\newtheorem{theorem}{Theorem}
\newtheorem{corollary}[theorem]{Corollary}
\newtheorem{lemma}[theorem]{Lemma}
\newtheorem{proposition}[theorem]{Proposition}
\newtheorem{claim}[theorem]{Claim}
\newtheorem*{claim*}{Claim}
\newtheorem*{multclaim*}{Multilinearity claim}
\newtheorem*{claima*}{Claim A}
\newtheorem*{claimb*}{Claim B}
\newtheorem*{claimc*}{Claim C}
\newtheorem*{claimd*}{Claim D}
\theoremstyle{definition}
\newtheorem{defin}[theorem]{Definition}
\titleformat{\section}[hang]{\scshape\large\bfseries\filcenter}{\S\thesection}{4pt}{}
\titleformat{\subsection}[hang]{\scshape\bfseries}{\thesubsection.}{4pt}{}
\def \ls#1#2 {^{#1}\!#2}
\newcommand{\elt}{
\operatorname{elt}}
\newcommand{\sgn}{
\operatorname{sgn}}
\newcommand{\bias}{
\operatorname{bias}}
\newcommand\mder{\partial}
\newcommand{\tdt}{
	\times\cdots\times
}
\newcommand{\exx}{
  \mathop{
    \mathchoice{\vcenter{\hbox{\larger[4]$\mathbb{E}$}}}
               {\kern0pt\mathbb{E}}
               {\kern0pt\mathbb{E}}
               {\kern0pt\mathbb{E}}
  }\displaylimits
}
\newcommand{\on}[1]{\operatorname{#1}}
\newcommand{\cmm}[1]{\ignorespaces}
\newcommand{\img}{\operatorname{Im}}
\newcommand{\tightoverset}[2]{
  \mathop{#2}\limits^{\vbox to -.5ex{\kern-1.15ex\hbox{$#1$}\vss}}}
\newcommand{\conv}{
	\tightoverset{\boldsymbol{-}}{\ast}
}
\newcommand\apps[1]{
	\overset{#1}{\approx}
}
\newcommand{\bigconv}[1]{
	\mathbf{C}_{#1}
}
\newcommand{\prank}{
\operatorname{prank}}
\newcommand{\dom}{
	\operatorname{dom}
}	
\newcommand{\supp}{
	\operatorname{supp}
}			
\newcommand\con{\bm{\mathrm{C}}} 
\newcommand\cons{\bm{\mathrm{c}}}
\newcommand\restr[2]{{
  \left.\kern-\nulldelimiterspace 
  #1 
  \vphantom{\big|} 
  \right|_{#2} 
}}
\newcommand{\subalign}[1]{%
  \vcenter{%
    \Let@ \restore@math@cr \default@tag
    \baselineskip\fontdimen10 \scriptfont\tw@
    \advance\baselineskip\fontdimen12 \scriptfont\tw@
    \lineskip\thr@@\fontdimen8 \scriptfont\thr@@
    \lineskiplimit\lineskip
    \ialign{\hfil$\m@th\scriptstyle##$&$\m@th\scriptstyle{}##$\hfil\crcr
      #1\crcr
    }%
  }%
}
\newcommand\blfootnote[1]{%
  \begingroup
  \renewcommand\thefootnote{}\footnote{#1}%
  \addtocounter{footnote}{-1}%
  \endgroup
}
\newcommand\ex{
	\mathop{\mathbb{E}}
}
\newcommand\codim{
	\operatorname{codim}
}
\newcommand*\bcdot{\mathpalette\bigcdot@{0.5}}
\newcommand*\bigcdot@[2]{\mathbin{\vcenter{\hbox{\scalebox{#2}{$\m@th#1\bullet$}}}}}
\def\colon{:}
\def\blfootnote{\gdef\@thefnmark{}\@footnotetext}
\newcommand\fco{\lbrack}
\newcommand\fcc{\rbrack^\wedge}
\begin{document}
\begin{center}\Large\noindent{\bfseries{\scshape An inverse theorem for Freiman multi-homomorphisms}}\\[24pt]\normalsize\noindent{\scshape W. T. Gowers\dag\hspace{3pt}and\hspace{3pt}L. Mili\'cevi\'c\ddag}\\[6pt]
\end{center}
\blfootnote{\noindent\dag\ Coll\`ege de France and University of Cambridge\\\phantom{\dag} Email: \texttt{wtg10@dpmms.cam.ac.uk}\\
\noindent\ddag\ Mathematical Institute of the Serbian Academy of Sciences and Arts\\\phantom{\dag\ }Email: \texttt{luka.milicevic@turing.mi.sanu.ac.rs}}

\footnotesize
\begin{changemargin}{1in}{1in}
\centerline{\sc{\textbf{Abstract}}}
\phantom{a}\hspace{12pt}~Let $G_1, \dots, G_k$ and $H$ be vector spaces over a finite field $\mathbb{F}_p$ of prime order. Let $A \subset G_1 \tdt G_k$ be a set of size $\delta |G_1| \cdots |G_k|$. Let a map $\phi \colon A \to H$ be a multi-homomorphism, meaning that for each direction $d \in [k]$, and each element $(x_1, \dots, x_{d-1}, x_{d+1}, \dots, x_k)$ of $G_1\tdt G_{d-1}\times G_{d+1}\tdt G_k$, the map that sends each $y_d$ such that $(x_1, \dots,$ $x_{d-1},$ $y_d,$ $x_{d+1}, \dots,$ $x_k) \in A$ to $\phi(x_1, \dots,$ $x_{d-1},$ $y_d,$ $x_{d+1}, \dots,$ $x_k)$ is a Freiman homomorphism (of order 2). In this paper, we prove that for each such map, there is a multiaffine map $\Phi \colon G_1 \tdt G_k \to H$ such that $\phi = \Phi$ on a set of density $\Big(\exp^{(O_k(1))}(O_{k,p}(\delta^{-1}))\Big)^{-1}$, where $\exp^{(t)}$ denotes the $t$-fold exponential.\\
\phantom{a}\hspace{12pt}~Applications of this theorem include:
\begin{itemize}
\item a quantitative inverse theorem for approximate polynomials mapping $G$ to $H$, for finite-dimensional $\mathbb{F}_p$-vector spaces $G$ and $H$, in the high-characteristic case,
\item a quantitative inverse theorem for uniformity norms over finite fields in the high-characteristic case, and
\item a quantitative structure theorem for dense subsets of $G_1 \tdt G_k$ that are subspaces in the principal directions (without additional characteristic assumptions).
\end{itemize}
\end{changemargin} 
\normalsize

\section{Introduction}

The finite-fields version of Freiman's theorem can be stated as follows.

\begin{theorem}[Freiman's theorem in $\mathbb{F}_p^n$]Fix a prime $p$. Suppose that $A \subset V$, where $V$ is a finite-dimensional vector space over $\mathbb{F}_p$. Suppose that $|A+A| \leq K|A|$. Then there is a subspace $U \leq V$ such that $A \subseteq U$ and $|U| \leq O_{K, p}(|A|)$.\end{theorem}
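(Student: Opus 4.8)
The plan is to deduce this from the two workhorses of additive combinatorics over $\mathbb{F}_p$ — the Pl\"unnecke--Ruzsa sumset inequalities and Ruzsa's covering lemma — together with the feature special to positive characteristic that the subspace generated by a set containing $0$ is just the union of its iterated sumsets. First I would reduce to the case $0 \in A$: translating $A$ by a fixed $a_0 \in A$ changes neither $|A|$ nor the hypothesis $|A+A| \le K|A|$, and if the translate $A - a_0$ is contained in a subspace $U_0$, then $A$ is contained in the subspace $\mathbb{F}_p a_0 + U_0$, of size at most $p\,|U_0|$; so this reduction costs only one factor of $p$. Once $0 \in A$, the set $B := A - A$ is symmetric, contains $0$, satisfies $A \subseteq B$, and the subspace $\langle A\rangle$ generated by $A$ coincides with $\langle B\rangle = \bigcup_{n \ge 1} nB$ (because $-1 = p-1$ in $\mathbb{F}_p$, so signed sums of elements of $A$ are ordinary sums).

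Next I would invoke the Pl\"unnecke--Ruzsa inequality to turn the doubling hypothesis into control on all iterated sum--difference sets: $|A+A| \le K|A|$ gives $|nA - mA| \le K^{n+m}|A|$ for all $n,m$, so in particular $|B| = |A-A| \le K^2|A|$ and $|2B + A| = |3A - 2A| \le K^5|A|$. Applying Ruzsa's covering lemma with $S = 2B = 2A - 2A$ and $T = A$ — valid since $|S + T| \le K^5|T|$ — produces a set $X$ with $|X| \le K^5$ such that $2B \subseteq X + (A - A) = X + B$. Set $U_0 := \langle X\rangle$; then $\dim U_0 \le |X| \le K^5$, hence $|U_0| \le p^{K^5}$.

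The crux is then a one-line induction: I claim $nB \subseteq U_0 + B$ for all $n \ge 1$. The case $n = 1$ holds as $0 \in U_0$, the case $n = 2$ is exactly the covering statement, and for the inductive step
\[
(n+1)B \;=\; B + nB \;\subseteq\; U_0 + 2B \;\subseteq\; U_0 + X + B \;=\; U_0 + B,
\]
using $X \subseteq U_0$ and that $U_0$ is a subspace. Taking the union over $n$ gives $\langle A\rangle = \bigcup_n nB \subseteq U_0 + B$, so $|\langle A\rangle| \le |U_0|\,|B| \le p^{K^5}K^2|A|$; undoing the initial reduction produces a subspace through the original $A$ of size at most $p^{1+K^5}K^2|A| = O_{K,p}(|A|)$.

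Given the Pl\"unnecke--Ruzsa machinery this is short and essentially forced, so the real content of the theorem is hidden inside that inequality — were a self-contained proof wanted, establishing $|nA-mA| \le K^{n+m}|A|$ (via Ruzsa's triangle inequality and the Petridis/Pl\"unnecke-graph argument) would be the main technical obstacle. The only steps genuinely particular to $\mathbb{F}_p$ are the identification $\langle A\rangle = \bigcup_n n(A-A)$ and the estimate $|U_0| \le p^{\dim U_0}$, the latter being the source of the $p$-dependence of the final constant; both are elementary.
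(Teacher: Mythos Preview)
Your argument is correct and is essentially the standard Green--Ruzsa proof of Freiman's theorem over $\mathbb{F}_p^n$: reduce to $0\in A$, use Pl\"unnecke--Ruzsa to bound $|2(A-A)+A|$, apply the Ruzsa covering lemma to get $2B\subseteq X+B$ with $|X|\le K^5$, and iterate to trap $\langle A\rangle$ inside $\langle X\rangle + B$. All steps check out, including the induction $(n+1)B\subseteq U_0+2B\subseteq U_0+B$ and the final size estimate.

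The paper, however, does not prove this statement at all. Theorem~1 is quoted as a known result to provide context and motivation for the main work; the surrounding paragraph simply gives attributions (Freiman, Ruzsa, Green--Ruzsa, Sanders) and moves on. So there is no ``paper's own proof'' to compare against --- you have supplied a complete proof where the authors chose to cite the literature.
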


Freiman's original theorem concerned finite sets of integers~\cite{Freiman}. In~\cite{Ruzsa}, Ruzsa found a highly influental new proof of this result, which was generalized to all Abelian groups by Green and Ruzsa in~\cite{greenRuzsaFreiman}. The result of this form with the strongest bounds known so far was given by Sanders in~\cite{Sanders}.\\
\indent An important ingredient in Freiman's proof was the notion of a Freiman homomorphism. When $G$ and $H$ are Abelian groups and $A \subset G$ is a subset, we say that $\phi \colon A \to H$ is a \emph{(Freiman) homomorphism of order $k$}, or simply a \emph{$k$-homomorphism}, if whenever $a_1, \dots, a_k, b_1, \dots, b_k \in A$ satisfy $\sum_{i=1}^k a_i = \sum_{i=1}^k b_i$, then $\sum_{i=1}^k \phi(a_i) = \sum_{i=1}^k \phi(b_i)$ holds as well. In particular, 2-homomorphisms are maps that respect all \emph{additive quadruples} (quadruples $(a, b, c, d)$ such that $a+b = c+d$) in the given set $A$. Freiman homomorphisms can be thought of as approximate analogues of linear maps. Indeed, if we combine Freiman's theorem with the Balog-Szemer\'edi-Gowers theorem~\cite{BalogSzemeredi},~\cite{TimSze}, we may obtain the following result.\\

\begin{theorem}\label{introFreAndBSG}Let $V$ and $H$ be vector spaces over $\mathbb{F}_p$, let $A \subset V$ be a subset of size at least $\delta |V|$ and let $\phi \colon A \to H$ be a 2-homomorphism. Then there are an affine map $\psi \colon V \to H$ and a subset $A' \subset A$ of size $\Omega_{\delta}(|V|)$ such that $\phi(a) = \psi(a)$ for all $a \in A'$.\end{theorem}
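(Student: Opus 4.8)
The plan is to pass to the graph of $\phi$, extract additive structure from it by combining the Balog--Szemer\'edi--Gowers theorem with Freiman's theorem in $\mathbb{F}_p^n$, and then peel off an honest affine map.

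I would begin with $\Gamma := \{(a,\phi(a)) : a \in A\} \subseteq V \times H$, noting $|\Gamma| = |A| \geq \delta|V|$. A Cauchy--Schwarz argument shows that the number of additive quadruples $(a,b,c,d) \in A^4$ with $a+b=c+d$ is at least $|A|^4/|V| \geq \delta|A|^3 = \delta|\Gamma|^3$. The hypothesis that $\phi$ is a $2$-homomorphism is used precisely here: whenever $a+b=c+d$ with $a,b,c,d \in A$ we get $\phi(a)+\phi(b) = \phi(c)+\phi(d)$, hence $(a,\phi(a))+(b,\phi(b)) = (c,\phi(c))+(d,\phi(d))$; since $a \mapsto (a,\phi(a))$ is injective, distinct additive quadruples in $A$ give distinct additive quadruples in $\Gamma$, so $\Gamma$ has at least $\delta|\Gamma|^3$ additive quadruples. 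Now the Balog--Szemer\'edi--Gowers theorem produces $\Gamma' \subseteq \Gamma$ with $|\Gamma'| \geq \delta^{O(1)}|\Gamma|$ and $|\Gamma'+\Gamma'| \leq \delta^{-O(1)}|\Gamma'|$, and Freiman's theorem applied inside $V \times H$ then gives a subspace $W \leq V\times H$ with $\Gamma' \subseteq W$ and $|W| \leq O_{\delta,p}(|\Gamma'|)$.

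It remains to convert this into a function. Let $\pi \colon V \times H \to V$ be the projection onto the first coordinate. Since $\Gamma$ is a graph, $\pi$ is injective on $\Gamma$, hence on $\Gamma'$, so $V_0 := \pi(W)$ is a subspace of $V$ containing $\pi(\Gamma')$, and $|V_0| \geq |\pi(\Gamma')| = |\Gamma'| \geq \delta^{O(1)}|V|$. Writing $W_0 := W \cap (\{0\}\times H) = \ker(\pi|_W)$, we have $|W| = |V_0|\,|W_0|$, so $|W_0| = |W|/|V_0| \leq O_{\delta,p}(|V|)/(\delta^{O(1)}|V|) = O_{\delta,p}(1)$: the vertical fibre of $W$ is bounded. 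The short exact sequence $0 \to W_0 \to W \to V_0 \to 0$ of $\mathbb{F}_p$-vector spaces splits, so there is a linear map $\ell \colon V_0 \to H$ with $(v,\ell(v)) \in W$ for every $v \in V_0$, and every element of $W$ has the form $(v,\ell(v)+w)$ with $v \in V_0$ and $w \in W_0$. In particular, for every $a \in \pi(\Gamma')$ we have $(a,\phi(a)) \in W$, so $\phi(a) = \ell(a) + w_a$ for some $w_a \in W_0$.

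Finally, since $|W_0| = O_{\delta,p}(1)$, the pigeonhole principle yields a single $w_0 \in W_0$ for which $A' := \{a \in \pi(\Gamma') : \phi(a) = \ell(a)+w_0\}$ satisfies $|A'| \geq |\pi(\Gamma')|/|W_0| = \Omega_{\delta}(|V|)$, where the implied constant depends on $p$ only through the Freiman step. Extending $\ell$ arbitrarily to a linear map $\hat\ell \colon V \to H$ and setting $\psi(v) := \hat\ell(v) + w_0$ gives an affine map with $\psi(a) = \phi(a)$ for all $a \in A' \subseteq A$. I expect the main obstacle to be exactly this last part: the two black boxes only tell us that the graph lives inside a small subspace, and the substantive point is that injectivity of the graph over $V$ forces the vertical fibre $W_0$ to be of bounded size, after which splitting the exact sequence and pigeonholing over $W_0$ recover a genuine affine map agreeing with $\phi$ on a positive-density subset.
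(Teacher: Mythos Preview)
Your argument is correct and is exactly the standard graph-of-$\phi$ route that the paper has in mind: the paper does not spell out a proof of this theorem but simply presents it as the combination of Balog--Szemer\'edi--Gowers with Freiman's theorem (and later restates it as Theorem~\ref{FreBSG} with Sanders-type bounds), and your write-up supplies precisely those details. One cosmetic point: Freiman's theorem in $V\times H$ strictly gives a \emph{coset} $g_0+W$ containing $\Gamma'$ rather than a subspace, but translating by $g_0$ (or absorbing $g_0$ into the constant term of the affine map) leaves the rest of your argument unchanged.
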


The main result of this paper is a generalization of Theorem \ref{introFreAndBSG} to the setting of multivariate maps. We begin by giving a definition of a class of functions that have the same relationship to Freiman homomorphisms that multilinear maps have to linear maps. That is, they are functions of several variables that are Freiman homomorphisms in each variable separately. A formal definition is as follows.

\begin{defin} Let $G_1, \dots, G_k$ and $H$ be finite-dimensional vector spaces over $\mathbb{F}_p$, and let $A$ be a subset of $G_1 \tdt G_k$. A function $\phi\colon A \to H$ is a \emph{Freiman multi-homomorphism of order $k$} if for every $d\in\{1,2,\dots,k\}$ and every $(a_1,\dots,a_{d-1},a_{d+1},\dots,a_k)\in G_1\tdt G_{d-1}\times G_{d+1}\tdt G_k$, the map from $\{x_d \in G_d \colon (a_1,\dots,a_{d-1},x_d, a_{d+1}, \dots, a_k) \in A\}$ to $H$ defined by the formula $x_d\mapsto\phi(a_1,\dots,a_{d-1},x_d,a_{d+1},\dots,a_k)$ is a Freiman homomorphism of order $k$. \end{defin}

We shall also call these multi-$k$-homomorphisms. Indeed, often we shall simply call them multi-homomorphisms, in which case, as with Freiman homomorphisms, it should be understood that $k=2$. 

Our main theorem is an inverse theorem for multi-homomorphisms. It is trivial that any multiaffine map $\phi \colon G_1\tdt G_k \to H$ is a multi-homomorphism. Moreover, the restriction of a multiaffine map to any subset of $G_1 \tdt G_k$ is also a multi-homomorphism. The theorem gives a sort of converse: given a multi-homomorphism $\phi$ defined on a dense subset $A$ of $G_1\tdt G_k$, it must agree on a dense subset of $A$ with some multiaffine map.

\begin{theorem}[Inverse theorem for multihomomorphisms]\label{multiaffineInvThm}For every $k \in \mathbb{N}$ there is a constant $D^{\mathrm{mh}}_k$ such that the following statement holds. Let $G_1, \dots, G_k$ and $H$ be finite-dimensional vector spaces over $\mathbb{F}_p$. Let $A \subset G_1 \tdt G_k$ be a set of size at least $\delta|G_1|\cdots |G_k|$, and let $\phi \colon A \to H$ be a multi-homomorphism. Then there is a multiaffine map $\Phi \colon G_1\tdt G_k \to H$ such that $\phi(x_1, \dots, x_k) = \Phi(x_1, \dots, x_k)$ for at least $\Big(\exp^{(D^{\mathrm{mh}}_k)}(O_{k,p}(\delta^{-1}))\Big)^{-1} |G_1| \dots |G_k|$ elements $(x_1, \dots, x_k) \in A$, where $\exp^{(t)}$ denotes the $t$-fold iterated exponential.\end{theorem}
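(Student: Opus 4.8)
The plan is to argue by induction on the number of variables $k$. The base case $k=1$ is exactly Theorem~\ref{introFreAndBSG}, which combines Freiman's theorem in $\mathbb{F}_p^n$ with the Balog--Szemer\'edi--Gowers theorem; there the loss incurred is mild (at most exponential-type in $\delta^{-1}$), so the height of the final tower will be governed by the number of inductive steps, i.e.\ by $k$, which is why the constant $D^{\mathrm{mh}}_k$ appears.

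For the inductive step, assume the statement for $k-1$ variables. Given $\phi\colon A\to H$ with $A\subseteq G_1\tdt G_k$ of density $\delta$, for $a\in G_k$ set $A_a=\{\mathbf{x}\in G_1\tdt G_{k-1}\colon (\mathbf{x},a)\in A\}$ and $\phi_a=\phi(\,\cdot\,,a)$. By averaging there is a set $S\subseteq G_k$ of density at least $\delta/2$ such that $A_a$ has density at least $\delta/2$ for each $a\in S$, and for such $a$ the map $\phi_a$ is again a multi-homomorphism, now in $k-1$ variables. The inductive hypothesis then supplies, for every $a\in S$, a multiaffine map $\Psi_a\colon G_1\tdt G_{k-1}\to H$ and a set $B_a\subseteq A_a$ of density at least $\varepsilon$, with $\varepsilon^{-1}$ of iterated-exponential size in $\delta^{-1}$, on which $\phi_a=\Psi_a$.

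It remains to glue the family $(\Psi_a)_{a\in S}$ into a single multiaffine map in all $k$ variables. The multiaffine maps $G_1\tdt G_{k-1}\to H$ form a finite-dimensional $\mathbb{F}_p$-vector space $\mathcal{M}$, and a multiaffine map on all $k$ variables is exactly an affine map $G_k\to\mathcal{M}$; so the goal is an affine map $a\mapsto\Psi^{\ast}_a$ agreeing with the data on a dense set. Using that $\phi$ is a Freiman $2$-homomorphism in the last coordinate, for every additive quadruple $a+b=c+d$ in $S$ one has $\phi_a+\phi_b=\phi_c+\phi_d$ on $A_a\cap A_b\cap A_c\cap A_d$, hence $\Psi_a+\Psi_b=\Psi_c+\Psi_d$ wherever $B_a,B_b,B_c,B_d$ all meet. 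The main obstacle of the whole argument is that these agreement sets are only $\varepsilon$-dense while the multiaffine maps produced by the inductive hypothesis are highly non-unique, so one cannot directly conclude that $a\mapsto\Psi_a$ respects additive quadruples into $\mathcal{M}$, nor simply patch the slices together. Overcoming this requires a Bogolyubov--Ruzsa--type input: one passes, by iterated popularity and covering arguments, to a dense sub-configuration --- a ``box'' of coordinate subspaces together with dense cosets --- on which the relevant agreement sets genuinely overlap and the candidate multiaffine maps become rigid; equivalently, one works with the additively structured set $\{(a,M)\colon a\in S,\ M\in\mathcal{M},\ M|_{B_a}=\phi_a|_{B_a}\}$, whose fibre over each $a\in S$ is an affine subspace of $\mathcal{M}$, and extracts affine maps through a dense part of it by a Freiman-type theorem. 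One then applies Theorem~\ref{introFreAndBSG} once more in the last coordinate to obtain (after a further refinement if necessary) the affine map $a\mapsto\Psi^{\ast}_a$.

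Finally, set $\Phi(\mathbf{x},a)=\Psi^{\ast}_a(\mathbf{x})$; this is multiaffine by construction, and a Fubini/popularity argument --- intersecting, for the popular values $a$, the agreement set in $G_k$ with the slice-wise agreement sets $B_a$ --- shows that $\Phi=\phi$ on a set of the claimed density. Each pass through the induction composes the current density with a Bogolyubov--Ruzsa--type loss and so inserts one more layer of exponentials, which is exactly the source of the iterated exponential with $D^{\mathrm{mh}}_k$ steps. A convenient auxiliary device throughout is discrete differencing: $D_{i,h}\phi(\mathbf{x})=\phi(\mathbf{x}+h\mathbf{e}_i)-\phi(\mathbf{x})$ is again a multi-homomorphism on a still-dense domain for most $h$, and it lowers the effective degree in direction $i$, so one may reduce to essentially multilinear, fully differenced versions of $\phi$ and then reintegrate --- a step available in every characteristic precisely because the multidegree in each variable is at most one, consistent with the absence of any characteristic hypothesis in the theorem.
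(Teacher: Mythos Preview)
Your outline correctly isolates the central difficulty---the non-uniqueness of the slice-wise approximants $\Psi_a$ and the fact that the agreement sets $B_a$ carry no structural information---but the step you call ``Bogolyubov--Ruzsa--type input'' is precisely the content of the theorem, and your description of it is not a proof. Concretely: for an additive quadruple $a+b=c+d$ in $S$, the intersection $B_a\cap B_b\cap B_c\cap B_d$ has no reason to be nonempty (each set has density only $\varepsilon$, and the inductive hypothesis gives you no alignment between them), and even when it is nonempty, two multiaffine maps on $G_{[k-1]}$ that agree on a set of density $<1-p^{-(k-1)}$ need not be equal (see Claim~\ref{nonzerovalsclaim}), so $\Psi_a+\Psi_b=\Psi_c+\Psi_d$ simply does not follow. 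Your alternative phrasing via the set $\{(a,M)\colon M|_{B_a}=\phi_a|_{B_a}\}$ does not help: the fibres over $a$ are affine subspaces of $\mathcal{M}$, but their dimensions are unbounded and uncontrolled, they depend on the particular $B_a$ produced by the black-box inductive call, and there is no Freiman-type theorem that selects an affine section through such a family. The final paragraph on discrete differencing does not address this; differencing in direction $i$ preserves the multi-homomorphism property but does nothing to create overlap between the $B_a$'s.

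The paper does \emph{not} proceed by gluing slice-wise approximants. It uses the inductive hypothesis for $k-1$ only to prove the mixed-convolution approximation (Theorem~\ref{MixedConvApprox}), and then builds an entirely separate machine: arrangements and tri-arrangements with associated $\phi$-values (Section~5), an algebraic dependent-random-choice step that upgrades ``positive proportion respected'' to ``$1-\varepsilon$ proportion respected'' (Theorem~\ref{densificationThm}), passage to a multiaffine map on a $(1-o(1))$-dense subset of a low-codimension variety (Theorem~\ref{nearlyMultThm}), an extension theory for biaffine maps on quasirandom biaffine varieties together with a simultaneous biaffine regularity lemma (Sections~8--9), and finally a codimension-reduction argument that removes the defining forms of the variety one at a time (Theorem~\ref{extn1codim} through Theorem~\ref{veryDenseToFullMultiaffineMap}). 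Each of these steps exists precisely because the naive slice-and-glue strategy you sketch cannot be made to close; the tower of exponentials arises from these iterations, not from $k$ applications of Theorem~\ref{introFreAndBSG}.
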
  

\noindent\textbf{Remark.} We may bound $D^{\text{mh}}_k$ by $C 3^k (k+1)!$ for some absolute constant $C$.\\

\vspace{\baselineskip}
\noindent\textbf{Uniformity norms.} In order to give further motivation for Theorem~\ref{multiaffineInvThm}, we need to recall the definition of the sequence of uniformity norms $\|\cdot\|_{U^k}$. These norms were introduced in~\cite{TimSze}, and played an essential role in obtaining a new proof of Szemer\'edi's theorem that gave quantitative bounds.

\begin{defin}[Uniformity norms]Let $G$ be a finite Abelian group and let $f \colon G \to \mathbb{C}$. The \emph{$U^k$ norm} of $f$ is given by the formula
\[\|f\|_{U^k}^{2^k} = \exx_{x, a_1, \dots, a_k \in G} \prod_{\varepsilon \in \{0,1\}^k}\operatorname{Conj}^{|\varepsilon|} f\Big(x - \sum_{i = 1}^k \varepsilon_i a_i\Big),\]
where $\operatorname{Conj}^{l}$ stands for the conjugation operator being applied $l$ times and $|\varepsilon|$ is shorthand for $\sum_{i = 1}^k \varepsilon_i$.\end{defin}

The relevance of these norms lies in the fact that whenever $f$ has small $U^k$ norm, it behaves like a randomly chosen function when it comes to counting objects of `complexity' $k-1$. We shall not define complexity here, but in the context of arithmetic progressions, where the complexity of an arithmetic progression of length $k$ is $k-2$, this statement can be formalized as follows.

\begin{proposition}Let $N$ be a sufficiently large prime, let $A \subset \mathbb{Z}_N$ be a set of size $\delta N$ and suppose that $\|\mathbbm{1}_A - \delta\|_{U^k} \leq \varepsilon$. Then the number $\on{AP}_{k+1}(A)$ of arithmetic progressions of length $k+1$ (and hence complexity $k-1$) inside $A$ satisfies $|N^{-2} \on{AP}_{k+1}(A) - \delta^{k+1}| = O_k(\varepsilon)$.\end{proposition}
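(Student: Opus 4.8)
The plan is to prove this by the standard two-step route: decompose the indicator of $A$ into its "structured" (constant) part plus a "pseudorandom" remainder, and then use a generalized von Neumann estimate to show that every term involving the remainder contributes at most $O_k(\varepsilon)$ to the count.

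First I would rewrite the count probabilistically. Counting pairs $(x,d)\in\mathbb{Z}_N^2$ (including the $N$ degenerate progressions with $d=0$, whose omission alters the normalised count by at most $\delta/N$, which is negligible since $N$ is large), we have exactly
\[
N^{-2}\on{AP}_{k+1}(A)=\exx_{x,d\in\mathbb{Z}_N}\prod_{i=0}^{k}\mathbbm{1}_A(x+id).
\]
Now write $\mathbbm{1}_A=\delta+f$ with $f=\mathbbm{1}_A-\delta$, so that $\|f\|_\infty\le 1$, $\exx f=0$, and $\|f\|_{U^k}\le\varepsilon$ by hypothesis. Expanding $\prod_{i=0}^k(\delta+f(x+id))$ produces $2^{k+1}$ terms indexed by subsets $S\subseteq\{0,1,\dots,k\}$, the $S$-term being $\delta^{k+1-|S|}\exx_{x,d}\prod_{i\in S}f(x+id)$. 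The term $S=\emptyset$ is exactly $\delta^{k+1}$, the desired main term, so it remains only to bound the $2^{k+1}-1$ terms with $S\neq\emptyset$.

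The heart of the matter is the claim that for every nonempty $S$,
\[
\Big|\exx_{x,d}\prod_{i\in S}f(x+id)\Big|\le\|f\|_{U^k}\le\varepsilon,
\]
which is the generalized von Neumann theorem for length-$(k+1)$ progressions (equivalently, the assertion that these configurations have Cauchy--Schwarz complexity $k-1$ and are therefore controlled by $U^k$). I would prove it as follows: pick $j\in S$, and reparametrise $(x,d)$ by the pair $(u,v)=(x+j_0d,\,x+j_1d)$ of values at two progression points with $j_1\neq j_0$, which is a legitimate change of variables since $j_1-j_0$ is invertible in $\mathbb{Z}_N$ ($N$ prime and large); then every $x+id$ becomes an affine function of $(u,v)$. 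Performing $k$ successive applications of the Cauchy--Schwarz inequality, at each step introducing one fresh additive-shift variable and peeling off one factor $f(x+id)$ (bounding the peeled factor by $1$ in $L^\infty$), the resulting $2^k$-fold average, after a final change of variables, is precisely $\|f\|_{U^k}^{2^k}$ in the $j$-th coordinate; padding $S$ to all of $\{0,\dots,k\}$ with factors $f$ (again crudely bounded by $1$) is harmless. This iterated Cauchy--Schwarz is exactly the argument showing $U^k$ governs $k$-step configurations; the one real point of care is the bookkeeping of which variable is doubled at which step, but I expect no genuine obstacle here.

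Finally, combining everything by the triangle inequality and using $\delta\le 1$,
\[
\big|N^{-2}\on{AP}_{k+1}(A)-\delta^{k+1}\big|\le\sum_{\emptyset\neq S\subseteq\{0,\dots,k\}}\delta^{k+1-|S|}\Big|\exx_{x,d}\prod_{i\in S}f(x+id)\Big|\le(2^{k+1}-1)\,\varepsilon=O_k(\varepsilon),
\]
which is the claimed estimate. The only substantive step is the middle one (the $k$-fold Cauchy--Schwarz giving the von Neumann bound); the decomposition and the final summation are routine.
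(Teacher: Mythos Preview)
The paper does not supply a proof of this proposition; it is stated purely as background motivation for why one cares about the $U^k$ norms, and the argument is classical (it is the generalised von Neumann inequality for $(k+1)$-term progressions, going back to~\cite{TimSze}). Your proposal follows exactly that standard route---decompose $\mathbbm{1}_A=\delta+f$, expand, and control each nontrivial term by $\|f\|_{U^k}$ via $k$ iterated applications of Cauchy--Schwarz---and is correct.

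Two small remarks on the write-up. First, when you ``pad $S$ to all of $\{0,\dots,k\}$'' you should pad with the constant function $1$, not with $f$; the point is that the generalised von Neumann bound $\bigl|\exx_{x,d}\prod_{i=0}^k f_i(x+id)\bigr|\le\min_i\|f_i\|_{U^k}$ then yields $\|f\|_{U^k}$ because at least one slot carries $f$ and the constant $1$ has $U^k$ norm $1$. Second, your sketch of the iterated Cauchy--Schwarz is a little imprecise (one does not literally ``bound the peeled factor by $1$ in $L^\infty$'' at each step; rather one uses Cauchy--Schwarz to square out the inner average, which eliminates the factor whose argument is independent of the doubled variable), but the structure you describe is the right one and presents no genuine difficulty.
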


\noindent Thus, in order to prove Szemer\'edi's theorem, one needs to understand the structure of functions with large uniformity norms. This was the strategy of the proof in~\cite{TimSze}, where a local inverse theorem for uniformity norms was obtained: given any $f \colon \mathbb{Z}_N \to \mathbb{D} = \{z \in \mathbb{C} \colon |z| \leq 1\}$ with $\|f\|_{U^k} \geq c$, there exist a polynomial $\psi : \mathbb{Z}_N \to \mathbb{Z}_N$ of degree at most $k-1$ and an arithmetic progression $P$ of length $N^{\Omega(1)}$ such that $\sum_{x \in P} f(x) \exp\Big(\frac{2 \pi i}{N} \psi(x)\Big) = \Omega_c(|P|)$.\\

This led to efforts to generalize the result to a strong inverse theorem, where one has a global correlation with a structured function such as a polynomial phase function. There are a couple of remarkable results along these lines. In~\cite{StrongUkZ}, Green, Tao and Ziegler proved such a result in the setting of $\mathbb{Z}_N$, while Bergelson, Tao and Ziegler had previously obtained an analogous result~\cite{BergelsonTaoZiegler} when the ambient group was $\mathbb{F}_p^n$ (with a further refinement by Tao and Ziegler~\cite{TaoZiegler}). In both cases, the family of structured functions is explicitly described, but it is more complicated than just the polynomial phases, so we shall not give the definitions here. However, in the so-called `high-characteristic case', when $k \leq p$, polynomial phases are again sufficient. Similar results in this direction were later proved by Szegedy~\cite{Szeg} and jointly by Camarena and Szegedy~\cite{CamSzeg}. (See also~\cite{GMV1},~\cite{GMV2},~\cite{GMV3}.)\\
\indent None of the results mentioned so far gave quantitative bounds on the correlation when $k>3$. Relatively recently,\footnote{The result appeared on arXiv in November 2018.} there was another major breakthrough when Manners~\cite{Manners} proved quantitative bounds\footnote{If we write $c_k(\delta)$ for the guaranteed correlation bound $|\ex_x f(x) g_{\text{str}}(x)| \geq c_k(\delta)$, where $g_{\text{str}}$ is the structured function, when $\|f\|_{U^k} \geq \delta$, and if $\mathcal{E}_0 \subset \mathcal{E}_1 \subset \dots$ is the Grzegorczyk hierarchy (i.e. functions in $\mathcal{E}_1$ are linear, functions in $\mathcal{E}_2$ are polynomial, functions in $\mathcal{E}_3$ use a bounded number of exponentials, etc.) then \emph{good bounds} means that all $n \mapsto c_k(n^{-1})^{-1}$ belong to some fixed $\mathcal{E}_i$. Before Manners's result, the proofs depended on regularity lemmas of increasing order, which led to $n \mapsto c_k(n^{-1})^{-1}$ being higher and higher in the Grzegorczyk hierarchy.} for the strong inverse theorem in the $\mathbb{Z}_N$ case.\\
\indent When it comes to quantitative bounds in the $\mathbb{F}_p^n$ case, the only result until recently was proved by Green and Tao for the ${U^3}$ norm~\cite{StrongU3}. Then with a much more involved argument, the authors obtained quantitative bounds for the $U^4$ norm~\cite{U4paper} in the case of large characteristic ($p \geq 5$). The key ingredient, from which the inverse theorem for $U^4$ norm follows easily, is Theorem~\ref{multiaffineInvThm} for the case of two variables. (This theorem is implicit in that paper.) Our main application is thus a generalization of the main result of~\cite{U4paper} and a quantitative version of the strong inverse theorem for the $U^k$ norm in $\mathbb{F}_p^n$, in the high-characteristic case $p\geq k$. This application is the main motivation for Theorem~\ref{multiaffineInvThm}. 

\begin{theorem}\label{inverseUniformityMain} Let $p\geq k$ and let $f \colon \mathbb{F}_p^n \to \mathbb{D}$ be a function such that $\|f\|_{U^k} \geq \delta > 0$ (where $\mathbb{D}$ is the unit disc in $\mathbb C$). Then there is a polynomial $g \colon \mathbb{F}_p^n \to \mathbb{F}_p$ of degree at most $k-1$ such that 
\[\Big|\exx_{x \in \mathbb{F}_p^n} f(x) \exp\Big(\frac{2 \pi i}{p} g(x)\Big)\Big| = \Omega_{k,p}\Big(\Big(\exp^{(O_k(1))} O_{k,p}(\delta^{-1})\Big)^{-1}\Big).\]
\end{theorem}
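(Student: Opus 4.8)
The goal is to deduce Theorem~\ref{inverseUniformityMain} from Theorem~\ref{multiaffineInvThm}. The standard route, in the high-characteristic case $p \geq k$, runs through the derivative/multilinearity structure of functions with large $U^k$ norm. The plan is as follows.

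\textbf{Step 1: Pass to a multilinear derivative statement.} Writing $\|f\|_{U^k}^{2^k} = \exx_{a_1} \|\Delta_{a_1} f\|_{U^{k-1}}^{2^{k-1}}$, a standard averaging argument shows that for a density-$\Omega(\delta^{2^k})$ set of $(a_1,\dots,a_{k-2}) \in (\mathbb{F}_p^n)^{k-2}$, the twice-iterated derivative $\Delta_{a_1,\dots,a_{k-2}} f$ has $U^2$ norm at least $\Omega(\delta^{2^k})$. By the classical $U^2$ inverse theorem (Fourier analysis), for each such tuple there is a linear form $x \mapsto \langle \phi(a_1,\dots,a_{k-2}), x\rangle$ and a phase such that $\Delta_{a_1,\dots,a_{k-2}} f$ correlates with $\exp(\frac{2\pi i}{p}(\langle \phi(a_1,\dots,a_{k-2}),x\rangle + c))$; here $\phi$ takes values in $(\mathbb{F}_p^n)^* \cong \mathbb{F}_p^n$. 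So we obtain a map $\phi$ defined on a dense subset $A$ of $(\mathbb{F}_p^n)^{k-2}$.

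\textbf{Step 2: Show $\phi$ is (after refinement) a Freiman multi-homomorphism.} This is the heart of the reduction and where the characteristic hypothesis enters. Using the cocycle-type identities among the iterated derivatives $\Delta_{a_1,\dots,a_{k-2}}f$ — specifically the fact that changing one coordinate $a_i$ by an additive quadruple relates four of these derived functions multiplicatively — together with a Cauchy--Schwarz / Gowers-norm popularity argument, one shows that for $99\%$ of additive quadruples $(a_i, a_i', a_i'', a_i''')$ in each direction $i$ (with the other coordinates fixed and in a popular position), $\phi$ respects the quadruple: $\phi(\dots a_i \dots) + \phi(\dots a_i' \dots) = \phi(\dots a_i'' \dots) + \phi(\dots a_i''' \dots)$. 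A Balog--Szemer\'edi--Gowers-type argument, or the robustness of multi-homomorphisms (one may invoke the results of the paper that allow passing from a ``$99\%$ multi-homomorphism'' to an exact one on a dense subset), then yields a genuine multi-homomorphism $\phi' \colon A' \to \mathbb{F}_p^n$ on a dense subset $A' \subseteq (\mathbb{F}_p^n)^{k-2}$. I expect this step to be the main obstacle, both because the combinatorial bookkeeping of the derivative identities is delicate and because the ``approximate $\Rightarrow$ exact'' step must be done quantitatively (with only an iterated-exponential loss, which is what the paper's machinery is designed to give).

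\textbf{Step 3: Apply Theorem~\ref{multiaffineInvThm} and integrate.} By Theorem~\ref{multiaffineInvThm} applied to $\phi' \colon A' \to \mathbb{F}_p^n$ with $k-2$ variables, there is a multiaffine map $\Phi \colon (\mathbb{F}_p^n)^{k-2} \to \mathbb{F}_p^n$ agreeing with $\phi'$ on a set of density $\big(\exp^{(O_k(1))}(O_{k,p}(\delta^{-1}))\big)^{-1}$. Thus, on a dense set of tuples $(a_1,\dots,a_{k-2})$, the derivative $\Delta_{a_1,\dots,a_{k-2}} f$ correlates with the phase of a bilinear-in-$x$-and-multiaffine-in-$a$ function, i.e. $f$ has a large ``multiaffine derivative'' of top degree $k-1$. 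The final step is the integration step in the spirit of Green--Tao and the $U^4$ paper: using $p \geq k$ so that one can divide by $k!$, one builds from $\Phi$ a single polynomial $g \colon \mathbb{F}_p^n \to \mathbb{F}_p$ of degree at most $k-1$ whose $(k-2)$-fold derivative matches (a symmetrized version of) $\Phi$, and then a further Cauchy--Schwarz argument descends the correlation of the derivatives to a genuine correlation $|\exx_x f(x) \exp(\frac{2\pi i}{p} g(x))| = \Omega_{k,p}\big(\big(\exp^{(O_k(1))} O_{k,p}(\delta^{-1})\big)^{-1}\big)$. The symmetrization and the resolution of the relevant cocycle equations are routine in high characteristic but would need to be carried out carefully; the quantitative losses here are only polynomial, so the iterated-exponential bound from Theorem~\ref{multiaffineInvThm} dominates, giving the stated conclusion.
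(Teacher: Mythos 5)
Your Steps~1 and~2 line up well with the paper's proof: the reduction via $\delta^{2^k} \leq \exx_{a_{[k-2]}} \max_r \big|[\mder_{a_1}\dots\mder_{a_{k-2}}f]^\wedge(r)\big|^2$ to a popular Fourier coefficient map $\phi$ on a dense subset of $G^{k-2}$, followed by the additive-quadruple counting in each direction (the paper's Lemma~\ref{multiaffineUkPassClaim}, proved via Lemma~\ref{l4bound}) and then Theorem~\ref{FreBSG} in each direction plus Theorem~\ref{multiaffineInvThm}, is exactly the route taken. So far so good.

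Step~3 is where there is a genuine gap, in two places. First, you describe the symmetrization as ``routine in high characteristic.'' It is not: after Cauchy--Schwarzing the multiaffine correlation down to a correlation of $\mder_{a_1}\dots\mder_{a_{k-1}}f(x)$ with a purely multilinear phase $\chi(\phi^{\mathrm{ml}}(a_{[k-1]}))$, the paper must replace $\phi^{\mathrm{ml}}$ by a symmetric multilinear form $\sigma$, and this requires showing that $\phi^{\mathrm{ml}} - \sigma$ has low \emph{partition rank}. The symmetry argument (Lemma~\ref{symmArgumentLemma} and Corollary~\ref{symmArgumentCor}) only produces a multilinear form $\phi^{\mathrm{ml}} - \phi^{\mathrm{ml}}_\pi$ of high \emph{bias}; to convert this to a partition-rank conclusion one needs the strong inverse theorem for biased multilinear forms (Theorem~\ref{strongInvARankThm}), which is one of the major tools of this paper and by no means a ``routine'' algebraic manipulation. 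Waving it away obscures a real dependency.

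Second and more importantly, the claim that ``a further Cauchy--Schwarz argument descends the correlation of the derivatives to a genuine correlation $|\exx_x f(x)\exp(\frac{2\pi i}{p}g(x))|$'' is not correct as stated. After building the degree-$(k-1)$ polynomial $g$ from $\sigma$ via the polarization identity (this is where $p\geq k$ is used), what the derivative correlation and Cauchy--Schwarz actually give is that $\|f\cdot\chi(g)\|_{U^{k-1}}$ is large. When $k-1\geq 3$, Cauchy--Schwarz alone cannot convert a large $U^{k-1}$ norm into correlation with a single phase function; one must invoke the inverse theorem for the $U^{k-1}$ norm. The paper therefore proceeds by \emph{induction on $k$}: the base case is the classical $U^2$ theorem, and the inductive step applies the full Theorem~\ref{inverseUniformityMain} for $k-1$ to $f' = f\cdot\chi(g)$. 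Your sketch has no such induction, and without it the argument does not close. Relatedly, because the induction iterates Theorem~\ref{multiaffineInvThm} and Theorem~\ref{strongInvARankThm} about $k$ times, the final height of the tower of exponentials grows with $k$ rather than being a single application of Theorem~\ref{multiaffineInvThm} plus polynomial loss.
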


As in the case of $U^4$ norm in~\cite{U4paper}, this theorem follows reasonably straightforwardly from Theorem~\ref{multiaffineInvThm}. Given that the deduction is not hard and given that Theorem~\ref{multiaffineInvThm} does not require any characteristic assumption, it is plausible that a proof of the full quantitative inverse theorem for uniformity norms over finite fields is now within reach.
 \medskip
 
\noindent\textbf{Other applications.} As well as Theorem~\ref{inverseUniformityMain}, some other results also follow from Theorem~\ref{multiaffineInvThm}. Among these, the closest in spirit to the inverse theorem for uniformity norms is an inverse theorem for approximate polynomials. For groups $G, H$ and an element $a \in G$, define the \emph{discrete derivative} $\Delta_a$ as the operator that maps a function $f \colon G \to H$ to the function $\Delta_a f$ defined by the formula $\Delta_a f(x) = f(x+a) - f(x)$. It is not hard to prove that when $G$ and $H$ are finite-dimensional vector spaces over $\mathbb{F}_p$ and $d < p$, a function $f\colon G\to H$ is a polynomial of degree at most $d$ if and only if the condition
\[\Delta_{a_1} \dots \Delta_{a_{d+1}} f(x) = 0\]
holds for all $a_1, \dots, a_{d+1}, x \in G$. By an \emph{approximate polynomial} we mean a function that satisfies this condition for a large collection of parameters but not necessarily for all. Our next result is that such functions are necessarily related to polynomials of the usual kind.\\

\begin{theorem}[Inverse theorem for approximate polynomials]\label{approxPolyMain} Let $p > d$, let $G$ and $H$ be finite-dimensional vector spaces over $\mathbb{F}_p$, and let $f \colon G \to H$ be a function such that
\[\Delta_{a_1} \dots \Delta_{a_{d+1}} f(x) = 0\]
for at least $\delta|G|^{d+2}$ choices of $a_1, \dots, a_{d+1}, x \in G$. Then there is a polynomial $\psi \colon G \to H$ of degree at most $d$ such that $f(x) = \psi(x)$ for at least $c|G|$ elements $x \in G$, where $c = \Omega_{d,p}\Big(\Big(\exp^{(O_d(1))} (O_{d,p}(\delta^{-1}))\Big)^{-1}\Big)$.\end{theorem}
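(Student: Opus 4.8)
The plan is to prove the statement by induction on $d$, invoking Theorem~\ref{multiaffineInvThm} once per step. The base case $d=0$ is immediate: if $\Delta_a f(x)=0$ for at least $\delta|G|^2$ pairs $(a,x)$, then writing $n_y=|\{x\in G:f(x)=y\}|$ one has $\sum_y n_y^2\ge\delta|G|^2$ (substitute $x'=x+a$), so $\max_y n_y\ge\delta|G|$ and $f$ agrees with a constant (a polynomial of degree $0$) on $\delta|G|$ points.

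For the inductive step the idea is to recognise the $d$-th derivative of $f$ as an approximate Freiman multi-homomorphism of order $2$ in $d$ variables. For $h=(h_1,\dots,h_d)\in G^d$ write $g_h=\Delta_{h_1}\cdots\Delta_{h_d}f\colon G\to H$; the hypothesis says precisely that $g_h(x+a)=g_h(x)$ for at least $\delta|G|^{d+2}$ tuples $(h,a,x)$. Averaging over $h$ and applying the inequality of the base case in the variable $x$, there is a set $H_0\subseteq G^d$ of density at least $\delta/2$ such that for each $h\in H_0$ the function $g_h$ takes some value $c(h)\in H$ on at least $(\delta/2)|G|$ points. Two algebraic facts make $c$ look like a multi-homomorphism: the cocycle identity $g_{(h_1+h_1',h_2,\dots,h_d)}(x)=g_{(h_1,\dots,h_d)}(x)+g_{(h_1',h_2,\dots,h_d)}(x+h_1)$ (and its analogues in the other coordinates, the $\Delta$'s commuting), and the symmetry of $g_h$ under permutations of $h$. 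Whenever enough of the relevant values of $g$ are simultaneously taken at their popular values, these identities force $c$ to respect additive quadruples in each coordinate.

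Making this precise is the main obstacle. A priori the popular-value sets attached to $h_1$, to $h_1'$ and to $h_1+h_1'$ (the latter shifted by $h_1$) need not intersect, so one cannot directly conclude anything about $c$. The remedy is a Cauchy--Schwarz / popularity argument carried out direction by direction: passing to a suitable dense sub-structure, one obtains a set $A\subseteq G^d$ of density bounded below in terms of $\delta$ and a map $\phi=c|_A\colon A\to H$ that is an \emph{exact} multi-homomorphism on $A$, with $\phi(h)$ still a popular value of $g_h$. This is exactly the hypothesis of Theorem~\ref{multiaffineInvThm} with $k=d$, which yields a multiaffine map $\Psi\colon G^d\to H$ agreeing with $\phi$ --- hence satisfying $\Delta_{a_1}\cdots\Delta_{a_d}f(x)=\Psi(a)$ for a dense set of pairs $(a,x)\in G^{d+1}$ --- on at least $\big(\exp^{(D^{\mathrm{mh}}_d)}(O_{d,p}(\delta^{-1}))\big)^{-1}|G|^d$ inputs.

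It remains to integrate $\Psi$. After replacing $\Psi$ by its symmetrisation (which still agrees with the symmetric function $c$ on a dense set) and using that $p>d$, so that $d!$ is invertible in $\mathbb{F}_p$, one writes down a polynomial $\psi_{\mathrm{top}}\colon G\to H$ of degree at most $d$ whose $d$-th derivative agrees with $\Psi$ on a dense set: concretely, the top multilinear part $M$ of the symmetrised $\Psi$ is integrated to $x\mapsto\tfrac1{d!}M(x,\dots,x)$ and the lower-order parts are handled the same way with fewer variables. Then $f_1:=f-\psi_{\mathrm{top}}$ satisfies $\Delta_{a_1}\cdots\Delta_{a_d}f_1(x)=0$ for at least $\delta'|G|^{d+1}$ tuples, where $\delta'$ is bounded below by a polynomial in $\delta$ times the density guaranteed by Theorem~\ref{multiaffineInvThm}; that is, $f_1$ satisfies the theorem's hypothesis with $d$ replaced by $d-1$. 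By the inductive hypothesis there is a polynomial $\psi_1$ of degree at most $d-1$ agreeing with $f_1$ on $c'|G|$ points, and then $\psi:=\psi_{\mathrm{top}}+\psi_1$ has degree at most $d$ and agrees with $f$ on those points. Unwinding the recursion, each of the $d$ steps composes one exponential tower of bounded height (from Theorem~\ref{multiaffineInvThm}) with a polynomial loss in the density, so the resulting tower still has height $O_d(1)$ and the final count has the claimed shape $\Omega_{d,p}\big(\big(\exp^{(O_d(1))}(O_{d,p}(\delta^{-1}))\big)^{-1}\big)|G|$.
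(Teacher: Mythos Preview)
Your overall framework --- induction on $d$, passing from the derivative identity to a multi-homomorphism via Cauchy--Schwarz and Freiman--BSG direction by direction, invoking Theorem~\ref{multiaffineInvThm}, then integrating via polarization and applying the inductive hypothesis --- matches the paper's proof of Theorem~\ref{ApproxPolyThmGen}. However, two steps are genuine gaps rather than routine omissions.

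\textbf{The symmetrisation step.} You assert that the symmetrisation $\Theta=\tfrac1{d!}\sum_\pi\Psi\circ\pi$ ``still agrees with the symmetric function $c$ on a dense set''. This is not justified. You know $\Psi=c$ on some $B$ of density $\delta_1$ that is bounded away from $0$ but not close to $1$; then $\Psi\circ\pi=c$ only on $\pi^{-1}B$, and $\bigcap_\pi\pi^{-1}B$ may well be empty. Equivalently, $\Psi$ and $\Psi\circ\pi$ are two multiaffine maps that each agree with $c$ on a (different) dense set, and there is no a~priori reason for them to agree with each other anywhere. The paper does not bypass this: its Step~3 is a substantial Cauchy--Schwarz manipulation of the identity $\Delta_{a_1}\cdots\Delta_{a_d}f(x)=\Psi(a)$ itself, showing that $\operatorname{bias}(\Psi-\Psi\circ\pi)$ is large, and then invokes Theorem~\ref{strongInvARankThm} to conclude that $\Psi-\Psi\circ\pi$ has low partition rank. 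Only then can one pass to a symmetric form.

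\textbf{The multiaffine-to-multilinear step.} Even granting a symmetric multiaffine $\Theta$ with $\Delta_{a_1}\cdots\Delta_{a_d}f(x)=\Theta(a)$ on a dense set, your claim that ``the lower-order parts are handled the same way with fewer variables'' cannot work. The $d$-th derivative of \emph{any} polynomial of degree at most $d$ is symmetric \emph{multilinear} in $a_1,\dots,a_d$; it is never a nonzero lower-order multiaffine piece such as $L(a_1)+L(a_2)+C$. So subtracting $\psi_{\mathrm{top}}(x)=\tfrac1{d!}M(x,\dots,x)$ leaves $\Delta_{a_1}\cdots\Delta_{a_d}f_1(x)=\Theta(a)-M(a)$, which is generically nonzero, and you cannot conclude the hypothesis for $d-1$. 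The paper's Proposition~\ref{lowOrderRemovalPoly} supplies exactly this missing step: a short but essential Cauchy--Schwarz argument (differencing in each $a_i$ in turn, exploiting $\Delta_b-\Delta_c=\Delta_{b-c}$ after a shift in $x$) shows that if $\Delta_{a_1}\cdots\Delta_{a_d}f(x)=\Theta(a)$ for many $(a,x)$ then the same holds with $\Theta$ replaced by its multilinear part $\Theta^{\mathrm{ml}}$, at only polynomial loss in density. The paper in fact uses this proposition twice --- once before and once after the symmetry argument.
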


\indent Before proceeding further, we pause for a moment to discuss the relationship between Theorems~\ref{multiaffineInvThm} and~\ref{approxPolyMain} (in a qualitative sense, and ignoring the assumption that $p > d$). While Theorem~\ref{multiaffineInvThm} implies Theorem~\ref{approxPolyMain} reasonably straightforwardly, the reverse implication is not entirely clear. To see why not, fix a function $\phi \colon G_1 \tdt G_k \to H$ that satisfies the assumptions of Theorem~\ref{multiaffineInvThm}. We may define a vector space $G^+ = G_1 \oplus G_2 \oplus \dots \oplus G_k$ and view $\phi$ as a function on $G^+$. It is not hard to see that $\phi$ becomes an approximate polynomial of degree at most $k$ on $G^+$ in the sense of Theorem~\ref{approxPolyMain}. Assuming that Theorem~\ref{approxPolyMain} has been proved, we can find a polynomial $\psi \colon G^+ \to H$ of degree at most $k$ that agrees with $\phi$ on a dense set. However, the structure of polynomials on $G^+$ is more general than that of multiaffine maps on $G_1 \tdt G_k$, so additional arguments are needed to complete the implication of theorems in this direction. For example, if we set $G_1 = G_2 = \mathbb{F}_p^n$ and $H = \mathbb{F}_p^m$, and set $\phi_i(x, y) = 2x_1y_1$ and $\psi_i(x,y) = x_1^2 + y_1^2$ for all $i \in [m]$, then $\phi = \psi$ on a dense set. In particular, the argument above would identify $\phi$ with a polynomial of degree 2, but not a multiaffine map, and it would still be correct (even if the conclusion would not be as strong as possible). 

At first sight, we expect that an inverse theorem should be significantly easier to prove when the object with approximate properties that we wish to understand also has a strong algebraic structure.\footnote{In our case, we want to show that a polynomial of degree $d$ on $G^+$ (instead of arbitrary function) that is simultaneously a multi-2-homomorphism on a dense subset of $G_1 \tdt G_k$ (the approximate property) necessarily comes from a global multiaffine map.} However, the paper~\cite{TaoZiegler}, in which Tao and Ziegler prove the inverse theorem for uniformity norms in the case of algebraic objects called `non-classical polynomials', demonstrates that this intuition can be misleading.\\
\indent A final point is that Theorem~\ref{multiaffineInvThm} does not suffer from the low-characteristic issues that are present in Theorem~\ref{approxPolyMain}.\\

\noindent Another application that we have included in this paper is related to Bogolyubov's method. The classical version of this method can be stated as follows.

\begin{proposition}[Bogolyubov lemma]\label{bogIntro}Suppose that $A \subset G$ is a set of density $\delta$ and that $G$ is a finite-dimensional vector space over $\mathbb{F}_p$. Then $A + A - A - A$ contains a subspace of codimension $O_\delta(1)$.\end{proposition}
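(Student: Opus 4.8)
The plan is to establish this by the classical Fourier-analytic argument of Bogolyubov, carried out inside the $\mathbb{F}_p$-vector space $G$. Write $f=\mathbbm{1}_A$, so that $\ex_x f(x)=\delta$, and for a character $r$ of $G$ set $\hat f(r)=\ex_{x\in G} f(x)\,\omega^{-r\cdot x}$ with $\omega=e^{2\pi i/p}$. First I would fix a threshold $\alpha>0$, to be optimised at the end, and consider the \emph{large spectrum} $S=\{r:|\hat f(r)|\ge\alpha\}$. Parseval's identity gives $\sum_r|\hat f(r)|^2=\ex_x f(x)^2=\delta$, whence $|S|\le\alpha^{-2}\delta$. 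I would then let $V=\{x\in G: r\cdot x=0\text{ for every }r\in S\}$, which is a subspace of $G$ of codimension at most $|S|\le\alpha^{-2}\delta$.

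The heart of the matter is to show $V\subseteq A+A-A-A$. For this I would observe that, with the normalisation $(u*v)(x)=\ex_y u(y)v(x-y)$, a point $x$ lies in $A+A-A-A$ exactly when $g(x):=(f*f*f^{-}*f^{-})(x)>0$, where $f^{-}(y)=f(-y)$: indeed $g(x)>0$ forces a representation $x=a+b-c-d$ with $a,b,c,d\in A$, and conversely every such representation contributes positively. Since $f$ is real we have $\widehat{f^{-}}=\overline{\hat f}$, so Fourier inversion gives $g(x)=\sum_r|\hat f(r)|^4\,\omega^{r\cdot x}$. Now for $x\in V$ every term with $r\in S$ equals $|\hat f(r)|^4\ge 0$ (as then $\omega^{r\cdot x}=1$), and in particular the $r=0$ term equals $\hat f(0)^4=\delta^4$, so
\[ g(x)\ \ge\ \delta^4-\sum_{r\notin S}|\hat f(r)|^4\ \ge\ \delta^4-\alpha^2\sum_r|\hat f(r)|^2\ =\ \delta^4-\alpha^2\delta, \]
using $|\hat f(r)|<\alpha$ off $S$ together with Parseval once more.

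Finally I would choose $\alpha=\tfrac12\delta^{3/2}$, so that the displayed lower bound is $\tfrac34\delta^4>0$; this shows $V\subseteq A+A-A-A$, while $\codim V\le\alpha^{-2}\delta=4\delta^{-2}=O_\delta(1)$, as required. I do not anticipate any genuine obstacle here: the statement is classical, and the only point needing a moment's care is the choice of $\alpha$, which must be small enough that the main term $\delta^4$ beats the error $\alpha^2\delta$ yet large enough that $\alpha^{-2}\delta$ stays bounded — in fact polynomial — in $\delta^{-1}$; the value above does both at once. (One could instead simply cite any of the standard references, but the self-contained computation above is short enough to record.)
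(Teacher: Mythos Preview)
Your proof is correct and is precisely the standard Fourier-analytic Bogolyubov argument. The paper does not supply a proof of this proposition at all --- it is stated in the introduction as a classical fact, used only to motivate the multilinear generalisation (Theorem~\ref{convApproxMain}) --- so there is no ``paper's own proof'' to compare against. Your write-up would serve perfectly well as the omitted proof; the paper's only additional remark is that the same argument yields the stronger statement that $f*f*\overline f*\overline f$ is $L^\infty$-close to a function constant on cosets of a bounded-codimension subspace, which is indeed implicit in your Fourier expansion of $g$.
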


The proof of the above statement actually gives more information than this. It shows that given any function $f \colon G \to \mathbb{D}$, and any $\varepsilon > 0$, there is a subspace $V$ of codimension $O_\varepsilon(1)$ such that on each coset of $V$ the values of $f \ast f \ast \overline{f} \ast \overline{f}$ vary by at most $\varepsilon$. Thus, we may approximate the iterated convolution of any function in the $L^\infty$ norm by a highly structured function. In this paper, we also prove a multilinear generalization of this phenomenon. To state it, given a function $f \colon G_1 \tdt G_k\to\mathbb C$, we define its \emph{convolution in direction $d$} as  
\begin{align*}
\bigconv{d} f(x_1, \dots,x_{d-1}&,y_d,x_{d+1},\dots, x_k) \\
&= \exx_{z_d \in G_d} f(x_1, \dots, x_{d-1}, y_d + z_d, x_{d+1}, \dots, x_k) \overline{f(x_1, \dots, x_{d-1}, z_d, x_{d+1}, \dots, x_k)}.\\
\end{align*}

One way of describing what happens in the one-variable case is to say that we find a linear map $\phi\colon G\to\mathbb F_p^l$ for some small $l$ such that the iterated convolution is roughly constant on the inverse image of each $y\in\mathbb F_p^l$. Our next theorem is a very similar statement for iterated convolutions in different directions: the main difference is that $\phi$ is now a multilinear map, and a technical difference is that the approximation is valid for a set of inverse images that covers most of the domain rather than all of it.

It will be convenient to use the shorthand $G_{[k]}$ for $G_1\tdt G_k$ and $x_{[k]}$ for $(x_1,\dots,x_k)$.

\begin{theorem}[Approximating multiconvolutions]\label{convApproxMain}Let $f \colon G_{[k]} \to \mathbb{D}$ be a function, let $d_1, \dots, d_r \in [k]$ be directions such that every direction is included at least once, and let $\varepsilon > 0$. Then there exist
\begin{itemize}
\item a positive integer $l = \exp^{(O_k(1))}\Big(O_{k,p}(2^{O_{k,p}(r)}\varepsilon^{-O_{k,p}(1)})\Big)$, 
\item a multiaffine map $\phi \colon G_{[k]} \to \mathbb{F}^l_p$,
\item a subset $M \subset \mathbb{F}_p^l$, such that $|\phi^{-1}(M)| \geq (1-\varepsilon) |G_{[k]}|$,
\item a function $c \colon M \to \mathbb{D}$
\end{itemize}
such that for every $x_{[k]}\in\phi^{-1}(M)$ we have that
\[\Big|\bigconv{d_r} \dots \bigconv{d_1}\bigconv{k}\dots\bigconv{1}\bigconv{k} \dots \bigconv{1} f(x_{[k]}) - c\Big(\phi(x_{[k]})\Big)\Big|\leq \varepsilon.\]\end{theorem}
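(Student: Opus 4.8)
The plan is an iterated Bogolyubov argument carried out one direction at a time, followed by an application of Theorem~\ref{multiaffineInvThm} to convert the linear data that emerges into a single multiaffine map. The engine is a one-direction fact: for $g\colon G_{[k]}\to\mathbb D$ and a direction $d$, writing $\hat g(x_{-d},\xi)=\mathbb E_{z_d} g(x_{-d},z_d)\overline{\xi(z_d)}$ for the Fourier transform of $g$ in its $d$-th variable, one has
\[\bigconv{d}g(x_{-d},y_d)=\sum_{\xi\in\widehat{G_d}}|\hat g(x_{-d},\xi)|^2\,\xi(y_d)\]
(where $\xi(y)=e^{2\pi i\langle\xi,y\rangle/p}$, so each $\xi$ corresponds to a linear form $\langle\xi,\cdot\rangle\colon G_d\to\mathbb F_p$). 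Thus a single convolution in direction $d$ makes the $d$-th transform a nonnegative function of total mass at most $1$, whose large spectrum $\Gamma^\eta_d(x_{-d})=\{\xi:|\hat g(x_{-d},\xi)|^2\ge\eta\}$ has size at most $\eta^{-1}$. A second convolution in direction $d$ squares these coefficients, and since $\sum_{\xi\notin\Gamma^\eta_d}|\hat g(x_{-d},\xi)|^4\le\eta$, the function $\bigconv{d}\bigconv{d}g$ depends on $y_d$, up to an $L^\infty$ error $O(\eta)$, only through the values $\langle\xi,y_d\rangle$, $\xi\in\Gamma^\eta_d(x_{-d})$.

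To run this in all directions at once I would use the structural observation that nonnegativity of the $d$-th Fourier transform is preserved by a convolution in any other direction $d'$ (the $d$-th transform of $\bigconv{d'}g$ is an average over $G_{d'}$ of sums of products of the nonnegative $d$-th transforms of $g$), and that the total $d$-th mass stays at most $1$. Hence after the first pass $F_1=\bigconv{k}\cdots\bigconv{1}f$ the transform of $F_1$ is pointwise nonnegative in every direction simultaneously; the second pass $\bigconv{k}\cdots\bigconv{1}F_1$ supplies the second convolution in each direction and runs the step above, and the trailing convolutions $\bigconv{d_r}\cdots\bigconv{d_1}$ — in which every direction appears — shrink the errors and, at the cost of enlarging the large-spectrum sets by iterated difference sets (which is where the $2^{O(r)}$-type growth comes from), make the conclusion stable under the intervening convolutions in other directions. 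The net effect is that the displayed iterated convolution agrees, up to $\varepsilon/2$ in $L^\infty$, with a function depending on $x$ only through the data $\{\langle\xi,x_d\rangle:d\in[k],\ \xi\in\Gamma_d(x_{-d})\}$, where each $\Gamma_d(x_{-d})$ is a set of $O_{k,r}(\eta^{-1})$ characters of $G_d$.

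The crux, and the step I expect to be the main obstacle, is that $\Gamma_d(x_{-d})$ depends on the remaining coordinates, so this data is not yet a function of a single map, still less of a multiaffine one. The structure to aim for is clear from the model case: if $f(x)=e^{2\pi i\beta(x)/p}$ with $\beta$ multiaffine, then $\Gamma_d(x_{-d})$ is the singleton given by the multilinear partial-derivative form of $\beta$ in direction $d$ evaluated at $x_{-d}$; so in general, after a regularisation that reduces the set-valued and lower-order cases to this one, one expects the selected dominant frequency in direction $d$ to be an approximate Freiman multi-$(k-1)$-homomorphism of $x_{-d}$ on a dense set. Feeding this into Theorem~\ref{multiaffineInvThm} produces, for each $d$, a genuine multiaffine map in the remaining coordinates; collating these and iterating a regularity-type refinement — enlarging the current multiaffine map whenever it fails to capture some $\Gamma_d$ on a non-negligible set — yields a single multiaffine $\phi\colon G_{[k]}\to\mathbb F_p^l$ together with a set $M\subset\mathbb F_p^l$ with $|\phi^{-1}(M)|\ge(1-\varepsilon)|G_{[k]}|$ on which the iterated convolution depends, up to $\varepsilon$, only on $\phi(x)$. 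The delicate point is to reconcile the very weak density of agreement given by Theorem~\ref{multiaffineInvThm} with the near-full density wanted here; it is this refinement loop, together with the tower-type cost of each use of Theorem~\ref{multiaffineInvThm}, that makes $l$ an iterated exponential and forces the exceptional set $M$.

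Finally I would define $c$ on $M$ to be the common near-constant value of the iterated convolution on the corresponding fibre of $\phi$; then $|c|\le1$ automatically, since this value is a partial sum of the nonnegative Fourier coefficients whose total is at most $1$. Taking $\eta\asymp\varepsilon/k$ and tracking the density losses and the dependence on $r$ through the refinement loop gives the stated bounds on $l$ and on $|\phi^{-1}(M)|$.
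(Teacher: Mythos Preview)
Your high-level shape matches the paper's: take Fourier in a direction, observe that the large spectrum has bounded size, show it is an approximate multi-homomorphism in the remaining $k-1$ variables, feed that into Theorem~\ref{multiaffineInvThm} (for $k-1$), and iterate until a single multiaffine map captures everything. Your nonnegativity-preservation observation is correct and pleasant, though the paper does not use it.

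There are, however, two genuine gaps. First, your mechanism for obtaining an $L^\infty$ approximation is the weak point. You propose to track how the large spectra in each direction evolve under the intervening convolutions in other directions, ``enlarging the large-spectrum sets by iterated difference sets''. This is where your plan becomes vague, and I do not see how to carry it out: the $d$-th transform after a convolution in direction $d'$ is a full (nonnegative) convolution of two $d$-th transforms, not a mere difference-set enlargement, so the spectral sets can spread dramatically. The paper avoids this entirely by a cleaner two-stage argument. Stage one (Theorem~\ref{MixedConvApprox}) gives only an $L^1$ (or $L^q$) approximation of $\bigconv{k}\cdots\bigconv{1}\bigconv{k}\cdots\bigconv{1} f$ by a function $g$ that is constant on layers of a multiaffine map. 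Stage two uses the \emph{extra} convolutions $\bigconv{d_r}\cdots\bigconv{d_1}$, in which every direction appears, together with the elementary fact (Lemma~\ref{LinftyConvApprox}) that convolving two $L^1$-close bounded functions in every direction produces $L^\infty$-close results; then one only needs to understand $\bigconv{d_r}\cdots\bigconv{d_1} g$, which is easy since $g$ already lives on a low-codimension multiaffine partition (Lemma~\ref{linftyforMLmaps}). This $L^1\to L^\infty$ upgrade is the missing idea in your plan.

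Second, you write that ``one expects the selected dominant frequency in direction $d$ to be an approximate Freiman multi-$(k-1)$-homomorphism'', but this requires proof; it is Lemma~\ref{convImpliesLin} in the paper, obtained by expanding the convolutions, a change of variables, and an $\ell^4$ bound (Lemma~\ref{l4bound}). Once that is in hand, the paper's ``refinement loop'' is also more substantial than you indicate: it is not a regularity-type iteration that enlarges $\phi$ until all $\Gamma_d$ are captured, but a recursive coordinate-removal scheme (Proposition~\ref{depRemovalPropFC}) that, at each step, re-expresses a Fourier coefficient of an iterated convolution in terms of shorter iterated convolutions and uses inclusion--exclusion together with Lemma~\ref{varOuterApprox} to replace indicator sets by low-codimension varieties. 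Your plan to ``collate and iterate'' would have to reproduce something of this complexity to actually close.
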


Unlike previous applications, deducing Theorem~\ref{convApproxMain} from Theorem~\ref{multiaffineInvThm} requires significantly more work. It is also central to the proof of Theorem~\ref{multiaffineInvThm}, and in fact our argument takes the following recursive form: for each $k$ we use Theorem~\ref{multiaffineInvThm} for $k$ to prove Theorem~\ref{convApproxMain} for $k$, and then we use Theorem~\ref{convApproxMain} for $k$ to prove Theorem~\ref{multiaffineInvThm} for $k+1$.\footnote{Actually, we use a slightly weaker version of Theorem~\ref{convApproxMain}, see Theorem~\ref{MixedConvApprox}. It turns out that additional control provided by $L^\infty$-approximation is not important it this proof, $L^q$ approximation is sufficient. The stronger version is useful in another application.} We shall return to a discussion of Theorem~\ref{convApproxMain} in the next section, where we give an outline of the proof.\\

Finally, we use Theorem~\ref{convApproxMain} in an easy and straightforward manner to deduce statements that are closer in spirit to Proposition~\ref{bogIntro}. Such results were established in the case of two variables (where one convolves a two-dimensional set several times in the principal directions to obtain biaffine structure) by Bienvenu and L\^{e}~\cite{BienHe} and by the authors~\cite{U4paper} independently, and the bounds in the problem were improved by Hosseini and Lovett~\cite{HosseiniLovett}. (See~\cite{BGSM3} for further discussion of that result and the structure obtained.) The two-variable version was also used by Bienvenu and L\^{e} in the study of correlations of the M\"obius function with quadratic polynomials over $\mathbb{F}_q[t]$ in~\cite{BienHe2}.\\
\indent One appealing further corollary is a structure theorem for subsets of $G_1 \tdt G_k$ that are subspaces in each principal direction.

\begin{theorem}\label{alldirsubspacestheorem}Let $G_1, \dots, G_k$ be $\mathbb{F}_p$-vector spaces. Suppose that $X \subset G_1 \tdt G_k$ is a set of density $\delta$ such that for each $d \in [k]$ and each $(x_1, \dots, x_{d-1}, x_{d+1}, \dots, x_k)\in G_1\tdt G_{d-1}\times G_{d+1}\tdt G_k$, the set $\{y_d \in G_d \colon (x_1, \dots, x_{d-1}, y_d, x_{d+1}, \dots, x_k) \in X\}$ is a (possibly empty) subspace. Then, there are $r \leq \exp^{(O_k(1))}(O_{k,p}(\delta^{-1}))$, sets $I_1, \dots, I_r \subset [k]$ and multilinear forms $\alpha_i \colon \prod_{j \in I_i} G_j \to \mathbb{F}_p$ for $i \in [r]$ such that 
\[\Big\{(x_1, \dots, x_k) \in G_1 \tdt G_k \colon (\forall i \in [r]) \, \alpha_i\Big((x_j \colon j \in I_i)\Big) = 0\Big\} \subset X.\]\end{theorem}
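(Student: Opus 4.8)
The strategy is to reduce Theorem~\ref{alldirsubspacestheorem} to Theorem~\ref{convApproxMain} by turning the "subspace in each principal direction" hypothesis into a statement about iterated convolutions of the indicator function $f = \mathbbm{1}_X$. The key observation is that if $B \subset G_d$ is a subspace, then $\mathbbm{1}_B \conv \mathbbm{1}_B = |B|^{-1}\,\mathbbm{1}_B \cdot |G_d|^{-1}|G_d|$ — more precisely, $\bigconv{d}\mathbbm{1}_X$ is, up to the normalising density factor, again essentially the indicator of the same slice (convolving a subspace with itself reproduces the subspace). Iterating this in all directions $d_1, \dots, d_r$ so that every direction appears, Theorem~\ref{convApproxMain} (with, say, $\varepsilon$ a small constant multiple of $\delta$) produces a multiaffine map $\phi \colon G_{[k]} \to \mathbb{F}_p^l$, a set $M \subset \mathbb{F}_p^l$ with $|\phi^{-1}(M)| \geq (1-\varepsilon)|G_{[k]}|$, and a function $c$ such that the iterated convolution of $\mathbbm{1}_X$ is within $\varepsilon$ of $c(\phi(x_{[k]}))$ on $\phi^{-1}(M)$. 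Because the iterated convolution takes only the values $0$ (off a reconstructed "subspace-like" set) and something bounded below by a power of $\delta$ (on it), choosing $\varepsilon$ small forces $c$ to be essentially two-valued on $M$, so that on $\phi^{-1}(M)$ the set $X$ coincides with a union of fibres of $\phi$.

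The next step is to pass from "$X$ agrees on $\phi^{-1}(M)$ with a union of fibres of a multiaffine map" to "$X$ contains the zero set of a bounded number of multilinear forms." First I would decompose the multiaffine map $\phi$ into its multilinear parts in the usual way: a multiaffine $\phi\colon G_{[k]} \to \mathbb{F}_p^l$ is a sum, over subsets $S \subseteq [k]$, of multilinear forms on $\prod_{j \in S} G_j$ (pulled back to $G_{[k]}$), so the fibres of $\phi$ refine the common zero sets of a bounded collection of multilinear forms $\alpha_i$, with $r \le l \cdot 2^k$ such forms. One then has to handle the fact that the approximation only holds on $\phi^{-1}(M)$, not everywhere. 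Here I would use the slice hypothesis to its full strength: if the zero set $Z = \{x : \alpha_i(x) = 0 \ \forall i\}$ is not contained in $X$, pick a point $z \in Z \setminus X$; since $|\phi^{-1}(M)| \ge (1-\varepsilon)|G_{[k]}|$ and the fibres of $\phi$ are cosets-of-a-multiaffine-kernel which are "large", most lines through $z$ in each coordinate direction meet $\phi^{-1}(M)$, and the subspace structure of the $X$-slices (each slice is a genuine subspace, in particular closed under scalar multiples and contains $0$ whenever nonempty) lets one propagate membership from the well-understood region into $z$, contradicting $z \notin X$ — or, more cleanly, first replace $X$ by the actual union-of-fibres set it agrees with on $\phi^{-1}(M)$ and then argue directly that this set, together with the slice-subspace property, must be exactly such a zero set up to the controlled error.

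The main obstacle, I expect, is this last passage — removing the "on $\phi^{-1}(M)$" caveat and the $\varepsilon$-slack to get an \emph{exact} containment $Z \subseteq X$ with \emph{no} exceptional set. The convolution machinery inherently produces statements valid off a small exceptional set, whereas the conclusion of Theorem~\ref{alldirsubspacestheorem} is exact. The rigidity that makes this possible is precisely that the slices of $X$ are honest subspaces, not approximate ones: a subspace is determined by, and robust under, knowing it on a dense subset of a line (indeed, a subspace of $G_d$ either contains a given line through the origin entirely or meets it only at $0$). So the plan is to run a direction-by-direction "cleaning" argument: starting from the union-of-fibres approximant $Y$ to $X$, use the exact subspace property in direction $1$ to show $Y$ can be corrected to be exactly a subspace in direction $1$ while remaining a union of fibres of a (slightly enlarged) multiaffine map, then direction $2$, and so on; after $k$ steps one obtains an exact multiaffine-kernel set contained in $X$, at the cost of multiplying $r$ by a bounded ($O_k(1)$) factor and iterating the exponential a bounded number of extra times, which is absorbed into the stated bound $r \le \exp^{(O_k(1))}(O_{k,p}(\delta^{-1}))$. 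Bookkeeping the tower height through the recursion of Theorems~\ref{multiaffineInvThm} and~\ref{convApproxMain} is then routine.
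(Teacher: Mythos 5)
Your high-level route---iterated convolutions, Theorem~\ref{convApproxMain}, slice-subspace rigidity---is indeed the one the paper takes, but several of the intermediate claims you rely on are false, and the argument as described has real gaps. First, $\varepsilon$ must be of size roughly $\delta^{2^{3k}}$, not a constant multiple of $\delta$: after $3k$ convolutions, Cauchy--Schwarz only guarantees $\ex g \geq \delta^{2^{3k}}$, where $g = \bigconv{k}\cdots\bigconv{1}\bigconv{k}\cdots\bigconv{1}\bigconv{k}\cdots\bigconv{1}\mathbbm{1}_X$, and $\varepsilon$ must sit below that. Second, the claim that $g$ is ``bounded below by a power of $\delta$'' on its support is false: a single convolution gives $\bigconv{d}\mathbbm{1}_X(x_{[k]\setminus\{d\}},y_d)=\big(|X_{x_{[k]\setminus\{d\}}}|/|G_d|\big)\mathbbm{1}\big(y_d\in X_{x_{[k]\setminus\{d\}}}\big)$, and the slice density can be as small as $|G_d|^{-1}$, with these small factors multiplying under iteration. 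Consequently the ``$c$ is essentially two-valued, hence $X$ coincides on $\phi^{-1}(M)$ with a union of fibres'' step does not follow, and the ensuing worry about propagating membership across the exceptional set $G_{[k]}\setminus\phi^{-1}(M)$ is a detour. None of this is needed. The only relevant facts are $g\geq 0$, $\ex g\geq\delta^{2^{3k}}$, and the observation (which you do correctly identify) that $\supp g\subseteq X$: if $z_d,\,z_d+y_d\in X_{x_{[k]\setminus\{d\}}}$ then $y_d\in X_{x_{[k]\setminus\{d\}}}$ because the slice is a subspace, so each $\bigconv{d}$ preserves the support. Taking $\varepsilon=\delta^{2^{3k}}/4$ and averaging $g$ over nonempty fibres $\phi^{-1}(\lambda)$, $\lambda\in M$, produces some $\lambda$ with $c(\lambda)>\varepsilon$, whence $g>0$ on all of $\phi^{-1}(\lambda)$, i.e.\ $\phi^{-1}(\lambda)\subseteq\supp g\subseteq X$. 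The exceptional set never enters: the whole fibre lies in $\phi^{-1}(M)$.

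The remaining step---passing from a nonempty multiaffine variety inside $X$ to a multilinear one of the form $\{x_{[k]}:(\forall i)\,\alpha_i(x_{I_i})=0\}\subseteq X$---is precisely Lemma~\ref{mlSetVarML}, and the paper simply quotes it. Your plan of decomposing $\phi$ into its multilinear parts and ``setting them to zero'' does not work as stated, because the fibre $\phi^{-1}(\lambda)\subseteq X$ imposes nonzero values on those parts and you cannot change them at will. The content of Lemma~\ref{mlSetVarML} is the direction-by-direction translation trick your ``cleaning'' sketch gestures at: fixing a base point $y\in B^d$ and using the subspace structure of $X$ in direction $d+1$ to replace each affine condition $\beta_I(x_I)=\lambda_I$ with $d+1\in I$ by the pair of linear conditions $\beta_I(x_I)=0$ and $\beta_I(x_{I\setminus\{d+1\}},y_{d+1})=\lambda_I$ (the latter not involving $x_{d+1}$), at the cost of doubling the number of conditions per step. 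If you want to argue from scratch you should carry out precisely that induction; as written, the sketch is aimed at the wrong object (the union-of-fibres approximant rather than the single variety already contained in $X$) and leaves the essential mechanism unstated.
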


We also derive related results for general finite fields, when the choice of the field plays a non-trivial role.\\

\noindent\textbf{Comparison with other works.} Before Manners's proof~\cite{Manners}, all proofs of the inverse theorem for general $U^k$ norms (as opposed to results for specific small values of $k$) relied on regularity or non-standard analysis. The main novelty of our proof is that we avoid such arguments in the case of vector spaces over finite fields. Let us now say a few words about how this paper differs from~\cite{Manners}. Although similar in spirit, the approaches taken in~\cite{Manners} and in this paper are nevertheless disjoint in the sense that there is no obvious way of generalizing our proof to deal with $\mathbb Z_n$ or Manners's proof to deal with $\mathbb F_p^n$. The main obstacle that stands in the way of adapting our proof to $\mathbb{Z}_N$ is that we rely heavily on a quantitative inverse theorem for biased multilinear forms (i.e.,\ the partition versus analytic rank problem). Such a result is not known in the setting of $\mathbb{Z}_N$. Indeed, even the corresponding conjecture has not yet been articulated. On the other hand, Manners's proof depends heavily on the assumption that the ambient group has bounded rank and no small subgroups, which is the opposite situation to that of $\mathbb{F}_p^n$. Also, the main result of~\cite{Manners} is actually a variant of Theorem~\ref{approxPolyMain} for $\mathbb{Z}_N$ rather than of Theorem~\ref{multiaffineInvThm} -- that is, it concerns polynomials rather than multilinear functions.\\

\noindent\textbf{Acknowledgements.} LM is supported by the Serbian Ministry of Education, Science and Technological Development through Mathematical Institute of the Serbian Academy of Sciences and Arts. We thank Pierre-Yves Bienvenu and Olof Sisask for comments on an earlier version of the paper.\\

\section{Overview of proof}

Recall that Theorem~\ref{multiaffineInvThm} is the inverse theorem for multi-homomorphisms, and Theorem~\ref{convApproxMain} is our result about approximating multiconvolutions. The proof of Theorem~\ref{multiaffineInvThm} splits into the following stages.

\begin{itemize}
\item[\textbf{Step 1.}] Assuming Theorem~\ref{multiaffineInvThm} for $k$, we prove Theorem~\ref{convApproxMain} for $k$. The proof relies on the standard Bogolyubov argument (in fact, for efficiency in the number of convolutions we use a related result which approximates single instead of triple convolutions, in the $L^2$ norm), together with some algebraic manipulation arguments already present in our previous paper~\cite{U4paper}, based on some ideas in~\cite{TimSze} and the inclusion-exclusion formula. The new ingredient that was not present in the two-dimensional case is the use of a solution to the partition rank versus analytic rank problem, which is required in the more general case and about which we shall say more later.
\item[\textbf{Step 2.}] We define certain sets of points, which we call arrangements, that correspond to taking convolutions in certain sequences of directions. For example, in two dimensions, convolving first in the vertical direction and then in the horizontal direction gives rise to vertical parallelograms, which are configurations of the form $(x,y_1), (x,y_1+h), (x+w,y_2), (x+w,y_2+h)$. With each arrangement we associate a sequence of lengths, which is an element of $G_1 \tdt G_k$ that can be obtained from points in the arrangement by using convolution operations. For instance, the lengths associated with a vertical parallelogram are its width and height -- the elements $w\in G_1$ and $h\in G_2$ above. Since all points in the arrangements we consider belong to the domain of $\phi$, we may define the $\phi$ value of an arrangement as an appropriate linear combination (in fact, a $\pm 1$ combination) of the values of $\phi$ at its points. In this step, we show that a positive proportion of pairs of arrangements with the same lengths have the same $\phi$ value.
\item[\textbf{Step 3.}] We use an algebraic form of the dependent random choice method to find a subset of points in the domain of $\phi$ such that the proportion of pairs of arrangements of same lengths that have the same $\phi$ value is not merely positive, but close to $1$.
\item[\textbf{Step 4.}] Combining the work in the previous step with Theorem~\ref{convApproxMain}, we find a new map $\phi' \colon A' \to H$, where $A' \subset V$ for some variety\footnote{In this paper, by \emph{variety} in $G_1 \tdt G_k$ we mean the zero set of a multiaffine map $\alpha \colon G_1 \tdt G_k \to H$. We also say that the \emph{codimension} of $\alpha$ is the dimension $\dim H$ and we say that a variety $V$ is of \emph{codimension at most $r$} if $V$ can be defined by a multiaffine map of codimension $r$. Equivalently, we may define \emph{codimension} of a variety $V$ to be the minimum codimension of a multiaffine map that defines it.} $V$, $|A'| \geq (1-o(1))|V|$, and $\phi'$ is not just a multi-homomorphism but has the stronger property of being a restriction of an affine map in each principal direction. We call such maps multiaffine, and use the term global multiaffine map for maps whose domain is the whole of $G_1 \tdt G_k$. Moreover, $\phi'$ is related to the initial map $\phi$ in a sufficiently algebraically strong sense that once we show that $\phi'$ coincides with a global multiaffine map on a dense set, we may deduce the same for $\phi$.
\item[\textbf{Step 5.}] The next step is a slight digression from the main flow of the argument, in which we study extensions of biaffine maps defined on quasirandom biaffine varieties. This is similar to arguments from~\cite{U4paper}, but the arguments presented here are more streamlined and there are some new ones as well (for example, proving that a convolution of a biaffine map is automatically biaffine once a small error set has been removed, provided that the density of lines in the convolution direction is close to $1$). 
\item[\textbf{Step 6.}] We prove a `simultaneous biaffine regularity lemma' which tells us that we may partition any given variety using pieces of lower complexity so that on most planes in the principal directions we get quasirandom biaffine varieties. This fits well with results on extensions of biaffine maps and allows us to extend them to the multiaffine setting.
\item[\textbf{Step 7.}] Finally, we show that a multiaffine map defined on almost all of a variety coincides on a large set with a global multiaffine map. This is done via a two-step argument. Very crudely put, we show that in the original variety, say of codimension $r$, we may densify the domain to density $1 - \varepsilon$ for arbitrarily small $\varepsilon$. Then in each step we remove one of the $r$ forms used to define the variety, at the cost of decreasing the density from $1 - \varepsilon$ to $1 - O(\varepsilon^{\Omega(1)})$. Provided $\varepsilon$ is small enough, we are able to finish the proof.
\end{itemize}

The organization of the paper follows these steps, but before we start with the proof, we include a section that contains a number of auxiliary results that will be used frequently later in the paper.\\

\noindent \textbf{Uses of the inverse theorem for biased multilinear forms.} A recurring theme in this paper is the use of the inverse theorem for biased multilinear forms -- Theorem~\ref{strongInvARankThm}. As anticipated in the introduction to~\cite{LukaRank}, here we employ Theorem~\ref{strongInvARankThm} as a substitute for regularity lemmas. More precisely, the uses of the theorem can be roughly split into three categories:
\begin{itemize}
\item[\textbf{(i)}] applications of its corollaries (see Corollary~\ref{simFibresThm}) to varieties with the goal of finding regular pieces of varieties,
\item[\textbf{(ii)}] application of the simultaneous biaffine regularity lemma (Theorem~\ref{simReg}), which allows us to partition any given variety in a structured way so that almost every piece intersected with almost every plane in principal directions becomes quasirandom, and
\item[\textbf{(iii)}] direct applications of the theorem itself to relevant multilinear forms (see the proofs of Theorem~\ref{multiaffineInvThmGen} and Theorem~\ref{ApproxPolyThmGen}).
\end{itemize}
This is one of the major differences from our previous work~\cite{U4paper}, where we did not require a result such as Theorem~\ref{strongInvARankThm} since we considered only the case of bi-homomorphisms, for which the theorem has an easy proof.

\section{Preliminaries}

\noindent\textbf{Notation.} As above, we write $\mathbb{D} = \{z \in \mathbb{C}\colon |z| \leq 1\}$ for the unit disk. We use the standard expectation notation $\ex_{x \in X}$ as shorthand for the average $\frac{1}{|X|} \sum_{x \in X}$, and when the set $X$ is clear from the context we simply write $\ex_x$. As in~\cite{LukaRank}, we use the following convention to save writing in situations where we have many indices appearing in predictable patterns. Instead of denoting a sequence of length $m$ by $(x_1, \dots, x_m)$, we write $x_{[m]}$, and for $I\subset[m]$ we write $x_I$ for the subsequence with indices in $I$. This applies to products as well: $G_{[k]}$ stands for $\prod_{i \in [k]} G_i$ and $G_I = \prod_{i \in I} G_i$. For example, instead of writing $\alpha \colon \prod_{i \in I} G_i \to \mathbb{F}$ and $\alpha(x_i \colon i \in I)$, we write $\alpha \colon G_I \to \mathbb{F}$ and $\alpha(x_I)$. This notation is particularly useful when $I=[k]\setminus\{d\}$ as it saves us writing expressions such as $(x_1,\dots,x_{d-1},x_{d+1},\dots,x_k)$ and $G_1\tdt G_{d-1}\times G_{d+1}\tdt G_k$.\\
\indent We extend the use of the dot product notation to any situation where we have two sequences $x=x_{[n]}$ and $y=y_{[n]}$ and a meaningful multiplication between elements $x_i y_i$, writing $x\cdot y$ as shorthand for the sum $\sum_{i=1}^n x_i y_i$. For example, if $\lambda=\lambda_{[n]}$ is a sequence of scalars, and $A=A_{[n]}$ is a suitable sequence of maps, then $\lambda\cdot A$ is the map $\sum_{i=1}^n\lambda_iA_i$.\\  
\indent Frequently we shall consider `slices' of sets $S\subset G_{[k]}$, by which we mean sets $S_{x_I} = \{y_{[k]\setminus I} \in G_{[k] \setminus I} \colon (x_I, y_{[k] \setminus I}) \in S\}$, for $I \subset [k], x_I \in G_I$. (Here we are writing $(x_I,y_{[k]\setminus I})$ not for the concatenation of the sequences $x_I$ and $y_{[k]\setminus I}$ but for the `merged' sequence $z_{[n]}$ with $z_i=x_i$ when $i\in I$ and $z_i=y_i$ otherwise.) If $I$ is a singleton $\{i\}$ and $z_i\in G_i$, then we shall write $S_{z_i}$ instead of $S_{z_{\{i\}}}$. Sometimes, the index $i$ will be clear from the context and it will be convenient to omit it. For example, $f(x_{[k]\setminus\{i\}},a)$ stands for $f(x_1,\dots,x_{i-1},a,x_{i+1},\dots,x_k)$. If the index is not clear, we emhasize it by writing it as a superscript to the left of the corresponding variable, e.g.\ $f(x_{[k]\setminus\{i\}},{}^i\,a)$.\\ 

\indent More generally, when $X_1, \dots, X_k$ are finite sets, $Z$ is an arbitrary set, $f \colon X_1 \tdt X_k = X_{[k]} \to Z$ is a function, $I \subsetneq [k]$ and $x_i \in X_i$ for each $i \in I$, we define a function $f_{x_I} \colon X_{[k] \setminus I} \to Z$, by mapping each $y_{[k] \setminus I} \in X_{[k] \setminus I}$ as $f_{x_I}(y_{[k] \setminus I}) = f(x_I, y_{[k] \setminus I})$. When the number of variables is small -- for example, when we have a function $f(x,y)$ that depends only on two variables $x$ and $y$ instead of on indexed variables -- we also write $f_x$ for the map $f_x(y)=f(x,y)$.\\

Let $G, G_1, \dots, G_k$ be finite-dimensional vector spaces over a finite field $\mathbb{F}$, and let $\chi \colon \mathbb{F} \to \mathbb{D}$ be a non-trivial additive character on $\mathbb{F}$. For maps $f,g \colon G \to \mathbb{C}$, we write $f\conv g$ for the function defined by $f \conv g(x) = \ex_{y \in G} f(x + y) \overline{g(y)}$. Given a map $f \colon G_1 \tdt G_k \to \mathbb{C}$, we can rewrite the definition of the convolution in direction $d$ as 
\[\bigconv{d} f(x_{[k]\setminus\{d\}},y_d) = f_{x_{[k] \setminus \{d\}}} \conv f_{x_{[k] \setminus \{d\}}} (y_d) = \mathbb{E}_{x_d \in G_d} f(x_{[k] \setminus \{d\}}, x_d + y_d) \overline{f(x_{[k] \setminus \{d\}}, x_d)}.\]
Fix a dot product $\cdot$ on $G$. The Fourier transform of $f \colon G \to \mathbb{C}$ is the function $\hat{f} \colon G \to \mathbb{C}$ defined by $\hat{f}(r) = \ex_{x \in G} f(x) \chi(-r\cdot x)$.\\

Throughout the paper, unless explicitly stated otherwise, the implicit constants in the big-Oh notation depend on $p$ and $k$ only.\\

\noindent \textbf{Additional asymptotic notation.} In the later parts of the paper, we use $\con$ notation as placeholders for positive constants whose values are not important. E.g.
\[(\forall x, y > 1) \hspace{3pt}x \geq \con \cdot y^{\con}\hspace{3pt}\implies\hspace{3pt}x \geq 100 y^2 \log y\]
is a shorthand for 
\[(\exists C_1, C_2 > 0)(\forall x, y > 1) \hspace{3pt} x \geq C_1 y^{C_2}\implies x \geq 100 y^2 \log y.\]
Formally, let $x, y_1, \dots, y_m$ be variables, let $p_1, \dots, p_n$ let parameters, let $X, Y_1, \dots, Y_m \subset \mathbb{R}$ be sets, let $f \colon \mathbb{R}^m \times \mathbb{R}^n \to \mathbb{R}$ be a function and let $P(x, y_1, \dots, y_m)$ be a proposition whose truth value depends on $x, y_1, \dots, y_m$, we define 
\[(\forall x \in X)(\forall y \in Y_1)\dots (\forall y \in Y_m)\  x \geq f(y_1, \dots, y_m; \con, \dots, \con) \implies P(x,y_1,\dots,y_m)\] 
to be a shorthand for
\begin{equation}(\exists C_1 > 0) \dots (\exists C_n > 0)(\forall x \in X)(\forall y \in Y_1)\dots (\forall y \in Y_m) \hspace{3pt} x \geq f(y_1, \dots, y_m; C_1, \dots, C_n) \implies P(x,y_1,\dots,y_m).\label{fullConEqn}\end{equation}

We also use notation 
\[(\forall x \in X)(\forall y \in Y_1)\dots (\forall y \in Y_m) \hspace{3pt} x \leq f(y_1, \dots, y_m; \con, \dots, \con) \implies P(x,y_1,\dots,y_m)\] 
which is defined by changing $\geq$ with $\leq$ in the full expression above. Since this is an unusual notation, we provide a few more examples.

\[(\forall x > 2)(\forall y > 0)\hspace{3pt} x \geq \con \cdot y \implies x \log x \geq 1000 y\]
\[(\forall x > 0)(\forall y > 2)\hspace{3pt} x \leq \con \cdot y \implies 1000 x \leq y \log y\]
\[(\forall x, y,z > 1)\hspace{3pt} x \geq \con \cdot y^{\con} + \con \cdot z^{\con} \implies x \geq yz + y + 1\]
\[(\forall x > 1)(\forall y > 1)\hspace{3pt} x \geq \con \cdot y^{\con} \implies \exp(\sqrt{x}) \geq y^{100}\]
\[(\forall x \in (0,1))(\forall y \in (0,1))\hspace{3pt} x \leq \con \cdot y^{\con} \implies \sqrt{x} \leq \frac{y}{100}\]

When the sets $X, Y_1, \dots, Y_m$ are clear from the context, we drop the universal quantifier part in the expressions above. For example, if we already know that $x$ and $y$ take values in $(0,1)$, then the last example may be written as
\[x \leq \con \cdot y^{\con} \implies \sqrt{x} \leq \frac{y}{100}.\]

To help readability, we also adopt $\cons$ notation, which has the same logical meaning as $\con$, but indicates that the property holds provided the implicit constant is sufficiently small. On the other hand, we shall think of $\con$ as a sufficiently large constant. Using $\cons$, the last example may be written as
\[x \leq \cons \cdot y^{\con} \implies \sqrt{x} \leq \frac{y}{100}.\]

Finally, if we have some parameters $s_1, \dots, s_m$ that are fixed beforehand (in our case these will almost always be the size of the field $p$ and the number of variables $k$) we may write $\con_{s_1, \dots, s_m}$ and $\cons_{s_1, \dots, s_m}$ to indicate that implicit constants depend on these parameters. This has the effect of adding $(\forall s_1, \dots, s_m)$ at the beginning of the expression~\eqref{fullConEqn}, and replacing $C_i$ by $C_i(s_1, \dots, s_m)$. However, these dependencies will be clear from the context, so we shall mostly be using $\con$ and $\cons$ notation without the parameters explicitly written out.

\subsection{Useful inequalities and identities}

We record the following standard fact as a lemma. It is a direct consequence of Parseval's identity.

\begin{lemma}\label{LargeFCs}Let $f \colon G \to \mathbb{D}$ be a map. For $\varepsilon > 0$ there are at most $\varepsilon^{-2}$ values of $r \in G$ such that $|\hat{f}(r)| \geq \varepsilon$.\end{lemma}

Another standard fact we need is an easy variant of Young's inequality. 

\begin{lemma}[Easy case of Young's inequality]\label{young}Let $f \colon G \to \mathbb{C}$ be a function. Then $\|\hat{f}\|_{L^\infty} \leq \|f\|_{L^1}$.\end{lemma}

The following corollary follows straightforwardly. 

\begin{corollary}\label{maxFCLpbound}Let $X$ be an arbitrary set. Let $f \colon X \times G \to \mathbb{C}$ and $g \colon X \to G$ be two maps. Define $F \colon X \to \mathbb{C}$ by $F(x) = \hat{f_x}(g(x))$. Then
$\|F\|_{L^q(X)} \leq \|f\|_{L^q(X \times G)}$.\end{corollary}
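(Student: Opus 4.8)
The plan is to apply Lemma~\ref{young} to each slice $f_x \colon G \to \mathbb{C}$ and then average the resulting pointwise bound over $X$. First I would fix $x \in X$ and observe that $F(x) = \hat{f_x}(g(x))$ is just one Fourier coefficient of the slice $f_x$, so trivially $|F(x)| \le \|\hat{f_x}\|_{L^\infty(G)}$; Lemma~\ref{young} then gives $|F(x)| \le \|f_x\|_{L^1(G)} = \ex_{y \in G} |f(x,y)|$. (Here we use that $X$ and $G$ are finite, so all the averages in the $L^q$ norms are honest finite averages.)

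Next, for $1 \le q < \infty$, I would raise this inequality to the $q$-th power and use the convexity of $t \mapsto t^q$, i.e.\ Jensen's inequality for the uniform average over $y \in G$, to obtain $|F(x)|^q \le \big(\ex_{y \in G} |f(x,y)|\big)^q \le \ex_{y \in G} |f(x,y)|^q$. Averaging over $x \in X$ then yields $\ex_{x \in X} |F(x)|^q \le \ex_{x \in X} \ex_{y \in G} |f(x,y)|^q = \|f\|_{L^q(X \times G)}^q$, and taking $q$-th roots gives the claim. The case $q = \infty$ is immediate, since $|F(x)| \le \ex_{y \in G} |f(x,y)| \le \|f\|_{L^\infty(X \times G)}$ for every $x$.

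There is essentially no obstacle here: the corollary is a routine slicewise application of the easy Young inequality combined with Jensen. The only points needing (minor) attention are that Jensen is applied in the correct direction, which requires $q \ge 1$ so that $t \mapsto t^q$ is convex, and the degenerate endpoints $q = 1$ (where the Jensen step is an equality and can be skipped) and $q = \infty$ (handled directly as above).
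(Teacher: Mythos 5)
Your argument is correct and is essentially identical to the paper's proof: both apply Lemma~\ref{young} to the slice $f_x$ to get $|F(x)|\le\|f_x\|_{L^1(G)}$, then use Jensen (the power-mean inequality) to pass to $\ex_{y}|f(x,y)|^q$ before averaging over $x$. Your separate handling of $q=1$ and $q=\infty$ is harmless bookkeeping that the paper leaves implicit.
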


\begin{proof}By expanding out and using Lemma~\ref{young} we obtain
\begin{align*}\|F\|_{L^q(X)}^q =& \exx_{x \in X} |F(x)|^q = \exx_{x \in X} |\hat{f_x}(g(x))|^q \leq \exx_{x \in X} \|\hat{f_x}\|^q_{L^\infty}(G) \leq \exx_{x \in X} \|f_x\|^q_{L^1}(G)\\
= &\exx_{x \in X} \Big(\exx_{y \in G} |f(x,y)|\Big)^q \leq \exx_{x \in X}\exx_{y \in G} |f(x,y)|^q = \|f\|_{L^q(X \times G)}^q,\end{align*}
as desired.\end{proof}

The next lemma tells us that we may use the large spectrum to approximate convolutions of functions (of a single variable).

\begin{lemma}\label{basicL2app}Let $f,g \colon G \to \mathbb{D}$ and let $\varepsilon > 0$. Let $S \subset G$ be any set that contains all $r \in G$ such that $|\hat{f}(r)|, |\hat{g}(r)| \geq \varepsilon / 2$. Then we have
\[\Big\|f \conv g(x) - \sum_{r \in S} \hat{f}(r) \overline{\hat{g}(r)} \chi(x \cdot r)\Big\|_{L^2(x)} \leq \varepsilon.\]\end{lemma}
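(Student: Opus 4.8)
The statement is a routine Fourier-analytic estimate, so the plan is to expand everything via the Fourier inversion formula and use Parseval. First I would write $f\conv g(x)=\sum_{r\in G}\hat f(r)\overline{\hat g(r)}\chi(x\cdot r)$, which is the standard identity for convolution of functions of a single variable (it follows from expanding the definition $f\conv g(x)=\ex_{y}f(x+y)\overline{g(y)}$ and substituting $f(x+y)=\sum_r\hat f(r)\chi(r\cdot(x+y))$, $\overline{g(y)}=\sum_s\overline{\hat g(s)}\chi(-s\cdot y)$, then averaging over $y$, which forces $r=s$). Hence the quantity inside the $L^2$ norm is exactly the tail sum $\sum_{r\notin S}\hat f(r)\overline{\hat g(r)}\chi(x\cdot r)$.

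Next I would bound the $L^2(x)$ norm of this tail sum. Since the characters $\{\chi(x\cdot r)\}_{r\in G}$ are orthonormal in $L^2(x)$, Parseval gives
\[
\Big\|\sum_{r\notin S}\hat f(r)\overline{\hat g(r)}\chi(x\cdot r)\Big\|_{L^2(x)}^2=\sum_{r\notin S}|\hat f(r)|^2|\hat g(r)|^2.
\]
Now I would use the hypothesis that $S$ contains every $r$ with $|\hat f(r)|\ge\varepsilon/2$ or $|\hat g(r)|\ge\varepsilon/2$: for $r\notin S$ we have both $|\hat f(r)|<\varepsilon/2$ and $|\hat g(r)|<\varepsilon/2$. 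Using $|\hat g(r)|<\varepsilon/2$ on one factor and $|\hat f(r)|\le 1$ trivially (or more carefully, one factor of each), the sum is at most $(\varepsilon/2)^2\sum_{r}|\hat f(r)|^2\le(\varepsilon/2)^2\cdot\|f\|_{L^2}^2\le(\varepsilon/2)^2$, using Parseval $\sum_r|\hat f(r)|^2=\ex_x|f(x)|^2\le 1$ since $f$ maps into $\mathbb D$. Actually a cleaner route: bound $|\hat f(r)|^2|\hat g(r)|^2\le(\varepsilon/2)^2|\hat g(r)|^2$ for $r\notin S$, so the tail is at most $(\varepsilon/2)^2\sum_r|\hat g(r)|^2\le(\varepsilon/2)^2$. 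Taking square roots yields the bound $\varepsilon/2\le\varepsilon$, which is even slightly stronger than claimed.

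There is essentially no obstacle here; the only mild subtlety is making sure the Fourier inversion/convolution identity is applied with the normalization conventions fixed in the Preliminaries (the Fourier transform defined with an expectation, $\hat f(r)=\ex_x f(x)\chi(-r\cdot x)$, so that inversion reads $f(x)=\sum_r\hat f(r)\chi(r\cdot x)$ and Parseval reads $\ex_x|f(x)|^2=\sum_r|\hat f(r)|^2$). With those conventions the computation is immediate, and one could equally invoke Lemma~\ref{LargeFCs} to note $S$ is finite (of size $O(\varepsilon^{-2})$), though finiteness is not even needed for the estimate.
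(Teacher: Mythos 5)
Your overall approach is the same as the paper's: write $f\conv g(x)=\sum_r\hat f(r)\overline{\hat g(r)}\chi(x\cdot r)$, so the error is the tail $\sum_{r\notin S}\hat f(r)\overline{\hat g(r)}\chi(x\cdot r)$, whose $L^2$ norm equals $(\sum_{r\notin S}|\hat f(r)|^2|\hat g(r)|^2)^{1/2}$ by Parseval. However, there is a genuine gap in the final bounding step, caused by a misreading of the hypothesis on $S$.

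The hypothesis says $S$ contains all $r$ with $|\hat f(r)|\ge\varepsilon/2$ \emph{and} $|\hat g(r)|\ge\varepsilon/2$ (the comma is a conjunction). Consequently, $r\notin S$ tells you only that \emph{at least one} of $|\hat f(r)|,|\hat g(r)|$ is below $\varepsilon/2$, not both, as you assert. Your cleaner bound $|\hat f(r)|^2|\hat g(r)|^2\le(\varepsilon/2)^2|\hat g(r)|^2$ therefore fails: for $r\notin S$ it may well be that $|\hat f(r)|\ge\varepsilon/2$ while $|\hat g(r)|<\varepsilon/2$, and then this inequality is false. The correct step, which the paper uses, is to observe that since at least one factor is below $\varepsilon/2$ one has
\[
|\hat f(r)|^2|\hat g(r)|^2\;\le\;\tfrac{\varepsilon^2}{4}\max\bigl(|\hat f(r)|^2,|\hat g(r)|^2\bigr)\;\le\;\tfrac{\varepsilon^2}{4}\bigl(|\hat f(r)|^2+|\hat g(r)|^2\bigr),
\]
and then summing over $r$ and applying Parseval to both $f$ and $g$ gives $\tfrac{\varepsilon^2}{4}(\|f\|_2^2+\|g\|_2^2)\le\tfrac{\varepsilon^2}{2}\le\varepsilon^2$. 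With that one-line repair your argument coincides with the paper's and is correct. (You should also drop the claim that your route yields a sharper $\varepsilon/2$ bound: the corrected bound is $\varepsilon/\sqrt{2}$, not $\varepsilon/2$, though either suffices.)
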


\begin{proof}We simply expand the square of the $L^2$ norm to get
\begin{align*}\Big\|f \conv g(x) - &\sum_{r \in S} \hat{f}(r) \overline{\hat{g}(r)} \chi(x \cdot r)\Big\|^2_{L^2(x)} = \exx_{x \in G} \Big|f \conv g(x) - \sum_{r \in S} \hat{f}(r) \overline{\hat{g}(r)} \chi(x \cdot r)\Big|^2 = \exx_{x \in G} \Big|\sum_{r \notin S} \hat{f}(r) \overline{\hat{g}(r)} \chi(x \cdot r)\Big|^2\\
= & \sum_{r, s \notin S} \exx_{x \in G} \hat{f}(r) \overline{\hat{g}(r)}\, \overline{\hat{f}(s)} \hat{g}(s) \chi(x \cdot (r-s))\, = \,\sum_{r \notin S} |\hat{f}(r)|^2 |\hat{g}(r)|^2\, \leq\, \sum_{r \in G} \frac{\varepsilon^2}{4} (|\hat{f}(r)|^2 + |\hat{g}(r)|^2)\\
\leq &\,\varepsilon^2,\end{align*}
which proves the lemma.\end{proof}

We may easily turn this result into an $L^q$ bound on the approximation.

\begin{corollary}\label{basicLqapp}Let $f,g \colon G \to \mathbb{D}$, $q \geq 1$ and $\varepsilon > 0$. Let $S \subset G$ be any set that contains all $r \in G$ such that $|\hat{f}(r)|, |\hat{g}(r)| \geq \varepsilon$. Then we have
\[\Big\|f \conv g(x) - \sum_{r \in S} \hat{f}(r) \overline{\hat{g}(r)} \chi(x \cdot r)\Big\|_{L^q(x)} \leq 2 \max\{\varepsilon, (2\varepsilon)^{\frac{2}{q}}\} \leq 8 \varepsilon^{\frac{1}{q}}.\]\end{corollary}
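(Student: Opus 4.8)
The plan is to deduce Corollary~\ref{basicLqapp} from Lemma~\ref{basicL2app} by a routine interpolation between the $L^2$ bound already proved and a trivial $L^\infty$ bound, combined with a small-$q$ versus large-$q$ case split. First I would set $E(x) = f\conv g(x) - \sum_{r \in S}\hat f(r)\overline{\hat g(r)}\chi(x\cdot r)$ and observe two elementary bounds on $E$. The first is that, since $S$ contains all $r$ with $|\hat f(r)|,|\hat g(r)|\geq\varepsilon$ (a hypothesis slightly stronger than the $\varepsilon/2$ version needed in Lemma~\ref{basicL2app}), the lemma gives $\|E\|_{L^2} \leq \varepsilon$; in fact re-running the expansion in the proof of Lemma~\ref{basicL2app} with the cruder threshold shows $\|E\|_{L^2}^2 = \sum_{r\notin S}|\hat f(r)|^2|\hat g(r)|^2 \leq \varepsilon^2\sum_{r\notin S}\min\{|\hat f(r)|^2,|\hat g(r)|^2\}\cdot\varepsilon^{-2}\cdot\varepsilon^2$, and more simply $\sum_{r\notin S}|\hat f(r)|^2|\hat g(r)|^2 \leq \varepsilon^2\sum_r |\hat f(r)|^2 \leq \varepsilon^2$ by Parseval (using $|f|\leq 1$), so $\|E\|_{L^2}\leq\varepsilon$. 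The second bound is a pointwise one: $|f\conv g(x)|\leq 1$ since $f,g$ map into $\mathbb{D}$, while $|\sum_{r\in S}\hat f(r)\overline{\hat g(r)}\chi(x\cdot r)| \leq \sum_{r\in S}|\hat f(r)||\hat g(r)| \leq (\sum_r|\hat f(r)|^2)^{1/2}(\sum_r|\hat g(r)|^2)^{1/2}\leq 1$ by Cauchy--Schwarz and Parseval, hence $\|E\|_{L^\infty}\leq 2$.

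Next I would interpolate. For $q\geq 2$, write $\|E\|_{L^q}^q = \exx_x |E(x)|^q \leq \|E\|_{L^\infty}^{q-2}\exx_x|E(x)|^2 = \|E\|_{L^\infty}^{q-2}\|E\|_{L^2}^2 \leq 2^{q-2}\varepsilon^2$, so that $\|E\|_{L^q} \leq 2^{(q-2)/q}\varepsilon^{2/q} \leq 2\varepsilon^{2/q}$; and since $\varepsilon^{2/q}\leq(2\varepsilon)^{2/q}$ this is at most $2(2\varepsilon)^{2/q}\leq 2\max\{\varepsilon,(2\varepsilon)^{2/q}\}$ provided $\varepsilon\le 1$, which we may assume since otherwise the claimed bound $2\max\{\varepsilon,(2\varepsilon)^{2/q}\}\geq 2\varepsilon$ dominates $\|E\|_{L^q}\leq\|E\|_{L^\infty}+\ldots$ trivially (one can in fact always assume $\varepsilon<1$ or the statement is vacuous because the left side is at most $2$). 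For $1\leq q\leq 2$, monotonicity of $L^q$ norms in probability space gives $\|E\|_{L^q}\leq\|E\|_{L^2}\leq\varepsilon\leq 2\max\{\varepsilon,(2\varepsilon)^{2/q}\}$. Combining the two ranges yields the first inequality $\|E\|_{L^q}\leq 2\max\{\varepsilon,(2\varepsilon)^{2/q}\}$ in all cases.

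For the final inequality $2\max\{\varepsilon,(2\varepsilon)^{2/q}\}\leq 8\varepsilon^{1/q}$, I would argue as follows, assuming $\varepsilon\leq 1$ and $q\geq 1$: on the one hand $2\varepsilon\leq 2\varepsilon^{1/q}\leq 8\varepsilon^{1/q}$ since $\varepsilon\leq\varepsilon^{1/q}$; on the other hand $2(2\varepsilon)^{2/q} = 2\cdot 2^{2/q}\varepsilon^{2/q}\leq 2\cdot 4\cdot\varepsilon^{1/q} = 8\varepsilon^{1/q}$, using $2^{2/q}\leq 4$ (valid for $q\geq 1$) and $\varepsilon^{2/q}\leq\varepsilon^{1/q}$ (valid for $\varepsilon\leq 1$, $q\geq 1$). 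Taking the maximum gives the stated bound.

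There is no real obstacle here; the only points requiring a modicum of care are the $L^\infty$ bound on the spectral sum (handled by Cauchy--Schwarz plus Parseval, using $f,g\in\mathbb{D}$) and keeping track of which branch of the maximum is active as $q$ ranges over $[1,\infty)$, so that the interpolation exponent $2/q$ is applied on the correct side. I would also remark that one may freely assume $\varepsilon\leq 1$, since for $\varepsilon>1$ both $\|f\conv g\|_{L^\infty}$ and the spectral sum are bounded by $1$, making the left-hand side at most $2<2\varepsilon\leq 2\max\{\varepsilon,(2\varepsilon)^{2/q}\}$, and likewise $8\varepsilon^{1/q}\geq 8>2$.
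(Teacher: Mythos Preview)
Your approach is essentially identical to the paper's: split into $q\le 2$ (monotonicity of $L^q$ norms) and $q\ge 2$ (interpolate between the $L^2$ bound from Lemma~\ref{basicL2app} and the $L^\infty$ bound via Cauchy--Schwarz plus Parseval). There is one small slip: your claim that $\sum_{r\notin S}|\hat f(r)|^2|\hat g(r)|^2 \le \varepsilon^2\sum_r|\hat f(r)|^2$ is not justified, since for $r\notin S$ you only know $\min(|\hat f(r)|,|\hat g(r)|)<\varepsilon$, not specifically $|\hat g(r)|<\varepsilon$. The correct bound (exactly as in the proof of Lemma~\ref{basicL2app}) is $\sum_{r\notin S}|\hat f(r)|^2|\hat g(r)|^2 \le \varepsilon^2\sum_r(|\hat f(r)|^2+|\hat g(r)|^2)\le 2\varepsilon^2$, giving $\|E\|_{L^2}\le 2\varepsilon$; this is harmless since the stated inequality $\|E\|_{L^q}\le 2\max\{\varepsilon,(2\varepsilon)^{2/q}\}$ absorbs the extra factor.
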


\begin{proof}First, note that for $q \leq 2$ we know that $\|\cdot\|_{L^q} \leq \|\cdot\|_{L^2}$, so the claim readily follows from Lemma~\ref{basicL2app} in that case. Hence, we may assume that $q \geq 2$. Note that for every $x \in G$ we have
\[\Big|\sum_{r \in S} \hat{f}(r) \overline{\hat{g}(r)} \chi(x \cdot r)\Big|\, \leq \sum_{r \in G} |\hat{f}(r)|\, |\overline{\hat{g}(r)}|\, \leq \Big(\sum_{r \in G} |\hat{f}(r)|^2\Big)^{\frac{1}{2}}\,\Big(\sum_{r \in G} |\hat{g}(r)|^2\Big)^{\frac{1}{2}} = \Big(\exx_{x \in G} |f(x)|^2\Big)^{\frac{1}{2}} \Big(\exx_{x \in G} |g(x)|^2\Big)^{\frac{1}{2}} \leq 1.\]
Hence $\Big\|f \conv g(x) - \sum_{r \in S} \hat{f}(r) \overline{\hat{g}(r)} \chi(x \cdot r)\Big\|_{L^\infty(x)} \leq 2$ and
\[\Big\|f \conv g(x) - \sum_{r \in S} \hat{f}(r) \overline{\hat{g}(r)} \chi(x \cdot r)\Big\|^q_{L^q(x)} \leq 2^{q-2} \Big\|f \conv g(x) - \sum_{r \in S} \hat{f}(r) \overline{\hat{g}(r)} \chi(x \cdot r)\Big\|^2_{L^2(x)} \leq 2^{q-2} (2\varepsilon)^2,\]
which completes the proof after taking $q$\textsuperscript{th} roots.\end{proof}

\noindent\textbf{Remark.} There are much more sophisticated results on approximations of convolutions when it comes to $L^q$ control for large $q$. For an improved Fourier analytic approximation see Bourgain~\cite{Bourgain}. A more combinatorial variant of Corollary~\ref{basicLqapp} with substantially better bounds was given by Croot and Sisask~\cite{CrootSisask}. However, the most basic estimate above is sufficient for our purposes.\\

We recall the following lemma which is implicit in~\cite{TimSze}.

\begin{lemma}\label{l4bound}Let $f,g \colon G \to \mathbb{D}$. Then
\[\Big(\exx_d \Big|\exx_x \overline{f}(x) g(x + d)\Big|^2\Big)^2 \leq \min\Big\{\sum_r |\hat{f}(r)|^4, \sum_r |\hat{g}(r)|^4\Big\}.\]\end{lemma}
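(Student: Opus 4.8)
The plan is to recognise the inner average as the Fourier side of a convolution and then reduce everything to an elementary inequality about non-negative sequences. I would write $h(d) = \exx_x \overline{f}(x)\,g(x+d)$, so that the left-hand side is $\big(\exx_d |h(d)|^2\big)^2$. A direct substitution $x \mapsto x+d$ shows that $h = g \conv f$ in the notation of the preliminaries, and hence $\hat{h}(r) = \hat{g}(r)\,\overline{\hat{f}(r)}$ for every $r \in G$. Applying Parseval's identity (the same one underlying Lemma~\ref{LargeFCs}) then gives
\[\exx_d |h(d)|^2 = \sum_{r \in G} |\hat{h}(r)|^2 = \sum_{r \in G} |\hat{f}(r)|^2\,|\hat{g}(r)|^2,\]
which is visibly symmetric in $f$ and $g$. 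So it suffices to bound the square of this quantity by $\sum_r |\hat{f}(r)|^4$; the bound by $\sum_r |\hat{g}(r)|^4$ then follows at once by swapping the roles of $f$ and $g$.

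For the remaining inequality I would set $a_r = |\hat{f}(r)|^2 \geq 0$ and $b_r = |\hat{g}(r)|^2 \geq 0$ and use two normalisations: first, $\sum_r b_r = \exx_x |g(x)|^2 \leq 1$ since $g$ takes values in $\mathbb{D}$, again by Parseval; and second, $\sup_r b_r = \|\hat{g}\|_{L^\infty}^2 \leq \|g\|_{L^1}^2 \leq 1$ by Lemma~\ref{young}. Splitting the product in the Cauchy--Schwarz inequality so as to use both of these,
\[\sum_r a_r b_r = \sum_r \big(a_r b_r^{1/2}\big)\big(b_r^{1/2}\big) \leq \Big(\sum_r a_r^2 b_r\Big)^{1/2}\Big(\sum_r b_r\Big)^{1/2} \leq \Big(\sup_r b_r\Big)^{1/2}\Big(\sum_r a_r^2\Big)^{1/2} \leq \Big(\sum_r a_r^2\Big)^{1/2},\]
and squaring gives $\big(\sum_r a_r b_r\big)^2 \leq \sum_r a_r^2 = \sum_r |\hat{f}(r)|^4$, as required.

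There is no genuine obstacle here; the only point that needs a little care is the choice of how to distribute the weights in the Cauchy--Schwarz step — pairing one factor $b_r^{1/2}$ with $a_r$ and leaving a bare $b_r^{1/2}$ — since this is exactly what lets us exploit the $\ell^1$-smallness of $(|\hat{g}(r)|^2)_r$ and the $\ell^\infty$-smallness coming from Young's inequality at the same time. A more symmetric application of Cauchy--Schwarz would only yield $\big(\sum_r a_r b_r\big)^2 \leq \sum_r a_r^2 \cdot \sum_r b_r^2$, which is weaker than the claimed bound.
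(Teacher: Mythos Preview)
Your proof is correct and follows essentially the same route as the paper: identify the inner expression as $g\conv f$, apply Parseval to get $\sum_r |\hat f(r)|^2|\hat g(r)|^2$, then use Cauchy--Schwarz together with $\sum_r |\hat g(r)|^2\le 1$ and $|\hat g(r)|\le 1$. Your closing remark is slightly off, though: the ``more symmetric'' application $(\sum a_rb_r)^2\le(\sum a_r^2)(\sum b_r^2)$ is not actually weaker --- it is exactly what the paper uses, followed by the trivial step $\sum_r b_r^2\le\sum_r b_r\le 1$ (since $b_r\le 1$), which consumes the same two ingredients you used.
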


\begin{proof} We prove that the expression is at most $\sum_r |\hat{f}(r)|^4$, which is sufficient.
\begin{align*}\Big(\exx_d \Big|\exx_x \overline{f}(x) g(x + d)\Big|^2\Big)^2 =& \Big(\exx_d |g \conv f(d)|^2\Big)^2 = \Big(\sum_r \Big|\widehat{g \conv f}(r)\Big|^2\Big)^2 = \Big(\sum_r |\hat{g}(r)|^2|\hat{f}(r)|^2\Big)^2 \\
\leq& \Big(\sum_r |\hat{g}(r)|^4\Big) \Big(\sum_r |\hat{f}(r)|^4\Big) \leq \Big(\sum_r |\hat{g}(r)|^2\Big) \Big(\sum_r |\hat{f}(r)|^4\Big)\\
\leq& \sum_r |\hat{f}(r)|^4.\end{align*}
\end{proof}

The next lemma is somewhat technical and it is used to give bounds on $\|\cdot\|_{L^q}$ norms of certain expressions in terms of norms of simpler ones. 

\begin{lemma}\label{squareNormBounds}Let $f,g,h \colon X \to \mathbb{C}$ be functions such that $\|f\|_{L^{\infty}}, \|h\|_{L^{\infty}} \leq 1$. Then
\[\Big\||f|^2h - |g|^2h\Big\|_{L^q} \leq (2 + \|f-g\|_{L^{2q}}) \|f-g\|_{L^{2q}}.\]\end{lemma}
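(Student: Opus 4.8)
The plan is to prove the pointwise bound $\bigl||f(x)|^2 h(x) - |g(x)|^2 h(x)\bigr| \leq (2 + |f(x)-g(x)|)|f(x)-g(x)|$ for every $x \in X$, and then integrate. Since $\|h\|_{L^\infty} \leq 1$, it suffices to bound $\bigl||f(x)|^2 - |g(x)|^2\bigr|$ pointwise. First I would factor $|f|^2 - |g|^2 = (|f| - |g|)(|f| + |g|)$ and estimate the two factors separately: by the (reverse) triangle inequality $\bigl||f(x)| - |g(x)|\bigr| \leq |f(x) - g(x)|$, and $|f(x)| + |g(x)| \leq |f(x) - g(x)| + 2|g(x)| \leq |f(x)-g(x)| + 2$, using $\|g\|_{L^\infty} \leq \|f\|_{L^\infty} + \|f-g\|_{L^\infty}$... actually more carefully, $|g(x)| \leq |f(x)| + |f(x)-g(x)| \leq 1 + |f(x)-g(x)|$, but this introduces an unwanted $|f-g|$ term of the wrong sign structure. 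A cleaner route: write $|f| + |g| \leq 2\|f\|_{L^\infty} + \bigl||g| - |f|\bigr| \leq 2 + |f-g|$ pointwise. Either way one arrives at the pointwise inequality
\[\bigl||f(x)|^2 h(x) - |g(x)|^2 h(x)\bigr| \leq |f(x)-g(x)|\bigl(|f(x)-g(x)| + 2\bigr).\]

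Next I would take $L^q$ norms of both sides. The right-hand side is $|f-g|^2 + 2|f-g|$, so $\bigl\||f|^2 h - |g|^2 h\bigr\|_{L^q} \leq \bigl\| |f-g|^2 \bigr\|_{L^q} + 2\bigl\| |f-g| \bigr\|_{L^q}$ by the triangle inequality for $\|\cdot\|_{L^q}$. Now $\bigl\||f-g|^2\bigr\|_{L^q} = \|f-g\|_{L^{2q}}^2$ directly from the definition of the norms, and $\|f-g\|_{L^q} \leq \|f-g\|_{L^{2q}}$ by monotonicity of $L^q$ norms with respect to the probability measure $\ex$ (Jensen / power-mean inequality, which holds since $\ex$ is an average). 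Combining gives $\bigl\||f|^2 h - |g|^2 h\bigr\|_{L^q} \leq \|f-g\|_{L^{2q}}^2 + 2\|f-g\|_{L^{2q}} = (\|f-g\|_{L^{2q}} + 2)\|f-g\|_{L^{2q}}$, which is exactly the claimed bound.

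There is no serious obstacle here; the lemma is essentially a pointwise algebraic identity followed by two standard norm manipulations. The only mild subtlety is getting the constant $2$ in the right place: one must bound $|f| + |g|$ by $2 + |f-g|$ rather than by, say, $1 + |g|$ or $2|f| + |f-g|$, since we want the bracketed factor to be exactly $2 + \|f-g\|_{L^{2q}}$ and we are only told $\|f\|_{L^\infty}, \|h\|_{L^\infty} \leq 1$ (with no a priori bound on $\|g\|_{L^\infty}$). Using $|f(x)| + |g(x)| = |f(x)| + |g(x)|$ and the reverse triangle inequality $|g(x)| \leq |f(x)| + |f(x) - g(x)|$ together with $|f(x)| \leq 1$ gives $|f(x)| + |g(x)| \leq 2|f(x)| + |f(x)-g(x)| \leq 2 + |f(x)-g(x)|$, which is the cleanest way to close the estimate. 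I would also remark that the step $\|f-g\|_{L^q} \leq \|f-g\|_{L^{2q}}$ uses $q \geq 1$ only through $2q \geq q$, so no further hypothesis is needed.
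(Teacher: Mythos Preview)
Your proof is correct. The paper's argument shares the same starting point --- drop $h$ using $\|h\|_{L^\infty}\le 1$ and factor $|f|^2-|g|^2=(|f|-|g|)(|f|+|g|)$ --- but then splits the product differently: instead of bounding pointwise and invoking the $L^q$ triangle inequality together with the monotonicity $\|\cdot\|_{L^q}\le\|\cdot\|_{L^{2q}}$, it applies Cauchy--Schwarz to $\bigl\|(|f|-|g|)(|f|+|g|)\bigr\|_{L^q}$ to obtain $\bigl\||f|-|g|\bigr\|_{L^{2q}}\bigl\||f|+|g|\bigr\|_{L^{2q}}$, and only then bounds $\||f|+|g|\|_{L^{2q}}\le 2\|f\|_{L^{2q}}+\|f-g\|_{L^{2q}}\le 2+\|f-g\|_{L^{2q}}$. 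Your route is a touch more elementary (no Cauchy--Schwarz, just a pointwise inequality); the paper's route has the mild advantage of not needing the monotonicity of $L^p$ norms and hence not relying on the measure being a probability measure, though in this paper's context that is guaranteed anyway.
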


\begin{proof}We have
\begin{align*}\Big\||f|^2h - |g|^2h\Big\|^q_{L^q} =& \exx_x \Big|(|f(x)|^2- |g(x)|^2 )h(x)\Big|^q \leq \exx_x \Big||f(x)|^2- |g(x)|^2\Big|^q\\
 = &\exx_x \Big||f(x)|- |g(x)|\Big|^q \Big||f(x)| + |g(x)|\Big|^q \leq \sqrt{\exx_x \Big||f(x)|- |g(x)|\Big|^{2q}} \sqrt{\exx_x \Big||f(x)| + |g(x)|\Big|^{2q}}\\
\leq & \sqrt{\exx_x \Big|f(x)- g(x)\Big|^{2q}} \sqrt{\exx_x \Big|2|f(x)| + (|g(x)| - |f(x)|)\Big|^{2q}}\\
= &\|f-g\|_{L^{2q}}^q\hspace{2pt} \| 2|f| + (|g| - |f|)\|_{L^{2q}}^q.\end{align*}
Taking $q$\textsuperscript{th} roots, we obtain
\[\||f|^2h - |g|^2h\|_{L^q} \leq \|f-g\|_{L^{2q}} \| 2|f| + (|g| - |f|)\|_{L^{2q}} \leq \|f-g\|_{L^{2q}} (2\|f\|_{L^{2q}} + \|f-g\|_{L^{2q}}) \leq (2 + \|f-g\|_{L^{2q}}) \|f-g\|_{L^{2q}},\]
as claimed.
\end{proof}

Next, we show that if two functions are sufficiently close in $\|\cdot\|_{L^{2q}}$ norm, then their convolutions in the given direction are close as well, though in $\|\cdot\|_{L^{q}}$ norm.

\begin{lemma}\label{2pConvpBnd}Let $f,g \colon G_{[k]} \to \mathbb{C}$ be maps such that $\|f\|_{L^\infty} \leq 1$ and $\|f-g\|_{L^{2q}} \leq \varepsilon$. Then for any $d\in [k]$, 
\[\|\bigconv{d}f - \bigconv{d}g\|_{L^q} \leq 4\varepsilon + 2\varepsilon^2.\]
\end{lemma}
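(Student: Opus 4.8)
The goal is to bound $\|\bigconv{d}f - \bigconv{d}g\|_{L^q}$ in terms of $\varepsilon = \|f-g\|_{L^{2q}}$. I would first reduce the problem to a single direction by fixing all coordinates $x_{[k]\setminus\{d\}}$ outside direction $d$ and writing $F = f_{x_{[k]\setminus\{d\}}}$, $G = g_{x_{[k]\setminus\{d\}}}$ as functions on $G_d$, so that $\bigconv{d}f(x_{[k]\setminus\{d\}},y_d) = F\conv F(y_d)$ and similarly for $g$. The plan is to estimate $\|F\conv F - G\conv G\|_{L^q(G_d)}$ slice-by-slice and then average the resulting bound over $x_{[k]\setminus\{d\}}$ using Fubini/Minkowski, taking care that the exponents match up. The elementary identity to exploit is the bilinear telescoping
\[
F\conv F - G\conv G = (F-G)\conv F + G\conv(F-G),
\]
which separates the difference into two terms each linear in $F-G$.

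For each of the two terms I would apply the pointwise bound $|f_1 \conv f_2(y)| \le \|f_1\|_{L^1}\|f_2\|_{L^\infty}$ together with the fact that $\|F\|_{L^\infty}\le 1$ on each slice (since $\|f\|_{L^\infty}\le 1$). Concretely, $|(F-G)\conv F(y_d)| \le \ex_{z_d}|F(z_d)-G(z_d)|\cdot 1 = \|F-G\|_{L^1(G_d)}$, which is independent of $y_d$; the term $G\conv(F-G)$ is handled the same way after noting $\|G\|_{L^\infty} \le \|F\|_{L^\infty} + \|F-G\|_{L^\infty} \le 1 + \|F-G\|_{L^\infty}$ — but to avoid needing an $L^\infty$ bound on $F-G$ I would instead write $G\conv(F-G)(y_d) = \ex_{z_d}G(y_d+z_d)\overline{(F-G)(z_d)}$ and bound $|G(y_d+z_d)| \le |F(y_d+z_d)| + |(F-G)(y_d+z_d)| \le 1 + |(F-G)(y_d+z_d)|$, which keeps everything in terms of $L^q$-controllable quantities. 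This yields, after taking $L^q$ norms in $y_d$ on each slice and then $L^q$ in $x_{[k]\setminus\{d\}}$ (the two nested $L^q$ averages combine into a single $L^q$ average over $G_{[k]}$), a bound of the shape $2\|f-g\|_{L^1} + (\text{cross term involving }\|f-g\|_{L^2}^2 \text{ or similar})$, and then Hölder/monotonicity of $L^p$ norms on probability spaces upgrades $\|f-g\|_{L^1}, \|f-g\|_{L^2} \le \|f-g\|_{L^{2q}} \le \varepsilon$ to give the claimed $4\varepsilon + 2\varepsilon^2$.

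The one genuinely fiddly point — and the step I'd expect to require the most care — is bookkeeping the exponents so that the final norm is $L^q$ and not $L^{q/2}$ or $L^{2q}$: convolution is an averaging operation, so $\|F\conv F\|_{L^q}$ naturally wants to be controlled by $L^q$-type quantities of $F$, but the telescoping identity introduces a product of two factors, and one must decide where to spend Hölder. The clean route is to never apply Cauchy–Schwarz inside the convolution (that would cost a square root and produce $L^{q/2}$ norms of the difference, i.e. $\|f-g\|_{L^{q}}^{1/2}$-type bounds, which is both weaker and off-exponent), but rather to always peel off one factor in $L^\infty$ (cost: the harmless constant $1$ plus an $\varepsilon$-order correction) and keep the other factor, containing $F-G$, intact so that Minkowski's integral inequality delivers $\|(F-G)\conv(\text{bounded})\|_{L^q(G_{[k]})} \le \|\,|F-G|\conv \mathbbm{1}\,\|_{L^q} \le \|f-g\|_{L^q} \le \|f-g\|_{L^{2q}} = \varepsilon$ directly. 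Summing the two telescoped pieces (each of size at most $\varepsilon + \varepsilon \cdot \|f-g\|_{L^{2q}} \le \varepsilon + \varepsilon^2$, the quadratic term coming from the slice where $|G|$ is replaced by $|F|+|F-G|$) gives $2(\varepsilon + \varepsilon^2) \le 4\varepsilon + 2\varepsilon^2$, completing the argument; a slightly more generous accounting of constants in the $L^\infty$-peeling step accounts for the factor $4$ rather than $2$. Lemma~\ref{squareNormBounds} is the natural template here and I would either invoke it directly (applied slicewise with $h = $ the appropriate shift of $f$ or $g$) or mimic its proof.
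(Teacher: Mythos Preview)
Your approach is correct and leads to a valid proof. The telescoping identity, followed by peeling one factor off in $L^\infty$ and keeping $F-G$ in $L^1$ (upgraded via monotonicity to $L^{2q}$), together with a Cauchy--Schwarz or AM--GM on the quadratic cross-term $|F-G|\conv|F-G|$, gives the bound $2\varepsilon + \varepsilon^2 \le 4\varepsilon + 2\varepsilon^2$.

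However, your route is genuinely different from the paper's, and --- ironically --- the paper does precisely what you warn against: it applies Cauchy--Schwarz \emph{inside} the convolution. After telescoping, the paper bounds each term by $\sqrt{\ex_{y_k}|f|^2}\,\sqrt{\ex_{z_k}|f-g|^2}$ (and similarly with $g$), raises to the $q$th power, then applies Cauchy--Schwarz a second time in the outer variables $x_{[k-1]}$ to separate the two square-root factors. The factor $(\ex_{z_k}|f-g|^2)^{q/2}$, once averaged over $x_{[k-1]}$, is at most $\|f-g\|_{L^{2q}}^q$ by Jensen; this is exactly why the $L^{2q}$ hypothesis is the right one, and no square-root is lost. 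The final form of the paper's bound is $2(\|f\|_{L^{2q}}+\|g\|_{L^{2q}})\|f-g\|_{L^{2q}} \le 2(2+\varepsilon)\varepsilon$, matching the claim with equality. So your claim that inner Cauchy--Schwarz produces ``$L^{q/2}$'' or ``$\|f-g\|_{L^q}^{1/2}$''-type losses is mistaken.

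Comparing the two: your $L^\infty$--$L^1$ route is marginally more elementary and yields a slightly tighter constant; the paper's double-Cauchy--Schwarz route is more symmetric in $f$ and $g$ and does not require splitting $|G| \le 1 + |F-G|$. Your suggestion to invoke Lemma~\ref{squareNormBounds} directly is off target (that lemma handles pointwise squares $|f|^2h$, not convolutions), though the telescoping spirit is indeed the same.
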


\begin{proof}This is a simple consequence of the Cauchy-Schwarz and $L^q$-norm triangle inequalities. Without loss of generality $d = k$. We have
\begin{align*}\|\bigconv{k}f - \bigconv{k}g\|^q_{L^q} = &\exx_{x_{[k]} \in G_{[k]}} \Big|\exx_{y_k \in G_k} f(x_{[k-1]}, x_k + y_k) \overline{f(x_{[k-1]}, y_k)} - \exx_{y_k \in G_k} g(x_{[k-1]}, x_k + y_k) \overline{g(x_{[k-1]}, y_k)}\Big|^q\\
= & \exx_{x_{[k]} \in G_{[k]}} \Big|\exx_{y_k \in G_k} f(x_{[k-1]}, x_k + y_k) (\overline{f(x_{[k-1]}, y_k)} - \overline{g(x_{[k-1]}, y_k)}) \\
&\hspace{1cm}+ \exx_{y_k \in G_k} (f(x_{[k-1]}, x_k + y_k) - g(x_{[k-1]}, x_k + y_k)) \overline{g(x_{[k-1]}, y_k)}\Big|^q\\
\leq &2^{q-1}\exx_{x_{[k]} \in G_{[k]}} \Big|\exx_{y_k \in G_k} f(x_{[k-1]}, x_k + y_k) (\overline{f(x_{[k-1]}, y_k)} - \overline{g(x_{[k-1]}, y_k)})\Big|^q\\
&\hspace{1cm}+ 2^{q-1}\exx_{x_{[k]} \in G_{[k]}} \Big|\exx_{y_k \in G_k}(f(x_{[k-1]}, x_k + y_k) - g(x_{[k-1]}, x_k + y_k)) \overline{g(x_{[k-1]}, y_k)}\Big|^q\\
\leq &2^{q-1}\exx_{x_{[k-1]} \in G_{[k-1]}} \Big(\sqrt{\exx_{y_k \in G_k} |f(x_{[k-1]}, y_k)|^2} \hspace{2pt}\sqrt{\exx_{z_k \in G_k} |f(x_{[k-1]}, z_k) - g(x_{[k-1]}, z_k)|^2}\Big)^q\\
&\hspace{1cm}+2^{q-1}\exx_{x_{[k-1]} \in G_{[k-1]}} \Big(\sqrt{\exx_{y_k \in G_k} |f(x_{[k-1]}, y_k) - g(x_{[k-1]}, y_k)|^2} \hspace{2pt}\sqrt{\exx_{z_k \in G_k} |g(x_{[k-1]}, z_k)|^2}\Big)^q\\
= &2^{q-1}\exx_{x_{[k-1]} \in G_{[k-1]}} \Big(\sqrt{\exx_{y_k \in G_k} |f(x_{[k-1]}, y_k)|^2}\Big)^q \hspace{2pt}\Big(\sqrt{\exx_{z_k \in G_k} |f(x_{[k-1]}, z_k) - g(x_{[k-1]}, z_k)|^2}\Big)^q\\
&\hspace{1cm}+2^{q-1}\exx_{x_{[k-1]} \in G_{[k-1]}} \Big(\sqrt{\exx_{y_k \in G_k} |f(x_{[k-1]}, y_k) - g(x_{[k-1]}, y_k)|^2}\Big)^q \hspace{2pt}\Big(\sqrt{\exx_{z_k \in G_k} |g(x_{[k-1]}, z_k)|^2}\Big)^q\\
\leq &2^{q-1}\sqrt{ \exx_{x_{[k-1]} \in G_{[k-1]}} \Big(\exx_{y_k \in G_k} |f(x_{[k-1]}, y_k)|^2}\Big)^q \\
&\hspace{2cm}\cdot\,\sqrt{\exx_{x_{[k-1]} \in G_{[k-1]}} \Big(\exx_{z_k \in G_k} |f(x_{[k-1]}, z_k) - g(x_{[k-1]}, z_k)|^2}\Big)^q\\
&\hspace{1cm}+2^{q-1}\sqrt{\exx_{x_{[k-1]}} \Big(\exx_{y_k \in G_k} |f(x_{[k-1]}, y_k) - g(x_{[k-1]}, y_k)|^2}\Big)^q \hspace{2pt}\sqrt{\exx_{x_{[k-1]}} \Big(\exx_{z_k \in G_k} |g(x_{[k-1]}, z_k)|^2}\Big)^q\\
\leq&2^{q-1}(\|f\|_{L^{2q}}^q + \|g\|_{L^{2q}}^q) \|f-g\|_{L^{2q}}^q \leq 2^{q-1}(\|f\|_{L^{2q}} + \|g\|_{L^{2q}})^q \|f-g\|_{L^{2q}}^q.\end{align*}
The lemma follows after taking $q$\textsuperscript{th} roots.\end{proof}

If we allow more convolutions, we can get approximations in the $L_\infty$ norm.

\begin{lemma}\label{LinftyConvApprox}Let $f,g \colon G_{[k]}  \to \mathbb{D}$ be two maps and let $d_1, \dots, d_r \in [k]$ be directions such that $\{d_1, \dots, d_r\} = [k]$ (allowing repetition of directions). Then
\[\Big\|\bigconv{d_1} \dots \bigconv{d_r} f - \bigconv{d_1} \dots \bigconv{d_r}g\Big\|_{L_\infty} \leq 2^r \|f-g\|_{L^1}.\]
\end{lemma}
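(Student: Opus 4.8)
The plan is to reduce to the case of a single convolution and then iterate, keeping track of which coordinates have been ``averaged out''. The first observation is that $\bigconv{d}$ maps $\mathbb{D}$-valued functions to $\mathbb{D}$-valued functions, which is immediate from the triangle inequality. The heart of the argument is the following pointwise bound for a single convolution, an $L^\infty$-flavoured analogue of Lemma~\ref{2pConvpBnd}: if $u,v \colon G_{[k]} \to \mathbb{D}$, then for every $d \in [k]$ and every point,
\[\big|\bigconv{d}u - \bigconv{d}v\big|(x_{[k]\setminus\{d\}}, y_d) \leq 2\, \exx_{z_d \in G_d} \big|u - v\big|(x_{[k]\setminus\{d\}}, z_d).\]
To prove this I would write $u\bar u - v\bar v = u(\bar u - \bar v) + (u-v)\bar v$ inside the average defining $\bigconv{d}u(x_{[k]\setminus\{d\}},y_d) - \bigconv{d}v(x_{[k]\setminus\{d\}},y_d)$, bound the two resulting terms using $|u|,|v|\leq 1$ by $\exx_{z_d}|u-v|(x_{[k]\setminus\{d\}}, z_d)$ and $\exx_{z_d}|u-v|(x_{[k]\setminus\{d\}}, y_d+z_d)$ respectively, and then change variables $w_d = y_d + z_d$ in the second term. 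The crucial feature is that the right-hand side does not depend on $y_d$: a single application of $\bigconv{d}$ effectively replaces the difference by twice its average over the $d$-th coordinate.

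Next I would iterate this while recording which coordinates have been averaged out, proving by induction on $r$ the following slightly stronger statement: for any $u,v \colon G_{[k]} \to \mathbb{D}$ and any directions $e_1, \dots, e_r \in [k]$ (not necessarily covering $[k]$), writing $S = \{e_1, \dots, e_r\}$, one has
\[\big|\bigconv{e_1}\cdots\bigconv{e_r}u - \bigconv{e_1}\cdots\bigconv{e_r}v\big|(x_{[k]}) \leq 2^r\, \exx_{y_S \in G_S}\big|u - v\big|(x_{[k]\setminus S}, y_S)\]
for every $x_{[k]}$. The case $r = 0$ is trivial. For the inductive step, set $U = \bigconv{e_r}u$ and $V = \bigconv{e_r}v$ (both $\mathbb{D}$-valued by the first observation), apply the inductive hypothesis to $U,V$ with the directions $e_1, \dots, e_{r-1}$, and then substitute the single-convolution bound for $|U - V|$ proved above; a short case analysis according to whether $e_r$ already lies in $\{e_1, \dots, e_{r-1}\}$ shows that the two averages merge into a single average over $G_{\{e_1,\dots,e_r\}}$.

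Finally, applying this stronger statement with $e_i = d_i$ and invoking the hypothesis $\{d_1,\dots,d_r\} = [k]$, so that $S = [k]$, the right-hand side becomes $2^r\exx_{y_{[k]} \in G_{[k]}}|f - g|(y_{[k]}) = 2^r\|f-g\|_{L^1}$, which is now independent of $x_{[k]}$; taking the supremum over $x_{[k]}$ yields the claimed bound. I expect the only delicate point to be the bookkeeping with repeated directions in the induction, and this is exactly what the strengthened hypothesis — tracking the set $S$ of already-averaged coordinates, rather than requiring the $e_i$ to be distinct — is designed to handle. Everything else is a routine application of the triangle inequality, Fubini's theorem, and a linear change of variables.
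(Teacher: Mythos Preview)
Your proof is correct and takes a genuinely different route from the paper's. The paper first establishes an explicit expansion formula (Lemma~\ref{convFormulaExplicit}) writing $\bigconv{d_1}\cdots\bigconv{d_r}h(x_{[k]})$ as an average over parameters of a product of $2^r$ (conjugated) values of $h$ at certain points $\bm{p}^{\bm{a},\varepsilon}$, then telescopes the difference $\prod f - \prod g$ into $2^r$ terms and uses the fact that each $\bm{p}^{\bm{a},\varepsilon}$ is uniformly distributed over $G_{[k]}$ to bound every term by $\|f-g\|_{L^1}$. Your approach bypasses the explicit expansion entirely: you prove a one-step pointwise bound $|\bigconv{d}u - \bigconv{d}v| \leq 2\,\exx_{z_d}|u-v|$ and then iterate, keeping track of the set $S$ of coordinates already averaged out. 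This is more elementary and avoids the notationally heavy recursive formula for the points $\bm{p}^{\bm{a},\varepsilon}$; since Lemma~\ref{convFormulaExplicit} is not used elsewhere in the paper, your argument is also more economical. The paper's route, on the other hand, makes the structure of iterated convolutions completely explicit, which has some expository value even if it is not strictly needed here.
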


Before proceeding with the proof, we derive an explicit formula coming from expanding out convolutions. It is notationally complex, but it is essentially a straightforward generalization of the following simple special case. If $k=2$ and $h:G_1\times G_2\to\mathbb{C}$, then $\bigconv{2}h (x,b)=\ex_y h(x,y+b)\overline{h(x,y)}$ and 
\[\bigconv{1}\bigconv{2} h(a,b)=\exx_{x,y_1,y_2}h(x+a,y_1+b)\overline{h(x+a,y_1)}\,\overline{h(x,y_2+b)}h(x,y_2).\]
That is, $\bigconv{2}h(x,b)$ is the average of a suitable product over `vertical edges' of height $b$ in column $x$, and $\bigconv{1}\bigconv{2}h(a,b)$ is the average over `vertical parallelograms' made out of pairs of such edges. If we were to convolve again in direction 1, then the value at $(t,b)$ would be an average over pairs of vertical parallelograms of the same height, and with widths that differ by $t$, and so on. In general, each time we convolve in some direction, we duplicate the previous configuration in a certain way, so after $r$ convolutions the number of points in a configuration is $2^r$. 

\begin{lemma}\label{convFormulaExplicit}Let $h \colon G_{[k]} \to \mathbb{C}$ be a function, let $d_1, \dots, d_r \in [k]$ be directions and let, for each $d \in [k]$, $j_{d, 1}, \dots, j_{d, l_d}$ be those $i$ such that $d_i = d$, sorted in increasing order. Let $x_{[k]} \in G_{[k]}$. For parameters $\bm{a} = (a^1, a^2, a^3, \dots, a^r) \in G_{d_1} \times G_{d_2}^{\{0,1\}} \times G_{d_3}^{\{0,1\}^2} \tdt G_{d_r}^{\{0,1\}^{r-1}}$, and $\varepsilon \in \{0,1\}^r$, define a point $\bm{p}^{\bm{a}, \varepsilon} \in G_{[k]}$ by setting
\begin{equation}\label{confFormulaPtsDefn}\bm{p}^{\bm{a}, \varepsilon}_d = \varepsilon_{j_{d, 1}} \cdots \varepsilon_{j_{d, l_d}} x_d + \varepsilon_{j_{d, 2}} \cdots \varepsilon_{j_{d, l_d}} a^{j_{d,1}}_{\varepsilon|_{[j_{d, 1} - 1]}} + \dots + \varepsilon_{j_{d,l_d}} a^{j_{d,l_d - 1}}_{\varepsilon|_{[j_{d, l_d - 1} - 1]}} + a^{j_{d,l_d}}_{\varepsilon|_{[j_{d, l_d} - 1]}}.\end{equation}
Then
\[\bigconv{d_1} \dots \bigconv{d_r} h(x_{[k]}) = \exx_{\bm{a}} \prod_{\varepsilon \in \{0,1\}^r} \operatorname{Conj}^{r - |\varepsilon|} h(\bm{p}^{\bm{a}, \varepsilon}),\]
where $|\varepsilon| = \varepsilon_1 + \dots + \varepsilon_r$, and $\bm{a}$ ranges over all choices of parameters in $G_{d_1} \times G_{d_2}^{\{0,1\}} \times G_{d_3}^{\{0,1\}^2} \tdt G_{d_r}^{\{0,1\}^{r-1}}$.\end{lemma}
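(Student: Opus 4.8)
The plan is to prove the formula by induction on $r$, the number of convolutions. The base case $r=0$ is trivial: there are no parameters $\bm{a}$, the only $\varepsilon$ is the empty string, $\bm{p}^{\bm{a},\varepsilon}=x_{[k]}$, and both sides equal $h(x_{[k]})$. For the inductive step, suppose the formula holds for $r-1$ convolutions and consider $\bigconv{d_1}\dots\bigconv{d_r}h$. First I would strip off the \emph{last} convolution applied, i.e.\ write $\bigconv{d_1}\dots\bigconv{d_r}h = \bigconv{d_1}\bigl(\bigconv{d_2}\dots\bigconv{d_r}h\bigr)$, and set $H = \bigconv{d_2}\dots\bigconv{d_r}h$, which is a convolution in $r-1$ directions $d_2,\dots,d_r$. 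By definition of $\bigconv{d_1}$ in direction $d_1$, we have $\bigconv{d_1}H(x_{[k]}) = \ex_{z \in G_{d_1}} H(x_{[k]\setminus\{d_1\}}, x_{d_1}+z)\,\overline{H(x_{[k]\setminus\{d_1\}}, z)}$, so we must apply the inductive hypothesis to each of the two copies of $H$ and then match up the resulting parameters and points with the claimed formula for $r$.

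The bookkeeping is what needs care. Applying the inductive hypothesis to $H$ with the $(r-1)$-tuple of directions $(d_2,\dots,d_r)$ introduces a family of parameters $\bm{b} = (b^1,\dots,b^{r-1}) \in G_{d_2}\times G_{d_3}^{\{0,1\}}\tdt G_{d_r}^{\{0,1\}^{r-2}}$ and points indexed by $\eta \in \{0,1\}^{r-1}$. One does this twice — once at $(x_{[k]\setminus\{d_1\}},x_{d_1}+z)$ with parameters $\bm{b}^{(1)}$, once at $(x_{[k]\setminus\{d_1\}}, z)$ with parameters $\bm{b}^{(0)}$ — and then averages over $z \in G_{d_1}$. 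The key identification is: the new parameter $a^1 \in G_{d_1}$ in the $r$-fold formula plays the role of (essentially) $x_{d_1}$ shifted, while $z$ becomes an averaging variable that gets absorbed; the parameters $a^{j}_{\varepsilon|_{[j-1]}}$ for $j \geq 2$ correspond to the two copies $\bm{b}^{(1)}, \bm{b}^{(0)}$ of the lower-order parameters, with the leading bit $\varepsilon_1$ of $\varepsilon \in \{0,1\}^r$ recording which copy one is in (so $\varepsilon = (\varepsilon_1, \eta)$ with $\eta \in \{0,1\}^{r-1}$). The conjugation count works out because $\operatorname{Conj}^{r-|\varepsilon|}$ splits as $\operatorname{Conj}^{(r-1)-|\eta|}$ (from the inductive hypothesis) possibly composed with one extra conjugation exactly when $\varepsilon_1 = 0$, matching the $\overline{H(\dots)}$ factor. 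One checks that formula~\eqref{confFormulaPtsDefn} for $\bm{p}^{\bm{a},\varepsilon}_d$ is precisely what results: the sorted list $j_{d,1},\dots,j_{d,l_d}$ of occurrences of direction $d$ among $d_1,\dots,d_r$ restricts, upon deleting the index $1$, to the analogous list for $d_2,\dots,d_r$ (with indices shifted by one), and the nested product $\varepsilon_{j_{d,2}}\cdots\varepsilon_{j_{d,l_d}}$ structure is exactly the recursion "duplicate the configuration, multiply the old shift-coefficients by the new bit."

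The main obstacle is purely notational: verifying that the indices $j_{d,m}$, the restriction maps $\varepsilon|_{[j_{d,m}-1]}$, and the coefficient products transform correctly under "delete the first convolution." I would handle this by treating two cases for each direction $d$: either $d = d_1$ (so $j_{d,1}=1$ and the term $a^{j_{d,l_d}}_{\varepsilon|_{[j_{d,l_d}-1]}}$ when $l_d = 1$ is just the new parameter $a^1 \in G_{d_1}$ combined with the averaging variable $z$), or $d \neq d_1$ (so all occurrences of $d$ are among $d_2,\dots,d_r$ and the whole expression for $\bm{p}^{\bm{a},\varepsilon}_d$ comes verbatim from the inductive hypothesis applied to one of the two copies of $H$, with $a^{j}_{\cdot} = (b^{(1)})^{j-1}_\cdot$ or $(b^{(0)})^{j-1}_\cdot$ according to $\varepsilon_1$). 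Since the two special-case expansions at the start of the section ($k=2$, $r=1$ and $r=2$) already display the pattern, the induction is a faithful but careful transcription of that pattern; no genuinely new idea is needed beyond organizing the indices.
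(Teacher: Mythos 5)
Your proposal is correct and follows exactly the paper's inductive argument: strip off the outermost convolution $\bigconv{d_1}$, apply the inductive hypothesis to the two copies of $\bigconv{d_2}\dots\bigconv{d_r}h$ (evaluated at $x_{d_1}+a^1$ and at $a^1$), and merge parameters by doubling the lower-order families according to the new leading bit $\varepsilon_1$, with the extra conjugation for $\varepsilon_1=0$ accounting for the $\overline{(\cdot)}$ factor in the definition of the convolution. The one imprecision in your sketch is the description of $a^1$ as ``essentially $x_{d_1}$ shifted'' while $z$ ``gets absorbed'': in fact $a^1$ \emph{is} the averaging variable $z$ of the new convolution, nothing is shifted or absorbed, and consequently $\bm{p}^{\bm{a},(\varepsilon_1,\eta)}_{d_1}$ equals $\varepsilon_1 x_{d_1}+a^1$ in the case $l_{d_1}=1$; when $d_1$ recurs among $d_2,\dots,d_r$ this must be combined with the lower-order parameter terms in direction $d_1$ as well, which is the coordinate computation the paper verifies at the end of its proof. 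None of this changes the structure of the argument, so your plan fleshes out to a correct proof.
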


\begin{proof}[Proof of Lemma~\ref{convFormulaExplicit}]We prove the claim by induction on $r$. For $r = 1$, the claim is trivial. Assume that $r \geq 2$ and that the claim holds for smaller values of $r$ and let $d_1, \dots, d_r$ be directions. Then
\[\bigconv{d_1} \dots \bigconv{d_r} h(x_{[k]}) = \exx_{a^1 \in G_{d_1}} \bigconv{d_2} \dots \bigconv{d_{r}} h(x_{[k] \setminus \{d_1\}},x_{d_1} + a^1)\overline{\bigconv{d_2} \dots \bigconv{d_{r}} h(x_{[k] \setminus \{d_1\}},a^1)}.\]
For parameters $a^1 \in G_1$, $\bm{b} = (b^2, b^3, \dots, b^r), \bm{c} = (c^2, c^3, \dots, c^r) \in G_{d_2} \times G_{d_3}^{\{0,1\}} \tdt G_{d_r}^{\{0,1\}^{r-2}}$ and $\varepsilon \in \{0,1\}^{[2, r]}$ (note indexing by $2,\dots, r$ instead of $1, \dots, r-1$), define points $\bm{s}^{\bm{b}, \varepsilon}, \bm{t}^{\bm{c}, \varepsilon} \in G_{[k]}$ (we suppress $a^1$ from the notation) by setting
\[\bm{s}^{\bm{b}, \varepsilon}_d =\begin{cases} \varepsilon_{j_{d, 1}} \cdots \varepsilon_{j_{d, l_d}} x_d + \varepsilon_{j_{d, 2}} \cdots \varepsilon_{j_{d, l_d}} b^{j_{d,1}}_{\varepsilon|_{[2, j_{d, 1} - 1]}} + \dots + \varepsilon_{j_{d,l_d}} b^{j_{d,l_d - 1}}_{\varepsilon|_{[2, j_{d, l_d - 1} - 1]}} + b^{j_{d,l_d}}_{\varepsilon|_{[2, j_{d, l_d} - 1]}}, &\text{when }d \not= d_1\\
\varepsilon_{j_{d, 2}} \cdots \varepsilon_{j_{d, l_d}} (a^1 + x_d) + \varepsilon_{j_{d, 3}} \cdots \varepsilon_{j_{d, l_d}} b^{j_{d,2}}_{\varepsilon|_{[2, j_{d, 2} - 1]}} + \dots + \varepsilon_{j_{d,l_d}} b^{j_{d,l_d - 1}}_{\varepsilon|_{[2, j_{d, l_d - 1} - 1]}} + b^{j_{d,l_d}}_{\varepsilon|_{[2, j_{d, l_d} - 1]}} &\text{when }d = d_1
\end{cases}\]
and
\[\bm{t}^{\bm{c}, \varepsilon}_d =\begin{cases} \varepsilon_{j_{d, 1}} \cdots \varepsilon_{j_{d, l_d}} x_d + \varepsilon_{j_{d, 2}} \cdots \varepsilon_{j_{d, l_d}} c^{j_{d,1}}_{\varepsilon|_{[2, j_{d, 1} - 1]}} + \dots + \varepsilon_{j_{d,l_d}} c^{j_{d,l_d - 1}}_{\varepsilon|_{[2, j_{d, l_d - 1} - 1]}} + c^{j_{d,l_d}}_{\varepsilon|_{[2, j_{d, l_d} - 1]}}, &\text{when }d \not= d_1\\
\varepsilon_{j_{d, 2}} \cdots \varepsilon_{j_{d, l_d}} a^1 + \varepsilon_{j_{d, 3}} \cdots \varepsilon_{j_{d, l_d}} c^{j_{d,2}}_{\varepsilon|_{[2, j_{d, 2} - 1]}} + \dots + \varepsilon_{j_{d,l_d}} c^{j_{d,l_d - 1}}_{\varepsilon|_{[2, j_{d, l_d - 1} - 1]}} + c^{j_{d,l_d}}_{\varepsilon|_{[2, j_{d, l_d} - 1]}} &\text{when }d = d_1.
\end{cases}\]
By the induction hypothesis, we have
\[\bigconv{d_2} \dots \bigconv{d_r} h(x_{[k] \setminus \{d_1\}},x_{d_1} + a^1) = \exx_{\bm{b}} \prod_{\varepsilon \in \{0,1\}^{[2, r]}} \operatorname{Conj}^{r - 1 - |\varepsilon|} h(\bm{s}^{\bm{b}, \varepsilon}),\]
and
\[\bigconv{d_2} \dots \bigconv{d_r} h(x_{[k] \setminus \{d_1\}}, a^1) = \exx_{\bm{c}} \prod_{\varepsilon \in \{0,1\}^{[2, r]}} \operatorname{Conj}^{r - 1 - |\varepsilon|} h(\bm{t}^{\bm{c}, \varepsilon}).\]
Rename the parameters and points by setting $a^i_{1, \varepsilon} = b^i_\varepsilon, a^i_{0, \varepsilon} = c^i_\varepsilon$ for each $i \in [2,r]$ and each $\varepsilon \in \{0,1\}^{[2,i]}$, and setting $\bm{p}^{\bm{a}, (1,\varepsilon)} = \bm{s}^{\bm{b}, \varepsilon}, \bm{p}^{\bm{a}, (0,\varepsilon)} = \bm{t}^{\bm{c}, \varepsilon}$ for each $\varepsilon \in \{0,1\}^{[2, r]}$, where we write $\bm{a}$ for $(a^1, a^2, a^3, \dots, a^r)$. Then
\begin{align*}\bigconv{d_1} \dots \bigconv{d_r} h(x_{[k]}) = &\exx_{a^1 \in G_{d_1}} \bigconv{d_2} \dots \bigconv{d_{r}} h(x_{[k] \setminus \{d_1\}},x_{d_1} + a^1)\overline{\bigconv{d_2} \dots \bigconv{d_{r}}h(x_{[k] \setminus \{d_1\}},a^1)}\\
=&\exx_{a^1 \in G_{d_1}} \Big(\exx_{\bm{b}} \prod_{\varepsilon \in \{0,1\}^{[2, r]}} \operatorname{Conj}^{r - 1 - |\varepsilon|} h(\bm{s}^{\bm{b}, \varepsilon})\Big) \Big(\exx_{\bm{c}} \prod_{\varepsilon \in \{0,1\}^{[2, r]}} \operatorname{Conj}^{r - |\varepsilon|} h(\bm{t}^{\bm{c}, \varepsilon})\Big)\\
= &\exx_{\bm{a}} \Big(\prod_{\varepsilon \in \{0,1\}^{[2, r]}} \operatorname{Conj}^{r - 1 - |\varepsilon|} h(\bm{p}^{\bm{a}, (1,\varepsilon)})\Big)\Big(\prod_{\varepsilon \in \{0,1\}^{[2, r]}} \operatorname{Conj}^{r - |\varepsilon|} h(\bm{p}^{\bm{a}, (0,\varepsilon)})\Big)\\
= &\exx_{\bm{a}}\prod_{\varepsilon \in \{0,1\}^{r}} \operatorname{Conj}^{r - |\varepsilon|} h(\bm{p}^{\bm{a}, \varepsilon}).\end{align*}
It remains to check that points $\bm{p}^{\bm{a}, \varepsilon}$ have the form described in the statement. For coordinates $d \not= d_1$, this is clear, since $j_{d, i} = 1$ if and only if $d = d_1, i = 1$. Let $\varepsilon \in \{0,1\}^{[2, r]}$. Then
\begin{align*}\bm{p}^{\bm{a}, (1,\varepsilon)}_{d_1} = \bm{s}^{\bm{b}, \varepsilon}_{d_1} &= \varepsilon_{j_{d_1, 2}} \cdots \varepsilon_{j_{d_1, l_{d_1}}} (a^1 + x_{d_1}) + \varepsilon_{j_{d_1, 3}} \cdots \varepsilon_{j_{d_1, l_{d_1}}} b^{j_{d_1,2}}_{\varepsilon|_{[2, j_{d_1, 2} - 1]}} + \dots\\
& \hspace{5cm} + \varepsilon_{j_{d_1,l_{d_1}}} b^{j_{d_1,l_{d_1} - 1}}_{\varepsilon|_{[2, j_{d_1, l_{d_1} - 1} - 1]}} + b^{j_{d_1,l_{d_1}}}_{\varepsilon|_{[2, j_{d_1, l_{d_1}} - 1]}}\\
&= 1\cdot \varepsilon_{j_{d_1, 2}} \cdots \varepsilon_{j_{d_1, l_{d_1}}}x_{d_1}  + \varepsilon_{j_{d_1, 2}} \cdots \varepsilon_{j_{d_1, l_{d_1}}}a^1  + \varepsilon_{j_{d_1, 3}} \cdots \varepsilon_{j_{d_1, l_{d_1}}} a^{j_{d_1,2}}_{(1, \varepsilon|_{[2, j_{d_1, 2} - 1]})} + \dots\\
&\hspace{2cm}  + \varepsilon_{j_{d_1,l_{d_1}}} a^{j_{d_1,l_{d_1} - 1}}_{(1, \varepsilon|_{[2, j_{d_1, l_{d_1} - 1} - 1])}} + a^{j_{d_1,l_{d_1}}}_{(1,\varepsilon|_{[2, j_{d_1, l_{d_1}} - 1]})}\end{align*}
and
\begin{align*}\bm{p}^{\bm{a}, (0,\varepsilon)}_{d_1} = \bm{t}^{\bm{c}, \varepsilon}_{d_1} &= \varepsilon_{j_{d_1, 2}} \cdots \varepsilon_{j_{d_1, l_{d_1}}}  a^1 + \varepsilon_{j_{d_1, 3}} \cdots \varepsilon_{j_{d_1, l_{d_1}}} c^{j_{d_1,2}}_{\varepsilon|_{[2, j_{d_1, 2} - 1]}} + \dots + \varepsilon_{j_{d_1,l_{d_1}}} c^{j_{d_1,l_{d_1} - 1}}_{\varepsilon|_{[2, j_{d_1, l_{d_1} - 1} - 1]}} + c^{j_{d_1,l_{d_1}}}_{\varepsilon|_{[2, j_{d_1, l_{d_1}} - 1]}}\\
&= 0 \cdot \varepsilon_{j_{d_1, 2}} \cdots \varepsilon_{j_{d_1, l_{d_1}}}x_{d_1}  + \varepsilon_{j_{d_1, 2}} \cdots \varepsilon_{j_{d_1, l_{d_1}}} a^1  + \varepsilon_{j_{d_1, 3}} \cdots \varepsilon_{j_{d_1, l_{d_1}}} a^{j_{d_1,2}}_{(0, \varepsilon|_{[2, j_{d_1, 2} - 1]})} + \dots\\
&\hspace{2cm}  + \varepsilon_{j_{d_1,l_{d_1}}} a^{j_{d_1,l_{d_1} - 1}}_{(0, \varepsilon|_{[2, j_{d_1, l_{d_1} - 1} - 1])}} + a^{j_{d_1,l_{d_1}}}_{(0,\varepsilon|_{[2, j_{d_1, l_{d_1}} - 1]})},\end{align*}
as desired.\end{proof}

\begin{proof}[Proof of Lemma~\ref{LinftyConvApprox}]Let $x_{[k]} \in G_{[k]}$. Use the same notation as in the proof of Lemma~\ref{convFormulaExplicit}. That lemma implies that
\[\bigconv{d_1} \dots \bigconv{d_r} f(x_{[k]}) - \bigconv{d_1} \dots \bigconv{d_r}g(x_{[k]}) = \exx_{\bm{a}} \prod_{\varepsilon \in \{0,1\}^r} \operatorname{Conj}^{r - |\varepsilon|} f(\bm{p}^{\bm{a}, \varepsilon}) - \prod_{\varepsilon \in \{0,1\}^r} \operatorname{Conj}^{r - |\varepsilon|} g(\bm{p}^{\bm{a}, \varepsilon}).\]
Order all elements of $\{0,1\}^r$ as $\varepsilon^1, \dots, \varepsilon^{2^r}$. Then
\begin{align*}\Big|\bigconv{d_1} \dots \bigconv{d_r} f(x_{[k]}) - \bigconv{d_1} \dots \bigconv{d_r}g(x_{[k]})\Big| =& \Big|\exx_{\bm{a}} \sum_{i \in [2^r]} \Big(\prod_{j \in [i-1]} \operatorname{Conj}^{r - |\varepsilon^j|} f(\bm{p}^{\bm{a}, \varepsilon^j})\Big) \operatorname{Conj}^{r - |\varepsilon^i|}\Big(f(\bm{p}^{\bm{a}, \varepsilon^i}) - g(\bm{p}^{\bm{a}, \varepsilon^i})\Big)\\
&\hspace{2cm}\Big(\prod_{j \in [i+1, 2^r]} \operatorname{Conj}^{r - |\varepsilon^j|} g(\bm{p}^{\bm{a}, \varepsilon^j})\Big)\Big|\\
\leq&\sum_{i \in [2^r]} \exx_{\bm{a}} \Big|(f- g)(\bm{p}^{\bm{a}, \varepsilon^i})\Big|\\
=&2^r \|f-g\|_{L^1},\end{align*}
since for each $\varepsilon \in \{0,1\}^r$, the point $\bm{p}^{\bm{a}, \varepsilon}$ ranges uniformly over all $G_{[k]}$ as $\bm{a}$ ranges over $G_{d_1} \times G_{d_2}^{\{0,1\}} \times G_{d_3}^{\{0,1\}^2} \tdt G_{d_r}^{\{0,1\}^{r-1}}$, which is due to~\eqref{confFormulaPtsDefn} and the fact that all directions are present in $d_1, \dots, d_r$ (just look at the parameters $a^{j_{d,l_d}}_{\varepsilon|_{[j_{d, l_d} - 1]}}$, $d \in [k]$ for any fixed choice of the other ones).\end{proof}

\subsection{Linear algebra results}
Let $\mathbb{F}$ be a finite field of size $\mathbf{f}$. We begin this subsection with a simple criterion for solubility of systems of linear equations.

\begin{lemma}\label{solCrit}Let $G$ be a vector space over $\mathbb F$. Let $x_1, \dots, x_r \in G$ and let $\lambda_1, \dots, \lambda_r \in \mathbb{F}$. Let $u_0 + U$ be a coset in $G$. Then the following are equivalent.
\begin{itemize}
\item[\textbf{(i)}]There exists $y \in u_0 + U$ such that $x_i \cdot y = \lambda_i$ for each $i \in [r]$.
\item[\textbf{(ii)}]$\sum_{i \in [r]} \mu_i(\lambda_i - x_i \cdot u_0) = 0$ holds for every $\mu \in \mathbb{F}^r$ such that $\sum_{i \in [r]} \mu_i x_i \in U^\perp$. 
\end{itemize}
\end{lemma}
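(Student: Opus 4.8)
The plan is to recast (i) as a statement about membership in the image of a single linear map into $\mathbb{F}^r$, prove the easy direction by direct substitution, and obtain the hard direction from the standard fact that a subspace of a finite-dimensional space carrying a nondegenerate bilinear form coincides with its double orthogonal complement.

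First I would normalise. Writing a general element of $u_0 + U$ as $y = u_0 + u$ with $u \in U$, and setting $\nu_i = \lambda_i - x_i \cdot u_0$, condition (i) becomes: there exists $u \in U$ with $x_i \cdot u = \nu_i$ for every $i \in [r]$. Introduce the linear map $T \colon U \to \mathbb{F}^r$ given by $T(u) = (x_1 \cdot u, \dots, x_r \cdot u)$. Then (i) is equivalent to $\nu = (\nu_1, \dots, \nu_r) \in \img T$. With this reformulation the implication (i)~$\Rightarrow$~(ii) is immediate: if $\nu = T(u)$ and $\mu \in \mathbb{F}^r$ satisfies $\sum_{i} \mu_i x_i \in U^\perp$, then $\sum_i \mu_i(\lambda_i - x_i \cdot u_0) = \sum_i \mu_i (x_i \cdot u) = \big(\sum_i \mu_i x_i\big)\cdot u = 0$, since $u \in U$.

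For the converse, the key observation is the identification of the orthogonal complement of $\img T$ inside $\mathbb{F}^r$ (with respect to the standard dot product on $\mathbb{F}^r$): a vector $\mu \in \mathbb{F}^r$ lies in $(\img T)^\perp$ if and only if $\sum_i \mu_i (x_i \cdot u) = 0$ for all $u \in U$, i.e. if and only if $\big(\sum_i \mu_i x_i\big)\cdot u = 0$ for all $u \in U$, i.e. if and only if $\sum_i \mu_i x_i \in U^\perp$. Since $\img T$ is a subspace of the finite-dimensional space $\mathbb{F}^r$ and the dot product there is nondegenerate, we have $\img T = \big((\img T)^\perp\big)^\perp$. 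Hence $\nu \in \img T$ if and only if $\mu \cdot \nu = \sum_i \mu_i \nu_i = 0$ for every $\mu \in (\img T)^\perp$, which by the identification above is exactly condition (ii). Together with the reduction of the first paragraph this proves the equivalence.

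I do not expect any real obstacle here; the only points needing a little care are the correct computation of $(\img T)^\perp$ and the observation that the double-complement identity is being applied in the genuinely finite-dimensional space $\mathbb{F}^r$, so that no hypothesis on the dimension of $G$ (beyond what is already standing in this section) is actually used.
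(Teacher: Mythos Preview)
Your proof is correct. The reduction to the map $T\colon U\to\mathbb{F}^r$, the identification of $(\img T)^\perp$ with $\{\mu:\sum_i\mu_i x_i\in U^\perp\}$, and the appeal to $\img T=((\img T)^\perp)^\perp$ in the finite-dimensional space $\mathbb{F}^r$ are all sound, and your remark that only finite-dimensionality of $\mathbb{F}^r$ is used is well observed.

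The paper's argument for (ii)$\Rightarrow$(i) is more constructive: it selects a maximal subset $x_1,\dots,x_s$ whose nontrivial linear combinations avoid $U^\perp$, argues that $u\mapsto(x_i\cdot u)_{i\in[s]}$ is surjective onto $\mathbb{F}^s$ (failure of surjectivity would produce a nonzero combination in $U^\perp$), solves the first $s$ equations explicitly, and then uses (ii) together with maximality to verify the remaining equations automatically. Your duality argument compresses this into a single step and is arguably cleaner; the paper's version has the mild advantage of actually exhibiting how one would build the solution $y$. Underneath, both are the same rank/image-versus-annihilator idea, just packaged differently.
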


\begin{proof}\textbf{(i) implies (ii).} Suppose that $\mu \in\mathbb{F}^r$ satisfies $\sum_{i \in [r]} \mu_i x_i \in U^\perp$. Let $y \in u_0 + U$ be such that $x_i \cdot y = \lambda_i$ for each $i \in [r]$. Let $w = y - u_0 \in U$. Then
\[\sum_{i \in [r]} \mu_i (\lambda_i - x_i \cdot u_0) = \sum_{i \in [r]} \mu_i \lambda_i - \sum_{i \in [r]} \mu_i x_i \cdot (y - w) = \sum_{i \in [r]} \mu_i (\lambda_i - y\cdot x_i) + w \cdot \Big(\sum_{i \in [r]} \mu_i x_i\Big) = 0\]
as desired.\\

\textbf{(ii) implies (i).} Take a maximal subset of $x_1, \dots, x_r$ whose non-zero linear combinations do not lie in $U^\perp$. Without loss of generality it is $x_1, \dots, x_s$ for some $s \leq r$. We claim that the function $u \mapsto (x_i \cdot u \colon i \in [s]) \in \mathbb{F}^s$ is a surjection from $U$ to $\mathbb{F}^s$. Indeed, if not, then there is some $0 \not= \nu \in \mathbb{F}^s$ such that for each $u \in U$, $\sum_{i \in [s]} \nu_i x_i \cdot u = 0$. However, this implies that $\nu \cdot x \in U^\perp$, which is impossible. In particular, there is some $u \in U$ such that for each $i \in [s]$, $x_i \cdot u = \lambda_i - u_0 \cdot x_i$. If we set $y = u + u_0$, we get that $y \in u_0 + U$ and $x_i \cdot y = \lambda_i$ for all $i \in [s]$. To finish the proof, we use property \textbf{(ii)}.\\
\indent Let $i \in [s+1,r]$. By the choice of $s$, there exists $\mu \in \mathbb{F}^s$ such that $x_i - \sum_{j \in [s]} \mu_j x_j \in U^\perp$. By property \textbf{(ii)} we get
\[(\lambda_i - x_i \cdot u_0) = \sum_{j \in [s]} \mu_j (\lambda_j - x_j \cdot u_0) = \sum_{j \in [s]} \mu_j x_j \cdot u = x_i \cdot u.\]
This implies that $y \cdot x_i = \lambda_i$ for $i \in [s+1, r]$ as well.\end{proof}

The following lemma controls the size of the intersection of a fixed dense set $S$ with random cosets of fixed dimension $r$.

\begin{lemma}[Random coset intersection lemma]\label{randomCosetInt}Let $G$ be a finite-dimensional vector space over $\mathbb{F}$, and let $S$ be a subset of size $\delta |G|$. Suppose that $x_0, x_1, \dots, x_r \in G$ are chosen uniformly and independently at random. Let $N = |\{\lambda \in \mathbb{F}^r \colon x_0 + \lambda \cdot x \in S\}|$. Then
\[\ex N = \delta \mathbf{f}^r\]
and
\[\mathbb{P}\Big(|N - \ex N| \leq \lambda \ex N\Big) \geq 1 - \mathbf{f}^{-r}\lambda^{-2}\delta^{-1}.\]\end{lemma}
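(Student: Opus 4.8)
The plan is to compute the first moment directly and then control the variance via a second-moment computation, finishing with Chebyshev's inequality. First I would observe that $N = \sum_{\lambda \in \mathbb{F}^r} \mathbbm{1}[x_0 + \lambda \cdot x \in S]$, so that by linearity of expectation $\ex N = \sum_{\lambda \in \mathbb{F}^r} \mathbb{P}(x_0 + \lambda \cdot x \in S)$. For each fixed $\lambda$, the random variable $x_0 + \lambda \cdot x = x_0 + \sum_{i \in [r]} \lambda_i x_i$ is uniformly distributed over $G$ (since $x_0$ alone already is, and it is independent of $x_1, \dots, x_r$), so $\mathbb{P}(x_0 + \lambda \cdot x \in S) = \delta$. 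Summing over the $\mathbf{f}^r$ choices of $\lambda$ gives $\ex N = \delta \mathbf{f}^r$ as claimed.

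Next I would compute $\ex N^2 = \sum_{\lambda, \mu \in \mathbb{F}^r} \mathbb{P}(x_0 + \lambda\cdot x \in S \text{ and } x_0 + \mu \cdot x \in S)$. The key point is that for $\lambda \neq \mu$, the pair $(x_0 + \lambda \cdot x, \ x_0 + \mu \cdot x)$ is uniformly distributed over $G \times G$: writing the map $(x_0, x_1, \dots, x_r) \mapsto (x_0 + \lambda \cdot x, x_0 + \mu \cdot x)$, one checks it is surjective as a linear map (the difference of the two coordinates is $(\lambda - \mu)\cdot x$, which ranges over all of $G$ since some $\lambda_i - \mu_i \neq 0$, while $x_0$ independently controls the first coordinate), hence each fibre has the same size and the joint distribution is uniform on $G^2$. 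Therefore for $\lambda \neq \mu$ the joint probability is exactly $\delta^2$, and there are fewer than $\mathbf{f}^{2r}$ such pairs, while for the $\mathbf{f}^r$ diagonal pairs $\lambda = \mu$ the probability is just $\delta$. This yields $\ex N^2 \leq \mathbf{f}^{2r}\delta^2 + \mathbf{f}^r \delta = (\ex N)^2 + \mathbf{f}^r \delta$, so $\operatorname{Var}(N) = \ex N^2 - (\ex N)^2 \leq \mathbf{f}^r \delta$.

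Finally I would apply Chebyshev's inequality:
\[\mathbb{P}\big(|N - \ex N| > \lambda \ex N\big) \leq \frac{\operatorname{Var}(N)}{\lambda^2 (\ex N)^2} \leq \frac{\mathbf{f}^r \delta}{\lambda^2 \delta^2 \mathbf{f}^{2r}} = \mathbf{f}^{-r}\lambda^{-2}\delta^{-1},\]
which gives the stated bound on the complementary event. I do not expect any serious obstacle here; the only point requiring a little care is the uniformity of the joint distribution of $(x_0 + \lambda\cdot x, x_0 + \mu \cdot x)$ on $G^2$ for distinct $\lambda, \mu$, which is where the independence and uniformity of the $x_i$ genuinely get used, and the bookkeeping to separate the diagonal from the off-diagonal terms in the second moment.
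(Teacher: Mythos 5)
Your proof is correct and follows essentially the same route as the paper's: compute the first moment, split the second moment into diagonal and off-diagonal terms (using uniformity of the joint distribution for $\lambda\neq\mu$), bound the variance by $\mathbf{f}^r\delta$, and finish with Chebyshev. The extra detail you give on why the pair $(x_0+\lambda\cdot x,\ x_0+\mu\cdot x)$ is uniform on $G^2$ is the one point the paper leaves implicit, and it is handled correctly.
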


\begin{proof}A simple calculation gives
\[\ex N = \ex \sum_{\lambda \in \mathbb{F}^r} \mathbbm{1}(x_0 + \lambda \cdot x \in S) = \sum_{\lambda \in \mathbb{F}^r} \mathbb{P}(x_0 + \lambda \cdot x \in S) = \mathbf{f}^r \delta.\]
We have an equally simple calculation for the second moment:
\begin{align*}\ex N^2 = &\ex \sum_{\lambda,\mu \in \mathbb{F}^r} \mathbbm{1}(x_0 + \lambda \cdot x \in S)\mathbbm{1}(x_0 + \mu \cdot x \in S) = \sum_{\lambda \not= \mu} \mathbb{P}(x_0 + \lambda \cdot x, x_0 + \mu \cdot x \in S) + \sum_{\lambda} \mathbb{P}(x_0 + \lambda \cdot x \in S)\\
\leq &(\mathbf{f}^{2r} - \mathbf{f}^r) \delta^2 + \mathbf{f}^r \delta.\end{align*}
Hence, $\operatorname{var} N \leq \mathbf{f}^r \delta$, which gives 
\begin{align*}\mathbb{P}\Big(|N - \ex N| \leq  \lambda \ex N\Big) \geq &1 - \mathbb{P}\Big(|N - \ex N|^2 \geq \lambda^2 (\ex N)^2\Big) \geq 1 - \frac{\lambda^{-2} \operatorname{var} N}{(\ex N)^2}\\
\geq & 1 - \mathbf{f}^{-r}\lambda^{-2}\delta^{-1},\end{align*}
as desired.\end{proof}

We say that a map $\psi \colon v_0 + V \to H$, where $V \leq G$ and $H$ are vector spaces, is \emph{affine} if there are a linear map $\tilde{\psi} \colon V \to H$ and a value $h_0 \in H$ such that $\psi(v_0 + x) = h_0 + \tilde{\psi}(x)$ holds for all $x \in V$. Note that being affine is equivalent to being a 2-homomorphism, since the domain is a coset in $G$. The next result tells us that a 2-homomorphism on a very dense subset of a coset $C$ necessarily extends to an affine map on the whole of $C$. 

\begin{lemma}\label{easyExtn}Let $p$ be a prime. Suppose that $V \leq G$ is a subspace of a vector space over $\mathbb{F}_p$. Let $v_0 \in G$, let $A \subset v_0 + V$ be a set of size greater than $\frac{4}{5} |v_0 + V|$ and let $\phi \colon A \to H$ be a map such that $\phi(a) + \phi(b) = \phi(c) + \phi(d)$ whenever $a,b,c,d \in A$ satisfy $a+b = c +d$. Then there is a unique affine map $\psi \colon v_0 + V \to H$ that extends $\phi$.\end{lemma}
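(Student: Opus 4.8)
The plan is to reduce everything to the case $v_0 = 0$ by translating, so we may assume $V = v_0 + V$ is a subspace and $A \subset V$ has $|A| > \tfrac45 |V|$, with $\phi \colon A \to H$ respecting all additive quadruples inside $A$. First I would establish the key ``Freiman-type'' fact: for $x, y, z, w \in A$ with $x + y = z + w$ we already know $\phi(x)+\phi(y) = \phi(z)+\phi(w)$; the goal is to leverage the very high density of $A$ to define a single affine extension. The natural candidate is to write, for arbitrary $u \in V$, $\psi(u) = \phi(a) + \phi(b) - \phi(c)$ whenever $a,b,c,a+b-c \in A$ with $a + b - c$ chosen so that... — more cleanly, fix any base point $a_0 \in A$ and define $\tilde\psi(u)$ for $u \in V$ by choosing $a, b \in A$ with $a - b = u$ and setting $\tilde\psi(u) = \phi(a) - \phi(b)$; one must show such a representation exists for every $u$ and that the value is independent of the choice.

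\textbf{Existence of representations.} For each $u \in V$, the number of pairs $(a,b) \in A \times A$ with $a - b = u$ is at least $|A| - |V \setminus A| \cdot 1$... more precisely $|A \cap (A + u)| \geq |A| + |A+u| - |V| = 2|A| - |V| > \tfrac35|V| > 0$, so a representation always exists; in fact a positive density of them. Similarly, for checking consistency and the homomorphism property we will repeatedly use that any two (or three) dense subsets of $V$ whose densities sum to more than the obvious threshold must intersect, and more generally that for any finite list of translates $A + u_1, \dots, A + u_m$ of $A$ with $m$ bounded (here $m \leq 4$ suffices), $\bigcap_i (A + u_i)$ is nonempty as long as $m(1 - |A|/|V|) < 1$, i.e. since $|A|/|V| > \tfrac45$ this works for $m \leq 4$.

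\textbf{Well-definedness and linearity.} To see $\tilde\psi(u)$ is independent of the chosen representation $a - b = u = a' - b'$: then $a + b' = a' + b$ with all four in $A$, so by the additive-quadruple hypothesis $\phi(a) + \phi(b') = \phi(a') + \phi(b)$, i.e. $\phi(a) - \phi(b) = \phi(a') - \phi(b')$. For additivity, given $u, u' \in V$ I want $\tilde\psi(u + u') = \tilde\psi(u) + \tilde\psi(u')$: choose $a, b, c \in A$ with $a - b = u$, $b - c = u'$ — such a simultaneous choice exists because it amounts to finding $b \in A \cap (A - u) \cap (A + u')$, an intersection of three translates of a set of density $> \tfrac45$, hence nonempty; then $a - c = u + u'$ and $\tilde\psi(u+u') = \phi(a) - \phi(c) = (\phi(a) - \phi(b)) + (\phi(b) - \phi(c)) = \tilde\psi(u) + \tilde\psi(u')$. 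Scalar multiplication by integers (hence by $\mathbb{F}_p$, using $pu = 0$) follows by iterating additivity, so $\tilde\psi$ is $\mathbb{F}_p$-linear. Finally I set $\psi(u) = \phi(a_0) + \tilde\psi(u - a_0)$ for a fixed $a_0 \in A$; this is affine, and for $u = a \in A$ we get $\psi(a) = \phi(a_0) + \tilde\psi(a - a_0) = \phi(a_0) + \phi(a) - \phi(a_0) = \phi(a)$, so $\psi$ extends $\phi$. Uniqueness is immediate since two affine maps agreeing on a set of density $> \tfrac45 > \tfrac12$ of a coset must agree everywhere (their difference is an affine map vanishing on more than half of the coset, hence identically).

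\textbf{Main obstacle.} There is no serious obstacle here — the whole proof rests on the elementary pigeonhole observation that $m$ translates of a subset of density exceeding $1 - 1/m$ have nonempty (indeed dense) common intersection, which is exactly why the hypothesis is stated with the constant $\tfrac45$: one needs $m = 4$ translates for the consistency check (the quadruple $a + b' = a' + b$ forces membership in up to four translates simultaneously when verifying compatibility across chains of representations). The only mild care needed is bookkeeping to ensure every $\phi$-value invoked is at a point genuinely lying in $A$, and to confirm the density threshold $\tfrac45$ is never violated in any of the finitely many intersection arguments used.
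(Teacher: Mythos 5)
Your proof is correct, and it takes a genuinely different route from the paper's. The paper defines $\psi(x) = \phi(a) + \phi(b) - \phi(c)$ directly on $v_0+V$ for a three-point representation $a+b-c=x$, proves well-definedness by intersecting three translates (needing density $> 2/3$), and proves affinity by intersecting five translates (which is where $> 4/5$ enters). You instead work first with the \emph{difference map} $\tilde\psi(u) = \phi(a)-\phi(b)$ on the subspace $V$, whose well-definedness needs \emph{no} density argument at all (the equality $a+b' = a'+b$ places all four points in $A$ automatically), and whose additivity needs only three translates (density $> 2/3$); you then recover the affine $\psi$ by adding back a base point. This is a cleaner separation of linear and constant parts, and it actually proves the lemma under the weaker hypothesis $|A| > \tfrac23|v_0+V|$. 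The price of the paper's more direct approach is the $4/5$ constant; since the lemma is stated and used with $4/5$ this is immaterial, but your version shows the constant is not sharp.

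One small inaccuracy in your ``Main obstacle'' paragraph: you attribute the constant $4/5$ to the consistency (well-definedness) check and say it forces membership in four translates. In your own argument, well-definedness uses zero translates, and the only genuine intersection you need is of three translates (density $> 2/3$), in the additivity step. The constant $4/5$ in the paper comes from its affinity check (five translates, since it fixes one point $a$ and intersects $A$ with $A+x-a$, $A+y-a$, $A+z-a$, $A+w-a$), a step your argument structure avoids entirely. This is cosmetic and does not affect the proof.
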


\begin{proof}For each $x \in v_0 + V$, take $a,b,c \in A$ such that $x = a + b - c$ and set $\psi(x) = \phi(a) + \phi(b) - \phi(c)$. To see why such elements $a,b,c$ exist, we first pick $a \in A$ arbitrarily and then observe that since $|A| > \frac{1}{2}|v_0 + V|$, the intersection $A \cap ((x -a) + A)$ is non-empty. We now check that $\psi$ has the claimed properties.\\
\indent We first show that $\psi$ is well-defined. Suppose that $a,b,c,a',b',c' \in A$ are such that $a+b-c = a' + b' - c'$. We need to show that $\phi(a) + \phi(b) - \phi(c) = \phi(a') + \phi(b') - \phi(c')$. Since $|A| > \frac{2}{3}|v_0 + V|$, the set $A \cap (a + b - A) \cap (a' + b' - A)$ is non-empty. Take an arbitrary element $s$ inside this set. Let $t = a + b - s$ and $t' = a' + b' - s$. Thus, $t, t' \in A$, and we have
\[\phi(a) + \phi(b) = \phi(s) + \phi(t)\hspace{1cm}\text{and}\hspace{1cm}\phi(a') + \phi(b') = \phi(s) + \phi(t').\]
Using this and the equality $t - c = a + b - s - c= a' + b' - s - c' = t' - c'$, we have
\begin{align*}\phi(a) + \phi(b) - \phi(c) = &   \phi(s) + \phi(t) - \phi(c)\\
= & \phi(s) + \phi(t') - \phi(c')\\
= &  \phi(a') + \phi(b') - \phi(c'),\end{align*}
as desired.\\
\indent The fact that $\psi(x) = \phi(x)$ for every $x \in A$ follows from the choice $\psi(x) = \phi(x) - \phi(x) + \phi(x)$.\\
\indent Finally, we check that $\psi$ is affine. Let $x, y, z, w \in v_0 + V$ be such that $x+y = z +w$. Take an arbitrary $a \in A$. Observe that since $|A| > \frac{4}{5}|v_0+V|$, the set $A \cap (A + x - a) \cap (A + y - a) \cap (A + z - a) \cap (A + w - a)$ is non-empty. Let $b$ an arbitrary element of this set. Then, $b + a - x, b+ a - y, b+a-z, b + a -w \in A$ as well. Hence,
\begin{align*}\psi(x) + \psi(y) - \psi(z) - \psi(w) =& \Big(\phi(a) + \phi(b) - \phi(a + b - x)\Big) + \Big(\phi(a) + \phi(b) - \phi(a + b - y)\Big)\\
&\hspace{2cm} - \Big(\phi(a) + \phi(b) - \phi(a + b - z)\Big) - \Big(\phi
(a) + \phi(b) - \phi(a + b - w)\Big)\\
=&\phi(a + b - z) + \phi(a + b - w) - \phi(a + b - x) - \phi(a + b - y)\\
=& 0,
\end{align*}
completing the proof.\end{proof}

Next, we generalize the previous lemma to the case where the map is no longer a 2-homomorphism, but it respects a vast majority of additive structures. It is stated in slightly more flexible form because of the later applications in the paper.

\begin{lemma}\label{3approxHom}Let $p$ be a prime and let $\epsilon \in (0, \frac{1}{100})$. Suppose that $G$ and $H$ are finite-dimensional $\mathbb{F}_p$-vector spaces, and that $\rho_1, \rho_2, \rho_3 \colon G \to H$ are three maps such that $\rho_1(x_1) - \rho_2(x_2) = \rho_3(x_1 - x_2)$ holds for at least a $1 - \epsilon$ proportion of the pairs $(x_1, x_2) \in G \times G$. Then there is an affine map $\alpha \colon G \to H$ such that $\rho_3(x) = \alpha(x)$ for at least $(1-\sqrt{\epsilon})|G|$ of $x \in G$.\end{lemma}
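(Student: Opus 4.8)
\section*{Proof proposal for Lemma~\ref{3approxHom}}

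The plan is to reduce the statement to Lemma~\ref{easyExtn} by producing a set of density at least $1-\sqrt{\epsilon}$ on which $\rho_3$ respects every additive quadruple. The first move is to rewrite the hypothesis in a form that involves $\rho_3$ only through a single argument: since $(x_1,x_2)\mapsto(x_1-x_2,\,x_2)$ is a bijection of $G\times G$, the assumption is equivalent to the statement that $\rho_3(x)=\rho_1(x+y)-\rho_2(y)$ for at least a $1-\epsilon$ proportion of pairs $(x,y)\in G\times G$. Let $X$ be the set of $x\in G$ for which this identity holds for at least a $1-\sqrt{\epsilon}$ proportion of $y\in G$. By Markov's inequality $|X|\geq(1-\sqrt{\epsilon})|G|$, and since $\epsilon<\tfrac1{100}$ we have in particular $|X|>\tfrac45|G|$.

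The heart of the argument is to show that $\rho_3|_X$ is a $2$-homomorphism, i.e.\ that $\rho_3(a)+\rho_3(b)=\rho_3(c)+\rho_3(d)$ whenever $a,b,c,d\in X$ and $a+b=c+d$. Fix such a quadruple and set $y':=y+a-d$, where $y\in G$ is a parameter still to be chosen. The plan is to impose on $y$ the four conditions
\[\rho_3(a)=\rho_1(a+y)-\rho_2(y),\quad \rho_3(c)=\rho_1(c+y)-\rho_2(y),\quad \rho_3(b)=\rho_1(b+y')-\rho_2(y'),\quad \rho_3(d)=\rho_1(d+y')-\rho_2(y').\]
Because $a,b,c,d\in X$, each of these four conditions confines $y$ to a subset of $G$ of density at least $1-\sqrt{\epsilon}$, so their intersection is non-empty provided $4\sqrt{\epsilon}<1$, which indeed holds since $\epsilon<\tfrac1{100}$. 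For any such $y$, in the combination $\rho_3(a)+\rho_3(b)-\rho_3(c)-\rho_3(d)$ the contributions $-\rho_2(y)$ coming from $\rho_3(a)$ and $\rho_3(c)$ cancel, as do the contributions $-\rho_2(y')$ coming from $\rho_3(b)$ and $\rho_3(d)$; and since $a+b=c+d$ gives $a-d=c-b$, we get $d+y'=a+y$ and $b+y'=c+y$, so the four $\rho_1$-terms $\rho_1(a+y),\rho_1(b+y'),-\rho_1(c+y),-\rho_1(d+y')$ also cancel in pairs. Hence $\rho_3(a)+\rho_3(b)=\rho_3(c)+\rho_3(d)$.

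Finally, since $|X|>\tfrac45|G|$ and $\rho_3|_X$ is a $2$-homomorphism, Lemma~\ref{easyExtn} applied with $V=G$ and $v_0=0$ yields an affine map $\alpha\colon G\to H$ extending $\rho_3|_X$; then $\rho_3(x)=\alpha(x)$ for every $x\in X$, and $|X|\geq(1-\sqrt{\epsilon})|G|$, which is exactly what is claimed. I expect the one delicate point to be the bookkeeping in the middle step — choosing $y'$ as precisely the right translate of $y$ so that all eight $\rho_1$- and $\rho_2$-values pair off, which works exactly because $a-d=c-b$; everything else is a routine averaging argument followed by the invocation of Lemma~\ref{easyExtn}.
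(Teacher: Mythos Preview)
Your proof is correct and follows essentially the same approach as the paper's: your set $X$ is precisely the paper's set $D$ of ``popular differences'', your good parameters $y$ for a given $x\in X$ form exactly the paper's set $R(x)$, and your algebraic verification that $\rho_3$ respects additive quadruples on $X$ is a minor relabelling of the paper's computation, followed by the same appeal to Lemma~\ref{easyExtn}.
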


\begin{proof} Let $\Omega = \{(x_1, x_2) \in G \times G\colon \rho_1(x_1) - \rho_2(x_2) = \rho_3(x_1 - x_2)\}$. Call an element $y \in G$ a \emph{popular difference} if $y = x_1 - x_2$ for at least $(1- \sqrt{\epsilon}) |G|$ of $(x_1, x_2) \in \Omega$. Then there are at least $(1- \sqrt{\epsilon})|G|$ popular differences in $V$. We claim that $\rho_3$ is a 2-homomorphism on the set of popular differences $D$. For each $d \in D$, define the set $R(d) = \{x \in G\colon (x+d,x) \in \Omega\}$.\\
\indent Let $d_1, d_2, d_3, d_4 \in D$ be an additive quadruple: that is, a quadruple such that $d_4 - d_3 + d_2 - d_1 = 0$. Consider the set
\[\Big(\Big(R(d_1) \cap R(d_2)\Big) + d_2\Big) \cap \Big(\Big(R(d_3) \cap R(d_4)\Big) + d_3\Big),\]
which is non-empty, since $\sqrt{\epsilon} < 1/4$. Let $y$ be an arbitrary element of that set. Then $y - d_2 \in R(d_1) \cap R(d_2)$ and $y - d_3 \in R(d_3) \cap R(d_4)$, so we have $(y - d_2 + d_1, y - d_2), (y, y - d_2), (y, y- d_3), (y-d_3 + d_4, y-d_3) \in \Omega$. Hence,
\begin{align*}\rho_3(d_4) - \rho_3(d_3) &+ \rho_3(d_2) - \rho_3(d_1) \\
= &\Big(\rho_1(y-d_3 + d_4) - \rho_2(y- d_3)\Big) - \Big(\rho_1(y) - \rho_2(y- d_3)\Big)\\
&\hspace{1cm}+ \Big(\rho_1(y) - \rho_2(y- d_2)\Big) - \Big(\rho_1(y-d_2 + d_1) - \rho_2(y- d_2)\Big)\\
= & \Big(\rho_1(y-d_3 + d_4) - \rho_1(y) + \rho_1(y) - \rho_1(y-d_2 + d_1)\Big)\\
 &\hspace{1cm}-\Big(\rho_2(y- d_3) -\rho_2(y- d_3) + \rho_2(y- d_2) - \rho_2(y- d_2) \Big) = 0,\end{align*}
as desired. But $\rho_3$ is a 2-homomorphism on a subset of $V$ of size at least $(1-\sqrt{\epsilon})|G|$, so the claim follows from Lemma~\ref{easyExtn}.\end{proof}

In the context of respecting additive quadruples, we have the following corollary.

\begin{corollary}\label{approxF2homm}Let $p$ be a prime and let $\epsilon \in (0, \frac{1}{4 \cdot 10^4})$. Suppose that $V \leq G$ and $H$ are finite-dimensional $\mathbb{F}_p$-vector spaces and that $v_0 \in G$. Let $X \subset v_0 + V$ and let $\phi \colon X \to H$ be a map such that $\phi(a) + \phi(b) = \phi(c) + \phi(a + b - c)$ holds for at least $(1-\varepsilon) |V|^3$ of $(a,b,c) \in X^3$. Then there is an affine map $\alpha \colon v_0 + V \to H$ such that $\phi(x) = \alpha(x)$ for at least $(1-5\sqrt[4]{\epsilon})|V|$ of $x \in X$.\end{corollary}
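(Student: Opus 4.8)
The plan is to translate the problem onto a subspace, extend $\phi$ to that whole subspace, reparametrise the additive quadruples so that the hypothesis takes exactly the form needed by Lemma~\ref{3approxHom}, and then read off the conclusion. First I would replace $X$ by $X-v_0$ and $\phi$ by $x\mapsto\phi(x+v_0)$, reducing to the case $v_0=0$, so that $X\subseteq V$ and we seek an affine map $\alpha\colon V\to H$; at the very end one translates back by $v_0$. Since the number of ``good'' triples is at most $|X|^3$ and at least $(1-\epsilon)|V|^3$, we get $|X|\ge(1-\epsilon)|V|$, so $X$ is dense in $V$. Extend $\phi$ to $g\colon V\to H$ by setting $g=0$ off $X$. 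Whenever $a,b,c\in X$ and $\phi(a)+\phi(b)=\phi(c)+\phi(a+b-c)$, the fourth point $a+b-c$ also lies in $X$, so the relation $g(a)+g(b)=g(c)+g(a+b-c)$ holds for at least $(1-\epsilon)|V|^3$ triples $(a,b,c)\in V^3$. (Nothing here uses anything about $p$, in particular $p=2$ causes no trouble.)

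\textbf{Reparametrisation.} The substitution $(a,b,c)\mapsto(x,b,z):=(a,b,a-c)$ is a bijection of $V^3$, and under it the relation reads $g(x)-g(x-z)=g(b+z)-g(b)$; hence this holds for at least $(1-\epsilon)|V|^3$ triples $(x,b,z)$. Averaging over $b$ produces a single $b_0\in V$ for which it holds for at least $(1-\sqrt\epsilon)|V|^2$ pairs $(x,z)$. Writing $x_1=x$ and $x_2=x-z$ (again a bijection, now of $V^2$), this says precisely that $\rho_1(x_1)-\rho_2(x_2)=\rho_3(x_1-x_2)$ for at least $(1-\sqrt\epsilon)|V|^2$ pairs $(x_1,x_2)$, where $\rho_1=\rho_2=g$ and $\rho_3(t):=g(b_0+t)-g(b_0)$.

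\textbf{Applying Lemma~\ref{3approxHom} and finishing.} Since $\epsilon<(4\cdot 10^4)^{-1}$ we have $\sqrt\epsilon<\tfrac1{100}$, so Lemma~\ref{3approxHom} with error parameter $\sqrt\epsilon$ yields an affine map $\beta\colon V\to H$ with $\rho_3(t)=\beta(t)$, equivalently $g(b_0+t)=g(b_0)+\beta(t)$, for at least $(1-\epsilon^{1/4})|V|$ values of $t$. Consequently the affine map $\alpha(s):=g(b_0)+\beta(s-b_0)$ agrees with $g$ on at least $(1-\epsilon^{1/4})|V|$ points $s\in V$, hence with $\phi$ on at least $|X|-\epsilon^{1/4}|V|\ge(1-\epsilon-\epsilon^{1/4})|V|\ge(1-5\sqrt[4]{\epsilon})|V|$ points of $X$. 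Undoing the initial translation produces the required affine map on $v_0+V$.

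\textbf{Where the difficulty is.} The only genuine decision is the reparametrisation of the quadruple relation, and within it the step of fixing one good base point $b_0$ to pass from a relation in three free variables to one in two; everything else is bookkeeping, and the final density estimate has plenty of slack (the argument in fact gives $1-2\sqrt[4]{\epsilon}$). I therefore do not expect a real obstacle: all the substance of the corollary sits in Lemma~\ref{3approxHom} (and, underneath it, Lemma~\ref{easyExtn}).
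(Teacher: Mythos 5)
Your argument is correct, and it routes through the same key tool as the paper (Lemma~\ref{3approxHom}, after extending $\phi$ to the whole coset and reparametrising the quadruple relation). The one genuine difference is the order in which you introduce the base point. The paper defines $\rho_3(d)$ to be the \emph{most frequent value} of $\phi(x+d)-\phi(x)$, applies Lemma~\ref{3approxHom} to this auxiliary function, and only then averages over $x$ to convert information about $\rho_3$ back into information about $\phi$; you instead average over $b$ \emph{first} to pin down a good base point $b_0$, define $\rho_3(t) = g(b_0+t)-g(b_0)$ directly, and then Lemma~\ref{3approxHom} immediately hands you the affine map as a shift of $\beta$. Your ordering avoids the most-frequent-value device and the final averaging step, so it is a little shorter; it also has more slack in the constants (as you note, averaging over $b$ actually gives $(1-\epsilon)|V|^2$ good pairs rather than the $(1-\sqrt{\epsilon})|V|^2$ you use, so the lemma could be invoked with error $\epsilon$ rather than $\sqrt{\epsilon}$, yielding $(1-O(\sqrt{\epsilon}))|V|$ agreement and relaxing the constraint on $\epsilon$ to $\epsilon < 1/100$). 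The paper's version is tighter on the hypothesis $\epsilon < 1/(4\cdot 10^4)$ precisely because it applies the lemma with parameter $2\sqrt{\epsilon}$.
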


\begin{proof}Note that the assumptions tacitly imply that $|X| \geq (1- \varepsilon)|V|$. Extend $\phi$ arbitrarily to $v_0 + V$ for technical reasons; we shall remove these additional elements at the end of the proof. We have $\phi(a) - \phi(c) = \phi(a + b - c) - \phi(b)$ for at least $(1-\varepsilon)|V|^3$ of choices of $(a,b,c)$. Making a change of variables, we obtain
\[\phi(x + d) - \phi(x) = \phi(y + d) - \phi(y)\]
for at least $(1-\varepsilon)|V|^3$ of choices of $(x,y,d) \in (v_0 + V) \times (v_0 + V) \times V$. Define $\rho_1 = \rho_2 \colon V \to H$ to be the map $\rho_1(u) = \phi(v_0 + u)$. For each $d \in V$ let $\rho_3$ be the most frequent value of $\phi(x + d) - \phi(x)$ as $x$ ranges over $v_0 + V$ (if there is a tie, choose an arbitrary winner). By averaging, for at least $(1-\sqrt{\varepsilon}) |V|$ elements $d \in V$ we have $\phi(x + d) - \phi(x) = \phi(y + d) - \phi(y)$ for at least $(1-\sqrt{\varepsilon})|V|^2$ elements $(x,y) \in (v_0 + V)^2$. Let $D$ be the set of all such $d \in V$. In particular, when $d \in D$, we obtain $\phi(x + d) - \phi(x) = \rho_3(d)$ for at least $(1-\sqrt{\varepsilon})|V|$ of $x \in v_0 + V$. Replacing $\phi$ with $\rho_1$ and $\rho_2$, we conclude that 
\[\rho_1(u) - \rho_2(v) = \rho_3(u-v)\]
holds for at least $(1-2\sqrt{\varepsilon})|V|^2$ of $(u,v) \in V^2$. By Lemma~\ref{3approxHom}, there is an affine map $\alpha \colon V \to H$ such that $\rho_3(u) = \alpha(u)$ for at least $(1-2\sqrt[4]{\epsilon})|V|$ of $u \in V$. By restricting our attention only to $d \in D$, we get that $\phi(x + d) - \phi(x) = \alpha(d)$ holds for at least $(1-4\sqrt[4]{\varepsilon})|V|^2$ of $(x,d) \in (v_0 + V) \times V$. Average over $x$ and ignore the elements of $(v_0 + V) \setminus X$ to finish the proof.\end{proof}

We also need a combination of Lemma~\ref{easyExtn} and Corollary~\ref{approxF2homm}, in the case where the domain of the map $\phi$ is a dense subset of the coset but the number of additive quadruples respected by $\phi$ is significantly higher than expected. This stronger assumption allows us to remove fewer points from the domain and still get a restriction of an affine map.

\begin{corollary}\label{easyExtnDoubleApprox}There is an absolute constant $\varepsilon_0 > 0$ such that the following holds. Let $p$ be a prime, let $\epsilon \in (0, \varepsilon_0)$ and let $\eta > 0$. Suppose that $V \leq G$ and $H$ are finite-dimensional $\mathbb{F}_p$-vector spaces, that $v_0 \in G$, and that $X \subset v_0 + V$ is a set of size at least $(1-\varepsilon)|V|$. Let $\phi \colon X \to H$ be a map such that the number of quadruples $a,b,c,d\in X$ with $\phi(a) + \phi(b) \not= \phi(c) + \phi(d)$ and $a + b = c + d$ is at most $\eta |V|^3$. Then there are a subset $X' \subset X$ and an affine map $\psi \colon v_0 + V \to H$ such that $|X \setminus X'| \leq O(\eta^{1/4})|V|$ and $\psi(x) = \phi(x)$ for all $x \in X'$.\end{corollary}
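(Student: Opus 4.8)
The plan is to first extract a candidate affine map $\alpha$ from Corollary~\ref{approxF2homm}, and then to use the two strong hypotheses --- that $X$ has density at least $1-\varepsilon$ and that only $\eta|V|^3$ additive quadruples in $X$ are violated by $\phi$ --- to show that the exceptional set $B=\{x\in X:\phi(x)\ne\alpha(x)\}$ is in fact tiny, of size only $O(\eta)|V|$. Before anything else I would dispose of the case where $\eta$ is not small: if $\eta$ exceeds some absolute constant $\eta_0$, then taking $X'=\varnothing$ and $\psi$ arbitrary already gives $|X\setminus X'|\le|V|\le O(\eta^{1/4})|V|$. So I may assume $\eta<\eta_0$, and, after possibly shrinking $\varepsilon_0$, that $2\varepsilon+\eta<\tfrac{1}{4\cdot 10^4}$.

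Next I would verify the hypothesis of Corollary~\ref{approxF2homm}. For fixed $a,b\in X$ the map $c\mapsto a+b-c$ is a bijection of $v_0+V$, so since $|X|\ge(1-\varepsilon)|V|$ there are at most $2\varepsilon|V|^3$ triples $(a,b,c)\in X^3$ with $a+b-c\notin X$; combined with the assumption on violated quadruples, this shows that $\phi(a)+\phi(b)=\phi(c)+\phi(a+b-c)$ holds for all but at most $(2\varepsilon+\eta)|V|^3$ of the triples $(a,b,c)\in X^3$. Corollary~\ref{approxF2homm} then produces an affine map $\alpha\colon v_0+V\to H$ with $|B|\le 5(2\varepsilon+\eta)^{1/4}|V|$, which is small given the bounds on $\varepsilon_0$ and $\eta_0$.

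The main step --- and the only one with any content --- is the boosting. Set $Y=X\setminus B$ and $\theta=\varepsilon+|B|/|V|$, so that $|Y|\ge(1-\theta)|V|$ and, after a further shrinking of $\varepsilon_0,\eta_0$ if needed, $\theta<\tfrac16$. I would fix $x\in B$ and count as follows: for each $a\in Y$ the map $c\mapsto x+a-c$ is a bijection of $v_0+V$ (as $x,a\in v_0+V$), so at least $(1-2\theta)|V|$ values of $c\in Y$ have $d:=x+a-c$ again in $Y$; summing over $a\in Y$ gives at least $(1-3\theta)|V|^2$ pairs $(a,c)\in Y^2$ with $d\in Y$. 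For each such pair, $x+a=c+d$ with $a,c,d\in X$, and since $\alpha$ is affine we have $\phi(c)+\phi(d)=\alpha(c)+\alpha(d)=\alpha(x)+\alpha(a)\ne\phi(x)+\alpha(a)=\phi(x)+\phi(a)$, so $(x,a,c,d)$ is a violated quadruple. As $x$ ranges over $B$ these ordered quadruples are pairwise distinct (the first coordinate identifies $x$), whence $|B|(1-3\theta)|V|^2\le\eta|V|^3$, i.e.\ $|B|\le 2\eta|V|$. Taking $X'=Y$ and $\psi=\alpha$ then finishes the argument, in fact with the bound $O(\eta)$ rather than $O(\eta^{1/4})$. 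I do not anticipate a genuine obstacle; the only things to watch are choosing $\varepsilon_0,\eta_0$ small enough that all the intermediate smallness conditions (the range required by Corollary~\ref{approxF2homm}, and $\theta<\tfrac16$) hold simultaneously, and the routine bijectivity bookkeeping in the counting.
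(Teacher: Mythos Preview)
Your argument is correct and, in fact, delivers the stronger bound $|X\setminus X'|\le O(\eta)|V|$. It differs substantially from the paper's proof, which is more elaborate: there the authors first pass to a smoothed map $\tilde\phi$ defined by majority vote over triples, control the number of non-respected additive \emph{sextuples} in $X$, apply Corollary~\ref{approxF2homm} to $\tilde\phi$ (rather than to $\phi$ directly), and then transfer the conclusion back to $\phi$ via a further averaging step. Each stage loses a power of $\eta$, which is why they end up with $O(\eta^{1/4})$.

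Your route is more economical. You apply Corollary~\ref{approxF2homm} directly to $\phi$ with parameter $O(\varepsilon+\eta)$, obtaining an affine $\alpha$ and a bad set $B$ of size $O((\varepsilon+\eta)^{1/4})|V|$; this first bound is crude but suffices to make $Y=X\setminus B$ have density $1-\theta$ with $\theta<1/3$. The boosting step is then a clean one-shot counting: each $x\in B$ participates in $\Omega(|V|^2)$ violated quadruples $(x,a,c,d)$ with $a,c,d\in Y$, forcing $|B|\le O(\eta)|V|$. The key observation the paper does not exploit is that once you have \emph{any} affine candidate agreeing with $\phi$ on a set $Y$ of density bounded away from $2/3$, a single bad point already generates quadratically many violated quadruples, so the hypothesis immediately caps $|B|$ linearly in $\eta$. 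Your bookkeeping (the bijectivity, the distinctness of the ordered quadruples, the reduction to small $\eta$) is all sound.
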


\begin{proof}We say that a pair $(a,b) \in X^2$ is \emph{good} if there are at most $\sqrt{\eta}|V|$ elements $c \in X$ such that $a - b + c \in X$ and $\phi(a) - \phi(b) \not= \phi(a-b+c) - \phi(c)$. Otherwise, the pair is \emph{bad}. The number of bad pairs in $X$ is at most $2 \sqrt{\eta} |V|^2$.\\
\indent We now show that there are at most $O(\sqrt{\eta})|V|^5$ sextuples $(a,b,c,d,e,f) \in X^6$ such that $a - b + c = d - e + f$, but $\phi(a) - \phi(b) + \phi(c) \not= \phi(d) - \phi(e) + \phi(f)$. There are at most $O(\sqrt{\eta})|V|^5$ such sextuples where additionally $(a,b)$ or $(d,e)$ is a bad pair, so without loss of generality we may assume that both these pairs are good. Fix such a sextuple $(a,b,c,d,e,f)$. There are at least $|X \cap (X - a + b) \cap (X - d + e)| \geq 3|X| - 2|V| \geq (1-3\varepsilon)|V|$ elements $x \in X$ such that $a - b + x, d - e + x \in X$. Hence, for $(1-3\varepsilon - 2 \sqrt{\eta})|V|$ elements $x \in X$, we have $a - b + x, d - e + x \in X$, $\phi(a) - \phi(b) = \phi(a - b + x) - \phi(x)$ and $\phi(d) - \phi(e) = \phi(d - e + x) - \phi(x)$. Therefore, provided $\varepsilon, \eta \leq \frac{1}{100}$, each such sextuple gives rise to at least $\frac{1}{2}|V|$ elements $x \in X$ such that $a - b + x, d - e + x \in X$ and
\[\phi(a - b + x) - \phi(x) + \phi(c) = \phi(a) - \phi(b) + \phi(c) \not= \phi(d) - \phi(e) + \phi(f) =\phi(d - e + x) - \phi(x) + \phi(f),\]
from which it follows that
\[\phi(a - b + x) + \phi(c) \not= \phi(d - e + x) + \phi(f).\]
Let $N_6$ be the number of the considered sextuples, i.e.\ all six elements belong to $X^6$, they are additive and the pairs $(a,b)$ and $(d,e)$ are good pairs. Define $\Omega$ to be the set
\begin{align*}\Big\{(a,b,c,d,e,f,x) \in (v_0 + V)^7 \colon\, &a - b + x, c, d - e + x, f \in X,\,\, a - b + c = d - e + f\\
&\hspace{3cm}\phi(a - b + x) + \phi(c) \not= \phi(d - e + x) + \phi(f)\Big\}.\end{align*}
The argument above shows that $|\Omega| \geq \frac{1}{2} |V| N_6$. On the other hand, every additive quadruple in $X$ not respected by $\phi$ contributes $|V|^3$ to $|\Omega|$. Double-counting proves that $N_6 \leq O(\eta|V|^5)$ and the claimed upper bound on additive sextuples in $X$ not respected by $\phi$ follows.\\

Now define a map $\tilde{\phi}$ as follows. For fixed $x \in v_0 + V$, if there is a value $h \in H$ such that $\phi(a) - \phi(b) + \phi(c) = h$ for all but at most $\sqrt[4]{\eta}|V|^2$ triples $(a,b,c) \in X^3$ such that $a - b + c = x$, set $\tilde{\phi}(x) = h$. Let $\tilde{X}$ be the set of all such $x \in v_0 + V$. Thus, $\tilde{\phi}$ is a map from $\tilde{X}$ to $H$. Note that $|(v_0 + V) \setminus \tilde{X}| = O(\eta^{1/4} |V|)$.\\
\indent We claim that $\tilde{\phi}$ respects all but $O(\eta|V|^3)$ additive quadruples in $\tilde{X}$. Consider any $x_1, x_2, x_3, x_4 \in \tilde{X}$ such that $x_1 + x_2 = x_3 + x_4$ but $\tilde{\phi}(x_1) + \tilde{\phi}(x_2) \not= \tilde{\phi}(x_3) + \tilde{\phi}(x_4)$. For each $a \in X$ consider the set $X \cap \Big(\bigcap_{i=1}^4 X  - x_i + a\Big)$. The size of this intersection is at least $(1 - 5\varepsilon)|V|$, so there are at least $\frac{1}{2}|V|^2$ pairs $a,b \in X$ such that $c_i = x_i - a + b \in X$ for $i=1,2,3,4$. By definition of $\tilde{X}$, this means that for at least $\Big(\frac{1}{2} - 4\sqrt[4]{\eta}\Big)|V|^2$ pairs $a,b \in X$, we additionally have $\tilde{\phi}(x_i) = \phi(a) - \phi(b) + \phi(c_i)$ for each $i$. Thus, 
\begin{align*}\phi(c_1) + \phi(c_2) - \phi(c_3) - \phi(c_4) = &\Big(\tilde{\phi}(x_1) - \phi(a) + \phi(b)\Big) + \Big(\tilde{\phi}(x_2) - \phi(a) + \phi(b)\Big) \\
&\hspace{2cm}- \Big(\tilde{\phi}(x_3) - \phi(a) + \phi(b)\Big) - \Big(\tilde{\phi}(x_4) - \phi(a) + \phi(b)\Big) \not= 0.\end{align*}
We conclude that for each additive quadruple in $\tilde{X}$ not respected by $\tilde{\phi}$, we may find at least $\frac{1}{3}|V|$ quadruples $(c_1, c_2, c_3, c_4) \in X^4$ such that there is some $d \in V$ with $c_i = x_i + d$ for each $i$. The claimed upper bound on the number of non-respected additive quadruples in $\tilde{X}$ now follows from double-counting.\\
\indent We may now apply Corollary~\ref{approxF2homm} to find an affine map $\psi \colon v_0 + V \to H$ such that $\phi(x) = \tilde{\psi}(x)$ for all but $O(\sqrt[4]{\eta}|V|)$ elements $x \in \tilde{X}$. Hence, 
\begin{equation} \label{goodTriplesEqnDoubleApp}
\phi(a) - \phi(b) + \phi(c) = \psi(a-b+c)
\end{equation}
for all but at most $O(\eta^{1/4}|V|^3)$ triples $(a,b,c) \in X^3$.\\
\indent Finally, define $X' \subset X$ as the set of all $x \in X$ such that for all but at most $\sqrt{\eta}|V|^2$ of choices of $a,b \in X$ such that $a + b - x \in X$, we have $\phi(x) = \phi(a) + \phi(b) - \phi(a +b - x)$. Thus, $|X \setminus X'| \leq O(\sqrt{\eta}|V|)$. If $\phi(x) \not= \psi(x)$ for $x \in X'$, then, we get at least $\frac{1}{2}|V|^2$ of $(a,b) \in X^2$ such that $a + b - x \in X$ and
\[\psi(x) \not= \phi(x) = \phi(a) - \phi(a + b - x) + \phi(b).\]
By~\eqref{goodTriplesEqnDoubleApp}, we see that $\phi(x) \not= \psi(x)$ may happen for at most $O(\eta^{1/4}|V|)$ elements $x \in X'$, which completes the proof.\end{proof}

We also need the combination of Freiman's theorem and the Balog-Szemer\'edi-Gowers theorem that we mentioned in the introduction (Theorem~\ref{introFreAndBSG}). Using Sanders's bound for the Bogolyubov-Ruzsa lemma~\cite{Sanders}, it takes the following form.

\begin{theorem}\label{FreBSG}Let $p$ be a prime and let $G$ and $H$ be finite-dimensional vector spaces over $\mathbb{F}_p$. Let $A \subset G$ and let $\psi \colon A \to H$ be a map that respects at least $c |G|^3$ additive quadruples -- that is, there are at least $c|G|^3$ choices of $(x_1, x_2, x_3, x_4) \in A^4$ such that $x_1 + x_2 = x_3 + x_4$ and $\phi(x_1) + \phi(x_2) = \phi(x_3) + \phi(x_4)$. Then there is an affine map $\alpha \colon G \to H$ such that $\alpha(x) = \psi(x)$ for $\exp(-\log^{O(1)}(c^{-1}))|G|$ values $x \in G$.\end{theorem}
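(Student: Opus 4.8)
The plan is to pass to the graph of $\psi$ and combine the Balog-Szemer\'edi-Gowers theorem with Sanders's quantitative version of Freiman's theorem, and then to read off an affine map by a fibrewise pigeonhole argument.

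First I would consider the graph $\Gamma = \{(x,\psi(x)) \colon x \in A\}$, regarded as a subset of the finite-dimensional $\mathbb{F}_p$-vector space $G \oplus H$. A quadruple $(x_1,x_2,x_3,x_4) \in A^4$ with $x_1+x_2 = x_3+x_4$ that is respected by $\psi$ is precisely a quadruple of points of $\Gamma$ satisfying $(x_1,\psi(x_1))+(x_2,\psi(x_2)) = (x_3,\psi(x_3))+(x_4,\psi(x_4))$ in $G \oplus H$, so the additive energy of $\Gamma$ is at least $c|G|^3$. Since the additive energy of any set is at most the cube of its size, we get $|A|^3 = |\Gamma|^3 \geq c|G|^3$, so $|A| \geq c^{1/3}|G|$; and as $|A| \leq |G|$, the energy of $\Gamma$ is also at least $c|\Gamma|^3$. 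The Balog-Szemer\'edi-Gowers theorem~\cite{BalogSzemeredi,TimSze}, with its standard polynomial bounds, then produces a subset $\Gamma' \subseteq \Gamma$ with $|\Gamma'| \geq c^{O(1)}|\Gamma| \geq c^{O(1)}|G|$ and $|\Gamma'+\Gamma'| \leq c^{-O(1)}|\Gamma'|$.

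Next I would apply the finite-field version of Freiman's theorem in the quantitative form that follows from Sanders's bound for the Bogolyubov-Ruzsa lemma~\cite{Sanders}: since $\Gamma'$ has doubling at most $c^{-O(1)}$, there are a subspace $W \leq G \oplus H$ and an element $w_0$ with $\Gamma' \subseteq w_0+W$ and $|W| \leq \exp(\log^{O(1)}(c^{-1}))\,|\Gamma'|$; set $L = |W|/|\Gamma'| \leq \exp(\log^{O(1)}(c^{-1}))$. To finish, let $\pi \colon G \oplus H \to G$ be the projection onto the first coordinate and put $W_0 = W \cap (\{0\} \oplus H) = \ker(\pi|_W)$. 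Because $\Gamma'$ is the graph of a function it meets each fibre of $\pi$ in at most one point, while each nonempty fibre of $\pi|_{w_0+W}$ is a translate of $W_0$ and hence has size $|W_0|$; counting fibres gives $|\Gamma'| \leq |W|/|W_0|$, so $|W_0| \leq L$. Choosing a subspace $W_1$ with $W = W_0 \oplus W_1$, the coset $w_0+W$ is the disjoint union of the $|W_0|$ cosets $w+w_0+W_1$ over $w \in W_0$, each of which (since $\pi|_{W_1}$ is injective) is the graph of an affine function on $\pi(w_0+W) \subseteq G$. By pigeonhole one of these cosets contains at least $|\Gamma'|/|W_0| \geq |\Gamma'|/L \geq \exp(-\log^{O(1)}(c^{-1}))|G|$ points of $\Gamma'$; extending the corresponding affine function to an affine map $\alpha \colon G \to H$, every such point $(x,\psi(x))$ satisfies $\psi(x) = \alpha(x)$, which is the desired conclusion.

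I expect the only delicate point to be this last step: the subspace $W$ supplied by Freiman's theorem need not be a graph over $G$ --- that is, $W_0$ may be nontrivial --- so one cannot simply declare $w_0+W$ to be the graph of an affine map. The resolution is that a graph filling a $1/L$ fraction of $w_0+W$ forces the vertical part $W_0$ to have size at most $L$, after which slicing $w_0+W$ into affine graphs parallel to a complement of $W_0$ and pigeonholing recovers a genuine affine map agreeing with $\psi$ on a set of the required density. Everything else is a routine concatenation of the cited results, with the polynomial losses from the Balog-Szemer\'edi-Gowers step absorbed into the $\log^{O(1)}(c^{-1})$ coming from Sanders's theorem.
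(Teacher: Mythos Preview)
Your proof is correct and is precisely the standard argument the paper has in mind: the paper does not actually prove this theorem but simply states it as the well-known combination of the Balog--Szemer\'edi--Gowers theorem with Freiman's theorem (using Sanders's quasipolynomial bounds), applied to the graph of $\psi$. Your graph-plus-pigeonhole argument to extract an affine map from the containing coset, including the observation that $|W_0|\le L$ because a graph meets each vertical fibre at most once, is exactly the routine way to carry this out.
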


\subsection{Approximating multiaffine varieties}

\noindent As above, let $\mathbb{F}$ be a finite field of size $\mathbf{f}$. Let $\mathcal{G} \subset \mathcal{P}([k])$ be a collection of sets. We say that $\mathcal{G}$ is a \emph{down-set} if it is closed under taking subsets. We also say that a multiaffine map $\alpha \colon G_{[k]} \to \mathbb{F}^r$ is \emph{$\mathcal{G}$-supported} if it can be written in the form $\alpha(x_{[k]}) = \sum_{I \in \mathcal{G}} \alpha_I(x_I)$ for some multilinear maps $\alpha_I \colon G_I \to \mathbb{F}^r$. We say that a variety is \emph{$\mathcal{G}$-supported} if its defining map is $\mathcal{G}$-supported. For example, any multiaffine form on $G_{[k]}$ is $\mathcal{P}([k])$-supported, any constant map on $G_{[k]}$ is $\{\emptyset\}$-supported and, as a concrete example, the form $\phi \colon G_1 \times G_2 \to \mathbb{F}$ given by $\phi(x_1, x_2) = (x_{1})_i + (x_{2})_i$ for any coordinate $i$ is $\{\emptyset, \{1\}, \{2\}\}$-supported.\\

\noindent\textbf{Remark.} In this subsection, the explicit constants and the implicit constants in big-Oh notation depend on $k$ only, and no longer on the field $\mathbb{F}$.\\

The most basic fact about varieties is that their codimension gives an easy lower bound on their size.

\begin{lemma}[Lemma 11 in~\cite{LukaRank}]\label{varsizelemma}Suppose that $B \subset G_{[k]}$ is a non-empty variety of codimension at most $r$. Then $|B| \geq \mathbf{f}^{-kr} |G_{[k]}|$.\end{lemma}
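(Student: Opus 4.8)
The plan is to induct on $k$. For the base case $k=1$, a variety $B \subset G_1$ of codimension at most $r$ is the set of solutions of a system of $r$ affine equations $\alpha_i(x_1) = \beta_i$ in $G_1$; since $B$ is non-empty, it is a coset of a subspace of codimension at most $r$ (the codimension drops by at most the number of equations), so $|B| \geq \mathbf{f}^{-r}|G_1|$, which is even better than claimed.

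For the inductive step, suppose the result holds for $k-1$ and let $B \subset G_{[k]}$ be a non-empty variety defined by a multiaffine map $\alpha \colon G_{[k]} \to \mathbb{F}^r$. The key idea is to fix the last coordinate. Write $\alpha$ as a function of $x_{[k-1]}$ and $x_k$; for each fixed value $a \in G_k$, the map $x_{[k-1]} \mapsto \alpha(x_{[k-1]}, a)$ is a multiaffine map $G_{[k-1]} \to \mathbb{F}^r$ (it is multiaffine in the first $k-1$ variables because $\alpha$ was multiaffine, and plugging in a fixed value in one slot preserves multiaffineness in the remaining slots), so its zero set $B_a = \{x_{[k-1]} \colon (x_{[k-1]}, a) \in B\}$ is a variety in $G_{[k-1]}$ of codimension at most $r$. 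First I would pick a point $(y_{[k-1]}, b) \in B$, which exists since $B$ is non-empty. Then $y_{[k-1]} \in B_b$, so $B_b$ is non-empty, and by the inductive hypothesis $|B_b| \geq \mathbf{f}^{-(k-1)r}|G_{[k-1]}|$.

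Next I would run the same argument in the last coordinate: for each fixed $x_{[k-1]} \in B_b$, consider the slice $\{x_k \in G_k \colon (x_{[k-1]}, x_k) \in B\}$. This is the zero set of the affine map $x_k \mapsto \alpha(x_{[k-1]}, x_k)$ from $G_k$ to $\mathbb{F}^r$, hence a variety of codimension at most $r$ in $G_k$; moreover it is non-empty precisely because $x_{[k-1]} \in B_b$ witnesses membership via $x_k = b$. Wait — that only shows the slice over $x_{[k-1]}$ contains $b$ when $x_{[k-1]} \in B_b$; so indeed for every $x_{[k-1]} \in B_b$ the $k$-th slice is a non-empty variety in $G_k$ of codimension at most $r$, and by the $k=1$ case it has size at least $\mathbf{f}^{-r}|G_k|$. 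Summing over $x_{[k-1]} \in B_b$,
\[|B| \geq \sum_{x_{[k-1]} \in B_b} \mathbf{f}^{-r}|G_k| \geq \mathbf{f}^{-(k-1)r}|G_{[k-1]}| \cdot \mathbf{f}^{-r}|G_k| = \mathbf{f}^{-kr}|G_{[k]}|,\]
as desired.

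The only genuinely delicate point — and the step I would be most careful about — is verifying that fixing one coordinate of a multiaffine map yields a multiaffine map of the same codimension in the remaining coordinates, and that a non-empty affine variety of codimension $r$ in a single vector space has density at least $\mathbf{f}^{-r}$ (equivalently, is a coset of a subspace of codimension at most $r$). Both are routine, but they are exactly where the structure of multiaffine maps (as opposed to arbitrary polynomial maps) is used: restricting a multilinear form $\alpha_I$ in the variable $x_k$ either kills it (if $k \in I$ and we set $x_k$ to a value making it vanish — but in general it becomes a multilinear form on $G_{I \setminus \{k\}}$ scaled by the relevant coordinate functionals of the fixed value) or leaves it untouched (if $k \notin I$), so the restricted map is again a sum of multilinear forms on subsets of $[k-1]$, i.e. multiaffine, with the same target $\mathbb{F}^r$. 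Once this bookkeeping is in place the induction goes through cleanly.
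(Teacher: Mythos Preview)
Your proof is correct. The paper itself does not give a proof of this lemma --- it is quoted as Lemma~11 of \cite{LukaRank} and used as a black box --- but your induction on $k$, fixing one coordinate to reduce to $G_{[k-1]}$ and then using that each non-empty slice in the remaining direction is a coset of codimension at most $r$, is the standard and natural argument for this bound.
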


\indent We recall the following results from~\cite{LukaRank}. The first states that a variety can always be approximated from the outside by a variety of low codimension. (This statement is mainly interesting when the given variety is dense, since otherwise any sufficiently small low-rank variety containing it will work.) The second is a generalization that states that a collection of varieties defined by multilinear maps that belong to a low-dimensional subspace can be simultaneously approximated from the outside by a similar collection where the varieties all have low codimension. 

\begin{lemma}[Approximating dense varieties externally, Lemma 12 in~\cite{LukaRank}]\label{varOuterApprox} Let $A \colon G_{[k]} \to H$ be a multiaffine map. Then for every positive integer $s$ there is a multiaffine map $\phi \colon G_{[k]} \to \mathbb{F}^s$ such that $A^{-1}(0) \subset \phi^{-1}(0)$ and $|\phi^{-1}(0) \setminus A^{-1}(0)| \leq \mathbf{f}^{-s}|G_{[k]}|$. If, additionally, $A$ is linear in coordinate $c$, then so is $\phi$. Moreover, if $\mathcal{G} \subset \mathcal{P}([k])$ is a down-set and if $A$ is $\mathcal{G}$-supported, then $\phi$ is also $\mathcal{G}$-supported.\footnote{The claim that $\phi$ is $\mathcal{G}$-supported does not appear in~\cite{LukaRank}, but follows from the proof given in that paper, since for each $i$, $\phi_i(x_{[k]}) = A(x_{[k]}) \cdot h_i$ for some $h_i \in H$.}\end{lemma}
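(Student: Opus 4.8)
The plan is to build $\phi$ by post-composing $A$ with a generic linear map $H \to \mathbb{F}^s$, selected by a simple averaging argument. Fix the dot product on $H$, and for $h_1,\dots,h_s \in H$ define $\phi_i(x_{[k]}) = h_i \cdot A(x_{[k]})$ for $i \in [s]$, and $\phi = (\phi_1,\dots,\phi_s) \colon G_{[k]} \to \mathbb{F}^s$. Whatever the $h_i$ are, each $\phi_i$ vanishes wherever $A$ does, so the inclusion $A^{-1}(0) \subseteq \phi^{-1}(0)$ holds automatically; the only work is to choose the $h_i$ so that $\phi^{-1}(0)$ is not much bigger than $A^{-1}(0)$.

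For that I would pick $h_1,\dots,h_s$ independently and uniformly at random from $H$ and estimate the expected size of $\phi^{-1}(0)\setminus A^{-1}(0)$. For a fixed point $x_{[k]}$ with $A(x_{[k]}) \neq 0$, the functional $h \mapsto h\cdot A(x_{[k]})$ is onto $\mathbb{F}$ (the dot product is non-degenerate), so $\mathbb{P}_{h}\big(h\cdot A(x_{[k]}) = 0\big) = \mathbf{f}^{-1}$; since the $h_i$ are independent, $\mathbb{P}\big(\phi(x_{[k]}) = 0\big) = \mathbf{f}^{-s}$. Summing over the at most $|G_{[k]}|$ such points gives $\mathbb{E}\,\big|\phi^{-1}(0)\setminus A^{-1}(0)\big| \leq \mathbf{f}^{-s}|G_{[k]}|$, so I fix a choice of $h_1,\dots,h_s$ attaining at most the average.

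It remains to verify the structural assertions, which is routine precisely because $\phi$ is obtained from $A$ purely by post-composition with \emph{fixed} linear functionals. Restricting all but one coordinate, an affine map into $H$ composed with a linear functional stays affine, so $\phi$ is multiaffine; if $A$ is linear in coordinate $c$, the same post-composition keeps $\phi$ linear in $c$. If $A$ is $\mathcal{G}$-supported, write $A(x_{[k]}) = \sum_{I \in \mathcal{G}} A_I(x_I)$ with each $A_I \colon G_I \to H$ multilinear; then $\phi(x_{[k]}) = \sum_{I \in \mathcal{G}} \phi^{(I)}(x_I)$, where $\phi^{(I)}(x_I) = \big(h_1\cdot A_I(x_I),\dots,h_s\cdot A_I(x_I)\big)$ is a multilinear map $G_I \to \mathbb{F}^s$, so $\phi$ is $\mathcal{G}$-supported.

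No step here is a genuine obstacle. The only point worth flagging is that the randomness lives on the codomain of $A$, not its domain, which is exactly why all three structural properties — multiaffinity, linearity in a distinguished coordinate, and $\mathcal{G}$-support — survive the construction without further work; an argument that perturbed the domain instead would not have this feature.
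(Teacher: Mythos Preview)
Your proof is correct and matches the approach indicated by the paper. The lemma is quoted from \cite{LukaRank} rather than proved in full here, but the footnote reveals that the construction there is exactly yours: each $\phi_i(x_{[k]}) = A(x_{[k]}) \cdot h_i$ for suitable $h_i \in H$, chosen by the random-projection-and-averaging argument you describe.
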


\begin{lemma}[Approximating dense varieties externally simultaneously, Lemma 13 in~\cite{LukaRank}]\label{BohrApproxSim}Let $A_1, \dots, A_r \colon G_{[k]} \to H$ be multiaffine maps. Let $\epsilon > 0$. Then there exist a positive integer $s \leq r + \log_{\mathbf{f}} \epsilon^{-1}$ and multiaffine maps $\phi_1, \dots, \phi_r \colon G_{[k]} \to \mathbb{F}^s$ such that for each $\lambda \in \mathbb{F}^r$ we have $(\lambda \cdot A)^{-1}(0) \subset (\lambda \cdot \phi)^{-1}(0)$ and $|(\lambda \cdot \phi)^{-1}(0)\setminus(\lambda \cdot A)^{-1}(0)| \leq \epsilon |G_{[k]}|$. If additionally each map $A_i$ is linear in coordinate $c$, then so are the maps $\phi_i$. Moreover, if $\mathcal{G} \subset \mathcal{P}([k])$ is a down-set and if each $A_i$ is $\mathcal{G}$-supported, then so is each $\phi_i$.\footnote{Again, the claim that the $\phi_i$ are $\mathcal{G}$-supported does not appear in~\cite{LukaRank}, but follows easily from the proof.}\end{lemma}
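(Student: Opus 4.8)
The plan is to adapt the probabilistic argument behind Lemma~\ref{varOuterApprox}. In the single-map case one takes $\phi_i(x_{[k]})=A(x_{[k]})\cdot h_i$ for randomly chosen $h_i$ in $H$ (with respect to a fixed non-degenerate bilinear form), the point being that for any $x_{[k]}$ with $A(x_{[k]})\neq 0$ a random linear functional annihilates $A(x_{[k]})$ with probability exactly $\mathbf{f}^{-1}$. The one genuinely new idea for the simultaneous version is to use a \emph{single} family of functionals $h_1,\dots,h_s\in H$ for all of $A_1,\dots,A_r$ at once, so that $\lambda\cdot\phi$ factors through $\lambda\cdot A$.

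Concretely, I would pick $h_1,\dots,h_s\in H$ independently and uniformly at random and set $\phi_i\colon G_{[k]}\to\mathbb{F}^s$ by $\phi_i(x_{[k]})=\big(A_i(x_{[k]})\cdot h_1,\dots,A_i(x_{[k]})\cdot h_s\big)$. Each $\phi_i$ is multiaffine because its coordinates are linear functionals composed with $A_i$; for the same reason $\phi_i$ is linear in coordinate $c$ whenever $A_i$ is, and $\mathcal{G}$-supported whenever $A_i$ is, since applying a fixed linear functional to a $\mathcal{G}$-supported map preserves $\mathcal{G}$-support coordinatewise. The key algebraic observation is that for every $\lambda\in\mathbb{F}^r$ we get $(\lambda\cdot\phi)(x_{[k]})=\big((\lambda\cdot A)(x_{[k]})\cdot h_1,\dots,(\lambda\cdot A)(x_{[k]})\cdot h_s\big)$, so $(\lambda\cdot\phi)^{-1}(0)=\{x_{[k]}\colon (\lambda\cdot A)(x_{[k]})\cdot h_j=0\text{ for all }j\in[s]\}$. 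In particular $(\lambda\cdot A)^{-1}(0)\subseteq(\lambda\cdot\phi)^{-1}(0)$ holds deterministically, for every $\lambda$ and every choice of the $h_j$, which disposes of the containment half of the lemma.

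For the size bound, fix $\lambda$ and an $x_{[k]}$ with $(\lambda\cdot A)(x_{[k]})\neq 0$; then the probability over the $h_j$ that all $s$ of the scalars $(\lambda\cdot A)(x_{[k]})\cdot h_j$ vanish is exactly $\mathbf{f}^{-s}$, so $\mathbb{E}\,|(\lambda\cdot\phi)^{-1}(0)\setminus(\lambda\cdot A)^{-1}(0)|\le\mathbf{f}^{-s}|G_{[k]}|$. The difference set is empty when $\lambda=0$, so summing over the remaining at most $\mathbf{f}^{r}$ values of $\lambda$ gives $\mathbb{E}\sum_{\lambda}|(\lambda\cdot\phi)^{-1}(0)\setminus(\lambda\cdot A)^{-1}(0)|\le\mathbf{f}^{\,r-s}|G_{[k]}|$. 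Taking $s$ of size about $r+\log_{\mathbf{f}}\epsilon^{-1}$ (the least integer with $\mathbf{f}^{\,r}\mathbf{f}^{-s}\le\epsilon$, which meets the stated bound up to rounding) makes this at most $\epsilon|G_{[k]}|$, so some realization of $h_1,\dots,h_s$ gives $\sum_\lambda|(\lambda\cdot\phi)^{-1}(0)\setminus(\lambda\cdot A)^{-1}(0)|\le\epsilon|G_{[k]}|$, whence each individual term is at most $\epsilon|G_{[k]}|$.

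There is no hard step here; the only care needed is the bookkeeping. One must insist on using the \emph{same} functionals $h_j$ for every $i$, since otherwise $\lambda\cdot\phi$ will not be expressible through $\lambda\cdot A$ and the argument collapses, and one must notice that the union bound over all $\lambda\in\mathbb{F}^r$ costs precisely the factor $\mathbf{f}^{r}$ that the target bound $s\le r+\log_{\mathbf{f}}\epsilon^{-1}$ is built to absorb. Everything else—multiaffinity, linearity in coordinate $c$, and $\mathcal{G}$-support of the $\phi_i$—is immediate from the fact that each coordinate of $\phi_i$ is a linear functional applied to $A_i$.
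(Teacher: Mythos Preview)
Your proof is correct and matches the approach indicated by the paper. The paper does not give its own proof of this lemma---it is recalled from~\cite{LukaRank}---but the footnotes to Lemmas~\ref{varOuterApprox} and~\ref{BohrApproxSim} make clear that the construction there is precisely $\phi_i(x_{[k]})=A_i(x_{[k]})\cdot h_j$ for suitable $h_j\in H$, which is exactly what you do; your observation that using the \emph{same} $h_j$ for all $i$ makes $\lambda\cdot\phi$ factor through $\lambda\cdot A$ is the point of the simultaneous version, and the union bound over $\lambda\in\mathbb{F}^r$ accounts for the $r$ in the bound on $s$.
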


\noindent Recall that the notation $\lambda \cdot \phi$ appearing in the lemma denotes the multilinear map $\sum_{i \in [r]} \lambda_i \phi_i$, and similarly for $\lambda \cdot A$.\\
\indent We also recall the following definitions from~\cite{LukaRank}. Let $S \subset G_{[k]}$ and let $\alpha \colon G_{[k]} \to H$ be a multiaffine map. A \emph{layer} of $\alpha$ is any set of the form $\{x_{[k]} \in G_{[k]} \colon \alpha(x_{[k]}) = \lambda\}$, for $\lambda \in H$. We say that layers of $\alpha$ \emph{internally $\epsilon$-approximate }$S$, if there are layers $L_1, \dots, L_m$ of $\alpha$ such that $S \supset L_i$ and $\Big|S \setminus \Big(\bigcup_{i \in [m]} L_i\Big)\Big| \leq \epsilon |G_{[k]}|$. Similarly, we say that layers of $\alpha$ \emph{externally $\epsilon$-approximate }$S$, if there are layers $L_1, \dots, L_m$ of $\alpha$ such that $S \subset \bigcup_{i \in [m]} L_i$ and $\Big|\Big(\bigcup_{i \in [m]} L_i\Big)  \setminus S\Big| \leq \epsilon |G_{[k]}|$.

In the next lemma, $\chi$ is an arbitrary non-trivial additive character.

\begin{theorem}[Strong inverse theorem for maps of low analytic rank, Theorem 6 in~\cite{LukaRank}]\label{strongInvARankThm}For every positive integer $k$ there are constants $C = C_k, D = D_k > 0$ with the following property. Suppose that $\alpha \colon G_{[k]} \to \mathbb{F}$ is a multilinear form such that $\ex_{x_{[k]}} \chi(\alpha(x_{[k]})) \geq c$, for some $c > 0$. Then there exist a positive integer $r \leq C \log_{\mathbf{f}}^D (\mathbf{f}c^{-1})$ and multilinear maps $\beta_i \colon G_{I_i} \to \mathbb{F}$ and $\gamma_i \colon G_{[k] \setminus I_i} \to \mathbb{F}$ with $\emptyset \not= I_i \subset [k-1]$ for each $i\in[r]$, such that 
\[\alpha(x_{[k]}) = \sum_{i \in [r]} \beta_i(x_{I_i}) \gamma_i(x_{[k] \setminus I_i}).\]
for every $x_{[k]} \in G_{[k]}$.
\end{theorem}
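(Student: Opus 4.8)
The plan is to prove the statement by induction on the number of variables $k$, the real content being a single inductive step that upgrades a lower bound on $\bias(\alpha)=\ex_{x_{[k]}}\chi(\alpha(x_{[k]}))$ to a bounded partition‑rank decomposition. For $k=1$ a nonzero linear form has bias $0$, so $\alpha\equiv 0$ and $r=0$ works. For $k=2$, identify $\alpha$ with a matrix $M$ over $\mathbb{F}$: then $\bias(\alpha)=\mathbb P_{x_2}(Mx_2=0)=\mathbf{f}^{-\rk M}$, so $\rk M\le\log_{\mathbf{f}}(\mathbf{f}c^{-1})$, and the usual rank factorisation of $M$ gives the decomposition with $r=\rk M$ terms, each having $I_i=\{1\}$.

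For $k\ge 3$ I would view $\alpha$ through its last‑coordinate slices: writing $\alpha_a(x_{[k-1]})=\alpha(x_{[k-1]},a)$ for $a\in G_k$, each $\alpha_a$ is a $(k-1)$-linear form and $\bias(\alpha)=\ex_{a\in G_k}\bias(\alpha_a)$, all terms being positive reals (they are probabilities that certain linear forms vanish). Hence at least a $c/2$ fraction of $a$ are \emph{good}, meaning $\bias(\alpha_a)\ge c/2$, and the inductive hypothesis gives every good slice partition rank at most $\rho:=C_{k-1}\log_{\mathbf{f}}^{D_{k-1}}(2\mathbf{f}c^{-1})$. It is also useful to record the equivalent reformulation $\alpha(x_{[k-1]},x_k)=v(x_{[k-1]})\cdot x_k$ for the $(k-1)$-linear map $v\colon G_{[k-1]}\to G_k^{*}$, so that the bias hypothesis says exactly that the variety $v^{-1}(0)$ has density $\ge c$ in $G_{[k-1]}$ --- a form in which Lemma~\ref{varsizelemma} and the random coset intersection lemma become available for locating well‑behaved sub‑slices of $v$.

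The crux is then to reassemble the $\dim G_k$-indexed family of slice decompositions into a single decomposition of $\alpha=\sum_j(x_k)_j\,\alpha_{e_j}$ \emph{without} a loss depending on $\dim G_k$ (the naive idea --- expand $x_k$ in a basis of good vectors and concatenate --- yields only $\prank(\alpha)\le(\dim G_k)\rho$, which is useless). The point is that, after a rank‑reduction/regularisation carried out in the quotient by forms of strictly smaller partition rank, the family $\{\alpha_a:a\in G_k\}$ may be taken to span an $O_k(\rho)$-dimensional space modulo a controlled low‑rank remainder; since $a\mapsto\alpha_a$ is linear, each $\alpha_a$ is then an $\mathbb{F}$-linear combination (with coefficients linear in $a$) of a bounded sub‑basis $\alpha_{a^{(1)}},\dots,\alpha_{a^{(m)}}$ plus that remainder. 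Substituting this into $\alpha(x_{[k]})=\sum_j(x_k)_j\alpha_{e_j}(x_{[k-1]})$ turns the bilinear dependence on $x_k$ into a bounded sum of products, and absorbing the linear functionals of $x_k$ into the long factors of each slice's decomposition yields $\alpha(x_{[k]})=\sum_{i\le r}\beta_i(x_{I_i})\gamma_i(x_{[k]\setminus I_i})$ with $r=O_k(\rho^{O_k(1)})=C_k\log_{\mathbf{f}}^{D_k}(\mathbf{f}c^{-1})$. The requirement $\emptyset\ne I_i\subset[k-1]$ is then free: in any rank‑one term $\beta(x_I)\gamma(x_{I^c})$ exactly one of $I,I^c$ omits the coordinate $k$, and since the slice decompositions have short sides inside $[k-2]$ and the new long factors carry coordinate $k$, we simply designate the side omitting $k$ as $I_i$.

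The main obstacle is precisely this reassembly: performing the regularisation so that the slice family is bounded‑dimensional modulo lower‑rank forms, and doing so with only a \emph{polynomial} (rather than exponential) loss in $\rho$ --- earlier arguments, which go through Kazhdan--Ziegler/Lovett‑type bounds, incur an exponential loss per level and so cannot produce the claimed $\log_{\mathbf{f}}^{D}$ bound. This is what forces both the shape of the inductive hypothesis (tracking $C_k\log_{\mathbf{f}}^{D_k}$ with constants allowed to blow up geometrically in $k$) and the careful use of the variety‑size and random‑coset lemmas to pass to sub‑slices on which the remainder term genuinely has small partition rank.
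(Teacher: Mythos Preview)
This theorem is not proved in the present paper at all: it is imported verbatim as Theorem~6 of~\cite{LukaRank} (with the remark that an almost identical result was obtained independently by Janzer~\cite{Janzer2}), and the paper uses it as a black box throughout. So there is no in-paper proof to compare your proposal against.

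That said, your sketch correctly identifies the overall architecture --- induct on $k$, slice in the last coordinate, use that $\bias(\alpha)=\ex_a\bias(\alpha_a)$ so that a positive proportion of slices have bounded partition rank by induction --- and you are also right that the entire difficulty is the reassembly step with only polynomial loss. But you have not actually supplied that step. The paragraph beginning ``the crux is then to reassemble\ldots'' asserts that ``after a rank-reduction/regularisation\ldots the family $\{\alpha_a\}$ may be taken to span an $O_k(\rho)$-dimensional space modulo a controlled low-rank remainder'', and the closing paragraph says the random-coset and variety-size lemmas will let you ``pass to sub-slices on which the remainder term genuinely has small partition rank'', but neither sentence says \emph{how}. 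This is exactly the content of~\cite{LukaRank} and~\cite{Janzer2}: earlier approaches (Bhowmick--Lovett, Kazhdan--Ziegler) do precisely the na\"{i}ve regularisation you allude to and incur an exponential or tower-type loss at every level, and the polynomial bound requires a genuinely new idea --- in~\cite{LukaRank} a carefully organised double induction together with a structural approximation of the variety $\{v=0\}$ rather than slice-by-slice decomposition, and in~\cite{Janzer2} a different combinatorial argument. Your proposal names the problem accurately but does not resolve it; as written it would reproduce the tower-type bounds of~\cite{Janzer1}, not the polynomial bounds claimed here.
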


\noindent \textbf{Remark.} In a qualitative sense, this theorem was first proved by Bhowmick and Lovett in~\cite{BhowLov}, generalizing an approach of Green and Tao~\cite{GreenTaoPolys}. An almost identical result\footnote{There is a slight difference in bounds, in~\cite{Janzer2} the constant $C$ depends on the field as well.} to the one stated here was obtained independently by Janzer in~\cite{Janzer2} (who had previously obtained tower-type bounds in this problem~\cite{Janzer1}).\\

The least number $r$ such that $\alpha$ can be expressed in terms of $r$ pairs of forms $(\beta_i, \gamma_i)$ as above is called \emph{the partition rank} of $\alpha$, and is denoted $\prank \alpha$. This notion was introduced by Naslund in~\cite{Naslund}. Also, the quantity $\ex_{x_{[k]}} \chi(\alpha(x_{[k]}))$ is called the \emph{bias} of $\alpha$, written $\bias \alpha$. The quantity $-\log_{|\mathbb{F}|} \bias \alpha$ was called the \emph{analytic rank} of $\alpha$ by the first author and J. Wolf, who showed that it has useful properties~\cite{TimWolf}. Thus, high bias, or equivalently low analytic rank, implies low partition rank.\\

The next result that we recall from~\cite{LukaRank} says that we may approximate varieties both internally and externally using low-codimensional varieties.

\begin{theorem}[Simultaneous approximation of varieties, Theorem 7 in~\cite{LukaRank}]\label{simVarAppThm}For every positive integer $k$, there are constants $C=C_k, D=D_k > 0$ with the following property. Let $\epsilon > 0$ and let $B_1, \dots, B_r \colon G_{[k]} \to H$ be multiaffine maps. For each $\lambda \in \mathbb{F}^r$, let $Z_\lambda = \{x_{[k]} \in G_{[k]} \colon \sum_{i \in [r]} \lambda_i B_i(x_{[k]}) = 0\}$. Then there exist a positive integer $s \leq C \Big(r \log_{|\mathbb{F}|}(|\mathbb{F}|\epsilon^{-1}) \Big)^D$ and a multiaffine map $\beta \colon G_{[k]} \to \mathbb{F}^s$ such that for each $\lambda \in \mathbb{F}^r$, the layers of $\beta$ internally and externally $\epsilon$-approximate $Z_\lambda$.\\
\indent Moreover, if $\mathcal{G}$ is a down-set and the maps $B_1, \dots, B_r$ are $\mathcal{G}$-supported, then so is the map $\beta$.\footnote{This follows from the proof in~\cite{LukaRank}. One can generalize Proposition 20 in that paper by changing $\mathcal{P}[k-1]$ to the down-set $\mathcal{G}$ and adding the property that the map $\beta$ is $\mathcal{G}$-supported. The induction base is again trivial and the new maps come from inverse theorem for maps of high bias, which keeps maps $\mathcal{G}$-supported.}\end{theorem}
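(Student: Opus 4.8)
The plan is to reduce the statement to the inverse theorem for analytic rank, Theorem~\ref{strongInvARankThm}, by a regularization argument, arguing by induction on $k$. The base case $k=1$ is linear algebra: a coset of codimension exceeding $\log_{\mathbf f}\epsilon^{-1}$ is negligible and may be ignored, while the cosets of bounded codimension that arise as zero sets of an $r$-dimensional space of affine maps are all unions of layers of a single linear map with boundedly many coordinates.

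For the inductive step I would first apply Lemma~\ref{BohrApproxSim} to $B_1,\dots,B_r$, obtaining a single multiaffine map $\phi\colon G_{[k]}\to\mathbb F^{s_0}$ with $s_0\le r+\log_{\mathbf f}\epsilon_0^{-1}$ whose layers externally $\epsilon_0$-approximate every $Z_\lambda$, and which, by the corresponding clauses of that lemma, inherits the linear-in-$c$ and $\mathcal G$-supported properties. The effect is to replace the $r$ maps into the possibly huge space $H$ by a single map into a bounded-dimensional space. It then suffices to refine $\phi$ to a multiaffine map $\beta$ of controlled codimension whose layers resolve $\phi$ finely enough that the external approximation of each $Z_\lambda$ by layers of $\phi$ upgrades to an internal-and-external approximation by layers of $\beta$: external approximation is then automatic since $\beta$ refines $\phi$, so the whole difficulty is internal approximation.

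To build $\beta$ I would decompose the (now finitely many) coordinates of $\phi$ into their multilinear parts of each type $I\subseteq[k]$ and regularize this bounded collection of scalar multilinear forms, working from the top type $[k]$ downwards: whenever some nonzero same-type combination has bias bounded below -- equivalently, by Theorem~\ref{strongInvARankThm}, partition rank at most $\rho$ -- replace it by the at most $\rho$ forms of strictly smaller type witnessing its factorization. Each step strictly decreases the number of forms of the current top type, so the process terminates after a number of steps, and with a total number of forms, that is polynomial in $r$ and $\rho$; and since every form introduced involves only a proper subset of the coordinates in play, the $\mathcal G$-supported property (here $\mathcal G$ is a down-set) and, with a little more care, the linear-in-$c$ property are preserved throughout. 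Taking $\beta$ to be the resulting tuple together with the constant terms, one gets that every $Z_\lambda$ is, up to $\epsilon_0$, a union of layers of $\beta$, and moreover -- this is the point of rank-regularity -- each non-negligible layer of $\beta$ is a quasirandom variety on which $\lambda\cdot B$, modulo the part pinned down by $\phi$, is so close to equidistributed that its zero set inside the layer is either everything or of negligible relative density; together with the bound $\epsilon_0$ on the external error this forces the layers genuinely contained in $Z_\lambda$ to cover all but an $\epsilon$-fraction of it, which is the internal approximation.

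The main obstacle I anticipate is quantitative and twofold. First, one must choose the rank threshold $\rho$ so that rank-regularity yields equidistribution to the precision actually needed, but that precision, and hence $\rho$, must be arranged to depend only on $r$ and $\log_{\mathbf f}(\mathbf f\epsilon^{-1})$ and \emph{not} on the final number of forms, which itself grows with $\rho$; breaking this apparent circularity is exactly where the polynomial (rather than tower-type) bound in Theorem~\ref{strongInvARankThm} is essential, combined with organizing the regularization level by level so that each level's threshold feeds only into the counts at lower levels. Second, several of these steps require applying the inverse theorem and equidistribution estimates to forms restricted to the layers of $\beta$, which are varieties rather than products of vector spaces, so one has to control the loss incurred by such restrictions; I expect this bookkeeping, rather than any single conceptual step, to be the bulk of the work.
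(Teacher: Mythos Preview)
The paper does not give its own proof of this statement: it imports the internal-approximation half from \cite{LukaRank} (Theorem 7 there), and the remark following the statement observes that external approximation is then easily obtained from Lemma~\ref{BohrApproxSim} (one simply adjoins to $\beta$ the coordinates of the map produced by that lemma). The footnote adds that the $\mathcal{G}$-supported clause, absent from the original, follows from the proof in \cite{LukaRank}: the regularization there works by repeatedly replacing high-bias forms with the lower-order forms furnished by the inverse theorem, and this operation preserves $\mathcal{G}$-support since $\mathcal{G}$ is a down-set.

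Your sketch reconstructs essentially the argument of \cite{LukaRank} together with the paper's remark, though you organize it slightly differently: you obtain external approximation first via Lemma~\ref{BohrApproxSim} and then refine by rank-regularization to force internal approximation, whereas the paper cites the internal half from \cite{LukaRank} and adjoins the external half afterward. One minor point: the statement here carries no linear-in-$c$ clause, so that strand of your plan is extraneous. Your first anticipated obstacle, the apparent circularity between the rank threshold and the number of forms it generates, is real and is exactly where the polynomial bound in Theorem~\ref{strongInvARankThm} earns its keep; organizing the regularization level by level, as you propose, is indeed how this is handled in \cite{LukaRank}. Your second obstacle is less serious than you fear: once the full system of forms is rank-regular, the equidistribution needed follows directly from the bias bound on the relevant linear combinations, without having to restrict the inverse theorem to layers.
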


\noindent\textbf{Remark.} The original statement only has the internal approximation part of the claim. The external approximation is easily obtained using Lemma~\ref{BohrApproxSim}.\\

Let $V \subset G_{[k]}$ be a variety. The next theorem shows that the set of points $x_{[k-1]} \in G_{[k-1]}$ such that $V_{x_{[k-1]}}$ is dense in $G_k$ is almost a variety as well.

\begin{theorem}[Structure of a set of dense columns of a variety, Theorem 8 in~\cite{LukaRank}]\label{denseColumnsThm}For every positive integer $k$ there are constants $C=C_k, D=D_k > 0$ with the following property. Let $\alpha\colon G_{[k]} \to \mathbb{F}^r$ be a multiaffine map. Let $S \subset \mathbb{F}^r$ and $\epsilon > 0$. Let $X$ be the set of $\epsilon$-dense columns: that is, 
\[\{x_{[k-1]} \in G_{[k-1]} \colon |\{y \in G_k \colon \alpha(x_{[k-1]}, y) \in S\}| \geq \epsilon |G_k|\}.\]
Then there exist a positive integer $s \leq C \Big(r \log_{|\mathbb{F}|} (|\mathbb{F}|\epsilon^{-1})\Big)^D$ and a multiaffine map $\beta \colon G_{[k-1]} \to \mathbb{F}^s$ such that the layers of $\beta$ $\epsilon$-internally and $\epsilon$-externally approximate $X$.\\
\indent Moreover, if $\mathcal{G}$ is a down-set and $\alpha$ is $\mathcal{G}$-supported, then we may take $\beta$ to be $\mathcal{G}'$-supported, where $\mathcal{G}' = \mathcal{G} \cap \mathcal{P}([k-1])$.\footnote{Once again, the proof in~\cite{LukaRank} can be straightforwardly modified to give this slightly stronger version. Theorem 7 and Lemma 13 of that paper (which are combined together to Theorem~\ref{simVarAppThm} of this paper) are used to define the desired map, and both respect the notion of $\mathcal{G}'$-supported maps. Note that $\alpha_i(x_{[k]}) = \alpha'_i(x_{[k-1]}) + A_i(x_{[k-1]}) \cdot x_k$ implies that $\alpha'_i$ and $A_i$ are $\mathcal{G}'$-supported. (We also note that Lemma 13 was referred to as Proposition 13 in the above-mentioned proof in~\cite{LukaRank}.)}\end{theorem}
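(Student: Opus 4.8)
\textit{Proof proposal.} The plan is to describe $X$ analytically, Fourier‑expand over $\mathbb{F}^r$, cut the last coordinate space down to bounded dimension by a random slicing argument, and then feed the resulting bounded family of multilinear forms into Theorem~\ref{simVarAppThm}. First record that for fixed $x_{[k-1]}$ the map $y\mapsto\alpha(x_{[k-1]},y)$ is affine on $G_k$, say $y\mapsto\alpha_0(x_{[k-1]})+\tilde\alpha(x_{[k-1]},\cdot)(y)$ with $\alpha_0\colon G_{[k-1]}\to\mathbb{F}^r$ multiaffine and $\tilde\alpha\colon G_{[k]}\to\mathbb{F}^r$ multilinear and linear in the $k$-th coordinate. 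Thus the column at $x_{[k-1]}$ is the preimage of $S$ under this affine map, so its density
\[f(x_{[k-1]}):=\exx_{y\in G_k}\mathbbm{1}_S(\alpha(x_{[k-1]},y))=|S\cap C_{x_{[k-1]}}|\,|C_{x_{[k-1]}}|^{-1},\qquad C_{x_{[k-1]}}:=\alpha_0(x_{[k-1]})+\img\big(\tilde\alpha(x_{[k-1]},\cdot)\big)\leq\mathbb{F}^r,\]
so $f$ is $\mathbf{f}^{-r}\mathbb{Z}$-valued and $X=\{x_{[k-1]}\colon f(x_{[k-1]})\geq\epsilon\}$. It then suffices to build a multiaffine $\beta\colon G_{[k-1]}\to\mathbb{F}^s$ with $s\leq C_k\big(r\log_{\mathbf{f}}(\mathbf{f}\epsilon^{-1})\big)^{D_k}$ such that $f$ agrees off a set of size at most a small multiple of $\epsilon|G_{[k-1]}|$ with a function constant on the layers of $\beta$; the layers of $\beta$ then internally and externally $\epsilon$-approximate $X$.

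Next expand $\mathbbm{1}_S$ in additive characters of $\mathbb{F}^r$: with $\delta_\xi=\xi\cdot\alpha_0$ (a multiaffine form on $G_{[k-1]}$) and $Z_\xi=\{x_{[k-1]}\colon(\xi\cdot\tilde\alpha)(x_{[k-1]},\cdot)\equiv 0\text{ on }G_k\}$ (a variety), one gets $f=\sum_{\xi\in\mathbb{F}^r}\widehat{\mathbbm{1}_S}(\xi)\,\mathbbm{1}_{Z_\xi}\,(\chi\circ\delta_\xi)$, since $(\xi\cdot\tilde\alpha)(x_{[k-1]},\cdot)$ is linear in $y$ and has exponential average $\mathbbm{1}_{Z_\xi}$. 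The $Z_\xi$ jointly encode $\img(\tilde\alpha(x_{[k-1]},\cdot))^\perp$ and hence $C_{x_{[k-1]}}$. To keep $\dim G_k$ out of the bounds I pass to a uniformly random subspace $G_k'\leq G_k$ of dimension $d=r+O(\log_{\mathbf{f}}(\epsilon^{-1}))$: as $\img(\tilde\alpha(x_{[k-1]},\cdot))$ has dimension at most $r$, for each fixed $x_{[k-1]}$ the restriction to $G_k'$ has full image with probability $\geq1-\mathbf{f}^{\,r-d}$ (at most $\mathbf{f}^r$ hyperplanes of the image pull back to hyperplanes of $G_k$, each avoided by $G_k'$ with probability $\geq1-\mathbf{f}^{-d}$), so for a good choice of $G_k'$ the exceptional set $E_1$ where the images differ has size at most (a tiny power of $\epsilon$) times $|G_{[k-1]}|$. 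Off $E_1$ we may replace each $Z_\xi$ by $Z'_\xi=\{x_{[k-1]}\colon(\xi\cdot\tilde\alpha)(x_{[k-1]},y')=0\ \forall\,y'\in G_k'\}$ without altering $f$; each $Z'_\xi$ is cut out by $O(rd)$ multilinear forms on $G_{[k-1]}$, all of which are $\mathbb{F}$-linear combinations of one fixed family $\mathcal{B}$: the forms $x_{[k-1]}\mapsto(\tilde\alpha(x_{[k-1]},y'_m))_i$ for $y'_m$ ranging over a basis of $G_k'$, together with the components of $\alpha_0$.

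Now apply Theorem~\ref{simVarAppThm} to $\mathcal{B}$ with a small error $\epsilon'$ satisfying $\log_{\mathbf{f}}(\epsilon'^{-1})=O(r+\log_{\mathbf{f}}(\epsilon^{-1}))$: it returns one multiaffine $\beta\colon G_{[k-1]}\to\mathbb{F}^s$ with $s\leq C_k\big(r\log_{\mathbf{f}}(\mathbf{f}\epsilon^{-1})\big)^{D_k}$ whose layers internally and externally $\epsilon'$-approximate every variety $(\lambda\cdot\mathcal{B})^{-1}(c)$; hence each $Z'_\xi$ (an intersection of $O(rd)$ such varieties) and each layer of $\alpha_0$ agree off a union of $\beta$-layers of negligible total size with a union of $\beta$-layers, and summing these errors over the $\leq\mathbf{f}^r$ relevant $\xi$ is harmless since it only enters the final bound inside a base-$\mathbf{f}$ logarithm. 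Combining with the $E_1$-error, $f$ agrees, off a set smaller than any prescribed multiple of $\epsilon|G_{[k-1]}|$, with a $\beta$-measurable function $\bar f$. Since $f$ is $\mathbf{f}^{-r}$-valued there is $\epsilon^{\ast}$ with $\{f\geq\epsilon\}=\{f\geq\epsilon^{\ast}\}$ and with $f$ taking no value in an interval of length $\mathbf{f}^{-r}$ around $\epsilon^{\ast}$; using this gap, $\{\bar f\geq\epsilon^{\ast}\}$ is upgraded to honest internal and external $\epsilon$-approximations of $X$ by layers of $\beta$ (a $\beta$-layer mostly inside $X$ is entirely inside once the negligible exceptional set is removed, and that exceptional set can itself be made, up to a negligible residue, a union of $\beta$-layers by peeling it with one or two further random slices of $G_k$). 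For the $\mathcal{G}$-supported refinement: if $\alpha$ is $\mathcal{G}$-supported then $\alpha_0$ and every member of $\mathcal{B}$ is $(\mathcal{G}\cap\mathcal{P}([k-1]))$-supported — here one uses that $\mathcal{G}$ is a down-set, so $I\in\mathcal{G}$ implies $I\setminus\{k\}\in\mathcal{G}$ — and Theorem~\ref{simVarAppThm} preserves the support class, so $\beta$ may be taken $(\mathcal{G}\cap\mathcal{P}([k-1]))$-supported.

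The main obstacle is twofold. The quantitative difficulty is that the naïve approaches — keeping all $\Theta(r\dim G_k)$ forms defining the $Z_\xi$, or stratifying by the $\mathbf{f}^{\Theta(r^2)}$ possible affine subspaces $C_{x_{[k-1]}}$ — yield a $\beta$ of codimension depending on $\dim G_k$ or exponential in $r$; the crux is the random-slicing reduction to $O(r+\log_{\mathbf{f}}(\epsilon^{-1}))$ forms, plus the bookkeeping observation that errors of size $\mathbf{f}^{-m}$ cost only $+m$ inside a base-$\mathbf{f}$ logarithm. The more delicate difficulty is the final thresholding: one only gets $f$ close to a $\beta$-measurable function in $L^1$, and converting this into a genuine internal (and external) layer-approximation of the super-level set $X$ needs both the gap in the values of $f$ and care that every small exceptional set produced along the way is, up to a negligible residue, a union of $\beta$-layers.
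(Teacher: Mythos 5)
Your setup is on the right lines --- decomposing $\alpha(x_{[k-1]},y)=\alpha_0(x_{[k-1]})+A(x_{[k-1]})\cdot y$, observing that the column density $f(x_{[k-1]})=|S\cap C_{x_{[k-1]}}|/|C_{x_{[k-1]}}|$ depends only on the coset $C_{x_{[k-1]}}=\alpha_0(x_{[k-1]})+\img A(x_{[k-1]})$, and reducing to the problem of making the varieties $Z_\lambda=\{x_{[k-1]}\colon\lambda\cdot A(x_{[k-1]})=0\}$ simultaneously layer-measurable via Theorem~\ref{simVarAppThm}. This matches the approach indicated in the paper's own footnote to this theorem. The $\mathcal{G}'$-support book-keeping is also correct. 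But the random-slicing step is both unnecessary and the source of a genuine gap.

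\textbf{The random slicing is unnecessary.} You introduce a random $d$-dimensional subspace $G_k'\leq G_k$ because you want to work with an $\epsilon^{-O(1)}\mathbf{f}^{O(r)}$-sized family of \emph{scalar-valued} multilinear forms, apparently under the impression that the cost of Theorem~\ref{simVarAppThm} would otherwise scale with $\dim G_k$. But Theorem~\ref{simVarAppThm} is stated for multiaffine maps $B_1,\dots,B_r\colon G_{[k-1]}\to H$ with an \emph{arbitrary} codomain $H$, and its bound $s\leq C\big(r\log_{|\mathbb{F}|}(|\mathbb{F}|\epsilon^{-1})\big)^D$ involves only the number $r$ of maps and the accuracy $\epsilon$, not $\dim H$. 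So one should apply it directly with $H=G_k$ to the $r$ maps $A_1,\dots,A_r\colon G_{[k-1]}\to G_k$, then set $\beta=(\beta_0,\alpha_0)$ where $\beta_0$ is the resulting map. On any layer $L$ of $\beta$ not split by any of the $\mathbf{f}^r$ varieties $Z_\lambda$, the set $\{\lambda\colon L\subset Z_\lambda\}=\img A(\cdot)^\perp$ and the value of $\alpha_0$ are constant, hence so is $C$ and hence so is $f$; such a layer is either entirely inside $X$ or entirely outside $X$, with no need for your $\epsilon^\ast$ gap argument. The split layers have total measure at most $2\mathbf{f}^r\epsilon'$, which with $\epsilon'=\epsilon\mathbf{f}^{-r}/4$ is at most $\epsilon/2$, and $\log_{\mathbf f}(\epsilon'^{-1})=O(r+\log_{\mathbf f}\epsilon^{-1})$ keeps the codimension of $\beta$ within the stated bound.

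\textbf{The slicing creates an exceptional set you cannot carve out.} You define $E_1=\{x_{[k-1]}\colon\img(A(x_{[k-1]})|_{G_k'})\neq\img A(x_{[k-1]})\}$ and correctly show that for a good choice of $G_k'$ of dimension $d=r+O(\log_{\mathbf f}\epsilon^{-1})$ one has $|E_1|\leq\mathbf{f}^{\,r-d}|G_{[k-1]}|$. But internal approximation demands whole layers of $\beta$ contained in $X$, and the layers you end up with are characterised by the behaviour of $f'$, the density computed with respect to $G_k'$. On $E_1$ the coset $C'_{x_{[k-1]}}\subsetneq C_{x_{[k-1]}}$, so $f'(x_{[k-1]})$ and $f(x_{[k-1]})$ can differ in either direction; a $\beta$-layer on which $f'\equiv c\geq\epsilon^\ast$ may contain points of $E_1$ with $f<\epsilon^\ast$, so the layer is not inside $X$. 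The fact that $|E_1|$ is tiny does not help, because $E_1$ can meet a set of $\beta$-layers of large total measure while itself being small, and $E_1$ is not a bounded-codimension variety (it is a rank-dropping locus, not a zero set of finitely many multiaffine forms), so there is no tool at hand to make it, ``up to a negligible residue, a union of $\beta$-layers'' --- the phrase ``peeling it with one or two further random slices'' does not describe an actual argument, since choosing a second $G_k''$ just produces a second small set $E_1''$ with the same defect, and in any case $\beta$ was built from the first choice of $G_k'$. The same difficulty blocks external approximation: covering $X\cap E_1$ by layers forces you to include every layer meeting $E_1$, and you have no bound on the measure of their union. Dropping the random slicing and feeding $A_1,\dots,A_r$ into Theorem~\ref{simVarAppThm} as $G_k$-valued maps removes $E_1$ entirely and closes the argument.
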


We say that a multiaffine variety $V \subset G_{[k]}$ is \emph{multilinear} if it is of the form $V = \bigcap_{i \in [s]} \{x_{[k]} \in G_{[k]} \colon \alpha_i(x_{I_i}) = 0\}$ for some multilinear maps $\alpha_i \colon G_{I_i} \to \mathbb{F}^{r_i}$ -- that is, if the multiaffine maps used to define it are in fact multilinear. We now deduce that dense multilinear varieties necessarily contain multilinear varieties of low-codimension. 

We begin with a lemma.

\begin{lemma}\label{mlSetVarML}Suppose that $M \subset G_{[k]}$ is a \emph{multilinear set}. That is, suppose that for every direction $d \in [k]$ and every $x_{[k]\setminus \{d\}} \in G_{[k] \setminus \{d\}}$, the set $M_{x_{[k] \setminus \{d\}}}$ is a (possibly empty) subspace of $G_d$. Let $B \subset M$ be a non-empty variety of codimension at most $s$. Then $M$ contains a multilinear variety of codimension $O(s)$.\end{lemma}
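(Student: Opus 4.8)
The plan is to reduce Lemma~\ref{mlSetVarML} to Theorem~\ref{simVarAppThm} by using the hypothesis that $M$ is a multilinear set (so every direction-$d$ slice is a subspace) to convert the external multiaffine approximation of $M$ into a \emph{multilinear} one, and then extract a low-codimension multilinear variety that sits between $B$ and $M$. First I would fix a multiaffine map $\alpha\colon G_{[k]}\to\mathbb F^s$ defining $B$, so that $B=\alpha^{-1}(0)\subset M$ is non-empty; by Lemma~\ref{varsizelemma} we have $|B|\geq\mathbf f^{-ks}|G_{[k]}|$, which means $M$ is itself dense (density at least $\mathbf f^{-ks}$).

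The key step is to replace $\alpha$ by a \emph{multilinear} map. I would argue direction by direction. Since $M$ is a multilinear set, for each direction $d$ and each $x_{[k]\setminus\{d\}}$, the slice $M_{x_{[k]\setminus\{d\}}}$ is a subspace of $G_d$; in particular $0\in M_{x_{[k]\setminus\{d\}}}$ whenever this slice is non-empty, and more importantly the slice is closed under the additive structure of $G_d$. The idea is that the restriction of the defining data of a variety inside $M$ to such a slice behaves linearly. Concretely, fix $d=k$ and write $\alpha(x_{[k-1]},x_k)=\alpha'(x_{[k-1]})+A(x_{[k-1]})\cdot x_k$ where $\alpha'\colon G_{[k-1]}\to\mathbb F^s$ is multiaffine and $A\colon G_{[k-1]}\to\mathrm{Hom}(G_k,\mathbb F^s)$ is multiaffine; then the column $\{x_k\colon\alpha(x_{[k-1]},x_k)=0\}$ is either empty or a coset of $\ker A(x_{[k-1]})$, while $M_{x_{[k-1]}}$ is a subspace containing it, forcing that coset (when non-empty) to pass through $0$, i.e.\ $\alpha'(x_{[k-1]})\in\operatorname{Im}A(x_{[k-1]})$. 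Using this over all non-empty columns, together with Theorem~\ref{denseColumnsThm} to describe the set of non-empty columns as (almost) a variety, and iterating over all $k$ directions, one can arrange to delete the affine part: I would replace $\alpha$ by the purely multilinear map $x_{[k]}\mapsto A_d(x_{[k]\setminus\{d\}})\cdot x_d$ in each direction $d$, obtaining a multilinear map $\tilde\alpha$ whose zero set still contains a translate of $B$ and is still contained in $M$ after discarding a negligible set. One then cleans up the negligible exceptional set by applying Theorem~\ref{simVarAppThm} (and Lemma~\ref{BohrApproxSim}) to simultaneously approximate the relevant varieties internally and externally by layers of a single multiaffine (hence, after the linearization, multilinear) map of codimension $O(s^{O(1)})$, and then selects a layer that lies inside $M$ and contains a positive fraction of $B$; since that layer is a coset-translate of a multilinear variety and $M$ is translation-closed in each direction through the relevant point, it is genuinely a multilinear variety of codimension $O(s)$.

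An alternative, cleaner route I would keep in reserve: apply Lemma~\ref{varOuterApprox} to $\alpha$ to get $\phi\colon G_{[k]}\to\mathbb F^s$ with $\alpha^{-1}(0)\subset\phi^{-1}(0)$ and $|\phi^{-1}(0)\setminus\alpha^{-1}(0)|\leq\mathbf f^{-s}|G_{[k]}|$; since $\phi_i(x_{[k]})=\alpha(x_{[k]})\cdot h_i$, the structure of $\phi$ mirrors that of $\alpha$. Then induct on $k$: restrict to a generic direction-$k$ slice $M_{x_{[k-1]}}$, which is an honest subspace, use the solubility criterion Lemma~\ref{solCrit} together with the $k=1$ case (a subspace containing a coset that is defined by linear equations must contain the linear span, so the affine parts vanish) to linearize the $x_k$-dependence, and feed the resulting lower-dimensional data back through the $(k-1)$-variable case of the lemma to linearize the rest.

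The main obstacle I anticipate is the linearization step itself: showing rigorously that the affine part of $\alpha$ in each direction can be absorbed \emph{uniformly} across the many columns, rather than column-by-column with an exceptional set that grows out of control as one iterates over all $k$ directions. Controlling this requires carefully combining Theorem~\ref{denseColumnsThm} (to get a variety-like description of which columns are non-empty, with good codimension bounds) with Theorem~\ref{simVarAppThm}, and checking that at each of the $k$ directional steps the codimension only blows up polynomially, so that the final bound is still $O(s)$ as claimed (or at worst $O(s^{O_k(1)})$, which the statement's $O(s)$ with its $k$-dependent implicit constant permits). The bookkeeping of exceptional sets, and the verification that the final chosen layer is nonempty and genuinely multilinear (not merely an affine translate), is where the real work lies.
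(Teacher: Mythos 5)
Your proposal has a genuine gap at the step it is built around, and the paper's actual proof is considerably simpler and uses none of the machinery you invoke.

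The claim that the coset $\{x_k\colon\alpha(x_{[k-1]},x_k)=0\}$ being contained in the subspace $M_{x_{[k-1]}}$ ``forces that coset (when non-empty) to pass through $0$'' is false. If a coset $v_0+U$ is contained in a subspace $V$, one gets $U\subseteq V$ and $v_0\in V$, but not $0\in v_0+U$; containment in a subspace does not make a coset a subspace, nor does it give $\alpha'(x_{[k-1]})=0$. (Your gloss ``i.e.\ $\alpha'(x_{[k-1]})\in\operatorname{Im}A(x_{[k-1]})$'' is only the statement that the coset is non-empty, which is weaker and different.) The same error reappears in your alternative route: ``a subspace containing a coset \dots must contain the linear span, so the affine parts vanish'' --- containing the linear span does not kill the affine part. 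Since neither formulation of the linearization step is sound, the downstream plan involving Theorem~\ref{denseColumnsThm}, Theorem~\ref{simVarAppThm}, Lemma~\ref{varOuterApprox}, and the control of exceptional sets has nothing to stand on.

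The paper avoids all of this with an elementary iterative shift. Write $\beta$ in terms of its multilinear pieces $\beta_I\colon G_I\to\mathbb{F}^s$, so that $B$ is a layer $\bigcap_{\emptyset\neq I\subseteq[k]}\{x_{[k]}\colon\beta_I(x_I)=\lambda_I\}$. Proceed direction by direction: having found a non-empty variety $B^d\subseteq M$ of this layer form whose constants $\lambda_I^d$ vanish whenever $I\cap[d]\neq\emptyset$, fix a point $y_{[k]}\in B^d$ and, for each $I\ni d+1$, replace the single constraint $\beta_I^d(x_I)=\lambda_I^d$ by the two constraints $\beta_I^d(x_I)=0$ and $\beta_I^d(x_{I\setminus\{d+1\}},y_{d+1})=\lambda_I^d$. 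The resulting $B^{d+1}$ is non-empty (it contains the point obtained from $y_{[k]}$ by putting $0$ in coordinate $d+1$), and for every $x_{[k]}\in B^{d+1}$, multilinearity shows that both $(x_{[k]\setminus\{d+1\}},y_{d+1})$ and $(x_{[k]\setminus\{d+1\}},x_{d+1}+y_{d+1})$ lie in $B^d\subseteq M$; since $M_{x_{[k]\setminus\{d+1\}}}$ is a subspace it contains the difference $x_{d+1}$, so $x_{[k]}\in M$. After $k$ steps all constants attached to pieces meeting $[k]$ are zero, giving a multilinear variety of codimension $O(s)$ inside $M$, with no densities, approximation theorems, or exceptional sets. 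The idea you are missing is to \emph{keep} the affine/lower-order contribution but move it into a separate constraint anchored at a fixed point of the variety, rather than trying to show it vanishes.
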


\begin{proof} Splitting the multiaffine map $\beta \colon G_{[k]} \to \mathbb{F}^s$ that defines $B$ into its multilinear pieces, we get multilinear maps $\beta_I \colon G_I \to \mathbb{F}^s$ and values $\lambda_I \in \mathbb{F}^s$, $\emptyset \not= I \subseteq [k]$ such that the variety $B^0$ defined as
\[B^0 = \bigcap_{\emptyset \not= I \subseteq [k]}\{x_{[k]} \in G_{[k]} \colon \beta_I(x_I) = \lambda_I\}\]
satisfies $\emptyset \not= B^0 \subset M$.\\
\indent We shall show that for each $d \in [0, k]$ there are a positive integer $s_d = O(s)$, multilinear maps $\beta^d_I \colon G_I \to \mathbb{F}^{s_d}$ and values $\lambda^d_I \in \mathbb{F}^{s_d}$, $\emptyset \not= I \subseteq [k]$, such that $\lambda^d_I = 0$ when $I \cap [d] \not= \emptyset$ and the variety $B^d$ defined as
\[B^d = \bigcap_{\emptyset \not= I \subseteq [k]}\{x_{[k]} \in G_{[k]} \colon \beta^d_I(x_I) = \lambda^d_I\}\]
satisfies $\emptyset \not= B^d \subset M$.\\
\indent We prove this by induction on $d$, taking $d = 0$ as the base case, which plainly holds. Suppose that the statement holds for some $d \in [0, k-1]$. Let $y_{[k]} \in B^d$ be an arbitrary element. Consider the set
\begin{align*}B^{d+1} = \Big(\bigcap_{\substack{\emptyset \not= I \subseteq [k]\\d + 1 \in I}}\{x_{[k]} \in G_{[k]} \colon \beta^d_I(x_I) = 0\}\Big)& \cap \Big(\bigcap_{\substack{\emptyset \not= I \subseteq [k]\\d + 1 \notin I}}\{x_{[k]} \in G_{[k]} \colon \beta^d_I(x_I) = \lambda^d_I\}\Big)\\
 &\cap \Big(\bigcap_{\substack{\emptyset \not= I \subseteq [k]\\d + 1 \in I}}\{x_{[k]} \in G_{[k]} \colon \beta^d_I(x_{I \setminus \{d+1\}}, y_{d+1}) = \lambda^d_I\}\Big).\end{align*}
Note that $B^{d+1}$ is non-empty since $(y_{[k] \setminus \{d+1\}}, {}^{d+1}\,0) \in B^{d+1}$ (recall that the notation ${}^{d+1}\,0$ means that we put zero at coordinate $d+1$). It remains to check that $B^{d+1} \subseteq M$, other properties are evident. Let $x_{[k]} \in B^{d+1}$. When $d+1 \in I$, we have $\beta^d_I(x_I) = 0$ and $\beta^d_I(x_{I \setminus \{d+1\}}, y_{d+1}) = \lambda^d_I$, and hence that $\beta^d_I(x_{I \setminus \{d+1\}}, x_{d+1} + y_{d+1}) = \lambda^d_I$. On the other hand, when $d + 1 \notin I$, we simply have $\beta^d_I(x_I) = \lambda^d_I$. We conlcude that $(x_{I \setminus \{d+1\}}, y_{d+1})$ and $(x_{I \setminus \{d+1\}}, x_{d+1} + y_{d+1})$ belong to $M$. Since $M$ is a multilinear set, it contains $x_{[k]}$, as desired.\end{proof}

\begin{corollary}\label{denseToLowCodimVar}Let $V \subset G_{[k]}$ be a variety of density $\delta > 0$. Then $V$ contains a non-empty variety $B$ of codimension $O\Big(\log^{O(1)}_{\mathbf{f}} (\mathbf{f} \delta^{-1})\Big)$. Moreover, if $V$ is multilinear, then we may take $B$ to be multilinear as well.\end{corollary}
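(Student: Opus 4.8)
The plan is to deduce both assertions from the simultaneous approximation of varieties (Theorem~\ref{simVarAppThm}), using its internal-approximation half, and then to upgrade to the multilinear case by means of Lemma~\ref{mlSetVarML}.

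Write $V = \alpha^{-1}(0)$ for a multiaffine map $\alpha \colon G_{[k]} \to H$. First I would apply Theorem~\ref{simVarAppThm} with $r = 1$, $B_1 = \alpha$, and $\epsilon = \delta/2$: this yields a positive integer $s \leq C\big(\log_{|\mathbb{F}|}(|\mathbb{F}|\epsilon^{-1})\big)^D = O\big(\log^{O(1)}_{\mathbf{f}}(\mathbf{f}\delta^{-1})\big)$ and a multiaffine map $\beta \colon G_{[k]} \to \mathbb{F}^s$ whose layers internally $\epsilon$-approximate $Z_1 = V$. Thus there are layers $L_1, \dots, L_m$ of $\beta$ with each $L_i \subseteq V$ and $|V \setminus \bigcup_i L_i| \leq (\delta/2)|G_{[k]}|$. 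Since $|V| \geq \delta |G_{[k]}|$, the union $\bigcup_i L_i$ is non-empty, so some $L_i$ is non-empty. As $L_i = \{x_{[k]} \colon \beta(x_{[k]}) = \mu_i\}$ is the zero set of the multiaffine map $x_{[k]} \mapsto \beta(x_{[k]}) - \mu_i$, it is a non-empty variety of codimension at most $s$ contained in $V$. Taking $B = L_i$ proves the first assertion; note that the bound on $s$ is independent of $\dim H$, which is exactly what is needed.

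For the multilinear refinement, suppose now that $V$ is multilinear, say $V = \bigcap_{i \in [s']}\{x_{[k]} \colon \alpha_i(x_{I_i}) = 0\}$ for multilinear maps $\alpha_i \colon G_{I_i} \to \mathbb{F}^{r_i}$. Then $V$ is a multilinear set: for any direction $d$ and any $x_{[k]\setminus\{d\}}$, each constraint with $d \notin I_i$ is either vacuous or makes the slice empty, while each constraint with $d \in I_i$ cuts out a subspace of $G_d$ (as $\alpha_i$ is linear in coordinate $d$), so $V_{x_{[k]\setminus\{d\}}}$ is a (possibly empty) subspace. Applying the first part gives a non-empty variety $B \subseteq V$ of codimension $O(\log^{O(1)}_{\mathbf{f}}(\mathbf{f}\delta^{-1}))$, and then Lemma~\ref{mlSetVarML} (with $M = V$) produces a multilinear variety contained in $V$ of codimension $O(\codim B) = O(\log^{O(1)}_{\mathbf{f}}(\mathbf{f}\delta^{-1}))$, as required.

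The argument is essentially immediate once the right inputs are identified, so there is no real obstacle; the only point that needs care is the multilinear case. One cannot simply invoke the $\mathcal{G}$-supported version of Theorem~\ref{simVarAppThm} there, because a $\mathcal{G}$-supported multiaffine map still carries lower-order (in particular constant) terms, and its layers need not be multilinear varieties. This is exactly why Lemma~\ref{mlSetVarML} is used as a separate step, to convert the non-empty low-codimension variety sitting inside the multilinear set $V$ into an honest multilinear variety of comparable codimension.
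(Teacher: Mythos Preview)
Your proof is correct and follows essentially the same approach as the paper: apply Theorem~\ref{simVarAppThm} with $r=1$ to internally approximate $V$ by layers of a low-codimension multiaffine map and pick a non-empty layer, then in the multilinear case observe that $V$ is a multilinear set and invoke Lemma~\ref{mlSetVarML}. Your version spells out the choice $\epsilon = \delta/2$ and the reason some layer must be non-empty, which the paper leaves implicit, but the argument is the same.
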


\begin{proof}Let $V=\{x \in G_{[k]}:\alpha(x) = 0\}$ for some multiaffine map $\alpha \colon G_{[k]} \to H$. Apply Theorem~\ref{simVarAppThm} (with $r = 1$) to find a non-empty variety of codimension $O\Big(\log^{O(1)}_{\mathbf{f}} (\mathbf{f} \delta^{-1})\Big)$ inside $V$.\\
\indent If $V$ is in fact a multilinear variety, then it is also a multilinear set. The additional conclusion follows from Lemma~\ref{mlSetVarML}.\end{proof}

In the rest of this subsection, we slightly strengthen the stated results in a straightforward manner. For maps $f,g \colon X \to \mathbb{C}$, we shall write 
\[f\apps{\varepsilon}_{L^p} g\]
as shorthand for $\|f - g\|_{L^p} \leq \varepsilon$. It will also sometimes be convenient to have a similar notation when we have two complicated expressions that both define functions of a variable such as $x_{[k]}$ and we want to say that the functions are close in $L_p$. In such a case we will write
\[f(x_{[k]})\apps{\varepsilon}_{L^p,x_{[k]}} g(x_{[k]})\]
The variable $x_{[k]}$ should be understood as a dummy variable in this notation, so the left-hand side is referring not to the complex number $f(x_{[k]})$ but to the function $x_{[k]}\mapsto f(x_{[k]})$.\\

The next proposition tells us that linear combinations of multilinear phases are well-behaved when convolved in a principal direction.

\begin{proposition}\label{ExpSumConv}Let $c_1, \dots, c_n \in \mathbb{D}$ and let $\phi_1, \dots, \phi_n \colon G_{[k]} \to \mathbb{F}$ be multiaffine forms. Define a function $f \colon G_{[k]} \to \mathbb{C}$ by $f(x_{[k]}) = \sum_{i \in [n]} c_i \chi(\phi_i(x_{[k]}))$. Let $\varepsilon > 0, p \geq 1$ and $d \in [k]$. Then we may find a positive integer $l =O\Big((\varepsilon^{-1}n)^{O(p)}\Big)$, constants $\tilde{c}_1, \dots, \tilde{c}_l \in \mathbb{D}$, and multiaffine forms $\psi_1, \dots, \psi_l \colon G_{[k]} \to \mathbb{F}$, such that the convolution of $f$ in direction $d$ satisifies
\[\bigconv{d}f \apps{\varepsilon}_{L^p} \sum_{i \in [l]} \tilde{c}_i \chi\circ\psi_i.\]
Moreover, if $\mathcal{G}$ is a down-set and the maps $\phi_1, \dots, \phi_n$ are $\mathcal{G}$-supported, then so are the maps $\psi_1, \dots, \psi_l$.

\end{proposition}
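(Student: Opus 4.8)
The plan is to reduce to the case $d=k$ (the general case being identical after relabelling coordinates) and compute $\bigconv{k}f$ explicitly. Since each $\phi_i$ is affine in its last argument, I would write, after fixing a dot product on $G_k$,
\[\phi_i(x_{[k-1]},z_k)=\beta_i(x_{[k-1]})\cdot z_k+\phi_i^0(x_{[k-1]}),\]
where $\beta_i\colon G_{[k-1]}\to G_k$ is multiaffine and $\phi_i^0\colon G_{[k-1]}\to\mathbb{F}$ is a multiaffine form; if $\phi_i$ is $\mathcal{G}$-supported then $\beta_i$ is supported on $\{I\setminus\{k\}:I\in\mathcal{G},\,k\in I\}$ and $\phi_i^0$ on $\{I\in\mathcal{G}:k\notin I\}$. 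Substituting into the definition of the convolution and averaging over $z_k$, the character sum $\mathbb{E}_{z_k}\chi\big((\beta_i(x_{[k-1]})-\beta_j(x_{[k-1]}))\cdot z_k\big)$ vanishes unless $\beta_i(x_{[k-1]})=\beta_j(x_{[k-1]})$, so
\[\bigconv{k}f(x_{[k-1]},y_k)=\sum_{i,j\in[n]}c_i\overline{c_j}\,\mathbbm{1}\big[\beta_i(x_{[k-1]})=\beta_j(x_{[k-1]})\big]\,\chi\big(\eta_{ij}(x_{[k-1]},y_k)\big),\]
where $\eta_{ij}(x_{[k-1]},y_k)=\beta_i(x_{[k-1]})\cdot y_k+\phi_i^0(x_{[k-1]})-\phi_j^0(x_{[k-1]})$ is a multiaffine form on $G_{[k]}$, and is $\mathcal{G}$-supported when the $\phi_i$ are.

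Everything on the right is a multiaffine phase except the indicators $\mathbbm{1}[W_{ij}]$, where $W_{ij}=(\beta_i-\beta_j)^{-1}(0)\subseteq G_{[k-1]}$ is a variety; the task is to resolve these using few phases. Here I would apply Lemma~\ref{varOuterApprox}: for a parameter $s$ to be chosen, it gives a multiaffine map $\pi_{ij}\colon G_{[k-1]}\to\mathbb{F}^s$ with $W_{ij}\subseteq\pi_{ij}^{-1}(0)$ and $|\pi_{ij}^{-1}(0)\setminus W_{ij}|\le\mathbf{f}^{-s}|G_{[k-1]}|$, and since $\beta_i-\beta_j$ is $(\mathcal{G}\cap\mathcal{P}([k-1]))$-supported, so is $\pi_{ij}$. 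The indicator of $\pi_{ij}^{-1}(0)\setminus W_{ij}$ depends only on $x_{[k-1]}$ and has $L^p(G_{[k]})$-norm at most $\mathbf{f}^{-s/p}$, so replacing every $\mathbbm{1}[W_{ij}]$ by $\mathbbm{1}[\pi_{ij}^{-1}(0)]$ perturbs $\bigconv{k}f$ by at most $n^2\mathbf{f}^{-s/p}$ in $L^p$; taking $s=\lceil p\log_{\mathbf{f}}(n^2\varepsilon^{-1})\rceil$ makes this at most $\varepsilon$. Expanding $\mathbbm{1}[\pi_{ij}^{-1}(0)](x_{[k-1]})=\mathbf{f}^{-s}\sum_{t\in\mathbb{F}^s}\chi(t\cdot\pi_{ij}(x_{[k-1]}))$ and merging with $\chi(\eta_{ij})$, each term becomes $\mathbf{f}^{-s}c_i\overline{c_j}\,\chi\big(t\cdot\pi_{ij}(x_{[k-1]})+\eta_{ij}(x_{[k-1]},y_k)\big)$, whose coefficient lies in $\mathbb{D}$ and whose exponent is a $\mathcal{G}$-supported multiaffine form. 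This yields the required $\bigconv{k}f\apps{\varepsilon}_{L^p}\sum_{i\in[l]}\tilde c_i\chi\circ\psi_i$ with $l\le n^2\mathbf{f}^s$ phases.

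The one point needing care is that $\mathbf{f}^s$ a priori depends on $\mathbf{f}$. To eliminate this I would first discard the pairs $(i,j)$ with $|W_{ij}|<(\varepsilon/2n^2)^p|G_{[k-1]}|$ (they contribute at most $\varepsilon/2$ in $L^p$ in total) and apply the external approximation with $s=\lceil p\log_{\mathbf{f}}(2n^2\varepsilon^{-1})\rceil$ only to the remaining ``dense'' pairs. For such a pair, either $\beta_i=\beta_j$, so $W_{ij}=G_{[k-1]}$ contributes one phase directly, or $\beta_i\ne\beta_j$, in which case some coordinate of $\beta_i-\beta_j$ is a non-identically-zero multiaffine form, whose zero set has density at most $(k-1)/\mathbf{f}$ (a short induction on the number of variables: splitting off one variable, the linear part or the constant part is a nonzero multiaffine form in one fewer variable); comparing with the density lower bound $(\varepsilon/2n^2)^p$ forces $\mathbf{f}\le(k-1)(2n^2\varepsilon^{-1})^p$, hence $\mathbf{f}^s<\mathbf{f}(2n^2\varepsilon^{-1})^p\le(k-1)(2n^2\varepsilon^{-1})^{2p}$, and $l=O_k\big((\varepsilon^{-1}n)^{O(p)}\big)$ with no dependence on $\mathbf{f}$. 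The main obstacle is precisely this control of the number of phases needed to resolve $\mathbbm{1}[\beta_i=\beta_j]$: one must use the inexpensive external approximation of dense varieties (Lemma~\ref{varOuterApprox}), not the far more costly strong inverse theorem, and pair it with the density bound for multiaffine forms so that the $\mathbf{f}^s$ factor coming from expanding the indicator into characters stays polynomially bounded in $\varepsilon^{-1}n$.
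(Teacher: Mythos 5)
Your argument follows the paper's route: decompose $\phi_i$ into its linear-in-$x_d$ part plus the rest, expand $\bigconv{d}f$ into a double sum carrying the factor $\mathbbm{1}[\Phi_i=\Phi_j]$, approximate this indicator from outside using Lemma~\ref{varOuterApprox}, and expand it into $\mathbf{f}^{t_{ij}}$ additive characters. What you add --- discarding the sparse pairs and invoking the density bound $(k-1)/\mathbf{f}$ for the zero set of a nonzero multiaffine form to conclude that the surviving pairs with $\Phi_i\ne\Phi_j$ force $\mathbf{f}\le(k-1)(2n^2\varepsilon^{-1})^p$ --- patches a real imprecision: the codimension $t_{ij}$ in Lemma~\ref{varOuterApprox} must be a positive integer, so when $\mathbf{f}$ is large relative to $\varepsilon^{-1}n$ one still has $\mathbf{f}^{t_{ij}}\ge\mathbf{f}$, and without your Schwartz--Zippel step the stated field-independent bound $l=O\big((\varepsilon^{-1}n)^{O(p)}\big)$ does not visibly follow from the proof as written in the paper.
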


\noindent\textbf{Remark.} We could have stated this proposition using external and internal approximation phrasing, but that would be more cumbersome.

\begin{proof}Without loss of generality $d = k$. Let $\mathcal{G}' = \mathcal{G} \cap \mathcal{P}([k-1])$. For each $i \in [n]$ let $\phi_i(x_{[k]}) = \Phi_i(x_{[k-1]}) \cdot x_k + \alpha_i(x_{[k-1]})$ for $\mathcal{G}'$-supported multiaffine maps $\Phi_i \colon G_{[k-1]} \to G_k$ and $\alpha_i \colon G_{[k-1]} \to \mathbb{F}$. We expand
\begin{align*}\bigconv{k}f(x_{[k]}) &= \exx_{y_k} f(x_{[k-1]}, x_k + y_k) \overline{f(x_{[k-1]}, y_k)} = \sum_{i, j \in [n]} c_i \overline{c_j} \exx_{y_k}\chi\Big(\phi_i(x_{[k-1]}, x_k + y_k) - \phi_j(x_{[k-1]}, y_k)\Big)\\
&=\sum_{i, j \in [n]} c_i \overline{c_j} \exx_{y_k}\chi\Big(\Phi_i(x_{[k-1]}) \cdot x_k + (\alpha_i - \alpha_j)(x_{[k-1]}) + (\Phi_i(x_{[k-1]}) - \Phi_j(x_{[k-1]})) \cdot y_k\Big)\\
&=\sum_{i, j \in [n]} c_i \overline{c_j} \chi\Big(\Phi_i(x_{[k-1]}) \cdot x_k + (\alpha_i - \alpha_j)(x_{[k-1]})\Big) \mathbbm{1}\Big(\Phi_i(x_{[k-1]}) - \Phi_j(x_{[k-1]}) = 0\Big).
\end{align*}
For each $i,j \in [n]$ apply Lemma~\ref{varOuterApprox} to find $\mathcal{G}'$-supported multiaffine maps $\beta_{ij} \colon G_{[k-1]} \to \mathbb{F}^{t_{ij}}$, where $t_{ij} = O\Big(p(\log_{\mathbf{f}} (\varepsilon^{-1} n))\Big)$, such that
\[\{x_{[k-1]} \in G_{[k-1]} \colon \Phi_i(x_{[k-1]}) - \Phi_j(x_{[k-1]}) = 0\} \subseteq \{x_{[k-1]} \in G_{[k-1]} \colon \beta_{ij}(x_{[k-1]}) = 0\}\]
and
\[|\{x_{[k-1]} \in G_{[k-1]} \colon \beta_{ij}(x_{[k-1]}) = 0,  \Phi_i(x_{[k-1]}) - \Phi_j(x_{[k-1]}) \not= 0\}| \leq \frac{\varepsilon^p}{2^p n^{2p}}|G_{[k-1]}|.\]
Then for each $i,j\in [n]$ we have 
\begin{align*}\chi\Big(\Phi_i(x_{[k-1]}) \cdot x_k + &(\alpha_i - \alpha_j)(x_{[k-1]})\Big) \mathbbm{1}\Big(\Phi_i(x_{[k-1]}) - \Phi_j(x_{[k-1]}) = 0\Big)\\
\apps{\varepsilon/n^2}_{L^p,x_{[k]}}&\hspace{2pt} \chi\Big(\Phi_i(x_{[k-1]}) \cdot x_k + (\alpha_i - \alpha_j)(x_{[k-1]})\Big) \mathbbm{1}\Big(\beta_{ij}(x_{[k-1]}) = 0\Big)\\
=&\sum_{\nu \in \mathbb{F}^{t_{ij}}} \mathbf{f}^{-t_{ij}}\chi\Big(\Phi_i(x_{[k-1]}) \cdot x_k + (\alpha_i - \alpha_j)(x_{[k-1]}) + \nu \cdot \beta_{ij}(x_{[k-1]})\Big),\end{align*}
and the claim follows.\end{proof}

We now deduce a strengthening of Theorem~\ref{denseColumnsThm} by obtaining control over sizes of slices of higher dimension.

\begin{theorem}[Fibres theorem]\label{fibresThm}For every $i \leq k$, there are constants $C = C_{i, k}$ and $D = D_{i, k}$ such that the following holds. Let $B \subset G_{[k]}$ be a non-empty variety of codimension at most $r$ and let $\varepsilon \in (0,1)$. Then we may find
\begin{itemize}
\item a positive integer $s \leq C \Big(r + \log_{\mathbf{f}} \varepsilon^{-1}\Big)^D$,
\item a multiaffine map $\beta \colon G_{[i]} \to \mathbb{F}^s$,
\item a union $U$ of layers of $\beta$ of size $|U| \geq (1-\varepsilon) |G_{[i]}|$, and
\item a map $c \colon \mathbb{F}^s \to [0,1]$,\end{itemize} 
such that 
\[\Big||G_{[i+1,k]}|^{-1} |B_{x_{[i]}}| - c(\beta(x_{[i]}))\Big| \leq \varepsilon\]
for every $x_{[i]} \in U$. Moreover, if $\mathcal{G} \subset \mathcal{P}[k]$ is a down-set, $\mathcal{G}^{(i)} = \mathcal{G} \cap \mathcal{P}([i])$, and $B$ is defined by maps that are $\mathcal{G}$-supported, then $\beta$ can be taken to be $\mathcal{G}^{(i)}$-supported.
\end{theorem}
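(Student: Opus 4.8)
The plan is a downward induction on $i$, the case $i=k$ being trivial: then $G_{[k+1,k]}$ is a one-point space, so $|G_{[i+1,k]}|^{-1}|B_{x_{[k]}}|=\mathbbm{1}(\alpha(x_{[k]})=0)$ for any multiaffine map $\alpha$ of codimension $r$ defining $B$, and we take $\beta=\alpha$, $c(\nu)=\mathbbm{1}(\nu=0)$, $U=G_{[k]}$. Write $\mathrm{dens}_i(x_{[i]})=|G_{[i+1,k]}|^{-1}|B_{x_{[i]}}|$, so that $\mathrm{dens}_i(x_{[i]})=\mathbb{E}_{y_{i+1}}\mathrm{dens}_{i+1}(x_{[i]},y_{i+1})$. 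Applying the inductive hypothesis at $i+1$ with an error parameter $\eta$ that will be a fixed power of $\varepsilon$, we obtain a multiaffine map $\gamma\colon G_{[i+1]}\to\mathbb{F}^t$, a function $c_{i+1}\colon\mathbb{F}^t\to[0,1]$, and a union $U_{i+1}$ of layers of $\gamma$ with $|U_{i+1}|\geq(1-\eta)|G_{[i+1]}|$ on which $\mathrm{dens}_{i+1}$ is within $\eta$ of $c_{i+1}\circ\gamma$. Splitting the average over $y_{i+1}$ according to whether $(x_{[i]},y_{i+1})\in U_{i+1}$ and using $0\leq\mathrm{dens}_{i+1}\leq1$, one gets $\mathrm{dens}_i(x_{[i]})=\mathbb{E}_{y_{i+1}}c_{i+1}(\gamma(x_{[i]},y_{i+1}))+O(\eta+p(x_{[i]}))$, where $p(x_{[i]})=\mathbb{P}_{y_{i+1}}((x_{[i]},y_{i+1})\notin U_{i+1})$ and $\mathbb{E}_{x_{[i]}}p(x_{[i]})\leq\eta$. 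Both the main term and $p$ have the shape $x_{[i]}\mapsto\mathbb{E}_{y_{i+1}}g(\gamma(x_{[i]},y_{i+1}))$ for a function $g\colon\mathbb{F}^t\to\mathbb{D}$ (namely $g=c_{i+1}$, respectively $g=\mathbbm{1}_{\mathbb{F}^t\setminus M_{i+1}}$, where $M_{i+1}$ indexes the layers making up $U_{i+1}$), so everything reduces to controlling such averages.

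The heart of the argument is the following lemma: \emph{for a multiaffine $\gamma\colon G_{[i+1]}\to\mathbb{F}^t$, a function $g\colon\mathbb{F}^t\to\mathbb{D}$ and $\epsilon>0$, there are a multiaffine $\sigma\colon G_{[i]}\to\mathbb{F}^s$ with $s\leq C_k(t^2+t\log_{\mathbf{f}}\epsilon^{-1})^{D_k}$, a function $\hat{g}\colon\mathbb{F}^s\to\mathbb{D}$, and a function $E\colon G_{[i]}\to[0,\infty)$ factoring through $\sigma$ with $\|E\|_{L^1}\leq\epsilon$, such that $|\mathbb{E}_{y_{i+1}}g(\gamma(x_{[i]},y_{i+1}))-\hat{g}(\sigma(x_{[i]}))|\leq E(x_{[i]})$ for all $x_{[i]}$}. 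To prove it, write $g(\nu)=\sum_{\mu\in\mathbb{F}^t}\hat{g}(\mu)\chi(\mu\cdot\nu)$ with each $\hat g(\mu)\in\mathbb{D}$, and decompose each coordinate of $\gamma$ in the last variable as $\gamma^{(a)}(x_{[i]},y_{i+1})=\Theta^{(a)}(x_{[i]})\cdot y_{i+1}+\tau^{(a)}(x_{[i]})$, with $\Theta^{(a)}\colon G_{[i]}\to G_{i+1}$ and $\tau^{(a)}\colon G_{[i]}\to\mathbb{F}$ multiaffine. Then $\mathbb{E}_{y_{i+1}}\chi((\mu\cdot\gamma)(x_{[i]},y_{i+1}))=\chi(\mu\cdot\tau(x_{[i]}))\,\mathbbm{1}\big(\sum_a\mu_a\Theta^{(a)}(x_{[i]})=0\big)$, where $\tau=(\tau^{(a)})_a\colon G_{[i]}\to\mathbb{F}^t$. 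The key point is that as $\mu$ ranges over $\mathbb{F}^t$ the maps $\sum_a\mu_a\Theta^{(a)}$ are precisely the $\mathbb{F}$-linear combinations of the \emph{fixed} family $\Theta^{(1)},\dots,\Theta^{(t)}\colon G_{[i]}\to G_{i+1}$, so a single application of Theorem~\ref{simVarAppThm} to this family provides one multiaffine map $\beta^{\ast}\colon G_{[i]}\to\mathbb{F}^{s'}$, of complexity $s'\leq C_k(t\log_{\mathbf{f}}(\mathbf{f}\epsilon''^{-1}))^{D_k}$, whose layers internally \emph{and} externally $\epsilon''$-approximate every variety $\{\sum_a\mu_a\Theta^{(a)}=0\}$ at once. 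Replacing each indicator in the Fourier sum by the corresponding inner union of layers of $\beta^{\ast}$ turns $\sum_\mu\hat{g}(\mu)\chi(\mu\cdot\tau(x_{[i]}))\mathbbm{1}(\cdots)$ into a genuine function of $\sigma(x_{[i]})$ for $\sigma=(\tau,\beta^{\ast})$, and bounds the error pointwise by $\sum_\mu\mathbbm{1}$ of the relevant symmetric differences, a quantity that factors through $\beta^{\ast}$ and has $L^1$-norm at most $2\epsilon''\mathbf{f}^t$; taking $\epsilon''=\epsilon\mathbf{f}^{-t}/2$ gives the lemma, the bound on $s$ following since $\log_{\mathbf{f}}\epsilon''^{-1}=\log_{\mathbf{f}}\epsilon^{-1}+t+O(1)$.

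With the lemma available, the inductive step finishes routinely. Apply it with $g=c_{i+1}$ to write $\mathbb{E}_{y_{i+1}}c_{i+1}(\gamma(x_{[i]},y_{i+1}))=\hat{c}(\sigma_1(x_{[i]}))+E_1(x_{[i]})$, and with $g=\mathbbm{1}_{\mathbb{F}^t\setminus M_{i+1}}$ to write $p(x_{[i]})=\hat{p}(\sigma_2(x_{[i]}))+E_2(x_{[i]})$ (we may take $\hat p\geq0$ since $p$ does, at the cost of doubling $\|E_2\|_{L^1}$). Put $\beta=(\sigma_1,\sigma_2)$, let $c$ be the clamp of $\mathrm{Re}\,\hat{c}$ to $[0,1]$ read off the $\sigma_1$-part, and let $U$ be the set of $x_{[i]}$ on which $E_1$, $E_2$ and $\hat{p}\circ\sigma_2$ are all at most $\varepsilon/10$. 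Since $E_1$, $E_2$ and $\hat p\circ\sigma_2$ factor through $\beta$, the set $U$ is a union of layers of $\beta$, and Markov's inequality together with $\mathbb{E}\,\hat p\circ\sigma_2\leq\mathbb{E}p+\|E_2\|_{L^1}\leq\eta+\epsilon$ gives $|G_{[i]}\setminus U|\leq\varepsilon|G_{[i]}|$ provided $\epsilon$ and $\eta$ are suitable fixed powers of $\varepsilon$ (say at most $\varepsilon^2/100$); on $U$, the estimate for $\mathrm{dens}_i$ from the first paragraph gives $|\mathrm{dens}_i(x_{[i]})-c(\beta(x_{[i]}))|=O(\eta+\varepsilon)\leq\varepsilon$ after adjusting constants. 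Because the recursion has depth at most $k$ and each step replaces the complexity by a polynomial of it with $k$-dependent constants, the final $s$ is bounded by $C_{i,k}(r+\log_{\mathbf{f}}\varepsilon^{-1})^{D_{i,k}}$. The $\mathcal{G}$-supported refinement is preserved throughout: if $\gamma$ is $\mathcal{G}^{(i+1)}$-supported, then extracting the $y_{i+1}$-linear and the $y_{i+1}$-free parts shows $\Theta^{(a)}$ and $\tau^{(a)}$ are $\mathcal{G}^{(i)}$-supported (a set $I\in\mathcal{G}$ with $i+1\in I$ contributes $I\setminus\{i+1\}\in\mathcal{G}^{(i)}$ to $\Theta^{(a)}$, one with $i+1\notin I$ contributes $I\in\mathcal{G}^{(i)}$ to $\tau^{(a)}$), and the $\mathcal{G}$-supported version of Theorem~\ref{simVarAppThm} keeps $\beta^{\ast}$ supported on $\mathcal{G}^{(i)}$.

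I expect the only genuine obstacle to be the point dealt with in the second paragraph. The naive move of approximating each variety $\{\sum_a\mu_a\Theta^{(a)}(x_{[i]})=0\}$ from the outside by a low-codimension variety via Lemma~\ref{varOuterApprox} does not work, because then the discrepancy between $\mathrm{dens}_i$ and its candidate description would be supported on a set that is \emph{not} a union of layers of $\beta$, and so could not be absorbed into $U^{c}$. This is exactly why one needs the full internal-and-external simultaneous approximation of Theorem~\ref{simVarAppThm}; and the reason a single invocation of it suffices, rather than one per $\mu\in\mathbb{F}^t$, is that the varieties in play form an $\mathbb{F}$-linear family over the bounded generating set $\Theta^{(1)},\dots,\Theta^{(t)}$, which is also what keeps the complexity polynomial. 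A secondary, purely bookkeeping, point is that the Fourier expansion of $g$ has $\mathbf{f}^t$ terms, forcing the approximation quality inside Theorem~\ref{simVarAppThm} down to $\epsilon\mathbf{f}^{-t}$; this merely inflates $\log_{\mathbf{f}}\epsilon^{-1}$ by $t$, which is harmless since $t$ is itself polynomial in $r$ and $\log_{\mathbf{f}}\varepsilon^{-1}$ and the recursion depth is at most $k$.
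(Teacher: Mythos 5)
Your proof is correct, but it reaches the result by a noticeably different route from the paper, and the comparison is worth recording. Both arguments run the same downward induction, extract the $y_{i+1}$-linear part of the multiaffine map from the inductive hypothesis (your $\Theta^{(a)}$, the paper's $\Gamma_j$), and then apply Theorem~\ref{simVarAppThm} to that family -- your observation that a single invocation suffices because the varieties $\{\mu\cdot\Theta=0\}$ form an $\mathbb F$-linear family is exactly the same observation the paper makes for the $\Gamma_j$. The divergence is in how the two proofs use that approximation and in how they handle the ``bad columns''. The paper works in physical space: it uses the internal/external approximation to identify layers of $\rho$ on which $\{\lambda:\lambda\cdot\Gamma(x_{[i]})=0\}$ is literally constant, invokes Lemma~\ref{solCrit} to pin down the image coset $\gamma(x_{[i]})+\Lambda_L^\perp$ of $y_{i+1}\mapsto\beta(x_{[i]},y_{i+1})$, and reads off the column average exactly; separately, it invokes the ready-made Theorem~\ref{denseColumnsThm} to regularise the set of columns that meet $G_{[i+1]}\setminus U$ too often. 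You instead push everything to the Fourier side: the identity $\mathbb E_{y_{i+1}}\chi((\mu\cdot\gamma)(x_{[i]},y_{i+1}))=\chi(\mu\cdot\tau(x_{[i]}))\mathbbm 1(\mu\cdot\Theta(x_{[i]})=0)$ (the dual of the paper's image-coset computation) reduces the column average of any bounded $g\circ\gamma$ to a question the simultaneous internal-and-external approximation answers directly, and then one lemma handles both the density $c_{i+1}$ and the column-wise bad-set measure $p$ in one stroke. The trade-off is that the paper obtains exact equality of the column average on each good layer, while you get only an $L^1$-controlled pointwise error; in exchange, your treatment is more uniform and self-contained, in particular it does not need Theorem~\ref{denseColumnsThm} at all. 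The one place I wanted to check carefully was whether your pointwise error function $E$ really factors through $\beta^\ast$ so that $U$ is a union of layers: it does, precisely because you use \emph{both} the internal approximant $A_\mu$ and the external one $B_\mu$ and bound the error by $\mathbbm 1_{B_\mu\setminus A_\mu}$ rather than by $\mathbbm 1_{V_\mu\setminus A_\mu}$ -- the same ``both sides'' refinement of Theorem~\ref{simVarAppThm} the paper relies on, and you correctly flagged this as the crux. Your bookkeeping of the $\mathcal G$-supported refinement and of the polynomial blow-up through at most $k$ recursion levels also matches the paper.
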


\begin{proof}We prove the claim by downwards induction on $i$. For $i = k$, let $\alpha \colon G_{[k]} \to \mathbb{F}^r$ be a multiaffine map such that $B = \{x_{[k]} \in G_{[k]}:\alpha(x_{[k]}) = 0\}$. Set $\beta = \alpha$, so $s=r$, and for each $u\in\mathbb{F}^s$ let $c(u) = 1$ if $u = 0$ and let $c(u)=0$ otherwise.\\
\indent Assume now that the claim holds for $i+1$ and let $\eta > 0$ to be specified later. Apply the inductive hypothesis with $\eta$ as the approximation parameter. We get 
\begin{itemize}
\item $s\leq C_{i+1, k} \Big(r + \log_{\mathbf{f}} \eta^{-1}\Big)^D_{i+1,k}$,
\item a $\mathcal{G}^{(i+1)}$-supported multiaffine map $\beta \colon G_{[i+1]} \to \mathbb{F}^s$,
\item a union $U$ of layers of $\beta$ of size $|U| \geq (1-\eta) |G_{[i+1]}|$, and
\item a map $c \colon \mathbb{F}^s \to [0,1]$,\end{itemize} 
such that for each $x_{[i+1]} \in U$,
\[\Big||G_{[i+2,k]}|^{-1}|B_{x_{[i+1]}}| - c(\beta(x_{[i+1]}))\Big| \leq \eta.\]
Let $F = G_{[i+1]} \setminus U$. Suppose that $x_{[i]} \in G_{[i]}$ is such that $|F_{x_{[i]}}| \leq \frac{\varepsilon}{4} |G_{i+1}|$. Then
\begin{align*}\Big||G_{[i+1,k]}|^{-1}|B_{x_{[i]}}| &- \exx_{x_{i+1} \in G_{i+1}} c(\beta(x_{[i+1]}))\Big|\\
&\hspace{1cm}=|G_{i+1}|^{-1}\bigg|\sum_{x_{i+1} \in G_{i+1}}\bigg(|G_{[i+2,k]}|^{-1}|B_{x_{[i+1]}}| - c(\beta(x_{[i+1]}))\bigg)\bigg|\\
&\hspace{1cm}\leq |G_{i+1}|^{-1}\sum_{x_{i+1} \in G_{i+1} \setminus F_{x_{[i]}}} \Big| |G_{[i+2,k]}|^{-1}|B_{x_{[i+1]}}| - c(\beta(x_{[i+1]}))\Big|\\
&\hspace{2cm}+ |G_{i+1}|^{-1}\sum_{x_{i+1} \in F_{x_{[i]}}}\Big||G_{[i+2,k]}|^{-1}|B_{x_{[i+1]}}| - c(\beta(x_{[i+1]}))\Big|\\
&\hspace{1cm}\leq \eta + \varepsilon/4.\end{align*}
Apply Theorem~\ref{denseColumnsThm} to find $s' \leq O((s \log_{\mathbf{f}}(\mathbf{f} \varepsilon^{-1}))^{O(1)})$ and a $\mathcal{G}^{(i)}$-supported multiaffine map $\beta' \colon G_{[i]} \to \mathbb{F}^{s'}$ whose layers externally $(\varepsilon/100)$-approximate the set $\{x_{[i]} \in G_{[i]} \colon |F_{x_{[i]}}| \geq \frac{\varepsilon}{4} |G_{i+1}|\}$. Note that $|F| \leq \frac{\varepsilon^2}{100} |G_{[i + 1]}|,$ provided $\eta \leq \frac{\varepsilon^2}{100}$. Hence, there is a union $U'$ of layers of $\beta'$ of size $|U'| \geq (1-\varepsilon/10)|G_{[i]}|$ such that for each $x_{[i]} \in U'$, $|F_{x_{[i]}}| \leq \frac{\varepsilon}{4} |G_{i+1}|$.\\ 

We now need to understand how the image of the map $y_{i+1} \mapsto \beta(x_{[i]}, y_{i+1})$ depends on $x_{[i]}$. 

For each $j \in [s]$ the map $\beta_j \colon G_{[i+1]} \to \mathbb{F}$ is a $\mathcal{G}^{(i+1)}$-supported multiaffine form, so we may find $\mathcal{G}^{(i)}$-supported multiaffine maps $\Gamma_j \colon G_{[i]} \to G_{i+1}$ and $\gamma_j \colon G_{[i]} \to \mathbb{F}$, such that $\beta_j(x_{[i+1]}) = \Gamma_j(x_{[i]}) \cdot x_{i+1} + \gamma_j(x_{[i]})$ for each $x_{[i+1]} \in G_{[i+1]}$. Apply Theorem~\ref{simVarAppThm} to the maps $\Gamma_1, \dots, \Gamma_s$. We obtain a positive integer $t = O\Big((s + \log_{\mathbf{f}} \eta^{-1})^{O(1)}\Big)$ and a $\mathcal{G}^{(i)}$-supported multiaffine map $\rho \colon G_{[i]} \to \mathbb{F}^t$ such that the layers of $\rho$ internally and externally $(\mathbf{f}^{-s^2-s}\eta)$-approximate the sets $\{x_{[i]} \in G_{[i]} \colon \lambda \cdot \Gamma(x_{[i]}) = 0\}$, for each $\lambda \in \mathbb{F}^s$. For a subspace $\Lambda \leq \mathbb{F}^s$ with a basis $\lambda^{(1)}, \dots, \lambda^{(v)}$, first approximate each variety $\{x_{[i]} \in G_{[i]} \colon \lambda^{(j)} \cdot \Gamma(x_{[i]}) = 0\}$ internally by layers of $\rho$. Then for each $\mu \in \mathbb{F}^s \setminus \Lambda$ approximate externally the variety $\{x_{[i]} \in G_{[i]} \colon \mu \cdot \Gamma(x_{[i]}) = 0\}$. This gives us an internal approximation of 
\begin{align*} \Big\{x_{[i]} \in G_{[i]} \colon (\forall \lambda \in \Lambda) \lambda \cdot \Gamma(x_{[i]}) = 0\Big\} &\setminus \bigg(\bigcup_{\mu \in \mathbb{F}^s \setminus \Lambda}\Big\{x_{[i]} \in G_{[i]} \colon \mu \cdot \Gamma(x_{[i]}) = 0\Big\}\bigg)\\& = \Big\{x_{[i]} \in G_{[i]} \colon \{\lambda \in \mathbb{F}^s \colon \lambda \cdot \Gamma(x_{[i]}) = 0\} = \Lambda\Big\}\end{align*}
by layers of $\rho$ with error of density at most $\mathbf{f}^{-s^2}\eta$.\\ 
\indent The number of subspaces of $\mathbb{F}^s$ is at most $\mathbf{f}^{s^2}$. Therefore there is a union $U''$ of layers of $\rho$ of size $|U''| \geq (1-\eta) |G_{[i]}|$ such that for each layer $L \subset U''$, there is a subspace $\Lambda_L \leq \mathbb{F}^s$ such that
\[\{\lambda \in \mathbb{F}^s \colon \lambda \cdot \Gamma(x_{[i]}) = 0\} = \Lambda_L\]
for every $x_{[i]} \in L$. Thus by Lemma~\ref{solCrit}, when $x_{[i]} \in L$, the image of the map $y_{i+1} \mapsto \beta(x_{[i]}, y_{i+1})$ is $\gamma(x_{[i]}) + \Lambda_L^\perp$.

When $x_{[i]} \in L \subset U''$, we may rewrite
\begin{align*}\exx_{x_{i+1} \in G_{i+1}}c(\beta(x_{[i+1]})) = &|G_{i+1}|^{-1} \sum_{x_{i+1} \in G_{i+1}} c\Big(\Gamma(x_{[i]}) \cdot x_{i+1} + \gamma(x_{[i]})\Big)\\
=&\frac{1}{|\Lambda_L^\perp|}\sum_{u \in \gamma(x_{[i]}) + \Lambda_L^\perp} c(u).\end{align*}
The quantity in the last line depends only on $\gamma(x_{[i]})$ and $\rho(x_{[i]})$. Hence, for any set of the form ${\beta'}^{-1}(\lambda_1) \cap \gamma^{-1}(\lambda_2) \cap \rho^{-1}(\lambda_3)$ such that ${\beta'}^{-1}(\lambda_1) \subset U'$ and $\rho^{-1}(\lambda_2) \subset U''$, there is a quantity $\tilde{c}(\lambda_1, \lambda_2, \lambda_3) \in [0,1]$ such that for each $x_{[i]} \in {\beta'}^{-1}(\lambda_1) \cap \gamma^{-1}(\lambda_2) \cap \rho^{-1}(\lambda_3)$,
\[\Big||G_{[i+1,k]}|^{-1}|B_{x_{[i]}}| - \tilde{c}(\lambda_1, \lambda_2, \lambda_3)\Big| = \Big||G_{[i+1,k]}|^{-1}|B_{x_{[i]}}| - \tilde{c}(\beta'(x_{[i]}), \gamma(x_{[i]}), \rho(x_{[i]}))\Big| \leq \eta + \varepsilon/4.\]
Choose $\eta = \frac{\varepsilon^2}{100}$ so that the necessary bounds are satisfied, and the proof is complete.\end{proof}

Again, we deduce a simultaneous version of Theorem~\ref{fibresThm}, where we find a multiaffine map of low codimension that works for all layers of a given multiaffine map $\beta$.

\begin{corollary}[Simultaneous Fibres theorem]\label{simFibresThm}For every $i \leq k$ there are constants $C = C_{i, k}$ and $D = D_{i, k}$ such that the following holds. Let $\beta \colon G_{[k]} \to \mathbb{F}^r$ be a multiaffine map and let $\varepsilon \in (0,1)$. Then we may find
\begin{itemize}
\item a positive integer $s \leq C \Big(r + \log_{\mathbf{f}} \varepsilon^{-1}\Big)^D$,
\item a multiaffine map $\gamma \colon G_{[i]} \to \mathbb{F}^s$, and
\item a union $U$ of layers of $\gamma$ of size $|U| \geq (1-\varepsilon) |G_{[i]}|$,
\end{itemize}
such that for each layer $L$ of $\gamma$ inside $U$, there is a map $c \colon \mathbb{F}^r \to [0,1]$ with the property that
\[\Big||G_{[i+1,k]}|^{-1} |\{y_{[i + 1, k]} \in G_{[i+1, k]} \colon \beta(x_{[i]}, y_{[i+1,k]}) = \lambda\}|\,\, -\,\, c(\lambda)\Big| \leq \varepsilon\]
for every $\lambda \in \mathbb{F}^r$ and every $x_{[i]} \in L$. Moreover, if $\mathcal{G} \subset \mathcal{P}[k]$ is a down-set, $\mathcal{G}^{(i)} = \mathcal{G} \cap \mathcal{P}([i])$, and $\beta$ is $\mathcal{G}$-supported, then $\gamma$ can be taken to be $\mathcal{G}^{(i)}$-supported.
\end{corollary}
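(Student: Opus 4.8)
The plan is to deduce the corollary from Theorem~\ref{fibresThm} by encoding, in the fibres of a single auxiliary variety living in a slightly larger space, the sizes of all the fibres $\{y_{[i+1,k]}\in G_{[i+1,k]}\colon\beta(x_{[i]},y_{[i+1,k]})=\lambda\}$ simultaneously. We may assume $r\geq1$ (the case $r=0$ is trivial). Adjoin two new coordinates, both copies of $\mathbb{F}^r$, which we denote $\star$ and $\bullet$, and work in $G_{[k]}\times\mathbb{F}^r_\star\times\mathbb{F}^r_\bullet$. Writing $\beta=\sum_{I\in\mathcal{G}}\beta_I$ for the decomposition of $\beta$ into multilinear parts (with $\mathcal{G}=\mathcal{P}([k])$ if no down-set is prescribed), the form $(x_{[k]},\lambda,z)\mapsto(\beta(x_{[k]})-\lambda)\cdot z=\sum_{I\in\mathcal{G}}\beta_I(x_I)\cdot z-\lambda\cdot z$ is multiaffine, and it is $\mathcal{H}$-supported, where $\mathcal{H}$ is the down-set generated by the sets $I\cup\{\bullet\}$ with $I\in\mathcal{G}$ together with $\{\star,\bullet\}$. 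Hence
\[B=\{(x_{[k]},\lambda,z)\colon(\beta(x_{[k]})-\lambda)\cdot z=0\}\]
is a variety of codimension $1$, and it is non-empty since it contains every point with $z=0$. A direct count shows that for fixed $(x_{[i]},\lambda)$ the fibre of $B$ over $G_{[i+1,k]}\times\mathbb{F}^r_\bullet$ has normalised size $\mathbf{f}^{-1}+(1-\mathbf{f}^{-1})q(x_{[i]},\lambda)$, where $q(x_{[i]},\lambda)=|G_{[i+1,k]}|^{-1}\,|\{y_{[i+1,k]}\colon\beta(x_{[i]},y_{[i+1,k]})=\lambda\}|\in[0,1]$ is exactly the quantity we must control; since $1-\mathbf{f}^{-1}\geq\frac{1}{2}$, an additive error $\eta$ in this fibre size translates into an error at most $2\eta$ in $q$.

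Next I would apply Theorem~\ref{fibresThm} to $B$, after relabelling the coordinates so that the first $i+1$ of them are $G_1,\dots,G_i,\mathbb{F}^r_\star$ (and the remaining ones are $G_{i+1},\dots,G_k,\mathbb{F}^r_\bullet$), with slicing index $i+1$ and approximation parameter $\varepsilon''=\varepsilon\mathbf{f}^{-r}$. Since $\codim B=1$, this produces a multiaffine map $\beta'\colon G_{[i]}\times\mathbb{F}^r_\star\to\mathbb{F}^s$ with $s\leq C(1+r+\log_{\mathbf{f}}\varepsilon^{-1})^D\leq C'(r+\log_{\mathbf{f}}\varepsilon^{-1})^{D}$ (using $r\geq1$), a union $U'$ of layers of $\beta'$ with $|U'|\geq(1-\varepsilon'')|G_{[i]}|\mathbf{f}^r$, and a map $\tilde{c}\colon\mathbb{F}^s\to[0,1]$ that $\varepsilon''$-approximates the normalised fibre sizes of $B$ over all of $U'$. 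The structural point is that the support clause of Theorem~\ref{fibresThm} forces $\beta'$ to be $(\mathcal{G}^{(i)}\cup\{\{\star\}\})$-supported, because $\mathcal{H}$ restricted to the first $i+1$ coordinates equals $\mathcal{G}^{(i)}\cup\{\{\star\}\}$ (no member of $\mathcal{H}$ contains $\star$ alongside any index of $[k]$). Consequently $\beta'(x_{[i]},\lambda)=\Gamma_0\lambda+\gamma(x_{[i]})$ for some fixed linear map $\Gamma_0$ and a multiaffine map $\gamma\colon G_{[i]}\to\mathbb{F}^s$, with $\gamma$ being $\mathcal{G}^{(i)}$-supported whenever $\beta$ is $\mathcal{G}$-supported; this $\gamma$, of the required codimension, is the output map.

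To finish, write $U'=\beta'^{-1}(S')$ and observe that, as $\beta'(x_{[i]},\lambda)=\Gamma_0\lambda+\gamma(x_{[i]})$ depends on $x_{[i]}$ only through $\gamma(x_{[i]})$, the set $U=\{x_{[i]}\colon(x_{[i]},\lambda)\in U'\text{ for all }\lambda\in\mathbb{F}^r\}$ is a union of layers of $\gamma$; moreover each $x_{[i]}\notin U$ contributes at least one pair outside $U'$, so $|G_{[i]}\setminus U|\leq|(G_{[i]}\times\mathbb{F}^r)\setminus U'|\leq\varepsilon''\mathbf{f}^r|G_{[i]}|=\varepsilon|G_{[i]}|$. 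On a layer $L=\gamma^{-1}(\mu)\subseteq U$ I would set $c_L(\lambda)$ to be $(\tilde{c}(\Gamma_0\lambda+\mu)-\mathbf{f}^{-1})/(1-\mathbf{f}^{-1})$ truncated to the interval $[0,1]$; combining the fibre-size estimate of Theorem~\ref{fibresThm} with the affine relation between $q$ and the fibre size, together with $2\varepsilon''\leq\varepsilon$, yields $|q(x_{[i]},\lambda)-c_L(\lambda)|\leq\varepsilon$ for every $\lambda$ and every $x_{[i]}\in L$, which is exactly the claim. The one step I would be most careful with is the support bookkeeping in the middle paragraph: checking that the enlarged defining form carries no monomial mixing $\star$ with the $G_{[k]}$-coordinates, so that the clean splitting $\beta'=\Gamma_0\lambda+\gamma$ is genuinely forced by Theorem~\ref{fibresThm}. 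Everything else is routine counting together with a first-moment (Markov) argument.
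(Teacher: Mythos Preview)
Your proof is correct, and at its core it follows the same idea as the paper: encode the target value $\lambda$ as an additional coordinate, apply Theorem~\ref{fibresThm} in this enlarged space, and then extract from the resulting map $\beta'$ on $G_{[i]}\times\mathbb{F}^r$ a map $\gamma$ on $G_{[i]}$ alone. The support bookkeeping you flagged as the delicate step is indeed fine: since $\lambda$ only appears through $\lambda\cdot z$, the enlarged form has no monomial mixing $\star$ with a coordinate in $[k]$, so the restricted down-set is exactly $\mathcal{G}^{(i)}\cup\{\{\star\}\}$ and the splitting $\beta'(x_{[i]},\lambda)=\gamma(x_{[i]})+\Gamma_0\lambda$ is forced.

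Where you differ from the paper is in two places. First, the paper adjoins only one extra coordinate and uses the codimension-$r$ variety $\{\beta(x_{[k]})-\lambda=0\}$, whose fibre over $(x_{[i]},\lambda)$ is $q(x_{[i]},\lambda)$ exactly, rather than your codimension-$1$ dot-product encoding that yields the affine expression $\mathbf{f}^{-1}+(1-\mathbf{f}^{-1})q$; this is purely cosmetic and the bounds are the same. Second, and more interestingly, to pass from $\psi(x_{[i]},\lambda)$ to a map on $G_{[i]}$, the paper does not use the splitting: it simply sets $\gamma(x_{[i]})=(\psi(x_{[i]},0),\psi(x_{[i]},e_1),\dots,\psi(x_{[i]},e_r))$, paying an extra factor of $r+1$ in codimension. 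Your observation that $\mathcal{H}$-supportedness already forces $\psi(x_{[i]},\lambda)=\gamma(x_{[i]})+\Gamma_0\lambda$ lets you read off $\gamma$ directly with codimension $s$. This is a genuine simplification, and in fact it applies equally well to the paper's one-coordinate setup (where the support of $\tilde\beta$ is $\mathcal{G}\cup\{\{k{+}1\}\}$, again with no $\{k{+}1\}$--$[k]$ mixing), so the second coordinate you introduced is not actually needed to make your argument work.
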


\begin{proof}Consider a modified map $\tilde{\beta} \colon G_{[k]} \times \mathbb{F}^r \to \mathbb{F}^r$, defined by the formula $\tilde{\beta}(x_{[k]}, \lambda) = \beta(x_{[k]}) - \lambda$, which is $\tilde{\mathcal{G}}$-supported, where $\tilde{\mathcal{G}} = \mathcal{G} \cup \{\{k+1\}\}$. Let $B = \{(x_{[k]}, \lambda) \in G_{[k]} \times \mathbb{F}^r \colon \tilde{\beta}(x_{[k]}, \lambda) = 0\}$ and write $\mathcal{H} = \mathcal{G}^{(i)} \cup \{\{k+1\}\}$. Apply Theorem~\ref{fibresThm} to $B$, with parameter $\tilde{\varepsilon} = \varepsilon \mathbf{f}^{-r}$, to get an $\mathcal{H}$-supported map $\psi \colon G_{[i]} \times \mathbb{F}^r \to \mathbb{F}^s$, for a positive integer $s = O\Big((r + \log_{\mathbf{f}} \varepsilon^{-1})^{O(1)}\Big)$ and a union of layers $\tilde{U}$ of $\psi$ of size $|\tilde{U}| \geq (1- \varepsilon \mathbf{f}^{-r}) |G_{[i]}| |\mathbb{F}^r|$, such that for each layer $L$ of $\psi$ inside $\tilde{U}$ there is some constant $c$ such that
\begin{equation}\label{approxEqncB} \Big||G_{[i+1,k]}|^{-1} |B_{x_{[i]}, \lambda}| - c \Big| \leq \mathbf{f}^{-r}\varepsilon\end{equation}
for every $(x_{[i]}, \lambda) \in L$.\\
\indent Let $\gamma \colon G_{[i]} \to (\mathbb{F}^s)^{r+1}$ be the $\mathcal{G}^{(i)}$-supported multiaffine map defined by
\[\gamma(x_{[i]}) = \Big(\psi(x_{[i]}, 0), \psi(x_{[i]}, e_1), \dots, \psi(x_{[i]}, e_r)\Big),\]
where $e_1, \dots, e_r$ is the standard basis of $\mathbb{F}^r$. Given a layer $L = \{(x_{[i]},\lambda') \in G_{[i]} \times \mathbb{F}^r:\psi(x_{[i]},\lambda') = u\}$ and given some $\lambda \in \mathbb{F}^r$, we have
\begin{align}L_\lambda &= \Big\{x_{[i]} \in G_{[i]}\colon \psi(x_{[i]}, \lambda) = u\Big\}\nonumber\\
&=  \Big\{x_{[i]} \in G_{[i]}\colon \Big(1 - \sum_{j \in [r]} \lambda_j\Big) \psi(x_{[i]}, 0) + \lambda_1 \psi(x_{[i]}, e_1) + \dots + \lambda_r \psi(x_{[i]}, e_r) = u\Big\}\nonumber\\
&= \bigcup\limits_{\substack{w \in (\mathbb{F}^s)^{r+1}\\w_1 (1 - \sum_{j \in [r]} \lambda_j) + w_2 \lambda_1 + \dots + w_{r+1} \lambda_r = u}} \{x_{[i]} \in G_{[i]}\colon \gamma(x_{[i]}) = w\}.\label{simfibrdecEqn}\end{align}
Hence, $L_\lambda$ is a union of layers of $\gamma$. Let $U$ be the union of those layers $K$ of $\gamma$ such that for each $\lambda \in \mathbb{F}^r$, the layer $K$ appears in the decomposition~\eqref{simfibrdecEqn} above for some $L_\lambda$, where $L \subset \tilde{U}$ is a layer of $\psi$. In other words, $U$ is the union of all layers $K$ of $\gamma$ such that $K \times \{\lambda\} \subset \tilde{U}$ for every $\lambda \in \mathbb{F}^r$. Since $|\tilde{U}| \geq (1- \varepsilon \mathbf{f}^{-r}) |G_{[i]}| |\mathbb{F}^r|$, we have that $|U| \geq (1-\varepsilon) |G_{[i]}|$. Finally, if $x_{[i]} \in K$ for some layer $K$ of $\gamma$ inside $U$, then for every $\lambda$ there is some layer $L \subset \tilde{U}$ of $\psi$ such that $K \subset L_\lambda$. Let $c(\lambda)$ be the relevant constant for $L_\lambda$ in~\eqref{approxEqncB}. Then for every $x_{[i]} \in K$ and every $\lambda \in \mathbb{F}^r$,
\begin{align*}\Big||G_{[i+1,k]}|^{-1} |\{y_{[i + 1, k]} \in G_{[i+1, k]} \colon \beta(x_{[i]}, y_{[i+1,k]}) = \lambda\}| - c(\lambda)\Big|= \Big||G_{[i+1,k]}|^{-1} |B_{x_{[i]}, \lambda}| - c(\lambda) \Big| \leq \varepsilon,\end{align*}
which completes the proof.
\end{proof}

Let $\alpha \colon G_{[k]} \to H$ be a multiaffine map. Let $\mathcal{F}$ be the collection of subsets of $[k]$ such that $\alpha(x_{[k]}) = \sum_{I \in \mathcal{F}} \alpha_I(x_I)$ for non-zero multilinear maps $\alpha_I \colon G_I \to H$. We call these maps the \emph{multilinear parts} of $\alpha$. Equivalently, $\mathcal{F}$ is the minimal collection of sets such that $\alpha$ is $\mathcal{F}$-supported. We define $\supp \alpha$ to be the down-set generated by $\mathcal{F}$, in other words the set $\{A \subset [k] \colon (\exists B \in \mathcal{F})\, A \subseteq B\}$. That is, $\supp \alpha$ is the minimal down-set $\mathcal{G}$ such that $\alpha$ is $\mathcal{G}$-supported. We say that a multiaffine map $\beta \colon G_{[k]} \to H$ is \emph{of lower order than} $\alpha$ if  $\supp \beta$ contains none of the maximal elements of $\supp \alpha$ (with respect to inclusion). When $\alpha$ is fixed, we simply say that $\beta$ is \emph{of lower order}. Moreover, if $\beta \colon G_{[l]} \to H$ is multiaffine for some $l < k$, we still say that $\beta$ is of lower order than $\alpha$ if $\supp \beta$ contains no maximal set in $\supp \alpha$. For varieties, we say that $U$ is \emph{of lower order than} $V$ if the multiaffine maps used to define $U$ and $V$ satisfy the corresponding condition. Again, when $V$ is fixed, we say that $U$ is \emph{of lower order}.\\
\indent We also need the following observation of Lovett~\cite{Lovett}.

\begin{lemma}[Lovett, Lemma 2.1 in~\cite{Lovett}]\label{biasHomog}Let $\alpha \colon G_{[k]} \to \mathbb{F}$ be a multiaffine form with multilinear parts $\alpha_I$, and let $\chi:\mathbb F\to\mathbb C$ be an additive character. Then
\[\Big|\exx_{x_{[k]}} \chi(\alpha(x_{[k]}))\Big| \leq \exx_{x_{[k]}} \chi(\alpha_{[k]}(x_{[k]})) \in \mathbb{R}_{\geq 0}.\]\end{lemma}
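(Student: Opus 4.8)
The plan is to prove this by induction on $k$. Fix a dot product on $G_k$; since $\chi$ is non-trivial, $\exx_{z\in G_k}\chi(r\cdot z)=\mathbbm 1(r=0)$ for every $r\in G_k$. I would first record the elementary fact that for any \emph{multilinear} form $\beta\colon G_{[k]}\to\mathbb F$, writing $\beta(x_{[k-1]},x_k)=B(x_{[k-1]})\cdot x_k$ with $B\colon G_{[k-1]}\to G_k$ multilinear, we have $\exx_{x_{[k]}}\chi(\beta(x_{[k]}))=\mathbb P_{x_{[k-1]}}(B(x_{[k-1]})=0)\in[0,1]$. Applied to $\beta=\alpha_{[k]}$ this already settles $\exx\chi(\alpha_{[k]})\in\mathbb R_{\ge 0}$, and it also shows that biases of multilinear forms are non-negative reals, a fact I will use below to remove moduli. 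The base case $k=1$ is immediate: with $\alpha(x_1)=\alpha_1(x_1)+\alpha_\emptyset$ one gets $\big|\exx_{x_1}\chi(\alpha(x_1))\big|=\mathbbm 1(\alpha_1=0)=\exx_{x_1}\chi(\alpha_1(x_1))$, so it remains to prove the inequality $\big|\exx\chi(\alpha)\big|\le\exx\chi(\alpha_{[k]})$ for $k\ge 2$.

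For the inductive step I would decompose $\alpha(x_{[k-1]},x_k)=B(x_{[k-1]})\cdot x_k+\gamma(x_{[k-1]})$, where $B\colon G_{[k-1]}\to G_k$ is the multiaffine map collecting the parts $\alpha_I$ with $k\in I$, and $\gamma\colon G_{[k-1]}\to\mathbb F$ collects the parts with $k\notin I$. The one bookkeeping point to verify is that the top multilinear part $B_{[k-1]}$ of $B$ captures the top part of $\alpha$ exactly, in the sense that $\alpha_{[k]}(x_{[k-1]},x_k)=B_{[k-1]}(x_{[k-1]})\cdot x_k$. Averaging over $x_k$ first gives $\exx\chi(\alpha)=\exx_{x_{[k-1]}}\chi(\gamma(x_{[k-1]}))\,\mathbbm 1(B(x_{[k-1]})=0)$, whence $\big|\exx\chi(\alpha)\big|\le\mathbb P_{x_{[k-1]}}(B(x_{[k-1]})=0)=\exx_{y_k\in G_k}\exx_{x_{[k-1]}}\chi\big(B(x_{[k-1]})\cdot y_k\big)$.

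To finish, for each fixed $y_k$ the map $x_{[k-1]}\mapsto B(x_{[k-1]})\cdot y_k$ is a multiaffine form on $G_{[k-1]}$ whose top multilinear part is $x_{[k-1]}\mapsto B_{[k-1]}(x_{[k-1]})\cdot y_k$, so the inductive hypothesis yields $\big|\exx_{x_{[k-1]}}\chi(B(x_{[k-1]})\cdot y_k)\big|\le\exx_{x_{[k-1]}}\chi(B_{[k-1]}(x_{[k-1]})\cdot y_k)$, the latter being a non-negative real. Since $\mathbb P_{x_{[k-1]}}(B(x_{[k-1]})=0)$ is itself a non-negative real, I can pull the modulus inside the outer average over $y_k$ and chain the estimates: $\big|\exx\chi(\alpha)\big|\le\exx_{y_k}\big|\exx_{x_{[k-1]}}\chi(B(x_{[k-1]})\cdot y_k)\big|\le\exx_{x_{[k-1]},y_k}\chi\big(B_{[k-1]}(x_{[k-1]})\cdot y_k\big)=\exx_{x_{[k]}}\chi(\alpha_{[k]}(x_{[k]}))$, which is exactly the claim. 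I do not expect a genuine obstacle here: the argument is a clean induction driven by character orthogonality and a single use of the inductive hypothesis per step, and the only points needing care are the identification of top multilinear parts in the decomposition $\alpha=B(\cdot)\cdot x_k+\gamma$ and the observation that every quantity from which a modulus is dropped is honestly real and non-negative (which is why the ``$\in\mathbb R_{\ge 0}$'' assertion must be available alongside the inequality).
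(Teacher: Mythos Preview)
Your proof is correct. The paper does not prove this lemma itself but quotes it from Lovett's paper, and your induction argument is precisely the one given there: decompose $\alpha$ with respect to the last coordinate, average over $x_k$ to land on $\mathbb{P}(B=0)$, rewrite this probability as an average of character sums over $y_k$, and apply the inductive hypothesis to each inner average.
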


\indent Further, when $\alpha \colon G_{[k]} \to \mathbb{F}^r$ is a multiaffine map such that each component $\alpha_i$ is multilinear on $G_{I_i}$, where $I_i$ is the set of coordinates on which $\alpha_i$ depends, we say that $\alpha$ is \emph{mixed-linear}. We say that a variety is \emph{mixed-linear} if it is a layer of a mixed-linear map.\\

The next theorem is more technical than previous results. Suppose that we are given a very dense subset $X$ of a variety $V$ of bounded codimension. The following theorem allows us to find a point $a_{[k]}$ and a further subset $X'$ and variety $V'$ such that for each $x_{[k]} \in X'$ the sequence $a_{[k]},$ $(x_1, a_{[2,k]}), \dots,$ $(x_{[k-1]}, a_k),$ $x_{[k]}$ of points lies in $X'$, $X'$ is still very dense in $V'$ and $V'$ is given by an intersection of $V$ with a lower-order variety of bounded codimension. Note that the sequence $a_{[k]},$ $(x_1, a_{[2,k]}), \dots,$ $(x_{[k-1]}, a_k),$ $x_{[k]}$ has the property that every two consecutive points differ in exactly one coordinate.

\begin{theorem}\label{connectedVeryDensePiece}Let $\alpha \colon G_{[k]} \to \mathbb{F}^r$ be a mixed-linear map, with $\alpha_i \colon G_{I_i} \to \mathbb{F}$. Let $\nu \in \mathbb{F}^r$ and let $V = \{x_{[k]}:\alpha(x_{[k]}) = \nu\}$ be a non-empty layer of $\alpha$. Let $X \subset V$ be a set of size $|X| \geq (1-\varepsilon)|V|$ and let $\xi > 0$. Then there exist a variety $U \subset G_{[k-1]}$ of codimension $O((r+\log_{\mathbf{f}} \xi^{-1})^{O(1)})$ and of lower-order than $V$, a subset $X' \subset X \cap (U \times G_k)$, and an element $a_{[k]} \in X'$, such that $|X'| \geq (1- O(\varepsilon^{\Omega(1)} + \xi^{\Omega(1)}))|V \cap (U \times G_k)|$ and for each $x_{[k]} \in X'$, $(x_{[i]}, a_{[i+1,k]}) \in X'$ for each $i \in [0,k]$. Moreover, there is $\delta > 0$ such that for each $x_{[k-1]} \in U \cap V^{[k-1]}$, $|V_{x_{[k-1]}}| = \delta |G_k|$, where 
\[V^{[k-1]} = \{x_{[k-1]} \in G_{[k-1]} \colon (\forall i \in [r])\ I_i \subset [k-1]\implies \alpha_i(x_{I_i}) = \nu_i\}.\]
\end{theorem}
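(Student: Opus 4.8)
The plan is to construct $U$ as the intersection of $U_0$ — a variety that freezes the rank of the ``direction-$k$ part'' of $\alpha$ — with a family of ``staircase-compatibility'' varieties tailored to the basepoint $a$. First I would separate the forms: let $T=\{i\in[r]\colon k\in I_i\}$, and for $i\in T$ write $\alpha_i(x_{I_i})=\beta_i(x_{I_i\setminus\{k\}})\cdot x_k$ for a multilinear $\beta_i\colon G_{I_i\setminus\{k\}}\to G_k$. Since $I_i\setminus\{k\}$ is a proper subset of $I_i\in\supp\alpha$ and $\supp\alpha$ is a down-set, $I_i\setminus\{k\}$ is never a maximal element of $\supp\alpha$, so any variety built from the $\beta_i$ is of lower order than $V$; the same holds for the forms $\alpha_j(x_{I_j\cap[i]},\,\cdot\,)$ with $I_j$ straddling a split $[i]\mid[i+1,k]$, as $I_j\cap[i]\subsetneq I_j$. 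Applying Theorem~\ref{simVarAppThm} to the family $(\beta_i)_{i\in T}$ yields a lower-order multiaffine map $\rho\colon G_{[k-1]}\to\mathbb F^{t}$ with $t=O\big((r+\log_{\mathbf f}\xi^{-1})^{O(1)}\big)$ whose layers detect, outside a small error set, the subspace $\Lambda(x_{[k-1]})=\{\mu\in\mathbb F^{T}\colon\sum_{i\in T}\mu_i\beta_i(x_{I_i\setminus\{k\}})=0\}$. Every point of $V$ lies over the ``consistent'' region $\Lambda\subseteq(\nu|_{T})^{\perp}$, so by averaging over layers I can pick a layer $U_0$ of $\rho$ on which $\Lambda\equiv\Lambda_0\subseteq(\nu|_{T})^{\perp}$, with $U_0\cap V^{[k-1]}\neq\emptyset$ and with $X$ still dense in $V\cap(U_0\times G_k)$. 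By Lemma~\ref{solCrit}, for each $x_{[k-1]}\in U_0\cap V^{[k-1]}$ the slice $V_{x_{[k-1]}}$ is a coset of a subspace of fixed dimension, so $|V_{x_{[k-1]}}|=\delta|G_k|$ with $\delta=\mathbf f^{-(|T|-\dim\Lambda_0)}>0$. Replace $V,X$ by $V\cap(U_0\times G_k),\ X\cap(U_0\times G_k)$.

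Now, writing $\pi_i(x_{[k]})=(x_{[i]},a_{[i+1,k]})$ (so that $\pi_i\circ\pi_j=\pi_{\min(i,j)}$), I choose $a\in X\cap(U_0\times G_k)$ by averaging so that, for every $i\in[0,k]$ simultaneously, $X$ is as dense as possible in the slice of $V$ obtained by fixing coordinates $[i+1,k]$ at $a_{[i+1,k]}$, and so that $a$ satisfies the bounded-codimension genericity conditions needed below. Then define $U$ as $U_0$ intersected with, for every $i\in[0,k-1]$, the conditions
\[\rho\big(x_{[i]},a_{[i+1,k-1]}\big)=\rho\big(a_{[k-1]}\big)\quad\text{and}\quad\alpha_j\big(x_{I_j\cap[i]},a_{I_j\cap[i+1,k]}\big)=\nu_j\ \text{ for each }j\text{ with }I_j\text{ meeting both }[i]\text{ and }[i+1,k],\]
together with conditions on coordinates $[i]$ supplied by Corollary~\ref{simFibresThm} that make $\big|(V\cap(U\times G_k))_{x_{[i]}}\big|$ constant across a dense layer. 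All of these maps are supported on down-sets avoiding the maximal sets of $\supp\alpha$, so $U$ is of lower order than $V$ and has codimension $O\big((r+\log_{\mathbf f}\xi^{-1})^{O(1)}\big)$. The key property is that $\pi_i$ maps $V\cap(U\times G_k)$ into itself for every $i$: membership in $V$ holds because $\alpha_j(\pi_i(x)_{I_j})=\nu_j$ is automatic except for the straddling $j$'s, which the displayed equations enforce, and membership in $U$ follows because the $\rho$- and $\alpha_j$-conditions at split $i'$ for $\pi_i(x)$ reduce to the corresponding conditions at split $\min(i,i')$ for $x$.

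With this in hand I set $X'=\{x\in X\cap(U\times G_k)\colon\pi_i(x)\in X\cap(U\times G_k)\text{ for all }i\in[0,k]\}$. Since $\pi_i$ preserves $V\cap(U\times G_k)$ and $\pi_i\circ\pi_j=\pi_{\min(i,j)}$, the set $X'$ is closed under every $\pi_i$; as $\pi_0(x)=a$ and $\pi_k(x)=x$, having $a\in X\cap(U\times G_k)$ forces $a\in X'$, and clearly $X'\subseteq X\cap(U\times G_k)$. For the size bound I write
\[\big|(V\cap(U\times G_k))\setminus X'\big|\leq\big|(V\cap(U\times G_k))\setminus X\big|+\sum_{i=1}^{k-1}\big|\{x\in V\cap(U\times G_k)\colon\pi_i(x)\notin X\cap(U\times G_k)\}\big|.\]
The first term is $O(\varepsilon^{\Omega(1)}+\xi^{\Omega(1)})|V\cap(U\times G_k)|$ by the choice of $U_0$ and $a$. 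For the $i$-th summand, $\pi_i$ is an idempotent map of $V\cap(U\times G_k)$ onto its slice $\{x_{[i+1,k]}=a_{[i+1,k]}\}$, whose fibres have (essentially) constant size by the Fibres conditions, so this summand is comparable to $|V\cap(U\times G_k)|$ times the relative deficiency of $X$ on that slice, which is $O(\varepsilon^{\Omega(1)})$ by the choice of $a$; summing over the $O(k)$ values of $i$ completes the estimate, and the final claim about $\delta$ is inherited from $U\subseteq U_0$.

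I expect the main obstacle to be the bookkeeping in this last step: one must control, uniformly in $x_{[i]}$, the sizes of the intermediate slices $(V\cap(U\times G_k))_{x_{[i]}}$ and the relative densities of $X$ on the slices $\{x_{[i+1,k]}=a_{[i+1,k]}\}$, all while $U$ itself depends on $a$. I would resolve this by taking every approximation parameter (the error in Theorem~\ref{simVarAppThm} and in the Fibres-theorem applications) polynomially small in $\xi$, by choosing $a$ after these maps are fixed but before the $a$-dependent part of $U$ is assembled, and by exploiting that each $a$-dependent constraint is of bounded codimension, so it cannot concentrate the deficiency of $X$ — the point that makes the intermediate fibres genuinely constant being that the straddling constraints $\alpha_j(x_{I_j\cap[i]},a_{I_j\cap[i+1,k]})=\nu_j$ force the relevant direction-$k$ coefficients $\beta_j$ to be nondegenerate along $U$, so that Corollary~\ref{simFibresThm} returns exactly constant slice sizes there.
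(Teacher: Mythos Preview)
Your direct construction --- take $U$ to be $U_0$ together with the explicit $a$-dependent staircase constraints, and set $X'=\bigcap_i\pi_i^{-1}(X\cap(U\times G_k))$ --- gets the geometry right: the check that each $\pi_i$ maps $V\cap(U\times G_k)$ into itself is correct, and that closure is exactly what makes $X'$ stable under every $\pi_i$. The gap is where you flag it, but it is a real gap rather than bookkeeping. The density you need is that of $X$ inside the slice $(V\cap(U\times G_k))_{a_{[i+1,k]}}$, and this slice is cut out in part by the staircase constraints $\rho(x_{[i']},a_{[i'+1,k-1]})=\rho(a_{[k-1]})$ for every $i'\leq i$; when $i'<i$ these depend on $a_{[i'+1,i]}$, coordinates of $a$ strictly \emph{below} the slice level. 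So the slice depends on all of $a$, not just on $a_{[i+1,k]}$, and one cannot pick $a$ by a single averaging before $U(a)$ is assembled. Your proposed fix --- that a bounded-codimension constraint ``cannot concentrate the deficiency of $X$'' --- is false as stated: for a fixed $a$ nothing stops $V\setminus X$ from lying entirely inside one of those layers, sending the relative density to zero. One could try to average over $a$, but then one must show that for each bad point $b$ the number of $a_{[i]}$ with $b_{[i]}\in(V\cap(U(a)\times G_k))_{a_{[i+1,k]}}$ is essentially uniform in $b$; this count is governed by the rank of the maps $a_{[i'+1,i]}\mapsto\rho(b_{[i']},a_{[i'+1,i]},a_{[i+1,k-1]})$ for every $i'<i$, so making it uniform would itself require another layer of regularisation. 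The Fibres-theorem conditions you invoke have the same circularity, since they refer to fibres of $V\cap(U\times G_k)$ while $U$ is still being built.

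The paper sidesteps all of this by induction on $k$. One applies the Fibres theorem in direction $G_k$ to pass to a lower-order layer on which $|V_{x_{[k-1]}}|$ is exactly $\delta|G_k|$; averages to keep $X$ at density $1-\varepsilon-\xi$ there; discards columns where $X_{x_{[k-1]}}$ is not $(1-\sqrt{\varepsilon}-\sqrt{\xi})$-dense to form $X'$; averages once more over $G_k$ to pick a single $a_k$ with $(X')_{a_k}$ still $(1-\sqrt{\varepsilon}-\sqrt{\xi})$-dense in the $(k-1)$-dimensional slice $(V')_{a_k}$; and then recurses on that slice to obtain $U'\subset G_{[k-2]}$, $X''$ and $a_{[k-1]}$. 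The output is $(X''\times G_k)\cap X'$ with basepoint $a_{[k]}$, and the staircase property is a two-line check using $X''\subset(X')_{a_k}$ and the defining property of $X'$. The point is that $a_k$ is chosen \emph{before} any $a_k$-dependent constraint appears, and after recursing the $a_{[k-1]}$-dependence lives entirely inside the $(k-1)$-variable subproblem, so the circularity never arises. Your approach becomes exactly this once you commit to choosing $a_k,a_{k-1},\dots$ sequentially rather than all at once.
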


Note that since $a_{[k]} \in X' \subset X \cap (U \times G_k) \subset V \cap (U \times G_k)$ we have in particular $V \cap (U \times G_k) \not= \emptyset$.

\begin{proof}We prove the claim by induction on $k$. Suppose the claim holds for $k-1$. Let $\mathcal{I} = \{i \in [r] \colon k \in I_i\}$. Let $W = \{x_{[k]} \in G_{[k]} \colon (\forall i \in \mathcal{I}) \alpha_i(x_{I_i}) = \nu_i\}$. Then $V = W \cap (V^{[k-1]} \times G_k)$. In fact, when $x_{[k-1]} \in V^{[k-1]}$ then $V_{x_{[k-1]}} = W_{x_{[k-1]}}$, while $V_{x_{[k-1]}} = \emptyset$ when $x_{[k-1]} \in G_{[k-1]} \setminus V^{[k-1]}$. Apply Theorem~\ref{fibresThm} in direction $G_k$ to the variety $W$ with parameter $\mathbf{f}^{-kr}\xi$ to find a positive integer $s \leq C \Big(r + \log_{\mathbf{f}} \xi^{-1}\Big)^D$, a multiaffine map $\gamma \colon G_{[k-1]} \to \mathbb{F}^s$ of lower-order than $\alpha$, and a collection of values $M \subset \mathbb{F}^s$ such that $|\{x_{[k-1]} \in G_{[k-1]} \colon \gamma(x_{[k-1]}) \in M\}| \geq (1-\mathbf{f}^{-kr}\xi) |G_{[k-1]}|$, with the property that for every $\mu\in M$ there exists a constant $c_\mu\in[0,1]$ such that $|V_{x_{[k-1]}}| =|W_{x_{[k-1]}}| = c_\mu |G_k|$ for every $x_{[k-1]} \in V^{[k-1]}$ such that $\gamma(x_{[k-1]}) = \mu$.\footnote{Note that Theorem~\ref{fibresThm} actually gives the approximation $\Big||W_{x_{[k-1]}}| - c_\mu |G_k|\Big| \leq \mathbf{f}^{-kr}\xi |G_k|.$ However, in the special case when Theorem~\ref{fibresThm} is applied for slices in a single direction, the slices $W_{x_{[k-1]}}$ are in fact cosets of subspaces of codimension at most $r$. Hence, the possible values of $|G_k|^{-1}|W_{x_{[k-1]}}|$ are only $\mathbf{f}^{-r}, \mathbf{f}^{-r + 1}, \dots, \mathbf{f}^{-1}, 1$, so since $\mathbf{f}^{-kr}\xi < \frac{1}{2}\mathbf{f}^{-r}$, we get that all slices $W_{x_{[k-1]}}$ have the exact same size. We shall frequently use such an exact version of Theorem~\ref{fibresThm} instead of its original approximate variant in the case of 1-directional slices without further comment.}\\
\indent Lemma~\ref{varsizelemma} implies that $|V| \geq \mathbf{f}^{-kr} |G_{[k]}|$. We also have
\begin{align*}\sum_{\mu \in M} |X \cap (\gamma^{-1}(\mu) \times G_k)|\hspace{2pt}\geq\hspace{2pt}|X| - \mathbf{f}^{-kr}\xi |G_{[k]}|\hspace{2pt}&\geq\hspace{2pt}(1-\varepsilon)|V| - \mathbf{f}^{-kr}\xi |G_k|\hspace{2pt}\geq\hspace{2pt}(1-\varepsilon - \xi)|V|\hspace{2pt}\\
&\geq\hspace{2pt}(1-\varepsilon - \xi) \sum_{\mu \in M} |V \cap (\gamma^{-1}(\mu) \times G_k)|\\
&\geq\hspace{2pt}(1-\varepsilon - \xi) |V| - \mathbf{f}^{-kr}\xi |G_{[k]}|\,> 0,\end{align*}
provided $\varepsilon, \xi < \frac{1}{4}$ (otherwise, the statement of the theorem is vacuous). Hence, there is a choice of $\mu \in M$ such that $V \cap (\{x_{[k-1]}:\gamma(x_{[k-1]}) = \mu\} \times G_k)$ is non-empty and 
\[|X \cap (\{x_{[k-1]}:\gamma(x_{[k-1]}) = \mu\} \times G_k)| \geq (1-\varepsilon - \xi) |V \cap (\{x_{[k-1]}:\gamma(x_{[k-1]}) = \mu\} \times G_k)|.\]

Write $V' = V \cap (\{x_{[k-1]}:\gamma(x_{[k-1]}) = \mu\} \times G_k)$ and set 
\[X' = \{x_{[k]} \in X \colon \gamma(x_{[k-1]}) = \mu, |X_{x_{[k-1]}}| \geq (1-\sqrt{\varepsilon} - \sqrt{\xi})|V_{x_{[k-1]}}|\}.\] 
Since there is $c_\mu > 0$ such that $|V_{x_{[k-1]}}| = c_\mu |G_k|$ for each $x_{[k-1]} \in V^{[k-1]}$ with $\gamma(x_{[k-1]}) = \mu$,  we have that 
\[|X'| \geq (1-\sqrt{\varepsilon} - \sqrt{\xi}) |V'| > 0.\]

By averaging, there is some $a_k \in G_k$ such that 
\[|(X')_{a_k}| \geq (1-\sqrt{\varepsilon} - \sqrt{\xi}) |(V')_{a_k}| > 0.\]
We may now apply the induction hypothesis to $(X')_{a_k}$ and $(V')_{a_k}$. We obtain a variety $U' \subset G_{[k-1]}$ (actually the induction hypothesis gives a variety $U'' \subset G_{[k-2]}$, here we immediately put $U' = U'' \times G_{k-1}$) of codimension $O((r + \log_{\mathbf{f}} \xi^{-1})^{O(1)})$ and of lower order than $V'_{a_k}$ and hence of lower order than $V$, a subset $X'' \subset (X')_{a_k} \cap U'$, and an element $a_{[k-1]} \in X''$, such that $|X''| \geq (1- O(\varepsilon^{\Omega(1)} + \xi^{\Omega(1)}))|(V')_{a_k} \cap U'|$ and $(x_{[i]}, a_{[i+1,k-1]}) \in X''$ for each $x_{[k-1]} \in X''$ and each $i \in [k-1]$. We claim that $a_{[k]}$, $(X'' \times G_k) \cap X'$, and $V' \cap ((V'_{a_k} \cap U') \times G_{k})$ have the desired properties.\\

First, 
\[V' \cap ((V'_{a_k} \cap U') \times G_{k}) = V \cap ((\{x_{[k-1]}:\gamma(x_{[k-1]})  = \mu\} \cap U' \cap W_{a_k})\times G_k),\] 
is indeed a variety obtained from $V$ by intersecting it with a lower-order variety of the codimension claimed. (Recall that $W$ was previously defined as $\{x_{[k]} \in G_{[k]} \colon (\forall i \in \mathcal{I}) \alpha_i(x_{I_i}) = \nu_i\}$. Also recall that $V = W \cap (V^{[k-1]} \times G_k)$ holds.) Next, 
\begin{align*}|(X'' \times G_k) \cap X'| =& \sum_{x_{[k-1]} \in X''} |X'_{x_{[k-1]}}| \\
= &\sum_{x_{[k-1]} \in X''} |X_{x_{[k-1]}}|\\
 \geq& \sum_{x_{[k-1]} \in X''} (1-\sqrt{\varepsilon} - \sqrt{\xi}) |V_{x_{[k-1]}}|\\
 = &(1-\sqrt{\varepsilon} - \sqrt{\xi}) c_\mu |G_k| |X''|\\
=& (1- O(\varepsilon^{\Omega(1)} + \xi^{\Omega(1)}))c_\mu |G_k| |(V')_{a_k} \cap U'|\\
=&(1- O(\varepsilon^{\Omega(1)} + \xi^{\Omega(1)})) \sum_{x_{[k-1]} \in (V')_{a_k} \cap U'} |V_{x_{[k-1]}}|\\
=&(1- O(\varepsilon^{\Omega(1)} + \xi^{\Omega(1)})) |V \cap ((V')_{a_k} \cap U') \times G_k|\\
=&(1- O(\varepsilon^{\Omega(1)} + \xi^{\Omega(1)})) |V' \cap ((V')_{a_k} \cap U') \times G_k|.\end{align*}

From the facts that $a_{[k-1]} \in X''$ and $X'' \subset X'_{a_k}$ we deduce that $a_{[k]} \in  (X'' \times G_k) \cap X'$.

Finally, for each $x_{[k]} \in (X'' \times G_k) \cap X'$ we need to show that $(x_{[i]}, a_{[i+1,k]}) \in (X'' \times G_k) \cap X'$ for $i \in [k]$ as well. Since $X'' \subset X'_{a_k}$, we already know that $(x_{[k-1]}, a_k) \in (X'' \times G_k) \cap X'$. Now suppose that $i \in [k-2]$. Then since $x_{[k-1]} \in X''$, we know that $(x_{[i]}, a_{[i+1,k-1]}) \in X''$. But again $X'' \subset X'_{a_k}$, so $(x_{[i]}, a_{[i+1,k]}) \in X'$, which completes the proof.\end{proof}

Let $A \subset G_{[k]}$. We say that a map $\phi \colon A \to H$ is \emph{multiaffine} if for each $d \in [k]$ and $x_{[k] \setminus \{d\}} \in G_{[k] \setminus \{d\}}$ there is an affine map $\psi \colon G_d \to H$ such that for every $y_d \in A_{x_{[k] \setminus \{d\}}}$ we have $\phi(x_{[k] \setminus \{d\}}, y_d)= \psi(y_d)$.\\

The following proposition shows that multi-homomophisms on very dense subsets of varieties are essentially multiaffine.

\begin{proposition}\label{MltHommToMltAff}Assume that the underlying field is $\mathbb{F}_p$ for a prime $p$. Let $\alpha \colon G_{[k]} \to \mathbb{F}_p^r$ be a mixed-linear map, with $\alpha_i \colon G_{I_i} \to \mathbb{F}_p$. Let $\nu \in \mathbb{F}_p^r$ and let $V = \{x_{[k]}:\alpha(x_{[k]}) = \nu\}$ be a non-empty layer of $\alpha$. Suppose that $X \subset V$ is a set of size $|X| \geq (1-\varepsilon)|V|$ and let $\phi \colon X \to H$ be a multi-homomorphism. Let $\xi > 0$. Then there exist a variety $U \subset G_{[k]}$ of lower order than $V$ and of codimension $O((r+\log_p \xi^{-1})^{O(1)})$ for which the intersection $U \cap V$ is non-empty and a subset $X' \subset X \cap U$ of size
\[(1- O(\varepsilon^{\Omega(1)} + \xi^{\Omega(1)}))|V \cap U|\]
such that $\phi|_{X'}$ is multiaffine.\end{proposition}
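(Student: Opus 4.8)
The plan is to combine Theorem~\ref{connectedVeryDensePiece} with a direction-by-direction application of Corollary~\ref{easyExtnDoubleApprox} (the approximate-affine-extension lemma). First I would apply Theorem~\ref{connectedVeryDensePiece} to $\alpha$, $\nu$, $V$, $X$ with approximation parameter $\xi$, producing a lower-order variety $U_0 \subset G_{[k-1]}$ of codimension $O((r+\log_p\xi^{-1})^{O(1)})$, a subset $X_0 \subset X \cap (U_0 \times G_k)$ with $|X_0| \geq (1-O(\varepsilon^{\Omega(1)}+\xi^{\Omega(1)}))|V \cap (U_0\times G_k)|$, and a point $a_{[k]} \in X_0$ such that for every $x_{[k]} \in X_0$ the whole ``staircase'' $a_{[k]}, (x_1, a_{[2,k]}), \dots, (x_{[k-1]}, a_k), x_{[k]}$ lies in $X_0$, together with the fact that all nonempty slices $V_{x_{[k-1]}}$ over $U_0 \cap V^{[k-1]}$ have the same size $\delta|G_k|$. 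Set $V_0 = V \cap (U_0 \times G_k)$. The key point of this step is that it lets us argue coordinatewise: to understand $\phi$ on $X_0$ it suffices to understand it along lines through the fixed basepoint $a_{[k]}$ in each direction separately, since the staircase connects every point of $X_0$ to $a_{[k]}$ through $X_0$.

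The heart of the argument is then the following inductive claim, run over $d = 0, 1, \dots, k$: there are a lower-order variety $U_d$ (still of codimension $O((r+\log_p\xi^{-1})^{O(1)})$, with the codimension growing by a bounded amount at each step) with $V_d := V \cap (U_d \times \cdots) $ nonempty, a subset $X_d \subseteq X_0$ of density $1 - O(\varepsilon^{\Omega(1)} + \xi^{\Omega(1)})$ in $V_d$, and for each $e \in [d]$ and each $x_{[k]\setminus\{e\}}$ an affine map $\psi^{e}_{x_{[k]\setminus\{e\}}} \colon G_e \to H$ agreeing with $\phi$ on $(X_d)_{x_{[k]\setminus\{e\}}}$. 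The inductive step from $d$ to $d+1$ goes as follows. For a random choice of $x_{[k]\setminus\{d+1\}}$ (with $x_j = a_j$ for $j > d+1$, so that the slice is genuinely one-dimensional in direction $d+1$ and sits inside a coset of a subspace of $G_{d+1}$ of bounded codimension determined by $U_d$ and the multilinear parts of $\alpha$ containing coordinate $d+1$), the restriction of $\phi$ to the slice is a Freiman $2$-homomorphism — this is exactly the multi-homomorphism hypothesis — on a set which, by Markov applied to the density of $X_d$ in $V_d$, is of density $1 - \varepsilon'$ in that coset for most slices, where $\varepsilon' = O(\varepsilon^{\Omega(1)}+\xi^{\Omega(1)})$. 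Apply Corollary~\ref{easyExtnDoubleApprox} (with $\eta = 0$, i.e.\ the exact-$2$-homomorphism case; Lemma~\ref{easyExtn} already suffices once the density exceeds $4/5$, but using the corollary version keeps the removed fraction a small power of $\varepsilon'$) to extend $\phi$ to an affine map on the whole coset, losing only an $O((\varepsilon')^{\Omega(1)})$ fraction of points. Then I would use the fibres/structure machinery (Theorem~\ref{fibresThm} and Theorem~\ref{simVarAppThm}, or rather Theorem~\ref{denseColumnsThm} applied to the ``bad slice'' set) to cut down to a lower-order subvariety $U_{d+1}$ on which for almost all slices the extension succeeds; discard the bad slices into the error set to form $X_{d+1}$, and check that the affine maps in directions $1, \dots, d$ are preserved (they are, because we only removed whole slices in direction $d+1$ and passed to a subvariety, which does not affect the already-established affine structure in the other directions — here one uses that restricting an affine map to a coset is still affine).

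After the induction terminates at $d = k$, set $U = U_k$ and $X' = X_k$; then $\phi|_{X'}$ is affine in every principal direction, i.e.\ multiaffine, and $|X'| \geq (1-O(\varepsilon^{\Omega(1)}+\xi^{\Omega(1)}))|V\cap U|$, with $U$ of the stated codimension and lower order than $V$, which is the conclusion.

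The main obstacle I anticipate is bookkeeping the error accumulation and the codimension growth across the $k$ coordinate steps: each application of Corollary~\ref{easyExtnDoubleApprox} followed by a structure-theorem cleanup replaces the current error $\varepsilon'$ by $O((\varepsilon')^{\Omega(1)})$ and adds $O((r + \log_p\xi^{-1})^{O(1)})$ to the codimension, and since $k$ is a fixed constant a bounded number of such steps keeps everything in the required form — but one must be careful that the ``good slice'' density is controlled uniformly and that passing to the subvariety $U_{d+1}$ does not destroy the connectedness/staircase property or the exact-slice-size property needed to run Markov again (this is why Theorem~\ref{connectedVeryDensePiece} is invoked up front rather than re-derived at each stage, and why the staircase is anchored at the single basepoint $a_{[k]}$). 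A secondary subtlety is that the one-dimensional slices in direction $d+1$ live in cosets of subspaces rather than all of $G_{d+1}$, so one must invoke the coset version of the extension lemmas (which is exactly the form in which Lemma~\ref{easyExtn} and Corollary~\ref{easyExtnDoubleApprox} are stated), and one must confirm that the defining forms of $U_d$ together with the relevant multilinear parts of $\alpha$ indeed cut out such a coset of bounded codimension in each slice.
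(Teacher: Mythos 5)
Your overall strategy — a direction-by-direction induction in which, at step $d+1$, you apply Theorem~\ref{fibresThm} to find a layer on which the slices $V_{x_{[k]\setminus\{d+1\}}}$ all have the same size, restrict to that layer, use Markov to discard the slices where $X$ is not at least $4/5$-dense, and then invoke Lemma~\ref{easyExtn} to get affine behaviour on each remaining line — is exactly the paper's argument, and that part of your plan is sound (including the observation that restricting to a subvariety and discarding slices in direction $d+1$ does not disturb the affine behaviour already established in directions $\leq d$). However, the invocation of Theorem~\ref{connectedVeryDensePiece} up front, and the whole ``staircase through the basepoint $a_{[k]}$'' apparatus, is not used in the paper and buys you nothing here: the induction is carried by fibre-wise density and the exact column-size property from Theorem~\ref{fibresThm} alone.

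More seriously, there is a genuine gap in your inductive step. You say you would consider ``a random choice of $x_{[k]\setminus\{d+1\}}$ (with $x_j = a_j$ for $j > d+1$).'' If taken literally, this only produces affine behaviour in direction $d+1$ on those lines whose coordinates in $G_{d+2}, \dots, G_k$ are pinned to the basepoint, which is a negligibly thin subset of the slices. But your inductive claim — correctly — demands an affine map on every slice $(X_d)_{x_{[k]\setminus\{e\}}}$, and multiaffinity of $\phi|_{X'}$ at the end requires it as well. The paper's proof considers all fibres $x_{[k]\setminus\{d+1\}}$, not just those through a basepoint; the Markov step then discards the bad fibres uniformly, and Lemma~\ref{easyExtn} is applied per fibre. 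There is no need for a connecting staircase, because nothing is propagated between slices — each slice is handled on its own. Finally, the right tool on each slice is Lemma~\ref{easyExtn} directly (density $>4/5$ in a coset gives an exact affine extension with no further loss); Corollary~\ref{easyExtnDoubleApprox} is for the genuinely approximate-quadruples situation and is not what is needed here.
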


\noindent\textbf{Remark.} By modifying $\xi$ appropriately, we may strengthen the conclusion slightly to
\begin{equation}|X'| \geq (1-O(\varepsilon^{\Omega(1)}) - \xi) |V \cap U|\label{MltHommToMltAffEqn}\end{equation}
without affecting the form of the bound on the codimension of $U$.

\begin{proof}We show that for each $d \in [0, k]$, there exist a lower-order variety $U \subset G_{[k]}$ of codimension $O((r+\log_p \xi^{-1})^{O(1)})$ and a subset $X' \subset X \cap U$ of size $(1- O(\varepsilon^{\Omega(1)} + \xi^{\Omega(1)}))|V \cap U|$ such that for each $d' \in [d]$, $\phi|_{X'}$ is a restriction of an affine map on each line in direction $G_{d'}$. The statement of the proposition then becomes the case $d = k$.\\
\indent We prove the claim by induction on $d$. Note that the base case $d = 0$ is trivial. Suppose now that the claim has been proved for some $d \in [0,k-1]$ and let $X'$ and $U$ be the relevant objects. Let $U$ be defined a multiaffine map of codimension $s = O((r+\log_p \xi^{-1})^{O(1)})$. Without loss of generality $U$ is defined by a mixed-linear map.\footnote{For a multiaffine map $\gamma \colon G_{[k]} \to \mathbb{F}^t$ that defines $U = \{\gamma = 0\}$, we may write it as $\gamma(x_{[k]}) = \sum_{I \in \mathcal{F}} \gamma_I(x_{[k]})$, where $\gamma_I \colon G_I \to \mathbb{F}^t$ is multilinear. Let $\tilde{\gamma} \colon G_{[k]} \to (\mathbb{F}^t)^{\mathcal{F}}$ be the concatenation of maps $\gamma_I$. Then layers of $\gamma$ (and in particular $U$) are unions of layers of $\tilde{\gamma}$, so we may average over latter, and $\tilde{\gamma}$ is mixed-linear.} Let $|X'| = (1-\varepsilon') |V \cap U|$, where $\varepsilon' = O(\varepsilon^{\Omega(1)} + \xi^{\Omega(1)})$. Let 
\[V^0 = \{x_{[k] \setminus \{d+1\}} \in G_{[k] \setminus \{d+1\}} \colon (\forall i \in [r])\ d+1 \notin I_i\implies \alpha_i(x_{I_i}) = \nu_i\}\] 
and let $\mathcal{I} = \{i \in [r] \colon d+1 \in I_i\}$. Apply Theorem~\ref{fibresThm} in direction ${d+1}$ to the variety
\[W = \{x_{[k]} \in G_{[k]} \colon (\forall i \in \mathcal{I}) \alpha_i(x_{I_i}) = \nu_i\} \cap U\]
to find a positive integer $t = O((r+\log_p \xi^{-1})^{O(1)})$, a multiaffine map $\beta \colon G_{[k] \setminus \{d+1\}} \to \mathbb{F}_p^t$ of lower order than $\alpha$, a collection of values $M \subset \mathbb{F}_p^t$, and a map $c \colon M \to [0,1]$ such that
\[|\beta^{-1}(M)| \geq (1 - p^{-kr - ks} \xi)|G_{[k] \setminus \{d+1\}}|\]
and
\begin{equation}\label{samecolsizeMltHommProp}|(V \cap U)_{x_{[k] \setminus \{d+1\}}}| = |W_{x_{[k] \setminus \{d+1\}}}| = c(\mu)|G_{d+1}|\end{equation}
for every $\mu \in M$ and every $x_{[k] \setminus \{d+1\}} \in V^0 \cap \beta^{-1}(\mu)$.\\
\indent By Lemma~\ref{varsizelemma}, we have $|V \cap U| \geq p^{-kr -ks}|G_{[k]}|$. Thus, we have
\begin{align*}\sum_{\mu \in M} |X' \cap (\beta^{-1}(\mu) \times G_{d+1})| \geq &|X'| - |G_{d+1}| \cdot |G_{[k] \setminus \{d+1\}} \setminus \beta^{-1}(M)|\\
\geq & (1-\varepsilon') |V \cap U| - p^{-kr - ks} \xi |G_{[k]}|\\
\geq & (1-\varepsilon'-\xi) |V \cap U|\\
\geq & (1-\varepsilon'-\xi) \sum_{\mu \in M} |V \cap U \cap (\beta^{-1}(\mu) \times G_{d+1})|.\end{align*}
We may assume that $\varepsilon' + \xi < 1$, as otherwise the claim is trivial. Note also that the fact that $|V \cap U| \geq p^{-kr -ks}|G_{[k]}|$ in particular implies that $V \cap U \cap \beta^{-1}(M) \not=\emptyset$.\\
\indent By averaging, there exists $\mu \in M$ such that the variety $V \cap U \cap (\beta^{-1}(\mu) \times G_{d+1})$ is non-empty and $|X' \cap (\beta^{-1}(\mu) \times G_{d+1})| \geq (1-\varepsilon' - \xi) |V \cap U \cap (\beta^{-1}(\mu) \times G_{d+1})|$. Let
\[X'' = \Big\{x_{[k]} \in X' \cap (\beta^{-1}(\mu) \times G_{d+1}) \colon |X'_{x_{[k] \setminus \{d+1\}}}| \geq (1-\sqrt{\varepsilon'} - \sqrt{\xi}) |(V \cap U)_{x_{[k] \setminus \{d+1\}}}|\Big\}.\]
Using~\eqref{samecolsizeMltHommProp} we see that $|X''| \geq  (1-\sqrt{\varepsilon'} - \sqrt{\xi}) |V \cap U \cap (\beta^{-1}(\mu) \times G_{d+1})|$. Finally, applying Lemma~\ref{easyExtn} we obtain that $\phi|_{X''}$ is a restriction of an affine map on each line in direction $G_{d+1}$. Here we need $\sqrt{\varepsilon'} + \sqrt{\xi} < \frac{1}{5}$ to hold, which we may assume as otherwise the claim is vacuous. Thus, the set $X''$ and variety $V \cap U \cap (\beta^{-1}(\mu) \times G_{d+1})$ satisfy the required properties.\end{proof}

Furthermore, it turns out that multi-homomorphisms defined on a $1-o(1)$ proportion of the whole space $G_{[k]}$ become restrictions of global multiaffine maps after omitting a few points if necessary.

\begin{proposition}\label{lastExtnStep}Assume that the underlying field is $\mathbb{F}_p$. There are constants $c_k, C_k, d_k > 0$ depending on $k$ only such that the following holds.\\
\indent Let $X \subset G_{[k]}$ be a set of density at least $1-\varepsilon$ for some $\varepsilon \in (0, c_k)$ and let $\phi \colon X \to H$ be a multi-homomorphism. Then there is a unique multiaffine map $\Phi \colon G_{[k]} \to H$ such that $\Phi(x_{[k]}) = \phi(x_{[k]})$ for a $1- C_k\varepsilon^{d_k}$ proportion of the $x_{[k]} \in G_{[k]}$.\end{proposition}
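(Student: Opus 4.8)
Here is the plan I would follow; the statement is Proposition~\ref{lastExtnStep}.

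I would argue by induction on $k$. For the base case $k=1$, a multi-homomorphism on a set $X\subseteq G_1$ is exactly a $2$-homomorphism, so provided $\varepsilon<1/5$ Lemma~\ref{easyExtn} gives a unique affine --- hence multiaffine --- map $\Phi\colon G_1\to H$ extending $\phi$, and I may take $c_1=1/5$, $C_1=1$, $d_1=1$. So assume the statement for $k-1$ and write $\mathcal M$ for the (finite-dimensional, $\mathbb F_p$-) vector space of all multiaffine maps $G_{[k-1]}\to H$. By Markov's inequality, the set $G_k'$ of those $x_k\in G_k$ whose slice $X_{x_k}=\{x_{[k-1]}\colon(x_{[k-1]},x_k)\in X\}$ has density at least $1-\sqrt\varepsilon$ in $G_{[k-1]}$ itself has density at least $1-\sqrt\varepsilon$ in $G_k$. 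Fixing one coordinate visibly preserves the property of being a multi-homomorphism in the remaining directions, so for each $x_k\in G_k'$ the restriction $\phi(\cdot,x_k)$ is a multi-homomorphism on $X_{x_k}$; since $\sqrt\varepsilon<c_{k-1}$, the inductive hypothesis yields a unique multiaffine map $\Phi^{(x_k)}\in\mathcal M$ that agrees with $\phi(\cdot,x_k)$ on at least a $1-C_{k-1}\varepsilon^{d_{k-1}/2}$ proportion of $G_{[k-1]}$.

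The heart of the argument would be to show that the partial map $\Theta\colon G_k'\to\mathcal M$, $x_k\mapsto\Phi^{(x_k)}$, is a $2$-homomorphism. Fix $a,b,c,d\in G_k'$ with $a+b=c+d$, and consider the multiaffine map $\Phi^{(a)}+\Phi^{(b)}-\Phi^{(c)}-\Phi^{(d)}$. At every $x_{[k-1]}$ for which all four points $(x_{[k-1]},a),(x_{[k-1]},b),(x_{[k-1]},c),(x_{[k-1]},d)$ lie in $X$ and each of $\Phi^{(a)},\Phi^{(b)},\Phi^{(c)},\Phi^{(d)}$ agrees at $x_{[k-1]}$ with the corresponding slice of $\phi$, those four points form an additive quadruple on the line $\{x_{[k-1]}\}\times G_k$, on which $\phi$ is a $2$-homomorphism, and hence that multiaffine map vanishes at $x_{[k-1]}$; the set of such $x_{[k-1]}$ has density at least $1-O_k(\varepsilon^{d_{k-1}/2})$. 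I would then use the following rigidity fact, proved by a short induction on the number of variables starting from the case of affine forms: a nonzero multiaffine map $G_{[k-1]}\to H$ is nonzero on at least a proportion $\kappa_k:=\bigl((p-1)/p\bigr)^{k-1}>0$ of $G_{[k-1]}$. Choosing $c_k$ small enough that $O_k(\varepsilon^{d_{k-1}/2})<\kappa_k$ whenever $\varepsilon<c_k$, the map $\Phi^{(a)}+\Phi^{(b)}-\Phi^{(c)}-\Phi^{(d)}$ must vanish identically, so $\Theta$ is a $2$-homomorphism. Since $|G_k'|>\tfrac45|G_k|$, Lemma~\ref{easyExtn} produces a unique affine extension $\widetilde\Theta\colon G_k\to\mathcal M$; writing $\widetilde\Theta(x_k)=\Psi_0+\Lambda(x_k)$ with $\Psi_0\in\mathcal M$ and $\Lambda\colon G_k\to\mathcal M$ linear, I would set $\Phi(x_{[k-1]},x_k)=\widetilde\Theta(x_k)(x_{[k-1]})$. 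This $\Phi$ is multiaffine: in direction $k$ because evaluation at a fixed $x_{[k-1]}$ is a linear map $\mathcal M\to H$; in a direction $i<k$ because $\Phi(x_{[k]})=\Psi_0(x_{[k-1]})+\Lambda(x_k)(x_{[k-1]})$ is, with the other coordinates fixed, a sum of two functions of $x_i$ each of which is affine.

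To finish, note that for $x_k\in G_k'$ one has $\Phi(\cdot,x_k)=\widetilde\Theta(x_k)=\Phi^{(x_k)}$, which agrees with $\phi(\cdot,x_k)$ on a $1-C_{k-1}\varepsilon^{d_{k-1}/2}$ proportion of $G_{[k-1]}$ (and those points automatically lie in $X_{x_k}$); integrating over the $\ge 1-\sqrt\varepsilon$ proportion of good $x_k$ shows that $\Phi=\phi$ on at least a $1-\sqrt\varepsilon-C_{k-1}\varepsilon^{d_{k-1}/2}$ proportion of $G_{[k]}$, which has the required form $1-C_k\varepsilon^{d_k}$ with $d_k=\min\{\tfrac12,\tfrac12 d_{k-1}\}$ and $C_k=1+C_{k-1}$ (after shrinking $c_k$ once more if needed). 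Uniqueness is again immediate from the rigidity fact: two multiaffine maps each agreeing with $\phi$ on more than a $1-\tfrac12\kappa_k$ proportion of $G_{[k]}$ agree with one another on more than a $1-\kappa_k$ proportion, hence are equal. I expect the genuinely substantive point to be the gluing step of the previous paragraph --- verifying that $x_k\mapsto\Phi^{(x_k)}$ is a true $2$-homomorphism into $\mathcal M$ --- together with locking down the quantitative rigidity statement that no nonzero multiaffine map vanishes on more than a $1-\kappa_k$ fraction; once those are in hand, tracking how the density deficit $\varepsilon\mapsto\sqrt\varepsilon$ compounds through the induction, and thereby the dependence of $c_k,C_k,d_k$ on $k$, is routine.
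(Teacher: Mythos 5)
Your proof is correct and follows the same inductive scheme as the paper's: slice by the last coordinate, use Markov to pick out dense slices, apply the inductive hypothesis to each, invoke the rigidity of multiaffine maps (the paper's Claim~\ref{nonzerovalsclaim}, which uses the weaker bound $p^{-(k-1)}$ where you use $((p-1)/p)^{k-1}$; both suffice) to show the slice-maps form a Freiman $2$-homomorphism, extend via Lemma~\ref{easyExtn}, and read off the final bound. The one genuine tidying-up in your write-up is the abstraction into the vector space $\mathcal M$ of multiaffine maps $G_{[k-1]}\to H$, which makes it automatic that $\widetilde\Theta(x_k)$ is affine in every direction $d<k$ even for $x_k$ outside $G_k'$; the paper instead extends each vertical line $\{x_{[k-1]}\}\times G_k$ separately and then verifies that property by hand, writing $x_k=y_k^1+y_k^2-y_k^3$ with $y_k^i\in Y$ and appealing to the multiaffineness of $\Phi(\cdot,y_k^i)$.
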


\begin{proof}We prove the result by induction on $k$. For $k = 1$, it follows from Lemma~\ref{easyExtn}. Assume now that the result holds for some $k-1 \geq 1$ and that $X$ and $\phi$ satisfying the hypothesis of the proposition are given. Let $Y= \{x_k \in G_k\colon |X_{x_k}| \geq (1 - \sqrt{\varepsilon})|G_{[k-1]}|\}$. Then $|Y| \geq (1-\sqrt{\varepsilon})|G_k|$. Provided $\varepsilon < c_{k-1}^2$, apply the induction hypothesis to $X_{x_k}$ for each $x_k \in Y$ to get a unique multiaffine map $\Phi_{x_k}$ that coincides on a subset of points $\tilde{X}_{x_k} \subset X_{x_k}$ with the map $\phi(\cdot , x_k)$. The induction hypothesis also gives a positive quantity $\varepsilon' \leq C_{k-1}\varepsilon^{d_k/2}$ such that $|\tilde{X}_{x_k}| \geq (1-\varepsilon')|G_{[k-1]}|$ for all $x_k \in Y$. For $x_k \in Y$ and $x_{[k-1]} \in G_{[k-1]}$ define $\Phi(x_{[k-1]}, x_k) = \Phi_{x_k}(x_{[k-1]})$. We claim that $\Phi$ is a multi-homomorphism on $G_{[k-1]} \times Y$. It suffices to prove that for all $x_k^{[4]}$ in $Y$ such that $x^1_k + x^2_k = x^3_k + x^4_k$ we also have $\Phi(x_{[k-1]}, x^1_k) + \Phi(x_{[k-1]}, x^2_k) = \Phi(x_{[k-1]}, x^3_k) + \Phi(x_{[k-1]}, x^4_k)$ for all $x_{[k-1]} \in G_{[k-1]}$. Let $Z = \bigcap_{i \in [4]} \tilde{X}_{x_k^i}$. Note that $|Z| \geq (1-4\varepsilon')|G_{[k-1]}|$ and that for each $x_{[k-1]} \in Z$ we have
\begin{align*}&\Phi(x_{[k-1]}, x^1_k) + \Phi(x_{[k-1]}, x^2_k) - \Phi(x_{[k-1]}, x^3_k) - \Phi(x_{[k-1]}, x^4_k)\\
&\hspace{2cm}= \phi(x_{[k-1]}, x^1_k) + \phi(x_{[k-1]}, x^2_k) - \phi(x_{[k-1]}, x^3_k) - \phi(x_{[k-1]}, x^4_k)\\
&\hspace{2cm} = 0.\end{align*}
Hence the map $\tau \colon G_{[k-1]} \to H$ given by
\[\tau(x_{[k-1]}) = \Phi(x_{[k-1]}, x^1_k) + \Phi(x_{[k-1]}, x^2_k) - \Phi(x_{[k-1]}, x^3_k) - \Phi(x_{[k-1]}, x^4_k)\]
is actually 0 on $Z$. But $\tau$ is a global multiaffine map on $G_{[k-1]}$. The next claim will imply that it is itself zero as long as $4\varepsilon' < p^{-k}$, which is true if $\varepsilon < \Big(\frac{p^{-k}}{4C_{k-1}}\Big)^{2/d_{k-1}}$.

\begin{claim}\label{nonzerovalsclaim}Suppose that $\psi \colon G_{[k]} \to H$ is a multiaffine map. Then $\psi = 0$ or $\psi(x_{[k]}) \not= 0$ for at least $p^{-k}|G_{[k]}|$ points $x_{[k]} \in G_{[k]}$.\end{claim}
\begin{proof}[Proof of claim] Suppose that $\psi \not =0$. Without loss of generality $H = \mathbb{F}_p$, i.e.\ $\psi$ is a multiaffine form (view $H$ as $\mathbb{F}_p^s$ and take some $\psi_i$ that is a non-zero map). Then, $\psi$ takes the value 1, so $\{x_{[k]}:\psi(x_{[k]}) = 1\}$ is a non-empty variety of codimension 1. By Lemma~\ref{varsizelemma} it has density at least $p^{-k}$.\end{proof}

Finally, since for each $x_{[k-1]} \in G_{[k-1]}$ the map that sends each $x_k \in Y$ to $\Phi(x_{[k-1]}, x_k)$ is a Freiman homomorphism, by Lemma~\ref{easyExtn} it is a restriction of a unique global affine map $\rho_{x_{[k-1]}} \colon G_k \to H$, as long as $\sqrt{\varepsilon} < 1/5$. For each $x_{[k-1]}$ extend the map $\Phi(x_{[k-1]}, \cdot)$ from $Y$ to the whole of $G_k$ using this global affine map: that is, define $\tilde{\Phi} \colon G_{[k]} \to H$ by setting $\tilde{\Phi}(x_{[k]}) = \rho_{x_{[k-1]}}(x_k)$. We now prove that $\tilde{\Phi}$ is a multi-homomorphism (and thus multiaffine since its domain in whole $G_{[k]}$) and that $\tilde{\Phi}$ coincides with $\phi$ on a dense enough set.\\
\indent By the way we defined $\tilde{\Phi}$ it is already a homomorphism in direction $G_k$. Suppose now that $x^1_d, x^2_d, x^3_d, x^4_d \in G_d$ satisfy $x^1_d + x^2 = x^3_d + x^4_d$ and that $x_{[k] \setminus \{d\}} \in G_{[k] \setminus \{d\}}$. Since $|Y| \geq (1-\sqrt{\varepsilon})|G_k|$, provided $\sqrt{\varepsilon} < 1/2$ we have some elements $y^1_k, y^2_k, y^3_k \in Y$ such that $x_k = y^1_k + y^2_k - y^3_k$. For each $i \in [4]$ we then have
\begin{align*}\tilde{\Phi}(x_{[k] \setminus \{d\}}, x^i_d) = &\rho_{x_{[k-1] \setminus \{d\}}, x^i_d}(x_k) = \rho_{x_{[k-1] \setminus \{d\}}, x^i_d}(y^1_k) + \rho_{x_{[k-1] \setminus \{d\}}, x^i_d}(y^2_k) - \rho_{x_{[k-1] \setminus \{d\}}, x^i_d}(y^3_k)\\
= & \Phi(x_{[k-1] \setminus \{d\}}, x^i_d, y^1_k) + \Phi(x_{[k-1] \setminus \{d\}}, x^i_d, y^2_k) - \Phi(x_{[k-1] \setminus \{d\}}, x^i_d, y^3_k).\end{align*}
Let $\sigma \colon [4] \to \{-1,1\}$ be the map $\sigma(1) = \sigma(2) = 1, \sigma(3) = \sigma(4) = -1$ corresponding to signs of the expression $x^1_d + x_d^2 - x^3_d - x^4_d$, which can then be written as $\sum_{i \in [4]} \sigma(i) x^i_d$ and which we know to be 0. We have
\begin{align*}\sum_{i \in [4]} \sigma(i) \tilde{\Phi}(x_{[k] \setminus \{d\}}, x^i_d) = &\sum_{i \in [4]} \sigma(i) \Big(\Phi(x_{[k-1] \setminus \{d\}}, x^i_d, y^1_k) + \Phi(x_{[k-1] \setminus \{d\}}, x^i_d, y^2_k) - \Phi(x_{[k-1] \setminus \{d\}}, x^i_d, y^3_k)\Big)\\
= & \Big(\sum_{i \in [4]} \sigma(i) \Phi(x_{[k-1] \setminus \{d\}}, x^i_d, y^1_k)\Big) + \Big(\sum_{i \in [4]} \sigma(i) \Phi(x_{[k-1] \setminus \{d\}}, x^i_d, y^2_k)\Big)\\
&\hspace{8cm} - \Big(\sum_{i \in [4]} \sigma(i) \Phi(x_{[k-1] \setminus \{d\}}, x^i_d, y^3_k)\Big)\\
= & 0,\end{align*} 
where in the last line we used the fact that $\Phi(\cdot, y)$ is a multiaffine map on $G_{[k-1]}$ for each $y \in Y$. Hence $\tilde{\Phi}$ is a global multiaffine map.\\
\indent When it comes to the set of points where $\tilde{\Phi} = \phi$, we have $\tilde{\Phi}(x_{[k]}) = \Phi(x_{[k]})$ for each $x_{[k]} \in G_{[k-1]} \times Y$ and $\Phi(x_{[k]}) = \phi(x_{[k]})$ for each $x_k \in Y$ and $x_{[k-1]} \in \tilde{X}_{x_k}$. Hence $\tilde{\Phi}(x_{[k]}) = \phi(x_{[k]})$ holds for all $x_{[k]} \in \bigcup_{y_k \in Y} \tilde{X}_{y_k} \times \{y_k\}$, which a set of size at least $(1 - \varepsilon' - \sqrt{\varepsilon}) |G_{[k]}|$, as desired.\\

We have proved the existence of the desired map. As the last step in the proof, we prove the uniqueness. Suppose for the sake of a contradiction that we have two maps $\Psi \colon G_{[k]} \to H$ and $\Theta \colon G_{[k]} \to H$ that differ in at least one point and 
\[|\{x_{[k]} \in X \colon \phi(x_{[k]}) = \Psi(x_{[k]})\}|,\,\{x_{[k]} \in X \colon \phi(x_{[k]}) = \Psi(x_{[k]})\}| \, \geq \, \Big(1- C_k\varepsilon^{d_k}\Big)|G_{[k]}|.\] 
Then $\Psi(x_{[k]}) = \Theta(x_{[k]})$ holds for at least $\Big(1- C_k\varepsilon^{d_k}\Big)|G_{[k]}|$ of points $x_{[k]} \in G_{[k]}$. However, Claim~\ref{nonzerovalsclaim} gives a contradiction, provided $C_k\varepsilon^{d_k} < p^{-k}$, which we may achieve by making $c_k$ sufficiently small.\end{proof}

\subsection{Box norms}
\indent Let $X$ and $Y$ be finite sets. For a function $f \colon X \times Y \to \mathbb{C}$, we define its \emph{box norm} by the formula
\[\|f\|_{\square} = \Big(\exx_{x_1, x_2 \in X} \exx_{y_1, y_2 \in Y} f(x_1, y_1) \overline{f(x_1, y_2)}\hspace{2pt} \overline{f(x_2, y_1)} f(x_2, y_2)\Big)^{1/4}.\]

The box norm satisfies the following Cauchy-Schwarz-like inequality.

\begin{lemma}\label{boxCS}Let $f_{11}, f_{12}, f_{21}, f_{22} \colon X \times Y\to \mathbb{C}$ be four functions. Then
\[\Big|\exx_{x_1, x_2 \in X} \exx_{y_1, y_2 \in Y} f_{11}(x_1, y_1) \overline{f_{12}(x_1, y_2)}\hspace{1pt} \overline{f_{21}(x_2, y_1)} f_{22}(x_2, y_2)\Big| \leq \|f_{11}\|_{\square} \|f_{12}\|_{\square} \|f_{21}\|_{\square} \|f_{22}\|_{\square}.\]
\end{lemma}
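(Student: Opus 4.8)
The plan is to derive the inequality from two successive applications of the ordinary Cauchy–Schwarz inequality, grouping the four variables into the pairs $(x_1,x_2)$ and $(y_1,y_2)$. Write
\[T = \exx_{x_1, x_2 \in X} \exx_{y_1, y_2 \in Y} f_{11}(x_1, y_1) \overline{f_{12}(x_1, y_2)}\, \overline{f_{21}(x_2, y_1)}\, f_{22}(x_2, y_2)\]
for the quantity to be bounded. First I would separate the $y_1$-dependent factors from the $y_2$-dependent ones: setting
\[g(x_1, x_2) = \exx_{y \in Y} f_{11}(x_1, y)\overline{f_{21}(x_2, y)}, \qquad h(x_1, x_2) = \exx_{y \in Y} f_{12}(x_1, y)\overline{f_{22}(x_2, y)},\]
we have $T = \exx_{x_1, x_2} g(x_1, x_2)\overline{h(x_1, x_2)}$, and Cauchy–Schwarz in the pair $(x_1, x_2)$ gives $|T|^2 \le \big(\exx_{x_1,x_2}|g|^2\big)\big(\exx_{x_1,x_2}|h|^2\big)$.

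Next I would bound each factor. Expanding the square, $\exx_{x_1,x_2}|g(x_1,x_2)|^2 = \exx_{x_1,x_2,y_1,y_2} f_{11}(x_1,y_1)\overline{f_{11}(x_1,y_2)}\,\overline{f_{21}(x_2,y_1)}\,f_{21}(x_2,y_2)$, which I would rewrite — now separating the $x_1$-factors from the $x_2$-factors — as $\exx_{y_1,y_2} u(y_1,y_2)\overline{v(y_1,y_2)}$ with $u(y_1,y_2) = \exx_x f_{11}(x,y_1)\overline{f_{11}(x,y_2)}$ and $v(y_1,y_2) = \exx_x f_{21}(x,y_1)\overline{f_{21}(x,y_2)}$. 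Since this quantity equals $\exx_{x_1,x_2}|g|^2 \ge 0$, a second Cauchy–Schwarz, this time in the pair $(y_1,y_2)$, gives $\exx_{x_1,x_2}|g|^2 \le \big(\exx_{y_1,y_2}|u|^2\big)^{1/2}\big(\exx_{y_1,y_2}|v|^2\big)^{1/2} = \|f_{11}\|_{\square}^2\,\|f_{21}\|_{\square}^2$, where the last equality is just the definition of the box norm. Symmetrically $\exx_{x_1,x_2}|h|^2 \le \|f_{12}\|_{\square}^2\|f_{22}\|_{\square}^2$, and combining these with $|T|^2 \le \big(\exx|g|^2\big)\big(\exx|h|^2\big)$ and taking square roots finishes the proof.

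There is no genuine obstacle here: this is the standard Gowers–Cauchy–Schwarz manipulation, and the only thing that requires care is the bookkeeping of which of the four functions lands in which corner after each expansion — in particular, observing that one Cauchy–Schwarz step reduces $T$ not directly to box norms but to the "two-function" box forms $\exx_{x_1,x_2}|g|^2$ and $\exx_{x_1,x_2}|h|^2$, each of which then needs its own Cauchy–Schwarz to collapse to genuine box norms. One could alternatively run four single-variable Cauchy–Schwarz steps (in $x_1$, then $x_2$, then $y_1$, then $y_2$), but the paired version above is more economical to write out.
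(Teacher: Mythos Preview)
Your proof is correct and is exactly the standard Gowers--Cauchy--Schwarz argument for the box norm. The paper itself states this lemma without proof, treating it as a well-known fact, so there is nothing further to compare.
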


In particular, we have the following useful corollary.

\begin{corollary}\label{boxUnifCor}Suppose that $f \colon X \times Y \to \mathbb{C}$, $u \colon X \to \mathbb{C}$ and $v \colon Y \to \mathbb{C}$ are three functions. Then
\[\Big|\exx_{x \in X, y \in Y} u(x) v(y) f(x,y)\Big| \leq \|u\|_2 \|v\|_2 \|f\|_{\square}.\]\end{corollary}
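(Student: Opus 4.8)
The plan is to deduce this immediately from the box Cauchy--Schwarz inequality (Lemma~\ref{boxCS}) by substituting rank-one functions into three of its four slots. Concretely, I would apply Lemma~\ref{boxCS} with $f_{11} = f$, with $f_{12}(x,y) = \overline{u(x)}$ and $f_{21}(x,y) = \overline{v(y)}$ the two functions depending on only one of the coordinates, and with $f_{22} \equiv 1$ the constant function.

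The first step is to check that, with these choices, the four-fold average on the left-hand side of Lemma~\ref{boxCS} collapses to the quantity we care about. Since $\overline{f_{12}(x_1,y_2)} = u(x_1)$, $\overline{f_{21}(x_2,y_1)} = v(y_1)$, and $f_{22}(x_2,y_2) = 1$, the variables $x_2$ and $y_2$ drop out entirely, leaving
\[\exx_{x_1, x_2 \in X}\, \exx_{y_1, y_2 \in Y} f(x_1,y_1)\, u(x_1)\, v(y_1) \;=\; \exx_{x \in X,\, y \in Y} u(x)\, v(y)\, f(x,y).\]
The second step is to evaluate the box norms on the right-hand side. The key sub-observation is that a function on $X\times Y$ depending on $x$ alone, say $h(x,y)=w(x)$, has box norm $\|h\|_\square = \|w\|_2$: indeed the defining formula gives $\|h\|_\square^4 = \exx_{x_1,x_2} w(x_1)\overline{w(x_1)}\,\overline{w(x_2)}w(x_2) = \big(\exx_x |w(x)|^2\big)^2$. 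Applying this (and its mirror image in the $y$ variable) yields $\|f_{12}\|_\square = \|\overline{u}\|_2 = \|u\|_2$ and $\|f_{21}\|_\square = \|v\|_2$, while $\|f_{22}\|_\square = 1$ and $\|f_{11}\|_\square = \|f\|_\square$. Combining the two steps, Lemma~\ref{boxCS} gives exactly $\big|\exx_{x,y} u(x)v(y)f(x,y)\big| \le \|f\|_\square\,\|u\|_2\,\|v\|_2$.

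There is essentially no obstacle here; the proof is a one-line application of Lemma~\ref{boxCS}. The only point requiring a little care is matching the complex conjugations so that the collapsed product really is $u(x)\,v(y)\,f(x,y)$ and not a partially conjugated variant — this is why $f_{12}$ and $f_{21}$ are taken to be $\overline{u}$ and $\overline{v}$ rather than $u$ and $v$. (Since the statement and Lemma~\ref{boxCS} are both phrased with absolute values, any consistent placement of the conjugates works equally well.)
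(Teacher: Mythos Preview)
Your proof is correct and is exactly the intended argument: the paper states this result as an immediate corollary of Lemma~\ref{boxCS} without further proof, and your substitution $f_{11}=f$, $f_{12}=\overline{u}$, $f_{21}=\overline{v}$, $f_{22}\equiv 1$ is precisely how one deduces it. The computation of the box norms of the rank-one functions and the collapse of the dummy variables are all correct.
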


\section{An $L^q$ approximation theorem for mixed convolutions}

Recall the following piece of notation that we introduced earlier. For two maps $f, g \colon X \to \mathbb{C}$, we write $f(x) \apps{\epsilon}_{L^q,x} g(x)$ to mean that $\|f-g\|_{L^q} \leq \epsilon$. If the variable and the underlying space are clear, we omit the $x$ from $\apps{\epsilon}_{L^q,x}$ and write simply $\apps{\epsilon}_{L^q}$. The main result of this section is the following theorem which says that after applying convolutions in directions $1,2, \dots, k, 1, 2, \dots ,k$ (every direction appears twice) to a given function on $G_{[k]}$, we get a new function which can be well approximated by a linear combination of few `multiaffine forms phases'. We prove this theorem using Theorem~\ref{multiaffineInvThm} for $k-1$ as an inductive hypothesis. Throughout this section we write $\chi$ for the function defined on $\mathbb F_p$ that takes $x$ to $\omega^x$, where $\omega=e^{2\pi i/p}$.

\begin{theorem}\label{MixedConvApprox}Let $p$ be a prime, let $\varepsilon > 0$, and let $q \geq 1$. Let $G_1, \dots, G_k$ be finite-dimensional vector spaces over $\mathbb{F}_p$ and let $f\colon G_{[k]} \to \mathbb{D}$. Then there exist 
\begin{itemize}
\item a positive integer $l =\exp^{\big((2k + 1)(D^{\mathrm{mh}}_{k-1} + 2)\big)}\Big(O(\varepsilon^{-O(q)})\Big)$, 
\item constants $c_1, \dots, c_l \in \mathbb{D}$, and
\item multiaffine forms $\phi_1, \dots, \phi_l \colon G_{[k]} \to \mathbb{F}_p$, 
\end{itemize}
such that
\[\bigconv{k}\dots\bigconv{1}\bigconv{k} \dots \bigconv{1} f \apps{\varepsilon}_{L^q} \sum_{i \in [l]} c_i \chi\circ\phi_i.\]
\end{theorem}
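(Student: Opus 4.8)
We prove this by induction on $k$, using Theorem~\ref{multiaffineInvThm} for $k-1$ at the crucial step. The overall strategy is to peel off the convolutions one direction at a time, maintaining at each stage an $L^q$-approximation of the current partially-convolved function by a bounded linear combination of multiaffine phases, and then feed each such approximation through Proposition~\ref{ExpSumConv} to pass it through the next convolution. The base case $k=1$ is essentially classical Fourier analysis: $\bigconv{1}\bigconv{1}f = \bigconv{1}(\bigconv{1}f)$ and a single convolution of a bounded function is approximated in $L^q$ by its large spectrum (Corollary~\ref{basicLqapp}), which is a linear combination of (affine = multiaffine) phases $\chi(r\cdot x)$, with the number of terms bounded by $\varepsilon^{-O(q)}$.

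First I would set up the inductive step. Write the full operator as $\bigconv{k}\bigconv{k-1}\cdots\bigconv{1}\bigconv{k}\bigconv{k-1}\cdots\bigconv{1}$. The innermost block $\bigconv{k}\cdots\bigconv{1}$ involves every direction, but only once each; the point is that after applying it, we obtain a function that is closely related to a multi-homomorphism. More precisely, $\bigconv{d}f(x_{[k]\setminus\{d\}},y_d) = f_{x_{[k]\setminus\{d\}}}\conv f_{x_{[k]\setminus\{d\}}}(y_d)$, and using Lemma~\ref{basicL2app}/Corollary~\ref{basicLqapp} in each direction successively (tracking errors via Lemma~\ref{2pConvpBnd} to move between $L^q$ and $L^{2q}$ norms as we compose), one approximates $\bigconv{k}\cdots\bigconv{1}f$ by a sum $\sum_j d_j \prod_{\text{directions}} \chi(\cdots)$ of products over Fourier coefficients. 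The key structural input — and this is where Theorem~\ref{multiaffineInvThm} for $k-1$ enters — is that the "phase" governing this approximation, viewed appropriately, is forced to be (correlated with) a multiaffine form: the large Fourier support in direction $d$ of $f_{x_{[k]\setminus\{d\}}}$, as a function of $x_{[k]\setminus\{d\}}$, is a multi-homomorphism-like object on a dense set (this is precisely the Bogolyubov-type phenomenon), so after invoking the inverse theorem in $k-1$ variables and the simultaneous Fibres theorem (Corollary~\ref{simFibresThm}) to make the error set a union of layers, we can replace it by a genuine multiaffine form. This is the heart of the matter and the main obstacle: managing the bookkeeping so that the bound on the number of multiaffine phases is the claimed $\exp^{((2k+1)(D^{\mathrm{mh}}_{k-1}+2))}(O(\varepsilon^{-O(q)}))$, i.e.\ exactly $2k+1$ new layers of exponential each contributing a factor $D^{\mathrm{mh}}_{k-1}+2$ to the iterated-exponential height — one for the initial block of $k$ convolutions that invokes the inverse theorem, and then essentially one more per convolution in the second block.

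After the innermost block is approximated by $\sum_i c_i\chi\circ\phi_i$ with the $\phi_i$ multiaffine, I would apply the remaining $k$ convolutions $\bigconv{k},\dots,\bigconv{1}$ one at a time. Each application uses Proposition~\ref{ExpSumConv}: if $g\apps{\eta}_{L^q}\sum_i c_i\chi\circ\phi_i$ with $n$ terms, then by Lemma~\ref{2pConvpBnd} $\bigconv{d}g$ is within $O(\sqrt\eta)$ in $L^q$ of $\bigconv{d}(\sum_i c_i\chi\circ\phi_i)$ (after passing to $L^{2q}$), and Proposition~\ref{ExpSumConv} approximates the latter in $L^q$ to within any prescribed $\eta'$ by a linear combination of $O((\eta'^{-1}n)^{O(q)})$ multiaffine phases. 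Iterating this $k$ times multiplies the number of phases by a bounded power and applies one further exponential at each step (because the count $n$ appears polynomially but $\eta'$ must be taken a power of $\varepsilon/k$, and the composition of $k$ such steps starting from a doubly-or-more exponential count stays within one more iterated exponential per step). Choosing all the intermediate error parameters to be suitable powers of $\varepsilon$ (roughly $\varepsilon/(2k+2)$ at each of the $O(k)$ stages, with the square-root losses from Lemma~\ref{2pConvpBnd} absorbed), the triangle inequality for $\|\cdot\|_{L^q}$ gives the final bound $\bigconv{k}\cdots\bigconv{1}\bigconv{k}\cdots\bigconv{1}f\apps{\varepsilon}_{L^q}\sum_{i\in[l]}c_i\chi\circ\phi_i$ with $l$ of the stated size, completing the induction.
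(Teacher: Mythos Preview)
Your plan has a genuine gap in step 2, which is the heart of the argument. You claim that the inner block $\bigconv{k}\cdots\bigconv{1}f$ can already be approximated in $L^q$ by a bounded linear combination of multiaffine phases, and then step 3 merely pushes that approximation through the outer $k$ convolutions via Proposition~\ref{ExpSumConv}. But establishing such an approximation for just $k$ convolutions is essentially the content of the theorem itself, and the machinery does not go through with so few convolutions.

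Concretely: peeling off the outermost $\bigconv{k}$ via Proposition~\ref{fcTrunc} gives an approximation of $\bigconv{k}\cdots\bigconv{1}f$ by a sum of terms of the form $c_i\,\big|\fco\bigconv{k-1}\cdots\bigconv{1}f_{x_{[k-1]}}\fcc(\mu_i(x_{[k-1]}))\big|^2\,\chi(\lambda_i(x_{[k]}))$. The inverse theorem for $k-1$ variables indeed forces the \emph{locations} $\mu_i$ of the large Fourier coefficients to be multiaffine, but the \emph{magnitudes} $\big|\fco\bigconv{k-1}\cdots\bigconv{1}f_{x_{[k-1]}}\fcc(\mu_i(x_{[k-1]}))\big|^2$ still depend on all of $x_{[k-1]}$ in a highly non-multiaffine way, and nothing in your plan addresses this. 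The paper eliminates this residual dependence via an inductive scheme (Proposition~\ref{depRemovalPropFC}) which at each step expands one more convolution, reapplies the multilinearity-of-large-spectra argument (Lemma~\ref{convImpliesLin}/the Multilinearity Claim), and invokes the inverse theorem in fewer variables. Each such step consumes one convolution, and removing all $k-1$ remaining variables requires the inner function to carry at least $2k-1$ convolutions (this is exactly the hypothesis $s\ge k+J-1$ with $J=k$). With only $k-1$ inner convolutions available after the first peel, the recursion cannot even start. Note also that your appeal to multiaffine structure of ``the large Fourier support of $f_{x_{[k]\setminus\{d\}}}$'' is unjustified as stated: Lemma~\ref{convImpliesLin} only yields $j$-additive structure when direction $j$ has already appeared among the convolutions, so for raw $f$ there is none.

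Your step 3 is fine in spirit---indeed the paper carries out exactly that argument in Theorem~\ref{strongMixedApprox}---but it is the easy part, applied \emph{after} Theorem~\ref{MixedConvApprox} is already in hand, not a substitute for its proof.
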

The constant $D^{\mathrm{mh}}_{k-1}$ in the above statement comes from the conclusion of Theorem~\ref{multiaffineInvThm}. The proof of Theorem \ref{MixedConvApprox} consists of two parts. In the first we find multilinear structure in the set of large Fourier coefficients of the mixed convolutions, and in the second we exploit this structure to obtain an exponential-sum approximation.\\

\subsection{Multilinearity of large Fourier coefficients}

We begin by generalizing Lemma 13.1 from~\cite{TimSze}. We say that $(x_{[k]},y_{[k]},z_{[k]},w_{[k]})$ is a $d$-\emph{additive quadruple} if $x_i=y_i=z_i=w_i$ for every $i\ne d$ and $(x_d, y_d, z_d, w_d)$ is an additive quadruple -- that is, $x_d - y_d + z_d - w_d = 0$. We say that a map $\sigma$ \emph{respects} this $d$-additive quadruple if $\sigma(x_{[k]}) - \sigma(y_{[k]}) + \sigma(z_{[k]}) - \sigma(w_{[k]}) = 0$. Our next lemma says that convolution in a given direction gives rise to many additive quadruples in that direction, in the following sense.

\begin{lemma}\label{convImpliesLin}Let $f \colon G_{[k]} \to \mathbb{D}$ and let $d_1, d_2, \dots, d_r \in [k]$. Let $j, j_0 \in [k]$ be distinct and suppose that $j = d_i$ for some $i$. Set
\[g = \bigconv{d_r}\dots\bigconv{d_1} f,\]
let $S \subset G_{[k] \setminus \{j_0\}}$ be a set of size at least $\delta |G_{[k] \setminus \{j_0\}}|$, and let $\sigma\colon S \to G_{j_0}$ be a map such that 
\[\Big|\widehat{g_{x_{[k] \setminus \{j_0\}}}}(\sigma(x_{[k] \setminus \{j_0\}}))\Big| \geq c\]
whenever $x_{[k] \setminus \{j_0\}} \in S$. Then the number of $j$-additive quadruples respected by $\sigma$ is at least $\delta^4 c^8 |G_j|^3|G_{[k] \setminus \{j, j_0\}}|$.\end{lemma}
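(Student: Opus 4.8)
The idea is the standard one from~\cite{TimSze} (Lemma 13.1): the Fourier coefficient of a convolution in direction $j$ can be rewritten as a genuine average over the direction-$j$ fibre, and this average structure, combined with Gowers--Cauchy--Schwarz (here the box-norm inequality, Corollary~\ref{boxUnifCor}), forces many additive quadruples in direction $j$ to be respected. Since $j=d_i$ for some $i$, we may peel off the convolutions one at a time, commuting the convolution in direction $j$ to the outermost position. Concretely, convolutions in distinct directions commute as integral operators, and applying $\bigconv{d}$ to a function only ever averages it; so writing $h=\bigconv{d_r}\dots\widehat{\bigconv{j}}\dots\bigconv{d_1}f$ for the composition with the (first, say) occurrence of $\bigconv{j}$ omitted, we have $g=\bigconv{j}h$ up to a reordering of the remaining operators, and crucially $\|h\|_{L^\infty}\le 1$ still. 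Fix the coordinates $x_{[k]\setminus\{j,j_0\}}$; all the action takes place in the two coordinates $j$ and $j_0$.

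Write $F(u,v)=h(x_{[k]\setminus\{j,j_0\}},{}^j u,{}^{j_0} v)$, a function on $G_j\times G_{j_0}$ with values in $\mathbb{D}$. Then for $x_{[k]\setminus\{j_0\}}\in S$, setting $u=x_j$ and $r=\sigma(x_{[k]\setminus\{j_0\}})\in G_{j_0}$, the hypothesis $|\widehat{g_{x_{[k]\setminus\{j_0\}}}}(r)|\ge c$ unwinds, using $g=\bigconv{j}h$ and $\widehat{\bigconv{j}h}(\cdot)(r)=\ex_{v}\bigconv{j}h(\cdot,v)\chi(-r\cdot v)$ together with the definition of $\bigconv{j}$, to a statement of the form
\[
\Big|\exx_{v\in G_{j_0}}\exx_{s\in G_j} F(s+u,v)\overline{F(s,v)}\,\chi(-r\cdot v)\Big|\ge c.
\]
Square this, expand, and apply the box-norm Cauchy--Schwarz inequality in the two variables $s$ (over $G_j$) and $v$ (over $G_{j_0}$): the phase $\chi(-r\cdot v)$ is a function of $v$ alone and the translate-and-conjugate structure is exactly what Corollary~\ref{boxUnifCor} (or Lemma~\ref{boxCS}) handles, giving $|\{\cdots\}|^2\le\|\Delta_u F\|_{\square}^2$ or similar, where the box norm is taken over $G_j\times G_{j_0}$ after an appropriate difference in the $j$ direction. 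Expanding the box norm produces a count of configurations, and the key point is that the phases $\chi(r\cdot v)$ cancel pairwise in the box-norm expansion, leaving a purely combinatorial quantity controlled below by $c^8$ after one more Cauchy--Schwarz (to pass from second moment to fourth, matching the exponent $8$).

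The remaining step is bookkeeping: average over $x_{[k]\setminus\{j,j_0\}}$, use that $S$ has density $\delta$ (so that for a $\delta^{O(1)}$ fraction of the transverse coordinates the slice $S_{x_{[k]\setminus\{j,j_0\}}}$ is itself dense in $G_{[k]\setminus\{j,j_0\}}$ — wait, more simply: $|S|\ge\delta|G_{[k]\setminus\{j_0\}}|$ directly gives, by the box-norm expansion applied after summing, a lower bound of $\delta^4c^8$ for the normalised count of respected $j$-additive quadruples, the fourth power of $\delta$ coming from the four ``corner'' points of the box configuration each of which must lie in the domain $S$ of $\sigma$). Collecting the normalisations, the number of respected $j$-additive quadruples is at least $\delta^4c^8|G_j|^3|G_{[k]\setminus\{j,j_0\}}|$, as claimed. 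The main obstacle is purely notational: carefully tracking which coordinate carries the phase, which carries the difference defining the additive quadruple, and getting the powers of $\delta$ and $c$ to come out as $4$ and $8$ respectively — this is where one must be disciplined about applying Cauchy--Schwarz the right number of times (two applications of a box-type inequality, i.e. effectively four Cauchy--Schwarz steps, yielding the eighth power of $c$) and about the fact that the four domain-membership constraints are on the \emph{same} map $\sigma$ evaluated at the four points of the quadruple in the transverse coordinates, which is what legitimises the $\delta^4$.
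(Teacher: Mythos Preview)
Your plan rests on the assertion that ``convolutions in distinct directions commute as integral operators'', allowing you to write $g=\bigconv{j}h$ with $h$ still bounded by~$1$. This is false: the operator $\bigconv{d}$ is \emph{quadratic} in its argument, not linear, and $\bigconv{1}\bigconv{2}\ne\bigconv{2}\bigconv{1}$ in general. For a concrete counterexample take $k=2$ and $f(x_1,x_2)=\mathbbm{1}(x_1=0)$: one computes $\bigconv{1}\bigconv{2}f=|G_1|^{-1}\mathbbm{1}(x_1=0)$ but $\bigconv{2}\bigconv{1}f=|G_1|^{-2}\mathbbm{1}(x_1=0)$. More conceptually, Lemma~\ref{convFormulaExplicit} shows that the expanded formula for $\bigconv{d_r}\cdots\bigconv{d_1}f$ depends essentially on the \emph{order} of the $d_i$ (vertical parallelograms versus horizontal parallelograms, etc.). So the reduction $g=\bigconv{j}h$ is not available, and without it your two-variable analysis in $(u,v)\in G_j\times G_{j_0}$ does not get off the ground.

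The paper avoids this obstacle not by commuting but by \emph{expanding}. One takes the largest index $l$ with $d_l=j$ and writes out the definitions of the outer operators $\bigconv{d_r},\dots,\bigconv{d_{l+1}}$ explicitly (none of which touch direction $j$, by maximality of $l$); this produces an average over many auxiliary parameters of products of values of $\bigconv{d_l}\cdots\bigconv{d_1}f$ at points whose $j$-coordinate is still exactly $x_j$. Expanding one further step in direction $d_l=j$, introducing a dummy shift $t_j$ and making the change of variables $y_l^{\bm a}\mapsto y_l^{\bm a}+t_j$, isolates a function $F_{\bm p}(x_j)=S(x_{[k]\setminus\{j_0\}})\chi(h_{j_0}\cdot\sigma(x_{[k]\setminus\{j_0\}}))$ (times harmless affine phases) paired against a bounded function of $x_j+t_j$; Lemma~\ref{l4bound} then gives $\delta^4c^8\le\ex_{\bm p}\sum_r|\widehat{F_{\bm p}}(r)|^4$, and expanding this fourth power is exactly the normalised count of $j$-additive quadruples respected by $\sigma$. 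Your instinct to end with an $\ell^4$/box-norm inequality is correct, but the route to a legitimate two-variable expression requires this careful expansion rather than an appeal to commutativity.
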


\begin{proof}By assumption,
\begin{align}\delta c^2 \leq & \exx_{x_{[k] \setminus \{j_0\}} \in G_{[k] \setminus \{j_0\}}} S(x_{[k] \setminus \{j_0\}}) \Big|\widehat{g_{x_{[k] \setminus \{j_0\}}}}(\sigma(x_{[k] \setminus \{j_0\}}))\Big|^2\nonumber\\
=&\exx_{x_{[k] \setminus \{j_0\}} \in G_{[k] \setminus \{j_0\}}} S(x_{[k] \setminus \{j_0\}}) \exx_{u_{j_0}, v_{j_0} \in G_{j_0}} g(x_{[k] \setminus \{j_0\}}, u_{j_0}) \overline{g(x_{[k] \setminus \{j_0\}}, v_{j_0})} \chi\Big((v_{j_0} - u_{j_0}) \cdot \sigma(x_{[k] \setminus \{j_0\}})\Big)\nonumber\\
=&\exx_{x_{[k] \setminus \{j_0\}} \in G_{[k] \setminus \{j_0\}}} S(x_{[k] \setminus \{j_0\}}) \exx_{u_{j_0}, v_{j_0} \in G_{j_0}} \bigconv{d_r}\dots\bigconv{d_1} f(x_{[k] \setminus \{j_0\}}, u_{j_0})\nonumber\\
&\hspace{8cm}\overline{\bigconv{d_r}\dots\bigconv{d_1} f(x_{[k] \setminus \{j_0\}}, v_{j_0})} \chi\Big((v_{j_0} - u_{j_0}) \cdot \sigma(x_{[k] \setminus \{j_0\}})\Big).\nonumber\\
\end{align}
Setting $h_{j_0} = v_{j_0} - u_{j_0}$, we can rewrite the last expression as
\begin{align}\exx_{x_{[k] \setminus \{j_0\}} \in G_{[k] \setminus \{j_0\}}} \exx_{u_{j_0}, h_{j_0} \in G_{j_0}} S(x_{[k] \setminus \{j_0\}})  \bigconv{d_r}\dots\bigconv{d_1} &f(x_{[k] \setminus \{j_0\}}, u_{j_0})\nonumber\\
&\overline{\bigconv{d_r}\dots\bigconv{d_1} f(x_{[k] \setminus \{j_0\}}, u_{j_0} + h_{j_0})} \chi\Big(h_{j_0} \cdot \sigma(x_{[k] \setminus \{j_0\}})\Big).\label{linFC1st}\end{align}
For parameters $u_{j_0}, h_{j_0} \in G_{j_0}$, $y_{r} \in G_{d_r}^{\{0,1\}}, \dots, y_{2} \in G_{d_2}^{\{0,1\}^{r-1}}, x_{[k] \setminus \{j_0\}} \in G_{[k] \setminus \{j_0\}}$ and for indices $i \in [r]$ and $\bm{a} \in \{0,1\}^{r + 1 - i}$ we define the point $\bm{x}^{i, \bm{a}} \in G_{[k]}$ as follows (which depends on the parameters mentioned, but we do not stress this in the notation). We write the indices of $y_i$ in the superscript, so $y_r = (y^0_r, y^1_r)$, and so on. First, set $\bm{x}^{r, 0} = (x_{[k] \setminus \{j_0\}}, u_{j_0})$ and $\bm{x}^{r, 1} = (x_{[k] \setminus \{j_0\}}, u_{j_0} + h_{j_0})$. Then we may rewrite~\eqref{linFC1st} as 
\begin{equation}\label{linFC2nd}\delta c^2 \leq \exx_{x_{[k] \setminus \{j_0\}}, u_{j_0}, h_{j_0}} S(x_{[k] \setminus \{j_0\}})  \bigconv{d_r}\dots\bigconv{d_1} f(\bm{x}^{r, 0}) \overline{\bigconv{d_r}\dots\bigconv{d_1} f(\bm{x}^{r, 1})} \chi\Big(h_{j_0}\cdot \sigma(x_{[k] \setminus \{j_0\}})\Big).\end{equation}
Next, for $i \in [r-1]$ and $\bm{a} \in \{0,1\}^{r-i}$, define
\[\bm{x}^{i, (\bm{a}, 0)} = \Big(\bm{x}^{i + 1, \bm{a}}_{[k] \setminus \{d_{i+1}\}},\ls{d_{i+1}}\,y^{\bm{a}}_{i+1}\Big)\hspace{1cm}\text{and}\hspace{1cm}\bm{x}^{i, (\bm{a}, 1)} = \Big(\bm{x}^{i + 1, \bm{a}}_{[k] \setminus \{d_{i+1}\}}, \ls{d_{i+1}}\,\bm{x}^{i + 1, \bm{a}}_{d_{i+1}} + y^{\bm{a}}_{i+1}\Big).\]
We introduced this notation in order to obtain the identity
\begin{equation}\label{convExpnNotn}\bigconv{d_{i+1}}\dots\bigconv{d_1} f(\bm{x}^{i+1, \bm{a}}) = \exx_{y_{i+1}^{\bm{a}}} \bigconv{d_{i}}\dots\bigconv{d_1} f(\bm{x}^{i, (\bm{a}, 1)}) \overline{\bigconv{d_{i}}\dots\bigconv{d_1} f(\bm{x}^{i, (\bm{a}, 0)})}.\end{equation}

Returning to~\eqref{linFC2nd} and using~\eqref{convExpnNotn}, we obtain
\begin{align*}\delta c^2 \leq& \exx_{x_{[k] \setminus \{j_0\}}, u_{j_0}, h_{j_0}} S(x_{[k] \setminus \{j_0\}})  \bigconv{d_r}\dots\bigconv{d_1} f(\bm{x}^{r, 0}) \overline{\bigconv{d_r}\dots\bigconv{d_1} f(\bm{x}^{r, 1})} \chi\Big(h_{j_0}\cdot \sigma(x_{[k] \setminus \{j_0\}})\Big)\\
=&\exx_{x_{[k] \setminus \{j_0\}}, u_{j_0}, h_{j_0}} \exx_{y_{r}} S(x_{[k] \setminus \{j_0\}}) \overline{\bigconv{d_{r-1}}\dots\bigconv{d_1} f(\bm{x}^{r-1; 0,0})} \bigconv{d_{r-1}}\dots\bigconv{d_1} f(\bm{x}^{r-1; 0,1}) \\
&\hspace{5cm}\bigconv{d_{r-1}}\dots\bigconv{d_1} f(\bm{x}^{r-1; 1,0}) \overline{\bigconv{d_{r-1}}\dots\bigconv{d_1} f(\bm{x}^{r-1; 1,1})} \chi\Big(h_{j_0}\cdot \sigma(x_{[k] \setminus \{j_0\}})\Big)\\
=& \cdots\\
=&\exx_{\substack{x_{[k] \setminus \{j_0\}}\\u_{j_0}, h_{j_0}}} \exx_{y_{r}, \dots, y_{l+1}}  S(x_{[k] \setminus \{j_0\}})  \chi\Big(h_{j_0}\cdot  \sigma(x_{[k] \setminus \{j_0\}})\Big) \prod_{\bm{a} \in \{0,1\}^{r + 1 - l}} \Big(\operatorname{Conj}^{r\,-\, l\, + \sum\limits_{i\in[r -l + 1]}a_i} \bigconv{d_l} \dots \bigconv{d_1} f(\bm{x}^{l;\bm{a}})\Big),
\end{align*}
for any $l \in [r]$, where $\operatorname{Conj}^e$ stands for the conjugation operator applied $e$ times (so a conjugation is performed if and only if $e$ is odd).\\
\indent Let $l$ be the largest index such that $d_l = j$. Then
\begin{align*}\delta c^2 \leq &\exx_{\substack{x_{[k] \setminus \{j_0\}}\\u_{j_0}, h_{j_0}}} \exx_{y_{r}, \dots, y_{l}}  S(x_{[k] \setminus \{j_0\}})  \chi\Big(h_{j_0}\cdot  \sigma(x_{[k] \setminus \{j_0\}})\Big)\\
&\hspace{1cm}\bigg(\prod_{\bm{a} \in \{0,1\}^{r + 1 - l}} \Big(\operatorname{Conj}^{r\,-\, l\, +\, 1\, + \sum\limits_{i \in [r -l + 1]}a_i} \bigconv{d_{l-1}} \dots \bigconv{d_1} f(\bm{x}^{l;\bm{a}}_{[k] \setminus \{d_l\}}, \ls{d_l}\,y^{\bm{a}}_{l})\Big)\bigg)\\
&\hspace{1cm}\bigg(\prod_{\bm{a} \in \{0,1\}^{r + 1 - l}} \Big(\operatorname{Conj}^{r\,-\, l\, + \sum\limits_{i \in [r -l + 1]}a_i} \bigconv{d_{l-1}} \dots \bigconv{d_1} f(\bm{x}^{l;\bm{a}}_{[k] \setminus \{d_l\}}, \ls{d_l}\, y^{\bm{a}}_{l} + \bm{x}^{l;\bm{a}}_{d_l})\Big)\bigg).\end{align*}
The way we chose $l$ guarantees that $\bm{x}^{l;\bm{a}}_{d_l} = x_j$. Therefore,

\begin{align}\delta c^2 \leq &\exx_{\substack{x_{[k] \setminus \{j_0, j\}}\\u_{j_0}, h_{j_0}}} \exx_{y_{r}, \dots, y_{l}} \exx_{x_j}  S(x_{[k] \setminus \{j_0\}})  \chi\Big(h_{j_0}\cdot  \sigma(x_{[k] \setminus \{j_0\}})\Big)\nonumber\\
&\hspace{1cm}\bigg(\prod_{\bm{a} \in \{0,1\}^{r + 1 - l}} \Big(\operatorname{Conj}^{r\,-\, l\, +\, 1\, + \sum\limits_{i \in [r -l + 1]}a_i} \bigconv{d_{l-1}} \dots \bigconv{d_1} f(\bm{x}^{l;\bm{a}}_{[k] \setminus \{j\}},\ls{j}\,y^{\bm{a}}_{l})\Big)\bigg)\nonumber\\
&\hspace{1cm}\bigg(\prod_{\bm{a} \in \{0,1\}^{r + 1 - l}} \Big(\operatorname{Conj}^{r\,-\, l\, + \sum\limits_{i \in [r -l + 1]}a_i} \bigconv{d_{l-1}} \dots \bigconv{d_1} f(\bm{x}^{l;\bm{a}}_{[k] \setminus \{j\}}, \ls{j}\,y^{\bm{a}}_{l} + x_j)\Big)\bigg)\nonumber\\
\leq&\exx_{\substack{x_{[k] \setminus \{j_0, j\}}\\u_{j_0}, h_{j_0}}} \exx_{y_{r}, \dots, y_{l}}\bigg|\exx_{x_j}  S(x_{[k] \setminus \{j_0\}})  \chi\Big(h_{j_0}\cdot  \sigma(x_{[k] \setminus \{j_0\}})\Big)\nonumber\\
&\hspace{1cm}\prod_{\bm{a} \in \{0,1\}^{r + 1 - l}} \Big(\operatorname{Conj}^{r\,-\, l\, + \sum\limits_{i \in [r -l + 1]}a_i} \bigconv{d_{l-1}} \dots \bigconv{d_1} f(\bm{x}^{l;\bm{a}}_{[k] \setminus \{j\}},\ls{j}\,y^{\bm{a}}_{l} +x_j)\Big)\bigg|\nonumber.\\
\end{align}
We introduce a new variable $t_j \in G_j$. After the change of variables $y^{\bm{a}}_{l} \mapsto y^{\bm{a}}_{l} + t_{j}$ for all $\bm{a}$ the expression above becomes
\begin{align}\exx_{\substack{x_{[k] \setminus \{j_0, j\}}\\u_{j_0}, h_{j_0}}} \exx_{y_{r}, \dots, y_{l}}\hspace{3pt}\exx_{t_{j}}&\bigg|\exx_{x_j}  S(x_{[k] \setminus \{j_0\}})  \chi\Big(h_{j_0}\cdot  \sigma(x_{[k] \setminus \{j_0\}})\Big)\nonumber\\
&\prod_{\bm{a} \in \{0,1\}^{r + 1 - l}} \Big(\operatorname{Conj}^{r\,-\, l\, + \sum\limits_{i \in [r -l + 1]}a_i} \bigconv{d_{l-1}} \dots \bigconv{d_1} f(\bm{x}^{l;\bm{a}}_{[k] \setminus \{j\}},\ls{j}\, y^{\bm{a}}_{l} + t_j + x_j)\Big)\bigg|.\label{linFCmaineq}\end{align}

Write $\bm{p}$ for the tuple (of elements and sequences of elements of spaces $G_1, \dots, G_k$) 
\[\bm{p} = (x_{[k] \setminus \{j_0, j\}}, u_{j_0}, h_{j_0}, y_{r}, \dots, y_{l}),\]
and for fixed $\bm{p}$, define maps $F_{\bm{p}}, H_{\bm{p}} \colon G_j \to \mathbb{D}$ by
\[F_{\bm{p}}(w_j) = S(x_{[k] \setminus \{j_0,j\}}, w_j)  \chi\Big(h_{j_0}\cdot  \sigma(x_{[k] \setminus \{j_0, j\}}, w_j)\Big)\]
and
\[H_{\bm{p}}(w_j) = \prod_{\bm{a} \in \{0,1\}^{r + 1 - l}} \Big(\operatorname{Conj}^{r\,-\, l\, + \sum\limits_{i \in [r -l + 1]}a_i} \bigconv{d_{l-1}} \dots \bigconv{d_1} f(\bm{x}^{l;\bm{a}}_{[k] \setminus \{j\}}, \ls{j}\,y^{\bm{a}}_{l} + w_j)\Big).\]
Rewriting~\eqref{linFCmaineq}, we obtain
\[\delta c^2 \leq \exx_{\bm{p}} \exx_{t_j} \Big|\exx_{x_j} F_{\bm{p}}(x_j) H_{\bm{p}}(x_j + t_j)\Big|.\]
Lemma~\ref{l4bound} yields
\begin{align*}\delta^4 c^8 &\leq \exx_{\bm{p}} \sum_{r \in G_j} \Big|\widehat{F_{\bm{p}}}(r)\Big|^4\\
&= \exx_{x_{[k] \setminus \{j_0, j\}}, h_{j_0}} \sum_{r \in G_j} \exx_{z^1_{j}, z^2_{j}, z^3_{j}, z^4_{j} \in G_j} S(x_{[k] \setminus \{j_0, j\}}, z^1_{j})S(x_{[k] \setminus \{j_0, j\}}, z^2_{j})S(x_{[k] \setminus \{j_0, j\}}, z^3_{j})S(x_{[k] \setminus \{j_0, j\}}, z^4_{j})\\
&\hspace{2cm} \chi\Big(h_j \cdot (\sigma(x_{[k] \setminus \{j_0, j\}}, z^4_{j}) - \sigma(x_{[k] \setminus \{j_0, j\}}, z^3_{j}) + \sigma(x_{[k] \setminus \{j_0, j\}}, z^2_{j}) - \sigma(x_{[k] \setminus \{j_0, j\}}, z^1_{j}))\Big)\\
&\hspace{2cm}\chi\Big(-r \cdot (z^4_{j} - z^3_{j} + z^2_{j} - z^1_{j})\Big).\end{align*}
This is exactly the density of $j$-additive quadruples with the property that all their points lie in $S$ and are respected by $\sigma$.\end{proof}

\begin{corollary}\label{FClinMidStep}Let $f \colon G_{[k]} \to \mathbb{D}$ be a map and let $d_1, \dots, d_r \in [k]$. Let $j_0 \in [k]$ be such that $[k]\setminus \{j_0\} \subseteq \{d_1, \dots, d_r\}$. Write 
\[g = \bigconv{d_r} \dots \bigconv{d_1} f.\]
Let $S \subseteq G_{[k] \setminus \{j_0\}}$ be a set of density at least $\delta$ and let $\sigma \colon S \to G_{j_0}$ be a map such that for each $x_{[k] \setminus \{j_0\}} \in S$,
\[\Big|\widehat{g_{x_{[k] \setminus \{j_0\}}}}(\sigma(x_{[k] \setminus \{j_0\}}))\Big|\geq c.\]
Then there exist $\delta'$ such that
\[{\delta'}^{-1}= \exp^{(D^{\mathrm{mh}}_{k-1})}\Big(\exp\Big(O((\log \delta^{-1} c^{-1})^{O(1)})\Big)\Big),\]
a subset $S' \subseteq S$ of size $|S'| \geq \delta' |G_{[k] \setminus \{j_0\}}|$, and a multiaffine map $\mu \colon G_{[k]\setminus \{j_0\}} \to G_{j_0}$ such that $\sigma(x_{[k] \setminus \{j_0\}}) = \mu(x_{[k] \setminus \{j_0\}})$ for each $x_{[k] \setminus \{j_0\}} \in S'$.\end{corollary}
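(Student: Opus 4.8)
The plan is to reduce Corollary~\ref{FClinMidStep} to the case $k-1$ of the main theorem, Theorem~\ref{multiaffineInvThm}, after a combinatorial preprocessing of $\sigma$ that uses nothing beyond Balog--Szemer\'edi--Gowers-type tools. The hypothesis $[k]\setminus\{j_0\}\subseteq\{d_1,\dots,d_r\}$ means that every coordinate $j\in[k]\setminus\{j_0\}$ occurs among $d_1,\dots,d_r$, so for each such $j$ we may invoke Lemma~\ref{convImpliesLin} with this $j$ and with $j_0$ as given; it tells us that $\sigma$ respects at least $\delta^4c^8|G_j|^3|G_{[k]\setminus\{j,j_0\}}|$ of the $j$-additive quadruples, i.e.\ at least a $(\delta c)^{O(1)}$ proportion of them. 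Applying this for every $j\in[k]\setminus\{j_0\}$, we are reduced to the following purely combinatorial statement: $\sigma$ is a map, defined on a set $S$ of density at least $\delta$ inside the $(k-1)$-fold product $G_{[k]\setminus\{j_0\}}$ and taking values in $G_{j_0}$, which respects at least a $(\delta c)^{O(1)}$ proportion of the additive quadruples in each of its $k-1$ coordinate directions.

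The technical heart is to upgrade this to a genuine multi-homomorphism on a dense subset, with only an $\exp(-\log^{O(1)}(\delta^{-1}c^{-1}))$ loss in density and \emph{no} further appeal to the case $k-1$ theorem. For a single coordinate this is exactly Theorem~\ref{FreBSG}: a map respecting a positive proportion of additive quadruples agrees with an affine map, hence with a $1$-homomorphism, on a subset of density $\exp(-\log^{O(1)})$. For $k-1$ coordinates I would argue by induction on the number of directions. A first step of Bogolyubov/dependent-random-choice type passes from $S$ to a sub-box $\prod_{j\neq j_0}(v_j+W_j)$ with each $W_j\leq G_j$ of codimension $\log^{O(1)}(\delta^{-1}c^{-1})$, inside which $\sigma$ is defined on a set of density $1-\eta$ and respects all but an $\eta$ proportion of the additive quadruples in each direction, $\eta$ being arbitrarily small. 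On such a sub-box, fix a direction $d$: for almost every choice of the remaining coordinates the corresponding line of $\sigma$ is $(1-o(1))$-dense and respects $(1-o(1))$ of its additive quadruples, so Corollary~\ref{easyExtnDoubleApprox} yields a unique affine map $\alpha$ agreeing with $\sigma$ on $(1-o(1))$ of that line. The assignment sending the remaining coordinates to $\alpha\in G_{j_0}\oplus\operatorname{Hom}(G_d,G_{j_0})$ then respects $(1-o(1))$ of the additive quadruples in each of the other $k-2$ directions: this transfer uses that a non-zero affine map on $G_d$ vanishes only on a proper coset, so an affine combination of these $\alpha$'s that vanishes on more than a $1/p$ fraction of a line in direction $d$ vanishes identically. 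Applying the inductive hypothesis to this assignment, and re-assembling with the affine structure along $d$ (as in Proposition~\ref{lastExtnStep}), produces a subset $S^*$, of density $\exp(-\log^{O(1)}(\delta^{-1}c^{-1}))$ in the sub-box and hence also in $G_{[k]\setminus\{j_0\}}$, on which $\sigma$ is a genuine multi-homomorphism.

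Finally, $\sigma|_{S^*}$ is a multi-homomorphism on a subset of density $\rho:=\exp(-\log^{O(1)}(\delta^{-1}c^{-1}))$ of a $(k-1)$-fold product of $\mathbb{F}_p$-vector spaces, valued in $G_{j_0}$. By Theorem~\ref{multiaffineInvThm} for $k-1$ there is a multiaffine map agreeing with $\sigma$ on a subset $S'\subseteq S^*$ of density at least $\bigl(\exp^{(D^{\mathrm{mh}}_{k-1})}(O(\rho^{-1}))\bigr)^{-1}=\bigl(\exp^{(D^{\mathrm{mh}}_{k-1})}(\exp(O((\log\delta^{-1}c^{-1})^{O(1)})))\bigr)^{-1}$; extending this multiaffine map (defined a priori on the sub-box) to all of $G_{[k]\setminus\{j_0\}}$ --- which is immediate, since multiaffine maps extend off a sub-box of bounded codimension --- gives the desired $\mu$ and the stated bound on $\delta'$.

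The main obstacle is the bridging step in the second paragraph. Two points need care: (a) arranging, by dependent random choice, a sub-box on which $\sigma$ is simultaneously $(1-o(1))$-dense and respects $(1-o(1))$ of the additive quadruples in \emph{every} direction; and (b) gluing the line-by-line affine extensions together consistently across that sub-box as one runs the induction on the number of directions. Both must be done while keeping every intermediate density loss in the $\exp(-\log^{O(1)})$ regime, so that the single application of the case $k-1$ theorem in the third paragraph accounts for the only iterated-exponential factor in the final bound.
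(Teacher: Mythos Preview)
Your overall architecture --- use Lemma~\ref{convImpliesLin} to get many respected quadruples, pass to a multi-homomorphism on a dense subset, then invoke Theorem~\ref{multiaffineInvThm} for $k-1$ --- matches the paper's. But your bridging step is far more elaborate than needed, and the densification you propose in point~(a) is a genuine gap: I do not know a dependent-random-choice argument that simultaneously boosts the density \emph{and} the proportion of respected quadruples in all $k-1$ directions to $1-\eta$, with only $\exp(-\log^{O(1)})$ loss. The property ``respects a positive proportion of $j$-quadruples'' is \emph{not} inherited by subsets, which is exactly why such a simultaneous densification is delicate.

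The paper avoids this entirely by a simple observation you have overlooked: the \emph{original} hypothesis --- that $|\widehat{g_{x_{[k]\setminus\{j_0\}}}}(\sigma(x_{[k]\setminus\{j_0\}}))| \geq c$ --- is pointwise, and therefore \emph{is} inherited by any subset of $S$. This means you never have to extract the quadruple information in all directions at once. Instead, proceed one direction at a time. Apply Lemma~\ref{convImpliesLin} to $S$ in direction~$1$; for a dense set of $x_{[k-1]\setminus\{1\}}$ the slice has many respected quadruples, so Theorem~\ref{FreBSG} (slice by slice) gives a subset $S_1\subset S$ of density $\exp(-\log^{O(1)}(\delta^{-1}c^{-1}))$ on which $\sigma$ is a Freiman homomorphism in direction~$1$. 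Now $S_1$ still satisfies the large-Fourier-coefficient hypothesis, so Lemma~\ref{convImpliesLin} applies again to $S_1$ in direction~$2$, yielding $S_2$, and so on through all $k-1$ directions. After $k-1$ steps you have a set $S_{k-1}$ of density $\exp(-\log^{O(1)}(\delta^{-1}c^{-1}))$ on which $\sigma$ is a genuine multi-homomorphism, and one application of the case $k-1$ of Theorem~\ref{multiaffineInvThm} finishes. No sub-boxes, no dependent random choice, no induction on the number of directions with $\operatorname{Hom}$-valued maps.
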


\begin{proof}Without loss of generality $j_0 = k$. By induction on $i \in [0,k-1]$ we show that there is a set $S_i \subseteq S$ of density $\delta_i = \exp\Big(-O((\log \delta^{-1} c^{-1})^{O(1)})\Big)$ such that $\sigma|_{S_i}$ is a Freiman homomorphism in directions $1,2,\dots,i$. For the base case take $S_0 = S$ and $\delta_0 = \delta$. Assume that the claim holds for some $i \in [0,k-2]$ and let $S_i$ be the relevant subset. By Lemma~\ref{convImpliesLin}, there are at least $\delta_i^4 c^8 |G_{i+1}|^3 |G_{[k-1] \setminus \{i + 1\}}|$ $(i+1)$-additive quadruples in $S_i$ that are respected by $\sigma$. Let $T$ be the set of all $x_{[k-1] \setminus \{i+1\}} \in G_{[k-1] \setminus \{i+1\}}$ for which there are at least $\frac{1}{2}\delta_i^4 c^8 |G_{i+1}|^3$ such additive quadruples coming from $(S_i)_{x_{[k-1] \setminus \{i+1\}}}$. Then $|T| \geq \frac{1}{2}\delta_i^4 c^8 |G_{[k-1] \setminus \{i+1\}}|$. By Corollary~\ref{FreBSG}, for each $x_{[k-1] \setminus \{i+1\}} \in T$ we may find an affine map $\alpha \colon G_{i+1} \to G_k$ and a set $Y_{x_{[k-1] \setminus \{i+1\}}} \subseteq (S_i)_{x_{[k-1] \setminus \{i+1\}}} $ of size $\exp\Big(-O((\log \delta^{-1} c^{-1})^{O(1)})\Big) |G_{i+1}|$, such that $\sigma_{x_{[k-1] \setminus \{i+1\}}}(y_{i+1}) = \alpha(y_{i+1})$ for all $y_{j+1} \in Y_{x_{[k-1] \setminus \{i+1\}}}$. Thus, taking $S_{i+1} = \bigcup_{x_{[k-1] \setminus \{i+1\}} \in T} \{x_{[k-1] \setminus \{i+1\}}\} \times Y_{x_{[k-1] \setminus \{i+1\}}} \subset S_i$ completes the proof of the inductive step.\\
\indent Once we have obtained the set $S_{k-1}$, we see that $\sigma|_{S_{k-1}}$ is a multi-homomorphism. By Theorem~\ref{multiaffineInvThm}, there is a global multiaffine map $\mu \colon G_{[k-1]} \to G_k$ such that $\mu(x_{[k-1]}) = \sigma(x_{[k-1]})$ holds for at least $\Big(\exp^{(D^{\mathrm{mh}}_{k-1})}(O(\delta_{k-1}^{-1}))\Big)^{-1} |G_{[k-1]}|$ of $x_{[k-1]} \in S_{k-1} \subset S$, which completes the proof.\end{proof}

We apply the corollary above iteratively to cover most of pairs $(x_{[k] \setminus \{j_0\}}, r_{j_0})$ where $\Big|\widehat{g_{x_{[k] \setminus \{j_0\}}}}(r_{j_0})\Big|\geq c$ by few structured pieces.

\begin{corollary}\label{FClinFinal}Let $f, d_1, \dots, d_r, j_0$ and $g$ be as in Corollary~\ref{FClinMidStep}, and let $\varepsilon > 0$. Then there exist a positive integer $m=\exp^{(D^{\mathrm{mh}}_{k-1} + 1)}\Big(O((\log \varepsilon^{-1})^{O(1)})\Big)$, multiaffine maps $\mu_1, \dots, \mu_m \colon G_{[k] \setminus \{j_0\}} \to G_{j_0}$ and a set $S \subset G_{[k] \setminus \{j_0\}}$ of size at least $(1-\varepsilon) |G_{[k] \setminus \{j_0\}}|$ such that $r_{j_0} \in \Big\{\mu_1(x_{[k] \setminus \{j_0\}}), \dots, \mu_m(x_{[k] \setminus \{j_0\}})\Big\}$ for every $x_{[k] \setminus \{j_0\}} \in S$ and every $r_{j_0} \in G_{j_0}$ such that $|\widehat{g_{x_{[k] \setminus \{j_0\}}}}(r_{j_0})| \geq \varepsilon$.\end{corollary}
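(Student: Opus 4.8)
The plan is a greedy covering argument built directly on top of Corollary~\ref{FClinMidStep}. Throughout write $G' = G_{[k]\setminus\{j_0\}}$. First I would record the elementary density input: by Parseval (Lemma~\ref{LargeFCs}), for each fixed $x_{[k]\setminus\{j_0\}}\in G'$ the set of frequencies $r_{j_0}\in G_{j_0}$ with $|\widehat{g_{x_{[k]\setminus\{j_0\}}}}(r_{j_0})|\geq\varepsilon$ has size at most $\varepsilon^{-2}$. Calling such a pair $(x_{[k]\setminus\{j_0\}},r_{j_0})$ \emph{heavy}, the total number of heavy pairs is therefore at most $\varepsilon^{-2}|G'|$, and this count will be the potential that drives the iteration.

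Now I would construct the maps $\mu_1,\mu_2,\dots$ one at a time. Suppose $\mu_1,\dots,\mu_i$ have already been produced. Call a heavy pair $(x_{[k]\setminus\{j_0\}},r_{j_0})$ \emph{covered} if $r_{j_0}\in\{\mu_1(x_{[k]\setminus\{j_0\}}),\dots,\mu_i(x_{[k]\setminus\{j_0\}})\}$, and let $T_i\subseteq G'$ be the set of $x_{[k]\setminus\{j_0\}}$ lying in at least one uncovered heavy pair. If $|T_i|<\varepsilon|G'|$ I stop. Otherwise, for each $x_{[k]\setminus\{j_0\}}\in T_i$ I pick one uncovered heavy frequency $\sigma_i(x_{[k]\setminus\{j_0\}})\in G_{j_0}$; the map $\sigma_i\colon T_i\to G_{j_0}$ then satisfies the hypotheses of Corollary~\ref{FClinMidStep} with $\delta=c=\varepsilon$ (here I use that $g$, $f$, $d_1,\dots,d_r$, $j_0$ are already in the required form). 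That corollary produces a multiaffine map $\mu_{i+1}\colon G'\to G_{j_0}$ and a subset $S'_{i+1}\subseteq T_i$ with $|S'_{i+1}|\geq\delta'|G'|$ on which $\sigma_i=\mu_{i+1}$, where
\[{\delta'}^{-1}=\exp^{(D^{\mathrm{mh}}_{k-1})}\Big(\exp\big(O((\log\varepsilon^{-1})^{O(1)})\big)\Big).\]
For each $x_{[k]\setminus\{j_0\}}\in S'_{i+1}$, the heavy pair $(x_{[k]\setminus\{j_0\}},\sigma_i(x_{[k]\setminus\{j_0\}}))$ was uncovered after step $i$ (by definition of $T_i$ and $\sigma_i$) but is covered after step $i+1$; since $S'_{i+1}\subseteq T_i\subseteq G'$, the number of uncovered heavy pairs drops by at least $\delta'|G'|$ with each step.

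Since there are at most $\varepsilon^{-2}|G'|$ heavy pairs, the iteration must reach $|T_i|<\varepsilon|G'|$ after at most $m\coloneqq\lceil\varepsilon^{-2}{\delta'}^{-1}\rceil$ steps; padding the list with zero maps if the process stops early, I may assume exactly $m$ maps $\mu_1,\dots,\mu_m$ are produced. A short computation with iterated exponentials — writing ${\delta'}^{-1}=\exp^{(D^{\mathrm{mh}}_{k-1}+1)}(O((\log\varepsilon^{-1})^{O(1)}))$ and absorbing the factor $\varepsilon^{-2}=\exp(O(\log\varepsilon^{-1}))$ into the top level of the tower — yields $m=\exp^{(D^{\mathrm{mh}}_{k-1}+1)}(O((\log\varepsilon^{-1})^{O(1)}))$, as required. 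Finally I set $S=G'\setminus T_m$, the set of those $x_{[k]\setminus\{j_0\}}$ all of whose heavy frequencies are covered by $\mu_1,\dots,\mu_m$; since the iteration terminated, $|S|\geq(1-\varepsilon)|G'|$, and by construction $r_{j_0}\in\{\mu_1(x_{[k]\setminus\{j_0\}}),\dots,\mu_m(x_{[k]\setminus\{j_0\}})\}$ for every $x_{[k]\setminus\{j_0\}}\in S$ and every $r_{j_0}$ with $|\widehat{g_{x_{[k]\setminus\{j_0\}}}}(r_{j_0})|\geq\varepsilon$.

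The only substantive ingredient is Corollary~\ref{FClinMidStep}; this deduction has no serious obstacle. The two points that require care are (i) verifying that the heavy-pair count genuinely decreases by the full $|S'_{i+1}|$ at each step, which relies on $S'_{i+1}\subseteq T_i$ together with the fact that $\sigma_i$ was chosen to select only \emph{uncovered} frequencies; and (ii) the bookkeeping of the tower height in passing from ${\delta'}^{-1}$ and the factor $\varepsilon^{-2}$ to the stated bound on $m$.
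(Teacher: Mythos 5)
Your argument is correct and matches the paper's own proof essentially verbatim: both run the same greedy iteration, applying Corollary~\ref{FClinMidStep} to a choice function $\sigma$ supported on the $\varepsilon$-dense set of columns with an as-yet-uncovered heavy frequency, and both use the Parseval bound of $\varepsilon^{-2}$ heavy frequencies per column to cap the number of steps at $\varepsilon^{-2}\delta'^{-1}$. The only difference is cosmetic bookkeeping (your explicit sets $T_i$, $S'_{i+1}$); the substance is identical.
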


\begin{proof}We iteratively define multiaffine maps $\mu_1, \dots, \mu_m \colon G_{[k] \setminus \{j_0\}} \to G_{j_0}$ as follows. If at the $i$\textsuperscript{th} step there is a set $A \subset  G_{[k] \setminus \{j_0\}}$ of size at least $\varepsilon | G_{[k] \setminus \{j_0\}}|$ such that for each $x_{[k] \setminus \{j_0\}} \in A$ there exists an element $\sigma(x_{[k] \setminus \{j_0\}}) \in G_{j_0} \setminus \Big\{\mu_1(x_{[k] \setminus \{j_0\}}), \dots, \mu_i(x_{[k] \setminus \{j_0\}})\Big\}$ such that $|\widehat{g_{x_{[k] \setminus \{j_0\}}}}(\sigma(x_{[k] \setminus \{j_0\}}))| \geq \varepsilon$, then we may apply Corollary~\ref{FClinMidStep} to find a multiaffine map $\mu_{i+1} \colon G_{[k]\setminus \{j_0\}} \to G_{j_0}$ such that $\sigma(x_{[k] \setminus \{j_0\}}) = \mu_{i+1}(x_{[k] \setminus \{j_0\}})$ holds for $\Big(\exp^{(D^{\mathrm{mh}}_{k-1} + 1)}\Big(O((\log \varepsilon^{-1})^{O(1)})\Big)\Big)^{-1} |G_{[k] \setminus \{j_0\}}|$ elements $x_{[k] \setminus \{j_0\}}$ of $A$.\\
\indent Since by Lemma~\ref{LargeFCs} the number of $\varepsilon$-large Fourier coefficients of each $g_{x_{[k] \setminus \{j_0\}}}$ is at most $\varepsilon^{-2}$, this procedure terminates after at most $\varepsilon^{-2} \exp^{(D^{\mathrm{mh}}_{k-1} + 1)}\Big(O((\log \varepsilon^{-1})^{O(1)})\Big)$ steps, as desired.\end{proof}

\subsection{Obtaining the approximation}

For the Fourier coefficient at $r$ of a very long expression $E$, we write $\fco E\fcc(r)$ instead of $\hat{E}(r)$. The next proposition allows us to approximate the mixed convolution $\bigconv{d_r} \dots \bigconv{d_1} f$ using the large Fourier coefficients of slices of the one step shorter convolution $\bigconv{d_{r-1}} \dots \bigconv{d_1} f$. 

\begin{proposition}\label{fcTrunc}Let $q \geq 1$, let $f \colon G_{[k]} \to \mathbb{D}$ and let $\{d_1, \dots, d_r\} = [k]$ (where $r$ may be greater than $k$). Then there exist a positive integer $l =\exp^{(D^{\mathrm{mh}}_{k-1} + 2)}\Big(O((\varepsilon^{-1})^{O(q)})\Big)$, multiaffine maps $\mu_i \colon G_{[k] \setminus \{d_r\}} \to G_{d_r}$, $\lambda_i \colon G_{[k]} \to \mathbb{F}_p$, and constants $c_i \in \mathbb{D}$ for $i \in [l]$, such that
\[\bigconv{d_r} \dots \bigconv{d_1} f(x_{[k]}) \apps{\epsilon}_{L^q, x_{[k]}} \sum_{i \in [l]}c_i \Big|\fco\bigconv{d_{r-1}} \dots \bigconv{d_1} f_{x_{[k] \setminus \{d_r\}}}\fcc\Big(\mu_i(x_{[k] \setminus \{d_r\}})\Big)\Big|^2 \chi(\lambda_i(x_{[k]})).\]
\end{proposition}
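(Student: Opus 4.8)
The plan is to combine Corollary~\ref{FClinFinal}, applied to $g:=\bigconv{d_{r-1}}\dots\bigconv{d_1}f$, with Fourier inversion and a short inclusion--exclusion argument. Write $x':=x_{[k]\setminus\{d_r\}}$ for brevity. By Fourier inversion in the $d_r$\textsuperscript{th} coordinate,
\[\bigconv{d_r}\dots\bigconv{d_1}f(x_{[k]})=g_{x'}\conv g_{x'}(x_{d_r})=\sum_{\rho\in G_{d_r}}\big|\widehat{g_{x'}}(\rho)\big|^2\chi(x_{d_r}\cdot\rho),\]
and for every $x'$ we have $|\widehat{g_{x'}}(\rho)|\le 1$ and $\sum_{\rho}|\widehat{g_{x'}}(\rho)|^2\le 1$. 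Since $\{d_1,\dots,d_r\}=[k]$ we have $[k]\setminus\{d_r\}\subseteq\{d_1,\dots,d_{r-1}\}$, so Corollary~\ref{FClinFinal} applies to $g$ with directions $d_1,\dots,d_{r-1}$, with $j_0=d_r$, and with a parameter $\eta>0$ (to be fixed): it produces an integer $m=\exp^{(D^{\mathrm{mh}}_{k-1}+1)}\big(O((\log\eta^{-1})^{O(1)})\big)$, multiaffine maps $\mu_1,\dots,\mu_m\colon G_{[k]\setminus\{d_r\}}\to G_{d_r}$, and a set $S$ of density at least $1-\eta$ such that $\rho\in\{\mu_1(x'),\dots,\mu_m(x')\}$ whenever $x'\in S$ and $|\widehat{g_{x'}}(\rho)|\ge\eta$. (Here $\widehat{g_{x'}}(\cdot)=\fco\bigconv{d_{r-1}}\dots\bigconv{d_1}f_{x_{[k]\setminus\{d_r\}}}\fcc(\cdot)$.)

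The crucial move is to replace the full Fourier expansion by a sum over the \emph{set} of values of the $\mu_i$: put
\[\Phi(x_{[k]}):=\sum_{\rho\in\{\mu_1(x'),\dots,\mu_m(x')\}}\big|\widehat{g_{x'}}(\rho)\big|^2\chi(x_{d_r}\cdot\rho),\]
the sum being over distinct values, so that $|\Phi(x_{[k]})|\le 1$ for \emph{every} $x_{[k]}$. Then $\bigconv{d_r}\dots\bigconv{d_1}f\apps{O(\eta^{1/q})}_{L^q}\Phi$: for $x'\in S$ the difference, regarded as a function of $x_{d_r}$, has Fourier support inside $\{\rho:|\widehat{g_{x'}}(\rho)|<\eta\}$, so by Parseval its $L^2(x_{d_r})$ norm is at most $\big(\sum_\rho|\widehat{g_{x'}}(\rho)|^4\big)^{1/2}\le\eta$, and interpolating against the trivial $L^\infty$ bound $\le 1$ gives $L^q(x_{d_r})$ norm at most $\eta^{1/q}$; on $G_{[k]\setminus\{d_r\}}\setminus S$ (density $\le\eta$) both sides are bounded by $1$, so summing $q$\textsuperscript{th} powers over $x'$ gives the claim. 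The point of working with the distinct-values sum is that $\Phi$ is bounded \emph{uniformly}, with no dependence on $m$; keeping only the Fourier coefficients above a threshold would instead leave a non-algebraic condition on $\widehat{g_{x'}}(\mu_i(x'))$ together with an error of order $m\eta^2$, which is useless because $m$ grows far faster than any power of $\eta^{-1}$.

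Next I resolve $\Phi$ by inclusion--exclusion. For all $x'$ and $\rho$,
\[\mathbbm 1\big[\rho\in\{\mu_1(x'),\dots,\mu_m(x')\}\big]=\sum_{\emptyset\ne T\subseteq[m]}(-1)^{|T|+1}\,\mathbbm 1\big[(\forall i\in T)\ \mu_i(x')=\mu_{\min T}(x')\big]\,\mathbbm 1\big[\rho=\mu_{\min T}(x')\big],\]
and multiplying by $|\widehat{g_{x'}}(\rho)|^2\chi(x_{d_r}\cdot\rho)$ and summing over $\rho$ yields the exact identity
\[\Phi(x_{[k]})=\sum_{\emptyset\ne T\subseteq[m]}(-1)^{|T|+1}\,\mathbbm 1\big[(\forall i\in T)\ \mu_i(x')=\mu_{\min T}(x')\big]\,\big|\widehat{g_{x'}}\big(\mu_{\min T}(x')\big)\big|^2\chi\big(x_{d_r}\cdot\mu_{\min T}(x')\big).\]
For each $T$ the indicator is that of the zero set of the multiaffine map $(\mu_i-\mu_{\min T})_{i\in T}$, which need not have bounded codimension; I apply Lemma~\ref{varOuterApprox} to replace it by the indicator $\mathbbm 1[\beta_T=0]$ of a variety of a bounded codimension $s$, disagreeing on a set of density at most $p^{-s}$. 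As every summand has modulus at most $1$, this changes $\Phi$ by at most $2^mp^{-s/q}$ in $L^q$. Expanding $\mathbbm 1[\beta_T(x')=0]=p^{-s}\sum_{\nu\in\mathbb F_p^s}\chi(\nu\cdot\beta_T(x'))$ and reindexing by the pairs $(T,\nu)$ now puts the expression into the required shape: the $(T,\nu)$ term is $c_{T,\nu}\,\big|\fco\bigconv{d_{r-1}}\dots\bigconv{d_1}f_{x_{[k]\setminus\{d_r\}}}\fcc\big(\mu_{\min T}(x_{[k]\setminus\{d_r\}})\big)\big|^2\chi(\lambda_{T,\nu}(x_{[k]}))$ with $c_{T,\nu}:=(-1)^{|T|+1}p^{-s}\in\mathbb D$, $\mu_{T,\nu}:=\mu_{\min T}$, and $\lambda_{T,\nu}(x_{[k]}):=\nu\cdot\beta_T(x_{[k]\setminus\{d_r\}})+x_{d_r}\cdot\mu_{\min T}(x_{[k]\setminus\{d_r\}})$, and there are $l=(2^m-1)p^s$ of them.

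It remains to choose the parameters. Taking $\eta$ a suitable small power of $\varepsilon$ (e.g.\ $\eta=(\varepsilon/8)^q$) makes the approximation error $\bigconv{d_r}\dots\bigconv{d_1}f\approx\Phi$ at most $\varepsilon/2$, so that $m=\exp^{(D^{\mathrm{mh}}_{k-1}+1)}\big(O((q\log(2\varepsilon^{-1}))^{O(1)})\big)$; then taking $s=\con\,q(m+\log\varepsilon^{-1})$ makes $2^mp^{-s/q}\le\varepsilon/2$, and hence the total $L^q$ error is at most $\varepsilon$. Finally $l=(2^m-1)p^s\le\exp\big(O_q(m)\big)$, and a routine estimate of iterated exponentials turns this into $l=\exp^{(D^{\mathrm{mh}}_{k-1}+2)}\big(O((\varepsilon^{-1})^{O(q)})\big)$, as required. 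The one genuinely delicate point is the one isolated in the second paragraph: one is forced to work with the uniformly bounded sum over the \emph{set} $\{\mu_1(x'),\dots,\mu_m(x')\}$ rather than with a truncated spectrum, and the cost of the ensuing inclusion--exclusion over its $2^m$ subsets is exactly what promotes the $(D^{\mathrm{mh}}_{k-1}+1)$-fold exponential bound on $m$ to the $(D^{\mathrm{mh}}_{k-1}+2)$-fold exponential bound on $l$.
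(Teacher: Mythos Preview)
Your proof is correct and follows essentially the same approach as the paper's: apply Corollary~\ref{FClinFinal} to capture the large Fourier coefficients of $g=\bigconv{d_{r-1}}\dots\bigconv{d_1}f$ by multiaffine maps, replace the Fourier truncation by a sum over the set $\{\mu_1(x'),\dots,\mu_m(x')\}$, expand by inclusion--exclusion, and then approximate each indicator using Lemma~\ref{varOuterApprox} before expanding as a character sum. The only difference is presentational: the paper first invokes Corollary~\ref{basicLqapp} to truncate and then observes that enlarging the spectrum to $\{\mu_i(x')\}$ does not worsen the approximation, whereas you go directly to the distinct-values sum $\Phi$ and argue its boundedness; both routes arrive at the same inclusion--exclusion expression and the same bounds.
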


\begin{proof}Set $\eta = (\varepsilon/32)^q$. Using Theorem~\ref{basicLqapp} for the $L^q$ norm and parameter $\eta$ for each $x_{[k] \setminus \{d_r\}} \in G_{[k] \setminus \{d_r\}}$ we obtain that
\[\exx_{x_{[k] \setminus \{d_r\}}} \exx_{x_{d_r}} \bigg|\bigconv{d_r} \dots \bigconv{d_1} f(x_{[k]})  -\sum\limits_{\substack{s_{d_r} \in G_{d_r}\\\big|\fco \bigconv{d_{r-1}} \dots \bigconv{d_1} f_{x_{[k] \setminus \{d_r\}}}\fcc (s_{d_r})\big| \geq \eta}} \Big|\fco \bigconv{d_{r-1}} \dots \bigconv{d_1} f_{x_{[k] \setminus \{d_r\}}}\fcc (s_{d_r})\Big|^2 \chi\Big(s_{d_r} \cdot x_{d_r}\Big)\bigg|^q\] 
is at most $(\varepsilon/4)^q$, which implies that
\[\bigconv{d_r} \dots \bigconv{d_1} f(x_{[k]}) \apps{\varepsilon/4}_{L^q, x_{[k]}} \sum\limits_{\substack{s_{d_r} \in G_{d_r}\\\big|\fco \bigconv{d_{r-1}} \dots \bigconv{d_1} f_{x_{[k] \setminus \{d_r\}}}\fcc (s_{d_r})\big| \geq \eta}} \Big|\fco \bigconv{d_{r-1}} \dots \bigconv{d_1} f_{x_{[k] \setminus \{d_r\}}}\fcc (s_{d_r})\Big|^2 \chi\Big(s_{d_r} \cdot x_{d_r}\Big).\]
This is already an approximation in the $L^q$ norm; it remains to modify it to the desired form.\\
\indent Let $\varepsilon_1 = (\varepsilon/8)^q$. Apply Corollary~\ref{FClinFinal} to find a positive integer $m_1=\exp^{(D^{\mathrm{mh}}_{k-1} + 1)}\Big(O_p((\varepsilon^{-1})^{O(q)})\Big)$, multiaffine maps $\mu_1, \dots, \mu_{m_1} \colon G_{[k] \setminus \{d_r\}} \to G_{d_r}$, and a set $S \subset G_{[k] \setminus \{d_r\}}$ of size at least $(1-\varepsilon_1)|G_{[k] \setminus \{d_r\}}|$, such that $s_{d_r} \in \{\mu_i(x_{[k] \setminus \{d_r\}}) \colon i \in [m_1]\}$ for every $x_{[k] \setminus \{d_r\}} \in S$ and every 
$s_{d_r} \in G_{d_r}$ for which $\Big|\fco \bigconv{d_{r-1}} \dots \bigconv{d_1} f_{x_{[k] \setminus \{d_r\}}}\fcc (s_{d_r})\Big| \geq \eta$.\\
\indent Using the inclusion-exclusion principle we may write
\[\mathbbm{1}\Big(s_{d_r} \in \{\mu_i(x_{[k] \setminus \{d_r\}}) \colon i \in [m_1]\}\Big) = \sum\limits_{\emptyset \not= I \subseteq [m_1]} (-1)^{|I|+1} \mathbbm{1}\Big((\forall i \in I) \mu_i(x_{[k] \setminus \{d_r\}}) = s_{d_r}\Big).\]
For each non-empty $I \subseteq [m_1]$, fix an arbitrary element $\elt(I) \in I$. We get
\begin{align}\bigconv{d_r} \dots \bigconv{d_1} f(x_{[k]}) &\apps{\varepsilon/2}_{L^q, x_{[k]}} \sum\limits_{s_{d_r} \in G_{d_r}} \mathbbm{1}\Big(s_{d_r} \in \{\mu_i(x_{[k] \setminus \{d_r\}}) \colon i \in [m_1]\}\Big) \Big|\fco \bigconv{d_{r-1}} \dots \bigconv{d_1} f_{x_{[k] \setminus \{d_r\}}}\fcc (s_{d_r})\Big|^2 \chi\Big(s_{d_r} \cdot x_{d_r}\Big)\nonumber\\
&=\sum\limits_{s_{d_r} \in G_{d_r}} \sum\limits_{\emptyset \not= I \subseteq [m_1]} (-1)^{|I|+1} \mathbbm{1}\Big((\forall i \in I) \mu_i(x_{[k] \setminus \{d_r\}}) = s_{d_r}\Big) \nonumber\\
&\hspace{1cm}\Big|\fco \bigconv{d_{r-1}} \dots \bigconv{d_1} f_{x_{[k] \setminus \{d_r\}}}\fcc (s_{d_r})\Big|^2 \chi\Big(s_{d_r} \cdot x_{d_r}\Big)\nonumber\\
&=\sum\limits_{\emptyset \not= I \subseteq [m_1]} (-1)^{|I|+1} \mathbbm{1}\Big(\mu_i(x_{[k] \setminus \{d_r\}})\text{ are equal for }i \in I\Big)\nonumber\\
&\hspace{1cm}\Big|\fco \bigconv{d_{r-1}} \dots \bigconv{d_1} f_{x_{[k] \setminus \{d_r\}}}\fcc (\mu_{\elt(I)}(x_{[k] \setminus \{d_r\}}))\Big|^2 \chi\Big(\mu_{\elt(I)}(x_{[k] \setminus \{d_r\}}) \cdot x_{d_r}\Big).\label{1stApproxEqn}\end{align}
Observe that we might have added more Fourier coefficients in the approximation sum for some $x_{[k] \setminus \{d_r\}}$, but this would not make the approximation from Theorem~\ref{basicLqapp} worse, since that theorem says that a sum that \emph{includes} all the large Fourier coefficients is a good approximation. Note also that slightly worse bound of $\varepsilon/2$ on the $L^q$ norm is consequence of the fact that some $x_{[k] \setminus \{d_r\}}$ might not belong to $S$.

Now for $\emptyset \not= I \subseteq [m_1]$, the subset $B_I \subset G_{[k] \setminus \{d_r\}}$ defined by
\begin{align*}B_I = &\Big\{x_{[k] \setminus \{d_r\}} \in G_{[k] \setminus \{d_r\}} \colon \mu_i(x_{[k] \setminus \{d_r\}})\text{ are equal for }i \in I\Big\}\\
=&\Big\{x_{[k] \setminus \{d_r\}} \in G_{[k] \setminus \{d_r\}} \colon (\forall i \in I) \hspace{3pt} (\mu_i - \mu_{\elt(I)})(x_{[k] \setminus \{d_r\}}) = 0\Big\}\end{align*}
is a variety in $G_{[k] \setminus \{d_r\}}$. Therefore~\eqref{1stApproxEqn} gives us that
\begin{align}\bigconv{d_r} \dots \bigconv{d_1} f(x_{[k]}) \apps{\varepsilon/2}_{L^q, x_{[k]}} &\sum\limits_{\emptyset \not= I \subseteq [m_1]} (-1)^{|I|+1} B_I\Big(x_{[k] \setminus \{d_r\}}\Big)\nonumber\\
&\Big|\fco \bigconv{d_{r-1}} \dots \bigconv{d_1} f_{x_{[k] \setminus \{d_r\}}}\fcc (\mu_{\elt(I)}(x_{[k] \setminus \{d_r\}}))\Big|^2 \chi\Big(\mu_{\elt(I)}(x_{[k] \setminus \{d_r\}}) \cdot x_{d_r}\Big).\label{2ndApproxEqn}\end{align}
For each $\emptyset \not= I \subseteq [m_1]$, apply Lemma~\ref{varOuterApprox} to $B_I$ to find a multiaffine map $\gamma_I \colon G_{[k] \setminus \{d_r\}} \to \mathbb{F}_p^{t_I}$, for some $t_I = O(q (m_1 + \log_p \varepsilon^{-1}))$, such that $B'_I = \{x_{[k] \setminus \{d_r\}} \in G_{[k] \setminus \{d_r\}} \colon \gamma_I(x_{[k] \setminus \{d_r\}}) = 0\}$ has the properties that $B_I \subseteq B'_I$ and that 
\[|B'_I \setminus B_I| \leq \frac{(\varepsilon/2)^q}{2^{(q+1)m_1}} |G_{[k] \setminus \{d_r\}}|.\]
From~\eqref{2ndApproxEqn} we deduce that
\begin{align*}\bigconv{d_r} \dots \bigconv{d_1} f(x_{[k]}) &\apps{\varepsilon}_{L^q,x_{[k]}} \sum\limits_{\emptyset \not= I \subseteq [m_1]} (-1)^{|I|+1} B'_I\Big(x_{[k] \setminus \{d_r\}}\Big)\\
&\hspace{2cm}\Big|\fco \bigconv{d_{r-1}} \dots \bigconv{d_1} f_{x_{[k] \setminus \{d_r\}}}\fcc (\mu_{\elt(I)}(x_{[k] \setminus \{d_r\}}))\Big|^2 \chi\Big(\mu_{\elt(I)}(x_{[k] \setminus \{d_r\}}) \cdot x_{d_r}\Big)\\
&=\sum\limits_{\emptyset \not= I \subseteq [m_1]} \sum_{\tau^I \in \mathbb{F}_p^{t_I}}(-1)^{|I|+1}p^{-t_I} \chi\Big(\tau^I \cdot \gamma_I(x_{[k] \setminus \{d_r\}})\Big)\\
&\hspace{2cm}\Big|\fco \bigconv{d_{r-1}} \dots \bigconv{d_1} f_{x_{[k] \setminus \{d_r\}}}\fcc (\mu_{\elt(I)}(x_{[k] \setminus \{d_r\}}))\Big|^2 \chi\Big(\mu_{\elt(I)}(x_{[k] \setminus \{d_r\}}) \cdot x_{d_r}\Big),\end{align*}
which completes the proof.\end{proof}

Let us now write $\bigconv{i, s}$ to mean
\[\bigconv{i} \bigconv{i-1} \dots \bigconv{i-s + 2} \bigconv{i -s + 1}\]
where the convolution directions are taken modulo $k$ and we allow $s$ to be greater than $k$.

The next proposition says that we may decrease the dependence on coordinates in the `non-character' term of the approximation sum. To simplify notation, the variables $a_{i_0}, a^{\{0,1\}}_{i_0-1}, \dots, a^{\{0,1\}^{J-1}}_{i_0 - J + 1}$ that appear in the statement below are assumed to have the property that $a^{\{0,1\}^{j}}_{i_0 - j}$ is a sequence of elements $a^e_{i_0 - j} \in G_{i_0 - j}$ with indices $e \in \{0,1\}^j$. Thus, superscripts are indices in the sequence, while the subscripts indicate to which group $G_l$ the elements belong. The expectation is taken over all such elements as usual. Also, the expression $a^{e}_{i_0 - J + 1},\dots, a_{i_0}$ that appears in the last line of the statement is a shortening of $a^{e}_{i_0 - J + 1}, a^{e|_{[J-2]}}_{i_0 - J + 2}, a^{e|_{[J-3]}}_{i_0 - J + 3}, \dots, a_{i_0}$.

\begin{proposition}\label{depRemovalPropFC}Let $q \geq 1$ and let $f \colon G_{[k]} \to \mathbb{D}$. Let $\mu \colon G_{[k] \setminus \{i_0\}} \to G_{i_0}$ be a multiaffine map, let $\varepsilon > 0$ and let $J \in [k]$. Let $s \geq k + J - 1$ and let $i_0 \in [k]$. Write $K_{k,J} = 2J(D^{\mathrm{mh}}_{k-1} + 2)$. Then, there exist
\begin{itemize}
\item a positive integer $l = \exp^{(K_{k, J})}\Big(O(\varepsilon^{-O(q)})\Big)$,
\item multiaffine maps $\lambda_i \colon G_{[k] \setminus \{i_0\}} \to \mathbb{F}_p, \mu_i \colon G_{[k] \setminus \{i_0 - J\}} \to G_{i_0 - J}$ for $i \in [l]$, and $\alpha_e \colon G_{[k]} \to \mathbb{F}_p$ for $e \in \{0,1\}^{J-1}$, and
\item constants $c_i \in \mathbb{D}$ for $i \in [l]$,
\end{itemize}
such that if $J \in [k-1]$, then
\begin{align*}&\fco \bigconv{i_0 - 1, s} f_{x_{[k] \setminus \{i_0\}}}\fcc(\mu(x_{[k] \setminus \{i_0\}}))\apps{\varepsilon}_{L^q,x_{[k] \setminus \{i_0\}}} \sum_{i \in [l]} c_i \chi\Big(\lambda_i(x_{[k] \setminus \{i_0\}})\Big) \exx_{a_{i_0}, a^{\{0,1\}}_{i_0-1}, \dots, a^{\{0,1\}^{J-1}}_{i_0 - J + 1}}\\
&\hspace{1cm} \prod_{e \in \{0,1\}^{J-1}} \Big|\fco\bigconv{i_0 - 1 - J, s - J} f_{x_{[k] \setminus \{i_0-J, i_0- J + 1, \dots, i_0\}}; a^{e}_{i_0 - J + 1},\dots, a_{i_0}} \fcc \Big(\mu_i(x_{[k] \setminus \{i_0-J, i_0- J + 1, \dots, i_0\}}; a^{e}_{i_0 - J + 1},\dots, a_{i_0})\Big)\Big|^2\\
&\hspace{2cm}\chi\Big(\alpha_e(x_{[k] \setminus \{i_0-J + 1, i_0 - J + 2, \dots, i_0\}}; a^{e}_{i_0 - J + 1},\dots, a_{i_0})\Big)\end{align*}
and when $J = k$, 
\[\fco \bigconv{i_0 - 1, s} f_{x_{[k] \setminus \{i_0\}}}\fcc(\mu(x_{[k] \setminus \{i_0\}}))\apps{\varepsilon}_{L^q,x_{[k] \setminus \{i_0\}}} \sum_{i \in [l]} c_i \chi\Big(\lambda_i(x_{[k] \setminus \{i_0\}})\Big).\]
\end{proposition}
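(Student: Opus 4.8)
The proof goes by induction on $J$, with $J=1$ as the base case. The role of the hypothesis $s\geq k+J-1$ is to guarantee that every convolution arising in the argument, together with its truncation by one convolution, still ranges over all of $[k]$; this is exactly what is needed for Corollary~\ref{FClinFinal} to apply at each stage, and hence it is through Corollary~\ref{FClinFinal} that Theorem~\ref{multiaffineInvThm} for $k-1$, and with it the constant $D^{\mathrm{mh}}_{k-1}$, enters.

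\emph{Base case $J=1$.} Write $\bigconv{i_0-1,s}f=\bigconv{i_0-1}\,\bigconv{i_0-2,s-1}f$ and apply Proposition~\ref{fcTrunc} to the convolution $\bigconv{i_0-1,s}f$, whose outermost direction is $i_0-1$ and whose directions exhaust $[k]$ since $s\geq k$. This produces multiaffine maps $\mu_i\colon G_{[k]\setminus\{i_0-1\}}\to G_{i_0-1}$, multiaffine forms $\lambda_i$ and constants $c_i\in\mathbb D$ with
\[\bigconv{i_0-1,s}f(x_{[k]})\apps{\varepsilon'}_{L^q}\sum_i c_i\,\Big|\fco\bigconv{i_0-2,s-1}f_{x_{[k]\setminus\{i_0-1\}}}\fcc\big(\mu_i(x_{[k]\setminus\{i_0-1\}})\big)\Big|^2\chi(\lambda_i(x_{[k]})),\]
the Fourier coefficient being taken in direction $i_0-1$, and $\varepsilon'$ chosen small in terms of $\varepsilon$ and $q$. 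Taking the Fourier transform of both sides in direction $i_0$ at $\mu(x_{[k]\setminus\{i_0\}})$ does not worsen the $L^q$ error, because $\ex_{y_{i_0}}$ followed by the $L^q$ norm in $x_{[k]\setminus\{i_0\}}$ is at most the $L^q$ norm over $G_{[k]}$ by Minkowski's and Jensen's inequalities. Renaming the dummy $y_{i_0}$ as $a_{i_0}$ and exchanging the sum with $\ex_{a_{i_0}}$ puts the right-hand side in the stated shape (for $J=1$ the product $\prod_{e\in\{0,1\}^0}$ has a single factor and the parameter list is just $a_{i_0}$): the character $\chi(-\mu(x_{[k]\setminus\{i_0\}})\cdot a_{i_0})$ created by the Fourier transform combines with $\chi(\lambda_i(x_{[k]}))$, and since both are affine in coordinate $i_0$ we split off the part independent of $a_{i_0}$, which we pull outside the average and rename $\lambda_i$, from the part linear in $a_{i_0}$, which remains inside and has the form $\chi(\alpha(x_{[k]\setminus\{i_0\}};a_{i_0}))$. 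Removing the residual dependence on $i$ of this inner character, so that $\alpha$ depends only on $e$, costs a second application of Proposition~\ref{fcTrunc} (re-expressing via $|\hat g|^2=\widehat{g\conv g}$ and re-linearising); this is why $K_{k,1}=2(D^{\mathrm{mh}}_{k-1}+2)$.

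\emph{Inductive step, and the case $J=k$.} Suppose the statement holds for $J$, with data $c_i,\lambda_i,\mu_i,\alpha_e$; for the $(J+1)$-case we assume $s\geq k+J$. Each of the $2^{J-1}$ factors $\big|\fco\bigconv{i_0-1-J,s-J}f_{\bullet}\fcc(\mu_i(\bullet))\big|^2$ has Fourier direction $i_0-J$ and outermost convolution direction $i_0-1-J$, so the coefficient inside the square is precisely the left-hand side of the proposition with $i_0$ replaced by $i_0-J$ and with the coordinates $i_0-J+1,\dots,i_0$ together with the earlier averaging parameters frozen; moreover $s-J\geq k-1$, so this shorter convolution still covers $[k]$. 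Applying the base-case argument (run with exponent $2q$) to each such coefficient, then squaring the resulting exponential-sum approximation and expanding, using an estimate of the type of Lemma~\ref{squareNormBounds} to control the passage from $L^{2q}$ to $L^q$: the two copies produced by the square supply a fresh level of averaging parameters $a^{\{0,1\}^{J}}_{i_0-J}$ and double the product index set from $\{0,1\}^{J-1}$ to $\{0,1\}^{J}$, while passing to the convolution $\bigconv{i_0-2-J,s-J-1}f$, and the phases are collected exactly as in the base case. This yields the $(J+1)$-case with $K_{k,J+1}=K_{k,J}+2(D^{\mathrm{mh}}_{k-1}+2)$. When $J=k$, the frozen coordinates exhaust all of $[k]$, so the innermost Fourier coefficients are coefficients of functions on a one-point domain, hence scalars of modulus at most $1$ that are absorbed into the $c_i$, and the expression collapses to the pure multiaffine character sum $\sum_i c_i\chi(\lambda_i(x_{[k]\setminus\{i_0\}}))$.

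\emph{Main obstacle.} The substantive difficulty is bookkeeping: one must check that iterating the base case assembles the nested averages into exactly the $\{0,1\}^{J-1}$-indexed structure stated, that the various phases separate cleanly enough that each $\alpha_e$ depends only on $e$ and not on the summation index $i$ — this is where, besides Proposition~\ref{fcTrunc}, one uses Corollary~\ref{basicLqapp}, the inclusion–exclusion trick together with Lemma~\ref{varOuterApprox}, Proposition~\ref{ExpSumConv}, and the explicit convolution-expansion formula of Lemma~\ref{convFormulaExplicit} — and that the relevant convolution covers $[k]$ at every stage. Tracking the height of the exponential tower through the $2J$ rounds of Proposition~\ref{fcTrunc} then gives $l=\exp^{(K_{k,J})}\!\big(O(\varepsilon^{-O(q)})\big)$.
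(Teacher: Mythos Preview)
Your base case $J=1$ is essentially right and matches the paper: one application of Proposition~\ref{fcTrunc} followed by Corollary~\ref{maxFCLpbound} gives equation~\eqref{varRemEq1}, which is already the stated form. (The ``second application of fcTrunc'' you propose, to make $\alpha$ independent of $i$, is not in the paper and is unnecessary: the paper's $\alpha^{(2,i)}_e$ and $\alpha^{(3,i)}_e$ depend on $i$ throughout, so this is at most a notational slip in the statement.)

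The inductive step, however, diverges from the paper in a way that hides the main technical content. The paper does \emph{not} iterate the base case on the $2^{J-1}$ Fourier coefficients in the $J$-case expression. Instead, to prove the $(J{+}1)$-case it applies Proposition~\ref{fcTrunc} once to the left-hand side, then applies the inductive hypothesis for $J$ (in $L^{2q}$, with $i_0-1,s-1$) to the \emph{single} Fourier coefficient that appears. After one use of Lemma~\ref{squareNormBounds} and expanding the square, one arrives at terms of the shape $\widehat{F}_{i,j_0,j_1;\,x_{[k]\setminus[i_0-J,i_0]}}\bigl(\rho_{i,j_0,j_1}(x_{[k]\setminus\{i_0\}})\bigr)$, where $F$ is an auxiliary function on $G_{i_0}$ built from the $|\cdot|^2$-factors and the $\chi(\alpha^{(3,i)}_e)$-phases (see~\eqref{FdefnFC} and~\eqref{rhsApproxFC}). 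The difficulty is that $\rho$ still depends on the coordinates $x_{[i_0-J+1,i_0-1]}$, which must be eliminated. The paper resolves this with a \emph{multilinearity claim}: for each $d\in[k]\setminus[i_0-J,i_0]$, the map sending $x_{[k]\setminus[i_0-J,i_0]}$ to a large Fourier coefficient of $F$ respects many $d$-additive quadruples. Proving this is a genuine analytic argument in the style of Lemma~\ref{convImpliesLin} (several Cauchy--Schwarz passes, with the direction $d=i_0-J-1$ handled separately), and it then feeds into Theorem~\ref{multiaffineInvThm} for $k-J-1$ variables to produce the covering maps $\sigma^{(i,j_0,j_1)}_\bullet$, after which inclusion--exclusion and Lemma~\ref{varOuterApprox} finish the step. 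This entire mechanism is absent from your proposal; it is not bookkeeping but the substantive heart of the induction, and it is what contributes $D^{\mathrm{mh}}_{k-J-1}+2\le D^{\mathrm{mh}}_{k-1}+2$ to the tower height on top of the single fcTrunc, giving the increment $K_{k,J+1}-K_{k,J}$. Your ``$2J$ rounds of fcTrunc'' accounting is not how the paper arrives at $K_{k,J}$.

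Your alternative scheme --- applying the $J=1$ case to each of the $2^{J-1}$ factors in the $J$-case and squaring --- is not obviously viable either: after substitution the approximants $|\widetilde T_e|^2$ are bounded only by $(l')^2$, not by $1$, so controlling $\prod_e|\widetilde T_e|^2$ against $\prod_e|T_e|^2$ in $L^q$ forces either a circular choice of $\varepsilon'$ versus $l'$ or a passage to $L^{2^{J}q}$, neither of which you address. Finally, neither Proposition~\ref{ExpSumConv} nor Lemma~\ref{convFormulaExplicit} is invoked in the paper's proof of this proposition.
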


\begin{proof}We prove the claim by induction on $J$. Let $\varepsilon_1 > 0 $ be a small parameter to be chosen later. Apply Proposition~\ref{fcTrunc} to find $l^{(1)} = \exp^{(D^{\mathrm{mh}}_{k-1} + 2)}\Big(O((\varepsilon_1^{-1})^{O(q)})\Big)$, multiaffine maps $\mu^{(1)}_i \colon G_{[k] \setminus \{i_0-1\}} \to G_{i_0 - 1}$, $\lambda^{(1)}_i \colon G_{[k]} \to \mathbb{F}_p$ and constants $c^{(1)}_i \in \mathbb{D}$ for $i \in [l^{(1)}]$, such that
\[\bigconv{i_0-1,s} f(x_{[k]}) \apps{\varepsilon_1}_{L^q,x_{[k]}} \sum_{i \in [l^{(1)}]} c^{(1)}_i \Big|\fco\bigconv{i_0 - 2, s-1} f_{x_{[k] \setminus \{i_0-1\}}}\fcc\Big(\mu^{(1)}_i(x_{[k] \setminus \{i_0-1\}})\Big)\Big|^2 \chi(\lambda^{(1)}_i(x_{[k]})).\]
By Corollary~\ref{maxFCLpbound} (in which we view $G_{[k]}$ as $G_{[k] \setminus \{i_0\}} \times G_{i_0}$), we get
\begin{align}\fco\bigconv{i_0-1,s} f_{x_{[k] \setminus \{i_0\}}}\fcc(\mu(x_{[k] \setminus \{i_0\}})) &\apps{\varepsilon_1}_{L^q, x_{[k] \setminus \{i_0\}}} \sum_{i \in [l^{(1)}]} c^{(1)}_i \exx_{a_{i_0}}\Big|\fco\bigconv{i_0 - 2, s-1} f_{x_{[k] \setminus \{i_0-1, i_0\}},\ls{i_0}\, a_{i_0}}\fcc\Big(\mu^{(1)}_i(x_{[k] \setminus \{i_0 - 1, i_0\}},\ls{i_0}\, a_{i_0})\Big)\Big|^2\nonumber\\
 &\hspace{4cm}\chi\Big(\lambda^{(1)}_i(x_{[k] \setminus\{i_0\}}, \ls{i_0}\,a_{i_0}) - \mu(x_{[k] \setminus \{i_0\}}) \cdot a_{i_0}\Big).\label{varRemEq1}\end{align}
This proves the base case $J = 1$ if we set $\varepsilon_1 = \varepsilon$. We now proceed to prove the inductive step, assuming that the claim holds for some $J \geq 1$. We also keep the notation $\varepsilon_1, \lambda^1, \mu^1$, and so on, for the remainder of the argument, where $\varepsilon_1$ is to be chosen later and is not necessarily equal to the value above, which was just the choice for the base case.\\

For each $i \in [l^{(1)}]$, apply the induction hypothesis for $J$ and $i_0 - 1, s - 1, \varepsilon_2, L^{2q}$ instead of $i_0, s, \varepsilon, L^q$, where $\varepsilon_2 > 0$ will be chosen later, to
\[\fco \bigconv{i_0 - 2, s-1} f_{x_{[k] \setminus \{i_0-1\}}}\fcc(\mu^{(1)}_i(x_{[k] \setminus \{i_0-1\}})).\]
Then we obtain a positive integer $l^{(2, i)}_j =\exp^{(K_{k, J})}\Big(O(\varepsilon_2^{-O(q)})\Big)$, multiaffine maps $\lambda^{(2,i)}_j \colon G_{[k] \setminus \{i_0-1\}} \to \mathbb{F}_p, \mu^{(2,i)}_j \colon G_{[k] \setminus \{i_0 - 1 - J\}} \to G_{i_0 - 1 - J}$ for $j \in [l^{(2,i)}]$, $\alpha^{(2, i)}_e \colon G_{[k]} \to \mathbb{F}_p$ for $e \in \{0,1\}^{J-1}$, and constants $c^{(2, i)}_j \in \mathbb{D}$ for $j \in [l^{(2,i)}]$, such that 
\begin{align}&\fco \bigconv{i_0 - 2, s-1} f_{x_{[k] \setminus \{i_0-1\}}}\fcc(\mu^{(1)}_i(x_{[k] \setminus \{i_0-1\}}))\apps{\varepsilon_2}_{L^{2q},x_{[k] \setminus \{i_0-1\}}} \sum_{j \in [l^{(2,i)}]} c^{(2,i)}_j \chi\Big(\lambda^{(2,i)}_j(x_{[k] \setminus \{i_0-1\}})\Big) \exx_{a_{i_0-1}, a^{\{0,1\}}_{i_0-2}, \dots, a^{\{0,1\}^{J-1}}_{i_0 - J}}\nonumber\\
&\hspace{1cm} \prod_{e \in \{0,1\}^{J-1}} \Big|\fco\bigconv{i_0 - 2 - J, s - 1 - J} f_{x_{[k] \setminus [i_0-J - 1, i_0-1]}; a^{e}_{i_0 - J},\dots, a_{i_0-1}} \fcc \Big(\mu^{(2,i)}_j(x_{[k] \setminus [i_0- 1-J, i_0-1]}; a^{e}_{i_0 - J},\dots, a_{i_0-1})\Big)\Big|^2\nonumber\\
&\hspace{2cm}\chi\Big(\alpha^{(2,i)}_e(x_{[k] \setminus [i_0-J, i_0-1]}; a^{e}_{i_0 -J},\dots, a_{i_0-1})\Big).\label{varRemEq2}\end{align}

(The notation $[i_0 - J, i_0 - 1]$ is shorthand for $\{i_0 - J, i_0 - J + 1, i_0 - J + 2, \dots, i_0 - 1\}$, where indices are taken modulo $k$.) Substitute the approximation from~\eqref{varRemEq2} into the right hand side of~\eqref{varRemEq1} and use Lemma~\ref{squareNormBounds} to obtain that
\begin{align}&\fco\bigconv{i_0-1,s} f_{x_{[k] \setminus \{i_0\}}}\fcc(\mu(x_{[k] \setminus \{i_0\}})) \apps{\varepsilon_1 + 3l^{(1)}\varepsilon_2}_{L^q,x_{[k] \setminus \{i_0\}}}\nonumber\\
&\sum_{i \in [l^{(1)}]} c^{(1)}_i \exx_{b_{i_0}} \Bigg(\chi\Big(\lambda^{(1)}_i(x_{[k] \setminus\{i_0\}},\ls{i_0}\, b_{i_0}) - \mu(x_{[k] \setminus \{i_0\}}) \cdot b_{i_0}\Big)\bigg|\sum_{j \in [l^{(2,i)}]} c^{(2,i)}_j \chi\Big(\lambda^{(2,i)}_j(x_{[k] \setminus \{i_0-1,i_0\}},\ls{i_0}\, b_{i_0})\Big) \exx_{a_{i_0-1}, \dots, a^{\{0,1\}^{J-1}}_{i_0 - J}}\nonumber\\
&\hspace{1cm} \prod_{e \in \{0,1\}^{J-1}} \Big|\fco\bigconv{i_0 - 2 - J, s - 1 - J} f_{x_{[k] \setminus [i_0-J-1, i_0]}; a^{e}_{i_0 - J},\dots, a_{i_0-1}; \ddot{b_{i_0}}} \fcc \Big(\mu^{(2,i)}_j(x_{[k] \setminus [i_0-J-1, i_0]}; a^{e}_{i_0 - J},\dots, a_{i_0-1}; \ddot{b_{i_0}})\Big)\Big|^2\nonumber\\
&\hspace{2cm}\chi\Big(\alpha^{(2,i)}_e(x_{[k] \setminus [i_0-J, i_0]}; a^{e}_{i_0- J},\dots, a_{i_0-1}; b_{i_0})\Big)\bigg|^2\Bigg),\label{FCapproxEqn3}\end{align}
where $\ddot{b_{i_0}}$ means that $b_{i_0}$ appears in the expression for $J \leq k -2 $ and does not appear if $J = k-1$. This happens for only two occurences of $b_{i_0}$ above, since in the relevant places the dependence on $x_{i_0}$ has already disappeared.\\

Expanding out the outer square in this expression produces two copies of each variable $a^{\{0,1\}^{j - 1}}_{i_0 - j}$, which we index as $a^{\{0,1\}^{j}}_{i_0 - j}$. We think of $a^{\{0\}\times \{0,1\}^{j-1}}_{i_0 - j}$ as the first copy and of $a^{\{1\}\times \{0,1\}^{j-1}}_{i_0 - j}$ as the second. Write $\alpha^{(3, i)}_{0, e} = \alpha^{(2,i)}_e$ and $\alpha^{(3, i)}_{1, e} = -\alpha^{(2,i)}_e$. Then expression~\eqref{FCapproxEqn3} becomes
\begin{align}&\fco\bigconv{i_0-1;s} f_{x_{[k] \setminus \{i_0\}}}\fcc(\mu(x_{[k] \setminus \{i_0\}})) \apps{\varepsilon_1 + l^{(1)}\varepsilon_2}_{L^q,x_{[k] \setminus \{i_0\}}}\nonumber\\
&\hspace{1cm} \sum_{i \in [l^{(1)}]} c^{(1)}_i \exx_{b_{i_0}} \exx_{a^{\{0,1\}}_{i_0-1}, a^{\{0,1\}^2}_{i_0-2}, \dots, a^{\{0,1\}^{J}}_{i_0 - J}} \chi\Big(\lambda^{(1)}_i(x_{[k] \setminus\{i_0\}}; b_{i_0}) - \mu(x_{[k] \setminus \{i_0\}}) \cdot b_{i_0}\Big)\nonumber\\
&\hspace{1cm} \sum_{j_0, j_1 \in [l^{(2,i)}]}c^{(2,i)}_{j_0}\overline{c^{(2,i)}_{j_1}} \chi\Big((\lambda^{(2,i)}_{j_0} - \lambda^{(2,i)}_{j_1})(x_{[k] \setminus \{i_0-1,i_0\}}, b_{i_0})\Big)\nonumber\\
&\hspace{2cm}\bigg(\prod_{e \in \{0,1\}^{J}} \Big|\fco\bigconv{i_0 - 2 - J, s - 1 - J} f_{x_{[k] \setminus [i_0-J-1, i_0]}; a^{e}_{i_0 - J},\dots, a^{e_1}_{i_0-1}; \ddot{b_{i_0}}} \fcc \Big(\mu^{(2,i)}_{j_{e_1}}(x_{[k] \setminus [i_0-J-1, i_0]}; a^{e}_{i_0 - J},\dots, a^{e_1}_{i_0-1}; \ddot{b_{i_0}})\Big)\Big|^2\nonumber\\
&\hspace{3cm}\chi\Big(\alpha^{(3,i)}_e(x_{[k] \setminus [i_0-J, i_0]}; a^{e}_{i_0- J},\dots, a^{e_1}_{i_0-1}; b_{i_0})\Big)\bigg).\label{FCapproxEq4}\end{align}
For $i \in [l^{(1)}], j_0, j_1 \in [l^{(2,i)}]$, define multiaffine maps $\rho_{i, j_0, j_1} \colon G_{[k] \setminus \{i_0\}} \to G_{i_0}, \tau_{i, j_0, j_1} \colon G_{[k] \setminus \{i_0\}} \to \mathbb{F}_p$ by
\[\lambda^{(1)}_i(x_{[k] \setminus\{i_0\}}; b_{i_0}) - \mu(x_{[k] \setminus \{i_0\}}) \cdot b_{i_0} +(\lambda^{(2,i)}_{j_0} - \lambda^{(2,i)}_{j_1})(x_{[k] \setminus \{i_0-1,i_0\}}, b_{i_0}) = -\rho_{i, j_0, j_1}(x_{[k] \setminus \{i_0\}})\cdot b_{i_0} +  \tau_{i, j_0, j_1}(x_{[k] \setminus \{i_0\}}).\]
For $x_{[k] \setminus [i_0-J, i_0]} \in G_{[k] \setminus [i_0-J, i_0]}$ define a map $F_{\substack{i,j_0,j_1\\x_{[k] \setminus [i_0-J, i_0]}}} \colon G_{i_0} \to \mathbb{D}$ by
\begin{align}F_{\substack{i,j_0,j_1\\x_{[k] \setminus [i_0-J, i_0]}}}(z_{i_0}) = &\exx_{a^{\{0,1\}}_{i_0-1}, a^{\{0,1\}^2}_{i_0-2}, \dots, a^{\{0,1\}^{J}}_{i_0 - J}} \bigg(\prod_{e \in \{0,1\}^{J}} \chi\Big(\alpha^{(3,i)}_e(x_{[k] \setminus [i_0-J, i_0]}; a^{e}_{i_0- J},\dots, a^{e_1}_{i_0-1};z_{i_0})\Big)\nonumber\\
&\Big|\fco\bigconv{i_0 - 2 - J, s - 1 - J} f_{x_{[k] \setminus [i_0-J-1, i_0]}; a^{e}_{i_0 - J},\dots, a^{e_1}_{i_0-1};\ddot{z_{i_0}}} \fcc \Big(\mu^{(2,i)}_{j_{e_1}}(x_{[k] \setminus [i_0-J-1, i_0]}; a^{e}_{i_0 - J},\dots, a^{e_1}_{i_0-1}; \ddot{z_{i_0}})\Big)\Big|^2\bigg)\label{FdefnFC}\end{align}
where again $\ddot{z_{i_0}}$ means that $z_{i_0}$ appears if $J \leq k-2$ and does not appear for $J = k-1$.\\

With this notation, the right-hand-side of~\eqref{FCapproxEq4} becomes
\begin{align}\sum_{\substack{i \in [l^{(1)}]\\j_0,j_1 \in [l^{(2,i)}]}} &c^{(1)}_i c^{(2,i)}_{j_0}\overline{c^{(2,i)}_{j_1}}\nonumber\\
&\hspace{1cm}  \exx_{b_{i_0}}\chi\Big(\lambda^{(1)}_i(x_{[k] \setminus\{i_0\}}; b_{i_0}) - \mu(x_{[k] \setminus \{i_0\}}) \cdot b_{i_0} + (\lambda^{(2,i)}_{j_0} - \lambda^{(2,i)}_{j_1})(x_{[k] \setminus \{i_0-1,i_0\}}, b_{i_0})\Big)F_{\substack{i,j_0,j_1\\x_{[k] \setminus [i_0-J, i_0]}}}(b_{i_0})\nonumber\\
&=\sum_{\substack{i \in [l^{(1)}]\\j_0,j_1 \in [l^{(2,i)}]}} c^{(1)}_i c^{(2,i)}_{j_0}\overline{c^{(2,i)}_{j_1}} \chi\Big(\tau_{i, j_0, j_1}(x_{[k] \setminus \{i_0\}})\Big) \Big\fco F_{\substack{i,j_0,j_1\\x_{[k] \setminus [i_0-J, i_0]}}} \Big\fcc\Big(\rho_{i, j_0, j_1}(x_{[k] \setminus \{i_0\}})\Big).\label{rhsApproxFC}\end{align}

We now explicitly distinguish the cases $J \in [k-2]$ and $J = k-1$. Suppose first that $J = k-1$. In this case $F_{\substack{i,j_0,j_1\\x_{[k] \setminus [i_0-J, i_0]}}}$ does not depend on $x_{[k]}$ so we may simply write $F_{i,j_0,j_1}$. Let $\varepsilon_3 > 0$ be a constant to be specified later. For each $i,j_0,j_1$ let $R^{i,j_0,j_1} = \{r^{i,j_0,j_1}_1, \dots, r^{i,j_0,j_1}_{n_{i,j_0,j_1}}\}$ be the set of Fourier coefficients such that $|\fco F_{i,j_0,j_1}\fcc (r^{i,j_0,j_1}_{j_2})| \geq \varepsilon_3$. By Lemma~\ref{LargeFCs}, we have that $n_{i,j_0,j_1} \leq \varepsilon_3^{-2}$. Hence,
\begin{align}\chi\Big(\tau_{i, j_0, j_1}&(x_{[k] \setminus \{i_0\}})\Big) \Big\fco F_{i,j_0,j_1} \Big\fcc\Big(\rho_{i, j_0, j_1}(x_{[k] \setminus \{i_0\}})\Big)\nonumber\\
&\apps{\varepsilon_3}_{L^q,x_{[k] \setminus \{i_0\}}}\chi\Big(\tau_{i, j_0, j_1}(x_{[k] \setminus \{i_0\}})\Big) \Big\fco F_{i,j_0,j_1} \Big\fcc\Big(\rho_{i, j_0, j_1}(x_{[k] \setminus \{i_0\}})\Big) \mathbbm{1}(\rho_{i, j_0, j_1}(x_{[k] \setminus \{i_0\}}) \in R^{i,j_0,j_1})\nonumber\\
&=\chi\Big(\tau_{i, j_0, j_1}(x_{[k] \setminus \{i_0\}})\Big) \sum_{j_2 \in [n_{i,j_0,j_1}]} \Big\fco F_{i,j_0,j_1} \Big\fcc\Big(r^{i,j_0,j_1}_{j_2}\Big) \mathbbm{1}\Big(\rho_{i, j_0, j_1}(x_{[k] \setminus \{i_0\}}) = r^{i,j_0,j_1}_{j_2}\Big).\label{JkCaseEqn1}
\end{align}

Let $\varepsilon_4> 0$. Apply Lemma~\ref{varOuterApprox} to $\{x_{[k] \setminus \{i_0\}} \in G_{[k] \setminus \{i_0\}} \colon \rho_{i, j_0, j_1}(x_{[k] \setminus \{i_0\}}) = r^{i,j_0,j_1}_{j_2}\}$ to find $t^{i,j_0,j_1,j_2} \leq \log_p \varepsilon^{-1}_4$ and a multiaffine map $\gamma^{i,j_0,j_1,j_2} \colon G_{[k] \setminus \{i_0\}} \to \mathbb{F}^{t^{i,j_0,j_1,j_2}}$ such that 
\[\rho_{i, j_0, j_1}^{-1}(r^{i,j_0,j_1}_{j_2}) \subseteq (\gamma^{i,j_0,j_1,j_2})^{-1}(0)\]
and 
\[|(\gamma^{i,j_0,j_1,j_2})^{-1}(0) \setminus \rho_{i, j_0, j_1}^{-1}(r^{i,j_0,j_1}_{j_2})| \leq \varepsilon_4 |G_{[k] \setminus \{i_0\}}|.\] 
Going back to~\eqref{JkCaseEqn1}, we get
\begin{align*}\chi\Big(\tau_{i, j_0, j_1}&(x_{[k] \setminus \{i_0\}})\Big) \Big\fco F_{i,j_0,j_1} \Big\fcc\Big(\rho_{i, j_0, j_1}(x_{[k] \setminus \{i_0\}})\Big)\\
&\apps{\varepsilon_3 + n_{i,j_0,j_1} \varepsilon^{1/q}_4}_{L^q, x_{[k] \setminus \{i_0\}}}\hspace{3pt}\chi\Big(\tau_{i, j_0, j_1}(x_{[k] \setminus \{i_0\}})\Big) \sum_{j_2 \in [n_{i,j_0,j_1}]} \Big\fco F_{i,j_0,j_1} \Big\fcc\Big(r^{i,j_0,j_1}_{j_2}\Big) \mathbbm{1}\Big(\gamma^{i,j_0,j_1,j_2} (x_{[k] \setminus \{i_0\}}) = 0\Big)\\
&=\sum_{j_2 \in [n_{i,j_0,j_1}]}  \sum_{\nu \in \mathbb{F}_p^{t^{i,j_0,j_1,j_2}}} p^{-t^{i,j_0,j_1,j_2}} \chi\Big(\tau_{i, j_0, j_1}(x_{[k] \setminus \{i_0\}})\Big) \Big\fco F_{i,j_0,j_1} \Big\fcc\Big(r^{i,j_0,j_1}_{j_2}\Big)\chi\Big(\nu \cdot \gamma^{i,j_0,j_1,j_2} (x_{[k] \setminus \{i_0\}})\Big).
\end{align*}
Combine this with~\eqref{FCapproxEq4} and~\eqref{rhsApproxFC} to get
\begin{align*}\fco\bigconv{i_0-1;s} f_{x_{[k] \setminus \{i_0\}}}\fcc(\mu(x_{[k] \setminus \{i_0\}})) \apps{\varepsilon'}_{L^q,x_{[k] \setminus \{i_0\}}} \sum_{\substack{i \in [l^{(1)}]\\j_0,j_1 \in [l^{(2,i)}]\\j_2 \in [n_{i,j_0,j_1}]\\\nu \in \mathbb{F}^{t^{i,j_0,j_1,j_2}}}} &c^{(1)}_i c^{(2,i)}_{j_0}\overline{c^{(2,i)}_{j_1}} p^{-t^{i,j_0,j_1,j_2}} \Big\fco F_{i,j_0,j_1} \Big\fcc(r^{i,j_0,j_1}_{j_2})\\
&\chi\Big(\tau_{i, j_0, j_1}(x_{[k] \setminus \{i_0\}}) + \nu \cdot \gamma^{i,j_0,j_1,j_2} (x_{[k] \setminus \{i_0\}})\Big),\end{align*}
where 
\[\varepsilon' = \varepsilon_1 + l^{(1)}\varepsilon_2 + \sum_{\substack{i \in [l^{(1)}]\\j_0,j_1 \in [l^{(2,i)}]}} (\varepsilon_3 + n_{i,j_0,j_1} \varepsilon^{1/q}_4).\]
Choose $\varepsilon_1 = \frac{\varepsilon}{4}, \varepsilon_2 = \frac{\varepsilon}{4 l^{(1)}}, \varepsilon_3 = \frac{\varepsilon}{4 \sum_{i \in [l^{(1)}]} (l^{(2,i)})^2}$, and $\varepsilon_4 =\varepsilon_3^{3q}$ to finish the proof. The number of summands in the approximation above is at most
\[\exp^{(K_{k, J} + D^{\mathrm{mh}}_{k-1} + 2)}\Big(O(\varepsilon^{-O(q)})\Big) \leq \exp^{(K_{k, J + 1})}\Big(O(\varepsilon^{-O(q)})\Big)\]
as claimed.\\

Now assume that $J \in [k-2]$. We claim that the large Fourier coefficients of $F_{\substack{i,j_0,j_1\\x_{[k] \setminus [i_0-J, i_0]}}}$ depend multilinearly on $x_{[k] \setminus [i_0-J, i_0]}$. The proof is similar to that of Lemma~\ref{convImpliesLin}.

\begin{multclaim*}\label{FClinClaim1}Let $S \subset G_{[k] \setminus [i_0-J, i_0]}$ be a set of density $\delta$ and let $\sigma \colon S \to G_{i_0}$ be a map such that 
\[\Big|\Big\fco F_{\substack{i,j_0,j_1\\x_{[k] \setminus [i_0-J, i_0]}}}\Big\fcc(\sigma(x_{[k] \setminus [i_0-J, i_0]}))\Big| \geq \xi\]
for every $x_{[k] \setminus [i_0-J, i_0]} \in S$. Let $d \in [k] \setminus [i_0-J, i_0]$. Then $\sigma$ respects at least $\delta^4 \xi^8 |G_d|^2 |G_{[k] \setminus [i_0-J, i_0]}|$ $d$-additive quadruples whose points lie in $S$.\end{multclaim*}

\begin{proof}[Proof of multilinearity claim]We have
\begin{align*}\delta\xi^2 \leq& \exx_{x_{[k] \setminus [i_0-J, i_0]}} S(x_{[k] \setminus [i_0-J, i_0]}) \Big|\Big\fco F_{\substack{i,j_0,j_1\\x_{[k] \setminus [i_0-J, i_0]}}}\Big\fcc(\sigma(x_{[k] \setminus [i_0-J, i_0]}))\Big|^2\\
=&\exx_{x_{[k] \setminus [i_0-J, i_0]}} S(x_{[k] \setminus [i_0-J, i_0]}) \bigg|\exx_{a_{i_0}}\chi\Big(-\sigma(x_{[k] \setminus [i_0-J, i_0]}) \cdot a_{i_0}\Big)\\
&\hspace{1cm} \exx_{a^{\{0,1\}}_{i_0-1}, a^{\{0,1\}^2}_{i_0-2}, \dots, a^{\{0,1\}^{J}}_{i_0 - J}} \bigg(\prod_{e \in \{0,1\}^{J}} \chi\Big(\alpha^{(3,i)}_e(x_{[k] \setminus [i_0-J, i_0]}; a^{e}_{i_0- J},\dots, a^{e_1}_{i_0-1};a_{i_0})\Big)\\
&\hspace{1cm}\Big|\fco\bigconv{i_0 - 2 - J, s - 1 - J} f_{x_{[k] \setminus [i_0-J-1, i_0]}; a^{e}_{i_0 - J},\dots, a^{e_1}_{i_0-1}; a_{i_0}} \fcc \Big(\mu^{(2,i)}_{j_{e_1}}(x_{[k] \setminus [i_0-J-1, i_0]}; a^{e}_{i_0 - J},\dots, a^{e_1}_{i_0-1}; a_{i_0})\Big)\Big|^2\bigg)\bigg|^2.\end{align*}
Again, expanding out the outer square of this expression produces two copies of each $a^{\{0,1\}^j}_{i_0-j}$, which we denote by $a^{\{0,1\}^{j+1}}_{i_0-j}$. Index the first copy by $a^{\{0\} \times \{0,1\}^{j}}_{i_0-j}$ and the second by $a^{\{1\} \times \{0,1\}^{j}}_{i_0-j}$. Then the right-hand-side becomes
\begin{align}&\exx_{x_{[k] \setminus [i_0-J, i_0]}} S(x_{[k] \setminus [i_0-J, i_0]}) \exx_{a_{i_0}^{\{0,1\}}, \dots, a^{\{0,1\}^{J + 1}}_{i_0 - J}} \bigg(\chi\Big(\sigma(x_{[k] \setminus [i_0-J, i_0]}) \cdot (a^1_{i_0} - a^0_{i_0})\nonumber\\
&\hspace{1cm} + \sum_{e \in \{0,1\}^{J+1}} (-1)^{e_1} \alpha^{(3,i)}_{e|_{[2,J+1]}}(x_{[k] \setminus [i_0-J, i_0]}; a^{e}_{i_0- J},\dots, a^{e|_{[2]}}_{i_0-1}, a^{e_1}_{i_0})\Big)\nonumber\\
&\hspace{1cm}\prod_{e \in \{0,1\}^{J + 1}} \Big|\fco\bigconv{i_0 - 2 - J, s - 1 - J} f_{x_{[k] \setminus [i_0-J-1, i_0]}; a^{e}_{i_0 - J},\dots, a^{e|_{[2]}}_{i_0-1}; a^{e_1}_{i_0}} \fcc \Big(\mu^{(2,i)}_{j_{e_2}}(x_{[k] \setminus [i_0-J-1, i_0]}; a^{e}_{i_0 - J},\dots, a^{e|_{[2]}}_{i_0-1}, a^{e_1}_{i_0})\Big)\Big|^2\bigg).\label{expnFCEqn}\end{align}

Before proceeding, we first deal with the case $d = i_0 - J - 1$ separately. By the triangle inequality, we get
\begin{align*}\delta\xi^2 \leq&\exx_{x_{[k] \setminus [i_0-J -1, i_0]}} \exx_{a_{i_0}^{\{0,1\}}, \dots, a^{\{0,1\}^{J + 1}}_{i_0 - J}} \bigg|\exx_{x_{i_0 - J - 1}} S(x_{[k] \setminus [i_0-J, i_0]})  \bigg(\chi\Big(\sigma(x_{[k] \setminus [i_0-J, i_0]}) \cdot (a^1_{i_0} - a^0_{i_0})\nonumber\\
&\hspace{5cm} + \sum_{e \in \{0,1\}^{J+1}} (-1)^{e_1} \alpha^{(3,i)}_{e|_{[2,J+1]}}(x_{[k] \setminus [i_0-J, i_0]}; a^{e}_{i_0- J},\dots, a^{e|_{[2]}}_{i_0-1}, a^{e_1}_{i_0})\Big)\nonumber\\
&\hspace{0.5cm}\prod_{e \in \{0,1\}^{J + 1}} \Big|\fco\bigconv{i_0 - 2 - J, s - 1 - J} f_{x_{[k] \setminus [i_0-J-1, i_0]}; a^{e}_{i_0 - J},\dots, a^{e|_{[2]}}_{i_0-1}; a^{e_1}_{i_0}} \fcc \Big(\mu^{(2,i)}_{j_{e_2}}(x_{[k] \setminus [i_0-J-1, i_0]}; a^{e}_{i_0 - J},\dots, a^{e|_{[2]}}_{i_0-1}, a^{e_1}_{i_0})\Big)\Big|^2\bigg)\bigg|\\
\leq&\exx_{x_{[k] \setminus [i_0-J -1, i_0]}} \exx_{a_{i_0}^{\{0,1\}}, \dots, a^{\{0,1\}^{J + 1}}_{i_0 - J}} \bigg|\exx_{x_{i_0 - J - 1}} S(x_{[k] \setminus [i_0-J, i_0]})  \chi\Big(\sigma(x_{[k] \setminus [i_0-J, i_0]}) \cdot (a^1_{i_0} - a^0_{i_0})\nonumber\\
&\hspace{5cm} + \sum_{e \in \{0,1\}^{J+1}} (-1)^{e_1} \alpha^{(3,i)}_{e|_{[2,J+1]}}(x_{[k] \setminus [i_0-J, i_0]}; a^{e}_{i_0- J},\dots, a^{e|_{[2]}}_{i_0-1}, a^{e_1}_{i_0})\Big)\bigg|.\end{align*}
By the Cauchy-Schwarz inequality, we deduce that
\begin{align*}\delta^2\xi^4 \leq&\exx_{x_{[k] \setminus [i_0-J -1, i_0]}} \exx_{a_{i_0}^{\{0,1\}}, \dots, a^{\{0,1\}^{J + 1}}_{i_0 - J}} \bigg|\exx_{x_{i_0 - J - 1}} S(x_{[k] \setminus [i_0-J, i_0]})  \chi\Big(\sigma(x_{[k] \setminus [i_0-J, i_0]}) \cdot (a^1_{i_0} - a^0_{i_0})\nonumber\\
&\hspace{5cm} + \sum_{e \in \{0,1\}^{J+1}} (-1)^{e_1} \alpha^{(3,i)}_{e|_{[2,J+1]}}(x_{[k] \setminus [i_0-J, i_0]}; a^{e}_{i_0- J},\dots, a^{e|_{[2]}}_{i_0-1}, a^{e_1}_{i_0})\Big)\bigg|^2\\
=&\exx_{x_{[k] \setminus [i_0-J -1, i_0]}} \exx_{a_{i_0}^{\{0,1\}}, \dots, a^{\{0,1\}^{J + 1}}_{i_0 - J}} \exx_{\substack{x_{i_0 - J - 1}\\y_{i_0 - J - 1}}} S(x_{[k] \setminus [i_0-J, i_0]}) S(x_{[k] \setminus [i_0-J-1, i_0]}; y_{i_0 - J - 1})\\
&\hspace{1cm}\chi\Big((\sigma(x_{[k] \setminus [i_0-J, i_0]}) - \sigma(x_{[k] \setminus [i_0-J -1, i_0]}; y_{i_0 - J - 1})) \cdot (a^1_{i_0} - a^0_{i_0})\Big)\\
&\hspace{1cm} \chi\Big(\sum_{e \in \{0,1\}^{J+1}} (-1)^{e_1} (\alpha^{(3,i)}_{e|_{[2,J+1]}}(x_{[k] \setminus [i_0-J, i_0]}; a^{e}_{i_0- J},\dots, a^{e|_{[2]}}_{i_0-1}, a^{e_1}_{i_0})\\
&\hspace{5cm}-\alpha^{(3,i)}_{e|_{[2,J+1]}}(x_{[k] \setminus [i_0-J-1, i_0]}; y_{i_0 - J - 1}; a^{e}_{i_0- J},\dots, a^{e|_{[2]}}_{i_0-1}, a^{e_1}_{i_0}))\Big)\\
=&\exx_{x_{[k] \setminus [i_0-J -1, i_0]}} \exx_{a_{i_0}^{\{0,1\}}, \dots, a^{\{0,1\}^{J + 1}}_{i_0 - J}} \exx_{\substack{x_{i_0 - J - 1}\\u_{i_0 - J - 1}}} S(x_{[k] \setminus [i_0-J, i_0]}) S(x_{[k] \setminus [i_0-J-1, i_0]}; x_{i_0 - J - 1} - u_{i_0 - J - 1})\\
&\hspace{1cm}\chi\Big((\sigma(x_{[k] \setminus [i_0-J, i_0]}) - \sigma(x_{[k] \setminus [i_0-J -1, i_0]}; x_{i_0 - J - 1} - u_{i_0 - J - 1})) \cdot (a^1_{i_0} - a^0_{i_0})\Big)\\
&\hspace{1cm}\chi\Big(\sum_{e \in \{0,1\}^{J+1}} (-1)^{e_1} (\alpha^{(3,i)}_{e|_{[2,J+1]}}(x_{[k] \setminus [i_0-J, i_0]}; a^{e}_{i_0- J},\dots, a^{e|_{[2]}}_{i_0-1}, a^{e_1}_{i_0})\\
&\hspace{5cm}-\alpha^{(3,i)}_{e|_{[2,J+1]}}(x_{[k] \setminus [i_0-J-1, i_0]}; x_{i_0 - J - 1}-u_{i_0 - J - 1}; a^{e}_{i_0- J},\dots, a^{e|_{[2]}}_{i_0-1}, a^{e_1}_{i_0}))\Big)\end{align*}
where the last equality arose from the change of variables $u_{i_0 - J - 1} = x_{i_0 - J - 1} - y_{i_0 - J - 1}$.

Apply the Cauchy-Schwarz inequality again to obtain
\begin{align*}\delta^4\xi^8 \leq& \exx_{x_{[k] \setminus [i_0-J -1, i_0]}} \exx_{a_{i_0}^{\{0,1\}}, \dots, a^{\{0,1\}^{J + 1}}_{i_0 - J}} \exx_{u_{i_0 - J - 1}} \bigg|\exx_{x_{i_0 - J - 1}} S(x_{[k] \setminus [i_0-J, i_0]}) S(x_{[k] \setminus [i_0-J-1, i_0]}; x_{i_0 - J - 1} - u_{i_0 - J - 1})\\
&\hspace{1cm}\chi\Big((\sigma(x_{[k] \setminus [i_0-J, i_0]}) - \sigma(x_{[k] \setminus [i_0-J -1, i_0]}; x_{i_0 - J - 1} - u_{i_0 - J - 1})) \cdot (a^1_{i_0} - a^0_{i_0})\Big)\\
&\hspace{1cm}\chi\Big(\sum_{e \in \{0,1\}^{J+1}} (-1)^{e_1} (\alpha^{(3,i)}_{e|_{[2,J+1]}}(x_{[k] \setminus [i_0-J, i_0]}; a^{e}_{i_0- J},\dots, a^{e|_{[2]}}_{i_0-1}, a^{e_1}_{i_0})\\
&\hspace{5cm}-\alpha^{(3,i)}_{e|_{[2,J+1]}}(x_{[k] \setminus [i_0-J-1, i_0]}; x_{i_0 - J - 1}-u_{i_0 - J - 1}; a^{e}_{i_0- J},\dots, a^{e|_{[2]}}_{i_0-1}, a^{e_1}_{i_0}))\Big)\bigg|^2\\
=&\exx_{x_{[k] \setminus [i_0-J -1, i_0]}} \exx_{a_{i_0}^{\{0,1\}}, \dots, a^{\{0,1\}^{J + 1}}_{i_0 - J}} \exx_{u_{i_0 - J - 1}} \exx_{\substack{x_{i_0 - J - 1}, y_{i_0 - J - 1}}} S(x_{[k] \setminus [i_0-J, i_0]}) S(x_{[k] \setminus [i_0-J-1, i_0]}; y_{i_0 - J - 1}) \\
&\hspace{5cm}S(x_{[k] \setminus [i_0-J-1, i_0]}; x_{i_0 - J - 1} - u_{i_0 - J - 1}) S(x_{[k] \setminus [i_0-J-1, i_0]}; y_{i_0 - J - 1} - u_{i_0 - J - 1})\\
&\hspace{1cm}\chi\Big((\sigma(x_{[k] \setminus [i_0-J, i_0]}) - \sigma(x_{[k] \setminus [i_0-J -1, i_0]}; x_{i_0 - J - 1} - u_{i_0 - J - 1})) \cdot (a^1_{i_0} - a^0_{i_0})\Big)\\
&\hspace{1cm}\chi\Big((\sigma(x_{[k] \setminus [i_0-J - 1, i_0]}; y_{i_0 - J - 1} - u_{i_0 - J - 1}) - \sigma(x_{[k] \setminus [i_0-J -1, i_0]};y_{i_0 - J - 1})) \cdot (a^1_{i_0} - a^0_{i_0})\Big)\\
&\hspace{1cm}\chi\Big(\sum_{e \in \{0,1\}^{J+1}} (-1)^{e_1} (\alpha^{(3,i)}_{e|_{[2,J+1]}}(x_{[k] \setminus [i_0-J, i_0]}; a^{e}_{i_0- J},\dots, a^{e|_{[2]}}_{i_0-1}, a^{e_1}_{i_0})\\
&\hspace{5cm}-\alpha^{(3,i)}_{e|_{[2,J+1]}}(x_{[k] \setminus [i_0-J-1, i_0]}; x_{i_0 - J - 1}-u_{i_0 - J - 1}; a^{e}_{i_0- J},\dots, a^{e|_{[2]}}_{i_0-1}, a^{e_1}_{i_0}))\Big)\\
&\hspace{1cm}\chi\Big(\sum_{e \in \{0,1\}^{J+1}} (-1)^{e_1} (\alpha^{(3,i)}_{e|_{[2,J+1]}}(x_{[k] \setminus [i_0-J-1, i_0]}; y_{i_0 - J - 1}-u_{i_0 - J - 1}; a^{e}_{i_0- J},\dots, a^{e|_{[2]}}_{i_0-1}, a^{e_1}_{i_0})\\
&\hspace{5cm}-\alpha^{(3,i)}_{e|_{[2,J+1]}}(x_{[k] \setminus [i_0-J-1, i_0]}; y_{i_0 - J - 1}; a^{e}_{i_0- J},\dots, a^{e|_{[2]}}_{i_0-1}, a^{e_1}_{i_0}))\Big).\end{align*} 
The $\chi$ terms that have $\alpha^{(3,i)}_e$ in their argument make no contribution since the $\alpha^{(3,i)}_e$ are multiaffine maps and cancel out. Thus we end up with
\begin{align*}\delta^4\xi^8 \leq& \exx_{x_{[k] \setminus [i_0-J -1, i_0]}} \exx_{\substack{u_{i_0 - J - 1}\\x_{i_0 - J - 1}\\y_{i_0 - J - 1}}} S(x_{[k] \setminus [i_0-J, i_0]}) S(x_{[k] \setminus [i_0-J-1, i_0]}; x_{i_0 - J - 1} - u_{i_0 - J - 1})\\
&\hspace{3cm}S(x_{[k] \setminus [i_0-J-1, i_0]}; y_{i_0 - J - 1}) S(x_{[k] \setminus [i_0-J-1, i_0]}; y_{i_0 - J - 1} - u_{i_0 - J - 1})\\
&\hspace{1cm}\mathbbm{1}\Big(\sigma(x_{[k] \setminus [i_0-J, i_0]}) - \sigma(x_{[k] \setminus [i_0-J -1, i_0]}; x_{i_0 - J - 1} - u_{i_0 - J - 1})\\
&\hspace{3cm}- \sigma(x_{[k] \setminus [i_0-J - 1, i_0]}; y_{i_0 - J - 1}) + \sigma(x_{[k] \setminus [i_0-J -1, i_0]}; y_{i_0 - J - 1} - u_{i_0 - J - 1}) = 0\Big)
\end{align*}
which is exactly the density of $d$-additive quadruples in $S$ that are respected by $\sigma$. This completes the proof of the multilinearity claim in the case $d=i_0-J-1$.\\

Now assume that $d \not= i_0 - J - 1$ and return to~\eqref{expnFCEqn}. Write $L$ for the smallest positive integer that satisfies $k|L + d - 1 - i_0$. As it turns out, the rest of the argument works only when $L \geq J + 3$, and this is the reason we proved the case $d = i_0 - J - 1$ separately, since otherwise we would need a bound of the form $s \geq k + J + 1$ instead of just $s \geq k + 1$.\\
\indent Expanding out the Fourier coefficients in the last line of~\eqref{expnFCEqn} produces additional variables $a^{e,0}_{i_0 - J - 1}, a^{e,1}_{i_0 - J - 1}$ for each $e \in \{0,1\}^{J+1}$ and we get
\begin{align*}\delta\xi^2 \leq&\exx_{x_{[k] \setminus [i_0-J, i_0]}} S(x_{[k] \setminus [i_0-J, i_0]}) \exx_{a_{i_0}^{\{0,1\}}, \dots, a^{\{0,1\}^{J + 1}}_{i_0 - J}, a^{\{0,1\}^{J + 2}}_{i_0 - J - 1}} \bigg(\chi\Big(\sigma(x_{[k] \setminus [i_0-J, i_0]}) \cdot (a^1_{i_0} - a^0_{i_0})\\
&\hspace{5cm} + \sum_{e \in \{0,1\}^{J+1}} (-1)^{e_1} \alpha^{(3,i)}_{e|_{[2,J+1]}}(x_{[k] \setminus [i_0-J, i_0]}; a^{e}_{i_0- J},\dots, a^{e|_{[2]}}_{i_0-1}, a^{e_1}_{i_0})\Big)\\
&\hspace{1cm}\prod_{e \in \{0,1\}^{J + 2}} \bigg(\operatorname{Conj}^{e_{J+2}}\bigconv{i_0 - 2 - J, s - 1 - J} f(x_{[k] \setminus [i_0-J-1, i_0]}; a^{e}_{i_0 - J - 1},\dots, a^{e|_{[2]}}_{i_0-1}, a^{e_1}_{i_0}) \\
&\hspace{5cm}\chi\Big((-1)^{1 + e_{J+2}}\mu^{(2,i)}_{j_{e_2}}(x_{[k] \setminus [i_0-J-1, i_0]}; a^{e|_{[J+1]}}_{i_0 - J},\dots, a^{e|_{[2]}}_{i_0-1}, a^{e_1}_{i_0}) \cdot a^e_{i_0 - J - 1}\Big)\bigg)\bigg)\\
=&\exx_{x_{[k] \setminus [i_0-J, i_0]}} S(x_{[k] \setminus [i_0-J, i_0]}) \exx_{a_{i_0}^{\{0,1\}}, \dots, a^{\{0,1\}^{J + 1}}_{i_0 - J}, a^{\{0,1\}^{J + 2}}_{i_0 - J - 1}, b^{\{0,1\}^{J + 2}}_{i_0 - J - 2}, \dots, b^{\{0,1\}^{L-1}}_d}\\
&\hspace{1cm}\chi\Big(\sigma(x_{[k] \setminus [i_0-J, i_0]}) \cdot (a^1_{i_0} - a^0_{i_0}) + \sum_{e \in \{0,1\}^{J+1}} (-1)^{e_1} \alpha^{(3,i)}_{e|_{[2,J+1]}}(x_{[k] \setminus [i_0-J, i_0]}; a^{e}_{i_0- J},\dots, a^{e|_{[2]}}_{i_0-1}, a^{e_1}_{i_0})\Big)\\
&\hspace{1cm}\chi\Big(\sum_{e \in \{0,1\}^{J+2}}(-1)^{1 + e_{J+2}}\mu^{(2,i)}_{j_{e_2}}(x_{[k] \setminus [i_0-J-1, i_0]}; a^{e|_{[J+1]}}_{i_0 - J},\dots, a^{e|_{[2]}}_{i_0-1}, a^{e_1}_{i_0}) \cdot a^e_{i_0 - J - 1}\Big)\\
&\hspace{1cm}\prod_{e \in \{0,1\}^L} \operatorname{Conj}^{\sum_{j \in [J+2,L]} e_j} \bigconv{d-1,s + 1 - L}f\Big(x_{[k] \setminus [d, i_0]}; b^{e|_{[L-1]}}_d - e_L x_d,\dots, b^{e|_{[J+2]}}_{i_0 - J - 2} -  e_{J+3} x_{i_0 - J - 2}; \\
&\hspace{10cm}a^{e|_{[J+2]}}_{i_0 - J - 1},\dots, a^{e|_{[2]}}_{i_0-1}, a^{e_1}_{i_0}\Big)
\end{align*}
where we expanded out convolutions up to coordinate $d$. Add a new variable $t_d \in G_d$ and make the change of variables $b^{e}_d \mapsto b^{e}_d + t_d$ for $e \in \{0,1\}^{L-1}$. By the triangle inequality, we get
\begin{align}\delta \xi^2 \leq &\exx_{x_{[k] \setminus ([i_0-J, i_0] \cup \{d\})}} \exx_{a_{i_0}^{\{0,1\}}, \dots, a^{\{0,1\}^{J + 2}}_{i_0 - J - 1}, b^{\{0,1\}^{J + 2}}_{i_0 - J - 2}, \dots, b^{\{0,1\}^{L-1}}_d}\nonumber\\
&\exx_{t_d}\Bigg|\exx_{x_d} \bigg(S(x_{[k] \setminus [i_0-J, i_0]}) \chi\Big(\sigma(x_{[k] \setminus [i_0-J, i_0]}) \cdot (a^1_{i_0} - a^0_{i_0}) + \nonumber\\
&\hspace{5cm}\sum_{e \in \{0,1\}^{J+1}} (-1)^{e_1} \alpha^{(3,i)}_{e|_{[2,J+1]}}(x_{[k] \setminus [i_0-J, i_0]}; a^{e}_{i_0- J},\dots, a^{e|_{[2]}}_{i_0-1}, a^{e_1}_{i_0})\Big)\nonumber\\
&\hspace{1cm}\chi\Big(\sum_{e \in \{0,1\}^{J+2}}(-1)^{1 + e_{J+2}}\mu^{(2,i)}_{j_{e_2}}(x_{[k] \setminus [i_0-J-1, i_0]}; a^{e|_{[J+1]}}_{i_0 - J},\dots, a^{e|_{[2]}}_{i_0-1}, a^{e_1}_{i_0}) \cdot a^e_{i_0 - J - 1}\Big)\bigg)\nonumber\\
&\bigg(\prod_{e \in \{0,1\}^{L-1}} \operatorname{Conj}^{1 + \sum_{j \in [J+2,L-1]} e_j} \bigconv{d-1,s + 1 - L}f\Big(x_{[k] \setminus [d, i_0]}; b^{e}_d - x_d - t_d, b^{e|_{[L-2]}}_{d+1} - e_{L-1} x_{d+1},\nonumber\\
&\hspace{8cm}\dots, b^{e|_{[J+2]}}_{i_0 - J - 2} -  e_{J+3} x_{i_0 - J - 2}; a^{e|_{[J+2]}}_{i_0 - J - 1},\dots, a^{e|_{[2]}}_{i_0-1}, a^{e_1}_{i_0}\Big)\bigg)\Bigg|.\label{FClinEqn5}\end{align}
Write 
\begin{equation}\label{pprimeEqn}\bm{p'} = \Big(a_{i_0 - 1}^{\{0,1\}^2}, \dots, a^{\{0,1\}^{J + 2}}_{i_0 - J - 1}; b^{\{0,1\}^{J + 2}}_{i_0 - J - 2}, \dots, b^{\{0,1\}^{L-1}}_d\Big)\end{equation}
and 
\[\bm{p} = (x_{[k] \setminus ([i_0-J, i_0] \cup \{d\})}, a_{i_0}^{\{0,1\}}, \bm{p'}) = \Big(x_{[k] \setminus ([i_0-J, i_0] \cup\{d\})}; a_{i_0}^{\{0,1\}}, \dots, a^{\{0,1\}^{J + 2}}_{i_0 - J - 1}; b^{\{0,1\}^{J + 2}}_{i_0 - J - 2}, \dots, b^{\{0,1\}^{L-1}}_d\Big).\]
For a fixed sequence $\bm{p}$ define maps $U_{\bm{p}}, V_{\bm{p}} \colon G_d \to \mathbb{D}$ by
\begin{align*}U_{\bm{p}}(z_d) = &S(x_{[k] \setminus ([i_0-J, i_0] \cup\{d\})}; z_d) \chi\Big(\sigma(x_{[k] \setminus ([i_0-J, i_0] \cup\{d\})}; z_d) \cdot (a^1_{i_0} - a^0_{i_0}) + \\
&\hspace{5cm}\sum_{e \in \{0,1\}^{J+1}} (-1)^{e_1} \alpha^{(3,i)}_{e|_{[2,J+1]}}(x_{[k] \setminus ([i_0-J, i_0] \cup \{d\})}; z_d; a^{e}_{i_0- J},\dots, a^{e|_{[2]}}_{i_0-1}, a^{e_1}_{i_0})\Big)\\
&\hspace{1cm}\chi\Big(\sum_{e \in \{0,1\}^{J+2}}(-1)^{1 + e_{J+2}}\mu^{(2,i)}_{j_{e_2}}(x_{[k] \setminus ([i_0-J-1, i_0] \cup\{d\})}; z_d; a^{e|_{[J+1]}}_{i_0 - J},\dots, a^{e|_{[2]}}_{i_0-1}, a^{e_1}_{i_0}) \cdot a^e_{i_0 - J - 1}\Big)\end{align*}
and
\begin{align*}V_{\bm{p}}(z_d) = &\prod_{e \in \{0,1\}^{L-1}} \operatorname{Conj}^{1 + \sum_{j \in [J+2,L-1]} e_j} \bigconv{d-1,s + 1 - L}f\Big(x_{[k] \setminus [d, i_0]}; b^{e}_d - z_d, b^{e|_{[L-2]}}_{d+1} - e_{L-1} x_{d+1},\\
&\hspace{8cm}\dots, b^{e|_{[J+2]}}_{i_0 - J - 2} -  e_{J+3} x_{i_0 - J - 2}; a^{e|_{[J+2]}}_{i_0 - J - 1},\dots, a^{e|_{[2]}}_{i_0-1}, a^{e_1}_{i_0}\Big).\end{align*}
Expression~\eqref{FClinEqn5} simplifies to
\[\delta \xi^2 \leq \exx_{\bm{p}} \exx_{t_d} \Big|\exx_{x_d} U_{\bm{p}}(x_d)V_{\bm{p}}(x_d + t_d)\Big|.\]
By the Cauchy-Schwarz inequality and Lemma~\ref{l4bound}
\begin{equation}\label{FClinEqn6}\delta^4 \xi^8 \leq \exx_{\bm{p}} \Big(\exx_{t_d} \Big|\exx_{x_d} U_{\bm{p}}(x_d)V_{\bm{p}}(x_d + t_d)\Big|^2\Big)^2 \leq \exx_{\bm{p}} \sum_r \Big|\widehat{U_{\bm{p}}}(r)\Big|^4.\end{equation}
Recall that $\bm{p} = (x_{[k] \setminus ([i_0-J, i_0] \cup \{d\})}, a_{i_0}^{\{0,1\}}, \bm{p'})$ where $\bm{p'}$ is defined in~\eqref{pprimeEqn}. Note that $U_{\bm{p}}$ can be written as
\begin{align*}U_{\bm{p}}(z_d) = S(x_{[k] \setminus ([i_0-J, i_0] \cup \{d\})}; z_d) \chi\Big(\beta_{\bm{p'}}^{(0)}(x_{[k] \setminus ([i_0-J, i_0] \cup \{d\})}; z_d, a^0_{i_0}) &+ \beta^{(1)}_{\bm{p'}}(x_{[k] \setminus ([i_0-J, i_0] \cup \{d\})}; z_d, a^1_{i_0})\\
&+\sigma(x_{[k] \setminus ([i_0-J, i_0] \cup \{d\})}; z_d) \cdot (a^1_{i_0} - a^0_{i_0})\Big)\end{align*}
where $\beta^{(0)}_{\bm{p'}}, \beta^{(1)}_{\bm{p'}} \colon G_{[k] \setminus [i_0-J, i_0-1]} \to \mathbb{F}_p$ are suitable multiaffine maps. From~\eqref{FClinEqn6} we obtain
\begin{align*}\delta^4 \xi^8 &\leq \exx_{\bm{p}} \sum_{r_d} \Big|\widehat{U_{\bm{p}}}(r_d)\Big|^4\\
&= \exx_{\bm{p'}; x_{[k] \setminus ([i_0-J, i_0] \cup \{d\})}; a_{i_0}^{\{0,1\}}} \exx_{z^{[4]}_d} \sum_{r_d} \Big(\prod_{j \in [4]} S(x_{[k] \setminus ([i_0-J, i_0] \cup \{d\})}; z^j_d)\Big)\\
&\hspace{1cm}\chi\Big(\sum_{j \in [4]} (-1)^j\beta_{\bm{p'}}^{(0)}(x_{[k] \setminus ([i_0-J, i_0] \cup \{d\})}; z^j_d, a^0_{i_0}) + (-1)^j\beta_{\bm{p'}}^{(1)}(x_{[k] \setminus ([i_0-J, i_0] \cup \{d\})}; z^j_d, a^1_{i_0})\Big)\\
&\hspace{1cm} \chi\Big(\sum_{j \in [4]} (-1)^j \sigma(x_{[k] \setminus ([i_0-J, i_0] \cup \{d\})}; z^j_d) \cdot (a^1_{i_0} - a^0_{i_0})\Big) \chi\Big(r_d \cdot (z^1_d - z^2_d + z^3_d - z^4_d)\Big)\\
&= \exx_{\bm{p'}; x_{[k] \setminus ([i_0-J, i_0] \cup \{d\})}; a_{i_0}^{\{0,1\}}} \exx_{z^{[4]}_d} \Big(\prod_{j \in [4]} S(x_{[k] \setminus ([i_0-J, i_0] \cup \{d\})}; z^j_d)\Big)\\
&\hspace{1cm}\chi\Big(\sum_{j \in [4]} (-1)^j\beta_{\bm{p'}}^{(0)}(x_{[k] \setminus ([i_0-J, i_0] \cup \{d\})}; z^j_d, a^0_{i_0}) + (-1)^j\beta_{\bm{p'}}^{(1)}(x_{[k] \setminus ([i_0-J, i_0] \cup \{d\})}; z^j_d, a^1_{i_0})\Big)\\
&\hspace{1cm} \chi\Big(\sum_{j \in [4]} (-1)^j \sigma(x_{[k] \setminus ([i_0-J, i_0] \cup \{d\})}; z^j_d) \cdot (a^1_{i_0} - a^0_{i_0})\Big) |G_d| \mathbbm{1}\Big(z^1_d - z^2_d + z^3_d - z^4_d = 0\Big)\\
&= \exx_{x_{[k] \setminus ([i_0-J, i_0] \cup \{d\})}; a_{i_0}^{\{0,1\}}} \exx_{z^{[4]}_d} \Big(\prod_{j \in [4]} S(x_{[k] \setminus ([i_0-J, i_0] \cup \{d\})}; z^j_d)\Big)\\
&\hspace{1cm} \chi\Big(\sum_{j \in [4]} (-1)^j \sigma(x_{[k] \setminus ([i_0-J, i_0] \cup \{d\})}; z^j_d) \cdot (a^1_{i_0} - a^0_{i_0})\Big)|G_d| \mathbbm{1}\Big(z^1_d - z^2_d + z^3_d - z^4_d = 0\Big)\\
&\hspace{2cm}(\text{the}\ \beta^{\{0,1\}}_{\bm{p'}}\text{ are multiaffine, so they cancel out in the argument of }\chi)\\
&= \exx_{x_{[k] \setminus ([i_0-J, i_0] \cup \{d\})}; y_d} \exx_{z^{[4]}_d} \Big(\prod_{j \in [4]} S(x_{[k] \setminus ([i_0-J, i_0] \cup \{d\})}; z^j_d)\Big)\\
&\hspace{1cm} \chi\Big(\sum_{j \in [4]} (-1)^j \sigma(x_{[k] \setminus ([i_0-J, i_0] \cup \{d\})}; z^j_d) \cdot y_d\Big) |G_d| \mathbbm{1}\Big(z^1_d - z^2_d + z^3_d - z^4_d = 0\Big).
\end{align*}
(In the last step we made a change of variables $a^{1}_{i_0} \mapsto y_d + a^0_{i_0}$.) This is exactly the density of those $d$-additive quadruples in $S$ that are respected by $\sigma$, which completes the proof of the multlinearity claim. \end{proof}

In a similar way to how we argued in the proof of Corollary~\ref{FClinFinal}, we use the multilinearity claim to deduce the following further claim.

\begin{claim*}\label{FClinClaim2}Let $\varepsilon_3, \xi > 0$ be given. Then, there exist
\begin{itemize}
\item a positive integer $l^{(3)} =\exp^{(D^{\mathrm{mh}}_{k-J - 1})}\Big(\exp\Big(O((\log  (\varepsilon_3^{-1} \xi^{-1}))^{O(1)})\Big)\Big)$,
\item multiaffine maps $\sigma_1, \dots, \sigma_{l^{(3)}} \colon G_{[k] \setminus [i_0-J, i_0]} \to G_{i_0}$, and 
\item a set $S \subset G_{[k] \setminus [i_0-J, i_0]}$ of size $|S| \geq (1-\varepsilon_3)|G_{[k] \setminus [i_0-J, i_0]}|$
\end{itemize}
such that $r_{i_0} \in \Big\{\sigma_j (x_{[k] \setminus [i_0-J, i_0]}) \colon 1\leq j\leq l^{(3)}\Big\}$ for every $x_{[k] \setminus [i_0-J, i_0]} \in S$ and every $r_{i_0} \in G_{i_0}$ such that $\Big|\Big\fco F_{\substack{i,j_0,j_1\\x_{[k] \setminus [i_0-J, i_0]}}}\Big\fcc(r_{i_0} )\Big| \geq \xi$.
\end{claim*}

\begin{proof}[Proof of claim]We iteratively find maps $\sigma_1, \dots, \sigma_j$. At the $j$\textsuperscript{th} step, assuming that $\sigma_1, \dots, \sigma_{j-1}$ have been found, we select those large Fourier coefficients that have not yet been covered by these maps. If there are at most $\varepsilon_3|G_{[k] \setminus [i_0-J, i_0]}|$ points $x_{[k] \setminus [i_0-J, i_0]}$ whose $\xi$-large Fourier coefficients of $F_{\substack{i,j_0,j_1\\x_{[k] \setminus [i_0-J, i_0]}}}$ are not all covered, the procedure terminates. Otherwise, we may therefore find a set $S \subset G_{[k] \setminus [i_0-J, i_0]}$ of density at least $\varepsilon_3$, and a map $\sigma \colon S \to G_{i_0}$ such that $\Big|\Big\fco F_{\substack{i,j_0,j_1\\x_{[k] \setminus [i_0-J, i_0]}}}\Big\fcc(\sigma(x_{[k] \setminus [i_0-J, i_0]}))\Big| \geq \xi$ and $\sigma(x_{[k] \setminus [i_0-J, i_0]}) \notin \{\sigma_1(x_{[k] \setminus [i_0-J, i_0]}), \dots, \sigma_{j-1}(x_{[k] \setminus [i_0-J, i_0]})\}$ for every $x_{[k] \setminus [i_0-J, i_0]} \in S$.

Apply the multilinearity claim and Corollary~\ref{FreBSG} for each direction in $[k] \setminus [i_0-J, i_0]$ to $\sigma$, as in the proof of Corollary~\ref{FClinMidStep}, to find a subset $S' \subset S$ of size 
$\exp\Big(-(\log (\varepsilon_3^{-1} \xi^{-1}))^{O(1)}\Big) |G_{[k] \setminus [i_0-J, i_0]}|$
such that $\sigma$ is a multi-homomorphism on $S'$. Now apply Theorem~\ref{multiaffineInvThm} to $S'$ and $\sigma$ to find a global multiaffine map $\sigma_j \colon  G_{[k] \setminus [i_0-J, i_0]} \to G_{i_0}$ such that $\sigma_j(x_{[k] \setminus [i_0-J, i_0]}) = \sigma(x_{[k] \setminus [i_0-J, i_0]})$ for
\[\Big[\exp^{(D^{\mathrm{mh}}_{k-J - 1})}\Big(\exp\Big(O((\log  (\varepsilon_3^{-1} \xi^{-1}))^{O(1)})\Big)\Big)\Big]^{-1} |G_{[k] \setminus [i_0-J, i_0]}|\]
of the points $x_{[k] \setminus [i_0-J, i_0]} \in G_{[k] \setminus [i_0-J, i_0]}$. From this and Lemma~\ref{LargeFCs}, we see that the procedure terminates after $l^{(3)} =\xi^{-2}\exp^{(D^{\mathrm{mh}}_{k-J - 1})}\Big(\exp\Big(O((\log  (\varepsilon_3^{-1} \xi^{-1}))^{O(1)})\Big)\Big)$ steps, as desired, and the claim is proved.
\end{proof}

We now complete the proof of Proposition~\ref{depRemovalPropFC}. Let $\varepsilon_3 > 0$. For each $i \in [l^{(1)}], j_0, j_1 \in [l^{(2,i)}]$, by the claim just proved there exist 
\[l^{(3; i, j_0, j_1)} = \exp^{(D^{\mathrm{mh}}_{k-J - 1})}\Big(\exp\Big(O((\log  \varepsilon_3^{-1})^{O(1)})\Big)\Big),\]
multiaffine maps  $\sigma^{(i, j_0, j_1)}_1, \dots, \sigma^{(i, j_0, j_1)}_{l^{(3; i, j_0, j_1)}} \colon G_{[k] \setminus [i_0-J, i_0]} \to G_{i_0}$ and a set $S^{(i, j_0, j_1)} \subset G_{[k] \setminus [i_0-J, i_0]}$ of size $|S^{(i, j_0, j_1)}| \geq (1-\varepsilon_3)|G_{[k] \setminus [i_0-J, i_0]}|$ such that $r_{i_0} \in \Big\{\sigma^{(i, j_0, j_1)}_j (x_{[k] \setminus [i_0-J, i_0]}) \colon j \in [l^{(3; i, j_0, j_1)}]\Big\}$ for every $x_{[k] \setminus [i_0-J, i_0]} \in S^{(i, j_0, j_1)}$ and every $r_{i_0} \in G_{i_0}$ such that 
$\Big|\Big\fco F_{\substack{i,j_0,j_1\\x_{[k] \setminus [i_0-J, i_0]}}}\Big\fcc(r_{i_0} )\Big| \geq \varepsilon_3$.

For each subset $\emptyset \not= I \subseteq [l^{(3; i, j_0, j_1)}]$ pick an element $\elt(I) \in I$. We get
\begin{align}\chi\Big(\tau_{i, j_0, j_1}&(x_{[k] \setminus \{i_0\}})\Big) \Big\fco F_{\substack{i,j_0,j_1\\x_{[k] \setminus [i_0-J, i_0]}}} \Big\fcc\Big(\rho_{i, j_0, j_1}(x_{[k] \setminus \{i_0\}})\Big)\nonumber\\ &\apps{(2\varepsilon_3)^{1/q}}_{L^q,x_{[k] \setminus \{i_0\}}}\chi\Big(\tau_{i, j_0, j_1}(x_{[k] \setminus \{i_0\}})\Big) \Big\fco F_{\substack{i,j_0,j_1\\x_{[k] \setminus [i_0-J, i_0]}}} \Big\fcc\Big(\rho_{i, j_0, j_1}(x_{[k] \setminus \{i_0\}})\Big)\nonumber\\
&\hspace{2cm} \mathbbm{1}\Big(\rho_{i, j_0, j_1}(x_{[k] \setminus \{i_0\}}) \in \Big\{\sigma^{(i, j_0, j_1)}_j (x_{[k] \setminus [i_0-J, i_0]}) \colon j \in [l^{(3; i, j_0, j_1)}]\Big\}\Big)\nonumber\\
&=\ \chi\Big(\tau_{i, j_0, j_1}(x_{[k] \setminus \{i_0\}})\Big) \sum_{\emptyset \not= I \subseteq [l^{3;i, j_0, j_1}]} (-1)^{|I| + 1} \Big\fco F_{\substack{i,j_0,j_1\\x_{[k] \setminus [i_0-J, i_0]}}} \Big\fcc\Big(\sigma^{(i, j_0, j_1)}_{\elt(I)}(x_{[k] \setminus [i_0-J, i_0]})\Big)\nonumber\\
&\hspace{2cm}\mathbbm{1}\Big((\forall j_2 \in I)\ \rho_{i, j_0, j_1}(x_{[k] \setminus \{i_0\}}) = \sigma^{(i, j_0, j_1)}_{j_2}(x_{[k] \setminus [i_0-J, i_0]})\Big).\label{rhsApproxFC2}\end{align}

Let $\varepsilon_4 > 0$, and for each $i \in [l^{(1)}], j_0, j_1 \in [l^{(2,i)}]$, and $\emptyset \not= I \subseteq [l^{(3;i, j_0, j_1)}]$ apply Lemma~\ref{varOuterApprox} to find multiaffine maps $\gamma^{i, j_0, j_1, I} \colon G_{[k] \setminus \{i_0\}} \to \mathbb{F}^{t_{i, j_0, j_1, I}}$, where $t_{i, j_0, j_1, I} \leq q\log_p \varepsilon_4^{-1}$, such that
\begin{align*}\Big\{x_{[k] \setminus \{i_0\}} \in G_{[k] \setminus \{i_0\}} &\colon (\forall j_2 \in I) \rho_{i, j_0, j_1}(x_{[k] \setminus \{i_0\}}) = \sigma^{(i, j_0, j_1)}_{j_2}(x_{[k] \setminus [i_0-J, i_0]})\Big\}\\
 &\subseteq \Big\{x_{[k] \setminus \{i_0\}} \in G_{[k] \setminus \{i_0\}} \colon \gamma^{i, j_0, j_1, I}(x_{[k] \setminus \{i_0\}}) = 0\Big\}\end{align*}
and
\begin{align*}\Big|\Big\{x_{[k] \setminus \{i_0\}} &\in G_{[k] \setminus \{i_0\}} \colon \gamma^{i, j_0, j_1, I}(x_{[k] \setminus \{i_0\}}) = 0\Big\}\\
& \setminus\Big\{x_{[k] \setminus \{i_0\}} \in G_{[k] \setminus \{i_0\}} \colon (\forall j_2 \in I) \rho_{i, j_0, j_1}(x_{[k] \setminus \{i_0\}})= \sigma^{(i, j_0, j_1)}_{j_2}(x_{[k] \setminus [i_0-J, i_0]})\Big\}\Big| \leq \varepsilon^q_4 |G_{[k] \setminus \{i_0\}}|.
\end{align*}
Then 
\begin{align}\chi&\Big(\tau_{i, j_0, j_1}(x_{[k] \setminus \{i_0\}})\Big) \sum_{\emptyset \not= I \subseteq [l^{3;i, j_0, j_1}]} (-1)^{|I| + 1} \Big\fco F_{\substack{i,j_0,j_1\\x_{[k] \setminus [i_0-J, i_0]}}} \Big\fcc\Big(\sigma^{(i, j_0, j_1)}_{\elt(I)}(x_{[k] \setminus [i_0-J, i_0]})\Big)\nonumber\\
&\hspace{6cm}\mathbbm{1}\Big((\forall j_2 \in I) \rho_{i, j_0, j_1}(x_{[k] \setminus \{i_0\}}) = \sigma^{(i, j_0, j_1)}_{j_2}(x_{[k] \setminus [i_0-J, i_0]})\Big)\nonumber\\
&\apps{2^{l^{(3;i, j_0, j_1)}}\varepsilon_4}_{L^q, x_{[k] \setminus \{i_0\}}}\hspace{2pt}\chi\Big(\tau_{i, j_0, j_1}(x_{[k] \setminus \{i_0\}})\Big) \sum_{\emptyset \not= I \subseteq [l^{3;i, j_0, j_1}]} (-1)^{|I| + 1} \Big\fco F_{\substack{i,j_0,j_1\\x_{[k] \setminus [i_0-J, i_0]}}} \Big\fcc\Big(\sigma^{(i, j_0, j_1)}_{\elt(I)}(x_{[k] \setminus [i_0-J, i_0]})\Big)\nonumber\\
&\hspace{9cm}\mathbbm{1}\Big(\gamma^{i, j_0, j_1, I}(x_{[k] \setminus \{i_0\}}) = 0\Big).
\label{rhsApproxFC3}\end{align}

Combining~\eqref{FCapproxEq4},~\eqref{rhsApproxFC},~\eqref{rhsApproxFC2} and~\eqref{rhsApproxFC3}, and writing 
\[\varepsilon' = \varepsilon_1 + l^{(1)}\varepsilon_2 + \sum_{\substack{i \in [l^{(1)}]\\j_0,j_1 \in [l^{(2,i)}]}} \Big((2\varepsilon_3)^{1/q} + 2^{l^{(3;i, j_0, j_1)}}\varepsilon_4)\Big)\]
we get
\begin{align*}&\fco\bigconv{i_0-1;s} f_{x_{[k] \setminus \{i_0\}}}\fcc(\mu(x_{[k] \setminus \{i_0\}})) \apps{\varepsilon'}_{L^q,x_{[k] \setminus \{i_0\}}}\\
&\hspace{1cm}\sum_{\substack{i \in [l^{(1)}]\\j_0,j_1 \in [l^{(2,i)}]}} c^{(1)}_i c^{(2)}_{j_0}\overline{c^{(2)}_{j_1}}\chi\Big(\tau_{i, j_0, j_1}(x_{[k] \setminus \{i_0\}})\Big) \sum_{\emptyset \not= I \subseteq [l^{(3;i, j_0, j_1)}]} (-1)^{|I| + 1} \Big\fco F_{\substack{i,j_0,j_1\\x_{[k] \setminus [i_0-J, i_0]}}} \Big\fcc\Big(\sigma^{(i, j_0, j_1)}_{\elt(I)}(x_{[k] \setminus [i_0-J, i_0]})\Big)\nonumber\\
&\hspace{12cm}\mathbbm{1}\Big(\gamma^{i, j_0, j_1, I}(x_{[k] \setminus \{i_0\}}) = 0\Big)\\
&\hspace{1cm}=\sum_{\substack{i \in [l^{(1)}]\\j_0,j_1 \in [l^{(2,i)}]}} \sum_{\emptyset \not= I \subseteq [l^{(3;i, j_0, j_1)}]} (-1)^{|I| + 1} c^{(1)}_i c^{(2)}_{j_0}\overline{c^{(2)}_{j_1}}\chi\Big(\tau_{i, j_0, j_1}(x_{[k] \setminus \{i_0\}})\Big) \mathbbm{1}\Big(\gamma^{i, j_0, j_1, I}(x_{[k] \setminus \{i_0\}}) = 0\Big)  \nonumber\\
&\hspace{2cm}\exx_{a_{i_0}}\chi\Big(-\sigma^{(i, j_0, j_1)}_{\elt(I)}(x_{[k] \setminus [i_0-J, i_0]}) \cdot a_{i_0}\Big)
\exx_{a^{\{0,1\}}_{i_0-1}, \dots, a^{\{0,1\}^{J}}_{i_0 - J}} \bigg(\prod_{e \in \{0,1\}^{J}} \chi\Big(\alpha^{(3,i)}_e(x_{[k] \setminus [i_0-J, i_0]}; a^{e}_{i_0- J},\dots, a^{e_1}_{i_0-1}, a_{i_0})\Big)\nonumber\\
&\hspace{2cm}\Big|\fco\bigconv{i_0 - 2 - J, s - 1 - J} f_{x_{[k] \setminus [i_0-J-1, i_0]}; a^{e}_{i_0 - J},\dots, a^{e_1}_{i_0-1}, a_{i_0}} \fcc \Big(\mu^{(2,i)}_{j_{e_1}}(x_{[k] \setminus [i_0-J-1, i_0]}; a^{e}_{i_0 - J},\dots, a^{e_1}_{i_0-1}, a_{i_0})\Big)\Big|^2\bigg)\\
&\hspace{4cm}\text{(by~\eqref{FdefnFC})}\\
&\hspace{1cm}=\sum_{\substack{i \in [l^{(1)}]\\j_0,j_1 \in [l^{(2,i)}]}} \sum_{\emptyset \not= I \subseteq [l^{(3;i, j_0, j_1)}]} \sum_{\nu_{i, j_0, j_1, I} \in \mathbb{F}^{t_{i, j_0, j_1, I}}} \mathbf{f}^{-t_{i, j_0, j_1, I}}(-1)^{|I| + 1} c^{(1)}_i c^{(2)}_{j_0}\overline{c^{(2)}_{j_1}}\chi\Big(\tau_{i, j_0, j_1}(x_{[k] \setminus \{i_0\}})\\
&\hspace{10cm} + \nu_{i, j_0, j_1, I} \cdot \gamma^{i, j_0, j_1, I}(x_{[k] \setminus \{i_0\}})\Big)  \nonumber\\
&\hspace{2cm}\exx_{a_{i_0}, \dots, a^{\{0,1\}^{J}}_{i_0 - J}} \chi\Big(-\sigma^{(i, j_0, j_1)}_{\elt(I)}(x_{[k] \setminus [i_0-J, i_0]}) \cdot a_{i_0}\Big)
\bigg(\prod_{e \in \{0,1\}^{J}} \chi\Big(\alpha^{(3,i)}_e(x_{[k] \setminus [i_0-J, i_0]}; a^{e}_{i_0- J},\dots, a^{e_1}_{i_0-1}, a_{i_0})\Big)\nonumber\\
&\hspace{2cm}\Big|\fco\bigconv{i_0 - 2 - J, s - 1 - J} f_{x_{[k] \setminus [i_0-J-1, i_0]}; a^{e}_{i_0 - J},\dots, a^{e_1}_{i_0-1}, a_{i_0}} \fcc \Big(\mu^{(2,i)}_{j_{e_1}}(x_{[k] \setminus [i_0-J-1, i_0]}; a^{e}_{i_0 - J},\dots, a^{e_1}_{i_0-1}, a_{i_0})\Big)\Big|^2\bigg).\end{align*}
Write $s_3 = \sum_{i \in [l^{(1)}]} (l^{(2,i)})^2$ and $s_4 = \sum_{\substack{i \in [l^{(1)}]\\j_0,j_1 \in [l^{(2,i)}]}} 2^{l^{(3;i, j_0, j_1)}}$. It remains to choose
\[\varepsilon_1 = \frac{\varepsilon}{4}, \varepsilon_2 = \frac{\varepsilon}{4l^{(1)}}, \varepsilon_3 = \Big(\frac{\varepsilon}{8s_3}\Big)^q, \varepsilon_4 = \frac{\varepsilon}{4s_4}\]
which makes $\varepsilon' \leq \varepsilon$. The number of summands in the approximation above is 
\[\exp^{(K_{k, J} + D^{\mathrm{mh}}_{k-1} + D^{\mathrm{mh}}_{k-J-1} + 4)}\Big(O(\varepsilon^{-O(q)})\Big) = \exp^{(K_{k, J + 1})}\Big(O(\varepsilon^{-O(q)})\Big),\]
which proves Proposition~\ref{depRemovalPropFC}.\end{proof}

We are now ready to prove the main approximation result for mixed convolutions.

\begin{proof}[Proof of Theorem~\ref{MixedConvApprox}]We begin the proof by applying Proposition~\ref{fcTrunc}. It provides us with a positive integer $l^{(1)} = \exp^{(D^{\mathrm{mh}}_{k-1} + 2)}\Big(O((\varepsilon^{-1})^{O(q)})\Big)$, multiaffine maps $\mu_i \colon G_{[k-1]} \to G_k$, $\lambda_i \colon G_{[k]} \to \mathbb{F}_p$, and constants $c_i \in \mathbb{D}$ for $i=1,2,\dots,l^{(1)}$, such that
\begin{equation}\label{mixedConvApp1}\bigconv{k} \dots \bigconv{1}\bigconv{k} \dots \bigconv{1} f(x_{[k]}) \apps{\epsilon/2}_{L^q,x_{[k]}} \sum_{i \in [l^{(1)}]}c_i \Big|\fco\bigconv{k-1} \dots \bigconv{1}\bigconv{k} \dots \bigconv{1} f_{x_{[k-1]}}\fcc\Big(\mu_i(x_{[k-1]})\Big)\Big|^2 \chi(\lambda_i(x_{[k]})).\end{equation}
For each $i \leq l^{(1)}$ apply Proposition~\ref{depRemovalPropFC} to $\fco\bigconv{k-1} \dots \bigconv{1}\bigconv{k} \dots \bigconv{1} f_{x_{[k-1]}}\fcc\Big(\mu_i(x_{[k-1]})\Big)$, with $J = k$, the norm $L^{2q}$ instead of $L^q$, and an approximation parameter $\varepsilon_2 > 0$ to be chosen later. We obtain
\begin{itemize}
\item a positive integer $l^{(2, i)} = \exp^{(2k(D^{\mathrm{mh}}_{k-1} + 2))}\Big(O(\varepsilon_2^{-O(q)})\Big)$, 
\item multiaffine maps $\lambda^{(2, i)}_j \colon G_{[k-1]} \to \mathbb{F}_p$, and
\item constants $c^{(2,i)}_j \in \mathbb{D}$ for $j \in [l^{(2,i)}]$,
\end{itemize}
such that 
\begin{align*}&\fco\bigconv{k-1} \dots \bigconv{1}\bigconv{k} \dots \bigconv{1} f_{x_{[k-1]}}\fcc\Big(\mu_i(x_{[k-1]})\Big) \apps{\varepsilon_2}_{L^{2q},x_{[k-1]}} \sum_{j \in [l^{(2,i)}]} c^{(2,i)}_j \chi\Big(\lambda^{(2,i)}_j(x_{[k-1]})\Big).\end{align*}

By Lemma~\ref{squareNormBounds}, we get, provided $\varepsilon_2 \leq 1$, for each $i \in [l^{(1)}]$, 
\[c_i \Big|\fco\bigconv{k-1} \dots \bigconv{1}\bigconv{k} \dots \bigconv{1} f_{x_{[k-1]}}\fcc\Big(\mu_i(x_{[k-1]})\Big)\Big|^2 \chi(\lambda_i(x_{[k]})) \apps{3\varepsilon_2}_{L^q,x_{[k]}} c_i\Big|\sum_{j \in [l^{(2,i)}]} c^{(2,i)}_j \chi\Big(\lambda^{(2,i)}_j(x_{[k-1]})\Big)\Big|^2 \chi(\lambda_i(x_{[k]})).\]
Returning to~\eqref{mixedConvApp1}, we get
\begin{align*}\bigconv{k} \dots \bigconv{1}\bigconv{k} \dots \bigconv{1} f(x_{[k]}) \apps{\epsilon/2}_{L^q,x_{[k]}}& \sum_{i \in [l^{(1)}]}c_i \Big|\fco\bigconv{k-1} \dots \bigconv{1}\bigconv{k} \dots \bigconv{1} f_{x_{[k-1]}}\fcc\Big(\mu_i(x_{[k-1]})\Big)\Big|^2 \chi(\lambda_i(x_{[k]}))\\
\apps{\epsilon/2 + 3l^{(1)}\varepsilon_2}_{L^q,x_{[k]}} &\sum_{i \in [l^{(1)}]}c_i \Big|\sum_{j \in [l^{(2,i)}]} c^{(2,i)}_j \chi\Big(\lambda^{(2,i)}_j(x_{[k-1]})\Big)\Big|^2 \chi(\lambda_i(x_{[k]}))\\
=&\sum_{\substack{i \in [l^{(1)}] \\ j_1, j_2 \in [l^{(2,i)}]}}c_i c^{(2,i)}_{j_1} \overline{c^{(2,i)}_{j_2}} \chi\Big(\lambda^{(2,i)}_{j_1}(x_{[k-1]}) - \lambda^{(2,i)}_{j_2}(x_{[k-1]}) + \lambda_i(x_{[k]})\Big).
\end{align*}
Taking $\varepsilon_2 = \min \{1, \varepsilon/(6 l^{(1)})\}$, we obtain the desired approximation and the number of summands is $\exp^{\big((2k + 1)(D^{\mathrm{mh}}_{k-1} + 2)\big)}\Big(O(\varepsilon^{-O(q)})\Big)$.\end{proof}

\subsection{Further convolutions}
We now show that, as one would expect, further convolutions can only help with the approximation.
\begin{theorem}\label{strongMixedApprox}Let $f \colon G_{[k]} \to \mathbb{D}$, let $d_1, \dots, d_r \in [k]$ be directions, let $q \geq 1$ and let $\varepsilon > 0$. Then there exist
\begin{itemize}
\item a positive integer $l = \bigg(\exp^{\big((2k + 1)(D^{\mathrm{mh}}_{k-1} + 2)\big)}\Big((\varepsilon/2)^{-O(2^rq)}\Big)\bigg)^{2^{O(r^2)} q^{O(r)}}$, 
\item constants $c_1, \dots, c_l \in \mathbb{D}$, and
\item multiaffine forms $\phi_1, \dots, \phi_l \colon G_{[k]} \to \mathbb{F}_p$ 
\end{itemize}
such that
\[\bigconv{d_r} \dots \bigconv{d_1}\bigconv{k}\dots\bigconv{1}\bigconv{k} \dots \bigconv{1} f \apps{\varepsilon}_{L^q} \sum_{i \in [l]} c_i\, \chi\circ\phi_i.\]
\end{theorem}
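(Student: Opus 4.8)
The plan is to argue by induction on $r$, taking Theorem~\ref{MixedConvApprox} as the base case $r=0$ and, in the inductive step, stripping off the last convolution $\bigconv{d_r}$ by combining Lemma~\ref{2pConvpBnd} (continuity of a directional convolution from $L^{2q}$ to $L^q$) with Proposition~\ref{ExpSumConv} (a directional convolution of a short exponential sum of multiaffine phases is again a short exponential sum of multiaffine phases). We prove the statement for all $q\ge 1$ simultaneously. Note first that any iterated convolution of a $\mathbb{D}$-valued function is again $\mathbb{D}$-valued, so for $\varepsilon\ge 1$ the conclusion is trivial (take $l=1$, $c_1=0$) and we may assume $\varepsilon\in(0,1]$.

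For the inductive step I would set $g=\bigconv{d_{r-1}}\dots\bigconv{d_1}\bigconv{k}\dots\bigconv{1}\bigconv{k}\dots\bigconv{1}f$, so that the target function equals $\bigconv{d_r}g$ and $\|g\|_{L^\infty}\le 1$. Applying the inductive hypothesis to $g$ (with the shorter direction sequence $d_1,\dots,d_{r-1}$), with approximation parameter $\varepsilon/12$ and with $2q$ in place of $q$, produces an exponential sum $\tilde g=\sum_{i\in[l']}c_i\,\chi\circ\phi_i$ with $c_i\in\mathbb D$, $\phi_i$ multiaffine forms, and $g\apps{\varepsilon/12}_{L^{2q}}\tilde g$, where $l'$ is the level-$(r-1)$ bound evaluated at $2q$. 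Lemma~\ref{2pConvpBnd} (applied with $g$ in the role of the $L^\infty$-bounded function) then gives $\bigconv{d_r}g\apps{\varepsilon/2}_{L^q}\bigconv{d_r}\tilde g$, since $4(\varepsilon/12)+2(\varepsilon/12)^2\le\varepsilon/2$. Next, Proposition~\ref{ExpSumConv} applied to $\tilde g$ in direction $d_r$ with exponent $q$ and parameter $\varepsilon/2$ gives $\bigconv{d_r}\tilde g\apps{\varepsilon/2}_{L^q}\sum_{j\in[l]}\tilde c_j\,\chi\circ\psi_j$ with $\tilde c_j\in\mathbb D$, $\psi_j$ multiaffine forms, and $l=O\big((\varepsilon^{-1}l')^{O(q)}\big)$. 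Adding the two $L^q$ errors via the triangle inequality closes the induction.

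The only genuine work is the bookkeeping of the bound on $l$. Writing $l_r(q)$ for the bound at level $r$ and exponent $q$, the construction yields a recursion $l_r(q)\le\varepsilon^{-O(q)}\,l_{r-1}(2q)^{O(q)}$, with $l_0(q)$ the bound from Theorem~\ref{MixedConvApprox}. Unwinding it over the $r$ steps, the exponent passed down doubles each time, so the base case is reached at exponent $2^rq$ and contributes $\exp^{((2k+1)(D^{\mathrm{mh}}_{k-1}+2))}\!\big(\varepsilon^{-O(2^rq)}\big)$; and the outer exponent picks up a factor $O(2^jq)$ at step $j$, for a total of $\prod_{j<r}O(2^jq)=2^{O(r^2)}q^{O(r)}$. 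The spare $\varepsilon^{-\mathrm{poly}(q)}$ factors and all absolute constants (the $12$, the $2$, and so on) are absorbed into the $O(\cdot)$'s in the exponents, using that $l_0(2^rq)$ already exceeds every fixed power of $\varepsilon^{-1}$. This gives exactly the stated $l\le\big(\exp^{((2k+1)(D^{\mathrm{mh}}_{k-1}+2))}((\varepsilon/2)^{-O(2^rq)})\big)^{2^{O(r^2)}q^{O(r)}}$.

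I expect this quantitative accounting to be the main (and essentially only) obstacle: in particular one must confirm that the repeated $L^{2q}\to L^q$ loss forced by Lemma~\ref{2pConvpBnd} only inflates the parameter by the benign factor $2^r$ inside the tower, rather than compounding with the height of the iterated exponential. Everything else is a matter of assembling results already established in the excerpt.
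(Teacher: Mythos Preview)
Your proposal is correct and follows essentially the same approach as the paper: apply Theorem~\ref{MixedConvApprox} as the base case, then at each inductive step use Lemma~\ref{2pConvpBnd} to pass from an $L^{2q}$ approximation of $g$ to an $L^q$ approximation of $\bigconv{d_r}g$, and finish with Proposition~\ref{ExpSumConv}. The only cosmetic difference is that the paper first invokes Theorem~\ref{MixedConvApprox} once with norm $L^{2^rq}$ and error $\varepsilon/12^r$ and then inducts on $s\in[0,r]$, whereas your recursion on $r$ unrolls to exactly the same sequence of applications with the same quantitative bookkeeping.
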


\begin{proof}Without loss of generality $\varepsilon \leq 1$. Apply Theorem~\ref{MixedConvApprox} for the norm $L^{2^rq}$ and the approximation $\apps{\varepsilon/12^r}_{L^{2^rq},x_{[k]}}$ to find $l^{(0)} = \exp^{\big((2k + 1)(D^{\mathrm{mh}}_{k-1} + 2)\big)}\Big((\varepsilon/2)^{-O(2^rq)}\Big)$, constants $c^{(0)}_1, \dots, c^{(0)}_{l^{(0)}} \in \mathbb{D}$, and multiaffine forms $\phi^{(0)}_1, \dots, \phi^{(0)}_{l^{(0)}} \colon G_{[k]} \to \mathbb{F}_p$ such that
\[\bigconv{k}\dots\bigconv{1}\bigconv{k} \dots \bigconv{1} f\apps{\varepsilon/12^r}_{L^{2^rq}} \sum_{i \in [l^{(0)}]} c^{(0)}_i \chi\circ\phi^{(0)}_i.\]
Write $\varepsilon_s = \varepsilon / 12^{r-s}$. By induction on $s \in [0,r]$, we now show that there exist a positive integer $l^{(s)} = (12^r \varepsilon^{-1} l^{(0)})^{2^{O(sr)}q^{O(s)}}$, constants $c^{(s)}_1, \dots, c^{(s)}_{l^{(s)}} \in \mathbb{D}$, and multiaffine forms $\phi^{(s)}_1, \dots, \phi^{(s)}_{l^{(s)}} \colon G_{[k]} \to \mathbb{F}_p$ such that
\[\bigconv{d_s} \dots \bigconv{d_1}\bigconv{k}\dots\bigconv{1}\bigconv{k} \dots \bigconv{1} f\apps{\varepsilon_s}_{L^{2^{r-s}q}} \sum_{i \in [l^{(s)}]} c^{(s)}_i \chi\circ\phi^{(s)}_i.\]
The base case is already proved. Suppose that the claim holds for some $s \in [0, r-1]$. Let $g(x_{[k]}) = \sum_{i \in [l^{(s)}]} c^{(s)}_i \chi(\phi^{(s)}_i(x_{[k]}))$. Apply Lemma~\ref{2pConvpBnd} to obtain that
\[\bigconv{d_{s+1}} \dots \bigconv{d_1}\bigconv{k}\dots\bigconv{1}\bigconv{k} \dots \bigconv{1} f \apps{6\varepsilon_{s}}_{L^{2^{r-s - 1}q}} \bigconv{d_{s+1}} g.\]
By Proposition~\ref{ExpSumConv}, there exist a positive integer $l^{(s+1)} = O\Big((12^r \varepsilon^{-1} l^{(s)})^{O(2^rq)}\Big)$, constants $c^{(s+1)}_1, \dots, c^{(s+1)}_{l^{(s+1)}} \in \mathbb{D}$, and multiaffine forms $\phi^{(s+1)}_1, \dots, \phi^{(s+1)}_{l^{(s+1)}} \colon G_{[k]} \to \mathbb{F}_p$ such that
\[\bigconv{d_{s+1}} g  \apps{\varepsilon/12^r}_{L^{2^{r-s-1}q}} \sum_{i \in [l^{(s+1)}]} c^{(s+1)}_i \chi\circ\phi^{(s+1)}_i.\]
The claim follows after an application of the triangle inequality for the $L^{2^{r-s-1}q}$ norm.\end{proof}

\section{The existence of respected arrangements}

We now give a formal definition of an \emph{arrangement} of points in $G_{[k]}$. We begin by defining an \emph{$\emptyset$-arrangement} of lengths $l_{[k]} \in G_{[k]}$ to be a sequence of length 1 that consists of the single term $l_{[k]}$. Then, given a sequence $d_1, \dots, d_r$ of elements of $[k]$, a \emph{$(d_r, d_{r-1}, \dots, d_1)$-arrangement} of lengths $l_{[k]} \in G_{[k]}$ is a sequence $(q_1,q_2)$ of length $2^r$ obtained by  concatenating two $(d_{r-1}, \dots, d_1)$-arrangements $q_1$ and $q_2$ (for $r = 1$, $q_1$ and $q_2$ are $\emptyset$-arrangements), where $q_1$ has lengths $(l_{[d_r-1]}, l_{d_r} + y, l_{[d_r+1,k]})$ and $q_2$ has lengths $(l_{[d_r-1]}, y, l_{[d_r+1,k]})$ for some $y \in G_{d_r}$.\\

If additionally we are given a map $\phi$ and an arrangement $q$ whose points lie in $\dom \phi$, we define $\phi(q)$ recursively as $\phi(l_{[k]})$ if $q$ is an $\emptyset$-arrangement, and $\phi(q) = \phi(q_1) - \phi(q_2)$ if $q$ is the concatenation of $q_1$ and $q_2$ as above. Recall that a multi-$k$-homomorphism is a function that restricts to a Freiman homomorphism of order $k$ whenever all but one of the coordinates are fixed (so what we have been calling a multi-homomorphism is a multi-2-homomorphism). The main result of this section is the following theorem which says that, provided we are given a multi-homomorphism $\phi$, for many choices of lengths $l_{[k]}$ there is a value $v$ such that many arrangements of lengths $l_{[k]}$ have $\phi$-value equal to $v$. 

\begin{theorem}\label{exArrThm}Let $A \subset G_{[k]}$ be a set of density $\delta$, and let $\phi \colon A \to H$ be a multi-$2^{3k}$-homomorphism. Then there are
\begin{itemize}
\item a set $B \subset G_{[k]}$ of density $\Omega(\delta^{O(1)})$, and
\item a map $\psi \colon B \to H$,
\end{itemize}
such that for each $l_{[k]} \in B$ there are $\Omega(\delta^{O(1)}|G_{[k]}|^{2^{3k} - 1})$ $k$-tuples $(q^{(1)}, \dots, q^{(k)})$ with the property that $q^{(i)}$ is an $(i-1, i-2, \dots, 1, k, \dots, 1, k, \dots, 1, k, \dots, i)$-arrangement\footnote{There are $3k$ directions in the description of the arrangement.} of lengths $l_{[k]}$ whose points lie inside $A$ and $\phi(q^{(i)}) = \psi(l_{[k]})$.
\end{theorem}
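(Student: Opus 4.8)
The plan is to prove Theorem~\ref{exArrThm} by combining the $L^q$-approximation for mixed convolutions (Theorem~\ref{MixedConvApprox}, or rather its strengthening Theorem~\ref{strongMixedApprox}) with the observation that the $\phi$-value of an arrangement is governed by the convolutions of the indicator-type function $f = \mathbbm{1}_A\,\chi(\langle\text{something}\rangle\phi)$, in a way that mirrors the standard Gowers-norm / $U^k$-inverse-theorem bookkeeping. First I would introduce an auxiliary direction/linear functional: fix an arbitrary non-trivial additive character $\chi$ on $\mathbb F_p$ (or work with $H=\mathbb F_p$ after composing with a generic linear projection $H\to\mathbb F_p$, since it suffices to pin down $\psi$ coordinate by coordinate) and consider, for a parameter $r\in H^*$ (or $r\in\mathbb F_p$), the weighted indicator $f_r(x_{[k]}) = \mathbbm 1_A(x_{[k]})\,\chi\big(r\cdot\phi(x_{[k]})\big)$. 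The multi-$k$-homomorphism property of $\phi$ means that along each line in a principal direction $\phi$ respects all additive $2^{3k}$-tuples, so when we apply the convolution operator $\bigconv{d}$ in direction $d$ the phase $\chi(r\cdot\phi)$ behaves multiplicatively on arrangements: $\bigconv{d_r}\dots\bigconv{d_1} f_r$ at a point with ``lengths'' $l_{[k]}$ is exactly $\mathbb E$ over arrangements $q$ of lengths $l_{[k]}$ (with all points in $A$) of $\chi(r\cdot\phi(q))$, weighted by the fraction of such arrangements lying in $A$. (This is the content of Lemma~\ref{convFormulaExplicit} applied to $f_r$, using that $\phi(q)$ is the corresponding $\pm1$ combination.) The key quantitative input is that, since $A$ has density $\delta$, the iterated convolution $\bigconv{1}\dots\bigconv{k}\bigconv{1}\dots\bigconv{k}\,\mathbbm 1_A$ has $L^1$-mass bounded below by a power of $\delta$ — indeed after two full sweeps of convolutions in all directions the density of ``full'' arrangements (all points in $A$) is $\gg\delta^{O(1)}$ on a set of lengths of density $\gg\delta^{O(1)}$, by repeated Cauchy--Schwarz.

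The next step is to extract structure. Apply Theorem~\ref{strongMixedApprox} to $f_r$ with $d_1,\dots,d_r$ chosen to be the concatenation $(i-1,\dots,1,k,\dots,1,k,\dots,1,k,\dots,i)$ that appears in the statement — actually I would first run the two fixed sweeps $\bigconv{k}\dots\bigconv{1}\bigconv{k}\dots\bigconv{1}$ and then the remaining prescribed directions — obtaining an $L^q$-approximation of the mixed convolution by a short linear combination $\sum_i c_i\chi\circ\phi_i$ of multiaffine phases, with $q$ chosen large enough (depending only on $k$) that the $L^q$ error $\varepsilon = \varepsilon(\delta)$ a small power of $\delta$ translates, via Markov, into pointwise closeness on a set of density $1-o_\delta(1)$. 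Since the convolution equals $\mathbb E_q\chi(r\cdot\phi(q))$ weighted by the density of full arrangements, and since this density is $\gg\delta^{O(1)}$ on a positive-density set $B_0$ of lengths, on $B_0$ we learn that the normalized average of $\chi(r\cdot\phi(q))$ over full arrangements is close to a bounded combination of multiaffine phases of $l_{[k]}$. Averaging this over $r\in\mathbb F_p$ (or using orthogonality of characters) and a pigeonhole/popularity argument then forces: for a positive-density subset $B\subset B_0$ of lengths $l_{[k]}$, there is a single value $v=\psi(l_{[k]})\in H$ such that a $\gg\delta^{O(1)}$ fraction of full arrangements $q$ of lengths $l_{[k]}$ have $\phi(q)=v$. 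Concretely, the fraction of full arrangements with $\phi(q)=v$ is $\mathbb E_r \chi(r\cdot(v-\phi(q)))$-averaged, i.e. $p^{-\dim H}\sum_r\chi(-r\cdot v)\cdot(\text{convolution of }f_r)$, so largeness of $|\mathbb E_q\chi(r\cdot\phi(q))|$ for a positive proportion of $r$ (which we may arrange by a second-moment / Parseval argument combined with the structured approximation) yields a popular value $v$ with the claimed multiplicity $\Omega(\delta^{O(1)}|G_{[k]}|^{2^{3k}-1})$.

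To organize the bookkeeping I would proceed in three lemmas. (1) A combinatorial identity: for each $i\in[k]$ and the prescribed direction sequence, $\bigconv{(\text{directions})}f_r(l_{[k]})$ equals $\mathbb E$ over the parameters describing a $(i-1,\dots,i)$-arrangement $q^{(i)}$ of lengths $l_{[k]}$ of $\mathbbm 1_A(\text{all points of }q^{(i)})\cdot\chi(r\cdot\phi(q^{(i)}))$; this is Lemma~\ref{convFormulaExplicit} together with the identity $\phi(q^{(i)})=\sum_\varepsilon\pm\phi(\bm p^{\bm a,\varepsilon})$ and the multi-homomorphism property used only to guarantee that, on arrangements whose points lie in $A$, this $\pm1$-combination is a legitimate object (one needs $2^{3k}$-homomorphism precisely because an $(i-1,\dots,i)$-arrangement with $3k$ directions has $2^{3k}$ points, and the cancellation in $\phi$ across the $2^{3k}-1$ ``linking'' relations among these points requires respecting additive tuples of length up to $2^{3k}$). (2) A density lower bound: by two applications of the Cauchy--Schwarz / box-norm machinery (Corollary~\ref{boxUnifCor}, Lemma~\ref{l4bound}) the density of full arrangements is $\gg\delta^{O(1)}$ on a set of lengths of density $\gg\delta^{O(1)}$; here the point is that the ``first two sweeps'' $\bigconv{k}\dots\bigconv{1}\bigconv{k}\dots\bigconv{1}$ applied to $\mathbbm 1_A$ already give something bounded below, and the extra directions only multiply by further bounded-below convolutions since every direction was already present. (3) The extraction of the popular value: apply Theorem~\ref{strongMixedApprox} to get the structured $L^q$ approximation, pass to pointwise closeness by Markov on a density-$(1-o_\delta(1))$ set of lengths, intersect with the positive-density set from (2), and finally average over $r$ and pigeonhole on the phases $\phi_i(l_{[k]})$ (there are boundedly many of them, each taking $p$ values, so the number of ``cells'' of lengths is bounded) to find, within one cell, a popular value $v$; set $\psi$ equal to $v$ there, and $B$ the union of the good cells.

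\textbf{Main obstacle.} The delicate point — and the one I expect to take real work — is step (3): translating the $L^q$-closeness of $\bigconv{(\cdots)}f_r$ to a structured function into the \emph{existence of a single popular value} $v=\psi(l_{[k]})$ with quantitatively many arrangements, uniformly over the density-$\Omega(\delta^{O(1)})$ set $B$ of lengths. The subtlety is that the structured approximation holds per fixed $r$, with an error and a complexity that a priori depend on $r$, whereas we need to average over all $r\in\mathbb F_p^{\dim H}$ and still retain a density lower bound; the bounds in Theorem~\ref{strongMixedApprox} are exponential-tower in shape, so one must be careful that the number of $r$'s (equivalently $\dim H$) does not enter — it does not, because one reduces to $H=\mathbb F_p$ first and handles each coordinate of $\psi$ separately, which is why I would insist on that reduction at the very start. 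A secondary, purely notational, hurdle is keeping the indexing of the $3k$-direction arrangements $q^{(i)}$ consistent with the convolution formula~\eqref{confFormulaPtsDefn}; I would fix once and for all the correspondence between the parameter tuples $\bm a$ of Lemma~\ref{convFormulaExplicit} and the recursive ``two halves differing by $y$'' description of arrangements, and then everything downstream is mechanical.
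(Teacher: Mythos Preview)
Your proposal has two genuine gaps.

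First, the route through Theorem~\ref{strongMixedApprox} cannot deliver the polynomial bound $\Omega(\delta^{O(1)})$ that the statement asserts. The structured approximation produces $l$ multiaffine phases with $l$ a tower of exponentials in $\varepsilon^{-1}$; since you need $\varepsilon$ polynomially small in $\delta$, your pigeonhole over the $p^{l}$ joint phase-values in step~(3) costs a tower-type factor in the density of $B$ and in the arrangement count. The polynomial shape of this bound is actually used downstream (Theorem~\ref{densificationThm} and the final assembly of Theorem~\ref{multiaffineInvThm}), so this is not a cosmetic issue.

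Second, and more fundamental: your outline never spends the multi-$2^{3k}$-homomorphism hypothesis. The identity in your step~(1), $\bigconv{(\cdots)} f_r(l)=\mathbb E_q\bigl[\mathbbm 1_{q\subset A}\,\chi(r\cdot\phi(q))\bigr]$, is valid for \emph{any} function $\phi$; no homomorphism property is needed to define $\phi(q)$ as a $\pm1$-sum. For a generic $\phi$ the values $\phi(q)$ over arrangements with given lengths are equidistributed in $H$ and no popular value exists, yet your steps~(1)--(2) still go through and nothing in step~(3) singles out the homomorphism case. Closely related is the synchronisation problem you leave unaddressed: the theorem asks for a \emph{single} value $\psi(l_{[k]})$ shared by arrangements of all $k$ direction-types simultaneously. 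Treating each $i\in[k]$ separately and taking the most popular value per $i$ gives no reason for these $k$ values to agree (already for $H=\mathbb F_p$ one can have disjoint supports). Your coordinate-by-coordinate reduction to $H=\mathbb F_p$ does not help here and in any case does not produce a single set $B$ and map $\psi$ valid for all coordinates at once.

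The paper's proof is elementary and avoids Theorem~\ref{MixedConvApprox} and character sums altogether. The object that replaces your step~(3) is a \emph{grid}: a product $\Gamma=X_1\times\cdots\times X_k$ with each $|X_d|=8$ and all $8^k$ points in $A$. A grid carries a single invariant $\phi(\Gamma)\in H$ (the $\pm1$ Morse-sign combination over its points, which is independent of the order in which one expands it). The multi-$2^{3k}$-homomorphism hypothesis is used precisely here: if one replaces $X_d$ by any other tuple $Y_d$ with the same Morse-length, $\phi(\Gamma)$ is unchanged. Iterated Cauchy--Schwarz (Lemma~\ref{adeqGrid}) gives $\gg\delta^{8^k}$ grids in $A$; a short robustness argument (Lemma~\ref{goodGrid}) shows almost all of them are ``good''; and Lemma~\ref{gridToArr} unpacks each good grid into $\Omega(\delta^{O(1)})|G_{[k]}|^{2^{3k}-1}$ arrangements of \emph{each} of the $k$ prescribed direction-types, all with $\phi$-value $\phi(\Gamma)$. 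One then takes $B$ to be the set of length-vectors supporting a grid that is good for all $k$ types and sets $\psi(l_{[k]})=\phi(\Gamma)$. This simultaneously handles the synchronisation across $i$, the arbitrary target group $H$, and the polynomial dependence on $\delta$.
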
 

For this proof, we also need a more structured version of arrangements. To this end, we define a \emph{grid} $\Gamma$ to be a product $X_1 \tdt X_k$, where each $X_i = (x_{i, 1}, x_{i,2}, \dots, x_{i,r_i})$ is a tuple in $G_i$. Thus, a grid $\Gamma$ consists of $r_1 \cdots r_k$ points, each of the form $\Gamma_{j_{[k]}} = (x_{1,j_1}, \dots, x_{k, j_k})$, where $j_i \in [r_i]$ (the indexing in $\Gamma$ is inherited from indexing in each $X_i$). We shall consider only grids such that the cardinality of each $X_i$ is a power of 2. We define the \emph{$d$-halves} of a grid $\Gamma$ to be the pair of grids $X_{[d-1]} \times Y \times X_{[d+1,k]}$ and $X_{[d-1]} \times Z \times X_{[d+1,k]}$, where $Y$ is the tuple consisting of the first $r_d/2$ elements of $X_d$ and $Z$ is the tuple of the last $r_d/2$ elements of $X_d$. The \emph{lengths} of a grid are given by a sequence $l_{[k]}$ that is defined recursively for each $d \in [k]$ by $l_d = l(X_d) = l(Y_d) - L(Z_d)$, where $Y_d$ is the first half of the tuple $X_d$ and $Z_d$ is the second half, provided $|X_d| \geq 2$. If $|X_d| = 1$, we take $l_d$ to be the single element in $X_d$. (Thus, $l_d$ is a $\pm 1$-combination of the elements of $X_d$, and the signs are given by the Morse sequence.)\\

Now let $A \subset G_{[k]}$ be a subset of density $\delta$, and fix some directions $d_{[r]}$ and constants $\eta_{[r]}$. For each $i \in [r]$, let $s^i_{[k]}$ be the sequence of quantities $s^i_j = 2^{|\{j' \in [i] \colon d_{j'} = j\}|}$ for $j \in [k]$. We say that a grid $\Gamma = X_{[k]}$ is $i$-\emph{adequate} if $|X_j| = s^i_j$ for all $j \in [k]$ and all points of the grid lie in $A$. We now recursively define objects that we call $(d_{[i]}, \eta_{[i]})$-candidate grids and $(d_{[i]}, \eta_{[i]})$-good grids with respect to $A$. First, each point (or more precisely singleton grid) that lies in $A$ is $(\emptyset, \emptyset)$-\emph{good}. Next, a grid $\Gamma = X_{[k]}$ is a $(d_{[i]}, \eta_{[i]})$-\emph{candidate} if it is $i$-adequate and both its $d_i$-halves are $(d_{[i-1]}, \eta_{[i-1]})$-good. Secondly, $\Gamma = X_{[k]}$ is $(d_{[i]}, \eta_{[i]})$-\emph{good}, if it is a $(d_{[i]}, \eta_{[i]})$-candidate and the number of $(d_{[i]}, \eta_{[i]})$-candidate grids of the form $X_{[d_i-1]} \times Y \times X_{[d_i + 1, k]}$ with $l(Y) = l(X_{d_i})$ is at least $\eta_i |G_{d_i}|^{|X_{d_i}| - 1}$.

\begin{lemma}\label{adeqGrid}The number of $i$-adequate grids is at least $\delta^{2^i} \prod_{j \in [k]} |G_j|^{s^i_j}$.\end{lemma}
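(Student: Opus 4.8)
\textbf{Proof plan for Lemma~\ref{adeqGrid}.} The statement is an averaging/counting fact: an $i$-adequate grid has $|X_j| = s^i_j$ points in direction $j$ and all of its $\prod_{j} s^i_j$ points lie in $A$, so we want to lower bound the number of such tuples by $\delta^{2^i}\prod_j |G_j|^{s^i_j}$, where $2^i = \prod_j s^i_j$ since exactly $i$ of the directions $d_1,\dots,d_i$ are distributed among the $j$'s (indeed $\sum_j \log_2 s^i_j = i$). The natural approach is to express the number of $i$-adequate grids as an expectation of a product of indicator functions $\mathbbm 1_A$ over the grid's points and then apply the Gowers--Cauchy--Schwarz inequality (or, more cleanly, iterated Cauchy--Schwarz in one direction at a time), exactly as in the standard lower bound $\|\mathbbm 1_A\|_{U^{(i)}_{\square}}^{2^i} \ge \delta^{2^i}$ for box norms.

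\textbf{First step.} I would set up the notation: an $i$-adequate grid is determined by a choice, for each direction $j\in[k]$, of an $s^i_j$-tuple $X_j = (x_{j,1},\dots,x_{j,s^i_j}) \in G_j^{s^i_j}$, subject to the constraint $\prod_{t_{[k]}} \mathbbm 1_A(x_{1,t_1},\dots,x_{k,t_k}) = 1$ where $t_j$ ranges over $[s^i_j]$. Thus
\[
N_i := \#\{i\text{-adequate grids}\} = \Big(\prod_{j\in[k]}|G_j|^{s^i_j}\Big)\, \exx_{X_{[k]}} \prod_{t_1\in[s^i_1]}\cdots\prod_{t_k\in[s^i_k]} \mathbbm 1_A(x_{1,t_1},\dots,x_{k,t_k}),
\]
and the claim reduces to showing the expectation on the right is at least $\delta^{2^i}$.

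\textbf{Main step.} I would prove this by induction on $i$ (equivalently, by peeling off one direction at a time). The base case $i=0$ is just $\exx \mathbbm 1_A = \delta$. For the inductive step, suppose the grid for $d_{[i]}$ differs from that for $d_{[i-1]}$ by doubling the $d_i$-th tuple. Write the target expectation as $\exx_{X_{[k]\setminus\{d_i\}}} \big(\exx_{X_{d_i}} \prod \mathbbm 1_A\big)$; since the product over the doubled tuple $X_{d_i}$ factors as a product of two independent copies (the two $d_i$-halves) of the $d_{[i-1]}$-grid product sharing the other coordinates, the inner expectation over $X_{d_i}$ (now a tuple of length $s^{i-1}_{d_i}$ duplicated) is the \emph{square} of an expectation over a single copy. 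Pulling the outer expectation in and applying Cauchy--Schwarz (Jensen) gives
\[
\exx_{X_{[k]}}\prod \mathbbm 1_A \;=\; \exx_{X_{[k]\setminus\{d_i\}}} \Big(\exx_{(\text{one }d_i\text{-copy})}\prod \mathbbm 1_A\Big)^2 \;\ge\; \Big(\exx \prod \mathbbm 1_A \ \text{over the } d_{[i-1]}\text{-grid}\Big)^2 \;\ge\; (\delta^{2^{i-1}})^2 = \delta^{2^i},
\]
using the inductive hypothesis in the last inequality. This is exactly the one-variable Cauchy--Schwarz step underlying Corollary~\ref{boxUnifCor} / Lemma~\ref{boxCS}, iterated; alternatively one can simply invoke the box-norm machinery of the `Box norms' subsection directly on the function $\mathbbm 1_A$ with the appropriate bipartition of coordinates. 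Combining with the displayed formula for $N_i$ yields $N_i \ge \delta^{2^i}\prod_j |G_j|^{s^i_j}$.

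\textbf{Expected obstacle.} There is no serious mathematical obstacle here — this is a routine Cauchy--Schwarz computation. The only thing requiring care is bookkeeping: making sure that when direction $d_i$ is doubled, the product of $\mathbbm 1_A$ over the new grid genuinely factors as two copies of the old-grid product (so that the inner expectation is a perfect square and not merely bounded below by one), and that the count of distinct tuples matches $\prod_j s^i_j = 2^i$. I would state the factorization explicitly for clarity and then let the induction run; everything else is immediate.
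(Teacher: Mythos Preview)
Your proposal is correct and takes essentially the same approach as the paper: induction on $i$, observing that an $i$-adequate grid is obtained by choosing two independent $d_i$-halves (each an $(i-1)$-adequate grid) with the same coordinates in the remaining directions, and then applying Cauchy--Schwarz. The paper phrases this in counting form---writing $n_{X_{[k]\setminus\{d_i\}}}$ for the number of $(i-1)$-adequate grids with given $X_{[k]\setminus\{d_i\}}$ and noting that the number of $i$-adequate grids is $\sum n_{X_{[k]\setminus\{d_i\}}}^2 \ge (\sum n_{X_{[k]\setminus\{d_i\}}})^2/\#\{X_{[k]\setminus\{d_i\}}\}$---while you phrase it as Jensen on expectations, but these are the same computation.
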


\begin{proof}We prove the claim by induction on $i$. For $i = 0$, the claim is trivial. Suppose the claim holds for some $i \geq 0$. Let $n_{X_{[k] \setminus \{d_i\}}}$ be the number of $(i-1)$-adequate grids of the form $X_1 \tdt X_{d_i - 1} \times Y_{d_i} \times X_{d_i + 1} \tdt X_k$ for a suitable tuple $Y_{d_i}$ in $G_{d_i}$. By the induction hypothesis, 
\[\sum_{X_{[k] \setminus \{d_i\}}} n_{X_{[k] \setminus \{d_i\}}} \geq \delta^{2^{i-1}} \prod_{j \in [k]} |G_j|^{s^{i-1}_j},\]
where $X_j$ in the sum ranges over all $s^{i-1}_j$-tuples in $G_j$. We are interested in the quantity
\[\sum_{X_{[k] \setminus \{d_i\}}} n_{X_{[k] \setminus \{d_i\}}}^2.\]
The desired bound follows from Cauchy-Schwarz inequality.\end{proof}

\begin{lemma}\label{goodGrid}Suppose that the numbers $\eta_{[r]}$ satisfy that $\eta_{i+1} \geq 8\eta_i$. Then the number of $i$-adequate grids $\Gamma = X_{[k]}$ that are not $(d_{[i]}, \eta_{[i]})$-good is at most $2\eta_i \prod_{j \in [k]} |G_j|^{s^i_j}$.\end{lemma}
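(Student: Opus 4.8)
The statement is Lemma~\ref{goodGrid}, and I would prove it by induction on $i$ in parallel with Lemma~\ref{adeqGrid}, exactly as one would for a dependent-random-choice style counting argument. The base case $i=0$ is trivial: every $0$-adequate grid is a single point in $A$, and it is vacuously $(\emptyset,\emptyset)$-good, so there are zero bad ones. For the inductive step, suppose the bound holds for $i-1$, so the number of $(i-1)$-adequate grids that fail to be $(d_{[i-1]},\eta_{[i-1]})$-good is at most $2\eta_{i-1}\prod_{j\in[k]}|G_j|^{s^{i-1}_j}$.

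The key step is to bound, for an $i$-adequate grid $\Gamma=X_{[k]}$, how it can fail to be $(d_{[i]},\eta_{[i]})$-good. There are two ways: either $\Gamma$ is not even a $(d_{[i]},\eta_{[i]})$-candidate (i.e.\ one of its $d_i$-halves is not $(d_{[i-1]},\eta_{[i-1]})$-good), or it is a candidate but the number of candidate grids $X_{[d_i-1]}\times Y\times X_{[d_i+1,k]}$ with $l(Y)=l(X_{d_i})$ is below $\eta_i|G_{d_i}|^{|X_{d_i}|-1}$. I would bound each contribution separately.

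For the first type, I count pairs (bad half, choice of the complementary half): an $i$-adequate grid whose, say, \emph{first} $d_i$-half is not $(d_{[i-1]},\eta_{[i-1]})$-good is obtained by taking a non-good $(i-1)$-adequate grid (at most $2\eta_{i-1}\prod_j|G_j|^{s^{i-1}_j}$ of them, using $s^{i-1}_{d_i}=s^i_{d_i}/2$ and $s^{i-1}_j=s^i_j$ for $j\ne d_i$) and appending an arbitrary $(i-1)$-adequate second half in direction $d_i$ (at most $|G_{d_i}|^{s^i_{d_i}/2}$ choices, since the other coordinate-tuples are already fixed). So the number of non-candidates is at most $2\cdot 2\eta_{i-1}\prod_{j}|G_j|^{s^i_j}=8\eta_{i-1}\prod_j|G_j|^{s^i_j}\le \eta_i\prod_j|G_j|^{s^i_j}$, using the hypothesis $\eta_i\ge 8\eta_{i-1}$. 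For the second type, I use Lemma~\ref{adeqGrid}: the total number of $i$-adequate grids is at most $\prod_j|G_j|^{s^i_j}$, and each $d_i$-slice (fixing $X_{[k]\setminus\{d_i\}}$ and letting $X_{d_i}$ vary) that contains a candidate-deficient grid contributes, when summed over the deficient $X_{d_i}$, fewer than $\eta_i|G_{d_i}|^{s^i_{d_i}-1}$ such grids per fixed value of $l(X_{d_i})$; summing over the at most $|G_{d_i}|$ possible length values and over all $X_{[k]\setminus\{d_i\}}$ (at most $\prod_{j\ne d_i}|G_j|^{s^i_j}$ choices) gives at most $\eta_i\prod_j|G_j|^{s^i_j}$ grids of the second type. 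Adding the two contributions gives the claimed bound $2\eta_i\prod_{j\in[k]}|G_j|^{s^i_j}$.

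\textbf{Main obstacle.} The genuinely fiddly point is getting the bookkeeping of the exponents $s^i_j$ right — tracking that passing to a $d_i$-half halves exactly the $d_i$-tuple and leaves the others untouched, so that counts of ``half plus arbitrary complement'' come out as $\prod_j|G_j|^{s^i_j}$ rather than something larger — and making sure the constant $8$ (from: two halves, each counted with a factor $2$ from the inductive bound, plus the $\eta_i$ absorbing the candidate-deficiency term) is exactly what the hypothesis $\eta_{i+1}\ge 8\eta_i$ was designed to absorb. There is a minor subtlety in the second-type count: one must only count grids that \emph{are} candidates but length-deficient, and these live in slices where a candidate exists, so the crude bound ``total adequate grids $\le\prod_j|G_j|^{s^i_j}$'' combined with the per-length-class deficiency threshold suffices without any Cauchy--Schwarz. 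No hard analysis is needed; it is purely a careful union bound, and the only risk is an off-by-a-constant error in the exponent or constant factor.
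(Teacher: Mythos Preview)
Your proposal is correct and follows essentially the same induction as the paper's proof: bound non-candidates by pairing a non-good half with an arbitrary complementary half, and bound candidates-but-not-good by observing that in each slice-length class $(X_{[k]\setminus\{d_i\}},l)$ either all candidates are good or there are fewer than $\eta_i|G_{d_i}|^{s^i_{d_i}-1}$ of them. Two small remarks: your displayed $2\cdot 2\eta_{i-1}=8\eta_{i-1}$ is a typo (it should be $4\eta_{i-1}$, matching the paper), and the upper bound on the total number of grids is trivial rather than a consequence of Lemma~\ref{adeqGrid} (which gives a lower bound); neither affects the argument.
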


\begin{proof}We prove the claim by induction on $i$. Since all points in $A$ are $(\emptyset, \emptyset)$-\emph{good}, the base case (that is, the case $i=0$) trivially holds. Assume that the claim holds for some $i-1$, so the number of $i$-adequate grids that are not $(d_{[i]}, \eta_{[i]})$-candidates is at most $4\eta_{i-1}\prod_{j \in [k]} |G_j|^{s^i_j}$. To count $i$-adequate grids that are $(d_{[i]}, \eta_{[i]})$-candidates but not $(d_{[i]}, \eta_{[i]})$-good, we set $n_{X_{[k] \setminus \{d_i\}}; l}$ to be the number of $i$-adequate grids of the form $X_1 \tdt X_{d_i - 1} \times Y_{d_i} \times X_{d_i + 1} \tdt X_k$ such that $l(Y_{d_i}) = l$, and we set $n'_{X_{[k] \setminus \{d_i\}}; l}$ to be the number of $i$-adequate grids of the form $X_1 \tdt X_{d_i - 1} \times Y_{d_i} \times X_{d_i + 1} \tdt X_k$ such that $l(Y_{d_i}) = l$ that are additionally $(d_{[i]}, \eta_{[i]})$-candidates. Then 
\begin{equation}\label{candGridEqn}\sum_{\substack{X_{[k] \setminus \{d_i\}}\\l \in G_{d_i}}} \Big(n_{X_{[k] \setminus \{d_i\}}; l} - n'_{X_{[k] \setminus \{d_i\}}; l}\Big) \leq 4\eta_{i-1}\prod_{j \in [k]} |G_j|^{s^i_j},\end{equation}
where $X_j$ in the sum ranges over all $s^{i}_j$-tuples in $G_j$. Observe that for fixed $X_{[k] \setminus \{d_i\}}$ and $l$, if $n'_{X_{[k] \setminus \{d_i\}}; l} \geq \eta_i |G_{d_i}|^{s^i_{d_i}- 1}$, then in fact all $(d_{[i]}, \eta_{[i]})$-candidate grids of the form $X_1 \tdt X_{d_i - 1} \times Y_{d_i} \times X_{d_i + 1} \tdt X_k$ with $l(Y_{d_i}) = l$ are $(d_{[i]}, \eta_{[i]})$-good. Hence, the number of $i$-adequate grids that are $(d_{[i]}, \eta_{[i]})$-candidates but are not $(d_{[i]}, \eta_{[i]})$-good is at most
\[\sum_{\substack{X_{[k] \setminus \{d_i\}}\\l \in G_{d_i}\\n'_{X_{[k] \setminus \{d_i\}}; l} \leq \eta_i |G_{d_i}|^{s^i_{d_i}- 1}}} n_{X_{[k] \setminus \{d_i\}}; l},\] 
which by~\eqref{candGridEqn} is at most
\[4\eta_{i-1}\prod_{j \in [k]} |G_j|^{s^i_j} + \sum_{\substack{X_{[k] \setminus \{d_i\}}\\l \in G_{d_i}\\n'_{X_{[k] \setminus \{d_i\}}; l}\ \leq\ \eta_i |G_{d_i}|^{s^i_{d_i}- 1}}} n'_{X_{[k] \setminus \{d_i\}}; l} \quad \leq\quad  (4\eta_{i-1} + \eta_i)\prod_{j \in [k]} |G_j|^{s^i_j}.\]
The proof is now complete.\end{proof}

For a map $\phi \colon A \to H$ and a grid $\Gamma = X_1 \tdt X_k$ with points that lie in $A$, we define $\phi(\Gamma)$ recursively as follows. If $Y_d$ and $Z_d$ are the first and second half of $X_d$, then $\phi(\Gamma) = \phi(X_1 \tdt X_{d-1} \times Y_d \times X_{d+1} \tdt X_k) - \phi(X_1 \tdt X_{d-1} \times Z_d \times X_{d+1} \tdt X_k)$. If $\Gamma = \{x_{[k]}\}$ is a singleton, then $\phi(\Gamma) = \phi(x_{[k]})$. Note that $\phi(\Gamma)$ is well-defined. To see this, write $|e|$ for the number of ones in a binary sequence $e$. Enumerate each tuple $X_i$ of size $2^{a_i}$ using binary sequences $e \in \{0,1\}^{a_i}$, ordered by $\sum_{j \in [a_i]} e_j 2^j$. Thus $X_i = (x_{i, e})_{e \in \{0,1\}^{a_i}}$. Then $\phi(\Gamma)$ becomes 
\[\phi(\Gamma) = \sum_{e^1 \in \{0,1\}^{a_1}, \dots, e^k \in \{0,1\}^{a_k}} (-1)^{|e^1| + \dots + |e^k|} \phi(x_{1,e^1}, \dots, x_{k, e^k}),\]
which is independent of the choice of the order of directions in computing the value.\\

\indent The relevance of $(d_{[i]}, \eta_{[i]})$-candidates and $(d_{[i]}, \eta_{[i]})$-good grids stems from the following fact.

\begin{lemma}\label{gridToArr}Let $i\geq 0$, let $\mathcal{G}_i$ be the set of $(d_{[i]}, \eta_{[i]})$-good grids, and let $\phi \colon A \to H$ be a multi-$2^i$-homomorphism. Then for each $\Gamma \in \mathcal{G}_i$ there is a set $\mathcal{A}^i_{\Gamma}$ of $(d_{[i]})$-arrangements whose points lie in $A$ and whose lengths are the same as those of $\Gamma$, such that
\begin{itemize}
\item[\textbf{(i)}] $|\mathcal{A}^i_{\Gamma}| \geq \prod_{j \in [i]} (\eta_j |G_{d_j}|)^{2^{i - j}}$, and
\item[\textbf{(ii)}] $\phi(q) = \phi(\Gamma)$ for every $q \in \mathcal{A}^i_{\Gamma}$.
\end{itemize}
\end{lemma}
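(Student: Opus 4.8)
\textbf{Proof proposal for Lemma~\ref{gridToArr}.}

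The plan is to prove this by induction on $i$, mirroring the recursive definitions of grids and arrangements. The base case $i = 0$ is immediate: a $(\emptyset,\emptyset)$-good grid is a single point $\{x_{[k]}\} \subset A$, and we take $\mathcal{A}^0_\Gamma = \{q\}$ where $q$ is the $\emptyset$-arrangement consisting of that same point; condition \textbf{(i)} reads $|\mathcal{A}^0_\Gamma| \ge 1$ (the empty product), and \textbf{(ii)} is $\phi(q) = \phi(x_{[k]}) = \phi(\Gamma)$. For the inductive step, suppose the claim holds for $i-1$ and let $\Gamma = X_{[k]}$ be a $(d_{[i]},\eta_{[i]})$-good grid. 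Writing $d = d_i$ and letting $Y, Z$ be the first and second halves of $X_d$, both $d$-halves $\Gamma^Y = X_{[d-1]} \times Y \times X_{[d+1,k]}$ and $\Gamma^Z = X_{[d-1]} \times Z \times X_{[d+1,k]}$ are $(d_{[i-1]},\eta_{[i-1]})$-good, so by induction we have sets $\mathcal{A}^{i-1}_{\Gamma^Y}$ and $\mathcal{A}^{i-1}_{\Gamma^Z}$ of $(d_{[i-1]})$-arrangements with the stated properties. The naive attempt would be to pair up one arrangement from each and concatenate, but the lengths of $\Gamma^Y$ (namely $(l_{[d-1]}, l(Y), l_{[d+1,k]})$) and of $\Gamma^Z$ (namely $(l_{[d-1]}, l(Z), l_{[d+1,k]})$) are \emph{not} of the form required to concatenate into a $(d_i, \dots)$-arrangement of the lengths of $\Gamma$ — for that we need two $(d_{[i-1]})$-arrangements whose lengths are $(l_{[d-1]}, l_d + y, l_{[d+1,k]})$ and $(l_{[d-1]}, y, l_{[d+1,k]})$ for a common $y \in G_d$, where $l_d = l(Y) - l(Z)$.

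This is where we exploit that $\Gamma$ is \emph{good}, not merely a candidate. By goodness in direction $d$, there are at least $\eta_i |G_d|^{|X_d|-1}$ tuples $Y'$ in $G_d$ of the same length $l(Y') = l(X_d) \cdot$ wait — more precisely, $(d_{[i]},\eta_{[i]})$-candidate grids $X_{[d-1]} \times W \times X_{[d+1,k]}$ with $l(W) = l(X_d)$; but the cleaner route is to use goodness applied to the \emph{halves}. Here is the key maneuver: for each $y \in G_d$, consider the "translated second half" $Z + y$ (add $y$ to the first element of $Z$, say, so that $l(Z+y)$ shifts appropriately) — actually the clean statement is that as $W$ ranges over tuples in $G_d$ with $l(W) = l(Y)$ (resp. $l(W) = l(Z)$), the grid $X_{[d-1]} \times W \times X_{[d+1,k]}$ is a $(d_{[i-1]},\eta_{[i-1]})$-good grid for many choices, and the arrangements $\mathcal{A}^{i-1}$ of those provide the needed flexibility in the middle coordinate. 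Concretely: goodness of $\Gamma$ says many candidate grids of the form $X_{[d-1]}\times W\times X_{[d+1,k]}$ with $l(W) = l(X_d)$ exist — but I believe the statement we actually want to extract, and which must be what the proof intends, is that for many $y \in G_{d}$ there is a $(d_{[i-1]},\eta_{[i-1]})$-good grid with middle tuple of length $l_d + y$ and another with middle tuple of length $y$ whose outer coordinates match $X_{[d-1]}, X_{[d+1,k]}$; concatenating a $(d_{[i-1]})$-arrangement drawn from the first with one drawn from the second yields a $(d_{[i]})$-arrangement of lengths $l_{[k]}(\Gamma)$, all of whose points lie in $A$ by adequacy. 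Running over all valid $y$ (at least $\eta_i |G_d|$ of them, up to the combinatorics of which element of the tuple absorbs the shift) and all pairs of arrangements in the corresponding $\mathcal{A}^{i-1}$-sets (each of size $\ge \prod_{j\in[i-1]}(\eta_j|G_{d_j}|)^{2^{i-1-j}}$) gives
\[
|\mathcal{A}^i_\Gamma| \;\ge\; \eta_i |G_{d_i}| \cdot \left(\prod_{j\in[i-1]}(\eta_j|G_{d_j}|)^{2^{i-1-j}}\right)^2 \;=\; \prod_{j\in[i]}(\eta_j|G_{d_j}|)^{2^{i-j}},
\]
which is exactly \textbf{(i)}. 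For \textbf{(ii)}, if $q = (q_1, q_2)$ with $q_1 \in \mathcal{A}^{i-1}$ of lengths $(l_{[d-1]}, l_d+y, l_{[d+1,k]})$ coming from a good grid $\Gamma_1$ and $q_2$ from $\Gamma_2$ of lengths $(l_{[d-1]}, y, l_{[d+1,k]})$, then $\phi(q) = \phi(q_1) - \phi(q_2) = \phi(\Gamma_1) - \phi(\Gamma_2)$ by the inductive hypothesis, and one checks $\phi(\Gamma_1) - \phi(\Gamma_2) = \phi(\Gamma)$ using that $\phi$ is a multi-$2^i$-homomorphism in direction $d$: the grid $\Gamma$ has $2^i$ points lying along direction-$d$ lines through the $2^{i-1}$ outer configurations, and the Freiman-$2^i$-homomorphism property in coordinate $d$ is precisely what is needed to replace the middle tuple $X_d$ (length $l_d$, written as $(l_d+y)$ minus $y$) by the split $\Gamma_1, \Gamma_2$ without changing the alternating sum that defines $\phi(\Gamma)$. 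We must track the homomorphism order carefully: the grid in direction $d$ involves $|X_d| = 2^{|\{j \le i : d_j = d\}|}$ points per line, and summed over the $2^{i-1}$ outer points we are comparing two linear functionals of $\phi$-values supported on at most $2^i$ points per direction-$d$ line, so a multi-$2^i$-homomorphism suffices.

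The main obstacle I anticipate is making precise the extraction from "goodness" of the family of auxiliary good grids with prescribed shifted middle-lengths, and correctly bookkeeping the alternating-sign (Morse-sequence) structure so that the identity $\phi(\Gamma_1) - \phi(\Gamma_2) = \phi(\Gamma)$ really does follow from the multi-homomorphism property and not merely from additive-quadruple respecting. In particular one has to be careful that goodness gives candidate grids (hence good grids, after possibly passing to a sub-count) whose \emph{two $d$-halves} are themselves good with the \emph{correct} lengths — this may require applying the definition of goodness not to $\Gamma$ directly but to each half, or iterating the observation that a $(d_{[i]},\eta_{[i]})$-candidate which is good has many good "siblings". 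I would also need to confirm that the element of $G_d$ absorbing the translation can be chosen so that $l(Y) = l_d + y$ and $l(Z) = y$ simultaneously range over a set of size $\Omega(\eta_i |G_d|)$; since $l$ is a fixed $\pm1$-combination, fixing all but one entry of the tuple and varying that entry realizes every value of $l$, so this is routine but needs to be stated. Everything else — the Cauchy–Schwarz-free counting, the recursion on $\mathcal{A}$, the final product bound — is mechanical once the length-bookkeeping is set up.
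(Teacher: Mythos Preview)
Your approach is correct and is essentially the paper's proof. The two obstacles you anticipate dissolve once you use the definitions precisely: a $(d_{[i]},\eta_{[i]})$-\emph{candidate} grid by definition has both $d_i$-halves $(d_{[i-1]},\eta_{[i-1]})$-good, so goodness of $\Gamma$ directly supplies $\geq \eta_i|G_{d_i}|^{|X_{d_i}|-1}$ such candidates $X_{[d_i-1]}\times W\times X_{[d_i+1,k]}$ with $l(W)=l_{d_i}$, and since the map from $W$ to the length $y=l(W_2)$ of its second half has fibres of size at most $|G_{d_i}|^{|X_{d_i}|-2}$, at least $\eta_i|G_{d_i}|$ values of $y$ are realised --- no sub-count is needed. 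One minor correction: the number of ``outer configurations'' is $2^i/|X_{d_i}|$, not $2^{i-1}$ in general; what matters for \textbf{(ii)} is that each direction-$d_i$ line carries $|X_{d_i}|\leq 2^i$ grid points, so a $2^i$-homomorphism in direction $d_i$ gives $\phi(\Gamma)=\phi(\Gamma')$ for any candidate $\Gamma'$ with the same outer tuples and the same $d_i$-length, whence $\phi(\Gamma)=\phi(\Gamma_1)-\phi(\Gamma_2)$ exactly as you say.
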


\begin{proof}For $i = 0$, $\Gamma$ is a single point and we simply set $\mathcal{A}^0_{\Gamma} = \{\Gamma\}$. Now suppose that the claim holds for some $i-1 \geq 0$. Let $\Gamma = X_1 \tdt X_k$ be a $(d_{[i]}, \eta_{[i]})$-good grid with lengths $l_{[k]}$. We define $\mathcal{A}^i_{\Gamma}$ as follows. First, let $M_{d_i}$ be the set of all $m_{d_i} \in G_{d_i}$ such that there are tuples $Y_{d_i}, Z_{d_i}$ in $G_{d_i}$ with $l(Y_{d_i}) = l_{d_i} + m_{d_i}$ and $l(Z_{d_i}) = m_{d_i}$ and $X_1 \tdt X_{d_i - 1} \times Y_{d_i} \times X_{d_i + 1} \tdt X_{k}$ and $X_1 \tdt X_{d_i - 1} \times Z_{d_i} \times X_{d_i + 1} \tdt X_{k}$ are $(d_{[i-1]}, \eta_{[i-1]})$-good. Fix arbitrary such $(d_{[i-1]}, \eta_{[i-1]})$-good grids for $m_{d_i}$ and denote them by $\Gamma_1(m_{d_i})$ and $\Gamma_2(m_{d_i})$ respectively. Since $\Gamma$ is $(d_{[i]}, \eta_{[i]})$-good, we have $|M_{d_i}| \geq \eta_i |G_{d_i}|$. Then define $\mathcal{A}^i_{\Gamma}$ to be the set of all concatenations $(q_1, q_2)$ where $m_{d_i} \in M_{d_i}$, $q_1 \in \mathcal{A}^{i-1}_{\Gamma_1(m_{d_i})}$ and $q_2 \in \mathcal{A}^{i-1}_{\Gamma_2(m_{d_i})}$. It remains to check that this collection of sets satisfies the properties claimed.\\
\noindent\textbf{Property (i).} Note that if $q = (q_1, q_2) \in \mathcal{A}^i_{\Gamma}$, then $m_{d_i}$ is determined by the corresponding length of $q_2$, so each $q$ comes from exactly one $m_{d_i}$. By the inductive hypothesis, we have 
\[|\mathcal{A}^i_{\Gamma}| = \sum_{m_{d_i} \in M_{d_i}} |\mathcal{A}^{i-1}_{\Gamma_1(m_{d_i})}||\mathcal{A}^{i-1}_{\Gamma_2(m_{d_i})}| \geq \eta_i |G_{d_i}| \cdot \Big( \prod_{j \in [i-1]} (\eta_j |G_{d_j}|)^{2^{i - 1 - j}}\Big)^2 = \prod_{j \in [i]} (\eta_j |G_{d_j}|)^{2^{i - j}}.\]
\noindent\textbf{Property (ii).} Let $q = (q_1, q_2) \in \mathcal{A}^i_\Gamma$. Then there are $d_i$-halves $\Gamma_1$ and $\Gamma_2$ of some $\Gamma' = X_1 \tdt X_{d_i - 1} \times Y_{d_i} \times X_{d_i + 1} \tdt X_{k}$ where $l(Y_{d_i}) = l(X_{d_i})$ such that $q_1 \in  \mathcal{A}^{i-1}_{\Gamma_1}$ and $q_2 \in  \mathcal{A}^{i-1}_{\Gamma_2}$. Since $\phi$ is a $2^i$-homomorphism in direction $d_i$, we have $\phi(\Gamma) = \phi(\Gamma')$. By definition, $\phi(\Gamma') = \phi(\Gamma_1) - \phi(\Gamma_2)$. By the inductive hypothesis and the definition of $\phi$ on arrangements, we get that $\phi(q) = \phi(q_1) - \phi(q_2) = \phi(\Gamma_1) - \phi(\Gamma_2)$, as required.\end{proof}

\begin{proof}[Proof of Theorem~\ref{exArrThm}]Set $\eta_i = \frac{8^i\delta^{8^k}}{8^{3k+1}k}$. Let
\[(d^i_{3k}, \dots, d^i_1) = (i-1, i-2, \dots, 1, k, \dots, 1, k, \dots, 1, k, \dots, i).\]
Then, by Lemmas~\ref{adeqGrid} and~\ref{goodGrid}, the number of $8 \times \dots \times 8$ grids $\Gamma$ that are $(\eta_{[3k]}, d^i_{[3k]})$-good for all $i \in [k]$ (the property of being $3k$-adequate with respect to sequence $d^i_{[3k]}$ is the same for each $i \in [k]$) is at least $\frac{\delta^{8^k}}{2}|G_{[k]}|^8$. Hence, there is a set $B \subset G_{[k]}$ of density at least $\frac{\delta^{8^k}}{2}$ such that for each $l_{[k]} \in B$, there is a grid $\Gamma$ of lengths $l_{[k]}$ which is $(\eta_{[3k]}, d^i_{[3k]})$-good for all $i \in [k]$. Define $\psi(l_{[k]}) = \phi(\Gamma)$ for such $\Gamma$. Apply Lemma~\ref{gridToArr} to finish the proof.\end{proof}

\section{Densification of respected tuples of arrangements}

In this section, we show that it is possible to pass to a subset where almost all arrangements of same length have the same $\phi$ value. Recall the $\con$ notation from the preliminary section of the paper and the discussion around~\eqref{fullConEqn}.

\begin{theorem}\label{densificationThm}Let $A \subset G_{[k]}$ be a set of density $\delta$, and let $\phi \colon A \to H$ be a multi-$2^{3k}$-homomorphism. Assume that $|G_i| \geq k p^{k(3^{k} + 1)}\delta^{- \con_{k, p}}$ for each $i \in [k]$, and let $\varepsilon > 0$. For each subset $A' \subset A$ let $\mathcal{Q}=\mathcal{Q}(A')$ be the set of all $k$-tuples $(q_1, \dots, q_k)$ for which each $q_i$ is an $([i,1], [k,1], [k,1], [k,i + 1])$-arrangement with points in $A'$ and $q_1, \dots, q_k$ have the same lengths. Then there exists a subset $A'\subset A$ such that 
\[|\{(q_1, \dots, q_k) \in \mathcal{Q} \colon \phi(q_1) = \dots = \phi(q_k)\}| \,\,\geq (1-\varepsilon) |\mathcal{Q}|\,\, \geq (\delta\varepsilon)^{O(1)}|G_{[k]}|^{8^k}.\] 
\end{theorem}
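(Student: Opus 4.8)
The plan is to prove Theorem~\ref{densificationThm} by an "algebraic dependent random choice" argument, following the scheme announced in Step 3 of the overview. The setup from Theorem~\ref{exArrThm} gives us a density-$\Omega(\delta^{O(1)})$ set $B$ of lengths and a map $\psi \colon B \to H$ such that for each $l_{[k]} \in B$ there are $\Omega(\delta^{O(1)}|G_{[k]}|^{2^{3k}-1})$ arrangements $q^{(i)}$ of each of the $k$ shapes with $\phi(q^{(i)}) = \psi(l_{[k]})$. The rough idea is to choose a random restriction $A'$ of $A$ so that, with positive probability, almost every arrangement with points in $A'$ has $\phi$-value exactly $\psi$ of its length. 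To pass from "many good arrangements" to "almost all arrangements are good", one passes through points: roughly, we want to restrict to a set of points each of which participates in many good arrangements in a way that forces agreement.

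First I would reformulate the problem recursively, working one direction at a time as in the definition of arrangements. An $([i,1],[k,1],[k,1],[k,i+1])$-arrangement is built by a sequence of $3k$ convolution-type doublings; the key structural feature (already exploited in Lemma~\ref{gridToArr}) is that the $\phi$-value of an arrangement is a $\pm1$ combination of the $\phi$-values at its $2^{3k}$ points, and two arrangements of the same lengths can be "glued" along shared sub-arrangements. Concretely, I would define, for a candidate subset $A' \subseteq A$, weighted counts of pairs of arrangements of equal length and of arrangements whose $\phi$-value disagrees with $\psi$, and then run a selection procedure: iteratively discard points that lie in "too few" good configurations. This is the standard dependent-random-choice template, but here the objects are multilinear (arrangements in all $k$ directions simultaneously), so the discarding has to be organized carefully so that the count of equal-length $k$-tuples $|\mathcal{Q}(A')|$ does not collapse. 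The lower bound $|\mathcal{Q}| \geq (\delta\varepsilon)^{O(1)}|G_{[k]}|^{8^k}$ will come from the fact that $B$ has density $\Omega(\delta^{O(1)})$ and for each length in $B$ there are $\Omega(\delta^{O(1)}|G_{[k]}|^{2^{3k}-1})$ good arrangements of each shape, so the product over the $k$ shapes gives roughly $|B| \cdot (\delta^{O(1)}|G_{[k]}|^{2^{3k}-1})^k$, and $2^{3k} = 8^k$, $k(2^{3k}-1) + k = k \cdot 8^k$ — wait, that is $8^k$ only up to the $|B|$ factor, so one must be slightly careful, but the arithmetic works out to $|G_{[k]}|^{8^k}$ up to the polynomial losses. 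The hypothesis $|G_i| \geq k p^{k(3^k+1)}\delta^{-\con_{k,p}}$ is what guarantees that the various intersections of cosets/arrangement-fibres appearing in the averaging are non-empty and of the expected size, so that the Cauchy--Schwarz steps (as in Lemma~\ref{adeqGrid}) do not degenerate.

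The main steps, in order, would be: (1) Invoke Theorem~\ref{exArrThm} to obtain $B$ and $\psi$ with the stated density and counting bounds. (2) Set up the random choice: pick a random "seed" consisting of a controlled number of points (or sub-arrangements) and let $A'$ be the set of points compatible with the seed in the relevant multilinear sense; compute the expected number of equal-length pairs $(q,q')$ of arrangements with points in $A'$, and separately the expected number of "bad" such pairs where $\phi(q) \ne \phi(q')$ (equivalently, where one of them disagrees with $\psi$). (3) Show the ratio bad/total is at most $\varepsilon$ in expectation after a bounded number of cleaning rounds, using the multilinearity of arrangements to localize a disagreement to a single direction and then applying a one-dimensional averaging argument (of the flavour of Lemma~\ref{easyExtn}/Corollary~\ref{approxF2homm}, but only the counting half, not the extension half). (4) Fix a good outcome and verify the two displayed inequalities, deriving the lower bound on $|\mathcal{Q}(A')|$ from the surviving good arrangements and the size of $B$.

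The hard part will be Step (3): organizing the cleaning so that it is simultaneously effective in all $k$ directions without the count $|\mathcal{Q}(A')|$ being driven to zero, and making sure the "localize a disagreement to one direction" move is valid for arrangements (which are not themselves affine objects, only $\pm1$-combinations of point-values). I expect this to require a nested induction on $k$ (or on the number $3k$ of convolution directions in the arrangement shape), peeling off one direction at a time exactly as arrangements are defined, and at each stage using a Cauchy--Schwarz / box-norm estimate (Corollary~\ref{boxUnifCor}) to transfer density from configurations to points. The bookkeeping of how the polynomial powers of $\delta$ and $\varepsilon$ accumulate through the $3k$ peeling steps will be tedious but routine; the conceptual content is entirely in making the multilinear dependent-random-choice work, and that is where I would spend the effort.
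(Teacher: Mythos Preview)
Your plan correctly identifies Theorem~\ref{exArrThm} as the input and that a random restriction is required, but the random restriction you describe is the wrong one, and the cleaning/peeling machinery you propose is not needed.

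The paper's restriction is purely algebraic and global: pick a random linear map $\pi \colon H \to \mathbb{F}_p^t$ and an independent random \emph{multilinear} map $\psi \colon G_{[k]} \to \mathbb{F}_p^t$, and set $A' = \{x_{[k]} \in A : \pi(\phi(x_{[k]})) = \psi(x_{[k]})\}$. There is no seed of points, no iterative discarding, no direction-by-direction induction. The entire argument reduces to a single computation (Proposition~\ref{DRCmap}): call a tuple $(q_1,\dots,q_k)$ \emph{generic} if, after deleting the first point of each $q_2,\dots,q_k$, the remaining $k\cdot 8^k - (k-1)$ points are linearly independent in $G_1 \otimes \dots \otimes G_k$. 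For a generic tuple, the events ``$x$ is kept'' for those points are exactly independent (because $\psi$ is a random linear functional on the tensor product), and the first point of each $q_i$ is then kept if and only if $\pi(\phi(q_i)) = \psi(l_{[k]})$, which is automatic when the tuple is respected and costs an extra factor $p^{-t}$ otherwise. A first-moment computation of $\mathbb{E}[N'_{\text{resp}} - \varepsilon^{-1}(N' - N'_{\text{resp}})]$ then finishes the proof once one chooses $t$ appropriately; Lemma~\ref{nongenNum} (a straightforward count using the explicit parametrization of arrangements and the hypothesis on $|G_i|$) disposes of the non-generic tuples.

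The gap in your proposal is that combinatorial dependent random choice with seeds separates dense configurations from sparse ones; it does not by itself separate \emph{respected} tuples from \emph{non-respected} ones of the same shape. The mechanism that produces that separation here is the algebraic coincidence between $\pi\circ\phi$ and the multilinear $\psi$: because $\phi(q_i)$ is a $\pm 1$-combination of $\phi$ at the points of $q_i$, and $\psi(l_{[k]})$ is the same $\pm 1$-combination of $\psi$ at those points (multilinearity of $\psi$), the equation $\pi(\phi(q_i)) = \psi(l_{[k]})$ is a \emph{consequence} of the point-level equations exactly when it should be. Your Step~(3) --- ``localize a disagreement to a single direction and then apply a one-dimensional averaging argument'' --- has no analogue of this mechanism, and I do not see how it would give the required $\varepsilon$-factor gain without it.
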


We say that $(q_1, \dots, q_k)$ is \emph{generic} if after omitting the first point\footnote{Recall that an arrangement is a sequence, so has an indexing of points.} from each of $q_2, \dots, q_k$ the remaining $k 8^k - (k-1)$ points are linearly independent as elements of $G_1 \otimes \dots \otimes G_k$. We also say that $(q_1, \dots, q_k)$ is \emph{respected} if $\phi(q_1) = \dots = \phi(q_k)$. Thus, the theorem above claims that we can guarantee that there are several tuples of arrangements, and most of them are respected.

\begin{lemma}\label{nongenNum}The number of non-generic $(q_1, \dots, q_k)$ is at most $p^{k (8^k + 1)} \Big(\frac{1}{|G_1|} + \dots + \frac{1}{|G_k|}\Big) |G_{[k]}|^{8^k}$.\end{lemma}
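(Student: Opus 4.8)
The plan is to count non-generic tuples $(q_1,\dots,q_k)$ by a union bound over the ways in which linear dependence can occur. First I would set up notation: each arrangement $q_i$ is a sequence of $8^k$ points in $G_{[k]}$ (since each of $q_1,\dots,q_k$ is built from $3k$ directions, giving $2^{3k}=8^k$ points), and a point of $G_{[k]}$ is identified with a pure tensor $x_1\otimes\cdots\otimes x_k \in G_1\otimes\cdots\otimes G_k$. After deleting the first point from each of $q_2,\dots,q_k$ we are left with a list of $N := k\cdot 8^k - (k-1)$ tensors, and ``non-generic'' means this list is linearly dependent. The strategy is: a list is linearly dependent iff some vector in it lies in the span of the earlier ones, so I would fix an ordering of the $N$ tensors and bound, for each position $m$, the probability (over the random choice of the underlying free parameters defining the arrangements) that the $m$-th tensor lies in the span of the previous $m-1$.

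The key observation is that each of the $N$ tensors in question, written out via the recursive definition of arrangements, has the form $v_1\otimes\cdots\otimes v_k$ where in \emph{at least one} coordinate $j$ the vector $v_j$ is a ``fresh'' parameter: tracing through the definition of a $(d_r,\dots,d_1)$-arrangement and the explicit point formula analogous to~\eqref{confFormulaPtsDefn}, each point except the very first one of each sub-arrangement introduces a new free variable $y\in G_{d}$ in exactly one coordinate, and because all directions $1,\dots,k$ occur among the $3k$ directions $[i,1],[k,1],[k,1],[k,i+1]$, after removing the first points every remaining point is genuinely ``new'' in some coordinate $j$ relative to the points preceding it in a suitable ordering. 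Concretely I would order the $N$ points so that when we reach the $m$-th point, its fresh coordinate entry $v_j$ ranges uniformly over $G_j$ independently of everything that determined the first $m-1$ points. Then the span of the first $m-1$ points is a subspace of $G_1\otimes\cdots\otimes G_k$ of dimension at most $m-1 \le N-1 \le k\cdot 8^k$, and conditioning on all coordinates of $v$ other than $j$, the event that $v_1\otimes\cdots\otimes v_k$ lies in that fixed subspace is the event that $v_j$ lies in a fixed subspace of $G_j$ of codimension at least... well, the event cuts out an affine-linear condition on $v_j$, and unless that condition is vacuous it holds with probability at most $1/|G_j| \cdot (\text{something})$; more carefully, $\{v_j : v_1\otimes\cdots\otimes v_k \in W\}$ is a subspace of $G_j$, and if it is all of $G_j$ then in fact the whole point was already forced to be dependent via a lower-order argument, which I would handle by a secondary induction or by noting it contributes to an earlier term. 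The cleanest route: bound $\mathbb{P}(v_j \in \text{proper subspace}) \le (\dim W)/|G_j| \le k 8^k/|G_j|$ when the relevant slice-subspace is proper, and show the slice is automatically proper because the fresh variable $v_j$ appears with coefficient $1$ (not inside a product of other $\varepsilon$-type scalars) in the coordinate-$j$ expression — this is exactly the role played by the ``leading'' parameter $a^{j_{d,l_d}}_{\varepsilon|_{[j_{d,l_d}-1]}}$ in~\eqref{confFormulaPtsDefn}.

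Summing over the $N \le k\cdot 8^k$ positions $m$ and over the (at most $k$) possible choices of fresh coordinate $j$ for each, and using that each contributes at most $\dfrac{k\cdot 8^k}{|G_j|}$, gives a total bound of roughly $\sum_{j\in[k]} \dfrac{(k\cdot 8^k)^2}{|G_j|}$ times $|G_{[k]}|^{8^k}$ on the \emph{probability}, hence on the count after multiplying by the total number $|G_{[k]}|^{8^k}$ of parameter choices (the parameters defining $(q_1,\dots,q_k)$ with common lengths number $|G_{[k]}|^{8^k}$, since fixing the lengths removes $k$ degrees of freedom per arrangement but we have $k$ arrangements sharing one length vector — I'd double-check this bookkeeping to match the exponent $8^k$ in the statement). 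Absorbing the polynomial-in-$k$ factors into $p^{k(8^k+1)}$ (which dominates $(k 8^k)^2$ for the relevant range, or simply bounding $(k8^k)^2 \le p^{k(8^k+1)}$ crudely) yields the claimed bound $p^{k(8^k+1)}\big(|G_1|^{-1}+\dots+|G_k|^{-1}\big)|G_{[k]}|^{8^k}$.

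\textbf{Main obstacle.} The delicate point is the bookkeeping that guarantees, for a suitable ordering of the $N$ tensors, that the $m$-th one has a coordinate in which a genuinely fresh uniform parameter appears linearly with unit coefficient and independently of the data fixing the first $m-1$ tensors — equivalently, that the ``first point removed'' from each $q_2,\dots,q_k$ is exactly the redundancy one must remove and no more. I would prove this by carefully unwinding the recursive arrangement definition: a $(d_{[3k]})$-arrangement of lengths $l_{[k]}$ is a binary tree of depth $3k$ whose $2^{3k}$ leaves are the points, and along the path to each leaf one picks up one free parameter per internal node except that the all-zeros leaf of each subtree reuses its parent's data; since every direction in $[k]$ labels at least one level of the tree, each non-minimal leaf is ``new'' in the coordinate of the deepest tree-level among those on its root-to-leaf path at which it branches right, and this gives the required ordering and independence. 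I expect writing this bijection/ordering cleanly — rather than the probability estimate itself, which is then routine — to be where the real work lies, and I would likely factor it out as a short combinatorial sublemma about the tree structure of arrangements before doing the union bound.
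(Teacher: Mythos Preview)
Your approach differs from the paper's and contains a genuine gap in the combinatorial core.

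\textbf{The probability bound.} You write $\mathbb{P}(v_j \in \text{proper subspace}) \le (\dim W)/|G_j|$, but the slice $\{v_j : v_1\otimes\cdots\otimes v_k\in W\}$ is a subspace of $G_j$ of \emph{dimension} at most $\dim W$, hence of size at most $p^{\dim W}$, giving probability at most $p^{\dim W}/|G_j|$ rather than $\dim W/|G_j|$. With this correction the geometric sum over $m$ would still fit inside $p^{k(8^k+1)}$, so this alone is not fatal.

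\textbf{The fresh-parameter claim fails.} Take two points of $q_i$ indexed by $\varepsilon,\varepsilon'\in\{0,1\}^{3k}$ that agree on the first $3k-1$ bits and differ only at bit $3k$. From the explicit coordinate formula preceding the lemma, each coordinate of $x^{i,\varepsilon}$ is a linear combination of the lengths and of parameters $u^{i,j}_{\varepsilon|_{[j-1]}}$, $u^{i,j+k}_{\varepsilon|_{[j+k-1]}}$, $u^{i,j+2k}_{\varepsilon|_{[j+2k-1]}}$ for $j\in[k]$; every subscript index depends only on $\varepsilon|_{[3k-1]}$. Hence these two sibling points are functions of \emph{exactly the same} set of parameters --- once one is placed, the other is completely determined, and no ordering can supply the second with a fresh coordinate variable. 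In any ordering at least half the points of each arrangement are ``second siblings'' in this sense, so your point-by-point union bound via fresh randomness cannot be carried out.

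\textbf{What the paper does.} The paper separates the problem into a deterministic combinatorial statement and an easy probabilistic one. It collects, for each direction $d$, the set $S_d\subset G_d$ of all parameters lying in that direction, and proves: if each $S_d$ is linearly independent in $G_d$, then the $N$ pure tensors are linearly independent in $G_1\otimes\cdots\otimes G_k$. This is done by exhibiting, for each point in a suitable layering $\mathcal{I}_1,\dots,\mathcal{I}_{3k+1}$ of the index set, a characteristic \emph{monomial} --- a specific pure tensor of one parameter from each $S_d$ --- that appears in the multilinear expansion of that point's tensor and in no earlier point's. Siblings land in different layers and are distinguished by different monomials, not by a single fresh parameter, which is exactly why the paper's device succeeds where a fresh-variable-per-point scheme stalls. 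The probabilistic step is then just a union bound over the at most $p^{|S_d|}\le p^{k(8^k+1)}$ nontrivial linear relations in each $G_d$, each holding with probability $1/|G_d|$.
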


Before we proceed with the proof, we explain how to parametrize $([i,1], [k,1], [k,1], [k,i + 1])$-arrangements. For fixed lengths $l_{[k]}$ and parameters $u^{i,1} \in G_i, u^{i, 2} \in G_{i-1}^{\{0,1\}}, \dots, u^{i, 3k} \in G_{i+1}^{\{0,1\}^{3k-1}}$, we can define an $([i,1], [k,1], [k,1], [k,i + 1])$-arrangement with points $(x^{i, \varepsilon})_{\varepsilon \in \{0,1\}^{3k}}$ by
\begin{align*}x^{i, \varepsilon}_{i + 1 - j} =\, &\varepsilon_{j + 2k}\Big(\varepsilon_{j + k} (\varepsilon_j l_{i + 1 - j} + u^{i, j}_{\varepsilon|_{[j-1]}}) + u^{i, j+k}_{\varepsilon|_{[j + k-1]}}\Big) + u^{i, j+2k}_{\varepsilon|_{[j + 2k -1]}}\\
=\, &\varepsilon_{j + 2k}\varepsilon_{j + k}\varepsilon_{j}l_{i + 1 - j} + \varepsilon_{j + 2k}\varepsilon_{j + k} u^{i, j}_{\varepsilon|_{[j-1]}} +\varepsilon_{j + 2k} u^{i, j+k}_{\varepsilon|_{[j + k-1]}} + u^{i, j+2k}_{\varepsilon|_{[j + 2k -1]}}\end{align*}
for $j \in [k]$, where the arithmetic in the coordinate index is carried out modulo $k$ and the points are ordered according to their image under the map $\varepsilon \mapsto (1-\varepsilon_1)2^{3k-1} + (1-\varepsilon_2)2^{3k-2} + \dots + (1-\varepsilon_{3k})$. Thus, the first point is indexed by $(1,1,\dots,1)$. The parameters $u^{i,1}, \dots, u^{i, 3k}$ arise naturally out of the recursive definition of arrangements.

\begin{proof}[Proof of Lemma~\ref{nongenNum}]Let $\mathcal{I} = ([k] \times \{0,1\}^{3k}) \setminus ([2,k] \times \{(1, \dots, 1)\})$. Write $S_d \subset G_d$ for the multiset of parameters that belong to $G_d$. That is, $S_d$ consists of $l_d$ and $u^{i,j}_\varepsilon$ for each $i \in [k]$, each $j \in [3k]$, and each $\varepsilon \in \{0,1\}^{j-1}$ such that $k | i + 1 - j - d$. We claim that if the parameters in $S_d$ are linearly independent for every $d \in [k]$, then the points $(x^{i, \varepsilon})_{(i, \varepsilon) \in \mathcal{I}}$ are linearly independent in $G_1 \otimes \dots \otimes G_k$. Assume that fact for now. Note that for each $d \in [k]$, there are at most $p^{k (8^k + 1)}$ different linear combinations that can be satisfied by elements in $S_d$. Thus, there are at most
\[p^{k (8^k + 1)} \Big(\frac{1}{|G_1|} + \dots + \frac{1}{|G_k|}\Big) |G_{[k]}|^{8^k}\]
choices of parameters such that some $S_d$ is not linearly independent.\\
\indent Thus, assume the linear independence of each $S_d$ and therefore that each $S_d$ is a proper set instead of a multiset.\\
\indent In what follows we shall write $u^{i, j}_{\varepsilon|_{[j-1]}}$ even when $j \leq 0$, when we interpret it to be $l_d$, where $d \in [k]$ satisfies $k | i + 1 - j - d$. We now partition $\mathcal{I} = \mathcal{I}_{1} \cup \dots \cup \mathcal{I}_{3k + 1}$ as follows. For $j \in [3k]$, set $\mathcal{I}_j = \Big\{\Big(i, (\varepsilon', 0, 1, \dots, 1)\Big) \colon i \in [k], \varepsilon' \in \{0,1\}^{3k - j}\Big\}$ (the zero is followed by $j-1$ ones) and $\mathcal{I}_{3k+1} = \{(1, (1, 1, \dots, 1))\}$. We prove that the given points are linearly independent by showing that the points $(x^{i, \varepsilon})_{(i, \varepsilon) \in \mathcal{I}_{1} \cup \dots \cup \mathcal{I}_j}$ are linearly independent for each $j$. But if we set $j' = 3k + 1 - j$ and take $r \in [0, k-1]$ such that $k| i + 1 - j' + r$, this claim is immediate from the fact that
\[u^{i, j'-r-1}_{\varepsilon|_{[j'-r-2]}} \otimes \dots \otimes u^{i, j'-k+1}_{\varepsilon|_{[j'-k]}} \otimes u^{i, j'}_{\varepsilon|_{[j'-1]}} \otimes u^{i, j'-1}_{\varepsilon|_{[j'-2]}} \otimes \dots \otimes u^{i, j'-r}_{\varepsilon|_{[j'-r-1]}}\]
appears only in $x^{i, \varepsilon}$ for $(i, \varepsilon) \in \mathcal{I}_j$, and in no other point of index in $\mathcal{I}_{1} \cup \dots \cup \mathcal{I}_j$ (we use $l_1 \otimes \dots \otimes l_k$ for $\mathcal{I}_{3k+1}$). The listed tensor products of elements are independent in $G_1 \otimes \dots \otimes G_k$ since each $S_d$ is a linearly independent set.\end{proof}

Theorem~\ref{densificationThm} is proved using an algebraic variant of the dependent random choice method.  

\begin{proposition}\label{DRCmap}Suppose that $\pi \colon H \to \mathbb{F}_p^t$ and $\psi \colon G_{[k]} \to \mathbb{F}_p^t$ are linear and multilinear maps chosen independently and uniformly at random. Suppose that $(q_1, \dots, q_k)$ is generic and that each $q_i$ has lengths $l_{[k]}$. Say that a point $x_{[k]}$ is \emph{kept} if $\pi \circ \phi(x_{[k]}) = \psi(x_{[k]})$. Then 
\[\mathbb{P}\Big((q_1, \dots, q_k)\text{ \emph{kept}}\Big)\begin{cases}=p^{-t(k 8^k - (k-1))}\ \text{ if }\phi(q_1) = \dots = \phi(q_k),\\\leq p^{-t(k 8^k - (k-2))}\ \text{ otherwise.}\end{cases}\]
\end{proposition}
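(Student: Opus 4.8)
The plan is to compute the probability directly by expanding the definitions of "kept" over all the points occurring in the $k$ arrangements, and then using the genericity hypothesis to reduce the computation to counting solutions of a system of linear constraints that splits into independent pieces.

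First I would set up notation: write $P = \{x : x \text{ a point of some } q_i\}$ and recall that $|P| \le k8^k$, with equality not necessarily holding, but that after discarding the first point from each of $q_2,\dots,q_k$ we are left with $N := k8^k - (k-1)$ points that, by genericity, are linearly independent in $G_1\otimes\cdots\otimes G_k$ (identifying $G_{[k]}$ with a spanning set of simple tensors). The event "$(q_1,\dots,q_k)$ kept" is the conjunction over all points $x$ of $P$ of the equations $\pi(\phi(x)) = \psi(x)$. The key observation is that $\pi$ and $\psi$ are chosen independently, so I would condition on $\pi$ first. For a fixed $\pi$, the event becomes: $\psi$ takes prescribed values $\pi(\phi(x)) \in \mathbb{F}_p^t$ at the points $x \in P$. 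Since $\psi$ is a uniformly random multilinear map $G_{[k]}\to\mathbb{F}_p^t$, and multilinear maps are exactly linear functionals on $G_1\otimes\cdots\otimes G_k$ (tupled $t$ times), the probability that $\psi$ hits a prescribed value on a given finite set of vectors is $p^{-t\cdot(\dim\text{span})}$ if the prescription is consistent with every linear dependence among those vectors, and $0$ otherwise.

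So the computation splits according to whether the prescribed values $\pi(\phi(x))$ respect the linear dependencies among the points of $P$. The only dependencies among points of $P$ (given genericity) come from the $k-1$ discarded first points: each first point $x^{i,(1,\dots,1)}$ of $q_i$ for $i\ge 2$ is a linear combination of the $N$ independent points (indeed, one can read off from the arrangement parametrization that the alternating sum of the $8^k$ points of each $q_i$ is $l_{[k]}$, independent of $i$, which gives exactly the relations forcing $\phi(q_1)=\cdots=\phi(q_k)$ to be the compatibility condition). Concretely: the span of $P$ has dimension $N$; the constraint "$\psi$ prescribed on $P$" is consistent iff for each $i\ge 2$ the prescribed value at the first point of $q_i$ equals the corresponding combination of the other prescribed values, and expanding this shows it is equivalent to $\pi(\phi(q_i)) = \pi(\phi(q_1))$ for all $i$. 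Therefore: if $\phi(q_1)=\cdots=\phi(q_k)$, then for \emph{every} $\pi$ the prescription is consistent, so $\mathbb{P}(\text{kept}\mid\pi) = p^{-tN}$, giving $\mathbb{P}(\text{kept}) = p^{-t(k8^k-(k-1))}$ exactly. If not all $\phi(q_i)$ are equal, say $\phi(q_2)\ne\phi(q_1)$, then consistency requires $\pi(\phi(q_2)) = \pi(\phi(q_1))$, i.e.\ $\pi(\phi(q_2)-\phi(q_1)) = 0$ for a fixed nonzero vector of $H$; since $\pi$ is uniform this has probability $p^{-t}$, and then conditionally the $\psi$-probability is at most $p^{-tN}$ (we may only have extra constraints, never fewer, and actually the span could drop further, but an upper bound suffices). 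Multiplying gives $\mathbb{P}(\text{kept}) \le p^{-t}\cdot p^{-tN} = p^{-t(k8^k-(k-2))}$, as claimed.

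The main obstacle I anticipate is the bookkeeping in the second paragraph: verifying carefully that the \emph{only} linear relations among the $k8^k$ points of $P$ are the $k-1$ relations expressing the discarded first points in terms of the rest, and that these relations translate exactly into the conditions $\pi(\phi(q_i))=\pi(\phi(q_1))$. This requires unwinding the recursive/tensor parametrization of $([i,1],[k,1],[k,1],[k,i+1])$-arrangements given before Lemma~\ref{nongenNum} and checking that the alternating ($\pm1$-Morse) sum of the points of an arrangement equals its length tuple $l_{[k]}$ regardless of the direction sequence — this is essentially the statement that $\phi$ evaluated on an arrangement, when $\phi$ is multilinear, depends only on the lengths. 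Once that identity is in hand the probability bookkeeping is routine. A minor secondary point is to make sure the "uniform random multilinear map hitting prescribed values" fact is applied correctly: it is just the statement that evaluation at a fixed finite set of tensors is a surjective (hence equidistributed) linear map from the space of multilinear maps onto $\mathbb{F}_p^{t\cdot d}$ where $d$ is the dimension of the span, so fibres have equal size — no characteristic issues since everything is over $\mathbb{F}_p$.
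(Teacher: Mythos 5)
Your proposal is correct and follows essentially the same route as the paper's proof: condition on $\pi$, use genericity together with the identification of multilinear maps with linear functionals on $G_1\otimes\cdots\otimes G_k$ to get probability $p^{-tN}$ for the $N = k8^k-(k-1)$ non-first points, translate the residual constraints at the omitted first points into $\pi(\phi(q_i))=\pi(\phi(q_1))$ (using that $\psi(q_i)=\psi(l_{[k]})$ for multilinear $\psi$, and that $q_1$'s points are all retained so $\pi(\phi(q_1))=\psi(l_{[k]})$), and then integrate out $\pi$. The only minor inaccuracy is your aside that ``the span could drop further'': genericity plus the fact that each omitted $x^i$ lies in the span of $l_1\otimes\cdots\otimes l_k$ and the retained points of $q_i$ means the span of all $k8^k$ point-tensors has dimension exactly $N$, so the conditional $\psi$-probability is exactly $p^{-tN}$ whenever consistency holds; but since you only need an upper bound in the non-respected case, this does not affect the argument.
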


\begin{proof}Let us begin by fixing an arbitrary affine map $\pi \colon H \to \mathbb{F}_p^t$ and conditioning on that as the $\pi$ that is chosen. Let $x^i$ be the first point of $q_i$ for $i \in [2,k]$. Since the tuple of arrangements is generic, and multilinear maps on $G_{[k]}$ are linear on $G_1 \otimes \dots \otimes G_k$, 
\[\mathbb{P}\Big((\forall x \in q_1 \cup \dots \cup q_k \setminus \{x^2, \dots, x^k\})\hspace{2pt}\pi(\phi(x)) = \psi(x)\Big) = p^{-t(k 8^k - (k-1))}.\]
Hence, for any choice of $\pi$
\begin{align*}&\mathbb{P}\Big((q_1, \dots, q_k)\hspace{2pt}\text{kept}\Big)\\
=\,&\mathbb{P}\Big(\Big((\forall x \in q_1 \cup \dots \cup q_k \setminus \{x^2, \dots, x^k\})\hspace{2pt}\pi(\phi(x)) = \psi(x)\Big)\text{ and }\Big((\forall i \in [2,k]) \pi(\phi(x^i)) = \psi(x^i)\Big)\Big)\\
=\,&\mathbb{P}\Big(\Big((\forall x \in q_1 \cup \dots \cup q_k \setminus \{x^2, \dots, x^k\})\hspace{2pt}\pi(\phi(x)) = \psi(x)\Big)\text{ and }\Big((\forall i \in [2,k]) \pi(\phi(q_i)) = \psi(l_{[k]})\Big)\Big)\\
&\hspace{2cm}\text{(from the first part of the condition we have that }\pi(\phi(q_1)) = \psi(l_{[k]}))\\ 
=\,&\mathbb{P}\Big(\Big((\forall x \in q_1 \cup \dots \cup q_k \setminus \{x^2, \dots, x^k\})\hspace{2pt}\pi(\phi(x)) = \psi(x)\Big)\text{ and }\Big((\forall i \in [2,k]) \pi(\phi(q_i)) =  \pi(\phi(q_1))\Big)\Big)\\\\
=\,&p^{-t(k 8^k - (k-1))}\, \mathbbm{1}\Big(\pi(\phi(q^1)) = \dots = \pi(\phi(q^k))\Big).\end{align*}
Now let us also take $\pi$ uniformly at random. Then by the above,
\[\mathbb{P}\Big((q_1, \dots, q_k)\hspace{2pt}\text{kept}\Big) = p^{-t(k 8^k - (k-1))} \mathbb{P}\Big(\pi(\phi(q^1)) = \dots = \pi(\phi(q^k))\Big).\]
The probability that $\pi(\phi(q^1)) = \dots = \pi(\phi(q^k))$ is 1 if the tuple is respected and at most $p^{-t}$ otherwise.\end{proof}

\begin{proof}[Proof of Theorem~\ref{densificationThm}]Let $N$ be the number of $k$-tuples $(q_1, \dots, q_k)$ such that $q_i$ is an $([i,1], [k,1], [k,1], [k, i+1])$-arrangement with points lying inside $A$ and $q_1, \dots, q_k$ have the same lengths. Let $N_{\text{resp}}$ be the number of those $(q_1, \dots, q_k)$ such that $\phi(q_1) = \dots = \phi(q_k)$ and let $N_{\text{nongen}}$ be the number of those $(q_1, \dots, q_k)$ that are not generic. By Lemma~\ref{nongenNum}
\[N_{\text{nongen}} \leq p^{k (8^k + 1)} \Big(\frac{1}{|G_1|} + \dots + \frac{1}{|G_k|}\Big) |G_{[k]}|^{8^k}.\]
We apply Theorem~\ref{exArrThm}. We obtain the inequality
\begin{equation}\label{arrBounds}|G_{[k]}|^{8^{k}} \geq N \geq N_{\text{resp}} = \Omega(\delta^{O(1)}) |G_{[k]}|^{8^{k}}.\end{equation}
Let $t \in \mathbb{N}$ and let $\pi \colon H \to \mathbb{F}_p^t$ and $\psi \colon G_{[k]} \to \mathbb{F}_p^t$ be linear and multilinear maps chosen independently and uniformly at random. Let $A' = \{x_{[k]} \in A \colon \pi(\phi(x_{[k]})) = \psi(x_{[k]})\}$ and, in a similar way to the above, let $N'$ be the number of tuples $(q_1, \dots, q_k)$ such that $q_i$ is an $([i,1], [k,1], [k,1], [k, i+1])$-arrangement with points lying inside $A'$ and $q_1, \dots, q_k$ have the same lengths, and let $N'_{\text{resp}}$ be the number of such tuples for which $\phi(q_1) = \dots = \phi(q_k)$. By Proposition~\ref{DRCmap} and inequality~\eqref{arrBounds}, we see that
\begin{align*}\ex\Big[ N'_{\text{resp}} - \varepsilon^{-1}(N' - N'_{\text{resp}})\Big]\,\, \geq&\,\, p^{t\big((k-1)-k8^k\big)} (N_{\text{resp}} - N_{\text{nongen}}) - \varepsilon^{-1} p^{t\big((k-2)-k8^{k}\big)} |G_{[k]}|^{8^{k}}\\
\geq &\,\,p^{t\big((k-1)-k8^{k}\big)} \delta^{O(1)}\Big(1 - p^{-t} \delta^{-O(1)} \varepsilon^{-1} \Big)|G_{[k]}|^{8^{k}},\end{align*}
provided that $|G_i| \geq k p^{k(8^{k} + 1)}\delta^{-\con_{k,p}}$ for each $i \in [k]$. Pick $t = O(\log \delta^{-1} + \log \varepsilon^{-1})$ such that $p^{-t} \delta^{-O(1)} \varepsilon^{-1} \leq 1/2$. Then there is a choice of $A'$ such that $N'_{\text{resp}} \geq (1-\varepsilon) N'$ and $N'_{\text{resp}} = \delta^{O(1)} \varepsilon^{O(1)}|G_{[k]}|^{8^k}$, as claimed.\end{proof}

\section{Obtaining a nearly multiaffine piece}
The goal of this section is to obtain a highly structured set of lengths $L$ on which the map $\psi$ given by common $\phi$-values of arrangements (as in Theorem~\ref{exArrThm}) is actually multiaffine, in the sense that for each line in one of the $k$ directions there is a global affine map that coincides with $\psi$ on the intersection with $L$. The structure we are after is given by subsets of density $1-o(1)$ inside varieties of bounded codimension. The main result will be obtained by applying the following proposition in each direction.

\begin{proposition}\label{nearlyMaffStep}Let $A \subset G_{[k]}$ be a set of density $\delta$ and let $\phi \colon A \to H$. Let $\varepsilon \in (0, 10^{-4})$ be given. Let $\mathcal{Q}$ be the set of all $(q_1, \dots, q_k)$ such that $q_i$ is a $([i,1], [k,1], [k,1], [k,i + 1])$-arrangement with points in $A$ and $q_1, \dots, q_k$ have the same lengths. Write $\mathcal{Q}_{x_{[k]}}$ for the set of $(q_1, \dots, q_k) \in \mathcal{Q}$ such that each $q_i$ has lengths $x_{[k]}$. Let $X \subset G_{[k]}$ be a subset such that for each $x_{[k]} \in X$, there is a value $\psi(x_{[k]}) \in H$ such that
\[\Big|\Big\{(q_1, \dots, q_k) \in \mathcal{Q}_{x_{[k]}} \colon \phi(q_1) = \dots = \phi(q_k) = \psi(x_{[k]})\Big\}\Big| \geq (1-\varepsilon) |\mathcal{Q}_{x_{[k]}}|,\]
and 
\[\sum_{x_{[k]} \in X} |\mathcal{Q}_{x_{[k]}}| \geq (1-\varepsilon) |\mathcal{Q}|.\]
Let $d \in [k]$ be a direction. Then there exist a positive integer $t = \exp^{\big((2k + 1)(D^{\mathrm{mh}}_{k-1} + 2)\big)}\Big(O((\delta\varepsilon)^{-O(1)})\Big)$, a multiaffine map $\alpha \colon G_{[k]} \to \mathbb{F}_p^t$, and a collection of disjoint sets $(S^\lambda)_{\lambda \in \mathbb{F}_p^t}$, such that $S^\lambda \subset \{x_{[k]} \in G_{[k]} \colon \alpha(x_{[k]}) = \lambda\} \cap X$, $\psi|_{S^\lambda}$ is affine in direction $d$ for each $\lambda \in \mathbb{F}_p^t$ (in the sense that for each $x_{[k]\setminus \{d\}} \in G_{[k]\setminus \{d\}}$ there is an affine map $\rho \colon G_d \to H$ such that $\rho(y_d) = \psi(y_d)$ for all $y_d \in S^\lambda_{x_{[k] \setminus \{d\}}}$), and
\[\sum_{x_{[k]} \in \cup_{\lambda \in \mathbb{F}^t} S^\lambda} |\mathcal{Q}_{x_{[k]}}|\ = \Big(1-O(\sqrt[8]{\varepsilon})\Big) |\mathcal{Q}|.\]
\end{proposition}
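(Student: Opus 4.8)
The plan is to run the following strategy for a fixed direction $d$ (without loss of generality $d=k$). The map $\psi$ is defined on $X$ via common $\phi$-values of tuples of arrangements, and the key structural fact to exploit is that the lengths of an $([i,1],[k,1],[k,1],[k,i+1])$-arrangement are obtained by taking iterated convolutions of the indicator of $A$ (weighted by $\phi$ in an appropriate way), so that the density $|\mathcal{Q}_{x_{[k]}}|$ as a function of $x_{[k]}$ is essentially a value of $\bigconv{k}\dots\bigconv{1}\bigconv{k}\dots\bigconv{1}\mathbbm{1}_A$ (up to rescaling), and more importantly, the ``respected'' count behaves the same way. First I would set up a bounded function $F\colon G_{[k]}\to\mathbb{D}$ built from $\mathbbm{1}_A$ and a character applied to $\phi$ (via a random linear form $\pi\circ\phi$, in the spirit of Proposition~\ref{DRCmap}, so that multilinear structure is respected), so that the mixed convolution $\bigconv{k}\dots\bigconv{1}\bigconv{k}\dots\bigconv{1}F$ at a point $x_{[k]}$ encodes both $|\mathcal{Q}_{x_{[k]}}|$ and the phase $\chi(\pi(\psi(x_{[k]})))$ on the set where most tuples are respected. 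Then Theorem~\ref{MixedConvApprox} applies: there are $l=\exp^{((2k+1)(D^{\mathrm{mh}}_{k-1}+2))}(O((\delta\varepsilon)^{-O(1)}))$ multiaffine forms $\phi_1,\dots,\phi_l\colon G_{[k]}\to\mathbb{F}_p$ and constants $c_i\in\mathbb{D}$ with
\[\bigconv{k}\dots\bigconv{1}\bigconv{k}\dots\bigconv{1}F\ \apps{\varepsilon'}_{L^q}\ \sum_{i\in[l]}c_i\,\chi\circ\phi_i\]
for a suitable $q$ and $\varepsilon'$ polynomial in $\delta,\varepsilon$.

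Next I would take $\alpha=(\phi_1,\dots,\phi_l)\colon G_{[k]}\to\mathbb{F}_p^l$ as the multiaffine map in the conclusion (setting $t=l$). On each layer $\{\alpha=\lambda\}$ the approximating function $\sum_i c_i\chi(\phi_i)$ is constant, so the $L^q$ approximation forces, for most layers, that the mixed convolution of $F$ is close to a single constant on a $(1-o(1))$-fraction of the layer. Via Markov's inequality and the earlier bounds $|G_{[k]}|^{8^k}\ge N\ge N_{\mathrm{resp}}=\Omega(\delta^{O(1)})|G_{[k]}|^{8^k}$ (compare~\eqref{arrBounds}), I would extract a union of good layers covering $(1-O(\sqrt{\varepsilon'}))$ of the mass $\sum_{x_{[k]}}|\mathcal{Q}_{x_{[k]}}|$, such that on each good layer the value $\chi(\pi(\psi(x_{[k]})))$ is constant on a very dense subset; running this over $O(\log(\delta^{-1}\varepsilon^{-1}))$ coordinates of the random $\pi$ and intersecting, one pins down $\psi(x_{[k]})$ itself up to a bounded number of possibilities, hence (refining $\alpha$ by a bounded-codimension multiaffine map) up to a constant on a dense subset $S^\lambda$ of each good layer. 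The point of doing all this is that $\psi$, being constant up to the character on a $(1-o(1))$-dense subset of a line in direction $k$ inside $S^\lambda$, respects almost all additive quadruples on that line, so Corollary~\ref{approxF2homm} (or rather Corollary~\ref{easyExtnDoubleApprox}) promotes it, after discarding few more points, to the restriction of a genuine affine map $\rho\colon G_k\to H$ on each such line; this is exactly the ``affine in direction $d$'' conclusion.

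The bookkeeping I would organize as follows: (1) define $F$ and verify the convolution identity relating $\bigconv{k}\dots\bigconv{1}\bigconv{k}\dots\bigconv{1}F$ to $\mathcal{Q}_{x_{[k]}}$ and to $\psi$; (2) invoke Theorem~\ref{MixedConvApprox}; (3) pass to good layers of $\alpha$ using the $L^q$ bound and averaging, controlling the total $\mathcal{Q}$-mass lost by $O(\sqrt[8]{\varepsilon})$ by choosing $q$ and $\varepsilon'$ appropriately relative to $\delta$; (4) on each good layer, use genericity of tuples (Lemma~\ref{nongenNum}) together with the randomness of $\pi$ to deduce that $\psi$ is constant-up-to-$\chi$ on a dense subset of most lines in direction $k$; (5) apply Corollary~\ref{easyExtnDoubleApprox} line-by-line and collect the resulting $S^\lambda$. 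The main obstacle I expect is step (4): translating an $L^q$-closeness statement about the \emph{convolution} of $F$ into a statement about $\psi$ on \emph{individual lines} in direction $d$ requires care, because the convolution averages over the line, so one must first localize (using the exact-slice-size phenomenon recorded in the footnote to Theorem~\ref{fibresThm}, and the fact that on a fixed layer the line-densities $|\mathcal{Q}_{x_{[k]\setminus\{d\}},y_d}|$ are essentially constant) before the per-line additive-quadruple count can be read off. Controlling the error propagation through these localizations — so that the final loss is genuinely $O(\sqrt[8]{\varepsilon})$ and not something worse — is the delicate part; everything else is an assembly of the cited results.
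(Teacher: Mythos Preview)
Your proposal has a genuine gap: the character approach cannot pin down $\psi$ when $H$ is large. You propose to encode $\psi$ via $\chi(\pi\circ\psi)$ for $O(\log(\delta^{-1}\varepsilon^{-1}))$ random linear forms $\pi\colon H\to\mathbb{F}_p$, apply Theorem~\ref{MixedConvApprox} to the twisted function $F=\mathbbm{1}_A\cdot\chi(\pi\circ\phi)$ for each $\pi$, and then intersect the resulting layers. But the multiaffine map $\alpha$ produced by Theorem~\ref{MixedConvApprox} depends on $F$ and hence on $\pi$, so after intersecting you have only determined $m=O(\log(\delta^{-1}\varepsilon^{-1}))$ linear functionals $\pi_1(\psi),\dots,\pi_m(\psi)$ on each layer. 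This constrains $\psi$ to a coset of codimension $m$ in $H$, which does not ``pin down $\psi$ up to a bounded number of possibilities'' unless $\dim H\le m$. Since $H$ is arbitrary, the reconstruction in step (4) fails. Relatedly, your step (5) presumes $\psi$ is essentially \emph{constant} on lines in direction $d$ inside each $S^\lambda$; but the conclusion of the proposition is only that $\psi$ is \emph{affine} on such lines, and in general it is genuinely non-constant there, so the mechanism you propose (constant $\Rightarrow$ trivially respects quadruples $\Rightarrow$ Corollary~\ref{easyExtnDoubleApprox}) is aiming at the wrong target.

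The paper's proof avoids characters on $H$ entirely. The key missing idea is an auxiliary map $\theta\colon Y\to H$ defined (for a large set $Y\subset G_{[k]}$) as the dominant $\phi$-value on the one-step-shorter $([d-1,1],[k,1],[k,1],[k,d+1])$-arrangements. Because a $([d,1],\dots)$-arrangement of lengths $x_{[k]}$ is the concatenation of two $([d-1,1],\dots)$-arrangements of lengths $(x_{[k]\setminus\{d\}},x_d+y_d)$ and $(x_{[k]\setminus\{d\}},y_d)$, the hypotheses force the \emph{$H$-valued} approximate identity
\[
\theta(x_{[k]\setminus\{d\}},x_d+y_d)-\theta(x_{[k]\setminus\{d\}},y_d)=\psi(x_{[k]})
\]
to hold for almost all relevant $(x_{[k]},y_d)$ (weighted by arrangement counts). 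This is exactly the input to Lemma~\ref{3approxHom}, which then yields an affine map on each line. The role of Theorem~\ref{strongMixedApprox} in the paper is not to encode $\psi$, but only to approximate the $[0,1]$-valued densities $f^{(1,i)}$ and $f^{(2,d)}$ so that the weights become constant on layers of a bounded-codimension multiaffine map; this is what lets one average over layers and apply Lemma~\ref{3approxHom} line by line with controlled error.
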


The main result of this section follows easily from the proposition above, by applying it to each direction $d \in [k]$.

\begin{theorem}\label{nearlyMultThm}Suppose that the assumptions of Proposition~\ref{nearlyMaffStep} hold. Then there exist a non-empty variety $V$ of codimension $\exp^{\big((2k + 1)(D^{\mathrm{mh}}_{k-1} + 2)\big)}\Big(O((\delta\varepsilon)^{-O(1)})\Big)$ and a subset $B \subset V \cap X$, of size $(1-O(\sqrt[8]{\varepsilon})) |V|$, such that $\psi|_B$ is multiaffine (in the sense that for each $d \in [k]$ and each $x_{[k]\setminus \{d\}} \in G_{[k]\setminus \{d\}}$ there is an affine map $\rho \colon G_d \to H$ such that $\rho(y_d) = \psi(y_d)$ for all $y_d \in B_{x_{[k] \setminus \{d\}}}$).\end{theorem}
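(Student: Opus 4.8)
The plan is to deduce Theorem~\ref{nearlyMultThm} from Proposition~\ref{nearlyMaffStep} by a straightforward iteration over the $k$ directions, where at each stage we cut down the ambient set using the multiaffine map produced by the proposition and pass to a single layer. Concretely, I would define a decreasing sequence of data $(\alpha^{(0)}, X^{(0)}), (\alpha^{(1)}, X^{(1)}), \dots, (\alpha^{(k)}, X^{(k)})$ where $X^{(0)} = X$, $\alpha^{(0)}$ is trivial, and at step $d$ we apply Proposition~\ref{nearlyMaffStep} in direction $d$ to the current set together with its current weighted density (the weights being $|\mathcal{Q}_{x_{[k]}}|$). The key point is that the hypotheses of the proposition are stable under the operation ``restrict to one layer of the new multiaffine map and keep only the points where $\psi$ is still the common arrangement value''. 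One needs to check that after restricting to a suitable layer $S^{\lambda}$ the fraction of $\mathcal{Q}$-mass that survives is still $1 - O(\sqrt[8]{\varepsilon})$, and by an averaging (pigeonhole) argument over the layers $\lambda \in \mathbb{F}_p^{t}$ such a layer exists. Replacing $\varepsilon$ by a fixed power of itself at each of the $k$ steps keeps the error of the form $O(\varepsilon^{\Omega(1)})$ and the codimension of the form $\exp^{((2k+1)(D^{\mathrm{mh}}_{k-1}+2))}(O((\delta\varepsilon)^{-O(1)}))$, since we only iterate a bounded number ($k$) of times and each application multiplies the codimension by at most a constant factor (absorbed into the $O(\cdot)$) and composes one more exp-tower of bounded height.

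More precisely, I would carry out the following steps. \textbf{Step 1.} Set up the induction: show that for each $d \in [0,k]$ there are a variety $V^{(d)}$ of codimension $\exp^{((2k+1)(D^{\mathrm{mh}}_{k-1}+2))}(O((\delta\varepsilon)^{-O(1)}))$ and a subset $B^{(d)} \subset V^{(d)} \cap X$ with $\sum_{x_{[k]} \in B^{(d)}} |\mathcal{Q}_{x_{[k]}}| \geq (1 - O(\varepsilon^{2^{-3d}})) |\mathcal{Q}|$ (or some such explicit bookkeeping of the exponent), such that $\psi|_{B^{(d)}}$ is affine in directions $1, \dots, d$, and moreover the hypotheses of Proposition~\ref{nearlyMaffStep} continue to hold with $B^{(d)}$ in place of $X$ and with $\varepsilon$ replaced by $O(\varepsilon^{2^{-3d}})$ — here one uses that restricting $\mathcal{Q}$ to lengths in $B^{(d)}$ still leaves almost all the mass, and that within that mass almost all tuples are respected with common value $\psi$. \textbf{Step 2.} Inductive step: given $(V^{(d)}, B^{(d)})$, apply Proposition~\ref{nearlyMaffStep} in direction $d+1$; this yields a multiaffine map $\alpha$ and disjoint sets $(S^\lambda)_\lambda$ covering $(1 - O(\sqrt[8]{\varepsilon'}))$ of the $\mathcal{Q}$-mass (with $\varepsilon' = O(\varepsilon^{2^{-3d}})$), each contained in a single layer of $\alpha$ and with $\psi|_{S^\lambda}$ affine in direction $d+1$. \textbf{Step 3.} Average over $\lambda$: since the $S^\lambda$ are disjoint and between them carry $(1 - O(\sqrt[8]{\varepsilon'}))$ of the mass, and since (by the structure-of-dense-columns results, or just directly) the mass is not too concentrated, a counting argument produces a layer $\lambda^\ast$ of $\alpha$ such that, setting $V^{(d+1)} = V^{(d)} \cap \alpha^{-1}(\lambda^\ast)$ (external-approximating via Lemma~\ref{varOuterApprox} if we want $V^{(d+1)}$ genuinely a variety rather than an intersection with a layer, though an intersection of a variety with a layer is already a variety) and $B^{(d+1)} = S^{\lambda^\ast}$, the surviving $\mathcal{Q}$-mass inside $B^{(d+1)}$ is at least $(1 - O(\sqrt[8]{\varepsilon'})) |\mathcal{Q}|$, and on $B^{(d+1)}$ the map $\psi$ is affine in directions $1, \dots, d+1$ — affineness in the first $d$ directions being inherited from $B^{(d)} \supset B^{(d+1)}$ and affineness in direction $d+1$ from the proposition. \textbf{Step 4.} At $d = k$, take $V = V^{(k)}$ and $B = B^{(k)}$; then $\psi|_B$ is multiaffine in all $k$ directions, $B \subset V \cap X$, and $\sum_{x \in B} |\mathcal{Q}_x| = (1 - O(\varepsilon^{\Omega(1)})) |\mathcal{Q}|$. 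To convert the $\mathcal{Q}$-mass statement into the size statement $|B| \geq (1 - O(\sqrt[8]{\varepsilon}))|V|$ claimed in the theorem, I would use that $|\mathcal{Q}_{x_{[k]}}|$ is, up to negligible corrections, constant in $x_{[k]}$ on a dense subvariety — this follows from the Fibres theorem (Theorem~\ref{fibresThm}) applied to the variety describing the simultaneous length constraints — so after possibly intersecting $V$ with one more bounded-codimension variety to fix this count, $\mathcal{Q}$-mass proportions translate into cardinality proportions; non-emptiness of the final $V$ follows from Lemma~\ref{varsizelemma} together with the lower bound $(1 - O(\varepsilon^{\Omega(1)})) |\mathcal{Q}| \geq (\delta\varepsilon)^{O(1)} |G_{[k]}|^{8^k} > 0$.

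The main obstacle I anticipate is \textbf{Step 3}, the passage from ``a family of disjoint good layers covering most of the $\mathcal{Q}$-mass'' to ``a single layer that is good and still carries almost all the mass''. The naive pigeonhole only gives a layer carrying a $1/(\text{number of layers})$ fraction, which is far too lossy since the number of layers $p^t$ is huge (tower-type in $\delta^{-1}$). The fix is that we do \emph{not} need a single layer to carry most of the mass; rather, we should keep the \emph{union} of all good layers as our new set. So the correct formulation of the induction carries along not a single layer but a \emph{union of layers} $U^{(d)}$ of a multiaffine map $\alpha^{(d)}$ (equivalently, $B^{(d)}$ sits inside a set which is a union of layers, and within each layer $\psi$ is affine in the relevant directions), and the hypotheses of Proposition~\ref{nearlyMaffStep} are checked \emph{layer by layer} — most layers (by $\mathcal{Q}$-mass) inside $U^{(d)}$ individually satisfy the proposition's hypotheses with a slightly worse $\varepsilon$. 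This is exactly the bookkeeping already used in the statement of Proposition~\ref{nearlyMaffStep} (it outputs disjoint sets indexed by layers, not a single layer), so the iteration is self-consistent; the care required is purely in tracking that ``most layers are good'' survives intersection with the next multiaffine map, which is where one invokes the simultaneous approximation results (Theorem~\ref{simVarAppThm}, Corollary~\ref{simFibresThm}) to ensure that refining by $\alpha^{(d+1)}$ does not destroy too much structure. Once this is set up correctly, each of the $k$ steps costs only a fixed power of $\varepsilon$ and a bounded increase in codimension, and the final bounds come out as stated.
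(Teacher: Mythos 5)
Your overall strategy — apply Proposition~\ref{nearlyMaffStep} once per direction, carry along a decomposition into layer-pieces (not a single layer), on each of which $\psi$ is affine in the directions handled so far, and finally average to find a good joint layer — is essentially the paper's argument, merely organized as a $k$-step iteration rather than a parallel application in all $k$ directions followed by intersection of the families $(S^{(d),\lambda})_\lambda$. Your own diagnosis of the pigeonhole trap and your proposed union-of-layers fix are the right idea. One thing you should be explicit about, though: when you pass from step $d$ to step $d+1$, the new collection $\{S^{(d+1),\lambda'}\}$ must be refined against the old layers of $\alpha^{(d)}$ (i.e.\ replace each piece by $S^{(d+1),\lambda'} \cap S^{(d),\lambda}$), since a single new piece may cross several old layers, and on such a crossing set $\psi$ would no longer be affine in direction $d$. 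The paper sidesteps this by intersecting all $k$ families at once.

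The genuine gap is in your final conversion from $\mathcal{Q}$-mass to cardinality. You propose to make $|\mathcal{Q}_{x_{[k]}}|$ approximately constant by invoking the Fibres theorem (Theorem~\ref{fibresThm}), but that theorem controls slice sizes $|B_{x_{[i]}}|$ of a \emph{variety}, and $|\mathcal{Q}_{x_{[k]}}|$ is not of that form: it is $\prod_{i\in[k]} f^{(1,i)}(x_{[k]})\cdot |G_{[k]}|^{2^{3k}-1}$, a product of iterated mixed convolutions of the indicator of $A'$. The Fibres theorem has nothing to say about such weights. What is actually needed — and what the paper uses — is Theorem~\ref{strongMixedApprox}, which approximates each $f^{(1,i)}$ in $L^k$ by a bounded sum of multiaffine phases $\chi\circ\phi^{(i)}_j$. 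This yields a multiaffine map $\bm\phi$ and a function $c:\mathbb{F}_p^{\sum m^{(i)}}\to[0,1]$ so that $|\mathcal{Q}_{x_{[k]}}|\approx c(\bm\phi(x_{[k]}))\,|G_{[k]}|^{2^{3k}-1}$ in $L^1$, i.e.\ the weight is approximately constant on layers of $\bm\phi$. Only after this substitution does averaging over the joint layers of $(\bm\alpha,\bm\phi)$ upgrade the mass inequality to the cardinality inequality $|S^{\bm\lambda}\cap\bm\phi^{-1}(\bm\mu)| = (1-O(\sqrt[8]{\varepsilon}))\,|\bm\alpha^{-1}(\bm\lambda)\cap\bm\phi^{-1}(\bm\mu)|$; and non-emptiness of the chosen layer comes for free because the averaging picks a layer on which $c(\bm\mu)>0$, not from Lemma~\ref{varsizelemma} and a global mass lower bound as you suggest.
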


\begin{proof}[Proof of Proposition~\ref{nearlyMaffStep}]Write 
\begin{align*}f^{(1, i)}(x_{[k]}) =& \bigconv{i}\dots\bigconv{1}\bigconv{k}\dots\bigconv{1}\bigconv{k}\dots\bigconv{1}\bigconv{k}\dots\bigconv{i + 1} A(x_{[k]}),\\
f^{(2, i)}(x_{[k]}) =& \bigconv{i-1}\dots\bigconv{1}\bigconv{k}\dots\bigconv{1}\bigconv{k}\dots\bigconv{1}\bigconv{k}\dots\bigconv{i + 1} A(x_{[k]})\end{align*}
and
\[\bm{g}_i = |G_{i+1}|^{2^{3k - 2} + 2^{2k-2} + 2^{k-2}} |G_{i+2}|^{2^{3k - 3} + 2^{2k-3} + 2^{k-3}} \dots |G_{i-1}|^{2^{2k} + 2^{k}  + 1} |G_i|^{2^{2k-1} + 2^{k-1}}.\]
The number of $([i,1], [k,1], [k,1], [k,i + 1])$-arrangements in $A$ of lengths $x_{[k]}$ is exactly
\[f^{(1, i)}(x_{[k]}) \cdot \bm{g}_i^2 |G_i|,\]
and the number of $([i-1,1], [k,1], [k,1], [k,i + 1])$-arrangements in $A$ of lengths $x_{[k]}$ is
\[f^{(2, i)}(x_{[k]}) \cdot \bm{g}_i.\]
Thus, 
\[\mathcal{Q}_{x_{[k]}} = \bigg(\prod_{i \in [k]} f^{(1, i)}(x_{[k]})\bigg) \cdot |G_{[k]}|^{2^{3k} - 1}.\]
Let $Y \subset G_{[k]}$ be the set of all $x_{[k]} \in G_{[k]}$ such that there is a value $\theta(x_{[k]}) \in H$ with the property that $\phi(q) = \theta(x_{[k]})$ for proportion at least $\frac{3}{4}$ of the $([d-1,1], [k,1], [k,1], [k,d + 1])$-arrangements $q$ of lengths $x_{[k]}$ with points in $A$. This value $\theta(x_{[k]})$ is clearly unique. Note also that for each $x_{[k]} \in X$, the proportion of $([d,1], [k,1], [k,1], [k,d + 1])$-arrangements $q$ in $A$ of lengths $x_{[k]}$ such that $\phi(q) = \psi(x_{[k]})$ is at least $1-\varepsilon$.\\

\indent Write $\bm{w}(x_{[k]}) = \prod_{i \in [k] \setminus \{d\}} f^{(1, i)}(x_{[k]}) \in [0,1]$. We think of this quantity as the `weight' of the point $x_{[k]}$. As a consequence\footnote{We do not use the full strength of the Theorem~\ref{exArrThm}: we merely need the existence of many tuples of arrangements with the same lengths. In particular, the map $\phi$ plays no role in this deduction, so we do not have to fulfil the requirements on $\phi$ in Theorem~\ref{exArrThm}.} of Theorem~\ref{exArrThm}, we have that
\begin{equation}\label{tuplesArrDenseEq}\exx_{x_{[k]}} \bm{w}(x_{[k]}) f^{(1, d)}(x_{[k]}) = \Omega(\delta^{O(1)}).\end{equation}

\noindent\textbf{Step 1.} We show that $Y$ is large. Suppose that $x_{[k]} \in X$. Then each $y_d \notin Y_{x_{[k] \setminus \{d\}}}$ produces at least
\[\frac{1}{4} f^{(2, d)}(x_{[k] \setminus \{d\}}, x_d + y_d) f^{(2, d)}(x_{[k] \setminus \{d\}}, y_d) \bm{g}_d^2\]
$([d,1], [k,1], [k,1], [k,d + 1])$-arrangements $q$ of lengths $x_{[k]}$ such that $\phi(q) \not= \psi(x_{[k]})$. Thus,
\[|G_d|^{-1}\sum_{y_d \in Y_{x_{[k] \setminus \{d\}}}}  f^{(2, d)}(x_{[k] \setminus \{d\}}, x_d + y_d) f^{(2, d)}(x_{[k] \setminus \{d\}}, y_d)  \geq (1-4\varepsilon) f^{(1,d)}(x_{[k]}).\]
Similarly, each $x_d + y_d \notin Y_{x_{[k] \setminus \{d\}}}$ produces at least
\[\frac{1}{4} f^{(2, d)}(x_{[k] \setminus \{d\}}, x_d + y_d) f^{(2, d)}(x_{[k] \setminus \{d\}}, y_d) \bm{g}_d^2\]
$([d,1], [k,1], [k,1], [k,d + 1])$-arrangements $q$ of lengths $x_{[k]}$ such that $\phi(q) \not= \psi(x_{[k]})$. Thus
\begin{align}\sum_{x_{[k]} \in X} \bm{w}(x_{[k]})& \sum_{y_d \in Y_{x_{[k] \setminus \{d\}}} \cap (Y_{x_{[k] \setminus \{d\}}} - x_d)}  f^{(2, d)}(x_{[k] \setminus \{d\}}, x_d + y_d) f^{(2, d)}(x_{[k] \setminus \{d\}}, y_d)\nonumber\\
\geq &(1-8\varepsilon)|G_d| \sum_{x_{[k]} \in X}\bm{w}(x_{[k]}) f^{(1,d)}(x_{[k]})\nonumber\\
\geq &(1-9\varepsilon)|G_d| \sum_{x_{[k]} \in G_{[k]}}\bm{w}(x_{[k]}) f^{(1,d)}(x_{[k]})\nonumber\\
= &(1-9\varepsilon) \sum_{x_{[k]} \in G_{[k]}} \bm{w}(x_{[k]}) \sum_{y_d \in G_d} f^{(2, d)}(x_{[k] \setminus \{d\}}, x_d + y_d) f^{(2, d)}(x_{[k] \setminus \{d\}}, y_d).\label{yLargeEqn}\end{align}

\textbf{Step 2.} Recall that $\theta$ is defined as the most frequent value of $\phi(q)$, where $q$ ranges over $([d-1,1], [k,1], [k,1], [k,d + 1])$-arrangements in $A$ of lengths $x_{[k]}$. In this step, we relate $\theta$ and $\psi$. Note also that when $y_d \in Y_{x_{[k] \setminus \{d\}}} \cap (Y_{x_{[k] \setminus \{d\}}} - x_d)$, then we get at least $\frac{9}{16} f^{(2, d)}(x_{[k] \setminus \{d\}}, x_d + y_d) f^{(2, d)}(x_{[k] \setminus \{d\}}, y_d) \bm{g}_d^2$ arrangements $q$ of lengths $x_{[k]}$ such that $\phi(q) = \theta(x_{[k] \setminus \{d\}}, x_d + y_d) - \theta(x_{[k] \setminus \{d\}}, y_d)$. Hence, if $\theta(x_{[k] \setminus \{d\}}, x_d + y_d) - \theta(x_{[k] \setminus \{d\}}, y_d) \not= \psi(x_{[k]})$, then these arrangements satisfy that $\phi(q) \not= \psi(x_{[k]})$. Thus, using~\eqref{yLargeEqn}, we get
\begin{align}&\sum_{x_{[k]} \in X} \bm{w}(x_{[k]}) \sum_{y_d \in Y_{x_{[k] \setminus \{d\}}} \cap (Y_{x_{[k] \setminus \{d\}}} - x_d)}  \mathbbm{1}\Big(\theta(x_{[k] \setminus \{d\}}, x_d + y_d) - \theta(x_{[k] \setminus \{d\}}, y_d) = \psi(x_{[k]})\Big)\nonumber\\
&\hspace{8cm}f^{(2, d)}(x_{[k] \setminus \{d\}}, x_d + y_d) f^{(2, d)}(x_{[k] \setminus \{d\}}, y_d) \nonumber\\
&\hspace{2cm}\geq (1-13 \varepsilon) \sum_{x_{[k]} \in G_{[k]}} \bm{w}(x_{[k]}) \sum_{y_d \in G_d} f^{(2, d)}(x_{[k] \setminus \{d\}}, x_d + y_d) f^{(2, d)}(x_{[k] \setminus \{d\}}, y_d).\label{approximateEqThetaPsi}\end{align}

\textbf{Step 3.} In this step we apply the approximation theorem for mixed convolutions and elucidate the structure of the approximation sum. Let $\xi > 0$ be a constant to be specified later. For each $i \in [k] \setminus \{d\}$, apply Theorem~\ref{strongMixedApprox} to $A$ for the $L^k$ norm to obtain a positive integer $l^{(i)} = \exp^{\big((2k + 1)(D^{\mathrm{mh}}_{k-1} + 2)\big)}\Big(O(\xi^{-O(1)})\Big)$, constants $c^{(i)}_1, \dots, c^{(i)}_{l^{(i)}} \in \mathbb{D}$, and multiaffine forms $\phi^{(i)}_1, \dots, \phi^{(i)}_{l^{(i)}} \colon G_{[k]} \to \mathbb{F}_p$, such that
\[f^{(1, i)}(x_{[k]}) = \bigconv{i}\dots\bigconv{1}\bigconv{k}\dots\bigconv{1}\bigconv{k}\dots\bigconv{1}\bigconv{k}\dots\bigconv{i + 1} A(x_{[k]}) \apps{\xi}_{L^k,x_{[k]}} \sum_{j \in [l^{(i)}]} c^{(i)}_j \chi(\phi^{(i)}_j(x_{[k]}))\]
and apply Theorem~\ref{strongMixedApprox} one more time to $A$ to obtain a positive integer $l^{(2, d)} = \exp^{\big((2k + 1)(D^{\mathrm{mh}}_{k-1} + 2)\big)}\Big(O(\xi^{-O(1)})\Big)$, constants $c^{(2, d)}_1, \dots, c^{(2, d)}_{l^{(2, d)}} \in \mathbb{D}$, and multiaffine forms $\phi^{(2, d)}_1, \dots, \phi^{(2, d)}_{l^{(2, d)}} \colon G_{[k]} \to \mathbb{F}_p$, such that
\[f^{(2, d)}(x_{[k]}) = \bigconv{d-1}\dots\bigconv{1}\bigconv{k}\dots\bigconv{1}\bigconv{k}\dots\bigconv{1}\bigconv{k}\dots\bigconv{d + 1} A(x_{[k]}) \apps{\xi}_{L^k,x_{[k]}} \sum_{j \in [l^{(2, d)}]} c^{(2, d)}_j \chi(\phi^{(2, d)}_j(x_{[k]})).\]
By the Cauchy-Schwarz inequality (applied several times), we have that 
\begin{equation}\label{arrDenseIneq}\exx_{x_{[k]}} f^{(1,d)}(x_{[k]}) \geq \delta^{2^{3k}}\quad \text{ and }\quad \exx_{x_{[k]}} f^{(2,d)}(x_{[k]}) \geq \delta^{2^{3k-1}}.\end{equation}
Write $l = \Big(\sum_{i \in [k] \setminus \{d\}} l^{(i)}\Big) + l^{(2,d)}$. Define maps $\phi \colon G_{[k]} \to \mathbb{F}^l$ and $c, \tilde{\bm{w}}\colon \mathbb{F}^l \to \mathbb{C}$ by
\[\phi = (\phi^{(1)}, \dots, \phi^{(d-1)}, \phi^{(2,d)}, \phi^{(d+1)}, \dots, \phi^{(k)}),\]
\[c(\lambda) = \sum_{j \in [l^{(2,d)}]} c^{(2, d)}_j \chi(\lambda_{s_d+ j})\]
and
\[\tilde{\bm{w}}(\lambda) = \prod_{i \in [k] \setminus \{d\}} \Big(\sum_{j \in [l^{(i)}]} c^{(i)}_j \chi(\lambda_{s_i + j})\Big),\]
where the $s_i$ are offsets such that $s_i + [l^{(i)}]$ are indices inside $[l]$ that correspond to the map $\phi^{(i)}$ in the definition of $\phi$ (and $s_d$ corresponds to $\phi^{(2,d)}$). Then
\begin{equation}\Big\|f^{(2, d)}(x_{[k]}) - c(\phi(x_{[k]}))\Big\|_{L^k, x_{[k]}} \leq \xi\label{approx2i}\end{equation}
and
\[\Big\|\bm{w} - \tilde{\bm{w}}\circ \phi\Big\|_{L^{1}} = \Big\|\prod_{i \in [k] \setminus \{d\}} f^{(1, i)} - \prod_{i \in [k] \setminus \{d\}} \Big(\sum_{j \in [l^{(i)}]} c^{(i)}_j \chi \circ \phi^{(i)}_j\Big)\Big\|_{L^{1}}.\]
Writing $\sum_{j \in [l^{(i)}]} c^{(i)}_j \chi \circ \phi^{(i)}_j = f^{(1, i)} - \Big(f^{(1, i)} - \sum_{j \in [l^{(i)}]} c^{(i)}_j \chi \circ \phi^{(i)}_j\Big)$, expanding and using the triangle inequality, we get
\[\Big\|\bm{w} - \tilde{\bm{w}}\circ \phi\Big\|_{L^{1}} \leq \sum_{I \subsetneq [k] \setminus \{d\}} \Big\|\Big(\prod_{i \in I} f^{(1, i)}\Big) \cdot \Big(\prod_{i \in [k] \setminus \{d\} \setminus I} \Big(f^{(1, i)} - \sum_{j \in [l^{(i)}]} c^{(i)}_j \chi \circ \phi^{(i)}_j\Big)\Big)\Big\|_{L^{1}}.\]
Using the fact that $\|f^{(1, i)}\|_{L^\infty} \leq 1$, we bound this from above by
\[\sum_{I \subsetneq [k] \setminus \{d\}} \Big\|\prod_{i \in [k] \setminus \{d\} \setminus I} \Big(f^{(1, i)} - \sum_{j \in [l^{(i)}]} c^{(i)}_j \chi \circ \phi^{(i)}_j\Big)\Big\|_{L^{1}}.\]
We then apply by H\"{o}lder's inequality to conclude that 
\begin{align} \Big\|\bm{w} - \tilde{\bm{w}}\circ \phi\Big\|_{L^{1}}  \leq &\sum_{I \subsetneq [k] \setminus \{d\}} \prod_{i \in [k] \setminus \{d\} \setminus I}  \Big\|f^{(1, i)} - \sum_{j \in [l^{(i)}]} c^{(i)}_j \chi \circ \phi^{(i)}_j\Big\|_{L^{k}}\nonumber\\
\leq &2^k \xi.\label{approxWEqn}\end{align}
Without loss of generality we may assume that $c \colon \mathbb{F}^l \to [0,1]$, since we have $f^{(2, d)}(x_{[k]}) \in [0,1]$ and thus we may simply replace $c(\lambda)$ by $\max \{\min\{\operatorname{Re} c(\lambda), 1\}, 0\}$. Such a change does not worsen the bound in~\eqref{approx2i}. Similarly, without loss of generality $\tilde{\bm{w}} \colon \mathbb{F}^l \to [0,1]$.\\

Note that
\begin{align}\exx_{x_{[k] \setminus \{d\}}} \exx_{y_d, z_d \in G_d} &\Big|f^{(2, d)}(x_{[k] \setminus \{d\}}, y_d) f^{(2, d)}(x_{[k] \setminus \{d\}}, z_d) - c(\phi(x_{[k]\setminus \{d\}}, y_d)) c(\phi(x_{[k]\setminus \{d\}}, z_d))\Big|\nonumber\\
\leq & \exx_{x_{[k] \setminus \{d\}}} \exx_{y_d, z_d \in G_d}  \Big|f^{(2, d)}(x_{[k] \setminus \{d\}}, y_d) - c(\phi(x_{[k]\setminus \{d\}}, y_d)) \Big| \Big|f^{(2, d)}(x_{[k] \setminus \{d\}}, z_d)\Big|\nonumber\\
&\hspace{1cm}+\exx_{x_{[k] \setminus \{d\}}} \exx_{y_d, z_d \in G_d}  \Big|c(\phi(x_{[k]\setminus \{d\}}, y_d))\Big| \Big|f^{(2, d)}(x_{[k] \setminus \{d\}}, z_d) - c(\phi(x_{[k]\setminus \{d\}}, z_d))\Big|\nonumber\\
\leq & 2\exx_{x_{[k] \setminus \{d\}}} \exx_{y_d \in G_d}  \Big|f^{(2, d)}(x_{[k] \setminus \{d\}}, y_d) - c(\phi(x_{[k]\setminus \{d\}}, y_d)) \Big|\nonumber\\
\leq&2\xi.\label{f2cApprox}\end{align}

By~\eqref{tuplesArrDenseEq} we also have that
\begin{align}\exx_{x_{[k] \setminus \{d\}}} \exx_{y_d, z_d \in G_d} &\bm{w}(x_{[k]\setminus \{d\}}, y_d - z_d) c(\phi(x_{[k]\setminus \{d\}}, y_d)) c(\phi(x_{[k]\setminus \{d\}}, z_d))\nonumber \\
\geq &\exx_{x_{[k] \setminus \{d\}}} \exx_{y_d, z_d \in G_d} \bm{w}(x_{[k]\setminus \{d\}}, y_d - z_d)  f^{(2, d)}(x_{[k]\setminus \{d\}}, y_d)  f^{(2, d)}(x_{[k]\setminus \{d\}}, z_d) - 2\xi\nonumber\\
=&\exx_{x_{[k]}} \bm{w}(x_{[k]}) f^{(1, d)}(x_{[k]})\nonumber - 2\xi\\
=&\ \Omega(\delta^{O(1)}),\label{cDenseEqn}\end{align}
provided that $\xi$ is sufficiently small, namely $\xi \leq \cons\,\delta^{\con}$. (Once again, recall that $\con$ and $\cons$ indicate positive constants, as explained in the notational part of the preliminary section of the paper; see expression~\eqref{fullConEqn} and discussion surrounding it.) Using~\eqref{approxWEqn}, we also have
\begin{equation}\label{cDenseWrtTildeW}\exx_{x_{[k] \setminus \{d\}}} \exx_{y_d, z_d \in G_d} \tilde{\bm{w}}(\phi(x_{[k]\setminus \{d\}}, y_d - z_d)) c(\phi(x_{[k]\setminus \{d\}}, y_d)) c(\phi(x_{[k]\setminus \{d\}}, z_d)) =\Omega(\delta^{O(1)})\end{equation}
once again provided $\xi \leq \cons\,\delta^{\con}$ (with slightly modified implicit constants).\\

Write $\phi\colon G_{[k]} \to \mathbb{F}_p^l$ as $\phi(x_{[k]}) = \phi'(x_{[k]}) + \tau(x_{[k] \setminus \{d\}})$, where $\phi'$ is linear in the $d$\textsuperscript{th} coordinate and $\tau$ is multiaffine. Thus, $\phi_j'(x_{[k]}) = \Phi_j(x_{[k] \setminus \{d\}}) \cdot x_d$ for a multiaffine map $\Phi_j \colon G_{[k] \setminus \{d\}} \to G_d$. Apply Theorem~\ref{simVarAppThm} to $\Phi_1, \dots, \Phi_l$, to obtain a positive integer $t = O\Big((l \log_p\xi^{-1})^{O(1)}\Big)$ and a multiaffine map $\beta \colon G_{[k] \setminus \{d\}} \to \mathbb{F}_p^t$ such that for each $\lambda \in \mathbb{F}_p^l$, the set $Z_\lambda = \{x_{[k] \setminus \{d\}} \in G_{[k] \setminus \{d\}} \colon \sum_{i \in [l]} \lambda_i \Phi_i(x_{[k] \setminus \{d\}}) = 0\}$ is internally and externally $(\xi p^{-l-l^2})$-approximated by the layers of $\beta$. For $\Lambda \leq \mathbb{F}_p^l$, consider the set $\Big(\bigcap_{\lambda \in \Lambda} Z_\lambda\Big) \setminus \Big(\bigcup_{\lambda \notin \Lambda} Z_\lambda\Big)$. Approximate the sets $Z_\lambda$ in the intersection internally and the sets $Z_\lambda$ in the union externally by collections of layers $\mathcal{L}_1$ and $\mathcal{L}_2$ of $\beta$, respectively. The union of layers in $\mathcal{L}_1 \setminus \mathcal{L}_2$ thus internally $(\xi p^{-l^2})$-approximates the given set. Since the number of subspaces $\Lambda \leq \mathbb{F}_p^l$ is at most $p^{l^2}$, we deduce that there is a union $U$ of layers of $\beta$ of size
\begin{equation}|U| \geq (1-\xi) |G_{[k] \setminus \{d\}}|\label{uSizeBound}\end{equation}
such that for each layer $L_\mu = \{x_{[k]\setminus\{d\}} \in G_{[k]\setminus\{d\}}:\beta(x_{[k]\setminus\{d\}}) = \mu\} \subset U$ there is a subspace $\Lambda_\mu \leq \mathbb{F}_p^l$ such that
\[\{\lambda \in \mathbb{F}_p^l \colon \lambda \cdot \Phi(x_{[k] \setminus \{d\}}) = 0\} = \Lambda_\mu\]
for every $x_{[k] \setminus \{d\}} \in L_\mu$. Write $M$ for the set of $\mu \in \mathbb{F}_p^t$ such that $L_\mu \subset U$. This implies that for each $\mu \in M$ and each $x_{[k]\setminus\{d\}}\in L_\mu$, 
\[\img \Big(y_d \mapsto \phi'(x_{[k] \setminus \{d\}}, y_d)\Big) = \Lambda_\mu^\perp.\]
Thus, when $x_{[k] \setminus \{d\}} \in L_\mu$ and $\tau(x_{[k] \setminus \{d\}}) = \lambda$, then  
\begin{equation}\label{imagePhi}\img \Big(y_d \mapsto \phi(x_{[k] \setminus \{d\}}, y_d)\Big) = \lambda + \Lambda_\mu^\perp.\end{equation}

\textbf{Step 4.} In this step, we move from using $\bm{w}$ as our system of weights to using $\tilde{\bm{w}}$, which has more algebraic structure. This will allow us to find a structured set with an affine map that coincides with the map $y_d \mapsto \psi(x_{[k] \setminus \{d\}}, y_d)$, using Lemma~\ref{3approxHom}. Combine~\eqref{approximateEqThetaPsi},~\eqref{f2cApprox},~\eqref{cDenseEqn},~\eqref{uSizeBound} and~\eqref{imagePhi} to obtain
\begin{align}&\sum_{\substack{\lambda \in \mathbb{F}_p^l\\\mu \in M}} \sum_{\nu_1 \in \Lambda^\perp_\mu} \sum_{\substack{x_{[k] \setminus \{d\}} \in G_{[k] \setminus \{d\}}\\\tau(x_{[k] \setminus \{d\}}) = \lambda\\\beta(x_{[k] \setminus \{d\}}) = \mu}}\sum_{\nu_2 \in \Lambda^\perp_\mu}c(\lambda + \nu_1 + \nu_2) c(\lambda + \nu_2)  \sum_{\substack{x_d \in X_{x_{[k] \setminus \{d\}}}\\\phi'(x_{[k]}) = \nu_1}} \bm{w}(x_{[k]})\nonumber\\
&\hspace{4cm}\sum_{\substack{y_d \in Y_{x_{[k] \setminus \{d\}}} \cap (Y_{x_{[k] \setminus \{d\}}} - x_d)\\\phi'(x_{[k] \setminus \{d\}}, y_d) = \nu_2}}  \mathbbm{1}\Big(\theta(x_{[k] \setminus \{d\}}, x_d + y_d) - \theta(x_{[k] \setminus \{d\}}, y_d) = \psi(x_{[k]})\Big)\nonumber\\
=&\sum_{\substack{\lambda \in \mathbb{F}_p^l\\\mu \in M}} \sum_{\substack{x_{[k] \setminus \{d\}}\in G_{[k] \setminus \{d\}}\\\tau(x_{[k] \setminus \{d\}}) = \lambda\\\beta(x_{[k] \setminus \{d\}}) = \mu}} \sum_{x_d \in X_{x_{[k] \setminus \{d\}}}}\bm{w}(x_{[k]}) \sum_{y_d \in Y_{x_{[k] \setminus \{d\}}} \cap (Y_{x_{[k] \setminus \{d\}}} - x_d)}  \mathbbm{1}\Big(\theta(x_{[k] \setminus \{d\}}, x_d + y_d) - \theta(x_{[k] \setminus \{d\}}, y_d) = \psi(x_{[k]})\Big)\nonumber\\
&\hspace{8cm}c(\phi(x_{[k]\setminus \{d\}}, x_d + y_d)) c(\phi(x_{[k]\setminus \{d\}}, y_d)) \nonumber\\
&\hspace{4cm}\text{(by~\eqref{imagePhi})}\nonumber\\
\medskip
=&\sum_{\substack{x_{[k]} \in X\\x_{[k] \setminus \{d\}} \in U}} \bm{w}(x_{[k]}) \sum_{y_d \in Y_{x_{[k] \setminus \{d\}}} \cap (Y_{x_{[k] \setminus \{d\}}} - x_d)}  \mathbbm{1}\Big(\theta(x_{[k] \setminus \{d\}}, x_d + y_d) - \theta(x_{[k] \setminus \{d\}}, y_d) = \psi(x_{[k]})\Big)\nonumber\\
&\hspace{8cm}c(\phi(x_{[k]\setminus \{d\}}, x_d + y_d)) c(\phi(x_{[k]\setminus \{d\}}, y_d)) \nonumber\\
\geq&\sum_{\substack{x_{[k]} \in X\\x_{[k] \setminus \{d\}} \in U}} \bm{w}(x_{[k]}) \sum_{y_d \in Y_{x_{[k] \setminus \{d\}}} \cap (Y_{x_{[k] \setminus \{d\}}} - x_d)}  \mathbbm{1}\Big(\theta(x_{[k] \setminus \{d\}}, x_d + y_d) - \theta(x_{[k] \setminus \{d\}}, y_d) = \psi(x_{[k]})\Big)\nonumber\\
&\hspace{8cm}f^{(2, d)}(x_{[k] \setminus \{d\}}, x_d + y_d) f^{(2, d)}(x_{[k] \setminus \{d\}}, y_d) \hspace{1cm}-2\xi |G_d||G_{[k]}|\nonumber\\
&\hspace{4cm}\text{(by~\eqref{f2cApprox})}\nonumber\\
\geq&\sum_{x_{[k]} \in X} \bm{w}(x_{[k]}) \sum_{y_d \in Y_{x_{[k] \setminus \{d\}}} \cap (Y_{x_{[k] \setminus \{d\}}} - x_d)}  \mathbbm{1}\Big(\theta(x_{[k] \setminus \{d\}}, x_d + y_d) - \theta(x_{[k] \setminus \{d\}}, y_d) = \psi(x_{[k]})\Big)\nonumber\\
&\hspace{8cm}f^{(2, d)}(x_{[k] \setminus \{d\}}, x_d + y_d) f^{(2, d)}(x_{[k] \setminus \{d\}}, y_d) \hspace{1cm}-3\xi |G_d||G_{[k]}|\nonumber\\
&\hspace{4cm}\text{(by~\eqref{uSizeBound})}\nonumber\\
\geq& (1-13 \varepsilon) \sum_{x_{[k]} \in G_{[k]}}\bm{w}(x_{[k]}) \sum_{y_d \in G_d} f^{(2, d)}(x_{[k] \setminus \{d\}}, x_d + y_d) f^{(2, d)}(x_{[k] \setminus \{d\}}, y_d) \hspace{1cm}-3\xi |G_d||G_{[k]}|\nonumber\\
&\hspace{4cm}\text{(by~\eqref{approximateEqThetaPsi})}\nonumber\\
\geq& (1-13 \varepsilon) \sum_{x_{[k]} \in G_{[k]}} \bm{w}(x_{[k]}) \sum_{y_d \in G_d} c(\phi(x_{[k]\setminus \{d\}}, x_d + y_d)) c(\phi(x_{[k]\setminus \{d\}}, y_d)) \hspace{1cm}-5\xi |G_d||G_{[k]}|\nonumber\\
&\hspace{4cm}\text{(by~\eqref{f2cApprox})}\nonumber\\
\geq&(1-13\varepsilon - O(\delta^{-O(1)})\xi)  \sum_{x_{[k]} \in G_{[k]}} \bm{w}(x_{[k]}) \sum_{y_d \in G_d} c(\phi(x_{[k]\setminus \{d\}}, x_d + y_d)) c(\phi(x_{[k]\setminus \{d\}}, y_d))\nonumber\\
&\hspace{4cm}\text{(by~\eqref{cDenseEqn})}\nonumber\\
=&(1-13 \varepsilon - O(\delta^{-O(1)})\xi) \sum_{\substack{\lambda \in \mathbb{F}_p^l\\\mu \in \mathbb{F}_p^t}} \sum_{\substack{x_{[k] \setminus \{d\}} \in G_{[k] \setminus \{d\}}\\\tau(x_{[k] \setminus \{d\}}) = \lambda\\\beta(x_{[k] \setminus \{d\}}) = \mu}} \sum_{x_d \in G_d} \bm{w}(x_{[k]}) \sum_{y_d \in G_d} c(\phi(x_{[k]\setminus \{d\}}, x_d + y_d)) c(\phi(x_{[k]\setminus \{d\}}, y_d))\nonumber\\
\geq&(1-13 \varepsilon - O(\delta^{-O(1)})\xi)\sum_{\substack{\lambda \in \mathbb{F}_p^l\\\mu \in M}} \sum_{\nu_1 \in \Lambda^\perp_\mu}\sum_{\substack{x_{[k] \setminus \{d\}} \in G_{[k] \setminus \{d\}}\\\tau(x_{[k] \setminus \{d\}}) = \lambda\\\beta(x_{[k] \setminus \{d\}}) = \mu}}\nonumber\\
&\hspace{4cm}\sum_{\nu_2 \in \Lambda^\perp_\mu}c(\lambda + \nu_1 + \nu_2) c(\lambda + \nu_2) \sum_{\substack{x_d \in G_d\\\phi'(x_{[k]}) = \nu_1}} \bm{w}(x_{[k]}) \sum_{\substack{y_d \in G_d\\\phi'(x_{[k] \setminus \{d\}}, y_d) = \nu_2}} 1.\label{approximateEqThetaPsi2}\end{align}

Using~\eqref{approxWEqn} and~\eqref{cDenseWrtTildeW}, we deduce that
\begin{align}\sum_{\substack{\lambda \in \mathbb{F}_p^l\\\mu \in M}}& \sum_{\nu_1 \in \Lambda^\perp_\mu} \sum_{\substack{x_{[k] \setminus \{d\}} \in G_{[k] \setminus \{d\}}\\\tau(x_{[k] \setminus \{d\}}) = \lambda\\\beta(x_{[k] \setminus \{d\}}) = \mu}}\sum_{\nu_2 \in \Lambda^\perp_\mu}c(\lambda + \nu_1 + \nu_2) c(\lambda + \nu_2)  \tilde{\bm{w}}(\lambda + \nu_1) \nonumber\\
&\hspace{1cm}\sum_{\substack{x_d \in X_{x_{[k] \setminus \{d\}}}\\\phi'(x_{[k]}) = \nu_1}}  \sum_{\substack{y_d \in Y_{x_{[k] \setminus \{d\}}} \cap (Y_{x_{[k] \setminus \{d\}}} - x_d)\\\phi'(x_{[k] \setminus \{d\}}, y_d) = \nu_2}}  \mathbbm{1}\Big(\theta(x_{[k] \setminus \{d\}}, x_d + y_d) - \theta(x_{[k] \setminus \{d\}}, y_d) = \psi(x_{[k]})\Big)\nonumber\\
\geq &(1-13 \varepsilon - O(\delta^{-O(1)})\xi)\sum_{\substack{\lambda \in \mathbb{F}_p^l\\\mu \in M}} \sum_{\nu_1 \in \Lambda^\perp_\mu}\sum_{\substack{x_{[k] \setminus \{d\}} \in G_{[k] \setminus \{d\}}\\\tau(x_{[k] \setminus \{d\}}) = \lambda\\\beta(x_{[k] \setminus \{d\}}) = \mu}}\nonumber\\
&\hspace{4cm}\sum_{\nu_2 \in \Lambda^\perp_\mu}c(\lambda + \nu_1 + \nu_2) c(\lambda + \nu_2)  \tilde{\bm{w}}(\lambda + \nu_1) \sum_{\substack{x_d \in G_d\\\phi'(x_{[k]}) = \nu_1}} \sum_{\substack{y_d \in G_d\\\phi'(x_{[k] \setminus \{d\}}, y_d) = \nu_2}} 1\label{approximateEqThetaPsi2Tilde}.\end{align}

Provided $\xi \leq \cons\,\delta^{\con}$, we may replace the constant at the beginning of the penultimate line by $(1-14\varepsilon)$.\\

Recall that $U = \{x_{[k] \setminus \{d\}} \in G_{[k] \setminus \{d\}} \colon \beta(x_{[k] \setminus \{d\}}) \in M\}$. Let $\tilde{X}$ be the set of all pairs $(x_{[k] \setminus \{d\}}, \nu_1) \in U \times \mathbb{F}_p^l$ such that $\nu_1 \in \Lambda^\perp_{\beta(x_{[k] \setminus \{d\}})}$ and
\begin{align*}&\sum_{\nu_2 \in \Lambda^\perp_{\beta(x_{[k] \setminus \{d\}})}}c(\tau(x_{[k] \setminus \{d\}}) + \nu_1 + \nu_2) c(\tau(x_{[k] \setminus \{d\}}) + \nu_2) \tilde{\bm{w}}(\tau(x_{[k] \setminus \{d\}}) + \nu_1) \sum_{\substack{x_d \in X_{x_{[k] \setminus \{d\}}}\\\phi'(x_{[k]}) = \nu_1}} \sum_{\substack{y_d \in Y_{x_{[k] \setminus \{d\}}} \cap (Y_{x_{[k] \setminus \{d\}}} - x_d)\\\phi'(x_{[k] \setminus \{d\}}, y_d) = \nu_2}}\\
&\hspace{6cm}\mathbbm{1}\Big(\theta(x_{[k] \setminus \{d\}}, x_d + y_d) - \theta(x_{[k] \setminus \{d\}}, y_d) = \psi(x_{[k]})\Big)\\
\geq&(1-\sqrt{\varepsilon}) \sum_{\nu_2 \in \Lambda^\perp_{\beta(x_{[k] \setminus \{d\}})}} c(\tau(x_{[k] \setminus \{d\}}) + \nu_1 + \nu_2) c(\tau(x_{[k] \setminus \{d\}}) + \nu_2) \tilde{\bm{w}}(\tau(x_{[k] \setminus \{d\}}) + \nu_1) \sum_{\substack{x_d \in G_d\\\phi'(x_{[k]}) = \nu_1}} \sum_{\substack{y_d \in G_d\\\phi'(x_{[k] \setminus \{d\}}, y_d) = \nu_2}} 1 \\
>& 0.
\end{align*}

For each $(x_{[k] \setminus \{d\}}, \nu_1) \in \tilde{X}$, average over $\nu_2 \in\Lambda^\perp_{\beta(x_{[k] \setminus \{d\}})}$ to find $\nu_2$ such that
\begin{align*}\sum_{\substack{x_d \in X_{x_{[k] \setminus \{d\}}}\\\phi'(x_{[k]}) = \nu_1}}&\sum_{\substack{y_d \in Y_{x_{[k] \setminus \{d\}}} \cap (Y_{x_{[k] \setminus \{d\}}} - x_d)\\\phi'(x_{[k] \setminus \{d\}}, y_d) = \nu_2}} \mathbbm{1}\Big(\theta(x_{[k] \setminus \{d\}}, x_d + y_d) - \theta(x_{[k] \setminus \{d\}}, y_d) = \psi(x_{[k]})\Big) \\
&\geq (1-\sqrt{\varepsilon})\sum_{\substack{x_d \in G_d\\\phi'(x_{[k]}) = \nu_1}} \sum_{\substack{y_d \in G_d\\\phi'(x_{[k] \setminus \{d\}}, y_d) = \nu_2}} 1\quad > \ 0.\end{align*}
Apply Lemma~\ref{3approxHom} to find a subset $X'_{x_{[k] \setminus \{d\}}, \nu_1} \subset \{x_d \in X_{x_{[k] \setminus \{d\}}} \colon \phi'(x_{[k]}) = \nu_1\}$ such that $|X'_{x_{[k] \setminus \{d\}}, \nu_1}| \geq (1-2\sqrt[4]{\varepsilon}) \Big|\{x_d \in G_d \colon \phi'(x_{[k]}) = \nu_1\}\Big|$ and there is an affine map $\rho \colon G_d \to H$ such that $\rho(y_d) = \psi(x_{[k] \setminus \{d\}}, y_d)$ for all $y_d \in X'_{x_{[k] \setminus \{d\}}, \nu_1}$. Note that we require $\varepsilon < 10^{-4}$.\\

Hence, when $(x_{[k] \setminus \{d\}}, \nu_1) \in \tilde{X}$, by definition we have in particular
\begin{align*}&\Big|\Big\{x_d \in X_{x_{[k] \setminus \{d\}}}\colon \phi'(x_{[k]}) = \nu_1\Big\} \Big|\sum_{\nu_2 \in \Lambda^\perp_{\beta(x_{[k] \setminus \{d\}})}}c(\tau(x_{[k] \setminus \{d\}}) + \nu_1 + \nu_2) c(\tau(x_{[k] \setminus \{d\}}) + \nu_2)\tilde{\bm{w}}(\tau(x_{[k] \setminus \{d\}}) + \nu_1)\\
&\hspace{4cm} \sum_{\substack{y_d \in G_d\\\phi'(x_{[k] \setminus \{d\}}, y_d) = \nu_2}} 1\\
=&\sum_{\nu_2 \in \Lambda^\perp_{\beta(x_{[k] \setminus \{d\}})}}c(\tau(x_{[k] \setminus \{d\}}) + \nu_1 + \nu_2) c(\tau(x_{[k] \setminus \{d\}}) + \nu_2) \tilde{\bm{w}}(\tau(x_{[k] \setminus \{d\}}) + \nu_1) \sum_{\substack{x_d \in X_{x_{[k] \setminus \{d\}}}\\\phi'(x_{[k]}) = \nu_1}} \sum_{\substack{y_d \in G_d\\\phi'(x_{[k] \setminus \{d\}}, y_d) = \nu_2}} 1\\
\geq&(1-\sqrt{\varepsilon}) \sum_{\nu_2 \in \Lambda^\perp_{\beta(x_{[k] \setminus \{d\}})}} c(\tau(x_{[k] \setminus \{d\}}) + \nu_1 + \nu_2) c(\tau(x_{[k] \setminus \{d\}}) + \nu_2) \tilde{\bm{w}}(\tau(x_{[k] \setminus \{d\}}) + \nu_1) \sum_{\substack{x_d \in G_d\\\phi'(x_{[k]}) = \nu_1}} \sum_{\substack{y_d \in G_d\\\phi'(x_{[k] \setminus \{d\}}, y_d) = \nu_2}} 1\\
=&(1-\sqrt{\varepsilon})\Big|\Big\{x_d \in G_d \colon\phi'(x_{[k]}) = \nu_1\Big\}\Big| \sum_{\nu_2 \in \Lambda^\perp_{\beta(x_{[k] \setminus \{d\}})}} c(\tau(x_{[k] \setminus \{d\}}) + \nu_1 + \nu_2) c(\tau(x_{[k] \setminus \{d\}}) + \nu_2)  \tilde{\bm{w}}(\tau(x_{[k] \setminus \{d\}}) + \nu_1)\\
&\hspace{4cm}\sum_{\substack{y_d \in G_d\\\phi'(x_{[k] \setminus \{d\}}, y_d) = \nu_2}} 1
 \quad > 0.
\end{align*}
Thus,
\begin{align}& |X'_{x_{[k] \setminus \{d\}}, \nu_1}| \sum_{\nu_2 \in \Lambda^\perp_{\beta(x_{[k] \setminus \{d\}})}}c(\tau(x_{[k] \setminus \{d\}}) + \nu_1 + \nu_2) c(\tau(x_{[k] \setminus \{d\}}) + \nu_2)\tilde{\bm{w}}(\tau(x_{[k] \setminus \{d\}}) + \nu_1) \sum_{\substack{y_d \in G_d\\\phi'(x_{[k] \setminus \{d\}}, y_d) = \nu_2}} 1\nonumber\\
\geq&(1-3\sqrt[4]{\varepsilon}) \Big|\{x_d \in G_d\colon \phi'(x_{[k]}) = \nu_1\}\Big|  \sum_{\nu_2 \in \Lambda^\perp_{\beta(x_{[k] \setminus \{d\}})}}c(\tau(x_{[k] \setminus \{d\}}) + \nu_1 + \nu_2) c(\tau(x_{[k] \setminus \{d\}}) + \nu_2)\tilde{\bm{w}}(\tau(x_{[k] \setminus \{d\}}) + \nu_1)\nonumber\\
&\hspace{4cm}\sum_{\substack{y_d \in G_d\\\phi'(x_{[k] \setminus \{d\}}, y_d) = \nu_2}} 1\quad > 0.\label{XprimeDefnEqn}
\end{align}
Define $a^{(1)}_{x_{[k] \setminus\{d\}}, \nu_1}, a^{(2)}_{x_{[k] \setminus\{d\}}, \nu_1}$ and $a^{(3)}_{x_{[k] \setminus\{d\}}, \nu_1}$ by
\begin{align*}a^{(1)}_{x_{[k] \setminus\{d\}}, \nu_1} = &\sum_{\nu_2 \in \Lambda^\perp_{\beta(x_{[k] \setminus\{d\}})}}c(\tau(x_{[k] \setminus\{d\}}) + \nu_1 + \nu_2) c(\tau(x_{[k] \setminus\{d\}}) + \nu_2) \tilde{\bm{w}}(\tau(x_{[k] \setminus \{d\}}) + \nu_1) \sum_{\substack{x_d \in G_d\\\phi'(x_{[k]}) = \nu_1}} \sum_{\substack{y_d \in G_d\\\phi'(x_{[k] \setminus \{d\}}, y_d) = \nu_2}} 1,\\
a^{(2)}_{x_{[k] \setminus\{d\}}, \nu_1} = &\sum_{\nu_2 \in \Lambda^\perp_{\beta(x_{[k] \setminus\{d\}})}}c(\tau(x_{[k] \setminus\{d\}}) + \nu_1 + \nu_2) c(\tau(x_{[k] \setminus\{d\}}) + \nu_2)  \tilde{\bm{w}}(\tau(x_{[k] \setminus \{d\}}) + \nu_1) \sum_{\substack{x_d \in X_{x_{[k] \setminus \{d\}}}\\\phi'(x_{[k]}) = \nu_1}}\\
&\hspace{2cm}\sum_{\substack{y_d \in Y_{x_{[k] \setminus \{d\}}} \cap (Y_{x_{[k] \setminus \{d\}}} - x_d)\\\phi'(x_{[k] \setminus \{d\}}, y_d) = \nu_2}}  \mathbbm{1}\Big(\theta(x_{[k] \setminus \{d\}}, x_d + y_d) - \theta(x_{[k] \setminus \{d\}}, y_d) = \psi(x_{[k]})\Big)\\
a^{(3)}_{x_{[k] \setminus\{d\}}, \nu_1} = &|X'_{x_{[k] \setminus \{d\}}, \nu_1}| \sum_{\nu_2 \in \Lambda^\perp_{\beta(x_{[k] \setminus \{d\}})}}c(\tau(x_{[k] \setminus \{d\}}) + \nu_1 + \nu_2) c(\tau(x_{[k] \setminus \{d\}}) + \nu_2) \tilde{\bm{w}}(\tau(x_{[k] \setminus \{d\}}) + \nu_1) \sum_{\substack{y_d \in G_d\\\phi'(x_{[k] \setminus \{d\}}, y_d) = \nu_2}} 1.\end{align*}
By definition of $\tilde{X}$, we have that when $(x_{[k] \setminus \{d\}}, \nu_1) \in \tilde{X}$, then $a^{(2)}_{x_{[k] \setminus\{d\}}, \nu_1} \geq (1-\sqrt{\varepsilon}) a^{(1)}_{x_{[k] \setminus\{d\}}, \nu_1} > 0$ and from~\eqref{XprimeDefnEqn} we have $a^{(3)}_{x_{[k] \setminus\{d\}}, \nu_1} \geq (1-3\sqrt[4]{\varepsilon}) a^{(1)}_{x_{[k] \setminus\{d\}}, \nu_1}$. Using this notation~\eqref{approximateEqThetaPsi2Tilde} becomes
\[\sum_{x_{[k] \setminus \{d\}} \in U} \sum_{\nu_1 \in \Lambda^\perp_{\beta(x_{[k] \setminus \{d\}})}} a^{(2)}_{x_{[k] \setminus\{d\}}, \nu_1} \geq (1-14 \varepsilon) \sum_{x_{[k] \setminus \{d\}} \in U} \sum_{\nu_1 \in \Lambda^\perp_{\beta(x_{[k] \setminus \{d\}})}} a^{(1)}_{x_{[k] \setminus\{d\}}, \nu_1}.\]
This further gives 
\begin{align*}14\varepsilon \sum_{\substack{x_{[k] \setminus \{d\}} \in U\\\nu_1 \in \Lambda^\perp_{\beta(x_{[k] \setminus\{d\}})}}} a^{(1)}_{x_{[k] \setminus\{d\}}, \nu_1} \geq &\sum_{\substack{x_{[k] \setminus \{d\}} \in U\\\nu_1 \in \Lambda^\perp_{\beta(x_{[k] \setminus\{d\}})}}} (a^{(1)}_{x_{[k] \setminus\{d\}}, \nu_1} - a^{(2)}_{x_{[k] \setminus\{d\}}, \nu_1}) \\ 
\geq& \sum_{\substack{x_{[k] \setminus \{d\}} \in U\\\nu_1 \in \Lambda^\perp_{\beta(x_{[k] \setminus\{d\}})}\\    (x_{[k] \setminus \{d\}}, \nu_1) \notin \tilde{X}}} (a^{(1)}_{x_{[k] \setminus\{d\}}, \nu_1} - a^{(2)}_{x_{[k] \setminus\{d\}}, \nu_1}) \geq \sqrt{\varepsilon} \sum_{\substack{x_{[k] \setminus \{d\}} \in U\\\nu_1 \in \Lambda^\perp_{\beta(x_{[k] \setminus\{d\}})}\\ (x_{[k] \setminus \{d\}}, \nu_1) \notin \tilde{X}}} a^{(1)}_{x_{[k] \setminus\{d\}}, \nu_1}.\end{align*}
Hence,
\begin{align*}\sum_{\substack{x_{[k] \setminus \{d\}} \in U\\\nu_1 \in \Lambda^\perp_{\beta(x_{[k] \setminus\{d\}})}\\ (x_{[k] \setminus \{d\}}, \nu_1) \in \tilde{X}}} a^{(3)}_{x_{[k] \setminus\{d\}}, \nu_1} &\geq (1-3\sqrt[4]{\varepsilon})\sum_{\substack{x_{[k] \setminus \{d\}} \in U\\\nu_1 \in \Lambda^\perp_{\beta(x_{[k] \setminus\{d\}})}\\ (x_{[k] \setminus \{d\}}, \nu_1) \in \tilde{X}}} a^{(1)}_{x_{[k] \setminus\{d\}}, \nu_1}\\
&\geq (1-3\sqrt[4]{\varepsilon}) (1-14\sqrt{\varepsilon})\sum_{\substack{x_{[k] \setminus \{d\}} \in U\\\nu_1 \in \Lambda^\perp_{\beta(x_{[k] \setminus\{d\}})}}} a^{(1)}_{x_{[k] \setminus\{d\}}, \nu_1}.\end{align*}
Simplify the bound slightly by using $(1-3\sqrt[4]{\varepsilon}) (1-14\sqrt{\varepsilon}) \geq 1-20\sqrt[4]{\varepsilon}$ and expand out to get
\begin{align}&\sum_{(x_{[k] \setminus \{d\}}, \nu_1) \in \tilde{X}} \sum_{x_d \in X'_{x_{[k] \setminus \{d\}}, \nu_1}} \sum_{\nu_2 \in \Lambda^\perp_{\beta(x_{[k] \setminus \{d\}})}}  \sum_{\substack{y_d \in G_d\\\phi'(x_{[k] \setminus \{d\}}, y_d) = \nu_2}} c(\phi(x_{[k] \setminus \{d\}}, x_d + y_d)) c(\phi(x_{[k] \setminus \{d\}}, y_d))  \tilde{\bm{w}}(\phi(x_{[k]}))\nonumber\\
=&\sum_{(x_{[k] \setminus \{d\}}, \nu_1) \in \tilde{X}} |X'_{x_{[k] \setminus \{d\}}, \nu_1}| \sum_{\nu_2 \in \Lambda^\perp_{\beta(x_{[k] \setminus \{d\}})}}c(\tau(x_{[k] \setminus \{d\}}) + \nu_1 + \nu_2) c(\tau(x_{[k] \setminus \{d\}}) + \nu_2) \tilde{\bm{w}}(\tau(x_{[k] \setminus \{d\}}) + \nu_1) \nonumber\\
&\hspace{2cm}\sum_{\substack{y_d \in G_d\\\phi'(x_{[k] \setminus \{d\}}, y_d) = \nu_2}} 1\nonumber\\
\geq &(1-20\sqrt[4]{\varepsilon})\sum_{\substack{x_{[k] \setminus \{d\}} \in U\\\nu_1 \in \Lambda^\perp_{\beta(x_{[k] \setminus\{d\}})}}} \sum_{\nu_2 \in \Lambda^\perp_{\beta(x_{[k] \setminus\{d\}})}}c(\tau(x_{[k] \setminus\{d\}}) + \nu_1 + \nu_2) c(\tau(x_{[k] \setminus\{d\}}) + \nu_2)  \tilde{\bm{w}}(\tau(x_{[k] \setminus \{d\}}) + \nu_1) \nonumber\\
&\hspace{2cm}\sum_{\substack{x_d \in G_d\\\phi'(x_{[k]}) = \nu_1}}\sum_{\substack{y_d \in G_d\\\phi'(x_{[k] \setminus \{d\}}, y_d) = \nu_2}} 1\nonumber\\
=&(1-20\sqrt[4]{\varepsilon})\sum_{\substack{x_{[k] \setminus \{d\}} \in U\\\nu_1 \in \Lambda^\perp_{\beta(x_{[k] \setminus\{d\}})}}} \sum_{\substack{x_d \in G_d\\ \phi'(x_{[k]}) = \nu_1}} \sum_{\nu_2 \in \Lambda^\perp_{\beta(x_{[k] \setminus\{d\}})}} \sum_{\substack{y_d \in G_d\\\phi'(x_{[k] \setminus \{d\}}, y_d) = \nu_2}} c(\phi(x_{[k] \setminus \{d\}}, x_d + y_d)) c(\phi(x_{[k] \setminus \{d\}}, y_d))\tilde{\bm{w}}(\phi(x_{[k]})).\label{expandedCEqn}
\end{align}

\noindent\textbf{Step 5.} We now stop using $c$ and return to using $f^{(2,d)}$. We have
\begin{align*}&\sum_{\substack{\lambda \in \mathbb{F}_p^l\\\mu \in M}} \sum_{\nu_1 \in \Lambda^\perp_\mu}\sum_{\substack{x_{[k] \setminus \{d\}} \in \tilde{X}_{\nu_1}\\\tau(x_{[k] \setminus \{d\}}) = \lambda\\\beta(x_{[k] \setminus \{d\}}) = \mu}} \sum_{x_d \in X'_{x_{[k] \setminus \{d\}}, \nu_1}} \tilde{\bm{w}}(\phi(x_{[k]})) f^{(1,d)}(x_{[k]}) |G_d|\\
&\hspace{2cm}=\sum_{(x_{[k] \setminus \{d\}}, \nu_1) \in \tilde{X}} \sum_{x_d \in X'_{x_{[k] \setminus \{d\}}, \nu_1}} \tilde{\bm{w}}(\phi(x_{[k]})) f^{(1,d)}(x_{[k]}) |G_d|\\
&\hspace{2cm}=\sum_{(x_{[k] \setminus \{d\}}, \nu_1) \in \tilde{X}} \sum_{x_d \in X'_{x_{[k] \setminus \{d\}}, \nu_1}} \tilde{\bm{w}}(\phi(x_{[k]})) \sum_{y_d \in G_d} f^{(2,d)}(x_{[k]\setminus \{d\}}, x_d + y_d)f^{(2,d)}(x_{[k]\setminus \{d\}}, y_d).\\
\end{align*}
Using~\eqref{f2cApprox}, we see that this is at least
\begin{align*}&\sum_{(x_{[k] \setminus \{d\}}, \nu_1) \in \tilde{X}} \sum_{x_d \in X'_{x_{[k] \setminus \{d\}}, \nu_1}} \tilde{\bm{w}}(\phi(x_{[k]})) \sum_{y_d \in G_d} c(\phi(x_{[k] \setminus \{d\}}, x_d + y_d)) c(\phi(x_{[k] \setminus \{d\}}, y_d)) - 2\xi |G_{[k]}||G_d|\\
&=\sum_{(x_{[k] \setminus \{d\}}, \nu_1) \in \tilde{X}} \sum_{x_d \in X'_{x_{[k] \setminus \{d\}}, \nu_1}} \tilde{\bm{w}}(\phi(x_{[k]})) \sum_{\nu_2 \in \Lambda^\perp_{\beta(x_{[k] \setminus \{d\}})}}  \sum_{\substack{y_d \in G_d\\\phi'(x_{[k] \setminus \{d\}}, y_d) = \nu_2}} c(\phi(x_{[k] \setminus \{d\}}, x_d + y_d)) c(\phi(x_{[k] \setminus \{d\}}, y_d))\\
&\hspace{12cm}-2\xi |G_{[k]}||G_d|\\
&\geq(1-20\sqrt[4]{\varepsilon})\sum_{\substack{x_{[k] \setminus \{d\}} \in U\\\nu_1 \in \Lambda^\perp_{\beta(x_{[k] \setminus\{d\}})}}} \sum_{\substack{x_d \in G_d\\ \phi'(x_{[k]}) = \nu_1}} \tilde{\bm{w}}(\phi(x_{[k]})) \sum_{\nu_2 \in \Lambda^\perp_{\beta(x_{[k] \setminus\{d\}})}} \sum_{\substack{y_d \in G_d\\\phi'(x_{[k] \setminus \{d\}}, y_d) = \nu_2}} c(\phi(x_{[k] \setminus \{d\}}, x_d + y_d)) c(\phi(x_{[k] \setminus \{d\}}, y_d))\\
&\hspace{12cm}-2\xi |G_{[k]}||G_d|\\
\end{align*}
by~\eqref{expandedCEqn}. Using~\eqref{f2cApprox} in a similar way to the way we used it above, we have that this is at least
\begin{align*}&(1-20\sqrt[4]{\varepsilon})\sum_{\substack{x_{[k] \setminus \{d\}} \in U\\\nu_1 \in \Lambda^\perp_{\beta(x_{[k] \setminus\{d\}})}}} \sum_{\substack{x_d \in G_d\\ \phi'(x_{[k]}) = \nu_1}}\tilde{\bm{w}}(\phi(x_{[k]})) \sum_{\nu_2 \in \Lambda^\perp_{\beta(x_{[k] \setminus\{d\}})}} \sum_{\substack{y_d \in G_d\\\phi'(x_{[k] \setminus \{d\}}, y_d) = \nu_2}}  f^{(2,d)}(x_{[k] \setminus \{d\}}, x_d + y_d) f^{(2,d)}(x_{[k] \setminus \{d\}}, y_d)\\
&\hspace{12cm}-4\xi |G_{[k]}||G_d|\\
\end{align*}
which by~\eqref{uSizeBound} is at least
\begin{align*}
(1-20\sqrt[4]{\varepsilon})\sum_{x_{[k]} \in G_{[k]}} \sum_{y_d \in G_d} &\tilde{\bm{w}}(\phi(x_{[k]}))f^{(2,d)}(x_{[k] \setminus \{d\}}, x_d + y_d) f^{(2,d)}(x_{[k] \setminus \{d\}}, y_d) - 5\xi |G_{[k]}| |G_d|\\
=&(1-20\sqrt[4]{\varepsilon})\sum_{x_{[k]} \in G_{[k]}} \tilde{\bm{w}}(\phi(x_{[k]}))f^{(1,d)}(x_{[k]})|G_d|\, - \,5\xi |G_{[k]}| |G_d|.\\
\end{align*}
Provided that $\xi \leq \cons\,\delta^{\con}$, we may use~\eqref{tuplesArrDenseEq} and~\eqref{approxWEqn} to deduce that this is at least
\[(1-20\sqrt[4]{\varepsilon} - O(\delta^{-O(1)})\xi)\sum_{x_{[k]} \in G_{[k]}} \bm{w}(x_{[k]})f^{(1,d)}(x_{[k]})|G_d|.\]

Further, assuming $\xi \leq \cons\,\delta^{\con}\,\varepsilon$ allows us to replace $1-20\sqrt[4]{\varepsilon} - O(\delta^{-O(1)})\xi$ by $1-21\sqrt[4]{\varepsilon}$, simplifying the bound above.\\

\noindent\textbf{Step 6.} We now stop using $\tilde{\bm{w}}$ and return to using $\bm{w}$. Using~\eqref{tuplesArrDenseEq} and~\eqref{approxWEqn}, we obtain
\begin{align}&\sum_{\substack{\lambda \in \mathbb{F}_p^l\\\mu \in M}} \sum_{\nu_1 \in \Lambda^\perp_\mu}\sum_{\substack{x_{[k] \setminus \{d\}} \in \tilde{X}_{\nu_1}\\\tau(x_{[k] \setminus \{d\}}) = \lambda\\\beta(x_{[k] \setminus \{d\}}) = \mu}} \sum_{x_d \in X'_{x_{[k] \setminus \{d\}}, \nu_1}} \bm{w}(x_{[k]}) f^{(1,d)}(x_{[k]})\nonumber\\
&\hspace{2cm}\geq(1-21\sqrt[4]{\varepsilon} - O(\delta^{-O(1)})\xi)\sum_{x_{[k]} \in G_{[k]}} \bm{w}(x_{[k]}) f^{(1,d)}(x_{[k]}).\label{step6arransAlmostAll}
\end{align}
Again, assuming $\xi \leq \cons\,\delta^{\con}\,\varepsilon$ allows simplification of the constant in the last line to $1-22\sqrt[4]{\varepsilon}$.\\

\noindent\textbf{Step 7.} Finally, we put the sets $X'_{x_{[k] \setminus \{d\}}, \nu_1}$ together and organize them in the desired form. Let $P$ be the set of all $(\lambda, \mu, \nu_1) \in \mathbb{F}_p^l \times M \times \mathbb{F}_p^l$ such that $\nu_1 \in \Lambda^\perp_\mu$ and
\[\sum_{\substack{x_{[k] \setminus \{d\}} \in \tilde{X}_{\nu_1}\\\tau(x_{[k] \setminus \{d\}}) = \lambda\\\beta(x_{[k] \setminus \{d\}}) = \mu}} \sum_{x_d \in X'_{x_{[k] \setminus \{d\}}, \nu_1}} \bm{w}(x_{[k]})f^{(1,d)}(x_{[k]}) \geq (1-\sqrt[8]{\varepsilon}) \sum_{\substack{x_{[k] \setminus \{d\}} \in G_{[k] \setminus \{d\}}\\\tau(x_{[k] \setminus \{d\}}) = \lambda\\\beta(x_{[k] \setminus \{d\}}) = \mu}} \sum_{\substack{x_d \in G_d\\ \phi'(x_{[k]}) = \nu_1}} \bm{w}(x_{[k]}) f^{(1,d)}(x_{[k]}).\]
Define a multiaffine map $\gamma \colon G_{[k]} \to \mathbb{F}_p^{l} \times \mathbb{F}_p^{t} \times \mathbb{F}_p^{l}$ by
\[\gamma(x_{[k]}) = \Big(\tau(x_{[k] \setminus \{d\}}), \beta(x_{[k] \setminus \{d\}}), \phi'(x_{[k]})\Big).\]
For $(\lambda, \mu, \nu_1) \in P$, define $S^{(\lambda, \mu, \nu_1)}$ as 
\[S^{(\lambda, \mu, \nu_1)} = \Big\{x_{[k]} \in G_{[k]} \colon x_{[k] \setminus \{d\}} \in \tilde{X}_{\nu_1}, x_d \in X'_{x_{[k] \setminus \{d\}}, \nu_1}, \tau(x_{[k] \setminus \{d\}}) = \lambda, \beta(x_{[k] \setminus \{d\}}) = \mu\Big\},\]
and for completeness set $S^{(\lambda, \mu, \nu_1)} = \emptyset$ for the remaining choices of $(\lambda, \mu, \nu_1) \in \mathbb{F}_p^l \times \mathbb{F}_p^t \times \mathbb{F}_p^l$. Clearly, for all $(\lambda, \mu, \nu_1) \in \mathbb{F}_p^l \times \mathbb{F}_p^t \times \mathbb{F}_p^l$, we have
\[S^{(\lambda, \mu, \nu_1)} \subset X \cap \{x_{[k]} \in G_{[k]} \colon \gamma(x_{[k]}) = (\lambda, \mu, \nu_1)\}.\]
By the way we obtained sets $X'_{x_{[k] \setminus \{d\}}}$, we see that $\psi|_{S^{(\lambda, \mu, \nu_1)}}$ is affine in direction $d$ in the sense described in the statement of the proposition. It remains to check that
\begin{equation}\label{step7lastClaim}\sum_{(\lambda, \mu, \nu_1) \in P} \sum_{x_{[k]} \in S^{(\lambda, \mu, \nu_1)}} \bm{w}(x_{[k]}) f^{(1,d)}(x_{[k]}) = (1 - O(\sqrt[8]{\varepsilon})) \sum_{x_{[k]} \in G_{[k]}} \bm{w}(x_{[k]}) f^{(1,d)}(x_{[k]}).\end{equation}
Inequality~\eqref{step6arransAlmostAll} implies that
\begin{align*}\sqrt[8]{\varepsilon}&\sum_{\substack{\lambda \in \mathbb{F}_p^l\\\mu \in M\\\nu_1 \in \Lambda^\perp_\mu}} \mathbbm{1}\Big((\lambda, \mu, \nu_1) \notin P\Big)\sum_{\substack{x_{[k] \setminus \{d\}} \in \tilde{X}_{\nu_1}\\\tau(x_{[k] \setminus \{d\}}) = \lambda\\\beta(x_{[k] \setminus \{d\}}) = \mu}} \sum_{x_d \in X'_{x_{[k] \setminus \{d\}}, \nu_1}} \bm{w}(x_{[k]})f^{(1,d)}(x_{[k]})\\
\leq&\sqrt[8]{\varepsilon}\sum_{\substack{\lambda \in \mathbb{F}_p^l\\\mu \in M\\\nu_1 \in \Lambda^\perp_\mu}} \mathbbm{1}\Big((\lambda, \mu, \nu_1) \notin P\Big) \sum_{\substack{x_{[k] \setminus \{d\}} \in G_{[k] \setminus \{d\}}\\\tau(x_{[k] \setminus \{d\}}) = \lambda\\\beta(x_{[k] \setminus \{d\}}) = \mu}} \sum_{\substack{x_d \in G_d\\ \phi'(x_{[k]}) = \nu_1}} \bm{w}(x_{[k]}) f^{(1,d)}(x_{[k]})\\
\leq &\sum_{\substack{\lambda \in \mathbb{F}_p^l\\\mu \in M\\\nu_1 \in \Lambda^\perp_\mu}} \mathbbm{1}\Big((\lambda, \mu, \nu_1) \notin P\Big) \Bigg(\sum_{\substack{x_{[k] \setminus \{d\}} \in G_{[k] \setminus \{d\}}\\\tau(x_{[k] \setminus \{d\}}) = \lambda\\\beta(x_{[k] \setminus \{d\}}) = \mu}} \sum_{\substack{x_d \in G_d\\ \phi'(x_{[k]}) = \nu_1}} \bm{w}(x_{[k]}) f^{(1,d)}(x_{[k]}) \\
&\hspace{8cm}-\sum_{\substack{x_{[k] \setminus \{d\}} \in \tilde{X}_{\nu_1}\\\tau(x_{[k] \setminus \{d\}}) = \lambda\\\beta(x_{[k] \setminus \{d\}}) = \mu}} \sum_{x_d \in X'_{x_{[k] \setminus \{d\}}, \nu_1}} \bm{w}(x_{[k]})f^{(1,d)}(x_{[k]})\Bigg)\\
\leq &\sum_{\substack{\lambda \in \mathbb{F}_p^l\\\mu \in M\\\nu_1 \in \Lambda^\perp_\mu}} \Bigg(\sum_{\substack{x_{[k] \setminus \{d\}} \in G_{[k] \setminus \{d\}}\\\tau(x_{[k] \setminus \{d\}}) = \lambda\\\beta(x_{[k] \setminus \{d\}}) = \mu}} \sum_{\substack{x_d \in G_d\\ \phi'(x_{[k]}) = \nu_1}} \bm{w}(x_{[k]}) f^{(1,d)}(x_{[k]}) - \sum_{\substack{x_{[k] \setminus \{d\}} \in \tilde{X}_{\nu_1}\\\tau(x_{[k] \setminus \{d\}}) = \lambda\\\beta(x_{[k] \setminus \{d\}}) = \mu}} \sum_{x_d \in X'_{x_{[k] \setminus \{d\}}, \nu_1}} \bm{w}(x_{[k]})f^{(1,d)}(x_{[k]})\Bigg)\\
\leq&\,22\sqrt[4]{\varepsilon} \sum_{x_{[k]} \in G_{[k]}} \bm{w}(x_{[k]}) f^{(1,d)}(x_{[k]}).\hspace{4cm}\text{(using~\eqref{step6arransAlmostAll})}\end{align*}

The desired inequality~\eqref{step7lastClaim} now follows from this and~\eqref{step6arransAlmostAll} and the proof is complete. We had the requirement that $\xi\leq\mathbf{c}\delta^\con\epsilon$, which we may satisfy with a choice $\xi = \Omega(\delta^{O(1)}) \varepsilon$.\end{proof}

\begin{proof}[Proof of Theorem~\ref{nearlyMultThm}]For each $d \in [k]$, apply Proposition~\ref{nearlyMaffStep} in direction $d$ to obtain a positive integer $t^{(d)} = \exp^{\big((2k + 1)(D^{\mathrm{mh}}_{k-1} + 2)\big)}\Big(O((\delta\varepsilon)^{-O(1)})\Big)$, a multiaffine map $\alpha^{(d)} \colon G_{[k]} \to \mathbb{F}_p^{t^{(d)}}$, and a collection of disjoint sets $(S^{(d), \lambda})_{\lambda \in \mathbb{F}_p^{t^{(d)}}}$ such that $S^{(d), \lambda} \subset \{x_{[k]} \in G_{[k]} \colon \alpha^{(d)}(x_{[k]}) = \lambda\} \cap X$, $\psi|_{S^{(d), \lambda}}$ is affine in direction $d$ for each $\lambda \in \mathbb{F}_p^{t^{(d)}}$, and
\[\sum_{x_{[k]} \in \bigcup_{\lambda \in \mathbb{F}^{t^{(d)}}}\, S^{(d), \lambda}} |\mathcal{Q}_{x_{[k]}}| = \Big(1-O(\sqrt[8]{\varepsilon})\Big) |\mathcal{Q}|,\]
where $\mathcal{Q}$ and $\mathcal{Q}_{x_{[k]}}$ have the same meaning as in Proposition~\ref{nearlyMaffStep}. Write $\bm{\alpha} = (\alpha^{(1)}, \dots, \alpha^{(k)})$. For each tuple $\bm{\lambda} = (\lambda^{(1)}, \dots, \lambda^{(k)})$ such that $\lambda^{(d)} \in \mathbb{F}^{t^{(d)}}$, define $S^{\bm{\lambda}} \subset G_{[k]}$ by
\[S^{\bm{\lambda}} = \bigcap_{d \in [k]} S^{(d), \lambda^{(d)}} \subset X \cap \{x_{[k]}:\bm{\alpha}(x_{[k]}) = \bm{\lambda}(x_{[k]})\}.\]
Then for each $\bm{\lambda}$, $\psi|_{S^{\bm{\lambda}}}$ is multiaffine in the sense explained in the statement of the theorem, and
\begin{equation}\label{nearlyMaffPiecesEqn}\sum_{\bm{\lambda} \in \mathbb{F}_p^{t^{(1)}} \oplus \dots \oplus \mathbb{F}_p^{t^{(k)}}} \sum_{x_{[k]} \in S^{\bm{\lambda}}} |\mathcal{Q}_{x_{[k]}}| = \Big(1-O(\sqrt[8]{\varepsilon})\Big) \sum_{\bm{\lambda} \in \mathbb{F}_p^{t^{(1)}} \oplus \dots \oplus \mathbb{F}_p^{t^{(k)}}} \sum_{\substack{x_{[k]} \in G_{[k]}\\\bm{\alpha}(x_{[k]}) = \bm{\lambda}}} |\mathcal{Q}_{x_{[k]}}|.\end{equation}
Let $\xi > 0$. As in the proof of Proposition~\ref{nearlyMaffStep} (\textbf{Step 3}), for each $i \in [k]$, apply Theorem~\ref{strongMixedApprox} to $A$ for the $L^k$ norm to obtain a positive integer $m^{(i)} = \exp^{\big((2k + 1)(D^{\mathrm{mh}}_{k-1} + 2)\big)}\Big(O(\xi^{-O(1)})\Big)$, constants $c^{(i)}_1, \dots, c^{(i)}_{m^{(i)}} \in \mathbb{D}$, and multiaffine forms $\phi^{(i)}_1, \dots, \phi^{(i)}_{m^{(i)}} \colon G_{[k]} \to \mathbb{F}_p$, such that
\[f^{(1, i)} = \bigconv{i}\dots\bigconv{1}\bigconv{k}\dots\bigconv{1}\bigconv{k}\dots\bigconv{1}\bigconv{k}\dots\bigconv{i + 1} A \apps{\xi}_{L^k} \sum_{j \in [m^{(i)}]} c^{(i)}_j \chi\circ\phi^{(i)}_j.\]
Write $\bm{\phi} = (\phi^{(1)}, \dots, \phi^{(k)})$ and define $c \colon \mathbb{F}_p^{m^{(1)}} \oplus \dots \oplus \mathbb{F}_p^{m^{(k)}} \to [0,1]$ by
\[c(\mu^{(1)}, \dots, \mu^{(k)}) = \min\Big\{\max\Big\{\operatorname{Re} \Big(\prod_{i \in [k]}\Big( \sum_{j \in [m^{(i)}]} c^{(i)}_j \chi(\mu^{(i)}_j)\Big)\Big), 0\Big\}, 1\Big\},\]
where $\mu^{(d)} \in \mathbb{F}_p^{m^{(d)}}$. Similarly to~\eqref{approxWEqn}, we obtain that
\[\exx_{x_{[k]} \in G_{[k]}} \Big||\mathcal{Q}_{x_{[k]}}| - c(\bm{\phi}(x_{[k]}))|G_{[k]}|^{2^{3k} - 1} \Big| \leq 2^k \xi.\]
Recall also from~\eqref{tuplesArrDenseEq} that
\[\exx_{x_{[k]} \in G_{[k]}} |\mathcal{Q}_{x_{[k]}}| = \Omega(\delta^{O(1)})|G_{[k]}|^{2^{3k}}.\]
Returning to~\eqref{nearlyMaffPiecesEqn}, we obtain 
\[\sum_{\bm{\lambda} \in \mathbb{F}_p^{t^{(1)}} \oplus \dots \oplus \mathbb{F}_p^{t^{(k)}}} \sum_{x_{[k]} \in S^{\bm{\lambda}}} c(\bm{\phi}(x_{[k]})) = \Big(1-O(\sqrt[8]{\varepsilon}) - O(\delta^{-O(1)})\xi\Big) \sum_{\bm{\lambda} \in \mathbb{F}_p^{t^{(1)}} \oplus \dots \oplus \mathbb{F}_p^{t^{(k)}}} \sum_{\substack{x_{[k]} \in G_{[k]}\\\bm{\alpha}(x_{[k]}) = \bm{\lambda}}} c(\bm{\phi}(x_{[k]})) > 0.\]
Pick $\xi = \Omega(\delta^{O(1)})\varepsilon$ so that $1-O(\sqrt[8]{\varepsilon}) - O(\delta^{-O(1)})\xi$ becomes just $1-O(\sqrt[8]{\varepsilon})$. Average over $\bm{\lambda}\in \mathbb{F}_p^{t^{(1)}} \oplus \dots \oplus \mathbb{F}_p^{t^{(k)}}$ and $\bm{\mu} \in \mathbb{F}_p^{m^{(1)}} \oplus \dots \oplus \mathbb{F}_p^{m^{(k)}}$ to find values such that
\[\sum_{\substack{x_{[k]} \in S^{\bm{\lambda}}\\\bm{\phi}(x_{[k]}) = \bm{\mu}}} c(\bm{\mu}) = (1-O(\sqrt[8]{\varepsilon})) \sum_{\substack{x_{[k]} \in G_{[k]}\\\bm{\alpha}(x_{[k]}) = \bm{\lambda}\\\bm{\phi}(x_{[k]}) = \bm{\mu}}} c(\bm{\mu}) > 0,\]
which gives 
\[|S^{\bm{\lambda}} \cap \bm{\phi}^{-1}(\bm{\mu})|\,\, = (1-O(\sqrt[8]{\varepsilon}))\, |\bm{\alpha}^{-1}(\bm{\lambda}) \cap \bm{\phi}^{-1}(\bm{\mu})|\,\, > 0,\] 
as desired.
\end{proof}

\section{Biaffine maps on biaffine varieties}

In this section, we study bihomomorphisms defined (typically) on very dense subsets of biaffine varieties. Most of the results that follow are similar to those in~\cite{U4paper}, but are more streamlined. 

\subsection{Quasirandomness of biaffine varieties}
A very useful property that some but not all biaffine varieties have is that if we regard them as bipartite graphs, then those bipartite graphs are quasirandom. In such a situation we shall call the varieties themselves quasirandom. More precisely we make the following definition.
\begin{defin}\label{qrDefin}Let $C_1 \subset G_1, C_2 \subset G_2$ be cosets of some subspaces inside $G_1$ and $G_2$, let $\beta \colon G_1 \times G_2 \to H$ be a biaffine map, and let $\lambda \in H$. The quadruple $(\beta, \lambda, C_1, C_2)$ is \emph{$\eta$-quasirandom with density $\delta$} if the variety $V = \{(x,y) \in G_1 \times G_2 \colon \beta(x,y) = \lambda\} \cap (C_1 \times C_2)$ satisfies
\begin{itemize}
\item for at least a $ 1-\eta$ proportion of the elements $x \in C_1$
\[|V_{x}| = \delta |C_2|,\]
\item for at least a $ 1-\eta$ proportion of the pairs $(x_1, x_2) \in C_1\times C_1$
\[|V_{x_1} \cap V_{x_2}| = \delta^2 |C_2|.\]
\end{itemize}
If the cosets $C_1, C_2$, the map $\beta$ and the element $\lambda$ are clear from the context, we say that $V$ is \emph{$\eta$-quasirandom with density $\delta$}.\end{defin}

\noindent\textbf{Remark.} For now, we shall only consider quasirandom varieties with $\delta > 0$. As we shall see later, when we have control over the dimension of $H$, we will be able to deduce that quasirandom varieties with $\delta = 0$ are in fact empty sets.\\

Note that here we use the notation $V_x$ without specifying the index of the coordinate explicitly. Since we only have two variables, we shall use $V_x$ to denote the slice with the first coordinate fixed and $V_y$ for the slice with the second coordinate fixed. Later, when the arguments become more involved, we shall use a more elaborate notation, but we prefer this notation for its simplicity for now.\\

This implies that the balanced function $\nu$ defined by $\nu(x,y) = V(x,y) - \delta$ satisfies
\begin{align*}\exx_{\substack{x_1, x_2 \in C_1\\y_1, y_2 \in C_2}} \nu(x_1, y_1) &\nu(x_2, y_1) \nu(x_1, y_2) \nu(x_2, y_2)\\
= &\exx_{x_1, x_2 \in C_1} \Big|\exx_{y \in C_2} \nu(x_1, y)\nu(x_2, y)\Big|^2 \\
= &\exx_{x_1, x_2 \in C_1} \Big|\exx_{y \in C_2} \mathbbm{1}(\beta(x_1, y) = \lambda)\mathbbm{1}(\beta(x_2, y) = \lambda) - \delta \mathbbm{1}(\beta(x_1, y) = \lambda) - \delta\mathbbm{1}(\beta(x_2, y) = \lambda) + \delta^2\Big|^2 \\
= &\exx_{x_1, x_2 \in C_1} \Big||C_2|^{-1} |V_{x_1} \cap V_{x_2}| - \delta |C_2|^{-1} |V_{x_1}| - \delta |C_2|^{-1} |V_{x_2}| + \delta^2\Big|^2 \\
\leq &12\eta,\end{align*}
and therefore that $\|\nu\|_{\square} \leq 2 \eta^{1/4}$.\\

\indent Using this property, we may deduce quasirandomness in direction $G_2$.

\begin{lemma}\label{yQR}Let $\beta, \lambda, C_1, C_2, V, \delta, \eta$ be as in Definition~\ref{qrDefin}. For a proportion of at least $1-8\delta^{-2}\sqrt[4]{\eta}$ of $y \in C_2$, we have $|V_{y}| = \delta |C_1|$, and for a proportion of at least $1-32\delta^{-4}\sqrt[4]{\eta}$ of the pairs $(y_1, y_2) \in C_2\times C_2$, we have $|V_{y_1} \cap V_{y_2}| = \delta^2 |C_1|$.\end{lemma}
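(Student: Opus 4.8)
The statement is a direct quantitative consequence of the box-norm bound $\|\nu\|_{\square} \leq 2\eta^{1/4}$ that was established just before Lemma~\ref{yQR}, transported to the $G_2$ side. The key observation is that $\|\nu\|_{\square}$ is symmetric in the two coordinates, so the same averaging argument that gave quasirandomness in direction $G_1$ works in direction $G_2$ once we know the column sizes on the $G_2$ side are well-behaved on average. So the first step is to record that $\exx_{y \in C_2} |C_1|^{-1}|V_y| = \exx_{x \in C_1}|C_2|^{-1}|V_x|$ and that this common average is within $\eta$ of $\delta$ (since $|V_x| = \delta|C_2|$ for all but an $\eta$-fraction of $x$, and $|V_x| \leq |C_2|$ always); more precisely I would compute $\exx_{x \in C_1} \big||C_2|^{-1}|V_x| - \delta\big|^2 \leq \eta$ directly from Definition~\ref{qrDefin}, and hence $\exx_y \big||C_1|^{-1}|V_y| - \delta\big|$ is small after an application of Cauchy--Schwarz.

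\textbf{The single-column bound.} For the first conclusion, I would apply Corollary~\ref{boxUnifCor} (box-norm controls products against one-variable functions). Write $\nu(x,y) = V(x,y) - \delta$. For a fixed $y$, $|C_1|^{-1}|V_y| - \delta = \exx_{x \in C_1} \nu(x,y)$. To estimate the measure of the bad set $\{y : |V_y| \neq \delta|C_1|\}$, note that $|V_y|$ takes only finitely many integer values, so it suffices to bound $\exx_y \big||C_1|^{-1}|V_y| - \delta\big|^2$ — but actually I need to be a little careful, since $|V_y|$ can differ from $\delta|C_1|$ by any amount. The cleaner route: bound $\exx_{y \in C_2}\big|\exx_{x\in C_1}\nu(x,y)\big|^2$. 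Expanding, this equals $\exx_{y}\exx_{x_1,x_2}\nu(x_1,y)\nu(x_2,y)$, which by Cauchy--Schwarz (or directly, since it is the square of $\|\nu\|_{\square}$ after fixing the roles of the coordinates) is at most $\sqrt{\exx_{y_1,y_2}\exx_{x_1,x_2}\nu(x_1,y_1)\nu(x_2,y_1)\nu(x_1,y_2)\nu(x_2,y_2)}\cdot 1 = \|\nu\|_{\square}^2 \leq 4\sqrt{\eta}$. Hence for a proportion at least $1 - 2\delta^{-1}\eta^{1/4}$ of $y\in C_2$ we have $\big||C_1|^{-1}|V_y| - \delta\big| < \delta$ — and since the deviations are forced to be multiples of $|C_1|^{-1}$ times an integer... actually here I should instead argue that $|V_y|$ must be \emph{exactly} $\delta|C_1|$ for the good $y$, which requires knowing $\delta|C_1|$ is an integer and the slice $V_y$ is a coset; this is where I would invoke that $V_y$ is (a translate of) a subspace of $C_1$, so $|V_y|/|C_1| \in \{\mathbf{f}^{-j}\}$, and $\delta$ is one such value, so being within an additive $\delta$ of $\delta$ pins it down when $\delta < 1$. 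Tracking the constant through Markov gives the stated $1 - 8\delta^{-2}\sqrt[4]{\eta}$ (the cruder constant absorbs the slack).

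\textbf{The pair-of-columns bound.} For the second conclusion I would run the analogous argument one level up. Set, for $(y_1,y_2)\in C_2^2$, the quantity $D(y_1,y_2) = |C_1|^{-1}|V_{y_1}\cap V_{y_2}| - \delta^2$. One checks $\exx_{x\in C_1}\nu(x,y_1)\nu(x,y_2) = D(y_1,y_2) + (\text{terms that vanish in expectation over good } y_i)$ — more precisely $\exx_x V(x,y_1)V(x,y_2) = |C_1|^{-1}|V_{y_1}\cap V_{y_2}|$ and $\exx_x V(x,y_i) = |C_1|^{-1}|V_{y_i}|$, which equals $\delta$ for good $y_i$ by the first part. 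Then $\exx_{y_1,y_2}|D(y_1,y_2)|^2 \leq \exx_{y_1,y_2}\big|\exx_x\nu(x,y_1)\nu(x,y_2)\big|^2 + (\text{error from bad }y_i)$, and the main term is exactly $\|\nu\|_{\square}^4 \leq 16\sqrt{\eta}$, while the error from the at most $8\delta^{-2}\eta^{1/4}$-fraction of bad $y_i$ contributes $O(\delta^{-2}\eta^{1/4})$. Applying Markov and again using that $|V_{y_1}\cap V_{y_2}|/|C_1|$ is quantized (it is the size of an intersection of two cosets of subspaces, hence $0$ or $\mathbf{f}^{-j}$) to upgrade "close to $\delta^2$" to "equal to $\delta^2$", I get the claimed proportion $1 - 32\delta^{-4}\sqrt[4]{\eta}$ of good pairs.

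\textbf{Main obstacle.} The only genuinely delicate point is the upgrade from an $L^2$-closeness statement to the exact equalities $|V_y| = \delta|C_1|$ and $|V_{y_1}\cap V_{y_2}| = \delta^2|C_1|$: this uses the algebraic rigidity of biaffine varieties (slices are cosets of subspaces, so their normalized sizes lie in the discrete set $\{\mathbf{f}^{-j} : j \geq 0\}\cup\{0\}$, which is separated near any fixed value $\delta$ or $\delta^2$), rather than just soft analysis. Everything else is bookkeeping with Cauchy--Schwarz, the box norm, and the defining quasirandomness hypothesis; I would be careful to feed the \emph{already-established} bound $\|\nu\|_{\square}\leq 2\eta^{1/4}$ into both steps so the constants come out as stated (with the generous factors $8\delta^{-2}$ and $32\delta^{-4}$ leaving ample room).
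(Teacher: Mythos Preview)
Your approach is essentially the same as the paper's: bound the second moments $\exx_y\big||C_1|^{-1}|V_y|-\delta\big|^2$ and $\exx_{y_1,y_2}\big||C_1|^{-1}|V_{y_1}\cap V_{y_2}|-\delta^2\big|^2$ via $\|\nu\|_\square$, then use the discreteness of coset sizes (values in $\{0\}\cup\{p^{-j}\}$, separated from $\delta$ resp.\ $\delta^2$ by at least $\tfrac12\delta$ resp.\ $\tfrac12\delta^2$) to convert closeness into exact equality. For the single-column part your Cauchy--Schwarz bound $\exx_y|\exx_x\nu|^2\leq\|\nu\|_\square^2$ is fine (in fact sharper than the paper's $\leq\|\nu\|_\square$ via Corollary~\ref{boxUnifCor}); the intermediate figure ``$1-2\delta^{-1}\eta^{1/4}$'' does not follow from either bound, but as you note the slack in $8\delta^{-2}\eta^{1/4}$ absorbs this.

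The one point worth tightening is the pair bound. Your split into good/bad $y_i$ with the crude estimate ``(fraction of bad $y_i$) $\times$ (max $|D|^2$)'' for the error gives $\exx|D|^2\leq \|\nu\|_\square^4 + O(\delta^{-2}\eta^{1/4})$, and after Markov with threshold $\delta^2/2$ this yields $O(\delta^{-6}\eta^{1/4})$, not the stated $32\delta^{-4}\eta^{1/4}$. The paper sidesteps this by writing $V(x,y_1)V(x,y_2)-\delta^2=\nu(x,y_1)V(x,y_2)+\delta\,\nu(x,y_2)$, using $(a+b)^2\leq 2a^2+2b^2$, and bounding each resulting term by $\|\nu\|_\square$ directly via Lemma~\ref{boxCS}; this gives $\exx|D|^2\leq 4\|\nu\|_\square\leq 8\eta^{1/4}$ without ever invoking the first conclusion, and hence the exact constant $32\delta^{-4}$. (Also, $\|\nu\|_\square^4\leq 16\eta$, not $16\sqrt\eta$.) Your route can be repaired by bounding the cross-terms via the second moment rather than crudely, but the paper's telescoping is cleaner.
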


\begin{proof}We have
\begin{align*}\exx_{y \in C_2}\Big||C_1|^{-1}|V_y| - \delta \Big|^2 = &\exx_{y \in C_2} \Big|\exx_{x \in C_1} \nu(x,y)\Big|^2 \\
= &\exx_{x_1, x_2 \in C_1} \exx_{y \in C_2} \nu(x_1,y) \nu(x_2,y)\\
\leq & \exx_{x_2 \in C_1} \Big| \exx_{x_1 \in C_1} \exx_{y \in C_2} \nu(x_1, y) \nu(x_2, y)\Big|\\
\leq &\|\nu\|_{\square},\end{align*}

where we applied Corollary~\ref{boxUnifCor} in the last line. Since $V_y$ is also a coset of a subspace in $G_1$ and $\delta$ is already known to be the density of some coset (and thus a non-positive power of $p$), either $|V_y| = \delta |C_1|$ or $\Big||C_1|^{-1}|V_y| - \delta \Big| \geq \frac{1}{2}\delta$. The first part of the claim now follows.\\

Similarly, we have
\begin{align*}\exx_{y_1, y_2 \in C_2}\Big||C_1|^{-1}|V_{y_1} \cap V_{y_2}| - \delta^2 \Big|^2 = &\exx_{y_1, y_2 \in C_2}\Big| \exx_{x \in C_1} V(x, y_1) V(x, y_2) - \delta^2\Big|^2\\
= &\exx_{y_1, y_2 \in C_2}\Big| \exx_{x \in C_1} \nu(x, y_1) V(x, y_2) + \delta \nu(x,y_2)\Big|^2\\
\leq & 2\exx_{y_1, y_2 \in C_2}\Big| \exx_{x \in C_1} \nu(x, y_1) V(x, y_2)\Big|^2 + 2\exx_{y_1, y_2 \in C_2}\Big| \exx_{x \in C_1}\delta \nu(x,y_2)\Big|^2\\
= &2\exx_{y_1, y_2 \in C_2} \exx_{x_1, x_2 \in C_1} \nu(x_1, y_1) V(x_1, y_2) \nu(x_2, y_1) V(x_2, y_2)\\
&\hspace{2cm}+2\exx_{y_1, y_2 \in C_2} \exx_{x_1, x_2 \in C_1}\delta^2 \nu(x_1,y_2)\nu(x_2, y_2)\\
\leq &4\|\nu\|_{\square},\end{align*}

where we used Lemma~\ref{boxCS} twice in the last step.\end{proof}

We now prove some useful properties of quasirandom varieties. The first one says that the intersection of $k$ columns most often has density exactly $\delta^k$.

\begin{lemma}\label{regInt1}Let $\beta, \lambda, C_1, C_2, V, \delta, \eta$ be as in Definition~\ref{qrDefin}. Pick $x_1, \dots, x_k$ independently and uniformly from $C_1$. Then
\[\mathbb{P}\Big(|V_{x_1} \cap \dots \cap V_{x_k}| = \delta^k |C_2|\Big) \geq 1 - 8 k^2 \delta^{-2k} \sqrt[4]{\eta}.\]
\end{lemma}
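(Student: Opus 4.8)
The plan is to prove \Cref{regInt1} by induction on $k$, using the second-moment/quasirandomness machinery already set up for quadruples. First I would dispose of the base cases: for $k=1$ the claim is immediate from the first bullet of \Cref{qrDefin} (a $1-\eta$ proportion of $x\in C_1$ have $|V_x|=\delta|C_2|$, and $8\cdot 1^2\cdot\delta^{-2}\sqrt[4]{\eta}\geq\eta$), and for $k=2$ it is exactly the second bullet of \Cref{qrDefin} up to the (weaker) error term. So assume $k\geq 3$ and that the statement holds for all smaller values.

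The key idea is that $|V_{x_1}\cap\dots\cap V_{x_k}|$ can be analyzed by comparing the indicator $\mathbbm{1}(x\in V_{x_1}\cap\dots\cap V_{x_k}) = \prod_{i\in[k]} V(x_i,x)$ with the ``expected'' product $\prod_i(\delta + \nu(x_i,x))$, where $\nu(x,y)=V(x,y)-\delta$ is the balanced function with $\|\nu\|_{\square}\leq 2\eta^{1/4}$ established just before \Cref{yQR}. I would compute the second moment
\[
\exx_{x_1,\dots,x_k\in C_1}\Big||C_2|^{-1}|V_{x_1}\cap\dots\cap V_{x_k}| - \delta^k\Big|^2
= \exx_{x_1,\dots,x_k\in C_1}\Big|\exx_{y\in C_2}\Big(\prod_{i\in[k]}V(x_i,y) - \delta^k\Big)\Big|^2.
\]
Expanding $\prod_i V(x_i,y) = \prod_i(\delta+\nu(x_i,y))$ and subtracting $\delta^k$ gives a sum over nonempty subsets $S\subseteq[k]$ of terms $\delta^{k-|S|}\prod_{i\in S}\nu(x_i,y)$; after squaring and averaging over $y$ (and over a second copy $y'$), one gets a sum of at most $4^k$ box-norm-type inner products, each of which I would bound by $\|\nu\|_{\square}$ using repeated applications of \Cref{boxCS} and \Cref{boxUnifCor} (exactly as in the proof of \Cref{yQR}, second displayed computation, but iterated). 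This yields a bound of the shape $\exx_{x_{[k]}}\big||C_2|^{-1}|V_{x_1}\cap\dots\cap V_{x_k}| - \delta^k\big|^2 \leq C_k\|\nu\|_{\square}\leq 2C_k\eta^{1/4}$ for an explicit constant $C_k$; the main bookkeeping is making $C_k$ small enough (the paper wants something like $k^2\delta^{-\cdots}$, so I would want the constant to come out polynomially in $k$, which is why it is cleaner to run the induction than to expand all $4^k$ terms crudely). Finally, since each $V_{x_1}\cap\dots\cap V_{x_k}$ is an intersection of cosets of subspaces of $G_2$, its density is a non-positive power of $p$, hence is either exactly $\delta^k$ or differs from it by at least a constant multiple (depending on $\delta$); a Chebyshev-type argument then converts the $L^2$ bound into the stated probability bound.

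The main obstacle I anticipate is getting the constant right: naively expanding $\prod_i(\delta+\nu(x_i,y))$ produces $2^k$ terms and squaring produces $4^k$, and bounding each term's contribution by $\|\nu\|_\square$ would give a $4^k$ factor rather than the $k^2$ claimed. To get the better dependence I would instead argue inductively: write $|V_{x_1}\cap\dots\cap V_{x_k}| = |(V_{x_1}\cap\dots\cap V_{x_{k-1}})\cap V_{x_k}|$, condition on the event $E_{k-1}$ that $|V_{x_1}\cap\dots\cap V_{x_{k-1}}|=\delta^{k-1}|C_2|$ (which holds with probability $\geq 1-8(k-1)^2\delta^{-2(k-1)}\eta^{1/4}$ by the inductive hypothesis), and then on $E_{k-1}$ estimate the conditional second moment of $|C_2|^{-1}|W\cap V_{x_k}|$ where $W=V_{x_1}\cap\dots\cap V_{x_{k-1}}$ is a \emph{fixed} coset of density $\delta^{k-1}$; this only requires a single box-norm estimate (bounding $\exx_{x_k}||C_2|^{-1}|W\cap V_{x_k}| - \delta\cdot\delta^{k-1}|$ via $\nu$ and \Cref{boxUnifCor}, exactly as in the first display of the proof of \Cref{yQR} but with the restricted averaging set $W$), giving an error of the form $\delta^{-(k-1)}\eta^{1/4}$ per step. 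Summing the failure probabilities over the $k$ steps and using a union bound produces the claimed $O(k^2\delta^{-2k}\eta^{1/4})$ bound; the genericity (subspace) rigidity argument at the end is then applied once, to $W\cap V_{x_k}$, to upgrade ``density within $O(\delta^{-(k-1)}\eta^{1/4})$ of $\delta^k$'' to ``density exactly $\delta^k$.''
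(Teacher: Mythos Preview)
Your overall strategy---compute the second moment of $|C_2|^{-1}|V_{x_1}\cap\dots\cap V_{x_k}|-\delta^k$, bound it via $\|\nu\|_{\square}$, and convert to exact equality using that coset densities are powers of $p$---is correct and is what the paper does. The difference is in how the $k^2$ (rather than $4^k$) constant is obtained.

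The paper does not use induction. Instead it writes the telescoping identity
\[
\prod_{i\in[k]}V(x_i,y)-\delta^k=\sum_{i\in[k]}\delta^{i-1}\,\nu(x_i,y)\prod_{j>i}V(x_j,y),
\]
which has only $k$ terms, each containing exactly one $\nu$-factor. After Cauchy--Schwarz on the sum (picking up a factor $k$) and expanding the square with a second variable $z$, each term becomes an average in which only $\nu(x_i,z)$ depends on both $x_i$ and $z$; a single application of Corollary~\ref{boxUnifCor} then bounds each term by $\|\nu\|_{\square}$, giving $k^2\|\nu\|_{\square}\leq 2k^2\eta^{1/4}$ directly. Chebyshev with threshold $\delta^k/2$ then yields the stated $8k^2\delta^{-2k}\eta^{1/4}$.

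Your inductive route also works, and if you are careful with the $\|\mathbbm{1}_W\|_2$ factor in Corollary~\ref{boxUnifCor} it can even give a better $\delta$-exponent than the paper. However, your final bookkeeping is off: the per-step failure probability depends on the step index $j$ (via Chebyshev against threshold $\delta^j/2$), not on $k$, and summing $k$ terms of size roughly $\delta^{-(k-1)}\eta^{1/4}$ does not produce $k^2\delta^{-2k}\eta^{1/4}$. Also, rigidity must be invoked at each inductive step (to feed exact equality into the next step), not just once at the end---though this is implicit in the way you state the inductive hypothesis. The telescoping argument sidesteps all of this and gets the stated constant in one clean computation.
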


\begin{proof}We have
\begin{align*}\exx_{x_1, \dots, x_k \in C_1}& \Big||C_2|^{-1}\hspace{2pt}|V_{x_1} \cap \dots \cap V_{x_k}| - \delta^k \Big|^2\\
=& \exx_{x_1, \dots, x_k \in C_1} \Big|\exx_{y \in C_2} V(x_1, y) \cdots V(x_k,y) - \delta^k\Big|^2\\
=& \exx_{x_1, \dots, x_k \in C_1} \Big|\sum_{i \in [k]} \exx_{y \in C_2} \delta^{i-1} \nu(x_i, y) V(x_{i+1}, y)\cdots V(x_k,y)\Big|^2\\
&\hspace{2cm}\text{(by the Cauchy-Schwarz inequality)}\\
\leq &\exx_{x_1, \dots, x_k \in C_1}  k\sum_{i \in [k]} \Big|\exx_{y \in C_2} \delta^{i-1} \nu(x_i, y) V(x_{i+1}, y)\cdots V(x_k,y)\Big|^2\\
=&k\sum_{i \in [k]}\exx_{x_1, \dots, x_k \in C_1}\exx_{y,z \in C_2} \delta^{2i-2} \nu(x_i, y)\nu(x_i, z) V(x_{i+1}, y)V(x_{i+1}, z)\cdots V(x_k,y)V(x_k,z)\\
\leq&k \sum_{i \in [k]} \exx_{x_1, \dots,x_{i-1}, x_{i+1}, \dots x_k \in C_1} \exx_{y \in C_2}\Big|\exx_{x_i \in C_1, z \in C_2} \delta^{2i-2} \nu(x_i, y)\hspace{3pt}\nu(x_i, z)\\
&\hspace{7cm}V(x_{i+1}, y)V(x_{i+1}, z)\cdots V(x_k,y)V(x_k,z)\Big|\\
&\hspace{2cm}\text{(by Corollary~\ref{boxUnifCor}: the only term that depends on both $x_i$ and $z$ is $\nu(x_i,z)$)}\\
\leq& k^2 \|\nu\|_{\square}\\
\leq& 2k^2 \sqrt[4]{\eta}.\qedhere
\end{align*}
\end{proof}

The next lemma is a generalization of the previous one in the sense that we consider intersections of $k$ randomly chosen columns with a fixed set. 

\begin{lemma}\label{regInt2}Let $\beta, \lambda, C_1, C_2, V, \delta, \eta$ be as in Definition~\ref{qrDefin}. Let $S \subset C_2$ and let $\varepsilon \in [20\delta^{-2}\sqrt[4]{\eta}, 1]$. Pick $x_1, \dots, x_k$ independently and uniformly from $C_1$. Then
\[\mathbb{P}\Big(\Big||S \cap V_{x_1} \cap \dots \cap V_{x_k}| - \delta^k |S|\Big| \geq \varepsilon|C_2|\Big) = O(\varepsilon^{-2} \delta^{-4}\sqrt[4]{\eta}).\]
\end{lemma}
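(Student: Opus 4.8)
The plan is to reduce Lemma~\ref{regInt2} to a second-moment computation that mirrors the proof of Lemma~\ref{regInt1}, with the extra set $S$ inserted into every product. Write $\nu(x,y) = V(x,y) - \delta$ for the balanced function of the variety, so that $\|\nu\|_{\square} \leq 2\eta^{1/4}$ as established in the discussion after Definition~\ref{qrDefin}. For fixed $S \subset C_2$ and independent uniform $x_1, \dots, x_k \in C_1$, the quantity $|C_2|^{-1}|S \cap V_{x_1} \cap \dots \cap V_{x_k}|$ equals $\exx_{y \in C_2} S(y) V(x_1,y) \cdots V(x_k, y)$. I would expand $V(x_i,y) = \delta + \nu(x_i,y)$ in each factor and peel off one $\nu$ at a time via a telescoping identity: $\prod_{i\in[k]} V(x_i,y) = \delta^k + \sum_{i\in[k]} \delta^{i-1}\nu(x_i,y) V(x_{i+1},y)\cdots V(x_k,y)$. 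Multiplying by $S(y)$ and averaging over $y$ gives
\[|C_2|^{-1}|S\cap V_{x_1}\cap\dots\cap V_{x_k}| = \delta^k\, |C_2|^{-1}|S| + \sum_{i\in[k]} \exx_{y\in C_2} \delta^{i-1} S(y)\nu(x_i,y) V(x_{i+1},y)\cdots V(x_k,y).\]

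Next I would bound the second moment of the error term over $x_1,\dots,x_k$. By Cauchy--Schwarz the squared error is at most $k\sum_{i\in[k]} |\exx_y \delta^{i-1} S(y)\nu(x_i,y) V(x_{i+1},y)\cdots V(x_k,y)|^2$. Taking the expectation over all $x_j$, expanding the square by introducing a second copy $z\in C_2$ of the $y$-variable, and noting that the only factor depending on both $x_i$ and $z$ is $\nu(x_i,z)$, an application of Corollary~\ref{boxUnifCor} (with the role of the distinguished pair of variables played by $x_i$ and $z$, and $S(y)S(z)$, $\delta^{2i-2}$ absorbed into the $L^\infty$-bounded weights $u,v$) bounds each term by $\|\nu\|_{\square}$. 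This yields $\exx_{x_{[k]}} \big||C_2|^{-1}|S\cap V_{x_1}\cap\dots\cap V_{x_k}| - \delta^k|C_2|^{-1}|S|\big|^2 \leq k^2\|\nu\|_{\square} \leq 2k^2\eta^{1/4}$, exactly as in Lemma~\ref{regInt1} but with $S$ carried along.

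Finally I would convert the second-moment bound into the claimed probability estimate. Since $|C_2|^{-1}|S| \leq 1$, we have $|\delta^k|C_2|^{-1}|S| - \delta^k|C_2|^{-1}|S|\,| = 0$ trivially; the point is that the deviation $\big||S\cap V_{x_1}\cap\dots\cap V_{x_k}| - \delta^k|S|\big|$ differs from $|C_2|$ times the quantity controlled above. By Chebyshev/Markov, $\mathbb{P}\big(\big||C_2|^{-1}|S\cap V_{x_1}\cap\dots\cap V_{x_k}| - \delta^k|C_2|^{-1}|S|\big| \geq \varepsilon\big) \leq \varepsilon^{-2}\cdot 2k^2\eta^{1/4} = O(\varepsilon^{-2}\eta^{1/4})$, and since $k$ is an absolute constant (implicit constants depend on $k$ and $p$), and $\varepsilon \geq 20\delta^{-2}\eta^{1/4}$ ensures the bound is meaningful, this is $O(\varepsilon^{-2}\delta^{-4}\eta^{1/4})$ after absorbing the harmless factors; multiplying $|C_2|^{-1}|S|$ deviation by $|C_2|$ recovers the statement about $\big||S\cap V_{x_1}\cap\dots\cap V_{x_k}| - \delta^k|S|\big| \geq \varepsilon|C_2|$. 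The only mild subtlety — and the step I would be most careful about — is matching the exponents of $\delta$ and $\varepsilon$ in the final $O(\cdot)$: the second moment naturally produces $k^2\eta^{1/4}$ with no $\delta$-dependence, so the $\delta^{-4}$ in the statement must come from how one chooses to present the bound (or from a slightly wasteful application tailored to the lower bound $\varepsilon \geq 20\delta^{-2}\eta^{1/4}$ on $\varepsilon$); I would simply verify that $\varepsilon^{-2}\cdot 2k^2\eta^{1/4} \leq O(\varepsilon^{-2}\delta^{-4}\eta^{1/4})$ trivially holds, so the stated bound is certainly valid, if not tight.
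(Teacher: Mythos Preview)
Your second-moment argument is correct and takes a genuinely different route from the paper. You insert $S(y)$ into the telescoping-plus-box-norm computation of Lemma~\ref{regInt1} and obtain $\exx_{x_{[k]}}\big||C_2|^{-1}|S\cap\bigcap_i V_{x_i}|-\delta^k|C_2|^{-1}|S|\big|^2\leq 2k^2\eta^{1/4}$, then apply Chebyshev. The paper instead computes $\mathbb{E}N$ and $\operatorname{var}N$ by summing over $y\in S$ (respectively $(y_1,y_2)\in S^2$) and using the independence of the $x_i$: this reduces everything to knowing $|V_y|$ and $|V_{y_1}\cap V_{y_2}|$, which is supplied by Lemma~\ref{yQR} (quasirandomness in the transposed direction). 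The paper's route yields a variance bound $O(\delta^{-4}\eta^{1/4})|C_2|^2$ with no dependence on $k$; your route yields $O(k^2\eta^{1/4})|C_2|^2$ with no dependence on $\delta$. Neither dominates the other.

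One correction to flag: in this lemma $k$ is a free parameter (the number of columns being intersected), not the paper's ambient number of variables, so you cannot absorb $k^2$ into the implied constant, and your final inequality ``$2k^2\eta^{1/4}\leq O(\delta^{-4}\eta^{1/4})$ trivially holds'' is not justified. Your argument therefore proves the variant bound $O(k^2\varepsilon^{-2}\eta^{1/4})$, which is perfectly adequate for the paper's downstream applications but is not literally the statement as written.
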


\begin{proof}Let $N$ be the random variable $|S \cap V_{x_1} \cap \dots \cap V_{x_k}|$. (The expectation notation $\mathbb{E}$ in this proof has its usual, probabilistic meaning.) We have
\begin{align*} \Big|\mathbb{E} N - \delta^k |S|\Big|=& \Big|\sum_{y \in S} \mathbb{P}(y \in V_{x_1} \cap \dots \cap V_{x_k}) - \delta^k\Big|\\
=& \Big|\sum_{y \in S}\mathbb{P}(x_1, \dots, x_k \in V_{y}) -  \delta^{k}\Big|\\
=& \Big|\sum_{y \in S} (|C_1|^{-1} |V_{y}|)^k - \delta^{k}\Big| \\
&\hspace{2cm}\text{(by Lemma~\ref{yQR})}\\
\leq & 8\delta^{-2}\sqrt[4]{\eta}|C_2|.\end{align*}
Next, we estimate the variance of $N$.
\begin{align*}\operatorname{var} N =& \mathbb{E}(N^2) - (\mathbb{E} N)^2\\
 =& \mathbb{E} \Big(\sum_{y \in S} \mathbbm{1}(y \in V_{x_1} \cap \dots \cap V_{x_k})\Big)^2 - \Big(\mathbb{E} \sum_{y \in S} \mathbbm{1}(y \in V_{x_1} \cap \dots \cap V_{x_k})\Big)^2\\
=& \sum_{y_1, y_2 \in S} \mathbb{E} \mathbbm{1}(y_1, y_2 \in V_{x_1} \cap \dots \cap V_{x_k}) - \mathbb{E}\Big(\mathbbm{1}(y_1 \in V_{x_1} \cap \dots \cap V_{x_k})\Big)\mathbb{E}\Big(\mathbbm{1}(y_2 \in V_{x_1} \cap \dots \cap V_{x_k})\Big)\\
=& \sum_{y_1, y_2 \in S}  \mathbb{P}(y_1, y_2 \in V_{x_1} \cap \dots \cap V_{x_k}) - \mathbb{P}(y_1 \in V_{x_1} \cap \dots \cap V_{x_k})\mathbb{P}(y_2 \in V_{x_1} \cap \dots \cap V_{x_k})\\
=& \sum_{y_1, y_2 \in S}  \mathbb{P}(x_1, \dots, x_k \in V_{y_1} \cap V_{y_2}) - \mathbb{P}(x_1, \dots, x_k \in V_{y_1})\mathbb{P}(x_1, \dots, x_k \in V_{y_2})\\
=& \sum_{y_1, y_2 \in S} \Big(\frac{|V_{y_1} \cap V_{y_2}|}{|C_1|}\Big)^k - \Big(\frac{|V_{y_1}|}{|C_1|}\Big)^k \Big(\frac{|V_{y_2}|}{|C_1|}\Big)^k\\
&\hspace{2cm}\text{(by Lemma~\ref{yQR})}\\ 
\leq& 100 \delta^{-4}\sqrt[4]{\eta}|C_2|^2.\end{align*}
In the last line we  considered separately those pairs $(y_1, y_2)$ where $|V_{y_1}|\not= \delta |C_1|$, those where $|V_{y_2}|\not= \delta|C_1|$, and those where $|V_{y_1} \cap V_{y_2}| \not= \delta^2 |C_1|$. We now use Chebyshev's inequality to get 
\begin{align*}\mathbb{P}\Big(\Big|N - \delta^k |S|\Big| \geq \varepsilon |C_2|\Big) &\leq \mathbb{P}\Big(|N - \mathbb{E} N| \geq (\varepsilon - 8\delta^{-2}\sqrt[4]{\eta})|C_2|\Big)\\
&\leq \frac{\operatorname{var} N}{(\varepsilon - 8\delta^{-2}\sqrt[4]{\eta})^2|C_2|^2}\\
&\leq \frac{1000 \delta^{-4}\sqrt[4]{\eta}}{\varepsilon^2}\\ 
&= O(\varepsilon^{-2} \delta^{-4}\sqrt[4]{\eta}),\end{align*}
which concludes the proof.\end{proof}

We need the following corollaries of Lemmas~\ref{regInt1} and~\ref{regInt2}.

\begin{corollary}\label{oneDenseColumnCor}Let $\beta, \lambda, C_1, C_2, V, \delta, \eta$ be as in Definition~\ref{qrDefin}. Let $r$ be the codimension of $\beta$ (that is, $\dim H$, where $H$ is the codomain of $\beta$). Let $x_0 \in C_1$ and let $S \subset V_{x_0 }$ be such that $|S|\, \geq (1-\varepsilon) |V_{x_0 }|\, > 0$. Let $k \in \mathbb{N}$. Suppose that $x_1, \dots, x_k$ are chosen uniformly and independently from $C_1$. Then, provided that $\varepsilon \geq 200 \delta^{-2}p^{(k+1)r} \sqrt[4]{\eta}$,
\[\mathbb{P}\Big(|S \cap V_{x_1 } \cap \dots \cap V_{x_k }| \geq (1-2\varepsilon)|V_{x_0 } \cap V_{x_1 } \cap \dots \cap V_{x_k }|\Big) = 1 -  O(\varepsilon^{-2}\delta^{-4}p^{2(k+1)r}\sqrt[4]{\eta}).\]\end{corollary}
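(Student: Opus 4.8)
\textbf{Proof plan for Corollary~\ref{oneDenseColumnCor}.}
The plan is to reduce the statement to an application of Lemma~\ref{regInt2} followed by Lemma~\ref{varsizelemma} (or rather the codimension bound on slices of a variety). First I would write $V_{x_0} \setminus S = E$, so $|E| \leq \varepsilon |V_{x_0}|$, and observe that $V_{x_0}$ is itself a coset of a subspace of $G_2$ of codimension at most $r$, hence $|V_{x_0}| \geq p^{-r}|C_2|$ by the same reasoning as in Lemma~\ref{varsizelemma} (a non-empty variety of codimension $r$ inside the coset $C_2$). Thus $|E| \leq \varepsilon p^{-r} |C_2| \cdot p^r \le \varepsilon |C_2|$; more usefully, $|V_{x_0} \cap V_{x_1} \cap \dots \cap V_{x_k}|$ is, whenever it is non-empty, a coset of a subspace of codimension at most $(k+1)r$, so it has size either $0$ or at least $p^{-(k+1)r}|C_2|$.

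The key steps, in order: (1) Apply Lemma~\ref{regInt2} with $S$ replaced by the error set $E = V_{x_0}\setminus S$ and with the approximation parameter $\varepsilon' = \tfrac12 \varepsilon p^{-(k+1)r}$ (noting that the hypothesis $\varepsilon \geq 200\delta^{-2}p^{(k+1)r}\sqrt[4]{\eta}$ guarantees $\varepsilon' \geq 20\delta^{-2}\sqrt[4]{\eta}$ as required). This gives that with probability $1 - O(\varepsilon'^{-2}\delta^{-4}\sqrt[4]{\eta}) = 1 - O(\varepsilon^{-2}\delta^{-4}p^{2(k+1)r}\sqrt[4]{\eta})$ we have $|E \cap V_{x_1}\cap\dots\cap V_{x_k}| \leq \delta^k|E| + \varepsilon'|C_2| \leq \delta^k \varepsilon |V_{x_0}| + \varepsilon'|C_2|$. (2) Apply Lemma~\ref{regInt1} to get that, with probability $1 - O(k^2\delta^{-2k}\sqrt[4]{\eta})$, the set $V_{x_0}\cap V_{x_1}\cap\dots\cap V_{x_k}$ has size exactly $\delta^{k}|V_{x_0}|$ — here I would run Lemma~\ref{regInt1} inside the coset $V_{x_0}$ viewed as the "ambient" column, i.e. apply it to the variety $V \cap (V_{x_0}^{\,} $-fixed$)$; alternatively observe directly that conditioning on $x_0$, the columns $V_{x_j}\cap V_{x_0}$ for $j\in[k]$ behave as $k$ random columns of a quasirandom variety with the same density. (3) Combine: on the intersection of the two good events, $|S \cap V_{x_1}\cap\dots\cap V_{x_k}| = |V_{x_0}\cap V_{x_1}\cap\dots\cap V_{x_k}| - |E\cap V_{x_1}\cap\dots\cap V_{x_k}| \geq \delta^k|V_{x_0}| - \delta^k\varepsilon|V_{x_0}| - \varepsilon'|C_2|$. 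Since $|V_{x_0}| \geq p^{-r}|C_2| \geq p^{-(k+1)r}|C_2|$ and $\varepsilon' = \tfrac12\varepsilon p^{-(k+1)r}$, the term $\varepsilon'|C_2| \leq \tfrac12\varepsilon|V_{x_0}| \le \tfrac12 \varepsilon \delta^{-k}\cdot\delta^k|V_{x_0}|$; one needs to be slightly careful with the $\delta^{-k}$ factor, so I would instead absorb $\varepsilon'|C_2|$ using $|V_{x_0}\cap\dots\cap V_{x_k}| \ge p^{-(k+1)r}|C_2| = 2\varepsilon'\varepsilon^{-1}|C_2|$ whenever that intersection is non-empty, giving $\varepsilon'|C_2| \le \tfrac12\varepsilon|V_{x_0}\cap V_{x_1}\cap\dots\cap V_{x_k}|$, and $\delta^k\varepsilon|V_{x_0}| = \varepsilon|V_{x_0}\cap V_{x_1}\cap\dots\cap V_{x_k}|$ on the good event of step (2), for a total of at least $(1-\tfrac32\varepsilon)$, which is $\geq(1-2\varepsilon)$ as claimed — or one tunes the constant in $\varepsilon'$ to land exactly at $1-2\varepsilon$.

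The step requiring the most care is step (2): Lemma~\ref{regInt1} as stated gives information about $V_{x_1}\cap\dots\cap V_{x_k}$ as columns of the ambient quasirandom variety, not about their intersection with the additional fixed column $V_{x_0}$. The cleanest fix is to note that $(\beta,\lambda,C_1,C_2)$ being $\eta$-quasirandom with density $\delta$ already controls $|V_{x_0}\cap V_{x_1}\cap\dots\cap V_{x_k}|$ directly: this is precisely the statement of Lemma~\ref{regInt1} applied with $k+1$ columns $x_0,x_1,\dots,x_k$, which holds with probability $1 - 8(k+1)^2\delta^{-2(k+1)}\sqrt[4]{\eta}$ over the joint choice. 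Since $x_0$ is fixed rather than random, I would instead observe that the proof of Lemma~\ref{regInt1} bounds an $L^2$-average over all $k+1$ columns, so by Markov at least a $1-O(\sqrt[4]{\eta}/\varepsilon'')$ fraction of choices of $x_0$ are "typical" in the sense that the conditional average over $x_1,\dots,x_k$ is still small; however, since the corollary's conclusion is a probabilistic statement over $x_1,\dots,x_k$ only, with $x_0$ genuinely fixed, I should simply invoke Lemma~\ref{regInt2} a second time — applied to the set $S' = V_{x_0}$ (a fixed subset of $C_2$) with $k$ random columns and parameter $\varepsilon'$ — to conclude $\big||V_{x_0}\cap V_{x_1}\cap\dots\cap V_{x_k}| - \delta^k|V_{x_0}|\big| \le \varepsilon'|C_2|$ with the stated probability. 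Then the codimension observation forces $|V_{x_0}\cap V_{x_1}\cap\dots\cap V_{x_k}|$ to equal $\delta^k|V_{x_0}|$ exactly (both being non-positive integer powers of $p$ times $|C_2|$, and $\varepsilon'$ small enough to separate consecutive powers once we know the intersection is non-empty, which it is since it contains $S\cap V_{x_1}\cap\dots\cap V_{x_k}$ shown non-empty in step (1)). Assembling steps (1)–(3) with a union bound over the two failure events gives the claimed probability $1 - O(\varepsilon^{-2}\delta^{-4}p^{2(k+1)r}\sqrt[4]{\eta})$.
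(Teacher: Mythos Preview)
Your approach is essentially the same as the paper's: apply Lemma~\ref{regInt2} twice with parameter $\approx \varepsilon p^{-(k+1)r}$, once to the set $V_{x_0}$ and once to (in your case) $E=V_{x_0}\setminus S$ (the paper applies it to $S$ instead, which is equivalent), then combine using the codimension bound $|V_{x_0}\cap V_{x_1}\cap\dots\cap V_{x_k}|\geq p^{-(k+1)r}|C_2|$ whenever this intersection is non-empty. Your self-correction in step~(2), replacing Lemma~\ref{regInt1} by a second application of Lemma~\ref{regInt2} with $S'=V_{x_0}$, is exactly what the paper does.

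One small slip: your justification of non-emptiness of $V_{x_0}\cap V_{x_1}\cap\dots\cap V_{x_k}$ is circular --- step~(1) only gives an \emph{upper} bound on $|E\cap V_{x_1}\cap\dots\cap V_{x_k}|$ and says nothing about $S\cap V_{x_1}\cap\dots\cap V_{x_k}$ being non-empty. The clean argument (which the paper uses) is that on the good event of step~(2) one has $|V_{x_0}\cap V_{x_1}\cap\dots\cap V_{x_k}|\geq \delta^k|V_{x_0}|-\varepsilon'|C_2|$, and since $\delta\geq p^{-r}$ and $|V_{x_0}|\geq p^{-r}|C_2|$ this gives $\delta^k|V_{x_0}|\geq p^{-(k+1)r}|C_2|>\varepsilon'|C_2|$, so the intersection is non-empty directly. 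Also, your claim that the intersection size equals $\delta^k|V_{x_0}|$ \emph{exactly} is unnecessary (and would need more care than you give it): the approximate equality from Lemma~\ref{regInt2} is already enough for the final inequality.
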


\begin{proof}Let $N$ and $N_0$ be the random variables $|S \cap V_{x_1 } \cap \dots \cap V_{x_k }|$ and $|V_{x_0 } \cap V_{x_1 } \cap \dots \cap V_{x_k }|$. By Lemma~\ref{regInt2}, (provided $\varepsilon \geq 200 \delta^{-2}p^{(k+1)r} \sqrt[4]{\eta}$ so that the technical requirement in the statement of that lemma is met) we have
\[\mathbb{P}\Big(\Big|N - \delta^k |S|\Big| \geq \frac{p^{-(k+1)r}\varepsilon}{10} |C_2|\Big) = O(\varepsilon^{-2} \delta^{-4}p^{2(k+1)r}\sqrt[4]{\eta})\]
and
\[\mathbb{P}\Big(\Big|N_0 - \delta^k |V_{x_0 }|\Big| \geq \frac{p^{-(k+1)r}\varepsilon}{10} |C_2|\Big) = O(\varepsilon^{-2}\delta^{-4}p^{2(k+1)r}\sqrt[4]{\eta}).\]
Thus, with probability $1 - O(\varepsilon^{-2}\delta^{-4}p^{2(k+1)r}\sqrt[4]{\eta})$ we have that $\Big|N - \delta^k |S|\Big|, \Big|N_0 - \delta^k |V_{x_0 }|\Big| \leq \frac{p^{-(k+1)r}\varepsilon}{10} |C_2|$. Combining these two bounds with triangle inequality, we obtain
\[|N-N_0| \leq \delta^k|V_{x_0 } \setminus S| + \frac{p^{-(k+1)r}\varepsilon|C_2|}{5} \leq \varepsilon \delta^k |V_{x_0 }| +  \frac{p^{-(k+1)r}\varepsilon|C_2|}{5} \leq \varepsilon N_0 + \frac{p^{-(k+1)r}\varepsilon|C_2|}{2}.\]
To finish the proof, it remains to show that $\frac{p^{-(k+1)r}\varepsilon|C_2|}{2} \leq \varepsilon N_0$. Observe that it is sufficient to show that $N_0 > 0$. Indeed, since the codimension of $\beta$ is $r$, $V_{x_0 } \cap V_{x_1 } \cap \dots \cap V_{x_k }$ has codimension at most $(k+1)r$ inside $C_2$, which implies
\[N_0 = |V_{x_0 } \cap V_{x_1 } \cap \dots \cap V_{x_k }| \geq p^{-(k+1)r} |C_2|.\]
Next, we prove that $N_0 > 0$. From $\Big|N_0 - \delta^k |V_{x_0 }|\Big| \leq \frac{p^{-(k+1)r}\varepsilon}{10} |C_2|$, we see that it suffices to show $\delta^k |V_{x_0 }| > \frac{p^{-(k+1)r}\varepsilon}{10} |C_2|$. Again, $V_{x_0}$ is non-empty, and since the codimension of $\beta$ is $r$, codimension of $V_{x_0}$ in $C_2$ is at most $r$. Thus, $|V_{x_0}| \geq p^{-r} |C_2|$. Finally, observe that $\delta \geq p^{-r}$; simply pick any $x \in C_1$ with $|V_{x}| = \delta |C_2|$, we know that $|V_x| \geq p^{-r} |C_2|$ since the codimension of $\beta$ is $r$, as in the argument above. This completes the proof.\end{proof}

Recall that $\con$ and $\cons$ indicate positive constants, as explained in the notational part of the preliminary section of the paper (see expression~\eqref{fullConEqn} and discussion surrounding it). 

\begin{corollary}\label{qrPairsCor}Let $P \subset C_1^2$. Then provided that $\eta \leq \cons\, \delta^{32}$, for all but $O(\delta^{-8} \sqrt[8]{\eta} |C_2|)$ of the elements $y \in C_2$, we have\footnote{If we were not to use $\cons$ and $O(\cdot)$ notations for implicit constants this corollary would have taken the following form.\\
\indent There are a sufficiently small positive constant $c_0$ and a sufficiently large positive constant $C_0$ such that  the following holds. Let $P \subset C_1^2$. If $\eta \leq c_0\,\delta^{32}$ then for all but $C_0 \delta^{-8} \sqrt[8]{\eta} |C_2|$ of the elements $y \in C_2$, we have $\Big||P \cap (V_y \times V_y)| - \delta^2 |P|\Big| \leq \sqrt[16]{\eta}|C_1|^2$.\\
Using both $\cons$ and $O(\cdot)$ notations helps us reduce the number of explicit short calculations which typically compare the quasirandomness constant $\eta$ with density $\delta$ or codimension of $\beta$. Although such calculations are simple, they are numerous, so this combination of notations, although non-standard, helps to improve the readability of the proofs.}
\[\Big||P \cap (V_y \times V_y)| - \delta^2 |P|\Big| \leq \sqrt[16]{\eta}|C_1|^2.\] \end{corollary}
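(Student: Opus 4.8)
```latex
\begin{proof}[Proof plan]
The strategy is the same second-moment computation that proved Lemmas~\ref{regInt1} and~\ref{regInt2}, but now applied to the indicator of the set $P$ in the two slices $V_y \times V_y$, and then combined with the direction-$G_2$ quasirandomness supplied by Lemma~\ref{yQR}. First I would introduce the random variable $N(y) = |P \cap (V_y \times V_y)|$ for $y$ chosen uniformly from $C_2$ and compute its mean: writing $N(y) = \sum_{(x_1,x_2) \in P} \mathbbm{1}(x_1 \in V_y)\mathbbm{1}(x_2 \in V_y)$ and swapping the order of summation, $\mathbb{E}_y N(y) = \sum_{(x_1,x_2)\in P} \mathbb{P}_y(x_1,x_2 \in V_y) = \sum_{(x_1,x_2)\in P} |C_2|^{-1}|V_{x_1} \cap V_{x_2}|$. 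By the first bullet of Definition~\ref{qrDefin} applied to the pair $(x_1,x_2)$, for all but an $\eta$-fraction of pairs this equals $\delta^2$, and the remaining pairs contribute at most $\eta |C_1|^2$, so $|\mathbb{E}_y N(y) - \delta^2|P|| \leq \eta |C_1|^2$.

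Next I would bound the variance of $N(y)$. Expanding $\mathbb{E}_y N(y)^2$ over quadruples $(x_1,x_2,x_3,x_4)$ with $(x_1,x_2),(x_3,x_4)\in P$, the relevant quantity is $\mathbb{P}_y(x_1,x_2,x_3,x_4 \in V_y) - \mathbb{P}_y(x_1,x_2 \in V_y)\mathbb{P}_y(x_3,x_4\in V_y)$, which is a bounded expression in the quantities $|C_2|^{-1}|V_{x_i}|$, $|C_2|^{-1}|V_{x_i}\cap V_{x_j}|$. Since Lemma~\ref{yQR} (applied with roles of $G_1,G_2$ as in that lemma, or directly Definition~\ref{qrDefin}) tells us that $|V_x| = \delta|C_2|$ for all but $O(\sqrt[4]{\eta})$-fraction of $x$ and $|V_{x_i}\cap V_{x_j}| = \delta^2|C_2|$ for all but $O(\sqrt[4]{\eta})$-fraction of pairs, splitting the sum over quadruples according to which of these ``bad'' events occur gives $\operatorname{var}_y N(y) = O(\delta^{-\con}\sqrt[4]{\eta})|C_1|^4$; the exponent of $\delta$ appears because controlling which slices have the expected size costs a factor of order $\delta^{-2}$ per conditioning, exactly as in Lemma~\ref{regInt2}. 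Here I must be slightly careful: the bounds in Definition~\ref{qrDefin} are only over the bulk of $x$ (resp.\ pairs), so I should not try to pin $N(y)$ to an exact value, only to $\delta^2|P|$ up to an error, which is why the statement has an additive $\sqrt[16]{\eta}|C_1|^2$ slack rather than an exact equality.

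Finally I would apply Chebyshev's inequality: $\mathbb{P}_y(|N(y) - \mathbb{E}_y N(y)| \geq \sqrt[16]{\eta}|C_1|^2/2) \leq \operatorname{var}_y N(y) \big/ (\sqrt[16]{\eta}|C_1|^2/2)^2 = O(\delta^{-\con}\sqrt[4]{\eta}/\sqrt[8]{\eta}) = O(\delta^{-\con}\sqrt[8]{\eta})$, and combine with the mean estimate $|\mathbb{E}_y N(y) - \delta^2|P|| \leq \eta|C_1|^2 \leq \sqrt[16]{\eta}|C_1|^2/2$ (valid once $\eta \leq \cons\,\delta^{32}$, which also absorbs the implicit constants so that the failure probability is genuinely $O(\delta^{-8}\sqrt[8]{\eta})$ as claimed). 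Translating ``failure probability $O(\delta^{-8}\sqrt[8]{\eta})$'' back into ``number of exceptional $y$'' multiplies by $|C_2|$, giving the stated bound. The only mildly delicate point is bookkeeping the exact powers of $\delta$ through the variance computation so that they fit under the $\delta^{32}$ threshold and the final $\delta^{-8}$; I expect the loss from the two applications of the pair-quasirandomness (once for $\mathbb{E} N$, once inside $\operatorname{var} N$) together with the $\delta^{-2}$-type cost of recognizing exact slice sizes to be comfortably within these exponents, matching the pattern already established in Corollary~\ref{oneDenseColumnCor}. No characteristic assumption or bound on $\dim H$ is needed here, since we never need to deduce that a quasirandom variety with $\delta = 0$ is empty.
\end{proof}
```
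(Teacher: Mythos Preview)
Your overall architecture (first and second moment of $N(y)=|P\cap(V_y\times V_y)|$ followed by Chebyshev) matches the paper's proof exactly, and your mean computation is fine. The gap is in your variance step. You write that
\[
\mathbb{P}_y(x_1,x_2,x_3,x_4\in V_y)-\mathbb{P}_y(x_1,x_2\in V_y)\,\mathbb{P}_y(x_3,x_4\in V_y)
\]
is ``a bounded expression in the quantities $|C_2|^{-1}|V_{x_i}|$, $|C_2|^{-1}|V_{x_i}\cap V_{x_j}|$'' and then control it using only the single- and pairwise-column information from Definition~\ref{qrDefin}. But the first term is $|C_2|^{-1}|V_{x_1}\cap V_{x_2}\cap V_{x_3}\cap V_{x_4}|$, a genuine four-fold intersection that is \emph{not} determined by the $|V_{x_i}|$ and $|V_{x_i}\cap V_{x_j}|$; knowing all pairwise intersections have size $\delta^2|C_2|$ does not force the four-way intersection to be $\delta^4|C_2|$. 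So the ``splitting according to bad pairwise events'' argument does not bound the variance.

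What the paper actually does here is invoke Lemma~\ref{regInt1} with $k=4$: that lemma (via the box-norm bound $\|\nu\|_\square\le 2\eta^{1/4}$) shows that $|V_{x_1}\cap V_{x_2}\cap V_{x_3}\cap V_{x_4}|=\delta^4|C_2|$ for all but an $O(\delta^{-8}\sqrt[4]{\eta})$-fraction of quadruples $(x_1,x_2,x_3,x_4)\in C_1^4$. That gives $\bigl|\mathbb{E}N^2-\delta^4|P|^2\bigr|=O(\delta^{-8}\sqrt[4]{\eta})|C_1|^4$ directly, and then Markov/Chebyshev finishes as you describe. (Incidentally, Lemma~\ref{yQR} is the wrong reference here: it concerns rows $V_y\subset C_1$, whereas you need control on intersections of columns $V_{x_i}\subset C_2$; the relevant input is Lemma~\ref{regInt1} in the original direction.) Once you swap in Lemma~\ref{regInt1} for the four-way intersection, your proof becomes the paper's proof verbatim, and the $\delta^{-8}$ and $\eta\le\cons\,\delta^{32}$ bookkeeping falls out automatically.
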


\begin{proof}Pick $y \in C_2$ uniformly at random. Let $N$ be the random variable $|P \cap (V_y \times V_y)|$. Then
\[\ex N = \sum_{(x_1, x_2) \in P} \mathbb{P}(x_1, x_2 \in V_{ y}) = \sum_{(x_1, x_2) \in P} \mathbb{P}(y \in V_{x_1 } \cap V_{x_2 }) = \sum_{(x_1, x_2) \in P} \frac{|V_{x_1 } \cap V_{x_2 }|}{|C_2|}.\]
Using Lemma~\ref{regInt1}, we get that
\[\Big|\ex N - \delta^2 |P|\Big| = O(\delta^{-4} \sqrt[4]{\eta}) |C_1|^2.\]
Next, we estimate the second moment of $N$. We have
\[\ex N^2 = \sum_{(x_1, x_2), (x_3, x_4) \in P} \mathbb{P}(x_1, x_2, x_3, x_4 \in V_{ y}) = \sum_{(x_1, x_2), (x_3, x_4) \in P} \frac{|V_{x_1 } \cap V_{x_2 } \cap V_{x_3 } \cap V_{x_4 }|}{|C_2|}.\]
Using Lemma~\ref{regInt1} another time, we get that
\[\Big|\ex N^2 - \delta^4 |P|^2\Big| = O(\delta^{-8} \sqrt[4]{\eta}) |C_1|^4.\]
By Markov's inequality, provided $\sqrt[8]{\eta} \geq \con\, \delta^{-4} \sqrt[4]{\eta}$, we obtain
\begin{align*}\mathbb{P}\Big(\Big|N - \delta^2 |P|\Big| \geq& \sqrt[16]{\eta}|C_1|^2\Big) \leq \mathbb{P}\Big(\Big|N - \ex N\Big| \geq \frac{1}{2}\sqrt[16]{\eta}|C_1|^2\Big) = \mathbb{P}\Big(\Big|N - \ex N\Big|^2 \geq \frac{1}{4}\sqrt[8]{\eta}|C_1|^4\Big)\\
 \leq& \frac{O(\ex N^2 - (\ex N)^2)}{\sqrt[8]{\eta}|C_1|^4} \leq \frac{O(\delta^{-8} \sqrt[4]{\eta}) |C_1|^4}{\sqrt[8]{\eta}|C_1|^4} \leq O(\delta^{-8} \sqrt[8]{\eta}).\qedhere\end{align*}
\end{proof}

We also note that a union of a small number of quasirandom pieces is still quasirandom, with a slightly worse quasirandomness parameter.

\begin{lemma}\label{simultQuasiRand}Let $C_1$ be a coset in $G_1$, let $U, W \leq G_2$ be subspaces such that $U \cap W = \{0\}$ and $\dim W = d$, let $w_0 + W$ be a coset in $G_2$, and let $\beta \colon G_1 \times G_2 \to H$ be a biaffine map. Let $\lambda \in H$. Suppose that 
\[V^{w} = \{(x_1,x_2):\beta(x_1,x_2) = \lambda\} \cap (C_1 \times (w + U))\] 
is non-empty and $\eta$-quasirandom with density $\delta$ for all $w \in w_0 + W$. Then 
\[V^{w_0 + W} =  \{(x_1,x_2):\beta(x_1,x_2) = \lambda\} \cap (C_1 \times (w_0 + W + U))\] 
is $(p^d\eta)$-quasirandom with density $\delta$.\end{lemma}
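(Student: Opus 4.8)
The statement is essentially an observation: quasirandomness is a property about slices $V_x$ for $x \in C_1$ and about pairwise intersections $V_{x_1} \cap V_{x_2}$, and passing from the family of cosets $w + U$ (for $w \in w_0 + W$) to the single coset $w_0 + W + U$ only enlarges the relevant slices by combining $p^d$ of the pieces. So the plan is a direct union bound. Write $C_2 = w_0 + W + U$, which is partitioned into the $p^d$ cosets $w + U$, $w \in w_0 + W$ (using $U \cap W = \{0\}$ and $\dim W = d$). For $x \in C_1$, the slice $V^{w_0+W}_x$ is the disjoint union over $w \in w_0 + W$ of the slices $V^w_x$, and similarly $|V^{w_0+W}_x| = \sum_w |V^w_x|$, $|C_2| = p^d |w+U|$.

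First I would handle the single-column condition. Call $x \in C_1$ \emph{bad} if $|V^{w_0+W}_x| \neq \delta |C_2|$. If $x$ is bad, then since $|V^{w_0+W}_x| = \sum_w |V^w_x|$ and $|C_2| = \sum_w |w+U|$, we cannot have $|V^w_x| = \delta|w+U|$ for every $w$; hence $x$ is ``bad for'' at least one $w \in w_0 + W$, meaning $|V^w_x| \neq \delta|w+U|$. By $\eta$-quasirandomness of each $V^w$, for each fixed $w$ the number of such $x$ is at most $\eta|C_1|$. Summing over the $p^d$ values of $w$, the number of bad $x$ is at most $p^d \eta |C_1|$, which is exactly what is required. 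The pairwise condition is handled the same way: call $(x_1,x_2) \in C_1^2$ bad if $|V^{w_0+W}_{x_1} \cap V^{w_0+W}_{x_2}| \neq \delta^2|C_1|$ — wait, here one must be careful that in Definition~\ref{qrDefin} the second bullet is about $|V_{x_1} \cap V_{x_2}|$ as a fraction of $|C_2|$ (the second coordinate's ambient set), so the target is $\delta^2 |C_2|$; I will write it accordingly. Since $V^{w_0+W}_{x_1} \cap V^{w_0+W}_{x_2} = \bigsqcup_w (V^w_{x_1} \cap V^w_{x_2})$, if the total size is not $\delta^2|C_2| = \sum_w \delta^2|w+U|$ then $|V^w_{x_1} \cap V^w_{x_2}| \neq \delta^2|w+U|$ for some $w$, and again $\eta$-quasirandomness of $V^w$ bounds the number of such pairs by $\eta|C_1|^2$ for each $w$; summing gives $p^d\eta|C_1|^2$.

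There is essentially no obstacle here; the only thing to be careful about is bookkeeping of which ambient set each density is measured against ($|C_1|$ versus $|C_2|$ versus $|w+U|$), and noting that $\delta$ being the common density for all the $V^w$ is what makes the ``sum of targets equals target of sum'' identity work — if the densities differed, the argument would fail. I would also remark at the outset that $V^{w_0+W}$ is non-empty (it contains each non-empty $V^w$), so the conclusion is non-vacuous, and that $\delta$ remains a non-positive power of $p$ so no rounding issue arises. The whole proof is three or four lines once the partition is set up.

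\begin{proof}[Proof of Lemma~\ref{simultQuasiRand}]Write $C_2 = w_0 + W + U$. Since $U \cap W = \{0\}$ and $\dim W = d$, the set $C_2$ is partitioned into the $p^d$ cosets $w + U$ for $w \in w_0 + W$, and $|C_2| = p^d |U|$. For $x \in C_1$ write $V^{w_0+W}_x$ for the slice of $V^{w_0+W}$ at $x$, so that $V^{w_0+W}_x = \bigsqcup_{w \in w_0 + W} V^w_x$ and hence $|V^{w_0+W}_x| = \sum_{w} |V^w_x|$. The variety $V^{w_0+W}$ is non-empty since it contains each non-empty $V^w$.

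If $x \in C_1$ satisfies $|V^{w_0+W}_x| \neq \delta|C_2| = \sum_w \delta|U|$, then $|V^w_x| \neq \delta|U|$ for at least one $w \in w_0 + W$. For each fixed $w$, $\eta$-quasirandomness of $V^w$ with density $\delta$ gives that there are at most $\eta|C_1|$ values $x \in C_1$ with $|V^w_x| \neq \delta|U|$. Summing over the $p^d$ cosets $w$, the number of $x \in C_1$ with $|V^{w_0+W}_x| \neq \delta|C_2|$ is at most $p^d\eta|C_1|$.

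Similarly, for $x_1, x_2 \in C_1$ we have $V^{w_0+W}_{x_1} \cap V^{w_0+W}_{x_2} = \bigsqcup_{w} (V^w_{x_1} \cap V^w_{x_2})$, so if $|V^{w_0+W}_{x_1} \cap V^{w_0+W}_{x_2}| \neq \delta^2|C_2| = \sum_w \delta^2|U|$ then $|V^w_{x_1} \cap V^w_{x_2}| \neq \delta^2|U|$ for some $w$. For each fixed $w$, $\eta$-quasirandomness of $V^w$ gives at most $\eta|C_1|^2$ pairs $(x_1,x_2)$ with $|V^w_{x_1} \cap V^w_{x_2}| \neq \delta^2|U|$. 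Summing over the $p^d$ cosets, at most $p^d\eta|C_1|^2$ pairs $(x_1,x_2) \in C_1^2$ fail $|V^{w_0+W}_{x_1} \cap V^{w_0+W}_{x_2}| = \delta^2|C_2|$. Hence $V^{w_0+W}$ is $(p^d\eta)$-quasirandom with density $\delta$.\end{proof}
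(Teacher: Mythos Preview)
Your proof is correct and follows essentially the same union-bound argument as the paper: for each $w \in w_0 + W$, quasirandomness of $V^w$ controls the exceptional $x$ (resp.\ pairs $(x_1,x_2)$), and summing over the $p^d$ cosets gives the $p^d\eta$ bound. The paper's proof is even more terse, handling only the single-column case explicitly and dismissing the pairs with ``A similar bound holds for pairs in $C_1$''; your treatment of the pair case is a faithful expansion of this.
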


\begin{proof}By definition of quasirandomness, for each $w \in w_0 + W$, there are at least $(1 - \eta)|C_1|$ elements $x \in C_1$ such that $|V^{w}_x| = \delta |w + U|$. Thus, for at least $(1 - p^d \eta)|C_1|$ elements $x \in C_1$, for each $w \in w_0 + W$, $|V^{w}_x| = \delta |w + U|$. For such an $x$ we thus have $|V^{w_0 + W}_x| = \sum_{w \in w_0 + W} |V^{w}_x| = \sum_{w \in w_0 + W} \delta |w + U| = \delta |w_0 + W + U|$. A similar bound holds for pairs in $C_1$, so the larger variety is also quasirandom.\end{proof}

\subsection{Convolutional extensions of biaffine maps}

For a subset $S \subset G_1 \times G_2$, we write $S_{u \bullet} = \{v \in G_2 \colon (u,v) \in S\}$ and $S_{\bullet v} = \{u \in G_2 \colon (u,v) \in S\}$. (We need this additional notation since previously it was understood that variables $x_i$ belonged to $G_1$ and variables $y_i$ belonged to $G_2$, and hence that $S_x$ meant $S_{x\bullet}$ and $S_y$ meant $S_{\bullet y}$.)\\

This subsection is devoted to the proof of a result which essentially says the following. Suppose that $S$ is a subset of a quasirandom variety $B$ with the property that each column in $S$ is nearly the whole column of $B$. We do not make any assumptions on the structure of rows, which is crucial. Let $\phi \colon S \to H$ be a bihomomorphism.\footnote{We actually need a slightly stronger assumption on the order of Freiman homomorphisms in direction $G_1$.} Then the map obtained by convolving in direction $G_2$ (in the sense of the proof of Lemma~\ref{easyExtn}) produces a bihomorphism on a set whose columns are the same as the columns of the variety, but at the cost of removing a very small number of columns in $S$.  

\begin{theorem}\label{biaffineExtnConv}For every $k \in \mathbb{N}$ there is a constant $\varepsilon_0 = \varepsilon_0(k) > 0$ such that the following holds. Let $u_0 + U$ be a coset in $G_1$, let $v_0 + V$ be a coset in $G_2$ and let $\beta \colon G_1 \times G_2 \to \mathbb{F}^r$ be a biaffine map. Let $\lambda \in \mathbb{F}^r$. Suppose that 
\[B = \{(x,y):\beta(x,y) = \lambda\} \cap ((u_0 + U) \times (v_0 + V))\] 
is non-empty and $\eta$-quasirandom with density $\delta$. Let $X \subset u_0 + U$ and $S \subset B$ be such that $|S_{x \bullet}| \geq (1-\varepsilon_0) |B_{x \bullet}|$ for each $x \in X$. Let $\phi \colon S \to H$ and suppose that $\phi$ is a $6 \cdot 2^k$-homomorphism in direction $G_1$ and a 2-homomorphism in direction $G_2$.

Then provided $|U| \geq \eta^{-\con_{k, p}}$, there exist a subset $X_{\text{ext}} \subset X$ such that $|X \setminus X_{\text{ext}}| = O_{k,p}(p^{O_{k,p}(r)}\eta^{\Omega_{k,p}(1)} |U|)$, and a map $\phi^{\text{conv}} \colon (X_{\text{ext}} \times (v_0 + V)) \cap B \to H$, with the following properties.
\begin{itemize}
\item[\textbf{(i)}] $\phi^{\text{conv}}$ is a $2^k$-homomorphism in direction $G_1$.
\item[\textbf{(ii)}] $\phi^{\text{conv}}$ is a $2$-homomorphism in direction $G_2$.
\item[\textbf{(iii)}] For each $(x,y) \in (X_{\text{ext}} \times (v_0 + V)) \cap B$, whenever $z_1, z_2 \in S_{x \bullet}$ are such that $z_1 + z_2 - y \in S_{x \bullet}$, we have
\[\phi^{\text{conv}}(x,y) = \phi(x,z_1) + \phi(x,z_2) - \phi(x, z_1 + z_2 - y).\]
\end{itemize}
\end{theorem}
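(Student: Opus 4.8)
The strategy is a two-variable version of the ``densify then convolve'' argument that underlies Lemma~\ref{easyExtn} and Corollary~\ref{easyExtnDoubleApprox}, carried out column by column. First I would fix $x \in X$ and work entirely inside the coset $B_{x\bullet} \subset v_0 + V$, which is a coset of a subspace of codimension at most $r$ in $v_0 + V$ (since $\beta$ is biaffine of codimension $r$ and $B$ is non-empty with $B_{x\bullet}$ non-empty for $x \in X$, because $|S_{x\bullet}| \geq (1-\varepsilon_0)|B_{x\bullet}| > 0$ provided $\varepsilon_0 < 1$). On this coset $\phi(x,\cdot)$ is a $2$-homomorphism defined on the very dense subset $S_{x\bullet}$. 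The hypothesis $|S_{x\bullet}| \geq (1-\varepsilon_0)|B_{x\bullet}|$ with $\varepsilon_0$ small (say $\varepsilon_0 < \tfrac15$) lets me apply Lemma~\ref{easyExtn} to extend $\phi(x,\cdot)$ uniquely to an affine map $\phi^{\text{conv}}(x,\cdot) \colon B_{x\bullet} \to H$. Concretely, $\phi^{\text{conv}}(x,y) = \phi(x,z_1) + \phi(x,z_2) - \phi(x, z_1 + z_2 - y)$ for any $z_1, z_2 \in S_{x\bullet}$ with $z_1 + z_2 - y \in S_{x\bullet}$ — such a triple exists since $S_{x\bullet}$ is dense in $B_{x\bullet}$ — and the value is independent of the choice by the well-definedness argument in Lemma~\ref{easyExtn}. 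This immediately gives property \textbf{(iii)} and property \textbf{(ii)} (affine in a coset direction is exactly being a $2$-homomorphism there), for \emph{every} $x \in X$, with $X_{\text{ext}} = X$ so far. The content of the theorem is property \textbf{(i)}: that after discarding a tiny set of columns, the resulting map is a $2^k$-homomorphism in direction $G_1$.

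For property \textbf{(i)}, I would exploit the quasirandomness of $B$ to transfer the direction-$G_1$ homomorphism property of $\phi$ on $S$ to $\phi^{\text{conv}}$ on most of $B$. Fix a direction-$G_1$ additive $2^k$-tuple of points $x^{(1)}, \dots, x^{(2^k)}$ in $u_0 + U$ (with the same $G_2$-coordinate $y$) that I want $\phi^{\text{conv}}$ to respect; note $6 \cdot 2^k = 3 \cdot 2^{k+1}$, which is the right order of Freiman homomorphism in $G_1$ for $\phi$ to survive the three-fold averaging below. The idea: for a random $y$ in the relevant coset, by Corollary~\ref{oneDenseColumnCor} (applied with the columns $S_{x^{(i)}\bullet} \subset B_{x^{(i)}\bullet}$, which are $(1-\varepsilon_0)$-dense) and Lemma~\ref{regInt2}, the intersection $\bigcap_i B_{x^{(i)}\bullet}$ still has most of its mass covered by $\bigcap_i S_{x^{(i)}\bullet}$. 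Then for such a $y$ one can pick $z_1, z_2$ lying in $\bigcap_i S_{x^{(i)}\bullet}$ with $z_1 + z_2 - y$ also in $\bigcap_i S_{x^{(i)}\bullet}$ (a density count: three constraints, each removing $O(\varepsilon_0 + \text{error})$ of the column), use property \textbf{(iii)} to write $\phi^{\text{conv}}(x^{(i)}, y)$ in terms of $\phi$ at points of $S$, and then invoke that $\phi$ is a $6 \cdot 2^k$-homomorphism in direction $G_1$ at the three slices $z_1$, $z_2$, $z_1+z_2-y$ to conclude $\sum_i \pm \phi^{\text{conv}}(x^{(i)},y) = 0$. Averaging over $y$ and then using a ``popular column'' argument (as in Lemma~\ref{3approxHom} / Corollary~\ref{approxF2homm}, but here transferred to direction $G_1$) one identifies a subset $X_{\text{ext}}$ of columns on which $\phi^{\text{conv}}$ respects \emph{all} direction-$G_1$ additive $2^k$-tuples; the discarded set $X \setminus X_{\text{ext}}$ has size $O(p^{O(r)} \eta^{\Omega(1)} |U|)$, the error coming from the quasirandomness constants in Corollary~\ref{oneDenseColumnCor} and Lemma~\ref{regInt2} (which force $\varepsilon \gtrsim p^{O(kr)}\eta^{1/4}$, hence the $p^{O(r)}$ and fractional power of $\eta$) together with the requirement $|U| \geq \eta^{-\con_{k,p}}$ so the columns are large enough for these estimates to have content.

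There is a subtlety: properties \textbf{(i)} and \textbf{(ii)} need to hold simultaneously on $(X_{\text{ext}} \times (v_0+V)) \cap B$, i.e.\ $\phi^{\text{conv}}$ is a genuine bihomomorphism there (in the appropriate orders). Property \textbf{(ii)} is automatic from the columnwise construction and survives restriction to $X_{\text{ext}}$. Property \textbf{(i)} requires the $2^k$-homomorphism relation for \emph{every} additive $2^k$-tuple in $G_1$ with fixed $y$; the argument above gives it for all $y \in B_{x^{(1)}\bullet} \cap \dots$, which is the full fibre once all $x^{(i)} \in X_{\text{ext}}$, since $\phi^{\text{conv}}(x^{(i)},\cdot)$ is defined on all of $B_{x^{(i)}\bullet}$ and the relation, being affine in $y$ on the intersection coset, is determined by its values on a dense subset — so it holds identically on the intersection. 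I would organise this last point as a short lemma: a multiaffine-in-$y$ relation that vanishes on a $(1-o(1))$-dense subset of a coset (with $o(1)$ below $p^{-r}$) vanishes identically, via Claim~\ref{nonzerovalsclaim} or directly via Lemma~\ref{varsizelemma}.

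\textbf{Main obstacle.} The hardest part is the bookkeeping in the transfer step for property \textbf{(i)}: one must choose $z_1, z_2$ \emph{uniformly} enough that the three slices $S_{z_1\bullet}$, $S_{z_2\bullet}$, $S_{(z_1+z_2-y)\bullet}$ are all dense at the relevant $2^k$ columns $x^{(1)},\dots,x^{(2^k)}$ simultaneously, which needs a careful double application of Corollary~\ref{oneDenseColumnCor} / Lemma~\ref{regInt2} in \emph{both} directions (dense columns of $S$ give dense columns after intersecting; dense rows are not assumed, so one must go through the variety structure and quasirandomness in direction $G_2$, Lemma~\ref{yQR}, to control the rows of $B$ well enough to place $z_1, z_2$). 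Tracking the resulting error terms and verifying they collapse to $O_{k,p}(p^{O_{k,p}(r)} \eta^{\Omega_{k,p}(1)}|U|)$ under the hypothesis $|U| \geq \eta^{-\con_{k,p}}$, while keeping $\varepsilon_0$ absolute and depending only on $k$, is where the real work lies; everything else is a routine adaptation of Lemma~\ref{easyExtn} and the popular-difference arguments already in the paper.
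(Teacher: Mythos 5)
Your columnwise construction of $\phi^{\text{conv}}$ via Lemma~\ref{easyExtn} is correct and is exactly the paper's first step; properties \textbf{(ii)} and \textbf{(iii)} do come for free. But there is a genuine gap in your plan for property \textbf{(i)}, and the gap is at the step you describe as routine rather than at the one you flag as the ``main obstacle.''

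You fix an arbitrary direction-$G_1$ additive $2^k$-tuple $x^{(1)},\dots,x^{(2^k)}$ (the $a_e, b_e$ of the statement) and assert that $\bigcap_i S_{x^{(i)}\bullet}$ covers most of $\bigcap_i B_{x^{(i)}\bullet}$, which would let you pick $z_1, z_2, z_1+z_2-y$ in the common $S$-intersection and finish by the $G_1$-homomorphism property of $\phi$ at those three rows. Corollary~\ref{oneDenseColumnCor} and Lemma~\ref{regInt2} do \emph{not} give this for a specific tuple — they give it for a $1 - O(p^{O(r)}\eta^{\Omega(1)})$ proportion of tuples. For an adversarial tuple the intersection $\bigcap_i B_{x^{(i)}\bullet}$ can be a tiny coset of density $\delta^{2^k}$, while each error set $B_{x^{(i)}\bullet}\setminus S_{x^{(i)}\bullet}$ (density $\varepsilon_0\delta$) can sit entirely inside it, so $\bigcap_i S_{x^{(i)}\bullet}$ may be empty even when every individual column is $(1-\varepsilon_0)$-dense. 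Your proposed rescue — a ``popular column'' argument carving out $X_{\text{ext}}$ — cannot repair this either, because the obstruction is a property of the whole $2^k$-tuple, whereas $X_{\text{ext}}$ must be defined by a condition on single columns (the statement requires $|X\setminus X_{\text{ext}}| = O_{k,p}(p^{O(r)}\eta^{\Omega(1)}|U|)$, i.e.\ a bound on individual removed columns). No per-column popularity criterion controls the behaviour of every tuple one can form from $X_{\text{ext}}$.

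The paper's proof supplies the missing idea. It first \emph{densifies}: iteratively removes from $X$ the elements participating in few additive quadruples, and then removes those participating in many ``bad'' quadruples (where ``good'' means the intersection $\bigcap_{i\in[4]}S_{x_i\bullet}$ is $(1-10\varepsilon_0)$-dense), arriving at $X''$. It then takes the arbitrary $2^{k}$-tuple $a_e, b_e \in X''$ and builds a \emph{chain} of good additive quadruples (the sets $\mathcal{S}_1,\dots,\mathcal{S}_{k+2}$ with Claims~B and~C) which, step by step, replace the rigid tuple by a family of tuples with many independent degrees of freedom. Only once the tuple has been ``unfrozen'' into this structured family does the average-case density of the $S$-column intersection apply, so that a generic member of the family can be handled exactly as in your direct argument. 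A telltale sign that this extra machinery is needed is the hypothesis that $\phi$ is a $6\cdot 2^k$-homomorphism in direction $G_1$: your plan, if it worked, would only require a $2^k$-homomorphism, but the chain of substitutions involves relations between up to $6\cdot 2^k$ columns simultaneously (see the ``crucially, observe'' paragraph in the proof). The densification step also explains the hypothesis $|U|\geq\eta^{-\con_{k,p}}$ (it bounds the $\xi^{-1}$ term arising from the iterative removal) and the $k$-dependent size of $\varepsilon_0$ (the paper takes $\varepsilon_0 = 2^{-k}/100$, because the final good-intersection estimate sums $\approx 12\cdot 2^k$ error terms of size $\varepsilon_0$). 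Your plan omits all of this and, as stated, does not prove the theorem.
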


\noindent\textbf{Remark.} We may take constant $\varepsilon_0$ to be $2^{-k}/100$.

\begin{proof}Set $\varepsilon_0 = 2^{-k}/100$. All the implicit constants in the asymptotic notation in the proof depend on $k$ and $p$ only, which to make the proof easier to read we do not write explicitly. We may immediately observe that for each $x \in X$, we can extend the map $y \mapsto \phi(x,y)$, defined on $S_{x \bullet}$, to a 2-homomorphism $\phi^{\text{conv}}_x \colon B_{x \bullet} \to H$, using Lemma~\ref{easyExtn}. If we set $\phi^{\text{conv}}(x,y) = \phi^{\text{conv}}_x(y)$, then the map $\phi^{\text{conv}}$ readily satisfies properties \textbf{(ii)} and \textbf{(iii)}. It remains to find $X_{\text{ext}}$ such that the restriction of $\phi^{\text{conv}}$ to $(X_{\text{ext}} \times (v_0 + V)) \cap B$ also has property \textbf{(i)}.\\

Before proceeding let us outline the strategy of the proof. It turns out that in order for a subset $X_{\text{ext}} \subset X$ to be a set of the desired kind it suffices that $X_{\text{ext}}$ has the property that every point $x \in X_{\text{ext}}$ belongs to a dense collection of additive quadruples $x_{[4]}$ in $X_{\text{ext}}$ such that $|\bigcap_{i \in [4]} S_{x_i \bullet}| \geq (1-10\varepsilon_0) |\bigcap_{i \in [4]} B_{x_i \bullet}|$ (we call such additive quadruples \emph{good}). Hence, the first step is to obtain such a set, and the second step is to show that \textbf{(i)} follows. We now provide a brief sketch of the second step of the proof. For simplicity we just indicate how to show that $\phi^{\text{conv}}$ is a $2$-homomorphism in direction $G_1$.\\
\indent Let $x_1, x_2, x_3, x_4 \in X_{\text{ext}}$ be an arbitrary additive quadruple and let $y \in \bigcap_{i \in [4]} B_{x_i \bullet}$. Our goal is to show that
\[\phi^{\text{conv}}(x_1,y) + \phi^{\text{conv}}(x_2,y) - \phi^{\text{conv}}(x_3, y) - \phi^{\text{conv}}(x_4,y) = 0.\]
By using the fact that each $x_i$ belongs to many good additive quadruples, we are able to find many choices of $u_1, v_1, \dots, u_4, v_4 \in X_{\text{ext}}$ such that $u_i + v_i - x_i \in X_{\text{ext}}$ and $\phi^{\text{conv}}(x_i,y) = \phi^{\text{conv}}(u_i,y) + \phi^{\text{conv}}(v_i,y) - \phi^{\text{conv}}(u_i + v_i - x_i,y)$. Hence, if we write $\sigma \colon [4] \to \{-1,1\}$ for the function given by $\sigma(1) = \sigma(2) = 1$ and $\sigma(3) = \sigma(4) = -1$ we get
\[\sum_{i \in [4]} \sigma(i) \phi^{\text{conv}}(x_i,y) = \sum_{i \in [4]} \sigma(i) \Big(\phi^{\text{conv}}(u_i,y) + \phi^{\text{conv}}(v_i,y) - \phi^{\text{conv}}(u_i + v_i - x_i,y)\Big).\]
The gain of this step is that we relate the fixed expression $\sum_{i \in [4]} \sigma(i) \phi^{\text{conv}}(x_i,y)$ to many expressions where all points in the argument of $\phi^{\text{conv}}$ are now `flexible'. Next, we observe that $u_1 + v_1 - x_1$ and $u_2 + v_2 - x_2$ can be further replaced by new values $u_1 + v_1 - x_1 + t$ and $u_2 + v_2 - x_2 + t$, while still preserving the value of the linear combination of evaluations of $\phi^{\text{conv}}$. This argument increases the number of degrees of freedom that are allowed in sequences of points that we evaluate $\phi^{\text{conv}}$ at. We repeat such steps until we are able to say that $\sum_{i \in [4]} \sigma(i) \phi^{\text{conv}}(x_i,y)$ equals a related expression that involves points of an additive 12-tuple in the first coordinate, for a dense proportion of additive 12-tuples. Finally, we show that in fact a vast majority of such 12-tuples in the row corresponding to $y$ are respected by $\phi^{\text{conv}}$, which completes the proof.\\
\indent Note however that we in fact have to show that $\phi^{\text{conv}}$ is a $2^k$-homomorphism in direction $G_1$, not merely a 2-homomorphism. The main ideas sketched above still suffice, but the notation in the actual proof becomes much more involved.\\

We say that an ordered quadruple $(x_1, x_2, x_3, x_4) \in (u_0 + U)^4$ is an \emph{additive quadruple} if $x_1 + x_2 = x_3 + x_4$. Let $\xi > 0$ be a constant to be chosen later. We iteratively remove elements from $X$. At the $i$\textsuperscript{th} step, we remove an element $r_i$ if it belongs to fewer than $\xi |U|^2$ additive quadruples whose points have not yet been removed from $X$. The procedure terminates if there is no such element. Let $r_1, r_2, \dots, r_m$ be the elements of $X$ that were removed, in this order. In particular, this means that for each $i_1 \in [m]$, there are at most $\xi |U|^2$ choices $(i_2, i_3, i_4)$ such that $i_1 < i_2 < i_3 < i_4 \leq m$ and the four elements $r_{i_{1}}, r_{i_2}, r_{i_3}$ and $r_{i_4}$ in some order form an additive quadruple. On the other hand, there are at least $m^4/ |U| - 24|U|^2$ additive quadruples consisting of distinct elements in $\{r_1, \dots, r_m\}$, from which we deduce that
\[24 \xi m |U|^2 \geq m^4/ |U| - 24|U|^2.\]
Thus, $m \leq \max\{4 \xi^{1/3} |U|, \xi^{-1}\}$. Let $X'$ be the modified set. Then each $x \in X'$ belongs to at least $\xi |U|^2$ additive quadruples in $X'$.\\

Say that an additive quadruple $x_{[4]}$ is \emph{good} if $|\bigcap_{i \in [4]} S_{x_i \bullet}| \geq (1-10\varepsilon_0) |\bigcap_{i \in [4]} B_{x_i \bullet}|$. Otherwise say that $x_{[4]}$ is \emph{bad}.

\begin{claima*}\label{badquadsbound}The number of bad additive quadruples $x_{[4]}$ with elements in $X'$  is $O(p^{O(r)}\sqrt[4]{\eta} |U|^3)$.\end{claima*}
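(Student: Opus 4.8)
The plan is to bound the number of bad additive quadruples by relating badness of a quadruple in $X'$ to a failure of the quasirandom intersection estimates from Lemmas~\ref{regInt1} and~\ref{regInt2}. Recall that $x_{[4]}$ is bad when $|\bigcap_{i\in[4]} S_{x_i\bullet}| < (1-10\varepsilon_0)|\bigcap_{i\in[4]} B_{x_i\bullet}|$, so that $|\bigcap_{i\in[4]} B_{x_i\bullet} \setminus S_{x_i\bullet}| > 10\varepsilon_0 |\bigcap_{i\in[4]} B_{x_i\bullet}|$. Since $|B_{x_i\bullet}\setminus S_{x_i\bullet}| \le \varepsilon_0 |B_{x_i\bullet}|$ for each $x_i\in X$, we have $\sum_{i\in[4]} |B_{x_i\bullet}\setminus S_{x_i\bullet}| \le 4\varepsilon_0 \max_i |B_{x_i\bullet}|$. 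Thus for a bad quadruple, $|\bigcap_{i\in[4]} B_{x_i\bullet}|$ must be small compared with $\max_i |B_{x_i\bullet}|$: specifically $10\varepsilon_0|\bigcap_{i\in[4]} B_{x_i\bullet}| < 4\varepsilon_0 \max_i |B_{x_i\bullet}|$, i.e. $|\bigcap_{i\in[4]} B_{x_i\bullet}| < \tfrac{2}{5}\max_i |B_{x_i\bullet}|$. The point is that for a typical additive quadruple this cannot happen: by quasirandomness most columns have $|B_{x\bullet}| = \delta|v_0+V|$ and most quadruples of columns intersect in a set of size $\delta^4|v_0+V|$, hence the intersection is a $\delta^3$-fraction of a single column, not a $<\tfrac25$-fraction. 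So badness forces one of finitely many exceptional events.

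First I would fix the index $i_0$ realizing the maximum of $|B_{x_i\bullet}|$ among the four columns (say $i_0=1$ by relabelling — care is needed since we want an unordered count, but a factor of $4$ absorbs the choice). The bad quadruples then split into three types according to which quasirandomness estimate fails. Type 1: $|B_{x_1\bullet}| \ne \delta|v_0+V|$; by Definition~\ref{qrDefin} this happens for at most $\eta|u_0+U|$ values of $x_1$, and the remaining three coordinates are free (with the additive constraint killing one degree of freedom), contributing $O(\eta|U|^3)$. Type 2: all four columns have the generic size $\delta|v_0+V|$ but $|B_{x_1\bullet}\cap B_{x_2\bullet}\cap B_{x_3\bullet}\cap B_{x_4\bullet}| \ne \delta^4|v_0+V|$; by Lemma~\ref{regInt1} (applied with $k=4$, noting $\delta \ge p^{-r}$ as in the proof of Corollary~\ref{oneDenseColumnCor}) the number of $4$-tuples $(x_1,x_2,x_3,x_4)\in(u_0+U)^4$ with this property is $O(p^{O(r)}\sqrt[4]{\eta}|U|^4)$, and restricting to additive quadruples (one linear constraint) gives $O(p^{O(r)}\sqrt[4]{\eta}|U|^3)$. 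Type 3 is the complementary event: all four columns generic size $\delta|v_0+V|$ and their intersection has size exactly $\delta^4|v_0+V| = \delta^3 \cdot \delta|v_0+V| = \delta^3|B_{x_1\bullet}|$. But then $|\bigcap_{i\in[4]} B_{x_i\bullet}| = \delta^3|B_{x_1\bullet}| \ge p^{-3r}|B_{x_1\bullet}|$, and since $\varepsilon_0 = 2^{-k}/100$ is an absolute small constant while we have freedom to demand $\eta$ small (equivalently, the quadruple's intersection is a fixed non-negligible fraction of a column), a bad quadruple of Type 3 would need $\delta^3 < \tfrac25$; this is not automatic, so I would instead argue directly that for a Type 3 quadruple $|\bigcap S_{x_i\bullet}| \ge |\bigcap B_{x_i\bullet}| - \sum_i|B_{x_i\bullet}\setminus S_{x_i\bullet}| \ge \delta^3|B_{x_1\bullet}| - 4\varepsilon_0|B_{x_1\bullet}|$, which is $\ge (1-10\varepsilon_0)\delta^3|B_{x_1\bullet}| = (1-10\varepsilon_0)|\bigcap B_{x_i\bullet}|$ provided $4\varepsilon_0 \le 10\varepsilon_0\delta^3$. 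Here lies the real issue: this needs $\delta^3 \ge 2/5$, which fails for small $\delta$.

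Rethinking Type 3: the resolution must be that the statement's bound $O(p^{O(r)}\sqrt[4]{\eta}|U|^3)$ does \emph{not} claim there are no bad quadruples when $\delta$ is tiny; rather, when $\delta \le p^{-r}$ is genuinely small, the ambient space of additive quadruples still has size $\asymp |U|^3$, so a bound of $O(p^{O(r)}\sqrt[4]{\eta}|U|^3)$ is only meaningful when $\eta$ is super-polynomially small in $p^r$ — which is exactly the regime the theorem operates in (the hypothesis $|U|\ge\eta^{-\con_{k,p}}$ and the conclusion's error $p^{O(r)}\eta^{\Omega(1)}$ signal this). So in fact one should NOT separate Type 3 out as "good for free"; instead I claim \emph{every} Type 3 quadruple is good after all — wait, that is what fails. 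The correct fix: in Type 3, use that $S_{x_i\bullet}$ misses at most $\varepsilon_0|B_{x_i\bullet}| = \varepsilon_0\delta|v_0+V|$ points, so the union of the four missing sets has size $\le 4\varepsilon_0\delta|v_0+V|$, while $|\bigcap B_{x_i\bullet}| = \delta^4|v_0+V|$; thus the quadruple is bad precisely when $4\varepsilon_0\delta|v_0+V| > 10\varepsilon_0\delta^4|v_0+V|$, i.e. always (for $\delta<(2/5)^{1/3}$). Hence Type 3 quadruples ARE typically bad, and the claim must be accounting for them differently: the phrase "bad" counts quadruples where the \emph{intersection in $S$} is small relative to the intersection in $B$, and the honest bound comes from noting that the overwhelming majority of additive quadruples $x_{[4]}\in X'^4$ — I realize the subtlety is that $X'$ itself is dense: each $x\in X'$ lies in $\ge\xi|U|^2$ additive quadruples in $X'$, but $|X'|$ could be small. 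I will therefore instead bound bad quadruples among \emph{all} of $(u_0+U)^4$, using Lemmas~\ref{regInt1}, ~\ref{regInt2}, and Definition~\ref{qrDefin} directly, and the definition of "bad" there must implicitly be the one where failure of quasirandomness occurs; re-reading the excerpt, "bad" means $|\bigcap S_{x_i\bullet}| < (1-10\varepsilon_0)|\bigcap B_{x_i\bullet}|$, so I conclude the intended argument is: bad $\Rightarrow$ (Type 1 or Type 2), because if all columns have size $\delta|v_0+V|$ AND the fourfold intersection equals $\delta^4|v_0+V|$, then — and this is where I suspect the paper uses a \emph{different} threshold, namely that $S$ misses at most $\varepsilon_0|B_{x\bullet}|$ but the intersection being counted is against $|\bigcap B|$ which is a $\delta^3$-fraction, so one needs $\varepsilon_0$ comparable to $\delta^3$. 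I believe the cleanest writeup fixes $\varepsilon_0$ depending on the quasirandomness/density setup is not possible since $\varepsilon_0=2^{-k}/100$; so the genuinely correct plan is: bad quadruples of Type 3 form a negligible fraction only because we may ADDITIONALLY assume $\delta$ is bounded below when it matters, OR — most likely — the intended reading is that "good" quadruples need only $|\bigcap S_{x_i\bullet}|>0$-type positivity and I have mis-parsed; I would check the surrounding text. Modulo this parsing point, the structure of the proof is: reduce badness to failure of one of two explicit quasirandomness events, bound each by Lemmas~\ref{regInt1} and Definition~\ref{qrDefin}, multiply by $O(1)$ for the choice of maximal column and by $|U|^{-1}$-worth of additive constraint, obtaining $O(p^{O(r)}\sqrt[4]{\eta}|U|^3)$; the main obstacle is precisely reconciling the fixed absolute $\varepsilon_0$ with the density-dependent fourfold-intersection fraction $\delta^3$, which I expect is handled by the observation that $\delta\ge p^{-r}$ combined with the fact that we are free to take $\eta$ as small as we like relative to $p^r$, so that "bad with all columns and intersection generic" simply never occurs.
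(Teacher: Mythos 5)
Your proposal has a genuine gap that you yourself noticed but did not resolve. The crude union bound $\bigl|\bigcap_i B_{x_i\bullet}\bigr| - \bigl|\bigcap_i S_{x_i\bullet}\bigr| \le \sum_i |B_{x_i\bullet}\setminus S_{x_i\bullet}| \le 4\varepsilon_0 \max_i |B_{x_i\bullet}|$ compares a quantity that, for a generic quadruple, is $\approx 4\varepsilon_0\,\delta|V|$ against a fourfold intersection of size $\approx \delta^3|V|$: this is lossy by a factor of $\delta^{-2}$, so when $\delta$ is small the union bound cannot certify that generic quadruples are good. As a result, your decomposition by type leaves a ``Type 3'' bucket comprising essentially all additive quadruples, giving a bound of $\Omega(|U|^3)$ rather than $O(p^{O(r)}\sqrt[4]{\eta}|U|^3)$. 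You correctly diagnosed the issue (``reconciling the fixed absolute $\varepsilon_0$ with the density-dependent fourfold-intersection fraction $\delta^3$'') but both of your suggested fixes are wrong: lower-bounding $\delta$ does not remove the $\delta^{-2}$ loss, and shrinking $\eta$ changes nothing about the union-bound computation.

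The paper's proof avoids the union bound entirely. Observe first that for an additive quadruple the fourfold intersection equals any threefold intersection ($B_{x_1\bullet}\cap\cdots\cap B_{x_4\bullet}=B_{x_1\bullet}\cap B_{x_2\bullet}\cap B_{x_3\bullet}=\cdots$), so fixing $x_i=x_0$ and letting the other columns vary is a two-parameter family. Then, rather than bounding $|B_{x_i\bullet}\setminus S_{x_i\bullet}|$ pointwise and summing, the paper uses Corollary~\ref{oneDenseColumnCor} (which rests on the second-moment concentration in Lemma~\ref{regInt2}): for all but an $O(p^{O(r)}\sqrt[4]\eta)$-fraction of pairs $(x_2,x_3)$, the intersection $B_{x_2\bullet}\cap B_{x_3\bullet}$ samples $S_{x_0\bullet}$ and $B_{x_0\bullet}$ at the same rate $\delta^2$, so $|S_{x_0\bullet}\cap\bigcap_{j\ne i} B_{x_j\bullet}|\ge(1-2\varepsilon_0)\bigl|\bigcap_j B_{x_j\bullet}\bigr|$ \emph{regardless of $\delta$}. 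The proof begins with exactly the observation that a quadruple is automatically good when $\bigl|S_{x_i\bullet}\cap\bigcap_{j\ne i} B_{x_j\bullet}\bigr|\ge(1-2\varepsilon_0)\bigl|\bigcap_j B_{x_j\bullet}\bigr|$ holds for each $i$; summing over $i\in[4]$ of the exceptional events from Corollary~\ref{oneDenseColumnCor} and over $x_0\in u_0+U$ gives the claimed $O(p^{O(r)}\sqrt[4]\eta|U|^3)$. So the missing ingredient in your proposal is precisely the move from a pointwise bound on the error set $B_{x_i\bullet}\setminus S_{x_i\bullet}$ to a concentration-of-sampled-density bound on its intersection with the other three columns.
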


\begin{proof}Observe that a quadruple $x_{[4]}$ of elements in $X'$ is automatically good when 
\[\bigg|S_{x_i \bullet} \cap \Big(\bigcap_{j \in [4] \setminus \{i\}} B_{x_j \bullet} \Big)\bigg| \geq (1 - 2\varepsilon_0) \bigg|\bigcap_{i \in [4]} B_{x_i \bullet}\bigg|\] 
for each $i \in [4]$. Using this observation, the claim follows from Corollary~\ref{oneDenseColumnCor} and the fact that $B_{x_1 \bullet} \cap B_{x_2 \bullet} \cap B_{x_3 \bullet} \cap B_{x_4 \bullet} = B_{x_1 \bullet} \cap B_{x_2 \bullet} \cap B_{x_3 \bullet} = \dots = B_{x_2 \bullet} \cap B_{x_3 \bullet} \cap B_{x_4 \bullet}$, provided $\eta \leq \cons p^{- \con\,r}$.\end{proof}

Let $X''$ be the set of all $x \in X'$ that belong to at most $\sqrt[8]{\eta}|U|^2$ bad additive quadruples whose elements lie in $X'$. Then $|X' \setminus X''| = O(p^{O(r)}\sqrt[8]{\eta} |U|)$. Thus,
\begin{equation}|X''| \geq |X| - m - O(p^{O(r)}\sqrt[8]{\eta} |U|) \geq |X| - \xi^{-1} - O((p^{O(r)}\sqrt[8]{\eta} + \xi^{1/3})|U|).\label{xextsizediff}\end{equation}
We may without loss of generality assume that $|X''| \geq \sqrt[8]{\eta} |U|$, otherwise take $X'' = \emptyset$ and the desired claim is immediately satisfied, which we now justify. Throughout the proof we will have requirements that $\eta \leq \cons\, p^{- \con\, r}$ and $\xi \geq \con\,\eta^{\cons} p^{\con\, r}$ hold. We may assume that the first inequality is satisfied as we may simply modify the implicit constants in the bound $|X \setminus X_{\text{ext}}| = O_{k,p}(p^{O_{k,p}(r)}\eta^{\Omega_{k,p}(1)} |U|)$ in the statement of the theorem so that should the first inequality fail, the theorem itself becomes vacuous. Hence, the only actual requirement becomes $\xi \geq \con\,\eta^{\cons} p^{\con\, r}$, so the final choice of $\xi$ will be $C_1 \eta^{c_2}$ for some constants $C_1, c_2 > 0$ (depending possibly only on $p$ and $k$). The term $\xi^{-1}$ in~\eqref{xextsizediff} gives rise to the requirement that $|U| \geq \eta^{-\con}$ in the statement of the theorem. Hence, our choice of $\xi$ and the assumption $|U| \geq \eta^{-\con}$ in the statement allow us to obtain the bound $\xi^{-1} + O((p^{O(r)}\sqrt[8]{\eta} + \xi^{1/3})|U|) \leq O(p^{O(r)}\eta^{\Omega(1)} |U|)$. Thus we assume that $|X''| \geq \sqrt[8]{\eta} |U|$. This discussion also shows that $|X \setminus X''| \leq  O(p^{O(r)}\eta^{\Omega(1)} |U|)$ holds, which is the desired bound.\\
\indent In particular, each $x \in X''$ still belongs to $\Big(\xi - O(p^{O(r)}\sqrt[8]{\eta})\Big) |U|^2$ additive quadruples with elements in $X''$. We now prove that $X''$ has the claimed properties.\\

Let $a_e, b_e \in X''$ for $e \in \{0,1\}^k$ be such that $\sum_{e \in \{0,1\}^k} a_e =\sum_{e \in \{0,1\}^k} b_e$ and let $y_0 \in \bigcap_{e \in \{0,1\}^k} (B_{a_e \bullet} \cap B_{b_e\bullet})$ . For $x \in X''$, let $P_x = \{(x_1, x_2) \in X'' \colon x_1 + x_2 - x \in X''\}$. From Corollary~\ref{qrPairsCor}, provided $\eta \leq \cons\, \delta^{32}$ we see that for all but $O(\delta^{-8} \sqrt[8]{\eta} |V|)$ elements $y \in B_{a_e \bullet}$,
\[\Big||P_{a_e} \cap (B_{\bullet y} \times B_{\bullet y})| - \delta^2 |P_{a_e}|\Big| \leq \sqrt[16]{\eta}|U|^2.\]
A similar property holds for each $b_e$.\\
\indent Note that $\delta \geq p^{-r}$ since every non-empty column $B_{x \bullet}$ is an at most $r$-codimensional coset inside $v_0 + V$. By Lemma~\ref{yQR} we know that $B$ is $(32 \delta^{-4} \sqrt[4]{\eta})$-quasirandom with density $\delta$. Applying Lemma~\ref{regInt2} with quasirandomness in direction $G_2$, provided $\eta \leq \cons\, p^{- \con\, r}$, we obtain
\[\Big|\Big\{y \in v_0 + V \colon \Big||B_{\bullet y} \cap X''| - \delta |X''|\Big| \leq \sqrt[64]{\eta} |U|\Big\}\Big| = (1-O(p^{O(r)}\sqrt[32]{\eta}))|v_0 + V|.\]

Hence, we obtain a set $Y \subset \Big(\bigcap_{e \in \{0,1\}^k} B_{a_e \bullet}\Big) \cap \Big(\bigcap_{e \in \{0,1\}^k} B_{b_e \bullet}\Big)$ such that 
\begin{enumerate}
\item[\textbf{(i)}] $\bigg|\Big(\bigcap_{e \in \{0,1\}^k} B_{a_e \bullet}\Big) \cap \Big(\bigcap_{e \in \{0,1\}^k} B_{b_e \bullet}\Big)\bigg| - |Y| = O(p^{O(r)}\sqrt[32]{\eta})|V|$,
\item[\textbf{(ii)}] $\Big||B_{\bullet y} \cap X''| - \delta |X''|\Big| \leq \sqrt[64]{\eta} |U|$ for every $y\in Y$, and
\item[\textbf{(iii)}] $ \Big||P_{a_e} \cap B_{\bullet y}^2| - \delta^2 |P_{a_e}|\Big| \leq \sqrt[16]{\eta}|U|^2$ and $\Big||P_{b_e} \cap B_{\bullet y}^2| - \delta^2 |P_{b_e}|\Big| \leq \sqrt[16]{\eta}|U|^2$ for every $y\in Y$ and every $e\in\{0,1\}^k$.
\end{enumerate}
Note that $\Big(\bigcap_{e \in \{0,1\}^k} B_{a_e \bullet}\Big) \cap \Big(\bigcap_{e \in \{0,1\}^k} B_{b_e \bullet}\Big)$ is a non-empty since it contains $y_0$. Being a coset of an at most $(2^{k+1} \cdot r)$-codimensional subspace of $V$, we see that $\Big|\Big(\bigcap_{e \in \{0,1\}^k} B_{a_e \bullet}\Big) \cap \Big(\bigcap_{e \in \{0,1\}^k} B_{b_e \bullet}\Big)\Big| \geq p^{-\cons r} |V|$. In particular, for a proportion $1 - O(p^{O(r)}\sqrt[32]{\eta})$ of choices of $y_1, y_2 \in \Big(\bigcap_{e \in \{0,1\}^k} B_{a_e \bullet}\Big) \cap \Big(\bigcap_{e \in \{0,1\}^k} B_{b_e \bullet}\Big)$, we have $y_1, y_2, y_1 + y_2 - y_0 \in Y$. Fix any such choice of $y_1, y_2$ and let $\tilde{y} \in \{y_1, y_2, y_1 + y_2 - y_0\}$.\\
\indent Set $W = B_{\bullet \tilde{y}} - a_e$ (for an arbitrary $e$ -- this is independent of the choice of $e$ as $W$ is simply the subspace of which $B_{\bullet \tilde{y}}$ is a coset), $\tilde{X} = X'' \cap B_{\bullet \tilde{y}}$ and recursively define sets $\mathcal{S}_i \subset \tilde{X}^{2^{k+2}} \times W^{2^{k+1} - 2^{k + 2 - i}}$, for $i \in [k+2]$ as follows. We set 
\begin{align*}\mathcal{S}_1 = \Big\{\Big(u_e, v_e, w_e, z_e \colon e \in \{0,1\}^k\Big) \in \tilde{X}^{2^{k+2}} \colon \Big(\forall e \in \{0,1\}^k\Big)\hspace{2pt}&(a_e, u_e + v_e - a_e, u_e, v_e)\in {\tilde{X}}^4\text{ and is good,}\\
&(b_e, w_e + z_e - b_e, w_e, z_e)\in \tilde{X}^4\text{ and is good}\Big\}.\end{align*}
Thus, the defining condition in the set $\mathcal{S}_1$ is that for each $e \in \{0,1\}^k$ all points in the two quadruples lie in $\tilde{X}$, which is non-trivial information only for $u_e + v_e - a_e$ and $w_e + z_e - b_e$, and that both additive quadruples are good. Further sets $\mathcal{S}_i$ will have similar conditions in their definition. For $i \in [k]$, once $\mathcal{S}_i$ has been defined, we set
\begin{align*}\mathcal{S}_{i+1} = \Big\{\Big((u_e,& v_e, w_e, z_e \colon e \in \{0,1\}^k), (d^{(1)}_e, f^{(1)}_e \colon e \in \{0,1\}^{k-1}), \dots, (d^{(i)}_e, f^{(i)}_e \colon e \in \{0,1\}^{k - i})\Big)\\
&\in \tilde{X}^{2^{k+2}}\times W^{2^{k+1} - 2^{k + 1 - i}} \colon \Big((u_e, v_e, w_e, z_e \colon e \in \{0,1\}^k), (d^{(1)}_e, f^{(1)}_e \colon e \in \{0,1\}^{k-1}), \dots,\\
&\hspace{5cm} (d^{(i-1)}_e, f^{(i-1)}_e \colon e \in \{0,1\}^{k + 1 - i})\Big) \in \mathcal{S}_i,\\
&\Big(\forall e \in \{0,1\}^{k-i}\Big)\hspace{2pt}(u_{e, 0, 0, \dots, 0} + d^{(1)}_{e, 0, 0, \dots, 0} + \dots + d^{(i-1)}_{e, 0},\,\, u_{e, 1, 0, \dots, 0} + d^{(1)}_{e, 1, 0, \dots, 0} + \dots + d^{(i-1)}_{e, 1},\\
&\hspace{1cm}u_{e, 0, 0, \dots, 0} + d^{(1)}_{e, 0, 0, \dots, 0} + \dots + d^{(i-1)}_{e, 0} + d^{(i)}_e,\,\, u_{e, 1, 0, \dots, 0} + d^{(1)}_{e, 1, 0, \dots, 0} + \dots + d^{(i-1)}_{e, 1} - d^{(i)}_e)\in {\tilde{X}}^4\\
&\hspace{0.5cm}(w_{e, 0, 0, \dots, 0} + f^{(1)}_{e, 0, 0, \dots, 0} + \dots + f^{(i-1)}_{e, 0},\,\, w_{e, 1, 0, \dots, 0} + f^{(1)}_{e, 1, 0, \dots, 0} + \dots + f^{(i-1)}_{e, 1},\\
&\hspace{1cm}w_{e, 0, 0, \dots, 0} + f^{(1)}_{e, 0, 0, \dots, 0} + \dots + f^{(i-1)}_{e, 0} + f^{(i)}_e,\,\, w_{e, 1, 0, \dots, 0} + f^{(1)}_{e, 1, 0, \dots, 0} + \dots + f^{(i-1)}_{e, 1} - f^{(i)}_e)\in {\tilde{X}}^4\\
&\text{and are good}\Big\}.\end{align*}
Finally, define 
\begin{align*}\mathcal{S}_{k+2} = \Big\{\Big((u_e,& v_e, w_e, z_e \colon e \in \{0,1\}^k), (d^{(1)}_e, f^{(1)}_e \colon e \in \{0,1\}^{k-1}), \dots, (d^{(k)}, f^{(k)}), g\Big)\\
&\in \tilde{X}^{2^{k+2}} \times W^{2^{k+1} - 1} \colon \Big((u_e, v_e, w_e, z_e \colon e \in \{0,1\}^k), (d^{(1)}_e, f^{(1)}_e \colon e \in \{0,1\}^{k-1}), \dots,\\
&(d^{(k)}, f^{(k)})\Big) \in \mathcal{S}_{k+1},\\
&(u_{0,\dots,0} + d^{(1)}_{0,\dots,0} + \dots + d^{(k)},\,\, w_{0,\dots,0} + f^{(1)}_{0,\dots,0} + \dots + f^{(k)} + g, \\
&\hspace{1cm}u_{0,\dots,0} + d^{(1)}_{0,\dots,0} + \dots + d^{(k)} + g,\,\, w_{0,\dots,0} + f^{(1)}_{0,\dots,0} + \dots + f^{(k)})\in \tilde{X}^4\text{ and is good}
\Big\}.
\end{align*}

\noindent These sets are dense, as the next claim shows.

\begin{claimb*}For each $i \in [k+2]$, provided $\xi \geq \con\,\eta^{\cons} p^{\con\, r}$, we have
\[|\mathcal{S}_i| = p^{-O(r)} \xi^{O(1)} |W|^{2^{k+2} + 2^{k+1} - 2^{k + 2 - i}}.\]
\end{claimb*}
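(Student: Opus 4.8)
The plan is to prove Claim B by induction on $i$, following the recursive definition of the $\mathcal{S}_i$. All the arguments take place inside the fixed row $\tilde y$, so the relevant objects are $\tilde X = X'' \cap B_{\bullet\tilde y}$ and the subspace $W$ of which $B_{\bullet\tilde y}$ is a coset; since $\beta$ has codimension $r$ one has $p^{-r}|U| \leq |W| \leq |U|$, and by property \textbf{(ii)} of $Y$ together with the standing assumption $|X''| \geq \sqrt[8]{\eta}|U|$ one gets $|\tilde X| \geq \delta|X''| - \sqrt[64]{\eta}|U| \geq p^{-\con r}\eta^{\cons}|U|$, so $\tilde X$ has density $\rho := |\tilde X|/|W| \geq p^{-\con r}\eta^{\cons}$ in $B_{\bullet\tilde y}$ (the case $X'' = \emptyset$ being trivial). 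Every multiplicative loss below will be of the form $p^{-\con r}\eta^{\cons}$, hence absorbable into $p^{-O(r)}\xi^{O(1)}$ under the hypothesis $\xi \geq \con\,\eta^{\cons}p^{\con r}$, and every additive error term — for instance the $O(p^{O(r)}\sqrt[4]{\eta}|U|^3)$ bound on bad quadruples from Claim A, or the $\sqrt[16]{\eta}|U|^2$ error in Corollary~\ref{qrPairsCor} — is negligible against the main terms, using also $\delta \geq p^{-r}$.

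In the base case $i = 1$, the conditions defining $\mathcal{S}_1$ involve pairwise disjoint blocks of variables $(u_e, v_e)$ and $(w_e, z_e)$, $e \in \{0,1\}^k$, so $|\mathcal{S}_1|$ factors over these $2^{k+1}$ blocks; it suffices to bound below, for each $a_e$, the number of pairs $(u_e, v_e) \in \tilde X^2$ with $u_e + v_e - a_e \in \tilde X$ for which $(a_e, u_e + v_e - a_e, u_e, v_e)$ is good (and symmetrically for $b_e$). One uses three inputs: $a_e \in X''$ lies in $\geq (\xi - O(p^{O(r)}\sqrt[8]{\eta}))|U|^2$ additive quadruples of $X''$; applying Corollary~\ref{qrPairsCor} to the set $P_{a_e} \subset (u_0+U)^2$ of these quadruples and using $\tilde y \in Y$ (property \textbf{(iii)}), a proportion $\delta^2 \geq p^{-2r}$ of them survives the requirement $u_e, v_e \in B_{\bullet\tilde y}$, up to error $\sqrt[16]{\eta}|U|^2$ (and then $u_e + v_e - a_e \in B_{\bullet\tilde y}$ automatically, as this is a coset); and by the definition of $X''$ at most $\sqrt[8]{\eta}|U|^2$ of the quadruples through $a_e$ are bad. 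Subtracting gives $\geq p^{-O(r)}\xi^{O(1)}|W|^2$ admissible pairs, and multiplying over the $2^{k+1}$ blocks gives $|\mathcal{S}_1| \geq p^{-O(r)}\xi^{O(1)}|W|^{2^{k+2}}$, as required since $2^{k+2}+2^{k+1}-2^{k+1} = 2^{k+2}$.

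For the inductive step, passing from $\mathcal{S}_i$ to $\mathcal{S}_{i+1}$ adjoins, for each $e \in \{0,1\}^{k-i}$, the two variables $d^{(i)}_e, f^{(i)}_e \in W$ subject to the requirement that the quadruples $(A_e, B_e, A_e + d^{(i)}_e, B_e - d^{(i)}_e)$ and their $b$-side analogues be good with all vertices in $\tilde X$; here the bases $A_e, B_e$ are, by the recursive structure, the ``$+$''-endpoints of the stage-$i$ quadruples for $(e,0)$ and $(e,1)$, hence already lie in $\tilde X$. Each new variable appears in exactly one such condition, so $|\mathcal{S}_{i+1}| = \sum_{\sigma \in \mathcal{S}_i}\prod_{e}|D_e(\sigma)|\,|F_e(\sigma)|$ with $D_e(\sigma)$ the set of admissible $d^{(i)}_e$, and it is enough to find a sub-collection of $\mathcal{S}_i$ of size $p^{-O(r)}\xi^{O(1)}|\mathcal{S}_i|$ on which every $|D_e(\sigma)|,|F_e(\sigma)| \geq p^{-O(r)}\xi^{O(1)}|W|$; the exponent then closes since $2^{k+2}+2^{k+1}-2^{k+2-i} + 2^{k+1-i} = 2^{k+2}+2^{k+1}-2^{k+1-i}$. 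The quantity $|D_e(\sigma)| = |\{d : A_e + d, B_e - d \in \tilde X,\ (A_e,B_e,A_e+d,B_e-d)\text{ good}\}|$ is an additive-energy count for the fixed pair $(A_e, B_e) \in \tilde X^2$: summing over all pairs gives $E(\tilde X) \geq |\tilde X|^4/|W| = \rho^4|W|^3$, so a $\gtrsim\rho$ proportion of pairs $(p,q) \in \tilde X^2$ have $|\{d : p+d, q-d \in \tilde X\}| \gtrsim \rho^2|W|$, and Claim A removes only a negligible fraction of these as bad. \textbf{The main obstacle is the last point}: to show that for a $p^{-O(r)}\xi^{O(1)}$ proportion of $\sigma \in \mathcal{S}_i$ \emph{all} the base pairs $(A_e(\sigma), B_e(\sigma))$ (and their $b$-side analogues) are simultaneously rich in this sense. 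This needs control of the distribution of the base points as $\sigma$ ranges over $\mathcal{S}_i$: at each earlier stage the $u$- and $d$-coordinates feeding into a given base pair were free to range over $\gtrsim p^{-O(r)}\xi^{O(1)}|W|$ values, so the induced distribution of each pair is spread out enough that the rich pairs retain a $\gtrsim\rho$ share of it, and since the pairs for distinct $e$ are built from disjoint coordinate blocks, a Cauchy--Schwarz / dependent-counting argument propagates richness through the conjunction over $e$ losing only a $p^{-O(r)}\xi^{O(1)}$ factor. The same argument handles the final step $\mathcal{S}_{k+1} \to \mathcal{S}_{k+2}$, which adjoins the single variable $g$ and involves one base pair.
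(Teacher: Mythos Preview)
Your base case $i=1$ is correct and essentially matches the paper: the defining conditions of $\mathcal{S}_1$ factor over the $2^{k+1}$ blocks, and each block is handled via property \textbf{(iii)} of $Y$ (which is exactly Corollary~\ref{qrPairsCor} applied to $P_{a_e}$) together with the $\sqrt[8]{\eta}|U|^2$ bound on bad quadruples through each $a_e,b_e$ coming from the definition of $X''$.

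The inductive step, however, has a genuine gap. Your identity $|\mathcal{S}_{i+1}| = \sum_{\sigma \in \mathcal{S}_i}\prod_e |D_e(\sigma)||F_e(\sigma)|$ is correct, but the argument you give for lower-bounding it does not work as stated. The additive-energy bound $E(\tilde X) \geq |\tilde X|^4/|W|$ tells you that a $\gtrsim\rho$ proportion of pairs \emph{drawn uniformly from $\tilde X^2$} are rich. But the base pairs $(A_e(\sigma), B_e(\sigma))$ are not uniformly distributed over $\tilde X^2$ as $\sigma$ ranges over $\mathcal{S}_i$; their distribution is the pushforward of a highly constrained measure, and there is no a priori reason the rich pairs should capture a $\gtrsim\rho$ share of it. Your sentence ``the induced distribution of each pair is spread out enough that the rich pairs retain a $\gtrsim\rho$ share'' is precisely the assertion that needs proof, and nothing in the proposal supplies one. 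The subsequent appeal to ``a Cauchy--Schwarz / dependent-counting argument'' is too vague to constitute a proof.

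The paper's key idea, which you are missing, is to apply Cauchy--Schwarz \emph{not} to propagate pointwise richness but to count pairs of elements of $\mathcal{S}_i$ directly. One buckets $\mathcal{S}_i$ by fixing all coordinates except the most recent layer $(d^{(i-1)}_e, f^{(i-1)}_e)$, together with the sums $s_e = A_e + B_e$ and $t_e$ (the $b$-side analogue). Two elements $(d^{(i-1)},f^{(i-1)})$ and $({d'}^{(i-1)},{f'}^{(i-1)})$ in the same bucket satisfy $d^{(i-1)}_{e,0}+d^{(i-1)}_{e,1} = {d'}^{(i-1)}_{e,0}+{d'}^{(i-1)}_{e,1}$, so setting $d^{(i)}_e = {d'}^{(i-1)}_{e,0} - d^{(i-1)}_{e,0}$ one obtains $A'_e = A_e + d^{(i)}_e$ and $B'_e = B_e - d^{(i)}_e$ with $A'_e, B'_e \in \tilde X$ automatically (since the primed tuple lies in $\mathcal{S}_i$). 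This yields a candidate element of $\mathcal{S}_{i+1}$, and the only obstruction is the goodness of the new quadruples, which is handled by Claim~A. Thus
\[
|\mathcal{S}_{i+1}| \geq \sum_{\text{buckets}} |\text{bucket}|^2 - O(p^{O(r)}\sqrt[4]{\eta})|W|^{2^{k+2}+2^{k+1}-2^{k+1-i}} \geq \frac{|\mathcal{S}_i|^2}{|W|^{\text{(param dim)}}} - O(\cdot),
\]
and a dimension count closes the exponent. (For $i=1$ one buckets by $v,z$ and the sums $s_{e'} = u_{e',0}+u_{e',1}$, $t_{e'} = w_{e',0}+w_{e',1}$, letting $(u,w)$ vary.) This sidesteps entirely any analysis of the distribution of base pairs.
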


\begin{proof}We prove the claim by induction on $i$. We begin by proving the base case $i=1$. Recall that $|P_x| \geq \Big(\xi - O(p^{O(r)}\sqrt[8]{\eta})\Big) |U|^2$ for every point $x \in X''$. Note also that $x$ belongs to at most $\sqrt[8]{\eta} |U|^2$ bad additive quadruples with elements in $X''$. Write $P^{\text{good}}_x \subset P_x$ for the set of all $(x_1, x_2) \in X''$ such that $(x, x_1 + x_2 - x, x_1 , x_2)$ is good. Thus  $|P_x \setminus P^{\text{good}}_x| \leq \sqrt[8]{\eta} |U|^2$. Furthermore, if $x, x_1, x_2 \in B_{\bullet \tilde{y}}$ then we also have $x_1 + x_2 - x \in B_{\bullet \tilde{y}}$. Hence, the size $|\mathcal{S}_1|$ is simply the product $\Big(\prod_{e \in \{0,1\}^k} |P^{\text{good}}_{a_e} \cap B_{\bullet \tilde{y}}^2|\Big) \cdot \Big(\prod_{e \in \{0,1\}^k} |P^{\text{good}}_{b_e} \cap B_{\bullet \tilde{y}}^2|\Big)$. Provided $\xi \geq \con\,\eta^{\cons} p^{\con\, r}$, using the property \textbf{(iii)} of the set $Y$ above we get the desired bound.\\

To show the bound in the case $i = 2$, for fixed values $v_e, z_e \in \tilde{X}, s_{e'}, t_{e'} \in 2a_{0,0, \dots, 0} + W$ (again the choice of index in $a_{0,0, \dots, 0}$ is irrelevant, and any $a_e$ would do), for each $e \in \{0,1\}^k$, $e' \in \{0,1\}^{k-1}$, let 
\begin{align*}\mathcal{S}_{\subalign{&1\\&v, z, s, t}} = \Big\{(u_e, w_e\colon e \in \{0,1\}^k) \in \tilde{X}^{2^{k+1}} \colon\Big(u_e&, v_e, w_e, z_e \colon e \in \{0,1\}^k\Big) \in \mathcal{S}_1\\
&\land\,\, (\forall e' \in \{0,1\}^{k-1})\,\, u_{(e', 0)} + u_{(e', 1)} = s_{e'}, w_{(e', 0)} + w_{(e', 1)} = t_{e'}\Big\}.\end{align*}
When $(u_e, w_e\colon e \in \{0,1\}^k), (u'_e, w'_e\colon e \in \{0,1\}^k) \in \mathcal{S}_{\subalign{&1\\&v,z,s,t}}$, for each $e' \in \{0,1\}^{k-1}$ we have $u_{(e', 0)} + u_{(e', 1)} = u'_{(e', 0)} + u'_{(e', 1)}$, so we can write $d^{(1)}_{e'} = u'_{(e', 0)} - u_{(e', 0)} \in W$ to get $u'_{(e', 0)} = u_{(e', 0)} + d^{(1)}_{e'}, u'_{(e', 1)} = u_{(e', 1)} - d^{(1)}_{e'}$. Similarly, we can write $w'_{(e', 0)} = w_{(e', 0)} + f^{(1)}_{e'}, w'_{(e', 1)} = w_{(e', 1)} - f^{(1)}_{e'}$ with $f^{(1)}_{e'} \in W$. Thus, every pair $(u_e, w_e\colon e \in \{0,1\}^k), (u'_e, w'_e\colon e \in \{0,1\}^k) \in \mathcal{S}_{\subalign{&1\\&v,z,s,t}}$ gives rise to a sequence $\bm{s} = \Big((u_e, v_e, w_e, z_e \colon e \in \{0,1\}^k), (d^{(1)}_e, f^{(1)}_e \colon e \in \{0,1\}^{k-1})\Big)$ such that $(u_e, v_e, w_e, z_e \colon e \in \{0,1\}^k) \in \mathcal{S}_1$, and the quadruples $\Big(u_{(e', 0)}, u_{(e', 1)}, u_{(e', 0)} + d^{(1)}_{e'}, u_{(e', 1)} - d^{(1)}_{e'}\Big)$ and $\Big(w_{(e', 0)}, w_{(e', 1)}, w_{(e', 0)} + f^{(1)}_{e'}, w_{(e', 1)} - f^{(1)}_{e'}\Big)$ both belong to $\tilde{X}^4$. In order for the sequence $\bm{s}$  to belong to $\mathcal{S}_2$ the additive quadruples we have just mentioned need to be good as well. But we may use Claim A to simply bound the number of sequences $\bm{s}$ for which one of the given additive quadruples is bad. Thus, by the Cauchy-Schwarz inequality and Claim A, 
\[|\mathcal{S}_2| = \sum_{v, z, s, t}\bigg|\mathcal{S}_{\subalign{&1\\&v, z, s, t}}\bigg|^2 - O(p^{O(r)}\sqrt[4]{\eta}) |W|^{2^{k+2} + 2^{k+1} - 2^k} = \xi^{O(1)} p^{-O(r)}|W|^{2^{k+2} + 2^{k}},\]
using $\xi \geq \con\,\eta^{\cons} p^{\con\, r}$ again.\\

Assume now that the claim holds for some $i \in [2,k]$. That is, assume that 
\[|\mathcal{S}_i| = p^{-O(r)}\xi^{O(1)}|W|^{2^{k+2} + 2^{k+1} - 2^{k + 2 - i}}.\]
Similarly to the above, for fixed values $(u_e, v_e, w_e, z_e \colon e \in \{0,1\}^k) \in \tilde{X}^{2^{k+2}}$, $(d^{(1)}_e, f^{(1)}_e \colon e \in \{0,1\}^{k-1}) \in W^{2^{k}}$, $\dots$, $(d^{(i-2)}_e, f^{(i-2)}_e \colon e \in \{0,1\}^{k+2 - i}) \in W^{2^{k + 3 - i}}$, $s_{e}, t_{e} \in 2a_{0,0, \dots,0} + W$ for $e \in \{0,1\}^{k - i}$, we set
\begin{align*}\mathcal{S}_{\subalign{&i\\&u, v, z, w\\&d^{[i-2]}, f^{[i-2]}, s, t}} = \Big\{(d^{(i-1)}_e,& f^{(i-1)}_e \colon e \in \{0,1\}^{k + 1 - i}) \in W^{2^{k+2-i}} \colon \Big((u_e, v_e, w_e, z_e \colon e \in \{0,1\}^k),\\
&(d^{(1)}_e, f^{(1)}_e \colon e \in \{0,1\}^{k-1}), \dots, (d^{(i-1)}_e, f^{(i-1)}_e \colon e \in \{0,1\}^{k + 1 - i})\Big) \in \mathcal{S}_i\\
&\hspace{1cm}\land \Big(\forall e \in \{0,1\}^{k-i}\Big)\hspace{2pt}(u_{e, 0, 0, \dots, 0} + d^{(1)}_{e, 0, 0, \dots, 0} + \dots + d^{(i-1)}_{e, 0}\\
&\hspace{2cm}+ u_{e, 1, 0, \dots, 0} + d^{(1)}_{e, 1, 0, \dots, 0} + \dots + d^{(i-1)}_{e, 1} = s_e)\\
&\hspace{1cm}\land \Big(\forall e \in \{0,1\}^{k-i}\Big)\hspace{2pt}(w_{e, 0, 0, \dots, 0} + f^{(1)}_{e, 0, 0, \dots, 0} + \dots + f^{(i-1)}_{e, 0}\\
&\hspace{2cm} + w_{e, 1, 0, \dots, 0} + f^{(1)}_{e, 1, 0, \dots, 0} + \dots + f^{(i-1)}_{e, 1} = t_e)\Big\}.\end{align*}

When $(d^{(i-1)}_e, f^{(i-1)}_e \colon e \in \{0,1\}^{k + 1 - i}), ({d'}^{(i-1)}_e, {f'}^{(i-1)}_e \colon e \in \{0,1\}^{k + 1 - i}) \in \mathcal{S}_{\subalign{&i\\&u, v, z, w\\&d^{[i-2]}, f^{[i-2]}, s, t}} $, for each $e \in \{0,1\}^{k - i}$ we have 
\begin{align*}u_{e, 0, 0, \dots, 0} &+ d^{(1)}_{e, 0, 0, \dots, 0} + \dots + d^{(i-2)}_{e, 0, 0} + d^{(i-1)}_{e, 0} + u_{e, 1, 0, \dots, 0} + d^{(1)}_{e, 1, 0, \dots, 0} + \dots + d^{(i-2)}_{e, 1, 0}+ d^{(i-1)}_{e, 1}\\
=&u_{e, 0, 0, \dots, 0} + d^{(1)}_{e, 0, 0, \dots, 0} + \dots + d^{(i-2)}_{e, 0, 0} + {d'}^{(i-1)}_{e, 0} + u_{e, 1, 0, \dots, 0} + d^{(1)}_{e, 1, 0, \dots, 0} + \dots + d^{(i-2)}_{e, 1, 0} + {d'}^{(i-1)}_{e, 1}\end{align*}
so we can find $d^{(i)}_e \in W$ such that ${d'}^{(i-1)}_{e, 0} = d^{(i-1)}_{e, 0} + d^{(i)}_e$ and ${d'}^{(i-1)}_{e, 1} = d^{(i-1)}_{e, 1} - d^{(i)}_e$. Similarly, we can write ${f'}^{(i-1)}_{e, 0} = f^{(i-1)}_{e, 0} + f^{(i)}_e$ and ${f'}^{(i-1)}_{e, 1} = f^{(i-1)}_{e, 1} - f^{(i)}_e$ for some $f^{(i)}_e \in W$. Hence, every pair $\mathcal{S}_{\subalign{&i\\&u, v, z, w\\&d^{[i-2]}, f^{[i-2]}, s, t}}$ gives us a sequence in $\mathcal{S}_{i+1}$ as long as all relevant additive quadruples are good. As in the previous step, we use Claim A to bound the number of sequences of desired shape that contain a bad additive quadruple. Thus, by the Cauchy-Schwarz inequality and Claim A,
\begin{align*}|\mathcal{S}_{i+1}| = &\sum_{\subalign{& u, v, z, w\\&d^{[i-2]}, f^{[i-2]}, s, t}}\bigg|\mathcal{S}_{\subalign{&i\\&u,v, z, w\\&d^{[i-2]}, f^{[i-2]}, s, t}}\bigg|^2 - O(p^{O(r)}\sqrt[4]{\eta}) |W|^{2^{k+2} + 2^{k+1} - 2^{k + 1 - i}}\\
= &p^{-O(r)}\xi^{O(1)}|W|^{2^{k+2} + 2^{k + 1} - 2^{k+1 - i}},\end{align*}
provided $\xi \geq \con\, \eta^{\cons} p^{\con\, r}$.\\

Finally, assume that the claim holds for $i = k + 1$. For fixed values $(u_e, v_e, w_e, z_e \colon e \in \{0,1\}^k) \in \tilde{X}^{2^{k+2}}$, $(d^{(1)}_e, f^{(1)}_e \colon e \in \{0,1\}^{k-1}) \in W^{2^{k}}$, $\dots$, $(d^{(k-1)}_e, f^{(k-1)}_e \colon e \in \{0,1\}) \in W^{4}$, $s \in W$, we set
\begin{align*}\mathcal{S}_{\subalign{&k+1\\&u,v, z, w\\&d^{[k-1]}, f^{[k-1]}, s}} = \Big\{(d^{(k)},& f^{(k)}) \in W^{2} \colon \Big((u_e, v_e, w_e, z_e \colon e \in \{0,1\}^k),(d^{(1)}_e, f^{(1)}_e \colon e \in \{0,1\}^{k-1}), \dots,\\
&\hspace{11cm} (d^{(k)}, f^{(k)})\Big) \in \mathcal{S}_{k+1}\\
&\hspace{1cm}\land \Big(u_{0,\dots,0} + d^{(1)}_{0,\dots,0} + \dots + d^{(k)}\Big) - \Big(w_{0,\dots,0} + f^{(1)}_{0,\dots,0} + \dots + f^{(k)}\Big) = s\Big\}.\end{align*}
When $(d^{(k)}, f^{(k)}), ({d'}^{(k)}, {f'}^{(k)}) \in \mathcal{S}_{\subalign{&k+1\\& u, v, z, w\\&d^{[k-1]}, f^{[k-1]}, s}}$, then, similarly to before, $d^{(k)} - f^{(k)} = {d'}^{(k)} - {f'}^{(k)}$, so there exists $g \in W$ such that ${d'}^{(k)} = d^{(k)} + g$ and ${f'}^{(k)} = f^{(k)} + g$. To finish the proof, apply the Cauchy-Schwarz inequality and Claim A again, obtaining 
\begin{align*}|\mathcal{S}_{k+2}| = &\sum_{\subalign{&u,v, z, w\\&d^{[k-1]}, f^{[k-1]}, s}}\bigg|\mathcal{S}_{\subalign{&k+1\\&u, v, z, w\\&d^{[k-1]}, f^{[k-1]}, s}}\bigg|^2 - O(p^{O(r)}\sqrt[4]{\eta}) |W|^{2^{k+2} + 2^{k+1} - 1}\\ 
= &p^{-O(r)}\xi^{O(1)}|W|^{2^{k+2} + 2^{k + 1} - 1},\end{align*}
provided $\xi \geq \con\,\eta^{\cons} p^{\con\, r}$.\end{proof}

Recall that Claim~A shows that a vast majority of additive quadruples in $X'$ are good. The final ingredient in the proof that we need is a very similar property for more involved sequences of elements $\tilde{X}$. The points of such sequences come from expressions appearing in the definition of the sets $\mathcal{S}_i$. Going back to the sketch of the proof, the sequences in the next claim are the the first coordinates in the final arrangements of points that we obtain in the procedure that increases the number of degrees of freedom.

\begin{claimc*}\label{g11tuplesCl1}The number of 
\[\Big((u_e, v_e, w_e, z_e \colon e \in \{0,1\}^k), (d^{(1)}_e, f^{(1)}_e \colon e \in \{0,1\}^{k-1}), \dots, (d^{(k)}, f^{(k)}), g\Big) \in (\tilde{X})^{2^{k+2}} \times W^{2^{k+1} - 1}\]
such that
\begin{align}\Big|\Big(\bigcap_{e \in \{0,1\}^k} &S_{u_e + v_e - a_e\bullet}\Big) \cap \Big(\bigcap_{e \in \{0,1\}^k} S_{v_e \bullet}\Big) \cap \Big(\bigcap_{e \in \{0,1\}^k} S_{w_e + z_e - b_e \bullet}\Big) \cap \Big(\bigcap_{e \in \{0,1\}^k} S_{z_e \bullet}\Big)\nonumber\\
&\hspace{1cm}\cap \bigg(\bigcap_{i \in [k]}\Big(\bigcap_{e \in \{0,1\}^{k-i}} S_{(u_{e, 1, 0, \dots, 0} + d^{(1)}_{e, 1, 0, \dots, 0} + \dots + d^{(i-1)}_{e, 1} - d^{(i)}_e)\bullet}\Big)\bigg)\nonumber\\
&\hspace{1cm}\cap \bigg(\bigcap_{i \in [k]}\Big(\bigcap_{e \in \{0,1\}^{k-i}} S_{(w_{e, 1, 0, \dots, 0} + f^{(1)}_{e, 1, 0, \dots, 0} + \dots + f^{(i-1)}_{e, 1} - f^{(i)}_e) \bullet}\Big)\bigg)\nonumber\\
&\hspace{1cm} \cap S_{(u_{0,\dots,0} + d^{(1)}_{0,\dots,0} + \dots + d^{(k)} + g) \bullet} \cap S_{(w_{0,\dots,0} + f^{(1)}_{0,\dots,0} + \dots + f^{(k)} + g) \bullet}\Big|\nonumber\\
&< (1-12\cdot 2^{k} \varepsilon_0)\Big|\Big(\bigcap_{e \in \{0,1\}^k} B_{u_e + v_e - a_e \bullet}\Big) \cap \Big(\bigcap_{e \in \{0,1\}^k} B_{v_e \bullet}\Big) \cap \Big(\bigcap_{e \in \{0,1\}^k} B_{w_e + z_e - b_e \bullet}\Big) \cap \Big(\bigcap_{e \in \{0,1\}^k} B_{z_e \bullet}\Big)\nonumber\\
&\hspace{3cm}\cap \bigg(\bigcap_{i \in [k]}\Big(\bigcap_{e \in \{0,1\}^{k-i}} B_{(u_{e, 1, 0, \dots, 0} + d^{(1)}_{e, 1, 0, \dots, 0} + \dots + d^{(i-1)}_{e, 1} - d^{(i)}_e) \bullet}\Big)\bigg)\nonumber\\
&\hspace{3cm}\cap \bigg(\bigcap_{i \in [k]}\Big(\bigcap_{e \in \{0,1\}^{k-i}} B_{(w_{e, 1, 0, \dots, 0} + f^{(1)}_{e, 1, 0, \dots, 0} + \dots + f^{(i-1)}_{e, 1} - f^{(i)}_e) \bullet}\Big)\bigg)\nonumber\\
&\hspace{3cm} \cap B_{(u_{0,\dots,0} + d^{(1)}_{0,\dots,0} + \dots + d^{(k)} + g) \bullet} \cap B_{(w_{0,\dots,0} + f^{(1)}_{0,\dots,0} + \dots + f^{(k)} + g) \bullet}\Big|,\label{g11tuplesCl1Eqn}
\end{align}
and all elements that appear in the subscript of $S$ belong to $\tilde{X}$, is $O(p^{O(r)}\sqrt[4]{\eta}|U|^{2^{k+2} + 2^{k+1} - 1})$.\end{claimc*}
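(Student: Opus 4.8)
The statement is a many-fold generalisation of Claim~A (\textbf{Claim A}), which handled a single additive quadruple, so the plan is to mimic the proof of Claim~A but apply it to each of the $2^{k+2} + 2^{k+1} - 1$ "column indices" that appear as subscripts of $S$ (equivalently $B$) in \eqref{g11tuplesCl1Eqn}. The key observation, exactly as in Claim~A, is that an intersection of the form $\bigcap_{j} B_{x_j \bullet}$ over some fixed finite index set of distinct points is automatically dense: whenever all but one of the columns is replaced by its $S$-restriction and that single remaining $B$-column is $(1-2\varepsilon_0)$-filled by $S$, the whole intersection is $(1-10\varepsilon_0)$-filled. Summing over the $O(1)$ choices of which column is the "bad" one, and noting $12\cdot 2^k\varepsilon_0 \geq 2^{k+2}\cdot 2\varepsilon_0$, we see that \eqref{g11tuplesCl1Eqn} can only fail if for some one of the column indices $x$ appearing there we have $\big|S_{x\bullet} \cap \bigcap_{j\neq x}B_{x_j\bullet}\big| < (1-2\varepsilon_0)\big|\bigcap_j B_{x_j\bullet}\big|$, i.e.\ that column is "locally sparse" inside the relevant intersection.

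First I would fix attention on one column index, say $u_{0,\dots,0}+d^{(1)}_{0,\dots,0}+\dots+d^{(k)}+g$, and bound the number of tuples for which it is locally sparse; the other $2^{k+2}+2^{k+1}-2$ cases are entirely analogous (indeed simpler, since many of those indices are among the "free" coordinates $u_e, v_e, w_e, z_e$). The point is that, after we fix all the other coordinates, the offending column ranges over a coset of $W$ as $g$ varies (all the other summands being fixed), and more importantly the $2^{k+2}+2^{k+1}-1$ columns that appear are, for a generic choice of the free parameters, distinct elements of $\tilde X \subset X''$, so their $B$-columns intersect in a coset of codimension at most $(2^{k+2}+2^{k+1}-1)r$ inside $v_0 + V$; hence that intersection has size $\geq p^{-O(r)}|V| > 0$. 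Now Corollary~\ref{oneDenseColumnCor} (applied with the role of "$x_0$" played by the column index we singled out, "$S$" being its $S$-restriction, and the remaining column indices as the random $x_1,\dots,x_k$) bounds the probability, over a uniformly random choice of the remaining column indices among columns of $B$, that the singled-out column is locally sparse, by $O(\varepsilon^{-2}\delta^{-4}p^{2(k+1)r}\sqrt[4]{\eta})$ with $\varepsilon = 2\varepsilon_0$ a constant. Translating this probabilistic bound into a count of tuples: the column indices appearing in \eqref{g11tuplesCl1Eqn} are affine images of the underlying parameters $(u_e,v_e,w_e,z_e,d^{(i)}_e,f^{(i)}_e,g)$, and for generic parameters these images are independent (this is the same kind of genericity/linear-independence bookkeeping as in Lemma~\ref{nongenNum}), so a uniformly random choice of parameters induces an essentially uniform choice of the tuple of column indices; the non-generic choices contribute only $O(p^{O(r)}/|U|)\cdot |U|^{2^{k+2}+2^{k+1}-1}$, which is absorbed since $|U| \geq \eta^{-\con_{k,p}}$. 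Multiplying the probability bound by the total number $|U|^{2^{k+2}+2^{k+1}-1}$ of tuples gives the desired $O(p^{O(r)}\sqrt[4]{\eta}\,|U|^{2^{k+2}+2^{k+1}-1})$ for this one column index.

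Finally I would sum over all $2^{k+2}+2^{k+1}-1 = O_k(1)$ column indices; since each contributes $O(p^{O(r)}\sqrt[4]{\eta}\,|U|^{2^{k+2}+2^{k+1}-1})$ and there are $O_k(1)$ of them, the total is still $O(p^{O(r)}\sqrt[4]{\eta}\,|U|^{2^{k+2}+2^{k+1}-1})$, as claimed. Throughout I would record the standing hypotheses $\eta \leq \cons\, p^{-\con\, r}$ (so that Corollary~\ref{oneDenseColumnCor} and Lemma~\ref{regInt2} apply with $\varepsilon = 2\varepsilon_0$) and $|U|\geq \eta^{-\con_{k,p}}$, which are already assumed in the ambient theorem.

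\textbf{Main obstacle.} The genuinely delicate point is the bookkeeping that converts "probability over random $B$-columns" into "count of parameter tuples": one must check that for all but an $O(p^{O(r)}/|U|)$-fraction of choices of the free parameters, the $2^{k+2}+2^{k+1}-1$ column-index expressions are pairwise distinct and, more precisely, that Corollary~\ref{oneDenseColumnCor} is being applied to an honest intersection of that many distinct columns (so that the codimension bound, and hence the non-vanishing $N_0 > 0$ hypothesis of that corollary, holds). This is the analogue of the linear-algebra argument in the proof of Lemma~\ref{nongenNum}, and while conceptually routine it requires care because the column indices are built out of nested sums of the parameters $d^{(i)}_e, f^{(i)}_e$ with the same $\pm$ sign pattern as in the definitions of the $\mathcal{S}_i$; one must verify that no nontrivial affine relation among them holds identically. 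Everything else is a direct transcription of the Claim~A argument with the single-quadruple intersection replaced by the large fixed intersection of columns occurring in \eqref{g11tuplesCl1Eqn}.
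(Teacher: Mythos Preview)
Your overall strategy is the same as the paper's: reduce via union bound to a single locally-sparse column, then apply Corollary~\ref{oneDenseColumnCor}. However, there is a genuine gap in your linear-algebra bookkeeping, and it is precisely the point you flag as the ``main obstacle''.

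You count $2^{k+2}+2^{k+1}-1$ column indices, but there are in fact $N=6\cdot 2^{k}=2^{k+2}+2^{k+1}$ of them (this is also why the constant $12\cdot 2^{k}\varepsilon_0 = 2N\varepsilon_0$ is exactly right, not merely $\geq 2^{k+2}\cdot 2\varepsilon_0$). The parameter space has dimension $N-1$, so your assertion that ``for generic parameters these images are independent'' is false: the $N$ column indices \emph{always} satisfy exactly one linear relation. The paper computes this relation explicitly and notes that it relies on the ambient hypothesis $\sum_{e}a_e=\sum_{e}b_e$; every column index appears in it with coefficient $\pm 1$, and the coefficients sum to zero. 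This relation is not an artefact to be discarded as a non-generic event --- it is the key structural fact you need.

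Once you have it, the argument you want actually goes through cleanly: because the coefficients sum to zero, any one column index $c_{j}$ is an \emph{affine} combination (coefficients summing to $1$) of the others, so $\bigcap_{i}B_{c_i\bullet}=\bigcap_{i\neq j}B_{c_i\bullet}$ by biaffineness of $\beta$. Now fix $c_{j}$ (its $S$-column is $(1-\varepsilon_0)$-dense since $c_j\in\tilde X\subset X$) and choose $N-2$ of the remaining $c_i$ as free uniform variables in $u_0+U$; the last one is determined by the relation and drops out of the $B$-intersection as just noted. This is exactly the shape required by Corollary~\ref{oneDenseColumnCor}, and the count follows. Your reference to Lemma~\ref{nongenNum} is misplaced: no separate ``non-generic'' case is needed here, because the one dependence is universal rather than exceptional.
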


\begin{proof}Note that when the elements $(u_e - a_e, v_e, w_e - b_e, z_e \colon e \in \{0,1\}^k),$ $(d^{(1)}_e, f^{(1)}_e \colon e \in \{0,1\}^{k-1}), \dots,$ $(d^{(k)},$ $f^{(k)}),$ $g$ are linearly independent, then the elements in the subscript of the set $S$ on the left-hand-side, omitting the last one, are linearly independent as well. On the other hand, there is a linear relationship that all elements in the subscripts always satisfy, namely:
\begin{align*}0 = &\Big(\sum_{e \in \{0,1\}^k} u_e + v_e - a_e\Big) - \Big(\sum_{e \in \{0,1\}^k} v_e \Big) - \Big(\sum_{e \in \{0,1\}^k} w_e + z_e - b_e \Big) + \Big(\sum_{e \in \{0,1\}^k} z_e\Big)\\
&\hspace{2cm}-\sum_{i \in [k]}\Big(\sum_{e \in \{0,1\}^{k-i}} u_{e, 1, 0, \dots, 0} + d^{(1)}_{e, 1, 0, \dots, 0} + \dots + d^{(i-1)}_{e, 1} - d^{(i)}_e\Big)\\
&\hspace{2cm}+\sum_{i \in [k]}\Big(\sum_{e \in \{0,1\}^{k-i}} w_{e, 1, 0, \dots, 0} + f^{(1)}_{e, 1, 0, \dots, 0} + \dots + f^{(i-1)}_{e, 1} - f^{(i)}_e\Big)\\
&\hspace{2cm}-\Big(u_{0,\dots,0} + d^{(1)}_{0,\dots,0} + \dots + d^{(k)} + g\Big) +\Big(w_{0,\dots,0} + f^{(1)}_{0,\dots,0} + \dots + f^{(k)} + g\Big).\end{align*}
We used the fact that $\sum_{e \in \{0,1\}^k} a_e = \sum_{e \in \{0,1\}^k} b_e$.\\
\indent The rest of the proof is almost identical to that of Claim A. Like that one, this claim follows from Corollary~\ref{oneDenseColumnCor} (being applied to all variants of~\eqref{g11tuplesCl1Eqn} where all but one occurrence of the set $S$ on the left has been changed to $B$, and the constant $1-12\cdot 2^{k} \varepsilon_0$ in~\eqref{g11tuplesCl1Eqn} is set to $1-2\varepsilon_0$).\end{proof}

We now use the structure obtained to show that $\sum_{e \in \{0,1\}^k}\phi^{\text{conv}}(a_e, \tilde{y}) = \sum_{e \in \{0,1\}^k}\phi^{\text{conv}}(b_e, \tilde{y})$. Once this has been proved, recall that $\tilde{y}$ was arbitrary among $\{y_1, y_2, y_1 + y_2 - y_0\}$. Using the fact that $\phi^{\text{conv}}$ is a 2-homomorphism in direction $G_2$, we get $\sum_{e \in \{0,1\}^k}\phi^{\text{conv}}(a_e, y_0) = \sum_{e \in \{0,1\}^k}\phi^{\text{conv}}(b_e, y_0)$, as desired.\\

Crucially, observe that whenever $m\leq 6\cdot 2^k$ and $q_1, \dots, q_{2m} \in X''$ satisfy $\sum_{i \in [m]} q_i = \sum_{i \in [m+1,2m]} q_i$ and
\[\bigg|\bigcap_{i \in [2m]} S_{q_i \bullet}\bigg| \geq \frac{3}{4} \bigg|\bigcap_{i \in [2m]} B_{q_i \bullet}\bigg|,\]
then $\sum_{i \in [m]} \phi^{\text{conv}}(q_i, y) = \sum_{i \in [m+1,2m]} \phi^{\text{conv}}(q_i, y)$ for any $y \in \bigcap_{i \in [2m]} B_{q_i \bullet}$. Indeed, as usual, $C = \bigcap_{i \in [2m]} B_{q_i \bullet}$ is a non-empty coset, and then for any $z_1 \in C' = \bigcap_{i \in [2m]} S_{q_i \bullet}$, we have that $C' - z_1 + y \subset C$, so $C' - z_1 + y$ and $C'$ intersect, at some $z_2$, say. Thus $z_2, z_1 + z_2 - y \in C'$ as well, so 
\begin{align*}\sum_{i \in [m]} \phi^{\text{conv}}(q_i, y) =& \sum_{i \in [m]} \Big(\phi(q_i, z_1) + \phi(q_i, z_2) - \phi(q_i, z_1 + z_2 - y)\Big)\\
= &\Big(\sum_{i \in [m]}\phi(q_i, z_1)\Big) + \Big(\sum_{i \in [m]}\phi(q_i, z_2)\Big) - \Big(\sum_{i \in [m]}\phi(q_i, z_1 + z_2 - y)\Big)\\
&\hspace{2cm}\text{(since $\phi$ is an $m$-homomorphism in direction $G_1$ on $S$ and}\\
&\hspace{3cm}\text{all points in the arguments of $\phi$ belong to $S$)}\\
= &\Big(\sum_{i \in [m + 1, 2m]}\phi(q_i, z_1)\Big) + \Big(\sum_{i \in [m + 1, 2m]}\phi(q_i, z_2)\Big) - \Big(\sum_{i \in [m+1,2m]}\phi(q_i, z_1 + z_2 - y)\Big)\\
=& \sum_{i \in [m+1, 2m]} \Big(\phi(q_i, z_1) + \phi(q_i, z_2) - \phi(q_i, z_1 + z_2 - y)\Big)\\
=&\sum_{i \in [m+1, 2m]} \phi^{\text{conv}}(q_i, y).
\end{align*}

Take $\Big((u_e, v_e, w_e, z_e \colon e \in \{0,1\}^k), (d^{(1)}_e, f^{(1)}_e \colon e \in \{0,1\}^{k-1}), \dots, (d^{(k)}, f^{(k)}), g\Big) \in \mathcal{S}_{k+2}$ such that~\eqref{g11tuplesCl1Eqn} fails (i.e.\ the reverse inequality holds). Applying this observation and recalling that $\varepsilon_0 = 2^{-k}/100$, we get 
\begin{align}\sum_{e \in \{0,1\}^k} \phi^{\text{conv}}(v_e, \tilde{y}) &- \sum_{e \in \{0,1\}^k} \phi^{\text{conv}}(u_e + v_e - a_e, \tilde{y})\nonumber\\ 
&+ \sum_{i \in [k]} \sum_{e \in \{0,1\}^{k-i}} \phi^{\text{conv}}(u_{e, 1, 0, \dots, 0} + d^{(1)}_{e, 1, 0, \dots, 0} + \dots+ d^{(i-1)}_{e, 1} - d^{(i)}_e, \tilde{y})\nonumber\\
&\hspace{4cm}+ \phi^{\text{conv}}(u_{0,\dots,0} + d^{(1)}_{0,\dots,0} + \dots + d^{(k)} + g, \tilde{y})\nonumber\\
=&\sum_{e \in \{0,1\}^k} \phi^{\text{conv}}(z_e, \tilde{y}) - \sum_{e \in \{0,1\}^k} \phi^{\text{conv}}(w_e + z_e - b_e, \tilde{y})\nonumber\\
&\hspace{3cm} + \sum_{i \in [k]} \sum_{e \in \{0,1\}^{k-i}} \phi^{\text{conv}}(w_{e, 1, 0, \dots, 0} + f^{(1)}_{e, 1, 0, \dots, 0} + \dots + f^{(i-1)}_{e, 1} - f^{(i)}_e, \tilde{y})\nonumber\\
&\hspace{7cm} + \phi^{\text{conv}}(w_{0,\dots,0} + f^{(1)}_{0,\dots,0} + \dots + f^{(k)} + g, \tilde{y})\label{centralGoodEq}
\end{align}
and whenever $x_{[4]}$ is a good additive quadruple in $B_{\bullet \tilde{y}}$, $\phi^{\text{conv}}(x_1, \tilde{y}) + \phi^{\text{conv}}(x_2, \tilde{y}) = \phi^{\text{conv}}(x_3, \tilde{y}) + \phi^{\text{conv}}(x_4, \tilde{y})$, so in particular $\phi^{\text{conv}}(a_e, \tilde{y}) + \phi^{\text{conv}}(u_e + v_e - a_e, \tilde{y}) = \phi^{\text{conv}}(u_e, \tilde{y}) +\phi^{\text{conv}}(v_e, \tilde{y})$ and when $e \in \{0,1\}^{k-i}$, 
\begin{align*}&\phi^{\text{conv}}(u_{e, 0, 0, \dots, 0} + d^{(1)}_{e, 0, 0, \dots, 0} + \dots + d^{(i-1)}_{e, 0}, \tilde{y}) + \phi^{\text{conv}}(u_{e, 1, 0, \dots, 0} + d^{(1)}_{e, 1, 0, \dots, 0} + \dots + d^{(i-1)}_{e, 1}, \tilde{y})\nonumber\\
&\hspace{1cm}=\phi^{\text{conv}}(u_{e, 0, 0, \dots, 0} + d^{(1)}_{e, 0, 0, \dots, 0} + \dots + d^{(i-1)}_{e, 0} + d^{(i)}_e, \tilde{y}) + \phi^{\text{conv}}(u_{e, 1, 0, \dots, 0} + d^{(1)}_{e, 1, 0, \dots, 0} + \dots + d^{(i-1)}_{e, 1} - d^{(i)}_e, \tilde{y})\end{align*}
and so on. (We look at all good quadruples listed in the definitions of the sets $\mathcal{S}_i$.) After straightforward algebraic manipulation, we deduce that 
\begin{align*}\sum_{e \in \{0,1\}^k} \phi^{\text{conv}}(a_e, \tilde{y}) = & \sum_{e \in \{0,1\}^k} \phi^{\text{conv}}(v_e, \tilde{y}) - \sum_{e \in \{0,1\}^k} \phi^{\text{conv}}(u_e + v_e - a_e, \tilde{y})+ \sum_{e \in \{0,1\}^k} \phi^{\text{conv}}(u_e, \tilde{y}) \\
= &  \sum_{e \in \{0,1\}^k} \phi^{\text{conv}}(v_e, \tilde{y}) - \sum_{e \in \{0,1\}^k} \phi^{\text{conv}}(u_e + v_e - a_e, \tilde{y})\\
&\hspace{5cm} + \sum_{e \in \{0,1\}^{k - 1}} \Big(\phi^{\text{conv}}(u_{e, 0}, \tilde{y}) + \phi^{\text{conv}}(u_{e, 1}, \tilde{y})\Big)\\
= &  \sum_{e \in \{0,1\}^k} \phi^{\text{conv}}(v_e, \tilde{y}) - \sum_{e \in \{0,1\}^k} \phi^{\text{conv}}(u_e + v_e - a_e, \tilde{y})\\
 &\hspace{5cm} + \sum_{e \in \{0,1\}^{k - 1}} \Big(\phi^{\text{conv}}(u_{e, 1} - d^{(1)}_e, \tilde{y}) + \phi^{\text{conv}}(u_{e, 0} + d^{(1)}_e, \tilde{y})\Big)\\
= & \dots\\
&\hspace{2cm}\text{(the central equality is precisely~\eqref{centralGoodEq})}\\
= & \dots\\
=&\sum_{e \in \{0,1\}^k} \phi^{\text{conv}}(b_e, \tilde{y}),\end{align*}
where at each step we first use the good quadruples involving the variables $u_{\bcdot}\,\,$ and $d_{\bcdot}\,\,$ listed in the definitions of the sets $\mathcal{S}_1, \dots, \mathcal{S}_{k+2}$, until we get the central equality, and then we use the good quadruples involving the variables $w_{\bcdot}\,\,$ and $f_{\bcdot}\,\,$ listed in the definitions of the sets $\mathcal{S}_{k+2}, \dots, \mathcal{S}_1$ (note the reversed order of the sets) to reach the expression in the final line, thus completing the proof. Finally, recall that we had requirements that $\eta \leq \cons\, p^{- \con\, r}$ and $\xi \geq \con\,\eta^{\cons} p^{\con\, r}$. Recall also the discussion following~\eqref{xextsizediff}, by which we may assume that the first required bound $\eta \leq \cons\, p^{- \con\, r}$ is satisfied. Hence, we may pick $\xi = C_1 \eta^{c_2}$ for some constants $C_1, c_2 > 0$ so that the required bounds on $\xi$ are satisfied.\end{proof}

\subsection{Extending biaffine maps defined on quasirandom varieties}

The main aim of this subsection is to prove that a bihomomorphism defined on almost all of a quasirandom biaffine variety agrees almost everywhere with a biaffine map defined on that variety.\\

First we prove an auxiliary result that can be informally stated as follows: whenever we have a very dense subset $S$ of a quasirandom variety $B$ which is not an intersection of a product $X \times Y$ with $B$, then we may find a $4 \times 4$ grid $\Gamma \subset B$, whose first coordinates form an additive quadruple and second coordinates also form an additive quadruple, such that $S$ contains exactly 15 out of 16 points of $\Gamma$.

\begin{proposition}\label{GridWoPt} Let $\varepsilon_0 = \frac{1}{64}$. Let $u_0 + U$ be a coset in $G_1$, let $v_0 + V$ be a coset in $G_2$, and let $\beta \colon G_1 \times G_2 \to \mathbb{F}_p^r$ be a biaffine map. Let $\lambda \in \mathbb{F}_p^r$. Suppose that 
\[B = \{(x,y) \in G_1 \times G_2:\beta(x,y) = \lambda\} \cap ((u_0 + U) \times (v_0 + V))\] 
is $\eta$-quasirandom with density $\delta > 0$. Provided that $\eta \leq \cons\, p^{- \con\,r}$, the following holds.\\
\indent Let $X \subset u_0 + U, Y \subset v_0 + V$ and $S \subset B$ be such that $|S_{x \bullet} \cap Y| \geq (1-\varepsilon_0) |B_{x \bullet}|$ for each $x \in X$, and $|S_{\bullet y} \cap X| \geq (1-\varepsilon_0) |B_{\bullet y}|$ for each $y \in Y$. Then either
\begin{itemize}
\item[\textbf{(i)}] there exist $x_1, x_2, x_3, x_4 \in X$ and $y_1, y_2, y_3, y_4 \in Y$ such that $x_1 + x_2 = x_3 + x_4$, $y_1 + y_2 = y_3 + y_4$ and $(x_i,y_j) \in S$ for $(i,j) \not= (1,1)$, but $(x_1, y_1) \in B \setminus S$, or
\item[\textbf{(ii)}] there exist $X' \subset X$ and $Y' \subset Y$, such that $\frac{|X \setminus X'|}{|u_0 + U|}, \frac{|Y \setminus Y'|}{|v_0 + V|} \leq O(\eta^{1/32})$ and $(X' \times Y') \cap B \subset S$.
\end{itemize} 
\end{proposition}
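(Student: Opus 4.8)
### Proof proposal

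The plan is to prove the dichotomy by a double-counting / averaging argument on the bipartite graph between $X$ and $Y$ with edge set $B\cap(X\times Y)$, treating $S$ as a large subgraph. Suppose \textbf{(i)} fails, that is, there is no $4\times4$ grid inside $B$ whose two index quadruples are additive, all of whose points except the $(1,1)$-corner lie in $S$, but whose $(1,1)$-corner lies in $B\setminus S$. We want to deduce that $S$ is, after deleting a tiny fraction of rows and columns, a full product intersected with $B$. The strategy is exactly the one used elsewhere in the paper (and in the proof of Lemma~\ref{easyExtn}, Corollary~\ref{approxF2homm}): failure of \textbf{(i)} says that membership in $S$ propagates across additive quadruples, so the indicator $\mathbbm{1}_{S}(x,y)$ restricted to $B$ can be ``completed'' to a product structure on most of the domain.

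First I would make the negation of \textbf{(i)} quantitative. For $x\in X$ write $T_x=\{y\in Y:(x,y)\notin S\}\subseteq B_{x\bullet}$, so $|T_x|\le\varepsilon_0|B_{x\bullet}|$; similarly $T_y^{\ast}=\{x\in X:(x,y)\notin S\}$ for $y\in Y$. The failure of \textbf{(i)} says: there do \emph{not} exist $x_1,x_2,x_3,x_4\in X$, $y_1,y_2,y_3,y_4\in Y$ with $x_1+x_2=x_3+x_4$, $y_1+y_2=y_3+y_4$, $(x_1,y_1)\in B\setminus S$, and $(x_i,y_j)\in S$ for $(i,j)\ne(1,1)$. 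I would use this together with quasirandomness (Definition~\ref{qrDefin}, Lemma~\ref{yQR}, and especially Lemma~\ref{regInt1} and Corollary~\ref{qrPairsCor}) to show that for all but $O(\eta^{\Omega(1)})$ of the pairs $(x_1,x_3,x_4)$ forming ``half'' an additive configuration, and for a fixed bad point $(x_1,y_1)\in B\setminus S$, one can find $x_2=x_3+x_4-x_1$ and $y_2,y_3,y_4$ completing the grid inside $B$ with the other $15$ points generic — because $B$ restricted to the relevant intersection of four columns has density $\approx\delta^4>0$ and four rows likewise, so the number of grids through $(x_1,y_1)$ with prescribed additive index quadruples and all $16$ points in $B$ is $\gg\delta^{O(1)}|U|^2|V|^2$, while the number of such grids having \emph{any} point other than $(1,1)$ outside $S$ is $O(\varepsilon_0\delta^{-O(1)})$ of that by a union bound over the $15$ positions, each controlled by $|T_{x_i}|\le\varepsilon_0|B_{x_i\bullet}|$ and Corollary~\ref{oneDenseColumnCor}-type estimates. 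Hence a bad point $(x_1,y_1)$ with $x_1\in X$, $y_1\in Y$ can exist only in rows/columns that are themselves ``exceptional''; more precisely I would define $X'=\{x\in X: |T_x|=0 \text{ after restricting to a good set of columns}\}$ — or rather, following Proposition~\ref{GridWoPt}'s statement, define $X'$ and $Y'$ to be the sets of $x,y$ that are not exceptional in a sense made precise below — and show $(X'\times Y')\cap B\subseteq S$.

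The cleanest way to organize the last step: call $y\in Y$ \emph{heavy} if $|T_y^{\ast}|>\sqrt{\eta}\,|B_{\bullet y}|$ (and similarly $x\in X$ heavy), let $Y'$ be the non-heavy elements of $Y$ and $X'$ the non-heavy elements of $X$. Since $\sum_{x\in X}|T_x|\le\varepsilon_0\sum_x|B_{x\bullet}|$ and analogously for columns, Markov gives $|X\setminus X'|, |Y\setminus Y'|=O(\sqrt{\eta})$... no — this only bounds by $O(\varepsilon_0/\sqrt\eta)$, which is too weak, so instead I would argue directly from the structural fact. The correct route, I expect, is: if $(x_1,y_1)\in B\setminus S$ with $x_1\in X'$, $y_1\in Y'$, then because $|T_{x_1}^{\text{row}}|$ is tiny and $|T_{y_1}^{\text{col}}|$ is tiny, quasirandomness (Lemma~\ref{regInt1}, Corollary~\ref{qrPairsCor}) produces, with positive probability, $x_3,x_4\in X$ and $y_3,y_4\in Y$ with $x_3+x_4-x_1\in X$, $y_3+y_4-y_1\in Y$, all of $(x_i,y_j)$ for $(i,j)\ne(1,1)$ lying in $B$, and — crucially — none of them in $T$, since each of the $15$ constraints fails with probability $O(\eta^{\Omega(1)})$; this directly contradicts the negation of \textbf{(i)}. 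Therefore $(X'\times Y')\cap B\subseteq S$ with $X',Y'$ exactly the complements of the sets of ``somewhat bad'' rows and columns, and I bound $|X\setminus X'|,|Y\setminus Y'|=O(\eta^{1/32})|U|$ by the same averaging used to define the heaviness threshold, tuning exponents so the union bound over $15$ positions survives.

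The main obstacle I anticipate is bookkeeping the propagation step: one must simultaneously control four columns and four rows of $B$ via quasirandomness, which only gives good behaviour for \emph{most} tuples, so the exceptional rows/columns that go into $X\setminus X'$, $Y\setminus Y'$ must absorb all the failures from Lemma~\ref{regInt1}, Corollary~\ref{oneDenseColumnCor} and Corollary~\ref{qrPairsCor}, and the thresholds (powers of $\eta$) have to be chosen consistently so that a point $(x_1,y_1)$ with $x_1\in X'$, $y_1\in Y'$ genuinely forces a configuration forbidden by $\neg$\textbf{(i)}. This is routine but delicate, and is where the hypothesis $\eta\le\mathbf{c}\,p^{-\mathbf{C}r}$ and the exponent $1/32$ in the conclusion come from; I would track a single parameter (say, declare $x$ bad if more than $\eta^{1/16}|B_{x\bullet}|$ of the relevant completions fail) and verify at the end that the chosen power beats all the error terms $O(p^{O(r)}\eta^{1/4})$, $O(\delta^{-O(1)}\eta^{1/8})$, etc., appearing in the cited lemmas, using $\delta\ge p^{-r}$.
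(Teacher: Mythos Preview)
Your proposal has a genuine gap at exactly the point you flag and then wave past. You correctly notice that Markov's inequality applied to $\sum_x|T_x|\le\varepsilon_0\sum_x|B_{x\bullet}|$ only gives $|X\setminus X'|\le O(\varepsilon_0/\text{threshold})$, which is useless since $\varepsilon_0=1/64$ is a fixed constant and you need the threshold to be a small power of $\eta$. You then say you will ``argue directly from the structural fact'' and later that you will ``bound $|X\setminus X'|,|Y\setminus Y'|=O(\eta^{1/32})$ by the same averaging'', but the same averaging cannot do this: the total mass of $M=((X\times Y)\cap B)\setminus S$ can genuinely be as large as a constant times $|U||V|$ under the hypotheses, so there is no a priori reason for the set of $x$ with $|M_{x\bullet}|>\eta^{1/16}|B_{x\bullet}|$ to be small. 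Your grid-completion argument for a putative bad point $(x_1,y_1)\in(X'\times Y')\cap B\setminus S$ requires the \emph{other} chosen $x_2,x_3,x_4$ also to have $\eta$-tiny $M$-slices (not just $\varepsilon_0$-bounded ones), or else the union bound over the fifteen positions loses a constant, not $O(\eta^{\Omega(1)})$; so you need most $x\in X$ to lie in $X'$, and that is precisely what you cannot establish.

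The paper resolves this by introducing a case split on the size of $|M|$. If $|M|\le\eta^{1/16}\delta|U||V|$, then Markov does give $|X\setminus X'|,|Y\setminus Y'|=O(\eta^{1/32})$ with $X'=\{x:|M_{x\bullet}|\le\eta^{1/32}|B_{x\bullet}|\}$, and your grid-completion argument goes through to yield \textbf{(ii)}. If instead $|M|>\eta^{1/8}|U||V|$, a different argument is needed: one starts from a point of $M$, builds many additive quadruples $x_{[4]}$ with three of the four points in $S_{\bullet y}$ and the first in $M_{\bullet y}$, and observes that if \textbf{(i)} fails then the $4\times4$ grid $P=\{x_1,\dots,x_4\}\times(B_{x_1\bullet}\cap\cdots\cap B_{x_4\bullet})$ must satisfy $|S\cap P|<\tfrac{15}{16}|P|$; averaging then produces a single $x_1$ with $|S_{x_1\bullet}\cap B_{x_2\bullet}\cap B_{x_3\bullet}|<\tfrac{15}{16}\delta^3|V|$ for $\Omega(\eta^{1/8})|U|^2$ pairs $(x_2,x_3)$, which contradicts Lemma~\ref{regInt2} since $|S_{x_1\bullet}|\ge\tfrac{31}{32}\delta|V|$. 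This second case is entirely absent from your sketch and is not recoverable by ``tuning exponents''.
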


\begin{proof} Without loss of generality $S \subset X \times Y$, as we may replace $S$ by $S \cap (X \times Y)$ without affecting the assumptions or structures in \textbf{(i)} and \textbf{(ii)}. First remove those $x \in X$ such that $|B_{x \bullet}| \not= \delta|v_0 + V|$ and those $y \in Y$ such that $|B_{\bullet y}| \not= \delta |u_0 + U|$. By $\eta$-quasirandomness, we have removed at most $\eta |u_0 + U|$ elements from $|X|$ and by Lemma~\ref{yQR}, we have removed at most $8\delta^{-2}\sqrt[4]{\eta} |v_0 + V|$ elements from $Y$. Misusing the notation slightly, keep writing $X$, $Y$ and $S$ for the modified sets. Note also that $S_{x\bullet }$ and $S_{\bullet y}$ might have become slightly smaller for $x \in X$, $y \in Y$. If we write $\varepsilon$ for $\varepsilon_0 + 8\delta^{-3}\sqrt[4]{\eta}$, then we still have
\[(\forall x \in X)\,\,|S_{x \bullet} \cap Y| \geq (1-\varepsilon) |B_{x \bullet}|\hspace{1cm}\text{and}\hspace{1cm}(\forall y \in Y)\,\,|S_{\bullet y} \cap X| \geq (1-\varepsilon)|B_{\bullet y}|.\]
The new value $\varepsilon$ lies in $(0, 2\varepsilon_0)$. Let $M = ((X \times Y )\cap B) \setminus S$. We consider two cases, depending on whether $M$ is small or large. (The bounds on $|M|$ in the two cases overlap when $\eta \leq \cons\, p^{- \con\,r}$.)\\ [6pt]

\noindent \textbf{Case 1: $|M| > \eta^{1/8}|u_0 + U||v_0+V|$.} 

Assume that there is no structure of the kind described in case \textbf{(i)} of the conclusion of this proposition. Start by counting, for each $y \in Y$, the number of quadruples $(x_1, x_2, x_3, x_4) \in (u_0 + U)^4$ such that $(x_1, y) \in M$, $x_1 + x_2 = x_3 + x_4$ and $(x_2, y), (x_3, y), (x_4, y) \in S$. This is 
\begin{align*}\sum_{\substack{x_1 \in M_{\bullet y}\\x_2, x_3 \in S_{\bullet y}}} S_{\bullet y}(x_1 + x_2 - x_3) = &\sum_{\substack{x_1 \in M_{\bullet y}, x_2 \in S_{\bullet y}}} |S_{\bullet y} \cap ((x_1 + x_2) - S_{\bullet y})|\\
\geq &\sum_{\substack{x_1 \in M_{\bullet y}, x_2 \in S_{\bullet y}}} (1-2\varepsilon)|B_{\bullet y}|\\
= & (1-2\varepsilon)|M_{\bullet y}||S_{\bullet y}| |B_{\bullet y}|\\
\geq &(1-3\varepsilon)|M_{\bullet y}| \delta^2|u_0 + U|^2,\end{align*}
where we used the fact that $S_{\bullet y}, ((x_1 + x_2) - S_{\bullet y}) \subset B_{\bullet y}$. Hence, the total number of pairs $(x_{[4]}, y)$, where $x_{[4]}$ is a quadruple satisfying properties above in the row $B_{\bullet y}$, is at least
\begin{equation}\label{yxquadmeqn}(1-3\varepsilon)|M| \delta^2|u_0 + U|^2.\end{equation}
\indent Note that since $\beta$ is biaffine, when $x_1 + x_2 = x_3 + x_4$ we have 
\begin{equation}\label{shortIntnEqn}B_{x_1\bullet } \cap B_{x_2\bullet } \cap B_{x_3\bullet } \cap B_{x_4\bullet } = B_{x_1\bullet } \cap B_{x_2\bullet } \cap B_{x_3\bullet }.\end{equation}
Applying Lemma~\ref{regInt1}, we get that for all but $O(\delta^{-6}\sqrt[4]{\eta} |U|^{3})$ triples $x_{[3]}$ in $(u_0 + U)^3$, the size of $|B_{x_1\bullet } \cap B_{x_2\bullet } \cap B_{x_3\bullet }|$ is precisely $\delta^3 |v_0+V|$. Combining this fact with~\eqref{yxquadmeqn}, the number of quadruples $(x_{[4]}) \in (u_0 + U)^4$ such that $|\bigcap_{i \in [4]} B_{\bullet x_i}| = \delta^{3} |v_0 + V|$, $x_1 + x_2 = x_3 + x_4$ and there exists $y$ such that $(x_1, y) \in M$ and $(x_i, y) \in S$ for $i \in [2,4]$, is at least
\begin{equation}\label{badQuadsEqn}\frac{1}{\delta^3 |v_0+V|}(1-3\varepsilon)\delta^2 |M| |U|^2 - O(\delta^{-O(1)}\sqrt[4]{\eta} |U|^3) \geq \frac{1}{2\delta |v_0+V|} |M| |U|^2 \geq \Omega(\eta^{1/8}|U|^3),\end{equation}
since $\varepsilon < \frac{1}{8}$ and provided $\eta \leq \cons \delta^{\con}$ (recall that since the codimension of the map $\beta$ is $r$, the quasirandomness of the variety $B$ implies the density $\delta$ is at least $p^{-r}$).\\
\indent Pick any such $x_{[4]}$, and let $A = B_{x_1\bullet} \cap B_{x_2\bullet} \cap B_{x_3 \bullet} \cap B_{x_4\bullet}$. Consider the set of points $P = \{x_1, x_2, x_3, x_4\} \times A \subset B$, and the set $S \cap P$. Observe that if $|S \cap P| \geq \frac{15}{16} |P|$, then we obtain the structure described in the case \textbf{(i)}, which contradicts our assumption. Indeed, consider the set $R$ of all $y \in A$ such that $x_1, x_2, x_3, x_4 \in S_{\bullet y}$. By the density assumption on $S \cap P$, we have that $|R| \geq \frac{3}{4}|A|$. Let $y_0$ be such that $(x_1, y_0) \in M$, and $x_2, x_3, x_4 \in S_{\bullet y_0}$. Since $|R| \geq \frac{3}{4} |A|$, we may find $y_1, y_2 \in R$, such that $y_1 + y_2 - y_0 \in R$ as well, by the usual argument: first pick arbitrary $y_1 \in R$, then, since $R \cap (R + y_0 - y_1) \not= \emptyset$, pick arbitrary $y_2 \in R \cap (R + y_0 - y_1)$. This produces the structure in the case \textbf{(i)} of the statement of the proposition, which we are assuming not to exist and is therefore a contradiction.\\

\indent Hence, for each $x_{[4]}$ considered above we actually have that $|S \cap P| < \frac{15}{16} |P|$, where $P$ is the relevant small grid. By the pigeonhole principle, we get that for some $i \in [4]$, $|S_{x_i \bullet} \cap B_{x_1 \bullet} \cap \dots \cap B_{x_4 \bullet}| < \frac{15}{16} |B_{x_1 \bullet} \cap \dots \cap B_{x_4 \bullet}|$. Combining this with the bound~\eqref{badQuadsEqn}, we obtain that for some $i \in [4]$ there are at least $\Omega(\eta^{1/8}|U|^{3})$ quadruples $(x_{[4]}) \in X^4$ such that $x_1 + x_2 = x_3 + x_4$ and
\[|S_{x_i \bullet} \cap B_{x_1 \bullet} \cap \dots \cap B_{x_4 \bullet}| < \frac{15}{16} |B_{x_1 \bullet} \cap \dots \cap B_{x_4 \bullet}|\, = \frac{15}{16}\delta^3 |v_0 + V|.\]
We may assume without loss of generality that $i=1$ (we no longer use the fact that $(x_1, y_0) \in M$ for some $y_0$). Recalling once again~\eqref{shortIntnEqn}, we rephrase this as follows. There are $\Omega(\eta^{1/8}|U|^3)$ triples $(x_{[3]}) \in X^3$ such that
\[|S_{x_1 \bullet } \cap B_{x_2\bullet } \cap B_{x_3 \bullet}| \leq \frac{15}{16} |B_{x_1 \bullet } \cap B_{x_2\bullet } \cap B_{x_3 \bullet}| \,= \frac{15}{16}\delta^3 |v_0 + V|.\]
Average once again, this time over $x_1$, to conclude that there is $x_1 \in X$ such that for $\Omega(\eta^{1/8}|U|^2)$ pairs $(x_2, x_3) \in (u_0 + U)^2$, we have
\[|S_{x_1 \bullet} \cap B_{x_2 \bullet} \cap B_{x_3 \bullet}| \leq \frac{15}{16}|B_{x_1 \bullet} \cap B_{x_2 \bullet} \cap B_{x_3 \bullet}| \,= \frac{15}{16}\delta^3 |v_0 + V|.\]

\indent Recall that $|S_{x_1 \bullet}| \geq \frac{31}{32} |B_{x_1 \bullet}| = \frac{31}{32} \delta |v_0+V|$. We may therefore deduce that
\[\mathbb{P}_{x_2,x_3 \in u_0 + U}\Big(\Big||S_{x_1 \bullet} \cap B_{x_2 \bullet} \cap B_{x_3 \bullet}| - \delta^2 |S_{x_1 \bullet}|\Big| \geq \frac{1}{64}\delta^3|v_0+V|\Big) = \Omega(\eta^{1/8}).\]

We now apply Lemma~\ref{regInt2}, which bounds this probability from above by $O(\sqrt[4]{\eta} \delta^{-O(1)})$, provided $\eta \leq \cons \delta^{\con}$ (so that the technical condition in that lemma is satisfied). This is a contradiction, again provided $\eta \leq \cons \delta^{\con}$, finishing the proof in this case.\\ [6pt]

\noindent\textbf{Case 2: $|M| \leq \eta^{1/16} \delta |u_0 + U| |v_0 + V|.$} 

Let $X'$ be the set of all $x \in X$ with the property that $|M_{x \bullet}| \leq \eta^{1/32} |B_{x \bullet}|$. Then $\eta^{1/32} \delta |v_0+V||X \setminus X'| \leq |M|$, from which it follows that $|X \setminus X'| = O(\eta^{1/32}|u_0 + U|)$. Similarly, let $Y'$ be the set of all $y \in Y$ such that $|M_{\bullet y}| \leq \eta^{1/32}|B_{\bullet y}|$ and therefore $|Y \setminus Y'| = O(\eta^{1/32} |v_0 + V|)$. We claim that $(X' \times Y') \cap B \subset S$.\\
\indent Suppose to the contrary that there exists $(x_1, y_1) \in ((X' \times Y') \cap B) \setminus S$. Take $x_2, x_3$ in $B_{\bullet y_1}$ uniformly and independently at random. Then by the usual counting arguments we have that 
\begin{align}\mathbb{P}_{x_2, x_3 \in B_{\bullet y_1}}\Big(x_2, x_3, x_1 + x_2 - x_3 \in X' \cap S_{\bullet y_1}\Big) &\geq 1- \mathbb{P}_{x_2 \in B_{\bullet y_1}}\Big(x_2 \notin X' \cap S_{\bullet y_1}\Big)- \mathbb{P}_{x_3 \in B_{\bullet y_1}}\Big(x_3 \notin X' \cap S_{\bullet y_1}\Big)\nonumber\\
&\hspace{2cm}  - \mathbb{P}_{x_2, x_3 \in B_{\bullet y_1}}\Big(x_1 + x_2 - x_3 \notin X' \cap S_{\bullet y_1}\Big)\nonumber\\
&\geq 1 - 3\frac{|B_{\bullet y_1}| - |X' \cap S_{\bullet y_1}|}{|B_{\bullet y_1}|}\nonumber\\
&\geq 1 - 6\varepsilon,\label{probEq1}\end{align}
provided $\eta \leq \cons \delta^{\con}$.

By Lemma~\ref{regInt2}, we have that 
\begin{equation}\label{probEq2}\mathbb{P}_{x'_2, x'_3 \in u_0 + U} \Big( \Big| |(B_{x_1 \bullet} \cap Y') \cap B_{x'_2 \bullet} \cap B_{x'_3 \bullet}| - \delta^2 |B_{x_1 \bullet} \cap Y'|\Big| \leq \frac{\delta^3}{8} |v_0 + V|\Big) = 1 - O(\delta^{-O(1)}\sqrt[4]{\eta}).\end{equation}
The technical condition in Lemma~\ref{regInt2} is satisfied provided $\eta \leq \cons \delta^{\con}$.\\
\indent Applying Lemma~\ref{regInt2} another time shows that
\begin{equation}\mathbb{P}_{x'_2, x'_3 \in u_0 + U}\Big(\Big||B_{x_1 \bullet} \cap B_{x'_2 \bullet} \cap B_{x'_3 \bullet}| - \delta^2 |B_{x_1 \bullet}|\Big| \leq \frac{\delta^3}{8} |v_0 + V|\Big) = 1 - O(\delta^{-O(1)}\sqrt[4]{\eta})\label{probEq3prelim}\end{equation}
and the technical condition in Lemma~\ref{regInt2} is again satisfied provided $\eta \leq \cons \delta^{\con}$. We now recall that $x_1 \in X' \subset X$ so in particular $|B_{x_1 \bullet}| = \delta |v_0 + V|$. Furthermore, since $\delta$ is a non-positive power of $p$ and the set $B_{x_1 \bullet} \cap B_{x'_2 \bullet} \cap B_{x'_3 \bullet}$ is a coset of a subspace in $v_0 + V$, its size can either be exactly $\delta^3 |v_0 +V|$ or at most $\frac{1}{p}\delta^3 |v_0 +V|$ or at least $p\delta^3 |v_0 +V|$. Hence, we may strengthen~\eqref{probEq3prelim} to
\begin{equation}\label{probEq3}\mathbb{P}_{x'_2, x'_3 \in u_0 + U}\Big(|B_{x_1 \bullet} \cap B_{x'_2 \bullet} \cap B_{x'_3 \bullet}| = \delta^3 |v_0 + V|\Big) = 1 - O(\delta^{-O(1)}\sqrt[4]{\eta}).\end{equation}

\indent Recall that $y_1 \in Y' \subset Y$ so $|B_{\bullet y_1}| = \delta|u_0 + U|$. By~\eqref{probEq1},~\eqref{probEq2} and~\eqref{probEq3}, we have a choice of $x_2, x_3 \in B_{\bullet y_1}$ such that $x_2, x_3, x_1 + x_2 - x_3 \in X' \cap S_{\bullet y_1}$, $|B_{x_1 \bullet} \cap B_{x_2 \bullet} \cap B_{x_3 \bullet}| = \delta^3|v_0 + V|$ and
\begin{align*}|(B_{x_1 \bullet} \cap Y') \cap B_{x_2 \bullet} \cap B_{x_3 \bullet}| \geq &\delta^2|B_{x_1 \bullet} \cap Y'| - \frac{\delta^3}{8}|v_0 + V|\\
\geq &\delta^2\Big(|B_{x_1 \bullet} \cap Y| - |Y \setminus Y'|\Big) - \frac{\delta^3}{8}|v_0 + V|\\
\geq &\delta^2\Big((1 - \varepsilon)|B_{x_1 \bullet}| - O(\eta^{1/32}|v_0 + V|)\Big) - \frac{\delta^3}{8}|v_0 + V|\\
\geq& \Big(\frac{7}{8} - \varepsilon - O(\delta^{-1}\eta^{1/32})\Big) \delta^3|v_0 + V|\\
\geq & \frac{13}{16}|B_{x_1 \bullet} \cap B_{x_2 \bullet} \cap B_{x_3 \bullet}|,\end{align*}
provided $\eta \leq \cons \delta^{\con}$.\\

Since for each $x \in X'$ we have $|(Y \cap B_{x \bullet}) \setminus S_{x \bullet}| = |M_{x \bullet}| \leq \eta^{1/32}|v_0+V|$, we obtain
\begin{align*}|Y' \cap S_{x_1 \bullet} \cap S_{x_2 \bullet}\cap S_{x_3 \bullet}\cap S_{x_1 + x_2 - x_3 \bullet}| &\geq |Y' \cap B_{x_1 \bullet} \cap B_{x_2 \bullet} \cap B_{x_3 \bullet}\cap B_{x_1 + x_2 - x_3 \bullet}|\\
&\hspace{1cm}- |(Y \cap B_{x_1 \bullet})\setminus S_{x_1 \bullet}| - |(Y \cap B_{x_2 \bullet}) \setminus S_{x_2 \bullet}|\\
&\hspace{1cm} - |(Y \cap B_{x_3 \bullet})\setminus S_{x_3 \bullet}| - |(Y \cap B_{x_1 + x_2 - x_3 \bullet})\setminus S_{x_1 + x_2 - x_3 \bullet}|\\
&\geq \Big(\frac{13}{16} - 4\delta^{-3}\eta^{1/32}\Big)|B_{x_1  \bullet} \cap B_{x_2 \bullet} \cap B_{x_3 \bullet}\cap B_{x_1 + x_2 - x_3 \bullet}|\\
&\geq \frac{3}{4} |B_{x_1 \bullet} \cap B_{x_2 \bullet} \cap B_{x_3 \bullet}\cap B_{x_1 + x_2 - x_3 \bullet}|,\end{align*}
provided $\eta \leq \cons \delta^{\con}$.\\
\indent As usual, by a simple intersection argument we obtain $y_2, y_3$ such that $y_2, y_3, y_1 + y_2 - y_3  \in Y' \cap S_{x_1 \bullet} \cap S_{x_2 \bullet} \cap S_{x_3 \bullet}\cap S_{x_1 + x_2 - x_3 \bullet}$. Then $\{x_1, x_2, x_3, x_1 + x_2 - x_3\} \times \{y_1, y_2, y_3, y_1 + y_2 - y_3\}$ gives the structure described in the case \textbf{(i)} of the conclusion of the theorem, which finishes the proof.\end{proof}

The next proposition is about extending bi-homomorphisms whose domain is of the form $B \cap (X \times Y)$ for very dense subsets $X \subset u_0 + U$ and $Y \subset v_0 + V$. It says that we may extend the domain by essentially filling in the columns $B_{x \bullet}$ for almost $x \in X$.

\begin{proposition}\label{biaffineExtnStep1}There exists a constant $\varepsilon_0> 0$ such that the following holds. Let $u_0, U, v_0, V, \beta, r, \eta, \delta, B$ be as in Proposition~\ref{GridWoPt} (including the assumption $\eta \leq \cons\, p^{- \con\,r}$). Suppose that $X \subset u_0 + U$ and $Y \subset v_0 + V$ are such that $|X| \geq (1 - \varepsilon_0)|u_0 + U|$ and $|Y| \geq (1 - \varepsilon_0)|v_0 + V|$. Let $S = B \cap (X \times Y)$ and let $\phi \colon S \to H$ be a bi-homomorphism. Then, there is a subset $X' \subset X$ such that $|X \setminus X'| = O(\eta^{\Omega(1)} |u_0 + U|)$, a subset $S' \subset (X' \times (v_0 +V)) \cap B$ such that $|((X' \times (v_0 +V)) \cap B) \setminus S'| = O(\eta^{\Omega(1)} |U||V|)$, and a bi-homomorphism $\phi^{\text{ext}} \colon S' \to H$ such that $\phi = \phi^{\text{ext}}$ on $S \cap S'$.\end{proposition}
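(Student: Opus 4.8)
The plan is to massage the hypothesis into a form where Theorem~\ref{biaffineExtnConv} (the convolutional extension theorem) can be applied directly with $k=1$: with that choice the theorem requires a map that is a $12$-homomorphism in direction $G_1$ and a $2$-homomorphism in direction $G_2$, defined on a set whose columns are $(1-\varepsilon_0')$-dense in the corresponding columns of $B$ (for $\varepsilon_0'$ the fixed constant of that theorem), and it outputs a map that is a $2$-homomorphism in both directions, i.e.\ a bihomomorphism, on $(X_{\mathrm{ext}}\times(v_0+V))\cap B$, losing only $O(p^{O(r)}\eta^{\Omega(1)}|U|)$ columns. So essentially all the work is in producing a subdomain of $S$ of the required shape while discarding only an $O(\eta^{\Omega(1)})$-fraction of the columns.

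First I would clean up the columns of $S=B\cap(X\times Y)$. Writing $Y^c=(v_0+V)\setminus Y$, which has density at most $\varepsilon_0$, I apply Lemma~\ref{regInt2} (with $k=1$, the set $Y^c$, and quasirandomness of $B$ in direction $G_2$) together with the fact that $|B_{x\bullet}|=\delta|v_0+V|$ for all but an $\eta$-fraction of $x$: for all but an $O(p^{O(r)}\eta^{\Omega(1)})$-fraction of $x\in u_0+U$ we get $|B_{x\bullet}\cap Y^c|\le 2\varepsilon_0|B_{x\bullet}|$, i.e.\ $|S_{x\bullet}|\ge(1-2\varepsilon_0)|B_{x\bullet}|$. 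It is crucial here that $B$ is quasirandom: a random column meets $Y^c$ in roughly its global proportion, so we avoid the spurious factor $\delta^{-1}$ that a naive count would produce. Let $X_1\subseteq X$ consist of these good $x$ in $X$; then $|X\setminus X_1|=O(p^{O(r)}\eta^{\Omega(1)}|U|)$. Here and below the technical inequality $\varepsilon'\in[20\delta^{-2}\sqrt[4]\eta,1]$ needed to invoke Lemma~\ref{regInt2} holds because $\delta\ge p^{-r}$ and $\eta\le\cons\,p^{-\con\,r}$.

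Next I would arrange the $12$-homomorphism property in direction $G_1$. By Lemma~\ref{yQR}, $B$ is also quasirandom in the transposed sense, so Lemma~\ref{regInt2} applied in direction $G_1$ to the set $(u_0+U)\setminus X_1$ (of density $O(\varepsilon_0)$) yields $Y_1\subseteq Y$ with $|Y\setminus Y_1|=O(p^{O(r)}\eta^{\Omega(1)}|V|)$ such that $|B_{\bullet y}\cap X_1|>\tfrac45|B_{\bullet y}|$ for every $y\in Y_1$. For such $y$ the map $x\mapsto\phi(x,y)$, a $2$-homomorphism on $B_{\bullet y}\cap X\supseteq B_{\bullet y}\cap X_1$, extends by Lemma~\ref{easyExtn} to an affine map on the coset $B_{\bullet y}$, hence is an $m$-homomorphism for every $m$. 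Now set $X_2=\{x\in X_1:|B_{x\bullet}\cap Y_1|\ge(1-3\varepsilon_0)|B_{x\bullet}|\}$; one more application of Lemma~\ref{regInt2} (to $(v_0+V)\setminus Y_1$, whose density is $O(p^{O(r)}\eta^{\Omega(1)})$) gives $|X_1\setminus X_2|=O(p^{O(r)}\eta^{\Omega(1)}|U|)$. Put $S_2=B\cap(X_2\times Y_1)\subseteq S$. Then $|(S_2)_{x\bullet}|\ge(1-3\varepsilon_0)|B_{x\bullet}|$ for $x\in X_2$; $\phi|_{S_2}$ is a $12$-homomorphism in direction $G_1$ because each of its rows sits inside a row of $S$ where $\phi$ is the restriction of an affine map; and $\phi|_{S_2}$ is a $2$-homomorphism in direction $G_2$ as a restriction of the bihomomorphism $\phi$. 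Choosing the constant $\varepsilon_0$ of the proposition small enough that $3\varepsilon_0$ is below the column-density constant required by Theorem~\ref{biaffineExtnConv} (with $k=1$), I apply that theorem to $B$, $X_2$, $S_2$, $\phi|_{S_2}$ (its hypothesis $|U|\ge\eta^{-\con_{1,p}}$ is inherited from the standing assumptions), obtaining $X_{\mathrm{ext}}\subseteq X_2$ with $|X_2\setminus X_{\mathrm{ext}}|=O(p^{O(r)}\eta^{\Omega(1)}|U|)$ and a map $\phi^{\mathrm{conv}}$ on $(X_{\mathrm{ext}}\times(v_0+V))\cap B$ with properties (i)--(iii) there.

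Finally I take $X'=X_{\mathrm{ext}}$, $S'=(X'\times(v_0+V))\cap B$ (so the claimed deficiency $|((X'\times(v_0+V))\cap B)\setminus S'|$ is $0$), and $\phi^{\mathrm{ext}}=\phi^{\mathrm{conv}}$. Properties (i) and (ii) of Theorem~\ref{biaffineExtnConv} make $\phi^{\mathrm{ext}}$ a bihomomorphism; summing the three losses above gives $|X\setminus X'|=O(p^{O(r)}\eta^{\Omega(1)}|U|)$, which is $O(\eta^{\Omega(1)}|U|)$ after absorbing the $p^{O(r)}$ factor using $\eta\le\cons\,p^{-\con\,r}$; and for $(x,y)\in S\cap S'$ property (iii) gives $\phi^{\mathrm{conv}}(x,y)=\phi(x,z_1)+\phi(x,z_2)-\phi(x,z_1+z_2-y)$ for suitable $z_1,z_2\in(S_2)_{x\bullet}\subseteq S_{x\bullet}$ (such $z_1,z_2$ exist because $(S_2)_{x\bullet}$ is $(1-3\varepsilon_0)$-dense in $B_{x\bullet}\ni y$), and since $y\in S_{x\bullet}$ and $\phi(x,\cdot)$ is a $2$-homomorphism on $S_{x\bullet}$ this equals $\phi(x,y)$. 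Hence $\phi^{\mathrm{ext}}=\phi$ on $S\cap S'$, as required. The main obstacle is the cleanup in the first two paragraphs: one must throw away only an $O(\eta^{\Omega(1)})$-fraction of columns rather than a constant fraction, which forces the use of quasirandomness of $B$ (via Lemma~\ref{regInt2}) both to control the density of $S_{x\bullet}$ inside $B_{x\bullet}$ and to verify that restricting the rows to $Y_1$ (needed to upgrade $2$-homomorphisms to affine maps in the row direction) does not appreciably thin out the columns of $S_2$.
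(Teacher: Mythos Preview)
Your approach is genuinely different from the paper's and is correct in spirit, but it has one real gap. The paper argues directly: it extends $\phi$ columnwise using Lemma~\ref{easyExtn}, shows that all but $O(\delta^{-O(1)}\sqrt[4]{\eta}|U|^3)$ horizontal additive quadruples are ``good'' (meaning $Y$ meets $\bigcap_i B_{x_i\bullet}$ with density $\ge 3/4$) and hence respected by the extension, and then invokes Lemma~\ref{easyExtnDoubleApprox} row by row to pass to a subset on which rows are genuinely affine. This yields a nontrivial error set inside $(X'\times(v_0+V))\cap B$, which is why the proposition only asserts $|((X'\times(v_0+V))\cap B)\setminus S'|=O(\eta^{\Omega(1)}|U||V|)$. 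Your route---first making rows affine on $Y_1$ via Lemma~\ref{easyExtn} so that $\phi|_{S_2}$ becomes an $m$-homomorphism in direction $G_1$ for every $m$, and then black-boxing the whole extension through Theorem~\ref{biaffineExtnConv} with $k=1$---is more modular and in fact produces a slightly stronger conclusion ($S'=(X'\times(v_0+V))\cap B$ exactly). The final check that $\phi^{\mathrm{conv}}=\phi$ on $S\cap S'$ via property~(iii) is handled correctly.

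The gap is your claim that the hypothesis $|U|\ge\eta^{-\con_{1,p}}$ of Theorem~\ref{biaffineExtnConv} is ``inherited from the standing assumptions''. It is not: Proposition~\ref{biaffineExtnStep1} (via Proposition~\ref{GridWoPt}) assumes only $\eta\le\cons\,p^{-\con r}$ and says nothing about the size of $U$. Without this hypothesis Theorem~\ref{biaffineExtnConv} genuinely does not apply (the iterative removal step in its proof can discard up to $\xi^{-1}$ elements, an absolute quantity that can exceed $|U|$), and for small $|U|$ the bound $|X\setminus X'|=O(\eta^{\Omega(1)}|U|)$ is not recovered by taking $X'=\emptyset$. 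The paper's direct argument avoids this entirely since none of Lemma~\ref{regInt2}, Lemma~\ref{easyExtn}, or Lemma~\ref{easyExtnDoubleApprox} requires $|U|$ to be large. As a minor aside, your parenthetical that $(v_0+V)\setminus Y_1$ has density $O(p^{O(r)}\eta^{\Omega(1)})$ is off---that is the density of $Y\setminus Y_1$; the full complement has density $\le\varepsilon_0+O(p^{O(r)}\eta^{\Omega(1)})$---but since you have already secured $|B_{x\bullet}\setminus Y|\le 2\varepsilon_0|B_{x\bullet}|$ for $x\in X_1$, only the $Y\setminus Y_1$ part needs to be controlled, and your conclusion $|X_1\setminus X_2|=O(p^{O(r)}\eta^{\Omega(1)}|U|)$ remains correct.
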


\begin{proof}Let $\varepsilon'_0$ be the absolute constant from Lemma~\ref{easyExtnDoubleApprox}, which we assume to be less than 1. We set $\varepsilon_0 = \frac{\varepsilon'_0}{100}$. Let $X' = \{x \in X \colon |B_{x \bullet} \cap Y| \geq (1-2\varepsilon_0)\text{ and }|B_{x \bullet}| = \delta |V|\}$. By $\eta$-quasirandomness and Lemma~\ref{regInt2}, we have that $|X \setminus X'| = O(\delta^{-6}\sqrt[4]{\eta}|U|)$. For each $x \in X'$, we may use Lemma~\ref{easyExtn} to find an affine map $\phi^{\text{ext}}_{x} \colon B_{x \bullet} \to H$ that extends $\phi(x, \bullet)$. Define a map $\phi^{\text{ext}} \colon (X' \times (v_0 +V)) \cap B \to H$ by setting $\phi^{\text{ext}}(x,y) = \phi^{\text{ext}}_x(y)$. We now show that $\phi^{\text{ext}}$ respects the vast majority of horizontal additive quadruples.\\
\indent We say that an additive quadruple $x_{[4]}$ in $X'$ (i.e., a quadruple $x_{[4]}$ such that $x_1 + x_2 = x_3 + x_4$) is \emph{good} if $|Y \cap (\bigcap_{i \in [4]} B_{x_i \bullet})| \geq \frac{3}{4}|\bigcap_{i \in [4]} B_{x_i \bullet}|$. By Lemmas~\ref{regInt1} and~\ref{regInt2}, all but $O(\delta^{-O(1)}\sqrt[4]{\eta}|U|^3)$ of the additive quadruples in $X'$ are good. Note that if $x_{[4]}$ is good and $y \in \bigcap_{i \in [4]} B_{x_i \bullet}$, then there are $y_1, y_2$ such that $y_1, y_2, y_1 + y_2 - y \in Y \cap (\bigcap_{i \in [4]} B_{x_i \bullet})$, which implies (writing $\sigma(1) = \sigma(2) = 1, \sigma(3) = \sigma(4) = -1$) that
\begin{align*}\sum_{i \in [4]} \sigma(i) \phi^{\text{ext}}(x_i, y) = &\sum_{i \in [4]} \sigma(i) \Big(\phi(x_i, y_1) + \phi(x_i, y_2) - \phi(x_i, y_1 + y_2 - y)\Big)\\
=&\Big(\sum_{i \in [4]} \sigma(i) \phi(x_i, y_1)\Big) + \Big(\sum_{i \in [4]} \sigma(i) \phi(x_i, y_2)\Big) - \Big(\sum_{i \in [4]} \sigma(i) \phi(x_i, y_1 + y_2 - y)\Big) = 0.\end{align*} 
Let $\mathcal{S}$ be the set of all $(x_{[4]}, y)$ such that $x_{[4]}$ is an additive quadruple in $X'$, $y \in \bigcap_{i \in [4]} B_{x_i \bullet}$, and $\sum_{i \in [4]} \sigma(i) \phi^{\text{ext}}(x_i, y) \not= 0$. Then $|\mathcal{S}| = O(\delta^{-O(1)}\sqrt[4]{\eta}|U|^3|V|)$.\\
\indent Recall that $\varepsilon'_0$ is the absolute constant from Lemma~\ref{easyExtnDoubleApprox}. By Lemmas~\ref{yQR} and~\ref{regInt2}, provided $\eta \leq \cons\, p^{- \con\,r}$, we have that $|B_{\bullet y} \cap X'| \geq (1 - \varepsilon'_0) |B_{\bullet y}|$ for all $y \in v_0 + V$ except at most $O(\delta^{-O(1)}\sqrt[16]{\eta}|v_0 + V|)$ of them. Now let
\[Z = \{y \in v_0 + V \colon |\mathcal{S}_y| \leq \sqrt[8]{\eta} |U|^3 \text{ and }|B_{\bullet y} \cap X'| \geq (1 - \varepsilon'_0) |B_{\bullet y}|\}.\]
By the work above we have that $|Z| = (1 - O(\delta^{-O(1)}\sqrt[16]{\eta}))|V|$.\\
\indent On the other hand, for each $y \in Z$, $\phi^{\text{ext}}(\cdot, y)$ respects all but at most $\sqrt[8]{\eta} |U|^3$ additive quadruples in $X' \cap B_{\bullet y}$. Use Lemma~\ref{easyExtnDoubleApprox} to find a subset $S'_y \subset X' \cap B_{\bullet y}$ such that $|(X' \cap B_{\bullet y}) \setminus S'_y| \leq \sqrt[32]{\eta}$ and $\phi^{\text{ext}}(\bullet, y)$ is given by an affine map defined on $B_{\bullet y}$ restricted to $S'_y$. If we define $S' = \cup_{y \in Z} S'_y \times \{y\}$, then $\phi^{\text{ext}}$ is a bi-homomorphism on $S'$, and $S'$ has the  properties claimed.\end{proof}

We now prove an easy extension result in the case when the domain of a bi-homomorphism is an extremely dense subset of a variety; that is, when the density is not merely $1-c$ for a small positive constant $c$, but instead when $c$ is small even when compared to $p^{-\con r}$, where $r$ is the codimension of the variety.

\begin{proposition}\label{biaffineExtnStep2}Let $u_0, U, v_0, V, \beta, r, \eta, \delta, B$ be as in Proposition~\ref{GridWoPt} (including the assumption $\eta \leq \cons\, p^{- \con\,r}$). Let $\xi \in (0,\cons\, p^{-\con\, r})$. Let $S \subset B$ be a subset of size at least $(1-\xi) |B|$ and let $\phi \colon S \to H$ be a bi-homomorphism. Then there is a bi-homomorphism $\phi^{\text{ext}} \colon B \to H$ such that $\phi = \phi^{\text{ext}}$ on a set $S' \subset S$ such that $|S'| = (1 - O(\xi p^{3r} + \eta p^{4r})) |B|$.\end{proposition}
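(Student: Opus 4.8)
The plan is to deduce Proposition~\ref{biaffineExtnStep2} from Proposition~\ref{biaffineExtnStep1} (or, more directly, from Proposition~\ref{GridWoPt}) by reducing the extremely dense case to the moderately dense case via slicing. First I would note that since $\dim H = r$, the variety $B$ is a coset of codimension at most $r$ inside $(u_0+U)\times(v_0+V)$, so $|B| \geq p^{-2r}|U||V|$ by Lemma~\ref{varsizelemma}, and every non-empty column $B_{x\bullet}$ (resp. row $B_{\bullet y}$) has codimension at most $r$, so $|B_{x\bullet}| \geq p^{-r}|V|$ whenever it is non-empty. I would also invoke Lemma~\ref{yQR} to get that $B$ is $(32\delta^{-4}\sqrt[4]{\eta})$-quasirandom in direction $G_2$ as well, and note $\delta \geq p^{-r}$.

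The key step is to identify the sets $X\subset u_0+U$ and $Y\subset v_0+V$ of columns and rows on which $S$ is very dense. Define $X = \{x : |B_{x\bullet}| = \delta|V| \text{ and } |S_{x\bullet}| \geq (1-\sqrt{\xi})|B_{x\bullet}|\}$ and $Y = \{y : |B_{\bullet y}| = \delta|U| \text{ and } |S_{\bullet y}| \geq (1-\sqrt{\xi})|B_{\bullet y}|\}$. Since $|B\setminus S| \leq \xi|B| \leq \xi p^{-2r}|U||V| \cdot p^{2r}$... more carefully: $|B\setminus S| \leq \xi|B| \leq \xi|U||V|$, and each $x\notin X$ with non-empty dense-enough column contributes at least $\sqrt{\xi}\delta|V| \geq \sqrt{\xi}p^{-r}|V|$ to $|B\setminus S|$, while the quasirandomness removes at most $O(\eta p^{?})|U|$ columns of the wrong size; combining, $|u_0+U|\setminus X$ has size $O(\sqrt{\xi}p^{r} + \eta^{?})|U|$, which under $\xi \leq \cons p^{-\con r}$ and $\eta \leq \cons p^{-\con r}$ is at most the constant $\varepsilon_0$ from Proposition~\ref{biaffineExtnStep1} times $|U|$, and similarly for $Y$ (here the extra loss from direction-$G_2$ quasirandomness being weaker by a $\delta^{-O(1)}$ factor is absorbed). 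Then I would replace $S$ by $S \cap (X\times Y)$; on the sub-coset structure this is still a bi-homomorphism defined on $B\cap(X\times Y)$, and I would check $|((X\times Y)\cap B)\setminus (S\cap(X\times Y))| \leq \xi|B| + (\text{columns/rows removed})\cdot\delta|U|\text{ or }\delta|V| = O(\xi p^{O(r)} + \eta p^{O(r)})|U||V|$.

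Now apply Proposition~\ref{biaffineExtnStep1} to $X$, $Y$, $S\cap(X\times Y)$ and $\phi$: this gives $X'\subset X$ with $|X\setminus X'| = O(\eta^{\Omega(1)}|U|)$, a set $S''\subset (X'\times(v_0+V))\cap B$ missing only an $O(\eta^{\Omega(1)})$-fraction of $(X'\times(v_0+V))\cap B$, and a bi-homomorphism $\phi^{\text{ext}}_1 \colon S'' \to H$ agreeing with $\phi$ on the overlap. The columns outside $X'$ total at most $(|u_0+U\setminus X| + |X\setminus X'|)\cdot|V| = O(\xi p^{O(r)} + \eta^{\Omega(1)})|U||V|$, so $S''$ is already density $1 - O(\xi p^{O(r)} + \eta^{\Omega(1)})$ in $B$. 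Finally I would run the symmetric argument once more in the other direction, or simply re-apply Lemma~\ref{easyExtn} column-by-column and then Lemma~\ref{easyExtnDoubleApprox} (exactly as in the proof of Proposition~\ref{biaffineExtnStep1}) to fill in the missing rows and produce a genuine bi-homomorphism $\phi^{\text{ext}} \colon B \to H$; then $S' := \{(x,y)\in S : \phi^{\text{ext}}(x,y) = \phi(x,y)\}$ has the claimed density, since each step only discards an $O(\xi p^{O(r)} + \eta p^{O(r)})$-fraction and $\eta^{\Omega(1)} \leq \eta p^{4r}$ is false in general but the losses there are of order $\eta^{\Omega(1)}$ with small exponent — so I would track exponents so that the final bound reads $|S'| = (1 - O(\xi p^{3r} + \eta p^{4r}))|B|$, adjusting the implicit $\con$'s in the hypotheses $\xi,\eta \leq \cons p^{-\con r}$ as needed to make intermediate error terms at most this.

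The main obstacle is bookkeeping: keeping the various error terms — coming from $S$ not being a perfect product, from the quasirandomness losses (which scale like $\delta^{-O(1)}\eta^{\Omega(1)}$, hence $p^{O(r)}\eta^{\Omega(1)}$), and from the iterated applications of Lemmas~\ref{easyExtn}, \ref{easyExtnDoubleApprox}, \ref{regInt1}, \ref{regInt2} — all simultaneously of the claimed order $O(\xi p^{3r} + \eta p^{4r})$, and verifying that the hypotheses $\xi \leq \cons p^{-\con r}$, $\eta \leq \cons p^{-\con r}$ of all the invoked lemmas can be met with a single consistent choice of constants. There is no conceptual difficulty beyond what is already in Propositions~\ref{GridWoPt} and~\ref{biaffineExtnStep1}; indeed one could alternatively give a self-contained proof by applying Proposition~\ref{GridWoPt} directly, noting that alternative~(i) there produces a $4\times 4$ grid missing exactly one point both of whose coordinate-quadruples are additive, which is impossible for a bi-homomorphism extended via Lemma~\ref{easyExtn}, so alternative~(ii) must hold and yields $(X'\times Y')\cap B \subset S$ with the right density — but the exponents in Proposition~\ref{GridWoPt} are $\eta^{1/32}$, so one would still need to check these fit inside $O(\eta p^{4r})$ after the $p^{O(r)}$ losses, which again is pure bookkeeping.
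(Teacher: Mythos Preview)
Your approach — routing through Propositions~\ref{GridWoPt} and~\ref{biaffineExtnStep1} — is workable in spirit but misses the point of this proposition and does not achieve the stated bound. You yourself flag the issue: those propositions give error terms of order $\eta^{\Omega(1)}$ (with exponent $1/32$ coming from Proposition~\ref{GridWoPt}), and $\eta^{1/32}$ is in general much larger than $\eta p^{4r}$. No amount of adjusting the constants in $\eta \leq \cons p^{-\con r}$ fixes this, since for $\eta = p^{-Cr}$ with $C$ large you would get $\eta^{1/32} = p^{-Cr/32}$ while the claim requires $\eta p^{4r} = p^{-(C-4)r}$. So your argument proves a weaker variant of the proposition, not the proposition as stated.

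The paper's proof is more elementary and exploits the smallness of $\xi$ directly, bypassing Propositions~\ref{GridWoPt} and~\ref{biaffineExtnStep1} entirely. The key observation you are missing is this: set the column threshold not at $1-\sqrt{\xi}$ but at $1 - \tfrac{1}{16}p^{-3r}$, and let $X$ be the set of $x$ with $|S_{x\bullet}| \geq (1-\tfrac{1}{16}p^{-3r})|B_{x\bullet}|$. Since $\xi \leq \cons p^{-\con r}$, averaging still gives $|X| \geq (1 - O(\xi p^{3r} + \eta p^{4r}))|u_0+U|$. Now the point is that for \emph{every} additive quadruple $x_1,x_2,x_3,x_4 \in X$ one has $|\bigcap_i S_{x_i\bullet}| \geq \tfrac{3}{4}|\bigcap_i B_{x_i\bullet}|$ deterministically, because $|\bigcap_i B_{x_i\bullet}| \geq p^{-3r}|V|$ whenever nonempty while each $|B_{x_i\bullet}\setminus S_{x_i\bullet}| \leq \tfrac{1}{16}p^{-3r}|V|$. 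Hence the column-by-column affine extension via Lemma~\ref{easyExtn} is a genuine bi-homomorphism on $(X\times(v_0+V))\cap B$ with no exceptions, and one repeats the same argument in the other direction. No quasirandomness lemmas (beyond bounding the number of columns of wrong size) and no approximate-homomorphism cleanup are needed; the linear-in-$\eta$ error comes solely from the $\eta$-quasirandomness in Definition~\ref{qrDefin}.
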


\begin{proof}Let $X =\Big\{x \in u_0 + U\colon |S_{x \bullet}| \geq \Big(1-\frac{1}{16}p^{-3r}\Big) |B_{x \bullet}|\Big\}$. By $\eta$-quasirandomness and averaging, we have that $|X| = (1- O(\xi p^{3r}) -O(\eta p^{4r})) |u_0 + U|$. Using Lemma~\ref{easyExtn}, for each $x \in X$ let $\phi^{\text{ext}(1)}(x, \cdot)$ be the affine extension of $\phi(x, \cdot)$. We claim that $\phi^{\text{ext}(1)}$ is a homomorphism in direction $G_1$ as well. To this end, take any $x_1, x_2, x_3, x_4 \in X$ such that $x_1 + x_2 = x_3 + x_4$ and let $y \in \bigcap_{i \in [4]} B_{x_i \bullet}$ be arbitrary. Since $|\bigcap_{i \in [4]} S_{x_i \bullet}| \geq \frac{3}{4} |\bigcap_{i \in [4]} B_{x_i \bullet}|$, there are $y_1, y_2 \in \bigcap_{i \in [4]} S_{x_i \bullet}$ such that additionally $y_1 + y_2 - y \in \bigcap_{i \in [4]} S_{x_i \bullet}$. Thus,
\begin{align*}\phi^{\text{ext}(1)}&(x_1, y) + \phi^{\text{ext}(1)}(x_2, y) - \phi^{\text{ext}(1)}(x_3, y) - \phi^{\text{ext}(1)}(x_4, y)\\ 
&= \Big(\phi(x_1, y_1) + \phi(x_1, y_2) - \phi(x_1, y_1 + y_2 - y)\Big) + \cdots - \Big(\phi(x_4, y_1) + \phi(x_4, y_2) - \phi(x_4, y_1 + y_2 - y)\Big)\\
&= 0,\end{align*}
as claimed. Hence $\phi^{\text{ext}(1)} \colon (X \times (v_0 + V)) \cap B \to H$ is a bi-homomorphism.\\
\indent Next, for each $y \in v_0 + V$, note that $|X \cap B_{\bullet y}| \geq |B_{\bullet y}| - |(u_0 + U)\setminus X| = (1 - O(\xi p^{4r} + \eta p^{5r}))|B_{\bullet y}| \geq \frac{9}{10}|B_{\bullet y}|$ and extend $\phi^{\text{ext}(1)}(\cdot, y)$ from $X \cap B_{\bullet y}$ to $B_{\bullet y}$, again using Lemma~\ref{easyExtn}. As above, we see that this extension is a bi-homomorphism, by using the fact that for any additive quadruple $y_{[4]}$ in $v_0 + V$ we have $|X \cap (\cap_{i \in [4]}B_{\bullet y_i})| \geq (1 - O(\xi p^{6r} + \eta p^{7r})) |\cap_{i \in [4]} B_{\bullet y_i}|$.\end{proof}

We next strengthen the previous proposition to show that a bi-homomorphism defined on a subset $S$ of density $1-c$ inside a variety $B$ can be extended to the whole of $B$ (at the cost of removing few points in $S$) even if $c$ is merely a small positive constant, and not necessarily a very small quantity when compared to $p^{-r}$, where $r$ is the codimension of the variety.

\begin{proposition}\label{biaffineAlmostToFullExtn}Let $u_0, U, v_0, V, \beta, r, \eta, \delta, B$ be as in Proposition~\ref{GridWoPt} (including the assumption $\eta \leq \cons\, p^{- \con\,r}$). Let $S \subset B$ be a subset of size at least $(1-\varepsilon) |B|$. Let $\phi \colon S \to H$ be a bi-$2$-homomorphism. Then, there is a subset $S' \subset S$ of size $(1-O(\varepsilon) - O(\eta^{\Omega(1)})) |B|$ and a bi-homomorphism $\psi \colon B \to H$ such that $\phi = \psi$ on $S'$.\end{proposition}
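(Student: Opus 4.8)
The plan is to run a short chain of reductions, each invoking a result of this subsection, while tracking the agreement set $\{\psi=\phi\}$. Throughout, every column or row I ever discard will be \emph{typical} (size exactly $\delta|V|$, resp.\ $\delta|U|$), so that a bound of the form ``complement has $B$-measure $O(\gamma)|B|$'' is interchangeable with ``complement misses an $O(\gamma)$-fraction of all columns/rows''; this, together with the standing hypothesis $\eta\le\cons\,p^{-\con r}$ (which lets any fixed power $p^{O(r)}$ be absorbed into $\eta^{\Omega(1)}$), is what keeps every error term of shape $O(\varepsilon)+O(\eta^{\Omega(1)})$ rather than $O(\sqrt\varepsilon)$ or $O(\varepsilon p^{r})$. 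Throughout one may assume $\varepsilon$ is smaller than a fixed absolute constant, as otherwise the conclusion is vacuous.

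\emph{Step 1 (reduction to product form).} Using Lemmas~\ref{yQR},~\ref{regInt1},~\ref{regInt2} and a fixed-constant threshold $\tau$ (below the constants $\varepsilon_0$ of Propositions~\ref{GridWoPt} and~\ref{biaffineExtnStep1} and below $1/5$), a bounded number of refinements yields sets $X_0\subset u_0+U$, $Y_0\subset v_0+V$ of typical columns/rows with $|u_0+U\setminus X_0|,|v_0+V\setminus Y_0|=O(\varepsilon)+O(\eta^{\Omega(1)})$ (times the ambient sizes) such that $S_{x\bullet}\cap Y_0$ is $(1-\varepsilon_0)$-dense in $B_{x\bullet}$ for each $x\in X_0$, and symmetrically for rows. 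For $x\in X_0$ let $\hat\phi_x\colon B_{x\bullet}\to H$ be the affine extension of $\phi(x,\cdot)|_{S_{x\bullet}\cap Y_0}$ (Lemma~\ref{easyExtn}), and for $y\in Y_0$ let $\hat\psi_y\colon B_{\bullet y}\to H$ be the affine extension of $\phi(\cdot,y)|_{S_{\bullet y}\cap X_0}$. Put $\bar S=\{(x,y)\in B\colon x\in X_0,\ y\in Y_0,\ \hat\phi_x(y)=\hat\psi_y(x)\}$ and $\bar\phi(x,y)=\hat\phi_x(y)=\hat\psi_y(x)$ on $\bar S$. Then $S\cap(X_0\times Y_0)\subset\bar S$ and $\bar\phi=\phi$ there, so $|B\setminus\bar S|=O(\varepsilon)+O(\eta^{\Omega(1)})$. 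Apply Proposition~\ref{GridWoPt} with input sets $X_0,Y_0$ and $\bar S$ (which is dense in both directions since it contains $S\cap(X_0\times Y_0)$). Alternative \textbf{(i)} is impossible: given $x_{[4]}\in X_0$, $y_{[4]}\in Y_0$ with $x_1+x_2=x_3+x_4$, $y_1+y_2=y_3+y_4$, $(x_i,y_j)\in\bar S$ for $(i,j)\ne(1,1)$ and $(x_1,y_1)\in B\setminus\bar S$, writing each of the $15$ values $\bar\phi(x_i,y_j)$ as the affine image $\hat\phi_{x_i}(y_j)$ or $\hat\psi_{y_j}(x_i)$ and using that affine maps respect additive quadruples in each coordinate, a short rearrangement forces $\hat\phi_{x_1}(y_1)=\hat\psi_{y_1}(x_1)$ (both sides equal $\bar\phi(x_1,y_3)+\bar\phi(x_1,y_4)-\bar\phi(x_1,y_2)$), i.e.\ $(x_1,y_1)\in\bar S$, a contradiction. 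Hence alternative \textbf{(ii)}: there are $X_1\subset X_0$, $Y_1\subset Y_0$ with $|X_0\setminus X_1|,|Y_0\setminus Y_1|=O(\eta^{1/32})$ (ambient) and $(X_1\times Y_1)\cap B\subset\bar S$; on $(X_1\times Y_1)\cap B$ the map $\bar\phi$ restricts to $\hat\phi_x$ on columns and $\hat\psi_y$ on rows, hence is a bi-homomorphism.

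\emph{Steps 2 and 3 (fill columns, then rows, then tidy).} Apply Proposition~\ref{biaffineExtnStep1} to $X_1,Y_1,B\cap(X_1\times Y_1),\bar\phi$: this gives $X_2\subset X_1$ with $|X_1\setminus X_2|=O(\eta^{\Omega(1)}|U|)$, a set $S_2\subset(X_2\times(v_0+V))\cap B$ with $|((X_2\times(v_0+V))\cap B)\setminus S_2|=O(\eta^{\Omega(1)}|U||V|)$, and a bi-homomorphism $\phi_2$ on $S_2$ agreeing with $\bar\phi$ (hence with $\phi$ on a set of measure $\ge(1-O(\varepsilon)-O(\eta^{\Omega(1)}))|B|$). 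Since $S_2$ misses only $O(\eta^{\Omega(1)}|U||V|)$ points inside the columns of $X_2$, the subset $X_3\subset X_2$ of columns $x$ with $|S_{2,x\bullet}|\ge(1-\tfrac1{16}p^{-3r})|B_{x\bullet}|$ has $|X_2\setminus X_3|=O(\eta^{\Omega(1)}p^{O(r)})|U|=O(\eta^{\Omega(1)})|U|$; extending each $\phi_2(x,\cdot)$ $(x\in X_3)$ to an affine map via Lemma~\ref{easyExtn} and arguing exactly as in the proof of Proposition~\ref{biaffineExtnStep2} (for an additive quadruple $x_{[4]}$ in $X_3$ one has $|\bigcap S_{2,x_i\bullet}|\ge\tfrac34|\bigcap B_{x_i\bullet}|$ because $|\bigcap B_{x_i\bullet}|\ge p^{-3r}|V|$) produces a bi-homomorphism $\psi$ on $(X_3\times(v_0+V))\cap B$, still equal to $\phi$ on a set of measure $\ge(1-O(\varepsilon)-O(\eta^{\Omega(1)}))|B|$. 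Now $(X_3\times(v_0+V))\cap B=B\cap(X_3\times(v_0+V))$ is of the form $B\cap(X\times Y)$ with $Y=v_0+V$ the \emph{whole} of $v_0+V$, so applying Proposition~\ref{biaffineExtnStep1} with $G_1$ and $G_2$ interchanged gives $Y'\subset v_0+V$ with $|v_0+V\setminus Y'|=O(\eta^{\Omega(1)})|v_0+V|$ only --- the $\varepsilon$-deficiency of $X_3$ does not propagate, since $X_3\cap B_{\bullet y}$ is $(1-\varepsilon_0)$-dense in $B_{\bullet y}$ for all but an $O(\eta^{\Omega(1)})$-fraction of rows --- together with a bi-homomorphism $\psi'$ on some $S'\subset((u_0+U)\times Y')\cap B$ with $|(((u_0+U)\times Y')\cap B)\setminus S'|=O(\eta^{\Omega(1)}|U||V|)$, agreeing with $\psi$ on the overlap. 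Thus $|B\setminus S'|=O(\eta^{\Omega(1)})|B|$, which by $\eta\le\cons\,p^{-\con r}$ is at most $\cons\,p^{-\con r}$, so Proposition~\ref{biaffineExtnStep2} applies to $(S',\psi')$ and yields a bi-homomorphism $\psi''\colon B\to H$ (defined on all of $B$) agreeing with $\psi'$ on a set of measure $(1-O(\eta^{\Omega(1)}))|B|$; intersecting the agreement sets from all four steps, $\psi''=\phi$ on a set of measure $(1-O(\varepsilon)-O(\eta^{\Omega(1)}))|B|$, as required.

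\emph{The main obstacle.} Two points need care. First, the \emph{saturation} device in Step~1: the verification that alternative \textbf{(i)} of Proposition~\ref{GridWoPt} cannot occur for $\bar S$ is exactly what upgrades ``$S$ is very dense in both directions'' to ``$S$ essentially contains a product $(X_1\times Y_1)\cap B$'', sidestepping any attempt to fill the $15$-point grid holes one at a time and worry about termination. Second, the bookkeeping: every estimate must be kept linear in $\varepsilon$, which forces one to discard only \emph{typical} columns and rows and to phrase all losses as $B$-measure (typical columns, resp.\ rows, account for all but an $\eta$-fraction of $B$ by Lemma~\ref{yQR}), and to use $\eta\le\cons\,p^{-\con r}$ to absorb the factors $p^{O(r)}$ that appear whenever one converts between column/row counts and point counts --- in particular in passing from $|((X_2\times(v_0+V))\cap B)\setminus S_2|=O(\eta^{\Omega(1)}|U||V|)$ to $|X_2\setminus X_3|=O(\eta^{\Omega(1)})|U|$ and in applying Proposition~\ref{biaffineExtnStep2} at the end.
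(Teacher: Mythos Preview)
Your proof is correct and follows the same overall architecture as the paper (reduce to product form via Proposition~\ref{GridWoPt}, fill columns with Proposition~\ref{biaffineExtnStep1}, reduce to product form again, fill rows, finish with Proposition~\ref{biaffineExtnStep2}), but it differs from the paper's argument in two genuinely useful ways.

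First, your \emph{saturation trick} in Step~1 is cleaner than what the paper does. The paper applies Proposition~\ref{GridWoPt} \emph{iteratively}: whenever alternative~\textbf{(i)} occurs, it adjoins the missing corner $(x_1,y_1)$ to $S$ and extends $\phi$ there, repeating until alternative~\textbf{(ii)} holds; termination is guaranteed only because $B$ is finite. You instead jump directly to the set $\bar S=\{(x,y):\hat\phi_x(y)=\hat\psi_y(x)\}$ and verify in one stroke that alternative~\textbf{(i)} is impossible for $\bar S$, since the fifteen known values force $\hat\phi_{x_1}(y_1)=\hat\psi_{y_1}(x_1)$. This is the natural fixed point of the paper's iteration and avoids the open-ended loop entirely.

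Second, after the first application of Proposition~\ref{biaffineExtnStep1} the paper runs the \emph{entire} Proposition~\ref{GridWoPt} iteration a second time (on the non-product set $S^{(2)}$) to regain product form before the role-swapped application of Proposition~\ref{biaffineExtnStep1}. You bypass this by passing to the columns $X_3$ where $S_2$ is $(1-\tfrac{1}{16}p^{-3r})$-dense and invoking only the first half of the argument of Proposition~\ref{biaffineExtnStep2} to fill those columns completely; the resulting domain $(X_3\times(v_0+V))\cap B$ is already a product, so Proposition~\ref{biaffineExtnStep1} applies immediately in the swapped direction. The point that makes this work---and that you flag correctly---is that the $O(\varepsilon)$ deficit lives only in $X_3$, while the $G_2$-factor is now full, so the swapped application loses only $O(\eta^{\Omega(1)})$ and Proposition~\ref{biaffineExtnStep2} can close the argument.

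Both shortcuts keep every loss linear in $\varepsilon$ (plus $O(\eta^{\Omega(1)})$), which is the crux of the proposition; the paper achieves the same bookkeeping but with two iterative loops where you have none.
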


\begin{proof} Start by setting $X_0 = \{x \in u_0 + U \colon |B_{x \bullet}| = \delta |V|\}$ and $Y_0 = \{y \in v_0 + V \colon |B_{\bullet y}| = \delta |U|\}$. By $\eta$-quasirandomness and Lemma~\ref{yQR}, we have that $|(u_0 + U) \setminus X_0| \leq \eta |U|$ and $|(v_0 + V) \setminus Y_0| \leq O(\delta^{-4} \sqrt[4]{\eta} |V|)$. We shall modify sets $X_0$ and $Y_0$ so that the conditions in Proposition~\ref{GridWoPt} are satisfied. Let $\varepsilon_0 = \frac{1}{64}$ be as in that proposition.\\ 
\indent Let $E_1 \subset X_0$ be defined by $E_1 = \{x \in X_0 \colon |S_{x \bullet}| \geq (1-\varepsilon_0/3) |B_{x \bullet}|\}$. By simple averaging, we get
\[|X_0 \setminus E_1|\, \cdot \frac{\varepsilon_0}{3} \delta |v_0 + V|\, \leq \sum_{x \in X_0 \setminus E_1} |B_{x \bullet} \setminus S_{x \bullet}|\, \leq |B \setminus S| \leq \varepsilon |B| \,\leq \varepsilon(\delta + \eta) |u_0 + U| |v_0 + V|.\]
Provided $\eta \leq \delta$ we obtain $|X_0 \setminus E_1| \leq 6 \varepsilon_0^{-1} \varepsilon |u_0 + U|$. Similarly, we define $E_2 \subset Y_0$ be defined by $E_2 = \{y \in Y_0 \colon |S_{\bullet y}| \geq (1-\varepsilon_0/3) |B_{\bullet y}|\}$ and an analogous averaging argument gives $|Y_0 \setminus E_2| \leq 6 \varepsilon_0^{-1} \varepsilon |v_0 + V|$. Define finally $X = \{x \in E_1 \colon |S_{x \bullet} \cap E_2| \geq (1-2\varepsilon_0/3) |B_{x \bullet}|\}$ and $Y = \{y \in E_2 \colon |S_{\bullet y} \cap E_1| \geq (1-2\varepsilon_0/3) |B_{\bullet y}|\}$.

\begin{claima*}We have $|E_1 \setminus X| \leq O(\delta^{-O(1)} \sqrt[16]{\eta}|u_0 + U|)$ and $|E_2 \setminus Y| \leq O(\delta^{-O(1)} \sqrt[16]{\eta}|v_0 + V|)$.\end{claima*}

\begin{proof}Note that $|(v_0 + V) \setminus E_2| \leq \Big(6 \varepsilon_0^{-1} \varepsilon  + \delta^{-4} \sqrt[4]{\eta}\Big) |v_0 + V|$. If $\eta \leq \cons\, \delta^{\con}$ and $\varepsilon \leq \cons$, we have $|(v_0 + V) \setminus E_2| \leq \frac{\varepsilon_0}{6}|v_0 + V|$. By Lemma~\ref{regInt2} we have that for all but at most $O(\delta^{-O(1)} \sqrt[4]{\eta} |u_0 + U|)$ of $x \in u_0 + U$ we have $|B_{x \bullet} \setminus E_2| = |B_{x \bullet} \cap ((v_0 + V) \setminus E_2)|  \leq \frac{\varepsilon_0 \delta}{3} |v_0 + V|$. If such an $x$ additionally lies in $E_1$, then we have
\[|S_{x \bullet} \cap E_2| \geq |S_{x \bullet}| - |B_{x \bullet} \setminus E_2| \geq (1-2\varepsilon_0/3) |B_{x \bullet}|,\]
hence such an $x$ actually lies in $X$. Thus $|E_1 \setminus X| \leq O(\delta^{-O(1)} \sqrt[4]{\eta} |u_0 + U|)$.\\
\indent For the second inequality, we follow the same steps, the only difference being that, by Lemma~\ref{yQR}, the quasirandomness parameter for $B$ in direction $G_2$ can be taken to be $32 \delta^{-4} \sqrt[4]{\eta}$. We have $|(u_0 + U) \setminus E_1| \leq \frac{\varepsilon_0}{6}|u_0 + U|$ and by Lemma~\ref{regInt2} for all but at most $O(\delta^{-O(1)} \sqrt[16]{\eta} |v_0 + V|)$ of $y \in v_0 + V$ we have $|B_{\bullet y} \setminus E_1| = |B_{\bullet y} \cap ((u_0 + U) \setminus E_1)|  \leq \frac{\varepsilon_0 \delta}{3} |u_0 + U|$. Hence, when such a $y$ additionally lies in $E_2$, then we have
\[|S_{\bullet y} \cap E_1| \geq |S_{\bullet y}| - |B_{\bullet y} \setminus E_1| \geq (1-2\varepsilon_0/3) |B_{\bullet y}|,\] 
which implies that $y \in Y$. Thus, $|E_2 \setminus Y| \leq O(\delta^{-O(1)} \sqrt[16]{\eta} |v_0 + V|)$.\end{proof}

Now let $x \in X$. Then $|S_{x \bullet} \cap Y| \geq |S_{x \bullet} \cap E_2| - |E_2 \setminus Y| \geq (1-2\varepsilon_0/3 -  O(\delta^{-O(1)} \sqrt[16]{\eta})) |B_{x \bullet}|$, by the claim above. Provided $\eta \leq \cons\, \delta^{\con}$, we obtain $|S_{x \bullet} \cap Y| \geq (1 - \varepsilon_0)|B_{x \bullet}|$. Similarly, when $y \in Y$, then we have $|S_{\bullet y} \cap X| \geq (1-\varepsilon_0)|B_{\bullet y}|$. Let us also record the bounds on sizes of $X$ and $Y$
\begin{align}\label{xysizebounds}|X| \geq & |u_0 + U| - |(u_0 + U) \setminus X_0| - |X_0 \setminus E_1| - |E_1 \setminus X| \nonumber\\
&\hspace{3cm}\geq (1 - \eta - 6 \varepsilon_0^{-1} \varepsilon - O(\delta^{-O(1)} \sqrt[16]{\eta}))|u_0 + U|,\text{ and}\nonumber\\
|Y| \geq & |v_0 + V| - |(v_0 + V) \setminus Y_0| - |Y_0 \setminus E_2| - |E_2 \setminus Y| \nonumber\\
&\hspace{3cm}\geq (1 - O(\delta^{-O(1)} \sqrt[4]{\eta}) - 6 \varepsilon_0^{-1} \varepsilon - O(\delta^{-O(1)} \sqrt[16]{\eta}))|v_0 + V|.
\end{align}

The sets $X$ and $Y$ now satisfy the conditions in Proposition~\ref{GridWoPt}. We start a procedure where we apply Proposition~\ref{GridWoPt} iteratively until we obtain the structure described in \textbf{(ii)} of that proposition. When we get structure described in \textbf{(i)}, i.e.\ when there are $x_1, x_2, x_3, x_4 \in X$, $y_1, y_2, y_3, y_4 \in Y$ such that $x_1 + x_2 = x_3 + x_4$, $y_1 + y_2 = y_3 + y_4$ and $(x_i,y_j) \in S$ for $(i,j) \not= (1,1)$, but $(x_1, y_1) \in B \setminus S$, we may add $(x_1, y_1)$ to $S$ and extend $\phi$ to $(x_1, y_1)$ by setting $\phi(x_1, y_1) = \phi(x_3, y_1) + \phi(x_4, y_1) - \phi(x_2, y_1)$. Note that
\begin{align*}\phi(x_3, y_1) + \phi(x_4, y_1) - \phi(x_2, y_1) = &\Big(\phi(x_3, y_3) + \phi(x_3, y_4) - \phi(x_3, y_2)\Big) + \Big(\phi(x_4, y_3) + \phi(x_4, y_4) - \phi(x_4, y_2)\Big)\\
&\hspace{5cm} - \Big(\phi(x_2, y_3) + \phi(x_2, y_4) - \phi(x_2, y_2)\Big)\\
= & \phi(x_1, y_3) + \phi(x_1, y_4) - \phi(x_1, y_2).\end{align*}
By Lemma~\ref{easyExtn} applied to $\phi(x, \cdot)$ on the set $S_{x \bullet}$ and to $\phi(\cdot, y)$ on the set $S_{\bullet y}$, we see that the extension is still a bi-homomorphism, so we may proceed.\\ 

Suppose that we have finally obtained the structure described in \textbf{(ii)}. Thus, we may assume that there are $X^{(1)} \subset X, Y^{(1)} \subset Y$, such that $\frac{|X \setminus X^{(1)}|}{|u_0 + U|}, \frac{|Y \setminus Y^{(1)}|}{|v_0 + V|} \leq O(\eta^{1/32})$ and a bi-homomorphism $\phi^{(1)} \colon (X^{(1)} \times Y^{(1)}) \cap B \to H$, such that $\phi^{(1)} = \phi$ on $S^{(1)} = S \cap (X^{(1)} \times Y^{(1)})$. Note that the size of $S^{(1)}$ is
\begin{align}|S^{(1)}| \,=\, &|S \cap (X^{(1)} \times Y^{(1)})| \nonumber\\
\geq\, &|S| - \Big|B \cap \Big(((u_0 + U) \setminus X^{(1)}) \times (v_0 + V)\Big)\Big| - \Big|B \cap \Big((u_0 + U) \times ((v_0 + V)\setminus Y^{(1)}) \Big)\Big|\nonumber\\
&\hspace{4cm}\text{(using: $|B_{x \bullet}| = \delta |v_0 + V|$ and $|B_{\bullet y}| = \delta |u_0 + U|$ for most $x$ and $y$)}\nonumber\\
\geq\,& (1-\varepsilon)|B| \,-\, |(u_0 + U) \setminus X^{(1)}| \cdot \delta |v_0 + V| \,-\, \eta |u_0 + U| |v_0 + V| \nonumber\\
&\hspace{3cm}- |(v_0 + V)\setminus Y^{(1)}| \cdot \delta |v_0 + V| \,-\, O(\delta^{-O(1)} \sqrt[4]{\eta} |u_0+U| |v_0 + V|)\nonumber\\
\geq\,& (1-\varepsilon)|B|\, -\, \Big(12\varepsilon_0^{-1} \varepsilon + O(\delta^{-O(1)} \sqrt[32]{\eta})\Big)\delta |u_0 + U||v_0 + V|\nonumber\\
\geq\,&(1-O(\varepsilon) - O(\eta^{\Omega(1)}))|B|,\label{s1bndeqn}
\end{align}
provided $\eta \leq \cons\,\delta^{\con}$. In the rest of the proof, we show that we may essentially extend $\phi^{(1)}$ to the whole of $B$.\\

Let $\varepsilon_0' > 0$ be the constant that appears in the statement of Proposition~\ref{biaffineExtnStep1}. Note that $|X^{(1)}| \geq (1-\varepsilon'_0)|u_0 + U|$ and $|Y^{(1)}| \geq (1-\varepsilon'_0)|v_0 + V|$, provided $\varepsilon \leq \cons$ and $\eta \leq \cons\,\delta^{\con}$. Apply Proposition~\ref{biaffineExtnStep1} to get a subset $X^{(2)} \subset X^{(1)}$ such that $|X^{(2)} \setminus X^{(1)}| = O(\eta^{\Omega(1)} |U|)$, a subset $S^{(2)} \subset (X^{(2)} \times (v_0 +V)) \cap B$ such that $|((X^{(2)} \times (v_0 +V)) \cap B) \setminus S^{(2)}| = O(\eta^{\Omega(1)} |U||V|)$ and a bi-homomorphism $\phi^{(2)} \colon S^{(2)} \to H$ such that $\phi = \phi^{(2)}$ on $S^{(1)} \cap S^{(2)}$.\\

Now, as in the first step, we find subsets $X^{(3)} \subset X^{(2)}$ and $Y^{(3)} \subset v_0 + V$ that satisfy the conditions of Proposition~\ref{GridWoPt}. Thanks to the additional structure we have, the argument is simpler this time. Recall the facts that $|((X^{(2)} \times (v_0 +V)) \cap B) \setminus S^{(2)}| = O(\eta^{\Omega(1)} |U||V|)$ and that $|B_{x \bullet}| = \delta |v_0 + V|$ for all $x \in X^{(2)}$. Provided that $\eta \leq \cons\,\delta^{\con}$, by averaging we may find a subset $X^{(3)} \subset X^{(2)}$ such that $|X^{(2)} \setminus X^{(3)}| \leq O(\eta^{\Omega(1)} |u_0 + U|)$ and for each $x \in X^{(3)}$ we have $|S^{(2)}_{x \bullet}| \geq (1 - O(\eta^{\Omega(1)})) |B^{(2)}_{x \bullet}|$. Recall from~\eqref{xysizebounds} that, provided that $\eta \leq \cons\,\delta^{\con}$ and $\varepsilon \leq \cons$, the size of $X$ is $|X| \geq (1-\varepsilon_0/8)|u_0 + U|$. Further $|X \setminus X^{(3)}| \leq O(\eta^{\Omega(1)} |u_0 + U|)$, so, again provided that $\eta \leq \cons\,\delta^{\con}$, we have $|X^{(3)}| \geq (1- \varepsilon_0/4)|u_0 + U|$. By Lemmas~\ref{yQR} and~\ref{regInt2} we have that $|X^{(3)} \cap B_{\bullet y}| \geq (1-\varepsilon_0/2) |B_{\bullet y}|$ holds for all but at most $O(\delta^{-O(1)}\eta^{\Omega(1)}|v_0 + V|)$ of $y \in v_0 + V$. On the other hand, by a simple averaging argument we infer that $|(X^{(3)} \cap B_{\bullet y}) \setminus S^{(2)}_{\bullet y}| \leq O(\eta^{\Omega(1)} |u_0 + U|)$ holds for all but at most $O(\eta^{\Omega(1)} |v_0 + V|)$ of $y \in v_0 + V$. In conclusion, we obtain $Y^{(3)} \subset v_0 + V$ of size $|Y^{(3)}| \geq (1 - O(\eta^{\Omega(1)}))|v_0 + V|$ such that for each $y \in Y^{(3)}$ we simultaneously have $|X^{(3)} \cap B_{\bullet y}| \geq (1-\varepsilon_0/2) |B_{\bullet y}|$ and $|(X^{(3)} \cap B_{\bullet y}) \setminus S^{(2)}_{\bullet y}| \leq O(\eta^{\Omega(1)} |u_0 + U|)$. In particular, provided $\eta \leq \cons\,\delta^{\con}$, $|X^{(3)} \cap S^{(2)}_{\bullet y}| \geq (1-\varepsilon_0) |B_{\bullet y}|$ holds when $y \in Y^{(3)}$. Finally, for each $x \in X^{(3)}$ we have $|S^{(2)}_{x \bullet} \cap Y^{(3)}| \geq (1 - O(\delta^{-O(1)}\eta^{\Omega(1)})) |B^{(2)}_{x \bullet}|$.\\ 

Hence, provided that $\eta \leq \cons\,\delta^{\con}$, $S^{(2)}$, $X^{(3)}$ and $Y^{(3)}$ satisfy the conditions of Proposition~\ref{GridWoPt}. Again, iteratively apply Proposition~\ref{GridWoPt}, as before. Each time that we obtain the structure described in part \textbf{(i)} of the conclusion of that proposition, we extend the domain of $\phi^{(2)}$, as we did previously. Eventually, we get the structure in part \textbf{(ii)} and we end up with sets $X^{(4)} \subset X^{(3)}, Y^{(4)} \subset Y^{(3)}$ such that $\frac{|X^{(3)} \setminus X^{(4)}|}{|u_0 + U|}, \frac{|Y^{(3)} \setminus Y^{(4)}|}{|v_0 + V|} = O(\eta^{1/32})$, and a bi-homomorphism $\phi^{(4)} \colon (X^{(4)} \times Y^{(4)}) \cap B \to H$, such that $\phi^{(4)} = \phi$ on $S^{(4)} = S^{(1)} \cap S^{(2)} \cap (X^{(4)} \times Y^{(4)})$.\\

Now reverse the roles of directions $G_1$ and $G_2$. The price we pay is the somewhat weaker quasirandomness parameter of $O(\delta^{-4} \sqrt[4]{\eta})$ instead of $\eta$, by Lemma~\ref{yQR}. Recall that $\varepsilon_0' > 0$ is the constant that appears in the statement of Proposition~\ref{biaffineExtnStep1}. Note that $|X^{(4)}| \geq (1-\varepsilon'_0)|u_0 + U|$ and $|Y^{(4)}| \geq (1-\varepsilon'_0)|v_0 + V|$, provided $\varepsilon \leq \cons$ and $\eta \leq \cons\,\delta^{\con}$. Apply Proposition~\ref{biaffineExtnStep1} to get subset $Y^{(5)} \subset Y^{(4)}$ such that $|Y^{(5)} \setminus Y^{(4)}| = O(\eta^{\Omega(1)} |V|)$, a subset $S^{(5)} \subset ((u_0 +U) \times Y^{(5)}) \cap B$ such that $|(((u_0 + U) \times Y^{(5)}) \cap B) \setminus S^{(5)}| = O(\eta^{\Omega(1)} |U||V|)$ and a bi-homomorphism $\phi^{(5)} \colon S^{(5)} \to H$ such that $\phi = \phi^{(5)}$ on $S^{(4)} \cap S^{(5)}$. But, note that $|(v_0 + V) \setminus Y^{(5)}| = O(\eta^{\Omega(1)}) |V|$, so in fact $|S^{(5)}| = (1 - O(\eta^{\Omega(1)})) |B|$. We may apply Proposition~\ref{biaffineExtnStep2} to find a bi-homomorphism $\phi^{(6)} \colon B \to H$ such that $\phi = \phi^{(6)}$ on a subset $S^{(6)} \subset S^{(4)} \cap S^{(5)}$ such that $|(S^{(4)} \cap S^{(5)}) \setminus S^{(6)}| \leq O(\eta^{\Omega(1)} |U||V|)$. To finish the proof, we estimate the size of $S^{(6)}$
\begin{align*}|S^{(6)}| \geq\,& |S^{(4)} \cap S^{(5)}| - O(\eta^{\Omega(1)} |U||V|) \\
\geq\, &|S^{(1)} \cap S^{(2)} \cap (X^{(4)} \times Y^{(5)})| - O(\eta^{\Omega(1)} |U||V|)\\
\geq\,& |S^{(1)} \cap (X^{(4)} \times Y^{(5)})| - O(\eta^{\Omega(1)} |U||V|)\\
\geq\, & |S^{(1)}| - \delta |V| \cdot |X^{(1)} \setminus X^{(4)}| - \delta |U| \cdot |Y^{(1)} \setminus Y^{(4)}| - O(\eta^{\Omega(1)} |U||V|)\\
&\hspace{4cm}\text{(by~\eqref{s1bndeqn} and $\eta \leq \cons\,\delta^{\con}$)}\\
\geq & (1-O(\varepsilon) - O(\eta^{\Omega(1)}))|B|.\end{align*}

Note that the assumption $\varepsilon \leq \cons$ can be omitted by increasing the implicit constant in the $O(\varepsilon)$ term above.
\end{proof}

\subsection{Extending biaffine maps from subvarieties of codimension 1}

In this subsection we study bi-homomorphisms defined on whole biaffine varieties and how to extend them to varieties of lower codimension. Note also that in this case a bi-homomorphism is the same as a biaffine map, as all rows and columns of the domain are in fact cosets of subspaces.

\begin{proposition}\label{1codimExtnBiaffineProp}Let $u_0 + U$ and $v_0 + V$ be cosets in $G_1$ and $G_2$, respectively, and let $\beta \colon G_1 \times G_2 \to \mathbb{F}_p^r$ be a biaffine map. Suppose that there exists $\delta > 0$ such that for each $\lambda \in \mathbb{F}_p^r$ the variety $B^\lambda = ((u_0 + U) \times (v_0 + V)) \cap \{(x,y):\beta(x,y) = \lambda\}$ is either $\eta$-quasirandom with density $\delta$ or empty. Let $\lambda \in \mathbb{F}_p^r$ be such that $B = B^\lambda$ is non-empty. Let
\[B^{\text{ext}} = ((u_0 + U) \times (v_0 + V)) \cap \{(x,y) \in G_1 \times G_2 \colon (\forall i \in [2,r])\,\, \beta_i(x,y) = \lambda_i\}.\]
Let $\phi \colon B \to H$ be a biaffine map. Then, provided $\eta \leq \cons p^{-\con\,r}$, there is a biaffine map $\phi^{\mathrm{ext}} \colon B^{\mathrm{ext}} \to H$ such that $\phi(x,y) = \phi^{\mathrm{ext}}(x,y)$ for all but $O(\eta^{\Omega(1)} |U||V|)$ elements $(x,y) \in B$.\end{proposition}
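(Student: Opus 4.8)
The plan is to build a candidate extension by filling in the columns of $B^{\mathrm{ext}}$ affinely, then — after arranging that these column-wise extensions cohere — to straighten the result into a genuine biaffine map using the quasirandomness machinery developed earlier in this section.

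\emph{Setup and clean-up.} Since $\beta$ is biaffine, $\beta_1$ is affine in each variable, so $B^{\mathrm{ext}}$ is the disjoint union of the layers $B^{\mu}$ with $\mu=(\mu_1,\lambda_2,\dots,\lambda_r)$, over the set $M\subseteq\mathbb F_p$ of those $\mu_1$ for which $B^{\mu}\ne\emptyset$; note $\lambda_1\in M$, $|M|\le p$, and by hypothesis each such $B^{\mu}$ is $\eta$-quasirandom with density $\delta\ge p^{-r}$ (the last bound by Lemma~\ref{varsizelemma}). First I would pass to the set $X$ of \emph{good} columns $x$ — those with $|B^{\mu}_{x\bullet}|=\delta|v_0+V|$ for every $\mu_1\in M$ and with $\beta_1(x,\cdot)$ surjective onto $M$ on $B^{\mathrm{ext}}_{x\bullet}$, so that $B_{x\bullet}$ has codimension exactly $1$ in $B^{\mathrm{ext}}_{x\bullet}$ — and the analogously defined set $Y$ of good rows; by Lemma~\ref{yQR} applied to each of the $\le p$ layers together with a union bound, all but $O(p\,\delta^{-O(1)}\eta^{1/4}|U|)$ columns and $O(p\,\delta^{-O(1)}\eta^{1/4}|V|)$ rows are good. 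The same kind of union-bound argument, together with Lemmas~\ref{regInt1} and~\ref{regInt2} (a variant of the reasoning in Lemma~\ref{simultQuasiRand}), shows that $B^{\mathrm{ext}}$ itself is $O(p^{O(r)}\eta^{\Omega(1)})$-quasirandom with density $|M|\delta$: the diagonal pairwise column-intersections are controlled by quasirandomness of the individual layers, and for generic pairs of columns the off-diagonal pieces are equidistributed.

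\emph{Building and cohering the candidate.} For each good column $x$, the affine map $\phi(x,\cdot)\colon B_{x\bullet}\to H$ extends to an affine map on the index-$p$ overcoset $B^{\mathrm{ext}}_{x\bullet}$ (this is the trivial direction of Lemma~\ref{easyExtn}), the extension being determined by a single vector $h_x\in H$; concretely any extension has the form $\phi^{(0)}(x,y)=\psi^{\mathrm{base}}_x(y)+(\beta_1(x,y)-\lambda_1)\,h_x$ for a fixed choice of base extensions $\psi^{\mathrm{base}}_x$. By construction $\phi^{(0)}$ is exactly affine in direction $G_2$ on every good column and agrees with $\phi$ on $B$; note moreover that for any horizontal additive quadruple $x_{[4]}$ the alternating sum of the terms $(\beta_1(x_i,y)-\lambda_1)h_{x_i}$ vanishes as soon as the $h_{x_i}$ are equal, so the extensions differ from one another only up to a global constant and the real task is simply to find \emph{one} choice of the $\psi^{\mathrm{base}}_x$ for which $x\mapsto\phi^{(0)}(x,y)$ is affine in direction $G_1$ on all but an $O(\eta^{\Omega(1)})$-proportion of the good rows. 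This I would do exactly as in the proof of Theorem~\ref{biaffineExtnConv}: given an additive quadruple $x_{[4]}$ of good columns and a value $y$, one uses quasirandomness of the layers (Lemmas~\ref{regInt1},~\ref{regInt2} and Corollary~\ref{qrPairsCor}) to produce, with high probability over auxiliary parameters, an iterated grid of points almost all of whose corners lie in $B$, where $\phi$ is genuinely biaffine; tracking the $\phi$-values through the grid forces the relation $\phi^{(0)}(x_1,y)+\phi^{(0)}(x_2,y)=\phi^{(0)}(x_3,y)+\phi^{(0)}(x_4,y)$ for a $1-O(\eta^{\Omega(1)})$-proportion of such $(x_{[4]},y)$, provided the $h_x$ are chosen consistently through the grid — and it is precisely this consistency requirement that pins down the choice. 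This is the $\mathcal S_i$-bookkeeping of Theorem~\ref{biaffineExtnConv} transplanted to the present setting, and it is the main obstacle of the proof.

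\emph{Straightening and filling in.} Once $\phi^{(0)}$ is known to be exactly affine in direction $G_2$ and to respect all but an $O(\eta^{\Omega(1)})$-fraction of additive quadruples in direction $G_1$ on a $1-O(\eta^{\Omega(1)})$-proportion of the good rows, I would apply Corollary~\ref{easyExtnDoubleApprox} row by row: for each such row $y$ it produces a subset $X'_y\subseteq B^{\mathrm{ext}}_{\bullet y}$ of relative density $1-O(\eta^{\Omega(1)})$ and an affine map $\psi_y$ on $B^{\mathrm{ext}}_{\bullet y}$ with $\psi_y=\phi^{(0)}(\cdot,y)$ on $X'_y$. Setting $S=\bigcup_{y}X'_y\times\{y\}$, one checks that $|B^{\mathrm{ext}}\setminus S|=O(\eta^{\Omega(1)}|U||V|)$ and that $\phi^{(0)}|_S$ is a genuine bi-$2$-homomorphism: in direction $G_1$ it agrees on $X'_y$ with the affine map $\psi_y$, and in direction $G_2$ it agrees with the (everywhere affine) map $\phi^{(0)}(x,\cdot)$. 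Finally, since $B^{\mathrm{ext}}$ was shown to be quasirandom, Proposition~\ref{biaffineAlmostToFullExtn} applies and yields a biaffine map $\phi^{\mathrm{ext}}\colon B^{\mathrm{ext}}\to H$ that agrees with $\phi^{(0)}|_S$, and hence with $\phi$, off a set of size $O(\eta^{\Omega(1)}|U||V|)$ inside $B$. Throughout, the hypothesis $\eta\le\cons\,p^{-\con\,r}$ is what keeps the many "all but few rows/columns" reductions — each costing a factor $\delta^{-O(1)}=p^{O(r)}$ — from swamping the densities; everything outside the grid-counting step is routine given the earlier results of this section.
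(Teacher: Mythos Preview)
Your high-level plan is sound and matches the paper's outline: extend column-wise, then check horizontal additive quadruples, then straighten. But the step you flag as ``the main obstacle'' really is the main obstacle, and you have not addressed it. In Theorem~\ref{biaffineExtnConv} the column-wise extension $\phi^{\mathrm{conv}}$ is \emph{canonically determined} by convolution (Lemma~\ref{easyExtn}), and the $\mathcal S_i$-bookkeeping then \emph{verifies} that this predetermined map respects enough horizontal quadruples. Here, by contrast, each affine extension of $\phi(x,\cdot)$ from $B_{x\bullet}$ to the index-$p$ overcoset $B^{\mathrm{ext}}_{x\bullet}$ is underdetermined by one vector in $H$, and nothing in your sketch produces a concrete choice. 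Your sentence ``it is precisely this consistency requirement that pins down the choice'' is a uniqueness statement (correct, up to the global constant you note), not an existence statement: you still have to \emph{construct} a value $\phi^{\mathrm{ext}}(x,y_0)$ at some $y_0$ with $\beta_1(x,y_0)\ne\lambda_1$, and then prove it works. The grid argument you invoke does not do this; it has no mechanism for selecting among the $H$-worth of candidate extensions per column.

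The paper fills this gap with an explicit formula. Fixing a reference $(a,b)\in B^1$ and $h_0\in H$, it sets (for $\mu=\beta_1(x,y)$ and auxiliary $z,s,u,v,w$ chosen so that all arguments below lie in $B$)
\[
\psi = \phi\bigl(x,\, v+y-w-\mu(z-w)\bigr) - \phi(x,v) - (\mu-1)\phi(x,w) + \mu\bigl(\phi(u+x-a,z)-\phi(u,z)+\phi(a,s+z-b)-\phi(a,s)+h_0\bigr),
\]
shows (Claim~A) that $\psi$ is almost independent of the auxiliaries, and takes $\phi^{\mathrm{ext}}_{a,b}(x,y)$ to be the most frequent value. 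The verification that this respects horizontal quadruples (Claim~D) rests on a nontrivial algebraic identity for $\phi$ on $B$ (Claim~C), which is the real technical content and has no analogue in your sketch. After that the endgame is as you describe: Corollary~\ref{approxF2homm} row by row, then Proposition~\ref{biaffineExtnStep2} (your use of Proposition~\ref{biaffineAlmostToFullExtn} on the quasirandom $B^{\mathrm{ext}}$ would also work).
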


We note that the assumption that all non-empty layers have the same density $\delta$ is automatically satisfied provided they are all quasirandom; we postpone this argument to the proof of Theorem~\ref{biaffineExtnFullThm}.

\begin{proof}If $B^{\text{ext}} = B$, the claim is trivial, so suppose the opposite. When $\mu_1 \in \mathbb{F}_p$, we also write $B^{\mu_1} = \{(x,y) \in B^{\text{ext}} \colon \beta_1(x,y) = \mu_1\}$. (If we were to use the notation from the statement, this variety would have been denoted $B^{(\mu_1, \lambda_2, \dots, \lambda_r)}$.) The fact that $B^{\text{ext}} \not= B$ implies that $B^{\mu_1} \not= \emptyset$ for some $\mu_1 \not= \lambda_1$. By assumption $B^{\mu_1}$ is $\eta$-quasirandom with density $\delta$. Provided $\eta < 1/2$ we may find $x \in u_0 + U$ such that $|B_{x \bullet}| = |B^{\mu_1}_{x \bullet}| = \delta |v_0 + V|$. In particular, this means that $\beta_1(x,y)$ takes all values in $\mathbb{F}_p$ as $y$ ranges over $B^{\text{ext}}_{x \bullet}$. Hence, the variety $B^{\mu}$ is non-empty for each $\mu \in \mathbb{F}_p$ and is thus $\eta$-quasirandom with density $\delta$.\\
\indent Note that without loss of generality we may assume that $\lambda_1 = 0$ (shifting $\beta_1$ by a constant keeps it biaffine). Let $(a,b) \in B^{1}$ be a point to be specified later. Let $h_0 \in H$ be arbitrary. For $(x,y) \in B^{\text{ext}}$, let $\mu = \beta_1(x,y)$, and let $z \in v_0 + V$ be such that $(a,z), (x,z) \in B^{1}$. We define
\begin{align}\label{bigpsidefneqn}\psi(x,y; a,b; z,s,u,v,w) = &\phi\Big(x, v + y - w - \mu(z - w)\Big) - \phi(x, v) - (\mu - 1)\phi(x,w)\nonumber\\
&\hspace{1cm}+ \mu\bigg(\phi(u + x - a, z) - \phi(u,z) + \phi(a,s + z - b) - \phi(a,s) + h_0\bigg),\end{align}
where $v, w \in B_{x \bullet}, u \in B_{\bullet z}$, and $s \in B_{a \bullet}$ are arbitrary. We shall pick $(a,b)$ such that for almost all $(x,y) \in B^{\text{ext}}$ there is a value $\phi^{\text{ext}}(x,y)$ for which $\psi(x,y; a,b; z, s, u, v, w) = \phi^{\text{ext}}(x,y)$ for almost all allowed choices of $z, s, u, v, w$. Moreover, viewed as a map on the set of points where it is defined, $\phi^{\text{ext}}$ will be a bi-homomorphism.\\
\indent We remark that all points at which $\phi$ is evaluated in the expression~\eqref{bigpsidefneqn} actually belong to $B$. To see this, first observe that by assumption the points $(x,y),$ $(x,v)$, $(x,w)$, $(x,z),$ $(u, z),$ $(a,z),$ $(a,s),$ $(a,z)$ and $(a,b)$ all belong to $B^{\text{ext}}$. Since $B^{\text{ext}}$ is a biaffine variety, it follows that the arguments of $\phi$ in~\eqref{bigpsidefneqn} are points inside $B^{\text{ext}}$. It remains to see that the $\beta_1$ value at all those points is exactly $0$. The assumptions give that $\beta_1(x,y) = \mu$, $\beta_1(x,v) =$ $\beta_1(x,w) =$ $\beta_1(u,z)=$ $ \beta_1(a,s) = 0$, and $\beta_1(x,z) = $ $\beta_1(a,z) =$ $\beta_1(a,b) = 1$. The fact that $\beta_1 = 0$ at the points of interest follows after a simple calculation.\\
\indent Also, when $(x,y) \in B$, we in fact have $\mu = \lambda_1$ and the expression~\eqref{bigpsidefneqn} becomes simply
\[\psi(x,y; a,b; z,s,u,v,w) = \phi(x, v + y - w) - \phi(x, v) + \phi(x,w) = \phi(x,y).\]

For fixed $x,y,a,b$ the number of choices of the other arguments such that all assumptions above are satisfied is $|B_{a \bullet}||B_{x \bullet}|^2 \sum_{z \in B^{1}_{x \bullet} \cap B^{1}_{a \bullet}} |B_{\bullet z}|$. Also, $\psi$ does not depend on the choice of $s,u,v,w$, so we may write $\psi(x,y; a,b; z)$ instead. We show this only for the parameter $s$: the cases of other parameters are very similar. Suppose that $x,$ $y,$ $a,$ $b,$ $z,$ $s,$ $u,$ $v$ and $w$ are as above and that $s' \in B_{a \bullet}$. Then 
\begin{align*}\psi(x,y; a,b; z,s,u,v,w)& - \psi(x,y; a,b; z,s',u,v,w)\\
 =\,& \mu\Big(\phi(a,s + z - b) - \phi(a,s) - \phi(a,s' + z - b) - \phi(a,s')\Big)\\
 =\,& 0,\end{align*}
since $\phi$ is a bi-homomorphism.\\

The rest of the proof will depend on several claims. The first one shows that for almost every choice of $a,b,x,y,z$ with the properties above we may find plenty of parameters $u, v, w, s$ which together satisfy the rest of the requirements, and the claim also shows that $\psi(x,y; a,b; z)$ is typically independent of $z$.

\begin{claima*}For all but $O(\delta^{-O(1)}\sqrt[16]{\eta} |U|^2 |V|^4)$ choices of $(x,y; a, b, z_1, z_2)$ such that $(x,y) \in B^{\text{ext}}$ and $(a,b), (a,z_1), (x,z_1),(a,z_2), (x,z_2) \in B^{1}$, we have $B_{x \bullet}, B_{\bullet z_1} \cap B_{\bullet z_2}, B_{a \bullet} \not= \emptyset$ and $\psi(x,y; a,b; z_1) = \psi(x,y; a,b; z_2)$.\end{claima*}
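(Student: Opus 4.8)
# Proof proposal for Claim A

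The plan is to show that the only obstructions to the desired conclusion are (a) the relevant cosets being empty and (b) the defining expression $\psi$ depending on the choice of $z$, and that both are rare events controlled by the quasirandomness hypotheses. I would first handle the non-emptiness. Since each $B^{\mu_1}$ is $\eta$-quasirandom with density $\delta$, for all but $O(\eta)|U|$ of $x \in u_0+U$ we have $|B_{x\bullet}| = \delta|v_0+V|$, and similarly (via Lemma~\ref{yQR}) for all but $O(\delta^{-4}\sqrt[4]{\eta})|V|$ of $z$ we have $|B_{\bullet z}| = \delta|u_0+U|$; analogous statements hold for the layer $B^1$ and for $B_{a\bullet}$, $B_{\bullet z_1}\cap B_{\bullet z_2}$ (the latter via Lemma~\ref{regInt1} applied to direction $G_2$ in $B^1$). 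So outside an $O(\delta^{-O(1)}\sqrt[4]{\eta})$-fraction of tuples, all the cosets appearing in the formula~\eqref{bigpsidefneqn} are non-empty and of the expected size, which in particular guarantees that valid choices of $s,u,v,w$ exist.

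The heart of the claim is the $z$-independence. Fix $(x,y)\in B^{\text{ext}}$ with $\mu = \beta_1(x,y)$, and fix $a,b$ with $(a,b)\in B^1$. For two choices $z_1,z_2$ with $(a,z_i),(x,z_i)\in B^1$, I would expand the difference $\psi(x,y;a,b;z_1) - \psi(x,y;a,b;z_2)$ using~\eqref{bigpsidefneqn}. Since $\psi$ is already known to be independent of $s,u,v,w$, I am free to compute this difference using whatever auxiliary parameters are convenient; the natural move is to pick $s,u,v,w$ for both $z_1$ and $z_2$ to lie in a common intersection of the relevant cosets (which is non-empty outside a small exceptional set, again by Lemma~\ref{regInt1}/Lemma~\ref{regInt2} applied to the quasirandom layers $B$ and $B^1$). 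Once the parameters are shared, the difference collapses to a fixed $\pm1$-combination of values of $\phi$ at points forming an additive quadruple in each variable separately; because $\phi$ is a biaffine map (equivalently a bi-homomorphism) on all of $B$, and all these points lie in $B$ as already verified in the paragraph following~\eqref{bigpsidefneqn}, this combination vanishes. Concretely I expect the cancellation to come down to the identities $\phi(x,v+y-w-\mu(z-w))$ being biaffine in its second argument and the translation terms $\phi(u+x-a,z)-\phi(u,z)$ and $\phi(a,s+z-b)-\phi(a,s)$ being, for fixed $x,a$, affine functions of $z$ whose dependence on $z_1$ versus $z_2$ exactly matches the dependence of the first term.

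The bookkeeping for the exceptional set is routine: each application of Lemma~\ref{regInt1}, Lemma~\ref{regInt2}, or Lemma~\ref{yQR} to one of the finitely many cosets involved contributes an error of the form $O(\delta^{-O(1)}\sqrt[c]{\eta})$ times the appropriate power of $|U|,|V|$, and since $\delta \geq p^{-r}$ (every non-empty column of a quasirandom layer is a coset of codimension at most $r$) and $\eta \leq \cons\,p^{-\con\,r}$, all of these are absorbed into $O(\delta^{-O(1)}\sqrt[16]{\eta}|U|^2|V|^4)$. The main obstacle I anticipate is not conceptual but notational: making sure that the choice of shared parameters $s,u,v,w$ used to compare $\psi(\cdot;z_1)$ and $\psi(\cdot;z_2)$ is simultaneously legal for both $z_1$ and $z_2$ (i.e.\ lies in $B_{x\bullet}$, $B_{\bullet z_1}\cap B_{\bullet z_2}$, $B_{a\bullet}$ at once), which is where the intersection estimates of Lemma~\ref{regInt1} are needed, and then carefully tracking that every $\phi$-evaluation in the expanded difference indeed lands in $B$ so that biaffinity of $\phi$ can be invoked term by term.
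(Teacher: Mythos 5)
Your handling of the non-emptiness and the exceptional-set accounting is correct and matches the paper's: each application of Lemma~\ref{regInt1}, Lemma~\ref{regInt2}, or Lemma~\ref{yQR} costs $O(\delta^{-O(1)}\eta^{\Omega(1)})$, and these are absorbed into the stated bound. The non-emptiness of $B_{x\bullet}$, $B_{a\bullet}$, and $B_{\bullet z_1}\cap B_{\bullet z_2}$ outside a small exceptional set is also handled the same way.

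The gap is in the $z$-independence, which is the heart of the claim. You write that after choosing common $s,u,v,w$, the difference $\psi(x,y;a,b;z_1)-\psi(x,y;a,b;z_2)$ ``collapses to a fixed $\pm1$-combination of values of $\phi$ at points forming an additive quadruple in each variable separately,'' and then that the translation terms ``are, for fixed $x,a$, affine functions of $z$ whose dependence on $z_1$ versus $z_2$ exactly matches the dependence of the first term.'' This last phrase is not a proof step; it is a restatement of what must be established. Write $L_w$ for the linear part of the affine map $\phi(w,\cdot)$ on the column $B_{w\bullet}$. The linear-in-$z$ part of $\phi\bigl(x,v+y-w-\mu(z-w)\bigr)$ is $-\mu L_x$; the linear-in-$z$ part of $\mu\bigl(\phi(u+x-a,z)-\phi(u,z)\bigr)$ is $\mu(L_{u+x-a}-L_u)$; and that of $\mu\bigl(\phi(a,s+z-b)-\phi(a,s)\bigr)$ is $\mu L_a$. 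So the vanishing you need is $L_{u+x-a}=L_x+L_u-L_a$, i.e.\ that $w\mapsto L_w$ is itself affine in $w$. This does \emph{not} follow from biaffinity on $B$: $\phi$ is defined only on the variety $B$, not on the whole plane, so one cannot simply evaluate $\phi$ at an additive quadruple of column indices with a common row, because the relevant points generally leave $B$. Establishing the affinity of $w\mapsto L_w$ (restricted as appropriate) is exactly the non-trivial content of the claim, and the paper achieves it by introducing auxiliary points $u\in B_{\bullet z_1}\cap B_{\bullet z_2}$, $e,e'\in Z_1$, $f\in Z_2$, $f'\in Z_3$, carefully chosen so that $(e,f)\in B^1$ and $(e',f')\in B$, and then running a chain of rewrites in which each $\phi$-term is split using $e-e'$ in the first coordinate and shifted by $f-f'$ in the second to stay inside $B$; it is this apparatus, not the bare biaffinity, that makes the difference vanish. (Claim C later in the same proof isolates exactly this ``$L_w$ is affine in $w$'' phenomenon.) Without these auxiliary elements, the cancellation you describe has no mechanism.
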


\begin{proof}[Proof of Claim A]By Lemmas~\ref{regInt1} and~\ref{yQR} we have that $|B_{x \bullet} \cap B_{a \bullet}| = \delta^2 |V|$ and $|B_{\bullet z_1} \cap B_{\bullet z_2} \cap B_{\bullet b}| = \delta^3 |U|$ for all but $O(\delta^{-O(1)}\sqrt[16]{\eta} |U|^2 |V|^4)$ of the sextuples considered. Thus, assume that these equalities hold.\\
\indent Since $|B_{\bullet z_1} \cap B_{\bullet z_2} \cap B_{\bullet b}| = \delta^3 |U|$, by Lemmas~\ref{yQR} and~\ref{regInt2} there are $v, w \in B_{x \bullet},$ $s \in B_{a \bullet}$ such that $Z_1 = B_{\bullet z_1} \cap B_{\bullet z_2} \cap B_{\bullet v} \cap B_{\bullet w} \cap B_{\bullet b} \cap B_{\bullet s}$ is non-empty, provided that $\eta \leq \cons \,\delta^{\con}$. Note that $Z_1$ is a non-empty and at most $(6r)$-codimensional coset inside $u_0 + U$. Hence $|Z_1| \geq p^{-6r} |u_0 + U|$. From this fact and Lemma~\ref{regInt2}, again provided that $\eta \leq \cons \,\delta^{\con}$, we may find elements $u \in B_{\bullet z_1} \cap B_{\bullet z_2},$ $e \in Z_1$ and $e' \in Z_1$ such that $Z_2 = B_{x \bullet} \cap B_{a \bullet} \cap B_{u \bullet} \cap B^{1}_{e \bullet} \cap B_{e' \bullet}$ and $Z_3 = B_{x \bullet} \cap B_{a \bullet} \cap B_{u \bullet} \cap B_{e \bullet} \cap B_{e' \bullet}$ are non-empty.\footnote{\label{foot1}Here we actually first use Lemma~\ref{regInt2} to show that for a vast majority of $u \in B_{\bullet z_1} \cap B_{\bullet z_2}$ and $e' \in Z_1$ we have $\tilde{Z}_2 = B_{x \bullet} \cap B_{a \bullet} \cap B_{u \bullet} \cap B_{e' \bullet} \not= \emptyset$. Then for any such choice of $u$ and $e'$ the set $\tilde{Z}_2$ has size $|\tilde{Z}_2| \geq p^{-4r}|v_0 + V|$. Applying Lemma~\ref{regInt2} twice to the set $\tilde{Z}_2$, once for the variety $B$ and the second time for $B^{1}$ (which we recall is also $\eta$-quasirandom with density $\delta$), we see that for a vast majority of $e \in Z_1$, the sets $Z_2 = \tilde{Z}_2 \cap B^{1}_{e \bullet}$ and $Z_3 = \tilde{Z}_2 \cap B_{e \bullet}$ are also non-empty.} Finally, take $f \in Z_2$ and $f' \in Z_3$. We have (noting that all points below at which $\phi$ is evaluated lie in $B$)
\begin{align*}\psi(x,&y; a,b; z_1) - \psi(x,y; a,b; z_2)\\ 
&= \phi\Big(x, v + y - w - \mu (z_1 - w)\Big) - \phi\Big(x, v + y - w - \mu (z_2 - w)\Big)\\
&\hspace{2cm}+\mu\bigg(\phi(u + x - a, z_1) - \phi(u,z_1) - \phi(u + x - a, z_2) + \phi(u,z_2)\\
&\hspace{4cm} + \phi(a,s + z_1 - b) - \phi(a,s + z_2 - b)\bigg)\\
&=\phi(x,v) - \phi\Big(x, v + \mu (z_1 - z_2)\Big)\\
&\hspace{0.5cm}+\mu\bigg(\phi(u + (x + e - e') - (a + e - e'), z_1) - \phi(u,z_1) - \phi(u + (x + e - e') - (a + e - e'), z_2)\\
&\hspace{4cm}+ \phi(u,z_2) + \phi(a + e - e', s + z_1 - b) - \phi(e, s + z_1 - b) + \phi(e', s + z_1 - b) \\
&\hspace{4cm}- \phi(a + e - e', s + z_2 - b) + \phi(e, s + z_2 - b) - \phi(e', s + z_2 - b)\bigg)\\
&=\phi(x,v) - \phi\Big(x + e - e', v + \mu (z_1 - z_2)\Big) + \phi\Big(e, v + \mu (z_1 - z_2)\Big)\\
&\hspace{2cm}- \phi\Big(e', v + \mu (z_1 - z_2)\Big)\\
&\hspace{2cm}+\mu\bigg(\phi(u + (x + e - e') - (a + e - e'), z_1) - \phi(u,z_1)\\
&\hspace{4cm} - \phi(u + (x + e - e') - (a + e - e'), z_2) + \phi(u,z_2)\\
&\hspace{4cm} + \phi(a + e - e', s + z_1 - b) - \phi(e, s + z_1 - b) + \phi(e', s + z_1 - b)\\
 &\hspace{4cm}- \phi(a + e - e', s + z_2 - b) + \phi(e, s + z_2 - b) - \phi(e', s + z_2 - b)\bigg)\\
&=\phi(x,v) + \phi\Big(e, v + \mu (z_1 - z_2)\Big) - \phi\Big(e', v + \mu (z_1 - z_2)\Big)\\
&\hspace{1cm}- \bigg(\phi\Big(x + e - e', v\Big) + \mu \phi\Big(x + e - e', z_1 - (\mu_1 - \lambda_1)(f-f')\Big)\\
&\hspace{3cm}- \mu\phi\Big(x + e - e', z_2- (\mu_1 - \lambda_1)(f-f')\Big)\bigg)\\
&\hspace{2cm}+\mu\bigg(\phi(u + (x + e - e') - (a + e - e'), z_1- \mu(f-f')) - \phi(u,z_1)\\
&\hspace{3cm} - \phi(u + (x + e - e') - (a + e - e'), z_2- \mu(f-f')) + \phi(u,z_2)\\
&\hspace{4cm} + \phi(a + e - e', z_1 - (f-f')) - \phi(a + e - e', z_2- (f-f')) \\
&\hspace{4cm} - \phi(e, s + z_1 - b) + \phi(e', s + z_1 - b) + \phi(e, s + z_2 - b) - \phi(e', s + z_2 - b)\bigg)\\
&=\phi(x,v) + \phi\Big(e, v + \mu (z_1 - z_2)\Big) - \phi\Big(e', v + \mu (z_1 - z_2)\Big) - \phi\Big(x + e - e', v\Big)\\
&\hspace{2cm}+\mu\bigg(\phi(u, z_1- (f-f')) - \phi(u,z_1) - \phi(u, z_2- (f-f')) + \phi(u,z_2)\\
&\hspace{4cm} - \phi(e, s + z_1 - b) + \phi(e', s + z_1 - b) + \phi(e, s + z_2 - b) - \phi(e', s + z_2 - b)\bigg)\\
&=\phi(e',v) - \phi(e,v) + \phi\Big(e, v + \mu (z_1 - z_2)\Big) - \phi\Big(e', v + \mu(z_1 - z_2)\Big) \\
&\hspace{2cm}+\mu\bigg(\phi(e', s + z_1 - b) - \phi(e', s + z_2 - b) - \phi(e, s + z_1 - b) + \phi(e, s + z_2 - b) \bigg)\\
&=0\qedhere.
\end{align*}
\end{proof}

For each $(a,b) \in B^{\mu_1}$, let $S_{(a,b)}$ be the set of all $(x,y) \in B^{\text{ext}}$ such that $B^{1}_{x \bullet} \cap B^{1}_{a \bullet} \not= \emptyset$ and for all but at most $\sqrt[32]{\eta} |V|^2$ choices of $z_1, z_2 \in B^{1}_{x \bullet} \cap B^{1}_{a \bullet}$, we have $B_{x \bullet}, B_{\bullet z_1} \cap B_{\bullet z_2}, B_{a \bullet} \not= \emptyset$ and $\psi(x,y; a,b; z_1) = \psi(x,y; a,b; z_2)$. First, since $b \in B^{1}_{a \bullet}$, this is a non-empty coset and thus we have that $|B^{1}_{a \bullet}| \geq p^{-r} |v_0 + V|$. By Lemma~\ref{regInt2}, $B^{1}_{x \bullet} \cap B^{1}_{a \bullet} \not= \emptyset$ holds for all but at most $O(p^{O(r)}\delta^{-O(1)}\sqrt[4]{\eta})|u_0 + U|$ elements $x \in u_0 + U$, provided that $\eta \leq \cons\, p^{-\con r}$. Thus, by Claim A,
\begin{equation}\label{totalSsize}\sum_{(a,b) \in B^{1}} |S_{(a,b)}| = (1 - O(p^{O(r)}\delta^{-O(1)}\sqrt[32]{\eta}))|B^{1}||B^{\text{ext}}|.\end{equation}
For each $(x,y) \in S_{(a,b)}$, define $\phi^{\text{ext}}_{a,b}(x,y)$ to be the most frequent value of $\psi(x,y; a,b; z)$. (Note that there is a unique such value as long as $\eta \leq \cons\,p^{-\con r}$.)\\

The next claim shows that $\phi^{\text{ext}}_{a,b}$ is necessarily a homomorphism in direction $G_2$.

\begin{claimb*}\label{biaffine1codimExtnCl1}For all tuples $(a,b, x, y_1, \dots, y_4)$ such that $(x, y_i) \in S_{(a,b)}$ and $y_1 + y_2 = y_3 + y_4$, we have that
\[\phi^{\mathrm{ext}}_{a,b}(x, y_1) + \phi^{\mathrm{ext}}_{a,b}(x, y_2) = \phi^{\mathrm{ext}}_{a,b}(x, y_3) + \phi^{\mathrm{ext}}_{a,b}(x,y_4).\]
\end{claimb*}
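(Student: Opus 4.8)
The plan is to reduce the statement entirely to the affinity of $\phi$ in the second coordinate and the affinity of the form $\beta_1(x,\cdot)$, by passing through a single common value of the auxiliary parameter $z$. First I would record the standard popularity estimate underlying the definition of $\phi^{\mathrm{ext}}_{a,b}$. If $(x,y)\in S_{(a,b)}$, then by definition $z\mapsto\psi(x,y;a,b;z)$ takes equal values $\psi(x,y;a,b;z_1)=\psi(x,y;a,b;z_2)$ on all but at most $\sqrt[32]{\eta}|V|^2$ of the pairs $(z_1,z_2)\in(B^1_{x\bullet}\cap B^1_{a\bullet})^2$. Since $B^1_{x\bullet}\cap B^1_{a\bullet}$ is a non-empty coset of codimension $O(r)$, hence of size at least $p^{-O(r)}|V|$, a Cauchy--Schwarz argument on the fibre sizes of $\psi(x,y;a,b;\cdot)$ shows that the most frequent value $\phi^{\mathrm{ext}}_{a,b}(x,y)$ is attained for all but a $p^{O(r)}\sqrt[32]{\eta}$ proportion of $z\in B^1_{x\bullet}\cap B^1_{a\bullet}$, and that it is the unique most frequent value provided $\eta\leq\cons\,p^{-\con\,r}$. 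The same definition also supplies $B_{x\bullet},B_{a\bullet}\neq\emptyset$ and $B_{\bullet z}\neq\emptyset$ for all but a $p^{O(r)}\sqrt[32]{\eta}$ proportion of such $z$.

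Given the tuple $(a,b,x,y_1,\dots,y_4)$ with every $(x,y_i)\in S_{(a,b)}$ and $y_1+y_2=y_3+y_4$, I would apply the previous paragraph to each $i\in[4]$ and take a union bound: provided $\eta\leq\cons\,p^{-\con\,r}$ there is a single $z\in B^1_{x\bullet}\cap B^1_{a\bullet}$ with $B_{\bullet z}\neq\emptyset$ such that $\psi(x,y_i;a,b;z)=\phi^{\mathrm{ext}}_{a,b}(x,y_i)$ for all $i$. Now fix, once and for all, elements $v,w\in B_{x\bullet}$, $u\in B_{\bullet z}$, $s\in B_{a\bullet}$; these choices do not depend on $i$, and by the remark preceding Claim~A the quantity $\psi(x,y_i;a,b;z,s,u,v,w)$ does not depend on $s,u,v,w$. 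Hence $\phi^{\mathrm{ext}}_{a,b}(x,y_i)=\psi(x,y_i;a,b;z,s,u,v,w)$ for all four indices with one common choice of every auxiliary parameter.

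Write $\sigma(1)=\sigma(2)=1$, $\sigma(3)=\sigma(4)=-1$ and $\mu_i=\beta_1(x,y_i)$. Since $\beta_1(x,\cdot)$ is affine and $y_1+y_2=y_3+y_4$, we have $\sum_i\sigma(i)\mu_i=0$, while trivially $\sum_i\sigma(i)=0$. I would then expand $\sum_i\sigma(i)\,\psi(x,y_i;a,b;z,s,u,v,w)$ term by term using formula~\eqref{bigpsidefneqn}: the contributions of $-\phi(x,v)$, of $-(\mu_i-1)\phi(x,w)$, and of the entire $\mu_i$-multiple of the $(a,b,z,s,u)$-block all vanish, because each is either independent of $i$ or affine-linear in $\mu_i$, and $\sum_i\sigma(i)=\sum_i\sigma(i)\mu_i=0$. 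The only surviving term is $\sum_i\sigma(i)\,\phi\big(x,\,v+y_i-w-\mu_i(z-w)\big)$. Setting $w_i=v+y_i-w-\mu_i(z-w)=v+y_i+(\mu_i-1)w-\mu_i z$, one checks exactly as in the paragraph following~\eqref{bigpsidefneqn} (using $\lambda_1=0$, $\beta_j(x,v)=\beta_j(x,w)=\lambda_j$, $\beta_j(x,y_i)=\lambda_j$ for $j\geq2$, and $\beta_1(x,z)=1$) that every $w_i$ lies in $B_{x\bullet}$, and that $w_1+w_2=w_3+w_4$ because the $v$-, $w$- and $z$-coefficients telescope to $0$ while $\sum_i\sigma(i)y_i=0$. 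Since $\phi(x,\cdot)$ is affine on the coset $B_{x\bullet}$, this yields $\sum_i\sigma(i)\phi(x,w_i)=0$, which is exactly the claimed identity $\phi^{\mathrm{ext}}_{a,b}(x,y_1)+\phi^{\mathrm{ext}}_{a,b}(x,y_2)=\phi^{\mathrm{ext}}_{a,b}(x,y_3)+\phi^{\mathrm{ext}}_{a,b}(x,y_4)$.

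I do not expect a genuine obstacle here: the only real work is the bookkeeping in the last step --- verifying that all arguments of $\phi$ land in $B_{x\bullet}$ and that the non-character blocks of $\psi$ cancel --- which is the same kind of computation already performed in the definition of $\psi$ and in Claim~A, and the extraction of a common $z$ is a routine union bound costing nothing beyond the standing hypothesis $\eta\leq\cons\,p^{-\con\,r}$. The mildest subtlety is to pick that single $z$ with $B_{\bullet z}\neq\emptyset$ so that $\psi(x,y_i;a,b;z)$ is actually defined, which again follows from the definition of $S_{(a,b)}$.
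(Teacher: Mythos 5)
Your proposal is correct and takes essentially the same route as the paper: pick a single $z$ realizing $\phi^{\mathrm{ext}}_{a,b}(x,y_i)=\psi(x,y_i;a,b;z)$ for all four $i$ simultaneously (permitted since $\eta\leq\cons\,p^{-\con r}$), fix common auxiliary parameters $s,u,v,w$, expand $\sum_i\sigma(i)\psi(x,y_i;a,b;z,s,u,v,w)$ via~\eqref{bigpsidefneqn}, and observe that all terms outside $\sum_i\sigma(i)\phi(x,v+y_i-w-\mu_i(z-w))$ vanish because $\sum_i\sigma(i)=\sum_i\sigma(i)\mu_i=0$, after which the surviving sum is zero by the affinity of $\phi$ on $B_{x\bullet}$. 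You supply a bit more detail than the paper (the Cauchy--Schwarz/union-bound extraction of the common $z$, and the verification that each $v+y_i-w-\mu_i(z-w)$ lands in $B_{x\bullet}$), but these are exactly the facts the paper invokes tacitly.
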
 

\begin{proof}[Proof of Claim B]Take $z \in B^{1}_{x \bullet} \cap B^{1}_{a \bullet}$ such that $\phi^{\text{ext}}_{a,b}(x, y_i) = \psi(x,y_i; a,b; z)$ for each $i \in [4]$ and $B_{x \bullet}, B_{\bullet z}, B_{a \bullet} \not= \emptyset$, which we may do provided that $\eta \leq \cons\, p^{-\con r}$. Take any $v, w \in B_{x \bullet}, u \in B_{\bullet z}, s \in B_{a \bullet}$. Let $\tau_i = \beta_1(x, y_i)$. These satisfy $\tau_1 + \tau_2 = \tau_3 + \tau_4$. Let $\sigma \colon [4] \to \{-1,1\}$ be defined by $\sigma(1) = \sigma(2) = 1, \sigma(3) = \sigma(4) = -1$. Then by definition~\eqref{bigpsidefneqn} of $\psi$,
\begin{align*}\phi^{\text{ext}}_{a,b}&(x, y_1) + \phi^{\text{ext}}_{a,b}(x, y_2) - \phi^{\text{ext}}_{a,b}(x, y_3) - \phi^{\text{ext}}_{a,b}(x,y_4)\,\,=\,\,\sum_{i \in [4]} \sigma(i) \phi^{\text{ext}}_{a,b}(x, y_i)\,\,=\,\,\sum_{i \in [4]} \sigma(i)\psi(x,y_i; a,b; z)\\
=&\sum_{i \in [4]} \sigma(i) \phi\Big(x, v + y_i - w - \tau_i (z - w)\Big) - \Big(\sum_{i \in [4]} \sigma(i)\Big)\phi(x, v) - \Big(\sum_{i \in [4]} \sigma(i)(\tau_i - 1)\Big) \phi(x,w)\nonumber\\
&\hspace{1cm}+\Big(\sum_{i \in [4]} \sigma(i) \tau_i\Big)\bigg(\phi(u + x - a, z) - \phi(u,z) + \phi(a,s + z - b) - \phi(a,s) + h_0\bigg)\\
=&0,\end{align*}
since $\phi$ is a bi-homomorphism, $\sum_{i \in [4]} \sigma(i) = 0$, $\sum_{i \in [4]} \sigma(i) y_i = 0$ and $\sum_{i \in [4]} \sigma(i) \tau_i = 0$.\end{proof}

Before showing that $\phi^{\text{ext}}_{a,b}$ is almost a homomorphism in direction $G_1$ (which will be our final claim in this proof), we need one more auxiliary claim that relates values of $\phi$ at different points in $B$. It is a slight generalization of a lemma in~\cite{extnPaper} which was proved for varieties defined by bilinear maps instead of biaffine maps.

\begin{claimc*}Suppose that $x, e, e', g, g',s \in u_0 + U$ and $w,z, f, h, h',t \in v_0 + V$ are such that $\beta_1 = 0$ for all pairs inside $\{x, e, e', g, g',s\} \times \{w,z,f,h,h',t\}$ except the values $\beta_1(x,z) = \beta_1(e,f) = \beta_1(g,h) = \beta_1(g', h') = 1$. Then, for each $\tau \in \mathbb{F}_p$ we have
\begin{align}&\phi(x + \tau(e - e'), \tau (z - w) + h + h' - f\Big) = \phi(x,h + h' - f)\nonumber\\
&\hspace{1cm}+ \tau \phi\Big(x + s - g, z - w + h\Big) -\tau \phi(x,h)\nonumber\\
&\hspace{1cm} + \tau\Big(\phi\Big(e - e' + g', h + h' - f\Big) - \phi\Big(s + g' - g, h + h' - f\Big)\Big)\nonumber\\
&\hspace{1cm}+ \tau \Big( -\phi(s,z) + \phi(s,w)-\phi(s,f) + \phi(s,h') + \phi(g,z) - \phi(g,w)  + \phi(g,f) -\phi(g,h')\Big)\nonumber\\ 
&\hspace{1cm}+\tau^2\Big( \phi(e,z) - \phi(e,w) - \phi(e', z) + \phi(e',w)\Big).\label{biaffineidentitylemma}\end{align}
\end{claimc*}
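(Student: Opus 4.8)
The plan is to prove \eqref{biaffineidentitylemma} by a direct computation that follows the corresponding lemma for bilinear maps in~\cite{extnPaper}, the extra terms on the right-hand side being exactly what is forced when one replaces uses of bilinearity of $\phi$ by biaffinity.

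First I would check that every point at which $\phi$ is evaluated on either side of \eqref{biaffineidentitylemma} lies in $B$, so that the identity makes sense. This is routine: since $\beta$ is biaffine, for any coefficients $\lambda_i$ with $\sum_i \lambda_i = 1$ one has $\beta_j\big(\sum_i \lambda_i a_i, b\big) = \sum_i \lambda_i \beta_j(a_i, b)$ and symmetrically in the second variable, so feeding in the hypotheses that $\beta_1$ vanishes on all the relevant pairs except on $(x,z),(e,f),(g,h),(g',h')$, where it is $1$ (and, as in the application, that all the pairs lie in $B^{\mathrm{ext}}$, so $\beta_j \equiv \lambda_j$ for $j \geq 2$), one gets $\beta_1 = 0$ and $\beta_j = \lambda_j$ at each evaluated point; e.g.\ at the left-hand argument $\big(x+\tau(e-e'),\ \tau(z-w)+h+h'-f\big)$ one finds $\beta_1 = \tau + \tau(-1) - \tau\cdot 0 = 0$.

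For the main computation I would start from the left-hand side and resolve the dependence on $\tau$ one coordinate at a time, using the affine-combination property of $\phi$: whenever $(a,b_i) \in B$ for $i = 0,\dots,m$, $\sum_i \mu_i = 1$, and $(a, \sum_i \mu_i b_i) \in B$, then $\phi(a, \sum_i \mu_i b_i) = \sum_i \mu_i \phi(a, b_i)$, and symmetrically in the first variable. The role of $s, g, g'$ is to act as ``anchors'' that keep $\beta_1$-values at $0$: in the bilinear setting of~\cite{extnPaper} one splits off the $\tau$-linear part of $\phi(x+\tau(e-e'),\cdot)$ directly because $\phi$ and $\beta_1$ vanish on pairs through the origin, whereas here each such step must be routed through $s$, $g$ or $g'$, and this is exactly what generates the linear-in-$\tau$ correction blocks $\tau\big(\phi(x+s-g,\,z-w+h)-\phi(x,h)\big)$, $\tau\big(\phi(e-e'+g',\,h+h'-f)-\phi(s+g'-g,\,h+h'-f)\big)$ and $\tau\big(-\phi(s,z)+\phi(s,w)-\phi(s,f)+\phi(s,h')+\phi(g,z)-\phi(g,w)+\phi(g,f)-\phi(g,h')\big)$, the genuinely bilinear residue contributing the $\tau^2$-term $\tau^2\big(\phi(e,z)-\phi(e,w)-\phi(e',z)+\phi(e',w)\big)$. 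One may organise the whole statement as an equality of two degree-$\le 2$ polynomials in $\tau$ valued in $H$: the $\tau^0$-parts agree trivially (both equal $\phi(x,h+h'-f)$), and matching the $\tau^1$- and $\tau^2$-coefficients reduces to a bounded list of biaffine identities for $\phi$, each dispatched by the telescoping just described.

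The main obstacle will be the bookkeeping rather than any new idea: each rewriting step is legitimate only while all the points involved remain in $B$, and since $\phi$ is merely biaffine one cannot discard affine shifts, so the anchor points $s, g, g'$ must be inserted in precisely the right places and the resulting correction terms collected without error. I would present the argument as one long explicit chain of equalities, in the style of the computation proving Claim~A above.
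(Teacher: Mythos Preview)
Your identification of the ingredients is correct --- biaffine manipulations, the role of the anchors $s,g,g'$, and why the various correction blocks arise --- but the paper does not organise the argument as a coefficient match. It proves the identity by \emph{induction on $\tau\in\{0,1,\dots,p-1\}$}: the base case $\tau=0$ is trivial, and the inductive step rewrites $\phi\big(x+(\tau+1)(e-e'),\,(\tau+1)(z-w)+h+h'-f\big)$ by first splitting the first argument as the affine combination $(x+\tau(e-e')+s-g)+(e-e'+g')-(s+g'-g)$, then peeling off the second argument through the auxiliary point $t$, and finally collecting terms and invoking the hypothesis for $\tau$.

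The difficulty with your coefficient-matching plan is that it presupposes the left-hand side is a polynomial of degree at most $2$ in $\tau$, and this is exactly what is not obvious when $\phi$ is only biaffine on $B$ rather than globally. The natural attempts to ``split off the $\tau$-linear part'' fail: for instance $\phi\big(x+\tau(e-e'),\,b(\tau)\big)=(1-\tau)\phi\big(x,b(\tau)\big)+\tau\,\phi\big(x+e-e',\,b(\tau)\big)$ is illegal because $\beta_1\big(x,b(\tau)\big)=\tau$ and $\beta_1\big(x+e-e',\,b(\tau)\big)=\tau-1$, so neither intermediate point lies in $B$; and there is no $\tau$-independent affine combination through the available anchors that keeps all intermediate points in $B$ simultaneously for every $\tau$. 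Routing through $s,g,g'$ only fixes a single $\tau$-increment at a time, which is precisely the paper's induction. So your ``one long explicit chain of equalities'' would, once made precise, either unfold into that induction or require a new idea for handling arbitrary $\tau$ in one stroke that you have not supplied.
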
 

Note that if we knew that $\phi$ was a global biaffine map, then the equality~\eqref{biaffineidentitylemma} would trivially hold. However, the fact that $\phi$ is defined only on a subset of $(u_0 + U) \times (v_0 + V)$ makes the result non-trivial.\\

\indent As before, we note that all points in the argument of $\phi$ belong to $B$ (and in the rest of this subsection by writing $\phi(q,r)$ we tacitly imply that $(q,r) \in B$). The easiest way to see this is to observe the following. Suppose that $\alpha$ is an affine map on a coset $C$. Let $\lambda_1, \dots, \lambda_r \in \mathbb{F}_p$ be such that $\sum_{i \in [r]} \lambda_i = 1$ and let $x_1, \dots, x_r \in C$. Then $\sum_{i \in [r]} \lambda_i x_i \in C$ and $\alpha\Big(\sum_{i \in [r]} \lambda_i x_i\Big) = \sum_{i \in [r]} \lambda_i \alpha(x_i)$. Hence, even though we need to pay additional care in algebraic manipulations when working with affine maps, such maps behave linearly when the linear combinations satisfy $\sum_{i \in [r]} \lambda_i = 1$. For example, to see that $(x + s - g, z - w + h) \in B$ holds under the assumptions in Claim C, since $\beta_1$ is biaffine, we have
\begin{align*}&\beta_1(x + s - g, z - w + h) = \beta_1(x, z - w + h) + \beta_1(s, z - w + h) - \beta_1(g, z - w + h) \\
&\hspace{2cm}= \beta_1(x,z) - \beta_1(x,w) + \beta_1(x,h) + \beta_1(s,z) - \beta_1(s,w) + \beta_1(s,h) - \beta_1(g,z) + \beta_1(g,w) - \beta_1(g,h)\\
&\hspace{2cm}= 0.\end{align*}
Similarly, we may similarly relate $\phi$ values of different points using the observation above, since $\phi$ is also biaffine. For example, our first step in the proof of Claim C is
\begin{align*}&\phi(x + (\tau + 1)(e - e'), (\tau + 1)(z - w) + h + h' - f\Big)\\
=\,&\phi\Big(x + \tau(e - e') + s - g, (\tau + 1)(z - w) + h + h' - f\Big) + \phi\Big(e - e' + g', (\tau + 1)(z - w) + h + h' - f\Big)\\
&\hspace{1cm}- \phi\Big(s + g' - g, (\tau + 1)(z - w) + h + h' - f\Big).\end{align*}
We will perform such steps without explicit comments in the the proof below.

\begin{proof}[Proof of Claim C]We prove the claim by induction on $\tau \in \{0,1,2 \dots, p-1\}$. The base case $\tau = 0$ is trivial. Suppose the claim holds for some $\tau \in \{0,1,\dots, p-2\}$. Then
\begin{align*}&\phi(x + (\tau + 1)(e - e'), (\tau + 1)(z - w) + h + h' - f\Big)\\
=\,&\phi\Big(x + \tau(e - e') + s - g, (\tau + 1)(z - w) + h + h' - f\Big) + \phi\Big(e - e' + g', (\tau + 1)(z - w) + h + h' - f\Big)\\
&\hspace{1cm}- \phi\Big(s + g' - g, (\tau + 1)(z - w) + h + h' - f\Big)\\
=\,&\phi\Big(x + \tau(e - e') + s - g, \tau (z - w) + 2t - f\Big) + \phi\Big(x + \tau(e - e') + s - g, z - w + h\Big)\\
&\hspace{4cm} - \phi\Big(x + \tau(e - e') + s - g, 2t - h'\Big)\\
&\hspace{1cm} +\phi\Big(e - e' + g', (\tau + 1)(z - w) + h + h' - f\Big) - \phi\Big(s + g' - g, (\tau + 1)(z - w) + h + h' - f\Big)\\
=\,&\phi\Big(x + \tau(e - e'), \tau (z - w) + 2t - f\Big) + \phi\Big(s, \tau z - \tau w + 2t - f\Big) - \phi\Big(g, \tau z - \tau w + 2t - f\Big)\\
&\hspace{1cm}+ \phi\Big(x + \tau(e - e') + s - g, z - w + h\Big) - \phi\Big(x + \tau(e - e') + s - g, 2t - h'\Big)\\
&\hspace{1cm} +\phi\Big(e - e' + g', (\tau + 1)(z - w) + h + h' - f\Big) - \phi\Big(s + g' - g, (\tau + 1)(z - w) + h + h' - f\Big)\\
=\,&\phi\Big(x + \tau(e - e'), \tau (z - w) + h + h' - f\Big) + \phi\Big(x + \tau(e - e'), 2t - h\Big) - \phi\Big(x + \tau(e - e'), h'\Big)\\
&\hspace{1cm}+ \phi\Big(s, \tau z - \tau w + 2t - f\Big) - \phi\Big(g, \tau z - \tau w + 2t - f\Big)\\
&\hspace{1cm}+ \phi\Big(x + \tau(e - e') + s - g, z - w + h\Big) - \phi\Big(x + \tau(e - e') + s - g, 2t - h'\Big)\\
&\hspace{1cm} +\phi\Big(e - e' + g', (\tau + 1)(z - w) + h + h' - f\Big) - \phi\Big(s + g' - g, (\tau + 1)(z - w) + h + h' - f\Big)\\
=\,&\phi\Big(x + \tau(e - e'), \tau (z - w) + h + h' - f\Big)\\
&\hspace{1cm}+ 2\phi(x, t) - \phi(x,h) + 2\tau \phi(e, t) - \tau \phi(e,h) - 2\tau\phi(e',t) + \tau\phi(e',h)\\
&\hspace{1cm}- \phi(x, h') - \tau \phi(e,h') + \tau \phi(e', h')\\
&\hspace{1cm}+ \tau \phi(s, z) - \tau \phi(s,w) + 2\phi(s,t) - \phi(s,f)\\
&\hspace{1cm}-\tau\phi(g,z) + \tau \phi(g,w) - 2\phi(g,t) + \phi(g,f)\\
&\hspace{1cm}+ \phi\Big(x + s - g, z - w + h\Big) + \tau \phi(e, z) - \tau\phi(e, w) + \tau\phi(e,h) - \tau\phi(e', z) + \tau\phi(e', w) - \tau \phi(e', h)\\
&\hspace{1cm}- 2\phi(x,t) + \phi(x,h') - 2\tau\phi(e,t) + \tau \phi(e, h') + 2\tau \phi(e', t)\\
&\hspace{2cm} - \tau \phi(e', h') - 2\phi(s,t) + \phi(s,h') + 2\phi(g,t) - \phi(g,h')\\
&\hspace{1cm} + \phi\Big(e - e' + g', h + h' - f\Big) + (\tau + 1) \phi(e,z) - (\tau + 1)\phi(e', z) + (\tau + 1)\phi(g', z) \\
&\hspace{2cm}- (\tau + 1)\phi(e,w) + (\tau + 1)\phi(e',w) - (\tau + 1)\phi(g', w)\\
&\hspace{1cm}- \phi\Big(s + g' - g, h + h' - f\Big) - (\tau + 1)\phi(s, z) -(\tau + 1)\phi(g', z) + (\tau + 1)\phi(g,z)\\
&\hspace{2cm} + (\tau + 1)\phi(s, w) + (\tau + 1)\phi(g', w) - (\tau + 1)\phi(g,w)\\
=\,&\phi\Big(x + \tau(e - e'), \tau (z - w) + h + h' - f\Big)\\
&\hspace{1cm}+ \phi\Big(x + s - g, z - w + h\Big) + \phi\Big(e - e' + g', h + h' - f\Big) - \phi\Big(s + g' - g, h + h' - f\Big)\\
&\hspace{1cm}-\phi(x,h)-\phi(s,z) + \phi(s,w)-\phi(s,f) + \phi(g,z) - \phi(g,w)  + \phi(g,f) + \phi(s,h') -\phi(g,h')\\ 
&\hspace{1cm}+(2\tau+1) \phi(e,z) - (2\tau + 1) \phi(e,w) - (2\tau + 1) \phi(e', z) + (2\tau + 1) \phi(e',w). 
\end{align*}
The result follows after applying the induction hypothesis.\end{proof}

As announced, our final claim is that $\phi^{\text{ext}}_{a,b}$ is essentially a homomorphism in direction $G_1$ as well.

\begin{claimd*}\label{biaffine1codimExtnCl2}For all but $O(\delta^{-O(1)}\sqrt[4]{\eta} |U|^4|V|^2)$ tuples $(a,b, x_1, \dots, x_4, y)$ such that $(x_i, y) \in S_{(a,b)}$ and $x_1 + x_2 = x_3 + x_4$, we have that
\[\phi^{\mathrm{ext}}_{a,b}(x_1, y) + \phi^{\mathrm{ext}}_{a,b}(x_2, y) = \phi^{\mathrm{ext}}_{a,b}(x_3, y) + \phi^{\mathrm{ext}}_{a,b}(x_4,y).\]
\end{claimd*}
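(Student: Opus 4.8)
Here is how I would prove Claim D.

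\medskip

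The plan is to mirror the strategy used in Claim B, but now working in direction $G_1$, which is the direction in which $\phi^{\text{ext}}_{a,b}$ is only \emph{almost} affine (hence the loss of a small exceptional set, unlike Claim B). Fix a tuple $(a,b,x_1,\dots,x_4,y)$ with $(x_i,y)\in S_{(a,b)}$ and $x_1+x_2=x_3+x_4$, let $\sigma\colon[4]\to\{-1,1\}$ be $\sigma(1)=\sigma(2)=1,\ \sigma(3)=\sigma(4)=-1$, and write $\tau_i=\beta_1(x_i,y)$, so that $\sum_i\sigma(i)\tau_i=\beta_1\big(\sum_i\sigma(i)x_i,\,y\big)=0$ since $\beta_1$ is biaffine and $\sum_i\sigma(i)x_i=0$. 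First I would choose, for each $i$, a single common element $z\in B^1_{x_i\bullet}\cap B^1_{a\bullet}$ that simultaneously computes $\phi^{\text{ext}}_{a,b}(x_i,y)=\psi(x_i,y;a,b;z)$ for all four $i$: this is possible for all but an $O(\delta^{-O(1)}\eta^{\Omega(1)})$-proportion of the tuples, because each $(x_i,y)\in S_{(a,b)}$ excludes only $\sqrt[32]{\eta}|V|^2$ bad pairs $(z_1,z_2)$, and an intersection/averaging argument (using Lemma~\ref{regInt2} applied to the quasirandom variety $B^1$, together with Lemma~\ref{yQR}) shows the four constraint sets $B^1_{x_i\bullet}\cap B^1_{a\bullet}$ overlap in a set on which a positive density of $z$ is simultaneously good for all $i$. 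One also records, via Lemma~\ref{regInt1} and Lemma~\ref{regInt2}, that for all but a small proportion of tuples the columns/rows $B_{x_i\bullet}$, $B_{\bullet z}$, $B_{a\bullet}$ and their relevant intersections are non-empty and of the expected size.

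\medskip

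With such a $z$ fixed, plug the definition~\eqref{bigpsidefneqn} of $\psi$ into $\sum_{i\in[4]}\sigma(i)\phi^{\text{ext}}_{a,b}(x_i,y)=\sum_{i\in[4]}\sigma(i)\psi(x_i,y;a,b;z)$ and expand term by term, exactly as in the computation proving Claim B but now tracking the dependence on $x_i$ rather than $y_i$. The terms $-\phi(x_i,v)$, $-(\tau_i-1)\phi(x_i,w)$, $\phi\big(x_i,v+y-w-\tau_i(z-w)\big)$ and $\mu\big(\cdots\big)$ each contribute combinations of $\phi$-values at points whose first coordinates are affine combinations of $x_1,\dots,x_4$. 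Here is where Claim C enters: the term $\phi\big(x_i,v+y-w-\tau_i(z-w)\big)$ is not linear in $x_i$ on its own — the points $(x_i,\cdot)$ lie in $B$ but need not lie on a common coset through which $\phi$ restricts linearly — so I would use~\eqref{biaffineidentitylemma} (applied with $\tau=\tau_i$ and suitably chosen auxiliary points $e,e',g,g',s,w,f,h,h',t$, all taken from the various columns and rows that the earlier exceptional-set bounds guarantee to be non-empty) to rewrite each such $\phi$-value as an explicit polynomial expression in $\phi$-values at points depending affinely or bi-affinely on $x_i$. Summing these expressions against $\sigma(i)$ and using the four identities $\sum_i\sigma(i)=0$, $\sum_i\sigma(i)x_i=0$, $\sum_i\sigma(i)\tau_i=0$ and $\sum_i\sigma(i)\tau_i^2$ — this last sum is \emph{not} automatically zero, but the coefficient of $\tau_i^2$ in~\eqref{biaffineidentitylemma} is the single expression $\phi(e,z)-\phi(e,w)-\phi(e',z)+\phi(e',w)$ which is independent of $i$, so $\sum_i\sigma(i)\tau_i^2\cdot(\text{that constant})$ still vanishes once one notes $\sum_i\sigma(i)\tau_i^2=\sum_i\sigma(i)\tau_i^2$ is killed by the biaffinity of $\beta_1$ through $\big(\sum\sigma(i)x_i\big)$ appearing linearly — everything collapses to $0$, using only that $\phi$ is a genuine bi-homomorphism on $B$.

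\medskip

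The main obstacle is the bookkeeping in the second paragraph: one has to make the auxiliary points in the application of Claim C \emph{uniform across $i\in[4]$} (same $e,e',g,g',s,w,f,h,h',t$ for all four values) so that the $\tau_i^2$-coefficients genuinely coincide and cancel under $\sum_i\sigma(i)$, and one must verify that a common such choice exists for all but a small proportion of tuples — this is again a routine but lengthy intersection argument built from Lemmas~\ref{regInt1},~\ref{regInt2},~\ref{yQR} and the non-emptiness of bounded-codimension cosets (each relevant intersection has codimension $O(r)$ inside its ambient coset, hence density at least $p^{-O(r)}$, so Lemma~\ref{regInt2} applies provided $\eta\le\cons\,p^{-\con r}$). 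Assembling the exceptional sets from the choice of $z$, from the Claim C auxiliary points, and from the "good size" requirements on columns and rows, and bounding their total size by $O(\delta^{-O(1)}\sqrt[4]{\eta}|U|^4|V|^2)$ as in the statement, completes the proof. I do not expect any genuinely new idea to be needed beyond what Claims A–C already supply; the difficulty is purely in organizing the cancellation cleanly.
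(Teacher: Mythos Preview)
Your overall strategy is essentially the paper's: find a common $z$ for all four $i$, expand $\sum_i\sigma(i)\psi(x_i,y;a,b;z)$ via~\eqref{bigpsidefneqn}, and invoke Claim~C to handle the problematic term $\phi\big(x_i,\,v+y-w-\tau_i(z-w)\big)$ whose second coordinate depends on $\tau_i$. The bookkeeping (common auxiliary points, exceptional-set bounds via Lemmas~\ref{regInt1},~\ref{regInt2},~\ref{yQR}) is also right.

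However, your treatment of the quadratic term $\sum_i\sigma(i)\tau_i^2$ is where the sketch breaks down. First, Claim~C cannot be applied directly to $\phi\big(x_i,\,y-\tau_i(z-w)\big)$: the identity~\eqref{biaffineidentitylemma} has left-hand side $\phi\big(x+\tau(e-e'),\,\tau(z-w)+h+h'-f\big)$, so to make the first coordinate equal to $x_i$ you would need $x=x_i-\tau_i(e-e')$, and then the first term $\phi(x,h+h'-f)$ on the right still carries $\tau_i$-dependence in its first coordinate, so nothing has been gained. Second, and more seriously, your assertion that $\sum_i\sigma(i)\tau_i^2$ ``is killed by the biaffinity of $\beta_1$ through $\big(\sum\sigma(i)x_i\big)$ appearing linearly'' is simply false: $\tau_1+\tau_2=\tau_3+\tau_4$ does not force $\tau_1^2+\tau_2^2=\tau_3^2+\tau_4^2$ (take $\tau_1=2,\tau_2=0,\tau_3=\tau_4=1$), and there is no biaffinity mechanism that makes this sum vanish.

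What the paper actually does is more delicate. It first performs a preparatory shift $x_i\mapsto x_i+\tau_i(e-e')$ using biaffinity of $\phi$, writing
\[
\phi\big(x_i,\,y-\tau_i(z-w)\big)=\phi\big(x_i+\tau_i(e-e'),\,y-\tau_i(z-w)\big)-\tau_i\phi\big(e,\,y-\tau_i(z-w)\big)+\tau_i\phi\big(e',\,y-\tau_i(z-w)\big),
\]
and expanding the last two terms affinely in the second coordinate \emph{deliberately introduces} a term $\tau_i^2\big[\phi(e,z)-\phi(e,w)-\phi(e',z)+\phi(e',w)\big]$. Only after this shift is Claim~C applicable (now with $x=x_i$, so the left-hand side of~\eqref{biaffineidentitylemma} matches), and the $\tau_i^2$-term produced by Claim~C is exactly the \emph{negative} of the one just introduced. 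The two cancel by design, not because $\sum_i\sigma(i)\tau_i^2=0$. Without this preparatory step your argument will leave an uncancelled $\big(\sum_i\sigma(i)\tau_i^2\big)\cdot\big[\phi(e,z)-\phi(e,w)-\phi(e',z)+\phi(e',w)\big]$, and there is no reason for either factor to vanish.
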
 

\begin{proof}[Proof of Claim D]By Lemma~\ref{regInt1}, we have $|B^{1}_{a\bullet} \cap (\bigcap_{i \in [4]} B^{1}_{x_i \bullet})| = |B_{a\bullet} \cap (\bigcap_{i \in [4]} B_{x_i \bullet})| = \delta^4 |V|$ for all but $O(\delta^{O(1)}\sqrt[4]{\eta}|U|^4)$ choices of $(x_{[4]}, a)$ such that $x_1 + x_2 = x_3 + x_4$. Suppose therefore that $(a,b, x_1, \dots, x_4, y)$ is such that $(x_i, y) \in S_{(a,b)}$, $x_1 + x_2 = x_3 + x_4$ and $|B^{1}_{a\bullet} \cap (\bigcap_{i \in [4]} B^{1}_{x_i \bullet})| = \delta^4 |V|$. Since $(x_i, y) \in S_{(a,b)}$ and $\eta \leq \cons\,\delta^{\con}$, there are at least $\Omega(p^{-5r}|v_0 + V|)$ choices of $z \in B^{1}_{a\bullet} \cap (\bigcap_{i \in [4]} B^{1}_{x_i \bullet})$ such that $B_{\bullet z} \not= \emptyset$ and $\phi^{\text{ext}}_{a,b}(x_i,y) = \psi(x_i,y; a,b; z)$ holds for each $i \in [4]$. Let $Z$ be the set of all such $z$. We now find auxiliary elements in $u_0 + U$ and $v_0 + V$ that will allow us to perform algebraic manipulations and in particular enable us to apply Claim C. Let $\tau_i = \beta_1(x_i, y)$.\\
\indent  By Lemma~\ref{regInt2} and $\eta \leq \cons\,\delta^{\con}$, there is a choice of $z \in Z$ such that $B_{\bullet y} \cap B_{\bullet z} \not= \emptyset$. Fix such a $z$. In particular, $|B_{\bullet y} \cap B_{\bullet z}| \geq p^{-2r} |u_0 + U|$. By Lemma~\ref{regInt2} and $\eta \leq \cons\,\delta^{\con}$ there are $u,$ $e,$ $e',$ $g,$ $g',$ $s \in B_{\bullet y} \cap B_{\bullet z}$ such that
\[B_{x_1 \bullet} \cap \dots \cap B_{x_4 \bullet} \cap B_{a \bullet} \cap B_{u \bullet}  \cap B^{\mu_1}_{e \bullet} \cap B^{\mu_2}_{e' \bullet} \cap B^{\mu_3}_{g \bullet}  \cap B^{\mu_4}_{g' \bullet} \cap B_{s \bullet} \not= \emptyset\]
for each $(\mu_1, \dots, \mu_4) \in \Big\{(1,0,0,0), (0,1,0,0), (0,0,1,0), (0,0,0,1)\Big\}$.\footnote{To be more precise, we need to apply Lemma~\ref{regInt2} twice for each choice of $(\mu_1,\dots,\mu_4)$, since we have intersections of two varieties $B$ and $B^1$. This is similar to the argument in footnote~\ref{foot1} in the proof of Claim A.} In conclusion, we may find $w,$ $v,$ $f,$ $f',$ $h,$ $h',$ $t \in v_0 + V$ such that
\begin{equation}\text{$\beta_1 = 0$ for all pairs in $\{x_1, x_2, x_3, x_4, a, e, e', g, g', s\} \times \{y, z, w,v,f,f',h,h',t\}$}\label{auxValuesEqn1}\end{equation}
except that
\begin{equation}\beta_1(x_i, y) = \tau_i,\,\,\beta_1(x_i,z) = \beta_1(e,f) = \beta_1(e',f') = \beta_1(g, h) = \beta_1(g',h') = 1\text{ and possibly }\beta_1(a,y)\not= 0\label{auxValuesEqn2}\end{equation}
where the value $\beta_1(a,y)$ is irrelevant.\\

Let $\sigma \colon [4] \to \{-1,1\}$ be defined by $\sigma(1) = \sigma(2) = 1, \sigma(3) = \sigma(4) = -1$. Then by definition~\eqref{bigpsidefneqn} of $\psi$, 
\begin{align*}\sum_{i \in [4]} \sigma(i) \phi^{\text{ext}}_{a,b}(x_i,y)=& \sum_{i \in [4]} \sigma(i)\psi(x_i,y; a,b; z,s,u,v,w)\\
=&\sum_{i \in [4]} \sigma(i) \phi\Big(x_i, v + y - w - \tau_i (z - w)\Big) - \Big(\sum_{i \in [4]} \sigma(i) \phi(x_i, v)\Big)\\
&\hspace{2cm} - \Big(\sum_{i \in [4]} \sigma(i) (\tau_i - 1)\phi(x_i,w)\Big) + \Big(\sum_{i \in [4]} \sigma(i) \tau_i \phi(u + x_i - a, z)\Big)\\
&\hspace{2cm} - \sum_{i \in [4]} \sigma(i) (\tau_i - 1)\Big(- \phi(u,z) + \phi(a,s + z - b) - \phi(a,s) + h_0\Big)\\
=& \sum_{i \in [4]} \sigma(i) \phi\Big(x_i, v + y - w - \tau_i (z - w)\Big) - \Big(\sum_{i \in [4]} \sigma(i) (\tau_i - 1)\phi(x_i,w)\Big)\\
&\hspace{2cm}+\Big(\sum_{i \in [4]} \sigma(i) \tau_i \phi(u + x_i - a, z)\Big)\\
=& \sum_{i \in [4]} \sigma(i) \bigg(\phi\Big(x_i, y - \tau_i (z - w)\Big) + \phi(x_i, v) - \phi(x_i, w)\bigg) - \Big(\sum_{i \in [4]} \sigma(i) (\tau_i - 1)\phi(x_i,w)\Big)\\
&\hspace{2cm}+\Big(\sum_{i \in [4]} \sigma(i) \tau_i \phi(u + x_i - a, z)\Big)\\
=& \sum_{i \in [4]} \sigma(i) \phi\Big(x_i, y - \tau_i (z - w)\Big) - \Big(\sum_{i \in [4]} \sigma(i) \tau_i \phi(x_i,w)\Big) + \Big(\sum_{i \in [4]} \sigma(i) \tau_i \phi(u + x_i - a, z)\Big),\end{align*}
since $\phi$ is a bi-homomorphism, $\sum_{i \in [4]} \sigma(i) = 0$, $\sum_{i \in [4]} \sigma(i) x_i = 0$ and $\sum_{i \in [4]} \sigma(i) \tau_i = 0$.\\

Recall that we have auxiliary elements of $u_0 + U$ and $v_0 + V$ satisfying~\eqref{auxValuesEqn1} and~\eqref{auxValuesEqn2}. Then we may proceed with manipulating the expression above as 
\begin{align*}&\sum_{i \in [4]} \sigma(i) \phi\Big(x_i, y - \tau_i (z - w)\Big) - \Big(\sum_{i \in [4]} \sigma(i) \tau_i \phi(x_i,w)\Big) + \Big(\sum_{i \in [4]} \sigma(i) \tau_i \phi(u + x_i - a, z)\Big)\\
=&\sum_{i \in [4]} \sigma(i) \Big(\phi\Big(x_i + \tau_i(e - e'), y - \tau_i (z - w)\Big) - \tau_i\phi\Big(e, y - \tau_i (z - w)\Big) + \tau_i \phi\Big(e', y - \tau_i (z - w)\Big)\Big)\\
&\hspace{1cm} - \Big(\sum_{i \in [4]} \sigma(i) \tau_i \phi(x_i,w)\Big) + \Big(\sum_{i \in [4]} \sigma(i) \tau_i \phi(u + x_i - a, z)\Big)\\
=&\sum_{i \in [4]} \sigma(i) \Big(\phi\Big(x_i + \tau_i(e - e'), y - \tau_i (z - w)\Big) - \tau_i\phi(e, y) + \tau_i \phi(e', y)\Big)\\
&\hspace{1cm} - \sum_{i \in [4]} \sigma(i) \Big(\tau_i \phi(x_i,w) + \tau_i^2 \phi(e, w) - \tau_i^2 \phi(e', w)\Big) + \Big(\sum_{i \in [4]} \sigma(i) \tau_i \phi(u + x_i - a, z) + \tau_i^2 \phi(e, z) - \tau_i^2 \phi(e', z)\Big)\\
=&\sum_{i \in [4]} \sigma(i) \Big(\phi\Big(x_i + \tau_i(e - e'), y - f' + h\Big) + \phi\Big(x_i + \tau_i(e - e'), f' - f + h'\Big)\\
&\hspace{4cm} - \phi\Big(x_i + \tau_i(e - e'), \tau_i (z - w) + h + h' - f\Big)\Big)\\
&\hspace{1cm} - \sum_{i \in [4]} \sigma(i) \Big(\tau_i \phi(x_i,w) + \tau_i^2 \phi(e, w) - \tau_i^2 \phi(e', w)\Big) + \Big(\sum_{i \in [4]} \sigma(i) \tau_i \phi(u + x_i - a, z) + \tau_i^2 \phi(e, z) - \tau_i^2 \phi(e', z)\Big)\\
=&-\sum_{i \in [4]} \sigma(i) \phi\Big(x_i + \tau_i(e - e'), \tau_i (z - w) + h + h' - f\Big)\Big)\\
&\hspace{1cm} - \sum_{i \in [4]} \sigma(i) \Big(\tau_i \phi(x_i,w) + \tau_i^2 \phi(e, w) - \tau_i^2 \phi(e', w)\Big) + \Big(\sum_{i \in [4]} \sigma(i) \tau_i \phi(u + x_i - a, z) + \tau_i^2 \phi(e, z) - \tau_i^2 \phi(e', z)\Big).\end{align*}

Applying Claim C (and using all the auxiliary elements from~\eqref{auxValuesEqn1} that we have not yet used), we may use the identity~\eqref{biaffineidentitylemma} for $\phi\Big(x_i + \tau_i(e - e'), \tau_i (z - w) + h + h' - f\Big)$ for each $i \in [4]$ so that the expression above becomes (after most of the terms cancel because $\phi$ is a bi-homomorphism and $\sum_{i \in [4]} \sigma(i) \tau_i = 0$)

\begin{align*}&-\Big(\sum_{i \in [4]} \sigma(i)\tau_i \phi\Big(x_i + s - g, z - w + h\Big)\Big) + \Big(\sum_{i \in [4]} \sigma(i)\tau_i \phi(x_i, h)\Big) - \Big(\sum_{i \in [4]} \sigma(i) \tau_i \phi(x_i,w) \Big)\\
&\hspace{10cm} + \Big(\sum_{i \in [4]} \sigma(i) \tau_i \phi(x_i + u - a, z)\Big)\\
=\,&-\Big(\sum_{i \in [4]} \sigma(i)\tau_i \phi\Big(x_i + u - a, z - w + h\Big) + \sigma(i)\tau_i \phi\Big(s, z - w + h\Big) - \sigma(i)\tau_i \phi\Big(u - a + g, z - w + h\Big)\Big)\\
&\hspace{1cm}  + \Big(\sum_{i \in [4]} \sigma(i)\tau_i \phi(x_i + u - a, h)\Big) - \Big(\sum_{i \in [4]} \sigma(i) \tau_i \phi(x_i + u - a, w) \Big) + \Big(\sum_{i \in [4]} \sigma(i) \tau_i \phi(x_i + u - a, z)\Big)\\
=\,&\sum_{i \in [4]} \sigma(i)\tau_i\Big(-\phi\Big(x_i + u - a, z - w + h\Big) + \phi(x_i + u - a, h) - \phi(x_i + u - a, w) + \phi(x_i + u - a, z)\Big)\\
=\,&0,\end{align*}
which finally shows that 
\[\phi^{\text{ext}}_{a,b}(x_1, y) + \phi^{\text{ext}}_{a,b}(x_2, y) = \phi^{\text{ext}}_{a,b}(x_3, y) + \phi^{\text{ext}}_{a,b}(x_4,y).\qedhere\]
\end{proof}

Combining~\eqref{totalSsize} with Claims B and D, we conclude that there is a choice of $(a,b) \in B^{1}$ such that $|S_{(a,b)}| = (1 - O(\delta^{-O(1)} \eta^{\Omega(1)}))|B^{\text{ext}}|$ and $\phi^{\text{ext}}_{a,b}$ respects all but $O(\delta^{-O(1)}\eta^{\Omega(1)} |U|^3|V|)$ of the additive quadruples in direction $G_1$ and all additive quadruples in direction $G_2$. After averaging to remove at most columns $O(\delta^{-O(1)}\eta^{\Omega(1)} |v_0 + V|)$ columns, so that the remaining ones have at most $O(\delta^{-O(1)}\eta^{\Omega(1)}|u_0+U|^3)$ additive quadruples in direction $G_1$ not respected by $\phi^{\text{ext}}_{a,b}$, we may apply Corollary~\ref{approxF2homm} to make $\phi^{\text{ext}}_{a,b}$ a bi-homomorphism on a subset of $S_{(a,b)}$ of size $(1 - O(\delta^{-O(1)} \eta^{\Omega(1)}))|B^{\text{ext}}|$. Finally, apply Proposition~\ref{biaffineExtnStep2} to finish the proof.\end{proof}

\subsection{The main biaffine extension result}

We conclude this section with the final result on extending biaffine maps defined on almost all points of biaffine varieties. The result is that such maps extend to global biaffine maps, with a small error set. Moreover, we have the control over the size of the error set.

\begin{theorem}\label{biaffineExtnFullThm}Let $u_0 + U$ and $v_0 + V$ be cosets in $G_1$ and $G_2$, respectively, and let $\beta \colon G_1 \times G_2 \to \mathbb{F}_p^r$ be a biaffine map. Suppose that for each $\lambda \in \mathbb{F}_p^r$ the variety $B^\lambda = ((u_0 + U) \times (v_0 + V)) \cap \{(x,y):\beta(x,y) = \lambda\}$ is $\eta$-quasirandom with density $\delta_\lambda \geq 0$. Let $\lambda \in \mathbb{F}_p^r$ be such that $B = B^\lambda$ is non-empty.\\
\indent Let $S \subset B$ be a subset of size at least $(1-\varepsilon)|B|$ and let $\phi \colon S \to H$ be a biaffine map. Provided that $\eta \leq \cons\, \delta^{\con} p^{-\con\,r}$, we may find a global biaffine map $\Phi \colon (u_0 + U) \times (v_0 + V) \to H$ such that $\Phi(x,y) = \phi(x,y)$ for every $(x,y)$ in a subset $S' \subset S$ of size $(1 - O(\varepsilon^{\Omega(1)}) - O(\eta^{\Omega(1)})) |B|$.\end{theorem}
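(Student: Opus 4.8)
The plan is to prove the theorem in two stages: first extend $\phi$ from $S$ to the whole layer $B=B^\lambda$, and then extend the resulting map across the codimension-$r$ variety $B$ to the full product $(u_0+U)\times(v_0+V)$ by peeling off the defining forms $\beta_1,\dots,\beta_r$ one at a time. For the first stage, observe that since $\eta\le\cons\,p^{-\con\,r}$ the layer $B$ is nonempty and quasirandom, hence has density $\delta=\delta_\lambda>0$ (a nonempty variety of codimension at most $r$ has density at least $p^{-2r}$ by Lemma~\ref{varsizelemma}, while an $\eta$-quasirandom set of density $0$ has size at most $\eta|U||V|$, which is impossible once $\eta<p^{-2r}$). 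Thus $B$ fits the setup of Proposition~\ref{GridWoPt}, and applying Proposition~\ref{biaffineAlmostToFullExtn} to the bi-$2$-homomorphism $\phi$ produces a bi-homomorphism $\psi\colon B\to H$ agreeing with $\phi$ on a subset $S_0\subset S$ with $|S_0|\ge(1-O(\varepsilon)-O(\eta^{\Omega(1)}))|B|$. Since $B$ is a biaffine variety its rows and columns are cosets, so $\psi$, being a $2$-homomorphism along each of them, is in fact a biaffine map on $B$. It therefore remains only to extend $\psi$ to a global biaffine $\Phi$ agreeing with $\psi$ on all but an $O(\eta^{\Omega(1)})$-fraction of $B$; intersecting the agreement set with $S_0$ then yields the required $S'$.

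The second stage starts from the short structural observation that was deferred from the statement: all nonempty layers of $\beta$ have one common density $\delta$. Indeed, there are at most $p^r$ nonempty layers, so for a $(1-p^r\eta)$-fraction of $x$ the column $B^\mu_{x\bullet}$ has size exactly $\delta_\mu|V|$ simultaneously for every nonempty $\mu$; fixing such an $x$, the sets $B^\mu_{x\bullet}$ are precisely the nonempty fibres of the affine map $y\mapsto\beta(x,y)$, so they all have the same size, forcing all $\delta_\mu$ to coincide. This verifies the hypothesis of Proposition~\ref{1codimExtnBiaffineProp} for the biaffine map $\beta$, and a first application carries $\psi$ (on the layer $B=\{\beta=\lambda\}$) to a biaffine map $\psi^{(1)}$ on $B^{(1)}=\{(\forall i\in[2,r])\,\beta_i=\lambda_i\}$, agreeing with $\psi$ on all but $O(\eta^{\Omega(1)}|U||V|)$ points of $B$.

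To iterate, at the $(j+1)$-st step one must apply Proposition~\ref{1codimExtnBiaffineProp} to the biaffine map $(\beta_{j+1},\dots,\beta_r)$, which requires all of its layers to be $\eta_j$-quasirandom with a common density or empty. This is the main obstacle, and I would resolve it by an auxiliary downward induction on the number of remaining forms: for each suffix $I=\{j+1,\dots,r\}$, the layers of $\beta_I$ are $p^{O(r)}\eta$-quasirandom with a common density (at least $p^{-r}$) or empty. The inductive step uses that any layer of $\beta_{\{j+1,\dots,r\}}$ is the disjoint union, over the $p$ values of $\beta_j$, of layers of $\beta_{\{j,\dots,r\}}$, each of which is quasirandom with the common density $\delta_j$ furnished by the previous level. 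For a column $x$ that is a good column of all these (at most $p$) constituents at once, the column of the union has size exactly $(\#\text{nonempty constituents})\cdot\delta_j|V|$; this is the first quasirandomness condition, and it holds for a $(1-p\eta_j)$-fraction of $x$. The second condition amounts to controlling $|B^\nu_{x_1\bullet}\cap B^{\nu'}_{x_2\bullet}|$ for the constituents: the diagonal terms $\nu=\nu'$ are handled by Lemma~\ref{regInt1}, and the off-diagonal terms by a first- and second-moment estimate over $x_1$, using the $G_2$-direction quasirandomness of the constituents from Lemma~\ref{yQR} and Lemma~\ref{regInt2} (the resulting coset intersection is then pinned to the exact value $\delta_j^2|V|$ since nearby sizes differ by a factor of $p$). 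Once each layer of $\beta_I$ is known to be quasirandom, the common density across its layers follows from the same equal-fibre-size argument as before. There are at most $r$ levels, so the quasirandomness parameters stay within $p^{O(r)}\eta\le\cons\,p^{-\con\,r}$, and the densities $\delta_j\ge p^{-r}$ keep the smallness conditions of Propositions~\ref{GridWoPt} and~\ref{1codimExtnBiaffineProp} satisfied throughout (this is why one takes $\eta\le\cons\,\delta^{\con}p^{-\con\,r}$ with a large constant).

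Assembling, $r$ applications of Proposition~\ref{1codimExtnBiaffineProp} produce biaffine maps $\psi=\psi^{(0)},\psi^{(1)},\dots,\psi^{(r)}$ with $\psi^{(r)}\colon(u_0+U)\times(v_0+V)\to H$ global biaffine, where $\psi^{(j+1)}$ agrees with $\psi^{(j)}$ off an $O(\eta_j^{\Omega(1)}|U||V|)$ set inside the smaller variety $B^{(j)}$. Setting $\Phi=\psi^{(r)}$ and restricting all of these agreement statements to $B$ (which is contained in every $B^{(j)}$), the set where $\Phi$ differs from $\psi$ on $B$ has size $O\big(\sum_{j<r}\eta_j^{\Omega(1)}\big)|U||V|=O(p^{O(r)}\eta^{\Omega(1)})|U||V|$; since $|B|\ge p^{-2r}|U||V|$, this is an $O(\eta^{\Omega(1)})$-fraction of $B$. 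Intersecting its complement with $S_0$ gives $S'\subset S$ with $|S'|\ge(1-O(\varepsilon)-O(\eta^{\Omega(1)}))|B|\ge(1-O(\varepsilon^{\Omega(1)})-O(\eta^{\Omega(1)}))|B|$ and $\Phi=\phi$ on $S'$, which is exactly the assertion of Theorem~\ref{biaffineExtnFullThm}.
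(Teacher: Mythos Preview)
Your overall plan is the same as the paper's: apply Proposition~\ref{biaffineAlmostToFullExtn} to extend to all of $B$, then apply Proposition~\ref{1codimExtnBiaffineProp} $r$ times, the main task being to check that the layers of each $\beta^{(j)}=(\beta_j,\dots,\beta_r)$ are quasirandom with a common density. The paper does this \emph{directly}, decomposing every $\beta^{(j)}$-layer into at most $p^{j-1}$ original $\beta$-layers, rather than by level-by-level induction.

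There is a quantitative gap in your inductive version. Handling the off-diagonal pair intersections $|B^\nu_{x_1\bullet}\cap B^{\nu'}_{x_2\bullet}|$ via a moment method through Lemmas~\ref{yQR} and~\ref{regInt2} loses a factor $\eta_j^{1/4}$ at \emph{each} of the $r$ inductive steps, so the quasirandomness degrades to roughly $\eta^{4^{-r}}$ rather than the $p^{O(r)}\eta$ you assert; under the hypothesis $\eta\le p^{-O(r)}$ this is far too weak to feed into Proposition~\ref{1codimExtnBiaffineProp}. The remedy is either to argue directly from the original $\eta$-quasirandom layers (as the paper does, so any root-loss occurs once rather than $r$ times), or to replace the moment estimate by a coset argument: for a pair $(x_1,x_2)$ that is good for the column and pair conditions of a single original layer, the kernels $L_\ell\le V$ of $y\mapsto\beta(x_\ell,y)$ satisfy $|L_\ell|=\delta|V|$ and $|L_1\cap L_2|=\delta^2|V|$, forcing $L_1+L_2=V$; the larger kernels $L_\ell^{(j)}\supseteq L_\ell$ then still span $V$, so every nonempty pair intersection has size exactly $|L_1^{(j)}||L_2^{(j)}|/|V|$, giving the pair condition for $\beta^{(j)}$ with failure fraction $O(p^{j-1}\eta)$ and no root-loss. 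With this correction your assembly goes through unchanged.
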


Before we start the proof, let us observe that if $B^{\mu}$ is $\eta$-quasirandom with density 0, it is in fact empty (this fact was mentioned after Definition~\ref{qrDefin}). Suppose on the contrary that some $(x,y)$ belongs to $B^{\mu}$. Then $|B^\mu_{\bullet y}| \geq p^{-r}|v_0 + V|$. Hence, for at least $p^{-r}|u_0 + U|$ of $x' \in u_0 + U$, we have $|B^{\mu}_{x' \bullet}| > 0$, which is a contradiction with $\eta$-quasirandomness provided $\eta < p^{-r}$.

\begin{proof} We first show that there is some $\delta > 0$ such that $\delta_\lambda \in\{0,\delta\}$ for each $\lambda$. To this end, suppose that $B^{\mu}$ is $\eta$-quasirandom with density $\delta_1 > 0$ and $B^{\nu}$ is $\eta$-quasirandom with density $\delta_2 > 0$. Then for all but $O(\eta |u_0 + U|)$ elements $x\in u_0 + U$ we have $|B^{\mu}_{x \bullet}| = \delta_1 |v_0 + V|$ and $|B^{\nu}_{x \bullet}| = \delta_2 |v_0 + V|$. But $B^{\mu}_{x \bullet}$ and $B^{\nu}_{x \bullet}$ are cosets of the same subspace, so $|B^{\mu}_{x \bullet}| = |B^{\nu}_{x \bullet}|$, which proves $\delta_1 = \delta_2$.\\

Next, for each $i \in [r]$ define biaffine map $\beta^{(i)} \colon G_1 \times G_2 \to \mathbb{F}_p^{[i,r]}$ by $\beta^{(i)}_j = \beta_j$ for $j \in [i,r]$. Thus $\beta^{(1)} = \beta$. We note that all layers of $\beta^{(i)}$ are $(p^{i -1}\eta)$-quasirandom. Indeed, if we write $\delta_\lambda$ for quasirandomness density of layer $B^\lambda$, and if $\mu \in \mathbb{F}_p^{[i,r]}$, we see that 
\[|\{y \in v_0 + V \colon \beta^{(i)}(x,y) = \mu\}| = \sum_{\nu \in \mathbb{F}_p^{[i-1]}} |B^{(\nu, \mu)}_{x \bullet}| = \Big(\sum_{\nu \in \mathbb{F}_p^{[i-1]}} \delta_{(\nu,\mu)}\Big)|v_0 + V|\] 
for all but at most $p^{i-1}\eta |u_0 + U|$ elements $x \in u_0 + U$.\\

Now apply Proposition~\ref{biaffineAlmostToFullExtn}, and then apply Proposition~\ref{1codimExtnBiaffineProp} $r$ times (at $i$\textsuperscript{th} step using map $\beta^{(i)}$ in definition of biaffine varieties) to complete the proof. The work above shows that all layers of $\beta^{(i)}$ are quasirandom, and the argument at the beginning of this proof then shows that all non-empty layers among them have the same density. Hence the assumptions of Proposition~\ref{1codimExtnBiaffineProp} are satisfied at each step.\end{proof}

\section{A simultaneous biaffine regularity lemma}

Throughout this section, we shall frequently consider sequences that agree with an element $x$ on all coordinates but two, and take prescribed values on one or two of those two coordinates. In order to notate these, we introduced the following conventions earlier, which we now recall. The notation $(x_{[k] \setminus \{d_1, d_2\}},^{d_1}\!y,^{d_2}\!z)$ stands for the sequence $u_{[k]}$ such that $u_i=x_i$ for $i \in [k] \setminus \{d_1, d_2\}$, $u_{d_1}=y$, and $u_{d_2}=z$. Similarly, the notation $(x_{[k] \setminus \{d_1, d_2\}},^{d_1}\!y)$ stands for the sequence $u_{[k]\setminus\{d_2\}}$ such that $u_i=x_i$ for $i \in [k] \setminus \{d_1, d_2\}$ and $u_{d_1}=y$, while $(x_{[k] \setminus \{d_1, d_2\}},^{d_2}\!z)$ stands for the sequence $u_{[k]\setminus\{d_1\}}$ such that $u_i=x_i$ for $i \in [k] \setminus \{d_1, d_2\}$ and $u_{d_2}=z$.

We shall adopt the further convention that unless we specify to the contrary, additional coordinates are inserted in the obvious order. So we shall often write $(x_{[k] \setminus \{d_1, d_2\}},y,z)$ instead of $(x_{[k] \setminus \{d_1, d_2\}},^{d_1}\!y,^{d_2}\!z)$ and $(x_{[k] \setminus \{d_1, d_2\}},y)$ instead of $(x_{[k] \setminus \{d_1, d_2\}},^{d_1}\!y)$. However, we cannot abbreviate the expression $(x_{[k] \setminus \{d_1, d_2\}},^{d_2}\!z)$ in this way, since the $z$ goes in the `second slot' rather than the first. (Usually there will not, strictly speaking, be any ambiguity to avoid since the sequence will be the argument of a multilinear map and we will have specified the domain of the map, but we do not want to rely on the reader's memory to that extent.) \\

Recall that a collection $\mathcal{G} \subset \mathcal{P}([k])$ is a down-set if it is closed under taking subsets. Recall also that a multiaffine map $\alpha \colon G_{[k]} \to \mathbb{F}^r$ is $\mathcal{G}$-supported if it can be written in the form $\alpha(x_{[k]}) = \sum_{I \in \mathcal{G}} \alpha_I(x_I)$ for some multilinear maps $\alpha_I \colon G_I \to \mathbb{F}^r$.\\

Broadly speaking, the main theorem of this section allows us to take a multiaffine variety, fix two directions, and decompose almost all 2-dimensional layers obtained by fixing the remaining coordinates into large quasirandom pieces. The precise statement is as follows.

\begin{theorem}\label{simReg}Let $\eta > 0$ and let $d_1, d_2 \in [k]$ be two coordinates. Let $\beta^1 \colon G_{[k] \setminus \{d_2\}} \to \mathbb{F}_p^{r_1}$, $\beta^2 \colon G_{[k] \setminus \{d_1\}} \to \mathbb{F}_p^{r_2}$ and $\beta^{12} \colon G_{[k]} \to \mathbb{F}_p^{r_{12}}$ be multiaffine maps. Let $\mathcal{G}$ be a down-set such that $\beta^{12}$ is $\mathcal{G}$-supported. For each $i \in [r_{12}]$ write the map $\beta^{12}_i(x_{[k]})$ as $\alpha_i(x_{[k] \setminus \{d_2\}}) + \alpha'_i(x_{[k]\setminus \{d_1, d_2\}}) + (A_i(x_{[k] \setminus \{d_2\}}) + A'_i(x_{[k] \setminus\{d_1, d_2\}})) \cdot x_{d_2}$, where the maps $A_i \colon G_{[k] \setminus\{d_2\}} \to G_{d_2}$, $A_i' \colon G_{[k] \setminus\{d_1, d_2\}} \to G_{d_2}$, $\alpha_i\colon G_{[k] \setminus\{d_2\}} \to \mathbb{F}_p$ and $\alpha'_i\colon G_{[k] \setminus\{d_1, d_2\}} \to \mathbb{F}_p$ are multiaffine and in addition $A_i$ and $\alpha_i$ are linear (as opposed to merely affine) in coordinate $d_1$. Let $\mathcal{G}' = \{S \subset [k] \setminus \{d_2\} \colon S \cup \{d_1, d_2\} \in \mathcal{G}\}$ and $\mathcal{G}'' = \{S \subset [k] \setminus \{d_1, d_2\} \colon S \cup \{d_1, d_2\} \in \mathcal{G}\}$. Then, there exist 
\begin{itemize}
\item positive integers $m, t = O((r_{12} + r_1 + r_2 + \log_{p}\eta^{-1})^{O(1)})$,
\item $\phi=(\phi_1,\dots,\phi_{r_{12}})$, where each $\phi_j\colon G_{[k] \setminus \{d_2\}} \to \mathbb{F}_p^m$ is a $\mathcal{G}'$-supported multiaffine map that is linear in coordinate $d_1$, 
\item a $\mathcal{G}''$-supported  multiaffine map $\gamma \colon G_{[k] \setminus \{d_1, d_2\}} \to \mathbb{F}_p^t$, and
\item a union $F \subset G_{[k] \setminus \{d_1,d_2\}}$ of layers of $\gamma$ of size $|F| \leq \eta|G_{[k] \setminus \{d_1,d_2\}}|$,
\end{itemize}
such that for each layer $L$ of $\gamma$ not in $F$, there is a subspace $\Lambda \leq \mathbb{F}_p^{r_{12}}$ such that for each $x_{[k] \setminus \{d_1,d_2\}} \in L$, $u_0 \in G_{d_1}, v_0 \in G_{d_2}$ and $\tau \in \mathbb{F}_p^{r_{12}}$, the biaffine variety
\begin{align*}&\bigg[\bigg(u_0 + \Big(\{y \in G_{d_1} \colon (\forall \lambda \in \Lambda)\,\, \lambda \cdot \phi(x_{[k] \setminus \{d_1, d_2\}}, \ls{d_1} y) = 0\} \\
&\hspace{3cm}\cap \{y \in G_{d_1} \colon \beta^1(x_{[k] \setminus \{d_1, d_2\}}, \ls{d_1} y) =\beta^1(x_{[k] \setminus \{d_1, d_2\}}, \ls{d_1} 0)\}\\
&\hspace{3cm}\cap \{y \in G_{d_1} \colon (\forall i \in [r_{12}])\,\, \alpha_i(x_{[k] \setminus \{d_1, d_2\}}, \ls{d_1} y) = 0\}\Big)\bigg) \\
&\hspace{2cm}\times \bigg(v_0 + \Big(\langle A'_1(x_{[k] \setminus \{d_1,d_2\}}), \dots, A'_{r_{12}}(x_{[k] \setminus \{d_1, d_2\}})\rangle^\perp\\
&\hspace{4cm} \cap \langle A_1(x_{[k] \setminus \{d_1, d_2\}}, \ls{d_1} {u_0}), \dots, A_{r_{12}}(x_{[k] \setminus \{d_1,d_2\}}, \ls{d_1} {u_0})\rangle^\perp\\
&\hspace{4cm}\cap \{z \in G_{d_2} \colon \beta^2(x_{[k] \setminus \{d_1, d_2\}}, \ls{d_2}{z}) = \beta^2(x_{[k] \setminus \{d_1, d_2\}}, \ls{d_2} 0)\}\Big)\bigg)\bigg]\\
&\hspace{1cm} \cap \{(y,z) \in G_{d_1}\times G_{d_2} \colon \beta^{12}(x_{[k] \setminus \{d_1, d_2\}}, y, z) = \tau\}\end{align*}
is either $\eta$-quasirandom with density $|\Lambda|p^{-{r_{12}}}$ or empty.
\end{theorem}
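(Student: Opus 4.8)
The strategy is to reduce the statement to the inverse theorem for biased multilinear forms (Theorem~\ref{strongInvARankThm}) together with the simultaneous approximation machinery (Theorem~\ref{simVarAppThm}, Corollary~\ref{simFibresThm}) from the preliminaries. First I would isolate the multilinear parts of $\beta^{12}$ in the two chosen directions: having written $\beta^{12}_i(x_{[k]}) = \alpha_i(x_{[k]\setminus\{d_2\}}) + \alpha'_i(x_{[k]\setminus\{d_1,d_2\}}) + (A_i(x_{[k]\setminus\{d_2\}}) + A'_i(x_{[k]\setminus\{d_1,d_2\}}))\cdot x_{d_2}$ with $A_i,\alpha_i$ linear in $d_1$, I would further extract from $A_i$ and $\alpha_i$ their $\mathcal{G}'$-supported multilinear-in-$d_1$ pieces, say $A_i(x_{[k]\setminus\{d_1,d_2\}}\,,\,\ls{d_1}y) = B_i(x_{[k]\setminus\{d_1,d_2\}})\cdot y + (\text{lower order in }d_1)$, and similarly for $\alpha_i$. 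The point is that, for fixed $x_{[k]\setminus\{d_1,d_2\}}$, the biaffine variety appearing in the statement is governed by the values of a bounded collection of multilinear forms in $G_{[k]\setminus\{d_1,d_2\}}$ — the partition-rank-type data controlling the $2$-dimensional slices — and the bias of the relevant `counting' multilinear form on $G_{d_1}\times G_{d_2}$ measures exactly the deviation from quasirandomness.

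The second step is the quasirandomness criterion itself. For a fixed $x_{[k]\setminus\{d_1,d_2\}}$ and fixed cosets $C_1\subset G_{d_1}$, $C_2\subset G_{d_2}$ defined by the displayed intersections, the quadruple $(\beta^{12}(x_{[k]\setminus\{d_1,d_2\}},\cdot,\cdot),\tau,C_1,C_2)$ fails to be $\eta$-quasirandom with the expected density precisely when a certain multilinear form — obtained by box-norm expansion of the balanced indicator of the variety, exactly as in the displayed computation following Definition~\ref{qrDefin} — has bias bounded away from the generic value by a quantity polynomial in $\eta$. Applying Theorem~\ref{strongInvARankThm} to that form (after translating `high bias' into `low partition rank' via Lemma~\ref{biasHomog} and the remark on analytic rank), one learns that the failure of quasirandomness for the slice over $x_{[k]\setminus\{d_1,d_2\}}$ is witnessed by the vanishing or low rank of a bounded number of auxiliary multilinear forms on $G_{[k]\setminus\{d_1,d_2\}}$ — and crucially these forms are $\mathcal{G}''$-supported because they are built out of products of the $\mathcal{G}'$-supported pieces $B_i$ etc. This reduces everything to: \emph{the set of $x_{[k]\setminus\{d_1,d_2\}}$ for which a bounded family of multilinear forms takes each prescribed value is approximable, internally and externally, by layers of a single low-codimension $\mathcal{G}''$-supported multiaffine map}, which is exactly Corollary~\ref{simFibresThm} / Theorem~\ref{simVarAppThm} with the $\mathcal{G}$-supportedness addendum.

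The third step is bookkeeping of the subspaces. The rank of the form $x_{d_2}\mapsto (A_i(x_{[k]\setminus\{d_1,d_2\}},\ls{d_1}u_0))_i$, and likewise of $(A'_i)_i$ and of the $\alpha$'s, determines the codimensions of $C_1$ and $C_2$; these quantities are controlled by the values of finitely many $\mathcal{G}''$-supported multilinear forms, so Corollary~\ref{simFibresThm} applied simultaneously to all of them gives a single map $\gamma\colon G_{[k]\setminus\{d_1,d_2\}}\to\mathbb{F}_p^t$ whose layers refine all the relevant stratifications. On each good layer $L$ I would then let $\Lambda\leq\mathbb{F}_p^{r_{12}}$ be the (layer-constant) space $\{\lambda\in\mathbb{F}_p^{r_{12}}\colon \lambda\cdot\phi(x_{[k]\setminus\{d_1,d_2\}},\ls{d_1}\cdot)\equiv 0\}$ obtained from Lemma~\ref{solCrit}, choosing $\phi$ to be the $\mathcal{G}'$-supported, $d_1$-linear map produced by the inverse theorem applications in step~2; the density $|\Lambda|p^{-r_{12}}$ is then forced, being the generic size of a fibre of the relevant map. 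The union $F$ of bad layers is small because the external approximation error in Theorem~\ref{simVarAppThm} is at most $\eta$, and on each non-bad layer the quasirandomness-or-emptiness dichotomy holds by construction — emptiness occurring exactly when the would-be density $|\Lambda|p^{-r_{12}}$ is incompatible with the value $\tau$, i.e.\ when the solubility condition of Lemma~\ref{solCrit} fails.

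\textbf{Main obstacle.} The genuinely delicate point is ensuring all the auxiliary forms produced along the way stay $\mathcal{G}''$-supported and stay linear (not merely affine) in the coordinates where linearity is claimed, so that the final $\phi$ and $\gamma$ have the asserted support and linearity properties; this is a matter of carefully tracking which products of the pieces $B_i, A'_i, \alpha'_i,\dots$ appear when the box norm and the counting form are expanded, and verifying that the inverse theorem (Theorem~\ref{strongInvARankThm}) and the approximation lemmas preserve $\mathcal{G}$-supportedness — which the paper has noted they do, via the footnoted strengthenings of the cited results. The second, more quantitative, difficulty is calibrating the various $\eta$-powers: the box-norm bound is $O(\eta^{1/4})$ in one direction, degrades under Lemma~\ref{yQR} to $O(\delta^{-O(1)}\eta^{1/4})$ in the other, and one must feed a suitably small polynomial in $\eta$ into Theorem~\ref{strongInvARankThm} and into the external-approximation parameter so that the final error set really has measure at most $\eta$; this is routine but must be done in the right order.
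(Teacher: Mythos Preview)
Your proposal has a genuine gap in step 2, which is the heart of the argument. The box-norm expansion of the balanced indicator lives on the 2D slice $G_{d_1}\times G_{d_2}$ for \emph{fixed} $x_{[k]\setminus\{d_1,d_2\}}$; applying Theorem~\ref{strongInvARankThm} there yields only partition-rank structure of a bilinear form on that slice (i.e.\ ordinary matrix rank), not ``auxiliary multilinear forms on $G_{[k]\setminus\{d_1,d_2\}}$''. You have not explained how to globalize this slice-by-slice rank information into a single map $\phi\colon G_{[k]\setminus\{d_2\}}\to\mathbb{F}_p^m$ that works uniformly. Since the varieties in the statement are \emph{defined} in terms of $\phi$, the argument as written is circular: you cannot test quasirandomness before choosing the cosets $C_1,C_2$, and those cosets depend on $\phi$ and $\Lambda$.

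The paper proceeds in the opposite order. First, $\phi$ is constructed directly as a simultaneous external approximation of the varieties $\{\lambda\cdot A=0\}$ via Lemma~\ref{BohrApproxSim}; this step is elementary and does not invoke the inverse theorem. The key idea you are missing is then a separate \emph{gap criterion} for quasirandomness (Lemma~\ref{qrEmptyClaim}): writing $V_\Lambda=\{x_{[k-1]}:(\forall\lambda\in\Lambda)\,\lambda\cdot A(x_{[k-1]})=0\}$, if $|(V_M)_{x_{[k-2]}}|<\eta\, p^{-5r_{12}-r_1-r_2}|(V_\Lambda)_{x_{[k-2]}}|$ for every $M\supsetneq\Lambda$, then the biaffine variety built from $\Lambda$ is $\eta$-quasirandom or empty. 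The proof of that lemma is a direct combinatorial argument on the slice --- failure of quasirandomness produces some $\mu\notin\Lambda$ with $\mu\cdot A(x_{[k-2]},d')=0$ for many $d'$, contradicting the gap --- and uses no inverse theorem. The approximation and fibre machinery (Theorem~\ref{simVarAppThm}, Corollary~\ref{simFibresThm}) is then used only to arrange that on almost every layer of $\gamma$ the gap condition holds for some $\Lambda$, and that on those layers $(V_\Lambda)_{x_{[k-2]}}$ coincides with the $\phi$-defined subspace $\{y:(\forall\lambda\in\Lambda)\,\lambda\cdot\phi(x_{[k-2]},y)=0\}$.
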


\noindent\textbf{Remark.} Notice that the codimension in direction $d_2$ is almost unchanged -- it doubles at worst. In particular, it does not depend on $\eta$. 
\bigskip

We begin with a lemma that will play an important role in the proof. For both the lemma and the proof we shall consider just the case $d_1 = k - 1$ and $d_2 = k$, which clearly loses no generality. For $\Lambda \leq \mathbb{F}_p^{r_{12}}$, we define
\[V_{\Lambda} = \{x_{[k-1]} \in G_{[k-1]} \colon (\forall \lambda \in \Lambda)\,\,\lambda \cdot A(x_{[k-1]}) =0 \}.\]
We take a moment to recall the convention on how to interpret the expression $\lambda\cdot A(x_{[k-1]})$. Note that $A=(A_1,\dots,A_{r_{12}})$ is a (multiaffine) map from $G_{[k-1]}$ to $\mathbb F_p^{r_{12}}$. Then $\lambda \cdot A$ is the multiaffine map $\sum_{i=1}^{r_{12}}\lambda_iA_i$. \\
\indent The notation in the lemma below and its proof is inherited from Theorem~\ref{simReg}, i.e.\ $\beta^{12}, \alpha, \alpha'$, etc.\ have the same meaning.

\begin{lemma}\label{qrEmptyClaim}Let $x_{[k-2]} \in G_{[k-2]}$ and $\Lambda \leq \mathbb{F}_p^{r_{12}}$ be such that $\eta p^{-5r_{12} -r_1-r_2}|(V_{\Lambda})_{x_{[k-2]}}|\hspace{3pt} > |(V_M)_{x_{[k-2]}}|$ for all $M\leq\mathbb{F}_p^{r_{12}}$ such that $\Lambda\lneqq M$. Then for each $u_0 \in G_{k-1}$, $v_0 \in G_k$ and $\tau \in \mathbb{F}_p^{r_{12}}$, the biaffine variety 
\begin{align*}B = &\bigg[\bigg(u_0 + \Big((V_\Lambda)_{x_{[k-2]}} \cap \{y \in G_{k-1} \colon \beta^1(x_{[k-2]}, y) = \beta^1(x_{[k-2]}, 0)\}\\
&\hspace{9cm}\cap \{y \in G_{k-1} \colon (\forall i \in [r_{12}])\,\,\alpha_i(x_{[k-2]}, y) = 0\}\Big)\bigg) \\
&\hspace{2cm}\times \bigg(v_0 + \Big(\langle A'_1(x_{[k-2]}), \dots, A'_{r_{12}}(x_{[k-2]})\rangle^\perp \cap \langle A_1(x_{[k-2]}, u_0), \dots, A_{r_{12}}(x_{[k-2]}, u_0)\rangle^\perp\\
&\hspace{9cm}\cap \{z \in G_{k} \colon \beta^2(x_{[k-2]},\ls{k} z) = \beta^2(x_{[k-2]},\ls{k} 0)\}\Big)\bigg)\bigg]\\
&\hspace{1cm} \cap \{(y,z) \in G_{k-1} \times G_k \colon \beta^{12}(x_{[k-2]}, y,z) = \tau\}\end{align*}
is either $\eta$-quasirandom with density $\delta = p^{-r_{12}} |\Lambda|$ or empty.\end{lemma}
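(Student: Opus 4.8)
\textbf{Proof plan for Lemma~\ref{qrEmptyClaim}.}

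The plan is to compute the two moments that appear in Definition~\ref{qrDefin} directly, by averaging over $y \in G_{k-1}$ (respectively over pairs $y_1, y_2 \in G_{k-1}$) in the relevant coset, and to show that both moments concentrate at their ``expected'' values $\delta$ and $\delta^2$. The key observation is that, having fixed $x_{[k-2]}$ and $y$ (with $y$ lying in the first coset in the definition of $B$), the slice $B_{y\bullet}$ is a coset of a subspace of $G_k$ whose codimension is governed by the rank of the system of vectors $A'_i(x_{[k-2]}) , A_i(x_{[k-2]},\ls{k-1} y)$ in $G_k$ together with the constraint coming from $\beta^2$; by the choice of the first coset, the maps $\alpha_i$ vanish and $\beta^1$ is constant, so $\beta^{12}(x_{[k-2]},y,\cdot)$ is an \emph{affine} function of $z$ with linear part $\lambda\cdot(A'(x_{[k-2]})+A(x_{[k-2]},\ls{k-1}y))$, which on this coset equals $\lambda \cdot A(x_{[k-2]},\ls{k-1}y)$ modulo the span we have already quotiented out. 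Therefore $B_{y\bullet}$ is non-empty and of density exactly $p^{-\operatorname{rk}}$, where $\operatorname{rk}$ is the dimension of the span of $\{A_i(x_{[k-2]},\ls{k-1}y)\}$ modulo $\langle A'_i(x_{[k-2]})\rangle$, and this density equals $\delta = p^{-r_{12}}|\Lambda|$ precisely when $\{\lambda : \lambda\cdot A(x_{[k-2]},\ls{k-1}y) \in \langle A'(x_{[k-2]})\rangle\} = \Lambda$. (When it is strictly larger, the density is strictly larger, i.e.\ $B_{y\bullet}$ is strictly bigger; when the affine constraint is inconsistent, $B_{y\bullet}=\emptyset$.) So the first moment condition reduces to showing that for a $1-\eta$ fraction of $y$ in the first coset, the ``stabilizer'' $\{\lambda : \lambda\cdot A(x_{[k-2]},\ls{k-1}y)\in\langle A'(x_{[k-2]})\rangle\}$ is exactly $\Lambda$ rather than something larger.

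The second step is to translate that into the counting hypothesis on $|(V_M)_{x_{[k-2]}}|$. Writing $y = u_0 + w$ with $w$ ranging over $(V_\Lambda)_{x_{[k-2]}}$ intersected with the $\beta^1$- and $\alpha$-layers, one checks that $\lambda\cdot A(x_{[k-2]},\ls{k-1}y) \in \langle A'(x_{[k-2]})\rangle$ together with $\lambda\in\Lambda$ forces $w$ into $(V_M)_{x_{[k-2]}}$ for the subspace $M$ generated by $\Lambda$ and $\lambda$; here one uses the hypothesis that $A_i$ and $\alpha_i$ are \emph{linear} in coordinate $d_1=k-1$, so that membership of $u_0+w$ in these varieties is controlled linearly by $w$. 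A union bound over the (at most $p^{r_{12}^2}$, but more carefully over the finitely many $M\gneqq\Lambda$) excess subspaces $M$, using $\eta p^{-5r_{12}-r_1-r_2}|(V_\Lambda)_{x_{[k-2]}}| > |(V_M)_{x_{[k-2]}}|$ and Lemma~\ref{varsizelemma} to bound the sizes of the auxiliary $\beta^1$- and $\alpha$-layers from below by $p^{-O(r_1 + r_{12})}|G_{k-1}|$, shows that all but an $\eta$ fraction of the relevant $y$ have stabilizer exactly $\Lambda$. The same analysis applied to pairs $(y_1,y_2)$ — now $B_{y_1\bullet}\cap B_{y_2\bullet}$ is a coset of codimension equal to the rank of the combined system, which is generically $2\operatorname{rk}$, i.e.\ density $\delta^2$, with the bad set again controlled by the same $(V_M)$-counting hypothesis applied to the two coordinates independently — gives the second moment condition. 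Finally, if $B$ itself is non-empty, pick any $(y,z)\in B$; then $y$ lies in the first coset and $B_{y\bullet}\ni z$ is non-empty, so by the above $B_{y\bullet}$ has density at least $\delta$, and the quasirandomness just proved forces density exactly $\delta$ for a positive fraction of columns, hence $\delta = p^{-r_{12}}|\Lambda|$ is the correct value; if $B=\emptyset$ there is nothing to prove.

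I expect the main obstacle to be bookkeeping rather than conceptual: carefully verifying that the stabilizer subspace $\{\lambda : \lambda\cdot A(x_{[k-2]},\ls{k-1}y)\in\langle A'(x_{[k-2]})\rangle\}$ is the right object, that it is automatically \emph{contained in} $\Lambda$'s saturation once $x_{[k-2]}$ lies in the regular layer (this is where linearity in $d_1$ and the definition of $V_\Lambda$ interact), and that the codimension counts in direction $G_k$ come out to exactly $\operatorname{rk}$ and $2\operatorname{rk}$ and not less — the subtlety being that the span of $\{A_i(x_{[k-2]},\ls{k-1}y_1)\} \cup \{A_i(x_{[k-2]},\ls{k-1}y_2)\}$ modulo $\langle A'(x_{[k-2]})\rangle$ has dimension $2\operatorname{rk}$ for \emph{most} pairs, which is precisely a second-moment statement about the linear map $y\mapsto A(x_{[k-2]},\ls{k-1}y) \bmod \langle A'\rangle$ and again reduces to comparing $|(V_\Lambda)_{x_{[k-2]}}|$ with $|(V_M)_{x_{[k-2]}}|$. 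Once these identifications are in place, the probability estimates are immediate from the hypothesis and Lemma~\ref{varsizelemma}, with the factor $p^{-5r_{12}-r_1-r_2}$ in the hypothesis exactly absorbing all the codimension losses incurred along the way.
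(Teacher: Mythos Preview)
Your plan follows essentially the same route as the paper's proof: analyse the column $B_{y\bullet}$ via the stabilizer subspace $\Lambda_y = \{\lambda : \lambda\cdot A(x_{[k-2]},y)\in Z^\perp\}$, show that $\Lambda_y=\Lambda$ for all but an $\eta$-fraction of $y\in u_0+Y$ (and similarly $\Lambda_{y_1,y_2}=\Lambda\times\Lambda$ for pairs), and deduce the two moment conditions in Definition~\ref{qrDefin}. Your direct union bound over $\lambda\notin\Lambda$ is a legitimate alternative to the paper's contradiction-plus-differencing argument, and the powers of $p$ indeed line up. Two minor points: the correct stabilizer is modulo $Z^\perp$ (which includes $\langle A'_i\rangle$, $\langle A_i(\cdot,u_0)\rangle$, and the $\beta^2$-part), not just $\langle A'_i\rangle$; and the union should be over elements $\lambda\notin\Lambda$ (at most $p^{r_{12}}$ of them), not over all subspaces $M\gneqq\Lambda$.

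There is, however, a genuine gap in your treatment of non-emptiness. Showing that $\Lambda_y=\Lambda$ only gives $|B_{y\bullet}|\in\{0,\delta|Z|\}$; it does not by itself rule out $B_{y\bullet}=\emptyset$. Your final paragraph tries to handle this by saying ``the quasirandomness just proved forces density exactly $\delta$ for a positive fraction of columns'', but this is circular --- you have not yet proved quasirandomness, because quasirandomness with density $\delta$ requires precisely that $|B_{y\bullet}|=\delta|Z|$ (not $0$) for a $1-\eta$ fraction of $y$. What the paper does, and what you are missing, is this: from the assumption $B\neq\emptyset$ one derives a compatibility relation (the paper's equation~\eqref{uvEqn}) saying that $\sum_i\lambda_i\big(c_i - A_i(x_{[k-2]},u_0)\cdot v_0\big)=0$ for every $\lambda\in\Lambda$, where $c_i$ is the right-hand side of the $i$th affine constraint defining $B_{y\bullet}$. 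One then invokes the solubility criterion Lemma~\ref{solCrit} to conclude that whenever $\Lambda_y=\Lambda$ (resp.\ $\Lambda_{y_1,y_2}=\Lambda\times\Lambda$), the affine system defining $B_{y\bullet}$ (resp.\ $B_{y_1\bullet}\cap B_{y_2\bullet}$) is consistent, hence non-empty of density exactly $\delta$ (resp.\ $\delta^2$). Without this step the argument does not close.
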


\begin{proof} Suppose that for the given $x_{[k-2]}$, $\Lambda$, $u_0, v_0$ and $\tau$, $B$ is neither $\eta$-quasirandom with density $\delta$ nor empty. Write 
\begin{align*}Y = (V_\Lambda)_{x_{[k-2]}} \hspace{3pt}\cap&\hspace{3pt} \{y \in G_{k-1} \colon \beta^1(x_{[k-2]}, y) = \beta^1(x_{[k-2]}, 0)\}\\
\hspace{3pt}\cap &\hspace{3pt}\{y \in G_{k-1} \colon (\forall i \in [r_{12}])\,\,\alpha_i(x_{[k-2]}, y) = 0\},\end{align*}
which is a subspace of $G_{k-1}$, and 
\begin{align}Z = \langle A'_1(x_{[k-2]}), \dots, A'_{r_{12}}(x_{[k-2]})\rangle^\perp\hspace{3pt} \cap& \hspace{3pt}\langle A_1(x_{[k-2]}, u_0), \dots, A_{r_{12}}(x_{[k-2]}, u_0)\rangle^\perp\nonumber\\
 \cap&\hspace{3pt} \{z \in G_{k} \colon \beta^2(x_{[k-2]},\ls{k} z) = \beta^2(x_{[k-2]},\ls{k} 0)\},\label{ZspaceDef}\end{align}
which is a subspace of $G_k$. For given $y \in u_0 + Y$, we have
\begin{align}\label{ByEqn}B_{y \bullet} = \hspace{1pt}&\Big\{z \in v_0 + Z \colon \beta^{12}(x_{[k-2]}, y, z) = \tau_i\}\nonumber\\
=\hspace{1pt}&\Big\{z \in v_0 + Z \colon (\forall i \in [r_{12}]) \hspace{3pt}\alpha_i(x_{[k-2]}, y) + \alpha'_i(x_{[k-2]}) + (A_i(x_{[k-2]}, y) + A'_i(x_{[k-2]})) \cdot z = \tau_i\Big\}\nonumber\\
=\hspace{1pt}&\Big\{z \in v_0 + Z \colon (\forall i \in [r_{12}])\hspace{3pt} A_i(x_{[k-2]}, y) \cdot z = \tau_i - A'_i(x_{[k-2]}) \cdot v_0 - \alpha_i(x_{[k-2]}, u_0) - \alpha'_i(x_{[k-2]})\Big\},\end{align}
where we used the fact that $z-v_0 \in Z \subset \langle A'_1(x_{[k-2]}), \dots, A'_{r_{12}}(x_{[k-2]})\rangle^\perp$ to replace $A'_i(x_{[k-2]}) \cdot z$ by $A'_i(x_{[k-2]}) \cdot v_0$ in the last equality.\\

Recall that $B$ is non-empty, and let $(y,z) \in B$. Let $\lambda \in \Lambda$. Then
\begin{equation}\sum_{i \in [r_{12}]} \lambda_i A_i(x_{[k-2]}, u_0) \cdot v_0=\sum_{i \in [r_{12}]} \lambda_i A_i(x_{[k-2]}, u_0) \cdot z,\label{qrlemmauvtoyz}\end{equation}
using the fact that $z-v_0 \in Z \subset \langle A_1(x_{[k-2]}, u_0), \dots, A_{r_{12}}(x_{[k-2]}, u_0)\rangle^\perp$. But $y - u_0 \in (V_\Lambda)_{x_{[k-2]}}$, so~\eqref{qrlemmauvtoyz} equals $\sum_{i \in [r_{12}]} \lambda_i A_i(x_{[k-2]}, y) \cdot z$, so by~\eqref{ByEqn} we end up with the equality  
\begin{align}\label{uvEqn}
\sum_{i \in [r_{12}]} \lambda_i A_i(x_{[k-2]}, u_0) \cdot v_0=- \sum_{i \in [r_{12}]}\lambda_i \Big(A'_i(x_{[k-2]}) \cdot v_0 + \alpha_i(x_{[k-2]}, u_0) + \alpha'_i(x_{[k-2]}) - \tau_i\Big).
\end{align}


From~\eqref{ZspaceDef} we have
\begin{align*}Z^\perp = \langle A'_1(x_{[k-2]}), \dots, A'_{r_{12}}(x_{[k-2]})\rangle \hspace{3pt} + & \hspace{3pt}\langle A_1(x_{[k-2]}, u_0), \dots, A_{r_{12}}(x_{[k-2]}, u_0)\rangle\\
 +&\hspace{3pt} \{z \in G_{k} \colon \beta^2(x_{[k-2]},\ls{k} z) = \beta^2(x_{[k-2]},\ls{k} 0)\}^\perp.\end{align*}
Observe that for each $y \in u_0 + Y$ and each pair $(y_1, y_2) \in (u_0 + Y)^2$, 
\begin{equation}\label{lambdaInBy}\{\lambda \in \mathbb{F}_p^{r_{12}} \colon \lambda \cdot A(x_{[k-2]}, y) \in Z^\perp\} \supseteq \Lambda\end{equation}
and
\[\{(\lambda, \mu) \in \mathbb{F}_p^{r_{12}} \times \mathbb{F}_p^{r_{12}} \colon \lambda \cdot A(x_{[k-2]}, y_1) + \mu \cdot A(x_{[k-2]}, y_2) \in Z^\perp\} \supseteq \Lambda \times \Lambda.\]
Recall from the statement that $\delta = p^{-r_{12}}|\Lambda|$. We claim that 
\begin{itemize}
\item[\textbf{(i)}] if $\{\lambda \in \mathbb{F}_p^{r_{12}} \colon \lambda \cdot A(x_{[k-2]}, y) \in Z^\perp\} =\Lambda$, then either $|B_{y \bullet}| = \delta |v_0 + Z|$ or $|B_{y \bullet}| = 0$,\footnote{In fact, most of the time the second possibility cannot occur, but we do not need to prove this fact directly, since it will follow from \textbf{(ii)}.} and
\item[\textbf{(ii)}] if $\{(\lambda, \mu) \in \mathbb{F}_p^{r_{12}} \times \mathbb{F}_p^{r_{12}} \colon \lambda \cdot A(x_{[k-2]}, y_1) + \mu \cdot A(x_{[k-2]}, y_2) \in Z^\perp\} = \Lambda \times \Lambda$, then $|B_{y_1 \bullet} \cap B_{y_2 \bullet}| = \delta^2 |v_0 + Z|$.
\end{itemize}
To prove \textbf{(i)}, suppose that $B_{y \bullet}$ is non-empty. From~\eqref{ByEqn}, $|B_{y \bullet}| = |\{z \in Z \colon (\forall i \in [r_{12}])\,\, A_i(x_{[k-2]}, y) \cdot z = 0\}|$. Consider the linear map $\psi \colon Z \to \mathbb{F}_p^{r_{12}}$, $\psi \colon z \mapsto (A_i(x_{[k-2]}, y) \cdot z \colon i \in [r_{12}])$. By the rank-nullity theorem, $|B_{y \bullet}| / |Z| = |\img \psi|^{-1} = |(\img \psi)^\perp| p^{-r_{12}}$. Hence, 
\begin{align*}|B_{y \bullet}| = &\Big|\Big\{\lambda \in \mathbb{F}_p^{r_{12}} \colon (\forall z \in Z) \sum_{i \in [r_{12}]} \lambda_i A_i(x_{[k-2]}, y) \cdot z = 0\Big\}\Big| p^{-r_{12}} |v_0 + Z|\\
 = &\Big|\Big\{\lambda \in \mathbb{F}_p^{r_{12}} \colon \sum_{i \in [r_{12}]} \lambda_i A_i(x_{[k-2]}, y) \in Z^\perp\Big\}\Big| p^{-r_{12}} |v_0 + Z|\\
= &|\Lambda| p^{-r_{12}} |v_0 + Z| = \delta |v_0 + Z|.\end{align*}
We now turn to \textbf{(ii)}. First we show that if $B_{y_1 \bullet} \cap B_{y_2 \bullet} \not= \emptyset$, then $|B_{y_1  \bullet} \cap B_{y_2 \bullet}| = \delta^2 |v_0 + Z|$. We argue similarly to \textbf{(i)}. From~\eqref{ByEqn}, $|B_{y_1  \bullet} \cap B_{y_2  \bullet}| = |\{z \in Z \colon (\forall i \in [r_{12}]) A_i(x_{[k-2]}, y_1) \cdot z = A_i(x_{[k-2]}, y_2) \cdot z = 0\}|$. Consider the linear map $\psi \colon Z \to \Big(\mathbb{F}_p^{r_{12}}\Big)^2$ given by
\[\psi \colon z \mapsto \Big((A_i(x_{[k-2]}, y_1) \cdot z \colon i \in [r_{12}]), (A_i(x_{[k-2]}, y_2) \cdot z \colon i \in [r_{12}])\Big).\] 
By the rank-nullity theorem, $|B_{y_1 \bullet} \cap B_{y_2 \bullet}| / |Z| = |\img \psi|^{-1} = |(\img \psi)^\perp| p^{-2r_{12}}$. Hence, 
\begin{align*}|B_{y_1 \bullet} \cap B_{y_2 \bullet}| = &\Big|\Big\{(\lambda, \mu) \in \mathbb{F}_p^{r_{12}} \times \mathbb{F}_p^{r_{12}} \colon (\forall z \in Z) \sum_{i \in [r_{12}]} (\lambda_i A_i(x_{[k-2]}, y_1) + \mu_iA_i(x_{[k-2]}, y_2)) \cdot z = 0\Big\}\Big| p^{-2r_{12}} |v_0 + Z|\\
 = &\Big|\Big\{(\lambda, \mu) \in \mathbb{F}_p^{r_{12}} \times \mathbb{F}_p^{r_{12}} \colon \sum_{i \in [r_{12}]} (\lambda_i A_i(x_{[k-2]}, y_1) + \mu_i A_i(x_{[k-2]}, y_2)) \in Z^\perp\Big\}\Big| p^{-2r_{12}} |v_0 + Z|\\
= &|\Lambda|^2 p^{-2r_{12}} |v_0 + Z| = \delta^2 |v_0 + Z|.\end{align*}
To finish the proof of \textbf{(ii)}, we apply Lemma~\ref{solCrit}. We just need to check that for $(\lambda, \mu) \in \Lambda \times \Lambda$, 
\begin{align*}\sum_{i \in [r_{12}]} &\lambda_i (A'_i(x_{[k-2]}) \cdot v_0 + \alpha_i(x_{[k-2]}, u_0) + \alpha'_i(x_{[k-2]}) + A_i(x_{[k-2]}, y_1) \cdot v_0 - \tau_i)\\
+&\sum_{i \in [r_{12}]} \mu_i (A'_i(x_{[k-2]}) \cdot v_0 + \alpha_i(x_{[k-2]}, u_0) + \alpha'_i(x_{[k-2]}) + A_i(x_{[k-2]}, y_2) \cdot v_0 - \tau_i) = 0.\end{align*}
Since $y_1 - u_0, y_2 - u_0 \in Y \subset (V_\Lambda)_{x_{[k-2]}}$, this is equivalent to 
\begin{align*}\sum_{i \in [r_{12}]} &\lambda_i (A'_i(x_{[k-2]}) \cdot v_0 + \alpha_i(x_{[k-2]}, u_0) + \alpha'_i(x_{[k-2]}) + A_i(x_{[k-2]}, u_0) \cdot v_0 - \tau_i)\\
+&\sum_{i \in [r_{12}]} \mu_i (A'_i(x_{[k-2]}) \cdot v_0 + \alpha_i(x_{[k-2]}, u_0) + \alpha'_i(x_{[k-2]}) + A_i(x_{[k-2]}, u_0) \cdot v_0 - \tau_i) = 0,\end{align*}
which, by~\eqref{uvEqn}, indeed holds.\\

Having proved observations \textbf{(i)} and \textbf{(ii)}, we now suppose that 
\[\{(\lambda, \mu) \in \mathbb{F}_p^{r_{12}} \times \mathbb{F}_p^{r_{12}} \colon \lambda \cdot A(x_{[k-2]}, y_1) + \mu \cdot A(x_{[k-2]}, y_2) \in Z^\perp\} = \Lambda \times \Lambda\]
for a proportion at least $1-\eta$ of the pairs $(y_1, y_2)$ in $u_0 + Y$. 
By property \textbf{(ii)}, we have in particular that $B_{y_1 \bullet} \not=\emptyset$. Also, by~\eqref{lambdaInBy} we see that 
\[\{\lambda \in \mathbb{F}_p^{r_{12}} \colon \lambda \cdot A(x_{[k-2]}, y) \in Z^\perp\} =\Lambda\]
for a proportion at least $1-\eta$ of elements $y \in u_0 + Y$.
Hence, by properties \textbf{(i)} and \textbf{(ii)} again, we have that $|B_{y  \bullet}| = \delta |v_0 + Z|$ for a proportion at least $1-\eta$ of elements $y \in u_0 + Y$, and $|B_{y_1 \bullet} \cap B_{y_2 \bullet}| = \delta^2 |v_0 + Z|$ for a proportion at least $1-\eta$ of the pairs $(y_1, y_2) \in (u_0 + Y)^2$. Thus, $B$ is $\eta$-quasirandom with density $\delta$, which is a contradiction.\\

\indent Therefore, if the given biaffine variety fails to be $\eta$-quasirandom with density $\delta$, then without loss of generality there is a pair $(\lambda, \mu) \in \mathbb{F}_p^{r_{12}}\times \mathbb{F}_p^{r_{12}}$, where $\mu \notin \Lambda$, such that
\[\lambda \cdot A(x_{[k-2]}, y_1) + \mu \cdot A(x_{[k-2]}, y_2) \in Z^\perp\]
for at least an $\eta p^{-2r_{12}}$ proportion of the pairs $(y_1, y_2) \in (u_0 + Y)^2$. Since $\dim Z^\perp \leq r_2 + 2r_{12}$, by writing $d = y_2 - y_1$ and averaging we find that there exist $y_1 \in u_0 + Y$ and $w \in Z^\perp$ such that for at least $\eta p^{-4r_{12} - r_2} |Y|$ elements $d \in Y$, we have
\[(\lambda +\mu) \cdot A(x_{[k-2]}, y_1) + \mu \cdot A(x_{[k-2]}, d) = w.\]
By taking differences between values of this expression for different choices of $d$, we conclude that 
\[\mu \cdot A(x_{[k-2]}, d') = 0\]
for at least $\eta p^{-4r_{12}-r_2} |Y|$ elements $d' \in Y \subset (V_\Lambda)_{x_{[k-2]}}$. This implies that
\[|(V_M)_{x_{[k-2]}}|\,\geq \eta p^{-4r_{12}-r_2} |Y|\, \geq \eta p^{-5r_{12} -r_1 -r_2}|(V_\Lambda)_{x_{[k-2]}}|\]
for $M = \Lambda + \langle \mu \rangle$, which is a contradiction.\end{proof}

\begin{proof}[Proof of Theorem \ref{simReg}]
Note that $A_i$ is $\mathcal{G}'$-supported for each $i \in [r_{12}]$, where $\mathcal{G}'$ was defined in the statement as $ \{S \subset [k] \setminus \{k\} \colon S \cup \{k-1, k\} \in \mathcal{G}\}$. Let $m \in \mathbb{N}$ be a parameter to be specified later. Apply Lemma~\ref{BohrApproxSim} to $A_{[r_{12}]}$ to find $\mathcal{G}'$-supported multiaffine maps $\phi_{1}, \dots, \phi_{r_{12}} \colon G_{[k-1]} \to \mathbb{F}_p^m$, linear in coordinate $k-1$, such that for each $\lambda \in \mathbb{F}_p^{r_{12}}$, 
\begin{equation}\{x_{[k-1]} \in G_{[k-1]} \colon \lambda \cdot A(x_{[k-1]}) = 0\} \subset \{x_{[k-1]} \in G_{[k-1]} \colon \lambda \cdot \phi(x_{[k-1]}) = 0\}\label{phiPropEqnOuterApp}\end{equation}
and
\begin{equation}\Big|\{x_{[k-1]} \in G_{[k-1]} \colon \lambda \cdot \phi(x_{[k-1]}) = 0\} \setminus \{x_{[k-1]} \in G_{[k-1]} \colon \lambda \cdot A(x_{[k-1]}) = 0\}\Big|\,\, \leq\,\, p^{r_{12} - m} |G_{[k-1]}|.\label{phiPropEqnOuterApp2}\end{equation}
Let $F_1$ be the set of all $x_{[k-2]} \in G_{[k-2]}$ with the property that there is $\Lambda \leq \mathbb{F}_p^{r_{12}}$ such that 
\[|\{y \in G_{k-1} \colon (\forall \lambda \in \Lambda) \lambda \cdot \phi(x_{[k-2]}, y) = 0\}| \geq \eta^{r_{12}} p^{-5r_{12}^2 - r_1 r_{12} - r_2r_{12}}|G_{k-1}|,\] 
but 
\[(V_{\Lambda})_{x_{[k-2]}} \subsetneq \{y \in G_{k-1} \colon (\forall \lambda \in \Lambda)\,\, \lambda \cdot \phi(x_{[k-2]}, y) = 0\}.\] 
Choose a subspace $\Lambda \leq \mathbb{F}_p^{r_{12}}$, and let $F^\Lambda_1$ be the set of $x_{[k-2]} \in F_1$ such that this $\Lambda$ is a witness to $x_{[k-2]}$ being in $F_1$. Then
\begin{align*}|F_1^\Lambda| \frac{1}{2}\eta^{r_{12}}p^{-5r_{12}^2 - r_1 r_{12} - r_2r_{12}}|G_{k-1}| &\leq \sum_{x_{[k-2]} \in F^\Lambda_1} \Big|\Big\{y \in G_{k-1} \colon (\forall \lambda \in \Lambda)\,\, \lambda \cdot \phi(x_{[k-2]}, y) = 0\Big\}\,\setminus\, (V_{\Lambda})_{x_{[k-2]}}\Big|\\
&\leq \Big|\Big\{x_{[k-1]} \in G_{[k-1]} \colon (\forall \lambda \in \Lambda) \lambda \cdot \phi(x_{[k-1]}) = 0\Big\} \,\setminus\, V_\Lambda\Big|\\
&=\Big|\bigcup_{\mu \in \Lambda}\Big\{x_{[k-1]} \in G_{[k-1]} \colon (\forall \lambda \in \Lambda) \lambda \cdot \phi(x_{[k-1]}) = 0\Big\} \\
&\hspace{3cm}\setminus \Big\{x_{[k-1]} \in G_{[k-1]} \colon \mu \cdot A(x_{[k-1]}) = 0\Big\}\Big|\\
&\leq \Big|\bigcup_{\mu \in \Lambda} \{x_{[k-1]} \in G_{[k-1]} \colon \mu \cdot \phi(x_{[k-1]}) = 0\} \\
&\hspace{3cm}\setminus \{x_{[k-1]} \in G_{[k-1]} \colon \mu \cdot A(x_{[k-1]}) = 0\}\Big|\\
&\leq p^{2r_{12} - m} |G_{[k-1]}|,\end{align*}
from which (combined with the fact that the number of subspaces $\Lambda \leq \mathbb{F}_p^{r_{12}}$ is at most $p^{r_{12}^2}$) we deduce that $|F_1| \leq 2\eta^{-r_{12}}p^{6r_{12}^2 + r_1 r_{12} + r_2r_{12} + 2r_{12} - m}|G_{[k-2]}|$.\\

We now approximate $F_1$ by layers of a multiaffine map of bounded codimension. Apply Theorem~\ref{simVarAppThm} to $A_{[r_{12}]}$ to find a positive integer $s=O\Big((r_{12} + m)^{O(1)}\Big)$ and a $\mathcal{G}'$-supported multiaffine map $\theta \colon G_{[k-1]} \to \mathbb{F}_p^{s}$ such that for each $\lambda \in \mathbb{F}_p^{r_{12}}$, the layers of $\theta$ internally $(p^{-m})$-approximate the set $\{x_{[k-1]} \in G_{[k-1]} \colon \lambda \cdot A(x_{[k-1]}) = 0\}$. Hence, the layers of $\theta$ internally $(p^{r_{12}^2-m)}$-approximate $V_\Lambda$ and the layers of $\phi$ externally $p^{2 r^2_{12} - m}$-approximate $V_\Lambda$.\\
\indent Let $\xi > 0$. Apply Theorem~\ref{simFibresThm} to both $\phi$ and $\theta$. Note that $\mathcal{G}' \cap \mathcal{P}([k-2]) = \mathcal{G}''$. We obtain positive integers $t^{(1,1)}, t^{(1,2)} =O\Big((r_{12} + m + \log_{p} \xi^{-1})^{O(1)}\Big)$, $\mathcal{G}''$-supported multiaffine maps $\gamma^{(1,1)} \colon G_{[k-2]} \to \mathbb{F}_p^{t^{(1,1)}}$ and $\gamma^{(1,2)} \colon G_{[k-2]} \to \mathbb{F}_p^{t^{(1,2)}}$, a union $U^1$ of layers of $\gamma^{(1,1)}$, and a union $U^2$ of layers of $\gamma^{(1,2)}$, such that $|U^1|, |U^2|\, \geq (1-\xi)|G_{[k-2]}|$ and for every layer $L$ of $\gamma^{(1,1)}$ inside $U^1$, there is a map $c^{(1,1)} \colon \mathbb{F}_p^{[r_{12}] \times [m]} \to [0,1]$ such that
\begin{equation}\Big(\forall \lambda \in \mathbb{F}_p^{[r_{12}] \times [m]}\Big)\hspace{2pt}(\forall x_{[k-2]} \in L) \hspace{6pt}\Big||G_{k-1}|^{-1} |\{y_{k-1} \in G_{k-1} \colon \phi(x_{[k-2]}, y_{k-1}) = \lambda\}| - c^{(1,1)}(\lambda)\Big| \leq \xi\label{simRegPhiRegEqn}\end{equation}
and for every layer $L$ of $\gamma^{(1,2)}$ inside $U^2$, there is a map $c^{(1,2)} \colon \mathbb{F}_p^{s} \to [0,1]$ such that
\begin{equation}\Big(\forall \lambda \in \mathbb{F}_p^{s}\Big)\hspace{2pt}(\forall x_{[k-2]} \in L) \hspace{6pt} \Big||G_{k-1}|^{-1} |\{y_{k-1} \in G_{k-1} \colon \theta(x_{[k-2]}, y_{k-1}) = \lambda\}| - c^{(1,2)}(\lambda)\Big| \leq \xi.\label{simRegThetaRegEqn}\end{equation}
Write $\gamma^{(1)} = (\gamma^{(1,1)}, \gamma^{(1,2)})$, which is also a $\mathcal{G}''$-supported multiaffine map on $G_{[k-2]}$. Let $U = U^1 \cap U^2$, which is a union of layers of $\gamma^{(1)}$. Let $\Lambda \leq \mathbb{F}_p^{r_{12}}$. Let $\tilde{F}^\Lambda_1$ be the union of all layers $L$ of $\gamma^{(1)}$ such that $L \subseteq U$ and $L \cap F^\Lambda_1 \not= \emptyset$. We claim that $\tilde{F}^\Lambda_1$ is small (to be precise, we prove the bound~\eqref{tildef1smalleqn}). Let $L$ be any layer of $\gamma^{(1)}$ inside $\tilde{F}^\Lambda_1$. Then there is some $\tilde{x}_{[k-2]} \in L \cap F^\Lambda_1$. By definition of $F^\Lambda_1$, we have 
\[|\{y \in G_{k-1} \colon (\forall \lambda \in \Lambda)\,\,\lambda \cdot \phi(\tilde{x}_{[k-2]}, y) = 0\}| \geq \eta^{r_{12}} p^{-5r_{12}^2 - r_1 r_{12} - r_2r_{12}}|G_{k-1}|,\] 
but 
\[(V_{\Lambda})_{\tilde{x}_{[k-2]}} \subsetneq \{y \in G_{k-1} \colon (\forall \lambda \in \Lambda)\,\,\lambda \cdot \phi(\tilde{x}_{[k-2]}, y) = 0\}.\] 
Let $M_1 \subseteq (\mathbb{F}^m_p)^{r_{12}}$ and $M_2 \subseteq \mathbb{F}_p^{s}$ be sets of values such that $\bigcup_{\lambda \in M_1}\{x_{[k-1]} \in G_{[k-1]}:\phi(x_{[k-1]}) = \lambda\}$ externally $(p^{2r^2_{12}-m})$-approximates $V_\Lambda$ and $\bigcup_{\lambda \in M_2} \{x_{[k-1]} \in G_{[k-1]}\colon\theta(x_{[k-1]}) = \lambda\}$ internally $(p^{r^2_{12}-m})$-approximates $V_\Lambda$. By~\eqref{phiPropEqnOuterApp} and~\eqref{phiPropEqnOuterApp2} we may take $M_1$ to be the set of $r_{12}$-tuples of elements $\mu_1, \dots, \mu_{r_{12}} \in \mathbb{F}_p^m$ such that $\lambda_1 \mu_1 + \dots + \lambda_{r_{12}} \mu_{r_{12}} = 0$ for every $\lambda \in \Lambda$. In particular,
\begin{equation}\label{smallDiffApprox}\Big|\Big(\bigcup_{\lambda \in M_1}\{x_{[k-1]} \in G_{[k-1]}:\phi(x_{[k-1]}) = \lambda\}\Big) \setminus \Big(\bigcup_{\lambda \in M_2} \{x_{[k-1]} \in G_{[k-1]}:\theta(x_{[k-1]}) = \lambda\}\Big)\Big| \leq 2p^{2r^2_{12}-m} |G_{[k-1]}|.\end{equation}
Observe that the fibre size $|\{y_{k-1} \in G_{k-1} \colon \phi(z_{[k-2]}, y_{k-1}) = \lambda\}|$ is either 0 or a positive integer in $\{p^{-mr_{12}} |G_{k-1}|, p^{-mr_{12} + 1} |G_{k-1}|, \dots, |G_{k-1}|\}$ and an analogous property holds for the fibre size $|\{y_{k-1} \in G_{k-1} \colon \theta(x_{[k-2]}, y_{k-1}) = \lambda\}|$ (just with different negative powers of $p$). Since $L$ is a layer of $\gamma^{(1)}$ contained in $U$, provided we take $\xi = \frac{1}{4}p^{-s-r_{12}m}$, from~\eqref{simRegPhiRegEqn} and~\eqref{simRegThetaRegEqn} we conclude that the pair of sequences 
\[\bigg(\Big(|\{y_{k-1} \in G_{k-1} \colon \phi(x_{[k-2]}, y_{k-1}) = \lambda\}| \colon \lambda \in (\mathbb{F}^m_p)^{r_{12}}\Big), \Big(|\{y_{k-1} \in G_{k-1} \colon \theta(z_{[k-2]}, y_{k-1}) = \lambda\}| \colon \lambda \in \mathbb{F}_p^s\Big)\bigg)\]
is the same for all $z_{[k-2]} \in L$. Recall that we fixed some $\tilde{x}_{[k-2]} \in L \cap F^\Lambda_1$ earlier. Thus, for all $z_{[k-2]} \in L$ we have that
\begin{align*}\Big|\Big(\bigcup_{\mu \in M_1}\{x_{[k-1]} \in G_{[k-1]}:\phi(x_{[k-1]}) = \mu\}\Big)_{z_{[k-2]}}\Big| &=\Big|\Big(\bigcup_{\mu \in M_1}\{x_{[k-1]} \in G_{[k-1]}:\phi(x_{[k-1]}) = \mu\}\Big)_{\tilde{x}_{[k-2]}}\Big|\\ 
&=\Big|\Big(\bigcap_{\lambda \in \Lambda}\{x_{[k-1]}\in G_{[k-1]}:\lambda \cdot \phi(x_{[k-1]}) = 0\}\Big)_{\tilde{x}_{[k-2]}}\Big|\\ 
&\geq \eta^{r_{12}} p^{-5r_{12}^2 - r_1 r_{12} - r_2r_{12}}|G_{k-1}|,\\
\end{align*}
\[\Big|\Big(\bigcup_{\lambda \in M_2}\{x_{[k-1]}\in G_{[k-1]}:\theta(x_{[k-1]}) = \lambda\}\Big)_{z_{[k-2]}}\Big| = \Big|\Big(\bigcup_{\lambda \in M_2}\{x_{[k-1]}\in G_{[k-1]}:\theta(x_{[k-1]}) = \lambda\}\Big)_{\tilde{x}_{[k-2]}}\Big|\]
and
\[\Big(\bigcup_{\lambda \in M_2}\{x_{[k-1]}\in G_{[k-1]}:\theta(x_{[k-1]}) = \lambda\}\Big)_{z_{[k-2]}} \subseteq\Big(\bigcup_{\lambda \in M_1}\{x_{[k-1]}\in G_{[k-1]}:\phi(x_{[k-1]}) = \lambda\}\Big)_{z_{[k-2]}}.\]
 
Hence,
\begin{align*}\Big|\Big(\bigcup_{\lambda \in M_1}\{&x_{[k-1]}\in G_{[k-1]}:\phi(x_{[k-1]}) = \lambda\}\Big)_{z_{[k-2]}} \setminus \Big(\bigcup_{\lambda \in M_2} \{x_{[k-1]}\in G_{[k-1]}:\theta(x_{[k-1]}) = \lambda\}\Big)_{z_{[k-2]}}\Big|\\
&=\Big|\Big(\bigcup_{\lambda \in M_1}\{x_{[k-1]}\in G_{[k-1]}:\phi(x_{[k-1]}) = \lambda\}\Big)_{\tilde{x}_{[k-2]}} \setminus \Big(\bigcup_{\lambda \in M_2} \{x_{[k-1]}\in G_{[k-1]}:\theta(x_{[k-1]}) = \lambda\}\Big)_{\tilde{x}_{[k-2]}}\Big|\\
&\geq \Big|\Big(\bigcap_{\lambda \in \Lambda}\{x_{[k-1]}\in G_{[k-1]}:  \lambda \cdot \phi(x_{[k-1]}) = 0\}\Big)_{\tilde{x}_{[k-2]}} \setminus (V_{\Lambda})_{\tilde{x}_{[k-2]}}\Big|\hspace{2cm}\text{(since $\tilde{x}_{[k-2]} \in F^\Lambda_1$)}\\
&\geq\frac{1}{2}\eta^{r_{12}} p^{-5r_{12}^2 - r_1 r_{12} - r_2r_{12}}|G_{k-1}|.\end{align*}
By~\eqref{smallDiffApprox}, we see that the total size of all layers $L$ of $\gamma^{(1)}$ contained in $U$ that intersect $F^\Lambda_1$ is
\begin{equation}\label{tildef1smalleqn}|\tilde{F}^\Lambda_1| \leq 4\eta^{-r_{12}}p^{7r_{12}^2 + r_1 r_{12} + r_2r_{12}-m}|G_{[k-2]}|.\end{equation}
Hence, $F_1$ is contained in $\tilde{F}_1 = (\bigcup_{\Lambda \leq \mathbb{F}_p^{r_{12}}} \tilde{F}^\Lambda_1) \cup (G_{[k-2]} \setminus U)$, which is still a union of layers of $\gamma^{(1)}$, and (using the fact that the number of subspaces $\Lambda \leq \mathbb{F}^{r_{12}}_p$ is at most $p^{r_{12}^2}$)
\[|\tilde{F}_1| \leq 6\eta^{-r_{12}}p^{9r_{12}^2 + r_1 r_{12} + r_2r_{12} - m}|G_{[k-2]}|.\]

Since $\phi_{ij}$ is linear in coordinate $k-1$ for all $i \in [r_{12}], j \in [m]$, we may find a $\mathcal{G}''$-supported multiaffine map $\Phi_{ij} \colon G_{[k-2]} \to G_{k-1}$ such that $\phi_{ij}(x_{[k-1]}) = \Phi_{ij}(x_{[k-2]}) \cdot x_{k-1}$. Notice that the set
\[\{\mu \in \mathbb{F}_p^{[r_{12}] \times [m]} \colon \mu \cdot \Phi(x_{[k-2]}) = 0\}\]
determines the sizes $|\{y \in G_{k-1} \colon (\forall \lambda \in \Lambda) \lambda \cdot \phi(x_{[k-2]}, y) = 0\}|$ for all $\Lambda \leq \mathbb{F}_p^{r_{12}}$.\\
\indent Apply Theorem~\ref{simVarAppThm} to $\Phi_{[r_{12}] \times [m]}$ to find a $\mathcal{G}''$-supported multiaffine map $\gamma^{(2)} \colon G_{[k-2]} \to \mathbb{F}_p^t$, for $t \leq O((r_{12} + m)^{O(1)})$ such that the layers of $\gamma^{(2)}$ internally and externally $(p^{-m - r_{12}m - r_{12}^2 m^2})$-approximate the sets $\{x_{[k-2]} \in G_{[k-2]} \colon \mu \cdot \Phi(x_{[k-2]}) = 0\}$ for each $\mu \in \mathbb{F}_p^{[r_{12}] \times [m]}$. Thus, the layers of $\gamma^{(2)}$ internally and externally $(p^{-m - r_{12}^2 m^2})$-approximate the sets
\begin{align*}&\Big\{x_{[k-2]} \in G_{[k-2]} \colon \{\mu \in \mathbb{F}_p^{[r_{12}] \times [m]} \colon \mu \cdot \Phi(x_{[k-2]}) = 0\} = M\Big\}\\
&\hspace{0.5cm}=\Big\{x_{[k-2]} \in G_{[k-2]} \colon (\forall \mu \in M) \mu \cdot \Phi(x_{[k-2]}) = 0\Big\} \setminus \bigg(\bigcup_{\lambda \in \mathbb{F}_p^{[r_{12}] \times [m]} \setminus M}\Big\{x_{[k-2]} \in G_{[k-2]} \colon \lambda \cdot \Phi(x_{[k-2]}) = 0\Big\}\bigg)\end{align*}
for each subspace $M \leq \mathbb{F}_p^{[r_{12}] \times [m]}$. These sets partition $G_{[k-2]}$. Let $F_2$ be the union of layers of $\gamma^{(2)}$ that are not fully contained in a set of this form. Then $|F_2| \leq p^{-m}|G_{[k-2]}|$.\\
\indent Let $F = \tilde{F}_1 \cup F_2$, which is a union of layers of the map $\gamma = (\gamma^{(1)}, \gamma^{(2)})$, which is a $\mathcal{G}''$-supported multiaffine map on $G_{[k-2]}$. Let $L$ be a layer of $\gamma$ such that $L \not\subseteq F$. In particular, $L \not\subseteq F_2$, so $L$ is fully contained in the set 
\[\Big\{x_{[k-2]} \in G_{[k-2]} \colon \{\mu \in \mathbb{F}_p^{[r_{12}] \times [m]} \colon \mu \cdot \Phi(x_{[k-2]}) = 0\} = M\Big\}\]
for some subspace $M \leq \mathbb{F}_p^{[r_{12}] \times [m]}$. This implies that the sequence
\[\Big(|\{y \in G_{k-1} \colon (\forall \lambda \in \Lambda) \,\,\lambda \cdot \phi(x_{[k-2]}, y) = 0\}|\colon \Lambda \leq \mathbb{F}_p^{r_{12}}\Big)\]
is the same for all $x_{[k-2]} \in L$. Hence, there exists $\Lambda \leq \mathbb{F}_p^{r_{12}}$ such that for each $x_{[k-2]} \in L$,
\[|\{y \in G_{k-1} \colon (\forall \lambda \in \Lambda)\,\,\lambda \cdot \phi(x_{[k-2]}, y) = 0\}|\ \geq\, \eta^{r_{12}} p^{-5r^2_{12}-r_{12}r_1-r_{12}r_2}|G_{k-1}|\]
and for each subspace $\Lambda' \gneq \Lambda$
\[\eta p^{-5r_{12}-r_1-r_2}|\{y \in G_{k-1} \colon (\forall \lambda \in \Lambda)\,\, \lambda \cdot \phi(x_{[k-2]}, y) = 0\}|\ > |\{y \in G_{k-1} \colon (\forall \lambda \in \Lambda')\,\, \lambda \cdot \phi(x_{[k-2]}, y) = 0\}|.\]
By the way we defined $\tilde{F}_1$, it must be that $L \subset U$ and $L \cap F_1^\Lambda = \emptyset$. From the definition of $F_1^\Lambda$, we deduce that for each $x_{[k-2]} \in L$,
\[(V_\Lambda)_{x_{[k-2]}} = \{y \in G_{k-1} \colon (\forall \lambda \in \Lambda)\,\, \lambda \cdot \phi(x_{[k-2]}, y) = 0\}.\]
On the other hand, from~\eqref{phiPropEqnOuterApp}, for each $x_{[k-2]} \in L$ we also have
\[|(V_{\Lambda'})_{x_{[k-2]}}|\ \leq\, |\{y \in G_{k-1} \colon (\forall \lambda \in \Lambda')\,\, \lambda \cdot \phi(x_{[k-2]}, y) = 0\}|\]
for all $\Lambda' \gneq \Lambda$. Therefore Lemma~\ref{qrEmptyClaim} applies, and finally, for each choice of $x_{[k-2]} \in L$, $u_0 \in G_{k-1}$, $v_0 \in G_k$ and $\tau \in \mathbb{F}_p^{r_{12}}$, the biaffine variety
\begin{align*}&\bigg[\bigg(u_0 + \Big(\{y \in G_{k-1} \colon (\forall \lambda \in \Lambda) \lambda \cdot \phi(x_{[k-2]}, y) = 0\} \cap \{y \in G_{k-1} \colon \beta^1(x_{[k-2]}, y) = \beta^1(x_{[k-2]}, 0)\}\\
&\hspace{4cm}\cap \{y \in G_{k-1} \colon (\forall i \in [r_{12}])\alpha_i(x_{[k-2]}, y) = 0\}\Big)\bigg) \\
&\hspace{2cm}\times \bigg(v_0 + \Big(\langle A'_1(x_{[k-2]}), \dots, A'_{r_{12}}(x_{[k-2]})\rangle^\perp \cap \langle A_1(x_{[k-2]}, u_0), \dots, A_{r_{12}}(x_{[k-2]}, u_0)\rangle^\perp\\
&\hspace{4cm}\cap \{z \in G_{k} \colon \beta^2(x_{[k-2]},\ls{k} z) = \beta^2(x_{[k-2]},\ls{k} 0)\}\Big)\bigg)\bigg]\\
&\hspace{1cm} \cap \{(y,z) \in G_{k-1} \times G_k \colon \beta^{12}(x_{[k-2]}, y,z) = \tau\}\end{align*}
is either $\eta$-quasirandom with density $p^{-r_{12}}|\Lambda|$ or empty. We may choose $m = O\Big((r_{12} + r_1 + r_2 + \log_{p} \eta^{-1})^{O(1)}\Big)$ and ensure that $|F|\ \leq \eta |G_{[k-2]}|$.\end{proof}

\subsection{Convolutional extensions of multihomomorphisms}

Recall from the introduction that when $\alpha \colon G_{[k]} \to \mathbb{F}^r$ is a multiaffine map such that each component $\alpha_i$ is multilinear on $G_{I_i}$, where $I_i$ is the set of coordinates on which $\alpha_i$ depends, we say that $\alpha$ is mixed-linear. Further, we say that a variety is mixed-linear if it is a layer of a mixed-linear map.\\
\indent The next theorem tells us that if we are given a multi-$D$-homomorphism defined on a subset $S$ of a variety $V$ and a set $X \subset G_{[k] \setminus \{d\}}$ of $(1-\varepsilon)$-dense columns for sufficiently small $\varepsilon > 0$ (that is $X = \Big\{x_{[k] \setminus \{d\}} \in G_{[k] \setminus \{d\}} \colon |S_{x_{[k] \setminus \{d\}}}| \geq (1-\varepsilon) |V_{x_{[k] \setminus \{d\}}}|\Big\}$), then convolving in direction $d$ turns $\phi$ into multi-$D'$-homomorphism $\psi$ for some $D'$ slightly smaller than $D$. The gain in the theorem is that the domain of $\psi$ is given by `filling' the $(1-\varepsilon)$-dense columns from $S_{x_{[k] \setminus \{d\}}}$ to $V_{x_{[k] \setminus \{d\}}}$, with a minor price that we need to remove a very small portion of columns and that we need to intersect $V$ with a further lower-order variety of bounded codimension. Crucially, we do not make any additional assumptions on $X$ such as being $1-o(1)$ dense in a variety of bounded codimension. 

\begin{theorem}\label{mainExtensionConvStep}Let $D, k,  \in \mathbb{N}$. Then there exists $\varepsilon_0 = \varepsilon_0(D,k) > 0$, depending only on $D$ and $k$, with the following property. Suppose that $S \subset V \subset G_{[k]}$ and that $V$ is a mixed-linear variety of codimension at most $r$. Let $\mathcal{G}$ be a down-set such that $V$ is $\mathcal{G}$-supported. Write $\mathcal{G}' = \{S \subset [k-1] \colon S \cup \{k\} \in \mathcal{G}\}$. Let $X \subset G_{[k-1]}$ be a set such that $|S_{x_{[k-1]}}|\ \geq (1-\varepsilon_0) |V_{x_{[k-1]}}|\ > 0$ for every $x_{[k-1]} \in X$,
and suppose that $\phi \colon S \to H$ is a multi-$(20 D)$-homomorphism. Let $\xi > 0$. Then, provided $\dim G_i \geq \con_{D, p} \Big(r + \log_p \xi^{-1}\Big)^{\con_{D, p}}$ for each $i \in [k-1]$, there exist a positive integer $s =O_{D, p}\Big((r + \log_{p}\xi^{-1})^{O_{D, p}(1)}\Big)$, a $\mathcal{G}'$-supported multiaffine map $\gamma \colon G_{[k-1]} \to \mathbb{F}_p^s$, a collection of values $\Gamma \subset \mathbb{F}_p^s$, and a set $X' \subset X$, such that
\begin{itemize}
\item[\textbf{(i)}] $|X \setminus X'| \leq \xi |G_{[k-1]}|$,
\item[\textbf{(ii)}] $|\{x_{[k-1]} \in G_{[k-1]} \colon \gamma(x_{[k-1]}) \in \Gamma\}|\ \geq (1-\xi)|G_{[k-1]}|$,
\item[\textbf{(iii)}] for each $\lambda \in \Gamma$, the map $\phi^{\text{ext}} \colon ((X' \cap \gamma^{-1}(\lambda)) \times G_k) \cap V \to H$ given by the formula
\[\phi^{\text{ext}}(x_{[k-1]},y_1 + y_2 - y_3) = \phi(x_{[k-1]},y_1) + \phi(x_{[k-1]},y_2) - \phi(x_{[k-1]},y_3),\]
for $x_{[k-1]} \in X' \cap \gamma^{-1}(\lambda)$ and $y_1, y_2, y_3 \in S_{x_{[k-1]}}$, is well-defined, has the domain claimed, and is a multi-$D$-homomorphism.
\end{itemize}
\end{theorem}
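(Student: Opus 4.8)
\textbf{Proof proposal for Theorem~\ref{mainExtensionConvStep}.}

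The plan is to fix $d=k$ without loss of generality and to produce $\phi^{\text{ext}}$ in two stages: first defining it column-by-column via the convolution/averaging trick (as in Lemma~\ref{easyExtn}), and then verifying that, after removing a negligible set of columns and intersecting $V$ with a bounded-codimension lower-order variety $\gamma^{-1}(\Gamma)$, the resulting map is a multi-$D$-homomorphism. For each $x_{[k-1]}\in X$, since $|S_{x_{[k-1]}}|\geq(1-\varepsilon_0)|V_{x_{[k-1]}}|$ and $V_{x_{[k-1]}}$ is a coset, Lemma~\ref{easyExtn} extends $y_k\mapsto\phi(x_{[k-1]},y_k)$ uniquely to an affine map $\phi^{\text{ext}}_{x_{[k-1]}}\colon V_{x_{[k-1]}}\to H$; this immediately gives that $\phi^{\text{ext}}$ (where defined) is affine in direction $k$, so condition \textbf{(iii)} in direction $k$ is automatic and the content is the homomorphism property in the directions $d'\in[k-1]$.

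The heart of the argument is showing that, for $d'\in[k-1]$, $\phi^{\text{ext}}$ respects almost all $(20D)/\dots$-tuples along lines in direction $d'$, and then applying Simultaneous Biaffine Regularity (Theorem~\ref{simReg}) in directions $d',k$ together with Theorem~\ref{biaffineExtnConv} to each quasirandom $2$-dimensional slice. Concretely: apply Theorem~\ref{simReg} to the maps defining $V$ (split according to which coordinates involve $d'$ and $k$), obtaining a $\mathcal{G}''$-supported map on $G_{[k]\setminus\{d',k\}}$ whose generic layers decompose the $2$-planes spanned by directions $d',k$ into quasirandom biaffine varieties (or empty pieces). On such a quasirandom piece $B$, the hypothesis that $|S_{x_{[k-1]}}|$ is $(1-\varepsilon_0)$-dense in $V_{x_{[k-1]}}$ translates, after an application of the fibres theorem and Lemma~\ref{regInt2}, into $|S_{x\bullet}|\geq(1-O(\varepsilon_0))|B_{x\bullet}|$ for almost all columns $x$ of $B$; since $\phi$ restricted to such a slice is a high-order multihomomorphism in the two surviving directions (in particular a $6\cdot 2^k$-homomorphism in direction $d'$ and a $2$-homomorphism in direction $k$, using $D$ large enough), Theorem~\ref{biaffineExtnConv} produces a convolutional extension $\phi^{\text{conv}}$ which is a $2^k$-homomorphism in direction $d'$ on almost all of $B$, and property \textbf{(iii)} of that theorem guarantees $\phi^{\text{conv}}$ agrees with the averaging formula, hence with $\phi^{\text{ext}}$. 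Assembling these slice-wise extensions over the generic layers of the Theorem~\ref{simReg} map (one direction $d'$ at a time, à la Proposition~\ref{MltHommToMltAff}), removing the bad layers and the bad columns into the error set, and taking $\gamma$ to be the concatenation of all the regularity maps produced over $d'\in[k-1]$ together with the fibre-counting maps needed to control column densities, gives the desired $\gamma$, $\Gamma$, $X'$.

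The bookkeeping to keep straight is that each of the $k-1$ applications (one per direction $d'$) introduces its own regularity map and its own small error set; one must do them successively, at each stage restricting to a sublayer on which all previously-gained homomorphism properties are preserved (this is exactly the inductive structure of the proof of Proposition~\ref{MltHommToMltAff}, and the fact that the new $\gamma$ is of lower order ensures the codimension stays $O((r+\log_p\xi^{-1})^{O(1)})$). The conditions \textbf{(i)}, \textbf{(ii)} follow by accumulating the $O(\eta^{\Omega(1)})$-type error terms from Theorem~\ref{biaffineExtnConv} and Theorem~\ref{simReg} and choosing the quasirandomness parameter $\eta$ (internal to those applications) polynomially small in $\xi^{-1}$ and $p^{-r}$; the requirement $\dim G_i\geq\con_{D,p}(r+\log_p\xi^{-1})^{\con_{D,p}}$ is forced by the condition $|U|\geq\eta^{-\con_{k,p}}$ in Theorem~\ref{biaffineExtnConv} and by Lemma~\ref{varsizelemma}-type lower bounds on slice sizes. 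The main obstacle I expect is the loss of Freiman order: Theorem~\ref{biaffineExtnConv} drops a $6\cdot 2^k$-homomorphism in direction $d'$ down to a $2^k$-homomorphism, and one must check that starting from a multi-$(20D)$-homomorphism leaves enough room, across all $k-1$ directions processed in turn and across the averaging step (which itself costs a factor of roughly $3$ in the order), to end with a genuine multi-$D$-homomorphism — this is why the constant $20D$ (rather than, say, $2D$) appears in the hypothesis, and verifying the arithmetic of orders carefully, together with confirming that the "good tuple" counting arguments of Theorem~\ref{biaffineExtnConv} go through uniformly over the layers, is the delicate part.
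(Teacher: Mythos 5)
Your overall strategy matches the paper's: for each direction $d\in[k-1]$ split the defining maps of $V$ according to whether they involve coordinates $d$ and/or $k$, apply Theorem~\ref{simReg} with $(d_1,d_2)=(d,k)$ to regularize, then apply Theorem~\ref{biaffineExtnConv} to the resulting quasirandom two-dimensional slices, and finally concatenate the regularity maps. That is exactly the paper's proof. However, three points in your sketch would derail a detailed write-up.

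First, the $k-1$ directions are handled \emph{in parallel}, not sequentially ``à la Proposition~\ref{MltHommToMltAff}''. For each $d$ you independently produce $X^{(d)}$ and a $\mathcal{G}'$-supported map $(\gamma^{(d)},\psi^{(d)})$ with its allowed layer set, and only at the very end you set $X'=\bigcap_{d\in[k-1]}X^{(d)}$, $\gamma=(\gamma^{(1)},\psi^{(1)},\dots,\gamma^{(k-1)},\psi^{(k-1)})$ and $\Gamma=\prod_d\Gamma^{(d)}\times I^{(d)}$. The map $\phi^{\text{ext}}$ is a single object, defined once and for all by the column-wise averaging formula, so there is nothing to ``preserve'' across directions; the $D$-homomorphism property established for direction $d$ holds on a set, and intersecting with the sets for the other directions can only shrink it. A sequential scheme, where one restricts to sublayers at each stage, is not needed and would complicate both the bound accounting and the downset bookkeeping.

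Second, the density transfer you describe via the fibres theorem and Lemma~\ref{regInt2} is superfluous. Once you have pinned $x_{[k-1]\setminus\{d\}}$ and a coset $u_0+U_{x_{[k-1]\setminus\{d\}}}$, the column $B_{y\bullet}$ of the two-dimensional quasirandom variety is \emph{exactly} $V_{x_{[k-1]\setminus\{d\}},\,^d y}$, because $\beta^1$ is constant on $u_0+U$. So the hypothesis $|S_{x_{[k-1]}}|\geq(1-\varepsilon_0)|V_{x_{[k-1]}}|$ gives $|\tilde S_{y\bullet}|\geq(1-\varepsilon_0)|\tilde V_{y\bullet}|$ with no loss and no extra lemma.

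Third, the Freiman-order arithmetic is simpler than you fear. Theorem~\ref{biaffineExtnConv} is applied once per direction $d$, with its parameter $k'$ chosen as the least integer with $2^{k'}\geq D$; it eats a $6\cdot 2^{k'}$-homomorphism (in direction $d$) and outputs a $2^{k'}$-homomorphism (which in particular is a $D$-homomorphism). Since $2^{k'}<2D$ one needs order $<12D$, so $20D$ is comfortable slack. There is no cumulative loss across the $k-1$ directions (again because they are done in parallel), and the averaging formula does not ``cost a factor of $3$'' — it is precisely what Theorem~\ref{biaffineExtnConv} part~\textbf{(iii)} already certifies. The constant $20$ is not tight; it is simply a safe choice that subsumes the $6\cdot 2^{k'}$ requirement for one application.
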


\begin{proof}Fix any $d \in [k - 1]$. Since $V$ is a $\mathcal{G}$-supported mixed-linear variety of codimension $r$, we can find multiaffine maps $\beta^1 \colon G_{[k-1]} \to \mathbb{F}_p^{r_1}$, $\beta^2 \colon G_{[k]\setminus \{d\}} \to \mathbb{F}_p^{r_2}$ and $\beta^{12} \colon G_{[k]} \to \mathbb{F}_p^{r_{12}}$ such that 
\begin{itemize}
\item $r_1 + r_2 + r_{12} \leq r$, 
\item there are values $\lambda^1 \in \mathbb{F}_p^{r_1}, \lambda^2 \in \mathbb{F}_p^{r_2}, \lambda^{12} \in \mathbb{F}_p^{r_{12}}$ for which 
\[V = \{x_{[k]} \in G_{[k]} \colon \beta^1(x_{[k-1]}) = \lambda^1, \beta^2(x_{[k] \setminus \{d\}}) = \lambda^2, \beta^{12}(x_{[k]}) = \lambda^{12}\},\] 
\item $\beta^{12}$ is linear in coordinates $G_d$ and $G_k$, and 
\item $\beta^{12}$ is $\{S \subset [k] \colon S \cup \{d,k\} \in \mathcal{G}\}$-supported.
\end{itemize}

Write $\beta^{12}_i(x_{[k]}) = A_i(x_{[k-1]}) \cdot x_k$, for multiaffine maps $A_i \colon G_{[k-1]} \to G_k$, where the $A_i$ are additionally linear in coordinate $d$ and $\mathcal{G}'$-supported.\\
\indent Let $\eta > 0$ be a constant to be chosen later. Apply Theorem~\ref{simReg} with $d_1 = d$ and $d_2 = k$ to $\beta^1, \beta^2$ and $\beta^{12}$. We obtain
\begin{itemize}
\item positive integers $m, s = O\Big((r + \log_{p}\eta^{-1})^{O(1)}\Big)$,
\item $\mathcal{G}'$-supported\footnote{This $\mathcal{G}'$ is the same as in the statement of this theorem, not the one given by Theorem~\ref{simReg}. In fact the former collections of sets contains the latter.} multiaffine maps $\psi^{(d)}_1, \dots, \psi^{(d)}_{r_{12}} \colon G_{[k-1]} \to \mathbb{F}_p^m$, which are linear in coordinate $d$,
\item  $\mathcal{G}''$-supported multiaffine maps $\gamma^{(d)} \colon G_{[k-1] \setminus \{d\}} \to \mathbb{F}_p^s$, where 
\[\mathcal{G}'' = \{S \subset [k-1] \setminus \{d\} \colon S \cup \{d, k\} \in \mathcal{G}\},\]
\item a collection of values $\Gamma^{(d)} \subset \mathbb{F}^s$ such that 
\[|\{x_{[k-1] \setminus \{d\}} \in G_{[k-1] \setminus \{d\}} \colon \gamma^{(d)}(x_{[k-1] \setminus \{d\}}) \in \Gamma^{(d)}\}| \geq (1-\eta) |G_{[k-1] \setminus \{d\}}|,\]
\end{itemize}
with the property that for each $\mu \in \Gamma^{(d)}$, there is a subspace $\Lambda^\mu \leq \mathbb{F}^{r_{12}}$ such that for each $x_{[k-1] \setminus \{d\}} \in (\gamma^{(d)})^{-1}(\mu)$, $u_0 \in G_{d}, v_0 \in G_{k}$ and $\tau \in \mathbb{F}_p^{r_{12}}$, the biaffine variety
\begin{align*}&\bigg[\bigg(u_0 + \Big(\{y \in G_{d} \colon (\forall \lambda \in \Lambda^\mu) \,\,\lambda \cdot \psi^{(d)}(x_{[k-1] \setminus \{d\}},\ls{d} y) = 0\} \\
&\hspace{3cm}\cap \{y \in G_{d} \colon \beta^1(x_{[k-1] \setminus \{d\}},\ls{d} y) =\beta^1(x_{[k-1] \setminus \{d\}},\ls{d} 0)\}\Big)\bigg)\\
&\hspace{2cm}\times \bigg(v_0 + \Big(\langle A_1(x_{[k-1] \setminus \{d\}},\ls{d} u_0), \dots, A_{r_{12}}(x_{[k-1] \setminus \{d\}},\ls{d} u_0)\rangle^\perp\\
&\hspace{4cm}\cap \{z \in G_{k} \colon \beta^2(x_{[k-1] \setminus \{d\}}, \ls k z) = \beta^2(x_{[k-1] \setminus \{d\}}, \ls k 0)\}\Big)\bigg)\bigg]\\
&\hspace{1cm} \cap \{(y,z) \in G_{d}\times G_{k} \colon \beta^{12}(x_{[k-1] \setminus \{d\}}, y, z) = \tau\}\end{align*}
is either $\eta$-quasirandom with density $|\Lambda^\mu|p^{-r_{12}}$ or empty.\\

Let $x_{[k-1] \setminus \{d\}} \in (\gamma^{(d)})^{-1}(\Gamma^{(d)})$ be such that $X_{x_{[k-1] \setminus \{d\}}} \not=\emptyset$. This implies $V_{x_{[k-1] \setminus \{d\}}} \not=\emptyset$, and in particular we have that $\Big\{y \in G_{d} \colon \beta^1(x_{[k-1] \setminus \{d\}},\ls{d} y) = \lambda^1\Big\}$ and $\Big\{z \in G_{k} \colon \beta^2(x_{[k-1] \setminus \{d\}}, \ls k z) = \lambda^2\Big\}$ are non-empty cosets in $G_{d}$ and $G_k$ respectively. Write $\mu = \gamma^{(d)}(x_{[k-1] \setminus \{d\}}) \in \Gamma^{(d)}$ and define
\[U_{x_{[k-1] \setminus \{d\}}} = \Big\{y \in G_{d} \colon (\forall \lambda \in \Lambda^\mu)\,\, \lambda \cdot \psi^{(d)}(x_{[k-1] \setminus \{d\}},\ls{d} y) = 0\Big\} \cap \Big\{y \in G_{d} \colon \beta^1(x_{[k-1] \setminus \{d\}},\ls{d} y) =\beta^1(x_{[k-1] \setminus \{d\}},\ls{d} 0)\Big\}.\]
Take a coset $W_{x_{[k-1] \setminus \{d\}}}$ inside $G_d$ such that
\begin{equation}\label{wcosetintroeqn}U_{x_{[k-1] \setminus \{d\}}}  \oplus W_{x_{[k-1] \setminus \{d\}}} = \Big\{y \in G_{d} \colon \beta^1(x_{[k-1] \setminus \{d\}},\ls{d} y) = \lambda^1\Big\}.\end{equation}
For each $u_0\in W_{x_{[k-1] \setminus \{d\}}}$, write 
\begin{align*}Y_{x_{[k-1] \setminus \{d\}}, u_0} = &\Big\langle A_1(x_{[k-1] \setminus \{d\}},\ls{d} u_0), \dots, A_{r_{12}}(x_{[k-1] \setminus \{d\}},\ls{d} u_0)\Big\rangle^\perp\\
&\hspace{2cm} \cap \Big\{z \in G_{k} \colon \beta^2(x_{[k-1] \setminus \{d\}}, \ls k z) = \beta^2(x_{[k-1] \setminus \{d\}}, \ls k 0)\Big\}.\end{align*}
Let $T_{x_{[k-1] \setminus \{d\}}, u_0}$ be a coset in $G_k$ such that
\[Y_{x_{[k-1] \setminus \{d\}}, u_0} \oplus T_{x_{[k-1] \setminus \{d\}}, u_0} = \{z \in G_{k} \colon \beta^2(x_{[k-1] \setminus \{d\}}, \ls k z) = \lambda^2\},\]
and let $C_{x_{[k-1] \setminus \{d\}}, u_0}$ be the set of all $v_0 \in T_{x_{[k-1] \setminus \{d\}}, u_0}$ such that 
\[B^{v_0} = \Big((u_0 + U_{x_{[k-1] \setminus \{d\}}}) \times (v_0 + Y_{x_{[k-1] \setminus \{d\}}, u_0})\Big) \cap \{(y,z) \in G_{d}\times G_k \colon \beta^{12}(x_{[k-1] \setminus \{d\}}, y, z) = \lambda^{12}\}\]
is non-empty, and therefore $\eta$-quasirandom with density $|\Lambda^\mu|p^{-r_{12}}$. By quasirandomness, we see that for a proportion at least $1 - p^{r_2 + r_{12}}\eta$ of $y \in u_0 + U_{x_{[k-1] \setminus \{d\}}}$ we have that $B^{v_0}_{y \bullet}$ is non-empty for every $v_0 \in C_{x_{[k-1] \setminus \{d\}}, u_0}$. On the other hand, for a fixed $y$, the set of all $v_0 \in T_{x_{[k-1] \setminus \{d\}}, u_0}$ such that $B^{v_0}_{y \bullet}$ is non-empty is a coset. Thus $C_{x_{[k-1] \setminus \{d\}}, u_0}$ is a coset in $G_k$, and by Lemma~\ref{simultQuasiRand},
\[\Big((u_0 + U_{x_{[k-1] \setminus \{d\}}}) \times (C_{x_{[k-1] \setminus \{d\}}, u_0} + Y_{x_{[k-1] \setminus \{d\}}, u_0})\Big) \cap \{(y,z) \in G_{d}\times G_k \colon \beta^{12}(x_{[k-1] \setminus \{d\}}, y, z) = \lambda^{12}\}\]
is $\eta p^{r}$-quasirandom with density $|\Lambda^\mu|p^{-r_{12}}$.\\

We shall apply Theorem~\ref{biaffineExtnConv} to sets
\[\tilde{X}_{u_0, x_{[k-1] \setminus \{d\}}} = X_{x_{[k-1] \setminus \{d\}}} \cap\hspace{3pt} (u_0 + U_{x_{[k-1] \setminus \{d\}}}),\hspace{6pt} \tilde{S}_{u_0, x_{[k-1] \setminus \{d\}}} = S_{x_{[k-1] \setminus \{d\}}} \cap\hspace{3pt} \Big((u_0 + U_{x_{[k-1] \setminus \{d\}}})\times G_k\Big)\]
and the variety
\begin{align*}\tilde{V}_{u_0, x_{[k-1] \setminus \{d\}}} = & \Big((u_0 + U_{x_{[k-1] \setminus \{d\}}}) \times (C_{x_{[k-1] \setminus \{d\}}, u_0} + Y_{x_{[k-1] \setminus \{d\}}, u_0})\Big) \\
&\hspace{4cm}\cap \{(y,z) \in G_{d}\times G_k \colon \beta^{12}(x_{[k-1] \setminus \{d\}}, y, z) = \lambda^{12}\}\\
=&\Big((u_0 + U_{x_{[k-1] \setminus \{d\}}}) \times \{z \in G_{k} \colon \beta^2(x_{[k-1] \setminus \{d\}}, \ls k z) = \lambda^2\}\Big) \\
&\hspace{4cm}\cap \{(y,z) \in G_{d}\times G_k \colon \beta^{12}(x_{[k-1] \setminus \{d\}}, y, z) = \lambda^{12}\}\\
=&\Big((u_0 + U_{x_{[k-1] \setminus \{d\}}}) \times G_k \Big) \,\cap\, V_{x_{[k-1] \setminus \{d\}}}.\end{align*}
We now briefly check that the assumptions in Theorem~\ref{biaffineExtnConv} are satisfied. We alredy know that $\tilde{V}_{u_0, x_{[k-1] \setminus \{d\}}}$ is $\eta p^{r}$-quasirandom with density $|\Lambda^\mu|p^{-r_{12}}$. When $y \in u_0 + U_{x_{[k-1] \setminus \{d\}}}$ then $\beta^1(x_{[k-1] \setminus \{d\}},\ls{d} y) = \lambda^1$, so we have $(\tilde{V}_{u_0, x_{[k-1] \setminus \{d\}}})_{y\bullet} = V_{x_{[k-1] \setminus \{d\}}, \ls{d} y}$. Hence, for each $y \in \tilde{X}_{u_0, x_{[k-1] \setminus \{d\}}}$ we have 
\[\Big|\Big(\tilde{S}_{u_0, x_{[k-1] \setminus \{d\}}}\Big)_{y \bullet}\Big| \geq (1 - \varepsilon_0) \Big|\Big(\tilde{V}_{u_0, x_{[k-1] \setminus \{d\}}}\Big)_{y\bullet}\Big|.\]
Note that we need $\varepsilon_0(D)$ in the statement of this theorem to be at most $\varepsilon_0(20D)$ from Theorem~\ref{biaffineExtnConv}.\\

\indent Lastly, provided that $|G_d| \geq (p^r\eta)^{- \con_{D, p}}\, p^{rm}$, which is sufficient to guarantee the technical condition $|U_{x_{[k-1] \setminus \{d\}}}| \geq (p^r\eta)^{- \con_{D, p}}$ required by Theorem~\ref{biaffineExtnConv}, all the assumptions in that theorem hold, so we obtain a subset $X^{(d, u_0)}_{x_{[k-1] \setminus \{d\}}} \subset X_{x_{[k-1] \setminus \{d\}}} \cap (u_0 + U_{x_{[k-1] \setminus \{d\}}})$ such that
\begin{equation}|(X_{x_{[k-1] \setminus \{d\}}} \cap (u_0 + U_{x_{[k-1] \setminus \{d\}}})) \setminus X^{(d, u_0)}_{x_{[k-1] \setminus \{d\}}}|\, =\, O_{D, p}(p^{O_{D, p}(r)} \eta^{\Omega_{D, p}(1)}) |U_{x_{[k-1] \setminus \{d\}}}|\label{2dextnsetdiff}\end{equation}
and $X^{(d, u_0)}_{x_{[k-1] \setminus \{d\}}}$ has the property regarding extensions of maps described in that theorem; the extended domain is
\begin{equation}\label{extendedDomainEqn} (X^{(d, u_0)}_{x_{[k-1] \setminus \{d\}}} \times G_k) \cap \tilde{V}_{u_0, x_{[k-1] \setminus \{d\}}} = (X^{(d, u_0)}_{x_{[k-1] \setminus \{d\}}} \times G_k) \cap V_{x_{[k-1] \setminus \{d\}}}.\end{equation}
\indent Recall that we only considered $x_{[k-1] \setminus \{d\}}$ such that $X_{x_{[k-1] \setminus \{d\}}} \not= \emptyset$. The assumptions in the statement of this theorem imply that $V_{x_{[k-1]\setminus\{d\}}, y_d} \not=\emptyset$ for each $y_d \in X_{x_{[k-1] \setminus \{d\}}}$. Hence $\beta^1(x_{[k-1]\setminus\{d\}}, y_d) = \lambda^1$, so $X_{x_{[k-1] \setminus \{d\}}} \subset \{y \in G_d \colon \beta^1(x_{[k-1] \setminus \{d\}},\ls{d} y) = \lambda^1\}$. Recall from~\eqref{wcosetintroeqn} that the latter set is equal to $U_{x_{[k-1] \setminus \{d\}}}  \oplus W_{x_{[k-1] \setminus \{d\}}}$. Hence
\begin{align}\Big|X_{x_{[k-1] \setminus \{d\}}} \,\setminus\,\Big(\bigcup_{u_0 \in W_{x_{[k-1] \setminus \{d\}}}} X^{(d, u_0)}_{x_{[k-1] \setminus \{d\}}}\Big)\Big|\,\, =&\,\, \sum_{u_0 \in W_{x_{[k-1] \setminus \{d\}}}} \Big|\Big(X_{x_{[k-1] \setminus \{d\}}} \cap  (u_0 + U_{x_{[k-1] \setminus \{d\}}})\Big)\,\setminus\, X^{(d, u_0)}_{x_{[k-1] \setminus \{d\}}}\Big| \nonumber\\
\leq&\,\, O_{D, p}(p^{O_{D, p}(r)} \eta^{\Omega_{D, p}(1)}) |W_{x_{[k-1] \setminus \{d\}}}||U_{x_{[k-1] \setminus \{d\}}}|\nonumber\\
&\hspace{6cm}\text{(by~\eqref{2dextnsetdiff})}\nonumber\\
=&\,\,O_{D, p}(p^{O_{D, p}(r)} \eta^{\Omega_{D, p}(1)}) |W_{x_{[k-1] \setminus \{d\}}} + U_{x_{[k-1] \setminus \{d\}}}|\nonumber\\
=&\,\,O_{D, p}(p^{O_{D, p}(r)} \eta^{\Omega_{D, p}(1)}) \Big|\Big\{y \in G_d \colon \beta^1(x_{[k-1] \setminus \{d\}},\ls{d} y) = \lambda^1\Big\}\Big|.\label{xandxdfiberdiffeqn}\end{align}

Let
\begin{equation}X^{(d)} = \bigcup_{\substack{x_{[k-1] \setminus \{d\}} \in G_{[k-1] \setminus \{d\}}\\\text{such that}\\X_{x_{[k-1] \setminus \{d\}}} \not=\emptyset\\\gamma^{(d)}(x_{[k-1] \setminus \{d\}}) \in \Gamma^{(d)}}} \{x_{[k-1] \setminus \{d\}}\} \times \Big(\bigcup_{u_0 \in W_{x_{[k-1] \setminus \{d\}}}} X^{(d, u_0)}_{x_{[k-1] \setminus \{d\}}}\Big).\label{xddefineqnextnthm}\end{equation}
Then $X^{(d)} \subset X$ and
\begin{align*}|X \setminus X^{(d)}| \,\,\leq &\Big|\Big(X \cap ((\gamma^{(d)})^{-1}(\Gamma^{(d)}) \times G_d)\Big) \setminus X^{(d)}\Big| + \Big|X \setminus ((\gamma^{(d)})^{-1}(\Gamma^{(d)}) \times G_d)\Big|\\
\leq &\sum_{\substack{x_{[k-1] \setminus \{d\}} \in G_{[k-1] \setminus \{d\}}\\\text{such that}\\X_{x_{[k-1] \setminus \{d\}}} \not=\emptyset\\\gamma^{(d)}(x_{[k-1] \setminus \{d\}}) \in \Gamma^{(d)}}} \Big|X_{x_{[k-1] \setminus \{d\}}} \,\setminus\,\Big(\bigcup_{u_0 \in W_{x_{[k-1] \setminus \{d\}}}} X^{(d, u_0)}_{x_{[k-1] \setminus \{d\}}}\Big)\Big| \,\, + \,\, \Big|G_{[k-1]} \setminus ((\gamma^{(d)})^{-1}(\Gamma^{(d)}) \times G_d)\Big|\\
&\hspace{10cm}\text{(by~\eqref{xddefineqnextnthm})}\\
\leq & O_{D, p}(p^{O_{D, p}(r)} \eta^{\Omega_{D,p}(1)})|G_{[k-1]}| \,\, + \eta |G_{[k-1]}|\hspace{4cm}\text{(by~\eqref{xandxdfiberdiffeqn})}\\
\leq & O_{D, p}(p^{O_{D, p}(r)} \eta^{\Omega_{D,p}(1)})|G_{[k-1]}|.\end{align*}

For each $\nu \in \Gamma^{(d)}$ and $\nu' \in (\mathbb{F}_p^{r_{12}})^m$ define the set $\tilde{X}^{(d)}_{\nu, \nu'}$ as
\[\tilde{X}^{(d)}_{\nu, \nu'} = \bigg[\bigg(X^{(d)} \,\cap\, \Big((\gamma^{(d)})^{-1}(\nu)\times G_d\Big) \,\cap\, \bigcap_{\lambda \in \Lambda^\nu} (\lambda \cdot \psi^{(d)})^{-1}(\nu')\bigg) \times G_k\bigg] \cap V.\]
Such a set is of the form $(X^{(d)} \times G_k) \cap V$ intersected with a layer of a lower-order multiaffine map, which is the structure required by part \textbf{(iii)} of the conclusion of the theorem. We claim that if $x_{[k-1] \setminus \{d\}} \in G_{[k-1] \setminus \{d\}}$ is such that $(\tilde{X}^{(d)}_{\nu, \nu'})_{x_{[k-1] \setminus \{d\}}} \not= \emptyset$, then 
\begin{equation}\label{tildexsectioneqn}(\tilde{X}^{(d)}_{\nu, \nu'})_{x_{[k-1] \setminus \{d\}}} = ((X^{(d, u_0)}_{x_{[k-1] \setminus \{d\}}}) \times G_k) \cap V_{x_{[k-1] \setminus \{d\}}}\end{equation}
for a suitable $u_0 \in W_{x_{[k-1] \setminus \{d\}}}$. This coincides with~\eqref{extendedDomainEqn}, which is the domain of the extended map given by Theorem~\ref{biaffineExtnConv}. To prove this claim, note that $(\tilde{X}^{(d)}_{\nu, \nu'})_{x_{[k-1] \setminus \{d\}}} \not= \emptyset$ implies in particular that $\gamma^{(d)}(x_{[k-1] \setminus \{d\}}) = \nu \in \Gamma^{(d)}$ and $\emptyset \not= X^{(d)}_{x_{[k-1] \setminus \{d\}}} \subset X_{x_{[k-1] \setminus \{d\}}}$, so by~\eqref{xddefineqnextnthm} we have
\begin{align}(\tilde{X}^{(d)}_{\nu, \nu'})_{x_{[k-1] \setminus \{d\}}} = \bigg[\bigg(\Big(\bigcup_{u_0 \in W_{x_{[k-1] \setminus \{d\}}}} X^{(d, u_0)}_{x_{[k-1] \setminus \{d\}}}\Big) \,\cap\, \Big\{y_d \in G_d \colon (\forall \lambda \in \Lambda^\nu)\,\,\lambda &\cdot \psi^{(d)}(x_{[k-1] \setminus \{d\}}, \ls{d} y_d) = \nu' \Big\}\bigg) \times G_k\bigg]\nonumber\\
&\cap\, V_{x_{[k-1] \setminus \{d\}}}.\label{tildexdnunu}\end{align}
But $V_{x_{[k-1] \setminus \{d\}}} \subset \Big\{y_d \in G_d \colon \beta^1(x_{[k-1] \setminus \{d\}}, \ls{d} y_d) = \lambda^1\Big\} \times G_k$, so we have that 
\begin{align*}(\tilde{X}^{(d)}_{\nu, \nu'})_{x_{[k-1] \setminus \{d\}}} \subset &\bigg(\Big(\bigcup_{u_0 \in W_{x_{[k-1] \setminus \{d\}}}} X^{(d, u_0)}_{x_{[k-1] \setminus \{d\}}}\Big) \times G_k\bigg)\\
&\cap \,\, \bigg(\Big\{y_d \in G_d \colon \beta^1(x_{[k-1] \setminus \{d\}}, \ls{d} y_d) = \lambda^1 \,\,\land \,\,(\forall \lambda \in \Lambda^\nu)\,\,\lambda \cdot \psi^{(d)}(x_{[k-1] \setminus \{d\}}, \ls{d} y_d) = \nu'\Big\}\times G_k\bigg).\end{align*}
The set $\Big\{y_d \in G_d \colon \beta^1(x_{[k-1] \setminus \{d\}}, \ls{d} y_d) = \lambda^1 \,\,\land \,\,(\forall \lambda \in \Lambda^\nu)\,\,\lambda \cdot \psi^{(d)}(x_{[k-1] \setminus \{d\}}, \ls{d} y_d) = \nu'\Big\}$ is actually a coset of $U_{x_{[k-1] \setminus \{d\}}}$ and is contained inside $\{y \in G_d \colon \beta^1(x_{[k-1] \setminus \{d\}}, \ls{d} y) = \lambda^1\} = W_{x_{[k-1] \setminus \{d\}}} + U_{x_{[k-1] \setminus \{d\}}}$, so there is some $u_0 \in W_{x_{[k-1] \setminus \{d\}}}$ such that
\begin{align*}(\tilde{X}^{(d)}_{\nu, \nu'})_{x_{[k-1] \setminus \{d\}}}\,\, \subset\,\, &\bigg(\Big(\bigcup_{u'_0 \in W_{x_{[k-1] \setminus \{d\}}}} X^{(d, u'_0)}_{x_{[k-1] \setminus \{d\}}}\Big) \times G_k\bigg)\,\,\cap \,\, \bigg(\Big(u_0 + U_{x_{[k-1] \setminus \{d\}}}\Big) \times G_k\bigg)\\
=&X^{(d, u_0)}_{x_{[k-1] \setminus \{d\}}} \times G_k.\end{align*}
Combining this with~\eqref{tildexdnunu} we deduce~\eqref{tildexsectioneqn}.\\

\indent In conclusion, the extension map defined by the rule described in the statement is well-defined on $\tilde{X}^{(d)}_{\nu, \nu'}$, affine in direction $k$, and a $D$-homomorphism in direction $d$ for every $\nu \in \Gamma^{(d)}$ and $\nu' \in (\mathbb{F}_p^{r_{12}})^m$.\\

We may apply the above steps for all $d \in [k-1]$ to obtain the relevant $\gamma^{(d)}, \Gamma^{(d)}, \psi^{(d)}, X^{(d)}$. Let $I^{(d)}$ be the codomain of $\psi^{(i)}$ and view each $\gamma^{(d)}$ as a multiaffine map on $G_{[k-1]}$ by mapping $x_{[k-1]} \mapsto \gamma^{(d)}(x_{[k-1] \setminus \{d\}})$. From this perspective, $\gamma^{(d)}$ is $\mathcal{G}'$-supported. Finally, take $X' = \bigcap_{d \in [k-1]} X^{(d)}$, $\gamma = (\gamma^{(1)}, \psi^{(1)}, \dots, \gamma^{(k-1)}, \psi^{(k-1)})$ (which is $\mathcal{G}'$-supported) and $\Gamma = \Gamma^{(1)} \times I^{(1)} \times \dots \times \Gamma^{(k-1)}  \times I^{(k-1)}$. We may take $\eta \geq \Omega_{D, p}((p^{-r}\xi)^{O_{D, p}(1)})$ to finish the proof.\end{proof}

\subsection{Biaffine structure in higher dimensions}

The next theorem tells us that under suitable conditions, if $\phi$ is a multi-homomorphism defined on almost all of a low-codimensional variety, then there is a subvariety $V'$ of bounded codimension and a subset $X'$ that contains almost all points of $V'$ such that for every 2-dimensional axis-aligned cross-section $\phi$ agrees on $X'$ with a global biaffine map. In other words, we end up in a slightly stronger position than before: the restrictions to 2-dimensional cross-sections are not just bihomomorphisms but bihomomorphisms that can be extended to the whole of the corresponding cross-section of $G_{[k]}$.

\begin{theorem}\label{globalBiaffineInAllPlanes}Let $\mathcal{G} \subset \mathcal{P}([k])$ be a down-set and let $V$ be a $\mathcal{G}$-supported mixed-linear variety of codimension at most $r$. Let $\mathcal{H}$ be the down-set obtained by removing maximal elements from $\mathcal{G}$. Let $X \subset V$ be a set of size at least $(1-\varepsilon) |V|$. Let $\phi \colon X \to H$ be a multi-homomorphism and let $\xi > 0$. Then there exist an $\mathcal{H}$-supported mixed-linear variety $W$ of codimension $O\Big((r + \log_p \xi^{-1})^{O(1)}\Big)$, such that $V' = V \cap W \not=\emptyset$, and a set $X' \subset V' \cap X$ of size $(1-O(\varepsilon^{\Omega(1)}) - O(\xi^{\Omega(1)})) |V'|$ such that for any two coordinates $d_1 \not= d_2$ and every $x_{[k] \setminus \{d_1, d_2\}} \in G_{[k] \setminus \{d_1, d_2\}}$, there is a global biaffine map $\phi^{\mathrm{glob}}_{x_{[k] \setminus \{d_1, d_2\}}} \colon G_{d_1} \times G_{d_2} \to H$ with the property that 
\[\phi^{\mathrm{glob}}_{x_{[k] \setminus \{d_1, d_2\}}}(y_{d_1}, y_{d_2}) = \phi(x_{[k] \setminus \{d_1, d_2\}}, y_{d_1}, y_{d_2})\]
for every $(y_{d_1}, y_{d_2}) \in X'_{x_{[k] \setminus \{d_1, d_2\}}}$.

\end{theorem}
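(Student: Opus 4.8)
The plan is to combine the simultaneous biaffine regularity lemma (Theorem~\ref{simReg}) with the biaffine extension theorem (Theorem~\ref{biaffineExtnFullThm}), treating the $\binom{k}{2}$ pairs of distinct coordinates one at a time. Fix a small auxiliary parameter $\eta$, to be specified at the end as a suitable power of $\xi$ (small enough for the hypothesis $\eta\le\cons\,\delta^{\con}p^{-\con r}$ of Theorem~\ref{biaffineExtnFullThm} with $\delta\ge p^{-r}$, and with $\eta^{\Omega(1)}\le\xi$). For each ordered pair $(d_1,d_2)$ of distinct directions I would write the defining map of $V$ in the form required by Theorem~\ref{simReg} --- splitting off the part independent of $d_2$, the part independent of both $d_1$ and $d_2$, and the part bilinear in $(d_1,d_2)$ --- and apply that theorem with parameter $\eta$. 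This produces lower-order multiaffine maps $\gamma^{(d_1,d_2)}$ on $G_{[k]\setminus\{d_1,d_2\}}$ and $\phi^{(d_1,d_2)}$ on $G_{[k]\setminus\{d_2\}}$, both of codimension $O((r+\log_p\eta^{-1})^{O(1)})$, together with, for each good layer of $\gamma^{(d_1,d_2)}$, a subspace $\Lambda$, such that the cut-down axis-aligned biaffine cross-sections described there are $\eta$-quasirandom or empty.

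Next I would pass to a subvariety. Let $W$ be obtained from $V$ by adjoining, for every pair $(d_1,d_2)$, a single layer condition for $\gamma^{(d_1,d_2)}$, the homogenized conditions $\lambda\cdot\phi^{(d_1,d_2)}=\text{(const)}$ for $\lambda$ in the associated subspace $\Lambda$, and the remaining ``perp'' conditions in directions $d_1,d_2$ that appear in Theorem~\ref{simReg}; all of these are lower order than $V$, so $W$ is an $\mathcal H$-supported mixed-linear variety of codimension $O((r+\log_p\eta^{-1})^{O(1)})$. An averaging argument over layers --- each bad event (a layer being bad for some pair, or the relative size of $V\setminus X$ on a layer being too large) has small total measure, and by Lemma~\ref{varsizelemma} a nonempty layer of a bounded-codimension map is not too small --- lets me choose these layers so that $V':=V\cap W$ is nonempty, $|X\cap V'|\ge(1-O(\varepsilon^{\Omega(1)})-O(\xi^{\Omega(1)}))|V'|$, and for every pair $(d_1,d_2)$ and every $x_{[k]\setminus\{d_1,d_2\}}$ the cross-section $V'_{x_{[k]\setminus\{d_1,d_2\}}}$ is a translate of one of the quasirandom biaffine varieties from Theorem~\ref{simReg}, hence itself $\eta$-quasirandom (or empty) with a density depending only on the fixed data, using the translation-invariance of quasirandomness as in Lemma~\ref{simultQuasiRand}. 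Set $X_0=X\cap V'$.

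Then I would extend cross-section by cross-section. Process the pairs $(d_1,d_2)$ in some order, maintaining a nested sequence $X_0\supset X_1\supset\cdots$ of subsets of $V'$ of density $1-o(1)$. To process $(d_1,d_2)$: by averaging over $x_{[k]\setminus\{d_1,d_2\}}$, for all but a small fraction of $x_{[k]\setminus\{d_1,d_2\}}$ the section $(X_i)_{x_{[k]\setminus\{d_1,d_2\}}}$ is $(1-\varepsilon')$-dense in $V'_{x_{[k]\setminus\{d_1,d_2\}}}$ with $\varepsilon'=O(\varepsilon^{\Omega(1)})+O(\xi^{\Omega(1)})$; for such $x_{[k]\setminus\{d_1,d_2\}}$ the restriction of $\phi$ is a bi-$2$-homomorphism (since $\phi$ is a multi-homomorphism) on a $(1-\varepsilon')$-dense subset of the $\eta$-quasirandom variety $V'_{x_{[k]\setminus\{d_1,d_2\}}}$, so Theorem~\ref{biaffineExtnFullThm} gives a global biaffine map $\phi^{\mathrm{glob}}_{x_{[k]\setminus\{d_1,d_2\}}}\colon G_{d_1}\times G_{d_2}\to H$ agreeing with $\phi$ on a subset of relative density $1-O((\varepsilon')^{\Omega(1)})-O(\eta^{\Omega(1)})$; for the few remaining $x_{[k]\setminus\{d_1,d_2\}}$ I take $\phi^{\mathrm{glob}}_{x_{[k]\setminus\{d_1,d_2\}}}=0$. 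Let $X_{i+1}$ be $X_i$ with all points of bad cross-sections and all points of disagreement on good cross-sections removed; this removes only an $O(\varepsilon^{\Omega(1)})+O(\xi^{\Omega(1)})$ fraction, and since $X_{i+1}\subset X_i$ it does not disturb the biaffine agreement already secured for earlier pairs. After all $\binom{k}{2}$ pairs, set $X'=X_{\binom k2}$; fixing $\eta$ as a suitable power of $\xi$ (bounded away from the relevant thresholds) and collecting the estimates gives the codimension bound for $W$ and the density bound for $X'$.

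The main obstacle will be Step two: one must check that the various cut-down varieties output by Theorem~\ref{simReg} --- whose exact description interleaves the homogenized equations of $V$, the $\phi^{(d_1,d_2)}$-condition attached to $\Lambda$, and the $u_0$-dependent perp conditions in direction $d_2$ --- really do arise as cross-sections of \emph{one} mixed-linear subvariety $W$ that is lower order than $V$ and serves all $\binom k2$ pairs simultaneously, and that restricting to a single good layer per pair does not shrink $V'$ enough to degrade the $(1-\varepsilon)$-density of $X$ below what the extension step can absorb. Organizing the interplay of $\varepsilon$, $\xi$ and $\eta$ through the quasirandom column-size estimates (Lemmas~\ref{yQR} and~\ref{regInt2}) so that every loss ends up $O(\varepsilon^{\Omega(1)})+O(\xi^{\Omega(1)})$ is routine bookkeeping once $W$ has been set up correctly.
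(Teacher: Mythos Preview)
Your overall plan is right and matches the paper's, but the gap is precisely where you locate the ``main obstacle'', and it is not routine bookkeeping: the one-shot construction of $W$ in Step~2 does not give quasirandom $(d_1,d_2)$-cross-sections for \emph{all} pairs simultaneously. The point is that the layer maps output by Theorem~\ref{simReg} for one pair interfere with the biaffine structure of another. Concretely, for a pair $(d_1',d_2')$ with $\{d_1',d_2'\}\cap\{d_1,d_2\}=\emptyset$, the map $\gamma^{(d_1,d_2)}$ lives on $G_{[k]\setminus\{d_1,d_2\}}$ and is $\mathcal G''$-supported with $\mathcal G''=\{S\subset[k]\setminus\{d_1,d_2\}:S\cup\{d_1,d_2\}\in\mathcal G\}$; whenever $\mathcal G$ contains a set of size $\ge4$ (e.g.\ $\mathcal G=\mathcal P([k])$, $k\ge4$) this $\mathcal G''$ contains sets $\supseteq\{d_1',d_2'\}$, so the condition $\gamma^{(d_1,d_2)}=\mu$ contributes a genuinely \emph{bilinear}-in-$(d_1',d_2')$ constraint to $V'$. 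The same applies to the $\phi^{(d_1,d_2)}$-conditions. Hence the $(d_1',d_2')$-cross-section of your $V'$ is no longer one of the quasirandom pieces produced by Theorem~\ref{simReg} applied to the original $V$ for $(d_1',d_2')$: you have added new forms to $\beta^{12}$ without regularising them, and you have no control over the result. (There is also a secondary issue: the ``$u_0$-dependent perp conditions'' in direction $d_2$ are not conditions you can adjoin to a fixed variety $W$, since $u_0$ is a free parameter; but in fact those conditions are not needed once you have fixed the $\phi$-layer, as $B^{u_0,\lambda^{12}}$ already equals $((u_0+U)\times\{\beta^2=\lambda^2\})\cap\{\beta^{12}=\lambda^{12}\}$.)

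The paper fixes this by interleaving your Steps~2 and~3: it processes the pairs by induction, and at step $i+1$ it applies Theorem~\ref{simReg} to the \emph{current} variety $V'$ (which already incorporates the $\gamma$- and $\phi$-layer conditions added at all previous steps), so that the $\beta^{12}$ being regularised includes every bilinear-in-$(d_1^{(i+1)},d_2^{(i+1)})$ form present. After applying Theorem~\ref{biaffineExtnFullThm} on each good piece, it averages over the layer value $\mu$ of $\gamma$ and over the choice of coset $\rho$ (i.e.\ of $u_0$) to pick a single layer on which the extended set is still $(1-O(\varepsilon^{\Omega(1)}+\xi^{\Omega(1)}))$-dense, adjoins the corresponding conditions to the variety, and proceeds. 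Your Step~3 is essentially this, but it only works if at each step the regularity lemma has been applied to the up-to-date variety; doing all the regularisations up front against the original $V$ is the step that fails.
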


\noindent\textbf{Remark.} By modifying $\xi$ slightly, we may improve the bound to $|X'| =(1-O(\varepsilon^{\Omega(1)}) - \xi) |V'|$ instead of the claimed bound.

\begin{proof}The approach is quite similar to that of the proof of Theorem~\ref{mainExtensionConvStep}. Let $(d^{(1)}_1, d^{(1)}_2),$ $(d^{(2)}_1, d^{(2)}_2), \dots,$ $(d^{(k')}_1, d^{(k')}_2)$ be an enumeration of all pairs of directions in $[k]$, where $k' = \binom{k}{2}$. By induction on $i \in \{0,1, \dots, k'\}$, we shall show that there exist a $\mathcal{H}$-supported mixed-linear variety $W$ of codimension $(r + \log_p \xi^{-1})^{O(1)}$, such that $V' = V \cap W \not=\emptyset$, and a set $X' \subset V' \cap X$ of size $|X'| = (1-O(\varepsilon^{\Omega(1)})-O(\xi^{\Omega(1)})) |V'|$, such that for any two coordinates $(d_1, d_2) \in \{(d^{(1)}_1, d^{(1)}_2), (d^{(2)}_1, d^{(2)}_2), \dots, (d^{(i)}_1, d^{(i)}_2)\}$, the conclusion in the statement holds.\\

The base case $i = 0$ is trivial. Now assume that the claim holds for some $i \geq 0$ and let $W, V', X'$ be the relevant objects. Let $\xi' = O(\xi^{\Omega(1)}), \varepsilon' = O(\varepsilon^{\Omega(1)})$ be such that
\begin{equation}|X'| \geq (1 - \varepsilon' - \xi')|V'|.\label{globalbiaffinexprimesize}\end{equation}
Write $d_1 = d_1^{(i+1)}$ and $d_2 = d_2^{(i+1)}$. Since $V$ and $W$ are mixed-linear so is $V'$, and we may find $\mathcal{G}$-supported multiaffine maps $\beta^1 \colon G_{[k]\setminus \{d_2\}} \to \mathbb{F}_p^{r_1}$, $\beta^2 \colon G_{[k]\setminus \{d_1\}} \to \mathbb{F}_p^{r_2}$ and $\beta^{12} \colon G_{[k]} \to \mathbb{F}_p^{r_{12}}$ such that $r_1 + r_2 + r_{12} \leq \codim V + \codim W \leq (r + \log_p \xi^{-1})^{O(1)}$, there are values $\lambda^1 \in \mathbb{F}_p^{r_1}, \lambda^2 \in \mathbb{F}_p^{r_2}, \lambda^{12} \in \mathbb{F}_p^{r_{12}}$ for which $V' = \{x_{[k]} \in G_{[k]} \colon \beta^1(x_{[k] \setminus \{d_2\}}) = \lambda^1, \beta^2(x_{[k] \setminus \{d_1\}}) = \lambda^2, \beta^{12}(x_{[k]}) = \lambda^{12}\}$, and $\beta^{12}$ is linear in coordinates $G_{d_1}$ and $G_{d_2}$.\\

Let $\mathcal{G}' = \{S \subset [k] \setminus \{d_2\} \colon S \cup \{d_1, d_2\} \in \mathcal{G}\}$ and $\mathcal{G}'' = \{S \subset [k] \setminus \{d_1, d_2\} \colon S \cup \{d_1, d_2\} \in \mathcal{G}\}$. These down-sets do not have any maximal elements of $\mathcal{G}$, so both are contained in $\mathcal{H}$. Note that $\beta^{12}_i(x_{[k]}) = A_i(x_{[k] \setminus \{d_2\}}) \cdot x_{d_2}$ for some multiaffine $A_i \colon G_{[k] \setminus \{d_2\}} \to G_{d_2}$ that is linear in coordinate $d_1$. Since $\beta_i^{12}$ is $\mathcal{G}$-supported, we have that $A_i$ is $\mathcal{G}'$-supported. Apply Theorem~\ref{simReg} to $\beta^1, \beta^2, \beta^{12}$ with parameter $\eta > 0$ in directions $d_1$ and $d_2$. Then there exist
\begin{itemize}
\item positive integers $m, t = O\Big((r_{12} + r_1 + r_2 + \log_{p}\eta^{-1})^{O(1)}\Big)$,
\item $\mathcal{G}'$-supported multiaffine maps $\phi_1, \dots, \phi_{r_{12}} \colon G_{[k] \setminus \{d_2\}} \to \mathbb{F}_p^m$ that are linear in coordinate $d_1$,
\item a $\mathcal{G}''$-supported  multiaffine map $\gamma \colon G_{[k] \setminus \{d_1, d_2\}} \to \mathbb{F}_p^t$, and
\item a collection of values $\Gamma \subset \mathbb{F}_p^t$ such that $|\gamma^{-1}(\Gamma)| \geq (1-\eta)|G_{[k] \setminus \{d_1,d_2\}}|$,
\end{itemize}
such that for each $\mu \in \Gamma$, there is a subspace $\Lambda^\mu \leq \mathbb{F}_p^{r_{12}}$ such that for each $x_{[k] \setminus \{d_1,d_2\}}\in\gamma^{-1}(\mu)$, each $u_0 \in G_{d_1}, v_0 \in G_{d_2}$ and each $\tau \in \mathbb{F}_p^{r_{12}}$, the biaffine variety
\begin{align}&\bigg[\bigg(u_0 + \Big(\{y \in G_{d_1} \colon (\forall \lambda \in \Lambda^\mu)\,\, \lambda \cdot \phi(x_{[k] \setminus \{d_1, d_2\}}, \ls{d_1} y) = 0\} \nonumber\\
&\hspace{3cm}\cap \{y \in G_{d_1} \colon \beta^1(x_{[k] \setminus \{d_1, d_2\}}, \ls{d_1} y) =\beta^1(x_{[k] \setminus \{d_1, d_2\}}, \ls{d_1} 0)\}\Big)\bigg)\nonumber \\
&\hspace{2cm}\times \bigg(v_0 + \Big(\langle A_1(x_{[k] \setminus \{d_1, d_2\}}, \ls{d_1} {u_0}), \dots, A_{r_{12}}(x_{[k] \setminus \{d_1,d_2\}},\ls{d_1} {u_0})\rangle^\perp\nonumber \\
&\hspace{4cm}\cap \{z \in G_{d_2} \colon \beta^2(x_{[k] \setminus \{d_1, d_2\}}, \ls{d_2} z) = \beta^2(x_{[k] \setminus \{d_1, d_2\}}, \ls{d_2} 0)\}\Big)\bigg)\bigg]\nonumber \\
&\hspace{1cm} \cap \{(y,z) \in G_{d_1}\times G_{d_2} \colon \beta^{12}(x_{[k] \setminus \{d_1, d_2\}}, y, z) = \tau\}\label{biaffinevaruvtaueqn}\end{align}
is either $\eta$-quasirandom with density $|\Lambda^\mu|p^{-{r_{12}}}$ or empty.\\

Let $x_{[k] \setminus \{d_1, d_2\}} \in \gamma^{-1}(\Gamma)$ be such that $V'_{x_{[k] \setminus \{d_1, d_2\}}} \not=\emptyset$. Write $\mu = \gamma(x_{[k] \setminus \{d_1, d_2\}})$ and
\begin{align*}U_{x_{[k] \setminus \{d_1, d_2\}}} = \Big\{y \in G_{d_1} &\colon (\forall \lambda \in \Lambda^\mu)\,\, \lambda \cdot \phi(x_{[k] \setminus \{d_1, d_2\}}, \ls{d_1} y) = 0\Big\}\\
& \cap \Big\{y \in G_{d_1} \colon \beta^1(x_{[k] \setminus \{d_1, d_2\}}, \ls{d_1} y) =\beta^1(x_{[k] \setminus \{d_1, d_2\}}, \ls{d_1} 0)\Big\}.\end{align*}
Since $V'_{x_{[k] \setminus \{d_1, d_2\}}} \not=\emptyset$ we also have that $\Big\{y \in G_{d_1} \colon \beta^1(x_{[k] \setminus \{d_1, d_2\}}, \ls{d_1} y) = \lambda^1\Big\}$ is a non-empty coset of the subspace $\Big\{y \in G_{d_1} \colon \beta^1(x_{[k] \setminus \{d_1, d_2\}}, \ls{d_1} y) = \beta^1(x_{[k] \setminus \{d_1, d_2\}}, \ls{d_1} 0)\Big\}$. Thus, we may find a coset $W_{x_{[k] \setminus \{d_1, d_2\}}}$ inside $G_{d_1}$ such that
\[U_{x_{[k] \setminus \{d_1, d_2\}}}  \oplus W_{x_{[k] \setminus \{d_1, d_2\}}} = \Big\{y \in G_{d_1} \colon \beta^1(x_{[k] \setminus \{d_1, d_2\}}, \ls{d_1} y) = \lambda^1\Big\}.\]
For each $u_0\in W_{x_{[k] \setminus \{d_1, d_2\}}}$, write 
\begin{align*}Y_{x_{[k] \setminus \{d_1, d_2\}}, u_0} = &\Big\langle A_1(x_{[k] \setminus \{d_1, d_2\}}, \ls{d_1}{u_0}), \dots, A_{r_{12}}(x_{[k] \setminus \{d_1, d_2\}}, \ls{d_1}{u_0})\Big\rangle^\perp\\
&\hspace{2cm} \cap \Big\{z \in G_{d_2} \colon \beta^2(x_{[k] \setminus \{d_1, d_2\}}, \ls{d_2} z) = \beta^2(x_{[k] \setminus \{d_1, d_2\}}, \ls{d_2} 0)\Big\}.\end{align*}
With this notation, the biaffine variety in~\eqref{biaffinevaruvtaueqn}, which we shall denote $B^{u_0, v_0, \tau}$,\footnote{The biaffine variety $B^{u_0, v_0, \tau}$ depends also on $x_{[k] \setminus \{d_1, d_2\}}$, but we do not make this fact explicit in the notation as expressions such as $B^{u_0, v_0, \tau}_{x_{[k] \setminus \{d_1, d_2\}}}$ would be confusing and cumbersome, yet we stress this dependence.} may be written as
\[B^{u_0, v_0, \tau} = \Big((u_0 + U_{x_{[k] \setminus \{d_1, d_2\}}}) \times (v_0 + Y_{x_{[k] \setminus \{d_1, d_2\}}, u_0})\Big) \cap \{(y,z) \in G_{d_1}\times G_{d_2} \colon \beta^{12}(x_{[k] \setminus \{d_1, d_2\}}, y, z) = \tau\}.\]
Using the fact that $V'_{x_{[k] \setminus \{d_1, d_2\}}} \not=\emptyset$ one more time, we see that $\{z \in G_{d_2} \colon \beta^2(x_{[k] \setminus \{d_1, d_2\}}, \ls{d_2} z) = \lambda^2\}$ is a non-empty coset of the subspace $\{z \in G_{d_2} \colon \beta^2(x_{[k] \setminus \{d_1, d_2\}}, \ls{d_2} z) = \beta^2(x_{[k] \setminus \{d_1, d_2\}}, \ls{d_2} 0)\}$. Hence, there exists a coset $T_{x_{[k] \setminus \{d_1, d_2\}}, u_0}$ in $G_{d_2}$ such that
\[Y_{x_{[k] \setminus \{d_1, d_2\}}, u_0} \oplus T_{x_{[k] \setminus \{d_1, d_2\}}, u_0} = \{z \in G_{d_2} \colon \beta^2(x_{[k] \setminus \{d_1, d_2\}}, \ls{d_2} z) = \lambda^2\}.\]
Let $C_{x_{[k] \setminus \{d_1, d_2\}}, u_0}$ be the set of all $v_0 \in T_{x_{[k] \setminus \{d_1, d_2\}}, u_0}$ such that $B^{u_0, v_0, \lambda^{12}}$ is non-empty and hence $\eta$-quasirandom with density $|\Lambda^\mu|p^{-r_{12}}$. By quasirandomness, we see that for a proportion at least $1 - p^{r_2 + r_{12}}\eta$ of $y \in u_0 + U_{[k] \setminus \{d_1, d_2\}}$ we have that $B^{u_0, v_0, \lambda^{12}}_{y \bullet}$ is non-empty for every $v_0 \in C_{x_{[k] \setminus \{d_1, d_2\}}, u_0}$. On the other hand, for a fixed $y$, the set of all $v_0 \in T_{x_{[k] \setminus \{d_1, d_2\}}, u_0}$ such that $B^{u_0, v_0, \lambda^{12}}_{y \bullet}$ is non-empty is a coset. Thus $C_{x_{[k] \setminus \{d_1, d_2\}}, u_0}$ is a coset in $G_k$.\\
\indent For each $v_0 \in C_{x_{[k] \setminus \{d_1, d_2\}}, u_0}$, let $\tilde{\Lambda}(v_0)$ be the set of all $\tau \in \mathbb{F}_p^{r_{12}}$ such that $B^{u_0, v_0, \tau}$ is non-empty (and hence $\eta$-quasirandom with density $|\Lambda^\mu|p^{-r_{12}}$). (The set $\tilde{\Lambda}(v_0)$ depends on $x_{[k] \setminus \{d_1, d_2\}}$ and $u_0$ as well, but we opt not to include these in the subscript as $\tilde{\Lambda}(v_0)$ will only be used temporarily in order to conclude the fact~\eqref{bu0tauqr} below.)

\begin{claima*}\begin{itemize}
\item[\textbf{(i)}] For each $v_0 \in C_{x_{[k] \setminus \{d_1, d_2\}}, u_0}$, $\tilde{\Lambda}(v_0)$ is a non-empty coset inside $\mathbb{F}_p^{r_{12}}$.
\item[\textbf{(ii)}] For each $v_0 \in C_{x_{[k] \setminus \{d_1, d_2\}}, u_0}$, $\tilde{\Lambda}(v_0)$ is the same, and we may write simply $\tilde{\Lambda}$.
\end{itemize}\end{claima*}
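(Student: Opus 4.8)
The plan is to prove both parts together by analysing how the non-emptiness condition for $B^{u_0,v_0,\tau}$ depends on $\tau$, and then to use the fact that all these varieties live inside a fixed ambient box together with the uniform quasirandomness/density $|\Lambda^\mu|p^{-r_{12}}$ supplied by Theorem~\ref{simReg}. First I would fix $x_{[k]\setminus\{d_1,d_2\}}\in\gamma^{-1}(\mu)$ and $u_0\in W_{x_{[k]\setminus\{d_1,d_2\}}}$, abbreviate $U=U_{x_{[k]\setminus\{d_1,d_2\}}}$, $Y=Y_{x_{[k]\setminus\{d_1,d_2\}},u_0}$, and consider the affine map $\Theta\colon (u_0+U)\times(v_0+Y)\to\mathbb{F}_p^{r_{12}}$ given by $\Theta(y,z)=\beta^{12}(x_{[k]\setminus\{d_1,d_2\}},y,z)$, which is biaffine (indeed the relevant component splits as $A_i(\cdots,y)\cdot z$ plus lower-order terms). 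The set of $\tau$ for which $B^{u_0,v_0,\tau}$ is non-empty is precisely the image $\img\Theta$ (intersected appropriately), and the image of an affine map restricted to a coset is itself a coset; this gives the coset structure in part \textbf{(i)}, and non-emptiness is immediate since $v_0\in C_{x_{[k]\setminus\{d_1,d_2\}},u_0}$ guarantees $\lambda^{12}\in\tilde\Lambda(v_0)$.

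For part \textbf{(ii)}, the key point is that translating $v_0$ within its coset $C_{x_{[k]\setminus\{d_1,d_2\}},u_0}$ merely shifts $\Theta$ by a constant in $\mathbb{F}_p^{r_{12}}$ that lies in the direction subspace of $\img\Theta$; more precisely, for $v_0,v_0'\in C_{x_{[k]\setminus\{d_1,d_2\}},u_0}$ with $v_0'-v_0\in Y'$ (the direction space of $T_{x_{[k]\setminus\{d_1,d_2\}},u_0}$), one has $\beta^{12}(\cdots,y,z+v_0'-v_0)-\beta^{12}(\cdots,y,z)$ equal to a $\tau$-independent element which, because $B^{u_0,v_0,\lambda^{12}}$ and $B^{u_0,v_0',\lambda^{12}}$ are both non-empty, must lie in the direction subspace $\tilde\Lambda(v_0)-\lambda^{12}=\tilde\Lambda(v_0')-\lambda^{12}$. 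I would carry this out by writing, for each $y\in u_0+U$, the slice condition as in~\eqref{ByEqn} in the proof of Lemma~\ref{qrEmptyClaim}, namely $z$ must satisfy $A_i(\cdots,y)\cdot z=\tau_i-(\text{terms in }v_0,u_0,x)$, and reading off that the set of achievable $\tau$ depends on $v_0$ only through the common translate. Since all the varieties $B^{u_0,v_0,\tau}$ for $\tau\in\tilde\Lambda(v_0)$ have the same density $|\Lambda^\mu|p^{-r_{12}}$ and are quasirandom, a counting argument (summing $|B^{u_0,v_0,\tau}|$ over $\tau$ and over $v_0\in C_{x_{[k]\setminus\{d_1,d_2\}},u_0}$, and comparing with the total size of the box) pins down $|\tilde\Lambda(v_0)|$ to be constant, so together with the coset structure and the fact that they all contain $\lambda^{12}$, the cosets $\tilde\Lambda(v_0)$ coincide.

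The main obstacle I anticipate is bookkeeping the affine (as opposed to linear) nature of $\beta^{12}$ and of the cosets $U$, $Y$, $T$, $C$: one must be careful that the ``shift by a constant'' argument in part \textbf{(ii)} genuinely lands in the correct subspace, and this requires using the biaffinity of $\beta^{12}$ in the two directions $d_1,d_2$ exactly as in the manipulation~\eqref{uvEqn} of Lemma~\ref{qrEmptyClaim}, rather than any global linearity. A secondary technical point is that one needs $\eta$ small enough (compared to a power of $p$ depending on $r_1+r_2+r_{12}$) so that ``$\eta$-quasirandom with density $|\Lambda^\mu|p^{-r_{12}}$'' forces the density to be exactly that value on a genuine majority of slices, which is what makes the counting argument in part \textbf{(ii)} rigorous; this is already guaranteed by the hypotheses of the ambient theorem and by Lemma~\ref{varsizelemma}, so it should not cause real difficulty. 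Once Claim A is in hand, the intended use is to define a single subspace $\tilde\Lambda$ per relevant fibre and conclude that $B^{u_0,v_0,\tau}$ is $\eta$-quasirandom with density $|\Lambda^\mu|p^{-r_{12}}$ for all $\tau$ in a fixed coset, which I would record as~\eqref{bu0tauqr}.
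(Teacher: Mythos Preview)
Your argument for part \textbf{(i)} has a genuine gap. You correctly identify $\tilde\Lambda(v_0)$ with the image of $\Theta(y,z)=\beta^{12}(x_{[k]\setminus\{d_1,d_2\}},y,z)$ on $(u_0+U)\times(v_0+Y)$, but then invoke the principle that ``the image of an affine map restricted to a coset is itself a coset''. That principle applies to affine maps on a single coset; here $\Theta$ is \emph{biaffine} and the domain is a product of two cosets. The image of a bilinear map on a product of subspaces need not be a subspace: for example $\Theta\colon \mathbb{F}_p^2\times\mathbb{F}_p^2\to\mathbb{F}_p^3$, $\Theta\big((y_1,y_2),(z_1,z_2)\big)=(y_1z_1,\,y_1z_2,\,y_2z_1)$ has $(0,1,0)$ and $(0,0,1)$ in its image but not $(0,1,1)$. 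So without further input the coset structure in \textbf{(i)} is not established, and your proposal never uses quasirandomness at this stage.

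Quasirandomness is exactly the missing idea. For each $\tau\in\tilde\Lambda(v_0)$ the variety $B^{u_0,v_0,\tau}$ is $\eta$-quasirandom with positive density, so all but an $\eta$-fraction of $y\in u_0+U$ have $B^{u_0,v_0,\tau}_{y\bullet}\neq\emptyset$. There are at most $p^{r_{12}}$ values of $\tau$, so provided $\eta p^{r_{12}}<1$ one can fix a \emph{single} $y$ with $B^{u_0,v_0,\tau}_{y\bullet}\neq\emptyset$ for every $\tau\in\tilde\Lambda(v_0)$. Then $\tilde\Lambda(v_0)$ equals the image of the genuinely affine (one-variable) map $z\mapsto\beta^{12}(x_{[k]\setminus\{d_1,d_2\}},y,z)$ on the coset $v_0+Y$, and now the coset conclusion is legitimate. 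Part \textbf{(ii)} then drops out cheaply: since $|C_{x_{[k]\setminus\{d_1,d_2\}},u_0}|\le p^{r_{12}}$, provided $\eta p^{2r_{12}}<1$ the same $y$ can be chosen to work simultaneously for all $v_0$, whence every $\tilde\Lambda(v_0)$ is a coset of the fixed subspace $I=\{\beta^{12}(x_{[k]\setminus\{d_1,d_2\}},y,z)-\beta^{12}(x_{[k]\setminus\{d_1,d_2\}},y,0):z\in Y\}$, and they all contain $\lambda^{12}$, so they all equal $\lambda^{12}+I$. This bypasses entirely the shift-and-count argument you sketch for \textbf{(ii)}.
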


\begin{proof}[Proof of Claim A]\textbf{(i)} We already know that $\lambda^{12} \in \tilde{\Lambda}(v_0)$, so the set is non-empty. On the other hand, using quasirandomness, provided that $\eta p^{r_{12}} < 1$, we may find $y \in u_0 + U_{x_{[k] \setminus \{d_1, d_2\}}}$ such that $B^{u_0, v_0, \tau}_{y \bullet} = \emptyset$ when $\tau \notin \tilde{\Lambda}(v_0)$ and $B^{u_0, v_0, \tau}_{y \bullet} \not= \emptyset$ when $\tau \in \tilde{\Lambda}(v_0)$. Hence, $\tilde{\Lambda}(v_0)$ is precisely the image of an affine map $z \mapsto \beta^{12}(y,z)$, with domain $v_0 + Y_{x_{[k] \setminus \{d_1, d_2\}}, u_0}$. It follows that $\tilde{\Lambda}(v_0)$ is indeed a coset inside $\mathbb{F}_p^{r_{12}}$.\\
\noindent\textbf{(ii)} Proceeding further, note that in fact we may pick $y \in u_0 + U_{x_{[k] \setminus \{d_1, d_2\}}}$ so that the property above holds simultaneously for all $v_0 \in C_{x_{[k] \setminus \{d_1, d_2\}}, u_0}$, provided that $\eta p^{2r_{12}} < 1$. We may then observe that for each $v_0 \in C_{x_{[k] \setminus \{d_1, d_2\}}, u_0}$, the set $\tilde{\Lambda}(v_0)$ is a coset of the subspace $I \leq \mathbb{F}_p^{r_12}$ defined as the image of linear map $z \mapsto \beta^{12}(y,z) - \beta^{12}(y,0)$, with domain $Y_{x_{[k] \setminus \{d_1, d_2\}}, u_0}$. But $\lambda^{12} \in \tilde{\Lambda}(v_0)$, which implies that $\tilde{\Lambda}(v_0) = \lambda^{12} + I$, which is independent of the choice of $v_0$.\end{proof}

By Claim A, we see that for each $u_0 \in W_{x_{[k] \setminus \{d_1, d_2\}}}$ and every $\tau \in \mathbb{F}_p^{r_{12}}$ we have that either $B^{u_0, v_0, \tau}$ is $\eta$-quasirandom with density $|\Lambda^\mu|p^{-r_{12}}$ for every $v_0 \in C_{x_{[k] \setminus \{d_1, d_2\}}, u_0}$ (this happens when $\tau \in \tilde{\Lambda}$) or $B^{u_0, v_0, \tau} = \emptyset$ for every $v_0 \in C_{x_{[k] \setminus \{d_1, d_2\}}, u_0}$ (this happens when $\tau \notin \tilde{\Lambda}$). By Lemma~\ref{simultQuasiRand}, we see that for each $u_0 \in W_{x_{[k] \setminus \{d_1, d_2\}}}$ and $\tau \in \tilde{\Lambda}$
\begin{align*}B^{u_0, \tau}=&\Big((u_0 + U_{x_{[k] \setminus \{d_1, d_2\}}}) \times (C_{x_{[k] \setminus \{d_1, d_2\}}, u_0} + Y_{x_{[k] \setminus \{d_1, d_2\}}, u_0})\Big)\\
&\hspace{2cm} \cap \{(y,z) \in G_{d_1}\times G_{d_2} \colon \beta^{12}(x_{[k] \setminus \{d_1, d_2\}}, y, z) = \tau\}\end{align*}
is an $\eta p^{r_{12}}$-quasirandom biaffine variety\footnote{As with biaffine varieties $B^{u_0, v_0, \tau}$, we stress that the variety $B^{u_0, \tau}$ depends also on $x_{[k] \setminus \{d_1, d_2\}}$, even though this is not apparent in the notation. We allow expressions such as $\sum_{x_{[k] \setminus \{d_1, d_2\}}} |B^{u_0, \tau}|$, where $B^{u_0, \tau}$ always means the variety $B^{u_0, \tau}$ for the given $x_{[k] \setminus \{d_1, d_2\}}$.} with density $|\Lambda^\mu|p^{-r_{12}}$. On the other hand, when $\tau \notin \tilde{\Lambda}$, the set given by this expression is empty. Hence, 
\begin{equation}\label{bu0tauqr}(\forall \tau \in \mathbb{F}_p^{r_{12}})\hspace{6pt}B^{u_0, \tau}\text{ is $\eta p^{r_{12}}$-quasirandom}.\end{equation}
Note that one also has
\begin{align*}B^{u_0, \lambda^{12}}=&\bigcup_{v_0 \in C_{x_{[k] \setminus \{d_1, d_2\}}}}\,B^{u_0, v_0, \lambda^{12}}\\
=&\bigcup_{v_0 \in T_{x_{[k] \setminus \{d_1, d_2\}}}}\,B^{u_0, v_0, \lambda^{12}}\hspace{2cm}\text{($B^{u_0, v_0, \lambda^{12}} = \emptyset$ for $v_0 \in T_{x_{[k] \setminus \{d_1, d_2\}}} \setminus C_{x_{[k] \setminus \{d_1, d_2\}}}$)}\\
=&\Big[\Big(u_0 + U_{x_{[k] \setminus \{d_1, d_2\}}}\Big) \times \Big\{z \in G_{d_2} \colon \beta^2(x_{[k] \setminus \{d_1, d_2\}}, \ls{d_2} z) = \lambda^2\Big\}\Big]\\
&\hspace{2cm} \cap \{(y,z) \in G_{d_1}\times G_{d_2} \colon \beta^{12}(x_{[k] \setminus \{d_1, d_2\}}, y, z) = \lambda^{12}\}.\end{align*}

By Lemma~\ref{varsizelemma}, we have $|V'| \geq p^{-k (r + \log_p \xi^{-1})^{O(1)}} |G_{[k]}|$. Returning to~\eqref{globalbiaffinexprimesize} we see that
\begin{align*}|(\gamma^{-1}(\Gamma) \times G_{d_1} \times G_{d_2}) \cap X'| \,\geq &\,|X'| - |G_{[k] \setminus \{d_1,d_2\}} \setminus \gamma^{-1}(\Gamma)|\,|G_{d_1}|\,|G_{d_2}|\\
 \geq&\, (1 - \varepsilon' - \xi')|V'| - \eta |G_{[k]}| \\
\geq &\,\Big(1 - \varepsilon' - \xi' - \eta p^{k (r + \log_p \xi^{-1})^{O(1)}}\Big)|V'|.\end{align*}
In particular $(\gamma^{-1}(\Gamma) \times G_{d_1} \times G_{d_2}) \cap V' \not= \emptyset$, as long as $\varepsilon', \xi' < 1/4$ (which we may assume as otherwise the statement of the theorem becomes trivial) and $\eta \leq \frac{1}{2} p^{-k (r + \log_p \xi^{-1})^{O(1)}}$. Write $\varepsilon'' = \varepsilon' + \xi' + \eta p^{k (r + \log_p \xi^{-1})^{O(1)}}$. By averaging, we may find $\mu \in \Gamma$ such that 
\[|(\gamma^{-1}(\mu) \times G_{d_1} \times G_{d_2}) \cap X'| \geq (1 - \varepsilon'') |(\gamma^{-1}(\mu) \times G_{d_1} \times G_{d_2}) \cap V'| > 0.\] 
Let $S$ be the set of all pairs $(x_{[k] \setminus \{d_1,d_2\}}, u_0)$ such that $\gamma(x_{[k] \setminus \{d_1,d_2\}}) = \mu$, $u_0\in W_{x_{[k] \setminus \{d_1, d_2\}}}$, $B^{u_0, \lambda^{12}}$ is non-empty, and 
\[|X'_{x_{[k] \setminus \{d_1,d_2\}}} \cap ((u_0 + U_{x_{[k] \setminus \{d_1, d_2\}}}) \times G_{d_2})| \geq (1-\sqrt{\varepsilon''})|B^{u_0, \lambda^{12}}|.\] 
Note that $\bigcup_{u_0 \in W_{x_{[k] \setminus \{d_1, d_2\}}}} B^{u_0, \lambda^{12}} = (V')_{x_{[k] \setminus \{d_1, d_2\}}}$ for each element $x_{[k] \setminus \{d_1, d_2\}} \in \gamma^{-1}(\mu)$. 
Therefore 
\begin{align*}&\varepsilon''\Big|\Big(\gamma^{-1}(\mu) \times G_{d_1} \times G_{d_2}\Big) \cap V'\Big|\\
&\hspace{2cm}\geq \Big|\Big(\gamma^{-1}(\mu) \times G_{d_1} \times G_{d_2}\Big) \cap V' \setminus X'\Big|\\ 
&\hspace{2cm}= \sum_{x_{[k] \setminus \{d_1,d_2\}} \in \gamma^{-1}(\mu)} \Big|(V')_{x_{[k] \setminus \{d_1,d_2\}}} \setminus (X')_{x_{[k] \setminus \{d_1,d_2\}}}\Big|\\
&\hspace{2cm}= \sum_{\substack{x_{[k] \setminus \{d_1,d_2\}} \in \gamma^{-1}(\mu)\\u_0 \in W_{x_{[k] \setminus \{d_1, d_2\}}}}} \Big|B^{u_0, \lambda^{12}} \setminus \Big((X')_{x_{[k] \setminus \{d_1,d_2\}}}\, \cap \, ((u_0 + U_{x_{[k] \setminus \{d_1, d_2\}}}) \times G_{d_2})\Big)\Big|\\
&\hspace{2cm}\geq \sum_{\substack{x_{[k] \setminus \{d_1,d_2\}} \in \gamma^{-1}(\mu)\\u_0 \in W_{x_{[k] \setminus \{d_1, d_2\}}}\\\text{such that}\\(x_{[k] \setminus \{d_1,d_2\}}, u_0) \notin S}} \Big|B^{u_0, \lambda^{12}} \setminus \Big((X')_{x_{[k] \setminus \{d_1,d_2\}}}\, \cap \,((u_0 + U_{x_{[k] \setminus \{d_1, d_2\}}}) \times G_{d_2})\Big)\Big|\\  
&\hspace{2cm}\geq \sqrt{\varepsilon''} \sum_{\substack{x_{[k] \setminus \{d_1,d_2\}} \in \gamma^{-1}(\mu)\\u_0 \in W_{x_{[k] \setminus \{d_1, d_2\}}}\\\text{such that}\\(x_{[k] \setminus \{d_1,d_2\}}, u_0) \notin S}} \Big|B^{u_0, \lambda^{12}}\Big|\end{align*}
from which we obtain
\begin{align}&\sum_{(x_{[k] \setminus \{d_1,d_2\}}, u_0) \in S}  \Big|X'_{x_{[k] \setminus \{d_1,d_2\}}} \cap \Big((u_0 + U_{x_{[k] \setminus \{d_1, d_2\}}}) \times G_{d_2}\Big)\Big|\nonumber\\
&\hspace{2cm}\geq (1-\sqrt{\varepsilon''}) \sum_{(x_{[k] \setminus \{d_1,d_2\}}, u_0) \in S} \Big|B^{u_0, \lambda^{12}}\Big|\nonumber\\
&\hspace{2cm} \geq (1-2\sqrt{\varepsilon''}) \Big|\Big(\gamma^{-1}(\mu) \times G_{d_1} \times G_{d_2}\Big) \cap V'\Big|.\label{unionofxprimebound}\end{align}
Now, provided that $\eta \leq \cons p^{-\con r_{12}}$, apply Theorem~\ref{biaffineExtnFullThm} to $X'_{x_{[k] \setminus \{d_1,d_2\}}} \cap ((u_0 + U_{x_{[k] \setminus \{d_1, d_2\}}}) \times G_{d_2})$ and $B^{u_0, \lambda^{12}}$ for every $(x_{[k] \setminus \{d_1,d_2\}}, u_0) \in S$. The quasirandomness condition in the theorem is satisfied by~\eqref{bu0tauqr}, as long as $\eta \leq \cons p^{-\con r_{12}}$. We obtain $\tilde{X}'_{x_{[k] \setminus \{d_1,d_2\}}, u_0} \subset X'_{x_{[k] \setminus \{d_1,d_2\}}} \cap ((u_0 + U_{x_{[k] \setminus \{d_1, d_2\}}}) \times G_{d_2})$ such that
\begin{equation}\Big|\tilde{X}'_{x_{[k] \setminus \{d_1,d_2\}}, u_0} \Big|\,\geq\,\Big(1-O({(\varepsilon'')}^{\Omega(1)}) - O((\eta p^{r_{12}})^{\Omega(1)})\Big)|B^{u_0, \lambda^{12}}|\label{xtildexprimeboundeqn}\end{equation}
and a global biaffine map $\phi^{\text{glob}}_{x_{[k] \setminus \{d_1, d_2\}}, u_0} \colon G_{d_1} \times G_{d_2} \to H$ with the property that 
\begin{equation}\phi^{\text{glob}}_{x_{[k] \setminus \{d_1, d_2\}}, u_0}(y_{d_1}, y_{d_2}) = \phi(x_{[k] \setminus \{d_1, d_2\}}, y_{d_1}, y_{d_2})\label{globalbiaffineu0map}\end{equation}
for every $(y_{d_1}, y_{d_2}) \in \tilde{X}'_{x_{[k] \setminus \{d_1, d_2\}}, u_0}$.
\\

To finish the proof, we need to find a subset $\tilde{S} \subset \gamma^{-1}(\mu)$ and to associate an element $u_0(x_{[k] \setminus \{d_1,d_2\}})$ with each $x_{[k] \setminus \{d_1,d_2\}} \in \tilde{S}$ such that $(x_{[k] \setminus \{d_1,d_2\}}, u_0(x_{[k] \setminus \{d_1,d_2\}})) \in S$ and
\[\bigcup_{x_{[k] \setminus \{d_1,d_2\}} \in \tilde{S}} \{x_{[k] \setminus \{d_1,d_2\}}\} \times \tilde{X}'_{x_{[k] \setminus \{d_1, d_2\}}, u_0(x_{[k] \setminus \{d_1, d_2\}})}\]
is a very dense subset of a bounded codimension subvariety of $V'$. To this end, fix a basis $\nu^1, \dots, \nu^l$ of $\Lambda^\mu$ and observe that the cosets of $U_{x_{[k] \setminus \{d_1, d_2\}}}$ that partition $\Big\{y \in G_{d_1} \colon \beta^1(x_{[k] \setminus \{d_1, d_2\}}, \ls{d_1} y) = \lambda^1\Big\}$ are sets of the form
\[\Big\{y \in G_{d_1} \colon \beta^1(x_{[k] \setminus \{d_1, d_2\}}, \ls{d_1} y) = \lambda^1 \land (\forall i \in [l])\,\,\nu^i \cdot \phi(x_{[k] \setminus \{d_1, d_2\}}, \ls{d_1} y) = \rho_i\Big\}\]
for all choices of $\rho \in \mathbb{F}_p^l$. Hence, for each $\rho \in \mathbb{F}_p^l$, we define $\tilde{S}_\rho$ as the set of all $(x_{[k] \setminus \{d_1,d_2\}}, u_0) \in S$ such that 
\[u_0 + U_{x_{[k] \setminus \{d_1, d_2\}}} = \Big\{y \in G_{d_1} \colon \beta^1(x_{[k] \setminus \{d_1, d_2\}}, \ls{d_1} y) = \lambda^1 \land (\forall i \in [l])\,\, \nu^i \cdot \phi(x_{[k] \setminus \{d_1, d_2\}}, \ls{d_1} y) = \rho_i\Big\}.\]
Thus, $S = \bigcup_{\rho \in \mathbb{F}_p^l} \tilde{S}_\rho$ is a partition, and the sets $\tilde{S}_\rho$ are candidates for the set $\tilde{S}$ and the mapping $x_{[k] \setminus \{d_1,d_2\}} \mapsto u_0(x_{[k] \setminus \{d_1,d_2\}})$ that we want. Note that for each $x_{[k] \setminus \{d_1,d_2\}} \in G_{[k] \setminus \{d_1,d_2\}}$ there is at most one value $u_0 \in W_{x_{[k] \setminus \{d_1, d_2\}}}$ such that $(x_{[k] \setminus \{d_1,d_2\}}, u_0) \in \tilde{S}_\rho$.\\

Note that we may partition $(\gamma^{-1}(\mu) \times G_{d_1} \times G_{d_2}) \cap V'$ as
\[(\gamma^{-1}(\mu) \times G_{d_1} \times G_{d_2}) \cap V' = \bigcup_{\rho \in \mathbb{F}_p^l} \bigg[ (\gamma^{-1}(\mu) \times G_{d_1} \times G_{d_2}) \cap V' \cap \bigg(\Big(\bigcap_{i \in [l]} (\nu^i \cdot \phi)^{-1}(\rho_i) \Big)\times G_{d_2}\bigg)\bigg]\] 
and observe that 
\begin{align*}
&\bigcup_{(x_{[k] \setminus \{d_1,d_2\}}, u_0) \in \tilde{S}_\rho} \{x_{[k] \setminus \{d_1,d_2\}}\} \times \tilde{X}'_{x_{[k] \setminus \{d_1, d_2\}}, u_0}\\
\subset\,&\bigcup_{(x_{[k] \setminus \{d_1,d_2\}}, u_0) \in \tilde{S}_\rho} \{x_{[k] \setminus \{d_1,d_2\}}\} \times \Big(X'_{x_{[k] \setminus \{d_1,d_2\}}} \cap ((u_0 + U_{x_{[k] \setminus \{d_1, d_2\}}}) \times G_{d_2})\Big)\\
&\hspace{3cm}\subset\,\, (\gamma^{-1}(\mu) \times G_{d_1} \times G_{d_2}) \cap V' \cap \bigg(\Big(\bigcap_{i \in [l]} (\nu^i \cdot \phi)^{-1}(\rho_i)\Big) \times G_{d_2}\bigg)\\
\end{align*}
for each $\rho$. Thus, by using inequalities~\eqref{unionofxprimebound} and~\eqref{xtildexprimeboundeqn} and averaging, we obtain $\rho \in \mathbb{F}_p^l$ such that 
\[X'' = \bigcup_{(x_{[k] \setminus \{d_1,d_2\}}, u_0) \in \tilde{S}_\rho} \{x_{[k] \setminus \{d_1,d_2\}}\} \times \tilde{X}'_{x_{[k] \setminus \{d_1, d_2\}}, u_0}\] 
and 
\[V'' = \Big(\gamma^{-1}(\mu) \times G_{d_1} \times G_{d_2}\Big) \cap V' \cap \bigg(\Big(\bigcap_{i \in [l]} (\nu^i \cdot \phi)^{-1}(\rho_i)\Big) \times G_{d_2}\bigg)\] 
such that $V'' \not= \emptyset$ and $|X''| \geq \Big(1-O({(\varepsilon'')}^{\Omega(1)}) - O((\eta p^{r_{12}})^{\Omega(1)})\Big) |V''|$. Further, let $x_{[k] \setminus \{d_1,d_2\}} \in G_{[k] \setminus \{d_1, d_2\}}$ be such that $X''_{x_{[k] \setminus \{d_1,d_2\}}}\not=\emptyset$. Then $X''_{x_{[k] \setminus \{d_1,d_2\}}} = \tilde{X}'_{x_{[k] \setminus \{d_1, d_2\}}, u_0}$ for the element $u_0$ such that $(x_{[k] \setminus \{d_1,d_2\}}, u_0) \in \tilde{S}_\rho$. By~\eqref{globalbiaffineu0map} there is a global biaffine map $\phi^{\text{glob}}_{x_{[k] \setminus \{d_1, d_2\}}} \colon G_{d_1} \times G_{d_2} \to H$ (which was denoted by $\phi^{\text{glob}}_{x_{[k] \setminus \{d_1, d_2\}}, u_0}$ in~\eqref{globalbiaffineu0map}) with the property that 
\[\phi^{\text{glob}}_{x_{[k] \setminus \{d_1, d_2\}}}(y_{d_1}, y_{d_2}) = \phi(x_{[k] \setminus \{d_1, d_2\}}, y_{d_1}, y_{d_2})\]
for every $(y_{d_1}, y_{d_2}) \in X''_{x_{[k] \setminus \{d_1,d_2\}}}$. On the other hand, if  $X''_{x_{[k] \setminus \{d_1,d_2\}}} =\emptyset$ take an arbitrary global biaffine map $\phi^{\text{glob}}_{x_{[k] \setminus \{d_1, d_2\}}} \colon G_{d_1} \times G_{d_2} \to H$ for completeness. We may choose $\eta = \Omega\Big(p^{-O\big((r + \log_p \xi^{-1})^{O(1)}\big)}\Big)$ so that the necessary bounds are satisfied.\\

Hence, $X''$ and $V''$ have the claimed properties, except that $V''$ possibly is not mixed-linear. To make it mixed-linear, first recall that $V'$ is already mixed-linear. Look at the multilinear parts of maps $\gamma$ and $\nu^i \cdot \phi$ and observe that $V''$ becomes a union of layers of a mixed-linear multiaffine map and average over these layers. \end{proof}

\section{Extending maps from nearly full varieties}

\subsection{Extensions from 1-codimensional subvarieties}

The next theorem is a generalization of Proposition~\ref{biaffineAlmostToFullExtn} to the multivariate case. Its statement is rather technical so we describe it informally beforehand. Let $V = \alpha^{-1}(\nu)$ be a variety defined by multilinear forms $\alpha_i \colon G_{I_i} \to \mathbb{F}_p$ for $i \in [r]$, where $\nu \in \mathbb{F}_p^r$. Assume that $\alpha_1$ is quasirandom with respect to the other forms and consider the variety defined by all forms but the first one, that is, $V^{[2,r]} = \{x_{[k]} \in G_{[k]} \colon \alpha_2(x_{[k]}) = \nu_2, \dots, \alpha_r(x_{[k]}) = \nu_r\}$.\\
\indent Now suppose that $\phi$ is a multiaffine map defined on a $1-o(1)$ proportion of $V$. Then, we may extend $\phi$ to most of $V^{[2,r]} \cap W$, for a lower-order variety $W$, using an explicit formula. 

\begin{theorem}\label{extn1codim}For  every positive integer $k$ there are constants $C_k, D_k \geq 1$ such that the following statement holds. Let $\mathcal{G} \subset \mathcal{P}([k])$ be a down-set and let $\mathcal{G}'$ be the down-set obtained by removing the maximal elements from $\mathcal{G}$. Suppose that we are given multilinear forms $\alpha_i \colon G_{I_i} \to \mathbb{F}_p$, where $I_i \in \mathcal{G}$, for $i \in [r]$. Let $\nu\in\mathbb{F}_p^r$ and let $V = \big\{x_{[k]} \in G_{[k]} \colon (\forall i \in [r])\,\,\alpha_i(x_{I_i})=\nu_i\big\}$. Let $\xi > 0$. Suppose that $I_1$ is a maximal element of $\mathcal{G}$ and that
\[\bias (\alpha_1 - \lambda \cdot \alpha) \leq p^{-C_k (r + \log_p \xi^{-1})^{D_k}}\]
for every $\lambda \in \mathbb{F}_p^{[2,r]}$ such that $\lambda_i =0$ whenever $I_i \not= I_1$. Let $X \subset V$ be a set of size at least $(1-\varepsilon) |V|$ and let $\phi \colon X \to H$ be a multiaffine map. Write $I_1 = \{c_1, c_2, \dots, c_m\}$. Let $h_0 \colon G_{[k] \setminus I_1}\to H$ be an arbitrary multiaffine map. Then there are a $\mathcal{G}'$-supported multiaffine variety $W$ of codimension $O\Big((r + \log_p \varepsilon^{-1} + \log_p \xi^{-1})^{O(1)}\Big)$ such that the variety $V'$ defined by $\{x_{[k]} \in G_{[k]} \colon (\forall i \in [2, r])\,\,\alpha_i(x_{I_i}) = \nu_i\} \cap W$ is non-empty, a set $X' \subset V'$ of size
$(1-O(\varepsilon^{\Omega(1)}) - O(\xi^{\Omega(1)})) |V'|$, a multiaffine map $\psi \colon X' \to H$, a point $a_{[k]} \in G_{[k]}$, and $\mu_0 \in \mathbb{F}_p \setminus \{\nu_1\}$ such that for each $x_{[k]} \in X'$,
\begin{itemize}
\item[\textbf{(i)}] when $x_{[k]} \in V$, then $x_{[k]} \in X$ and $\psi(x_{[k]}) = \phi(x_{[k]})$, and
\item[\textbf{(ii)}] when $\alpha_1(x_{I_1}) = \mu$ for some $\mu \not= \nu_1$, for $\Omega\Big(p^{-O((r + \log_p \xi^{-1})^{O(1)})}\Big)|G_{I_1}|$ choices of $u_{I_1} \in G_{I_1}$ we have
\begin{align*}\psi(x_{[k]}) &= \phi\Big(x_{[k] \setminus \{c_m\}}, x_{c_m} - \frac{\mu- \nu_1}{\mu_0 - \nu_1}(a_{c_m} - u_{c_m})\Big)\\
&\hspace{1cm} + \frac{\mu- \nu_1}{\mu_0 - \nu_1}\bigg(h_0(x_{[k] \setminus I_1}) - \phi(x_{[k] \setminus \{c_m\}}, u_{c_m})\\
&\hspace{4cm} + \sum_{i \in [m-1]}\phi(x_{[k] \setminus \{c_i, \dots, c_{m}\}}, u_{c_i} + x_{c_i} - a_{c_i}, a_{\{c_{i+1}, \dots, c_m\}})\\
&\hspace{4cm} - \sum_{i \in [m-1]}\phi(x_{[k] \setminus \{c_i, \dots, c_{m}\}}, u_{c_i}, a_{\{c_{i+1}, \dots, c_m\}})\bigg),\end{align*}
and additionally all points in the arguments of $\phi$ belong to $X$.
\end{itemize}
\end{theorem}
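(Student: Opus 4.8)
The plan is to run an induction on $m=|I_1|$, exactly mirroring the single-variable scaffold of Proposition~\ref{biaffineAlmostToFullExtn} but carried out coordinate-by-coordinate along $I_1$, using Theorem~\ref{globalBiaffineInAllPlanes} to produce the biaffine structure needed to make the interpolation formula in \textbf{(ii)} well-defined. The point $a_{[k]}$ and the scalar $\mu_0\neq\nu_1$ will be chosen once and for all at the start: $\mu_0$ is any value in $\mathbb F_p\setminus\{\nu_1\}$, and $a_{[k]}$ will be chosen generically so that $\alpha_1(a_{I_1})=\mu_0$ and so that $a_{[k]}$ lies in the large subvariety on which all the averaging estimates below hold. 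The key observation is that since $I_1$ is maximal in $\mathcal{G}$, every auxiliary variety we produce (via Lemma~\ref{varOuterApprox}, Theorem~\ref{simVarAppThm}, Theorem~\ref{connectedVeryDensePiece}, Theorem~\ref{globalBiaffineInAllPlanes}) can be taken $\mathcal{G}'$-supported, so $W$ stays of lower order throughout.

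\textbf{Setting up the induction.} For the base case, think of $I_1=\{c_1\}$; here $\alpha_1$ is linear in the single coordinate $c_1$, so $V$ is a union of affine slices in direction $c_1$, and the formula in \textbf{(ii)} reduces to an affine-extension statement in one variable. This follows from Lemma~\ref{easyExtn} applied on dense columns together with Corollary~\ref{easyExtnDoubleApprox} (to handle the $1-o(1)$ density), exactly as in the proof of Proposition~\ref{biaffineAlmostToFullExtn}'s one-dimensional pieces; the quasirandomness hypothesis on $\alpha_1$ guarantees, via Lemma~\ref{varsizelemma} and the fibre estimates, that almost every column of $V$ is dense and of the expected size. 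For the inductive step, I would fix the last coordinate $c_m$ and apply Theorem~\ref{fibresThm} (in its exact form for one-directional slices) in direction $c_m$ to the variety $V$ intersected with the lower-order variety produced so far, so as to restrict to a layer on which all slices $V_{x_{[k]\setminus\{c_m\}}}$ are cosets of a fixed codimension; then apply Theorem~\ref{connectedVeryDensePiece} in direction $c_m$ to densify and to produce a connecting point, reducing to the situation where we already have the extension on $V\cap(\text{lower-order variety})$ and want to extend in the new coordinate. The hypothesis $\bias(\alpha_1-\lambda\cdot\alpha)$ small, combined with Lemma~\ref{biasHomog} and Theorem~\ref{strongInvARankThm}, ensures that the restricted form $\alpha_1$ remains quasirandom (in the sense of Definition~\ref{qrDefin}) on almost every $2$-dimensional cross-section spanned by $c_m$ and any other coordinate of $I_1$, after passing to a bounded-codimension $\mathcal{G}'$-supported sub-variety via Theorem~\ref{simReg}; this is what lets us invoke the biaffine extension machinery.

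\textbf{The extension step.} With the biaffine structure in hand from Theorem~\ref{globalBiaffineInAllPlanes}, I would define $\psi$ on the new, larger set by the explicit formula of \textbf{(ii)} with $m$ replaced by the current coordinate index: that is, peel off the $c_m$-contribution using the biaffine interpolation $\phi(\dots,x_{c_m}-\tfrac{\mu-\nu_1}{\mu_0-\nu_1}(a_{c_m}-u_{c_m}))\approx \phi(\dots,x_{c_m})+\tfrac{\mu-\nu_1}{\mu_0-\nu_1}(\phi(\dots,u_{c_m}+x_{c_m}-a_{c_m})-\phi(\dots,u_{c_m}))$, which holds for a dense set of parameters precisely because on the relevant cross-section $\phi$ agrees with a \emph{global} biaffine map (this is the role of Theorem~\ref{globalBiaffineInAllPlanes} rather than mere bihomomorphism). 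One checks well-definedness by the usual double-counting: two valid choices of the auxiliary parameters $u_{I_1}$ differ by a quantity that the biaffine-on-cross-sections property forces to vanish, using Lemma~\ref{regInt1} and Lemma~\ref{regInt2} to guarantee that the intersection of the relevant dense columns is non-empty and of the expected size. That $\psi$ is multiaffine — affine on every line in every direction — is then verified direction by direction: in directions inside $I_1$ it follows from the interpolation formula and the fact that $\phi$ is affine there; in directions outside $I_1$ it follows because $h_0$ is multiaffine and $\phi$ is affine in those directions, after again removing an $o(1)$-fraction of bad lines and invoking Corollary~\ref{easyExtnDoubleApprox} to repair. The size bound $|X'|\geq(1-O(\varepsilon^{\Omega(1)})-O(\xi^{\Omega(1)}))|V'|$ is assembled from the union-bound losses at each of the finitely many ($k$-dependent) steps, each loss being polynomial in $\varepsilon$ or $\xi$ by the fibre theorems and the quasirandom-intersection lemmas.

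\textbf{Main obstacle.} The hard part will be bookkeeping the quasirandomness: after restricting $\alpha_1$ to a sub-variety and to a $2$-dimensional cross-section, one must verify that it is still quasirandom \emph{with a density that does not degrade}, and that the cross-sectional biaffine varieties to which Theorem~\ref{biaffineExtnFullThm} is ultimately applied genuinely satisfy the hypotheses of Definition~\ref{qrDefin}; controlling how the codimension $r$, the approximation parameters, and the quasirandomness parameter $\eta$ interact so that the final bound on $\codim W$ is $O((r+\log_p\varepsilon^{-1}+\log_p\xi^{-1})^{O(1)})$ — with no dependence of the $c_m$-codimension on $\xi$, exactly as flagged in the remark after Theorem~\ref{simReg} — requires threading the estimates carefully. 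A secondary subtlety is ensuring the chosen $a_{[k]}$ simultaneously satisfies $\alpha_1(a_{I_1})=\mu_0$, lies in $V'$ after the restriction, and is generic for all the averaging arguments; this is handled by a final averaging/pigeonhole over $a_{[k]}$ once all the $o(1)$-bad sets have been assembled, noting that their total measure is still $o(1)$.
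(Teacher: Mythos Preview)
Your organizing principle --- induction on $m=|I_1|$ --- is not what the paper does, and as written it has a genuine gap. You never say what the inductive hypothesis is or how slicing in direction $c_m$ produces an instance of the theorem with $|I_1|$ smaller: fixing $x_{c_m}$ turns $\alpha_1$ into a form on $I_1\setminus\{c_m\}$, but then you have a \emph{different} extension problem on each slice, and gluing the resulting maps $\psi_{x_{c_m}}$ consistently across slices is precisely the difficulty. Your displayed interpolation identity is also ill-posed: the term $\phi(\dots,x_{c_m})$ on the right-hand side evaluates $\phi$ at a point with $\alpha_1$-value $\mu\neq\nu_1$, hence outside $X$. Finally, you over-engineer the quasirandomness step: the paper never invokes Theorem~\ref{simReg} or Theorem~\ref{biaffineExtnFullThm} here --- the bias hypothesis is used only in the elementary way of Claim~A (showing that for most $x_{[k]\setminus\{d\}}$ the map $A_1(x_{[k]\setminus\{d\}})$ is independent of $A_2,\dots,A_{s_0}$, so slices of $V$ and $V^{[2,s]}\setminus V$ have comparable size).

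The paper's actual argument is a direct construction, not an induction. It applies Theorem~\ref{globalBiaffineInAllPlanes} \emph{once} at the start, then (via Claim~A) shows that almost every point of $V^{[2,s]}\setminus V$ has a $(1-\eta)$-dense $X$-column in every direction $d\in I_1$. Theorem~\ref{connectedVeryDensePiece} is applied to the layer $V^{\mu_0}$ to produce the anchor $a_{[k]}$ and a ``path-connected'' dense set $Z$ where one can reach $a_{[k]}$ from any $x_{[k]}\in Z$ by changing one coordinate at a time. The extension $\phi^{\mathrm{ext}}$ is then defined on three pieces of $V^{[2,s]}$: on $\alpha_1^{-1}(\nu_1)$ it equals $\phi$; on $\alpha_1^{-1}(\mu_0)$ it is built by walking from $(x_{[d_0]},a_{I_1})$ to $x_{[k]}$ one coordinate of $I_1$ at a time, starting from $h_0(x_{[d_0]})$; and on the remaining layers it is given by the single-variable affine interpolation~\eqref{thirdpieceeqn} in direction $c_m$, which recovers the formula in \textbf{(ii)}. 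The bulk of the proof is then the verification (Claims~B and C) that $\phi^{\mathrm{ext}}$ is a multi-homomorphism in every direction, which is where the biaffine-on-every-plane property from Theorem~\ref{globalBiaffineInAllPlanes} is repeatedly used; Proposition~\ref{MltHommToMltAff} upgrades this to multiaffine at the end. You correctly identified the key ingredients (Theorem~\ref{globalBiaffineInAllPlanes}, Theorem~\ref{connectedVeryDensePiece}, and the role of the bias hypothesis), but the architecture should be a single direct construction anchored at $a_{[k]}$, not an induction on $|I_1|$.
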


\noindent\textbf{Remark.} By modifying $\xi$ appropriately, we may strengthen the conclusion slightly to
\begin{equation}|X'| \geq (1-O(\varepsilon^{\Omega(1)}) - \xi) |V'|.\label{extn1codimXprimeDens}\end{equation}

\begin{proof}Suppose that $I_i = I_1$ if and only if $i \in [r_0]$. Begin by applying Theorem~\ref{globalBiaffineInAllPlanes}. Misusing the notation by still writing $X$ and $V$ for the smaller set and variety produced by that theorem, we may assume that a set $X \subset V$ of size at least $(1-\varepsilon') |V|$ is given, where $\varepsilon' = O(\varepsilon^{\Omega(1)}) + \xi$, $\phi \colon X \to H$ is multiaffine and extends on each axis-aligned plane to a global biaffine map, and $V$ is defined by $s \leq C_k(r + \log_p \xi^{-1})^{D_k}$ multilinear forms, where the first $r$ are the given ones, and the others are $\mathcal{G}'$-supported, where $C_k$ and $D_k$ are the implicit constants appearing in the theorem. Write $V^{[2,s]} = \{x_{[k]} \in G_{[k]} \colon \alpha_2(x_{[k]}) = \nu_2, \dots\}$ for the variety where $\alpha_1$ is not used.\\

Let $\eta \in (0,1)$. For simplicity of notation assume without loss of generality that $I_1 = [d_0 + 1, k]$. For each $d \in [d_0 + 1, k]$, let $Y_d$ be the set of all $x_{[k]} \in V^{[2,s]} \setminus V$ such that $|X_{x_{[k] \setminus \{d\}}}| \geq (1-\eta) |V_{x_{[k] \setminus \{d\}}}| > 0$.\\ 

\begin{claima*}If $\bias (\alpha_1 - \lambda \cdot \alpha) \leq \frac{\xi}{2}p^{-(k+1)s}$ for every $\lambda \in \mathbb{F}_p^{[2,r_0]}$, then
\[|Y_d|\ \geq (1 - 2p\eta^{-1}\varepsilon' - 2\xi) |V^{[2,s]} \setminus V|\]
for each $d \in [d_0 + 1, k]$.\end{claima*}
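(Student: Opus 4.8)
The statement concerns a single fixed direction $d \in [d_0+1,k] = I_1$, so fix such a $d$ and write $d$ throughout. The plan is to double-count the pairs $(x_{[k]}, d)$ where $x_{[k]}\in V^{[2,s]}\setminus V$ and the column $X_{x_{[k]\setminus\{d\}}}$ fails to be very dense in $V_{x_{[k]\setminus\{d\}}}$, using the hypothesis that $\alpha_1$ has high bias relative to the other forms of the same support $I_1$. The key point is that the quasirandomness of $\alpha_1$ guarantees that, for almost all $x_{[k]\setminus\{d\}}\in G_{[k]\setminus\{d\}}$, the line $\{x_{[k]\setminus\{d\}}\}\times G_d$ meets $V$ in a coset of the expected size and, more importantly, that the values of $\alpha_1$ along that line take each element of $\mathbb{F}_p$ equally often once we fix the values of $\alpha_2,\dots,\alpha_s$. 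Indeed $d\in I_1$ and $I_1$ is maximal, so $\alpha_1$ is linear (not just affine) in coordinate $d$; combining this with the bias hypothesis via Lemma~\ref{biasHomog} and the inverse theorem considerations already invoked, one sees that the coefficient vector of $\alpha_1$ in direction $d$ is, on almost all columns, linearly independent from those of the other forms $\alpha_i$ with $d\in I_i$.

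The concrete steps I would carry out are as follows. First, use the Fibres theorem (Theorem~\ref{fibresThm}) or rather its exact 1-directional version, applied to the variety $V^{[2,s]}$ in direction $d$, to find that for all but a $\xi$-fraction of $x_{[k]\setminus\{d\}}$ the slice $(V^{[2,s]})_{x_{[k]\setminus\{d\}}}$ is a coset of a fixed dimension; call the corresponding set of good columns $G$. Second, on the good columns, use the bias hypothesis on $\alpha_1-\lambda\cdot\alpha$ together with an averaging argument over $x_{[k]\setminus\{d\}}$ to show that for all but another $\xi$-fraction of columns the restriction of $\alpha_1$ to $(V^{[2,s]})_{x_{[k]\setminus\{d\}}}$ is a surjection onto $\mathbb{F}_p$ with all fibres of equal size; this is where one quotes that high bias forces low partition rank (Theorem~\ref{strongInvARankThm}) only to conclude that the relevant form does not degenerate on almost every line — in fact the cleanest route is to note $\bias(\alpha_1-\lambda\cdot\alpha)$ small for every such $\lambda$ directly bounds the number of columns on which $\alpha_1$ restricted to the line is constant or a translate of some $\lambda\cdot\alpha$. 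Third, on a column that is good in both senses, $V_{x_{[k]\setminus\{d\}}}$ is exactly a $p^{-1}$-fraction of $(V^{[2,s]})_{x_{[k]\setminus\{d\}}}$, and $|X_{x_{[k]\setminus\{d\}}}| \geq (1-\eta)|V_{x_{[k]\setminus\{d\}}}|$ holds whenever $|X_{x_{[k]\setminus\{d\}}}|/|V_{x_{[k]\setminus\{d\}}}|$ is large, which by a Markov-type argument fails for at most a $\eta^{-1}$-multiple of the density defect of $X$ in $V$. Fourth, assemble these: the columns $x_{[k]\setminus\{d\}}$ for which $(x_{[k]\setminus\{d\}}, y_d)\in V^{[2,s]}\setminus V$ but $(x_{[k]\setminus\{d\}},y_d)\notin Y_d$ for the relevant $y_d$ are contained in the union of the bad-column sets, whose total measure is $O(\xi) + O(\eta^{-1}\varepsilon')$; multiplying by the fibre sizes (which are comparable in $V^{[2,s]}\setminus V$ because every fibre is a coset of a bounded-codimension subspace) yields the bound $|Y_d| \geq (1-2p\eta^{-1}\varepsilon' - 2\xi)|V^{[2,s]}\setminus V|$.

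The main obstacle I anticipate is making precise the passage from "$\bias(\alpha_1-\lambda\cdot\alpha)$ is tiny for all admissible $\lambda$" to "on almost every column in direction $d$, the form $\alpha_1$ restricted to the line is affinely independent of $\alpha_2,\dots,\alpha_s$ restricted to that line", and then converting this into a statement about the exact sizes of the slices $V_{x_{[k]\setminus\{d\}}}$ versus $(V^{[2,s]})_{x_{[k]\setminus\{d\}}}$. The subtlety is that one must control not just $\alpha_1$ but its behaviour relative to the other forms simultaneously, and the forms with $I_i\neq I_1$ that happen to depend on $d$ also enter; here the fact that $I_1$ is maximal in $\mathcal{G}$ means no $I_i$ properly contains $I_1$, but some $I_i$ with $d\in I_i$ can be incomparable to $I_1$. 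One handles this exactly as in the proof of Lemma~\ref{qrEmptyClaim}: an averaging over $x_{[k]\setminus\{d\}}$ reduces a bad event to a bias of some nonzero combination $\alpha_1 - \lambda\cdot\alpha$ with the $\lambda_i$ supported on indices $i$ with $I_i = I_1$ (the contributions of incomparable $I_i$ being absorbed by first fixing the coordinates outside $I_1$), and this bias is exactly what the hypothesis bounds. Once this is in hand the rest is the routine Fibres/Markov bookkeeping sketched above, and the constants $C_k, D_k$ are chosen large enough to make the bias threshold $p^{-C_k(r+\log_p\xi^{-1})^{D_k}}$ dominate the codimension $s\leq C_k(r+\log_p\xi^{-1})^{D_k}$ appearing in the $p^{-(k+1)s}$ term.
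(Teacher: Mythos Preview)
Your proposal is correct and follows essentially the same approach as the paper. The core mechanism is identical: use the bias hypothesis together with Lemma~\ref{biasHomog} to show that for all but a tiny fraction of $x_{[k]\setminus\{d\}}$ the coefficient vector of $\alpha_1$ in direction $d$ is linearly independent from those of $\alpha_2,\dots,\alpha_{s_0}$, whence $|V_{x_{[k]\setminus\{d\}}}| = p^{-1}|V^{[2,s]}_{x_{[k]\setminus\{d\}}}|$ on these good columns, and then run a Markov argument on the density defect of $X$ in $V$.

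Two minor points of comparison. First, your invocation of the Fibres theorem in step one is unnecessary: the paper works directly with the bias computation, which already gives that $|V_{x_{[k]\setminus\{d\}}}| = \frac{1}{p}|V^{[2,s]}_{x_{[k]\setminus\{d\}}}|$ on almost all columns without first controlling $|V^{[2,s]}_{x_{[k]\setminus\{d\}}}|$ separately. Second, your brief detour through Theorem~\ref{strongInvARankThm} is likewise unnecessary (as you yourself note); the paper simply writes
\[|G_{[k]\setminus\{d\}}|^{-1}\Big|\Big\{x_{[k]\setminus\{d\}} : A_1(x_{I_1\setminus\{d\}}) = \sum_{i\in[2,s_0]}\lambda_i A_i(x_{I_i\setminus\{d\}})\Big\}\Big| = \exx_{x_{[k]}} \omega^{\alpha_1 - \sum_i \lambda_i \alpha_i}\]
and bounds the right-hand side by $\bias(\alpha_1 - \sum_{i\in[2,r_0]}\lambda_i\alpha_i)$ via Lemma~\ref{biasHomog}, exactly as you describe in your final paragraph. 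Your identification of the subtlety with forms $\alpha_i$ having $I_i$ incomparable to $I_1$ and its resolution by fixing coordinates outside $I_1$ is spot on.
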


\begin{proof}[Proof of Claim A]Without loss of generality $d\in I_i$ if and only if $i\in [s_0]$. Write $\alpha_i(x_{I_i}) = A_i(x_{I_i \setminus \{d\}}) \cdot x_d$ for $i \in [s_0]$. Note that $A_1(x_{I_1 \setminus \{d\}})$ is independent of $A_2(x_{I_2 \setminus \{d\}}), \dots, A_{s_0}(x_{I_{s_0} \setminus \{d\}})$ for most values of $x_{[k] \setminus \{d\}} \in G_{[k] \setminus \{d\}}$. Indeed, for any $\lambda \in \mathbb{F}_p^{[2,s_0]}$, we have
\begin{align*}|G_{[k] \setminus \{d\}}|^{-1}\Big|\Big\{x_{[k] \setminus \{d\}} &\in G_{[k] \setminus \{d\}} \colon A_1(x_{I_1 \setminus \{d\}}) = \sum_{i \in [2,s_0]} \lambda_i A_i(x_{I_{i} \setminus \{d\}})\Big\}\Big|\\
=&\exx_{x_{[k]}} \omega^{\big(A_1(x_{I_1 \setminus \{d\}}) - \sum_{i \in [2,s_0]} \lambda_i A_i(x_{I_{i} \setminus \{d\}})\big) \cdot x_d} \\
=&\exx_{x_{[k]}} \omega^{\alpha_1(x_{I_1}) - \sum_{i \in [2,s_0]} \lambda_i \alpha_i(x_{I_i})}\\
\leq&\exx_{x_{[k] \setminus I_1}} \Big|\exx_{x_{I_1}} \omega^{\alpha_1(x_{I_1}) - \sum_{i \in [2,s_0]} \lambda_i \alpha_i(x_{I_i})}\Big|\\
\leq &\bias \Big(\alpha_1 - \sum_{i \in [2,r_0]} \lambda_i \alpha_i\Big)\\
\leq &\frac{\xi}{2} p^{-(k+1)s},
\end{align*} 
where the second inequality follows from Lemma~\ref{biasHomog} and the third is true by hypothesis.

There are $p^{s_0-1}$ choices of $\lambda$ above, so $|V_{x_{[k] \setminus \{d\}}}| = \frac{1}{p}|V^{[2,s]}_{x_{[k] \setminus \{d\}}}|$ for all but at most $\frac{\xi}{2} p^{-ks} |G_{[k] \setminus \{d\}}|$ elements $x_{[k] \setminus \{d\}} \in G_{[k] \setminus \{d\}}$. Let $Z$ be the set of such $x_{[k] \setminus \{d\}}$. Then
\begin{align}|(V^{[2,s]} \setminus V) \setminus Y_d|\ \leq & \sum_{x_{[k] \setminus \{d\}} \in Z} |(V^{[2,s]} \setminus V)_{x_{[k] \setminus \{d\}}}| \cdot \mathbbm{1}\Big(|X_{x_{[k] \setminus \{d\}}}| \leq (1-\eta) |V_{x_{[k] \setminus \{d\}}}|\Big) + |G_{[k] \setminus \{d\}} \setminus Z| \cdot |G_d|\nonumber\\
= & \sum_{x_{[k] \setminus \{d\}} \in Z} (p-1)|V_{x_{[k] \setminus \{d\}}}| \cdot \mathbbm{1}\Big(|X_{x_{[k] \setminus \{d\}}}| \leq (1-\eta) |V_{x_{[k] \setminus \{d\}}}|\Big) + |G_{[k] \setminus \{d\}} \setminus Z| \cdot |G_d|\nonumber\\
\leq & \sum_{x_{[k] \setminus \{d\}} \in Z} (p-1) \eta^{-1} |V_{x_{[k] \setminus \{d\}}} \setminus X_{x_{[k] \setminus \{d\}}}| + |G_{[k] \setminus \{d\}} \setminus Z| \cdot |G_d|\nonumber\\
\leq &  (p-1)\eta^{-1} |V \setminus X| + \xi p^{-ks}|G_{[k]}| \nonumber\\
\leq & ((p-1)\eta^{-1}\varepsilon' + \xi) |V|,\label{v2sboundvarineq}\end{align}
where we used Lemma~\ref{varsizelemma} to see that $|V| \geq p^{-ks} |G_{[k]}|$.

Note that $V^{[2,s]} \setminus V$ constains a layer of $\alpha$, so $|V^{[2,s]} \setminus V| \geq p^{-ks}|G_{[k]}|$ by Lemma~\ref{varsizelemma}. Lastly, we have that $|V \cap (Z \times G_d)| = \frac{1}{p-1}|(V^{[2,s]} \setminus V) \cap (Z \times G_d)|$ from which we conclude that
\[|V| \,\,\leq \, |V \cap (Z \times G_d)| + \xi p^{-ks} |G_{[k]}|\,\, \leq\, |(V^{[2,s]} \setminus V) \cap (Z \times G_d)| + \xi p^{-ks} |G_{[k]}| \,\,\leq\, 2|V^{[2,s]} \setminus V|.\]
Combining this inequality with~\eqref{v2sboundvarineq}, we complete the proof of the claim. \end{proof}

Let $\mu_0 \in \mathbb{F}_p \setminus \{\nu_1\}$ be arbitrary. Write $V^{\mu_0} =  \{x_{[k]} \in V^{[2,s]} \colon \alpha_1(x_{[k]}) = \mu_0\}$. Let $Y = \bigcap_{d \in I_1} Y_d$. Applying Claim A to each $Y_d$, we have that $|Y|\ \geq (1 - 2kp\eta^{-1}\varepsilon' - 2k\xi) |V^{[2,s]} \setminus V|$. Let 
\[\tilde{Y} = \Big\{x_{[k]} \in Y \colon \alpha_1(x_{[k]}) = \mu_0, |Y_{x_{[k-1]}}| \geq (1 - \eta) |(V^{[2,s]} \setminus V)_{x_{[k-1]}}|\Big\}.\] 
Using the quasirandomness of $\alpha_1$ with respect to the other forms as in the proof of the claim, one has that $|V_{x_{[k-1]}}| = |V^{\mu_0}_{x_{[k-1]}}| = \frac{1}{p}|V^{[2,s]}_{x_{[k-1]}}|$ for all but at most $\frac{\xi}{2} p^{-ks} |G_{[k-1]}|$ elements $x_{[k-1]} \in G_{[k-1]}$. Let $Q$ be the set of such $x_{[k-1]}$. We deduce that
\begin{align}|V^{\mu_0}\setminus \tilde{Y}|\ \leq &\sum_{x_{[k-1]} \in Q} |V^{\mu_0}_{x_{[k-1]}}|\hspace{2pt} \mathbbm{1}\Big(|Y_{x_{[k-1]}}| \leq (1 - \eta) |(V^{[2,s]} \setminus V)_{x_{[k-1]}}|\Big) + \sum_{x_{[k-1]} \in G_{[k-1]} \setminus Q} |G_k|\nonumber\\
\leq& \sum_{x_{[k-1]} \in Q} \frac{1}{p-1}|(V^{[2,s]} \setminus V)_{x_{[k-1]}}| \hspace{2pt} \mathbbm{1}\Big(|Y_{x_{[k-1]}}| \leq (1 - \eta) |(V^{[2,s]} \setminus V)_{x_{[k-1]}}|\Big) + \sum_{x_{[k-1]} \in G_{[k-1]} \setminus Q} |G_k|\nonumber\\
\leq & \sum_{x_{[k-1]} \in Q} \frac{1}{p-1}\eta^{-1}|((V^{[2,s]} \setminus V) \setminus Y)_{x_{[k-1]}}| + \frac{\xi}{2} p^{-ks}|G_{[k]}|\nonumber\\
\leq & \eta^{-1} (2kp\eta^{-1}\varepsilon' + 2k\xi) \frac{1}{p-1} |V^{[2,s]} \setminus V| + \frac{\xi}{2}|V^{\mu_0}|.\label{vmu0v2sbndineq}\end{align}
Since $|V^{\mu_0}| \geq p^{-ks}|G_{[k]}|$ we get
\[|V^{[2,s]} \setminus V| \,\,\leq\, |(V^{[2,s]} \setminus V) \cap (Q \times G_k)| + \xi p^{-ks}|G_{[k]}| \,\,=\, (p-1)|V^{\mu_0} \cap(Q\times G_k)| + \xi p^{-ks}|G_{[k]}|\,\, \leq \,p |V^{\mu_0}|.\]
Together with~\eqref{vmu0v2sbndineq}, this inequality implies that $|\tilde{Y}|\ \geq (1 - 4kp\eta^{-2}\varepsilon' - 5k\eta^{-1}\xi) |V^{\mu_0}|$.\\

Applying Theorem~\ref{connectedVeryDensePiece} to $\tilde{Y} \subset V^{\mu_0}$, we find a lower-order variety $U \subset G_{[k-1]}$ of codimension $O((r + \log_p \xi^{-1})^{O(1)})$, a subset $Z \subset \tilde{Y} \cap (U \times G_k)$, and an element $a_{[k]} \in Z$ such that $V^{\mu_0} \cap (U \times G_k) \not= \emptyset$,
\begin{equation}\label{1codimExtnzdensEqn}|Z|\ \geq (1 - \tilde{\eta})|V^{\mu_0} \cap (U \times G_k)|,\end{equation}
where $\tilde{\eta} \leq O((\eta^{-2}{\varepsilon'})^{\Omega(1)}) - O((\eta^{-1}\xi)^{\Omega(1)})$, and
\begin{equation}(\forall x_{[k]} \in Z)\,(\forall i \in [k]) \hspace{3pt} (x_{[i]}, a_{[i+1,k]}) \in  Z.\label{zbelongalldirmove}\end{equation}
Moreover, there is $\delta > 0$ such that
\begin{equation}(\forall x_{[k-1]} \in U \cap V^{[k-1]}) \hspace{3pt}|V^{\mu_0}_{x_{[k-1]}}| = \delta |G_k|,\label{samecolsizestildez}\end{equation} where 
\[V^{[k-1]} = \{x_{[k-1]} \in G_{[k-1]} \colon (\forall i \in [s])\ I_i \subset [k-1]\implies \alpha_i(x_{I_i}) = \nu_i\}.\]
Let $h_0 \colon G_{[k] \setminus I_1}\to H$ be an arbitrary multiaffine map. We now define a multi-homomorphism $\phi^{\text{ext}}$ on a very dense subset of $V^{[2,s]} \cap (U \times G_k)$. Later we shall strengthen this new map to a multiaffine one.\\

The domain of $\phi^{\text{ext}}$ will be the set $\tilde{Z} \subset V^{[2,s]} \cap (U \times G_k)$, defined as follows. Note that
\begin{align*}V^{[2,s]} \cap (U \times G_k) = \Big(V^{[2,s]} \cap (U \times G_k) \cap \alpha_1^{-1}(\nu_1)\Big)\,&\cup\,\Big(V^{[2,s]} \cap (U \times G_k) \cap \alpha_1^{-1}(\mu_0)\Big)\\
&\cup\,\Big(V^{[2,s]} \cap (U \times G_k) \cap \alpha_1^{-1}(\mathbb{F}_p \setminus \{\nu_1, \mu_0\})\Big)\end{align*}
is a partition $V^{[2,s]} \cap (U \times G_k)$. We define $\tilde{Z}$ on each of these pieces by setting $\tilde{Z} \cap \alpha_1^{-1}(\nu_1) = X \cap (U \times G_k)$, $\tilde{Z} \cap\alpha_1^{-1}(\mu_0)= Z$ and $\tilde{Z} \cap\alpha_1^{-1}(\mathbb{F}_p \setminus \{\mu_0, \nu_1\}) = Y \cap (Z_{a_k} \times G_k) \cap \alpha_1^{-1}(\mathbb{F}_p \setminus \{\mu_0, \nu_1\})$.\\
\indent Before constructing the map $\phi^{\text{ext}} \colon \tilde{Z} \to H$ we show that $\tilde{Z}$ is very dense in $V^{[2,s]} \cap (U \times G_k)$. To this end, first observe that $Z_{a_k}$ is very dense in $U \cap V^{[k-1]}$. Indeed, by~\eqref{samecolsizestildez},
\begin{align}|Z_{a_k}| \cdot \delta |G_k| \,\,\geq\, |Z| \,\,\geq&\, (1 - \tilde{\eta})|V^{\mu_0} \cap (U \times G_k)|\nonumber\\
 =&\, (1 - \tilde{\eta}) |U \cap V^{[k-1]}| \cdot \delta |G_k|.\label{zakbound}\end{align}

 Observe that when $x_{[k-1]} \in Z_{a_k}$, then we have $(x_{[k-1]}, a_k) \in \tilde{Y} \subset Y_k$, so $|Y_{x_{[k-1]}}|\ \geq (1 - \eta) |(V^{[2,s]} \setminus V)_{x_{[k-1]}}|$ and $|X_{x_{[k-1]}}|\ \geq (1-\eta) |V_{x_{[k-1]}}| > 0$. Using this, we conclude that 
\begin{align}|\tilde{Z}|\ =&\,|X \cap (U \times G_k)|\, + \,|Z|\, +\, |Y \cap (Z_{a_k} \times G_k) \cap \alpha_1^{-1}(\mathbb{F}_p \setminus \{\mu_0, \nu_1\})|\nonumber\\
\geq&\,|Z| \,+\,\sum_{x_{[k-1]} \in Z_{a_k}} \Big(|X_{x_{[k-1]}}|\,+\,|(Y \cap \alpha_1^{-1}(\mathbb{F}_p \setminus \{\mu_0, \nu_1\}))_{x_{[k-1]}}|\Big)\nonumber\\
\geq&\,|Z| \,+\,\sum_{x_{[k-1]} \in Z_{a_k}} \Big(|X_{x_{[k-1]}}|\,+\,|Y_{x_{[k-1]}}|\,-\,|\alpha_1^{-1}(\mu_0)_{x_{[k-1]}}|\Big)\nonumber\\
\geq&\,|Z| \,+\,\sum_{x_{[k-1]} \in Z_{a_k}} \Big((1-\eta) |V_{x_{[k-1]}}|\,+\, (1 - \eta) |(V^{[2,s]} \setminus V)_{x_{[k-1]}}|\,-\,|V^{\mu_0}_{x_{[k-1]}}|\Big)\nonumber\\
\geq&\,(1 - \tilde{\eta})|V^{\mu_0} \cap (U \times G_k)| \, +  \,(1-\tilde{\eta})\sum_{x_{[k-1]} \in U \cap V^{[k-1]}} \Big((1-\eta) |V_{x_{[k-1]}}|\,+\, (1 - \eta) |(V^{[2,s]} \setminus V)_{x_{[k-1]}}|\,-\,|V^{\mu_0}_{x_{[k-1]}}|\Big)\nonumber\\
&\hspace{5cm}\text{(by~\eqref{1codimExtnzdensEqn},~\eqref{samecolsizestildez} and~\eqref{zakbound})}\nonumber\\
\geq & \Big(1 - O(\eta) - O((\eta^{-2}{\varepsilon'})^{\Omega(1)}) - O((\eta^{-1}\xi)^{\Omega(1)})\Big)|V^{[2,s]} \cap (U \times G_k)|.\label{1codimExtnTzdensEqn}\\
&\hspace{5cm}\text{(by~\eqref{samecolsizestildez})}\nonumber\end{align}
\phantom{a}\\
We now proceed to define $\phi^{\text{ext}}$ on each of the three pieces of $\tilde{Z}$.\\

\noindent\textbf{First piece definition.} When $x_{[k]} \in \tilde{Z} \cap \alpha_1^{-1}(\nu_1)$ then $x_{[k]} \in X$ and we set $\phi^{\text{ext}}(x_{[k]}) = \phi(x_{[k]})$.\\[3pt]
\noindent\textbf{Second piece definition.} When $x_{[k]} \in \tilde{Z} \cap \alpha_1^{-1}(\mu_0)$, we have that $x_{[k]} \in Z$, so by~\eqref{zbelongalldirmove} we also have $(x_{[i]}, a_{[i+1,k]}) \in  Z$ for each $i \in [k]$. The definition of $\phi^{\text{ext}}$ is more involved in this case and we proceed in stages. In particular, in order to specify $\phi^{\text{ext}}(x_{[k]})$ we need to define $\phi^{\text{ext}}(x_{[i]}, a_{[i+1,k]})$ for all $i \in [d_0, k]$. First, we define $\phi^{\text{ext}}$ on the set $\tilde{Z} \cap \alpha_1^{-1}(\mu_0) \cap (Z_{a_{[d_0 + 1, k]}} \times G_{[d_0 + 1, k]})$, which is the set of points in the second piece of $Z$ that are of the form $(x_{[d_0]}, a_{[d_0 + 1, k]})$. We set $\phi^{\text{ext}}(x_{[d_0]}, a_{[d_0 + 1, k]}) = h_0(x_{[d_0]})$. We now consider points of the form $(x_{[i]}, a_{[i+1,k]})$ where $i$ grows by one in each step.\\
\indent Suppose that we have defined $\phi^{\text{ext}}$ on $\tilde{Z} \cap \alpha_1^{-1}(\mu_0) \cap (Z_{a_{[i + 1, k]}} \times G_{[i + 1, k]})$ for some $i \in [d_0, k-1]$ and that $(x_{[i+1]}, a_{[i+2,k]}) \in \tilde{Z} \cap \alpha_1^{-1}(\mu_0)$ is given and that $x_{i+1} \not= a_{i+1}$. We set
\[\phi^{\text{ext}}(x_{[i+1]}, a_{[i+2,k]}) = \phi^{\text{ext}}(x_{[i]}, a_{[i+1,k]}) + \phi(x_{[i]}, u_{i+1} + x_{i+1} - a_{i+1}, a_{[i+2,k]}) - \phi(x_{[i]}, u_{i+1}, a_{[i+2,k]}),\]
where $u_{i+1}$ is chosen so that the last two points in the argument of $\phi$ lie in $X$. Since $(x_{[i+1]}, a_{[i+2,k]}) \in Z \subset \tilde{Y} \subset Y_{i+1}$, we know that $|X_{x_{[i]}, a_{[i+2,k]}}| \geq (1-\eta) |V_{x_{[i]}, a_{[i+2,k]}}|$, so we may find such a $u_{i+1}$ as long as $\eta < \frac{1}{10}$. In fact there are at least $\frac{1}{2}|V_{x_{[i]}, a_{[i+2,k]}}| \geq \frac{1}{2}p^{-s} |G_{i+1}|$ such choices of $u_{i+1}$. Note also that this is well-defined (i.e.\ independent of a particular choice of $u_{i+1}$) since $\phi$ is a multiaffine map. Also note that the displayed inequality is still true if $x_{i+1} = a_{i+1}$.\\[3pt]
\noindent\textbf{Third piece definition.} Finally, when $x_{[k]} \in Y$ is such that $\alpha_1(x_{[k]}) = \mu \not= \mu_0, \nu_1$ and $(x_{[k-1]}, a_k) \in Z$, we put
\begin{equation}\phi^{\text{ext}}(x_{[k]}) = \frac{\mu- \nu_1}{\mu_0 - \nu_1}(\phi^{\text{ext}}(x_{[k-1]}, a_k) - \phi(x_{[k-1]}, u_k)) + \phi\Big(x_{[k-1]}, x_k - \frac{\mu- \nu_1}{\mu_0 - \nu_1}(a_k - u_k)\Big),\label{thirdpieceeqn}\end{equation}
where again we choose $u_k$ so that $(x_{[k-1]}, u_k), \Big(x_{[k-1]}, x_k - \frac{\mu- \nu_1}{\mu_0 - \nu_1}(a_k - u_k)\Big) \in X$. As above, we know that $x_{[k]} \in Y_k$, so $|X_{x_{[k-1]}}| \geq (1-\eta)|V_{x_{[k-1]}}|$, which implies that there is a desired $u_k$ (in fact there are at least $\frac{1}{2}|V_{x_{[k-1]}}| \geq \frac{1}{2}p^{-s} |G_{k}|$ such choices of $u_{k}$) and $\phi^{\text{ext}}(x_{[k]})$ is well-defined (i.e.\ we get the same value for any $u_k$ with the required property).\\

Note that, allowing a misuse of notation, we may use the same equation for $x_{[k]} \in V$, since the right-hand-side reduces to $\phi(x_{[k]})$ (even though $(x_{[k-1]}, a_k)$ might not belong to $Z$ in this case). Furthermore, when $\mu = \mu_0$, this definition coincides with the definition from the second step (with $i = k-1$). Hence, equality~\eqref{thirdpieceeqn} holds for all $x_{[k]} \in \tilde{Z}$ with the convention that if $\mu \not= \nu_1$ (which is equivalent to $\frac{\mu- \nu_1}{\mu_0 - \nu_1} \not= 0$) then $(x_{[k-1]}, a_k)$ lies in the domain of $\phi^{\text{ext}}$. If $\mu = \nu_1$ then we do not claim anything about the point $(x_{[k-1]}, a_k)$, but we keep it to simplify the notation as $\phi^{\text{ext}}(x_{[k-1]}, a_k)$ makes no contribution to the expression~\eqref{thirdpieceeqn} in this case.\\
\indent Also, expanding the definitions of $\phi^{\text{ext}}$ we get the formula in the case \textbf{(ii)} of the conclusion of the theorem and we get the desired number of possible choices of $u_{[d_0 + 1,k]}$.\\

We now prove that this extension is a multi-homomorphism.\\

Let $d \in [k]$, and for $i\in[4]$ let $(x_{[k] \setminus \{d\}}, y^i_d) \in \tilde{Z}$ be points such that $y^1_d + y^2_d = y^3_d + y^4_d$. Write $\sgn \colon [4] \to \{-1,1\}$ for the function defined by $\sgn(1) = \sgn(2) = 1, \sgn(3) = \sgn(4) = -1$, so that $\sum_{i \in [4]} \sgn(i) y^i_d = 0$. Let $\mu_i$ be the value of $\alpha_1$ at the $i$\textsuperscript{th} point. That is,
\[\mu_i = \alpha_1\Big((x_{[k] \setminus \{d\}}, y^i_d)|_{I_1}\Big) = \begin{cases}\alpha_1(x_{I_1 \setminus \{d\}}, y^i_d),\hspace{1cm}&\text{when }d \geq d_0 + 1\\\alpha_1(x_{I_1}),&\text{when }d \leq d_0.\end{cases}\]
Since $\alpha_1$ is multilinear, these values also satisfy $\sum_{i \in [4]} \sgn(i) \mu_i = 0$. Note also that we may assume that not all of the four given points belong to $X$, as $\phi^{\text{ext}}  = \phi$ on $X$ and we know that $\phi$ is multiaffine. Hence, at least one $\mu_i$ is not equal to $\nu_1$.\\

\noindent\textbf{The map $\phi^{\mathrm{ext}}$ is a homomorphism in direction $k$.} Assume first that $d = k$. By definition of $\phi^{\text{ext}}$ and the fact that~\eqref{thirdpieceeqn} holds for all $x_{[k]} \in \tilde{Z}$, there are $u^1_k, u^2_k, u^3_k, u^4_k \in G_k$ (if $\mu_i = \nu_1$ for some $i$ we continue with the misuse of notation mentioned earlier) such that 

\begin{align*}&\sum_{i \in [4]} \sgn(i) \phi^{\text{ext}}(x_{[k-1]}, y^i_k) = \sum_{i \in [4]} \sgn(i) \frac{\mu_i - \nu_1}{\mu_0 - \nu_1} \phi^{\text{ext}}(x_{[k-1]}, a_k)\\
&\hspace{2cm} - \sum_{i \in [4]} \sgn(i) \frac{\mu_i - \nu_1}{\mu_0 - \nu_1}\phi(x_{[k-1]}, u^i_k) + \sum_{i \in [4]} \sgn(i) \phi\Big(x_{[k-1]}, y^i_k - \frac{\mu_i - \nu_1}{\mu_0 - \nu_1}(a_k - u^i_k)\Big).\end{align*}
Since the map $z_k \mapsto \phi(x_{[k-1]}, z_k)$ is a restriction of an affine map with domain $G_k$, and $\sum_{i \in [4]} \sgn(i) \frac{\mu_i - \nu_1}{\mu_0 - \nu_1} = 0$, the whole expression is zero, as desired.\\

\noindent\textbf{The map $\phi^{\mathrm{ext}}$ is a homomorphism in directions $[d_0 +1, k-1]$.} Next, assume that $d \in [d_0 + 1, k-1]$. By downwards induction on $d' \in [d, k-1]$ we show the following claim.

\begin{claimb*}Let $d' \in [d, k-1]$. There are elements $t_d, u^{[4]}_d, v^{[4]}_d$ in $G_{d}$ such that $(x_{[d'] \setminus \{d\}}, u^i_d, a_{[d' + 1,k]}),$ $(x_{[d'] \setminus \{d\}}, v^i_d, a_{[d'+1,k]}),$ $(x_{[d'] \setminus \{d\}},$ $u^i_d + v^i_d + t_d - y^i_d,$ $a_{[d' + 1,k]}) \in X$ for each $i \in [4]$ with $\mu_i \not= \nu_1$ and 

\begin{align*}&\sum_{i \in [4]} \sgn(i) \phi^{\text{ext}}(x_{[k] \setminus \{d\}}, y^i_d) = \sum_{i \in [4]} \sgn(i) \frac{\mu_i - \nu_1}{\mu_0 - \nu_1} \phi^{\text{ext}}(x_{[d'] \setminus \{d\}}, y^i_d, a_{[d' + 1, k]})\\
&\hspace{2cm} - \sum_{i \in [4]} \sgn(i) \frac{\mu_i - \nu_1}{\mu_0 - \nu_1}\Big(\phi(x_{[d'] \setminus \{d\}}, u^i_d, a_{[d' + 1,k]}) + \phi(x_{[d'] \setminus \{d\}}, v^i_d, a_{[d'+1,k]})\\
&\hspace{8cm} - \phi(x_{[d'] \setminus \{d\}}, u^i_d + v^i_d + t_d - y^i_d, a_{[d' + 1,k]})\Big),\end{align*}
where $(x_{[d'] \setminus \{d\}}, y^i_d, a_{[d' + 1, k]}) \in Z$ for each $i \in [4]$ with $\mu_i \not= \nu_1$ (again, when $\mu_i = \nu_1$, we continue with the misuse of notation and write formal expressions $\phi^{\text{ext}}(x_{[d'] \setminus \{d\}}, y^i_d, a_{[d' + 1, k]})$, $\phi(x_{[d'] \setminus \{d\}}, u^i_d, a_{[d' + 1,k]})$, etc.,\ which are multiplied by a zero scalar and thus make no contribution).\end{claimb*}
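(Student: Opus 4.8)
The plan is to prove Claim B by downwards induction on $d'$, starting from $d' = k-1$, and then use the base case $d' = d$ together with the multilinearity claims (Claims A and C type arguments on the $d$-th coordinate) to finish the proof that $\phi^{\mathrm{ext}}$ is a homomorphism in direction $d$. First I would establish the base case $d' = k-1$: here we simply apply the definition~\eqref{thirdpieceeqn} of $\phi^{\text{ext}}$ on the third piece. For each $i$ with $\mu_i \not= \nu_1$, the point $(x_{[k]\setminus\{d\}}, y^i_d) \in \tilde{Z}$ has $\alpha_1$-value $\mu_i$, so by~\eqref{thirdpieceeqn} there is $u^i_k$ such that $\phi^{\text{ext}}(x_{[k]\setminus\{d\}}, y^i_d) = \frac{\mu_i - \nu_1}{\mu_0 - \nu_1}\big(\phi^{\text{ext}}(x_{[k-1]\setminus\{d\}}, y^i_d, a_k) - \phi(x_{[k-1]\setminus\{d\}}, y^i_d, u^i_k)\big) + \phi\big(x_{[k-1]\setminus\{d\}}, y^i_d, x_k - \frac{\mu_i-\nu_1}{\mu_0-\nu_1}(a_k - u^i_k)\big)$. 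Since the map $z_k \mapsto \phi(x_{[k-1]\setminus\{d\}}, y^i_d, z_k)$ is a restriction of an affine map on $G_k$ (as $\phi$ is multiaffine and the point $(x_{[k-1]\setminus\{d\}}, y^i_d, a_k)$ lies in $Z \subset V^{\mu_0}$), and using $\sum_i \sgn(i)\frac{\mu_i - \nu_1}{\mu_0-\nu_1} = 0$, the $x_k$-dependent and $u^i_k$-dependent contributions telescope, leaving exactly the sum of $\phi^{\text{ext}}(x_{[k-1]\setminus\{d\}}, y^i_d, a_k)$ terms weighted by $\sgn(i)\frac{\mu_i-\nu_1}{\mu_0-\nu_1}$; the form of Claim B at $d' = k-1$ then follows after a change of names of parameters (absorbing the single $u^i_k$ into the $u^i_d, v^i_d, t_d$ pattern using the fact that $G_d$-slices of $X$ are dense). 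The key technical point here, as throughout, is that we may always find the auxiliary parameters $u^i_d, v^i_d, t_d$ because $(x_{[d']\setminus\{d\}}, y^i_d, a_{[d'+1,k]}) \in Z \subset \tilde Y \subset Y_d$, which forces $|X_{x_{[d']\setminus\{d\}}, a_{[d'+1,k]}}| \ge (1-\eta)|V_{x_{[d']\setminus\{d\}}, a_{[d'+1,k]}}|$, hence intersections of a bounded number of translates of this dense set inside the coset $V_{x_{[d']\setminus\{d\}}, a_{[d'+1,k]}}$ remain nonempty when $\eta$ is small compared to $p^{-s}$.

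For the inductive step, I would pass from $d'+1$ to $d'$ using the recursive definition of $\phi^{\text{ext}}$ on the second piece: for a point $(x_{[d'+1]\setminus\{d\}}, y, a_{[d'+2,k]}) \in \tilde Z \cap \alpha_1^{-1}(\mu_0)$ with $\mu_0$-value we have $\phi^{\text{ext}}(x_{[d'+1]\setminus\{d\}}, y, a_{[d'+2,k]}) = \phi^{\text{ext}}(x_{[d']\setminus\{d\}}, y, a_{[d'+1,k]}) + \phi(x_{[d']\setminus\{d\}}, y, w, a_{[d'+2,k]}) - \phi(x_{[d']\setminus\{d\}}, y, w', a_{[d'+1,k]})$ for suitable $w, w' \in G_{d'+1}$ — but here I must be careful: the recursion in the second-piece definition was written for the special slices $(x_{[i]}, a_{[i+1,k]})$; to apply it with the coordinate $d$ kept general I need the analogous recursion which holds because $\phi$ is multiaffine in coordinate $d'+1$ as well, and because the relevant points lie in $Z$ (so all the moving is legitimate). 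Substituting this into the $d'+1$ version of Claim B, using that $\phi$ is affine in coordinate $d'+1$ on $X$-slices, and re-grouping the $\phi$ terms (again absorbing new parameters into the $u,v,t$ shape) yields the $d'$ version. I expect the bookkeeping of which auxiliary parameters are introduced at each level — and verifying that all the points at which $\phi$ is evaluated genuinely lie in $X$ (not merely in $V$) — to be the main obstacle; this requires repeatedly invoking the density of $Y_d$ and $X$ on the relevant cosets and a union bound over the bounded number of evaluation points, all of which is routine but notationally heavy.

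Once Claim B is proved down to $d' = d$, the final step is to handle the $d$-coordinate directly. At $d' = d$ the claim expresses $\sum_i \sgn(i)\phi^{\text{ext}}(x_{[k]\setminus\{d\}}, y^i_d)$ as $\sum_i \sgn(i)\frac{\mu_i - \nu_1}{\mu_0-\nu_1}\phi^{\text{ext}}(x_{[d-1]}, y^i_d, a_{[d+1,k]})$ (note the $d$-coordinate is now in the "slot" of the first piece's evaluation point, and $(x_{[d-1]}, y^i_d, a_{[d+1,k]}) \in Z$) minus a sum of $\phi$ terms of the form $\phi(x_{[d-1]}, u^i_d, a_{[d+1,k]}) + \phi(x_{[d-1]}, v^i_d, a_{[d+1,k]}) - \phi(x_{[d-1]}, u^i_d + v^i_d + t_d - y^i_d, a_{[d+1,k]})$. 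The $\phi$ terms vanish in the signed sum because $z_d \mapsto \phi(x_{[d-1]}, z_d, a_{[d+1,k]})$ is a restriction of an affine map on $G_d$ and $\sum_i \sgn(i)\frac{\mu_i-\nu_1}{\mu_0-\nu_1}(u^i_d + v^i_d + t_d) - \sum_i \sgn(i)\frac{\mu_i-\nu_1}{\mu_0-\nu_1} y^i_d$, together with $\sum_i \sgn(i)\frac{\mu_i-\nu_1}{\mu_0-\nu_1}y^i_d = \frac{1}{\mu_0-\nu_1}\sum_i\sgn(i)(\mu_i-\nu_1)y^i_d$, can be controlled (this is where I would need, if the $u^i_d, v^i_d$ were not chosen with the same "sum", a further averaging/Cauchy–Schwarz step as in the multilinearity claims of Section 4; but since Claim B lets us choose them freely we can simply arrange the cancellation). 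Then we are reduced to the already-established fact that $\phi^{\text{ext}}$ restricted to points with $d$-coordinate $a_d$ (equivalently, the second/first pieces) is multiaffine in the remaining coordinates, applied with the weights $\frac{\mu_i - \nu_1}{\mu_0 - \nu_1}$ — but since $\sum_i \sgn(i)\frac{\mu_i-\nu_1}{\mu_0-\nu_1} = 0$, this weighted sum of multiaffine values is zero. This completes the proof that $\phi^{\text{ext}}$ is a multi-homomorphism on $\tilde{Z}$, and then Theorem~\ref{globalBiaffineInAllPlanes}, Corollary~\ref{approxF2homm} or Proposition~\ref{biaffineAlmostToFullExtn}-type densification converts $\phi^{\text{ext}}$ into a genuine multiaffine map $\psi$ on a subset $X' \subset V'$ of the claimed density, with properties \textbf{(i)} and \textbf{(ii)} read off from the explicit formulas above.
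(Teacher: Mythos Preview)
There is a genuine gap in both the base case and the inductive step: you never invoke the global biaffine extension $\phi^{\mathrm{glob}}_{x_{[k]\setminus\{d,d'\}}}$ on the plane $G_d \times G_{d'}$, and without it the telescoping you claim does not go through. Concretely, in the base case $d'=k-1$, after applying~\eqref{thirdpieceeqn} the residual $\phi$ terms reduce (using only affineness in coordinate $k$) to $\sum_i \sgn(i)\,\psi_i(x_k) - \sum_i \sgn(i)\frac{\mu_i-\nu_1}{\mu_0-\nu_1}\,\psi_i(a_k)$, where $\psi_i$ is the affine extension in $G_k$ of $\phi(x_{[k-1]\setminus\{d\}}, y^i_d,\,\cdot\,)$. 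The $\psi_i$ live on \emph{different} rows (different $y^i_d$), so $\sum_i \sgn(i)\,\psi_i(x_k)$ has no reason to vanish from single-direction affineness alone, nor can the remaining $\psi_i(a_k)$ term be ``absorbed into the $u^i_d,v^i_d,t_d$ pattern'' --- note $u^i_k\in G_k$ while $u^i_d\in G_d$, so this is not a renaming. The paper handles this by first applying Theorem~\ref{globalBiaffineInAllPlanes} at the very start of the proof of Theorem~\ref{extn1codim}, so that on every axis-aligned plane $\phi$ extends to a global biaffine map $\phi^{\mathrm{glob}}$; then $\psi_i(z)=\phi^{\mathrm{glob}}(y^i_d,z)$, the first sum vanishes by $d$-affineness of $\phi^{\mathrm{glob}}$ and $\sum_i\sgn(i)y^i_d=0$, and the second sum is what is actually recorded in Claim~B.

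You also miss the role of $t_d$. The point $(x_{[d']\setminus\{d\}}, y^i_d, a_{[d'+1,k]})$ lies in $Z\subset V^{\mu_0}$, not in $V$, so $\phi$ cannot be evaluated there; one must first shift by $t_d$ so that $(x_{[d']\setminus\{d\}}, y^i_d - t_d, a_{[d'+1,k]})\in V$ (this uses that $V^{\mu_0}_{\cdots}$ and $V_{\cdots}$ are cosets of the same subspace in $G_d$). The identity $\sum_i\sgn(i)\frac{\mu_i-\nu_1}{\mu_0-\nu_1}\phi^{\mathrm{glob}}(y^i_d,a_{d'+1})=\sum_i\sgn(i)\frac{\mu_i-\nu_1}{\mu_0-\nu_1}\phi^{\mathrm{glob}}(y^i_d-t_d,a_{d'+1})$ (again requiring biaffineness) then lets one rewrite each $\phi^{\mathrm{glob}}(y^i_d-t_d,a_{d'+1})$ as a genuine $\phi$-combination $\phi(\cdots,\tilde u^i_d,\cdots)+\phi(\cdots,\tilde v^i_d,\cdots)-\phi(\cdots,\tilde u^i_d+\tilde v^i_d+t_d-y^i_d,\cdots)$ using density of $X$ on that column. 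This is where the $u^i_d,v^i_d,t_d$ in Claim~B actually come from. The same mechanism (global biaffine map on $G_d\times G_{d'}$, shift by $t_d$, density) drives each inductive step; your appeal to ``$\phi$ is affine in coordinate $d'+1$'' is insufficient for the same reason as in the base case.
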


\begin{proof}[Proof of Claim B]We begin the proof by dealing with the base case $d' = k-1$. By definition, there are $u^{[4]}_k$ in $G_k$ such that 

\begin{align*}&\sum_{i \in [4]} \sgn(i) \phi^{\text{ext}}(x_{[k] \setminus \{d\}}, y^i_d) = \sum_{i \in [4]} \sgn(i) \frac{\mu_i - \nu_1}{\mu_0 - \nu_1} \phi^{\text{ext}}(x_{[k-1] \setminus \{d\}}, y^i_d, a_k)\\
&\hspace{2cm} - \sum_{i \in [4]} \sgn(i) \frac{\mu_i - \nu_1}{\mu_0 - \nu_1}\phi(x_{[k-1] \setminus \{d\}}, y^i_d, u^i_k) + \sum_{i \in [4]} \sgn(i) \phi\Big(x_{[k-1] \setminus \{d\}}, y^i_d, x_k - \frac{\mu_i - \nu_1}{\mu_0 - \nu_1}(a_k - u^i_k)\Big),\end{align*}
where $(x_{[k-1] \setminus \{d\}}, y^i_d, a_k) \in Z \subset V^{\mu_0}$ holds when $\mu_i \not= \nu_1$ (recall that there is at least one such $i \in [4]$). In particular, for such an index $i$, $\alpha_1(x_{[d_0 + 1, k-1] \setminus \{d\}}, y^i_d, a_k) = \mu_0$. Note also that $V^{\mu_0}_{x_{[k-1] \setminus \{d\}}, a_k} \not= \emptyset$. Moreover, since $(x_{[k-1] \setminus \{d\}}, y^i_d, a_k) \in Z \subset Y_d$, we also have $V_{x_{[k-1] \setminus \{d\}}, a_k} \not= \emptyset$. Thus, being non-empty, the sets $V_{x_{[k-1] \setminus \{d\}}, a_k}$ and $V^{\mu_0}_{x_{[k-1] \setminus \{d\}}, a_k}$ are cosets of the same subspace and there exists some element $t_d \in G_d$ such that $V_{x_{[k-1] \setminus \{d\}}, a_k} = V^{\mu_0}_{x_{[k-1] \setminus \{d\}}, a_k} - t_d$. In particular, $(x_{[k-1] \setminus \{d\}}, y^i_d - t_d, a_k) \in V$ for each $i \in [4]$ such that $\mu_i \not= \nu_1$.\\

For fixed $x_{[k-1] \setminus \{d\}}$, there is a global biaffine map $\phi^{\text{glob}}_{x_{[k-1] \setminus \{d\}}}$ that extends $\phi$ from $X_{x_{[k-1] \setminus \{d\}}}$ to $G_d \times G_k$. Thus, we get
\begin{align*}\sum_{i \in [4]}& \sgn(i) \phi^{\text{ext}}(x_{[k] \setminus \{d\}}, y^i_d)\\ 
= &\sum_{i \in [4]} \sgn(i) \frac{\mu_i - \nu_1}{\mu_0 - \nu_1} \phi^{\text{ext}}(x_{[k-1] \setminus \{d\}}, y^i_d, a_k)\\
&\hspace{2cm} - \sum_{i \in [4]} \sgn(i) \frac{\mu_i - \nu_1}{\mu_0 - \nu_1}\phi^{\text{glob}}_{x_{[k-1] \setminus \{d\}}}(y^i_d, u^i_k) + \sum_{i \in [4]} \sgn(i) \phi^{\text{glob}}_{x_{[k-1] \setminus \{d\}}}\Big(y^i_d, x_k - \frac{\mu_i - \nu_1}{\mu_0 - \nu_1}(a_k - u^i_k)\Big)\\
= &\sum_{i \in [4]} \sgn(i) \frac{\mu_i - \nu_1}{\mu_0 - \nu_1} \phi^{\text{ext}}(x_{[k-1] \setminus \{d\}}, y^i_d, a_k)\\
&\hspace{2cm} - \sum_{i \in [4]} \sgn(i) \frac{\mu_i - \nu_1}{\mu_0 - \nu_1}\phi^{\text{glob}}_{x_{[k-1] \setminus \{d\}}}(y^i_d, a_k)\\
= &\sum_{i \in [4]} \sgn(i) \frac{\mu_i - \nu_1}{\mu_0 - \nu_1} \phi^{\text{ext}}(x_{[k-1] \setminus \{d\}}, y^i_d, a_k)\\
&\hspace{2cm} - \sum_{i \in [4]} \sgn(i) \frac{\mu_i - \nu_1}{\mu_0 - \nu_1}\phi^{\text{glob}}_{x_{[k-1] \setminus \{d\}}}(y^i_d - t_d, a_k)\\
= &\sum_{i \in [4]} \sgn(i) \frac{\mu_i - \nu_1}{\mu_0 - \nu_1} \phi^{\text{ext}}(x_{[k-1] \setminus \{d\}}, y^i_d, a_k)\\
&\hspace{1cm} - \sum_{i \in [4]} \sgn(i) \frac{\mu_i - \nu_1}{\mu_0 - \nu_1}\Big(\phi(x_{[k-1] \setminus \{d\}}, \tilde{u}^i_d, a_k) + \phi(x_{[k-1] \setminus \{d\}}, \tilde{v}^i_d, a_k) - \phi(x_{[k-1] \setminus \{d\}}, \tilde{u}^i_d + \tilde{v}^i_k + t_d - y^i_d, a_k)\Big),\end{align*}
where in the last line, for each $i \in [4]$ with $\mu_i \not= \nu_1$, we found elements $\tilde{u}^i_d, \tilde{v}^i_d \in X_{x_{[k-1] \setminus \{d\}}, a_k}$ such that $\tilde{u}^i_d + \tilde{v}^i_k + t_d - y^i_d \in X_{x_{[k-1] \setminus \{d\}}, a_k}$, using the facts that $(x_{[k-1] \setminus \{d\}}, y^i_d - t_d, a_k) \in V$, $(x_{[k-1] \setminus \{d\}}, y^i_d, a_k) \in Y_d$, and $\eta < 1/10$.\\

Now assume that the claim holds for some $d' \in [d+1, k-1]$. Then there are elements $t_d, u^{[4]}_d, v^{[4]}_d$ in $G_{d}$ with the properties claimed. By definition of $\phi^{\text{ext}}$ and the fact that $(x_{[d'] \setminus \{d\}}, y^i_d, a_{[d' + 1, k]}) \in Z$ when $\mu_i \not= \nu_1$, there are $w^{[4]}_{d'}$ in $G_{d'}$ such that 
\begin{align*}&\sum_{i \in [4]} \sgn(i) \phi^{\text{ext}}(x_{[k] \setminus \{d\}}, y^i_d) = \sum_{i \in [4]} \sgn(i) \frac{\mu_i - \nu_1}{\mu_0 - \nu_1} \Big(\phi^{\text{ext}}(x_{[d'-1] \setminus \{d\}}, y^i_d, a_{[d', k]})\\
&\hspace{5cm} + \phi(x_{[d'-1] \setminus \{d\}}, y^i_d, x_{d'} + w^i_{d'} - a_{d'}, a_{[d'+1, k]}) - \phi(x_{[d'-1] \setminus \{d\}}, y^i_d, w^i_{d'}, a_{[d'+1, k]})\Big)\\
&\hspace{2cm} - \sum_{i \in [4]} \sgn(i) \frac{\mu_i - \nu_1}{\mu_0 - \nu_1}\Big(\phi(x_{[d'] \setminus \{d\}}, u^i_d, a_{[d' + 1,k]}) + \phi(x_{[d'] \setminus \{d\}}, v^i_d, a_{[d'+1,k]})\\
&\hspace{10cm} - \phi(x_{[d'] \setminus \{d\}}, u^i_d + v^i_d + t_d - y^i_d, a_{[d' + 1,k]})\Big),\end{align*}
where on the right hand side the arguments of $\phi^{\text{ext}}$ lie in $Z$ and the arguments of $\phi$ lie in $X$ when $\mu_i \not= \nu_1$ (when $\mu_i = \nu_1$ scalar $\frac{\mu_i - \nu_1}{\mu_0 - \nu_1}$ is zero so there is no contribution for such indices $i$).\\
\indent As before, since $(x_{[d'-1] \setminus \{d\}}, y^i_d, a_{[d', k]}) \in Z \subset V^{\mu_0} \cap Y_d$ when $\mu_i \not= \nu_1$, we may pick $\tilde{t}_d \in G_d$ such that $(x_{[d'-1] \setminus \{d\}}, y^i_d - \tilde{t}_d, a_{[d', k]}) \in V$ for each $i \in [4]$ with $\mu_i \not= \nu_1$. Using the global biaffine map $\phi^{\text{glob}} = \phi^{\text{glob}}_{x_{[d'-1] \setminus \{d\}}, a_{[d'+1, k]}}$ on $G_{d} \times G_{d'}$, we get
\begin{align*}&\sum_{i \in [4]} \sgn(i) \phi^{\text{ext}}(x_{[k] \setminus \{d\}}, y^i_d)\\
&\hspace{1cm}=\sum_{i \in [4]} \sgn(i) \frac{\mu_i - \nu_1}{\mu_0 - \nu_1} \phi^{\text{ext}}(x_{[d'-1] \setminus \{d\}}, y^i_d, a_{[d', k]})\\
&\hspace{2cm}+\sum_{i \in [4]} \sgn(i) \frac{\mu_i - \nu_1}{\mu_0 - \nu_1} \Big(\phi^{\text{glob}}(y^i_d, x_{d'} + w^i_{d'} - a_{d'}) - \phi^{\text{glob}}(y^i_d, w^i_{d'})\\
&\hspace{3cm} - \phi^{\text{glob}}(u^i_d, x_{d'}) - \phi^{\text{glob}}(v^i_d, x_{d'}) + \phi^{\text{glob}}(u^i_d + v^i_d + t_d - y^i_d, x_{d'})\Big)\\
&\hspace{1cm}= \sum_{i \in [4]} \sgn(i) \frac{\mu_i - \nu_1}{\mu_0 - \nu_1} \phi^{\text{ext}}(x_{[d'-1] \setminus \{d\}}, y^i_d, a_{[d', k]})\\
&\hspace{2cm}-\sum_{i \in [4]} \sgn(i) \frac{\mu_i - \nu_1}{\mu_0 - \nu_1}\phi^{\text{glob}}(y^i_d, a_{d'})\\
&\hspace{1cm}= \sum_{i \in [4]} \sgn(i) \frac{\mu_i - \nu_1}{\mu_0 - \nu_1} \phi^{\text{ext}}(x_{[d'-1] \setminus \{d\}}, y^i_d, a_{[d', k]})\\
&\hspace{2cm}-\sum_{i \in [4]} \sgn(i) \frac{\mu_i - \nu_1}{\mu_0 - \nu_1}\phi^{\text{glob}}(y^i_d - \tilde{t}_d, a_{d'})\\
&\hspace{1cm}= \sum_{i \in [4]} \sgn(i) \frac{\mu_i - \nu_1}{\mu_0 - \nu_1} \phi^{\text{ext}}(x_{[d'-1] \setminus \{d\}}, y^i_d, a_{[d', k]})\\
&\hspace{2cm}-\sum_{i \in [4]} \sgn(i) \frac{\mu_i - \nu_1}{\mu_0 - \nu_1}\Big(\phi(x_{[d'-1] \setminus \{d\}}, \tilde{u}^i_d, a_{[d',k]})+ \phi(x_{[d'-1] \setminus \{d\}}, \tilde{v}^i_k, a_{[d',k]})\\
&\hspace{7cm} - \phi(x_{[d'-1] \setminus \{d\}}, \tilde{u}^i_d + \tilde{v}^i_k + \tilde{t}_d - y^i_d, a_{[d',k]})\Big),\end{align*}
where in the last line for each $i \in [4]$ with $\mu_i \not= \nu_1$ we again found elements $\tilde{u}^i_d, \tilde{v}^i_d \in X_{x_{[d'-1] \setminus \{d\}}, a_{[d',k]}}$ such that $\tilde{u}^i_d + \tilde{v}^i_k + \tilde{t}_d - y^i_d \in X_{x_{[d'-1] \setminus \{d\}}, a_{[d',k]}}$, using the facts that $(x_{[d'-1] \setminus \{d\}}, y^i_d, a_{[d', k]}) \in Y_d$ and $\eta < 1/10$.\end{proof}

Now that the claim has been proved, apply it with $d' = d$ to get 
\begin{align*}&\sum_{i \in [4]} \sgn(i) \phi^{\text{ext}}(x_{[k] \setminus \{d\}}, y^i_d)\\ 
&\hspace{1cm}= \sum_{i \in [4]} \sgn(i) \frac{\mu_i - \nu_1}{\mu_0 - \nu_1} \phi^{\text{ext}}(x_{[d-1]}, y^i_d, a_{[d + 1, k]})\\
&\hspace{5cm} - \sum_{i \in [4]} \sgn(i) \frac{\mu_i - \nu_1}{\mu_0 - \nu_1}\Big(\phi(x_{[d-1]}, u^i_d, a_{[d + 1,k]}) + \phi(x_{[d-1]}, v^i_d, a_{[d+1,k]})\\
&\hspace{10cm} - \phi(x_{[d-1]}, u^i_d + v^i_d + t_d - y^i_d, a_{[d + 1,k]})\Big)\\
&\hspace{1cm} = \sum_{i \in [4]} \sgn(i) \frac{\mu_i - \nu_1}{\mu_0 - \nu_1} \Big(\phi^{\text{ext}}(x_{[d-1]}, a_{[d, k]}) + \phi(x_{[d-1]}, y^i_d + w^i_d - a_d, a_{[d+1,k]}) - \phi(x_{[d-1]}, w^i_d, a_{[d+1,k]})\Big)\\
&\hspace{2cm} - \sum_{i \in [4]} \sgn(i) \frac{\mu_i - \nu_1}{\mu_0 - \nu_1}\Big(\phi(x_{[d-1]}, u^i_d, a_{[d + 1,k]}) + \phi(x_{[d-1]}, v^i_d, a_{[d+1,k]})\\
&\hspace{10cm}  - \phi(x_{[d-1]}, u^i_d + v^i_d + t_d - y^i_d, a_{[d + 1,k]})\Big)\\
&\hspace{1cm} = \Big(\sum_{i \in [4]} \sgn(i) \frac{\mu_i - \nu_1}{\mu_0 - \nu_1}\Big) \phi^{\text{ext}}(x_{[d-1]}, a_{[d, k]})\\
&\hspace{2cm} - \sum_{i \in [4]} \sgn(i) \frac{\mu_i - \nu_1}{\mu_0 - \nu_1}\Big(\phi(x_{[d-1]}, w^i_d, a_{[d+1,k]}) - \phi(x_{[d-1]}, y^i_d + w^i_d - a_d, a_{[d+1,k]})\\
&\hspace{3cm} + \phi(x_{[d-1]}, u^i_d, a_{[d + 1,k]}) + \phi(x_{[d-1]}, v^i_d, a_{[d+1,k]}) - \phi(x_{[d-1]}, u^i_d + v^i_d + t_d - y^i_d, a_{[d + 1,k]})\Big)\\
&\hspace{1cm}=0
\end{align*}
since $\sum_{i \in [4]} \sgn(i) \frac{\mu_i - \nu_1}{\mu_0 - \nu_1}  = 0$ and $\phi$ is the restriction of a global affine map in each direction, in particular in direction $d$. (The second equality above uses the definition of $ \phi^{\text{ext}}(x_{[d-1]}, y^i_d, a_{[d + 1, k]})$, where $(x_{[d-1]}, y^i_d, a_{[d + 1, k]}) \in Z$, with an auxiliary element $w_d^i \in G_d$.)\\

\noindent\textbf{The map $\phi^{\mathrm{ext}}$ is a homomorphism in the directions $[d_0]$.} Finally, suppose that $d \in [d_0]$. Recall that in this case $\mu_1 = \mu_2 = \mu_3 = \mu_4 = \alpha_1(x_{I_1})$ and that without loss of generality $\alpha_1(x_{I_1}) \not= \nu_1$, as otherwise all four points lie in $X$ and $\phi^{\text{ext}} = \phi$ on $X \cap \tilde{Z}$, and the latter map is multiaffine. By induction on $d' \in [d_0,k]$ we show the following claim.
\begin{claimc*}For $d' \in [d_0,k]$, we have $\sum_{i \in [4]} \sgn(i) \phi^{\text{ext}}(x_{[d'] \setminus \{d\}}, y^i_d, a_{[d'+1,k]}) = 0.$\end{claimc*}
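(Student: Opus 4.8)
The goal is to establish Claim C by induction on $d' \in [d_0, k]$, which will then immediately yield that $\phi^{\mathrm{ext}}$ respects additive quadruples in direction $d$ (take $d' = k$, since $(x_{[k] \setminus \{d\}}, y^i_d) = (x_{[k] \setminus \{d\}}, y^i_d, a_{[k+1,k]})$). The argument runs almost exactly parallel to the proofs that $\phi^{\mathrm{ext}}$ is a homomorphism in directions $[d_0 + 1, k]$, with one genuine simplification: because $d \in [d_0]$, the coordinate $d$ does not lie in $I_1$, so all four values $\mu_i = \alpha_1(x_{I_1})$ coincide. Write $\mu$ for this common value; we have arranged that $\mu \neq \nu_1$, so the scalar $\frac{\mu - \nu_1}{\mu_0 - \nu_1}$ is a single fixed nonzero constant that factors out of every term, rather than four distinct scalars that must cancel via $\sum_i \sgn(i) \frac{\mu_i - \nu_1}{\mu_0 - \nu_1} = 0$.

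\textbf{Base case $d' = d_0$.} Here $(x_{[d_0] \setminus \{d\}}, y^i_d)$ is a point of the form $(z_{[d_0]}, a_{[d_0+1,k]})$ in the second piece of $\tilde Z$, so by definition $\phi^{\mathrm{ext}}(x_{[d_0] \setminus \{d\}}, y^i_d, a_{[d_0+1,k]}) = h_0(x_{[d_0] \setminus \{d\}}, y^i_d)$ when this point lies in $Z$ (and the analogous formal expression otherwise). Since $h_0 \colon G_{[k] \setminus I_1} \to H$ is multiaffine and $[d_0] \subset [k] \setminus I_1$, the map $z_d \mapsto h_0(x_{[d_0] \setminus \{d\}}, z_d)$ is affine, so $\sum_{i \in [4]} \sgn(i) h_0(x_{[d_0] \setminus \{d\}}, y^i_d) = 0$ because $\sum_{i} \sgn(i) y^i_d = 0$ and $\sum_i \sgn(i) = 0$. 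This gives the base case.

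\textbf{Inductive step.} Assuming the claim for $d' - 1 \in [d_0, k-1]$, I would expand each $\phi^{\mathrm{ext}}(x_{[d'] \setminus \{d\}}, y^i_d, a_{[d'+1,k]})$ using its defining formula. When $d' \in [d_0+1, k-1]$ this is the second-piece recursion
\[\phi^{\mathrm{ext}}(x_{[d'] \setminus \{d\}}, y^i_d, a_{[d'+1,k]}) = \phi^{\mathrm{ext}}(x_{[d'-1] \setminus \{d\}}, y^i_d, a_{[d',k]}) + \phi(x_{[d'-1] \setminus \{d\}}, y^i_d, u^i_{d'} + x_{d'} - a_{d'}, a_{[d'+1,k]}) - \phi(x_{[d'-1] \setminus \{d\}}, y^i_d, u^i_{d'}, a_{[d'+1,k]}),\]
and when $d' = k$ it is the third-piece formula \eqref{thirdpieceeqn}; in either case the extra terms beyond $\phi^{\mathrm{ext}}(x_{[d'-1] \setminus \{d\}}, y^i_d, a_{[d',k]})$ involve only honest values of $\phi$ (or $h_0$, already handled) at points whose $d$-th coordinate is $y^i_d$ and all of whose other coordinates are independent of $i$. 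As in Claim B, I would pass to the global biaffine extension $\phi^{\mathrm{glob}}_{x_{[d'-1] \setminus \{d\}}, \ldots}$ in the plane spanned by directions $d$ and $d'$ (respectively $d$ and $k$) to legitimately treat these terms as values of a genuine biaffine map, so that the dependence on $y^i_d$ is affine; then $\sum_i \sgn(i)$ of each such extra group of terms vanishes. The surviving sum is $\sum_{i \in [4]} \sgn(i) \phi^{\mathrm{ext}}(x_{[d'-1] \setminus \{d\}}, y^i_d, a_{[d',k]})$, which is zero by the inductive hypothesis. The one point requiring care — and the main obstacle — is ensuring that all the auxiliary parameters ($u^i_{d'}$, and in the third-piece case $u^i_k$) can be chosen so that every point at which $\phi$ is evaluated genuinely lies in $X$, and that the relevant planar cross-sections are dense enough for the global biaffine extension to agree with $\phi$ on $X$; this is exactly where the membership $(x_{[d'] \setminus \{d\}}, y^i_d, a_{[d'+1,k]}) \in Z \subset \tilde Y \subset Y_d$ (together with $\eta < 1/10$) is used, precisely as in Claim B, so no new difficulty arises. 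Once Claim C is proved, applying it at $d' = k$ shows $\phi^{\mathrm{ext}}$ is a homomorphism in direction $d$ for every $d \in [d_0]$; combined with the cases $d \in [d_0+1,k]$ already treated, $\phi^{\mathrm{ext}}$ is a multi-homomorphism on $\tilde Z$. Finally, since $\tilde Z$ has density $1 - O(\eta) - O((\eta^{-2}\varepsilon')^{\Omega(1)}) - O((\eta^{-1}\xi)^{\Omega(1)})$ inside $V^{[2,s]} \cap (U \times G_k)$ by \eqref{1codimExtnTzdensEqn}, and $V^{[2,s]} \cap (U \times G_k)$ is a $\mathcal{G}'$-supported variety of the claimed codimension, one applies Proposition~\ref{MltHommToMltAff} to pass from $\phi^{\mathrm{ext}}$ (a multi-homomorphism) to a multiaffine map $\psi$ on a slightly smaller subset $X'$, intersecting with a further lower-order variety of bounded codimension; tracking the density losses and choosing $\eta$ to be a suitable power of $p^{-(r + \log_p \varepsilon^{-1} + \log_p \xi^{-1})^{O(1)}}$ gives the stated bound $|X'| \geq (1 - O(\varepsilon^{\Omega(1)}) - O(\xi^{\Omega(1)}))|V'|$, and property \textbf{(ii)} follows by unwinding the definition of $\phi^{\mathrm{ext}}$ as noted just before the start of the homomorphism verification.
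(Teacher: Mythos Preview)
Your proof is correct and follows essentially the same inductive approach as the paper's own proof of Claim C: the base case uses the multiaffinity of $h_0$, and the inductive step expands via the second-piece recursion (or the third-piece formula~\eqref{thirdpieceeqn} when $d' = k$), then passes to the global biaffine extension $\phi^{\mathrm{glob}}$ in the $(d,d')$-plane so that the extra $\phi$-terms become affine in $y^i_d$ and cancel. One small correction: the membership that guarantees the auxiliary parameters $u^i_{d'}$ exist is $(x_{[d'] \setminus \{d\}}, y^i_d, a_{[d'+1,k]}) \in Z \subset \tilde{Y} \subset Y_{d'}$ (not $Y_d$), and --- unlike Claim B --- no shift-by-$t_d$ trick is needed here precisely because all four $\mu_i$ coincide, so the argument is in fact simpler than the one you compare it to.
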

Note that $(x_{[d'] \setminus \{d\}}, y^i_d, a_{[d'+1,k]}) \in Z$ for $d' \in [d, k-1]$.

\begin{proof}[Proof of Claim C]The base case is the case $d' = d_0$, when $\phi^{\text{ext}}(x_{[d_0] \setminus \{d\}}, y^i_d, a_{[d_0+1,k]})$ becomes $h_0(x_{[d_0] \setminus \{d\}}, y^i_d)$, and $h_0$ is multiaffine. Assume now that the claim holds for some $d' \in [d_0,k-2]$: we shall treat the case $d' = k-1$ separately. (When $d' \in [d_0,k-2]$ we immediately know that $(x_{[d' + 1] \setminus \{d\}}, y^i_d, a_{[d'+2,k]}) \in Z$, when $d' = k-1$ this need not hold.) Recall that for fixed $x_{[k] \setminus \{d, d' + 1\}}$, there is a global biaffine map $\phi^{\text{glob}}_{x_{[k] \setminus \{d, d' + 1\}}}$ that extends $\phi$ from $X_{x_{[k] \setminus \{d, d'+1\}}}$ to $G_d \times G_{d'+1}$. Then, by definition of $\phi^{\text{ext}}$ on $Z$, we have, for some elements $u^{[4]}_{d'+1}$, that
\begin{align*}\sum_{i \in [4]} \sgn(i) \phi^{\text{ext}}(x_{[d' + 1] \setminus \{d\}}, y^i_d, a_{[d'+2,k]})
&=\sum_{i \in [4]} \sgn(i) \phi^{\text{ext}}(x_{[d'] \setminus \{d\}}, y^i_d, a_{[d'+1,k]})\\ 
&\hspace{1cm}+ \sum_{i \in [4]} \sgn(i)\phi(x_{[d'] \setminus \{d\}}, y^i_d, u^i_{d'+1} + x_{d'+1} - a_{d'+1}, a_{[d'+2,k]})\\
&\hspace{2cm}- \sum_{i \in [4]} \sgn(i)\phi(x_{[d'] \setminus \{d\}}, y^i_d, u^i_{d'+1}, a_{[d'+2,k]}),\\
\end{align*}
which by the inductive hypothesis is equal to
\begin{align*}
&\sum_{i \in [4]} \sgn(i)\phi(x_{[d'] \setminus \{d\}}, y^i_d, u^i_{d'+1} + x_{d'+1} - a_{d'+1}, a_{[d'+2,k]})- \sum_{i \in [4]} \sgn(i)\phi(x_{[d'] \setminus \{d\}}, y^i_d, u^i_{d'+1}, a_{[d'+2,k]})\\
&\hspace{1cm}=\sum_{i \in [4]} \sgn(i)\phi^{\text{glob}}_{x_{[d'] \setminus \{d\}}, a_{[d'+2,k]}}(y^i_d, u^i_{d'+1} + x_{d'+1} - a_{d'+1})- \sum_{i \in [4]} \sgn(i)\phi^{\text{glob}}_{x_{[d'] \setminus \{d\}}, a_{[d'+2,k]}}(y^i_d, u^i_{d'+1})\\
&\hspace{1cm}=0.
\end{align*}
We now prove the induction step when $d' = k-1$. Recall that $\mu_1 = \mu_2 = \mu_3 = \mu_4 = \alpha_1(x_{I_1}) \not= \nu_1$ and write $\mu = \alpha_1(x_{I_1})$. By definition of $\phi^{\text{ext}}$, we have for some $u^{[4]}_k$
\begin{align*}\sum_{i \in [4]} \sgn(i) \phi^{\text{ext}}(x_{[k] \setminus \{d\}}, y^i_d) &= \sum_{i \in [4]} \sgn(i) \frac{\mu- \nu_1}{\mu_0 - \nu_1}\Big(\phi^{\text{ext}}(x_{[k-1] \setminus \{d\}}, y^i_d, a_k) - \phi(x_{[k-1] \setminus \{d\}}, y^i_d, u^i_k)\Big)\\
&\hspace{2cm}+ \sum_{i \in [4]} \sgn(i)\phi\Big(x_{[k-1] \setminus \{d\}}, y^i_d, x_k - \frac{\mu- \nu_1}{\mu_0 - \nu_1}(a_k - u^i_k)\Big),\\
\end{align*}
which by the inductive hypothesis is equal to 
\begin{align*} 
&-\sum_{i \in [4]} \sgn(i) \frac{\mu- \nu_1}{\mu_0 - \nu_1} \phi(x_{[k-1] \setminus \{d\}}, y^i_d, u^i_k)+ \sum_{i \in [4]} \sgn(i)\phi\Big(x_{[k-1] \setminus \{d\}}, y^i_d, x_k - \frac{\mu- \nu_1}{\mu_0 - \nu_1}(a_k - u^i_k)\Big)\\
&\hspace{1cm}=  -\sum_{i \in [4]} \sgn(i) \frac{\mu- \nu_1}{\mu_0 - \nu_1} \phi^{\text{glob}}_{x_{[k-1] \setminus \{d\}}}(y^i_d, u^i_k)+ \sum_{i \in [4]} \sgn(i)\phi^{\text{glob}}_{x_{[k-1] \setminus \{d\}}}\Big(y^i_d, x_k - \frac{\mu- \nu_1}{\mu_0 - \nu_1}(a_k - u^i_k)\Big)\\
&\hspace{1cm}=0,\\
\end{align*}
completing the proof of the claim.\end{proof}

\indent When $d' = k$, we get $\sum_{i \in [4]} \sgn(i) \phi^{\text{ext}}(x_{[k] \setminus \{d\}}, y^i_d) = 0$, which completes the proof that $\phi^{\text{ext}}$ is a multi-homomorphism on $\tilde{Z}$.\\

Recall from~\eqref{1codimExtnTzdensEqn} that $|\tilde{Z}| = (1 - O(\eta) - O((\eta^{-2}{\varepsilon'})^{\Omega(1)}) - O((\eta^{-1}\xi)^{\Omega(1)}))|V^{[2,s]} \cap (U \times G_k)|$ and that $V^{[2,s]} \cap (U \times G_k) \not= \emptyset$, where $\varepsilon' = O(\varepsilon^{\Omega(1)}) + \xi$. We may pick $\eta \geq \con\, \Big(\varepsilon^{\cons } + \xi^{\cons }\Big)$ so that the bound above becomes
\[|\tilde{Z}| \geq (1-O(\varepsilon^{\Omega(1)}) - O(\xi^{\Omega(1)}))|V^{[2,s]} \cap (U \times G_k)|.\]
(Note that the necessary bounds involving $\eta$ were $\eta < 1/10$, so the choice of $\eta$ made here will satisfy those requirements provided $\varepsilon$ and $\xi$ are smaller than some positive absolute constant $c$. Of course if $\xi > c$ or $\varepsilon > c$ then the claim is vacuous.)\\
\indent Apply Proposition~\ref{MltHommToMltAff} to make the multi-homomorphism become a multiaffine map and to finish the proof.\end{proof}
\bigskip

\subsection{Generating structure by convolving}

In a similar way to the way we defined arrangements when we stated Theorem~\ref{exArrThm}, we now define a closely related structure that we call a \emph{tri-arrangement}. We say that a singleton sequence consisting of a single point $x_{[k]} \in G_{[k]}$ is an \emph{$\emptyset$-tri-arrangement} of lengths $x_{[k]}$. For $i \in [k]$, a \emph{$(k,k-1, \dots, i)$-tri-arrangement} $q$ of lengths $x_{[k]}$ is any concatenation $(q_1, q_2, q_3)$ where $q_1, q_2$ and $q_3$ are $(k,k-1, \dots, i+1)$-tri-arrangements of lengths $(x_{[k] \setminus \{i\}}, u_{i})$, $(x_{[k] \setminus \{i\}}, v_{i})$ and $(x_{[k] \setminus \{i\}}, w_{i})$, respectively, such that $u_i + v_i - w_i = x_i$. When $\phi$ is a map defined at points of the tri-arrangement $q$ above, we recursively define its $\phi$ value by $\phi(q) = \phi(q_1) + \phi(q_2) - \phi(q_3)$, and we set $\phi$ value of the singleton sequence $\{x_{[k]}\}$ to be simply $\phi(x_{[k]})$.

The next theorem tells us that if $\phi$ is a multi-$D$-homomorphism defined on $1-o(1)$ of a variety $V$ and if $\xi > 0$ is arbitrarily small, then, by convolving, we may extend $\phi$ to $1-\xi$ of $V \cap W$, for a lower-order variety $W$ of bounded codimension. The additional price we pay is that this extension is a multi-$D'$-homomorphism, where $D'$ is slightly smaller than $D$. (This short description is the case $i = 0$, for $i$ in the statement of the theorem.)\\

\noindent\textbf{Remark.} Additionally to $k$ and $p$, the implicit constants in bounds in the theorem below depend on $D$. However, we shall apply the theorem with $D = O_{k,p}(1)$, so we opt not to stress the dependence on $D$ for the sake of readability.\\

\begin{theorem}\label{bogolyubovSingleStep}Let $D$ be a positive integer and let $\mathcal{G} \subset \mathcal{P}[k]$ be a down-set. Write $\mathcal{G}'$ for the down-set obtained by removing maximal subsets from $\mathcal{G}$. Let $\alpha \colon G_{[k]} \to \mathbb{F}_p^{r_0}$ be a multiaffine map such that each form $\alpha_i$ is multilinear on some $G_{I_i}$ for a maximal set $I_i \in \mathcal{G}$. Let $V = \alpha^{-1}(\nu^{(0)}) \cap V'$ be a non-empty variety of codimension $r \geq r_0$, where $V'$ is some $\mathcal{G}'$-supported variety. Given a subset $I \subset [k]$, write $V^{I}$ for the set $\Big\{x_{I} \in G_{I} \colon (\forall j \in [r_0] \colon I_j \subset I)\,\, \alpha_j(x_{I_j}) = \nu^{(0)}_j\Big\}$. Let $X \subset V$ be a set of size at least $(1-\varepsilon) |V|$.  Let $\phi \colon X \to H$ be a multi-$(D \cdot 20^{k})$-homomorphism, let $i \in [0, k]$, and let $\xi > 0$. Suppose that $\dim G_i \geq \con \Big(r + \log_p \xi^{-1}\Big)^{\con}$ for each $i \in [k]$. Then there exist
\begin{itemize}
\item a $(\mathcal{G}' \cap \mathcal{P}[i])$-supported variety $W \subset G_{[i]}$ of codimension $O((r + \log_p \xi^{-1})^{O(1)})$,
\item a subset $Y \subset W \cap V^{[i]}$,
\item a $\mathcal{G}'$-supported variety $U \subset G_{[k]}$ of codimension $O((r + \log_p \xi^{-1})^{O(1)})$,
\item a subset $Z \subset (Y \times G_{[i+1,k]}) \cap V \cap U$, and
\item a multi-$(D \cdot 20^{i})$-homomorphism $\psi \colon Z \to H$
\end{itemize}
such that
\begin{itemize}
\item[\textbf{(i)}] the variety $((W \cap V^{[i]}) \times G_{[i+1,k]}) \cap V \cap U$ is non-empty,
\item[\textbf{(ii)}] $|((W \cap V^{[i]} \setminus Y) \times G_{[i+1,k]}) \cap V \cap U|\ = (O(\varepsilon^{\Omega(1)}) + \xi)|((W \cap V^{[i]}) \times G_{[i+1,k]}) \cap V \cap U|$,
\item[\textbf{(iii)}] $|((Y \times G_{[i+1,k]}) \cap V \cap U) \setminus Z|\ \leq \xi |((W \cap V^{[i]}) \times G_{[i+1,k]}) \cap V \cap U|$,
\item[\textbf{(iv)}] for each $x_{[k]} \in Z$, there are at least $\Omega\Big(p^{-O\big((r + \log_p \xi^{-1})^{O(1)}\big)}|G_{i+1}|^{2}|G_{i+2}|^{2 \cdot 3} \cdots |G_{k}|^{2 \cdot 3^{k - i - 1}}\Big)$ $(k,k-1, \dots, i+1)$-tri-arrangements $q$ with points in $X$ of lengths $x_{[k]}$ such that $\phi(q) = \psi(x_{[k]})$.
\end{itemize}
\end{theorem}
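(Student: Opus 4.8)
The proof goes by induction on $i$, from $i=k$ down to $i=0$, peeling off one principal direction at a time. For the base case $i=k$ there is nothing to do: take $W=V^{[k]}$ (as a variety in $G_{[k]}$), $Y=X$, $U=G_{[k]}$, $Z=X$ and $\psi=\phi$; the tri-arrangements of lengths $x_{[k]}$ with no directions are just the singletons $\{x_{[k]}\}$, so (iv) is immediate. So assume the statement holds with $i+1$ in place of $i$, and apply it to $\phi$ (with the same $\xi$ up to constant adjustments, and with $D\cdot 20^{k}$ still large enough since $D\cdot 20^{i+1}$ divides into what we need). This produces a $(\mathcal{G}'\cap\mathcal{P}[i+1])$-supported variety $W_1\subset G_{[i+1]}$ of bounded codimension, a subset $Y_1\subset W_1\cap V^{[i+1]}$, a $\mathcal{G}'$-supported variety $U_1$, a subset $Z_1\subset (Y_1\times G_{[i+2,k]})\cap V\cap U_1$, and a multi-$(D\cdot 20^{i+1})$-homomorphism $\psi_1\colon Z_1\to H$ satisfying the analogue of (i)--(iv), where (iv) now counts $(k,\dots,i+2)$-tri-arrangements.

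\textbf{The convolution step.} The heart of the argument is to now convolve $\psi_1$ in the single direction $i+1$, i.e.\ to pass from $\psi_1$ defined on very dense columns over $Y_1$ (thought of as a subset of $W_1\cap V^{[i+1]}$, which for fixed $x_{[k]\setminus\{i+1\}}$ is a very dense subset of the corresponding line, at least after an averaging argument showing most such lines are indeed $(1-\varepsilon_0)$-dense) to a map $\psi$ defined on the filled-in columns. This is exactly what Theorem~\ref{mainExtensionConvStep} does: its hypotheses ask for a mixed-linear variety (here $V\cap U_1$, which we first arrange to be mixed-linear by splitting multiaffine maps into multilinear parts and averaging over layers, as in the proof of Proposition~\ref{MltHommToMltAff}), a set $X$ of $(1-\varepsilon_0)$-dense columns in direction $i+1$, and a multi-$(20D')$-homomorphism, and it outputs a $\mathcal G'$-supported multiaffine map $\gamma$, a collection of values $\Gamma$, and a set $X'$ such that on each layer $\gamma^{-1}(\lambda)$ the convolutional extension $\psi(x_{[k]\setminus\{i+1\}},y_1+y_2-y_3)=\psi_1(\ldots,y_1)+\psi_1(\ldots,y_2)-\psi_1(\ldots,y_3)$ is well-defined on the filled columns and is a multi-$D'$-homomorphism, where $D'=D\cdot 20^{i}$ (so $20D'=D\cdot 20^{i+1}$, matching the order of $\psi_1$). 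We take $U$ to be $U_1$ intersected with a suitable layer of $\gamma$ (chosen by averaging so that it carries a $1-O(\xi)$-fraction of the relevant mass), $W$ to be $W_1$ intersected with the corresponding $(\mathcal G'\cap\mathcal P[i])$-supported piece of $\gamma$ restricted to $G_{[i]}$, and $Y, Z$ the induced subsets. Property (iv) for $\psi$ follows by combining (iv) for $\psi_1$ with the tri-arrangement structure: each filled column point $x_{[k]}$ with $x_{i+1}=u_{i+1}+v_{i+1}-w_{i+1}$ and $u_{i+1},v_{i+1},w_{i+1}\in X_{x_{[k]\setminus\{i+1\}}}$ gives $\psi(x_{[k]})=\psi_1(\ldots,u_{i+1})+\psi_1(\ldots,v_{i+1})-\psi_1(\ldots,w_{i+1})$, and each of the three terms unpacks by the inductive (iv) into $\Omega(\cdot)$ many $(k,\dots,i+2)$-tri-arrangements with $\phi$-value equal to it; concatenating three such gives a $(k,\dots,i+1)$-tri-arrangement with $\phi$-value $\psi(x_{[k]})$, and the count multiplies to give the claimed bound (with the extra factor $|G_{i+1}|^{2}|G_{i+2}|^{2\cdot3}\cdots$ coming from the one new direction, noting the exponents $2,2\cdot3,\dots$ arise because each of the three halves contributes, roughly, the previous count). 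Bookkeeping the codimension increases: each application adds $O((r+\log_p\xi^{-1})^{O(1)})$ to the codimension of $U$ and $W$, and over $k$ steps this stays $O((r+\log_p\xi^{-1})^{O(1)})$.

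\textbf{Main obstacle.} The subtle point is the density bookkeeping needed to invoke Theorem~\ref{mainExtensionConvStep}: that theorem wants $X$ to consist of $(1-\varepsilon_0)$-dense \emph{columns} in direction $i+1$, but the inductive hypothesis only gives that $Z_1$ is $(1-O(\varepsilon^{\Omega(1)})-\xi)$-dense in the variety globally. The standard fix is to pass to the set of columns that are individually $(1-\sqrt{\varepsilon})$-dense, which by Markov captures almost all of the mass, at the cost of replacing $\varepsilon$ by $\sqrt\varepsilon$ and similarly for $\xi$; one must check these repeated square-rootings over the $k$ steps still leave the final bound of the form $O(\varepsilon^{\Omega(1)})+\xi$ after a final adjustment of $\xi$, and that the `filled column' produced by Theorem~\ref{mainExtensionConvStep}(iii) is exactly $((Y\times G_{i+1})\cap V\cap U)_{x_{[k]\setminus\{i+1\}}}$ as needed for (ii) and (iii). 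A second, more bureaucratic obstacle is keeping track of the support class: one needs $\gamma$ from Theorem~\ref{mainExtensionConvStep} to be $\mathcal G'$-supported (it is, since the theorem is stated with that conclusion) and its restriction to $G_{[i]}$ to be $(\mathcal G'\cap\mathcal P[i])$-supported, so that the final $W$ lives in $G_{[i]}$ with the right support, and one must verify that intersecting with lower-order varieties at each stage does not disturb the mixed-linearity needed at the next stage — handled, as in Theorem~\ref{globalBiaffineInAllPlanes}, by splitting into multilinear parts and averaging over layers.
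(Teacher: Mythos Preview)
Your overall scaffold---downward induction on $i$ with the base case $i=k$ trivial, and the inductive step carried out by applying Theorem~\ref{mainExtensionConvStep} to convolve $\psi_1$ in direction $i{+}1$---matches the paper exactly, and your treatment of property (iv) (concatenating three $(k,\dots,i{+}2)$-tri-arrangements into a $(k,\dots,i{+}1)$-tri-arrangement) is correct.

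However, there is a genuine structural gap. After applying Theorem~\ref{mainExtensionConvStep}, the convolved map $\psi$ lives on a set of the form $((X'\cap\gamma^{-1}(\lambda))\times G_{i+1})\cap V\cap U_1$, where $X'$ is a subset of $G_{[k]\setminus\{i+1\}}$ and $\gamma\colon G_{[k]\setminus\{i+1\}}\to\mathbb{F}_p^s$. This is \emph{not} of the required shape $(Y\times G_{[i+1,k]})\cap V\cap U$ with $Y\subset G_{[i]}$. Your proposal to take ``$W$ to be $W_1$ intersected with the $(\mathcal{G}'\cap\mathcal{P}[i])$-supported piece of $\gamma$ restricted to $G_{[i]}$'' does not make sense: $\gamma$ genuinely depends on coordinates $i{+}2,\dots,k$ (its support can include sets meeting $[i{+}2,k]$), so there is no meaningful ``restriction to $G_{[i]}$''. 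Even if you split $\gamma$ into multilinear pieces and put the $[i{+}2,k]$-dependent ones into $U$, the set $X'$ of surviving dense columns is an arbitrary subset of $G_{[k]\setminus\{i+1\}}$ with no a~priori product structure over $G_{[i]}$.

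The paper devotes the bulk of its proof to exactly this difficulty, via three further steps you have not mentioned. First, a \emph{densification} step: choose random $a,y_1,\dots,y_m\in G_{i+1}$ and define $C\subset T\subset G_{[k]\setminus\{i+1\}}$ to be those $(x_{[i]},z_{[i+2,k]})$ for which the random $m$-dimensional affine subspace meets $Y^{\mathrm{ind}}_{x_{[i]}}\cap(V\cap U^{\mathrm{ind}})_{x_{[i]},z_{[i+2,k]}}$ with density $\geq(1-\tfrac{\varepsilon_0}{3})\delta$; Lemma~\ref{randomCosetInt} shows $C$ approximates the dense-column set $A$. Second, the key \emph{algebraic observation}: for fixed $x_{[i]}$, membership of $z_{[i+2,k]}$ in $C_{x_{[i]}}$ depends only on the values $\tau_1(x_{[i]},a,z_{[i+2,k]}),\tau_1(x_{[i]},y_j,z_{[i+2,k]})$, i.e.\ on $\theta(x_{[i]},z_{[i+2,k]})$ for a bounded-codimension $\mathcal{G}'$-supported map $\theta$; hence $C_{x_{[i]}}$ is a union of $\theta$-layers indexed by a set $R_{x_{[i]}}$ that varies with $x_{[i]}$. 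Third, a \emph{regularization} step (Theorem~\ref{simFibresThm}) to make the layer sizes of $\theta$ approximately constant over a lower-order layer $\gamma_2^{-1}(\mu_2)\subset G_{[i]}$, after which one can pick a \emph{single} value $u$ and set $Y=\{x_{[i]}\in\tilde{Y}:u\in\tilde{R}_{x_{[i]}}\}$. Only then does one obtain the desired product form, with $U$ absorbing the constraint $\theta=u$ and $W=\tilde{T}\cap\gamma_2^{-1}(\mu_2)\subset G_{[i]}$. Your ``Main obstacle'' paragraph correctly flags the Markov-type column-density bookkeeping, but that is the lesser of the two obstacles; the randomized densification plus fibre-regularization argument is the missing idea.
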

\vspace{\baselineskip}
\noindent\textbf{Remark.} Note that $V^{[i]} \times G_{[i+1,k]} \supset V$ so $V^{[i]}$ is redundant in some of the expressions above, e.g.\ $((W \cap V^{[i]}) \times G_{[i+1,k]}) \cap V \cap U = (W \times G_{[i+1,k]}) \cap V \cap U$. However, we keep $V^{[i]}$ in such expressions to make them more consistent and easier to compare. The main reason to include $V^{[i]}$ in the first place is that the set $Y$ is a subset of $W \cap V^{[i]}$, which makes the expression $((W \cap V^{[i]} \setminus Y) \times G_{[i+1,k]}) \cap V \cap U$ more suggestive than $((W \setminus Y) \times G_{[i+1,k]}) \cap V \cap U$.\\

\noindent\textbf{Remark about the case $i = 0$.} Note that we allow $i = 0$, in which case the variety $W$ and the set $Y$ play no significant role as $Y \times G_{[k]}$ is either $\emptyset$ or $G_{[k]}$. In that case $Z$ becomes a subset of $V \cap U$. To apply the theorem, we just need the term $O(\varepsilon^{\Omega(1)}) + \xi)$ in part \textbf{(ii)} of the conclusion to be less than 1, to ensure that $Y \times G_{[k]} = G_{[k]}$. We remark that the proof simplifies in this case as some steps become trivial, e.g. \textbf{Regularization} below. We shall in fact apply the theorem for this case later in the paper.

\begin{proof}Without loss of generality, $V'$ is mixed-linear, thus making $V$ mixed-linear as well. We prove the claim by downwards induction on $i$. For the base case $i = k$, simply take $W = V'$, $Y = X$, $U = G_{[k]}$, $Z = X$ and $\psi = \phi$.\\
\indent Assume now that the claim holds for some $i+1 \in [1,k]$. Let $\xi_0 > 0$ be a parameter to be chosen later. Apply the inductive hypothesis for the parameter $\xi_0$ instead of $\xi$. There exist a $(\mathcal{G}' \cap \mathcal{P}[i+1])$-supported variety $W^{\text{ind}} \subset G_{[i+1]}$ of codimension $O((r + \log_p \xi_0^{-1})^{O(1)})$, a subset $Y^{\text{ind}} \subset W^{\text{ind}} \cap V^{[i+1]}$, a $\mathcal{G}'$-supported variety $U^{\text{ind}} \subset G_{[k]}$ of codimension $O((r + \log_p \xi_0^{-1})^{O(1)})$, a subset $Z^{\text{ind}} \subset V \cap U^{\text{ind}} \cap (Y^{\text{ind}} \times G_{[i+2,k]})$, and a multi-$(D \cdot 20^{i+1})$-homomorphism $\psi^{\text{ind}} \colon Z^{\text{ind}} \to H$ with the properties described.\\
\indent We now show that without loss of generality $W^{\text{ind}}$ and $U^{\text{ind}}$ can be taken to be mixed-linear. By looking at the multilinear parts of the maps that define these two varieties, we find $r_1 \leq 2^k \codim W^{\text{ind}}$, $r_2 \leq 2^k \codim U^{\text{ind}}$, a $(\mathcal{G}' \cap \mathcal{P}[i + 1])$-supported mixed-linear map $\gamma^1 \colon G_{[i+1]} \to \mathbb{F}_p^{r_1}$, a $\mathcal{G}'$-supported mixed-linear map $\gamma^2 \colon G_{[k]} \to \mathbb{F}_p^{r_2}$, and collections of values $M_1 \subset \mathbb{F}_p^{r_1}, M_2 \subset \mathbb{F}_p^{r_2}$ such that $W^{\text{ind}} = (\gamma^1)^{-1}(M_1)$ and $U^{\text{ind}} = (\gamma^2)^{-1}(M_2)$. We may also assume that $(\gamma^1)^{-1}(\mu^1)\not=\emptyset$ for each $\mu^1 \in M_1$ and $(\gamma^2)^{-1}(\mu^2) \not=\emptyset$ for each $\mu^2 \in M_2$. Let $0 < \tilde{\varepsilon} = O(\varepsilon^{\Omega(1)}) + \xi_0$ be such that 
\[|((W^{\text{ind}} \cap V^{[i+1]} \setminus Y^{\text{ind}}) \times G_{[i+2,k]}) \cap V \cap U^{\text{ind}}|\ \leq \tilde{\varepsilon}|((W^{\text{ind}} \cap V^{[i+1]}) \times G_{[i+2,k]}) \cap V \cap U^{\text{ind}}|.\]
Let $P_1$ be the set of all $(\mu^1, \mu^2) \in M_1 \times M_2$ such that
\[\Big(((\gamma^1)^{-1}(\mu^1) \cap V^{[i+1]}) \times G_{[i+2,k]}\Big) \cap V \cap (\gamma^2)^{-1}(\mu^2) \not= \emptyset,\]
let $P_2$ be the set of all $(\mu^1, \mu^2) \in M_1 \times M_2$ such that
\[\Big|\Big(((\gamma^1)^{-1}(\mu^1) \cap V^{[i+1]}\setminus Y^{\text{ind}}) \times G_{[i+2,k]}\Big) \cap V \cap (\gamma^2)^{-1}(\mu^2)\Big| \leq 3\tilde{\varepsilon}\Big|\Big(((\gamma^1)^{-1}(\mu^1)  \cap V^{[i+1]}) \times G_{[i+2,k]}\Big) \cap V \cap (\gamma^2)^{-1}(\mu^2)\Big|,\]
and let $P_3$ be the set of all $(\mu^1, \mu^2) \in M_1 \times M_2$ such that
\[\Big|\Big(((Y^{\text{ind}} \cap (\gamma^1)^{-1}(\mu^1)) \times G_{[i+2,k]}) \cap V \cap (\gamma^2)^{-1}(\mu^2) \Big) \setminus Z^{\text{ind}}\Big|\ \leq 3\xi_0 \Big|\Big(((\gamma^1)^{-1}(\mu^1)  \cap V^{[i+1]}) \times G_{[i+2,k]}\Big) \cap V \cap (\gamma^2)^{-1}(\mu^2)\Big|.\]
By the definitions of the sets $P_1, P_2$ and $P_3$ we have
\[\sum_{(\mu^1, \mu^2) \in (M_1 \times M_2) \setminus P_1} \Big|\Big(((\gamma^1)^{-1}(\mu^1)  \cap V^{[i+1]}) \times G_{[i+2,k]}\Big) \cap V \cap (\gamma^2)^{-1}(\mu^2) \Big|\ = 0,\]
\begin{align*}
&\sum_{(\mu^1, \mu^2) \in (M_1 \times M_2) \setminus P_2} \Big|\Big(((\gamma^1)^{-1}(\mu^1)  \cap V^{[i+1]}) \times G_{[i+2,k]}\Big) \cap V \cap (\gamma^2)^{-1}(\mu^2) \Big|\\
&\hspace{3cm}\leq \frac{{\tilde{\varepsilon}}^{-1}}{3} \sum_{(\mu^1, \mu^2) \in (M_1 \times M_2) \setminus P_2}\Big|\Big(((\gamma^1)^{-1}(\mu^1) \cap V^{[i+1]} \setminus Y^{\text{ind}}) \times G_{[i+2,k]}\Big) \cap V \cap (\gamma^2)^{-1}(\mu^2) \Big|\\
&\hspace{3cm} \leq \frac{{\tilde{\varepsilon}}^{-1}}{3} |((W^{\text{ind}} \cap V^{[i+1]} \setminus Y^{\text{ind}}) \times G_{[i+2,k]}) \cap V \cap U^{\text{ind}}|\\
&\hspace{3cm} \leq \frac{1}{3} \Big|((W^{\text{ind}} \cap V^{[i+1]}) \times G_{[i+2,k]}) \cap V \cap U^{\text{ind}}\Big|,\\
\end{align*}
and
\begin{align*}
&\sum_{(\mu^1, \mu^2) \in (M_1 \times M_2) \setminus P_3} \Big|\Big(((\gamma^1)^{-1}(\mu^1)  \cap V^{[i+1]}) \times G_{[i+2,k]}\Big) \cap V \cap (\gamma^2)^{-1}(\mu^2) \Big| \leq\\
&\hspace{3cm} \leq \frac{{\xi_0}^{-1}}{3} \sum_{(\mu^1, \mu^2) \in (M_1 \times M_2) \setminus P_3} \Big|\Big(((Y^{\text{ind}} \cap (\gamma^1)^{-1}(\mu^1)) \times G_{[i+2,k]}) \cap V \cap (\gamma^2)^{-1}(\mu^2) \Big) \setminus Z^{\text{ind}}\Big|\\
&\hspace{3cm} \leq \frac{{\xi_0}^{-1}}{3}\Big|((Y^{\text{ind}} \times G_{[i+2,k]}) \cap V \cap U^{\text{ind}}) \setminus Z^{\text{ind}}\Big|\\
&\hspace{3cm} \leq \frac{1}{3} \Big|((W^{\text{ind}} \cap V^{[i+1]}) \times G_{[i+2,k]}) \cap V \cap U^{\text{ind}}\Big|.\\
\end{align*}
Since
\[\sum_{(\mu^1, \mu^2) \in M_1 \times M_2}  \Big|\Big(((\gamma^1)^{-1}(\mu^1)  \cap V^{[i+1]}) \times G_{[i+2,k]}\Big) \cap V \cap (\gamma^2)^{-1}(\mu^2) \Big| = \Big|((W^{\text{ind}} \cap V^{[i+1]}) \times G_{[i+2,k]}) \cap V \cap U^{\text{ind}}\Big|\]
we deduce that there exists a pair $(\mu^1, \mu^2) \in P_1 \cap P_2 \cap P_3$. Misuse the notation and write $W^{\text{ind}}$ for $(\gamma^1)^{-1}(\mu^1)$, $U^{\text{ind}}$ for $(\gamma^2)^{-1}(\mu^2)$, $Y^{\text{ind}}$ for $Y^{\text{ind}} \cap (\gamma^1)^{-1}(\mu^1)$, $Z^{\text{ind}}$ for $Z^{\text{ind}} \cap ((\gamma^1)^{-1}(\mu^1)  \times G_{[i+2,k]}) \cap (\gamma^2)^{-1}(\mu^2)$ and $\xi_0$ instead of $3\xi_0$ (this will not affect the bounds in the statement that depend on $\xi$).\\
\indent Let $r^{\text{ind}} = r + r_1 + r_2 = O((r + \log_p \xi_0^{-1})^{O(1)})$.\\

\noindent\textbf{Convolutional extension.}  
Apply Theorem~\ref{fibresThm} to the variety\footnote{More precisely, since $V \cap U^{\text{ind}} \cap (W^{\text{ind}} \times G_{[i+2,k]})$ is a mixed-linear variety, we may see it as defined to be $(\beta^1)^{-1}(\mu^1) \cap (\beta^2)^{-1}(\mu^2)$, where $\beta^1$ is a mixed-linear map all of whose components depend linearly on $G_{i+1}$ and $\beta^2$ is a mixed-linear map that does not depend on $G_{i+1}$. Then apply the theorem to the variety $(\beta^1)^{-1}(\mu^1)$ to obtain the relevant layers $L_1, \dots, L_m \subset G_{[k]\setminus \{i+1\}}$ of some other lower-order multiaffine map $\gamma$. For the initial variety, we are then interested in the layers of $(\gamma, \beta^2)$, which are of the form $(\beta^2)^{-1}(\mu^2) \cap L_i$. In our case, $(\beta^2)^{-1}(\mu^2)$ is of the form $V^{[k] \setminus \{i+1\}}$ intersected with a mixed-linear lower-order variety. (We have put these details in a footnote so as not to interrupt the flow of the argument.)\label{linearconstfootnote}} 
\[V \cap U^{\text{ind}} \cap (W^{\text{ind}} \times G_{[i+2,k]})\]
in direction $G_{i+1}$ and with parameter $\eta > 0$, to find a positive integer $s = O\Big((r^{\text{ind}} + \log_p \eta^{-1})^{O(1)}\Big)$, a $\mathcal{G}'$-supported multiaffine map $\gamma \colon G_{[k] \setminus \{i+1\}} \to \mathbb{F}_p^{s}$ (which without loss of generality is mixed-linear), a collection of values $M \subset \mathbb{F}_p^{s}$ such that $|\gamma^{-1}(M)|\ \geq (1-\eta) |G_{[k]\setminus \{i+1\}}|$, and a map $c \colon M \to [0,1]$ such that
\begin{equation} \Big|\Big(V \cap U^{\text{ind}} \cap (W^{\text{ind}} \times G_{[i+2,k]})\Big)_{x_{[k] \setminus \{i+1\}}}\Big| = c(\mu) |G_{i+1}|\label{cOfMuEqnProp}\end{equation}
for every $\mu \in M$ and every $x_{[k] \setminus \{i+1\}} \in \gamma^{-1}(\mu)\cap V^{[k] \setminus \{i+1\}}$. Let $\varepsilon' > 0$ be a parameter to be chosen later and let $M'$ be the set of all $\mu \in M$ such that
\begin{align}\Big|\Big((W^{\text{ind}} \cap V^{[i+1]} &\setminus Y^{\text{ind}}) \times G_{[i+2,k]}\Big)  \cap \Big(\gamma^{-1}(\mu) \times G_{i+1}\Big) \cap V \cap U^{\text{ind}}\Big|\nonumber\\
&\leq \varepsilon' \Big|\Big((W^{\text{ind}} \cap V^{[i+1]}) \times G_{[i+2,k]}\Big) \cap \Big(\gamma^{-1}(\mu) \times G_{i+1}\Big) \cap V \cap U^{\text{ind}}\Big| \not= 0\label{MprimeDefnEqn}\end{align}
(note that we insist that the variety on the right-hand-side is non-empty) and let $M''$ be the set of all $\mu \in M$ such that
\begin{align}\Big|\Big((Y^{\text{ind}} \times G_{[i+2,k]})& \cap V \cap U^{\text{ind}} \cap (\gamma^{-1}(\mu) \times G_{i+1})\Big) \setminus Z^{\text{ind}}\Big|\nonumber\\
&\leq \sqrt{\xi_0} \Big|\Big((W^{\text{ind}} \cap V^{[i+1]}) \times G_{[i+2,k]}\Big) \cap \Big(\gamma^{-1}(\mu) \times G_{i+1}\Big) \cap V \cap U^{\text{ind}}\Big| \not= 0,\label{MdoubleprimeDefnEqn}\end{align}
(again we insist that the variety on the right-hand-side is non-empty). Then
\begin{align*}\sum_{\mu \in M \setminus M'} &\Big|\Big((W^{\text{ind}} \cap V^{[i+1]}) \times G_{[i+2,k]}\Big) \cap \Big(\gamma^{-1}(\mu) \times G_{i+1}\Big) \cap V \cap U^{\text{ind}}\Big|\\
 &\leq {\varepsilon'}^{-1} \sum_{\mu \in M \setminus M'} \Big|\Big((W^{\text{ind}} \cap V^{[i+1]} \setminus Y^{\text{ind}}) \times G_{[i+2,k]}\Big)  \cap \Big(\gamma^{-1}(\mu) \times G_{i+1}\Big) \cap V \cap U^{\text{ind}}\Big|\\
&\leq {\varepsilon'}^{-1} \Big|\Big((W^{\text{ind}} \cap V^{[i+1]} \setminus Y^{\text{ind}}) \times G_{[i+2,k]}\Big) \cap V \cap U^{\text{ind}}\Big|\\
&= O({\varepsilon'}^{-1}(\varepsilon^{\Omega(1)} + \xi_0)) \Big|((W^{\text{ind}} \cap V^{[i+1]}) \times G_{[i+2,k]}) \cap V \cap U^{\text{ind}}\Big|.\end{align*}
We may take $\varepsilon' = O(\varepsilon^{\Omega(1)} + \xi_0)$ so that the $O({\varepsilon'}^{-1}(\varepsilon^{\Omega(1)} + \xi_0))$ term becomes less than $1/3$.\\
\indent Similarly, 
\begin{align*}\sum_{\mu \in M' \setminus M''} &\Big|\Big((W^{\text{ind}} \cap V^{[i+1]}) \times G_{[i+2,k]}\Big) \cap \Big(\gamma^{-1}(\mu) \times G_{i+1}\Big) \cap V \cap U^{\text{ind}}\Big|\\ 
&\leq \xi_0^{-1/2} \sum_{\mu \in M' \setminus M''} \Big|\Big(Y^{\text{ind}} \times G_{[i+2,k]}\Big)  \cap \Big((\gamma^{-1}(\mu) \times G_{i+1}) \cap V \cap U^{\text{ind}}\Big) \setminus Z^{\text{ind}}\Big|\\ 
&\leq \xi_0^{-1/2} \Big|(Y^{\text{ind}} \times G_{[i+2,k]}) \cap V \cap U^{\text{ind}} \setminus Z^{\text{ind}}\Big|\\
&\leq \xi_0^{1/2} \Big|\Big((W^{\text{ind}} \cap V^{[i+1]}) \times G_{[i+2,k]}\Big) \cap V \cap U^{\text{ind}}\Big|.\end{align*}
Finally,
\begin{align*}\sum_{\mu \in \mathbb{F}_p^{s} \setminus M} &\Big|\Big((W^{\text{ind}} \cap V^{[i+1]}) \times G_{[i+2,k]}\Big) \cap \Big(\gamma^{-1}(\mu) \times G_{i+1}\Big) \cap V \cap U^{\text{ind}}\Big|\ \leq \sum_{\mu \in \mathbb{F}_p^{s} \setminus M} |\gamma^{-1}(\mu)| |G_{i+1}| \,\,\leq\, \eta |G_{[k]}|\\
&\leq \eta p^{k r^{\text{ind}}}\Big|\Big((W^{\text{ind}} \cap V^{[i+1]}) \times G_{[i+2,k]}\Big) \cap V \cap U^{\text{ind}}\Big|.\hspace{2cm}\text{(by Lemma~\ref{varsizelemma})}\end{align*}
Thus, provided we take $\eta = \frac{1}{3} p^{-2k r^{\text{ind}}}$ and $\xi_0 \leq \cons$, and provided that $\varepsilon$ is smaller than some sufficiently small positive absolute constant (or else the claim of the proposition is vacuous, as we may take $Y = Z = \emptyset$), there exists $\mu_1 \in M' \cap M''$, which we fix. Let $\delta = c(\mu_1) > 0$.\\ 

Let $\varepsilon_0 = \varepsilon_0(D \cdot 20^{i+1}, k)$ be the constant from Theorem~\ref{mainExtensionConvStep}. Set 
\[T = \{x_{[k] \setminus \{i+1\}} \in  V^{[k] \setminus \{i+1\}} \colon \gamma(x_{[k] \setminus \{i+1\}}) = \mu_1\},\]
which is a mixed-linear variety. By~\eqref{cOfMuEqnProp}, we have for every $x_{[k] \setminus \{i+1\}} \in T$, that
\begin{equation}\label{deltaEqnPropTElt}\Big|\Big(V \cap U^{\text{ind}} \cap (W^{\text{ind}} \times G_{[i+2,k]})\Big)_{x_{[k] \setminus \{i+1\}}}\Big| = \delta |G_{i+1}|.\end{equation}
Define also
\[A = \Big\{x_{[k] \setminus \{i+1\}} \in T \colon \Big|\Big(W^{\text{ind}} \cap V^{[i+1]} \setminus Y^{\text{ind}}\Big)_{x_{[i]}} \cap (V \cap U^{\text{ind}})_{x_{[k] \setminus \{i+1\}}}\Big| \leq \frac{\varepsilon_0}{4} \Big|W^{\text{ind}}_{x_{[i]}} \cap (V \cap U^{\text{ind}})_{x_{[k] \setminus \{i+1\}}}\Big|\Big\}\]
and
\[F = \Big\{x_{[k] \setminus \{i+1\}} \in T \colon \Big|Y^{\text{ind}}_{x_{[i]}} \cap \Big(V \cap U^{\text{ind}} \setminus Z^{\text{ind}}\Big)_{x_{[k] \setminus \{i+1\}}}\Big| \geq \frac{\varepsilon_0}{4} \Big|W^{\text{ind}}_{x_{[i]}} \cap (V \cap U^{\text{ind}})_{x_{[k] \setminus \{i+1\}}}\Big|\Big\}.\]
Note also that
\[V \cap (\gamma^{-1}(\mu_1) \times G_{i+1}) = V \cap (V^{[k] \setminus \{i+1\}} \times G_{i+1}) \cap (\gamma^{-1}(\mu_1) \times G_{i+1}) = V \cap (T \times G_{i+1}).\]
Then from~\eqref{MprimeDefnEqn},~\eqref{MdoubleprimeDefnEqn} and~\eqref{deltaEqnPropTElt}
\begin{align*}|T \setminus A| = &\frac{1}{\delta |G_{i+1}|} \sum_{x_{[k] \setminus \{i+1\}} \in T \setminus A} \Big|\Big(V \cap U^{\text{ind}} \cap (W^{\text{ind}} \times G_{[i+2,k]})\Big)_{x_{[k] \setminus \{i+1\}}}\Big|\hspace{2cm}\text{(by~\eqref{deltaEqnPropTElt})}\\
\leq &\frac{4\varepsilon_0^{-1}}{\delta |G_{i+1}|} \sum_{x_{[k] \setminus \{i+1\}} \in T \setminus A} \Big|\Big(W^{\text{ind}} \cap V^{[i+1]} \setminus Y^{\text{ind}}\Big)_{x_{[i]}} \cap (V \cap U^{\text{ind}})_{x_{[k] \setminus \{i+1\}}}\Big|\\
\leq &\frac{4\varepsilon_0^{-1}}{\delta |G_{i+1}|} \sum_{x_{[k] \setminus \{i+1\}} \in T} \Big|\Big(W^{\text{ind}} \cap V^{[i+1]} \setminus Y^{\text{ind}}\Big)_{x_{[i]}} \cap (V \cap U^{\text{ind}})_{x_{[k] \setminus \{i+1\}}}\Big|\\
= &\frac{4\varepsilon_0^{-1}}{\delta |G_{i+1}|}\Big|\Big((W^{\text{ind}} \cap V^{[i+1]} \setminus Y^{\text{ind}}) \times G_{[i+2,k]}\Big) \cap V \cap U^{\text{ind}} \cap (T \times G_{i+1})\Big|\\
\leq &\frac{4\varepsilon_0^{-1}}{\delta |G_{i+1}|} O(\varepsilon^{\Omega(1)} + \xi_0) \Big|\Big((W^{\text{ind}} \cap V^{[i+1]}) \times G_{[i+2,k]}\Big) \cap \Big(T \times G_{i+1}\Big) \cap V \cap U^{\text{ind}}\Big|\hspace{2cm}\text{(by~\eqref{MprimeDefnEqn})}\\
= &\frac{O(\varepsilon^{\Omega(1)} + \xi_0)}{\delta |G_{i+1}|} \sum_{x_{[k] \setminus \{i+1\}} \in T} \Big|\Big(V \cap U^{\text{ind}} \cap (W^{\text{ind}} \times G_{[i+2,k]})\Big)_{x_{[k] \setminus \{i+1\}}}\Big|\\
 =& O(\varepsilon^{\Omega(1)} + \xi_0)|T|\hspace{2cm}\text{(by~\eqref{deltaEqnPropTElt})}\end{align*}
and
\begin{align*}|F| = &\frac{1}{\delta |G_{i+1}|} \sum_{x_{[k] \setminus \{i+1\}} \in F} \Big|\Big(V \cap U^{\text{ind}} \cap (W^{\text{ind}} \times G_{[i+2,k]})\Big)_{x_{[k] \setminus \{i+1\}}}\Big|\hspace{2cm}\text{(by~\eqref{deltaEqnPropTElt})}\\
\leq &\frac{4\varepsilon_0^{-1}}{\delta |G_{i+1}|} \sum_{x_{[k] \setminus \{i+1\}} \in F} \Big|Y^{\text{ind}}_{x_{[i]}} \cap \Big(V \cap U^{\text{ind}} \setminus Z^{\text{ind}}\Big)_{x_{[k] \setminus \{i+1\}}}\Big|\\
\leq &\frac{4\varepsilon_0^{-1}}{\delta |G_{i+1}|} \sum_{x_{[k] \setminus \{i+1\}} \in T} \Big|Y^{\text{ind}}_{x_{[i]}} \cap \Big(V \cap U^{\text{ind}} \setminus Z^{\text{ind}}\Big)_{x_{[k] \setminus \{i+1\}}}\Big|\\
= &\frac{4\varepsilon_0^{-1}}{\delta |G_{i+1}|} \Big|(Y^{\text{ind}} \times G_{[i+2,k]}) \cap V \cap U^{\text{ind}} \cap (T \times G_{i+1}) \setminus Z^{\text{ind}}\Big|\\
\leq & \frac{4\varepsilon_0^{-1} \sqrt{\xi_0} }{\delta |G_{i+1}|}\Big|\Big((W^{\text{ind}} \cap V^{[i+1]}) \times G_{[i+2,k]}\Big) \cap \Big(T \times G_{i+1}\Big) \cap V \cap U^{\text{ind}}\Big|\hspace{2cm}\text{(by~\eqref{MdoubleprimeDefnEqn})}\\
= & \frac{4\varepsilon_0^{-1} \sqrt{\xi_0} }{\delta |G_{i+1}|} \sum_{x_{[k] \setminus \{i+1\}} \in T} \Big|\Big(V \cap U^{\text{ind}} \cap (W^{\text{ind}} \times G_{[i+2,k]})\Big)_{x_{[k] \setminus \{i+1\}}}\Big|\\
=&O(\sqrt{\xi_0})|T|.\hspace{2cm}\text{(by~\eqref{deltaEqnPropTElt})}\end{align*}
Note that
\[|Z^{\text{ind}}_{x_{[k] \setminus \{i+1\}}}|\geq (1- \varepsilon_0)\Big|W^{\text{ind}}_{x_{[i]}} \cap (V \cap U^{\text{ind}})_{x_{[k] \setminus \{i+1\}}}\Big|\]
holds for each $x_{[k] \setminus \{i+1\}} \in A \setminus F$. Define the set of all $(1-\varepsilon_0)$-dense columns $\on{DC}$ by
\[\on{DC} = \Big\{x_{[k] \setminus \{i+1\}} \in T : |Z^{\text{ind}}_{x_{[k] \setminus \{i+1\}}}|\geq (1- \varepsilon_0)\Big|W^{\text{ind}}_{x_{[i]}} \cap (V \cap U^{\text{ind}})_{x_{[k] \setminus \{i+1\}}}\Big|\Big\}.\]
Hence $A \setminus F \subset \on{DC}$.\\

Apply Theorem~\ref{mainExtensionConvStep} to the subset $\on{DC}$ of $T$ (the set of $(1-\varepsilon_0)$-dense columns), the subset $Z^{\text{ind}}$ of $(W^{\text{ind}} \times G_{[i+2,k]}) \cap V \cap U^{\text{ind}}$ (the domain of the map), the map $\psi^{\text{ind}}$ and the parameter $\rho > 0$. Let $\gamma^{\text{conv}}$ be the $\mathcal{G}'$-supported multiaffine map (which without loss of generality is mixed-linear) of codimension $t^{\text{conv}} = O((r^{\text{ind}} + \log_p \rho^{-1})^{O(1)})$ given by the theorem, with a set of allowed layers $(\gamma^{\text{conv}})^{-1}(\lambda)$ for $\lambda \in \Gamma$ such that $|(\gamma^{\text{conv}})^{-1}(\Gamma)| \geq (1 - \rho)|G_{[k] \setminus \{i+1\}}|$, and let $R \subset \on{DC}$ be the set of points removed from $\on{DC}$, which is such that $|R| \leq \rho |G_{[k] \setminus \{i+1\}}|$.\\
\indent By Lemma~\ref{varsizelemma}, we have that $|T| \geq p^{-k (r^{\text{ind}} + s)} |G_{[k] \setminus \{i+1\}}|$. Let $0 < \delta_1 \leq O(\varepsilon^{\Omega(1)} + \xi_0)$ and $0 < \delta_2 \leq O(\sqrt{\xi_0} + \rho\, p^{k(r^{\text{ind}} + s)})$ be quantities such that $|T \setminus A| \leq \delta_1 |T|$ and $|F \cup R| \leq \delta_2 |T|$. Let $\Gamma_1$ be the set of all $\lambda \in \mathbb{F}_p^{t^{\text{conv}}}$ such that $|(T \setminus A) \cap (\gamma^{\text{conv}})^{-1}(\lambda)| \leq 3\delta_1 |T \cap (\gamma^{\text{conv}})^{-1}(\lambda)|$ and let $\Gamma_2$ be the set of all $\lambda \in \mathbb{F}_p^{t^{\text{conv}}}$ such that $|(F \cup R) \cap (\gamma^{\text{conv}})^{-1}(\lambda)| \leq 3\delta_2 |T \cap (\gamma^{\text{conv}})^{-1}(\lambda)|$. Then
\begin{align*}\sum_{\lambda \notin \Gamma_1} |T \cap (\gamma^{\text{conv}})^{-1}(\lambda)| \,\,\leq\,\sum_{\lambda \notin \Gamma_1} \frac{\delta_1^{-1}}{3} |(T \setminus A) \cap (\gamma^{\text{conv}})^{-1}(\lambda)| \,\,\leq\, \frac{\delta_1^{-1}}{3} |T \setminus A| \,\,\leq\,\frac{\delta_1^{-1}}{3} \cdot \delta_1 |T|\,\, \leq \,\frac{1}{3}|T|\end{align*}
and 
\[\sum_{\lambda \notin \Gamma_2} |T \cap (\gamma^{\text{conv}})^{-1}(\lambda)| \,\,\leq\, \frac{\delta_2^{-1}}{3}\sum_{\lambda \notin \Gamma_2} |(F \cup R) \cap (\gamma^{\text{conv}})^{-1}(\lambda)| \,\,\leq\, \frac{\delta_2^{-1}}{3} |F \cup R| \,\,\leq\, \frac{\delta_2^{-1}}{3}\cdot \delta_2 |T| \,\,\leq\, \frac{1}{3}|T|.\]
Hence
\[\sum_{\lambda \in \Gamma \cap \Gamma_1 \cap \Gamma_2} |T \cap (\gamma^{\text{conv}})^{-1}(\lambda)| \geq \Big(\frac{1}{3} - \rho p^{k(r^{\text{ind}} + s)}\Big)|T|.\]

We may take $\rho \geq \Omega(\xi_0 p^{-k(r^{\text{ind}} + s)})$ so that the factor $\Big(\frac{1}{3} - \rho p^{k(r^{\text{ind}} + s)}\Big)$ on the right hand side of the expression is positive and $\delta_2 \leq O(\sqrt{\xi_0})$, which allows us to find a layer $(\gamma^{\text{conv}})^{-1}(\mu^{\text{conv}})$ such that
\begin{align*}|A \cap (\gamma^{\text{conv}})^{-1}(\mu^{\text{conv}})| \,\,=&\,\, (1 - O(\varepsilon^{\Omega(1)} + \xi_0))|T \cap (\gamma^{\text{conv}})^{-1}(\mu^{\text{conv}})|\\
|(F \cup R) \cap (\gamma^{\text{conv}})^{-1}(\mu^{\text{conv}})|\,\, =& \,\,O(\sqrt{\xi_0}) |T \cap (\gamma^{\text{conv}})^{-1}(\mu^{\text{conv}})|\end{align*}
and the variety $T \cap (\gamma^{\text{conv}})^{-1}(\mu^{\text{conv}})$ is non-empty.\\
\indent We shall abuse notation and keep writing $T$ for $T \cap (\gamma^{\text{conv}})^{-1}(\mu^{\text{conv}})$ (which remains mixed-linear), $\on{DC}$ for $\on{DC} \cap (\gamma^{\text{conv}})^{-1}(\mu^{\text{conv}})$, $A$ for $A \cap (\gamma^{\text{conv}})^{-1}(\mu^{\text{conv}})$ and $F$ for $(F \cup R) \cap (\gamma^{\text{conv}})^{-1}(\mu^{\text{conv}})$. In the new notation we have
\begin{equation}\label{newtdefn}T = V^{[k] \setminus \{i+1\}} \cap \gamma^{-1}(\mu_1) \cap (\gamma^{\text{conv}})^{-1}(\mu^{\text{conv}}).\end{equation}
We also record here the codimension bounds
\[s + t^{\text{conv}} = \codim \gamma + \codim \gamma^{\text{conv}} \leq O((r^{\text{ind}} + \log_p \xi_0^{-1})^{O(1)})\]
and recall that $\gamma$ and $\gamma^{\text{conv}}$ are mixed-linear and $\mathcal{G}'$-supported. Misusing the notation, we write $\codim T = r^{\text{ind}} + s + t^{\text{conv}}$, which is not necessarily the codimension of the variety $T$ (recall that the codimension of $T$ defined to be the minimum possible codimension of a multiaffine map which gives $T$ as a layer), but is essentially the codimension of the mixed-linear map we use here to define $T$. The notation $\codim T$ is more suggestive and will simplify the calculations that follow.\\ 
\indent Again, we have
\[A = \Big\{x_{[k] \setminus \{i+1\}} \in T \colon \Big|Y^{\text{ind}}_{x_{[i]}} \cap (V \cap U^{\text{ind}})_{x_{[k] \setminus \{i+1\}}}\Big| \geq \Big(1 - \frac{\varepsilon_0}{4}\Big) \Big|W^{\text{ind}}_{x_{[i]}} \cap (V \cap U^{\text{ind}})_{x_{[k] \setminus \{i+1\}}}\Big|\Big\},\]
(note that the elements removed from initial version of $A$ are also removed from the initial version of $T$), $A \setminus F \subset \on{DC}$
\begin{equation}\label{AdenseEqn}|A| = (1 - O(\varepsilon^{\Omega(1)} + \xi_0))|T|\end{equation}
and
\begin{equation}\label{FsparseEqn}|F| = O(\sqrt{\xi_0})|T|.\end{equation}
Additionally, convolving in direction $i+1$ turns $\psi^{\text{ind}}$ into a well-defined multi-$(D \cdot 20^{i})$-homomorphism on $\Big((W^{\text{ind}} \times G_{[i+2,k]}) \cap V \cap U^{\text{ind}}\Big) \cap \Big((\on{DC} \setminus F) \times G_{i+1} \Big)$. Property \textbf{(iv)} from the statement follows directly from the inductive hypothesis and property \textbf{(iii)} of Theorem~\ref{mainExtensionConvStep} (to get the stated bounds on the number of tri-arrangements we still need to specify the parameter $\xi_0$). It remains to show that convolving in direction $i+1$ of $Z^{\text{ind}}$ gives a set with the desired structure, that is, we need to find the desired structure in the set
\begin{align}\Big((W^{\text{ind}}& \times G_{[i+2,k]}) \cap V \cap U^{\text{ind}}\Big) \cap \Big((\on{DC} \setminus F) \times G_{i+1} \Big)\label{targetdomainset}\\
= &\Big((W^{\text{ind}} \times G_{[i+2,k]}) \cap V \cap U^{\text{ind}}\cap (\on{DC} \times G_{i+1}) \Big) \hspace{3pt}\setminus\hspace{3pt} \Big((W^{\text{ind}} \times G_{[i+2,k]}) \cap V \cap U^{\text{ind}}\cap (F \times G_{i+1}) \Big).\nonumber\end{align}

\vspace{\baselineskip}

\noindent \textbf{Densification.} Let $m \in \mathbb{N}$ be a parameter to be specified later. Choose elements $a, y_1, \dots, y_m \in G_{i+1}$ independently and uniformly at random. For a set $S \subset G_{i+1}$, we write
\[L(S) = \{\lambda \in \mathbb{F}_p^m \colon a + \lambda \cdot y \in S\}\]
and we write $l(S)$ for the cardinality of $L(S)$.

Let
\[C = \Big\{(x_{[i]}, z_{[i+2, k]}) \in T \colon l(Y^{\text{ind}}_{x_{[i]}} \cap (V \cap U^{\text{ind}})_{x_{[i]}, z_{[i+2, k]}}) \geq \Big(1 - \frac{\varepsilon_0}{3}\Big) \delta p^m\Big\}.\]
By Lemma~\ref{randomCosetInt},
\begin{itemize}
\item if $(x_{[i]}, z_{[i+2, k]})$ is such that $|Y^{\text{ind}}_{x_{[i]}} \cap (V \cap U^{\text{ind}})_{x_{[i]}, z_{[i+2, k]}}|\ \geq \Big(1 - \frac{\varepsilon_0}{4}\Big) \delta |G_{i+1}|$, then
\begin{equation}\label{probChoiceDense}\mathbb{P}_{a, y_{[m]}}\Big((x_{[i]}, z_{[i+2, k]}) \in C\Big) = 1 - O(p^{-m} \delta^{-1});\end{equation}
\item if $(x_{[i]}, z_{[i+2, k]})$ is such that $|Y^{\text{ind}}_{x_{[i]}} \cap (V \cap U^{\text{ind}})_{x_{[i]}, z_{[i+2, k]}}|\ \leq \Big(1 - \frac{\varepsilon_0}{2}\Big) \delta |G_{i+1}|$, then
\begin{equation}\label{probChoiceSparse}\mathbb{P}_{a, y_{[m]}}\Big((x_{[i]}, z_{[i+2, k]}) \in C\Big) = O(p^{-m} \delta^{-1}).\end{equation}
\end{itemize}
By~\eqref{AdenseEqn}, the first case holds for $(1 - O(\varepsilon^{\Omega(1)} + \xi_0))|T|$ elements $(x_{[i]}, z_{[i+2, k]}) \in T$. Thus $\ex_{a, y_{[m]}} |T \setminus C| \leq O(\varepsilon^{\Omega(1)} + \xi_0 + p^{-m} \delta^{-1})|T|$.\\
\indent Let
\[C^{\text{sparse}} = \Big\{(x_{[i]}, z_{[i+2, k]}) \in C \colon \Big|Y^{\text{ind}}_{x_{[i]}} \cap (V \cap U^{\text{ind}})_{x_{[i]}, z_{[i+2, k]}}\Big| \leq \Big(1 - \frac{\varepsilon_0}{2}\Big) \delta |G_{i+1}|\Big\}.\]
By the calculation in the second case, $\ex_{a, y_{[m]}} |C^{\text{sparse}}| = O(p^{-m} \delta^{-1}) |T|$. Therefore
\[\exx_{a, y_{[m]}} |T \setminus C| + p^{m/2} |C^{\text{sparse}}|= O(\varepsilon^{\Omega(1)} + \xi_0 + p^{-m/2} \delta^{-1}) |T|,\]
so there exists a choice of $a, y_{[m]}$ such that $|C|\ \geq (1 - O(\varepsilon^{\Omega(1)} + \xi_0))|T|$ and $|C^{\text{sparse}}| \leq p^{-m/3} |T|$, provided that $p^{-m} \leq \cons\, \delta^{\con} (\varepsilon^{\con} + \xi_0^{\con})$ and $\varepsilon, \xi_0$ are smaller than some sufficiently small absolute constant. Let $C^{\text{bad}} = C \cap F$, which by~\eqref{FsparseEqn} has size at most $|F| = O(\sqrt{\xi_0})|T|$. Note also that when $x_{[k] \setminus \{i+1\}} \in C \setminus (C^{\text{sparse}} \cup C^{\text{bad}})$, then 
\[\Big|Z^{\text{ind}}_{x_{[k] \setminus \{i+1\}}}\Big| \geq (1 - \varepsilon_0)\Big|\Big(V \cap U^{\text{ind}} \cap (W^{\text{ind}} \times G_{[i+2,k]})\Big)_{x_{[k] \setminus \{i+1\}}}\Big|,\]
and in particular $x_{[k] \setminus \{i+1\}} \in \on{DC} \setminus F$. Thus, 
\begin{equation}\label{targetdomainset2} C \setminus (C^{\text{sparse}} \cup C^{\text{bad}}) \,\subset\, \on{DC} \setminus F.\end{equation}
\vspace{\baselineskip}

\noindent\textbf{Finding algebraic structure.} Observe that we may rewrite $C$ in a different form. Let $\tau_1 \colon G_{[k]} \to \mathbb{F}^{t_1}
$ be a $\mathcal{G}$-supported multiaffine map, linear in direction $i+1$, and let $\kappa_1 \in \mathbb{F}^{t_1}$ be such that 
\[(T \times G_{i+1}) \cap (W^{\text{ind}} \times G_{[i+2,k]}) \cap V \cap U^{\text{ind}} = (T \times G_{i+1}) \cap (\tau_1)^{-1}(\kappa_1).\] 
(The fact that we may find such a map $\tau_1$ follows from the work in footnote~\ref{linearconstfootnote}.) Fix an element $x_{[i]} \in G_{[i]}$ and let $\{\lambda^1, \dots, \lambda^e\} = L(Y^{\text{ind}}_{x_{[i]}})$. Then
\begin{align}C_{x_{[i]}} = &\Big\{z_{[i+2, k]} \in T_{x_{[i]}} \colon \Big|\Big\{j \in [e] \colon a + \lambda^j \cdot y \in (V \cap U^{\text{ind}} \cap (W^{\text{ind}} \times G_{[i+2,k]}))_{x_{[i]}, z_{[i+2,k]}}\Big\}\Big| \geq \Big(1 - \frac{\varepsilon_0}{3}\Big) \delta p^m\Big\}\nonumber\\
=&\bigcup_{\substack{M \subset \{\lambda^{[e]}\}\\|M| \geq  \big(1 - \frac{\varepsilon_0}{3}\big)\delta p^m}} \Big\{z_{[i+2, k]} \in T_{x_{[i]}} \colon (\forall \mu \in M)\,\,a + \mu \cdot y \in (V \cap U^{\text{ind}}\cap (W^{\text{ind}} \times G_{[i+2,k]}))_{x_{[i]}, z_{[i+2,k]}}\Big\}\nonumber\\
=&\bigcup_{\substack{M \subset \{\lambda^{[e]}\}\\|M| \geq  \big(1 - \frac{\varepsilon_0}{3}\big)\delta p^m}}  T_{x_{[i]}} \cap \bigg(\bigcap_{\mu \in M} (V \cap U^{\text{ind}} \cap (W^{\text{ind}} \times G_{[i+2,k]}))_{x_{[i]},\ls{i+1}\,a + \mu \cdot y}\bigg).\label{cslicetolayerseqn}\end{align} 
Define a map
\[\theta(x_{[i]}, z_{[i+2,k]}) = \Big(\tau_1(x_{[i]},\ls{i+1}\,a, z_{[i+2,k]}), \tau_1(x_{[i]},\ls{i+1}\,y_1, z_{[i+2,k]}), \dots, \tau_1(x_{[i]},\ls{i+1}\,y_m, z_{[i+2,k]})\Big).\]
Then $\codim \theta \leq (m+1) r^{\text{ind}}$ and $\theta$ is $\mathcal{G}'$-supported. We write $\mathbb{F}_p^{\codim \theta}$ for the codomain of $\theta$ to avoid introducing more notation. Notice that when $(x_{[i]}, \ls{i+1}\,a + \mu \cdot y) \in W^{\text{ind}}$ (which is the case for the relevant values $\mu$ in~\eqref{cslicetolayerseqn} since $Y^{\text{ind}} \subset W^{\text{ind}}$), we have
\begin{align*}T_{x_{[i]}} \cap (V \cap U^{\text{ind}})_{x_{[i]}, \ls{i+1}\,a + \mu \cdot y} &=T_{x_{[i]}} \cap \{z_{[i+2,k]} \in G_{[i+2,k]} \colon \tau_1(x_{[i]},\ls{i+1}\,a + \mu \cdot y, z_{[i+2,k]}) = \kappa_1\}\\
&=T_{x_{[i]}} \cap \Big\{z_{[i+2,k]} \in G_{[i+2,k]} \colon \tau_1(x_{[i]},\ls{i+1}\,a, z_{[i+2,k]}) + \mu_1 \tau_1(x_{[i]},\ls{i+1}\,y_1, z_{[i+2,k]}) + \dots\\
&\hspace{8cm} + \mu_m \tau_1(x_{[i]},\ls{i+1}\,y_m, z_{[i+2,k]}) = \kappa_1\Big\}\\
&=T_{x_{[i]}} \cap \Big\{z_{[i+2,k]} \in G_{[i+2,k]} \colon \Big(1, \mu_1, \dots, \mu_m\Big) \cdot \theta(x_{[i]}, z_{[i+2,k]}) = \kappa_1\Big\}.\end{align*}
Combining this observation with~\eqref{cslicetolayerseqn} we conclude that for each $x_{[i]} \in G_{[i]}$, we have a collection $R_{x_{[i]}} \subset \mathbb{F}_p^{t_1 (m+1)}$ of values of $\theta$, such that
\[C_{x_{[i]}} = \bigcup_{u \in R_{x_{[i]}}} T_{x_{[i]}} \cap \Big\{z_{[i+2, k]} \in G_{[i+2, k]}\colon \theta(x_{[i]}, z_{[i+2, k]}) = u\Big\}.\]
\vspace{\baselineskip}

\noindent\textbf{Digression.} Before proceeding any further with the proof, we pause to make a comment about the case $i=0$, elaborating on the remark preceding the proof. In that case, the equality above becomes simply $C = \bigcup_{u \in R} T \cap \theta^{-1}(u)$ for a subset $R\subset \mathbb{F}_p^{t_1 (m+1)}$. We may then immediately proceed to step~\eqref{rtildestep} below, and pick the relevant element $u \in \tilde{R}$, where $\tilde{R}$ is defined in that step -- when $i = 0$ any $u \in \tilde{R}$ that gives non-empty varieties in the end works. (We interpret $Y \times G_{[k]}$ and $W \times G_{[k]}$ as $G_{[k]}$ and we ignore $x_{[i]}$ in the subscripts.) Once that is done, recall that we only need to specify $U$ and $Z$ in this case, which we do as in~\eqref{udefnendproof} and~\eqref{zdefnendproof}, using the same argument as in cases $i \not= 0$.\\
\indent We now proceed with the proof.\\
\vspace{\baselineskip}

\noindent \textbf{Regularization.} Let $\eta_2 > 0$. Apply Theorem~\ref{simFibresThm} to $T$ and $\theta$ in the directions $G_{[i+2,k]}$ with an error parameter $\eta_2 > 0$ to be chosen later.\footnote{As with a previous step in this proof, we are actually applying the theorem to the map $(\rho, \theta)$, where $\rho$ is a mixed-linear map, each component of which depends on at least one coordinate in $G_{[i+2,k]}$, such that $T = (T^{[i]} \times G_{[i+2,k]}) \cap \{\rho = \lambda\}$ for some $\lambda \in \mathbb{F}_p^{\codim \rho}$. Here, $T^{[i]} \subset G_{[i]}$ is the variety given by the multilinear forms that define $T$ and are $(\mathcal{G} \cap \mathcal{P}[i])$-supported. Hence, each $\rho_i$ depends on a set of coordinates that lies in $\mathcal{G} \setminus \mathcal{P}([i])$. Therefore, the multiaffine map given by Theorem~\ref{simFibresThm} is $\mathcal{G}'$-supported. We put these details in a footnote to keep the main line of the argument clearer.} We get a positive integer $s_2 =O\Big((\log_p \eta_2^{-1} + \codim T + \codim \theta)^{O(1)}\Big)$, a $\mathcal{G}'$-supported multiaffine map $\gamma_2 \colon G_{[i]} \to \mathbb{F}_p^{s_2}$, a collection of values $M_2 \subset \mathbb{F}_p^{s_2}$, and maps $c_\mu \colon \mathbb{F}_p^{\codim \theta} \to [0,1]$ for $\mu \in M_2$, such that 
\[|\gamma_2^{-1}(M_2)| \geq (1-\eta_2) |G_{[i]}|\]
and
\begin{align}
|T_{x_{[i]}} \cap \{z_{[i+2,k]} \in G_{[i+2,k]} \colon \theta(x_{[i]}, z_{[i+2, k]}) = u\}| \in [c_\mu(u), c_\mu(u) + \eta_2] \cdot |G_{[i+2,k]}|\label{gamma2EqnPropDensityApprox}\end{align}
for every $\mu \in M_2$, every $x_{[i]} \in \gamma_2^{-1}(\mu)\cap T^{[i]}$ and every $u \in \mathbb{F}_p^{\codim \theta}$. Moreover, provided $\eta_2 < p^{-k (\codim T + \codim \theta)}$, we may assume that if $c_\mu(u) = 0$, then actually 
\[T_{x_{[i]}} \cap \{z_{[i+2,k]} \in G_{[i+2,k]} \colon \theta(x_{[i]}, z_{[i+2, k]}) = u\} = \emptyset.\] 
Without loss of generality, when $c_\mu(u) > 0$, then in fact $c_\mu(u) \geq p^{-k (\codim T + \codim \theta)}$.\\

Let $0 < \varepsilon'' = O(\varepsilon^{\Omega(1)} + \xi_0)$ be a quantity such that $|C| \geq (1 - \varepsilon'')|T|$. Without loss of generality $\varepsilon'' \geq \varepsilon + \xi_0$ (although smaller $\varepsilon''$ can only help, we need such a lower bound to simplify the notation and calculations). Let $M_2'$ be the set of all $\mu \in M_2$ such that
\[|C \cap (\gamma_2^{-1}(\mu) \times G_{[i+2,k]})| \,\,\geq\, (1 - 3\varepsilon'')|T \cap (\gamma_2^{-1}(\mu) \times G_{[i+2,k]})| > 0.\]
Let $0 < \xi_0'' = p^{-m/3} + O(\sqrt{\xi_0})$ be a quantity such that $|C^{\text{sparse}} \cup C^{\text{bad}}| \leq \xi_0'' |T|$; without loss of generality $\xi_0'' \geq \xi_0$. Let $M_2''$ be the set of all $\mu \in M_2$ such that 
\[|(C^{\text{sparse}} \cup C^{\text{bad}}) \cap (\gamma_2^{-1}(\mu) \times G_{[i+2,k]})| \,\,\leq \,3 \xi_0'' |T \cap (\gamma_2^{-1}(\mu) \times G_{[i+2,k]})|.\]
We then have
\begin{align*}\sum_{\mu \in M_2 \setminus M_2'} |T \cap (\gamma_2^{-1}(\mu) \times G_{[i+2,k]})|& \leq \frac{{\varepsilon''}^{-1}}{3} \sum_{\mu \in M_2 \setminus M_2'} |(T \setminus C) \cap (\gamma_2^{-1}(\mu) \times G_{[i+2,k]})|\\
&\leq \frac{{\varepsilon''}^{-1}}{3} |T\setminus C|\\
&\leq \frac{1}{3} |T|.\end{align*}
Also,
\begin{align*}\sum_{\mu \in M'_2 \setminus M''_2} |T \cap (\gamma_2^{-1}(\mu) \times G_{[i+2,k]})|& \leq \frac{{\xi_0''}^{-1}}{3} \sum_{\mu \in M_2 \setminus M_2'} |(C^{\text{sparse}} \cup C^{\text{bad}}) \cap (\gamma_2^{-1}(\mu) \times G_{[i+2,k]})|\\
&\leq \frac{{\xi_0''}^{-1}}{3}  |C^{\text{sparse}} \cup C^{\text{bad}}|\\
&\leq \frac{1}{3} |T|.\end{align*}
Finally, 
\[\sum_{\mu \in \mathbb{F}_p^{s_2} \setminus M_2} |T \cap (\gamma_2^{-1}(\mu) \times G_{[i+2,k]})| \leq |\gamma_2^{-1}(\mathbb{F}_p^{s_2} \setminus M_2)| \cdot |G_{[i+2, k]}| \leq \eta_2 p^{k\codim T} |T|.\]
Hence, provided $\eta_2 \leq \frac{1}{4} p^{-k\codim T}$, we may select $\mu_2 \in M'_2 \cap M''_2$. Note that $\mu_2 \in M'_2$ in particular implies that $T \cap (\gamma_2^{-1}(\mu_2) \times G_{[i+2,k]}) \not= \emptyset$ and hence $T^{[i]} \cap (\gamma_2)^{-1}(\mu_2) \not= \emptyset$.\\

\vspace{\baselineskip}

\noindent\textbf{Obtaining the desired structure.} Let $\tilde{Y}_1$ be the set of $x_{[i]} \in T^{[i]} \cap \gamma_2^{-1}(\mu_2)$ such that $|C_{x_{[i]}}| \geq (1 - \sqrt{\varepsilon''})|T_{x_{[i]}}|$. Then
\begin{align*}\sum_{x_{[i]} \in T^{[i]} \cap (\gamma_2)^{-1}(\mu_2) \setminus \tilde{Y}_1} |T_{x_{[i]}}|& \leq  {\varepsilon''}^{-1/2}\sum_{x_{[i]} \in T^{[i]} \cap \gamma_2^{-1}(\mu_2) \setminus \tilde{Y}_1} |T_{x_{[i]}} \setminus C_{x_{[i]}}|\\
&\leq  {\varepsilon''}^{-1/2} |T \cap (\gamma_2^{-1}(\mu_2) \times G_{[i+2,k]}) \setminus C|\\
&\leq  3\sqrt{\varepsilon''} |T \cap (\gamma_2^{-1}(\mu_2) \times G_{[i+2,k]})|.\end{align*}
Let $\tilde{Y}_2$ be the set of $x_{[i]} \in T^{[i]} \cap \gamma_2^{-1}(\mu_2)$ such that $|(C^{\text{sparse}} \cup C^{\text{bad}})_{x_{[i]}}| \leq \sqrt{\xi_0''} |T_{x_{[i]}}|$. Then
\begin{align*} \sum_{x_{[i]} \in T^{[i]} \cap \gamma_2^{-1}(\mu_2) \setminus \tilde{Y}_2} |T_{x_{[i]}}| &\leq  {\xi_0''}^{-1/2}\sum_{x_{[i]} \in T^{[i]} \cap \gamma_2^{-1}(\mu_2) \setminus \tilde{Y}_2} |(C^{\text{sparse}} \cup C^{\text{bad}})_{x_{[i]}}|\\
&\leq  {\xi_0''}^{-1/2} |(C^{\text{sparse}} \cup C^{\text{bad}}) \cap (\gamma_2^{-1}(\mu_2) \times G_{[i+2,k]})|\\
&\leq  3 \sqrt{\xi_0''}|T \cap (\gamma_2^{-1}(\mu_2) \times G_{[i+2,k]})|.
\end{align*}
Let $\tilde{Y} = \tilde{Y}_1 \cap \tilde{Y}_2$. Write $\sigma$ for $\sum_{u \in \mathbb{F}_p^{\codim \theta}} c_{\mu_2}(u)$. Note that when $x_{[i]}$ is such that $(T \cap (\gamma_2^{-1}(\mu_2) \times G_{[i+2,k]}))_{x_{[i]}} \not= \emptyset$, from~\eqref{gamma2EqnPropDensityApprox} and Lemma~\ref{varsizelemma} we have 
\[(\sigma + p^{\codim \theta}\eta_2) |G_{[i+2,k]}|\,\, \geq \,|(T \cap (\gamma_2^{-1}(\mu_2) \times G_{[i+2,k]}))_{x_{[i]}}|\,\, =\, |T_{x_{[i]}}| \,\,\geq\, p^{-k \codim T}|G_{[i+2,k]}|.\]
Since $T \cap (\gamma_2^{-1}(\mu_2) \times G_{[i+2,k]}) \not= \emptyset$, provided $\eta_2 \leq \frac{1}{2}p^{-k \codim T - \codim \theta}$, we conclude that $\sigma \geq \frac{1}{2}p^{-k \codim T}$.\\
\indent Using~\eqref{gamma2EqnPropDensityApprox} twice, we have 
\begin{align*}|\tilde{Y}| (\sigma  + p^{\codim \theta} \eta_2) |G_{[i+2,k]}| &\geq \sum_{x_{[i]} \in \tilde{Y}} |T_{x_{[i]}}|\\
&\geq  (1 - 3\sqrt{\varepsilon''} - 3 \sqrt{\xi_0''}) |T \cap ((\gamma_2)^{-1}(\mu_2) \times G_{[i+2,k]})|\\
&=  (1 - 3\sqrt{\varepsilon''} - 3 \sqrt{\xi_0''}) \sum_{x_{[i]} \in T^{[i]} \cap (\gamma_2)^{-1}(\mu_2)} |T_{x_{[i]}}|\\
&\geq (1 - 3\sqrt{\varepsilon''} - 3 \sqrt{\xi_0''}) \sum_{x_{[i]} \in T^{[i]} \cap (\gamma_2)^{-1}(\mu_2)} \sigma |G_{[i+2,k]}|.
\end{align*}
It follows that
\begin{align*}
|\tilde{Y}| &\geq (1 - 3\sqrt{\varepsilon''} - 3 \sqrt{\xi_0''}) \frac{\sigma}{\sigma + p^{\codim \theta}\eta_2} |T^{[i]} \cap (\gamma_2)^{-1}(\mu_2)|\\ 
&=(1 - O(\sqrt{\varepsilon''} + \sqrt{\xi_0''})) |T^{[i]} \cap (\gamma_2)^{-1}(\mu_2)|,\\
\end{align*}
provided that $\eta_2 \leq \frac{1}{2}(\sqrt{\varepsilon''} + \sqrt{\xi_0''}) p^{-k \codim T - \codim \theta}$.\\

\indent For each $x_{[i]} \in \tilde{Y}$ let $\tilde{R}_{x_{[i]}}$ be the set of all $u \in R_{x_{[i]}}$ such that 
\begin{align}\Big|(C^{\text{sparse}} \cup C^{\text{bad}})_{x_{[i]}} \cap \Big\{z_{[i+2, k]} &\in G_{[i+2, k]}\colon \theta(x_{[i]}, z_{[i+2, k]}) = u\Big\}\Big|\nonumber\\ 
&\leq \sqrt[4]{\xi_0''}\Big|T_{x_{[i]}} \cap \Big\{z_{[i+2, k]} \in G_{[i+2, k]}\colon \theta(x_{[i]}, z_{[i+2, k]}) = u\Big\}\Big|.\label{rtildestep}\end{align}
Then by averaging,
\[\bigg|\bigcup_{u \in \tilde{R}_{x_{[i]}}} T_{x_{[i]}} \cap \Big\{z_{[i+2, k]} \in G_{[i+2, k]}\colon \theta(x_{[i]}, z_{[i+2, k]}) = u\Big\}\bigg| = (1 - O(\sqrt{\varepsilon''} + \sqrt[4]{\xi_0''}))|T_{x_{[i]}}|.\]
Pick a random value $u \in \mathbb{F}_p^{\codim \theta}$, according to the probability distribution $p_u = \sigma^{-1} c_{\mu_2}(u)$. Define $Y = \{x_{[i]} \in \tilde{Y} \colon u \in \tilde{R}_{x_{[i]}}\}$. Then
\begin{align*}\sigma \ex |Y| = &\sigma \sum_{x_{[i]} \in \tilde{Y}} \mathbb{P}(u \in \tilde{R}_{x_{[i]}})\\
 =& \sum_{x_{[i]} \in \tilde{Y}} \sum_{u \in \mathbb{F}_p^{\codim \theta}} \sigma p_u \mathbbm{1}(u \in \tilde{R}_{x_{[i]}})\\
= & \sum_{x_{[i]} \in \tilde{Y}} \sum_{u \in \mathbb{F}_p^{\codim \theta}} c_{\mu_2}(u) \mathbbm{1}(u \in \tilde{R}_{x_{[i]}})\\
\geq & \sum_{x_{[i]} \in \tilde{Y}} \sum_{u \in \mathbb{F}_p^{\codim \theta}}\frac{c_{\mu_2}(u)}{c_{\mu_2}(u) + \eta_2} |G_{[i+2,k]}|^{-1}|T_{x_{[i]}} \cap \{z_{[i+2,k]} \in G_{[i+2,k]} \colon \theta(x_{[i]}, z_{[i+2, k]}) = u\}|\mathbbm{1}(u \in \tilde{R}_{x_{[i]}})\\
\geq& (1 - \eta_2 p^{k \codim T + k \codim \theta}) \sum_{x_{[i]} \in \tilde{Y}} \sum_{u \in \tilde{R}_{x_{[i]}}} |G_{[i+2,k]}|^{-1} |T_{x_{[i]}} \cap \{z_{[i+2,k]} \in G_{[i+2,k]} \colon \theta(x_{[i]}, z_{[i+2, k]}) = u\}|\\
\geq & (1 - O(\sqrt{\varepsilon''} + \sqrt[4]{\xi_0''})) \sum_{x_{[i]} \in \tilde{Y}}  |G_{[i+2,k]}|^{-1} |T_{x_{[i]}}|\\
\geq & (1 - O(\sqrt{\varepsilon''} + \sqrt[4]{\xi_0''})) |\tilde{Y}| \sigma,\end{align*}
where the first and the last inequality both use~\eqref{gamma2EqnPropDensityApprox}, while in the second inequality we use the fact that either $c_{\mu_2}(u) \geq p^{-k (\codim T + \codim \theta)}$, or $c_{\mu_2}(u) = 0$, the latter possibility implying that
\[T_{x_{[i]}} \cap \{z_{[i+2,k]} \in G_{[i+2,k]} \colon \theta(x_{[i]}, z_{[i+2, k]}) = u\} = \emptyset.\]
Hence, there is a choice of $u$ such that $c_{\mu_2}(u) \geq p^{-k (\codim T + \codim \theta)}$ and
\begin{equation}\label{YsizeFinalEqn}|Y| = (1 - O(\sqrt{\varepsilon''} + \sqrt[4]{\xi_0''})) |T^{[i]} \cap (\gamma_2)^{-1}(\mu_2)|.\end{equation}

Recall from~\eqref{newtdefn} that $T = V^{[k] \setminus \{i+1\}} \cap \gamma^{-1}(\mu_1) \cap (\gamma^{\text{conv}})^{-1}(\mu^{\text{conv}})$ and that $\gamma$ and $\gamma^{\text{conv}}$ are mixed-linear $\mathcal{G}'$-supported multiaffine maps. Let $\tilde{T}$ be the variety defined by $\{x_{[i]} \in G_{[i]} \colon (\forall j \in \mathcal{J})\, \gamma_j(x_{[i]}) = (\mu_1)_j\,\land\,(\forall j \in \mathcal{J}^{\text{conv}})\, \gamma^{\text{conv}}_j(x_{[i]}) = \mu^{\text{conv}}_j\}$, where $\mathcal{J}$ is the set of all $j$ such that $\gamma_j$ does not depend on any coordinate among $G_{[i+2,k]}$ and $\mathcal{J}^{\text{conv}}$ is the set of all $j$ such that $\gamma^{\text{conv}}_j$ does not depend on any coordinate among $G_{[i+2,k]}$. Note that the  variety $T^{[i]}$ satisfies $T^{[i]} = \tilde{T} \cap V^{[i]}$. Let 
\begin{equation}U = ((\gamma^{-1}(\mu_1) \cap (\gamma^{\text{conv}})^{-1}(\mu^{\text{conv}}) \cap\theta^{-1}(u)) \times G_{i+1}) \cap U^{\text{ind}} \cap (W^{\text{ind}} \times G_{[i+2,k]}) \subset G_{[k]}\label{udefnendproof}\end{equation}
and
\[W = \tilde{T} \cap (\gamma_2)^{-1}(\mu_2) \subset G_{[i]},\]
both of which are $\mathcal{G}'$-supported. Then we have
\[Y \subset T^{[i]} \cap (\gamma_2)^{-1}(\mu_2) = \tilde{T} \cap V^{[i]} \cap (\gamma_2)^{-1}(\mu_2) = W \cap V^{[i]}.\]
We also have
\begin{align*}|(Y &\times G_{[i+1,k]}) \cap V \cap U|\\
&= |(Y \times G_{[i+1,k]}) \cap ((\gamma^{-1}(\mu_1) \cap (\gamma^{\text{conv}})^{-1}(\mu^{\text{conv}}) \cap \theta^{-1}(u)) \times G_{i+1}) \cap V \cap U^{\text{ind}} \cap (W^{\text{ind}} \times G_{[i+2,k]})|\\ 
&= |(Y \times G_{[i+1,k]}) \cap ((T \cap \theta^{-1}(u)) \times G_{i+1}) \cap V \cap U^{\text{ind}} \cap (W^{\text{ind}} \times G_{[i+2,k]})|\\
&= \sum_{x_{[k] \setminus \{i+1\}} \in (Y \times G_{[i+2,k]}) \cap T \cap \theta^{-1}(u)}\Big|\Big(V \cap U^{\text{ind}} \cap (W^{\text{ind}} \times G_{[i+2,k]})\Big)_{x_{[k] \setminus \{i+1\}}}\Big|\\
&=|(Y \times G_{[i+2,k]}) \cap T \cap \theta^{-1}(u)| \cdot \delta |G_{i+1}|\hspace{1cm}\text{(by~\eqref{deltaEqnPropTElt})}\\
&=\sum_{x_{[i]} \in Y} |T_{x_{[i]}} \cap \{z_{[i+2,k]} \in G_{[i+2,k]} \colon \theta(x_{[i]}, z_{[i+2, k]}) = u\}| \cdot \delta |G_{i+1}|\\
&\geq|Y| \delta c_{\mu_2}(u) |G_{[i+1,k]}|\hspace{1cm} \text{(by ~\eqref{gamma2EqnPropDensityApprox})}\\
&= (1 - O(\sqrt{\varepsilon''} + \sqrt[4]{\xi_0''})) \delta c_{\mu_2}(u) |T^{[i]} \cap (\gamma_2)^{-1}(\mu_2)| |G_{[i+1,k]}|\hspace{1cm}\text{(by~\eqref{YsizeFinalEqn})}\\
&=\frac{c_{\mu_2}(u)}{c_{\mu_2}(u) + \eta_2} (1 - O(\sqrt{\varepsilon''} + \sqrt[4]{\xi_0''}))\\
&\hspace{1cm} \sum_{x_{[i]} \in T^{[i]} \cap (\gamma_2)^{-1}(\mu_2)} |T_{x_{[i]}} \cap \{z_{[i+2,k]} \in G_{[i+2,k]} \colon \theta(x_{[i]}, z_{[i+2, k]}) = u\}| \cdot \delta |G_{i+1}|\hspace{1cm}\text{(by~\eqref{gamma2EqnPropDensityApprox})}\\ 
&= (1 - O(\sqrt{\varepsilon''} + \sqrt[4]{\xi_0''})) \Big|\Big((T^{[i]} \cap (\gamma_2)^{-1}(\mu_2)) \times G_{[i+2,k]}\Big) \cap T \cap \theta^{-1}(u)\Big|\cdot\delta |G_{i+1}|\\
&\hspace{3cm}\text{(provided $\eta_2 \leq (\sqrt{\varepsilon''} + \sqrt[4]{\xi_0''}) p^{-k \codim T - k\codim \theta}$)}\\
&= (1 - O(\sqrt{\varepsilon''} + \sqrt[4]{\xi_0''}))\\
&\hspace{1cm} \sum_{x_{[k] \setminus \{i+1\}} \in ((T^{[i]} \cap (\gamma_2)^{-1}(\mu_2)) \times G_{[i+2,k]}) \cap T \cap \theta^{-1}(u)} \Big|\Big(V \cap U^{\text{ind}} \cap (W^{\text{ind}} \times G_{[i+2,k]})\Big)_{x_{[k] \setminus \{i+1\}}}\Big|\hspace{1cm}\text{(by~\eqref{deltaEqnPropTElt})}\\
\\
&=(1 - O(\sqrt{\varepsilon''} + \sqrt[4]{\xi_0''}))\\ 
&\hspace{1cm}\Big|\Big((T^{[i]} \cap (\gamma_2)^{-1}(\mu_2)) \times G_{[i+1,k]}\Big) \cap \Big((T \cap\theta^{-1}(u)) \times G_{i+1}\Big) \cap V \cap U^{\text{ind}} \cap (W^{\text{ind}} \times G_{[i+2,k]})\Big|\\
&=(1 - O(\sqrt{\varepsilon''} + \sqrt[4]{\xi_0''}))\\ 
&\hspace{1cm}\Big|\Big((\tilde{T} \cap (\gamma_2)^{-1}(\mu_2)) \times G_{[i+1,k]}\Big) \cap \Big((\gamma^{-1}(\mu_1) \cap(\gamma^{\text{conv}})^{-1}(\mu^{\text{conv}})\cap\theta^{-1}(u)) \times G_{i+1}\Big)\\
&\hspace{11cm}\cap V \cap U^{\text{ind}} \cap (W^{\text{ind}} \times G_{[i+2,k]})\Big|\\
&=(1 - O(\sqrt{\varepsilon''} + \sqrt[4]{\xi_0''}))\Big|\Big(W \times G_{[i+1,k]}\Big) \cap V \cap U\Big|.\\
\end{align*}

Finally, set
\begin{equation}Z = \Big((Y \times G_{[i+1,k]}) \cap ((T \cap \theta^{-1}(u)) \times G_{i+1}) \cap V \cap U^{\text{ind}} \cap (W^{\text{ind}} \times G_{[i+2,k]})\Big) \setminus \Big((C^{\text{sparse}} \cup C^{\text{bad}}) \times G_{i+1}\Big).\label{zdefnendproof}\end{equation}
Note that $Z$ satisfies
\begin{align*}Z &= \Big((Y \times G_{[i+1,k]}) \cap ((\gamma^{-1}(\mu_1) \cap(\gamma^{\text{conv}})^{-1}(\mu^{\text{conv}})\cap \theta^{-1}(u)) \times G_{i+1}) \cap V \cap U^{\text{ind}} \cap (W^{\text{ind}} \times G_{[i+2,k]})\Big)\\
&\hspace{13cm} \setminus \Big((C^{\text{sparse}} \cup C^{\text{bad}}) \times G_{i+1}\Big)\\
&= (Y \times G_{[i+1,k]}) \cap V \cap U\setminus \Big((C^{\text{sparse}} \cup C^{\text{bad}}) \times G_{i+1}\Big)\\
&\subset (Y \times G_{[i+1,k]}) \cap V \cap U.\end{align*}
Also, we claim that $Z$ is the subset of the domain of the new convolved map that was given by expression~\eqref{targetdomainset}. By~\eqref{targetdomainset2}, it suffices to show that $Z \subset (C \setminus (C^{\text{sparse}} \cup C^{\text{bad}})) \times G_{i+1}$. To that end, let $x_{[k]} \in Z$ be arbitrary. From the definition of $Z$, we already know that $x_{[k] \setminus \{i+1\}} \notin C^{\text{sparse}} \cup C^{\text{bad}}$. Proceeding further, we see that $x_{[i]} \in Y$, so we have $x_{[i]} \in \tilde{Y}$ and $u \in \tilde{R}_{x_{[i]}} \subset R_{x_{[i]}}$. But this further implies that $T_{x_{[i]}} \cap (\theta^{-1}(u))_{x_{[i]}} \subset C_{x_{[i]}}$. From the definition of $Z$, we have $x_{[k] \setminus \{i+1\}} \in T \cap \theta^{-1}(u)$, which completes the proof that $x_{[k] \setminus \{i+1\}} \in C$.\\ 

The last property of the set $Z$ is that it is very dense relative to the set $(Y \times G_{[i+1,k]}) \cap V \cap U$, which we now show. We have
\begin{align*}\Big|\Big(&(Y \times G_{[i+1,k]}) \cap V \cap U\Big) \setminus Z\Big|\\
&=\Big|\Big((Y \times G_{[i+1,k]}) \cap ((T \cap\theta^{-1}(u)) \times G_{i+1}) \cap V \cap U^{\text{ind}} \cap (W^{\text{ind}} \times G_{[i+2,k]})\Big) \cap \Big((C^{\text{sparse}} \cup C^{\text{bad}}) \times G_{i+1}\Big)\Big|\\ 
&=\Big|(Y \times G_{[i+2,k]}) \cap T \cap \theta^{-1}(u) \cap (C^{\text{sparse}} \cup C^{\text{bad}})\Big| \cdot \delta |G_{i+1}|\hspace{1cm}\text{(by~\eqref{deltaEqnPropTElt})}\\
&=\sum_{x_{[i]} \in Y} |(T \cap \theta^{-1}(u))_{x_{[i]}} \cap (C^{\text{sparse}} \cup C^{\text{bad}})_{x_{[i]}}| \cdot \delta |G_{i+1}|\\
&\leq\sqrt[4]{\xi_0''}\sum_{x_{[i]} \in Y} |(T \cap \theta^{-1}(u))_{x_{[i]}}| \cdot \delta |G_{i+1}|\hspace{1cm}\text{(since $u \in \tilde{R}_{x_{[i]}}$ for all $x_{[i]} \in Y$)}\\
&=\sqrt[4]{\xi_0''} |(Y \times G_{[i+2,k]}) \cap T \cap \theta^{-1}(u)| \cdot \delta |G_{i+1}|\\
&=\sqrt[4]{\xi_0''}\Big|(Y \times G_{[i+1,k]}) \cap ((T \cap \theta^{-1}(u)) \times G_{i+1}) \cap V \cap U^{\text{ind}} \cap (W^{\text{ind}} \times G_{[i+2,k]})\Big|\hspace{1cm}\text{(by~\eqref{deltaEqnPropTElt})}\\
&=\sqrt[4]{\xi_0''}\Big|(Y \times G_{[i+1,k]}) \cap ((\gamma^{-1}(\mu_1) \cap (\gamma^{\text{conv}})^{-1}(\mu^{\text{conv}}) \cap \theta^{-1}(u)) \times G_{i+1}) \cap V \cap U^{\text{ind}} \cap (W^{\text{ind}} \times G_{[i+2,k]})\Big|\\
&=\sqrt[4]{\xi_0''}\Big|(Y \times G_{[i+1,k]}) \cap U \cap V\Big|\\
&\leq\sqrt[4]{\xi_0''} |((W \cap V^{[i]}) \times G_{[i+1,k]}) \cap V \cap U|.\end{align*}
It remains to choose parameters $\xi_0, m$ and $\eta_2$. We set $\eta_2 = \frac{1}{2}(\varepsilon + \xi_0) p^{-k\codim T - (k+1)\codim \theta}$. To see that this choice will make the required bounds involving $\eta_2$ hold, recall that we assumed that $\varepsilon'' \geq \varepsilon + \xi_0$ and $\xi_0'' \geq \xi_0$, where $\varepsilon''$ and $\xi_0''$ were previously defined and satisfy $\varepsilon'' = O(\varepsilon^{\Omega(1)} + \xi_0)$ and $\xi_0'' = p^{-m/3} + O(\varepsilon_0^{-1} \sqrt{\xi_0}) = p^{-m/3} + O(\sqrt{\xi_0})$.\\
\indent Finally, we may choose $m \leq O(\log_p \delta^{-1} + \log_p \xi_0^{-1})$ and $\xi_0 \geq \Omega(\xi^{O(1)})$ so that all the other required bounds hold. This completes the proof.\end{proof}

\subsection{Obtaining a global multiaffine map}

Let $\phi$ be a multiaffine map defined on $1-o(1)$ of a $\mathcal{G}$-supported variety $V$. The next proposition tells us that we can use $\phi$ to define a multiaffine map $\psi$ on $1-o(1)$ of a $(\mathcal{G} \setminus \{I_0\})$-supported variety, where $I_0 \in \mathcal{G}$ is a maximal member. The value of $\psi$ is obtained by evaluation of $\phi$ on an arrangement of points. However, we allow more than a single shape of arrangement, which leads to a decomposition of the domain of $\psi$ according to which kind of arrangement was used.

\begin{proposition}\label{StabBogFnMultThm}There exists $\varepsilon_0 = \varepsilon_0(k) > 0$ with the following property.\\
\indent Let $\mathcal{G} \subset \mathcal{P}$ be a down-set and let $\mathcal{G}'$ be a down-set obtained by removing a maximal element $I_0$ from $\mathcal{G}$. Let $V \subset G_{[k]}$ be a $\mathcal{G}$-supported mixed-linear variety of codimension at most $r$ given by $V = \{x_{[k]} \in G_{[k]} \colon (\forall i \in [r])\,\, \alpha_i(x_{I_i}) = \tau_i\}$ for some multilinear forms $\alpha_i \colon G_{I_i} \to \mathbb{F}_p$, where $I_i \in \mathcal{G}$ for each $i$. Define $V^{\mathrm{lower}}$ to be the variety $\{x_{[k]} \in G_{[k]} \colon (\forall i \in [r] \colon I_i \not= I_0)\,\,\alpha_i(x_{I_i}) = \tau_i\}$. Let $X \subset V$ be a set of size at least $(1- \varepsilon_0) |V|$. Let $\phi \colon X \to H$ be a multiaffine map and let $\eta > 0$. Suppose that $\dim G_i \geq \con \Big(2^r + \log_p \xi^{-1}\Big)^{\con}$ for each $i \in [k]$. Then there exist
\begin{itemize}
\item a positive integer $s \leq p^{{(2k)}^{r + 1} - 1}$,
\item a mixed-linear $\mathcal{G}'$-supported variety $V'$ of codimension $(2 + \log_p \eta^{-1})^{2^{O(r)}}$,
\item a set $X' \subset V^{\mathrm{lower}} \cap V'$,
\item a multiaffine map $\psi \colon X' \to H$, and
\item a partition $X' = X'_1 \cup \dots \cup X'_s$,
\end{itemize}
such that 
\begin{itemize}
\item[\textbf{(i)}] $V^{\mathrm{lower}} \cap V'  \not= \emptyset$,
\item[\textbf{(ii)}] $|X'|\ \geq (1 - \eta) | V^{\mathrm{lower}} \cap V'|$,
\item[\textbf{(iii)}] for each $i \in [s]$, there exist $m = m^{(i)} \leq 3^k \cdot (2k + 1)^{r + 1}$, a collection of coefficients $\nu_{j, l} = \nu_{j, l}^{(i)} \in \mathbb{F}_p^{[0,m]}$, for $j \in [m], l \in [k]$, elements $a_{j, l} = a_{j, l}^{(i)} \in G_l$ for $j \in [m], l \in [k]$ and coefficients $\lambda_j = \lambda_j^{(i)} \in \mathbb{F}_p \setminus \{0\}$, for $j \in [m]$, such that:
\begin{itemize}
\item[\textbf{(iii.a)}] for each $x_{[k]} \in X'_i$
\begin{equation}\psi(x_{[k]}) = \sum_{j \in [m]} \lambda_j \phi\Big(\nu_{j, 1} \cdot (x_1, u^1_1, \dots, u^1_{m}) + a_{j, 1},\hspace{2pt}\dots,\hspace{2pt}\nu_{j, k} \cdot (x_k, u^k_{1}, \dots, u^k_{m}) + a_{j, k}\Big),\label{psitophiidentity}\end{equation}
where $\nu_{j, l} \cdot (x_l, u^l_{1}, \dots, u^l_{m}) = \nu_{j,l,0} x_l + \sum_{l' \in [m]} \nu_{j,l,l'} u^l_{l'}$, holds (and the arguments of $\phi$ belong to $X$) for at least
\[p^{-(2 + \log_p \eta^{-1})^{2^{O(r)}}}|G_{[k]}|^{m}\]
choices of $u^1_{1}, \dots,$ $u^1_{m} \in G_1, \dots,$ $u^k_{1}, \dots, u^k_{m} \in G_k$,
\item[\textbf{(iii.b)}] there is exactly one $j \in [m]$ such that $\nu_{j,l,0} \not= 0$ for all $l \in [k]$.
\end{itemize}
\end{itemize}
\end{proposition}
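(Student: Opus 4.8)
The plan is to combine the two main ingredients prepared in this section: Theorem~\ref{bogolyubovSingleStep} (iterated convolutional extension, producing a multi-$D$-homomorphism $\psi$ on a dense subset of a lower-order variety, together with the tri-arrangement description) with Theorem~\ref{extn1codim} (the removal of one form from the defining variety, with the explicit $\phi$-value formula and the crucial quasirandomness hypothesis on the form being removed). Before we can apply Theorem~\ref{extn1codim} we must arrange that the form $\alpha_i$ supported on $I_0$ has very small bias against the other $I_0$-supported forms; this is achieved by first applying the inverse theorem for biased multilinear forms (Theorem~\ref{strongInvARankThm}), or rather Theorem~\ref{simVarAppThm} / Lemma~\ref{biasHomog}, to replace the $I_0$-block of $\alpha$ by an equivalent collection of forms of which all nontrivial combinations are highly unbiased, at the cost of enlarging that block to bounded dimension — this is exactly the kind of manipulation used at the start of the proof of Theorem~\ref{extn1codim} itself, so in fact we can feed $V$ directly into Theorem~\ref{globalBiaffineInAllPlanes} (invoked inside that theorem) and then into Theorem~\ref{extn1codim}.

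\textbf{Key steps.} First I would apply Theorem~\ref{bogolyubovSingleStep} with $i = 0$ to $V$, $X$, $\phi$ and a small parameter, obtaining a $\mathcal{G}'$-supported mixed-linear variety $U$ of bounded codimension, a very dense subset $Z \subset V \cap U$, and a multi-$(D\cdot 20^{k})$-type homomorphism $\psi_0 \colon Z \to H$ whose value at each point is a $\pm 1$ combination of $\phi$ at the $2\cdot 3^{\ge 0}\cdots$ points of a tri-arrangement — this gives the ``arrangement of points'' shape of~\eqref{psitophiidentity} and, by counting tri-arrangements, the quantitative $p^{-(\cdots)}|G_{[k]}|^m$ lower bound in (iii.a). (Here one must be careful that the tri-arrangement lengths are the coordinates of $x_{[k]}$, so that after fixing $x_{[k]}$ the free parameters $u^l_{l'}$ range over a dense set; the exponent $m$ is bounded by the total number of points in such a tri-arrangement, which is at most $3^k$, times the $(2k+1)^{r+1}$ factor coming from the later extension steps.) Secondly, I would iterate Theorem~\ref{extn1codim}: the variety $V$ is defined by the forms $\alpha_i$, and $I_0$ is a maximal member of $\mathcal{G}$; applying Theorem~\ref{extn1codim} once removes the $I_0$-block (after the unbiasing preprocessing), producing a multiaffine map on a $1-o(1)$ subset of $V^{\mathrm{lower}} \cap (\text{lower-order variety})$, and the explicit formula (ii) of that theorem, expanded, gives a sum over $u_{I_0}$ of $\phi$ evaluated at points that are affine in $x_{[k]}$ and the $u$'s — precisely the shape of~\eqref{psitophiidentity}, and the single term with all $\nu_{j,l,0}\ne 0$ is the term $\phi(x_{[k]\setminus\{c_m\}}, x_{c_m} - \tfrac{\mu-\nu_1}{\mu_0-\nu_1}(a_{c_m}-u_{c_m}))$ when $x_{[k]}\in X$ (i.e. on the piece $V$), which yields (iii.b). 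The partition $X' = X'_1 \cup\dots\cup X'_s$ arises because the formula in Theorem~\ref{extn1codim}(ii) takes different shapes according to the value $\mu = \alpha_{I_0}(x_{I_0})$ (there are $p$ possibilities) and according to which tri-arrangement branch was taken in the first step; iterating this over the at most $r$ applications and the various branchings gives the bound $s \le p^{(2k)^{r+1}-1}$, and the codimension bound $(2+\log_p\eta^{-1})^{2^{O(r)}}$ comes from the fact that each of the $\le r$ steps raises the codimension polynomially (so after $r$ steps it is iterated-polynomial, hence bounded by a doubly-exponential-in-$r$ power).

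\textbf{Putting it together.} After the convolutional extension (Step 1) and the single form-removal (Step 2, one application of Theorem~\ref{extn1codim}), one obtains a multiaffine map on a $1-o(1)$ subset $X'$ of $V^{\mathrm{lower}} \cap V'$ for a mixed-linear $\mathcal{G}'$-supported $V'$. One then composes the two explicit formulas: substituting the tri-arrangement expression for $\psi_0$ into the $\phi$-evaluations appearing in the Theorem~\ref{extn1codim} formula expresses $\psi$ as a $\pm\lambda_j$ combination of values of the original $\phi$ at affine-in-$(x,u)$ points, with the count of valid parameter choices controlled by multiplying the two density bounds (using Lemma~\ref{varsizelemma} to turn each ``dense subvariety'' into a genuine $p^{-O(\cdots)}$ density). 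Tracking which branch of each formula was used defines the partition into the $X'_i$, and conditions (iii.a)–(iii.b) follow. Finally, property (i) (nonemptiness) and (ii) (density $1-\eta$) are read off directly from the corresponding conclusions of Theorems~\ref{bogolyubovSingleStep} and~\ref{extn1codim} after choosing the internal parameters ($\xi$ in those theorems) small enough in terms of $\eta$ and $\varepsilon$; one also uses $\varepsilon_0$ small enough that all the ``$O(\varepsilon^{\Omega(1)})$'' error terms are absorbed.

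\textbf{Main obstacle.} I expect the principal difficulty to be bookkeeping rather than conceptual: one must verify that composing the two explicit formulas keeps the points at which $\phi$ is evaluated \emph{affine} in the combined variable $(x_{[k]}, u^{\bcdot}_{\bcdot})$ and that exactly one monomial retains a nonzero $x$-coefficient in every coordinate (condition (iii.b)), which requires carefully matching the ``all directions move consecutively'' structure of arrangements in Theorem~\ref{bogolyubovSingleStep} with the single distinguished coordinate $c_m$ of Theorem~\ref{extn1codim}; and one must check that the quantitative density bounds survive multiplication and that the parameter count $m \le 3^k(2k+1)^{r+1}$ and $s \le p^{(2k)^{r+1}-1}$ are not exceeded. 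A secondary technical point is ensuring the unbiasedness hypothesis of Theorem~\ref{extn1codim} can genuinely be met for the specific maximal set $I_0$ after the block-enlargement, which is where Lemma~\ref{biasHomog} and Theorem~\ref{simVarAppThm} are needed; this is routine but must be stated carefully since $I_0$ need not be the "last" block.
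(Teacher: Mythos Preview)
Your approach is essentially the paper's and is correct in outline. Two points need sharpening.

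First, your text oscillates between ``applying Theorem~\ref{extn1codim} once removes the $I_0$-block'' / ``one application of Theorem~\ref{extn1codim}'' and ``iterating this over the at most $r$ applications''. The latter is right: Theorem~\ref{extn1codim} removes a single form $\alpha_s$ with $I_s = I_0$, so the paper runs an induction on $s \in [r_0+1]$, one step per $I_0$-supported form. Moreover, the bias hypothesis is handled there not by a global preprocessing but by a dichotomy at each step: if some combination $\alpha_s - \lambda\cdot\alpha_{[s+1,r_0]}$ has large bias, Theorem~\ref{strongInvARankThm} writes it as a short sum of products of lower-order forms, so the constraint $\alpha_s = \tau_s$ becomes a union of layers of a $\mathcal{G}'$-supported map and one simply averages over layers to complete the inductive step without invoking Theorem~\ref{extn1codim} at all. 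Your preprocessing idea is equivalent in effect, but this is how it is actually carried out. The recursion for the partition size is $N_{s+1} \le p\cdot N_s^{2k}$ (the factor $p$ from the value $\mu = \alpha_s(x_{I_0})$, the exponent $2k$ from distributing the at most $2k$ occurrences of $\psi$ in the formula of Theorem~\ref{extn1codim}(ii) among the previous pieces), which gives the stated bound on $s$.

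Second, you omit one intermediate step: Theorem~\ref{bogolyubovSingleStep} outputs only a multi-\emph{homomorphism} $\psi$, whereas Theorem~\ref{extn1codim} requires its input to be multi\emph{affine}. The paper inserts an application of Proposition~\ref{MltHommToMltAff} between the two to upgrade $\psi$ (passing to a further lower-order subvariety of bounded codimension and a slightly smaller dense subset). Without this, the chain does not compose.
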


\begin{proof} For fixed $k$ and $p$, let $D_0, c_0 > 0$ be constants such that the term $O(\varepsilon^{\Omega(1)})$ replaces $D_0 \varepsilon^{c_0}$ that appears in~\eqref{extn1codimXprimeDens} in the statement of Theorem~\ref{extn1codim}. Without loss of generality we may assume that $D_0 \geq 2$ and $c_0 \leq 1/2$, as lower values of $D_0$ and higher values of $c_0$ only make the bound stronger. Let $d_0 = (2D_0)^{-c_0^{-1}}$ and $C_0 = c_0^{-1}$. Likewise, let $D_1, c_1 > 0$ be the implicit constants such that the term $O(\varepsilon^{\Omega(1)})$ replaces $D_1 \varepsilon^{c_1}$ in~\eqref{MltHommToMltAffEqn} in Proposition~\ref{MltHommToMltAff} and again assume that in fact $D_1 \geq 2$ and $c_1 \leq 1/2$. Let $d_1 = (2D_1)^{-1}$ and $C_1 = c_1^{-1}$.\\
\indent Since $\phi$ is multiaffine, it is also a multi-$2 \cdot 20^k$-homomorphism. Apply Theorem~\ref{bogolyubovSingleStep} for $i = 0$ and an error parameter $\xi_0 = \Big(\frac{d_1}{2}\Big)^{C_1} \cdot d_0^{C_1(C_0^{r-1} + C_0^{r-2} + \dots + 1)} \eta^{C_1 \cdot C_0^r}$ to the map $\phi$. (This is the reason why $\varepsilon_0$ has to be smaller than some positive constant -- the term $(O(\varepsilon^{\Omega(1)}) + \xi)$ in the bound in property \textbf{(ii)} of that theorem has to be less than 1 in order to have $Y \times G_{[k]} = G_{[k]}$.) We obtain 
\begin{itemize}
\item a $\mathcal{G}'$-supported variety $U \subset G_{[k]}$ of codimension $O((r + \log_p \xi_0^{-1})^{O(1)}) = O(C_0^{O(r)} \log^{O(1)}_p \eta^{-1})$, which is mixed-linear, without loss of generality,
\item a subset $Z \subset V \cap U$, and
\item a multi-homomorphism $\psi \colon Z \to H$,
\end{itemize}
such that
\begin{itemize}
\item[\textbf{(i)}] the variety $V \cap U$ is non-empty,
\item[\textbf{(ii)}] $|(V \cap U) \setminus Z| \leq \xi_0 |V \cap U|$,
\item[\textbf{(iii)}] for each $x_{[k]} \in Z$, there are $p^{-O\big((r + \log_p \xi_0^{-1})^{O(1)}\big)}|G_k|^{2 \cdot 3^{k - 1}}|G_{k-1}|^{2 \cdot 3^{k- 2}} \cdots |G_1|^2$ $(k,k-1, \dots, 1)$-tri-arrangements $q$ with points in $X$ of lengths $x_{[k]}$ such that $\phi(q) = \psi(x_{[k]})$.
\end{itemize}
Apply Proposition~\ref{MltHommToMltAff} to $Z \subset V \cap U$ and the map $\psi$. Abusing notation, we may assume that all the properties above hold, with $\psi$ now being multiaffine, and $|(V \cap U) \setminus Z| \leq \xi_0 |V \cap U|$ changed to $|(V \cap U) \setminus Z| \leq \xi_1 |V \cap U|$, for
\[\xi_1 = d_0^{C_0^{r-1} + C_0^{r-2} + \dots + 1} \eta^{C_0^r}.\]
Without loss of generality the forms that define $V$ are organized in such a way that $I_i = I_0$ if and only if $i \in [r_0]$ for some $r_0 \leq r$. By induction on $s \in [r_0 + 1]$ we show that there exist
\begin{itemize}
\item a $\mathcal{G}'$-supported variety $U^{\text{lower}} \subset G_{[k]}$ of codimension at most $(C_0^{O(r)} + \log^{O(1)}_p \eta^{-1})^{2^{O(s)}}$,
\item a subset $Z^{\text{dom}} \subset \{x_{[k]} \in G_{[k]} \colon (\forall i \in [s,r_0])\,\, \alpha_i(x_{I_0}) = \tau_i\} \cap U^{\text{lower}}$, and
\item a multiaffine map $\psi \colon Z^{\text{dom}} \to H$,
\end{itemize}
such that
\begin{itemize}
\item[\textbf{(i)}] the variety $\{x_{[k]} \in G_{[k]} \colon (\forall i \in [s,r_0]) \,\,\alpha_i(x_{I_0}) = \tau_i\} \cap U^{\text{lower}}$ is non-empty,
\item[\textbf{(ii)}] \begin{align*}|(\{x_{[k]} \in G_{[k]} &\colon (\forall i \in [s,r_0]) \,\,\alpha_i(x_{I_0}) = \tau_i\} \cap U^{\text{lower}}) \setminus Z^{\text{dom}}|\\
& \leq  d_0^{C_0^{r - s -1} + C_0^{r-s-2} + \dots + 1} \eta^{C_0^{r-s}} |\{x_{[k]} \in G_{[k]} \colon (\forall i \in [s,r_0]) \alpha_i(x_{I_0}) = \tau_i\} \cap U^{\text{lower}}|,\\
\end{align*}
\item[\textbf{(iii)}] we have a partition of $Z^{\text{dom}}$ into $p^{(2k)^{s} - 1}$ pieces such that properties \textbf{(iii.a)} and \textbf{(iii.b)} from the statement hold with $m^{(i)} \leq 3^k (2k)^s$ and a proportion $p^{-(C_0^{O(r)} \log^{O(1)}_p \eta^{-1})^{2^{O(s)}}}$ of the parameters making the relevant identity~\eqref{psitophiidentity} hold.
\end{itemize}
For the base case $s = 1$, we use $U^{\text{lower}} = U \cap \{x_{[k]} \in G_{[k]} \colon (\forall i \in [r_0 + 1, r]) \alpha_i(x_{I_i}) = \tau_i\}$, $Z^{\text{dom}} = Z$ with the trivial partition $Z = Z$ and with $\phi$ as above. We just need to identify the scalars to show that \textbf{(iii)} holds. We can parametrize $(k, k-1, \dots, 1)$-tri-arrangements of lengths $x_{[k]}$ using $u^d_{\varepsilon} \in G_d$ for $\varepsilon \in \{1,2\}^{\{d\}}\times \{1,2,3\}^{[d-1, 1]}$,\footnote{This is the set of sequences indexed from $d$ down to $1$, whose elements lie in $\{1,2,3\}$ except the element indexed by $d$ which can only be $1$ or $2$.} as follows. When $\varepsilon \in \{1,2,3\}^{[k,1]}$ is given, we define $y_{[k]} = y_{[k]}(\varepsilon)$, by setting 
\[y_d = \begin{cases}u^d_{\varepsilon|_{[d, 1]}},&\text{ if }\varepsilon_d = 1,2,\\
u^d_{(1, \varepsilon|_{[d-1, 1]})} + u^d_{(2, \varepsilon|_{[d-1, 1]})} - x_d,&\text{ if }\varepsilon_d = 3,\end{cases}\]
and define its corresponding weight $\lambda_\varepsilon = (-1)^{|\{d \in [k] \colon \varepsilon_d = 3\}|}$. Hence, for each $x_{[k]} \in Z$ there is a sufficient number of choices of $u^i_\varepsilon$ such that
\[\psi(x_{[k]}) = \sum_{\varepsilon \in \{1,2,3\}^{[k, 1]}} \lambda_\varepsilon \phi(y_{[k]}(\varepsilon)).\]
Note that the only time all $x_1, \dots, x_k$ are present in the coordinates of $y_{[k]}$ is when $\varepsilon = (3, \dots, 3)$. The properties in \textbf{(iii)} are easily seen to hold (note that we formally need to add auxiliary variables $u^d_\varepsilon$ for $\varepsilon \notin \{1,2\}^{\{d\}}\times \{1,2,3\}^{[d-1, 1]}$, which are not used in the expression above, to make the sequences of parameters $u^d_\bullet$ of same length for each $d$).\\

Assume now that the claim holds for some $s \in [r_0]$. Let $U^{\text{lower}}$, without loss of generality defined by a mixed-linear map of codimension $r_{\text{lower}} \leq (C_0^{O(r)} + \log^{O(1)}_p \eta^{-1})^{2^{O(s)}}\Big)$, $Z^{\text{dom}} = Z^{\text{dom}}_1 \cup \dots \cup Z^{\text{dom}}_N$, where $N \leq p^{(2k)^{s} - 1}$, and $\psi$ be the objects with the properties described for $s$. Let $C_k, D_k$ be the constants from Theorem~\ref{extn1codim} and let $\xi'_s = \frac{1}{2}d_0^{C_0^{r-s-2} + C_0^{r-s-3} + \dots + 1} \eta^{C_0^{r-s - 1}}$. If $\alpha_{s}$ satisfies 
\begin{equation}\label{largebiascase}\bias \Big(\alpha_{s} - \mu \cdot \alpha_{[s+1,r_0]} \Big) \geq p^{-C_k(r_0 + r_{\text{lower}} + \log_p {\xi'_s}^{-1})^{D_k}}\end{equation}
for some $\mu \in \mathbb{F}_p^{[s+1,r_0]}$, apply Theorem~\ref{strongInvARankThm}. Then there exists a positive integer $t \leq O\Big((r_0 + r_{\text{lower}} + \log_p {\xi'_s}^{-1})^{O(1)}\Big)$, subsets $\emptyset \not= J_i \subsetneq I_0$ and multilinear forms $\beta_i \colon G_{J_i} \to \mathbb{F}_p$, $\gamma_i \colon G_{I_0 \setminus J_i} \to \mathbb{F}_p$ for $j \in [t]$ such that
\[\alpha_{s}(x_{I_0}) - \mu \cdot \alpha_{[s+1,r_0]}(x_{I_0}) = \sum_{i \in [t]} \beta_i(x_{J_i}) \gamma_i(x_{I_0 \setminus J_i}).\]
Then the variety $\{x_{[k]} \in G_{[k]} \colon (\forall i \in [s,r_0])\,\, \alpha_i(x_{I_0}) = \tau_i\} \cap U^{\text{lower}}$ becomes a union of layers of the form
\[\{x_{[k]} \in G_{[k]} \colon (\forall i \in [s + 1, r_0])\,\, \alpha_i(x_{I_0}) = \tau_i\} \cap U^{\text{lower}} \cap \{x_{[k]} \in G_{[k]} \colon (\forall i \in [t])\,\,\beta_i(x_{J_i}) = \tau'_i, \gamma_i(x_{I_0 \setminus J_i}) = \tau''_i\}\]
for some $\tau', \tau'' \in \mathbb{F}_p^t$. We simply average over such layers to find one that has sufficiently dense intersection with $Z^{\text{dom}}$ to finish the proof in this case.\\
\indent Thus, we may assume the opposite, i.e.\ that inequality~\eqref{largebiascase} fails. Apply Theorem~\ref{extn1codim} to $Z^{\text{dom}} \subset \{x_{[k]} \in G_{[k]} \colon (\forall i \in [s,r_0]) \alpha_i(x_{I_0}) = \tau_i\} \cap U^{\text{lower}}$, the map $\psi$ and parameter $\xi'_s$. We obtain a further $\mathcal{G}'$-supported variety $W \subset G_{[k]}$ of codimension $(r + r_{\text{lower}})^{O(1)} + r C_0^{O(r)} \log^{O(1)}_p \eta^{-1}$, a subset $Z' \subset \{x_{[k]} \in G_{[k]} \colon (\forall i \in [s+1,r_0])\,\, \alpha_i(x_{I_0}) = \tau_i\} \cap U^{\text{lower}} \cap W$ of size at least
\begin{align*}(1 - D_0 &(d_0^{C_0^{r - s -1} + C_0^{r-s-2} + \dots + 1} \eta^{C_0^{r-s}})^{c_0} - \xi'_s) |\{x_{[k]} \in G_{[k]} \colon (\forall i \in [s+1,r_0])\,\, \alpha_i(x_{I_0}) = \tau_i\} \cap U^{\text{lower}} \cap W|\\
&= (1 - d_0^{C_0^{r-s-2} + C_0^{r-s-3} + \dots + 1} \eta^{C_0^{r-s -1}})|\{x_{[k]} \in G_{[k]} \colon (\forall i \in [s+1,r_0])\,\, \alpha_i(x_{I_0}) = \tau_i\} \cap U^{\text{lower}} \cap W| > 0,\end{align*}
a multiaffine map $\psi' \colon Z' \to H$, a point $a_{[k]} \in G_{[k]}$, and $\mu_0 \in \mathbb{F}_p \setminus \{\tau_s\}$, such that for each $x_{[k]} \in Z'$,
\begin{itemize}
\item if $\alpha_s(x_{I_0}) = \tau_s$, then $\psi'(x_{[k]}) = \psi(x_{[k]})$, and
\item if $\alpha_s(x_{I_0}) = \mu \not= \tau_s$, for $\Omega(p^{-O\big((r + r_{\text{lower}} + \log_p {\xi'_s}^{-1})^{O(1)}\big)}|G_{I_0}|)$ choices of $u_{I_0} \in G_{I_0}$, we have
\begin{align}\psi'(x_{[k]}) &= \psi\Big(x_{[k] \setminus \{c_t\}}, x_{c_t} - \frac{\mu- \tau_s}{\mu_0 - \tau_s}(a_{c_t} - u_{c_t})\Big)\nonumber\\
&\hspace{1cm} + \frac{\mu- \tau_s}{\mu_0 - \tau_s}\bigg( - \psi(x_{[k] \setminus \{c_t\}}, u_{c_t})\nonumber\\
&\hspace{4cm} + \sum_{i \in [t-1]}\psi(x_{[k] \setminus \{c_i, \dots, c_{t}\}}, u_{c_i} + x_{c_i} - a_{c_i}, a_{\{c_{i+1}, \dots, c_t\}})\nonumber\\
&\hspace{4cm} - \sum_{i \in [t-1]}\psi(x_{[k] \setminus \{c_i, \dots, c_{t}\}}, u_{c_i}, a_{\{c_{i+1}, \dots, c_t\}})\bigg),\label{znewptsidentity}\end{align}
where $I_0 = \{c_1, \dots, c_t\}$.\\
\end{itemize}

It remains to show the property \textbf{(iii)} for the set $Z'$. We first partition $Z'$ into sets of the form $Z' \cap \{x_{[k]} \in G_{[k]} \colon \alpha_s(x_{I_0}) = \mu\}$. Thus, for each set in the current partition, we have that one of the formulas above holds for all points $x_{[k]}$ in the set. First, when $\mu = \tau_s$, we partition $Z' \cap \{x_{[k]} \in G_{[k]} \colon \alpha_s(x_{I_0}) = \tau_s\}$ as $\bigcup_{i \in [N]} Z' \cap \{x_{[k]} \in G_{[k]} \colon \alpha_s(x_{I_0}) = \tau_s\} \cap Z^{\text{dom}}_i$. Secondly, when $\mu \not= \tau_s$, partition $Z' \cap \{x_{[k]} \in G_{[k]} \colon \alpha_s(x_{I_0}) = \mu\}$ further by looking, for each $x_{[k]}$, at most frequent choice of indices $(i_1, i_2, \dots, i_{2t}) \in [N]^{2t}$ such that the arguments of $\psi$ belong to $Z^{\text{dom}}_{i_1}, Z^{\text{dom}}_{i_2}, \dots, Z^{\text{dom}}_{i_{2t}}$ (in the order they appear in expression~\eqref{znewptsidentity}). Then the number of sets in the new partition is at most $p \cdot \Big(p^{(2k)^{s} - 1}\Big)^{2k}$. For each set in the new partition we get an identity of the form in the property \textbf{(iii)} by using~\eqref{znewptsidentity} and expressing each $\psi$ value as a linear combination of values of $\phi$ using the property \textbf{(iii)} for sets $Z^{\text{dom}}_{i_1}, Z^{\text{dom}}_{i_2}, \dots, Z^{\text{dom}}_{i_{2t}}$. In the final expression, where $\psi'(x_{[k]})$ is expressed in terms of $\phi$, the only time we get an argument of $\phi$ that depends on all $x_1, \dots, x_k$ comes from the only such argument of $\psi$ in~\eqref{znewptsidentity} (namely the first one).\end{proof}

We now arrive to the culmination of the work on extensions of multiaffine maps. Using Proposition~\ref{StabBogFnMultThm} several times, we are able to prove that a multiaffine map defined on $1-o(1)$ of a variety necessarily agrees on a large proportion of points with a global multiaffine map. Recall that $\exp^{(t)}$ stands for the $t$-fold iteration of the exponential function, i.e.\ the tower of exponentials of height $t$.

\begin{theorem}\label{veryDenseToFullMultiaffineMap}There exists $\varepsilon_0 = \varepsilon_0(k) > 0$ with the following property.\\
\indent Let $V \subset G_{[k]}$ be a multiaffine variety of codimension $r$ and let $X \subset V$ be a set of size at least $(1- \varepsilon_0) |V|$. Suppose that $\dim G_i \geq \exp^{(2^{k+1} - 1)}(\con\,r)$ for each $i \in [k]$. Let $\phi \colon X \to H$ be a multiaffine map. Then there exist a subset $X' \subset X$ of size $\Big(\exp^{(2^{k+1})}(O(r))\Big)^{-1}|X|$, a global multiaffine map $\Phi \colon G_{[k]} \to H$, a positive integer $m =\exp^{(2^{k+1} - 1)}(O(r))$, collections of coefficients $\nu_{j, l} \in \mathbb{F}_p^{[0,m]}$ for $j \in [m], l \in [k]$, elements $a_{j, l} \in G_l$ for $j \in [m], l \in [k]$, and coefficients $\lambda_j \in \mathbb{F}_p \setminus \{0\}$, $j \in [m]$, such that
\begin{itemize}
\item[\textbf{(a)}] for each $x_{[k]} \in X'$
\begin{equation}\Phi(x_{[k]}) = \sum_{j \in [m]} \lambda_j \phi\Big(\nu_{j, 1} \cdot (x_1, u^1_1, \dots, u^1_{m}) + a_{j, 1},\hspace{3pt}\dots,\hspace{3pt}\nu_{j, k} \cdot (x_k, u^k_1, \dots, u^k_{m}) + a_{j, k}\Big),\label{phitoglobalidentity}\end{equation}
where $\nu_{j, l} \cdot (x_l, u_{l, 1}, \dots, u_{l,m}) = \nu_{j,l,0} x_l + \sum_{s \in [m]} \nu_{j,l,s} u_{l,s}$, holds for at least
\[2^{-\exp^{(2^{k+1})} (O(r))}|G_{[k]}|^{m}\]
choices of $u^1_{1}, \dots,$ $u^1_{m} \in G_1, \dots,$ $u^k_{1}, \dots, u^k_{m} \in G_k$,
\item[\textbf{(b)}] there is exactly one $j \in [m]$ such that $\nu_{j,l,0} \not= 0$ for all $l \in [k]$.
\end{itemize}
\end{theorem}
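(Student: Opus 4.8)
The statement is the last link in the chain of extension theorems, and the natural plan is to iterate Proposition~\ref{StabBogFnMultThm} once for each maximal element of the support of the variety $V$, peeling away one maximal set at a time until nothing is left, and to track carefully how the parameters degrade at each step. First I would reduce to the case where $V$ is mixed-linear: taking multilinear parts of the multiaffine map that defines $V$, we may pass to a layer of a mixed-linear map whose codimension is at most $2^k r$, at the cost of averaging $X$ over that layer (so $\varepsilon_0$ only needs to shrink by a bounded factor). Now let $\mathcal{G} = \supp V$ and enumerate its maximal elements $I_0^{(1)}, I_0^{(2)}, \dots$; note that $|\mathcal{G}| \leq 2^k$, so there are at most $2^k$ of them across the whole process (after removing one maximal set, new sets may become maximal, but each removal strictly decreases $|\mathcal{G}|$, so the total number of applications of Proposition~\ref{StabBogFnMultThm} is at most $|\mathcal{G}| \leq 2^k$).

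The core of the argument is the iteration. At stage $t$ I would have a $\mathcal{G}_t$-supported mixed-linear variety $V_t$ of codimension $r_t$, a set $X_t \subset V_t$ with $|X_t| \geq (1 - \varepsilon_0)|V_t|$ (or whatever density Proposition~\ref{StabBogFnMultThm}(ii) guarantees — here I would choose the error parameter $\eta$ at each stage small enough, in fact of the form $\varepsilon_0^{\Omega(1)}$, so that the density of the new domain stays above $1 - \varepsilon_0$), a multiaffine map $\psi_t \colon X_t \to H$, and a partition $X_t = \bigcup_i X_{t,i}$ together with, for each piece, an identity of the form \eqref{psitophiidentity} expressing $\psi_t$ on that piece as a $\pm$-combination of values of the original $\phi$ on an explicit arrangement, valid for a dense set of auxiliary parameters, with the key structural property \textbf{(iii.b)}: exactly one summand uses all of $x_1, \dots, x_k$. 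Applying Proposition~\ref{StabBogFnMultThm} removes a maximal element from $\mathcal{G}_t$, replaces $V_t$ by a lower-order subvariety $V_{t+1}$ of codimension $(2 + \log_p \eta^{-1})^{2^{O(r_t)}}$, refines the partition (multiplying the number of pieces by at most $p^{(2k)^{r+1}}$), and composes the arrangement-formulas: substituting \eqref{psitophiidentity} for $\psi_t$ into the new \eqref{psitophiidentity} for $\psi_{t+1}$ gives a new identity still of the same shape, and crucially property \textbf{(iii.b)} is preserved because the unique ``full'' summand of $\psi_{t+1}$ involves the unique ``full'' summand of $\psi_t$ and nothing else picks up all coordinates. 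After at most $2^k$ stages $\mathcal{G}$ is reduced to (a subset of) $\{\emptyset\}$, so $V_{\text{final}}$ is essentially all of $G_{[k]}$ (or a coset, handled trivially), and $\psi_{\text{final}}$ is a multiaffine map defined on $1 - o(1)$ of $G_{[k]}$; by Proposition~\ref{lastExtnStep} it agrees with a unique global multiaffine map $\Phi$ on a set of density $1 - C_k \varepsilon^{d_k}$. Pulling $\Phi$ back through the chain of arrangement-identities, and intersecting the various dense parameter-sets (each a variety complement of density $1 - o(1)$, and each fibre of each projection is nonempty on a dense set by Lemma~\ref{varsizelemma} and the random-coset/quasirandom counting lemmas), yields \eqref{phitoglobalidentity} for $X'$ equal to the preimage in $X$ of the agreement set, with the parameter count $m$ being the product of the per-stage counts, hence of iterated-exponential type.

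The parameter bookkeeping is where the iterated exponential $\exp^{(2^{k+1})}$ comes from and is the first thing to get right: the codimension map $r \mapsto (2 + \log_p \eta^{-1})^{2^{O(r)}}$ roughly squares the ``exponential height'' of $r$ at each of the $O(2^k)$ stages, while the number-of-pieces and number-of-summands quantities $p^{(2k)^{r+1}}$ and $3^k(2k+1)^{r+1}$ are themselves already exponential in $r$, so composing $O(2^k)$ of them builds a tower of height $\sim 2 \cdot 2^k$. I would set up an explicit recursion $f_0 = O(r)$, $f_{t+1} = \exp(O(f_t))$ and verify $f_{2^{k+1}} = \exp^{(2^{k+1})}(O(r))$ dominates both the codimension and the summand count; the hypothesis $\dim G_i \geq \exp^{(2^{k+1}-1)}(\con\, r)$ is exactly what is needed so that the dimension lower bounds demanded by Proposition~\ref{StabBogFnMultThm} (namely $\dim G_i \geq \con(2^{r_t} + \log_p \xi^{-1})^{\con}$) hold at every stage. \textbf{The main obstacle} I expect is verifying that property \textbf{(iii.b)} — the ``exactly one summand uses all coordinates'' condition — is genuinely stable under composition of the arrangement-identities across stages, since a naive substitution of a sum into a sum could in principle create several full-coordinate terms; one has to check that in \eqref{psitophiidentity} the coefficient vectors $\nu_{j,l}$ of the non-full summands of $\psi_t$ have $\nu_{j,l,0} = 0$ for some $l$, so that substituting them produces only non-full terms, and that only the one full summand, substituted into the one full summand of $\psi_{t+1}$, yields a full term — this is a purely combinatorial check on the $\nu$-patterns but it is the linchpin of the whole induction, and also what ultimately guarantees that the resulting $\Phi$ is multiaffine rather than merely multiaffine-on-cross-sections. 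A secondary technical point is ensuring the intersection of all the dense parameter-sets across all stages is still nonempty and dense: each is the complement of a bounded-codimension variety or a set controlled by Lemma~\ref{randomCosetInt}, so a union bound works provided the $\dim G_i$ are large enough, which the hypothesis supplies.
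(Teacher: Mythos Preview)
Your proposal is correct and follows the same route as the paper: iterate Proposition~\ref{StabBogFnMultThm} over the $2^k$ subsets of $[k]$ (peeling one maximal element at a time with $\eta = \varepsilon_0$ constant), maintain the arrangement-identity and property \textbf{(iii.b)} inductively, and finish with Proposition~\ref{lastExtnStep} followed by pigeonhole over the final partition pieces. The one detail you gloss over is that when composing the identities the paper further refines the partition by recording, for each point, the (most frequent) tuple of old-partition pieces into which the $m$ arguments of the outer identity fall; this costs an extra factor of $s^m$ in the piece count but is absorbed by the two-exponentials-per-step growth $r_{i+1} = 2^{2^{O(r_i)}}$, which matches your bookkeeping.
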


\begin{proof}Let $\varepsilon_0 > 0$ be the value in Proposition~\ref{StabBogFnMultThm}. Decreasing $\varepsilon_0$ further if necessary, we may assume that $\varepsilon_0 \leq c_k$ and $1- C_k\varepsilon_0^{d_k} \geq 1/2$, where $c_k, C_k, d_k$ are the constants in Proposition~\ref{lastExtnStep}. Note that $\varepsilon_0$ is still a positive quantity depending on $k$ and $p$ only. List all subsets of $[k]$ as $I_1, \dots, I_{2^k}$ so that larger sets come first -- that is, if $I_i \supset I_j$ then $i \leq j$. Let $\mathcal{G}_i = \{I_i, \dots, I_{2^k}\}$, which is a down-set for each $i \in [2^k]$. By induction on $i \in [2^k]$, we shall show that there exist 
\begin{itemize}
\item[\textbf{(i)}] a non-empty $\mathcal{G}_i$-supported variety $V'$ of codimension at most $r_i = \exp^{(2 i)} (O(r))$,
\item[\textbf{(ii)}] a subset $X' \subset V'$ of size at least $(1 - \varepsilon_0)|V'|$,
\item[\textbf{(iii)}] a multiaffine map $\phi' \colon X' \to H$,
\item[\textbf{(iv)}] a partition $X' = X'_1 \cup \dots \cup X'_s$, where $s = \exp^{(2i)}(O(r))$, such that for each $i_0 \in [s]$, there exist $m = m^{(i_0)} = \exp^{(2i + 1)}(O(r))$, collections of coefficients $\nu_{j, l} = \nu_{j, l}^{(i_0)} \in \mathbb{F}_p^{[0,m]}$ for $j \in [m], l \in [k]$, elements $a_{j, l} = a_{j, l}^{(i_0)} \in G_l$ for $j \in [m], l \in [k]$, coefficients $\lambda_j = \lambda_j^{(i_0)} \in \mathbb{F}_p \setminus \{0\}$, for $j \in [m]$, such that:
\begin{itemize}
\item[\textbf{(iv.a)}] for each $x_{[k]} \in X'_{i_0}$
\[\phi'(x_{[k]}) = \sum_{j \in [m]} \lambda_j \phi(\nu_{j, 1} \cdot (x_1, u^1_{1}, \dots, u^1_{m}) + a_{j, 1},\hspace{2pt}\dots,\hspace{2pt}\nu_{j, k} \cdot (x_k, u^k_{1}, \dots, u^k_{m}) + a_{j, k}),\]
for $2^{-\exp^{(2i)}(O(r))}|G_{[k]}|^{m}$ choices of $u^1_{1}, \dots,$ $u^1_{m} \in G_1, \dots,$ $u^k_{1}, \dots, u^k_{m} \in G_k$,
\item[\textbf{(iv.b)}] there is exactly one $j \in [m]$ such that $\nu_{j,l,0} \not= 0$ for all $l \in [k]$.
\end{itemize}
\end{itemize}
For $i = 1$, the claim is trivial. Assume now that the claim has been proved for some $i \in [2^k - 1]$, and let $V', X' = X'_1  \cup \dots \cup X_s'$ and $\phi'$ be the relevant variety, set and map. By averaging over the layers of the multilinear parts of the multiaffine map that defines $V'$ we may without loss of generality assume that $V'$ is mixed-linear. Recall that $I_i$ is a maximal set in $\mathcal{G}_i$. Write $V' = V^{\text{max}} \cap V^{\text{lower}}$, where $V^{\text{max}}$ and $V^{\text{lower}}$ are mixed-linear, $V^{\text{max}}$ is defined by multilinear forms that depend exactly on $G_{I_i}$ and $V^{\text{lower}}$ is $\mathcal{G}_{i+1}$-supported. Apply Proposition~\ref{StabBogFnMultThm} to $X' \subset V'$, the map $\phi'$ and the parameter $\eta = \varepsilon_0$ to get a further $\mathcal{G}_{i+1}$-supported variety $U^{\text{lower}}$ of codimension $2^{2^{O(r_i)}}$ such that $V^{\text{lower}} \cap U^{\text{lower}} \not= \emptyset$, a subset $X'' \subset V^{\text{lower}} \cap U^{\text{lower}}$ of size at least $(1 - \varepsilon_0) |V^{\text{lower}} \cap U^{\text{lower}}|$, a multiaffine map $\phi'' \colon X'' \to H$, and a partition of $X''$ into $2^{2^{O(r_i)}}$ pieces $X''_e$, such that for each piece there exist $m = m^{(e)} = 2^{O(r_i)}$, collections of coefficients $\nu_{j, l} = \nu_{j, l}^{(e)} \in \mathbb{F}_p^{[0,m]}$ for $j \in [m], l \in [k]$, elements $a_{j, l} = a_{j, l}^{(e)} \in G_l$ for $j \in [m], l \in [k]$, and coefficients $\lambda_j = \lambda_j^{(e)} \in \mathbb{F}_p \setminus \{0\}$, $j \in [m]$, such that:
\begin{itemize}
\item for each $x_{[k]} \in X''_{e}$
\begin{equation}\phi''(x_{[k]}) = \sum_{j \in [m]} \lambda_j \phi'(\nu_{j, 1} \cdot (x_1, u^1_{1}, \dots, u^1_{m}) + a_{j, 1},\hspace{2pt}\dots,\hspace{2pt}\nu_{j, k} \cdot (x_k, u^k_{1}, \dots, u^k_{m}) + a_{j, k}),\label{phidprimetophiprime}\end{equation}
for $2^{-2^{2^{O(r_{i+1})}}}|G_{[k]}|^{m}$ choices of $u^1_{1}, \dots,$ $u^1_{m} \in G_1, \dots,$ $u^k_{1}, \dots, u^k_{m} \in G_k$, and
\item there is exactly one $j \in [m]$ such that $\nu_{j,l,0} \not= 0$ for all $l \in [k]$.
\end{itemize}
We partition $X''_e$ further into subsets of the following form. For each $(i_1, \dots, i_m) \in [s]^m$ we take the set of $x_{[k]} \in X''_e$ such that the argument in the $j$\textsuperscript{th} $\phi'$ term in~\eqref{phidprimetophiprime} belongs to $X'_{i_j}$ for $j \in [m]$ for at least a proportion $s^{-m}$ of allowed values of $u^1_1, \dots, u^k_m$. (If $x_{[k]}$ can be assigned to several such sets, we put it in an arbitrary one.) On each such subset, we obtain an identity as in property \textbf{(iv)} that expresses $\phi''(x_{[k]})$ in terms of $\phi$ by using \textbf{(iv.a)} for each $X'_{i_j}$, $j \in [m]$, provided by the inductive hypothesis. Again, property \textbf{(iv.b)} holds for the newly obtained expression, as there is a single term in~\eqref{phidprimetophiprime} that involves all of $x_1, \dots, x_k$ and property \textbf{(iv.b)} for the relevant $X'_{i_j}$ gives a unique $\phi$ term that has all of $x_1, \dots, x_k$ present in its argument. The claim for $i + 1$ now follows.\\

Once the claim has been proved, we use it for $i = 2^k$. We obtain a multiaffine map $\phi'$ from a subset $X' \subset G_{[k]}$ of size at least $(1-\varepsilon_0) |G_{[k]}|$, where $X'$ also has a partition $X_1' \cup \dots \cup X'_s$ that satisfies property \textbf{(iv)}. Apply Proposition~\ref{lastExtnStep} to $\phi'$ to obtain a global multiaffine map $\Phi \colon G_{[k]} \to H$ such that $\Phi(x_{[k]}) = \phi'(x_{[k]})$ holds for at least $(1- C_k\varepsilon_0^{d_k})|G_{[k]}| \geq 1/2|G_{[k]}|$ points $x_{[k]} \in X'$. Let $\tilde{X}' \subset X'$ be the set of such $x_{[k]}$. Take the largest set $X'_i \cap \tilde{X}'$ to finish the proof.\end{proof}

\section{Putting everything together}

Before we finally proceed to prove Theorem~\ref{multiaffineInvThm}, we show that if a map $\phi \colon G_{[k]} \to H$ is related to a global multiaffine map $\Phi$ as in~\eqref{phitoglobalidentity}, then $\phi$ itself coincides with a global multiaffine map on a dense set.

\begin{proposition}\label{lastStepGlobalToMulti}Let $D \subset G_{[k]}$ be a set of density $\delta > 0$, let $\phi \colon D \to H$ be a multi-homomorphism and let $\Phi \colon G_{[k]} \to H$ be a global multiaffine map. Let $m, n \in \mathbb{N}$, $\lambda_i \in \mathbb{F}_p \setminus \{0\}$ for $i \in [n]$, $\nu_{i, j, l} \in \mathbb{F}_p$ for $i \in [n], j \in [k], l \in [0,m]$ and $a_{i,j} \in G_j$ for $i \in [m], j \in [k]$, be such that
\begin{equation}\label{phiPhiArrsEqn}\Phi(x_{[k]}) = \sum_{i \in [n]} \lambda_i \phi(\nu_{i, 1} \cdot (x_1, u^1_{1}, \dots, u^1_{m}) + a_{i,1},\hspace{2pt}\dots,\hspace{2pt}\nu_{i, k} \cdot (x_k, u^k_{1}, \dots, u^k_{m}) + a_{i,k}),\end{equation}
holds for at least $\delta |G_1|^{m+1} \cdots |G_k|^{m+1}$ choices of $x_i, u^i_{j} \in G_i$, $i \in [k], j \in [m]$. Assume also that $\prod_{j \in [k]} \nu_{i,j, 0} \not=0$ for exactly one $i \in [n]$. Then $\phi$ coincides with some global multiaffine map on a set of size $\delta' |G_{[k]}|$, where $\delta' = \Big(\exp^{(k \cdot D^{\mathrm{mh}}_{k-1})}(O(\delta^{-1}))\Big)^{-1}$.\end{proposition}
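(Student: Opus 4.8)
The plan is to run an induction on $k$, so that Theorem~\ref{multiaffineInvThm} in at most $k-1$ variables is available, and in the inductive step to reparametrize the arrangement identity~\eqref{phiPhiArrsEqn} so as to isolate the one term that sees all of $x_{[k]}$, turning every other term into a copy of $\phi$ restricted to a lower-dimensional coordinate slice.

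First I would normalize and reparametrize. Relabel so that $i_0=n$ is the unique index with $\prod_{j\in[k]}\nu_{n,j,0}\neq 0$, and rescale so that~\eqref{phiPhiArrsEqn} reads $\lambda_n\phi(z^{(n)}) = \Phi(x_{[k]}) - \sum_{i<n}\lambda_i\phi(z^{(i)})$, where $z^{(i)}=z^{(i)}(x,u)$ is the $i$th argument occurring in~\eqref{phiPhiArrsEqn}. For each fixed choice of the $u$-parameters the $n$th argument map $x_{[k]}\mapsto(\nu_{n,j,0}x_j+\cdots)_{j\in[k]}$ is an affine bijection of $G_{[k]}$ acting coordinatewise; writing $y_{[k]}$ for its value, the identity becomes, for a set of pairs $(y_{[k]},u)$ of density at least $\delta$,
\[
\phi(y_{[k]}) \;=\; \Phi'_u(y_{[k]}) \;-\; \sum_{i<n}\lambda'_i\,\phi\bigl(\pi_{i,u}(y_{[k]})\bigr),
\]
where $\Phi'_u$ is multiaffine in $y$ for fixed $u$, and each $\pi_{i,u}$ is affine in $y$ and, because $i\neq n$, fails to depend on $y_j$ for at least one coordinate $j$ (namely those $j$ with $\nu_{i,j,0}=0$). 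Hence, for fixed $u$, each $\pi_{i,u}$ maps onto a proper coordinate slice $\{z:z_{[k]\setminus J_i}=c^{(i)}(u)\}$ with $J_i\subsetneq[k]$, and $y_{J_i}\mapsto\pi_{i,u}(y)$ is an affine bijection onto that slice; so $\phi\circ\pi_{i,u}$ is, up to an affine change of variables, the restriction of $\phi$ to a slice of dimension $|J_i|\le k-1$, a Freiman multi-homomorphism in $|J_i|$ variables. (The few genuinely degenerate terms, where a coordinate of some argument depends on no variable at all, are even lower-dimensional and are dealt with first.)

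Next comes slicing, the $(k-1)$-variable theorem, and gluing. For generic $u$ the set on which the identity holds has density at least $\delta/2$ in $y$; since $\pi_{i,u}$ does not depend on $y_{[k]\setminus J_i}$, this forces the corresponding slice to meet $D$ in a set of relative density at least $\delta/2$, and Theorem~\ref{multiaffineInvThm} in $|J_i|\le k-1$ variables gives a global multiaffine map on $G_{J_i}$ agreeing with $\phi$ on that slice on a set of relative density $\bigl(\exp^{(D^{\mathrm{mh}}_{k-1})}(O(\delta^{-1}))\bigr)^{-1}$. The difficulty is that this density is an entire exponential tower sparser than $\delta$, so naively substituting these approximations back into the displayed identity would force us to intersect many sparse sets and would kill all the density. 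To get around this I would not fix $u$: as $u$ varies, the slice positions $c^{(i)}(u)$ sweep out essentially all of $G_{[k]\setminus J_i}$, and the arrangement identity itself acts as a rigidity constraint on the resulting family of per-slice multiaffine maps, forcing them to be restrictions of a single global multiaffine map $\Psi_i$ of $y_{J_i}$ that agrees with $\phi\circ\pi_{i,u}$ on a $1-o(1)$ proportion of $(y,u)$; the essential input for the rigidity is that a nonzero multiaffine map cannot vanish on more than a $1-p^{-|J_i|}$ fraction of its domain (cf.\ Claim~\ref{nonzerovalsclaim} and Lemma~\ref{varsizelemma}). Feeding the $\Psi_i$ back in, $\phi$ agrees on a $1-o(1)$ proportion of $G_{[k]}$ with the global multiaffine map $\Phi'-\sum_{i<n}\lambda'_i\Psi_i$, and Proposition~\ref{lastExtnStep} upgrades this to exact agreement of $\phi$ with a global multiaffine map on a dense set. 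Carrying out this reduction by peeling off the slice terms in order of the size of the omitted coordinate set — at most $k$ rounds, one application of the $(k-1)$-variable theorem per round — is what produces the bound $\bigl(\exp^{(kD^{\mathrm{mh}}_{k-1})}(O(\delta^{-1}))\bigr)^{-1}$.

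The main obstacle is exactly this density bookkeeping: bridging the gap between the sparse, tower-lossy output of the $(k-1)$-variable inverse theorem on individual slices and the near-perfect density needed before one can invoke Proposition~\ref{lastExtnStep}. The mechanism that makes it work is to treat the whole family of slices as $u$ ranges, using the arrangement identity together with the rigidity of multiaffine maps to promote the sparse per-slice agreements to an almost-everywhere agreement with a single global map. The remaining points — checking that the reparametrized maps $\pi_{i,u}$ and the glued values $\Psi_i$ are well defined, and that all the exceptional sets introduced along the way stay negligible — are routine but must be verified.
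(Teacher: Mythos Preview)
Your reparametrization and the observation that each $i<n$ yields a term that is independent of at least one $y_j$ are both correct and are also the starting point of the paper's proof. The gap is in what you do next.

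You propose to apply the $(k-1)$-variable inverse theorem to the restriction of $\phi$ to each individual coordinate slice $\pi_{i,u}(G_{J_i})$, and then to glue the resulting per-slice multiaffine maps as $u$ varies by appealing to a ``rigidity'' principle. This rigidity step is not justified and does not work as stated: the inverse theorem only guarantees that the per-slice multiaffine map agrees with $\phi$ on a tower-sparse subset of the slice, and the arrangement identity~\eqref{phiPhiArrsEqn} holds on only a $\delta$-proportion of $(y,u)$, so there is no mechanism forcing the per-slice maps to patch together into a single global $\Psi_i$ agreeing with $\phi\circ\pi_{i,u}$ on a $1-o(1)$ proportion of $(y,u)$. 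Citing Claim~\ref{nonzerovalsclaim} does not help, since that claim requires an exact identity of multiaffine maps on a set of density exceeding $1-p^{-k}$, which you never obtain.

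The paper's route avoids this problem altogether by two simple moves. First, fix the $u$-parameters once and for all by averaging, so that the identity holds for $x_{[k]}$ in a set $X_0$ of density at least $\delta$. Second, do not treat the terms $i<n$ individually: instead partition $[n-1]=I_1\cup\cdots\cup I_k$ by choosing, for each $i$, one direction $d$ with $\nu_{i,d,0}=0$ and placing $i$ in $I_d$. Then for each $d$ the grouped sum
\[
\psi_d(x_{[k]\setminus\{d\}})\;=\;\sum_{i\in I_d}\lambda_i\,\phi\bigl(\nu_{i,1,0}x_1+a'_{i,1},\dots,\nu_{i,k,0}x_k+a'_{i,k}\bigr)
\]
is independent of $x_d$ and, because $\phi$ is a multi-homomorphism, is itself a multi-homomorphism on a $\tfrac{\delta_{d-1}}{2}$-dense subset of $G_{[k]\setminus\{d\}}$. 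Now a single application of Theorem~\ref{multiaffineInvThm} in $k-1$ variables replaces $\psi_d$ by a global multiaffine map $\Psi_d$ on a dense subset. Iterating over $d=1,\dots,k$ and intersecting (this is where the $k$-fold composition of exponentials enters) gives a set $X_k$ of density $\bigl(\exp^{(kD^{\mathrm{mh}}_{k-1})}(O(\delta^{-1}))\bigr)^{-1}$ on which all lower-order groups are globally multiaffine, so on $X_k$ one has $\phi(\nu_{n,1,0}x_1+a'_{n,1},\dots)=\lambda_n^{-1}\Phi(x_{[k]})-\lambda_n^{-1}\sum_d\Psi_d(x_{[k]\setminus\{d\}})$, whose right-hand side is already globally multiaffine. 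A final affine change of variables finishes; Proposition~\ref{lastExtnStep} is not needed.
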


\begin{proof}Without loss of generality $\prod_{j \in [k]} \nu_{n,j, 0} \not=0$ and $\prod_{j \in [k]} \nu_{i,j, 0} =0$ for $i \in [n-1]$. We may therefore partition $[n-1] = I_1 \cup \dots \cup I_k$ so that when $i\in I_j$, then $\nu_{i,j, 0} = 0$.\\
By averaging, we may find $u^{[k]}_{[m]}$ such that~\eqref{phiPhiArrsEqn} holds for at least $\delta |G_{[k]}|$ elements $x_{[k]} \in G_{[k]}$. Let $X_0$ be the set of such $x_{[k]}$. We may rewrite that expression as
\[\Phi(x_{[k]}) = \sum_{i \in [n]} \lambda_i \phi(\nu_{i, 1, 0} x_1 + a'_{i, 1}, \dots, \nu_{i, k, 0} x_k + a'_{i,k})\]
for some $a'_{i,j} \in G_j$, $i \in [n], j \in [k]$. Reorganizing this further, we obtain
\begin{equation}\Phi(x_{[k]}) = \lambda_n \phi(\nu_{n, 1, 0} x_1 + a'_{n, 1}, \dots, \nu_{n, k, 0} x_k + a'_{n,k}) +  \sum_{j \in [k]} \Big(\sum_{i \in I_j} \lambda_i \phi(\nu_{i, 1, 0} x_1 + a'_{i, 1}, \dots, \nu_{i, k, 0} x_k + a'_{i,k})\Big).\label{phiPhi2Eqn}\end{equation}
We now find sets $X_0 \supset X_1 \supset \dots \supset X_k$ of sizes $|X_i| = \delta_i |G_{[k]}|$, $\delta_i \geq \Big(\exp^{(i \cdot D^{\mathrm{mh}}_{k-1})}(O(\delta^{-1}))\Big)^{-1}$, such that for each $d \in [k]$, there is a global multiaffine map $\Psi_d \colon G_{[k] \setminus \{d\}} \to H$ such that
\[(\forall x_{[k]} \in X_d)\hspace{6pt} \sum_{i \in I_d} \lambda_i \phi(\nu_{i, 1, 0} x_1 + a'_{i, 1}, \dots, \nu_{i, k, 0} x_k + a'_{i,k}) = \Psi_d(x_{[k] \setminus \{d\}}).\]
We have already defined $X_0$. Assume now that we have defined $X_0, X_1, \dots, X_{d-1}$. Let $Y \subset G_{[k] \setminus \{d\}}$ be the set of $x_{[k] \setminus \{d\}}$ such that $|(X_{d-1})_{x_{[k] \setminus \{d\}}}| \geq \frac{\delta_{d-1}}{2} |G_d|$. By averaging, $|Y| \geq \frac{\delta_{d-1}}{2}|G_{[k] \setminus \{d\}}|$. Hence, we may define a map $\psi \colon Y \to H$ by
\[\psi(x_{[k] \setminus \{d\}}) = \sum_{i \in I_d} \lambda_i \phi(\nu_{i, 1, 0} x_1 + a'_{i, 1}, \dots, \nu_{i, k, 0} x_k + a'_{i,k}),\]
noting that $\nu_{i, d, 0} = 0$ for all $i \in I_d$ so $x_d$ does not appear. Since $\phi$ is a multi-homomorphism, so is $\psi$. We may now apply Theorem~\ref{multiaffineInvThm} for the space $G_{[k] \setminus \{d\}}$ (recall that since the beginning of the work in this paper we assume Theorem~\ref{multiaffineInvThm} for smaller numbers of variables). This gives us a subset $Y' \subset Y$ of size 
\[\Big(\exp^{(D^{\mathrm{mh}}_{k-1})}(O(\delta_{d-1}^{-1}))\Big)^{-1} |G_{[k] \setminus \{d\}}|\]
and a global multiaffine map $\Psi_d \colon G_{[k] \setminus \{d\}} \to H$ such that $\psi = \Psi_d$ on $Y'$. Let $X_d = (Y' \times G_d) \cap X_{d-1} = \bigcup_{x_{[k] \setminus \{d\}} \in Y'} \{x_{[k] \setminus \{d\}}\} \times (X_{d-1})_{x_{[k] \setminus \{d\}}}$, which has size at least $\delta_d |G_{[k]}|$, where
\[\delta_d \geq \frac{\delta_{d-1}}{2} \cdot \Big(\exp^{(D^{\mathrm{mh}}_{k-1})}(O(\delta_{d-1}^{-1}))\Big)^{-1}.\]

Thus, when $x_{[k]} \in X_k$, from~\eqref{phiPhi2Eqn}, we obtain
\[\phi(\nu_{n, 1, 0} x_1 + a'_{n, 1}, \dots, \nu_{n, k, 0} x_k + a'_{n,k}) = \lambda^{-1}_n \Phi(x_{[k]})  - \lambda^{-1}_n\sum_{j \in [k]} \Psi_j(x_{[k] \setminus \{j\}}).\]
The result follows after a change of variables.\end{proof}

\begin{proof}[Proof of Theorem~\ref{multiaffineInvThm}] Suppose that $\phi \colon A \to H$ is a multi-homomorphism, where $A \subset G_{[k]}$ has density $\delta$. First, we find subsets $A_k \subset A_{k-1} \subset \dots \subset A_1 \subset A_0 = A$ of densities $\delta_i = |A_i| / |G_{[k]}|$, such that on each $A_d$, $\phi$ is actually affine in the directions $G_{[d]}$. We obtain $A_{d+1}$ from $A_d$ by applying Theorem~\ref{FreBSG} to $(\delta_d/2)$-dense columns of $A_d$ in direction $G_{d+1}$. Thus, we may take $\delta_k = \exp(-\log^{O(1)}\delta^{-1})$.\\
\indent Let $\varepsilon_0 > 0$ be as in Theorem~\ref{veryDenseToFullMultiaffineMap}. We shall apply Theorem~\ref{densificationThm}, but first we need to check that the technical condition $|G_i| \geq \con\, \delta_k^{- \con}$ for each $i \in [k]$ of that theorem is satisfied. Suppose on the contrary that $|G_i| \leq O(\delta_k^{- O(1)})$ holds for some $i$, which we may assume to be $i = k$. Then we may simply average over $G_k$ to find an element $y_k \in G_k$ such that $|A_{y_k}| \geq \delta |G_{[k-1]}|$. We may apply the inductive hypothesis to $\phi_{y_k}$ to find a global multiaffine map $\Phi \colon G_{[k-1]} \to H$ such that $\phi(x_{[k-1]}, y_k) = \Phi(x_{[k-1]})$ holds for at least 
\[\Big(\exp^{(D^{\mathrm{mh}}_{k-1})}(O(\delta^{-1}))\Big)^{-1} |G_{[k-1]}|\]
of $x_{[k-1]} \in A_{y_k}$. But, since $G_k$ is small, we simply conclude that in fact $\phi(x_{[k]}) = \Phi(x_{[k-1]})$ holds for at least
\[\Big(\exp^{(D^{\mathrm{mh}}_{k-1})}(O(\delta^{-1}))\Big)^{-1} \, \exp(-O(\log^{O(1)}\delta^{-1}))  |G_{[k]}|\]
of $x_{[k]} \in G_{[k]}$, completing the proof. Hence, we may assume that all $G_i$ are sufficiently large, as required by Theorem~\ref{densificationThm}.\\
\indent Let $\varepsilon > 0$ be a positive quantity that we shall specify later. By Theorem~\ref{densificationThm} for parameter $\varepsilon$ there exists a subset $A' \subset A_k$ such that
\[|\{(q_1, \dots, q_k) \in \mathcal{Q} \colon \phi(q_1) = \dots = \phi(q_k)\}| \,\,\geq (1-\varepsilon) |\mathcal{Q}|\,\, \geq \delta'\,|G_{[k]}|^{8^k},\] 
where $\delta' \geq \Omega((\varepsilon \delta)^{O(1)})$ and where $\mathcal{Q}$ is defined in the statement of that theorem. Write $\mathcal{Q}_{x_{[k]}}$ for the set of $(q_1, \dots, q_k) \in \mathcal{Q}$ such that each $q_i$ has lengths $x_{[k]}$. Let $X^{(1)} \subset G_{[k]}$ be the set of all $x_{[k]}$ such that $|\mathcal{Q}_{x_{[k]}}| \geq \varepsilon\delta'|G_{[k]}|^{8^k - 1}$. By definition of $X^{(1)}$ we have
\[\sum_{x_{[k]} \in X^{(1)}} |\mathcal{Q}_{x_{[k]}}|\,\, = |\mathcal{Q}| - \sum_{x_{[k]} \notin X^{(1)}} |\mathcal{Q}_{x_{[k]}}|\,\, \geq |\mathcal{Q}| - \varepsilon \delta'|G_{[k]}|^{8^k} \geq (1 - \varepsilon) |\mathcal{Q}|.\] 
Next, define $X^{(2)} \subset G_{[k]}$ be the set of all $x_{[k]}$ such that $|\{(q_1, \dots, q_k) \in \mathcal{Q}_{x_{[k]}} \colon \phi(q_1) = \dots = \phi(q_k)\}| \,\,\geq (1-\sqrt{\varepsilon}) |\mathcal{Q}_{x_{[k]}}|$. We have
\begin{align*}\sqrt{\varepsilon} \sum_{x_{[k]} \notin X^{(2)}} |\mathcal{Q}_{x_{[k]}}|\,\,\leq &\sum_{x_{[k]} \notin X^{(2)}} |\{(q_1, \dots, q_k) \in \mathcal{Q}_{x_{[k]}} \colon \phi(q_1), \dots, \phi(q_k)\text{ not all equal}\}| \\
\leq &\,\,|\{(q_1, \dots, q_k) \in \mathcal{Q} \colon \phi(q_1), \dots, \phi(q_k)\text{ not all equal}\}|\\
 \leq&\,\, \varepsilon |\mathcal{Q}|.\end{align*}
Putting $X = X^{(1)} \cap X^{(2)}$ we conclude that
\[\sum_{x_{[k]} \in X} |\mathcal{Q}_{x_{[k]}}| \geq (1 - 2\sqrt{\varepsilon}) |\mathcal{Q}|.\]

\begin{claim*} Provided $\varepsilon \leq \cons$, for each $x_{[k]} \in X^{(2)}$ we may find a value $\psi(x_{[k]}) \in H$ such that 
\[|\{(q_1, \dots, q_k) \in \mathcal{Q}_{x_{[k]}} \colon \phi(q_1) = \dots = \phi(q_k) = \psi(x_{[k]})\}| \geq (1-2\sqrt{\varepsilon}) |\mathcal{Q}_{x_{[k]}}|.\]\end{claim*}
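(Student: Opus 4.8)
<br>

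The plan is to extract the value $\psi(x_{[k]})$ as the unique ``majority value'' taken by $\phi(q_1)$ over tuples $(q_1,\dots,q_k)\in\mathcal{Q}_{x_{[k]}}$, and then to argue that this majority is overwhelming. Fix $x_{[k]}\in X^{(2)}$. By definition of $X^{(2)}$, at least a $(1-\sqrt\varepsilon)$ proportion of the tuples $(q_1,\dots,q_k)\in\mathcal{Q}_{x_{[k]}}$ are \emph{respected}, i.e.\ satisfy $\phi(q_1)=\dots=\phi(q_k)$; call this set of tuples $\mathcal{R}_{x_{[k]}}$. For such a tuple all $k$ values agree, so we may speak of the common value $\phi(q_1)$. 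Let $\psi(x_{[k]})$ be the value $v\in H$ maximising $|\{(q_1,\dots,q_k)\in\mathcal{R}_{x_{[k]}}\colon \phi(q_1)=v\}|$ (breaking ties arbitrarily); it remains to show this maximiser captures all but a $2\sqrt\varepsilon$ fraction of $\mathcal{Q}_{x_{[k]}}$.

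First I would observe that the set of lengths $\mathcal{Q}_{x_{[k]}}$ has a product structure: a tuple consists of $k$ independent arrangements $q_1,\dots,q_k$, each $q_i$ an $([i,1],[k,1],[k,1],[k,i+1])$-arrangement of lengths $x_{[k]}$ with points in $A'$. Hence if $\mathcal N_i$ denotes the number of such arrangements $q_i$ in $A'$ of lengths $x_{[k]}$, then $|\mathcal{Q}_{x_{[k]}}|=\prod_i\mathcal N_i$, and similarly for any ``coordinatewise'' condition on the tuple. The key point is that ``being respected with common value $v$'' factors: a respected tuple with common value $v$ is exactly a tuple $(q_1,\dots,q_k)$ with $\phi(q_i)=v$ for every $i$. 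Let $\mathcal N_i(v)=|\{q_i\colon \phi(q_i)=v\}|$ be the number of arrangements in coordinate $i$ with $\phi$-value $v$. Then $|\{(q_1,\dots,q_k)\in\mathcal{Q}_{x_{[k]}}\colon \phi(q_i)=v\ \forall i\}|=\prod_i\mathcal N_i(v)$, and $\sum_v\prod_i\mathcal N_i(v)=|\mathcal R_{x_{[k]}}|\geq(1-\sqrt\varepsilon)\prod_i\mathcal N_i$.

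Now I would run a simple pigeonhole/convexity argument across the $k$ coordinates. For each $i$, since $\sum_v\mathcal N_i(v)=\mathcal N_i$, let $v_i$ be the value maximising $\mathcal N_i(v)$, with $\mathcal N_i(v_i)=\rho_i\mathcal N_i$. From $\sum_v\prod_i\mathcal N_i(v)\geq(1-\sqrt\varepsilon)\prod_i\mathcal N_i$ and the bound $\mathcal N_i(v)\le\rho_i\mathcal N_i$ applied in all but one coordinate, one gets $\big(\min_i\rho_i\big)^{-1}\cdot\text{(nothing worse)}$; more precisely, $\sum_v\prod_i\mathcal N_i(v)\le \big(\max_i\rho_i\big)\prod_i\mathcal N_i$ is false in general, so instead I would use: for any $v$, $\prod_i\mathcal N_i(v)\le \mathcal N_j(v)\prod_{i\ne j}\mathcal N_i$ for each $j$, whence $\sum_v\prod_i\mathcal N_i(v)\le\big(\sum_v\mathcal N_j(v)\min_{i}\tfrac{\mathcal N_i(v)}{\mathcal N_i}\big)\prod_i\mathcal N_i\le\big(\max_v\min_i\tfrac{\mathcal N_i(v)}{\mathcal N_i}\big)\sum_v\mathcal N_j(v)\cdot\ldots$. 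Cleaner: since the respected tuples form at least a $(1-\sqrt\varepsilon)$ fraction and $1-\sqrt\varepsilon>1/2$, for each coordinate $i$ the ``marginal'' value $v_i$ satisfies $\rho_i\ge 1-\sqrt\varepsilon$ (because restricting to respected tuples and projecting to coordinate $i$, the value at coordinate $i$ agrees with the common value, and a $(1-\sqrt\varepsilon)$-fraction of a product cannot avoid a $(1-\sqrt\varepsilon)$-dense marginal slice in each coordinate when $1-\sqrt\varepsilon>1/2$ — if two coordinates had their mass split below $1-\sqrt\varepsilon$ on every single value, the respected set would be too small). Consequently all $v_i$ coincide (a respected tuple forces $\phi(q_i)=\phi(q_j)$, and each coordinate's mass is concentrated on $v_i$ with $\rho_i>1/2$, so $v_1=\dots=v_k=:v$), and setting $\psi(x_{[k]})=v$ gives $|\{(q_1,\dots,q_k)\in\mathcal{Q}_{x_{[k]}}\colon \phi(q_i)=v\ \forall i\}|=\prod_i\rho_i\mathcal N_i\ge(1-k\sqrt\varepsilon)|\mathcal{Q}_{x_{[k]}}|$; absorbing constants and taking $\varepsilon$ small enough that $k\sqrt\varepsilon\le 2\sqrt\varepsilon$ is not quite right, so I would instead replace $2\sqrt\varepsilon$ in the statement's proof by the harmless $k\sqrt{\varepsilon}$ and note $\cons$ absorbs the factor, or re-examine: the claim only asks for $1-2\sqrt\varepsilon$, so I should be slightly more careful and use that the respected fraction being $1-\sqrt\varepsilon$ forces, in each coordinate separately, concentration at level $1-\sqrt\varepsilon$ \emph{simultaneously}, not a union bound over coordinates.

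The main obstacle is precisely this last point: getting the loss to be $2\sqrt\varepsilon$ rather than $k\sqrt\varepsilon$. I would handle it by working directly with the product structure rather than a union bound: from $\sum_{v}\prod_{i}\mathcal N_i(v)\ge(1-\sqrt\varepsilon)\prod_i\mathcal N_i$ and $\sum_v\mathcal N_i(v)=\mathcal N_i$ for each $i$, one deduces by a direct manipulation (comparing $\sum_v\prod_i\mathcal N_i(v)$ with $\max_v\mathcal N_1(v)\cdot\prod_{i\ge2}\mathcal N_i$ and symmetrising, or noting the respected set lies inside the product $\prod_i\{q_i\colon\phi(q_i)=v\}$ for a \emph{single} common $v$) that in fact $\prod_i\mathcal N_i(v^\star)\ge(1-\sqrt\varepsilon)\prod_i\mathcal N_i$ for the majority value $v^\star$ itself — because the sets $\{(q_1,\dots,q_k)\colon\phi(q_i)=v\ \forall i\}$ over distinct $v$ are disjoint, and if none of them had measure $\ge(1-\sqrt\varepsilon)\prod\mathcal N_i$ then... this needs the refinement that actually only \emph{one} $v$ can carry more than half the mass, and that $v$ then carries at least $1-\sqrt\varepsilon$. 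So: since the $\le p^{\dim H}$ sets $\{\phi(q_i)=v\ \forall i\}$ partition $\mathcal R_{x_{[k]}}$, and one of them (say value $v^\star$) has the largest mass, and the total is $\ge(1-\sqrt\varepsilon)|\mathcal{Q}_{x_{[k]}}|$, I claim $v^\star$'s part already has mass $\ge(1-2\sqrt\varepsilon)|\mathcal{Q}_{x_{[k]}}|$ provided $\sqrt\varepsilon<1/2$ — because if the top part had mass $<(1-2\sqrt\varepsilon)|\mathcal{Q}_{x_{[k]}}|$ then, combined with $\prod\mathcal N_i(v^\star)\ge\prod\mathcal N_i(v)$ for all $v$ and $\sum_v\mathcal N_i(v)=\mathcal N_i$, a short convexity estimate on the product of simplices shows $\sum_v\prod_i\mathcal N_i(v)$ drops below $1-\sqrt\varepsilon$, a contradiction. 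Setting $\psi(x_{[k]})=v^\star$ finishes the proof. I expect this convexity estimate (that a product of probability vectors on $k$ simplices, each with top coordinate $<1-2\sqrt\varepsilon$, has $\sum_v\prod_i p_i(v)<1-\sqrt\varepsilon$ when $\sqrt\varepsilon<1/2$) to be the one genuinely non-formal step, but it is elementary: $\sum_v\prod_ip_i(v)\le\max_v\min_ip_i(v)\le\min_i\max_vp_i(v)<1-2\sqrt\varepsilon<1-\sqrt\varepsilon$ — wait, that gives it immediately once one knows $\max_v p_i(v)<1-2\sqrt\varepsilon$ for some $i$; and indeed if every coordinate had $\max_vp_i(v)\ge1-2\sqrt\varepsilon$ but the joint top cell had mass $<1-2\sqrt\varepsilon$ the top values would have to disagree across coordinates, making every \emph{joint} cell small, again forcing $\sum_v\prod_ip_i(v)$ small. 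I will write this out carefully in the proof.
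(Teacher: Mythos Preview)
Your approach is the same as the paper's: exploit the product structure $|\mathcal{Q}_{x_{[k]}}| = \prod_i n_i$ (your $\mathcal N_i$), write $a_i(h)$ for the number of $i$-th arrangements with $\phi$-value $h$, and use the hypothesis $\sum_h \prod_i a_i(h) \ge (1-\sqrt\varepsilon)\prod_i n_i$. Your setup is correct, and the paper's proof of $\rho_i := \max_h a_i(h)/n_i \ge 1-\sqrt\varepsilon$ is the one-line bound you almost wrote down: $\sum_h\prod_i a_i(h) \le (\max_h a_j(h))\sum_h\prod_{i\ne j}a_i(h) \le \rho_j n_j\prod_{i\ne j}n_i$, so $\rho_j \ge 1-\sqrt\varepsilon$ for every $j$. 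From $\rho_j > 1/2$ the maximizer $h_j$ is unique, and (as in the paper) if $h_1\ne h_2$ then $\sum_h a_1(h)a_2(h) \le a_1(h_1)a_2(h_1) + \sum_{h\ne h_1}a_1(h)a_2(h) \le \sqrt\varepsilon\, n_1 n_2 + \sqrt\varepsilon\, n_1 n_2$, forcing $\sum_h\prod_i a_i(h) \le 2\sqrt\varepsilon\prod_i n_i$, a contradiction. So $h_1=\dots=h_k=:h^*$.

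The genuine gap is the step you yourself flag as the ``one genuinely non-formal step'': passing from the marginal bounds $\rho_i\ge 1-\sqrt\varepsilon$ to the joint bound $\prod_i a_i(h^*)\ge(1-2\sqrt\varepsilon)\prod_i n_i$. Your union-bound attempt only gives $1-k\sqrt\varepsilon$, and your final ``convexity'' inequality $\sum_v\prod_i p_i(v)\le \max_v\min_i p_i(v)$ is simply false (take $k=1$). The paper's trick is to go back to the \emph{full} constraint, not just the marginals: with $\tilde a_i = a_i(h^*)$,
\[
(1-\sqrt\varepsilon)\prod_i n_i \;\le\; \sum_h\prod_i a_i(h) \;\le\; \tilde a_1\cdots\tilde a_k \;+\; \sum_{h\ne h^*}\prod_i a_i(h) \;\le\; \tilde a_1\cdots\tilde a_k \;+\; \prod_i(n_i-\tilde a_i),
\]
the last inequality because $\sum_{h\ne h^*}\prod_i a_i(h) \le \prod_i\sum_{h\ne h^*}a_i(h) = \prod_i(n_i-\tilde a_i)$. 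Since $n_i-\tilde a_i \le \sqrt\varepsilon\, n_i$, the error term is at most $\varepsilon^{k/2}\prod_i n_i \le \sqrt\varepsilon\prod_i n_i$, whence $\tilde a_1\cdots\tilde a_k \ge (1-2\sqrt\varepsilon)\prod_i n_i$. This is the missing line; with it your outline becomes a complete proof identical to the paper's.
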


\begin{proof}Let $n_i$ be the number of $([i,1], [k,1], [k,1], [k,i + 1])$-arrangements with points in $A'$ of lengths $x_{[k]}$. For $h \in H$, let $a_i(h)$  be the number of $([i,1], [k,1], [k,1], [k,i + 1])$-arrangements $q$ with points in $A'$ of lengths $x_{[k]}$ such that $\phi(q) = h$. Then we have $\sum_{h \in H} a_i(h) = n_i$ and 
\[\sum_{h \in H} a_1(h) \cdots a_k(h) \geq (1 - \sqrt{\varepsilon}) n_1 \cdots n_k.\]
In particular, for any $i \in [k]$, we have
\begin{align*}\Big(\max_{h \in H} a_i(h) \Big) \prod_{j \in [k] \setminus \{i\}} n_j \geq &\Big(\max_{h \in H} a_i(h) \Big) \Big(\sum_{h \in H} a_1(h) \cdots a_{i-1}(h)a_{i+1}(h) \cdots a_k(h)\Big)\\
 \geq &\sum_{h \in H} a_1(h) \cdots a_k(h)\\
 \geq &(1 - \sqrt{\varepsilon}) n_1 \cdots n_k\end{align*}
so $\max_{h \in H} a_i(h) \geq (1 - \sqrt{\varepsilon}) n_i$. Provided $\varepsilon \leq \cons$, there is a unique $h_i \in H$ where the maximum of $a_i(h)$ is attained. We claim that $h_1 = \dots = h_k$. It suffices to prove $h_1 = h_2$. Suppose that on the contrary $h_1 \not= h_2$. Then
\begin{align*} (1 - \sqrt{\varepsilon}) n_1 \cdots n_k \leq &\sum_{h \in H} a_1(h) \cdots a_k(h) \leq n_3 \cdots n_k \Big(\sum_{h \in H} a_1(h) a_2(h)\Big)\\
 = &n_3 \cdots n_k \Big(a_1(h_1)a_2(h_1) + \sum_{h \not= h_1} a_1(h) a_2(h)\Big) \leq n_3 \cdots n_k \Big(a_1(h_1) \sqrt{\varepsilon} n_2 + \sqrt{\varepsilon} n_1 \sum_{h \in H} a_2(h)\Big)\\
\leq &2 \sqrt{\varepsilon}  n_1 \cdots n_k \end{align*}
which is a contradiction, provided $\varepsilon \leq \cons$. Hence, $h_1 = \dots = h_k$ and we set $\psi(x_{[k]})$ to be this common value. To simplify the notation, write $\tilde{a}_i = a_i(\psi(x_{[k]}))$.\\
\indent Finally, observe that
\begin{align*}(1 - \sqrt{\varepsilon}) n_1 \cdots n_k \leq &\sum_{h \in H} a_1(h) \cdots a_k(h) \leq \tilde{a}_1 \cdots \tilde{a}_k + \sum_{v_1, \dots, v_k \in H \setminus \{\psi(x_{[k]})\}} a_1(v_1) \cdots a_k(v_k)\\
= & \tilde{a}_1 \cdots \tilde{a}_k + (n_1 - \tilde{a}_1) \cdots (n_k - \tilde{a}_k) \leq \tilde{a}_1 \cdots \tilde{a}_k + \varepsilon^{k/2} n_1\cdots n_k,\end{align*}
from which we get $\tilde{a}_1 \cdots \tilde{a}_k \geq (1 - 2\sqrt{\varepsilon}) n_1 \cdots n_k$, as desired.\end{proof}

For each $x_{[k]} \in X$, let $\psi(x_{[k]})$ be given by the claim above. By Theorem~\ref{nearlyMultThm} there exist a non-empty variety $V$ of codimension $r = \exp^{\big((2k + 1)(D^{\mathrm{mh}}_{k-1} + 2)\big)}\Big(O((\varepsilon\delta_k)^{-O(1)})\Big)$ and a subset $B \subset V \cap X$ of size at least $(1-O(\varepsilon^{1/16})) |V|$ such that the restriction $\psi|_B$ is a multiaffine map (in the sense of Theorem~\ref{nearlyMultThm}). Moreover, since $B \subset X$, we also know that for each $x_{[k]} \in B$ there are at least
\[\Omega((\varepsilon\delta_k)^{O(1)})|G_1|^{2^{3k-1} + 2^{2k-1} + 2^{k-1}} |G_2|^{2^{3k-2} + 2^{2k-2} + 2^{k-2}} \cdots |G_k|^{2^{2k} + 2^k + 1}\]
($[k,1], [k,1], [k,1]$)-arrangements $q$ with lengths $x_{[k]}$ whose points lie in $A'$, such that $\phi(q) = \psi(x_{[k]})$.\\
\indent We may pick $\varepsilon = \Omega(\varepsilon_0^{O(1)})$ so that $|B| \geq (1-\varepsilon_0) |V|$ and $\varepsilon \leq \cons$. Before we apply Theorem~\ref{veryDenseToFullMultiaffineMap}, we comment about its technical requirement that $\dim G_i \geq \exp^{(2^{k+1} - 1)}(\con\,r)$ for each $i \in [k]$. As above in the case of Theorem~\ref{densificationThm}, if it happens that $\dim G_i \leq \exp^{(2^{k+1} - 1)}(O(r))$ for some $i$, say $i = k$, we may apply the induction hypothesis to find a global multiaffine map $\Phi \colon G_{[k-1]} \to H$ such that $\phi(x_{[k]}) = \Phi(x_{[k-1]})$ holds for 
\[\Big(\exp^{(D^{\mathrm{mh}}_{k-1})}(O(\delta^{-1}))\Big)^{-1} \, \Big(\exp^{\big((2k + 1)(D^{\mathrm{mh}}_{k-1} + 2) + 2^{k + 1} + 1\big)}(O(\delta^{-1}))\Big)^{-1}  |G_{[k]}|\]
points $x_{[k]} \in G_{[k]}$ and we are done immediately. Thus, assume that the condition on $\dim G_i$ is satisfied and apply Theorem~\ref{veryDenseToFullMultiaffineMap}. There exist a subset $B' \subset B$ of size at least $\Big(\exp^{(2^{k+1})}(O(r))\Big)^{-1}|B|$, a global multiaffine map $\Phi \colon G_{[k]} \to H$, $m \leq \exp^{(2^{k+1} - 1)}(O(r))$, collections of coefficients $\nu_{j, l}\in \mathbb{F}_p^{[0,m]}$ for $j \in [m], l \in [k]$, elements $a_{j, l} \in G_l$ for $j \in [m], l \in [k]$, and coefficients $\lambda_j \in \mathbb{F}_p \setminus \{0\}$ for $j \in [m]$ such that
\begin{itemize}
\item[\textbf{(a)}] for each $x_{[k]} \in B'$
\[\Phi(x_{[k]}) = \sum_{j \in [m]} \lambda_j \psi(\nu_{j, 1} \cdot (x_1, u^1_{1}, \dots, u^1_{m}) + a_{j, 1},\hspace{2pt}\dots,\hspace{2pt}\nu_{j, k} \cdot (x_k, u^k_{1}, \dots, u^k_{m}) + a_{j, k}),\]
for
\[2^{-\exp^{(2^{k+1})} (O(r))}|G_{[k]}|^{m}\]
choices of $u^1_{1}, \dots,$ $u^1_{m} \in G_1, \dots,$ $u^k_{1}, \dots, u^k_{m} \in G_k$, and
\item[\textbf{(b)}] there is exactly one $j \in [m]$ such that $\nu_{j,l,0} \not= 0$ for all $l \in [k]$.
\end{itemize}
Replace $\psi$ in the expression above using ($[k,1], [k,1], [k,1]$)-arrangements $q$ such that $\phi(q) = \psi(l(q))$, where $l(q) \in G_{[k]}$ are the lengths of $q$. Apply Proposition~\ref{lastStepGlobalToMulti}, where the proportion of parameters that obey the relevant equation is at least
\[\bigg(\exp^{\big(2^{k+1} + 2 + (2k + 1)(D^{\mathrm{mh}}_{k-1} + 2)\big)} (\log^{O(1)} \delta^{-1})\bigg)^{-1}.\]
This completes the proof, as we obtain a global multiaffine map $\Psi$ which coincides with $\phi$ for a proportion
\begin{align*}&\bigg(\exp^{(k \cdot D^{\mathrm{mh}}_{k-1})}\Big(\exp^{\big(2^{k+1} + 2 + (2k + 1)(D^{\mathrm{mh}}_{k-1} + 2)\big)} (\log^{O(1)} \delta^{-1})\Big)\bigg)^{-1}\\
&\hspace{2cm}\geq \Big(\exp^{\big((3k + 1) \cdot D^{\mathrm{mh}}_{k-1} + 2^{k+1} + 4k + 4\big)} (O(\delta^{-1}))\Big)^{-1} \end{align*}
of the points in $G_{[k]}$.\end{proof}

\section{Applications}

\subsection{An inverse theorem for multiaffine maps over general finite fields}

In this subsection, we fix a finite field $\mathbb{F}$ of characteristic $p$, and we view $\mathbb{F}_p$ as a subfield of $\mathbb{F}$. The next result tells us that a map defined on a dense subset of $G_{[k]}$ that is $\mathbb{F}$-affine in each direction separately agrees on a further dense subset with a global $\mathbb{F}$-multiaffine map.

\begin{theorem}\label{multiaffineInvThmGen}For each $k \in \mathbb{N}$ there exists a constant $D_k$ such that the following holds. Let $G_1, \dots, G_k$ be finite-dimensional vector spaces over $\mathbb{F}$. Let $A \subset G_{[k]}$ be a set of density $\delta$, and let $\phi \colon A \to H$ be a $\mathbb{F}$-multiaffine map, which means that for each direction $d \in [k]$, and every $x_{[k] \setminus \{d\}} \in G_{[k] \setminus \{d\}}$, there is an $\mathbb{F}$-affine map $\alpha \colon G_d \to H$ such that for each $y_d \in A_{x_{[k] \setminus \{d\}}}$, $\phi(x_{[k] \setminus \{d\}}, y_d) = \alpha(y_d)$. Then, there is a global $\mathbb{F}$-multiaffine map $\Phi \colon G_{[k]} \to H$ such that $\phi(x_{[k]}) = \Phi(x_{[k]})$ for at least $\Big(\exp^{(D_k)}(O(\delta^{-1}))\Big)^{-1} |G_{[k]}|$ of $x_{[k]} \in A$.\end{theorem}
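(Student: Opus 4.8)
The plan is to reduce Theorem~\ref{multiaffineInvThmGen} to the already-proved Theorem~\ref{multiaffineInvThm} by viewing everything over the prime field $\mathbb{F}_p$. Write $q=|\mathbb{F}|=p^e$, so that each $G_i$, being a finite-dimensional $\mathbb{F}$-vector space, is also a finite-dimensional $\mathbb{F}_p$-vector space (of $\mathbb{F}_p$-dimension $e\cdot\dim_{\mathbb{F}}G_i$), and $H$ is likewise an $\mathbb{F}_p$-vector space. The key observation is that an $\mathbb{F}$-affine map is in particular an $\mathbb{F}_p$-affine map, hence a Freiman $2$-homomorphism in the sense used throughout the paper; so $\phi\colon A\to H$, viewed over $\mathbb{F}_p$, is a multi-homomorphism in the sense of the main definition. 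Theorem~\ref{multiaffineInvThm} then applies directly and produces an $\mathbb{F}_p$-multiaffine map $\Phi_0\colon G_{[k]}\to H$ agreeing with $\phi$ on a set of size $\big(\exp^{(D^{\mathrm{mh}}_k)}(O_{k,p}(\delta^{-1}))\big)^{-1}|G_{[k]}|$. Since $p$ and $k$ are fixed, this is of the required shape $\big(\exp^{(D_k)}(O(\delta^{-1}))\big)^{-1}|G_{[k]}|$.

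The remaining — and main — task is to upgrade the $\mathbb{F}_p$-multiaffine map $\Phi_0$ to an $\mathbb{F}$-multiaffine map without losing more than a constant factor in density. The natural approach is a direction-by-direction straightening argument. Fix all but one coordinate, say the $d$-th: on a positive-density slice, $\Phi_0(x_{[k]\setminus\{d\}},\cdot)$ is an $\mathbb{F}_p$-affine map $G_d\to H$ which agrees on a dense subset with the genuinely $\mathbb{F}$-affine map $\phi(x_{[k]\setminus\{d\}},\cdot)$. Two $\mathbb{F}_p$-affine maps that agree on more than half of $G_d$ are equal; so on the set of slices where the agreement between $\phi$ and $\Phi_0$ is dense, $\Phi_0$ is automatically $\mathbb{F}$-affine in direction $d$. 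The idea is to run a dependent-random-choice / averaging argument (in the spirit of the algebraic regularity used in Section~5 and the random coset intersection Lemma~\ref{randomCosetInt}) to pass to a structured large subset on which, simultaneously for every direction $d$, the map $\Phi_0$ restricted to the relevant slices coincides with the $\mathbb{F}$-affine map coming from $\phi$; then $\Phi_0$ is $\mathbb{F}$-multiaffine on that subset. Alternatively, and perhaps more cleanly, one repeats the whole extension machinery of Sections~7--8 but with $\mathbb{F}_p$ replaced by $\mathbb{F}$ everywhere: the point is that every tool used (Freiman--BSG over $\mathbb{F}$, the partition-rank inverse theorem over $\mathbb{F}$, quasirandomness of biaffine varieties, the regularity lemma) was stated or can be restated over a general finite field $\mathbb{F}$, since the excerpt's linear-algebra section already works over an arbitrary $\mathbb{F}$ of size $\mathbf{f}$.

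Concretely, I would proceed as follows. First, observe that $\phi$ is an $\mathbb{F}_p$-multi-homomorphism and apply Theorem~\ref{multiaffineInvThm} to obtain $\Phi_0$ and a set $X\subset A$ of the stated density with $\phi|_X=\Phi_0|_X$. Second, by iterated averaging over the $k$ directions, find a ``combinatorially rich'' subset $X'\subset X$ still of density $\big(\exp^{(D_k)}(O(\delta^{-1}))\big)^{-1}$ such that for each $d$ and each relevant $x_{[k]\setminus\{d\}}$, the fibre $X'_{x_{[k]\setminus\{d\}}}$ has density $>1/2$ inside a common coset structure forced by $\phi$ being $\mathbb{F}$-affine; more precisely, use the fact that the $\mathbb{F}$-affine map agreeing with $\phi$ on each such fibre is unique and that $\Phi_0$ agrees with it on more than half the fibre, to deduce $\Phi_0$ is $\mathbb{F}$-affine in direction $d$ on those fibres. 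Third, since $\mathbb{F}$-multiaffineness is a local, direction-by-direction condition, conclude that $\Phi_0$ restricted to a set which is $\mathbb{F}$-affine in all directions can be extended to a global $\mathbb{F}$-multiaffine map $\Phi$ (using a version of Proposition~\ref{lastExtnStep} over $\mathbb{F}$, or just noting that a multiaffine map on a dense enough product-like set extends uniquely), and $\phi=\Phi$ on the surviving dense set. The hard part will be step two: controlling the interaction across different directions so that the ``more than half the fibre'' condition holds simultaneously in all $k$ directions on one common dense subset — this is exactly the type of obstacle the densification and simultaneous-regularity arguments of Sections~4 and~6 were built to overcome, and I expect the cleanest route is to invoke those over $\mathbb{F}$ rather than to re-derive an ad hoc averaging. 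A secondary subtlety is the ``no small subgroups'' type issue — ensuring the $\mathbb{F}_p$-affine-implies-equal-on-half argument is not vacuous requires the fibres to be genuinely large, which the density bound from Theorem~\ref{multiaffineInvThm} guarantees after the averaging.
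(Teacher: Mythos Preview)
Your opening step is correct and matches the paper: since an $\mathbb{F}$-affine map is $\mathbb{F}_p$-affine, $\phi$ is an $\mathbb{F}_p$-multi-homomorphism and Theorem~\ref{multiaffineInvThm} yields an $\mathbb{F}_p$-multiaffine $\Phi_0$ agreeing with $\phi$ on a set $X$ of density $\delta_1=\big(\exp^{(D^{\mathrm{mh}}_k)}(O(\delta^{-1}))\big)^{-1}$. The upgrade step, however, has a genuine gap. Your slice argument needs the fibre $X_{x_{[k]\setminus\{d\}}}$ to have density $>1/p$ in $G_d$ so that the two $\mathbb{F}_p$-affine maps $\Phi_0(x_{[k]\setminus\{d\}},\cdot)$ and $\alpha$ are forced to coincide; but $\delta_1$ is tower-exponentially small, and no averaging over $X$ can raise the \emph{absolute} density of its one-dimensional fibres beyond $\delta_1$. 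Densification in the sense of Sections~5--7 only makes $X$ dense \emph{relative to a low-codimension variety}, not absolutely dense in $G_d$. More fundamentally, whether $\Phi_0(x_{[k]\setminus\{d\}},\cdot)$ is $\mathbb{F}$-affine is a property of the fixed global map $\Phi_0$ at the point $x_{[k]\setminus\{d\}}$ and cannot be altered by restricting to subsets; in general $\Phi_0$ will fail to be $\mathbb{F}$-affine on most slices, so there is no subset on which it becomes $\mathbb{F}$-multiaffine. Your step three then compounds the problem: extending an $\mathbb{F}$-multiaffine map from a merely dense set to a global one is essentially the theorem you are trying to prove (Proposition~\ref{lastExtnStep} needs density $1-o(1)$). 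The fallback of rerunning Sections~7--9 over $\mathbb{F}$ is not a proof sketch but a programme.

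The paper's route is structurally different: it never tries to make $\Phi_0$ itself $\mathbb{F}$-multiaffine. Instead it iterates Proposition~\ref{app1MainProp} down the chain of down-sets $\mathcal{G}_1\supset\cdots\supset\mathcal{G}_{2^k}$, at each stage writing the current $\mathbb{F}_p$-multiaffine remainder as an $\mathbb{F}$-multiaffine piece plus a $\mathcal{G}_{i+1}$-supported $\mathbb{F}_p$-multiaffine piece, after passing to a layer of a bounded-codimension map. The engine of Proposition~\ref{app1MainProp} is the bias--rank mechanism: one forms $\psi(x_{I_0},h)=\Phi^{\mathrm{ml}}(x_{I_0})\cdot h$ with an $\mathbb{F}$-dot-product on $H$, shows that for each $d\in I_0$ and $\lambda\in\mathbb{F}$ the ``scalar-defect'' form $(x_{I_0},h)\mapsto \psi_i(x_{I_0\setminus\{d\}},\lambda x_d,h)-(\lambda\psi)_i(x_{I_0},h)$ has bias $\geq \delta_1^{O(1)}$ (this is where the dense agreement with the $\mathbb{F}$-multiaffine $\phi$ enters), applies Theorem~\ref{strongInvARankThm} to get low partition rank, and then uses Lemma~\ref{lowRankApprox} to find a genuinely $\mathbb{F}$-multilinear $\sigma$ with $\prank_{\mathbb{F}_p}(\psi-\sigma)$ bounded. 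The low-rank difference is absorbed by passing to a layer of an $\mathbb{F}_p$-multiaffine map of bounded codimension. This is the idea your sketch is missing.
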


The theorem will follow from Theorem~\ref{multiaffineInvThm} and Proposition \ref{app1MainProp}. For the proposition, we need the following lemma from~\cite{extnPaper}.

\begin{lemma}[Lemma 5.1 in~\cite{extnPaper}]\label{lowRankApprox}Let $\alpha \colon G_{[k]} \to \mathbb{F}$ be a $\mathbb{F}_p$-multilinear map, which is $\mathbb{F}$-linear in first $d-1$ coordinates. Suppose that for every $i \in [r]$ and $\lambda \in \mathbb{F}$,
\[\prank_{\mathbb{F}_p} \Big(x_{[k]} \mapsto \alpha_i(x_{[k] \setminus \{d\}}, \lambda x_d) - (\lambda \cdot \alpha)_i(x_{[k]})\Big) \leq s.\]
Then there is an $\mathbb{F}_p$-multilinear form $\sigma \colon G_{[k]} \to \mathbb{F}$, linear in the first $d$ cordinates, such that $\prank_{\mathbb{F}_p} \Big(\alpha_i - \sigma_i\Big) \leq 2sr^2$ for each $i \in [r]$.
\end{lemma}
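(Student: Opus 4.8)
The plan is to build the approximant $\sigma=(\sigma_1,\dots,\sigma_r)$ by an explicit $\mathbb{F}^\times$-averaging of $\alpha$ \emph{in the single coordinate $d$}, which automatically annihilates the genuinely non-$\mathbb{F}$-linear part of $\alpha$ there, and then to control $\prank_{\mathbb{F}_p}(\alpha_i-\sigma_i)$ by exhibiting it as a short $\mathbb{F}$-linear combination of the low-rank forms supplied by the hypothesis. Write $\mathbb{F}=\mathbb{F}_{p^n}$; since $p\nmid|\mathbb{F}^\times|$ the scalar $|\mathbb{F}^\times|^{-1}$ makes sense in $\mathbb{F}$. For $\mu\in\mathbb{F}$ let $T_\mu$ be the operator sending an $\mathbb{F}_p$-multilinear form $\phi\colon G_{[k]}\to\mathbb{F}$ to $x_{[k]}\mapsto\phi(x_{[k]\setminus\{d\}},\mu x_d)$; note that $T_\mu$ is $\mathbb{F}$-linear on forms, multiplicative ($T_\nu T_\mu=T_{\mu\nu}$), preserves $\mathbb{F}_p$-multilinearity and $\mathbb{F}$-linearity in coordinates $1,\dots,d-1$, and does not increase $\prank_{\mathbb{F}_p}$. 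Define $\sigma_i:=|\mathbb{F}^\times|^{-1}\sum_{\mu\in\mathbb{F}^\times}\mu^{-1}T_\mu\alpha_i$.

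First I would verify the algebraic properties of $\sigma$. Each operation used in its definition touches only coordinate $d$, so $\sigma$ is $\mathbb{F}_p$-multilinear and is $\mathbb{F}$-linear in coordinates $1,\dots,d-1$, inheriting these from $\alpha$. That $\sigma$ is $\mathbb{F}$-linear in coordinate $d$ follows from the substitution $\mu'=\mu\nu$: $T_\nu\sigma_i=|\mathbb{F}^\times|^{-1}\sum_\mu\mu^{-1}T_{\mu\nu}\alpha_i=\nu\,|\mathbb{F}^\times|^{-1}\sum_{\mu'}(\mu')^{-1}T_{\mu'}\alpha_i=\nu\sigma_i$. (Equivalently, writing $G_d$ as an $\mathbb{F}$-vector space and decomposing the $\mathbb{F}_p$-linear $x_d$-dependence of $\alpha_i$ into its Frobenius layers via linearized polynomials, $\sigma_i$ is precisely the layer of degree $p^0$, which is visibly $\mathbb{F}$-linear in $x_d$.) This part is routine.

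Now set $\beta^\mu_i:=T_\mu\alpha_i-\mu\alpha_i$, so that $\prank_{\mathbb{F}_p}\beta^\mu_i\le s$ for all $\mu\in\mathbb{F}$, $i\in[r]$ by hypothesis, and a one-line computation gives $\alpha_i-\sigma_i=-|\mathbb{F}^\times|^{-1}\sum_{\mu\in\mathbb{F}^\times}\mu^{-1}\beta^\mu_i$. Because $\alpha_i$ is additive in coordinate $d$, the map $\mu\mapsto\beta^\mu_i$ is $\mathbb{F}_p$-linear, and one has the cocycle identity $\beta^{\mu\nu}_i=T_\nu\beta^\mu_i+\mu\beta^\nu_i$; in particular $\alpha_i-\sigma_i$ lies in the $\mathbb{F}$-span of $\{\beta^\mu_i:\mu\in\mathbb{F}\}$. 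The naive estimate at this point — pass to an $\mathbb{F}_p$-basis of $\mathbb{F}$, or recover the individual Frobenius layers of $\alpha_i$ by interpolation — only bounds $\prank_{\mathbb{F}_p}(\alpha_i-\sigma_i)$ by roughly $[\mathbb{F}:\mathbb{F}_p]\cdot s$, which is useless. The real content of the lemma is to replace this by the $\mathbb{F}$-independent bound $2sr^2$: one works with the whole list $\alpha_1,\dots,\alpha_r$ simultaneously, uses the cocycle identity together with the fact that $T_\nu$ preserves $\prank\le s$, and runs a rank--nullity argument over $\mathbb{F}_p^r$ (equivalently over $\mathbb{F}^r$) to extract at most $2sr^2$ rank-one building blocks out of which every $\beta^\mu_i$, and hence each $\alpha_i-\sigma_i$, can be reassembled. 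I expect this last step — forcing the bound not to see $[\mathbb{F}:\mathbb{F}_p]$ — to be the main obstacle; it is exactly Lemma 5.1 of \cite{extnPaper}, and it is where the quadratic dependence on $r$ is produced.
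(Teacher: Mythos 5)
Your construction of $\sigma$ by averaging over $\mathbb{F}^\times$ is correct: it is $\mathbb{F}_p$-multilinear, inherits $\mathbb{F}$-linearity in coordinates $1,\dots,d-1$, and the reindexing $\mu\mapsto\mu\nu$ shows $T_\nu\sigma=\nu\sigma$, so it is $\mathbb{F}$-linear in coordinate $d$ as well. You also correctly compute $\alpha-\sigma=-|\mathbb{F}^\times|^{-1}\sum_{\mu\in\mathbb{F}^\times}\mu^{-1}\beta^\mu$ with $\beta^\mu=T_\mu\alpha-\mu\alpha$, and you correctly observe that $\mu\mapsto\beta^\mu$ is $\mathbb{F}_p$-linear. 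Up to here your account matches the intended argument.

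However, the proposal then stops short of the result. You declare the rank estimate to be ``the main obstacle'', suggest it needs a cocycle/rank--nullity argument you do not supply, and finally refer back to Lemma~5.1 of \cite{extnPaper} as the place where that argument lives --- which is circular, since that lemma is precisely the statement you are asked to prove. So as written there is a genuine gap: the bound $\prank_{\mathbb{F}_p}(\alpha_i-\sigma_i)\le 2sr^2$ is never established.

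The gap is, in fact, easy to close, and the reason you did not close it is a misreading of what $r$ is. In the statement, $\alpha$ takes values in $\mathbb{F}$ (not in $\mathbb{F}^r$), and the components $\alpha_i,\sigma_i,\beta^\mu_i$ are the $\mathbb{F}_p$-coordinates of these $\mathbb{F}$-valued forms with respect to a fixed $\mathbb{F}_p$-basis $e_1,\dots,e_r$ of $\mathbb{F}$; in particular $r=[\mathbb{F}:\mathbb{F}_p]$. (This is exactly how the lemma is invoked in the proof of Proposition~\ref{app1MainProp}, where $e_1,\dots,e_r$ is explicitly introduced as an $\mathbb{F}_p$-basis of $\mathbb{F}$ and $\lambda_i$ denotes the $i$th coordinate.) Therefore the ``useless'' bound $[\mathbb{F}:\mathbb{F}_p]\cdot s$ you dismiss is in fact a bound of order $rs$, which is better than what the lemma asks for; the target $2sr^2$ \emph{does} depend on $[\mathbb{F}:\mathbb{F}_p]$, through $r$. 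There is no hidden obstacle.

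Concretely: by $\mathbb{F}_p$-linearity of $\mu\mapsto\beta^\mu$ you may write $\beta^\mu=\sum_{j=1}^r\mu_j\beta^{e_j}$, so that
\[
\alpha-\sigma=-|\mathbb{F}^\times|^{-1}\sum_{\mu\in\mathbb{F}^\times}\mu^{-1}\sum_{j=1}^r\mu_j\beta^{e_j}=\sum_{j=1}^r c_j\,\beta^{e_j}
\]
for some $c_1,\dots,c_r\in\mathbb{F}$. Taking the $i$th $\mathbb{F}_p$-coordinate, and writing $(c_je_l)_i\in\mathbb{F}_p$ for the $i$th coordinate of $c_je_l\in\mathbb{F}$,
\[
(\alpha-\sigma)_i=\sum_{j=1}^r\bigl(c_j\beta^{e_j}\bigr)_i=\sum_{j=1}^r\sum_{l=1}^r(c_je_l)_i\,\beta^{e_j}_l .
\]
This is an $\mathbb{F}_p$-linear combination of the $r^2$ forms $\beta^{e_j}_l$, each of which has $\prank_{\mathbb{F}_p}\le s$ by hypothesis (applied with $\lambda=e_j$, component $l$). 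By subadditivity of partition rank under $\mathbb{F}_p$-linear combinations, $\prank_{\mathbb{F}_p}\bigl((\alpha-\sigma)_i\bigr)\le r^2s\le 2sr^2$, as required. Neither the cocycle identity nor any rank--nullity argument is needed.
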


\begin{proposition}\label{app1MainProp}Let $\mathcal{G} \subset \mathcal{P}([k])$ be a down-set with a maximal element $I_0$. Write $\mathcal{G}' = \mathcal{G} \setminus \{I_0\}$. Let $\Phi \colon G_{[k]} \to H$ be an $\mathbb{F}_p$-multiaffine $\mathcal{G}$-supported map and let $A \subset G_{[k]}$ be a set of density $\delta > 0$ such that the restriction $\Phi|_A$ is $\mathbb{F}$-multiaffine. Then there exist an $\mathbb{F}$-multiaffine map $\Theta \colon G_{[k]} \to H$ and an $\mathbb{F}_p$-multiaffine map $\alpha \colon G_{[k]} \to \mathbb{F}_p^r$, where $r = O(\log_p\delta^{-1})^{O(1)}$, such that on each layer of $\alpha$, $\Phi - \Theta$ is a $\mathcal{G}'$-supported $\mathbb{F}_p$-multiaffine map, provided that $|G_d| \geq \con\,\delta^{-\con}$ for each $d \in [k]$.\end{proposition}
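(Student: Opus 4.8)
The plan is to induct on $k$ and proceed direction by direction, showing that being $\mathbb F$-affine (rather than merely $\mathbb F_p$-affine) in a coordinate is captured, up to a bounded-rank $\mathbb F_p$-multiaffine correction, by an $\mathbb F$-multiaffine map. Write $\Phi$ as a sum of $\mathbb F_p$-multilinear parts $\Phi = \sum_{I \in \mathcal G}\Phi_I$ with $\Phi_{I_0}$ the top part. For a fixed coordinate $d \in I_0$ and a scalar $\lambda \in \mathbb F$, consider the map $x_{[k]}\mapsto \Phi(x_{[k]\setminus\{d\}},\lambda x_d)-(\lambda\cdot\Phi)(x_{[k]})$ (interpreting $\lambda$-action via the $\mathbb F$-module structure); the hypothesis that $\Phi|_A$ is $\mathbb F$-affine in direction $d$ says this map vanishes on a positive-density set, at least after we restrict to a suitable set of columns. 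The first step is therefore to use that density-$\delta$ vanishing, together with Lemma~\ref{biasHomog} and the quantitative inverse theorem for biased multilinear forms (Theorem~\ref{strongInvARankThm}), to deduce that the $\mathbb F_p$-multilinear parts of this difference have partition rank $O(\log_p\delta^{-1})^{O(1)}$ — i.e.\ that the obstruction to $\mathbb F$-linearity in direction $d$ is of bounded rank. (Here one has to be slightly careful: $\Phi|_A$ being $\mathbb F$-affine in direction $d$ is a statement about columns, so one first passes, via Theorem~\ref{denseColumnsThm} or a simple averaging over $(1-o(1))$-dense columns, to the conclusion that the relevant difference is genuinely low-rank as a global map.)

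Once low partition rank of these differences is established for every $\lambda$ and every $d \in I_0$, the second step is to invoke Lemma~\ref{lowRankApprox} (Lemma 5.1 of~\cite{extnPaper}) repeatedly, one coordinate of $I_0$ at a time: starting from $\Phi_{I_0}$, which is $\mathbb F_p$-multilinear, we correct it by an $\mathbb F_p$-multilinear form of partition rank $O((r')^2 s)$ to make it $\mathbb F$-linear in the first coordinate of $I_0$, then in the second, and so on, the rank of the accumulated correction growing only polynomially at each of the $|I_0|\le k$ stages. At the end we obtain an $\mathbb F$-multilinear form $\sigma_{I_0}\colon G_{I_0}\to H$ (more precisely $H$ componentwise) together with an $\mathbb F_p$-multilinear correction $\Phi_{I_0}-\sigma_{I_0}$ of partition rank $O(\log_p\delta^{-1})^{O(1)}$. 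Setting $\Theta$ to be the $\mathbb F$-multiaffine map whose top part is $\sigma_{I_0}$ (and whose lower-order parts we may take to be zero, or chosen to absorb whatever is convenient), the difference $\Phi-\Theta$ has top-order part $\Phi_{I_0}-\sigma_{I_0}$, which is $\mathbb F_p$-multilinear of bounded partition rank. Expressing a rank-$r$ $\mathbb F_p$-multilinear form in terms of $O(r)$ products of lower-order $\mathbb F_p$-multilinear forms and then applying Lemma~\ref{varOuterApprox} (or directly using the decomposition of $\Phi_{I_0}-\sigma_{I_0}$ as $\sum_{i\le r'}\beta_i\gamma_i$ with $\emptyset\ne J_i\subsetneq I_0$), we define $\alpha$ to be the $\mathbb F_p$-multiaffine map collecting all the forms $\beta_i$ (and all the lower-order multilinear parts of $\Phi$ that are already $\mathcal G'$-supported need no treatment). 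On each layer of $\alpha$, the forms $\beta_i$ are constant, so $\Phi_{I_0}-\sigma_{I_0}$ restricts to a $\mathcal G'$-supported $\mathbb F_p$-multiaffine map there, and hence so does $\Phi-\Theta$. The codimension bound $r=O(\log_p\delta^{-1})^{O(1)}$ follows from tracking the polynomial rank growth through the $\le k$ applications of Lemma~\ref{lowRankApprox} and the final rank-to-partition-rank bookkeeping.

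The main obstacle I expect is the first step: turning the \emph{qualitative} hypothesis "$\Phi|_A$ is $\mathbb F$-affine in each direction on a density-$\delta$ set" into a \emph{quantitative, global} low-rank statement about the $\mathbb F_p$-multilinear differences $\Phi(x_{[k]\setminus\{d\}},\lambda x_d)-(\lambda\cdot\Phi)(x_{[k]})$. The subtlety is that $\mathbb F$-affineness in direction $d$ is asserted column-by-column and only for columns meeting $A$, so one first needs that $A$ has $(1-o(1))$-dense columns in direction $d$ for $\Omega(\delta)$ of the base points — which it does not in general, only after an averaging/pigeonhole argument or after passing to the dense-columns variety of Theorem~\ref{denseColumnsThm}. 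One then argues that on such a dense column the $\mathbb F$-affine map agreeing with $\Phi$ there is forced, and comparing the $\mathbb F$-linear structure across many columns forces the $\mathbb F_p$-multilinear difference to vanish on a dense set, whence Theorem~\ref{strongInvARankThm} applies. Handling the affine (rather than linear) parts and the action of a general $\lambda\in\mathbb F$ (as opposed to $\lambda\in\mathbb F_p$) cleanly — so that the rank bound is uniform in $\lambda$, using that $\mathbb F$ is a bounded-dimensional $\mathbb F_p$-vector space — is the part that will require the most care, but it is essentially the same bookkeeping as in~\cite{extnPaper}, from which Lemma~\ref{lowRankApprox} is quoted.
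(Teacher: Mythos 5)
Your proposal has the same macroscopic architecture as the paper's proof: isolate the top-order multilinear part $\Phi^{\mathrm{ml}}\colon G_{I_0}\to H$, pair with an $H$-coordinate to obtain a scalar multilinear form, establish large bias for each $(d,\lambda)$-scaling difference, invoke Theorem~\ref{strongInvARankThm}, iterate Lemma~\ref{lowRankApprox} over the coordinates of $I_0$, and read off $\alpha$ from the resulting low-partition-rank decomposition. The second half of your argument would therefore go through. The gap is in the first step, and although you flag the right place to worry, the proposed fix does not work.

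The claim that ``$\Phi|_A$ being $\mathbb{F}$-affine in direction $d$ says $x_{[k]}\mapsto \Phi(x_{[k]\setminus\{d\}},\lambda x_d)-(\lambda\cdot\Phi)(x_{[k]})$ vanishes on a positive-density set'' fails for three independent reasons. First, the column-wise maps guaranteed by $\mathbb{F}$-multiaffineness are $\mathbb{F}$-\emph{affine}, not $\mathbb{F}$-linear, so $\alpha(\lambda y)\neq\lambda\alpha(y)$: the constant term does not scale. Second, $A$ need not be closed under $\lambda$-scaling in coordinate $d$, so even on a dense column one cannot simultaneously control $\Phi$ at $y_d$ and at $\lambda y_d$. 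Third, $\Phi$ still contains lower-order multilinear parts, and the scaling difference does not cleanly annihilate them; you need to strip them off before applying the bias/inverse-rank machinery. Averaging over $(1-o(1))$-dense columns, or Theorem~\ref{denseColumnsThm}, cures none of these: the columns delivered are not $\mathbb{F}$-subspaces. The paper circumvents all three by a concrete sub-argument: it extracts $\Phi^{\mathrm{ml}}$ via an alternating-sum identity over combinatorial cubes lying in $A$ (eliminating the lower-order parts); then, for each $(I_0\setminus\{d\})$-slice, it shows $\Phi^{\mathrm{ml}}$ agrees on a dense set with an $\mathbb{F}$-\emph{affine} column map $\theta$, perturbs $\theta$ to a genuine $\mathbb{F}$-\emph{linear} map at the cost of an $|\mathbb{F}|^{-1}$ factor in density, and finally intersects $X''_{x_{I_0\setminus\{d\}}}=\bigcap_{\mu\in\mathbb{F}^{\times}}\mu X'_{x_{I_0\setminus\{d\}}}$ to obtain an $\mathbb{F}$-subspace, so that $y_d$ and $\lambda y_d$ both lie in it. Only after all of this does the bias become a positive-density lower bound to which Theorem~\ref{strongInvARankThm} applies. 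Your proposal would need essentially this whole sub-argument to plug the hole; it is not merely ``the same bookkeeping as in~\cite{extnPaper}''.
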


\begin{proof}Since $\Phi$ is $\mathcal{G}$-supported, we may write it as $\Phi(x_{[k]}) = \Phi^{\text{ml}}(x_{I_0}) + \Phi'(x_{[k]})$ for an $\mathbb{F}_p$-multilinear map $\Phi^{\text{ml}} \colon G_{I_0} \to H$ and a $\mathcal{G}'$-supported $\mathbb{F}_p$-multiaffine map $\Phi' \colon G_{[k]} \to H$. Thus, whenever $a_{I_0}, b_{I_0} \in G_{I_0}$ and $z_{[k] \setminus I_0} \in G_{[k] \setminus I_0}$ are such that $(a_J, b_{I_0 \setminus J}, z_{[k] \setminus I_0}) \in A$ for each choice of $J \subset I_0$, then in fact
\[\sum_{J \subset I_0} (-1)^{|I_0| - |J|} \Phi(a_J, b_{I_0 \setminus J}, z_{[k] \setminus I_0}) = \Phi^{\text{ml}}(a_i - b_i \colon i \in I_0).\]
By averaging, there are positive quantities $\delta_1, \delta_2 \geq \Omega(\delta^{O(1)})$, an element $z_{[k] \setminus I_0} \in G_{[k] \setminus I_0}$ and a $\delta_1$-dense set $X \subset G_{I_0}$ such that for each $x_{I_0} \in X$, there are at least $\delta_2 |G_{I_0}|$ elements $b_{I_0} \in G_{I_0}$ such that $\Big((b + x)_J, b_{I_0 \setminus J}, z_{[k] \setminus I_0}\Big) \in A$ for each $J \subset I_0$.\\

\begin{claim*}\label{linearDirCoinc}Provided $|G_d| \geq \con\,\delta^{-\con}$, for each $d \in I_0$, there are a subset $X' \subset X$ of size at least $(|\mathbb{F}|^{-1} \delta)^{O(1)} |G_{I_0}|$ and an $\mathbb{F}$-linear map $\theta = \theta_{x_{I_0 \setminus \{d\}}}\colon G_d \to H$ for each $x_{I_0 \setminus \{d\}} \in G_{I_0 \setminus \{d\}}$ such that $\Phi^{\mathrm{ml}}(x_{I_0 \setminus \{d\}}, y_d) = \theta(y_d)$ for all $y_d \in X'_{x_{I_0 \setminus \{d\}}}$.\end{claim*}

\begin{proof}[Proof of claim] Let $x_{I_0 \setminus \{d\}}$ be such that $X_{x_{I_0 \setminus \{d\}}}$ is $\delta_1/2$-dense. Then there exists $b_{I_0} \in G_{I_0}$ such that $\Big((b + x)_I, b_{I_0 \setminus I}, z_{[k] \setminus I_0}\Big) \in A$ for each $I \subset I_0$ for at least $\Omega(\delta^{O(1)})|G_d|$ of $x_d \in X_{x_{I_0 \setminus \{d\}}}$. Let $\tilde{X}_{x_{I_0 \setminus \{d\}}}$ be the set of such $x_d$. Then for each $x_{d} \in \tilde{X}_{x_{I_0 \setminus \{d\}}}$,
\[\sum_{I \subset I_0} (-1)^{|I_0|-|I|} \Phi((b+x)_I, b_{I_0 \setminus I}, z_{[k] \setminus I_0}) = \Phi^{\text{ml}}(x_{I_0}).\]
By the properties of $\Phi$, for each $I \subset I_0 \setminus \{d\}$ there is an affine map $\theta_I \colon G_d \to H$ such that we have 
\[\theta_I(y_d) = \Phi((b + x)_I, b_{I_0 \setminus I \setminus \{d\}}, z_{[k] \setminus I_0}, y_d)\]
for all $y_d \in A_{(b + x)_I, b_{I_0 \setminus I \setminus \{d\}}, z_{[k] \setminus I_0}}$. It follows that for each $x_d \in \tilde{X}_{x_{I_0 \setminus \{d\}}}$ we have
\[\Phi^{\text{ml}}(x_{I_0}) = \sum_{I \subset (I_0 \setminus \{d\})} (-1)^{|I_0|-|I|-1} \Big(\theta_I(x_d + b_d) - \theta_I(x_d)\Big).\]
Writing $\theta \colon G_d \to H$ for the affine map
\[y_d \mapsto \sum_{I \subset (I_0 \setminus \{d\})} (-1)^{|I_0|-|I|-1} \Big(\theta_I(y_d + b_d) - \theta_I(y_d)\Big),\]
we sum up the work above to conclude that for each $x_{I_0 \setminus \{d\}}$ such that $X_{x_{I_0 \setminus \{d\}}}$ is $\delta_1/2$-dense, there are a set $\tilde{X}_{x_{I_0 \setminus \{d\}}}$ of size at least $\Omega(\delta^{O(1)})|G_d|$ and an affine map $\theta \colon G_d \to H$ such that $\Phi^{\text{ml}}(x_{I_0}) = \theta(x_d)$ for $x_d \in \tilde{X}_{x_{I_0 \setminus \{d\}}}$. We now modify $\theta$ to make it an $\mathbb{F}$-linear map.\\

Fix an $\mathbb{F}$ dot product $\cdot$ on $G_d$. Take a random element $u \in G_d$. Define a map $\theta'$ by $\theta'(x) = \theta(x) - \theta(0) + \theta(0) u \cdot x$. View $\mathbb{F}_p$ as a subset of $\mathbb{F}$. Then for each $x_d \in \tilde{X}_{x_{I_0 \setminus \{d\}}} \setminus \{0\}$, 
\[\mathbb{P}\Big(\Phi^{\text{ml}}(x_{I_0}) = \theta'(x_d)\Big) = \mathbb{P}\Big(\theta(x_d) = \theta'(x_d)\Big) = \mathbb{P}(u \cdot x = 1) = |\mathbb{F}|^{-1}.\]
There is therefore a choice of $u$ such that $\Phi^{\text{ml}}(x_{I_0}) = \theta'(x_d)$ on a set $X'_{x_{I_0 \setminus \{d\}}} \subset \tilde{X}_{x_{I_0 \setminus \{d\}}}$ of size at least $|\mathbb{F}|^{-1}(|\tilde{X}_{x_{I_0 \setminus \{d\}}}| - 1)$, as desired. This proves the claim.\end{proof}
\medskip

\indent Now define a map $\psi \colon G_{I_0} \times H \to \mathbb{F}$ by $\psi(x_{I_0}, h) = \Phi^{\text{ml}}(x_{I_0}) \cdot h$, where $\cdot$ is an $\mathbb{F}$ dot product on $H$. Then $\psi$ is an $\mathbb{F}_p$-multilinear map. Fix an $\mathbb{F}_p$-basis $e_1, \dots, e_r$ of $\mathbb{F}$ and for each $\lambda \in \mathbb{F}$ let $\lambda_i$ be the $i$\textsuperscript{th} coordinate with respect to this basis. We now claim that for each $i \in [r], d \in I_0, \lambda \in \mathbb{F}$, the $\mathbb{F}_p$-multilinear form
\[(x_{I_0}, h) \mapsto \psi_i(x_{I_0 \setminus \{d\}}, \lambda x_d, h) - (\lambda \psi)_i(x_{I_0}, h)\]
has large bias.\\
\indent Let $X'$ be the set provided by the claim. This set has size $\delta_3 |G_{I_0}|$, where $\delta_3 = \Omega(\delta^{O(1)})$. Let $Y$ be the set of all $x_{I_0 \setminus \{d\}} \in G_{I_0 \setminus \{d\}}$ such that $|X'_{x_{I_0 \setminus \{d\}}}|\ \geq \frac{\delta_3}{2} |G_d|$. Then $|Y|\ \geq \frac{\delta_3}{2} |G_{I_0 \setminus \{d\}}|$. Observe that we may in fact assume that $X'_{x_{I_0 \setminus \{d\}}}$ is an $\mathbb{F}_p$-subspace of $G_d$, for each $x_{I_0 \setminus \{d\}} \in Y$ (we no longer use the fact that $X' \subset X$). Indeed, the set of all $y_d \in G_d$ such that $\theta_{x_{I_0 \setminus \{d\}}}(y_d) = \Phi^{\text{ml}}(x_{I_0 \setminus \{d\}}, y_d)$ is an $\mathbb{F}_p$-subspace. Let $X''_{x_{I_0 \setminus \{d\}}} = \bigcap_{\mu \in \mathbb{F}^\times} \mu \cdot X'_{x_{I_0 \setminus \{d\}}}$. This is then an $\mathbb{F}$-subspace of density at least $2^{1-|\mathbb{F}|}\delta_3^{|\mathbb{F}| - 1}$.\\
\indent For each $x_{I_0 \setminus \{d\}} \in Y$ we obtain
\begin{align*}\exx_{y_d, h} \chi\Big(\psi_i(x_{I_0 \setminus \{d\}}, \lambda y_d, h)& - (\lambda \psi)_i(x_{I_0 \setminus \{d\}}, y_d, h)\Big)\\ 
&= \exx_{y_d, h} \chi\Big((\Phi^{\text{ml}}(x_{I_0 \setminus \{d\}}, \lambda y_d) \cdot h - \lambda \Phi^{\text{ml}}(x_{I_0 \setminus \{d\}}, y_d) \cdot h)_i\Big)\\
&= \exx_{y_d} \bigg(\exx_h\chi\Big((\Phi^{\text{ml}}(x_{I_0 \setminus \{d\}},\lambda y_d) \cdot h - \lambda \Phi^{\text{ml}}(x_{I_0 \setminus \{d\}}, y_d) \cdot h)_i\Big)\bigg).\\
\end{align*}
Since the expression $\ex_h\chi\Big((\Phi^{\text{ml}}(x_{I_0 \setminus \{d\}},\lambda y_d) \cdot h - \lambda \Phi^{\text{ml}}(x_{I_0 \setminus \{d\}}, y_d) \cdot h)_i\Big)$ always takes values in $[0,1]$, this is at least
\[\exx_{y_d} \mathbbm{1}(y_d \in X''_{x_{I_0 \setminus \{d\}}})\exx_h\chi\Big((\Phi^{\text{ml}}(x_{I_0 \setminus \{d\}}, \lambda y_d) \cdot h - \lambda \Phi^{\text{ml}}(x_{I_0 \setminus \{d\}}, y_d) \cdot h)_i\Big).\]
If $y_d \in X''_{x_{I_0 \setminus \{d\}}}$, then $\lambda y_d\in X''_{x_{I_0 \setminus \{d\}}}$ as well, so this is equal to
\begin{align*} \exx_{y_d} \mathbbm{1}(y_d \in X''_{x_{I_0 \setminus \{d\}}})\exx_h \chi\Big(((\theta(\lambda y_d) - \lambda \theta(y_d)) \cdot h)_i\Big) &=  |X''_{x_{I_0 \setminus \{d\}}}| / |G_d|\,\, \geq 2^{1-|\mathbb{F}|}\delta_3^{|\mathbb{F}| - 1}.
\end{align*}
After averaging over $x_{I_0 \setminus\{d\}}$, the bias we considered is at least $2^{-|\mathbf{F}|}\delta_3^{|\mathbf{F}|}$.\\

By Theorem~\ref{strongInvARankThm}, $\rho_{i,\lambda,d}(x_{I_0}, h) = \psi_i(x_{I_0 \setminus \{d\}}, \lambda x_d, h) - (\lambda  \psi)_i(x_{I_0}, h)$ has partition-rank $s = O(\log_{p} \delta^{-1})^{O(1)}$.

We now apply Lemma \ref{lowRankApprox} $|I_0|$ times to get an $\mathbb{F}$-multilinear form $\sigma \colon G_{I_0} \times H \to \mathbb{F}$ such that $\prank_{\mathbb{F}_p} (\psi_i - \sigma_i) \leq (2r)^{O(k)} s$ for each $i \in [r]$. Write $\sigma(x_{I_0}, h) = S(x_{I_0}) \cdot h$ for an $\mathbb{F}$-multilinear map $S \colon G_{I_0} \to H$. For each $i \in [r]$, we get that $\{x_{I_0} \in G_{I_0} \colon (\forall h \in H) \psi_i(x_{I_0}, h) = \sigma_i(x_{I_0}, h)\}$ contains an $\mathbb{F}_p$-multilinear variety $W_i$ of codimension $O(\log_p \delta^{-1})^{O(1)}$. Thus, $\{x_{I_0} \in G_{I_0} \colon (\forall h \in H)(\forall i \in [r]) \hspace{2pt} \psi_i(x_{I_0}, h) = \sigma_i(x_{I_0}, h)\}$ contains an $\mathbb{F}_p$-multilinear variety of codimension $s' = O(\log_p \delta^{-1})^{O(1)}$, and in particular has density at least $p^{-k s'}$. Hence, $S =  \Phi^{\text{ml}}$ on a set of density at least $p^{-ks'}$. Let $\overset{p}{\bullet}$ be an $\mathbb{F}_p$ dot product on $H$. Now consider the $\mathbb{F}_p$-multilinear forms $\psi^{\mathbb{F}_p}$ and $\sigma^{\mathbb{F}_p}$ defined by $\psi^{\mathbb{F}_p}(x_{I_0}, h) = \Phi^{\text{ml}}(x_{I_0}) \overset{p}{\bullet} h$ and $\sigma^{\mathbb{F}_p}(x_{I_0}, h) = S(x_{I_0}) \overset{p}{\bullet} h$. Apply Theorem~\ref{strongInvARankThm} to the form $\psi^{\mathbb{F}_p} - \sigma^{\mathbb{F}_p}$ to complete the proof of the proposition.\end{proof}

\begin{proof}[Proof of Theorem~\ref{multiaffineInvThmGen}] We prove the claim by induction on $k$, the base case $k = 1$ being trivial. Thus assume that $k \geq 2$ and that the claim holds for smaller values of $k$. Let $\eta > 0$ be a parameter to be specified later. We begin the proof by showing that we may assume the condition $|G_d| \geq \con\,\eta^{-\con}$ for each $d \in [k]$. Suppose on the contrary that $|G_d| \leq O(\eta^{-O(1)})$ for some $d$, say $d = k$. We simply average over $G_k$ to find an element $x_k$ such that 
\[|A_{x_k}| \geq \frac{1}{|G_k|} |A| \geq \Omega(\eta^{O(1)} |A|).\]
The inductive hypothesis applies to give an $\mathbb{F}$-multiaffine map $\Phi \colon G_{[k-1]} \to H$ such that $\Phi(y_{[k-1]}) = \phi(y_{[k-1]}, x_k)$ holds for at least $\Big(\exp^{(D_{k-1})}(O(\eta^{-1}))\Big)^{-1} |G_{[k-1]}|$ choices of $y_{[k-1]} \in G_{[k-1]}$. In particular $\phi(y_{[k]}) = \Phi(y_{[k-1]})$ holds for at least 
\[\Omega(\eta^{O(1)}) \Big(\exp^{(D_{k-1})}(O(\eta^{-1}))\Big)^{-1} |G_{[k]}|\]
elements $y_{[k]} \in G_{[k]}$ and we are done. Thus, assume that the condition on the size of $G_d$ is satisfied, provided $\eta$ is not too small.\\

From the assumptions, we deduce that $\phi$ is a Freiman homomorphism in each direction, so Theorem~\ref{multiaffineInvThm} applies and gives an $\mathbb{F}_p$-multiaffine map $\Phi \colon G_{[k]} \to H$ such that $\phi = \Phi$ on a set $B \subset A$ of size $|B| = \delta_1 |G_{[k]}|$, where $\delta_1 = \Big(\exp^{(D^{\mathrm{mh}}_k)}(O(\delta^{-1}))\Big)^{-1}$. We now use $\Phi$ to find the desired $\mathbb{F}$-multiaffine map. List all subsets $I_1, \dots, I_{2^{k}} \subset \mathcal{P}([k])$ in such a way that $I_i \supset I_j$ implies $i \leq j$. Let $\mathcal{G}_i = \{I_i, \dots, I_{2^k}\}$, which is a down-set for each $i \in [2^k]$. By induction on $i \in [2^k]$, we claim that there exist an $\mathbb{F}$-multiaffine map $\Theta^{(i)} \colon G_{[k]} \to H$ and a $\mathcal{G}_i$-supported $\mathbb{F}_p$-multiaffine map $\Psi^{(i)} \colon G_{[k]} \to H$  such that $\Phi = \Theta^{(i)} + \Psi^{(i)}$ on a subset $B^{(i)} \subset B$ of size at least $\exp\Big(-\log^{O(1)} \delta_1^{-1}\Big)|G_{[k]}|$. The base case $i = 1$ is trivial -- we simply take $\Theta^{(1)} = 0$, $\Psi^{(1)} = \Phi$ and $B^{(1)} = B$.\\
\indent Assume now that the claim holds for some $i \in [2^k-1]$, let $\Theta^{(i)}$ and $\Psi^{(i)}$ be the relevant maps, and let $B^{(i)}$ be the relevant set. Note that $\Phi$ and $\Theta^{(i)}$ are $\mathbb{F}$-multiaffine on $B^{(i)}$, which makes $\Psi^{(i)}$ $\mathbb{F}$-multiaffine on $B^{(i)}$ as well. Since $\Psi^{(i)}$ is $\mathcal{G}_i$-supported and $I_i$ is a maximal set in $\mathcal{G}_i$, we may apply Proposition~\ref{app1MainProp} to find an $\mathbb{F}$-multiaffine map $\Theta' \colon G_{I_i} \to H$ and an $\mathbb{F}_p$-multiaffine map $\alpha \colon G_{I_{[k]}} \to \mathbb{F}_p^{s'}$, where $s' = O(\log_p \delta_1^{-1})^{O(1)}$, such that on each layer of $\alpha$, $\Psi^{(i)} - \Theta'$ is a $\mathcal{G}_{i+1}$-supported $\mathbb{F}_p$-multiaffine map. (By the discussion at the beginning of the proof, we may assume that the condition on $|G_d|$ in Proposition~\ref{app1MainProp} is satisfied, by setting $\eta = \exp(-\log^{O(1)} \delta_1^{-1})$.) Take a layer $L$ of $\alpha$ such that the set $B^{(i+1)}=L \cap B^{(i)}$ has size at least $p^{-s'} |B^{(i)}|$. Then on $L$, $\Psi^{(i)} = \Theta' + \Psi^{(i+1)}$ for a $\mathcal{G}_{i+1}$-supported $\mathbb{F}_p$-multiaffine map $\Psi^{(i+1)}$. Set $\Theta^{(i+1)} = \Theta^{(i)} + \Theta'$ to finish the proof of the claim.\\
\indent Finally, apply the above claim for $i = 2^k$ to complete the proof of the theorem.\end{proof}

\subsection{The structure of approximate polynomials}
Recall that a (generalized) polynomial of degree at most $k$ from an Abelian group $G$ to an Abelian group $H$ can be defined as a function $\Phi$ with the property that $\Delta_{a_1} \dots \Delta_{a_{k+1}} \Phi(x) = 0$ for every $x,a_1,\dots,a_{k+1}\in G$, where $\Delta_af(x)$ is defined to be $f(x)-f(x-a)$. Our next main theorem states that a function that satisfies this equation for a dense set of $(x,a_1,\dots,a_{k+1})$ agrees on a dense subset of $G$ with a (classical) polynomial of degree at most~$k$.

\begin{theorem}\label{ApproxPolyThmGen}Let $G$ and $H$ be $\mathbb{F}_p$-vector spaces, with $p > k$. Suppose that $\phi \colon G \to H$ is a function with the property that $\Delta_{a_1} \dots \Delta_{a_{k+1}} \phi(x) = 0$ for at least $c|G|^{k+2}$ choices of $(x, a_1, \dots, a_{k+1}) \in G^{k+2}$. Then there is a polynomial $\Phi \colon G \to H$ of degree at most $k$ such that $\phi(x) = \Phi(x)$ for every $x$ in a subset of $G$ of density $\Omega\Big(\exp^{(O(1))} (c^{-1})\Big)$.\end{theorem}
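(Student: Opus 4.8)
The plan is to reduce Theorem~\ref{ApproxPolyThmGen} to Theorem~\ref{multiaffineInvThm} (the inverse theorem for multi-homomorphisms) by passing to the $(k+1)$-fold multiplicative derivative of $\phi$. First I would set $D = \{(a_1,\dots,a_{k+1}) \in G^{k+1} \colon \Delta_{a_1}\cdots\Delta_{a_{k+1}}\phi(x) = 0 \text{ for } \geq \tfrac{c}{2}|G| \text{ values of } x\}$; by averaging, $D$ has density $\Omega(c)$ in $G^{k+1}$. For $(a_1,\dots,a_{k+1}) \in D$, fix $x = x(a_1,\dots,a_{k+1})$ witnessing the vanishing, and define $\Psi(a_1,\dots,a_{k+1}) = \Delta_{a_1}\cdots\Delta_{a_{k+1}}\phi(x)$ evaluated at that $x$ — but in fact, since $p > k+1$, the discrete derivative $\Delta_{a_1}\cdots\Delta_{a_{k+1}}\phi$ being zero at \emph{one} point forces it (on the relevant combinatorial configurations) to agree with the symmetric multilinearization: the standard identity expresses $\Delta_{a_1}\cdots\Delta_{a_{k+1}}\phi(x)$ as a $\pm 1$ combination of values $\phi(x - \sum_{i \in S}a_i)$ over $S \subseteq [k+1]$. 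The key point is that on a dense set, the map $(a_1,\dots,a_{k+1}) \mapsto \Delta_{a_1}\cdots\Delta_{a_{k+1}}\phi(\cdot)$, viewed appropriately, behaves like a symmetric multi-homomorphism in the $k+1$ variables $a_1,\dots,a_{k+1}$ (it respects additive structure in each $a_i$ separately, because $\Delta_{a+a'} = \Delta_a + \Delta_{a'} - \Delta_a\Delta_{a'}$ and the higher term contributes an extra derivative which vanishes).

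The main work is to make this precise: I would show that there is a function $\Psi$ defined on a dense subset $A' \subseteq G^{k+1}$ which is a multi-$2$-homomorphism in each of the $k+1$ coordinates (in the sense of the paper, with ambient spaces $G_1 = \dots = G_{k+1} = G$ and codomain $H$), and which records the $(k+1)$-st derivative data of $\phi$. This requires a Balog–Szemerédi–Gowers / dependent-random-choice style densification argument (or one can invoke something like Corollary~\ref{approxF2homm} or Theorem~\ref{FreBSG} componentwise) to pass from "respects many additive quadruples in direction $i$" to "genuinely a homomorphism in direction $i$ on a dense subset", iterating over $i = 1,\dots,k+1$ while losing only $\exp(\log^{O(1)})$-type factors. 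Then Theorem~\ref{multiaffineInvThm} applies to $\Psi$: there is a global multiaffine map $\Xi \colon G^{k+1} \to H$ agreeing with $\Psi$ on a set of density $\big(\exp^{(O(1))}(O(c^{-1}))\big)^{-1}$. Because $\Psi$ is symmetric and $\Psi(a_1,\dots,a_{k+1})$ is (on a dense set) a signed sum of values of $\phi$, the multilinear form $\Xi$ must likewise be symmetric (after symmetrizing, using $p > k$ to divide by $(k+1)!$), and it is the multilinearization of a degree-$(k+1)$... one should in fact check degree $\leq k$. Integrating $\Xi$ once produces a candidate polynomial $\Phi$ of degree $\leq k$, and a final argument — identical in spirit to Proposition~\ref{lastStepGlobalToMulti} — transfers the agreement of $\Xi$ with the derivative data back to agreement of $\Phi$ with $\phi$ on a dense subset of $G$, at the cost of one more application of the $(k-1)$-variable inverse theorem (or just direct averaging since here everything lives over a single copy of $G$).

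The step I expect to be the main obstacle is the middle one: cleanly setting up $\Psi$ as a bona fide multi-$2$-homomorphism on a dense set and controlling the symmetry. The subtlety is that $\Delta_{a_1}\cdots\Delta_{a_{k+1}}\phi$ at a single witnessing point $x$ is not literally symmetric in $(a_1,\dots,a_{k+1})$ a priori unless the witnessing points are chosen compatibly; one needs a coherent choice, which is exactly what the dependent-random-choice densification buys — after passing to a dense subset, the value becomes independent of the choice of $x$ within that subset, and then symmetry is automatic from the symmetry of the $\pm 1$ sum over subsets $S$. One also has to be slightly careful that the hypothesis $p > k$ (rather than $p > k+1$) suffices; this is handled because a generalized polynomial of degree $\leq k$ over $\mathbb{F}_p$ with $p > k$ is automatically a classical polynomial, and the relevant division by factorials only ever involves $j!$ for $j \leq k$. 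Once $\Psi$ is in hand as a multi-homomorphism, the rest is a routine assembly of results already in the paper (Theorems~\ref{multiaffineInvThm}, \ref{FreBSG}, and the extension machinery of Proposition~\ref{lastStepGlobalToMulti}).
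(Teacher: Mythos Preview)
Your proposal has a genuine gap at its very first move: you apply the multi-homomorphism machinery to the wrong object. You set $D$ to be the tuples $(a_1,\dots,a_{k+1})$ for which $\Delta_{a_1}\cdots\Delta_{a_{k+1}}\phi(x)=0$ at many $x$, and then define $\Psi(a_1,\dots,a_{k+1})$ to be this derivative at a witnessing $x$. But at a witnessing $x$ the value is $0$, so $\Psi\equiv 0$ on $D$, and Theorem~\ref{multiaffineInvThm} applied to it returns only the zero map, which carries no information about $\phi$. The identity $\Delta_{a+a'}=\Delta_a+\Delta_{a'}-\Delta_a\Delta_{a'}$ you quote is exactly the reason one should drop one derivative: the vanishing of the $(k+1)$-st derivative is what makes the \emph{$k$-th} derivative behave like a multi-homomorphism in $(a_1,\dots,a_k)$. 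The paper's Step~1 does precisely this: it rewrites the hypothesis as $\Delta_{a_1}\cdots\Delta_{a_k}\phi(x)=\Delta_{a_1}\cdots\Delta_{a_k}\phi(y)$ for many $(x,y,a_{[k]})$, extracts a popular-value function $\psi\colon S\to H$ on a dense $S\subset G^k$, proves (Claim~\ref{appPolyClaim}) that $\psi$ respects many $d$-additive quadruples in each direction, and only then invokes Theorem~\ref{FreBSG} direction-by-direction followed by Theorem~\ref{multiaffineInvThm} to get a global multiaffine $\Psi\colon G^k\to H$ with $\Delta_{a_1}\cdots\Delta_{a_k}\phi(x)=\Psi(a_{[k]})$ on a dense set.

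Two further points where your sketch would break down even after this fix. First, symmetry is \emph{not} automatic: although $\Delta_{a_1}\cdots\Delta_{a_k}\phi(x)$ is symmetric in $a_1,\dots,a_k$, the global multiaffine $\Psi$ produced by Theorem~\ref{multiaffineInvThm} is only guaranteed to agree with it on a dense set, and a multiaffine map symmetric on a dense set need not be symmetric. The paper handles this with a genuine symmetry argument (Step~3): it shows $\bias(\psi-\psi_\pi)$ is large for each transposition $\pi$, applies Theorem~\ref{strongInvARankThm} to get low partition rank for $\psi-\psi_\pi$, symmetrizes to a form $\Theta$, and absorbs the low-rank error by averaging over layers. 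Second, your degree bookkeeping is off: a symmetric $(k+1)$-linear form polarizes to a polynomial of degree $k+1$, not $k$, and with only $p>k$ you cannot divide by $(k+1)!$. Working with the $k$-linear $\Psi$ avoids both issues: one sets $\phi'(x)=\phi(x)-\tfrac{1}{k!}\Theta(x,\dots,x)$, obtains $\Delta_{a_1}\cdots\Delta_{a_k}\phi'=0$ on a dense set, and finishes by induction on $k$.
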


\begin{proof} The proof will proceed by induction on $k$ and will take several steps. For $k = 0$, the result is straightforward to prove. For the rest of this subsection we shall assume that the result has been proved for $k-1$. 
\medskip

\noindent\textbf{Step 1. Finding a multiaffine map.} We may rewrite the condition as the statement that
\[\Delta_{a_1} \dots \Delta_{a_{k}} \phi(x) = \Delta_{a_1} \dots \Delta_{a_{k}} \phi(y)\]
for at least $c|G|^{k+2}$ $(k+2)$-tuples $(x, y, a_1, \dots, a_k) \in G^{k+2}$. By averaging, we may find a set $S \subset G^k$ and a map $\psi \colon S \to H$ such that $|S|\ = c^{O(1)} |G^k|$ and such that for each $a_{[k]} \in S$ we may find $c^{O(1)} |G|$ elements $x \in G$ for which
\[\Delta_{a_1} \dots \Delta_{a_{k}} \phi(x) = \psi(a_{[k]}).\]

We now prove that $\psi$ coincides with a global multiaffine map on a dense set.

\begin{claim}\label{appPolyClaim}Let $A \subset G^k$ be a set of density $\delta$ such that for each $a_{[k]} \in A$, there are at least $\delta' |G|$ elements $x \in G$ such that $\Delta_{a_1} \dots \Delta_{a_{k}} \phi(x) = \psi(a_{[k]})$. Then for each $d \in [k]$, $\psi$ respects $\Omega(\delta^5 {\delta'}^4) |G|^{k+2}$ $d$-additive quadruples.
\end{claim}

\begin{proof}Without loss of generality $d=k$. Take any $a_{[k-1]} \in G^{k-1}$ such that $|A_{a_{[k-1]}}| \geq \frac{\delta}{2}|G|$. There are at least $\frac{\delta}{2}|G|^{k-1}$ such $a_{[k-1]}$. Define $\alpha \colon G \to H$ by
\[\alpha(x) = \Delta_{a_1} \dots \Delta_{a_{k-1}} \phi(x).\]
Then there are at least $\frac{\delta \delta'}{2}|G|^2$ pairs $(b,x) \in A_{a_{[k-1]}} \times G$ such that
\[\alpha(x) - \alpha(x-b) = \psi(a_{[k-1]}, b).\]
That is, there are at least $\frac{\delta \delta'}{2}|G|^2$ of $(x,y) \in G^2$ such that $(a_{[k-1]}, x-y) \in A$ and
\[\alpha(x) - \alpha(y) = \psi(a_{[k-1]}, x-y).\] 
By the Cauchy-Schwarz inequality, there are at least $\Big(\frac{\delta \delta'}{2}\Big)^2|G|^3$ triples $(x_1, x_2,y) \in G^3$ such that $(a_{[k-1]}, x_1-y), (a_{[k-1]}, x_2-y) \in A$, $\alpha(x_1) - \alpha(y) = \psi(a_{[k-1]}, x_1-y)$, and $\alpha(x_2) - \alpha(y) = \psi(a_{[k-1]}, x_2-y)$. The last two conditions imply that
\[\alpha(x_1) - \alpha(x_2) = \psi(a_{[k-1]}, x_1-y) - \psi(a_{[k-1]}, x_2-y).\] 
Applying the Cauchy-Schwarz inequality once more, we get at least $\Big(\frac{\delta \delta'}{2}\Big)^4|G|^4$ quadruples $(x_1, x_2 ,y_1, y_2) \in G^4$ such that $(a_{[k-1]}, x_1-y_1), (a_{[k-1]}, x_2-y_1), (a_{[k-1]}, x_1-y_2), (a_{[k-1]}, x_2-y_2) \in A$,
\[\alpha(x_1) - \alpha(x_2) = \psi(a_{[k-1]}, x_1-y_1) - \psi(a_{[k-1]}, x_2-y_1),\]
and
\[\alpha(x_1) - \alpha(x_2) = \psi(a_{[k-1]}, x_1-y_2) - \psi(a_{[k-1]}, x_2-y_2).\] 
The last two conditions imply that
\[\psi(a_{[k-1]}, x_1-y_1) - \psi(a_{[k-1]}, x_2-y_1) - \psi(a_{[k-1]}, x_1-y_2) + \psi(a_{[k-1]}, x_2-y_2) = 0,\]
which completes the proof.\end{proof}

Combining this claim with Theorem~\ref{FreBSG} for each direction, we find a set $B \subset G^k$ of size $c_1 |G^k|$, where $c_1 = \exp(-O(\log c^{-1})^{O(1)})$, such that for each $d \in [k]$ and $x_{[k] \setminus \{d\}} \in G_{[k] \setminus \{d\}}$, $(\psi|_B)_{x_{[k] \setminus \{d\}}}$ coincides with an affine map $G \to H$. Thus, by Theorem~\ref{multiaffineInvThm}, there is a global multiaffine map $\Psi \colon G^k \to H$ such that
\begin{equation}\label{CubePsiEqn}\Delta_{a_1} \dots \Delta_{a_{k}} \phi(x) = \Psi(a_{[k]})\end{equation}
for $c_2 |G|^{k+1}$ choices of $(a_{[k]}, x)$, where $c_2 = \Big(\exp^{(O(1))} (O(c^{-1}))\Big)^{-1}$.\\

\noindent\textbf{Step 2. We may take $\Psi$ to be multilinear.} This follows from the following proposition.

\begin{proposition}\label{lowOrderRemovalPoly}Suppose that $\Delta_{a_1} \dots \Delta_{a_{k}} \phi(x) = \Theta(a_{[k]})$ holds for $c_0 |G|^{k+1}$ choices of $(a_{[k]}, x)$ in $G^{k+1}$, where $\Theta$ is a global multiaffine map. Then $\Delta_{a_1} \dots \Delta_{a_{k}} \phi(x) = \Theta^{\text{\emph{ml}}}(a_{[k]})$ for $c_0^{O(1)} |G|^{k+1}$ choices of $(a_{[k]}, x)$ in $G^{k+1}$.\end{proposition}

\begin{proof}Write $\Theta_0 = \Theta$, and define $\Theta_1, \dots, \Theta_k$ iteratively as $\Theta_{i-1}(a_{[k]}) = \Theta_i(a_{[k]}) + \theta_i(a_{[k] \setminus \{i\}})$, where $\Theta_i$ is linear in coordinate $i$ and $\theta_i$ is multiaffine. Actually, we obtain that $\Theta_i$ is linear in the first $i$ coordinates, and in particular, $\Theta_k$ is multilinear and therefore $\Theta^{\text{ml}} = \Theta_k$. Let $T^0$ be the set of $(x, a_{[k]}) \in G^{k+1}$ such that 
\[\Delta_{a_1} \dots \Delta_{a_{k}} \phi(x) = \Theta(a_{[k]}).\]
Thus, $|T^0| \geq c_0|G|^{k+1}$.

We claim now that for each $i \in [0, k]$, there is a set $T^i \subset G^{k+1}$ such that $|T^i| \geq c_0^{O(1)}|G|^{k+1}$ and for each $(x, a_{[k]}) \in T^i$
\begin{equation}\Delta_{a_1} \dots \Delta_{a_{k}} \phi(x) = \Theta_i(a_{[k]}),\label{CubePsiEqn2}\end{equation}
which will imply the proposition.

We prove this claim by induction on $i$. The base case is the case $i = 0$, where the claim holds trivially. Assume now that it holds for some $i \in [0, k-1]$, and fix $x_0 \in G$. Observe that if $b_{i+1}, c_{i+1} \in T^i_{x_0, a_{[k] \setminus \{i+1\}}}$, then
\begin{align*}\Theta_{i+1}(a_{[k] \setminus \{i+1\}}, b_{i+1} - c_{i+1}) = & \Theta_i(a_{[k] \setminus \{i+1\}}, b_{i+1}) - \Theta_i(a_{[k] \setminus \{i+1\}}, c_{i+1})\\
= & \Delta_{a_{k}} \dots \Delta_{a_{i+2}}\Delta_{b_{i+1}} \Delta_{a_{i}}\dots \Delta_{a_1}\phi (x_0)\\
&\hspace{2cm}- \Delta_{a_{k}} \dots \Delta_{a_{i+2}}\Delta_{c_{i+1}} \Delta_{a_{i}}\dots \Delta_{a_1}\phi (x_0)\\
= &\Delta_{a_{k}} \dots \Delta_{a_{i+2}} \Delta_{a_{i}}\dots \Delta_{a_1}\phi (x_0 - c_{i+1}) \\
&\hspace{2cm}- \Delta_{a_{k}} \dots \Delta_{a_{i+2}} \Delta_{a_{i}}\dots \Delta_{a_1}\phi (x_0 - b_{i+1})\\
= &\Delta_{a_{k}} \dots \Delta_{a_{i+2}} \Delta_{b_{i+1} - c_{i+1}} \Delta_{a_{i}}\dots \Delta_{a_1}\phi (x_0 - c_{i+1}).\\
\end{align*}

By the Cauchy-Schwarz inequality, there are $c_0^{O(1)}|G|^{k+2}$ choices of $(x_0, a_{[k] \setminus \{i+1\}}, b_{i+1}, c_{i+1})$ such that the above equality holds. The result follows after a change of variables.\end{proof}

We shall abuse notation and keep writing $\Psi$ for the modified multilinear map $\Psi^{\text{ml}}$. We end up with
\begin{equation}\label{CubePsi2}\Delta_{a_1} \dots \Delta_{a_{k}} \phi(x) = \Psi(a_{[k]})\end{equation}
which holds for a $c_3$-dense collection of $(a_{[k]}, x)$, where $c_3 = c_2^{O(1)}$.\\
\vspace{\baselineskip}

\noindent\textbf{Step 3. A symmetry argument.} What we would like to do at this point is use a polarization identity to obtain a polynomial from $\Psi$, but we cannot do that because $\Psi$ is not symmetric. In this section we show how to obtain a symmetric multilinear map from $\Psi$. The argument will have a similar flavour to an argument introduced in a slightly different context by Green and Tao in \cite{StrongU3}.

We begin by showing that, for each $i \in [k-1]$, the variety
\[\Big\{a_{[k]} \in G^k \colon \Psi(a_1, \dots, a_{i-1}, a_i, a_{i+1}, a_{i+2}, \dots, a_k) = \Psi(a_1, \dots, a_{i-1}, a_{i+1}, a_i, a_{i+1}, \dots, a_k)\Big\}\]
is dense.\\

\indent Let us focus on the first two coordinates. Write $T \subset G^{k+1}$ for the set of all $(x, a_{[k]})$ such that~\eqref{CubePsi2} holds. Fix $x \in G$ and $a_{[2,k]} \in G^{k-1}$. Then, for $u_1,v_1 \in T_{x, a_{[2,k]}}$, we have
\begin{align*}\Psi(u_1 - v_1, a_{[2,k]}) = &\Delta_{u_1} \Delta_{a_2} \dots \Delta_{a_k} \phi(x) - \Delta_{v_1} \Delta_{a_2} \dots \Delta_{a_k} \phi(x)\\
= &\Delta_{a_2} \dots \Delta_{a_k} \phi(x - v_1) - \Delta_{a_2} \dots \Delta_{a_k} \phi(x - u_1)\\
 = &\Delta_{a_3} \dots \Delta_{a_k} \phi(x - u_1 - a_2) - \Delta_{a_3} \dots \Delta_{a_k} \phi(x - u_1)\\
&\hspace{1cm} - \Delta_{a_3} \dots \Delta_{a_k} \phi(x - v_1 - a_2) + \Delta_{a_3} \dots \Delta_{a_k} \phi(x - v_1).\end{align*}
By the Cauchy-Schwarz inequality, this holds for ${c_3}^{O(1)}|G|^{k+2}$ choices of $(x, u_1, v_1, a_{[2,k]})$.\\
\indent Let $z = x - a_2 - u_1 - v_1$. Then
\begin{align*}\Psi(u_1 - v_1, x - u_1 - v_1 - z, a_{[3,k]}) = &\Delta_{a_3} \dots \Delta_{a_k} \phi(z + v_1) + \Delta_{a_3} \dots \Delta_{a_k} \phi(x - v_1)\\
&\hspace{1cm} -  \Delta_{a_3} \dots \Delta_{a_k} \phi(x - u_1) - \Delta_{a_3} \dots \Delta_{a_k} \phi(z + u_1).\end{align*}  
Hence, there is a set $T' \subset G^{k+2}$ of size ${c_3}^{O(1)}|G|^{k+2}$ whose elements $(x, u_1, v_1, z, a_{[3,k]})$ satisfy 
\begin{align*}\Psi(v_1, u_1, a_{[3,k]}) - \Psi(u_1, v_1, a_{[3,k]}) =&\Delta_{a_3} \dots \Delta_{a_k} \phi(x - v_1) + \Delta_{a_3} \dots \Delta_{a_k} \phi(z + v_1)\\
&\hspace{1cm}+ \Psi(v_1, x - v_1 - z, a_{[3,k]})\\
&\hspace{1cm}- \Delta_{a_3} \dots \Delta_{a_k} \phi(x - u_1) - \Delta_{a_3} \dots \Delta_{a_k} \phi(z + u_1)\\
&\hspace{1cm} -\Psi(u_1, x - u_1 - z, a_{[3,k]}).\end{align*}  
Fix $x, u_1, z, a_{[3,k]}$. For any $v_1, v'_1 \in T'_{(x, u_1, z, a_{[3,k]})}$ we get, after subtracting, that
\begin{align*}\Psi(v_1 - v'_1, u_1, a_{[3,k]}) - \Psi(u_1, v_1-v'_1, a_{[3,k]}) =& \Delta_{a_3} \dots \Delta_{a_k} \phi(x - v_1) + \Delta_{a_3} \dots \Delta_{a_k} \phi(z + v_1) \\
&\hspace{1cm}+\Psi(v_1, x - v_1 - z, a_{[3,k]}) -  \Delta_{a_3} \dots \Delta_{a_k} \phi(x - v'_1)\\
&\hspace{1cm} -\Delta_{a_3} \dots \Delta_{a_k} \phi(z + v'_1) - \Psi(v'_1, x - v'_1 - z, a_{[3,k]}).\end{align*}  
By the Cauchy-Schwarz inequality, we obtain a set $T'' \subset G^{k+3}$ of size ${c_3}^{O(1)}|G|^{k+3}$ such that each $(x, u_1, v_1, v'_1, z, a_{[3,k]}) \in T''$ satisfies the equality above. Now fix $(x, v_1, v'_1, z, a_{[3,k]})$. For each $u_1, u_1' \in T''_{(x, v_1, v'_1, z, a_{[3,k]})}$ we get, after subtracting, that
\begin{align*}\Psi(v_1-v'_1, &u_1-u'_1, a_{[3,k]}) - \Psi(u_1-u'_1, v_1-v'_1, a_{[3,k]}) = \\
&\Psi(v_1-v'_1, u_1, a_{[3,k]}) - \Psi(u_1, v_1-v'_1, a_{[3,k]}) - \Psi(v_1-v'_1, u'_1, a_{[3,k]}) + \Psi(u'_1, v_1-v'_1, a_{[3,k]}) = 0.\end{align*}
Hence, by Cauchy-Schwarz, 
\[\Psi(v_1-v'_1, u_1-u'_1, a_{[3,k]}) - \Psi(u_1-u'_1, v_1-v'_1, a_{[3,k]}) = 0\]
for $c_3^{O(1)}|G|^{k+2}$ choices of $u_1, u'_1, v_1, v'_1, a_{[3,k]}$. We are done after a change of variables.\\

By Claim 1.6 in~\cite{Lovett}, it follows that the variety
\[V_{\text{sym}} = \Big\{a_{[k]} \in G^k \colon (\forall i \in [k-1]) \Psi(a_1, \dots, a_i, a_{i+1}, \dots, a_k) = \Psi(a_1, \dots, a_{i+1}, a_i, \dots, a_k)\Big\}\]
has density ${c_3}^{O(1)}$.
\vspace{\baselineskip}

Let $\psi \colon G^k \times H \to \mathbb{F}_p$ be the multilinear form given by $\psi(x_{[k]}, y) = \Psi(x_{[k]}) \cdot y$. For a permutation $\pi \in \operatorname{Sym}_k$, let $\pi \circ \psi(x_{[k]}, h) = \psi(x_{\pi(1)}, \dots, x_{\pi(k)}, h)$. Then for each $\pi$, $\bias \Big(\pi \circ \psi - \psi\Big) \geq {c_3}^{O(1)}$. By Theorem~\ref{strongInvARankThm}, we have $\prank \Big(\pi \circ \psi - \psi\Big) \leq O(\log_p c_3^{-1})^{O(1)}$.\\
\indent Define a multilinear form $\theta \colon G^k \times H \to \mathbb{F}_p$ by
\[\theta(x_{[k]}, y) = \frac{1}{k!} \sum_{\pi \in \operatorname{Sym}_k} \pi \circ \psi(x_{[k]}, h).\]
Then $s = \prank (\psi - \theta) = O(\log_p c_3^{-1})^{O(1)}$, which means that there exist non-empty sets $I_i \subset [k]$, and multilinear forms $\alpha_i \colon G^{I_i} \to \mathbb{F}_p$ and $\beta_i \colon G^{[k] \setminus I_i} \times H \to \mathbb{F}_p$, for each $i \in [s]$, such that for each $(x_{[k]}, h) \in G^k \times H$ we have
\[\psi(x_{[k]}, h) - \theta(x_{[k]}, h) = \sum_{i \in [s]} \alpha_i(x_{I_i}) \beta_i(x_{[k] \setminus I_i}, h).\]
\indent Write $\theta(x_{[k]}, y) = \Theta(x_{[k]}) \cdot y$ and $\beta_i(x_{[k] \setminus I_i}, y) = B_i(x_{[k]\setminus I_i}) \cdot y$ for multilinear maps $\Theta$ and $B_i$. Then, $\Theta = \frac{1}{k!} \sum_{\pi \in \operatorname{Sym}_k} \pi \circ \Psi$, so it is symmetric, and $\Psi = \Theta + \sum_{i \in [s]} \alpha_i(x_{I_i}) B_i(x_{[k] \setminus I_i})$. By averaging over layers of $\alpha$, we obtain $\mu \in \mathbb{F}_p^s$ and a $c_3 p^{-s}$-dense set of $(a_{[k]}, x)$ such that $\alpha_i(a_{I_i}) = \mu_i$ and
\[\Delta_{a_1} \dots \Delta_{a_{k}} \phi(x) = \Theta(a_{[k]}) + \sum_{i \in [s]} \mu_i B_i(a_{[k] \setminus I_i}).\]
Applying Proposition~\ref{lowOrderRemovalPoly}, we may without loss of generality assume that the $B_i$ do not appear in the expression above, which now holds for a $c^{O(1)}_3 p^{-O(s)}$-dense set of $(a_{[k]}, x)$. Define $\phi'(x) = \phi(x) - \frac{1}{k!}\Theta(x,x,\dots,x)$. By the polarization identity,
\[\Delta_{a_1} \dots \Delta_{a_{k}}\phi'(x) = 0\]
holds on a set of parameters of density $c^{O(1)}_3 p^{-O(s)}$. We may now apply the inductive hypothesis to finish the proof of Theorem \ref{ApproxPolyThmGen}.\end{proof}

\subsection{A quantitative inverse theorem for the $U^k$ norm}

For our next application, we show that a bounded function defined on $G=\mathbb F_p^n$ with large $U^k$ norm must correlate well with a polynomial phase function of degree at most $k-1$. For each fixed $k$, the dependence on the $U^k$ norm is given by a bounded number of exponentials, but this number increases with $k$, so the dependence on $k$ is given by a tower-type function.

For a map $f \colon G \to \mathbb{C}$, and $a \in G$, we write $\mder_a f \colon G \to \mathbb{C}$ for the map $\mder_a f(x) = f(x) \overline{f(x - a)}$.

\begin{theorem}\label{UkInverseTheorem}Let $f \colon G \to \mathbb{D}$ be a function such that $\|f\|_{U^{k}} \geq \delta$. Assume that $p \geq k$. Write $\omega = \exp\Big(\frac{2 \pi i}{p}\Big)$. Then there is a polynomial $g \colon G \to \mathbb{F}_p$ of degree at most $k-1$ such that
\[\Big|\exx_x f(x) \omega^{g(x)}\Big|\ =\Big(\exp^{(O(1))} (O(\delta^{-1}))\Big)^{-1}.\]
\end{theorem}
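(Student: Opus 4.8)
The strategy is the standard reduction of the inverse theorem for $\|\cdot\|_{U^k}$ to a statement about approximate polynomials, which is exactly the content of Theorem~\ref{ApproxPolyThmGen}. First I would unpack the hypothesis $\|f\|_{U^{k}} \geq \delta$ using the identity $\|f\|_{U^k}^{2^k} = \ex_{a} \|\mder_a f\|_{U^{k-1}}^{2^{k-1}}$, so that for a $\Omega(\delta^{O(1)})$-dense set of $a \in G$ we have $\|\mder_a f\|_{U^{k-1}} \geq \Omega(\delta^{O(1)})$. Iterating this down to the $U^2$ norm, one obtains that for a dense set of $(k-2)$-tuples $(a_1, \dots, a_{k-2})$, the function $\mder_{a_{k-2}} \cdots \mder_{a_1} f$ has $U^2$ norm at least $\Omega(\delta^{O(1)})$, hence (by the standard $U^2$-inverse theorem / Fourier analysis) has a large Fourier coefficient $r = \phi(a_1, \dots, a_{k-2}) \in G$, so that $\big|\ex_x \mder_{a_{k-2}} \cdots \mder_{a_1} f(x)\, \omega^{-\phi(a_1,\dots,a_{k-2})\cdot x}\big| \geq \Omega(\delta^{O(1)})$ for a dense set of tuples. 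This is the point where one identifies a candidate ``derivative'' for the polynomial phase we are seeking.

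Next I would show that the map $\phi$, defined on a dense subset of $G^{k-2}$, respects a positive proportion of additive quadruples in each direction. This is a Cauchy--Schwarz argument of exactly the flavour used in Claim~\ref{appPolyClaim} in the proof of Theorem~\ref{ApproxPolyThmGen}: one uses that two large Fourier coefficients in the same fibre force a relation between them, and iterates the Cauchy--Schwarz step to transfer this to genuine additive quadruples. Combining this with Theorem~\ref{FreBSG} applied in each of the $k-2$ directions produces a dense subset on which $\phi$ is an affine map along each line, and then Theorem~\ref{multiaffineInvThm} upgrades $\phi$ to agree, on a set of density $\big(\exp^{(O(1))}(O(\delta^{-1}))\big)^{-1}$, with a global multiaffine map $\Phi \colon G^{k-2} \to G$. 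In fact, rather than redoing all of this from scratch, the cleaner route is to observe that $f$ being $U^k$-large is essentially equivalent to a density statement $\Delta_{a_1} \cdots \Delta_{a_{k-1}} \psi(x) = 0$ for a related $\mathbb F_p$-valued object, so that one can feed the problem more or less directly into Theorem~\ref{ApproxPolyThmGen}; concretely, one extracts from the large Fourier coefficients a function $\psi\colon G \to \mathbb F_p$ (a ``local phase'') such that $f$ correlates with $\omega^{\psi}$ on many translates, and then $\psi$ satisfies the approximate-polynomial condition $\Delta_{a_1}\cdots\Delta_{a_{k-1}}\psi(x)=0$ for a $\Omega(\delta^{O(1)})$-dense set of parameters.

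Granting that, Theorem~\ref{ApproxPolyThmGen} (valid since $p \geq k > k-1$) yields a genuine polynomial $g \colon G \to \mathbb F_p$ of degree at most $k-1$ agreeing with $\psi$ on a set of density $\Omega\big(\exp^{(O(1))}(\delta^{-1})\big)$; combined with the correlation of $f$ with $\omega^\psi$ on a dense set, a final averaging over translates and an application of Cauchy--Schwarz (to convert ``$f$ correlates with $\omega^\psi$ locally on a dense set where $\psi=g$'' into ``$f$ correlates globally with $\omega^g$'') gives the desired bound $\big|\ex_x f(x)\omega^{g(x)}\big| = \big(\exp^{(O(1))}(O(\delta^{-1}))\big)^{-1}$. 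The bookkeeping of how the density losses compose through the $k-2$ iterated Cauchy--Schwarz steps, the passage from a multiaffine $\phi$ back to a polynomial phase (the symmetrisation / polarisation step, as in Step~3 of the proof of Theorem~\ref{ApproxPolyThmGen}, which needs $p\ge k$), and the final correlation transfer are all routine but must be done carefully to keep the tower height under control.

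\textbf{Main obstacle.} The genuinely delicate point is not any single step but making the reduction to Theorem~\ref{ApproxPolyThmGen} precise: one must produce, from the purely analytic hypothesis $\|f\|_{U^k}\ge\delta$, an honest $\mathbb F_p$-valued function satisfying the approximate-polynomial identity with only polynomial loss in $\delta$, and then convert the resulting polynomial back into a phase correlating with $f$. This ``dequantification'' --- going between the multiplicative world of phases $\omega^{\psi}$ and the additive world of $\mathbb F_p$-valued (multi)linear and polynomial maps --- is where the characteristic assumption $p\ge k$ is essential and where one has to be careful that the correlation is not destroyed; everything else is an assembly of results already established in the paper.
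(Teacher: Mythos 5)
Your opening steps match the paper's proof exactly: iterating down to $U^2$, extracting the large Fourier coefficient $\phi(a_{[k-2]})$ of $\mder_{a_{k-2}}\cdots\mder_{a_1}f$, showing $\phi$ respects many additive quadruples via Cauchy--Schwarz (the paper's Lemma~\ref{multiaffineUkPassClaim}), and then upgrading to a global multiaffine $\Phi$ via Theorem~\ref{FreBSG} and Theorem~\ref{multiaffineInvThm}. You also correctly flag that a symmetrisation/polarisation step (requiring $p \geq k$) is needed. Up to here the plan is sound.

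The gap is in the ``cleaner route'' you propose thereafter. You suggest extracting a function $\psi\colon G\to\mathbb F_p$ such that $f$ correlates with $\omega^\psi$ on many translates and $\psi$ satisfies the approximate-polynomial identity, so that Theorem~\ref{ApproxPolyThmGen} applies. But no such $\psi$ drops out of the $U^k$ hypothesis: the large Fourier coefficients give you a $G$-valued map $\phi(a_{[k-2]})$ attached to the \emph{derivatives} of $f$, not an $\mathbb F_p$-valued phase of $f$ itself. Producing $\psi$ is precisely the ``dequantification'' you identify as the obstacle, and there is no way to do it ``more or less directly'' without already solving the inverse problem; Theorem~\ref{approxPolyMain} and the inverse theorem for $U^k$ are both \emph{applications} of Theorem~\ref{multiaffineInvThm}, not steps in a chain. (Incidentally, your stated condition $\Delta_{a_1}\cdots\Delta_{a_{k-1}}\psi(x)=0$ would give a polynomial of degree $k-2$, not $k-1$.) What the paper does instead is a self-contained induction on $k$: split $\phi(a_{[k-1]}) = -\Phi(a_{[k-2]})\cdot a_{k-1}$ into its multilinear part $\phi^{\mathrm{ml}}$ and lower-order parts, kill the lower-order parts by the generalized Cauchy--Schwarz for box norms (Lemma~\ref{unifBound2}), symmetrise $\phi^{\mathrm{ml}}$ to a symmetric multilinear $\sigma$ via the high-rank argument of Corollary~\ref{symmArgumentCor}, polarise to get a degree-$(k-1)$ polynomial $g$, and then set $f'(x)=f(x)\chi(g(x))$ and observe that $\|f'\|_{U^{k-1}}$ is large. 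This peeling-off step, which reduces $U^k$ to $U^{k-1}$ and then invokes the induction hypothesis, is the missing engine of your proposal; the final correlation is not a one-shot Cauchy--Schwarz but emerges after $k-1$ rounds of subtracting successive polynomial layers.
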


Let us first recall the definition of higher-dimensional box norms. Let $X_1, \dots, X_k$ be arbitrary sets. The \emph{box norm} of a function $f \colon X_1 \tdt X_k \to \mathbb{C}$ (see for examle Definition B.1 in the Appendix B of~\cite{GreenTaoPrimes}) is defined by
\[\|f\|_{\square^{k}}^{2^k} = \exx_{x_1, y_1 \in X_1, \dots, x_k, y_k \in X_k} \prod_{I \subset [k]} \operatorname{Conj}^{|I|} f(x_I, y_{[k] \setminus I}).\]
Note that when $\phi \colon G_{[k]} \to \mathbb{F}$ is a multilinear form, we have $\|\chi \circ \phi\|_{\square^{k}}^{2^k} = \exx_{x_{[k]} \in G_{[k]}} \chi(\phi(x_{[k]}))$. The following is a well-known generalized Cauchy-Schwarz inequality for the box norm.

\begin{lemma}\label{unifBound}Let $f_I \colon X_1 \tdt X_k \to \mathbb{C}$ be a function for each $I \subset [k]$. Then
\[\Big|\exx_{x_1, y_1 \in X_1, \dots, x_k, y_k \in X_k} \prod_{I \subset [k]} \operatorname{Conj}^{|I|} f_I(x_I, y_{[k] \setminus I})\Big| \leq \prod_{I \subset [k]} \|f_I\|_{\square^k}.\]
\end{lemma}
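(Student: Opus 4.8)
\textbf{Proof plan for Lemma~\ref{unifBound}.} The statement is the standard Gowers--Cauchy--Schwarz inequality for the box norm, and I would prove it exactly as one proves the corresponding inequality for the $U^k$ norm: by applying the Cauchy--Schwarz inequality $k$ times, once for each coordinate, peeling off one pair of variables $(x_j, y_j)$ at each step. The plan is to induct on $k$. The base case $k=1$ is a single application of Cauchy--Schwarz: writing the expectation over $x_1, y_1 \in X_1$ of $f_{\emptyset}(y_1) \overline{f_{\{1\}}(x_1)}$ as a product of two one-variable averages, one bounds it by $\big(\exx_{x_1,y_1} f_\emptyset(y_1)\overline{f_\emptyset(x_1)}\big)^{1/2}\big(\exx_{x_1,y_1} f_{\{1\}}(y_1)\overline{f_{\{1\}}(x_1)}\big)^{1/2} = \|f_\emptyset\|_{\square^1}\|f_{\{1\}}\|_{\square^1}$, where here the one-dimensional box norm of $g$ is just $|\exx_x g(x)|$.

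For the inductive step, fix the variables $x_k$ and $y_k$ and group the $2^k$ functions into the $2^{k-1}$ pairs indexed by $J \subset [k-1]$, namely $f_J(\,\cdot\,, y_k)$ paired with $f_{J \cup \{k\}}(\,\cdot\,, x_k)$; the inner average over $x_{[k-1]}, y_{[k-1]}$ is then an expression of the shape covered by the $(k-1)$-dimensional case but with the functions also depending on the frozen parameters $x_k, y_k$. Now I would apply Cauchy--Schwarz in the pair $(x_k, y_k)$ to split this into the geometric mean of two expressions, each of which is an average over $x_{[k-1]}, y_{[k-1]}$ and a doubled copy $x_k, x_k'$ (respectively $y_k, y_k'$) of the remaining variable, where in one factor every function has had its $k$-th argument set according to the ``$x$'' side and in the other according to the ``$y$'' side. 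After this step each of the two factors is precisely an instance of the inequality for $k-1$ variables in the first $k-1$ coordinates (with the $k$-th coordinate now playing the role of an extra fixed parameter that is the same on both sides), so the induction hypothesis applies and bounds each factor by a product of $(k-1)$-dimensional box norms; reassembling and using the recursive definition $\|F\|_{\square^k}^{2^k} = \exx_{x_k,y_k}\|F(\,\cdot\,,x_k)\text{ vs }F(\,\cdot\,,y_k)\|$-type identity (i.e. unfolding the box norm one coordinate at a time) collapses the resulting product of $(k-1)$-dimensional norms of the ``frozen'' functions into the $k$-dimensional box norms $\|f_I\|_{\square^k}$.

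There is no real obstacle here beyond bookkeeping: the only thing to be careful about is tracking which subsets $I$ receive the $x_k$-substitution versus the $y_k$-substitution after the Cauchy--Schwarz step, and checking that the two resulting factors are genuinely in the form to which the induction hypothesis applies (this is why it is cleanest to state the inductive claim with the functions allowed to depend on arbitrary extra frozen parameters). I would also note that by replacing each $f_I$ by $f_I \cdot \mathrm{sgn}$ of a suitable unimodular constant one may assume the left-hand side is a nonnegative real, which removes the absolute value and streamlines the Cauchy--Schwarz applications; alternatively one simply carries the $|\cdot|$ through, since Cauchy--Schwarz gives an upper bound on the modulus at each stage. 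The special case $f_I = f$ for all $I$ recovers the defining identity of $\|f\|_{\square^k}$, which is the consistency check that the constant on the right is correct.
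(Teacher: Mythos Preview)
Your approach is correct: this is exactly the standard Gowers--Cauchy--Schwarz argument, iterating Cauchy--Schwarz once per coordinate. The paper does not actually give a proof of this lemma; it simply introduces it as ``a well-known generalized Cauchy--Schwarz inequality for the box norm'' and states it without proof (the result appears, for instance, in Appendix~B of the Green--Tao paper~\cite{GreenTaoPrimes} cited here). So there is nothing to compare against, and your plan is the expected one.

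One very minor remark: in your base case $k=1$ the inequality is in fact an equality and no Cauchy--Schwarz is needed, since the expectation over $(x_1,y_1)$ factors as $\big(\exx_{y_1} f_\emptyset(y_1)\big)\big(\exx_{x_1}\overline{f_{\{1\}}(x_1)}\big)$ directly. This does not affect the argument.
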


This lemma has the following useful corollary.

\begin{lemma}\label{unifBound2}Let $f \colon X_1 \tdt X_k \to \mathbb{D}$ be a function and for each $i \in [k]$ let $f_i \colon \prod_{j \in [k] \setminus \{i\}} X_j \to \mathbb{D}$ be a function that does not depend on $i$\textsuperscript{th} coordinate. Then
\[\Big|\exx_{x_{[k]}} f(x_{[k]}) \prod_{i \in [k]} f_i(x_{[k] \setminus \{i\}})\Big| \leq \|f\|_{\square^k}.\]
\end{lemma}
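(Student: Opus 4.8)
\textbf{Proof proposal for Lemma~\ref{unifBound2}.} The plan is to deduce this from Lemma~\ref{unifBound}, exactly as one deduces the standard Gowers--Cauchy--Schwarz corollary for box norms. First I would set up the product to be estimated as an average of the shape appearing in Lemma~\ref{unifBound}: write each of the variables $x_i$ as the ``$x$-coordinate'' in direction $i$ and introduce dummy ``$y$-coordinates'' $y_i \in X_i$ that the relevant functions simply ignore. Concretely, for $I \subset [k]$ define $g_I \colon X_1 \tdt X_k \to \mathbb{D}$ by $g_{[k]} = f$, by $g_{[k] \setminus \{i\}}(x_{[k]}) = f_i(x_{[k] \setminus \{i\}})$ for each singleton-complement $[k]\setminus\{i\}$, and $g_I \equiv 1$ for all remaining $I$. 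Then the quantity $\exx_{x_{[k]}} f(x_{[k]}) \prod_{i \in [k]} f_i(x_{[k]\setminus\{i\}})$ is, up to the harmless presence of extra variables that nothing depends on, equal to
\[\exx_{x_1, y_1 \in X_1, \dots, x_k, y_k \in X_k} \prod_{I \subset [k]} \operatorname{Conj}^{|I|} g_I(x_I, y_{[k] \setminus I}),\]
because in this product the only factor carrying $x_{[k]}$ (all $x$-coordinates) is $g_{[k]}=f$, the factor $g_{[k]\setminus\{i\}}$ sees $x_{[k]\setminus\{i\}}$ and the dummy $y_i$ (on which it does not depend, so effectively $x_{[k]\setminus\{i\}}$), and all other factors are $1$; averaging over the unused $y$'s changes nothing.

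Having rewritten the left-hand side in this form, I would apply Lemma~\ref{unifBound} to the family $(g_I)_{I\subset[k]}$, obtaining the bound $\prod_{I \subset [k]} \|g_I\|_{\square^k}$. Now I bound each factor: $\|g_{[k]}\|_{\square^k} = \|f\|_{\square^k}$, and for every other $I$ the function $g_I$ is bounded by $1$ in absolute value and fails to depend on at least one coordinate, so $\|g_I\|_{\square^k} \leq 1$ — indeed, if $g_I$ does not depend on coordinate $j$, then in the $2^k$-fold average defining $\|g_I\|_{\square^k}^{2^k}$ one can pair up the terms differing only in the $j$-th pair of variables and use $|g_I| \le 1$ to bound everything by $1$ (equivalently, $\|g_I\|_{\square^k} \le \|g_I\|_{L^\infty} \le 1$ when $g_I$ is independent of a coordinate, by Cauchy--Schwarz collapsing that coordinate). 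Hence the whole product on the right is at most $\|f\|_{\square^k}$, which is exactly the claimed inequality.

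The only mild subtlety — and the step I would be most careful about — is the bookkeeping that matches the $2^k$-term box-norm average to the $(k+1)$-fold product on the left: one must check that the exponents $\operatorname{Conj}^{|I|}$ do not spoil anything (they act trivially on the constant factors $g_I \equiv 1$, and on $g_{[k]}$ the conjugation exponent is $\operatorname{Conj}^k$, which can be absorbed since we are bounding the absolute value), and that introducing the dummy variables $y_i$ genuinely leaves the average unchanged. Both points are routine; no genuine obstacle arises, and the lemma follows in a few lines.
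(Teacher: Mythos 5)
Your approach is the standard one and is exactly the deduction the paper has in mind when it calls Lemma~\ref{unifBound2} a corollary of Lemma~\ref{unifBound}: set $g_{[k]}=f$, $g_{[k]\setminus\{i\}}$ equal to (a lift of) $f_i$, $g_I\equiv 1$ otherwise, observe that the $2k$-fold average collapses to the $k$-fold one because nothing depends on the $y$-variables, apply Lemma~\ref{unifBound}, and then note $\|g_I\|_{\square^k}\le 1$ for $I\ne[k]$.

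The one place where the write-up is not quite right is the conjugation bookkeeping. You address only the $\operatorname{Conj}^k$ acting on $g_{[k]}$, but the factors $g_{[k]\setminus\{i\}}$ are hit with $\operatorname{Conj}^{k-1}$ as well, so after collapsing the dummy variables the quantity that Lemma~\ref{unifBound} actually controls is
\[
\Big|\exx_{x_{[k]}}\operatorname{Conj}^{k}f(x_{[k]})\prod_{i\in[k]}\operatorname{Conj}^{k-1}f_i(x_{[k]\setminus\{i\}})\Big|
=\Big|\exx_{x_{[k]}}f(x_{[k]})\prod_{i\in[k]}\overline{f_i}(x_{[k]\setminus\{i\}})\Big|,
\]
which is not the same complex number as $\big|\exx f\prod f_i\big|$ in general, and ``absorbing into the absolute value'' does not fix this on its own. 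The correct repair is trivial: either define $g_{[k]\setminus\{i\}}$ to be $\operatorname{Conj}^{k-1}f_i$ (and $g_{[k]}=\operatorname{Conj}^k f$) so that the conjugations cancel, or, equivalently, apply your argument with $\overline{f_i}$ in place of $f_i$ and note that the $f_i$ are arbitrary $\mathbb{D}$-valued functions while $\|\,\cdot\,\|_{\square^k}$ and $\|\,\cdot\,\|_{L^\infty}$ are conjugation-invariant. With that adjustment the proof is complete; the bound $\|g_I\|_{\square^k}\le\|g_I\|_{L^\infty}\le 1$ holds for any bounded $g_I$ (you do not even need independence of a coordinate for that particular inequality, although your pairing argument is also fine).
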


We begin the proof of the inverse theorem by proving the following lemma, which is a straightforward generalization of lemmas that have played similar roles in proofs of earlier $U^k$ inverse theorems.

\begin{lemma}\label{multiaffineUkPassClaim}Let $S \subset G^{k-2}$ be a set of density $\delta'$ such that for each $a_{[k-2]} \in S$
\[\Big|[\mder_{a_1} \dots \mder_{a_{k-2}} f\fcc(\phi(a_{[k-2]}))\Big| \geq \delta''.\]
Then, for each $d \in [k]$, $\phi$ respects at least ${\delta'}^4 {\delta''}^8 |G|^k$ $d$-additive quadruples.\end{lemma}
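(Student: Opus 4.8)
The plan is to imitate the standard ``Cauchy--Schwarz to produce additive quadruples'' argument used in the proofs of the $U^3$ and $U^4$ inverse theorems, but in the multilinear setting. Fix a direction $d \in [k]$; by relabelling coordinates we may assume $d = k-1$ or, more conveniently, that $d$ is one of the directions appearing in the discrete derivatives defining $\phi$'s domain, say $d=1$ (the case where $d$ is not among the first $k-2$ coordinates is similar and slightly easier, as in Lemma~\ref{convImpliesLin}). Recall that $\mder_{a_1}\dots\mder_{a_{k-2}}f$ is a function of the remaining variable, and that $\fco \mder_{a_1}\dots\mder_{a_{k-2}}f\fcc$ denotes its Fourier transform; the hypothesis gives a large Fourier coefficient at $\phi(a_{[k-2]})$ for each $a_{[k-2]}$ in a dense set $S$.

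First I would unfold the large Fourier coefficient: for $a_{[k-2]} \in S$,
\[
\delta'' \le \Big|\exx_{x} \mder_{a_1}\dots\mder_{a_{k-2}} f(x)\, \omega^{-\phi(a_{[k-2]})\cdot x}\Big|,
\]
and then square and average over $a_{[k-2]}$ weighted by $S$, exactly as in the first display of the proof of Lemma~\ref{convImpliesLin}. Squaring introduces a second copy of the $x$-variable; writing the difference of the two copies as a new variable $h$ turns the inner average into something of the shape $\exx_{x,h}\ \mder_{a_1}\dots\mder_{a_{k-2}}f(x)\ \overline{\mder_{a_1}\dots\mder_{a_{k-2}}f(x+h)}\ \omega^{h\cdot\phi(a_{[k-2]})}$. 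Now I expand the two $(k-2)$-fold multiplicative derivatives: each $\mder_{a_i}$ doubles the configuration, so after expanding we get an average over $x$, $h$, and parameters attached to $a_2,\dots,a_{k-2}$ of a product of $2^{k-2}$ translates of $f$ (with conjugations according to a Morse-type sign pattern), all multiplied by $S(a_{[k-2]})\,\omega^{h\cdot\phi(a_{[k-2]})}$. This is the direct analogue of equations~\eqref{linFC1st}--\eqref{linFC2nd} in the proof of Lemma~\ref{convImpliesLin}.

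The key step is then to isolate the direction $a_1$. Since the only place $a_1$ enters the $f$-product is through the outermost derivative $\mder_{a_1}$, I would substitute a new variable $t$ (translating $a_1$ by $t$ inside the argument of one of the two $f$-factors produced by $\mder_{a_1}$, as in the change of variables $y^{\bm a}_l \mapsto y^{\bm a}_l + t$ in Lemma~\ref{convImpliesLin}), apply the triangle inequality to pull the $a_1$-average inside an absolute value, and recognise the resulting quantity as $\exx_{\bm p}\exx_{t}\big|\exx_{a_1} F_{\bm p}(a_1) H_{\bm p}(a_1+t)\big|$ for suitable bounded functions $F_{\bm p}, H_{\bm p}$, where $F_{\bm p}(w) = S(w, a_{[2,k-2]})\,\omega^{h\cdot\phi(w,a_{[2,k-2]})}$ carries the only dependence on $S$ and $\phi$. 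Applying Lemma~\ref{l4bound} (as in the final steps of Lemma~\ref{convImpliesLin}) gives $\delta'^4\delta''^8 \le \exx_{\bm p}\sum_r |\widehat{F_{\bm p}}(r)|^4$, and expanding the right-hand side as a sum over additive quadruples $z^1_1, z^2_1, z^3_1, z^4_1$ with $z^1_1 - z^2_1 + z^3_1 - z^4_1 = 0$ shows it equals precisely the density (times $|G|^k$) of $1$-additive quadruples all of whose points lie in $S$ and which are respected by $\phi$; here one uses that $\omega^{-h\cdot(z^4_1 - z^3_1 + z^2_1 - z^1_1)\cdot\phi\text{-terms}}$ summed over $h$ forces the respected condition. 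This is exactly the conclusion. The main obstacle is purely bookkeeping: correctly tracking the $2^{k-2}$-fold expansion of the iterated multiplicative derivative and the Morse sign pattern, and making sure the variable $a_1$ genuinely appears in only one $f$-factor at the stage where we apply Lemma~\ref{l4bound} — this is handled exactly as the indexing in the proof of Lemma~\ref{convImpliesLin}, so no genuinely new difficulty arises.
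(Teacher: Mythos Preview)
Your proposal is correct and follows essentially the same approach as the paper. The paper's execution is slightly cleaner: it expands only the outermost derivative $\mder_{a_1}$ (not all $k-2$ of them), observes that the resulting $a_1$-dependent $f$-factors $\overline{F(x-a_1)}F(x-u-a_1)$ with $F=\mder_{a_2}\cdots\mder_{a_{k-2}}f$ depend on $a_1$ only through $x-a_1$, and uses the already-present variable $x$ as the shift in Lemma~\ref{l4bound} --- so no auxiliary $t$ from the Lemma~\ref{convImpliesLin} machinery is needed.
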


\begin{proof}Since $\mder_a \mder_b = \mder_b \mder_a$, we may assume without loss of generality that $d = 1$. We have
\begin{align*}\exx_{a_{[k-2]}} &S(a_{[k-2]}) \Big|[\mder_{a_1} \dots \mder_{a_{k-2}} f\fcc(\phi(a_{[k-2]}))\Big|^2\\            
&=\exx_{a_{[k-2]}} S(a_{[k-2]}) \exx_{x,y} \mder_{a_1} \dots \mder_{a_{k-2}} f(x) \overline{\mder_{a_1} \dots \mder_{a_{k-2}}f(y)} \omega^{-\phi(a_{[k-2]}) \cdot (x - y)}\\
&=\exx_{a_{[k-2]}} S(a_{[k-2]}) \exx_{x,u} \mder_{a_1} \dots \mder_{a_{k-2}} f(x) \overline{\mder_{a_1} \dots \mder_{a_{k-2}}f(x - u)} \omega^{-\phi(a_{[k-2]}) \cdot u}\\
&\leq \exx_{a_{[2,k-2]}} \Big|\exx_{x,u} \mder_{a_2} \dots \mder_{a_{k-2}} f(x) \overline{\mder_{a_2} \dots \mder_{a_{k-2}} f(x - u)}\\
&\hspace{3cm} \exx_{a_1}  S(a_{[k-2]})  \overline{\mder_{a_2} \dots \mder_{a_{k-2}} f(x - a_1)} \mder_{a_2} \dots \mder_{a_{k-2}}f(x - a_1 - u) \omega^{-\phi(a_{[k-2]}) \cdot u}\Big|\\
&\leq \exx_{a_{[2,k-2]}} \bigg|\exx_{x,u} \Big|\exx_{a_1}  S(a_{[k-2]})  \overline{\mder_{a_2} \dots \mder_{a_{k-2}} f(x - a_1)} \mder_{a_2} \dots \mder_{a_{k-2}}f(x - a_1 - u) \omega^{-\phi(a_{[k-2]}) \cdot u}\Big|\bigg|.\end{align*}
For fixed $a_{[2,k-2]}$ and $u$, define maps 
\[g_{a_{[2,k-2]},u}(x) = \overline{\mder_{a_2} \dots \mder_{a_{k-2}} f( - x)} \mder_{a_2} \dots \mder_{a_{k-2}}f(- x - u)\]
and 
\[h_{a_{[2,k-2]},u}(x) = S(x,a_{[2, k-2]})\omega^{-\phi(x, a_{[2, k-2]}) \cdot u}.\]
By Lemma~\ref{l4bound}, we get
\[\bigg(\exx_{x}\Big|\exx_{a_1} g_{a_{[2,k-2]},u}(a_1 - x) h_{a_{[2,k-2]},u}(a_1)\Big|^2\bigg)^2 \leq \sum_r \Big|[h_{a_{[2,k-2]},u}\fcc(r)\Big|^4.\]
Returning to the inequalities above, we deduce that
\begin{align*}{\delta'}^4{\delta''}^8\leq &\bigg(\exx_{a_{[k-2]}} S(a_{[k-2]}) \Big|[\mder_{a_1} \dots \mder_{a_{k-2}} f\fcc(\phi(a_{[k-2]}))\Big|^2\bigg)^4\\
\leq &\exx_{a_{[2,k-2]}} \exx_u \sum_r \Big|\exx_{v} S(v, a_{[2, k-2]})\omega^{-\phi(v, a_{[2, k-2]}) \cdot u} \omega^{-r \cdot v}\Big|^4.\end{align*}
Some easy algebraic manipulation shows that the right-hand side expands to give $|G|^{-k}$ times the number of $d$-additive quadruples, which implies the lemma.\end{proof}

We also need another symmetry argument (closer to that of Green and Tao), which we present as a separate lemma.

\begin{lemma}[Symmetry argument]\label{symmArgumentLemma}Let $i_1, i_2 \in [k-1]$ be distinct elements and let $\psi \colon G_{[k-1]} \to \mathbb{F}_p$ be a multilinear form. Suppose that
\begin{equation}\label{singleForm} \Big|\exx_{a_{[k-1]} \in G^{k-1}} \exx_{x \in G} \mder_{a_1} \dots \mder_{a_{k-1}} f(x) \omega^{\psi(a_{[k-1]})}\Big| \geq c\end{equation}
for some $c > 0$. Then the multilinear map $\psi'$, defined by
\[\psi'(x_{[k-1]}) = \psi(x_{[k-1] \setminus \{i_1,i_2\}}, \ls{i_1}{x_{i_2}}, \ls{i_1}{x_{i_2}}),\]
that is by swapping coordinates $i_1$ and $i_2$, satisfies
\[\bias(\psi - \psi') \geq c^{O(1)}.\]
\end{lemma}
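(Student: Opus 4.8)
The plan is to follow the Green--Tao symmetry argument (\cite{StrongU3}, and its $U^4$ analogue in~\cite{U4paper}), suitably iterated. Since $\partial_{a_1}\cdots\partial_{a_{k-1}}f$ is unchanged by any permutation of $a_1,\dots,a_{k-1}$ (because $\partial_u\partial_v=\partial_v\partial_u$), we may relabel the variables so that $\{i_1,i_2\}=\{k-2,k-1\}$. Fixing a dot product on $G$, write $\psi(a_{[k-1]})=\gamma(a_{[k-2]})\cdot a_{k-1}$ for a multilinear map $\gamma\colon G^{k-2}\to G$; then $\psi'(a_{[k-1]})=\gamma(a_{[k-3]},a_{k-1})\cdot a_{k-2}=\delta(a_{[k-3]},a_{k-2})\cdot a_{k-1}$, where $\delta\colon G^{k-2}\to G$ is the multilinear map determined by $\delta(a_{[k-3]},w)\cdot v=\gamma(a_{[k-3]},v)\cdot w$, so that $\psi-\psi'=(\gamma-\delta)(a_{[k-2]})\cdot a_{k-1}$ and the conclusion we want is exactly the statement that $\gamma-\delta$ has bounded analytic rank.

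First I would unwind the hypothesis~\eqref{singleForm}. Writing $\partial_{a_1}\cdots\partial_{a_{k-1}}f=\partial_{a_{k-1}}(\partial_{a_1}\cdots\partial_{a_{k-2}}f)$ and substituting $u=x-a_{k-1}$ in the average over $(a_{k-1},x)$, one checks that the left-hand side of~\eqref{singleForm}, with the modulus dropped, equals $\ex_{a_{[k-2]}}|\widehat{\partial_{a_1}\cdots\partial_{a_{k-2}}f}(-\gamma(a_{[k-2]}))|^2$; in particular this average is a non-negative real, and it is $\geq c$. Peeling off the $a_{k-2}$-derivative instead, and using $\partial_{a_{k-2}}\partial_{a_{k-1}}=\partial_{a_{k-1}}\partial_{a_{k-2}}$ to line the two computations up, gives in the same way $\ex_{a_{[k-2]}}|\widehat{\partial_{a_1}\cdots\partial_{a_{k-2}}f}(-\delta(a_{[k-2]}))|^2\geq c$. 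Hence the $(k-2)$-fold derivative $h_{a_{[k-2]}}:=\partial_{a_1}\cdots\partial_{a_{k-2}}f$ has a Fourier coefficient of size $\geq\sqrt c/2$ at $-\gamma(a_{[k-2]})$ for a set of $a_{[k-2]}$ of density $\geq c/2$, and likewise at $-\delta(a_{[k-2]})$ for such a set.

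The main step is to turn this into a lower bound on $\bias(\psi-\psi')$ directly. Applying $\partial_{a_{k-1}}$ converts the linear phase $x\mapsto\omega^{-\gamma(a_{[k-2]})\cdot x}$ into the $x$-independent phase $\omega^{-\gamma(a_{[k-2]})\cdot a_{k-1}}=\omega^{-\psi(a_{[k-1]})}$, so $\partial_{a_1}\cdots\partial_{a_{k-1}}f$ correlates, in the weighted sense supplied by the two displayed averages, with $\omega^{-\psi}$; by the permutation symmetry it correlates equally with $\omega^{-\psi'}$. I would then run a Gowers--Cauchy--Schwarz argument of the type of Lemma~\ref{l4bound} / Lemma~\ref{unifBound} on the associated expression: expanding the Fourier coefficients, substituting, and keeping every intermediate quantity real by working with squared moduli of Fourier coefficients, one is led to an exponential sum whose phase is exactly $\psi-\psi'$ and whose value is bounded below by a fixed power of $c$. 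Since by Lemma~\ref{biasHomog} the quantity $\ex_{a_{[k-1]}}\omega^{(\psi-\psi')(a_{[k-1]})}$ is already real and non-negative, this bound is precisely $\bias(\psi-\psi')\geq c^{O(1)}$. The hard part will be arranging the Cauchy--Schwarz so that it lower-bounds $\bias(\psi-\psi')$ itself, rather than merely the density of the symmetry variety $\{a_{[k-1]}:\psi(a_{[k-1]})=\psi'(a_{[k-1]})\}$; that density is always at least $p^{-1}$ and hence carries no information unless one controls how far it exceeds $p^{-1}$, whereas the appeal to Lemma~\ref{biasHomog} lets the symmetric-phase identity read off the bias without passing through the variety at all. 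The auxiliary coordinates $a_1,\dots,a_{k-3}$, over which everything is averaged, are carried along passively throughout.
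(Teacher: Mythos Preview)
Your setup is sound---the reduction to $\{i_1,i_2\}=\{k-2,k-1\}$, the Fourier rewriting of the hypothesis, and the observation that the symmetry of $\partial_{a_1}\cdots\partial_{a_{k-1}}f$ in the $a_i$ also yields $\bigl|\exx\,\partial_{a_{[k-1]}}f\cdot\omega^{\psi'}\bigr|\geq c$ are all correct. But the proposal halts precisely where the proof begins: knowing that a bounded function $F(a_{[k-1]})=\exx_x\partial_{a_{[k-1]}}f(x)$ correlates with both $\omega^\psi$ and $\omega^{\psi'}$ does not by itself force $\bias(\psi-\psi')$ to be large, and the box-norm inequalities of Lemmas~\ref{l4bound} and~\ref{unifBound} will not produce the phase $\psi-\psi'$ out of these two separate correlations. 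You identify this as ``the hard part'' but supply no mechanism.

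The paper's mechanism does not combine the two correlations at all; it works from the single hypothesis and \emph{manufactures} the antisymmetric phase by a change of variables. After fixing $x_0$ and applying Cauchy--Schwarz in $a_{k-1}$ (introducing $u_{k-1},v_{k-1}$), one expands the $a_{k-2}$-derivative and substitutes $z=x_0-u_{k-1}-v_{k-1}-a_{k-2}$. This is the key move: $a_{k-2}$ appears linearly in the phase (by multilinearity of $\psi$) and additively in the $f$-arguments (via $\partial_{a_{k-2}}$), and the substitution balances the two so that, on expanding $\psi(a_{[k-3]},x_0-u_{k-1}-v_{k-1}-z,u_{k-1}-v_{k-1})$ by bilinearity in the last two slots, the term $\psi(a_{[k-3]},u_{k-1},v_{k-1})-\psi(a_{[k-3]},v_{k-1},u_{k-1})$ emerges explicitly. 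Every remaining phase term and every $f$-factor then depends on at most one of $u_{k-1},v_{k-1}$; two further Cauchy--Schwarz steps (in $v_{k-1}$, then $u_{k-1}$) strip all of these away and leave $c^8\leq\bias(\psi-\psi')$. Your outline is missing this substitution, which is the entire content of the lemma.
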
 

We adopt a shorter notation for multiple derivatives, by letting $\mder_{d_{[l]}} f(x_0)$ stand for $\mder_{d_1} \dots \mder_{d_l} f(x_0)$.

\begin{proof}[Proof of Lemma~\ref{symmArgumentLemma}]For simplicity, we argue in the case $i_1 = k-2, i_2 = k-1$; the general case is analogous. By averaging, there is some $x_0 \in G$ such that
\begin{equation}\label{mainAssumptionForSymm}c \leq \Big|\exx_{a_{[k-1]} \in G^{k-1}} \mder_{a_1} \dots \mder_{a_{k-1}} f(x_0) \omega^{\psi(a_{[k-1]})}\Big|.\end{equation}

To make the expressions that follow clearer, we shall write $\chi$ for the one-dimensional character $\chi(\lambda) = \omega^{\lambda}$. Applying the Cauchy-Schwarz inequality, and making the change of variables $z=x_0 - u_{k-1} - v_{k-1} - a_{k-2}$ for the third equality below, we obtain that
\begin{align*}c^{2} &\leq \exx_{a_{[k-2]}} \Big|\exx_{a_{k-1}}  \mder_{a_{[k-2]}} f(x_0) \overline{\mder_{a_{[k-2]}} f(x_0 - a_{k-1})} \chi\Big(\psi(a_{[k-1]})\Big)\Big|^2\\
&=  \exx_{a_{[k-2]}} \exx_{u_{k-1}, v_{k-1}} \overline{\mder_{a_{[k-2]}} f(x_0 - u_{k-1})} \mder_{a_{[k-2]}} f(x_0 - v_{k-1}) \chi\Big(\psi(a_{[k-2]}, u_{k-1} - v_{k-1})\Big)\\
&=   \exx_{a_{[k-3]}} \exx_{a_{k-2}, u_{k-1}, v_{k-1}} \mder_{a_{[k-3]}} f(x_0 - u_{k-1} - a_{k-2}) \overline{\mder_{a_{[k-3]}} f(x_0 - u_{k-1})} \\
&\hspace{2cm} \overline{\mder_{a_{[k-3]}} f(x_0 - v_{k-1} - a_{k-2})} \mder_{a_{[k-3]}} f(x_0 - v_{k-1}) \chi\Big(\psi(a_{[k-2]}, u_{k-1} - v_{k-1})\Big)\\
&=   \exx_{a_{[k-3]}} \exx_{z, u_{k-1}, v_{k-1}} \mder_{a_{[k-3]}} f(z + v_{k-1}) \overline{\mder_{a_{[k-3]}} f(x_0 - u_{k-1})} \hspace{1pt} \overline{\mder_{a_{[k-3]}} f(z + u_{k-1})} \mder_{a_{[k-3]}} f(x_0 - v_{k-1})\\
&\hspace{2cm} \chi\Big(\psi(a_{[k-3]}, x_0 - u_{k-1} - v_{k-1} - z, u_{k-1} - v_{k-1})\Big)\\
&=   \exx_{a_{[k-3]}} \exx_{z, u_{k-1}, v_{k-1}}\mder_{a_{[k-3]}} f(z + v_{k-1}) \overline{\mder_{a_{[k-3]}} f(x_0 - u_{k-1})} \hspace{1pt} \overline{\mder_{a_{[k-3]}} f(z + u_{k-1})} \mder_{a_{[k-3]}} f(x_0 - v_{k-1})\\
&\hspace{2cm} \chi\Big(\psi(a_{[k-3]}, x_0 - z, u_{k-1})\Big) \overline{\chi\Big(\psi(a_{[k-3]}, x_0 - z, v_{k-1})\Big)}\\
&\hspace{2cm}\chi\Big(\psi(a_{[k-3]}, v_{k-1}, v_{k-1})\Big) \overline{\chi\Big(\psi(a_{[k-3]}, u_{k-1}, u_{k-1})\Big)}\\
&\hspace{2cm}\chi\Big(\psi(a_{[k-3]}, u_{k-1} , v_{k-1}) - \psi(a_{[k-3]}, v_{k-1} , u_{k-1})\Big).
\end{align*}

Applying the Cauchy-Schwarz inequality once again, we get
\begin{align*}c^{4} &\leq  \exx_{a_{[k-3]}, z, u_{k-1}} \Big|\exx_{v_{k-1}}  \mder_{a_{[k-3]}} f(z + v_{k-1}) \overline{\mder_{a_{[k-3]}} f(x_0 - u_{k-1})} \hspace{1pt} \overline{\mder_{a_{[k-3]}} f(z + u_{k-1})} \mder_{a_{[k-3]}} f(x_0 - v_{k-1})\\
&\hspace{2cm} \chi\Big(\psi(a_{[k-3]}, x_0 - z, u_{k-1})\Big) \overline{\chi\Big(\psi(a_{[k-3]}, x_0 - z, v_{k-1})\Big)}\\
&\hspace{2cm}\chi\Big(\psi(a_{[k-3]}, v_{k-1}, v_{k-1})\Big) \overline{\chi\Big(\psi(a_{[k-3]}, u_{k-1}, u_{k-1})\Big)}\\
&\hspace{2cm}\chi\Big(\psi(a_{[k-3]}, u_{k-1}, v_{k-1}) - \psi(a_{[k-3]}, v_{k-1}, u_{k-1})\Big)\Big|^2\\
&\leq \exx_{a_{[k-3]}, z, u_{k-1}} \Big|\exx_{v_{k-1}}  \mder_{a_{[k-3]}} f(z + v_{k-1}) \mder_{a_{[k-3]}} f(x_0 - v_{k-1})\\
&\hspace{2cm} \overline{\chi\Big(\psi(a_{[k-3]}, x_0 - z, v_{k-1})\Big)} \chi\Big(\psi(a_{[k-3]}, v_{k-1}, v_{k-1})\Big)\\
&\hspace{2cm}\chi\Big(\psi(a_{[k-3]}, u_{k-1}, v_{k-1}) - \psi(a_{[k-3]}, v_{k-1}, u_{k-1})\Big)\Big|^2\\
&=\exx_{a_{[k-3]}, z, u_{k-1}, v_{k-1}, v'_{k-1}} \mder_{a_{[k-3]}} f(z + v_{k-1}) \mder_{a_{[k-3]}} f(x_0 - v_{k-1}) \overline{\mder_{a_{[k-3]}} f(z + v'_{k-1})}\,\overline{\mder_{a_{[k-3]}} f(x_0 - v'_{k-1})}\\
&\hspace{2cm}\chi\Big(\psi(a_{[k-3]}, x_0 - z, v'_{k-1} - v_{k-1})\Big)\\
&\hspace{2cm} \chi\Big(\psi(a_{[k-3]}, v_{k-1}, v_{k-1})\Big) \overline{\chi\Big(\psi(a_{[k-3]}, v'_{k-1}, v'_{k-1})\Big)}\\
&\hspace{2cm}\chi\Big(\psi(a_{[k-3]}, u_{k-1}, v_{k-1} - v'_{k-1})- \psi(a_{[k-3]}, v_{k-1} - v'_{k-1}, u_{k-1})\Big).\end{align*}

Applying the Cauchy-Schwarz inequality one last time,
\begin{align*}c^{8} &\leq  \exx_{a_{[k-3]}, z, v_{k-1}, v'_{k-1}}\bigg|\exx_{u_{k-1}} \mder_{a_{[k-3]}} f(z + v_{k-1}) \mder_{a_{[k-3]}} f(x_0 - v_{k-1}) \overline{\mder_{a_{[k-3]}} f(z + v'_{k-1})}\,\overline{\mder_{a_{[k-3]}} f(x_0 - v'_{k-1})}\\
&\hspace{2cm}\chi\Big(\psi(a_{[k-3]}, z - x_0, v'_{k-1} - v_{k-1})\Big)\\
&\hspace{2cm} \chi\Big(\psi(a_{[k-3]}, v_{k-1}, v_{k-1})\Big) \overline{\chi\Big(\psi(a_{[k-3]}, v'_{k-1}, v'_{k-1})\Big)}\\
&\hspace{2cm}\chi\Big(\psi(a_{[k-3]}, u_{k-1} , v_{k-1} - v'_{k-1}) - \psi(a_{[k-3]}, v_{k-1} - v'_{k-1}, u_{k-1})\Big)\bigg|^2\\
&\leq\exx_{a_{[k-3]}, z, v_{k-1}, v'_{k-1}}\Big|\exx_{u_{k-1}}  \chi\Big(\psi(a_{[k-3]}, u_{k-1} , v_{k-1} - v'_{k-1})- \psi(a_{[k-3]}, v_{k-1} - v'_{k-1}, u_{k-1})\Big) \Big|^2\\
&=\exx_{a_{[k-3]}, z, v_{k-1}, v'_{k-1}, u_{k-1}, u'_{k-1}} \chi\Big(\psi(a_{[k-3]}, u_{k-1} - u'_{k-1}, v_{k-1} - v'_{k-1}) - \psi(a_{[k-3]}, v_{k-1} - v'_{k-1}, u_{k-1} - u'_{k-1})\Big)\\
&=\exx_{a_{[k-1]}} \chi\Big(\psi(a_{[k-3]}, a_{k-2}, a_{k-1}) - \psi(a_{[k-3]}, a_{k-1}, a_{k-2})\Big),\end{align*}
which completes the proof of the lemma.\end{proof}

For $\pi \in \operatorname{Sym}_{[k-1]}$, write $\psi_\pi(a_{[k-1]})$ for $\psi(a^\pi_{[k-1]})$, where $a^\pi_d=a_{\pi(d)}$ for each $d\in [k-1]$. We claim that each $\psi_\pi$ differs from $\psi$ by a multilinear form of small partition rank.

\begin{corollary}\label{symmArgumentCor}let $\psi \colon G_{[k-1]} \to \mathbb{F}_p$ be a multilinear form. Suppose that
\[\Big|\exx_{a_{[k-1]} \in G^{k-1}} \exx_{x \in G} \mder_{a_1} \dots \mder_{a_{k-1}} f(x) \omega^{\psi(a_{[k-1]})}\Big| \geq c\]
for some $c > 0$. Then for each $\pi \in \on{Sym}_{[k-1]}$ we have $\on{prank} (\psi - \psi_\pi) \leq  O\Big((\log_p c^{-1})^{O(1)}\Big)$.\end{corollary}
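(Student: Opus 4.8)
\textbf{Proof plan for Corollary~\ref{symmArgumentCor}.} The plan is to derive the result from Lemma~\ref{symmArgumentLemma} together with Theorem~\ref{strongInvARankThm}, using the fact that the symmetric group $\on{Sym}_{[k-1]}$ is generated by adjacent transpositions. First I would observe that if $\pi = (i_1\ i_2)$ is the transposition swapping two distinct coordinates $i_1, i_2$, then Lemma~\ref{symmArgumentLemma} applies directly: the hypothesis~\eqref{singleForm} holds with constant $c$, so the conclusion gives $\bias(\psi - \psi_\pi) \geq c^{O(1)}$. Since $\psi - \psi_\pi$ is a multilinear form and its bias is bounded below by $c^{O(1)}$, Theorem~\ref{strongInvARankThm} yields $\on{prank}(\psi - \psi_\pi) \leq C \log_p^D(p\, c^{-O(1)}) = O((\log_p c^{-1})^{O(1)})$. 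This handles the case of a single transposition.

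For a general permutation $\pi$, I would write $\pi$ as a product of $N$ transpositions $\pi = \sigma_1 \sigma_2 \cdots \sigma_N$, where $N \leq \binom{k-1}{2}$ is bounded in terms of $k$ only. The key point is a telescoping identity: writing $\tau_j = \sigma_{j+1} \cdots \sigma_N$ for $j \in [0,N]$ (so $\tau_0 = \pi$ and $\tau_N = \on{id}$), we have
\[\psi - \psi_\pi = \sum_{j=0}^{N-1} (\psi_{\tau_{j+1}} - \psi_{\tau_j}).\]
Each summand $\psi_{\tau_{j+1}} - \psi_{\tau_j}$ is, after relabelling coordinates by $\tau_{j+1}$, exactly of the form $\psi' - \psi'_{\sigma}$ for the single transposition $\sigma = \sigma_{j+1}$, where $\psi' = \psi_{\tau_{j+1}}$. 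Here I would need to check that the hypothesis~\eqref{singleForm} is preserved under relabelling the $a_d$: since the expression $\ex_{a_{[k-1]}} \ex_x \mder_{a_1}\cdots\mder_{a_{k-1}} f(x) \,\omega^{\psi(a_{[k-1]})}$ is symmetric in the $a_d$ up to the choice of which form appears in the phase (the discrete derivatives commute, $\mder_a \mder_b = \mder_b \mder_a$), permuting the names of the $a_d$ simply replaces $\psi$ by $\psi_{\tau_{j+1}}$ without changing the modulus. Hence Lemma~\ref{symmArgumentLemma} applies to each pair $(\psi_{\tau_{j+1}}, \sigma_{j+1})$ and gives $\bias(\psi_{\tau_{j+1}} - \psi_{\tau_j}) \geq c^{O(1)}$, so by Theorem~\ref{strongInvARankThm} each difference has partition rank $O((\log_p c^{-1})^{O(1)})$. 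Since partition rank is subadditive and $N = O_k(1)$, we conclude $\on{prank}(\psi - \psi_\pi) \leq N \cdot O((\log_p c^{-1})^{O(1)}) = O((\log_p c^{-1})^{O(1)})$, as claimed.

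The main obstacle I anticipate is the bookkeeping in the telescoping step: one must be careful that after applying a permutation $\tau_{j+1}$ to the variable names, the remaining transposition $\sigma_{j+1}$ still acts as an adjacent transposition in the appropriate sense so that Lemma~\ref{symmArgumentLemma} is literally applicable (the lemma is stated for swapping two coordinates $i_1, i_2$, which is any transposition, so this is in fact fine — the genuine subtlety is only in verifying that~\eqref{singleForm} transfers to $\psi_{\tau_{j+1}}$, which as noted follows from the commutativity of the derivative operators). A minor additional point is to confirm that subadditivity of partition rank is available; this is immediate from the definition, since a representation of $\psi - \psi_\pi$ as a sum of low-rank pieces can be obtained by concatenating the representations of the individual differences.
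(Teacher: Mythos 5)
Your proposal is correct, and its skeleton matches the paper's: decompose $\pi$ into $O_k(1)$ transpositions, apply Lemma~\ref{symmArgumentLemma} and Theorem~\ref{strongInvARankThm} to each consecutive pair, and finish with subadditivity of partition rank. The one place where you diverge — and where your argument is genuinely cleaner — is in transferring hypothesis~\eqref{singleForm} to the intermediate forms $\psi_{\tau_{j+1}}$. You observe that since $\mder_{a_1}\cdots\mder_{a_{k-1}}f(x)$ is a symmetric function of $a_{[k-1]}$ (the derivative operators commute), the change of variables $a_d\mapsto a_{\sigma(d)}$ turns the expression in~\eqref{singleForm} for $\psi$ into the \emph{identical} expression for $\psi_\sigma$, so the hypothesis holds with the same constant $c$ for every $\psi_\sigma$, $\sigma\in\on{Sym}_{[k-1]}$. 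The paper instead argues by induction on the number of transpositions and at each step re-derives~\eqref{singleForm} for $\psi_{\pi'}$ indirectly: it starts from~\eqref{singleForm} for $\psi$, uses the inductive partition-rank bound on $\psi-\psi_{\pi'}$ to split the phase, pigeonholes over the level sets of the low-rank correction, and invokes Lemma~\ref{unifBound2}, obtaining~\eqref{singleForm} for $\psi_{\pi'}$ with a degraded constant $c^{O(1)}p^{-O(r)}$. Because $r=O((\log_p c^{-1})^{O(1)})$, this loss is absorbed and both routes give the stated bound, but your change-of-variables observation bypasses the pigeonholing step entirely and keeps the constant uniform across the $O_k(1)$ transpositions.
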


\begin{proof}Recall that every permutation is a composition of a bounded number of transpositions. We use induction on that number, the base case being $\pi = \on{id}$, which is trivial. Suppose now that $\pi \in \on{Sym}_{[k-1]}$ is given and that $\tau \in \on{Sym}_{[k-1]}$ is a transposition such that the claim holds for $\pi' = \tau \circ \pi$. Then there are a positive integer $r \leq  O\Big((\log_p c^{-1})^{O(1)}\Big)$, non-empty proper subsets $J_i \subset [k-1]$ and multilinear forms $\alpha_i \colon G^{J_i} \to \mathbb{F}_p$ and $\beta_i \colon G^{[k-1] \setminus J_i}$ for $i \in [r]$ such that
\[\psi(a_{[k-1]}) - \psi_{\pi'}(a_{[k-1]}) = \sum_{i \in [r]} \alpha_i(a_{J_i}) \beta_i(a_{[k-1] \setminus J_i}).\]
Going back to the assumption in the statement of the corollary, we conclude that
\begin{align*}c &\leq \Big|\exx_{a_{[k-1]} \in G^{k-1}} \exx_{x \in G} \mder_{a_1} \dots \mder_{a_{k-1}} f(x) \omega^{\psi(a_{[k-1]})}\Big|\\
&=  \Big|\exx_{a_{[k-1]} \in G^{k-1}} \exx_{x \in G} \mder_{a_1} \dots \mder_{a_{k-1}} f(x) \omega^{\psi_{\pi'}(a_{[k-1]}) + \sum_{i \in [r]} \alpha_i(a_{J_i}) \beta_i(a_{[k-1] \setminus J_i})} \Big|\\
&\leq  \sum_{\nu, \nu' \in \mathbb{F}_p^r} \Big|\exx_{a_{[k-1]} \in G^{k-1}} \exx_{x \in G} \mder_{a_1} \dots \mder_{a_{k-1}} f(x) \omega^{\psi_{\pi'}(a_{[k-1]})} \Big(\prod_{i \in [r]} \mathbbm{1}(\alpha_i(a_{J_i}) = \nu_i)\Big) \Big(\prod_{i \in [r]} \mathbbm{1}(\beta_i(a_{[k-1] \setminus J_i}) = \nu'_i)\Big)\Big|.\end{align*}
By averaging, there is a choice of $\nu, \nu' \in \mathbb{F}_p^r$ such that
\begin{align*}cp^{-2r} &\leq \Big|\exx_{a_{[k-1]} \in G^{k-1}} \exx_{x \in G} \mder_{a_1} \dots \mder_{a_{k-1}} f(x) \omega^{\psi_{\pi'}(a_{[k-1]})} \Big(\prod_{i \in [r]} \mathbbm{1}(\alpha_i(a_{J_i}) = \nu_i)\Big) \Big(\prod_{i \in [r]} \mathbbm{1}(\beta_i(a_{[k-1] \setminus J_i}) = \nu'_i)\Big)\Big|\\
&\leq  \exx_{x \in G} \Big|\exx_{a_{[k-1]} \in G^{k-1}}\mder_{a_1} \dots \mder_{a_{k-1}} f(x) \omega^{\psi_{\pi'}(a_{[k-1]})} \Big(\prod_{i \in [r]} \mathbbm{1}(\alpha_i(a_{J_i}) = \nu_i)\Big) \Big(\prod_{i \in [r]} \mathbbm{1}(\beta_i(a_{[k-1] \setminus J_i}) = \nu'_i)\Big)\Big|.\end{align*}

Define an auxiliary map $\tilde{f}_x \colon G^{k-1} \to \mathbb{D}$ by $\tilde{f}_x(a_{[k-1]}) = f(x - a_1 - \dots - a_{k-1}) \omega^{\psi_{\pi'}(a_{[k-1]})}$. Using Lemma~\ref{unifBound2} we see that 
\[c^{O(1)}p^{-O(r)}\leq \exx_x \|\tilde{f}_x\|_{\square^{k-1}}^{2^{k-1}} = \exx_{x, a_{[k-1]}}  \mder_{a_1} \dots \mder_{a_{k-1}} f(x) \omega^{\psi_{\pi'}(a_{[k-1]})}\]
after expansion and an easy algebraic manipulation. Lemma~\ref{symmArgumentLemma} implies that 
\[\bias (\psi_\pi - \psi_{\tau \circ \pi}) \geq c^{O(1)}p^{-O(r)}.\]
By Theorem~\ref{strongInvARankThm}, we get that $\prank (\psi_\pi - \psi_{\tau \circ \pi}) \leq  O\Big((\log_p c^{-1})^{O(1)}\Big)$, finishing the proof of the claim.\end{proof}

We are now ready to prove Theorem~\ref{UkInverseTheorem}.

\begin{proof}[Proof of Theorem~\ref{UkInverseTheorem}] We prove the result by induction on $k$. Recall that for any map $h \colon G \to \mathbb{D}$, we have the bound
\[\|h\|_{U^2}^4 = \sum_r |\hat{h}(r)|^4 \leq \Big(\sum_r |\hat{h}(r)|^2\Big) \max_r |\hat{h}(r)|^2 \leq \max_r |\hat{h}(r)|^2.\]
It follows that
\[\delta^{2^k} \leq \|f\|_{U^{k}}^{2^k} = \exx_{a_1, \dots, a_{k-2}} \Big\|\mder_{a_1} \dots \mder_{a_{k-2}} f\Big\|_{U^2}^4 \leq \exx_{a_1, \dots, a_{k-2}} \max_r |[\mder_{a_1} \dots \mder_{a_{k-2}} f\fcc(r)|^2.\]
Let $\phi \colon G^{k-2} \to G$ be a map defined by taking $\phi(a_{[k-2]})$ to be any $r \in G$ such that $|[\mder_{a_1} \dots \mder_{a_{k-2}} f\fcc(r)|$ is maximal. This gives us a set $A \subset G_{[k-2]}$ of size at least $\frac{\delta^{2^k}}{2}|G_{k-2}|$ such that
\[ \Big|[\mder_{a_1} \dots \mder_{a_{k-2}} f\fcc(\phi(a_{[k-2]}))\Big| \geq \delta^{2^{k-1}}/2\]
for every $a_{[k-2]} \in A$.

Applying Theorem~\ref{FreBSG} and Lemma~\ref{multiaffineUkPassClaim} in each direction, and then Theorem~\ref{multiaffineInvThm}, we obtain a global multiaffine map $\Phi \colon G^{k-2} \to G$ and a set $A' \subset G_{[k-2]}$ of size $\delta_1|G_{[k-2]}|$, where 
\[\delta_1 =\Big(\exp^{(O(1))} (O(\delta^{-1}))\Big)^{-1}\]
such that for every $a_{[k-2]} \in A'$ we have the bound
\[ \Big|[\mder_{a_1} \dots \mder_{a_{k-2}} f\fcc(\Phi(a_{[k-2]}))\Big| \geq \delta^{2^{k-1}}/2.\]
Therefore,
\[\exx_{a_{[k-1]} \in G^{k-1}} \exx_{x \in G} \mder_{a_1} \dots \mder_{a_{k-1}} f(x) \omega^{-\Phi(a_{[k-2]}) \cdot a_{k-1}}= \exx_{a_{[k-2]} \in G_{[k-2]}} \Big|[\mder_{a_1} \dots \mder_{a_{k-2}} f\fcc(\Phi(a_{[k-2]}))\Big|^2= \delta_2,\]
where $\delta_2 \geq \Omega(\delta_1 \delta^{O(1)})$.\\

Write $\phi(a_{[k-1]}) = -\Phi(a_{[k-2]}) \cdot a_{k-1}$, which is a multiaffine form. We may further rewrite $\phi$ as a sum $\phi^{\text{ml}} + \phi_1 + \dots + \phi_{k-1}$, where $\phi^{\text{ml}}$ is multilinear, and $\phi_i$ is multiaffine form that does not depend on $a_i$. Then
\begin{align*}\delta_2 \leq &\exx_{a_{[k-1]}} \exx_{x} \mder_{a_1} \dots \mder_{a_{k-1}} f(x) \chi(\phi^{\text{ml}}(a_{[k-1]})) \prod_{i \in [k-1]} \chi(\phi_i(a_{[k-1]\setminus\{i\}}))\\
\leq & \exx_x \Big|\exx_{a_{[k-1]}} f(x - a_1 - \dots - a_{k-1}) \chi(\phi^{\text{ml}}(a_{[k-1]})) \prod_{I \subsetneq [k-1]} \on{Conj}^{|I|} f\Big(x - \sum_{i \in I} a_i\Big)\, \prod_{i \in [k-1]} \chi(\phi_i(a_{[k-1]\setminus\{i\}})\Big|.\end{align*}
Define an auxiliary map $\tilde{f}_x \colon G^{k-1} \to \mathbb{D}$ by $\tilde{f}_x(a_{[k-1]}) = f(x - a_1 - \dots - a_{k-1}) \chi(\phi^{\text{ml}}(a_{[k-1]}))$. Using Lemma~\ref{unifBound2} we see that 
\begin{equation}\delta^{2^{k-1}}_2 \leq \exx_x \|\tilde{f}_x\|_{\square^{k-1}}^{2^{k-1}} = \exx_{x, a_{[k-1]}}  \mder_{a_1} \dots \mder_{a_{k-1}} f(x) \chi(\phi^{\text{ml}}(a_{[k-1]}))\label{ukmultilinearformbnd}\end{equation}
after expansion and an easy algebraic manipulation.\\

Apply Corollary~\ref{symmArgumentCor} to find a symmetric multilinear form such that $m' = \prank (\phi^{\text{ml}} - \sigma) \leq O\Big((\log_p \delta_2^{-1})^{O(1)}\Big)$. Then there are proper non-empty subsets $J_i \subsetneq [k-1]$, and multilinear forms $\alpha_i \colon G^{J_i} \to \mathbb{F}_p$ and $\beta_i \colon G^{[k-1] \setminus J_i} \to \mathbb{F}_p$ for each $i \in [m']$, such that
\[\phi^{\text{ml}}(a_{[k-1]}) = \sigma(a_{[k-1]}) + \sum_{i \in [m']} \alpha_i(a_{J_i}) \beta_i(a_{[k-1] \setminus J_i}).\]
Return to~\eqref{ukmultilinearformbnd} to obtain
\begin{align*}\delta_2^{O(1)} \leq & \exx_{x, a_{[k-1]}}  \mder_{a_1} \dots \mder_{a_{k-1}} f(x) \chi(\sigma(a_{[k-1]})) \chi\Big(\sum_{i \in [m']} \alpha_i(a_{J_i}) \beta_i(a_{[k-1] \setminus J_i})\Big)\\
= &\exx_{x, a_{[k-1]}} \sum_{\nu, \nu' \in \mathbb{F}_p^{m'}} \mder_{a_1} \dots \mder_{a_{k-1}} f(x) \chi(\sigma(a_{[k-1]})) \chi(\nu \cdot \nu') \prod_{i \in [m']} \mathbbm{1}(\alpha_i(a_{J_i}) = \nu_i)\,\mathbbm{1}(\beta_i(a_{[k-1] \setminus J_i}) = \nu'_i).\end{align*}
By averaging, there are $\nu, \nu' \in \mathbb{F}_p^{m'}$ such that 
\[\delta_2^{O(1)}p^{-2m'} \leq  \Big|\exx_{x, a_{[k-1]}}\mder_{a_1} \dots \mder_{a_{k-1}} f(x) \chi(\sigma(a_{[k-1]})) \prod_{i \in [m']} \mathbbm{1}(\alpha_i(a_{J_i}) = \nu_i)\,\mathbbm{1}(\beta_i(a_{[k-1] \setminus J_i}) = \nu'_i)\Big|.\]
Using a very similar argument to the one above, we apply Lemma~\ref{unifBound2} to conclude that
\[\delta_2^{O(1)}p^{-O(m')} \leq \exx_{x, a_{[k-1]}}\mder_{a_1} \dots \mder_{a_{k-1}} f(x) \chi(\sigma(a_{[k-1]})).\]
Since $p \geq k$, we may use the polarization identity to find a polynomial $g \colon G \to \mathbb{F}_p$ of degree at most $k-1$ such that $\sigma(x_{[k-1]}) = \Delta_{x_1} \dots \Delta_{x_{k-1}} g(y)$ (namely $g(x) = \frac{1}{(k-1)!} \sigma(x, \dots, x)$). Set $f'(x) = f(x) \chi\Big(g(x)\Big)$ to obtain the identity
\[\mder_{a_1} \dots \mder_{a_{k-1}} f(x) \chi\Big(\sigma(a_{[k-1]})\Big) = \mder_{a_1} \dots \mder_{a_{k-1}} f'(x).\]
This implies that 
\[\delta_2^{O(1)}p^{-O(m')} \leq \|f'\|_{U^{k-1}}\]
and we may use the induction hypothesis to complete the proof of the theorem.\end{proof}

\subsection{A multiaffine Bogolyubov argument}

We now strengthen Theorem~\ref{strongMixedApprox}. The main theorem of this subsection tells us, roughly speaking, that if we apply enough convolutions in each direction to a bounded function $f$, then the resulting function will be approximately constant (in an $L_\infty$ sense) on almost all the level sets of a multiaffine map to $\mathbb F_p^l$, where $l$ is bounded. (In the case $k=1$, the ``almost all" makes this statement weaker than Bogolyubov's lemma, but as we remarked in \cite{U4paper}, one cannot obtain uniform approximations on all level sets when $k\geq 2$.)

\begin{theorem}\label{AlmostLinftyApproxThm}Let $f \colon G_{[k]} \to \mathbb{D}$, let $d_1, \dots, d_r \in [k]$ directions such that $\{d_1, \dots, d_r\} = [k]$, and let $\varepsilon > 0$. Then there exist
\begin{itemize}
\item a positive integer $l = \exp^{(O(1))}\big(2^{O(r)}\varepsilon^{-O(1)}\big)$,
\item a multiaffine map $\phi \colon G_{[k]} \to \mathbb{F}^l_p$,
\item a set of values $M \subset \mathbb{F}_p^l$, such that $|\phi^{-1}(M)| \geq (1-\varepsilon) |G_{[k]}|$,
\item a map $c \colon M \to \mathbb{D}$
\end{itemize}
such that
\[\Big|\bigconv{d_r} \dots \bigconv{d_1}\bigconv{k}\dots\bigconv{1}\bigconv{k} \dots \bigconv{1} f(x_{[k]}) - c(\phi(x_{[k]}))\Big|\leq \varepsilon\]
for every $x_{[k]}\in\phi^{-1}(M)$.
\end{theorem}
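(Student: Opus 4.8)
The plan is to reduce to the $L^q$ approximation already established in Theorem~\ref{MixedConvApprox} and then upgrade it to an $L^\infty$ statement on almost every layer by exploiting the fact that the extra convolutions $\bigconv{d_r}\dots\bigconv{d_1}$ together cover every direction. Write $G = \bigconv{k}\dots\bigconv{1}\bigconv{k}\dots\bigconv{1} f$ and $F = \bigconv{d_r}\dots\bigconv{d_1} G$, so $F$ is the function appearing in the statement. First I would apply Theorem~\ref{MixedConvApprox} with $q = 1$ and approximation parameter $\varepsilon_1 = \varepsilon/2^{r+1}$ to obtain a linear combination of multiaffine phases $g = \sum_{i\in[l_0]} c_i\,\chi\circ\phi_i$, with $\|G - g\|_{L^1}\le\varepsilon_1$ and $l_0 = \exp^{(O(1))}\big(2^{O(r)}\varepsilon^{-O(1)}\big)$; after replacing $g$ by its pointwise projection onto $\mathbb D$ (which does not increase $\|G-g\|_{L^1}$, since $G$ is $\mathbb D$-valued, and is again a function of the multiaffine map $\psi^{(0)}=(\phi_1,\dots,\phi_{l_0})$) we may assume $\|g\|_{L^\infty}\le 1$. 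Since $\{d_1,\dots,d_r\}=[k]$, Lemma~\ref{LinftyConvApprox} gives $\|F - \bigconv{d_r}\dots\bigconv{d_1} g\|_{L^\infty}\le 2^r\varepsilon_1\le\varepsilon/2$. It therefore suffices to understand $g^\sharp:=\bigconv{d_r}\dots\bigconv{d_1} g$.

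The heart of the argument is to show that $g^\sharp$ agrees, on a union of layers of a multiaffine map $\psi\colon G_{[k]}\to\mathbb F_p^l$ of codimension $l=\exp^{(O(1))}\big(2^{O(r)}\varepsilon^{-O(1)}\big)$ covering all but an $\varepsilon/2$-proportion of $G_{[k]}$, with a $\mathbb D$-valued function of the form $x\mapsto c(\psi(x))$ up to an error of at most $\varepsilon/2$. I would prove this by induction on the number $s\in[0,r]$ of convolutions applied so far, maintaining: $\bigconv{d_s}\dots\bigconv{d_1} g$ coincides, on a union $U^{(s)}$ of layers of a multiaffine map $\psi^{(s)}$ of codimension $l_s$ with $|U^{(s)}|\ge(1-\eta_s)|G_{[k]}|$, with a $\mathbb D$-valued function $c^{(s)}\circ\psi^{(s)}$ up to error $\rho_s$ (with $\eta_0=\rho_0=0$). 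The passage from $s$ to $s+1$ is the computation underlying Proposition~\ref{ExpSumConv}, but carried out in the internal/external approximation phrasing rather than with an $L^p$ bound: writing $\psi^{(s)}(x)=\Psi^{(s)}(x_{[k]\setminus\{d_{s+1}\}})\cdot x_{d_{s+1}}+\alpha^{(s)}(x_{[k]\setminus\{d_{s+1}\}})$ and expanding $\bigconv{d_{s+1}}(c^{(s)}\circ\psi^{(s)})$ produces terms $\chi(\text{multiaffine form in }x)\cdot\mathbbm 1\{(\nu_1-\nu_2)\cdot\Psi^{(s)}(x_{[k]\setminus\{d_{s+1}\}})=0\}$; applying Theorem~\ref{simVarAppThm} to the family $\{\nu\cdot\Psi^{(s)}\}$ replaces all these varieties, up to a set of density $O(\delta)$, by layers of one multiaffine map of codimension $(l_s\log_p\delta^{-1})^{O(1)}$, so that the resulting "main term" is a function of $(\psi^{(s)},\alpha^{(s)},\gamma)$ off that small set. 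The contribution of the exceptional set $G_{[k]}\setminus U^{(s)}$ to $\bigconv{d_{s+1}}(\bigconv{d_s}\dots\bigconv{d_1} g)$ is controlled columnwise: on every line in direction $d_{s+1}$ whose intersection with $G_{[k]}\setminus U^{(s)}$ has density at most $t$, the cross terms and the error$\times$error term contribute at most $O(\rho_s+t)$ in absolute value, while the "dense columns" form, by Theorem~\ref{denseColumnsThm}, a set internally and externally approximated by layers of a further multiaffine map of codimension $(l_s\log_p\delta^{-1})^{O(1)}$ and of density $O(\eta_s/t)$. Taking $\psi^{(s+1)}$ to be the concatenation of $\psi^{(s)}$, $\alpha^{(s)}$, $\gamma$ and this last map, we obtain the inductive statement at level $s+1$ with $\rho_{s+1}=O(\rho_s+t)$, $\eta_{s+1}=O(\eta_s/t)+O(\delta)$ and $l_{s+1}=(l_s\log_p\delta^{-1})^{O(1)}$.

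To finish, I would choose $t=\varepsilon/(Cr)$ and $\delta$ of size roughly $\varepsilon^{r+1}/(C'r^r)$, so that $\rho_r\le\varepsilon/2$ and $\eta_r\le\varepsilon/2$, while $\log_p\delta^{-1}=O\big(r\log(r\varepsilon^{-1})\big)$; the recursion $l_{s+1}=(l_s\log_p\delta^{-1})^{O(1)}$, started from $l_0=\exp^{(O(1))}\big(2^{O(r)}\varepsilon^{-O(1)}\big)$ and iterated $r$ times, stays of the form $l_r=\exp^{(O(1))}\big(2^{O(r)}\varepsilon^{-O(1)}\big)$. Setting $\psi=\psi^{(r)}$, letting $M$ be the set of layer values of $\psi$ that lie inside $U^{(r)}$ (so $\psi^{-1}(M)=U^{(r)}$ has density $\ge1-\varepsilon$), and taking $c$ to be the projection onto $\mathbb D$ of $c^{(r)}$ restricted to $M$, we get for every $x\in\psi^{-1}(M)$
\[\big|F(x)-c(\psi(x))\big|\le\big|F(x)-g^\sharp(x)\big|+\big|g^\sharp(x)-c(\psi(x))\big|\le\tfrac{\varepsilon}{2}+\tfrac{\varepsilon}{2}=\varepsilon,\]
which is the assertion of the theorem.

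The main obstacle is the bookkeeping in the inductive step: one must simultaneously keep under control, over all $r$ iterations, the growth of the codimension $l_s$ and of the two error parameters $\eta_s$ and $\rho_s$, and verify that the "dense columns of the exceptional set" can genuinely be absorbed into the multiaffine data via Theorem~\ref{denseColumnsThm} without spoiling these bounds. Once the recursions are set up, the analytic input — the $L^1\to L^\infty$ upgrade (Lemma~\ref{LinftyConvApprox}) and the explicit exponential-sum computation behind Proposition~\ref{ExpSumConv} — is routine, so essentially all the work is in this parameter management.
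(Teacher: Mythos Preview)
Your proof follows the same route as the paper: apply Theorem~\ref{MixedConvApprox} in $L^1$, project to $\mathbb{D}$, upgrade to $L^\infty$ via Lemma~\ref{LinftyConvApprox}, and then argue inductively that convolving a function of the form $c\circ\psi$ in one direction yields, on most layers of a slightly larger multiaffine map, another function of the same form. The paper isolates this last step as Lemma~\ref{linftyforMLmaps} and carries it out using Theorem~\ref{simFibresThm} directly (computing $\bigconv{d_{r+1}}(c'\circ\alpha')$ as an average over the image of the linear-in-$d_{r+1}$ part of $\alpha'$), rather than via the Fourier expansion plus Theorem~\ref{simVarAppThm} that you describe; the two computations are equivalent in content.

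There is, however, one point where your bookkeeping does not close. In Theorem~\ref{denseColumnsThm} the density threshold and the approximation error are the \emph{same} parameter, so invoking it with threshold $t$ yields an external approximation of the bad-column set of density at most $\eta_s/t+t$, not $O(\eta_s/t)$ as you write. With $t=\varepsilon/(Cr)$ fixed, this extra $+t$ term is fatal: $\eta_1=O(\delta)$, $\eta_2\ge t$, and then $\eta_3\ge\eta_2/t\ge 1$, so the recursion collapses after two steps regardless of how small $\delta$ is. The fix is exactly what the paper does: replace Theorem~\ref{denseColumnsThm} by Theorem~\ref{simFibresThm} (applied to $\psi^{(s)}$ in direction $d_{s+1}$), which lets you choose the approximation parameter $\eta_1$ independently of $t$. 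On good layers the column density of $G_{[k]}\setminus U^{(s)}$ is then determined up to $p^{l_s}\eta_1$; taking $\eta_1\approx\delta p^{-l_s}$ recovers your intended recursion $\eta_{s+1}=O(\eta_s/t+\delta)$ while keeping the codimension at $(l_s+\log_p\delta^{-1})^{O(1)}$. With this correction your argument and parameter choices go through and match the paper's bound.
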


We will be almost done once we have proved a closely related statement in the case where $f(x_{[k]})$ is itself of the form $c(\phi(x_{[k]}))$. The next lemma shows such functions remain approximately of the same form if we convolve in several directions.

\begin{lemma}\label{linftyforMLmaps}Let $l \in \mathbb{N}$, let $\alpha \colon G_{[k]} \to \mathbb{F}^l_p$ be a multiaffine map, let $c \colon \mathbb{F}_p^l \to \mathbb{D}$ and let $f=c\circ\alpha$. Let $d_1, \dots, d_r \in [k]$ be directions and let $\varepsilon > 0$. Then there exist positive integers $l', s = O(l + \log_p \varepsilon^{-1})^{O(1)}$, multiaffine maps $\beta \colon G_{[k]} \to \mathbb{F}_p^s$, $\alpha' \colon G_{[k]} \to \mathbb{F}_p^{l'}$, a map $c'\colon \mathbb{F}_p^{l'} \to \mathbb{D}$, and a collection of values $B \subset \mathbb{F}_p^s$ such that 
\[|\beta^{-1}(B)| \geq (1-\varepsilon) |G_{[k]}|\]
and
\[\Big|\bigconv{d_r} \dots\bigconv{d_1} f(x_{[k]}) -  c'(\alpha'(x_{[k]}))\Big| \leq \varepsilon\]
for every $x_{[k]}\in\beta^{-1}(B)$.
\end{lemma}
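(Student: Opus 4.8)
The plan is to induct on $r$, the number of convolutions, peeling off one convolution at a time. For the base case $r=0$ there is nothing to do: take $\beta=\alpha'=\alpha$, $c'=c$ and $B=\mathbb{F}_p^l$. For the inductive step, suppose the statement holds for $r-1$ convolutions; apply it to $f$ and directions $d_1,\dots,d_{r-1}$ with a smaller error parameter $\varepsilon_1>0$ to be chosen later. This gives multiaffine maps $\beta^{(1)}\colon G_{[k]}\to\mathbb{F}_p^{s_1}$, $\alpha^{(1)}\colon G_{[k]}\to\mathbb{F}_p^{l_1}$, a map $c^{(1)}\colon\mathbb{F}_p^{l_1}\to\mathbb{D}$ and a set $B_1\subset\mathbb{F}_p^{s_1}$ with $|(\beta^{(1)})^{-1}(B_1)|\geq(1-\varepsilon_1)|G_{[k]}|$ such that $\bigconv{d_{r-1}}\dots\bigconv{d_1}f$ agrees with $c^{(1)}\circ\alpha^{(1)}$ up to $\varepsilon_1$ on $(\beta^{(1)})^{-1}(B_1)$. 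Write $g=\bigconv{d_{r-1}}\dots\bigconv{d_1}f$ and $\tilde{g}=c^{(1)}\circ\alpha^{(1)}$; by Lemma~\ref{2pConvpBnd} (or just the trivial $L^1$ bound) convolving in direction $d_r$ preserves closeness of $g$ and $\tilde{g}$ outside a small set, so it suffices to analyse $\bigconv{d_r}\tilde g$ and then absorb the error set using an external-approximation argument. The point is that $\tilde g$ is exactly a linear combination of multilinear phases, so $\bigconv{d_r}\tilde g$ is handled by Proposition~\ref{ExpSumConv}.

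More precisely, after replacing $c^{(1)}$ by its Fourier expansion over $\mathbb{F}_p^{l_1}$ we may write $\tilde g(x_{[k]})=\sum_{\nu\in\mathbb{F}_p^{l_1}}\hat{c^{(1)}}(\nu)\chi(\nu\cdot\alpha^{(1)}(x_{[k]}))$, a linear combination of at most $p^{l_1}$ multiaffine phases with coefficients in $\mathbb{D}$. Apply Proposition~\ref{ExpSumConv} to this linear combination in direction $d_r$, with norm $L^1$ and error parameter $\varepsilon_2$: this yields a positive integer $l'' = O((\varepsilon_2^{-1}p^{l_1})^{O(1)})$, coefficients $\tilde c_i\in\mathbb{D}$ and multiaffine forms $\psi_i\colon G_{[k]}\to\mathbb{F}_p$ such that $\bigconv{d_r}\tilde g\apps{\varepsilon_2}_{L^1}\sum_i\tilde c_i\chi\circ\psi_i$. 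The function $\sum_i\tilde c_i\chi\circ\psi_i$ is of the form $c''\circ\alpha''$ where $\alpha''=(\psi_1,\dots,\psi_{l''})$ and $c''$ is an explicit trigonometric polynomial. To upgrade the $L^1$ closeness to an $L^\infty$ closeness on most level sets, I would apply the Fibres theorem / simultaneous Fibres theorem (Theorem~\ref{fibresThm}, Corollary~\ref{simFibresThm}) to the combined multiaffine map $(\beta^{(1)},\alpha^{(1)},\alpha'')$: on most layers of an auxiliary bounded-codimension multiaffine map $\beta$, the quantities $|G|^{-1}|\{x: \text{the derivative value equals }\cdot\}|$ stabilise, which forces $\bigconv{d_r}\tilde g$ to be genuinely close (in $L^\infty$) to $c''\circ\alpha''$ on those layers, once the global $L^1$ error is small. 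The error set where this fails is a union of layers of $\beta$ and can be made to have density at most $\varepsilon$ by a pigeonhole/averaging argument over layers.

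For the bookkeeping: set $\alpha'=\alpha''$, $c'=c''$, and let $\beta$ be the concatenation of $\beta^{(1)}$ (to control the set where $g$ and $\tilde g$ differ), $\alpha^{(1)}$ itself, and the map produced by the Fibres-theorem step (to control the set where the $L^1\to L^\infty$ upgrade fails). The set $B$ is then the union of the "good" layers of $\beta$ — those that lie inside $(\beta^{(1)})^{-1}(B_1)$, on which the derivative-fibre sizes stabilise, and on which $g$ and $\tilde g$ are pointwise $\varepsilon_1$-close — and one checks $|\beta^{-1}(B)|\geq(1-\varepsilon)|G_{[k]}|$ by choosing $\varepsilon_1,\varepsilon_2$ small multiples of $\varepsilon$ (say $\varepsilon_1=\varepsilon_2=c\varepsilon$ for a small absolute constant $c$). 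The codimension of $\beta$ and the dimension $l'$ of $\alpha'$ grow polynomially in $s_1+l_1+\log_p\varepsilon^{-1}$ at each step, and since there are only $r$ steps this gives $l',s=O(l+\log_p\varepsilon^{-1})^{O(1)}$ uniformly in $r$; importantly the tower height does not grow with $r$ because no application of Theorem~\ref{multiaffineInvThm} is involved here (unlike Theorem~\ref{MixedConvApprox}), only Proposition~\ref{ExpSumConv} and the Fibres theorem, both of which have polynomial bounds.

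The main obstacle I anticipate is the $L^1\to L^\infty$ upgrade step. Having a good $L^1$ (or $L^q$) approximation of $\bigconv{d_r}\tilde g$ by an exponential sum does not by itself give a pointwise approximation on level sets of a bounded-complexity multiaffine map; one genuinely needs the structural input that $\bigconv{d_r}\tilde g$ is itself (up to small error) a function of a bounded-complexity multiaffine map — which is why I invoke Proposition~\ref{ExpSumConv} to get the exact structure before the error is introduced, rather than only an $L^1$ bound for $\bigconv{d_r}g$. One must be careful that the exponential-sum structure coming from Proposition~\ref{ExpSumConv} applies to $\tilde g=c^{(1)}\circ\alpha^{(1)}$ (which is exactly a sum of multilinear phases) and not to the abstract function $g$, and then separately control $\|g-\tilde g\|_{L^1}$ after convolving via Lemma~\ref{2pConvpBnd}; keeping these two error sources cleanly separated, and ensuring the resulting error set is a union of layers of the composite map $\beta$ (so that "density $\leq\varepsilon$" is meaningful in the statement), is the delicate part of the write-up.
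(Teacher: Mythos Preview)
Your induction on $r$ matches the paper's structure, but the inner step takes a wrong turn that breaks the quantitative conclusion. When you Fourier-expand $c^{(1)}$ you feed $n=p^{l_1}$ phases into Proposition~\ref{ExpSumConv}, which then outputs $l''=O\big((\varepsilon_2^{-1}p^{l_1})^{O(1)}\big)$ forms. This is \emph{exponential} in $l_1$, not polynomial; iterating over $r$ steps yields a tower of height $r$, not $O(l+\log_p\varepsilon^{-1})^{O(1)}$. Your sentence ``the dimension $l'$ of $\alpha'$ grow[s] polynomially in $s_1+l_1+\log_p\varepsilon^{-1}$ at each step'' is therefore false for your construction, and the asserted bound does not follow.

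The paper avoids this by never approximating $\bigconv{d_r}\tilde g$: it computes it exactly. Write $\alpha^{(1)}=\alpha''+\gamma$ with $\alpha''$ linear in coordinate $d_r$ and $\gamma$ independent of $x_{d_r}$. Then
\[
\bigconv{d_r}\tilde g(x_{[k]})=\exx_{\mu\in\Gamma}\,c^{(1)}\bigl(\alpha''(x_{[k]})+\mu+\gamma(x_{[k]\setminus\{d_r\}})\bigr)\,\overline{c^{(1)}\bigl(\mu+\gamma(x_{[k]\setminus\{d_r\}})\bigr)},
\]
where $\Gamma$ is the image of $y\mapsto\alpha''(x_{[k]\setminus\{d_r\}},y)$. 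So $\bigconv{d_r}\tilde g$ depends only on $\alpha''(x_{[k]})$, $\gamma(x_{[k]\setminus\{d_r\}})$, and $\Gamma$. A single application of Corollary~\ref{simFibresThm} to $\alpha''$ (codimension $\le l_1$) pins down $\Gamma$ on most layers of some $\rho'$ of codimension $O(l_1+\log_p\varepsilon^{-1})^{O(1)}$; on each such layer, $\bigconv{d_r}\tilde g$ is \emph{exactly} $c'\circ(\rho',\alpha'',\gamma)$. No $L^1\to L^\infty$ upgrade is needed, and the new dimension stays polynomial in $l_1$. The ``main obstacle'' you flag disappears once you compute instead of approximate. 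The paper handles the $g$-versus-$\tilde g$ error the same way: apply Corollary~\ref{simFibresThm} to $\beta^{(1)}$ to find columns in direction $d_r$ that lie mostly in the good set, so that $\bigconv{d_r}g$ and $\bigconv{d_r}\tilde g$ are pointwise close on those columns, rather than passing through a global $L^1$ bound and Lemma~\ref{2pConvpBnd}.
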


\begin{proof}We prove the statement by induction on $r$ and include the trivial $r = 0$ case as the base case. Suppose that the statement has been proved for some $r$. Apply it with a parameter $\varepsilon' > 0$, and let $l',s, \alpha', \beta, c', B$ be the relevant objects. Let $g \colon G_{[k]} \to \mathbb{D}$ be given by $g(x_{[k]}) = c'(\alpha'(x_{[k]}))$. Apply Theorem~\ref{simFibresThm} to $\beta$ in direction $G_{d_{r+1}}$ with parameter $\eta_1$. We obtain a positive integer $t = O(s + \log_p \eta_1^{-1})^{O(1)}$, a multiaffine map $\rho \colon G_{[k] \setminus \{d_{r+1}\}} \to \mathbb{F}_p^t$, a collection of values $R \subset \mathbb{F}_p^t$ and a map $v \colon R \times \mathbb{F}_p^s \to \mathbb{D}$ such that
\[|\rho^{-1}(R)| \geq (1- \eta_1) |G_{[k] \setminus \{d_{r+1}\}}|\]
and
\[\big|\big\{y_{d_{r+1}} \in G_{d_{r+1}} \colon \beta(x_{[k] \setminus \{d_{r+1}\}}, y_{d_{r+1}}) = \lambda\big\}\big| = v(\rho(x_{[k] \setminus \{d_{r+1}\}}), \lambda)\]
for every $x_{[k] \setminus \{d_{r+1}\}} \in \rho^{-1}(R)$ and every $\lambda \in \mathbb{F}_p^s$.

Note that for each $\mu \in R$, the size $|\{y_{d_{r+1}} \in G_{d_{r+1}} \colon \beta(x_{[k] \setminus \{d_{r+1}\}}, y_{d_{r+1}}) \notin B\}|$ is the same for every $x_{[k] \setminus \{d_{r+1}\}} \in \rho^{-1}(\mu)$. Let $R'$ be the set of all $\mu\in R$ for which this size is at most $\frac{\varepsilon}{100}|G_{d_{r+1}}|$. We have
\[\varepsilon'|G_{[k]}| \,\,\geq \,|\beta^{-1}(\mathbb{F}_p^s \setminus B)|\,\, \geq\,  |\rho^{-1}(R \setminus R')| \cdot \frac{\varepsilon}{100}|G_{d_{r+1}}|\]
and hence
\[|\rho^{-1}(R')|\ \geq (1- \eta_1 - 100 \varepsilon^{-1} \varepsilon') |G_{[k] \setminus \{d_{r+1}\}}|.\]
Thus, whenever $\rho(x_{[k] \setminus \{d_{r+1}\}})\in R'$, we have
\begin{equation}\Big|\bigconv{d_{r+1}} \dots \bigconv{d_1} f(x_{[k] \setminus \{d_{r+1}\}}, y_{d_{r+1}}) - \bigconv{d_{r+1}} g(x_{[k] \setminus \{d_{r+1}\}}, y_{d_{r+1}})\Big| \leq 2\varepsilon' + \frac{\varepsilon}{10}.\label{genbogapproxeqn1}\end{equation}
It remains to approximate $\bigconv{d_{r+1}} g$. To this end, let $\alpha'(x_{[k]}) = \alpha''(x_{[k]}) + \gamma(x_{[k] \setminus \{d_{r+1}\}})$ for multiaffine maps $\alpha'' \colon G_{[k]} \to \mathbb{F}_p^{l'}$ and $\gamma \colon G_{[k] \setminus \{d_{r+1}\}} \to \mathbb{F}_p^{l'}$, where $\alpha''$ is additionally linear in coordinate $d_{r+1}$. Then, for each $x_{[k] \setminus \{d_{r+1}\}}$, the map $y_{d_{r+1}} \mapsto \bigconv{d_{r+1}} g(x_{[k] \setminus \{d_{r+1}\}}, y_{d_{r+1}})$ is constant on cosets of $\{y_{d_{r+1}} \in G_{d_{r+1}}\colon \alpha''(x_{[k] \setminus \{d_{r+1}\}}, y_{d_{r+1}}) = 0\}$.  Apply Theorem~\ref{simFibresThm} to $\alpha''$ in direction $G_{d_{r+1}}$ with parameter $\eta_2$. We obtain a positive integer $t' = O(l' + \log_p \eta_2^{-1})^{O(1)}$,  a multiaffine map $\rho' \colon G_{[k] \setminus \{d_{r+1}\}} \to \mathbb{F}_p^{t'}$, a collection of values $T \subset \mathbb{F}_p^{t'}$, and a map $u \colon T \times \mathbb{F}_p^{l'} \to \mathbb{D}$, such that
\[|(\rho')^{-1}(T)| \geq (1- \eta_2) |G_{[k] \setminus \{d_{r+1}\}}|\]
and
\[|\{y_{d_{r+1}} \in G_{d_{r+1}} \colon \alpha''(x_{[k] \setminus \{d_{r+1}\}}, y_{d_{r+1}}) = \mu\}| = u(\rho'(x_{[k] \setminus \{d_{r+1}\}}), \mu)\]
for every $x_{[k] \setminus \{d_{r+1}\}} \in(\rho')^{-1}(T)$ and every $\mu \in \mathbb{F}_p^{l'}$. We now use this to approximate $\bigconv{d_{r+1}} g$.\\
Let $\lambda' \in T$ be fixed. Then, for each $x_{[k] \setminus \{d_{r+1}\}} \in(\rho')^{-1}(\lambda')$ we have that the vector space $\Gamma = \{\alpha''(x_{[k] \setminus \{d_{r+1}\}}, y_{d_{r+1}}) : y_{d_{r+1}} \in G_{d_{r+1}}\}$ is the same. Hence,
\begin{align*}\bigconv{d_{r+1}} g(x_{[k]}) = &\exx_{y_{d_{r+1}}}  g(x_{[k] \setminus \{d_{r+1}\}}, x_{d_{r+1}} + y_{d_{r+1}}) \overline{g(x_{[k] \setminus \{d_{r+1}\}}, y_{d_{r+1}})}\\
= &\exx_{y_{d_{r+1}}}  c'\Big(\alpha'(x_{[k] \setminus \{d_{r+1}\}}, x_{d_{r+1}} + y_{d_{r+1}})\Big) \overline{c'\Big(\alpha'(x_{[k] \setminus \{d_{r+1}\}}, y_{d_{r+1}})\Big)}\\
= &\exx_{y_{d_{r+1}}}  c'\Big(\alpha''(x_{[k] \setminus \{d_{r+1}\}}, x_{d_{r+1}} + y_{d_{r+1}}) + \gamma(x_{[k] \setminus \{d_{r+1}\}})\Big) \overline{c'\Big(\alpha''(x_{[k] \setminus \{d_{r+1}\}}, y_{d_{r+1}}) + \gamma(x_{[k] \setminus \{d_{r+1}\}})\Big)}\\
= &\exx_{y_{d_{r+1}}}  c'\Big(\alpha''(x_{[k]}) + \alpha''(x_{[k] \setminus \{d_{r+1}\}}, y_{d_{r+1}}) + \gamma(x_{[k] \setminus \{d_{r+1}\}})\Big) \overline{c'\Big(\alpha''(x_{[k] \setminus \{d_{r+1}\}}, y_{d_{r+1}}) + \gamma(x_{[k] \setminus \{d_{r+1}\}})\Big)}\\
= & \exx_{\mu \in \Gamma} c'\Big(\alpha''(x_{[k]}) + \mu + \gamma(x_{[k] \setminus \{d_{r+1}\}})\Big) \overline{c'\Big(\mu+ \gamma(x_{[k] \setminus \{d_{r+1}\}})\Big)},\end{align*}
which is a quantity that depends only on $\alpha''(x_{[k]})$ and $\gamma(x_{[k] \setminus \{d_{r+1}\}})$. Recalling that we considered only $x_{[k] \setminus \{d_{r+1}\}} \in(\rho')^{-1}(\lambda')$ for fixed $\lambda'$, we conclude that for every $\lambda' \in T$, $\mu, \mu' \in \mathbb{F}_p^{l'}$, we may find a value $w(\lambda', \mu, \mu') \in \mathbb{D}$ such that
\begin{equation}\label{genbogapproxeqn2}\bigconv{d_{r+1}} g(x_{[k]})  = w(\lambda', \mu, \mu')\end{equation}
for all $x_{[k]} \in G_{[k]}$ such that $\rho'(x_{[k] \setminus \{d_{r+1}\}}) = \lambda', \alpha''(x_{[k]}) = \mu$, and $\gamma(x_{[k] \setminus \{d_{r+1}\}}) = \mu'$.\\

Combining~\eqref{genbogapproxeqn1} and~\eqref{genbogapproxeqn2}, for every choice of $\lambda \in R', \lambda' \in T, \mu, \mu' \in \mathbb{F}_p^{l'}$, we obtain a single value $w \in \mathbb{D}$ such that for each $x_{[k]} \in G_{[k]}$ such that $\rho(x_{[k] \setminus \{d_{r+1}\}}) = \lambda, \rho'(x_{[k] \setminus \{d_{r+1}\}}) = \lambda', \alpha''(x_{[k]}) = \mu$, and $\gamma(x_{[k] \setminus \{d_{r+1}\}}) = \mu'$, 
\[\Big|\bigconv{d_{r+1}} \dots \bigconv{d_1} f(x_{[k]}) - w\Big| \leq 2\varepsilon' + \frac{\varepsilon}{10}.\]
Pick $\varepsilon' = \frac{\varepsilon^2}{1000}, \eta_1 = \frac{\varepsilon}{100}, \eta_2 = \frac{\varepsilon}{100}$ to finish the proof.\end{proof}

\begin{proof}[Proof of Theorem \ref{AlmostLinftyApproxThm}] To reduce the theorem to Lemma~\ref{linftyforMLmaps}, we first apply Theorem~\ref{MixedConvApprox} for the $L^1$ norm with approximation parameter $2^{-r-1}\varepsilon$. That yields
\begin{itemize}
\item a positive integer $l^{(1)} =\exp^{\big((2k + 1)(D^{\mathrm{mh}}_{k-1} + 2)\big)}\Big(O(2^{O(r)}\varepsilon^{-O(1)})\Big)$, 
\item constants $c^{(1)}_1, \dots, c^{(1)}_{l^{(1)}} \in \mathbb{D}$, and
\item multiaffine forms $\phi^{(1)}_1, \dots, \phi^{(1)}_{l^{(1)}} \colon G_{[k]} \to \mathbb{F}_p$ such that
\end{itemize}
\[\bigconv{k}\dots\bigconv{1}\bigconv{k} \dots \bigconv{1} f \apps{2^{-r-1}\varepsilon}_{L^1} \sum_{i \in [l^{(1)}]} c^{(1)}_i \chi\circ\phi^{(1)}_i.\]
Define $g \colon G_{[k]} \to \mathbb{C}$ as follows. For given $x_{[k]}$, let $\sigma = \sum_{i \in [l^{(1)}]} c^{(1)}_i \chi(\phi_i(x_{[k]})) $. Set
\[g(x_{[k]}) = \begin{cases}\sigma,&\hspace{1cm}\text{when }|\sigma| \leq 1\\
\frac{\sigma}{|\sigma|},&\hspace{1cm}\text{when }|\sigma| > 1.\end{cases}\]
Notice that $g$ is constant on layers of $\phi$ and that $\bigconv{k}\dots\bigconv{1}\bigconv{k} \dots \bigconv{1} f \apps{2^{-r-1}\varepsilon}_{L^1} g$.\\

Now apply Lemma~\ref{LinftyConvApprox} to obtain an approximation in the $L^\infty$ norm, though with a slightly larger approximation parameter:
\[\Big\|\bigconv{d_r} \dots \bigconv{d_1}\bigconv{k}\dots\bigconv{1}\bigconv{k} \dots \bigconv{1} f - \bigconv{d_r} \dots \bigconv{d_1}g\Big\|_{L^{\infty}} \leq \frac{\varepsilon}{2}.\]
Finally, we may apply Lemma~\ref{linftyforMLmaps} to approximate $\bigconv{d_r} \dots \bigconv{d_1}g$ with error parameter $\varepsilon/2$. We obtain positive integers $l, s = O(l^{(1)} + \log_p \varepsilon^{-1})^{O(1)}$, multiaffine maps $\beta \colon G_{[k]} \to \mathbb{F}_p^s$, $\alpha \colon G_{[k]} \to \mathbb{F}_p^{l}$, a map $c\colon \mathbb{F}_p^{l} \to \mathbb{D}$, and a collection of values $B \subset \mathbb{F}_p^s$ such that 
\[|\beta^{-1}(B)| \geq (1-\varepsilon) |G_{[k]}|\]
and
\[\Big|\bigconv{d_r} \dots\bigconv{d_1} g(x_{[k]}) -  c(\alpha(x_{[k]}))\Big| \leq \varepsilon/2\]
for every $x_{[k]}\in\beta^{-1}(B)$.\\
\indent It follows that 
\begin{align*}&\Big|\bigconv{d_r} \dots \bigconv{d_1}\bigconv{k}\dots\bigconv{1}\bigconv{k} \dots \bigconv{1} f(x_{[k]}) - c(\alpha(x_{[k]}))\Big|\\
&\hspace{2cm}\leq \Big|\bigconv{d_r} \dots \bigconv{d_1}\bigconv{k}\dots\bigconv{1}\bigconv{k} \dots \bigconv{1} f(x_{[k]}) - \bigconv{d_r} \dots \bigconv{d_1}g(x_{[k]})\Big| + \Big|\bigconv{d_r} \dots\bigconv{d_1} g(x_{[k]}) -  c(\alpha(x_{[k]}))\Big|\\
&\hspace{2cm} \leq \varepsilon\end{align*}
for each $x_{[k]}\in\beta^{-1}(B)$, which completes the proof.\end{proof}

We are now ready to prove Theorem~\ref{alldirsubspacestheorem} about sets $X \subset G_{[k]}$ that are subspaces in each principal direction.

\begin{proof}[Proof of Theorem~\ref{alldirsubspacestheorem}]First of all, notice that since $X$ is a subspace in each principal direction, we have that $\on{supp} \bigconv{d} X \subset X$ for any $d \in [k]$. By the Cauchy-Schwarz inequality we have that
\[\exx_{x_{[k]}} \bigconv{k} \dots \bigconv{1}\bigconv{k}\dots\bigconv{1}\bigconv{k} \dots \bigconv{1} X(x_{[k]}) \geq \delta^{2^{3k}},\]
where we applied $3k$ convolutions in total. Apply Theorem~\ref{AlmostLinftyApproxThm} with $\varepsilon =  \delta^{2^{3k}} / 2$. Then
\[\on{supp} \bigconv{k} \dots \bigconv{1}\bigconv{k}\dots\bigconv{1}\bigconv{k} \dots \bigconv{1} X \subset X\]
contains a non-empty variety $V$ of codimension $r = \exp^{(O(1))}(O(\delta^{-1}))$. Apply Lemma~\ref{mlSetVarML} to get a multilinear non-empty variety $\tilde{V}$ of codimension $O(r)$ inside $X$.\end{proof}

In the case of Theorem~\ref{alldirsubspacestheorem}, unlike other results in this paper, the underlying field plays a non-trivial role and we deduce the following corollary for the case of general finite fields. Let $\mathbb{F}$ be an arbitrary field of characteristic $p$; we view $\mathbb{F}_p$ as a subfield of $\mathbb{F}$.

\begin{corollary}[General finite fields version] Let $G_1,\dots,G_k$ be $\mathbb{F}$-vector spaces and let $X \subset G_{[k]}$ be a set of density $\delta > 0$ such that for each $d \in [k]$ and each $x_{[k] \setminus \{d\}} \in G_{[k] \setminus \{d\}}$, the slice $X_{x_{[k] \setminus \{d\}}}$ is a (possibly empty) $\mathbb{F}$-subspace. Then, there exist a positive integer $r =\exp^{(O(1))}(O(\delta^{-1}))$ and an $\mathbb{F}_p$-mixed-linear map $\theta \colon G_{[k]} \to \mathbb{F}_p^r$ such that 
\[\bigcap_{d \in [k], \lambda \in \mathbb{F} \setminus \{0\}} \{x_{[k]} \in G_{[k]} \colon \theta(x_{[d-1]}, \lambda x_d, x_{[d+1,k]}) = 0\} \subset X.\]\end{corollary}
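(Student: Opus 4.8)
The plan is to reduce to Theorem~\ref{alldirsubspacestheorem} by restriction of scalars, and then to ``$\mathbb{F}$-symmetrize'' the mixed-linear map it produces so that its zero set becomes closed under $\mathbb{F}$-scaling in each principal direction.

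First I would regard each $G_i$ as an $\mathbb{F}_p$-vector space. Every $\mathbb{F}$-subspace of $G_d$ is in particular an $\mathbb{F}_p$-subspace, and the density of $X$ is unchanged, so the hypotheses of Theorem~\ref{alldirsubspacestheorem} are satisfied. Applying it gives $r_0 = \exp^{(O(1))}(O(\delta^{-1}))$, sets $I_1,\dots,I_{r_0}\subseteq[k]$, and $\mathbb{F}_p$-multilinear forms $\alpha_i\colon G_{I_i}\to\mathbb{F}_p$ with $\{x_{[k]}\colon(\forall i)\,\alpha_i(x_{I_i})=0\}\subseteq X$. Discarding trivial components, we may assume each $I_i$ is nonempty, so $\alpha=(\alpha_1,\dots,\alpha_{r_0})$ is an $\mathbb{F}_p$-mixed-linear map $G_{[k]}\to\mathbb{F}_p^{r_0}$ and $\alpha^{-1}(0)\subseteq X$.

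Next I would symmetrize. Fix an $\mathbb{F}_p$-basis $e_1,\dots,e_m$ of $\mathbb{F}$, where $m=\dim_{\mathbb{F}_p}\mathbb{F}$ is a constant. For $\lambda\in\mathbb{F}$ and $d\in[k]$, the map $x_{[k]}\mapsto(x_{[d-1]},\lambda x_d,x_{[d+1,k]})$ is $\mathbb{F}_p$-linear in the $d$-th coordinate, so each map $x_{[k]}\mapsto\alpha(\mu_1 x_1,\dots,\mu_k x_k)$ with $\mu\in\mathbb{F}^k$ is again $\mathbb{F}_p$-mixed-linear. Expanding each $\mu_j$ in the basis and using $\mathbb{F}_p$-multilinearity of $\alpha_i$ in its slots, one has $\alpha_i(\mu_1 x_1,\dots,\mu_k x_k)=\sum_{\vec s}\big(\prod_{j\in I_i}\mu_j^{(s_j)}\big)\,\alpha_i^{\vec s}(x_{I_i})$, where $\alpha_i^{\vec s}(x_{I_i})=\alpha_i\big((e_{s_j}x_j)_{j\in I_i}\big)$ and $\vec s$ ranges over $[m]^{I_i}$. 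Since $\mathbb{F}^\times$ spans $\mathbb{F}$ over $\mathbb{F}_p$ in each coordinate, and pure tensors of spanning sets span the tensor product, the coefficient vectors $\big(\prod_j\mu_j^{(s_j)}\big)_{\vec s}$ span $\mathbb{F}_p^{[m]^{I_i}}$ as $\mu$ ranges over $(\mathbb{F}^\times)^k$; hence $\bigcap_{\mu\in(\mathbb{F}^\times)^k}\{x_{[k]}\colon\alpha_i(\mu_1 x_1,\dots,\mu_k x_k)=0\}=\bigcap_{\vec s}\{x_{[k]}\colon\alpha_i^{\vec s}(x_{I_i})=0\}$. Let $\theta$ be the concatenation of all the maps $\alpha_i^{\vec s}$; then $\theta$ is $\mathbb{F}_p$-mixed-linear of codimension at most $r_0 m^k=\exp^{(O(1))}(O(\delta^{-1}))$, and
\[\theta^{-1}(0)=\bigcap_{\mu\in(\mathbb{F}^\times)^k}\{x_{[k]}\colon\alpha(\mu_1 x_1,\dots,\mu_k x_k)=0\}\subseteq\alpha^{-1}(0)\subseteq X.\]

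Finally I would check that $V:=\theta^{-1}(0)$ is an $\mathbb{F}$-subspace in each principal direction. It is a common kernel of $\mathbb{F}_p$-linear maps in direction $d$, hence an $\mathbb{F}_p$-subspace there; and if $x_{[k]}\in V$ and $\nu\in\mathbb{F}$, then scaling the $d$-th coordinate by $\nu$ keeps the point in $V$: for $\nu\neq0$ this merely re-indexes the scalings $\mu$ in the intersection, and for $\nu=0$ it holds because $\alpha_i$ with $d\in I_i$ vanishes on points with zero $d$-th coordinate while $\alpha_i$ with $d\notin I_i$ does not see that coordinate. Consequently, for every $d$ and every $\lambda\in\mathbb{F}\setminus\{0\}$, scaling the $d$-th coordinate by $\lambda$ is a bijection of $V$, so $\{x_{[k]}\colon\theta(x_{[d-1]},\lambda x_d,x_{[d+1,k]})=0\}=V$; intersecting over all $d$ and $\lambda$ therefore still yields $V\subseteq X$, which is the claim. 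The entire quantitative content sits in Theorem~\ref{alldirsubspacestheorem}; the remaining steps are routine, and the only mild point to get right is precisely the tensor-span observation, which guarantees that the $\mathbb{F}$-symmetrization collapses to a bounded-codimension $\mathbb{F}_p$-mixed-linear map rather than to an a priori unbounded intersection.
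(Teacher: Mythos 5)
Your proof is correct, but it takes a substantially longer route than the one in the paper, and it is worth noticing why. The paper's proof is essentially one line: it takes $\theta$ to be exactly the $\mathbb{F}_p$-mixed-linear map produced by Theorem~\ref{alldirsubspacestheorem} (after viewing each $G_i$ as an $\mathbb{F}_p$-space, as you do), and then simply observes that since $1 \in \mathbb{F}\setminus\{0\}$, the intersection in the conclusion is a subset of the $\lambda = 1$ term $\theta^{-1}(0)$, which is already contained in $X$. Nothing further is needed because the corollary, as stated, does not claim that the intersection is nonempty or has bounded codimension---only that it sits inside $X$. Your argument instead builds a new map $\theta$ by expanding each $\alpha_i$ over an $\mathbb{F}_p$-basis of $\mathbb{F}$ (the forms $\alpha_i^{\vec s}$) and shows, via the tensor-span observation, that $\theta^{-1}(0)$ coincides with the full $\mathbb{F}^\times$-symmetrization of $\alpha^{-1}(0)$ and is therefore $\mathbb{F}$-scale-invariant in every direction, so the intersection is literally equal to $\theta^{-1}(0)$. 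That is a correct and somewhat stronger statement: it is exactly what justifies the \emph{remark} following the corollary, namely that the left-hand side is a nonempty $\mathbb{F}_p$-variety of codimension $O_{[\mathbb{F}:\mathbb{F}_p],k}(r)$ (rather than the a priori $k(|\mathbb{F}|-1)r$ conditions one naively counts). So what your extra work buys is the ``best possible'' explanation, but for the containment claimed in the corollary it is more than is needed; the $\lambda=1$ trick does everything.
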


This is best possible -- it is easy to check that the set on the left-hand-side of the expression in the conclusion of the corollary is indeed an $\mathbb{F}$-subspace in each direction. It is naturally dense, since it is a non-empty $\mathbb{F}_p$-variety of bounded codimension.

\begin{proof}Apply Theorem~\ref{alldirsubspacestheorem} and let $\theta$ be an $\mathbb{F}_p$-mixed-linear map $\theta \colon G_{[k]} \to \mathbb{F}_p^r$ such that $\theta^{-1}(0) \subset X$, where $r = \exp^{(O(1))}(O(\delta^{-1}))$. Then $X$ contains the variety
\[\bigcap_{d \in [k], \lambda \in \mathbb{F} \setminus \{0\}} \{x_{[k]} \in G_{[k]} \colon \theta(x_{[d-1]}, \lambda x_d, x_{[d+1,k]}) = 0\}\]
as well.\end{proof}

\thebibliography{99}
\bibitem{BalogSzemeredi} A. Balog and E. Szemer\'edi, \emph{A statistical theorem of set addition}, Combinatorica \textbf{14} (1994), 263--268.
\bibitem{BhowLov} A. Bhowmick and S. Lovett, \emph{Bias vs structure of polynomials in large fields, and applications in effective algebraic geometry and coding theory}, arXiv preprint (2015), \verb+arXiv:1506.02047+. 
\bibitem{BergelsonTaoZiegler} V. Bergelson, T. Tao and T. Ziegler, \emph{An inverse theorem for the uniformity seminorms associated with the action of $\mathbb{F}^{\infty}_p$}, Geometric and Functional Analysis \textbf{19} (2010), 1539--1596.
\bibitem{BienHe} P.-Y. Bienvenu and T.H. L\^{e}, \emph{A bilinear Bogolyubov theorem}, European Journal of Combinatorics, \textbf{77} (2019), 102--113.
\bibitem{BienHe2} P.-Y. Bienvenu and T.H. L\^{e}, \emph{Linear and quadratic uniformity of the M\"obius function over $\mathbb{F}_{q}[t]$}, Mathematika \textbf{65} (2019), 505--529.
\bibitem{BGSM3} P.-Y. Bienvenu, D. Gonz\'alez-S\'anchez and \'A.D. Martinez, \emph{A note on the bilinear Bogolyubov theorem: Transverse and bilinear sets}, Proceedings of the American Mathematical Society \textbf{148} (2020), 23--31.
\bibitem{Bourgain} J. Bourgain, \emph{On arithmetic progressions in sums of sets of integers}, A tribute to Paul Erd\H os, 105--110, Cambridge University Press, Cambridge, 1990.
\bibitem{CamSzeg} O.A. Camarena and B. Szegedy, \emph{Nilspaces, nilmanifolds and their morphisms}, arXiv preprint (2010), \verb+arXiv:1009.3825+.
\bibitem{CrootSisask} E. Croot and O. Sisask, \emph{A probabilistic technique for finding almost-periods of convolutions}, Geometric and Functional Analysis \textbf{20} (2010), 1367--1396.
\bibitem{Freiman} G. Freiman, \emph{Foundations of a structural theory of set addition}, Translations of Mathematical Monographs \textbf{37}, American Mathematical Society, Providence, RI, USA, 1973.
\bibitem{TimSze} W.T. Gowers, \emph{A new proof of Szemer\'edi's theorem}, Geometric and Functional Analysis \textbf{11} (2001), 465--588.
\bibitem{U4paper} W.T. Gowers and L. Mili\'cevi\'c, \emph{A quantitative inverse theorem for the $U^4$ norm over finite fields}, arXiv preprint (2017), \verb+arXiv:1712.00241+.
\bibitem{extnPaper} W.T. Gowers and L. Mili\'cevi\'c, \emph{A note on extensions of multilinear maps defined on multilinear varieties}, Proceedings of the Edinburgh Mathematical Society \textbf{64} (2021), 148--173.
\bibitem{bogPaper} W.T. Gowers and L. Mili\'cevi\'c, \emph{A bilinear version of Bogolyubov's theorem}, Proceedings of the American Mathematical Society \textbf{148} (2020), 4695--4704.
\bibitem{TimWolf} W.T. Gowers and J. Wolf, \emph{Linear forms and higher-degree uniformity functions on $\mathbb{F}^n_p$}, Geometric and Functional Analysis \textbf{21} (2011), 36--69.
\bibitem{greenRuzsaFreiman} B. Green and I.Z. Ruzsa, \emph{Freiman's theorem in an arbitrary abelian group}, Journal of the London Mathematical Society \textbf{75} (2007), 163--175.
\bibitem{StrongU3} B. Green and T. Tao, \emph{An inverse theorem for the Gowers $U^3(G)$-norm}, Proceedings of the Edinburgh Mathematical Society \textbf{51} (2008), 73--153.
\bibitem{GreenTaoPolys} B. Green and T. Tao. \emph{The distribution of polynomials over finite fields, with applications to the Gowers norms}, Contributions to Discrete Mathematics \textbf{4} (2009), no. 2, 1--36.
\bibitem{GreenTaoPrimes} B. Green and T. Tao, \emph{Linear equations in primes}, Annals of Mathematics \textbf{171} (2010), no. 3, 1753--1850.
\bibitem{StrongUkZ} B. Green, T. Tao and T. Ziegler, \emph{An inverse theorem for the Gowers $U^{s+1}[N]$-norm}, Annals of Mathematics \textbf{176} (2012), 1231--1372.
\bibitem{GMV1} Y. Gutman, F. Manners and P. Varj\'u, \emph{The structure theory of Nilspaces I}, Journal d'Analyse Math\'ematique \textbf{140} (2020), 299--369.
\bibitem{GMV2} Y. Gutman, F. Manners and P. Varj\'u, \emph{The structure theory of Nilspaces II: Representation as nilmanifolds}, Transactions of the American Mathematical Society \textbf{371} (2019), 4951--4992.
\bibitem{GMV3} Y. Gutman, F. Manners and P. Varj\'u, \emph{The structure theory of Nilspaces III: Inverse limit representations and topological dynamics}, Advances in Mathematics \textbf{365} (2020), 107059.
\bibitem{HosseiniLovett} K. Hosseini and S. Lovett, \emph{A bilinear Bogolyubov-Ruzsa lemma with polylogarithmic bounds}, Discrete Analysis, paper no. 10 (2019), 1--14.
\bibitem{Janzer1} O. Janzer, \emph{Low analytic rank implies low partition rank for tensors}, arXiv preprint (2018) \verb+arXiv:1809.10931+.
\bibitem{Janzer2} O. Janzer, \emph{Polynomial bound for the partition rank vs the analytic rank of tensors}, Discrete Analysis, paper no. 7 (2020), 1--18.
\bibitem{Lovett} S. Lovett, \emph{The analytic rank of tensors and its applications}, Discrete Analysis, paper no. 7 (2019), 1--10.
\bibitem{Manners} F. Manners, \emph{Quantitative bounds in the inverse theorem for the Gowers $U^{s+1}$-norms over cyclic groups}, arXiv preprint (2018), \verb+arXiv:1811.00718+.
\bibitem{LukaRank} L. Mili\'cevi\'c, \emph{Polynomial bound for partition rank in terms of analytic rank}, Geometric and Functional Analysis \textbf{29} (2019), 1503--1530.
\bibitem{Naslund}  E. Naslund, \emph{The partition rank of a tensor and $k$-right corners in $\mathbb{F}_q^n$}, arXiv preprint (2017), \verb+arXiv:1701.04475+.
\bibitem{Ruzsa} I.Z. Ruzsa, \emph{Generalized arithmetical progressions and sumsets}, Acta Mathematica Hungarica \textbf{65} (1994), 379--388.
\bibitem{Sanders} T. Sanders, \emph{On the Bogolyubov-Ruzsa lemma}, Analysis \& PDE \textbf{5} (2012), no. 3, 627--655.
\bibitem{Szeg} B. Szegedy, \emph{On higher order Fourier analysis}, arXiv preprint (2012), \verb+arXiv:1203.2260+.
\bibitem{TaoZiegler} T. Tao and T. Ziegler, \emph{The inverse conjecture for the Gowers norm over finite fields in low characteristic}, Annals of Combinatorics \textbf{16} (2012), 121--188.
\end{document}